\theoremstyle{plain}
\def\cal@symb#1|#2{\expandafter\def\csname #2#1\endcsname{\mathcal{#1}}}
\def\calsymbols#1#2{\@for\@tmpz:=#2\do{\expandafter\cal@symb\@tmpz|{#1}}}
\def\bb@symb#1|#2{\expandafter\def\csname #2#1\endcsname{\mathbb{#1}}}
\def\bbsymbols#1#2{\@for\@tmpz:=#2\do{\expandafter\bb@symb\@tmpz|{#1}}}
\def\bold@symb#1|#2{\expandafter\def\csname #2#1\endcsname{\mathbf{#1}}}
\def\boldsymbols#1#2{\@for\@tmpz:=#2\do{\expandafter\bold@symb\@tmpz|{#1}}}
\def\scr@symb#1|#2{\expandafter\def\csname #2#1\endcsname{\mathscr{#1}}}
\def\scrsymbols#1#2{\@for\@tmpz:=#2\do{\expandafter\scr@symb\@tmpz|{#1}}}
\def\frak@symb#1|#2{\expandafter\def\csname #2#1\endcsname{\mathfrak{#1}}}
\def\fraksymbols#1#2{\@for\@tmpz:=#2\do{\expandafter\frak@symb\@tmpz|{#1}}}
\def\dmth@p#1|{\expandafter\let\csname#1\endcsname\relax
  \expandafter\DeclareMathOperator\csname#1\endcsname{#1}}
\def\operators#1{\@for\@tmpz:=#1\do{\expandafter\dmth@p\@tmpz|}}
\theoremstyle{definition}
\newtheorem{defn}{Definition}[subsection]
\newtheorem{thm}[defn]{Theorem}
\newtheorem{cor}[defn]{Corollary}
\newtheorem{prop}[defn]{Proposition}
\newtheorem{lem}[defn]{Lemma}
\newtheorem{ex}[defn]{Example}
\newtheorem{rmk}[defn]{Remark}
\newtheorem{hypn}[defn]{Hypothesis}
\newtheorem{notn}[defn]{Notation}
\newtheorem{MainThm}{Theorem}
\newtheorem{MainCor}{Corollary}
\DeclareSymbolFont{largesymbols}{OMX}{yhex}{m}{n}
\DeclareMathAccent{\wideparen}{\mathord}{largesymbols}{"F3}
\DeclareMathAlphabet{\mathpzc}{OT1}{pzc}{m}{it}
\DeclareMathOperator{\Char}{Char}
\DeclareMathOperator{\Div}{div}
\DeclareMathAlphabet{\mathpzc}{OT1}{pzc}{m}{it}
\newcommand{\h}[1]{\widehat{#1}}
\newcommand{\hK}[1]{\h{#1_K}}
\newcommand{\w}[1]{\wideparen{#1}}
\newcommand{\fr}[1]{\mathfrak{{#1}}}
\newcommand{\ts}[1]{\texorpdfstring{$#1$}{}}
\newcommand{\st}{\mid}
\newcommand\binomb[2]{\genfrac{\{}{\}}{0pt}{}{#1}{#2}}
\newcommand\binoma[2]{\genfrac{\langle}{\rangle}{0pt}{}{#1}{#2}}
\newcommand{\be}{\begin{enumerate}[{(}a{)}]}
\newcommand{\ee}{\end{enumerate}}
\let\le=\leqslant  \let\leq=\leqslant
  \let\geq=\geqslant
\newcommand{\qmb}[1]{\quad\mbox{#1}\quad}
\newcommand{\hsp}{\hspace{0.1cm}}
\newcommand{\Bin}[2]{\langle #1 \vert #2 \rangle}
\newcommand{\Abe}[2]{\widehat{\sE}_{\mathscr{#1},\mathbb{Q}}^{(#2)}}
\newcommand{\Berth}[2]{\widehat{\sD}_{\mathscr{#1},\mathbb{Q}}^{(#2)}}
\newcommand{\Cot}[1]{\overset{\circ}{T}{}^\ast {\mathscr{#1}_0}}
\newcommand{\myceil}[1]{\left \lceil #1 \right \rceil }
\newcommand{\myfloor}[1]{\left \lfloor #1 \right \rfloor }
\newcommand{\sq}[1]{\sqrt{|#1^\times|}}
\newcommand{\uset}[1]{\underset{#1}{\otimes}{}}
\newcommand{\uwset}[1]{\underset{#1}{\w\otimes}{}}
\def\arrowlim#1#2{\mathop{\underset{\scriptstyle #1}{\underset
    {\raisebox{0ex}[0.25ex][-0.5ex]{$#2$}}{\operatorname{lim}}}}}
\newcommand{\invlim}[1][]{\arrowlim{#1}{\longleftarrow}}        
\let\le=\leqslant  \let\leq=\leqslant
  \let\geq=\geqslant
\def\Qp{\Q_p}
\def\Zp{\Z_p}
\begin{document}
\title[Global sections of equivariant line bundles]{Global sections of equivariant line bundles on the $p$-adic upper half plane}
\author{Konstantin Ardakov}
\address{Mathematical Institute\\University of Oxford\\Oxford OX2 6GG}
\email{ardakov@maths.ox.ac.uk}

\author{Simon Wadsley}
\address{Homerton College, Cambridge, CB2 8PH}
\email{S.J.Wadsley@dpmms.cam.ac.uk}
\vspace{-2cm}

\begin{abstract} Let $F$ be a finite extension of $\bQ_p$, let $\Omega_F$ be Drinfeld's upper half-plane over $F$ and let $G^0$ the subgroup of $GL_2(F)$ consisting of elements whose determinant has norm $1$. Let $\sL$ be a torsion $G^0$-equivariant line bundle with connection on $\Omega_F$. We show that the strong dual of $\sL(\Omega_F)$ is an admissible locally $F$-analytic representation of $G^0$ of topological length at most $2$. It is topologically irreducible if and only if the underlying connection on $\sL$ is non-trivial. We give an explicit formula for the length of the strong dual of the space of globally-defined rigid analytic functions on a $G^0$-equivariant finite \'etale rigid analytic covering of $\Omega_F$ with abelian Galois group as an admissible locally $F$-analytic representation of $G^0$.
\end{abstract}

\maketitle
\vspace{-1cm}
\tableofcontents

\section{Introduction}

\subsection{Main results}
Let $p$ be a prime number, let $F$ be a finite field extension of $\bQ_p$ with residue field $k_F$ of size $q$ and let $K$ be a complete discretely valued field extension of $F$. For any rigid $F$-analytic variety $X$, we denote by $X_K := K \times_F X$ its base-change to a rigid $K$-analytic variety. Let $\Omega_F$ be the rigid $F$-analytic $p$-adic upper half plane and let $G^0$ denote the group of $2 \times 2$ matrices with entries in $F$ whose determinant has absolute value $1$. In \cite{ArdWad2023} the authors classified torsion $G^0$-equivariant line bundles with connection on $\Omega:=\Omega_F\times_F K$. As explained there, this was motivated by wanting to understand the first covering in Drinfeld's tower. In this paper we seek to understand the global sections of these line bundles as modules over the locally $F$-analytic distribution algebra $D(G^0,K)$ of $G^0$, or equivalently by \cite[p.176, Definition]{ST} and \cite[Corollary 3.3]{ST2}, the strong duals of these global sections as locally $F$-analytic representations of $G^0$.
\begin{MainThm}\label{ThmA}
Let $\sL$ be a torsion $G^0$-equivariant line bundle with connection on $\Omega$. Then $\sL(\Omega)$ is a coadmissible $D(G^0,K)$-module of length at most $2$. 
Moreover $\sL(\Omega)$ is topologically irreducible as a $D(G^0,K)$-module if and only if the underlying connection on $\sL$ is non-trivial.
\end{MainThm}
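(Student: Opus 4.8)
The plan is to leverage the classification of torsion $G^0$-equivariant line bundles with connection from \cite{ArdWad2023} to reduce to a manageable list of cases, and then to analyze each case using the structure of $\sL(\Omega)$ as a $D(G^0,K)$-module. First I would recall that such line bundles are classified by a pair consisting of a character and a connection datum; the trivial-connection case corresponds (up to twist) to an actual line bundle on $\Omega$ in the usual sense, while the non-trivial-connection case is genuinely "exotic". For the trivial-connection case, $\sL(\Omega)$ should be closely related to $\cO(\Omega)$ itself, twisted by a character of $G^0$; since $\cO(\Omega)/K$ is known by work of Schneider--Teitelbaum to have strong dual an extension of the trivial representation by the Steinberg representation (more precisely, the relevant composition series has length $2$), this gives length exactly $2$ and reducibility, matching the "only if" direction.

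Next I would treat the non-trivial-connection case. Here the key input is that $\sL(\Omega)$ carries a $D(G^0,K)$-module structure via the equivariant structure differentiated through the connection, so that the $\fg$-action (for $\fg = \fgl_2(F)$, or rather its base change) is twisted by the connection form. The strategy is to show coadmissibility first --- this should follow from general finiteness properties of $\sL(\Omega)$ as a Fr\'echet--Stein module, using that $\Omega$ has a $G^0$-stable admissible affinoid cover (the standard Stein exhaustion) and that $\sL$ restricted to each affinoid is a finitely generated module over the corresponding affinoid algebra, together with the compatibility of the connection with this exhaustion. Then, to pin down the length, I would compute the action of the centre of $U(\fg)$: a non-trivial connection shifts the infinitesimal character away from the one governing $\cO(\Omega)$, and in particular away from the "critical" value where the reducibility of $\cO(\Omega)$ originates. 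A Harish-Chandra-style or central-character argument then forces $\sL(\Omega)$ to be topologically irreducible.

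The main obstacle I anticipate is establishing topological irreducibility in the non-trivial-connection case rigorously, rather than merely heuristically via infinitesimal characters. The difficulty is that topological irreducibility as a $D(G^0,K)$-module (equivalently, the strong dual having no proper closed $G^0$-stable subrepresentation) is a statement about the \emph{analytic}, not just infinitesimal, representation: one must rule out closed submodules that are invisible to the Lie algebra action alone, which requires controlling the full locally analytic $G^0$-action. I expect this to be handled by combining (i) a localization argument showing any nonzero closed $D(G^0,K)$-submodule of $\sL(\Omega)$ must, after restricting to a large enough affinoid, contain a "big" submodule of the corresponding coherent $\sD$-module, with (ii) the irreducibility of the associated $\sD$-module on $\Omega$ coming from the non-triviality of the connection (an absence of horizontal sections / no nontrivial sub-connections), and (iii) a duality/bornological argument transferring this back to the global Fr\'echet module. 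Verifying that the $\sD_\Omega$-module attached to $\sL$ with non-trivial connection is irreducible --- plausibly because $\Omega$ is simply connected in the relevant sense so the only sub-local-systems are trivial, which the connection form precludes --- is likely the technical heart.

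Finally, for the length bound of $2$ in full generality: once irreducibility is known in the non-trivial case and the length-$2$ structure of $\cO(\Omega)$ is known in the trivial case, the coadmissible length of $\sL(\Omega)$ is at most $2$ in every case of the classification, and the biconditional follows by noting these are exactly the two alternatives. I would assemble the pieces so that Theorem~A is deduced by case analysis over the (finite, up to the relevant equivalence) classification of torsion equivariant line bundles with connection established in \cite{ArdWad2023}, with the Schneider--Teitelbaum computation of $\cO(\Omega)$ as the anchoring base case and the $\fg$-central-character shift as the mechanism distinguishing the two regimes.
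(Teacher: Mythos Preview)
Your case split (trivial versus non-trivial underlying connection) matches the paper's, and your treatment of the trivial case is broadly aligned: the paper also reduces to $\cO_\Omega$ twisted by a finite-order character, although it computes the length of $\cO(\Omega)$ not by quoting Schneider--Teitelbaum but by analysing $j_\ast\cO_\Omega$ on $\bP^1$ via the equivariant Kashiwara equivalence and the induction equivalence from \cite{ArdEqKash} (Corollaries \ref{IndEqApp}--\ref{LengthL(Omega)}). Either route should work for that case.

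The non-trivial case, however, has two genuine gaps.

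First, the infinitesimal-character argument is wrong. Every $\sL$ here is a rank-one $\cD_\Omega$-module, so $j_\ast\sL$ lies in $\cC_{\bP^1/G^0}$ (once coadmissibility is known) and, via the equivariant Beilinson--Bernstein equivalence (Theorem \ref{RigidEqBB}), $\sL(\Omega)$ is killed by the \emph{same} ideal $\fr m_0$ as $\cO(\Omega)$. The connection being non-trivial does not shift $Z(U(\frg))$ away from the trivial character; it only changes the global structure, not the infinitesimal one. So a Harish-Chandra separation argument cannot distinguish the two regimes. Relatedly, your fallback ``irreducibility of the associated $\cD_\Omega$-module'' is true for \emph{every} line bundle with connection, including $\cO_\Omega$ itself; it therefore cannot be the mechanism producing irreducibility of $\sL(\Omega)$ precisely when the connection is non-trivial.

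Second, coadmissibility is not a soft finiteness statement here. The point is that the naive $\sD_n$-module truncations $\sL_n$ of $j_\ast\sL$ do \emph{not} form a coadmissible system over $\w\cD$: the comparison map $\sD_n\otimes_{\sD_{n+1}}\sL_{n+1}\to\sL_n$ fails to be an isomorphism (the characteristic cycles grow with $n$). The paper fixes this by bringing in the group action: passing to the crossed products $\sD_n\rtimes_{J_{n+1}}J$ and proving that the refined comparison map (\ref{CompMapIntro}) \emph{is} an isomorphism (Theorem \ref{XiIso}). That result is the technical core of the paper --- it requires the ``secret action'' of \S\ref{CPandSA}, Abe's characteristic cycles, and ultimately the delicate $p$-adic analysis of \S\ref{TheIntegral}--\S\ref{BinEsts} showing a certain explicit power series has unbounded coefficients. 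Irreducibility in the non-trivial case is then obtained by showing each $\sL_n(\bD)$ is a simple $\sD_n(\bD)$-module (Theorem \ref{LnDsimple}), which in turn rests on Berthelot's Frobenius descent (Theorem \ref{SimpleKIsoc}). None of this is captured by a central-character or generic-simplicity heuristic; you would need to find a genuinely different mechanism to bypass it.
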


Let $\bfC$ denote a complete and algebraically closed field extension of $K$. Theorem \ref{ThmA} has the following consequence. 
\setcounter{MainCor}{1}
\begin{MainCor}\label{ThmB}Let $f : X \to \Omega$ be a $G^0$-equivariant finite \'etale rigid $K$-analytic covering of $\Omega$, with abelian Galois group $\Gamma$ of exponent $e$, where we assume that $K$ contains a primitive $e$-th root of unity. Let $H$ be a closed subgroup of $G^0$ that contains an open subgroup of $\mathbb{SL}_2(F)$. Then $\cO(X)$ is a coadmissible $D(H,K)$-module, whose length is given by
\[ \ell_{D(H,K)} \left(\cO(X)\right) = |\Gamma| + |H \backslash \bP^1(F)| \cdot |\pi_0(X_{\bfC})|.\]
\end{MainCor}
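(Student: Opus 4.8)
The plan is to reduce the statement about $\cO(X)$ to Theorem \ref{ThmA} via a decomposition of $f_*\cO_X$ into equivariant line bundles, and then to compute lengths over the smaller group $H$.

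First I would analyse the structure of the covering $f : X \to \Omega$. Since $f$ is finite étale with abelian Galois group $\Gamma$ and $K$ contains a primitive $e$-th root of unity (where $e$ is the exponent of $\Gamma$), the pushforward $f_*\cO_X$ decomposes as a direct sum over the characters of $\Gamma$: writing $\widehat{\Gamma} = \Hom(\Gamma, K^\times)$, we get $f_*\cO_X \cong \bigoplus_{\chi \in \widehat{\Gamma}} \sL_\chi$, where each $\sL_\chi$ is a line bundle on $\Omega$ on which $\Gamma$ acts through $\chi$. Because $f$ is $G^0$-equivariant, each $\sL_\chi$ inherits a $G^0$-equivariant structure, and it carries the canonical connection coming from the fact that $f$ is étale (the relative de Rham complex vanishes). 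I would then invoke the classification of \cite{ArdWad2023} to check that each $\sL_\chi$ is \emph{torsion} as a $G^0$-equivariant line bundle with connection — this follows since $\sL_\chi^{\otimes e}$ is trivial as a line bundle (the $e$-th power of the tautological character is trivial) and one checks the connection is compatible. Taking global sections, $\cO(X) \cong \bigoplus_{\chi \in \widehat{\Gamma}} \sL_\chi(\Omega)$ as $D(G^0,K)$-modules, hence also as $D(H,K)$-modules, so $\ell_{D(H,K)}(\cO(X)) = \sum_{\chi} \ell_{D(H,K)}(\sL_\chi(\Omega))$.

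Next I would compute each term $\ell_{D(H,K)}(\sL_\chi(\Omega))$. By Theorem \ref{ThmA}, $\sL_\chi(\Omega)$ is coadmissible over $D(G^0,K)$ of length $1$ when the connection is non-trivial and length $2$ when it is trivial. The subtlety is that the length can \emph{increase} upon restriction to $H$: a $D(G^0,K)$-module that is topologically irreducible over $G^0$ may break up over $H$. For the trivial character $\chi = 1$, $\sL_1 = \cO_\Omega$ with trivial connection, and $\cO(\Omega)$ over $G^0$ has length $2$ with the constants $K$ as a submodule and the quotient a topologically irreducible representation related to the Steinberg; restricting to $H$, the constants remain irreducible and I expect the quotient's length over $H$ to be governed by $|H\backslash\bP^1(F)|$, since the building-theoretic combinatorics of $\Omega$ at the boundary $\bP^1(F)$ controls the reducibility. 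For non-trivial $\chi$, the connection is non-trivial, so $\sL_\chi(\Omega)$ is $G^0$-irreducible; restricting to $H$ it stays irreducible because $H$ contains an open subgroup of $\mathbb{SL}_2(F)$ and the infinitesimal action — which already sees a twisted $\fg$-module with non-trivial central character or non-integral parameter — does not decompose. This should contribute $1$ to the length for each of the $|\Gamma| - 1$ non-trivial characters, together with the count $|H\backslash\bP^1(F)|\cdot|\pi_0(X_{\bfC})|$ distributed correctly: note $|\pi_0(X_{\bfC})|$ counts the connected components, which equals the number of characters $\chi$ for which $\sL_\chi$ has trivial underlying line bundle \emph{and} trivial connection after base change to $\bfC$, i.e. the characters through which $\Gamma$ acts trivially on geometric components.

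Assembling: the $|\Gamma|$ characters each contribute at least $1$ (giving the $|\Gamma|$ term), and the characters whose line bundle becomes geometrically trivial with trivial connection each contribute an \emph{extra} $|H\backslash\bP^1(F)|$ from the locally analytic "Steinberg-type" quotient; the number of such characters is exactly $|\pi_0(X_{\bfC})|$, yielding the extra term $|H\backslash\bP^1(F)|\cdot|\pi_0(X_{\bfC})|$. The main obstacle I anticipate is controlling the behaviour of length under restriction from $G^0$ to $H$: I would need a clean criterion (presumably via the Schneider–Teitelbaum theory of admissible representations and the exact structure of the length-$2$ extension, together with a careful analysis of which Jordan–Hölder constituents remain topologically irreducible when restricted to a subgroup containing an open subgroup of $\mathbb{SL}_2(F)$) to pin down precisely that non-trivial connections contribute $1$ and trivial ones contribute $1 + |H\backslash\bP^1(F)|$. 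Identifying $|\pi_0(X_{\bfC})|$ with the count of "geometrically trivial" characters is a separate point requiring a descent/base-change argument, but a more routine one.
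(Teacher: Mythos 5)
Your proposal follows the same route the paper takes: decompose $f_\ast \cO_X = \bigoplus_{\psi \in \widehat{\Gamma}} \sL_\psi$ into torsion $G^0$-equivariant line bundles with connection, compute the length of each $\sL_\psi(\Omega)$ over $D(H,K)$ according to whether the underlying (non-equivariant) class $\omega[\sL_\psi]$ in $\PicCon(\Omega)$ vanishes, and then identify the number of $\psi$ with $\omega[\sL_\psi]=0$ with $|\pi_0(X_\bfC)|$. The final assembly paragraph gives the correct bookkeeping.

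One point in the middle of your argument is wrong as stated: you write that ``for non-trivial $\chi$, the connection is non-trivial.'' That is false. A non-trivial character $\chi$ of $\Gamma$ can very well give a line bundle $\sL_\chi$ whose class $\omega[\sL_\chi]$ in $\PicCon(\Omega)$ is zero (i.e.\ the $\cD$-module is isomorphic to $\cO_\Omega$ once you forget the $G^0$-equivariant structure); indeed those are exactly the characters that contribute the factor $|\pi_0(X_\bfC)|$, and there can be more than one of them. The dichotomy is governed by $\omega[\sL_\chi]$, not by whether $\chi$ is the trivial character. Your final paragraph implicitly corrects this by counting geometrically trivial characters, so the conclusion is right, but the sentence above contradicts it.

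The obstacle you flag --- controlling lengths under restriction from $G^0$ to $H$ --- is indeed the crux, and the paper resolves it as follows. When $\omega[\sL_\psi]\neq 0$, topological irreducibility of $\sL_\psi(\Omega)$ over $D(H,K)$ for all closed $H \leq G^0$ containing an open subgroup of $\mathbb{SL}_2(F)$ is established in Corollary \ref{LiftToG^0}, which rests on the local irreducibility results of \S\ref{LocIrred} and the equivariant Beilinson--Bernstein localisation Theorem \ref{RigidEqBB} applied to $\mathbb{PGL}_2$. When $\omega[\sL_\psi]=0$, the length $|H\backslash\bP^1(F)|+1$ follows from the explicit Jordan--H\"older analysis of $j_\ast \cO_\Omega$ in $\S\ref{JstarO}: the Induction Equivalence and equivariant Kashiwara equivalence from \cite{ArdEqKash} exhibit $\coker(\cO_{\bP^1}\to j_\ast\cO_\Omega)$ as a direct sum of simple objects indexed by $H\backslash\bP^1(F)$ (Corollary \ref{LengthResGHjO}), so the length is not ``governed by building combinatorics'' in a vague sense but is computed precisely by counting $H$-orbits on the boundary. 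Finally, the identification of the count $c_X$ of characters with $\omega[\sL_\psi]=0$ with $|\pi_0(X_\bfC)|$ is done via a horizontal-sections computation over $\bfC$ --- this step is somewhat more involved than ``a more routine descent argument'' suggests, as it requires passing through the equality $c_X = \dim_\bfC \cO(X_\bfC)^{\cT(X_\bfC)=0}$ and then using that each geometric component is quasi-Stein and connected.
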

Following \cite[Introduction]{ArdWad2023} we write $\Sigma_1:=\cM_1\times_{\cM_0} \Omega$, defined with respect to some inclusion $\Omega \hookrightarrow \cM_0$ into the level-zero Rapoport-Zink space $\cM_0$, so that for a suitable choice of the ground field $K$, $\Sigma_1$ is a $G^0$-equivariant $\mathbb{F}_{q^2}^\times$-Galois covering of $\Omega$ arising in Drinfeld's tower. It follows from Corollary B that $\cO(\Sigma_1)$ is a coadmissible $D(G^0,K)$-module of length $q^2+q-2$. We note in passing that the coadmissibility part of this statement was already addressed by Patel, Schmidt and Strauch in \cite{PSSJussieu}.

\subsection{Summary of this paper}
\subsubsection{Chapter 2} We consider the rigid analytic affine line $\bA$ equipped with a fixed local coordinate $x$. We view the $K$-algebra $\w\cD(\bA)$ of infinite-order differential operators on $\bA$ that was introduced in \cite{DCapOne} as a `rigid analytic quantisation' of the cotangent bundle $T^\ast \bA$. The choice of coordinate $x$ on $\bA$ gives us a coordinate $(x,y)$ on $T^\ast \bA \cong \bA^2$, and we study various completions of $\w\cD(\bA)$. For every affinoid subdomain $X$ of $\bA$, we consider the `affinoid box' $X \times \{|y| \leq r\} \subseteq T^\ast \bA$ and we show that provided the positive real number $r$ is big enough (depending on $X$) we can define a completion $\cD_r(X)$ of $\w\cD(\bA)$ which we regard as a `rigid analytic quantisation` of this box: $\cD_r(X)$ is isomorphic to $\cO(X \times \{|y| \leq r\})$ as a Banach $\cO(X)$-module. We show that this construction, as well as its overconvergent version $\cD_r^\dag$, satisfies an analogue of Tate's Acyclicity Theorem - see Theorem \ref{NCDagTate}. With an eye on the applications in later chapters, we also consider certain `twisting-automorphisms' $\theta_{u,d}$ of the algebra of finite-order differential operators $\cD(X)$ that are morally given by the conjugation by a $d$-th root of a unit $u \in \cO(X)$, and we show in Theorem \ref{ThetaU} that these automorphisms extend to $\cD_r(X)$ whenever $d$ is coprime to $p$.

\subsubsection{Chapter 3}  In this more technical section, we establish Noetherianity of the rings $\cD_r(X)$ at Corollary \ref{DrXNoeth}. We show in Theorem \ref{changeofbase} that $\cD_r(Y)$ is (abstractly) flat as a $\cD_r(X)$-module on both sides whenever $Y$ is an affinoid subdomain $X$ such that both $\cD_r(X)$ and $\cD_r(Y)$ are well-defined. We also show in Theorem \ref{flatcotangent} that $\cD_r(X)$ is (abstractly) flat as a $\cD_s(X)$-module whenever $s > r$. These results further support the `rigid-analytic quantisation' intuition: more general precise conjectures in the direction of this philosophy can be found at \cite[Chapter 5]{Ciappara}. The proofs in Chapter 3 borrow heavily from the earlier work of Berthelot \cite{Berth}.

\subsubsection{Chapter 4} Here we begin to study in earnest the torsion line bundles with connection $\sL$ on the upper half plane $\Omega$. Write $\bD$ and $w \bD$ for the two closed subdiscs of $\bP^1$ of radius $1$, centred at $0$ and $\infty$ respectively. With an eye on the rigid-analytic Beilinson-Bernstein theorem \cite{EqDCap}, we view the two-element affinoid covering $\{\bD,w\bD\}$ of the projective line $\bP^1$ as fundamental. Our approach is therefore to first restrict these line bundles to what we call the `local Drinfeld space' $\Upsilon := \bD \cap \Omega$, and then to study their further restrictions to the affinoid subdomains $\Upsilon_n$ of $\bD$ obtained by removing from $\bD$ all open discs of radius $|\pi_F^n|$ centred at the $F$-rational points, simultaneously for all $n \geq 0$. Since $\{\Upsilon_n : n \geq 0\}$ forms an admissible affinoid open covering of $\Upsilon$, no information about $\sL$ is lost in this way. Actually, we study the sections of $\sL$ on $\Upsilon_n$ that \emph{overconverge} into those holes of $\Upsilon_n$ that are wholly contained within $\bD$. These sections form a sheaf $\sL_n$ on $\Upsilon$ that could reasonably be viewed as a `truncation' of $\sL$. 

Logically, our first major result of Chapter 4 is Theorem \ref{AlgGensExist} which ensures that the torsion overconvergent line bundle with connection $\sL_n$ is \emph{algebraic}: assuming that the order $d$ of $\sL$ in $\PicCon(\Upsilon)$ is coprime to $p$, we show that for any fixed (finite) set $\cS_n$ of coset representatives for $\pi_F^{n+1}\cO_F$ in $\cO_F$, there is a \emph{rational function} $u_n$ with zeroes and poles contained in $\cS_n$ such that $\sL_n$ can be extended as a $\cD$-module to a $d$-torsion line bundle with connection $\cM(\cS_n, u_n, d)$ on $\bD$ that is generated as an $\cO$-module by a $d$-th root of $u_n$ and has singularities in $\cS_n$ only.

In $\S \ref{KummerSect}$, we perform a more general study of the line bundles with connection $\cM(\cS,u,d)$ on the affine line that are generated as an $\cO$-module by $u^{\frac{1}{d}}$ and have singularities constrained to a finite set $\cS$. Again, assuming that $d$ is coprime to $p$, we show in Theorem \ref{SheafyPres} that the corresponding subsheaf $\cM(\cS,u,d)_{\overline{U_t}}$ of sections of $\cM(\cS,u,d)$ overconverging into the open discs of radius $t$ centred at the finite set of (possible) singularities $\cS$ of $\cM(\cS,u,d)$ is in fact \emph{finitely presented} as a module over the appropriate sheaf of infinite order differential operators $\cD^\dag_{\varpi/t}$ that was introduced earlier on in Chapter 2. More precisely, we recall a certain well-known order-one differential operator $R(u,d)$ at Definition \ref{TheRelator}(b), and show that the well-known classical presentation $\cD / \cD \cdot R(u,d)$ for the algebraic-$\cD$-module version of $\cM(\cS,u,d)$ in fact induces `the same' presentation of $\cM(\cS,u,d)_{\overline{U_t}}$ as a $\cD^\dag_{\varpi/t}$-module: $\cM(\cS,u,d)_{\overline{U_t}} \cong \cD^\dag_{\varpi/t} / \cD^\dag_{\varpi/t} \cdot R(u,d)$.

We conclude Chapter 4 by applying methods of Berthelot in $\S \ref{LocIrred}$ --- notably, his theory of \emph{Frobenius descent} --- to show that the simplest possible examples of the cyclic one-relator $\cD^\dag_{\varpi}$-modules of this form, namely those where the relator $R(u,d)$ equals $x \partial_x - \lambda$ for some scalar $\lambda$, are in fact \emph{irreducible} for generic values of $\lambda$.


\subsubsection{Chapter 5} Returning to the truncation $\sL_n$ to $\Upsilon_n$ of our line bundle with connection $\sL$ on $\Upsilon$, Theorem \ref{AlgGensExist} and Theorem \ref{SheafyPres} give us a presentation 
\begin{equation} \label{PresLeveln} \sL_n \cong \sD_n / \sD_n \cdot R(u_n,d)\end{equation}
for a certain rational function $u_n$: here $\sD_n$ is a convenient abbreviation for the sheaf of infinite-order differential operators $\cD^\dag_{\varpi/|\pi_F|^n}$ that is of relevance when studying $\Upsilon_n$ and $\sL_n$.  Chapter 5 forms the core of our paper: we investigate what happens with these presentations as we begin to vary the parameter $n$. Since for any affinoid subdomain $X$ of $\bD$, the inverse limit of the rings $\sD_n(X)$ over all sufficiently large $n$ can be shown to be isomorphic to the algebra $\w\cD(X)$ from \cite{DCapOne} --- see Corollary \ref{wDJ} --- one may wonder, writing $j : \Upsilon \hookrightarrow \bD$ to denote the open inclusion, whether $j_\ast \sL$ is in fact a coadmissible $\w\cD$-module on $\bD$ in the sense of \cite{DCapOne}. Although we cannot rule this out, we believe this not to be the case, the main reason being that the divisors $\Div(u_n)$ of the rational functions $u_n$ grow without bound in complexity as $n$ increases: consequently, the characteristic cycles of the algebraic approximations $\cD / \cD R(u_n,d)$ to $\sL$ grow without bound with $n$ as well. In fact, one can use Berthelot--Abe's \emph{characteristic cycles} from \cite{Berth2} and \cite{AbeMicro} to show that the natural comparison map
\begin{equation}\label{NaiveCompMapIntro} \sD_n \underset{\sD_{n+1}}{\otimes}{} \sL_{n+1} \to \sL_n\end{equation}
is in fact \emph{not} an isomorphism. 

In this situation, the underlying \emph{equivariance} of the line bundle $\sL$ comes to the rescue. Let $I$ denote the \emph{Iwahori subgroup}: it is of interest to us as it maps onto the stabiliser of $\bD$ in $\GL_2(F)$ under the natural M\"{o}bius action of $\GL_2(F)$ on $\bP^1$. Under the assumption that $\sL$ is $I$-equivariant, the action of $\sD_n$ on the truncated line bundles $\sL_n$ naturally extends to the (rather large) skew-group ring $\sD_n \rtimes I$. We show in Theorem \ref{DnJnJmodule} that that sufficiently small open subgroups $I_{n+1}$ of $I$ act on $\sL_n$ through certain infinite-order differential operators contained in the unit group of $\sD_n$: this is done as a consequence of the contents of $\S \ref{IwahoriAction}$, where we show that the natural action of elements $g  \in \GL_2(K)$ on rigid analytic functions by M\"{o}bius transformations can in fact be expressed using certain infinite-order differential operators $\beta(g) \in \sD_n^\times$, provided $g$ is sufficiently close to $1$: see Proposition \ref{SigmaRho} for a more precise statement. Consequently, the action of $\sD_n \rtimes I$ on $\sL_n$ factors through a particular crossed product  $\sD_n \rtimes_{I_{n+1}} I$ of $\sD_n$ with $I / I_{n+1}$. These crossed products are of relevance to us for two reasons: firstly, they are more manageable than the skew-group ring $\sD_n \rtimes I$ as the group $I / I_{n+1}$ is finite, and secondly, for any affinoid subdomain $X$ of $\bD$, the inverse limit of the crossed products $\sD_n(X) \rtimes_{I_{n+1}} I$ over all sufficiently large $n$ is naturally isomorphic to the completed skew-group ring $\w\cD(X,I)$ that underpins the definition of coadmissible equivariant $\cD$-modules given in \cite{EqDCap} --- see Corollary \ref{wDJ}. 

With these crossed products acting on $\sL_n$, we have the natural comparison map
\begin{equation}\label{CompMapIntro} \sD_n \underset{I_{n+1}}{\rtimes}I \quad \underset{ \sD_{n+1} \underset{I_{n+2}}{\rtimes}I }{\otimes}{} \quad \sL_{n+1} \quad \longrightarrow \quad \sL_n.\end{equation}
In $\S \ref{CPandSA}$ we explain how to work with the complicated-looking tensor product appearing on the left hand side: we show in Theorem \ref{IndModCoinv} that it is isomorphic to a particular quotient of the $\sD_n$-module $\sD_n \underset{\sD_{n+1}}{\otimes}{} \sL_{n+1}$ appearing in (\ref{NaiveCompMapIntro}) above. In fact, we explain how the operators $\beta(g)$ from Definition \ref{Beta} give rise to a certain left $\sD_n$-linear action of the finite group $I_{n+1}/I_{n+2}$ on $\sD_n \underset{\sD_{n+1}}{\otimes}{} \sL_{n+1}$ that we call the `secret action': the left hand side of $(\ref{CompMapIntro})$ can then be identified with the $\sD_n$-module of coinvariants of $\sD_n \underset{\sD_{n+1}}{\otimes}{} \sL_{n+1}$ under this secret action. 

The main result of Chapter 5 is then Theorem \ref{XiIso}, which (essentially) says that the comparison map (\ref{CompMapIntro}) is always an isomorphism. 

\subsubsection{Chapter 6} Here we give a proof of Theorem \ref{XiIso}. Working locally on $\bD$, at the end of $\S \ref{CompSect}$ we reduce the problem to showing that the comparison map (\ref{CompMapIntro}) is an isomorphism upon restriction to a particular affinoid subdomain $W_{a,n}$ contained in a closed disc of radius $|\pi_F|^n$ centred at some $a \in \cO_F$ --- see Theorem \ref{CompOnW}. Some further work reduces us further to the case where $n = 0$ and where the algebraic approximations to $\sL_1$ and $\sL_0$ given by (\ref{PresLeveln}) take a particularly simple explicit form --- see Theorem \ref{MandMdash}. Sections $\S \ref{MicRings}$ and $\S \ref{CharCycleSect}$ are devoted to the calculation of the characteristic cycles of $\sD_0 \otimes_{\sD_1} \sL_1$ and $\sL_0$, building on the work of Abe \cite{AbeMicro}, where it becomes apparent that the cycle of $\sD_0 \otimes_{\sD_1} \sL_1$ is larger than that of $\sL_0$. An abstract criterion given by Theorem \ref{Criterion} then provides three conditions that we must verify in order to deduce Theorem \ref{CompOnW}. The last two of these are relatively straightforward, however the first one, namely the non-triviality of the secret action, turns out to be surprisingly difficult to check. Arguing by contradiction and working with some microlocalisation of $\sD_n$, we show in $\S \ref{TheIntegral}$ that the triviality of this secret action would imply that a certain $p$-adic differential equation would have a rigid analytic solution $\zeta_{w,d}$ everywhere on $W_{a,n}$. We find an explicit local formal solution of this differential equation at Proposition \ref{UniqueSol}. Looking at the growth rate of the $p$-adic valuations of the Taylor coefficients of this solution then provides the required contradiction --- see Theorem \ref{ZetaUnbounded}. This relies on detailed estimates of $p$-adic valuations of certain binomial coefficients, which we perform in $\S \ref{BinEsts}$.

\subsubsection{Chapter 7} Here we draw everything together and invoke the results of \cite{EqDCap} by the first author to establish Theorem A and Corollary B. In $\S \ref{QcohTower}$ and $\S \ref{CompareSandT}$ we carry out the necessary preliminary work to establish Corollary \ref{wDJ} that was mentioned above. In $\S \ref{LocalCoad}$ we crucially use Theorem \ref{XiIso} together with the elementary fact that $j_\ast \sL \cong \invlim \sL_n$ as an $I$-equivariant locally Fr\'echet $\cD$-module on $\bD$ to establish that $j_\ast \sL \in \cC_{\bD/I}$ in Theorem \ref{LolCoad}. It is interesting to point out that for this local coadmissibility statement, we do not need to use the action of the entire Iwahori subgroup $I$ on $\sL$: we are able to show that $j_\ast \sL \in \cC_{\bD / J}$ for \emph{any} closed subgroup $J$ of $I$ that contains an open subgroup of the `translation subgroup' $\begin{pmatrix} 1 & F \\0 & 1 \end{pmatrix}$ of $\GL_2(F)$. In $\S \ref{CoadIrredSect}$ we globalise the material in $\S \ref{LocalCoad}$ from $\bD$ to $\bP^1$ and apply the equivariant Beilinson-Bernstein Localisation Theorem --- Theorem \ref{RigidEqBB} --- to prove the coadmissibility assertion on $\sL(\Omega)$ in Theorem A. Along the way we apply the local irreducibility results from $\S$\ref{LocIrred} to establish Theorem \ref{PushForwardIsSimple} which deals with those $\sL$ in Theorem A that are non-trivial as line bundles with connection. Finally, in $\S \ref{JstarO}$, we treat the case of the trivial line bundle with connection $\cO_\Omega$. We use the Induction Equivalence and the equivariant Kashiwara equivalence from \cite{ArdEqKash} to show that the pushforward of $\cO_\Omega$ to $\bP^1$ is a coadmissible $G^0$-equivariant $\cD$-module on $\bP^1$ of length $2$, and use this to deduce Theorem A together with Corollary B.

\subsection{Conventions and Notation}  \label{ConvNotn}
$F$ will denote a finite extension of $\mathbb{Q}_p$ with ring of integers $\cO_F$, uniformiser $\pi_F$ and residue field $k_F$ of order $q$. $K$ will denote a field containing $F$ that is complete with respect to a non-archimedean norm $|\cdot|$ such that $|p|=1/p$. We will write: 
\begin{itemize}
\item $\sqrt{|K^\times|}$ to denote the divisible subgroup of $\R^\times$ generated by $|K^\times|$, 
\item $K^\circ := \{a \in K : |a| \leq 1\}$ for the valuation ring of $K$,
\item $K^{\circ\circ} := \{a \in K : |a| < 1\}$ for the maximal ideal of $K^\circ$,
\item $\overline{K}$ for a fixed algebraic closure of $K$, and
\item $\bfC$ for the completion of $\overline{K}$.
\end{itemize}
Note that $\sqrt{|K^\times|} = \left|\overline{K}^\times\right|$.

A \emph{$K$-Banach space} will be a $K$-vector space $V$ equipped with a norm $|\cdot|$ compatible with the norm on $K$. A morphism of Banach spaces $T\colon V\to W$ will be a bounded linear map from $V$ to $W$ and $||T||$ will denote the operator norm of $T$: $||T||=\sup_{v\in V\backslash 0} \frac{|Tv|}{|v|}$. Thus a \emph{$K$-Banach algebra} $A$ will be a $K$-Banach space equipped with an associative unital multiplication map $A\times A\to A$ that is bounded: there is a constant $C$ such that $|ab|\leq C|a||b|$ for all $a,b\in A$.

Given a $K$-Banach space $V$ we will write $\cB(V)$ to denote the Banach algebra of endomorphisms of $V$ equipped with the operator norm. Following \cite[Definition 6.1.3]{Kedlaya} the \emph{spectral radius} of $T\in \cB(V)$ is defined to be
\[ |T|_{\sp,V} := \lim\limits_{k\to\infty} ||T^k||^{1/k}.\]

When $V$ and $W$ are $K$-Banach spaces we recall \cite[\S 17]{SchNFA} that the \emph{tensor product norm} on $V \otimes_KW$ is given by 
\[|u| = \inf\left\{ \max\limits_{1 \leq i\leq r} |v_i| |w_i| : u = \sum_{i=1}^r v_i \otimes w_i, v_i \in V, w_i \in W\right\}\]
and that $V \h{\otimes}_K V$ is the completion of $V \otimes_K W$ with respect to this norm. When we write $V\otimes W$ or $V\h\otimes W$ unadorned we will always mean tensor product or completed tensor product over $K$.

Let $X$ be a rigid $K$-analytic variety. When $Y$ is a subset $X$, we will say that $Y$ is an \emph{affinoid subdomain} of $X$ to mean that $Y$ is an admissible open subspace of $X$, itself isomorphic to an affinoid $K$-variety. When $X$ itself happens to be a $K$-affinoid variety, this agrees by \cite[Corollary 8.2.1/4]{BGR} with the standard definition found at \cite[Definition 7.2.2/2]{BGR}. We will write
\begin{itemize}
\item $|\cdot|_X$ to denote the (power-multiplicative) supremum seminorm on $X$,
\item $\cO(X)^\circ := \{f \in \cO(X) : |f|_X \leq 1\}$,
\item $\cO(X)^{\circ\circ} := \{f \in \cO(X) : |f|_X < 1\}$, and
\item $\cO(X)^{\times\times} := 1 + \cO(X)^{\circ\circ}$, the subgroup of \emph{small units} in $\cO(X)^\times$.
\end{itemize}

When $X$ is a $K$-affinoid variety, $g_0,\ldots,g_n\in \cO(X)$ generate the unit ideal, and $r\in \sqrt{|K|^\times}$ we will write $X(rg_1/g_0,\ldots,rg_n/g_0)$ to denote the rational subdomain of $X$ given by \[ X(rg_1/g_0,\ldots,rg_n/g_0):=\{ x\in X\mid r|g_i(x)|\leq |g_0(x)|\mbox{ for }i=1,\ldots,n \}\] c.f. \cite[Example 2.1.9]{ConradSurvey}. We note that when $a\in K^\times$, \[X(|a|g_1/g_0,\ldots, |a|g_n/g_0)=X((ag_1)/g_0,\ldots, (ag_n)/g_0).\]

When $X$ is a smooth rigid analytic variety over $K$ we may form the tangent sheaf $\cT_X$ which together with its identity map forms a Lie algebroid on $X$. The sheaf of enveloping algebras $\cD_X$ of (finite order) differential operators is given on affinoid subdomains $Y$ of $X$ by the enveloping algebra $U(\cT_X(Y))$. See \cite{DCapOne} for more details. As in \cite[Definition 3.1]{ArdWad2023}, $\PicCon(X)$ will denote the group of the isomorphism classes of line bundles with flat connection on $X$  with the group operation given by tensor product and $\Con(X)$ will denote the subgroup of $\PicCon(X)$ consisting of isomorphism classes of line bundles with flat connection on $X$ that are trivial after forgetting the connection.

We will write $\A := \A_K := \A_K^{1,\an}$ to denote the rigid $K$-analytic affine line, equipped with a fixed choice of local coordinate $x \in \cO(\A)$ and $\bD$ to denote the unit disc $\Sp K\langle x\rangle$ in $\bA$ with respect to this coordinate. We write $\bP^1$ to denote the rigid $K$-analytic projective line.  

We recall \cite[\S4.1]{ArdWad2023} that a \emph{$K$-cheese} is an affinoid subdomain of $\bA$ of the form \[ \Sp K\left\langle \frac{x-\alpha_0}{s_0},\frac{s_1}{x-\alpha_1},\cdots, \frac{s_g}{x-\alpha_g}\right\rangle\] for some $\alpha_0,\ldots,\alpha_g\in K$ and $s_0,\ldots,s_g\in K^\times$ satisfying certain conditions. An affinoid subdomain of $\bA$ \emph{splits} over a finite field extension $K'/K$ if $X_{K'}$ is a finite union of pairwise disjoint $K'$-cheeses. Every affinoid subdomain of $\bA$ splits for some finite extension $K'/K$ by \cite[Theorem 4.1.8]{ArdWad2023}. 

Let $A$ be an abelian group and let $d$ be a non-zero integer. We will write
\begin{itemize} 
\item $A[d]=\{a\in A\st da=0\}$ to denote the $d$-torsion subgroup of $A$, 
\item $A[p']:= \bigcup_{(d,p)=1} A[d]$ to denote the prime-to-$p$ torsion subgroup of $A$,
\item $A[p^\infty]:=\bigcup_{n=1}^\infty A[p^n]$ to denote the $p$-power torsion subgroup of $A$. 
\end{itemize} 

Let a group $G$ act on a set $X$. We will write
\begin{itemize}
\item $G_x:=\{g\in G:gx=x\}$ for the stabilizer of a point $x\in X$, and
\item $X^G:=\{x\in X: gx=x $ for all $g\in G\}$ for the set of elements fixed by $G$.
\end{itemize}


It  will be convenient to define $\varpi=p^{-\frac{1}{p-1}}\in \bR_{>0}$ and $\varepsilon:=\left\{\begin{array}{ll} 1 & \mbox{if } p > 2, \\ 2 & \mbox{if } p = 2. \end{array} \right.$
\section{Sheaves of completed differential operators}\label{sheaves}

\subsection{Skew-Ore and skew-Laurent extensions}\label{skewLaurent}
We recall \cite[p27]{GoodearlWarfield} that if $A$ is a ring and $\delta\colon A\to A$ is a derivation, then one can form the \emph{skew-Ore extension} $A[T;\delta]$ which is an over-ring of $A$ that as a left $A$-module is free on the symbols $\{T^i\st i\geq 0\}$ and satisfies $T a-aT=\delta(a)$ for all $a\in A$ and $T^iT^j=T^{i+j}$ for $i,j\geq 0$. 

We recall that $A[T;\delta]$ satisfies a natural looking universal property.

\begin{prop}\label{Univskewore} (\cite[Exercise 2F]{GoodearlWarfield})  For every ring homomorphism $\phi\colon A\to B$ and element $b\in B$ such that $b\phi(a)-\phi(a)b=\phi(\delta(a))$ for all $a\in A$, there is a unique ring homomorphism $\psi\colon A[T;\delta]\to B$  such that $\psi|_A=\phi$ and $\psi(T)=b$.
\end{prop}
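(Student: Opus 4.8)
The plan is to exploit the fact, recalled just above, that $A[T;\delta]$ is free as a left $A$-module on $\{T^i : i \geq 0\}$, with multiplication determined by $T^iT^j = T^{i+j}$ and $Ta = aT + \delta(a)$. \emph{Uniqueness} is then immediate: $A[T;\delta]$ is generated as a ring by the image of $A$ together with $T$, so any $\psi$ with $\psi|_A = \phi$ and $\psi(T) = b$ must send an element written in its (unique) normal form $\sum_{i \geq 0} a_i T^i$ to $\sum_{i \geq 0}\phi(a_i)b^i$; this pins $\psi$ down.

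For \emph{existence} I would build $\psi$ from a module action. The hypothesis on $b$ says precisely that $B$, regarded as an abelian group, becomes a left $A[T;\delta]$-module with $a \in A$ acting by $x \mapsto \phi(a)x$ and $T$ acting by $x \mapsto bx$: a left module over a skew-Ore extension is exactly a left $A$-module $M$ together with an additive endomorphism $t$ satisfying $t(am) = a\,t(m) + \delta(a)m$ for all $a \in A$, $m \in M$, and with $M = B$ and the operations just described this identity is the assumption $b\phi(a)x = \phi(a)bx + \phi(\delta(a))x$. Setting $\psi(w) := w \cdot 1_B$ then yields an additive map with $\psi|_A = \phi$, $\psi(T) = b$ and $\psi(1) = 1_B$. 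To see it is multiplicative, first check by induction on the number of generators in a monomial expression for $w$ that $w \cdot y = (w \cdot 1_B)\,y$ for all $y \in B$ (the base cases $w \in A$ and $w = T$ being trivial); granting this, $\psi(w_1w_2) = (w_1w_2)\cdot 1_B = w_1\cdot(w_2\cdot 1_B) = (w_1\cdot 1_B)(w_2\cdot 1_B) = \psi(w_1)\psi(w_2)$.

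There is no serious obstacle here --- the statement is Exercise 2F in \cite{GoodearlWarfield} --- the only mildly delicate points being the verification that the assignment on generators respects the defining relation $Ta - aT = \delta(a)$ (this is exactly where the hypothesis $b\phi(a) - \phi(a)b = \phi(\delta(a))$ is used) and the bookkeeping identity $w \cdot y = (w\cdot 1_B)\,y$. If one prefers to avoid module language, one can instead define $\psi$ directly on normal forms by $\psi\left(\sum_i a_iT^i\right) = \sum_i \phi(a_i)b^i$ (well-defined by freeness over $A$), prove $\psi(T^i a) = b^i\phi(a)$ for all $i \geq 0$ and $a \in A$ by induction on $i$ using $Ta = aT + \delta(a)$ together with the hypothesis, and then deduce multiplicativity by evaluating on products of normal forms $(aT^i)(a'T^j)$, passing right multiplication by $T^j$ through $\psi$.
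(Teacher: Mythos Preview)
The paper does not supply a proof of this proposition; it simply cites Exercise 2F in Goodearl--Warfield and moves on. Your argument is correct and is the standard one. One minor remark: your first (module-theoretic) approach implicitly relies on the characterisation of left $A[T;\delta]$-modules as left $A$-modules $M$ equipped with an additive endomorphism $t$ satisfying $t(am)=a\,t(m)+\delta(a)m$; verifying that characterisation is essentially the same induction as your second (direct) approach, so the module packaging is a repackaging rather than a genuine shortcut. Either route is fine, and the paper would accept the result without further comment.
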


We wish to form a kind of skew-Laurent-polynomial ring $A[T,T^{-1};\delta]$ by formally inverting $T$ in $A[T;\delta]$ and also understand its subring generated by $A$ and $T^{-1}$. This is not possible in general but we will see that it \emph{is} possible if $\delta$ is locally nilpotent; i.e. for each $a\in A$ there is $n\geq 0$ such that $\delta^n(a)=0$. The first step is to prove that the set $\{T^n\st n\geq 0\}$ satisfies the Ore condition (both left and the right versions) under this locally nilpotent hypothesis. 

We recall a criterion for a multiplicatively closed subset of a ring to satisfy Ore's condition.

\begin{prop}\label{GabberOre}(\cite[p457]{Gabber}) If $S$ is a multiplicatively closed subset of a ring $A$ such that $\ad(s)$ is locally nilpotent for all $s\in S$ then $S$ satisfies both the left and right Ore conditions. \end{prop}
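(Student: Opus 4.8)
The plan is to derive both Ore conditions simultaneously from a single binomial identity. Fix $a \in A$ and $s \in S$; the goal is to produce $t \in S$ and elements $b, b' \in A$ with $at = sb$ (the right Ore condition) and $ta = b's$ (the left Ore condition). Since $\ad(s)$ is locally nilpotent there is an integer $n \geq 1$ with $\ad(s)^n(a) = 0$ — the case $a = 0$ being trivial, we may indeed assume $n \geq 1$. The key observation is that, writing $L_s$ and $R_s$ for the operators of left and right multiplication by $s$ on $A$, associativity gives $L_s R_s = R_s L_s$, while $\ad(s) = L_s - R_s$. Hence the binomial theorem applies to $(L_s - R_s)^n$ and yields
\[ 0 \;=\; \ad(s)^n(a) \;=\; \sum_{k=0}^{n} (-1)^k \binom{n}{k}\, s^{n-k}\, a\, s^{k}. \]

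To read off the right Ore relation I would isolate the $k = n$ summand, namely $(-1)^n a s^n$: every other summand is left-divisible by $s$, since $n - k \geq 1$ there. Solving for $a s^n$ therefore gives $a s^n = s b$ with
\[ b \;:=\; \sum_{k=0}^{n-1} (-1)^{n-k-1} \binom{n}{k}\, s^{n-k-1}\, a\, s^{k} \;\in\; A, \]
and since $S$ is multiplicatively closed we have $t := s^n \in S$; thus $aS \cap sA \ni a s^n$, which is the right Ore condition. Symmetrically, isolating instead the $k = 0$ summand $s^n a$ — now every remaining summand is right-divisible by $s$ — produces $s^n a = b' s$ with $b' := \sum_{k=1}^{n} (-1)^{k+1} \binom{n}{k}\, s^{n-k}\, a\, s^{k-1} \in A$, so $S a \cap A s \ni s^n a$, giving the left Ore condition with the same denominator $t = s^n$.

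I do not anticipate a genuine obstacle here: once one recognises that local nilpotence of $\ad(s) = L_s - R_s$ is precisely what allows a power of $s$ to be transferred from one side of $a$ to the other, the argument is a short manipulation of the binomial expansion. The only points that warrant a moment's care are the reduction to $n \geq 1$ (so the exponents $n-k-1$ and $n-k$ above are non-negative), the degenerate case $a = 0$, and the remark that $S$ being merely closed under multiplication already supplies the required denominator $s^n \in S$, so that no stronger hypothesis on $S$ is needed.
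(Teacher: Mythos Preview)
Your argument is correct. The paper does not supply its own proof of this proposition: it simply records the statement with a citation to Gabber \cite[p.~457]{Gabber}, so there is nothing to compare against in the paper itself. Your binomial expansion of $\ad(s)^n = (L_s - R_s)^n$ applied to $a$, followed by isolating the extreme terms $a s^n$ and $s^n a$, is the standard short argument for this result and is essentially what one finds in Gabber's original.
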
 

\begin{lem}\label{Oregens} If $s,t\in A$ such that $\ad(s)$ and $\ad(t)$ are both locally nilpotent and $st=ts$ then $\ad(st)$ is locally nilpotent.
\end{lem}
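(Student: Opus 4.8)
The plan is to reduce the statement to a single operator identity on the underlying abelian group of $A$, together with a counting argument. For $b \in A$ write $L_b, R_b \colon A \to A$ for left and right multiplication by $b$, so that $\ad(b) = L_b - R_b$. A direct computation gives the identity
\[ \ad(st) \;=\; \ad(s)\, L_t \;+\; \ad(t)\, R_s \]
of endomorphisms of $(A,+)$: indeed $(\ad(s)L_t)(a) = sta - tas$ and $(\ad(t)R_s)(a) = tas - ast$, and these sum to $sta - ast = \ad(st)(a)$. (This identity does not use $st=ts$.)

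Next I would record that, \emph{because} $st=ts$, the four operators $\ad(s)$, $\ad(t)$, $L_t$, $R_s$ pairwise commute: left and right multiplications always commute, so $L_t$ commutes with $R_s$; the hypothesis $st=ts$ gives $L_sL_t = L_tL_s$ and $R_sR_t = R_tR_s$, whence $L_t$ commutes with $\ad(s)=L_s-R_s$ and $R_s$ commutes with $\ad(t)=L_t-R_t$; and $[\ad(s),\ad(t)] = \ad([s,t]) = 0$. Hence the two summands $\ad(s)L_t$ and $\ad(t)R_s$ commute, and expanding the $N$-th power by the binomial theorem and using $(\ad(s)L_t)^k = L_{t^k}\ad(s)^k$, $(\ad(t)R_s)^{N-k} = R_{s^{N-k}}\ad(t)^{N-k}$, we obtain
\[ \ad(st)^N \;=\; \sum_{k=0}^{N} \binom{N}{k}\, L_{t^k}\, R_{s^{N-k}}\; \ad(s)^k\, \ad(t)^{N-k}. \]

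It then remains, for a given $a \in A$, to choose $N$ so that every term on the right annihilates $a$. Pick $M \geq 0$ with $\ad(t)^M(a) = 0$; for each $j \in \{0,\dots,M-1\}$ local nilpotence of $\ad(s)$ gives $L_j$ with $\ad(s)^{L_j}(\ad(t)^j(a)) = 0$; set $L := \max_{0\le j<M} L_j$ and $N := L + M - 1$. If $k \le N-M$ then $N-k \ge M$, so $\ad(t)^{N-k}(a)=0$; if $k \ge N-M+1 = L$ then $N-k = j$ with $0 \le j \le M-1$ and $k \ge L \ge L_j$, so $\ad(s)^k\ad(t)^{N-k}(a)=0$. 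Every term vanishes, so $\ad(st)^N(a)=0$.

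The only delicate point is this last step: one cannot simply invoke ``a sum of two commuting locally nilpotent operators is locally nilpotent'', because neither $\ad(s)L_t$ nor $\ad(t)R_s$ is locally nilpotent in general ($L_t$ and $R_s$ are typically not nilpotent). The resolution is the observation that in the binomial expansion only the finitely many exponents $N-k \in \{0,\dots,M-1\}$ can contribute a nonzero $\ad(t)^{N-k}(a)$, so local nilpotence of $\ad(s)$ on the \emph{finite} set $\{a, \ad(t)a,\dots,\ad(t)^{M-1}a\}$ is enough to kill all of them simultaneously once $N$ is large enough.
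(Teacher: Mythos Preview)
Your proof is correct and follows essentially the same strategy as the paper: decompose $\ad(st)$ as a sum of two commuting operators each built from an $\ad$ and a one-sided multiplication, expand the $N$-th power binomially, and use local nilpotence of $\ad(s)$ and $\ad(t)$ to kill every term for large $N$. The paper uses the variant decomposition $\ad(st) = L_s\,\ad(t) + R_t\,\ad(s)$ rather than your $\ad(s)\,L_t + \ad(t)\,R_s$, but this is cosmetic; your treatment of the final step (handling the fact that $L_t$ and $R_s$ are not themselves nilpotent by bounding $\ad(s)$ on the finite set $\{\ad(t)^j a : 0\le j < M\}$) is more explicit than the paper's, which simply asserts that the conclusion ``follows easily''.
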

\begin{proof} If we write $l_s\colon A\to A$ to denote left multiplication by $a$ and $r_t\colon A\to A$ to denote right multiplication by $t$ then for $a\in A$ \[ \ad(st)(a)= sta-ast= (sta - sat) + (sat - ast) = l_s\circ\ad(t)(a) + r_t\circ \ad(s)(a).\] Thus since $\ad(s), l_s, r_t$ and $\ad(t)$ all commute \[ (\ad(st))^n = \sum_{i=0}^n \binom{n}{i} l_s^i r_t^{n-i} \ad(t)^i \ad(s)^{n-i}  \] and it is follows easily from the local nilpotence of $\ad(s)$ and $\ad(t)$ that $\ad(st)$ is locally nilpotent.
\end{proof}

\begin{cor}\label{locnilOre}If $\delta\colon A\to A$ is a locally nilpotent derivation then $\{T^n\st n\geq 0\}$ is a left and right Ore set in $A[T;\delta]$. 
\end{cor}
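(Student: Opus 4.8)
The plan is to reduce the statement to the single assertion that $\ad(T)$ acts locally nilpotently on $A[T;\delta]$, and then to feed this into Lemma \ref{Oregens} and Proposition \ref{GabberOre}. Since $S = \{T^n \st n \geq 0\}$ is multiplicatively closed and contains $1 = T^0$, Proposition \ref{GabberOre} tells us it is enough to check that $\ad(T^n)$ is locally nilpotent for every $n \geq 0$.

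First I would compute the action of $\ad(T)$ on a monomial. Because $A[T;\delta]$ is free as a left $A$-module on $\{T^i \st i \geq 0\}$ and $\ad(T)$ is additive, it suffices to understand $\ad(T)$ on elements of the form $aT^i$ with $a \in A$. Using the defining relation $Ta - aT = \delta(a)$ together with the fact that $T$ commutes with its own powers, one gets $\ad(T)(aT^i) = (Ta - aT)T^i = \delta(a)T^i$, and hence inductively $\ad(T)^n(aT^i) = \delta^n(a)T^i$ for all $n$. Now given any $f = \sum_{i=0}^m a_i T^i \in A[T;\delta]$, local nilpotence of $\delta$ furnishes an $N$ with $\delta^N(a_i) = 0$ for every $i \leq m$, and then $\ad(T)^N(f) = \sum_{i=0}^m \delta^N(a_i)T^i = 0$. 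Thus $\ad(T)$ is locally nilpotent on $A[T;\delta]$.

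Finally I would run an induction on $n$: the base case $\ad(T^0) = \ad(1) = 0$ is trivial, and for the inductive step I would apply Lemma \ref{Oregens} with $s = T^n$ and $t = T$ — these commute since $T^nT = T^{n+1} = TT^n$, and both $\ad(T^n)$ (inductive hypothesis) and $\ad(T)$ (previous paragraph) are locally nilpotent — to conclude that $\ad(T^{n+1})$ is locally nilpotent. Hence $\ad(T^n)$ is locally nilpotent for all $n \geq 0$, so every element of $S$ has locally nilpotent adjoint action, and Proposition \ref{GabberOre} gives that $S$ is a left and right Ore set in $A[T;\delta]$.

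I do not expect a genuine obstacle here; the only point requiring care is bookkeeping with the skew-Ore convention — that $T$ sits to the \emph{left} of $a$ in the relation $Ta - aT = \delta(a)$ — so that the identity $\ad(T)^n(aT^i) = \delta^n(a)T^i$ comes out with the correct sign and with $\delta$ (rather than $-\delta$) appearing.
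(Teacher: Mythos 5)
Your proposal is correct and follows essentially the same route as the paper: reduce via Proposition \ref{GabberOre} to local nilpotence of $\ad(T^n)$, use Lemma \ref{Oregens} to descend to $n=1$, and then compute $\ad(T)(\sum a_i T^i)=\sum \delta(a_i)T^i$ to conclude. The only cosmetic difference is that you make the induction on $n$ and the monomial computation explicit, whereas the paper states these steps more compactly.
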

\begin{proof} By Proposition \ref{GabberOre} it suffices to show that $\ad(T^n)$ acts locally nilpotently on $A[T;\delta]$ and by Lemma \ref{Oregens} this can be reduced to the case $n=1$. 
	
	But \[ \ad(T)\left(\sum_{i=0}^m a_iT^i\right) = \sum_{i=0}^m \delta(a_i)T^i \] so local nilpotence of $\ad(T)$ on $A[T;\delta]$ is a straightforward consequence of the local nilpotence of $\delta$ on $A$.   
\end{proof}

\begin{notn} For $\delta $ a locally nilpotent derivation of $A$ we write $A[T,T^{-1};\delta]$ to denote the localisation of $A[T;\delta]$ at the Ore set $\{T^n\st n\geq 0\}$. 
\end{notn}
We can perform the following calculation in $A[T,T^{-1};\delta]$:
\begin{lem}\label{adTinverse} For each $a\in A$, $T^{-1}a = \sum\limits_{n=0}^\infty (-1)^n \delta^n(a)T^{-n-1}$. \end{lem}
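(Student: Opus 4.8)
The plan is to verify the claimed formula directly, exploiting the fact that $T$ is a unit in $A[T,T^{-1};\delta]$. First I would observe that since $\delta$ is locally nilpotent there is some $N \geq 1$ with $\delta^N(a) = 0$, so that $\sum_{n=0}^\infty (-1)^n \delta^n(a) T^{-n-1}$ is really the finite sum $\sum_{n=0}^{N-1} (-1)^n \delta^n(a) T^{-n-1}$ and hence is a genuine, well-defined element of the Ore localisation $A[T,T^{-1};\delta]$, with no convergence issue to worry about.

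Since left multiplication by the unit $T$ is a bijection on $A[T,T^{-1};\delta]$, it is enough to check the identity after multiplying both sides on the left by $T$; as the left-hand side then becomes $T \cdot T^{-1} a = a$, the task reduces to proving
\[ a = \sum_{n=0}^\infty (-1)^n\, T\delta^n(a)\, T^{-n-1}. \]
Here I would apply the defining relation of the skew-Ore extension in the form $Tb = bT + \delta(b)$, valid for all $b \in A$, with $b = \delta^n(a)$, to rewrite $T\delta^n(a) = \delta^n(a) T + \delta^{n+1}(a)$. Substituting this in and using $T^{-i} T^{-j + 1} = T^{-(i+j)+1}$ inside the localisation, the right-hand side splits as
\[ \sum_{n=0}^\infty (-1)^n \delta^n(a) T^{-n} \; + \; \sum_{n=0}^\infty (-1)^n \delta^{n+1}(a) T^{-n-1}. \]
Re-indexing the second sum by $m = n+1$ rewrites it as $-\sum_{m=1}^\infty (-1)^m \delta^m(a) T^{-m}$, which cancels precisely the terms with $n \geq 1$ in the first sum; all that survives is the $n = 0$ term of the first sum, namely $a$. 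This establishes the displayed identity, and therefore the lemma.

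I do not anticipate a real obstacle: the only points needing a little care are the remark that local nilpotence turns the series into a finite sum (so the statement makes sense purely algebraically), the legitimacy of cancelling $T$ on the left (injectivity of multiplication by a unit), and the sign bookkeeping in the telescoping cancellation. An alternative and essentially equivalent route would be induction on the least $N$ with $\delta^N(a) = 0$, the base case $\delta(a) = 0$ being immediate since then $T$ and $a$ commute in $A[T;\delta]$ and hence in $A[T,T^{-1};\delta]$.
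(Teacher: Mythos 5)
Your proof is correct and takes essentially the same approach as the paper: both multiply through on the left by the unit $T$, apply the relation $T\delta^n(a) = \delta^n(a)T + \delta^{n+1}(a)$, and observe the telescoping cancellation. The only cosmetic difference is that the paper groups consecutive terms and notes $(-1)^n + (-1)^{n+1} = 0$, whereas you reindex one of the sums; these are the same computation.
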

\begin{proof} First note that the right-hand side always makes sense because $\delta$ is locally nilpotent. Since $T$ is a unit in $A[T,T^{-1};\delta]$ it thus suffices to show that \[ T\left(\sum_{n\geq 0} (-1)^n\delta^n(a)T^{-n-1}\right) =a.\] But $T\delta^n(a) = \delta^n(a)T + \delta^{n+1}(a)$ so \begin{eqnarray*} T\left(\sum_{n\geq 0} (-1)^n\delta^n(a)T^{-n-1}\right) & = & \sum_{n\geq 0} (-1)^n\left(\delta^n(a) T^{-n} + \delta^{n+1}(a) T^{-n-1}\right)   \\
		& = & a + \sum_{n\geq 0} \left( (-1)^n + (-1)^{n+1} \right) \delta^{n+1}(a)T^{-n-1} \\ & = & a. \qedhere
	\end{eqnarray*} 
\end{proof}

It follows immediately that the subring of $A[T,T^{-1};\delta]$ generated by $A$ and $T^{-1}$ is the (free) left $A$-submodule of $A[T,T^{-1};\delta]$ on the set $\{T^{-n}\st n\geq 0 \}$.

\begin{defn}\label{NegLaurPoly} For $\delta$ a locally nilpotent derivation of $A$ we write $A[T^{-1}; \delta]$ to denote the subring of $A[T,T^{-1};\delta]$ generated by $A$ and $T^{-1}$. 
\end{defn}
This algebra satisfies the following universal property.

\begin{prop}\label{univTinverse} Let $f\colon A\to B$ be a ring homomorphism and let $\delta$ be a locally nilpotent derivation of $A$. Suppose that $b\in B$ is such that 
	\[bf(a)=\sum\limits_{n\geq 0} (-1)^n f(\delta^n(a))b^{n+1} \qmb{for all} a \in A.\] 
	Then there is a unique ring homomorphism 
	\[g\colon A[T^{-1};\delta]\to B\]
	extending $f$ such that $g(T^{-1})=b$.
\end{prop}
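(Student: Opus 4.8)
The plan is to construct $g$ explicitly using the universal property of the skew-Ore extension (Proposition \ref{Univskewore}) followed by localisation, and then to check it lands in the right subring. First I would observe that the hypothesis on $b$ says precisely that $b$ satisfies the relation forced by Lemma \ref{adTinverse}, so the natural thing is to produce a ring homomorphism $A[T,T^{-1};\delta] \to B'$ for a suitable over-ring $B'$ of $B$ in which $b$ becomes invertible, and then restrict. More precisely, I would first try to build a homomorphism $h \colon A[T;\delta] \to B$ sending $T \mapsto$ (something), but the issue is that $b$ plays the role of $T^{-1}$, not $T$, so $T$ itself need not have an image in $B$.

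So instead, here is the cleaner route. Consider the opposite-flavoured skew-polynomial construction: I would check that $A[T^{-1};\delta]$ carries an intrinsic presentation as the left $A$-module free on $\{S^n : n \geq 0\}$ (writing $S$ for $T^{-1}$) with multiplication determined by $S \cdot a = \sum_{n \geq 0}(-1)^n \delta^n(a) S^{n+1}$, which is exactly the content of Lemma \ref{adTinverse} together with the remark that $\{T^{-n}\}$ is a free basis. Granting this presentation, defining $g$ amounts to: set $g|_A = f$ and $g(S) = b$, extend $A$-linearly by $g(\sum a_i S^i) = \sum f(a_i) b^i$, and verify this respects multiplication. The associativity/well-definedness check reduces, by $A$-linearity and induction on word length, to verifying that $g(S \cdot a) = g(S) g(a)$ for $a \in A$, i.e. that $\sum_{n\geq 0}(-1)^n f(\delta^n(a)) b^{n+1} = b f(a)$ — which is exactly the hypothesis. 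Uniqueness is immediate since $A$ and $T^{-1}$ generate $A[T^{-1};\delta]$ by Definition \ref{NegLaurPoly}, so any extension of $f$ with $g(T^{-1}) = b$ is forced on a generating set, hence everywhere.

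The main obstacle I expect is making the "intrinsic presentation" step rigorous enough that the $A$-linear extension is visibly well-defined before one knows it is multiplicative: a priori $A[T^{-1};\delta]$ is defined as a subring of a localisation, so the cleanest fix is probably to avoid presentations altogether and instead factor $g$ through the localisation. Namely: the hypothesis on $b$ does not obviously let us invert $b$ in $B$, so I would pass to $\cB(B)$ acting by left multiplication, or better, note that in $A[T,T^{-1};\delta]$ we have $T = (T^{-1})^{-1}$ and the subring generated by $A$ and $T^{-1}$ is all of what we need; then a homomorphism out of $A[T^{-1};\delta]$ need not extend over the localisation. So factoring through the localisation is \emph{not} available in general, and one really is forced into the direct construction. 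I would therefore present $A[T^{-1};\delta]$ concretely: as established just before Definition \ref{NegLaurPoly}, it is the free left $A$-module on $\{T^{-n} : n \geq 0\}$, and its multiplication is completely pinned down by left $A$-linearity, by $T^{-i}T^{-j} = T^{-i-j}$, and by the straightening rule of Lemma \ref{adTinverse}; one checks directly (a short induction) that these data define an associative unital ring, and this gives the desired presentation. With that in hand the verification above goes through, and the only real computation — that the proposed $g$ is multiplicative — collapses to the stated hypothesis on $b$.

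\begin{proof}
Recall from the discussion preceding Definition \ref{NegLaurPoly} that $A[T^{-1};\delta]$ is the free left $A$-module on the symbols $\{T^{-n} : n \geq 0\}$, with multiplication determined by left $A$-linearity, by $T^{-i} \cdot T^{-j} = T^{-(i+j)}$, and by the straightening identity $T^{-1} a = \sum_{n \geq 0}(-1)^n \delta^n(a) T^{-n-1}$ of Lemma \ref{adTinverse} (the sum being finite by local nilpotence of $\delta$).

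\emph{Uniqueness.} By Definition \ref{NegLaurPoly}, $A[T^{-1};\delta]$ is generated as a ring by $A$ together with $T^{-1}$. Hence any ring homomorphism $g$ with $g|_A = f$ and $g(T^{-1}) = b$ is determined on a generating set, so is unique if it exists.

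\emph{Existence.} Since $A[T^{-1};\delta]$ is free as a left $A$-module on $\{T^{-n} : n \geq 0\}$, there is a well-defined $A$-module (hence additive) map $g \colon A[T^{-1};\delta] \to B$ given by
\[ g\left( \sum_{i \geq 0} a_i T^{-i}\right) = \sum_{i \geq 0} f(a_i) b^{i},\]
the sums being finite. Clearly $g|_A = f$, $g(1) = 1$ and $g(T^{-1}) = b$. It remains to check that $g$ is multiplicative.

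By additivity and left $A$-linearity of $g$, it suffices to prove $g(\xi \eta) = g(\xi) g(\eta)$ when $\xi = T^{-i}$ and $\eta = a T^{-j}$ for $a \in A$ and $i, j \geq 0$. We first treat the case $i = 1$, $j = 0$, i.e. $\xi = T^{-1}$ and $\eta = a \in A$. Using the straightening identity and the hypothesis on $b$,
\[ g\left(T^{-1} a\right) = g\left( \sum_{n \geq 0}(-1)^n \delta^n(a) T^{-n-1}\right) = \sum_{n \geq 0}(-1)^n f(\delta^n(a)) b^{n+1} = b f(a) = g(T^{-1}) g(a).\]
Next, for general $i \geq 1$ and $\eta = a \in A$, we induct on $i$: writing $T^{-i} = T^{-1} \cdot T^{-(i-1)}$ and using $g(T^{-1} \zeta) = b\, g(\zeta)$ for all $\zeta$ (which follows from the case just proved by $A$-linearity, since every element of $A[T^{-1};\delta]$ is a left $A$-combination of powers $T^{-m}$ and $T^{-1} \cdot a T^{-m} = (T^{-1} a) T^{-m}$), we get
\[ g\left(T^{-i} a\right) = g\left(T^{-1}\left(T^{-(i-1)} a\right)\right) = b\, g\left(T^{-(i-1)} a\right) = b \cdot b^{i-1} f(a) = b^i f(a) = g(T^{-i}) g(a).\]
Finally, for $\xi = T^{-i}$ and $\eta = a T^{-j}$,
\[ g\left(T^{-i} \cdot a T^{-j}\right) = g\left( \left(T^{-i} a\right) T^{-j}\right) = g\left(T^{-i} a\right) b^{j} = b^i f(a) b^j = g(T^{-i}) g(a T^{-j}),\]
where the second equality uses that $g$ is left $A$-linear and $\mathbb{Z}_{\geq 0}$-graded-compatible with right multiplication by powers of $T^{-1}$, which is immediate from the definition of $g$. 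This completes the verification that $g$ is a ring homomorphism, and hence the proof.
\end{proof}
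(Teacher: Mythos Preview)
Your proof is correct and follows essentially the same approach as the paper: define $g$ on the free left $A$-module basis $\{T^{-n}\}$ by $g(\sum a_n T^{-n}) = \sum f(a_n) b^n$, then reduce multiplicativity to the single check $g(T^{-1}a) = b f(a)$, which is exactly the hypothesis, and conclude by induction and $A$-linearity. Your explicit uniqueness argument and the slightly more detailed induction are fine elaborations of what the paper leaves to the reader.
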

\begin{proof} Given a pair $(f,b)$ as in the statement, we define $g : A[T^{-1};\delta] \to B$ by
	\[g\left(\sum\limits_{n\geq 0} a_nT^{-n}\right):=\sum f(a_n)b^n \qmb{for all} a_n\in A.\] 
	It suffices to prove that this is a ring homomorphism. Since for $a\in A$ and $m\geq 0$, 
	\begin{eqnarray*} g(T^{-1}aT^{-m}) & = &g\left(\sum_{n\geq 0}(-1)^n\delta^n(a)T^{-n-1-m}\right)\\ & = & \sum_{n\geq 0} (-1)^n f(\delta^n(a))b^{n+1+m} \\ & = & bf(a)b^m, \end{eqnarray*} we see that $g(T^{-1}aT^{-m})=g(T^{-1})g(aT^{-m})$. An easy induction argument together with left $A$-linearity of $g$ shows that $g$ respects multiplication.  
\end{proof}

\subsection{Skew-Laurent and skew-Tate algebras}
We recall from \S\ref{skewLaurent} that for a ring $A$ and a derivation $\delta\colon A\to A$ one can form the \emph{skew-Ore extension} $A[\partial;\delta]$. As noted previously, the set $\{\partial^n\st n\geq 0\}$ is not an Ore set in $A[\partial;\delta]$ in general so we cannot form a Laurent polynomial-style ring extension $A[\partial,\partial^{-1};\delta]$ of $A[\partial;\delta]$; though by Corollary \ref{locnilOre} this is possible when $\delta$ is locally nilpotent. In this section we will show that when $A$ is a $K$-Banach algebra and $\delta$ is topologically nilpotent in a suitable sense we can form certain completed versions of $A[\partial;\delta]$ and $A[\partial,\partial^{-1};\delta]$ that are $K$-Banach algebras. These will satisfy suitable analogues of Proposition \ref{Univskewore}. In the case where $A = \cO(X)$ is the $K$-affinoid algebra of rigid analytic functions on a $K$-affinoid variety $X$, we might think of $\cO(X) \langle \partial/r, s/\partial\rangle$ as being the algebra of `rigid analytic functions on a non-commutative annulus over $X$', and we refer the reader to Schneider's Appendix in \cite{ZabSch} for some related constructions.
	
Let $r\geq s$ denote arbitrary positive real numbers; frequently it will be convenient to restrict our attention to only those $r$ and $s$ lying in $\sq{K}$.

\begin{defn}\label{AdelSR} Let $A$ be a $K$-Banach space and let $\partial$ be a formal variable. We define \[ A\langle \partial/r,s/\partial\rangle := \left\{\sum_{j\in \mathbb{Z}}a_j\partial^j\in \prod_{j\in\mathbb{Z}} A\partial^j : \lim_{|j|\to \infty}|a_j|t^j= 0 \mbox{ if } s\le t \le r \right\} \]
	and give it the norm  
	\[  \left| \sum_{j\in \mathbb{Z}} a_j \partial^j \right| := \sup\limits_{s\le t\le r}\sup\limits_{j\in \mathbb{Z}} |a_j| t^j .\] 
\end{defn}
Of course this norm can be written in the somewhat less symmetric form
\begin{equation}\label{AsymNorm} \left| \sum_{j\in \mathbb{Z}} a_j \partial^j \right| = \max \left\{ \sup\limits_{j \geq 0} |a_j| r^j, \sup\limits_{j < 0} |a_j| s^j\right\}.\end{equation}
Note that $A\langle \partial/r,s/\partial\rangle$ becomes $K$-Banach space when equipped with this norm. We will now explain how to give it the structure of an associative non-commutative $K$-Banach algebra.
\begin{lem}\label{StarProdEstimate} Let $A$ be a $K$-Banach algebra and let $\delta \in \cB(A)$ be such that 
\[|\delta|_{\sp,A}<s.\]
Let $u, v \in A \langle \partial /r ,s/\partial\rangle$.
	\be
	\item For each $k \in \bZ$, the following limit exists in $A$:
	\begin{equation} \label{MicroMult} u \ast_k v := \lim\limits_{I \to \infty} \sum\limits_{i=-I}^I u_i \lim\limits_{M \to \infty} \sum\limits_{m=0}^M \binom{i}{m} \delta^m(v_{k-i+m}).\end{equation}
	\item There is a constant $C>0$ depending only on $A$ and $\delta$ such that 
	\[\sup\limits_{k \in \bZ} |u \ast_k v| t^k \leq C \cdot |u| \cdot |v| \qmb{for all} t \in [s,r].\]
	\item For all $t \in [s,r]$, $|u \ast_k v| t^k \to 0$ as $|k| \to \infty$.
	\ee\end{lem}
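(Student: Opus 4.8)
The plan is to deduce all three parts from a single family of non-archimedean estimates, whose only serious input is the hypothesis $|\delta|_{\sp,A}<s$. First I would fix a real number $\rho$ with $|\delta|_{\sp,A}<\rho<s$; by the definition of the spectral radius together with submultiplicativity of the operator norm on $\cB(A)$, there is then a constant $C\ge 1$, depending only on $A$, $\delta$ and $\rho$, with $\|\delta^m\|\le C\rho^m$ for all $m\ge 0$. Since each $\binom{i}{m}$ lies in $\bZ\cdot 1_A$ and $|p|\le 1$, we also have $|\binom{i}{m}|\le 1$. For any $t\in[s,r]$, $i\in\bZ$ and $m\ge 0$, the factorisation $t^k=t^i\cdot t^{-m}\cdot t^{\,k-i+m}$ and the defining inequalities $|u_j|t^j\le|u|$, $|v_j|t^j\le|v|$ then give the basic term bound
\[ \left|u_i\binom{i}{m}\delta^m\!\left(v_{k-i+m}\right)\right| t^k \;\le\; C\,(|u_i|t^i)\,(\rho/t)^m\,(|v_{k-i+m}|t^{\,k-i+m}) \;\le\; C\,(\rho/t)^m\,|u|\,|v|. \]
The hypothesis $|\delta|_{\sp,A}<s$ enters exactly through $\rho/t\le\rho/s<1$, making $(\rho/t)^m\to 0$; morally this is the convergence, in the completed setting, of the relation $\partial^{-1}a=\sum_{n\ge0}(-1)^n\delta^n(a)\partial^{-n-1}$ of Lemma \ref{adTinverse}.

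For part (a), I would first observe that for each fixed $i$ the inner series $\sum_{m\ge0}\binom{i}{m}\delta^m(v_{k-i+m})$ converges in $A$: all but finitely many terms have $k-i+m\ge 0$, so by the first inequality above (with $t=r$) their norm is $\le C|v|r^{\,i-k}(\rho/r)^m\to0$. Call the sum $w_i\in A$; splitting the estimate $\|w_i\|\le\sup_m C\rho^m|v_{k-i+m}|$ according to the sign of $k-i+m$ and using $s\le r$ gives $\|w_i\|\le C|v|r^{\,i-k}$ in every case, so $\|u_iw_i\|=|u_i|\,\|w_i\|\le C|v|r^{-k}\,(|u_i|r^i)$. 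Since $|u_i|r^i\to0$ as $i\to+\infty$ and $|u_i|r^i\le|u_i|s^i\to0$ as $i\to-\infty$ (here $i<0$ and $r\ge s$), we get $\|u_iw_i\|\to0$ as $|i|\to\infty$, so the iterated limit \eqref{MicroMult} exists. The same term-by-term analysis, using the three decay mechanisms $(\rho/t)^m$ for $m\to\infty$, $|u_i|r^i$ for $i\to+\infty$ and $|u_i|s^i$ for $i\to-\infty$, shows that the general term tends to $0$ along the net $\{(i,m)\}$ for each fixed $k$, so the double series converges unconditionally with value $u\ast_k v$. Part (b) is then immediate: taking the supremum over $i$ and $m$ in the displayed bound (which controls ultrametric sums) yields $|u\ast_k v|\,t^k\le C|u||v|$ for all $t\in[s,r]$, with $C$ depending only on $A$, $\delta$ (and the ambient radii $s\le r$), as required.

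For part (c), fix $t\in[s,r]$ and $\epsilon>0$, and put $q:=\rho/s<1$. Choose $M$ with $Cq^M|u||v|<\epsilon$; by the displayed bound all terms of $u\ast_k v$ with $m\ge M$ contribute $t^k$-weighted norm $<\epsilon$, uniformly in $k$. For each of the finitely many $m<M$ it then remains to check $\sup_{i\in\bZ}(|u_i|t^i)(|v_{k-i+m}|t^{\,k-i+m})\to0$ as $|k|\to\infty$, which is a routine two-sided $\epsilon/N$ split: terms with $|i|$ large are killed by $|u_i|t^i\to0$, and for $|i|\le N$ the index $l=k-i+m$ satisfies $|l|\ge|k|-N-M$, so terms are killed by $|v_l|t^l\to0$ once $|k|$ is large. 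Combining the two observations gives $|u\ast_k v|\,t^k\to0$ as $|k|\to\infty$.

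I expect the only genuinely delicate point to be the simultaneous convergence in the two variables $i$ and $m$ in part (a): crude bounds on $\|w_i\|$ (or on the general term) blow up as $i\to\pm\infty$ in the direction forcing the index of $v$ to become very negative, and it is precisely the freedom to replace $s$ by a strictly smaller radius $\rho\in(|\delta|_{\sp,A},s)$ that restores convergence. This is entirely parallel to the estimates underlying Schneider's non-commutative Tate algebras in the appendix to \cite{ZabSch}.
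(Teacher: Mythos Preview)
Your proof is correct and follows the same strategy as the paper's: bound the general term by a product $(|u_i|t^i)\cdot(\text{a }\delta\text{-factor})\cdot(|v_{k-i+m}|t^{k-i+m})$, then use the ultrametric inequality for the sums and the three decay mechanisms for part~(c). Two minor points: the equality $\|u_iw_i\|=|u_i|\,\|w_i\|$ should be an inequality involving the multiplication constant of $A$; and the auxiliary radius $\rho$ is not actually needed---the paper simply sets $C_2=\sup_{m\ge0}\|\delta^m\|/s^m$ and uses $(s/t)^m\le1$, so your closing remark that the choice of $\rho<s$ is what ``restores convergence'' overstates the difficulty---but neither point affects the validity of your argument.
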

\begin{proof} (a) and (b). We can find $C_1$ such that $|ab|\leq C_1|a||b|$ for all $a,b\in A$; let $C_2=\sup\limits_{\ell \geq 0}||\delta^\ell||/s^\ell<\infty$. Fix $k,i \in \Z$ and $t \in [s,r]$. Then for all $m \geq 0$,
	\begin{equation} \label{StarProdEst} \begin{array}{lll} \left|u_i \binom{i}{m} \delta^m(v_{k-i+m})\right|  \cdot t^k &\leq & C_1 \cdot |u_i| \cdot ||\delta^m|| \cdot |v_{k-i+m}| \cdot t^k  \\
			&\leq & C_1 \cdot |u_i| t^i \cdot C_2 \cdot |v_{k-i+m}| t^{k-i+m}\end{array}\end{equation}
	which tends to $0$ as $m \to \infty$ because $v \in A \langle \partial/r, s/\partial\rangle$. Therefore the inner series in the definition of $u \ast_k v$ converges to an element $(u \ast_k v)_i \in A$, such that
	\[ \left\vert (u \ast_k v)_i \right\vert \cdot t^k \leq C_1 \cdot |u_i| t^i \cdot C_2 \cdot |v|.\]
	Since $|u_i|t^i \to 0$ as $|i| \to \infty$ because $u \in A \langle \partial/r, s /\partial\rangle$, we see that the partial sums $\sum\limits_{i = -I}^I (u \ast_k v)_i$ converge to an element $u \ast_k v \in A$ as $|i| \to \infty$, which satisfies
	\[ |u \ast_k v| t^k \leq C_1 \cdot |u| \cdot |v| \cdot C_2.\]
	We may therefore take $C := C_1C_2$.

	(c)  Let $\epsilon > 0$ be given, and choose positive integers $I, M$ and $J$ such that
	\begin{itemize}
		\item $|u_i| t^i < \frac{\epsilon}{C_1C_2 |v|}$ whenever $|i| > I$,
		\item $||\delta^m|| / s^m < \frac{\epsilon}{C_1 |u| \hsp |v|}$ whenever $m > M$, and 
		\item $|v_j| r^j < \frac{\epsilon}{C_1 C_2 |u|}$ whenever $|j| > J$.
	\end{itemize}
	Now suppose that $|k| > I + M+ J$. 
	
	Firstly, if $|i| > I$, then using (\ref{StarProdEst}) we see that
	\[  \left\vert u_i \delta^m(v_{k-i+m})\right\vert \cdot t^k  < C_1 \cdot\frac{\epsilon}{C_1 C_2 |v|} \cdot C_2 \cdot |v| = \epsilon.\]
	Next, if $m > M$ then again using (\ref{StarProdEst}) we have
	\[ \left\vert u_i \delta^m(v_{k-i+m})\right\vert \cdot t^k  < C_1 \cdot |u| \cdot \frac{\epsilon}{C_1 |u| |v|} \cdot |v| = \epsilon.\]
	Finally if $|i| \leq I$ and $m \leq M$, then since $|i|+m+|k-i+m|\geq |k| > I + M + J$, we must have $|k-i+m|> J$ and  so  ($\ref{StarProdEst})$ gives
	\[ \left\vert u_i \delta^m(v_{k-i+m})\right\vert \cdot t^k < C_1 \cdot |u| \cdot C_2 \cdot \frac{\epsilon}{C_1 C_2 |u|} = \epsilon.\]
	We conclude that in all cases, we have the inequality
	\[ |u\ast_kv| t^k < \epsilon  \qmb{whenever} |k| > I + J + M \]
	and part (c) follows.
\end{proof}

\begin{defn}\label{DefnStarProd} Let $A$ be a $K$-Banach algebra and let $\delta : A \to A$ be a bounded $K$-linear derivation such that $|\delta|_{\sp, A} <s$. We define the \emph{star product} of two elements $u, v \in A \langle \partial / r, s / \partial \rangle$ to be
\[ u \ast v := \sum\limits_{k \in \bZ} (u \ast_k v) \partial^k.\]
\end{defn}
It is not hard to see that this star product gives a $K$-bilinear map $\cE \times \cE \to \cE$ where $\cE := A \langle \partial /r ,s/\partial\rangle$. Our next task will be to show that this star product on $\cE$ is associative. Because $\delta : A \to A$ is a derivation we have at our disposal the skew-Ore extension $A[\partial; \delta]$. Its associated graded ring with respect to the filtration $F_\bullet A[\partial;\delta]$ by degree in $\partial$ is the polynomial ring $A[y]$ where $y = \gr \partial$ is the principal symbol of $\partial$; we now consider the classical algebraic microlocalisation $Q := Q_S A[\partial,\delta]$, where $S := \cup_{n = 0}^\infty (\partial^n + F_{n-1}A[\partial,\delta])$; see, for example, \cite[\S 3]{AVVO}. By construction, this is an associative ring $(Q, \cdot)$ equipped with a complete and exhaustive $\Z$-filtration, whose associated graded ring is $A[y, y^{-1}]$. In this way, we can identify $Q$ with the following set of formal Laurent power series:
\[ Q = A[[\partial^{-1}]][\partial] := \left\{ \sum\limits_{k \in \Z} a_k \partial^k : a_k = 0 \qmb{for all} k >> 0\right\}.\]
The ring $Q$, as well as the $A$-module $\cE$, both contain the $A$-submodule $A[\partial,\partial^{-1}]$ whose elements are \emph{finite} $A$-linear combinations of integer powers of the formal symbol $\partial$. Also, note that $Q$ and $\cE$ are both contained in the large $A$-module of all bi-directional power series $P := \prod\limits_{k \in \bZ} A \partial^k$.

\begin{lem}\label{multbypartial} For all $n\in \mathbb{Z}$ and $a\in A$, $\partial\cdot(\partial^n \ast a)=\partial^{n+1}\ast a$.
	\end{lem}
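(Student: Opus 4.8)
The plan is to reduce the identity to a single application of Pascal's rule --- or, equivalently, to the associativity of the microlocalised ring $Q$ --- once the element $\partial^n\ast a$ has been written out explicitly. So the first step is to compute $\partial^n\ast a$ directly from (\ref{MicroMult}). Taking $u=\partial^n$, so that $u_i=1$ if $i=n$ and $u_i=0$ otherwise, and $v=a\in A$, so that $v_j=a$ if $j=0$ and $v_j=0$ otherwise, both limits in (\ref{MicroMult}) are limits of eventually-constant sequences, and one obtains
\[\partial^n\ast a=\sum_{m\ge 0}\binom{n}{m}\,\delta^m(a)\,\partial^{n-m}.\]

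Next I would record two facts about this series. First, since $|\delta|_{\sp,A}<s$ we have $\|\delta^m\|\le c^m$ for some $c<s$ and all large $m$, whence $|\binom{n}{m}\delta^m(a)|\,t^{n-m}\to 0$ for every $t\in[s,r]$; thus the series genuinely converges in $\cE=A\langle\partial/r,s/\partial\rangle$ (as it must, by Lemma \ref{StarProdEstimate}). Second, its coefficient of $\partial^k$ vanishes for $k>n$, so the series also lies in $Q=A[[\partial^{-1}]][\partial]$, and comparing with the iterated skew-Ore relation $\partial c=c\partial+\delta(c)$ for $n\ge 0$ and with its inverse (cf. Lemma \ref{adTinverse}, valid in $Q$) for $n<0$ identifies $\partial^n\ast a$ with the honest ring product $\partial^n\cdot a$ computed in $Q$.

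With this in hand the lemma is immediate: left multiplication by $\partial$ is a continuous, degree-shifting operator on the complete filtered ring $Q$, so it distributes over the convergent series above, and associativity of $Q$ gives
\[\partial\cdot(\partial^n\ast a)=\partial\cdot(\partial^n\cdot a)=(\partial\cdot\partial^n)\cdot a=\partial^{n+1}\cdot a=\partial^{n+1}\ast a.\]
If one prefers to avoid $Q$, the same computation can be done by hand: applying $\partial c=c\partial+\delta(c)$ term by term yields $\partial\cdot(\partial^n\ast a)=\sum_{m\ge 0}\binom{n}{m}\bigl(\delta^m(a)\partial^{n+1-m}+\delta^{m+1}(a)\partial^{n-m}\bigr)$, and collecting the coefficient of $\partial^{n+1-j}$ produces $\bigl(\binom{n}{j}+\binom{n}{j-1}\bigr)\delta^j(a)=\binom{n+1}{j}\delta^j(a)$, which is exactly the coefficient of $\partial^{n+1-j}$ in $\partial^{n+1}\ast a$. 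There is no real obstacle here; the only points needing care are the collapse of the double limit in (\ref{MicroMult}) --- clear from the finite support of $u$ and $v$ --- and the legitimacy of distributing left multiplication by $\partial$ over the infinite series, which follows either from the continuity of multiplication on $Q$ or directly from the estimates in Lemma \ref{StarProdEstimate}.
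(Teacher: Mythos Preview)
Your proof is correct, and your ``by hand'' computation (approach B) is essentially identical to the paper's: both write out $\partial^n\ast a=\sum_{m\ge 0}\binom{n}{m}\delta^m(a)\partial^{n-m}$ from Definition~\ref{DefnStarProd}, observe that this element lies in $Q$, apply $\partial\cdot(-)$ term by term using $\partial c=c\partial+\delta(c)$ and the associativity of $(Q,\cdot)$, and collect coefficients via Pascal's rule.

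One caveat on your approach A. You propose to first identify $\partial^n\ast a$ with $\partial^n\cdot a$ in $Q$ and then conclude by associativity. In the paper's logical structure this identification is precisely the content of Lemma~\ref{HexCheck}, which is \emph{deduced from} Lemma~\ref{multbypartial} by induction on $|n|$; so invoking it here would be circular. You can of course prove the identification independently --- for $n\ge 0$ it is the finite skew-Ore formula, and for $n<0$ one iterates the analogue of Lemma~\ref{adTinverse} in $Q$ --- but unpacking that iteration reproduces exactly the Pascal's-rule computation of approach B. So approach A does not genuinely shorten anything; the paper's route (your approach B) is the direct one.
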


\begin{proof}
	A direct computation using Definition \ref{DefnStarProd} gives that 
	\begin{equation}\label{dnstara} \partial^n\ast a=\sum_{k=-\infty}^n \binom{n}{n-k}\delta^{n-k}(a)\partial^k. \end{equation}
	Thus, since $(Q,\cdot)$ is associative and $\partial\cdot a=\delta(a)+a\partial$ is the defining property of the skew-Ore extension $A[\partial;\delta]$, we can further compute 	 \begin{eqnarray*} \partial\cdot(\partial^n\ast a) & = & \partial\cdot \left(\sum_{k=-\infty}^{n}\binom{n}{n-k}\delta^{n-k}(a)\partial^k\right) \\ &=&
		\sum_{k=-\infty}^{n}\binom{n}{n-k}\delta^{n-k+1}(a)\partial^k + \sum_{k=-\infty}^{n}\binom{n}{n-k}\delta^{n-k}(a)\partial^{k+1} \\ & = &
		\sum_{k=-\infty}^{n+1}\left(\binom{n}{n-k}+\binom{n}{n-k+1}\right) \delta^{n-k+1}(a)\partial^k \\ & = &
		\sum_{k=-\infty}^{n+1} \binom{n+1}{n+1-k}\delta^{n+1-k}(a)\partial^k \\ & = & \delta^{n+1}\ast a
	\end{eqnarray*} as required.
\end{proof}
\begin{lem}\label{HexCheck} We have $u \cdot v = u \ast v$ in $P$ for all $u,v \in A[\partial,\partial^{-1}]$. 
\end{lem}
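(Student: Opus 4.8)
The plan is to reduce the claimed identity to the single special case $\partial^m \ast a = \partial^m \cdot a$ in $Q$, valid for all $a \in A$ and all $m \in \bZ$, and then to establish that case by induction on $m$, using Lemma \ref{multbypartial} to pass between consecutive values of $m$.

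For the reduction, note that both $\cdot$ (the multiplication of $Q$, restricted to the $A$-submodule $A[\partial,\partial^{-1}]$) and $\ast$ are additive and $K$-bilinear, so it suffices to prove the identity for monomials $u = b\partial^m$ and $v = a\partial^n$ with $a,b\in A$ and $m,n\in\bZ$. Each of $\cdot$ and $\ast$ is moreover left $A$-linear in its first variable --- for $\cdot$ by associativity in $Q$, and for $\ast$ directly from (\ref{MicroMult}), since $(au)_i = a u_i$ and left multiplication by $a$ is continuous on $A$ --- and in $Q$ left multiplication by an element of $A$ is coefficientwise, so we may take $b=1$. Finally, unravelling Definition \ref{DefnStarProd} shows that replacing $v = a$ by $v = a\partial^n$ simply shifts every exponent on the output by $n$; the same exponent-shift describes right multiplication by $\partial^n$ inside $Q$, as powers of $\partial$ commute. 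Together with $a\cdot\partial^n = a\partial^n$ and associativity of $Q$, this reduces the whole statement to the assertion that $\partial^m\ast a = \partial^m\cdot a$ in $Q$ for all $m\in\bZ$.

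To prove this, I would first use the explicit expansion (\ref{dnstara}) of $\partial^m\ast a$ --- a direct unravelling of Definition \ref{DefnStarProd} --- to note that $\partial^m\ast a = \sum_{k\le m}\binom{m}{m-k}\delta^{m-k}(a)\partial^k$ is a genuine element of $Q$ for every $m\in\bZ$, because only exponents $\le m$ occur and the $\bZ$-filtration on $Q$ is complete. For $m\ge 0$ the binomial coefficients with lower index exceeding $m$ all vanish, so the expansion collapses to the classical Leibniz formula for $\partial^m a$ in the skew-Ore ring $A[\partial;\delta]\subseteq Q$, whence $\partial^m\ast a = \partial^m\cdot a$. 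For $m<0$ I would induct downwards: Lemma \ref{multbypartial} gives $\partial\cdot(\partial^m\ast a) = \partial^{m+1}\ast a$, and since $\partial$ is a unit of $Q$ and $\partial^m\ast a\in Q$, left multiplication by $\partial^{-1}$ together with associativity in $Q$ yields $\partial^m\ast a = \partial^{-1}\cdot(\partial^{m+1}\ast a)$; comparing with $\partial^m\cdot a = \partial^{-1}\cdot(\partial^{m+1}\cdot a)$ and invoking the inductive hypothesis closes the step.

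I expect the only point needing genuine care --- as opposed to routine bookkeeping with exponent-shifts, $K$-bilinearity and associativity of $Q$ --- to be the behaviour of $\partial^{-1}$ on the constant subalgebra $A$ inside $Q$, i.e. the fact that completeness of the filtration on the microlocalisation $Q$ allows the descending series in (\ref{dnstara}) to converge there. This has already been addressed in Lemma \ref{multbypartial} and its proof, so I do not anticipate a serious obstacle.
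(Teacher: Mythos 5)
Your proof is correct and follows essentially the same path as the paper's: reduce by bilinearity and $A$-linearity/exponent-shift to the core identity $\partial^m \ast a = \partial^m \cdot a$, then handle $m < 0$ by downward induction using Lemma \ref{multbypartial} together with the fact that $\partial$ is a unit of $Q$ and that $\partial^m \ast a \in Q$ by (\ref{dnstara}). The one minor divergence is in the $m \geq 0$ case, which you dispatch by observing that (\ref{dnstara}) collapses to the classical Leibniz formula once the binomial coefficients with lower index exceeding $m$ vanish, whereas the paper handles it by the same upward induction via Lemma \ref{multbypartial}; both are routine and equivalent.
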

\begin{proof} 
	First we note that for $a\in A$ and $n\in \mathbb{Z}$ an easy computation gives that 
	\[(au)\ast (v\partial^n)=a\cdot (u\ast v)\cdot \partial^n.\] 
	Next, using the $\bZ$-bilinearity of both products we may reduce to the case $u=a\partial^n$ and $v=b\partial^{n'}$ with $a,b\in A$ and $n,n'\in \mathbb{Z}$. Then $u\ast v=(a\partial^n)\ast (b\partial^{n'})=a\cdot(\partial^n\ast b)\cdot\partial^{n'}$ so as $\cdot$ is associative we may further reduce to the case $a=1$ and $n'=0$. So, it remains to prove that 
	\[\partial^n\cdot b=\partial^n \ast b \qmb{for all} n\in \bZ \qmb{and} b\in A.\]
When $n\geq 0$ we can verify this directly by induction on $n$ using Lemma \ref{multbypartial} and associativity of $(Q,\cdot)$: 
\[ \partial^{n+1}\ast b=\partial\cdot (\partial^n\ast b)=\partial\cdot \partial^n\cdot b=\partial^{n+1}\cdot b.\]
We also prove the case $n<0$ by induction, this time on $-n$. Suppose inductively that $\partial^n \cdot b = \partial^n \ast b$. Then using Lemma \ref{multbypartial} we have
   \[\partial \cdot( \partial^{n-1} \ast b ) = \partial^n \ast b =  \partial \cdot( \partial^{-1} \cdot (\partial^n \ast b)) = \partial \cdot (\partial^{n-1} \cdot b).\]
Now $\partial^{n-1} \cdot b \in Q$ and $\partial^{n-1} \ast b \in Q$ by equation (\ref{dnstara}). Since $\partial$ is a unit in the associative ring $(Q,\cdot)$, we conclude that $\partial^{n-1}\cdot b=\partial^{n-1}\ast b$.\end{proof}

\begin{thm} \label{skewannulus} Suppose that $A$ is a $K$-Banach algebra and that $\delta\in \cB(A)$ is a derivation such that $|\delta|_{\sp,A}<s$. Then $A\langle\partial/r, s/\partial\rangle$ becomes an associative $K$-Banach algebra when equipped with the star product $\ast$. \end{thm}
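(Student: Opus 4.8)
The plan is to verify the three Banach-algebra axioms — well-definedness of $\ast$ (including the norm bound), associativity, and submultiplicativity of the norm — drawing on the lemmas already assembled in this subsection. Well-definedness is essentially Lemma \ref{StarProdEstimate}: parts (a) and (c) guarantee that $u \ast v$ as defined in Definition \ref{DefnStarProd} actually lies in $\cE := A\langle\partial/r,s/\partial\rangle$, while part (b) furnishes a constant $C > 0$ with $|u\ast v| \leq C\,|u|\,|v|$, which is exactly the boundedness condition required of a $K$-Banach algebra multiplication. So the substance is entirely in associativity, and in organising the bookkeeping so that the Banach-space structure (already noted after Definition \ref{AdelSR}) combines cleanly with the algebra structure.

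**For associativity**, the strategy is to exploit the large ambient $A$-module $P = \prod_{k\in\bZ} A\partial^k$ of bi-directional power series, together with the classical microlocalisation $(Q,\cdot)$, in which $A[\partial,\partial^{-1}]$ sits as a genuine (dense-ish) subalgebra. First I would record that $\ast$ is separately continuous in each variable: fixing $v$, the map $u \mapsto u\ast v$ is bounded by Lemma \ref{StarProdEstimate}(b), hence continuous, and similarly on the other side; this uses only the norm estimate already proved. Next, $A[\partial,\partial^{-1}]$ is dense in $\cE$ — any $u = \sum a_j\partial^j \in \cE$ is the limit of its truncations $\sum_{|j|\leq N} a_j\partial^j$ in the norm of Definition \ref{AdelSR}, since $|a_j|t^j \to 0$ uniformly for $t\in[s,r]$. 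On $A[\partial,\partial^{-1}]$, Lemma \ref{HexCheck} identifies $\ast$ with the associative product $\cdot$ of $Q$, so $\ast$ is associative on the dense subalgebra $A[\partial,\partial^{-1}]$. Then associativity on all of $\cE$ follows by a standard three-variable continuity/density argument: given $u,v,w \in \cE$, approximate each by truncations $u_N, v_N, w_N \in A[\partial,\partial^{-1}]$, use $(u_N\ast v_N)\ast w_N = u_N\ast(v_N\ast w_N)$, and pass to the limit using separate continuity of $\ast$ in each slot (one variable at a time, three applications). One should also check that $1 = 1\cdot\partial^0 \in \cE$ is a two-sided identity, which is immediate from the formula \eqref{MicroMult}.

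**The main obstacle** I anticipate is not associativity itself — once Lemma \ref{HexCheck} and the boundedness estimate are in hand, the density argument is routine — but rather making sure the continuity argument is deployed correctly, i.e. that taking limits in $P$ is compatible with taking limits in the Banach norm of $\cE$. Concretely: when I approximate $u$ by truncations $u_N$, the products $u_N \ast v$ converge to $u \ast v$ in the norm of $\cE$ (by boundedness), hence also coefficient-wise in $P$; the only mild care needed is to perform the three-variable limit in the right order and to note that all intermediate products remain in $\cE$ (guaranteed by Lemma \ref{StarProdEstimate}(a),(c)) so that the norm estimates keep applying. An alternative, slightly more computational route would be to prove associativity directly on monomials $a\partial^m, b\partial^n, c\partial^p$ by a hexagon-style identity of binomial sums as in Lemma \ref{multbypartial} and then extend by bilinearity and continuity, but this duplicates work that the microlocalisation $(Q,\cdot)$ already does for us, so I would prefer the density argument. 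Either way, the conclusion is that $(\cE, \ast, |\cdot|)$ is an associative $K$-Banach algebra, as claimed.
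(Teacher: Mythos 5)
Your proof is correct and follows the same strategy as the paper's: transport associativity from the microlocalisation $(Q,\cdot)$ via Lemma \ref{HexCheck}, then extend from the dense subalgebra $A[\partial,\partial^{-1}]$ to all of $\cE$ by continuity. The paper phrases the density step slightly more compactly (the associator $\Phi:\cE^3\to\cE$ is continuous and vanishes on a dense subset, hence vanishes identically), where you unfold it into three one-variable limits, but the content is identical.
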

\begin{proof} We have to show that the star product $\ast$ on $\cE := A\langle \partial/r, s/\partial\rangle$ is associative. Using Lemma \ref{HexCheck}, we see that the two possible $P = \prod\limits_{k \in \bZ} A \partial^k$-valued multiplications on $A[\partial, \partial^{-1}] \subseteq Q \cap \cE$ coincide in the sense that the diagram
\[\xymatrix{ & Q^2 \ar[r]^(0.6){m_Q} & Q \ar[rd] & \\ A[\partial,\partial^{-1}]^2 \ar[ru]\ar[rd] &  & & P \\ & \cE^2 \ar[r]_(0.6){m_{\cE}} & \cE \ar[ur] }\]
is commutative. Because $(Q, \cdot)$ is an associative ring, the associator $\Phi : \cE^3 \to \cE$ must vanish on $A[\partial, \partial^{-1}]^3$. Since $A[\partial,\partial^{-1}]$ is dense in $\cE$ and $\Phi$ is continuous, we conclude that $\Phi$ must be identically zero.\end{proof}

Because of Lemma \ref{HexCheck} and Theorem \ref{skewannulus}, we will now drop the symbol $\ast$ and simply write $ab$ to denote the product in the associative skew-Laurent algebra $A\langle \partial/r, s/\partial\rangle$.  Next, we establish a universal property for this algebra.

\begin{prop}\label{SkewLaurentUP} Suppose that $A$, $\delta$ and $0 < s \leq r$ are as in Theorem \ref{skewannulus}. Let $B$ be a $K$-Banach algebra, let $\theta\colon A\to B$ is a $K$-Banach algebra homomorphism and $b\in B$. Then the following are equivalent:
\be \item the map $\theta : A \to B$ extends to a $K$-Banach algebra homomorphism $\phi\colon A\langle \partial/r,s/\partial\rangle\to B$ which sends $\partial$ to $b$;
\item 
	\begin{enumerate}[(i)]
		\item $b\in B^\times$,
		\item $b\theta(a)-\theta(a)b=\theta(\delta(a))$ for all $a\in A$,
		\item $\sup\limits_{n \geq 0} |b^n|/r^n < \infty$ and $\sup\limits_{n \leq 0} |b^n|/s^n < \infty$.
	\end{enumerate}
\ee\end{prop}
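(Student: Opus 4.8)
The plan is to prove (a)$\Rightarrow$(b) by pulling the three conditions back through $\phi$, and (b)$\Rightarrow$(a) by writing down the only possible extension, $\phi\big(\sum_j a_j\partial^j\big):=\sum_j\theta(a_j)b^j$, and checking that it is a bounded $K$-algebra homomorphism. Throughout write $\cE:=A\langle\partial/r,s/\partial\rangle$, with product denoted by juxtaposition as in Theorem \ref{skewannulus}. For (a)$\Rightarrow$(b): if $\phi\colon\cE\to B$ is such a homomorphism, then since $\partial^{-1}\in\cE$ is a two-sided inverse of $\partial$ (by Lemma \ref{HexCheck} together with the invertibility of $\partial$ in $Q$), $b=\phi(\partial)$ lies in $B^\times$, giving (i); applying $\phi$ to the skew-Ore relation $\partial a-a\partial=\delta(a)$, which already holds in $A[\partial;\delta]\subseteq\cE$, gives (ii); and since the norm formula (\ref{AsymNorm}) gives $|\partial^n|=r^n$ for $n\geq0$ and $|\partial^n|=s^n$ for $n<0$, boundedness of $\phi$ yields $|b^n|=|\phi(\partial^n)|\leq\|\phi\|\,r^n$ for $n\geq0$ and $|b^n|\leq\|\phi\|\,s^n$ for $n\leq0$, which is (iii).

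For the converse, assume (i)--(iii) and set $M_+:=\sup_{n\geq0}|b^n|/r^n$ and $M_-:=\sup_{n\leq0}|b^n|/s^n$, both finite by (iii). For $u=\sum_j a_j\partial^j\in\cE$ the bounds $|\theta(a_j)b^j|\leq\|\theta\|\,M_+\,|a_j|r^j$ (for $j\geq0$) and $|\theta(a_j)b^j|\leq\|\theta\|\,M_-\,|a_j|s^j$ (for $j<0$), combined with the convergence condition in Definition \ref{AdelSR}, show that the family $\big(\theta(a_j)b^j\big)_{j\in\bZ}$ is summable in $B$, so $\phi(u):=\sum_j\theta(a_j)b^j$ is well defined; the same bounds with (\ref{AsymNorm}) show $\phi$ is $K$-linear and bounded. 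It visibly restricts to $\theta$ on $A$ and sends $\partial$ to $b$, so only multiplicativity remains.

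Since the product on $\cE$ (Lemma \ref{StarProdEstimate}(b)) and on $B$ are continuous, $\phi$ is continuous, and $A[\partial,\partial^{-1}]$ is dense in $\cE$, it suffices to verify $\phi(uv)=\phi(u)\phi(v)$ for $u,v\in A[\partial,\partial^{-1}]$, and by $K$-bilinearity for $u=a\partial^m$, $v=c\partial^n$ with $a,c\in A$ and $m,n\in\bZ$. From formula (\ref{dnstara}) one gets
\[ a\partial^m\cdot c\partial^n=\sum_{p\geq0}\binom{m}{p}\,a\,\delta^p(c)\,\partial^{m+n-p}, \]
a finite sum if $m\geq0$ and a norm-convergent series in $\cE$ if $m<0$, the latter because $\|\delta^p\|/s^p\to0$ (using $|\delta|_{\sp,A}<s$). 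Applying $\phi$ and pulling $\theta(a)$ out on the left and $b^n$ out on the right (both multiplications being continuous), the claim $\phi(uv)=\phi(u)\phi(v)$ reduces to the identity
\[ b^m\theta(c)=\sum_{p\geq0}\binom{m}{p}\,\theta(\delta^p(c))\,b^{m-p}\qquad(m\in\bZ,\ c\in A), \]
whose right-hand side converges in $B$ for $m<0$ by the same estimate (together with $|\binom{m}{p}|\leq1$, since $\binom{m}{p}\in\bZ$). For $m\geq0$ this is immediate: Proposition \ref{Univskewore} and (ii) produce a ring homomorphism $A[\partial;\delta]\to B$ which must agree with $\phi$ there, and one applies it to $\partial^m c=\sum_{p=0}^m\binom{m}{p}\delta^p(c)\partial^{m-p}$. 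For $m=-1$ the identity reads $b^{-1}\theta(c)=\sum_{p\geq0}(-1)^p\theta(\delta^p(c))b^{-p-1}$; multiplying the right-hand side on the left by $b$ and using (ii) repeatedly telescopes it to $\theta(c)$, exactly as in the proof of Lemma \ref{adTinverse}, and then (i) lets us cancel $b$. For general $m<0$ one induces on $-m$: plug the $m=-1$ case into the inductive hypothesis, regroup the resulting absolutely convergent double series by the total power of $b$, and use the binomial identity $\sum_{p=0}^{\ell}\binom{m}{p}(-1)^{\ell-p}=\binom{m-1}{\ell}$, which follows from $(1+x)^m(1+x)^{-1}=(1+x)^{m-1}$.

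I expect the only genuine difficulty to be the negative-$m$ case of this identity: one must be careful about convergence of the infinite series in $B$ --- which is exactly where both hypothesis (iii) and the bound $|\delta|_{\sp,A}<s$ are used --- and about the legitimacy of rearranging and regrouping them, which is unproblematic in the non-archimedean setting precisely because every family in sight is summable. Everything else, namely well-definedness, the norm estimates, and the direction (a)$\Rightarrow$(b), is routine bookkeeping with the norm (\ref{AsymNorm}).
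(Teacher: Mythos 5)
Your proof is correct and follows essentially the same route as the paper: pulling (i)--(iii) back through $\phi$ for the forward direction, then for the converse defining $\phi$ by the forced formula, checking boundedness via (\ref{AsymNorm}) and (iii), and reducing multiplicativity by continuity and density to the monomial identity $b^m\theta(c)=\sum_{p\geq 0}\binom{m}{p}\theta(\delta^p(c))b^{m-p}$, which you then verify case-by-case using the telescoping argument from Lemma \ref{adTinverse} for $m=-1$. The only cosmetic difference is that the paper compresses the passage from $m=-1$ to general $m<0$ into a one-line induction (iterating $\phi(\partial^{-1}w)=b^{-1}\phi(w)$ by continuity), while you instead regroup a summable double series by total degree and invoke the Vandermonde-type identity $\sum_{p=0}^{\ell}\binom{m}{p}(-1)^{\ell-p}=\binom{m-1}{\ell}$; both are valid, and yours makes the rearrangement step explicit where the paper leaves it implicit.
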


\begin{proof} (a) $\Rightarrow$ (b). Let $C := A \langle \partial/r, s/\partial \rangle$ and suppose that $\phi : C \to B$ is a bounded $K$-algebra homomorphism that extends $\theta$ and that sends $\partial$ to $b$. The first two conditions hold because $\partial \in C^\times$ and $\partial a - a \partial = \delta(a)$ holds for all $a \in A$. 

For the last condition: since $\theta : C \to B$ is bounded, there is a constant $L$ such that $|\theta(c)| \leq  L|c|$ for all $c \in C$. Hence $|b^n| = |\theta(\partial^n)| \leq L |\partial^n|$ for all $n \in \bZ$. By Definition \ref{AdelSR}, we get $|b^n| \leq L r^n$ for all $n \geq 0$ and $|b^n| \leq L s^n$ for all $n \leq 0$.

(b) $\Rightarrow$ (a). Note first that $b^n$ makes sense for all $n\in \mathbb{Z}$ by condition (i). Choose constants $M,D>0$ such that $|\theta(a)|\leq M|a|$ for all $a\in A$ and $|b_1b_2|\leq D|b_1||b_2|$ for all $b_1,b_2\in B$; then for all $a \in A$ and all $n \in \bZ$ we have $|\theta(a)b^n| \leq MD |a| |b^n|$. Using (iii), choose $L>0$ such that $|b^n| \leq L r^n$ for all $n \geq 0$ and $|b^n| \leq L s^n$ for all $n \leq 0$. Then for any $\sum\limits_{n\in \Z} a_n\partial^n\in C$ we have
\begin{equation}\label{ThetaEst} |\theta(a_n) b^n|  \leq \left\{ \begin{array}{lll} LMD |a_n|r^n &\qmb{for all}& n \geq 0, \\ LMD |a_n|s^n &\qmb{for all}& n \leq 0. \end{array} \right.\end{equation}
Since $|a_n| r^n \to 0$ as $n \to +\infty$ and $|a_n|s^n \to 0$ as $n \to -\infty$ by Definition \ref{AdelSR}, we obtain $|\theta(a_n)b^n|\to 0$ as $|n|\to \infty$. So, we can define a map $\phi : C\to B$ by setting
\[ \phi\left(\sum\limits_{n\in \Z}a_n\partial^n\right) := \sum\limits_{n\in \Z} \theta(a_n)b^n \qmb{for any}\sum\limits_{n\in \Z} a_n\partial^n\in C.\] 
It is easy to check that $\phi$ is $K$-linear. Using (\ref{ThetaEst}) and (\ref{AsymNorm}), we see that for all $\sum\limits_{n\in \Z} a_n\partial^n\in C$ we have
\[\left|\phi\left( \sum\limits_{n\in \Z}a_n\partial^n\right)\right| \leq LMD \max\{ \sup\limits_{n \geq 0} |a_n| r^n, \sup\limits_{n \leq 0} |a_n| s^n\} = LMD\left|\sum\limits_{n\in \Z} a_n\partial^n\right|. \]  Thus $\phi$ is a bounded $K$-linear map which satisfies $||\phi||\leq LMD$.
	
To see that $\phi$ is a ring homomorphism, by its continuity and $K$-linearity it suffices to verify that $\phi(\partial^na)=\phi(\partial^n)\phi(a)$ for all $n$ in $\Z$. By induction on $n$ we can reduce to the cases $n=\pm 1$. The case $n=1$ is immediate from condition (iii). For the case $n=-1$, we know from Lemma \ref{HexCheck} together with equation (\ref{dnstara}) that
\[ \partial^{-1}a = \sum_{m\geq 0}\binom{-1}{m}\delta^m(a)\partial^{-m-1}. \]
Since $\binom{-1}{m}=(-1)^m$ we must prove that $b^{-1}\theta(a)=\sum\limits_{m\geq 0}(-1)^m\theta(\delta^m(a))b^{-m-1}$. Since $b$ is a unit in $B$ it suffices to prove that $\theta(a)=\sum\limits_{m\geq 0} (-1)^m b\theta(\delta^m(a))b^{-m-1}$. But, by condition (iii), the right-hand side of this formula is \[ \sum\limits_{m\geq 0} (-1)^m\left( \theta(\delta^m(a))b^{-m} + \theta(\delta^{m+1}(a))b^{-m-1}\right).\]  
We can now see that this sum telescopes down to $\theta(a)$. \end{proof}
We will now discuss the \emph{skew-Tate} algebra $A\langle \partial/r \rangle$. 
\begin{defn}\label{AdelR} Let $A$ be a $K$-Banach space and let $\partial$ be a formal variable. We define the \emph{Skew-Tate algebra}
	\[ A \langle \partial / r \rangle := \left\{ \sum_{j=0}^\infty a_j \partial^j \in A[[\partial]] : \lim\limits_{j\to\infty} |a_j| r^j = 0\right\}\] and give it the norm  \[ \left| \sum_{j=0}^\infty a_j \partial^j \right| := \sup\limits_{j \geq 0} |a_j| r^j .\]
\end{defn}
Again, $A \langle \partial / r \rangle$ is a $K$-Banach space when equipped with this norm. 
\begin{lem}\label{BanachStarProd} Suppose that $A$ and $\delta$ are as in Theorem \ref{skewannulus}. 
\be \item The natural map  $A\langle \partial/r\rangle \to A\langle \partial/r,s/\partial\rangle$ is an isometric embedding of $K$-Banach spaces with closed image.
\item $A \langle \partial/r \rangle$ is closed under the star product in $A\langle \partial/r,s/\partial\rangle$.
\ee\end{lem}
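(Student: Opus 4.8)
The plan is to prove part (a) directly from the asymmetric form (\ref{AsymNorm}) of the norm, and then to deduce part (b) from part (a) together with Lemma \ref{HexCheck}; neither part poses a genuine obstacle, so I will mostly be checking that the bookkeeping works.

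\emph{Part (a).} The natural map is the inclusion $\sum_{j\geq 0}a_j\partial^j\mapsto\sum_{j\geq 0}a_j\partial^j$, regarding a one-sided power series as a bi-directional series with vanishing negative part. If $a_j=0$ for all $j<0$, then the condition ``$|a_j|t^j\to 0$ as $j\to-\infty$ for $s\le t\le r$'' from Definition \ref{AdelSR} holds vacuously, so such an element lies in $A\langle\partial/r,s/\partial\rangle$ exactly when $|a_j|r^j\to 0$ as $j\to\infty$, which is precisely the defining condition for $A\langle\partial/r\rangle$; hence the map is well-defined. Moreover $\sup_{j<0}|a_j|s^j=0$ for such an element, so (\ref{AsymNorm}) shows that its norm in $A\langle\partial/r,s/\partial\rangle$ equals $\sup_{j\geq 0}|a_j|r^j$, i.e.\ its norm in $A\langle\partial/r\rangle$, and the embedding is isometric. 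Since $A\langle\partial/r\rangle$ is a $K$-Banach space, hence complete, its isometric image is a complete, and therefore closed, subspace of $A\langle\partial/r,s/\partial\rangle$. (Alternatively, the image is $\bigcap_{j<0}\ker(u\mapsto u_j)$, an intersection of kernels of bounded linear functionals.)

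\emph{Part (b).} Here I would use the skew-Ore extension $A[\partial;\delta]$, whose underlying set is the finite sums $\sum_{j\geq 0}a_j\partial^j$; this embeds in $A\langle\partial/r\rangle$ as a dense $K$-subspace (approximate by truncations) and also sits inside $A[\partial,\partial^{-1}]\subseteq Q\cap A\langle\partial/r,s/\partial\rangle$. By Lemma \ref{HexCheck} the star product on $A[\partial,\partial^{-1}]$ agrees with the ring multiplication of $Q$, and since $A[\partial;\delta]$ is a subring of $Q$ it is closed under $\ast$. As $\ast$ is continuous on $A\langle\partial/r,s/\partial\rangle$ by Theorem \ref{skewannulus} and $A\langle\partial/r\rangle$ is a closed subspace by part (a), the closure $A\langle\partial/r\rangle=\overline{A[\partial;\delta]}$ of the $\ast$-closed subspace $A[\partial;\delta]$ is again $\ast$-closed: given $u,v\in A\langle\partial/r\rangle$, approximate them by sequences in $A[\partial;\delta]$, whose $\ast$-products lie in $A[\partial;\delta]\subseteq A\langle\partial/r\rangle$ and converge to $u\ast v$, so $u\ast v\in A\langle\partial/r\rangle$.

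If one prefers to avoid density, part (b) can instead be read off the formula (\ref{MicroMult}) for $u\ast_k v$. When $u_i=0$ for $i<0$ the outer sum runs over $i\geq 0$ only; for such $i$ the binomial coefficient $\binom{i}{m}$ vanishes once $m>i$; and for $0\leq m\leq i$ one has $k-i+m\leq k$, so when $k<0$ every entry $v_{k-i+m}$ sits in a negative degree and vanishes for $v\in A\langle\partial/r\rangle$. Hence $u\ast_k v=0$ for all $k<0$, and combined with $u\ast v\in A\langle\partial/r,s/\partial\rangle$ (Theorem \ref{skewannulus}) this gives $u\ast v\in A\langle\partial/r\rangle$. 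In either form the argument is routine; the only points worth flagging are that part (b) genuinely relies on the closedness established in part (a) (in the density version) and on the simultaneous vanishing of $\binom{i}{m}$ and of the negative-degree components of $v$ (in the direct version).
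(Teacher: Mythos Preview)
Your proof is correct. Part (a) spells out what the paper dismisses as ``evident,'' and your second (direct) argument for part (b)---checking $u\ast_k v=0$ for $k<0$ via the vanishing of $\binom{i}{m}$ for $m>i\geq 0$ and of $v_{k-i+m}$ for $k-i+m<0$---is exactly the paper's proof. Your density argument is a legitimate alternative, but note that it makes (b) depend on Theorem \ref{skewannulus}, whereas the direct computation only needs the explicit formula from Lemma \ref{StarProdEstimate} and so keeps the logical dependencies minimal.
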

\begin{proof} (a) This is evident.

(b) We have to check that $u \ast_k v = 0$ for any $u, v \in A \langle \partial/r \rangle$ and any $k < 0$. Fix $m \geq 0$ and consider the term $\binom{i}{m} \delta^m(v_{k-i+m})$ appearing in the definition of $u \ast_k v$ in Lemma \ref{StarProdEstimate}(a). If $m > i$ then the binomial coefficient is zero, so suppose that $m \leq i$. But then $k - i + m \leq k < 0$ so $v_{k-i+m} = 0$ as $v \in A \langle \partial / r\rangle$. 
\end{proof}

When $||\delta|| \leq r$ and the norm on $A$ satisfies $|ab| = |a| \cdot |b|$ for all $a, b \in A$, it also follows from the work of Pangalos --- see \cite[Proposition 2.1.2]{Pangalos} --- that $A \langle \partial/r \rangle$ is an associative $K$-Banach algebra.

\begin{lem} \label{BanachUnivProp} Let $B$ be another $K$-Banach algebra. Given a bounded $K$-Banach algebra homomorphism $f\colon A\to B$ and an element $b\in B$, the following are equivalent.
\be \item
There is a bounded $K$-Banach algebra homomorphism $g\colon A\langle \partial/r\rangle \to B$ extending $f$ such that $g(\partial)=b$;
\item 
\begin{enumerate}[(i)]
		\item $bf(a)-f(a)b=f(\delta(a))$ for all $a\in A$ and \item $\sup\limits_{\ell \geq 0}|b^\ell|/r^\ell<\infty$.\end{enumerate} 
\ee 
Moreover when such a $g$ exists it is unique. \end{lem}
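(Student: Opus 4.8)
The plan is to mirror the structure of the proof of Proposition \ref{SkewLaurentUP}, but now in the one-sided (skew-Tate) setting, where matters are a little simpler because no unit condition is needed. First I would establish the forward implication (a) $\Rightarrow$ (b): given a bounded $K$-algebra homomorphism $g\colon A\langle\partial/r\rangle\to B$ extending $f$ with $g(\partial)=b$, condition (i) is immediate from the relation $\partial a - a\partial = \delta(a)$ holding inside $A\langle\partial/r\rangle$ (this identity is part of the skew-Ore/star-product structure, valid by Lemma \ref{HexCheck} and Theorem \ref{skewannulus}), and condition (ii) follows exactly as in Proposition \ref{SkewLaurentUP}: boundedness of $g$ gives a constant $L$ with $|g(c)|\le L|c|$ for all $c$, whence $|b^\ell| = |g(\partial^\ell)| \le L|\partial^\ell| = Lr^\ell$ for all $\ell\ge 0$ by Definition \ref{AdelR}.

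For the substantive direction (b) $\Rightarrow$ (a), I would define $g$ on a general element by $g\left(\sum_{j\ge 0} a_j\partial^j\right) := \sum_{j\ge 0} f(a_j)b^j$. The first task is convergence and boundedness: choosing constants $M, D$ with $|f(a)|\le M|a|$ and $|b_1b_2|\le D|b_1||b_2|$, and $L$ with $|b^\ell|\le Lr^\ell$ from (ii), one gets $|f(a_j)b^j|\le LMD|a_j|r^j \to 0$ since $\sum a_j\partial^j \in A\langle\partial/r\rangle$; the same estimate gives $\|g\|\le LMD$, so $g$ is a well-defined bounded $K$-linear map. Then one checks $g$ is multiplicative: by continuity and $K$-linearity it suffices to verify $g(\partial^n a) = g(\partial^n)g(a) = b^n f(a)$ for $n\ge 0$ and $a\in A$, and this reduces by induction to $n=1$, which is precisely condition (i) rearranged as $g(\partial a) = g(\delta(a) + a\partial) = f(\delta(a)) + f(a)b = bf(a)$. (Unlike the skew-Laurent case, there is no negative-power telescoping argument to run, since $A\langle\partial/r\rangle$ only involves nonnegative powers of $\partial$.) Finally, uniqueness: any bounded extension $g$ with $g(\partial)=b$ is determined on the dense subspace $A[\partial]$ by $g(\sum a_j\partial^j) = \sum f(a_j)b^j$, hence on all of $A\langle\partial/r\rangle$ by continuity.

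The main obstacle is not deep here — it is purely bookkeeping with the non-archimedean estimates to guarantee the series $\sum f(a_j)b^j$ converges and that $g$ respects multiplication on all of $A\langle\partial/r\rangle$ rather than just on the polynomial subring; the density of $A[\partial]$ in $A\langle\partial/r\rangle$ together with continuity of both multiplications does the rest, exactly as in the proof of Theorem \ref{skewannulus}. One minor point to be careful about is that the relation $\partial a - a\partial = \delta(a)$ must be known to hold in the completed algebra $A\langle\partial/r\rangle$ and not merely in $A[\partial;\delta]$; this is ensured by Lemma \ref{HexCheck}, which identifies the star product with the skew-Ore multiplication on the dense subring $A[\partial,\partial^{-1}] \cap A\langle\partial/r\rangle \supseteq A[\partial;\delta]$.
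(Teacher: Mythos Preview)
Your proposal is correct and follows essentially the same route as the paper. The only cosmetic difference is that for (b) $\Rightarrow$ (a) the paper first invokes Proposition \ref{Univskewore} to obtain a ring homomorphism $h\colon A[T;\delta]\to B$ with $h(T)=b$, then shows $h$ is bounded and extends it by density (checking multiplicativity on the completion via the associator $\Psi(u,v)=g(uv)-g(u)g(v)$ vanishing on the dense set $A[T;\delta]\times A[T;\delta]$), whereas you define $g$ directly on the completion and verify multiplicativity by hand via induction on the power of $\partial$; these are the same argument.
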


\begin{proof} (a) $\Rightarrow$ (b). Since for all $a\in A$, $\partial a-a\partial=\delta(a)$, we have \[ bf(a)-f(a)b=g(\partial)g(a)-g(a)g(\partial)=g(\delta(a))=f(\delta(a)),\] so (i) holds. Moreover, because $|\partial^\ell|=r^\ell$ in $A \langle \partial/r \rangle$ for all $\ell\geq 0$, \[ ||g||\geq |g(\partial^\ell)| / r^\ell = |b^\ell| / r^\ell \qmb{for all} \ell \geq 0, \]
so (ii) holds. 
	
(b) $\Rightarrow$ (a) Let $L:=\sup \limits_{\ell \geq 0}|b^\ell|/r^\ell$ and choose constants $M,D>0$ such that $|f(a)|\leq M|a|$ for all $a\in A$ and $|b_1b_2|\leq D|b_1||b_2|$ for all $b_1,b_2\in B$. 
	
	By condition (i), together with the universal property of the skew-Ore extension $A[T;\delta]$ --- see Proposition \ref{Univskewore} --- $f$ extends uniquely to a $K$-algebra map $h\colon A[T;\delta]\to B$ such that $h(T)=b$. We compute that for $a_0,\ldots,a_n\in A$ \[ \left|\sum\limits_{i=0}^n h(a_iT^i)\right|=\left|\sum\limits_{i=0}^n f(a_i)b^i\right|\leq D\sup\limits_{i\geq 0}(|f(a_i)||b^i|)\leq DML\sup\limits_{i\geq 0} |a_i| r^i .\]
	
	Thus, identifying $A[T;\delta]$ with its image in $A\langle \partial/r \rangle$ under the map $\varphi$ in the proof of Lemma \ref{BanachStarProd} we see that $h$ is bounded with respect to the subspace norm on $A[T;\delta]$ and so $h$ extends uniquely to a bounded $K$-linear map $g\colon A\langle \partial/r\rangle \to B$. Finally, consider the continuous map $\Psi\colon A\langle \partial/r\rangle \times A\langle \partial/r\rangle \to B$ given by \[ \Psi(u,v)=g(uv)-g(u)g(v).\] Since $\Psi$ vanishes on the dense subset $A[T,\delta]\times A[T,\delta]$ of its domain, it is identically zero and so $g$ is a $K$-algebra homomorphism. 
\end{proof}
It turns out that the skew-Tate algebra $A \langle \partial/r \rangle$ admits a natural involution.
\begin{lem} \label{Transpose} Let $A$ be a $K$-Banach algebra and let $\delta\in \cB(A)$ be a derivation such that $|\delta|_{\sp,A}<r$.  Then there is a bounded $K$-Banach algebra isomorphism
\[ (-)^T : A \langle \partial/r \rangle \to A \langle \partial/r \rangle^{\op} \]
such that $a^T = a$ for all $a \in A$, $\partial^T = -\partial$ and $(Q^T)^T = Q$ for all $Q \in A \langle \partial/r \rangle$.
\end{lem}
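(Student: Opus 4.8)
The plan is to build $(-)^T$ as the $K$-Banach algebra homomorphism into $B := A\langle \partial/r\rangle^{\op}$ delivered by the universal property of the skew-Tate algebra, namely Lemma \ref{BanachUnivProp}. First I would observe that $B$ is again a $K$-Banach algebra, since its norm is unchanged and remains submultiplicative ($|b_1\cdot_{\op} b_2| = |b_2 b_1|$). The natural inclusion $A \hookrightarrow A\langle\partial/r\rangle$ is an isometry onto its image, and --- using that $A$ is commutative, as it is in the case $A = \cO(X)$ of interest to us --- it is also a bounded $K$-Banach algebra homomorphism $f : A \to B$, because $f(a)\cdot_{\op} f(b) = ba = ab = f(ab)$.

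Next I would set $b := -\partial \in B$ and verify the two conditions of Lemma \ref{BanachUnivProp}(b). For (i): computing the products inside $B$, one has $bf(a) - f(a)b = a\cdot(-\partial) - (-\partial)\cdot a = \partial a - a\partial = \delta(a) = f(\delta(a))$, using the skew-Ore relation $\partial a - a\partial = \delta(a)$. For (ii): powers of the single element $-\partial$ coincide whether formed in $B$ or in $A\langle\partial/r\rangle$, so $|b^\ell| = |(-1)^\ell\partial^\ell| = |\partial^\ell| = r^\ell$ and therefore $\sup_{\ell\ge 0}|b^\ell|/r^\ell = 1 < \infty$. Lemma \ref{BanachUnivProp} then produces a unique bounded $K$-Banach algebra homomorphism $g : A\langle\partial/r\rangle \to B$ extending $f$ with $g(\partial) = -\partial$, and I would define $(-)^T := g$; by construction $a^T = a$ for $a \in A$ and $\partial^T = -\partial$.

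Finally I would establish $(Q^T)^T = Q$; this also shows $(-)^T$ is bijective with bounded inverse $(-)^T$ itself, hence a topological algebra isomorphism. Regarding $g$ as an anti-homomorphism of the single algebra $A\langle\partial/r\rangle$ (that is, $g(PQ) = g(Q)g(P)$), the composite $g\circ g : A\langle\partial/r\rangle \to A\langle\partial/r\rangle$ is a bounded $K$-algebra \emph{homomorphism}; it restricts to the inclusion on $A$ and, by $K$-linearity, sends $\partial$ to $g(-\partial) = -g(\partial) = \partial$. Since $\mathrm{id}_{A\langle\partial/r\rangle}$ shares both properties, the uniqueness clause of Lemma \ref{BanachUnivProp} (applied with target $A\langle\partial/r\rangle$, $f$ the inclusion, and $b = \partial$) forces $g\circ g = \mathrm{id}$, i.e. $(Q^T)^T = Q$.

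There is no serious obstacle here; the only points demanding care are keeping straight in which multiplication ($\cdot$ or $\cdot_{\op}$) each identity is being asserted --- in particular the sign in condition (i) --- and the easily-missed use of commutativity of $A$ needed to make $f$ multiplicative.
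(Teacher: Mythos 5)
Your proof is correct and follows essentially the same route as the paper: invoke Lemma~\ref{BanachUnivProp} with $B := A\langle \partial/r\rangle^{\op}$, $b := -\partial$, and $f$ the natural inclusion, verify the two conditions, then deduce $(-)^{TT} = \mathrm{id}$ from uniqueness (the paper argues this last step via density of the subalgebra generated by $A$ and $\partial$, which is precisely where the uniqueness clause of Lemma~\ref{BanachUnivProp} comes from, so the two arguments are interchangeable).

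The one thing worth highlighting is a point you noticed in passing but perhaps did not press hard enough: the requirement that $A$ be commutative is not merely a convenience for verifying that $f$ is multiplicative --- it is forced by the conclusion itself. An algebra map $A\langle \partial/r\rangle \to A\langle \partial/r\rangle^{\op}$ restricting to the identity on $A$ must satisfy $ab = (ab)^T = b^T a^T = ba$ for all $a,b \in A$. So the Lemma as stated, with ``$A$ a $K$-Banach algebra'', is slightly imprecise and should say ``commutative''; the paper's proof silently relies on this (its $f$ would not be a homomorphism otherwise, so Lemma~\ref{BanachUnivProp} could not even be applied), and indeed it is only ever invoked with $A = \cO(X)$. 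This is a real catch on your part, and it would be worth stating the commutativity hypothesis explicitly rather than tucking it into a parenthetical.
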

\begin{proof} We apply Lemma \ref{BanachUnivProp} with $A = \cO(X)$, $\delta = \partial_x \in \cB(A)$, $B :=A \langle \partial/r \rangle^{\op}$, $b = -\partial$ and $f : A \to B$ the natural inclusion. Writing $\cdot$ for the product in the $K$-Banach algebra $B$, we compute that for all $a \in A$ we have
\[ b \cdot f(a) - f(a) \cdot b = a(-\partial) - (-\partial)a = [\partial, f(a)] = f(\delta(a)),\]
so the condition (b)(i) in Lemma \ref{BanachUnivProp} holds. The condition (b)(ii) is clear since $|b^\ell| = |(-\partial)^\ell| = r^\ell$ holds in $B$ for all $\ell \geq 0$ by the definition of the norm on $B$ found at Definition \ref{AdelR}. So by Lemma \ref{BanachUnivProp} we obtain a bounded $K$-Banach algebra homomorphism $(-)^T : A\langle \partial/r \rangle \to B$ such that $a^T = a$ for all $a \in A$ and $\partial^T = -\partial$. 

It remains to check that this map is self-inverse. To this end, note that the map $(-)^{TT} : A\langle \partial/r \rangle \to A\langle \partial/r \rangle$ fixes $A$ and $\partial$ pointwise. Since it is also a bounded $K$-Banach algebra homomorphism and since $A$ and $\partial$ generate a dense $K$-subalgebra of $A \langle \partial/r\rangle$, we see that $(-)^{TT}$ is the identity map on all of $A \langle \partial/r\rangle$.
\end{proof}

\subsection{The sheaf on \ts{\cD_r} on \ts{\partial_x/r}-admissible subdomains of \ts{\A}} \label{DrSect}
%
In this section, on we will be interested in affinoid subdomains of the rigid-analytic affine line $\A$. 
Let $\partial_x$ denote the derivation $\frac{d}{dx}$ of $\cO_\A$.
\begin{defn} \label{dxradm} Let $X$ be an affinoid subdomain of $\A$.
\be \item We define \emph{the spectral radius of $X$} to be $r(X) := |\partial_x|_{\sp, \cO(X)}$.
\item Let $r \in \mathbb{R}_{>0}$. We say that $X$ is \emph{$\partial_x/r$-admissible} if and only if
\[  r > r(X) .\]
\ee\end{defn}
We emphasise that this notion of spectral radius $r(X)$ depends on the choice of global vector field $\partial_x$ on $\A$, i.e. on the choice of coordinate $x$ on $\A$. 

\begin{prop}\label{Gtop}  Let $X, Y$ be two affinoid subdomains of $\A$. Then 
\[ r(X \cap Y) \leq \max \{r(X), r(Y)\}.\]
For any $r \in \mathbb{R}_{>0}$, if $X$ and $Y$ are $\partial_x/r$-admissible, then so is $X \cap Y$.
\end{prop}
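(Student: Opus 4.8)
The second assertion is immediate from the first, because $r>r(X)$ and $r>r(Y)$ force $r>\max\{r(X),r(Y)\}\geq r(X\cap Y)$; and if $X\cap Y=\emptyset$ there is nothing to prove. So the task is to establish the inequality $r(X\cap Y)\leq\max\{r(X),r(Y)\}$, and the plan is to realise $\cO(X\cap Y)$ as a completed tensor product and then estimate the operator norms $\|\partial_x^k\|$ by way of the Leibniz rule. First I would observe that, $X$ and $Y$ being affinoid, each is contained in a closed disc, so for $N\gg 0$ both $X$ and $Y$ are affinoid subdomains of $Z:=\Sp K\langle\pi_F^N x\rangle$ by \cite[Corollary 8.2.1/4]{BGR}. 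Then $X\cap Y$ is an affinoid subdomain of $Z$, and $\cO(X\cap Y)\cong\cO(X)\h\otimes_{\cO(Z)}\cO(Y)$ by the universal property of affinoid subdomains \cite[Definition 7.2.2/2]{BGR}. I would give the right-hand side the quotient of the completed-tensor-product norm inherited from $\cO(X)\h\otimes_K\cO(Y)$; this is equivalent to the affinoid norm on $\cO(X\cap Y)$ by the Open Mapping Theorem, and the spectral radius of a bounded operator does not change when a Banach-space norm is replaced by an equivalent one, so I may work with it. Finally, because the restriction maps $\cO(X)\to\cO(X\cap Y)$ and $\cO(Y)\to\cO(X\cap Y)$ each intertwine the relevant copies of $\partial_x$, on elementary tensors one has $\partial_x(f\h\otimes g)=\partial_x f\h\otimes g+f\h\otimes\partial_x g$, whence $\partial_x^k(f\h\otimes g)=\sum_{j=0}^k\binom{k}{j}\partial_x^j f\h\otimes\partial_x^{k-j}g$ for all $k\geq 0$.

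Next I would carry out the estimate. Given $u\in\cO(X\cap Y)$ and $\epsilon>0$, write $u=\sum_i f_i\h\otimes g_i$ with $\sup_i|f_i||g_i|$ within $\epsilon$ of the (relative) tensor-product norm of $u$; applying $\partial_x^k$ and using the ultrametric inequality, $|\binom{k}{j}|\leq 1$, and $|\partial_x^j f_i|\leq\|\partial_x^j\|_{\cB(\cO(X))}|f_i|$ (and similarly for the $g_i$), one obtains
\[ \|\partial_x^k\|_{\cB(\cO(X\cap Y))}\;\leq\; C\cdot\max_{0\leq j\leq k}\|\partial_x^j\|_{\cB(\cO(X))}\,\|\partial_x^{k-j}\|_{\cB(\cO(Y))}\]
for some constant $C$ independent of $k$ (absorbing the norm-equivalence constants). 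To conclude, fix any $\rho>\max\{r(X),r(Y)\}$: since $\|\partial_x^j\|_{\cB(\cO(X))}^{1/j}\to r(X)<\rho$, the sequence $\big(\|\partial_x^j\|_{\cB(\cO(X))}/\rho^j\big)_j$ is bounded, and likewise for $Y$; so the displayed inequality gives $\|\partial_x^k\|_{\cB(\cO(X\cap Y))}\leq C'\rho^k$ for all $k$ and some constant $C'$, hence $r(X\cap Y)=\lim_k\|\partial_x^k\|_{\cB(\cO(X\cap Y))}^{1/k}\leq\rho$. Letting $\rho\downarrow\max\{r(X),r(Y)\}$ finishes the argument.

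I expect that the bulk of the genuine work lies in the first paragraph's structural input: the identification $\cO(X\cap Y)\cong\cO(X)\h\otimes_{\cO(Z)}\cO(Y)$, the control of its topology up to equivalence of norms, and the check that the canonical $\partial_x$ on $\cO(X\cap Y)$ restricts to $\partial_x$ on each of $\cO(X)$ and $\cO(Y)$. Once that is in hand, everything reduces to a routine ultrametric computation together with the elementary fact that $\limsup_k\big(\max_{0\leq j\leq k}a_j b_{k-j}\big)^{1/k}\leq\max\{\lim_j a_j^{1/j},\lim_j b_j^{1/j}\}$ whenever the two limits on the right exist — here applied to $a_j=\|\partial_x^j\|_{\cB(\cO(X))}$ and $b_j=\|\partial_x^j\|_{\cB(\cO(Y))}$.
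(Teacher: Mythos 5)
Your proof is correct and in essence the same as the paper's: both realise $\cO(X\cap Y)$ via a completed tensor product $\cO(X)\h\otimes\cO(Y)$ on which $\partial_x$ acts as $\partial_x\h\otimes 1+1\h\otimes\partial_x$, and both reduce the spectral-radius bound to the Leibniz-rule estimate together with the ultrametric inequality. The only packaging difference is that the paper factors out the key analytic input into a standalone statement (Proposition \ref{SpectralNorm}(a) and (b), on spectral radii of $S\h\otimes 1+1\h\otimes T$ and of induced maps on quotients), and then needs only the \emph{surjection} $\cO(X)\h\otimes_K\cO(Y)\twoheadrightarrow\cO(X\cap Y)$ induced by restriction-and-multiply, with closed $\partial_x$-stable kernel — it never identifies $\cO(X\cap Y)$ precisely; you instead invoke the full isomorphism $\cO(X\cap Y)\cong\cO(X)\h\otimes_{\cO(Z)}\cO(Y)$ and run the estimate inline, absorbing the norm-equivalence constant via the Open Mapping Theorem. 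Both routes are valid and yield the same bound.
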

To prove this result, we will need the following elementary statements from $p$-adic functional analysis that we were unable to find in the literature.
\begin{prop}\label{SpectralNorm} Let $V, W$ be two $K$-Banach spaces, let $S : V \to V$ and $T : W \to W$ be bounded $K$-linear maps.
\be \item Let $U : V \h\otimes W \to V \h\otimes W$ be the bounded $K$-linear map $U := S \h{\otimes} 1 + 1 \h{\otimes} T$.
\[ |U|_{\sp, V \h\otimes W} \quad \leq \quad \max\{|S|_{\sp, V}, |T|_{\sp,W}\}.\]
\item Suppose that $W \leq V$ is closed and that $T$ is the restriction of $S$ to $W$. Let $\overline{S} : V/W \to V/W$ be the induced bounded $K$-linear map. Then
\[ |\overline{S}|_{\sp, V/W} \leq |S|_{\sp, V}.\]
\item Suppose that $K'$ is a complete field extension of $K$ and that $V$ is of countable type. Let $V' := V \h\otimes K'$ and let $S' := S \h\otimes 1 : V' \to V'$. Then 
\[ |S|_{\sp,V} = |S'|_{\sp,V'}.\]
\ee \end{prop}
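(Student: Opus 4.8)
The plan is to prove each of the three parts by a direct spectral-radius estimate, using in each case the formula $|T|_{\sp,V}=\lim_k\|T^k\|^{1/k}$ together with the multiplicativity/compatibility properties of the relevant tensor-product and quotient norms. For part (a), I would first expand $U^k=(S\h\otimes 1+1\h\otimes T)^k$; since $S\h\otimes 1$ and $1\h\otimes T$ commute, this is $\sum_{i=0}^k\binom{k}{i}S^i\h\otimes T^{k-i}$. The tensor-product norm is sub-multiplicative and satisfies $\|A\h\otimes B\|\le\|A\|\,\|B\|$, and since the residue characteristic norm is non-archimedean the binomial coefficients have norm $\le 1$, so $\|U^k\|\le\max_{0\le i\le k}\|S^i\|\,\|T^{k-i}\|$. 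Given $\rho>\max\{|S|_{\sp,V},|T|_{\sp,W}\}$, for $k$ large enough $\|S^i\|\le\rho^i$ and $\|T^{k-i}\|\le\rho^{k-i}$ uniformly (one has to be mildly careful: pick $N$ so that $\|S^j\|\le\rho^j$ and $\|T^j\|\le\rho^j$ for $j\ge N$, and absorb the finitely many small exponents into a constant), giving $\|U^k\|\le C\rho^k$ and hence $|U|_{\sp}\le\rho$; letting $\rho$ decrease to the max finishes it.

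For part (b), the quotient norm on $V/W$ is $|\bar v|=\inf_{w\in W}|v-w|$, and $\overline{S}^k=\overline{S^k}$, so $\|\overline{S^k}\|\le\|S^k\|$ directly from the definition of the quotient norm (the image of the unit ball maps into the unit ball). Taking $k$-th roots and the limit gives $|\overline S|_{\sp,V/W}\le|S|_{\sp,V}$ immediately; this part is essentially formal. For part (c), since $K'/K$ is an isometric extension and $V$ is of countable type, the base-changed norm on $V'=V\h\otimes K'$ restricts to the original norm on $V$, and more importantly $\|S'^k\|_{V'}=\|(S^k)\h\otimes 1\|_{V'}=\|S^k\|_V$: the inequality $\le$ is general, and the reverse inequality $\ge$ uses that for a map of the form $A\h\otimes 1$ on $V\h\otimes K'$ with $V$ of countable type the operator norm is not increased, which is where countable type (equivalently, the existence of an orthogonal/orthonormal-type Schauder basis up to $\epsilon$, cf.\ \cite{SchNFA}) is used to reduce to finite-dimensional pieces over $K$. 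Then $|S'|_{\sp,V'}=\lim\|S'^k\|^{1/k}=\lim\|S^k\|^{1/k}=|S|_{\sp,V}$.

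The main obstacle I expect is part (c): showing the operator norm of $S\h\otimes 1$ on the completed base change equals that of $S$ requires knowing that the completed tensor product norm is "faithful" on the $V$-factor, which is exactly the point where the countable-type hypothesis enters and cannot be dropped in general. The cleanest route is probably to invoke that a $K$-Banach space of countable type admits, for every $\epsilon>0$, an $\epsilon$-orthogonal Schauder basis, write vectors of $V'$ in terms of this basis with coefficients in $K'$, and compute $\|S'v\|$ and $\|v\|$ coefficient-wise, so that the ratio is controlled by $\|S\|$ up to $(1+\epsilon)$-factors that wash out in the spectral limit. Parts (a) and (b) are routine non-archimedean estimates and should only take a few lines each.
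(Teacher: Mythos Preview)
Your argument is correct and for parts (a) and (b) essentially matches the paper's proof. For part (c) the paper takes a slightly different route: rather than working with an $\epsilon$-orthogonal Schauder basis of $V$ directly, it invokes \cite[Proposition 1.2.1(3)]{FvdPut} to choose a $K$-Banach isomorphism $\varphi: c_0(K) \stackrel{\cong}{\to} V$, which induces $\varphi' = \varphi \h\otimes 1: c_0(K') \stackrel{\cong}{\to} V'$. Setting $U := \varphi^{-1} S \varphi$ and $U' := (\varphi')^{-1} S' \varphi'$, one has $U' = U \h\otimes 1$, and on $c_0$ the standard basis $\{e_m\}$ is genuinely orthonormal, so $\|U^n\| = \sup_m |U^n(e_m)| = \|U'^n\|$ exactly for each $n$. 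Since $\varphi$ and $\varphi'$ are only topological (not isometric) isomorphisms, this gives $\|S^n\|$ and $\|S'^n\|$ equal up to fixed multiplicative constants independent of $n$, which suffices for the spectral radii. Your route via $\epsilon$-orthogonal bases is essentially the same idea packaged differently --- such a basis is precisely what encodes an isomorphism with $c_0$ up to bounded distortion. One caution: your intermediate claim that $\|S'^k\| = \|S^k\|$ holds \emph{on the nose} is not obviously true for arbitrary $V$ of countable type over a general (non-spherically-complete) $K$, but as you correctly note, bounded multiplicative defects disappear when extracting $k$th roots and passing to the limit, so this does not affect the conclusion.
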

\begin{proof} (a) Write $s := |S|_{\sp, V}$ and $t := |T|_{\sp, W}$. It follows from the definition of spectral radius that there are real constants $A, B > 1$ such that
\[ |S^n(v)| \leq A s^n \qmb{and} |T^n(w)| \leq B t^n \qmb{for all} n \geq 0, v\in V, w \in W.\]
Given $v \in V$ and $w \in W$, we compute that  
\[ U^n(v_i \otimes w_i) = \left(\sum_{m=0}^n \binom{n}{m} S^m \otimes T^{n-m}\right)(v \otimes w) = \sum_{m=0}^n S^m(v) \binom{n}{m} T^{n-m}(w).\]
Hence, given an element $u \in V \otimes W$, whenever we can write $u = \sum_{i=1}^r v_i \otimes w_i$ with $v_i \in V$ and $w_i \in W$, we can calculate that
\[ \begin{array}{lll} |U^n(u)| &\leq & \max\limits_{1 \leq i \leq r} \max\limits_{0 \leq m \leq n} |S^m(v_i)| \cdot |\binom{n}{m}| \cdot |T^{n-m}(w_i)| \leq \\
&\leq &  A s^m \cdot B t^{n-m} \cdot \max\limits_{1 \leq i \leq r} |v_i| |w_i| = \\
&=& AB \max\limits_{1 \leq i \leq r} |v_i| |w_i| \cdot \max\{s,t\}^n.  \end{array}\]
Taking the infimum over all ways of writing $u$ in this form, and using the density of $V \otimes W$ in $V \h\otimes W$, we conclude that
\[ |U^n|_{V \h\otimes W} \leq AB \max\{s,t\}^n\]
for all $n \geq 0$. Extracting $n^{\rm{th}}$-roots and letting $n \to \infty$ gives the result.

(b) Write $s := |S|_{\sp,V}$ and choose a real number $A > 1$ such that $||S^n|| \leq A s^n$ for all $n \geq 0$. Then
\[ |\overline{S}^n(v + W)|_{V/W} = \inf\limits_{w \in W} |S^n(v + w)| \leq \inf\limits_{w \in W} A s^n |v + w| = A s^n |v + W|.\]
We conclude that $||\overline{S}^n|| \leq A s^n$ for all $n \geq 0$. Again, extracting $n^{\rm{th}}$-roots and letting $n \to \infty$ gives the result.

(c) Because $V$ is of countable type, by \cite[Proposition 1.2.1(3)]{FvdPut} we can find a $K$-Banach space isomorphism $\varphi : c_0(K) \stackrel{\cong}{\longrightarrow} V$. We identify $c_0(K')$ with $c_0(K) \h\otimes K'$; this gives us an induced isomorphism $\varphi' : c_0(K') \stackrel{\cong}{\longrightarrow} V'$ given by $\varphi' = \varphi \h\otimes 1$. Let $U : c_0(K) \to c_0(K)$ be the bounded $K$-linear endomorphism defined by $U := \varphi^{-1} \circ S \circ \varphi$, and let $U' : c_0(K') \to c_0(K')$ be defined by $U' = (\varphi')^{-1} \circ S' \circ \varphi'$.  Then we can find positive real constants $a,b,a',b' > 0$ such that for all $n \geq 0$ we have
\[ a ||U^n|| \leq ||S^n|| \leq b||U^n|| \qmb{and} a' ||U'^n|| \leq ||S'^n|| \leq b' ||U'^n||.\]
This quickly implies that $|S|_{\sp, V} = |U|_{\sp,c_0(K)}$ and $|S'|_{\sp, V'} = |U'|_{\sp,c_0(K')}$. 

Let $\{e_1,e_2,\cdots\}$ be the standard orthonormal basis for the $K$-Banach space $c_0(K)$; then it also forms an orthonormal basis for $c_0(K')$ as a $K'$-Banach space. Since $S' = S \h\otimes 1$ and $\varphi' = \varphi \h\otimes 1$, we see that $U' = U \h\otimes 1$, and we can calculate
\[ ||U^n|| = \sup\limits_{m \geq 0} |U^n(e_m)| = \sup\limits_{m \geq 0} |U'^n(e_m)| = ||U'^n|| \quad\mbox{for all} \quad n \geq 0.\]
Hence $|U|_{\sp, c_0(K)} = |U'|_{\sp,c_0(K')}$ and the result follows.
\end{proof}

\begin{proof}[Proof of Proposition \ref{Gtop}] We can give $\cO(X)\h\otimes \cO(Y)$ the structure of a $K[\partial_x]$-module by making $\partial_x$ act by $\partial_x\h\otimes 1+1\h\otimes\partial_x$ as in Proposition \ref{SpectralNorm}(a). Then there is a bounded surjective $K[\partial_x]$-linear map
\[\cO(X) \h\otimes \cO(Y) \twoheadrightarrow  \cO(X \cap Y)\] induced by the map $\cO(X)\times \cO(Y)\to \cO(X\cap Y)$ that sends $(f,g)$ to $f|_{X\cap Y}g|_{X\cap Y}$.  
Since the kernel of this map is closed and $\partial_x$-stable, by Proposition \ref{SpectralNorm} we have
\[\begin{array}{lllll} r(X \cap Y) &=& |\partial_x|_{\sp, \cO(X \cap Y)} \\
&\leq& |\partial_x \h\otimes 1 + 1 \h\otimes \partial_x|_{\sp, \cO(X) \h\otimes \cO(Y)} \\
&\leq& \max \{|\partial_x|_{\sp, \cO(X)}, |\partial_x|_{\sp, \cO(Y)}\}  \\
&=& \max\{r(X), r(Y)\}.\end{array}\]
The second statement is an easy consequence of the first.
\end{proof}

\begin{defn} Let $r \in \mathbb{R}_{>0}$. The \emph{$\partial_x/r$-admissible $G$-topology} on $\A$ has the $\partial_x/r$-admissible affinoids as its admissible sets, and finite coverings as its admissible coverings. We denote this $G$-topology by $\A(\partial_x/r)$. 
\end{defn}

It follows easily from Proposition \ref{Gtop} that $\A(\partial_x/r)$ is indeed a $G$-topology in the sense of \cite[Definition 9.1.1/1]{BGR}. 

\begin{defn}\label{DrX} For any affinoid subdomain $X$ of $\A$, let $\cD_r(X)$ denote $K$-Banach space $\cO(X) \langle \partial/r\rangle$ from Definition \ref{AdelR}.
\end{defn}

\begin{prop}\label{DbRing} Let $Y \subseteq X$ be $(\partial_x/r)$-admissible affinoid subsets of $\A$.
\be \item $\cD_r(X)$ carries an associative, unital $K$-Banach algebra structure.
\item The function $\cD_r(X) \to \cD_r(Y)$ that sends $\sum\limits_{n=0}^\infty a_n \partial^n$ to $\sum\limits_{n=0}^\infty (a_n)_{|Y} \partial^n$ is a bounded homomorphism of $K$-Banach algebras.
\ee\end{prop}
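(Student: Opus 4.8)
The plan is to recognise $\cD_r(X)$ as an instance of the skew-Tate algebra $A\langle\partial/r\rangle$ of Definition \ref{AdelR} with $A = \cO(X)$ and $\delta = \partial_x$, and then to read off everything from the results already established in this subsection. For part (a), I would first note that $\cO(X)$, equipped with its supremum norm $|\cdot|_X$, is a $K$-Banach algebra: this uses that affinoid subdomains of $\A$ are reduced, so that $|\cdot|_X$ is a genuine (power-multiplicative) norm. The global vector field $\partial_x = d/dx$ restricts to a bounded derivation of $\cO(X)$ whose spectral radius is by definition $r(X)$, and $\partial_x/r$-admissibility says exactly that $r > r(X)$, so I can choose a real number $s$ with $r(X) < s \leq r$. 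Then $|\partial_x|_{\sp,\cO(X)} < s \leq r$, so Theorem \ref{skewannulus} applies and $\cO(X)\langle\partial/r,s/\partial\rangle$ is an associative $K$-Banach algebra under the star product. By Lemma \ref{BanachStarProd}, $\cD_r(X) = \cO(X)\langle\partial/r\rangle$ sits inside it as an isometrically embedded closed $K$-subspace that is moreover closed under the star product, hence is itself an associative $K$-Banach algebra. For unitality I would check directly from Definition \ref{DefnStarProd} that $1 = 1\cdot\partial^0$ is a two-sided identity: in $1 \ast_k v$ only the $i = 0$, $m = 0$ term survives, giving $v_k$, and in $v \ast_k 1$ only the $i = k$, $m = 0$ term survives (using $\delta(1) = 0$), again giving $v_k$.

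For part (b), I would invoke part (a) for both $X$ and $Y$ (both $\partial_x/r$-admissible by hypothesis) to know that $\cD_r(X)$ and $\cD_r(Y)$ are $K$-Banach algebras, and then apply the universal property Lemma \ref{BanachUnivProp} with $A = \cO(X)$, $\delta = \partial_x$, target $B = \cD_r(Y)$, structure map $f = \rho$ the restriction homomorphism $\cO(X)\to\cO(Y)$, and $b = \partial \in \cD_r(Y)$. Here $\rho$ is contractive (hence bounded), since $|f|_Y \leq |f|_X$ for $Y\subseteq X$, and it intertwines the derivations, $\partial_x^Y\circ\rho = \rho\circ\partial_x^X$, because both copies of $\partial_x$ are restrictions of the one global vector field $d/dx$ on $\A$. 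Condition (b)(i) of Lemma \ref{BanachUnivProp} then becomes $\partial\cdot\rho(a) - \rho(a)\cdot\partial = \rho(\partial_x^X(a))$ in $\cD_r(Y)$; the left-hand side is $\partial_x^Y(\rho(a))$ by equation (\ref{dnstara}) in the case $n=1$, and this equals the right-hand side by the intertwining property. Condition (b)(ii) holds because $|\partial^\ell| = r^\ell$ in $\cD_r(Y)$ by Definition \ref{AdelR}, so $\sup_{\ell\geq 0}|\partial^\ell|/r^\ell = 1 < \infty$. Lemma \ref{BanachUnivProp} then produces a bounded $K$-Banach algebra homomorphism $g : \cD_r(X)\to\cD_r(Y)$ with $g|_{\cO(X)} = \rho$ and $g(\partial) = \partial$; since $g$ is continuous and $K$-linear this forces $g\left(\sum_{n\geq 0}a_n\partial^n\right) = \sum_{n\geq 0}\rho(a_n)\partial^n$, which is exactly the map in the statement.

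I do not expect any serious obstacle here: the genuinely substantive content — associativity of the star product (Theorem \ref{skewannulus}) and the universal property of the skew-Tate algebra (Lemma \ref{BanachUnivProp}) — is already available, and the argument amounts to verifying that the pair $(\cO(X), \partial_x)$ fits their hypotheses. The only points needing a word of care are the standard facts that $|\cdot|_X$ is an honest norm making $\cO(X)$ a Banach algebra (reducedness of affinoid subdomains of $\A$) and that restriction of functions commutes with $d/dx$; the unitality check and the identification of $g$ with the coefficientwise restriction map are routine.
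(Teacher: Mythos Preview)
Your proposal is correct and follows essentially the same approach as the paper: for (a) it uses $r(X)<r$ to place $\cO(X)\langle\partial/r\rangle$ inside the skew-Laurent algebra via Lemma~\ref{BanachStarProd} and Theorem~\ref{skewannulus}, and for (b) it invokes Lemma~\ref{BanachUnivProp} with $A=\cO(X)$, $B=\cD_r(Y)$ and $b=\partial$. You are simply more explicit about verifying unitality, the hypotheses of Lemma~\ref{BanachUnivProp}, and the identification of the resulting map with coefficientwise restriction, but these are exactly the details the paper leaves implicit.
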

\begin{proof} Since $r(X) < r$, it follows from Definition \ref{dxradm}(a) that 
\[\lim\limits_{\ell \to \infty} ||\partial_x^\ell||_{\cO(X)} / r^\ell= 0.\]
Now Lemma \ref{BanachStarProd} gives the required associative, unital, $K$-Banach algebra structure on $\cD_r(X) = \cO(X) \langle \partial / r \rangle$, proving part (a). 

For part (b), we apply Lemma \ref{BanachUnivProp} with $A = \cO(X)$, $B = \cD_r(Y)$ and $b = \partial \in B$ to obtain a bounded $K$-algebra homomorphism $\cD_r(X) \to \cD_r(Y)$ which extends the restriction map $\cO(X) \to \cO(Y)$ and sends $\partial \in \cD_r(X)$ to $\partial \in \cD_r(Y)$. 
\end{proof}

In this way, $\cD_r$ becomes a presheaf of $K$-Banach algebras on $\A(\partial_x/r)$. Evidently, we are thinking of $\cD_r(X)$ as a particular Banach completion of the usual ring of finite-order differential operators $\cD(X)$ on $X$. The following Lemma makes this more precise.

\begin{lem}\label{TheMapJ} There is an injective homomorphism $j : \cD \to \cD_r$ from the restriction of $\cD$ to $\A(\partial_x/r)$ to $\cD_r$. The image of $j(X)$ is dense for all $X \in \A(\partial_x/r)$.
\end{lem}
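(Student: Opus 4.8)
The plan is to build $j$ pointwise from the universal property of the skew-Ore extension (Proposition \ref{Univskewore}), to deduce naturality from the uniqueness clause of that property, and then to observe that each $j(X)$ is nothing but the inclusion of the finite sums $\sum_{n=0}^{N} a_n \partial^n$ into $\cO(X)\langle \partial/r\rangle$, which makes injectivity and density transparent. First recall that, since $\A$ carries the global coordinate $x$, the tangent sheaf $\cT_\A$ is free of rank one on $\partial_x$, so for any affinoid subdomain $X$ the ring $\cD(X) = U(\cT_\A(X))$ is canonically the skew-Ore extension $\cO(X)[\partial_x;\partial_x]$. Now fix a $\partial_x/r$-admissible $X$, so that $\partial_x \in \cB(\cO(X))$ has spectral radius $r(X) < r$. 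By Proposition \ref{DbRing}(a), $\cD_r(X) = \cO(X)\langle \partial/r\rangle$ is an associative unital $K$-Banach algebra containing $\cO(X)$ as a $K$-subalgebra (the degree-zero part, by Definition \ref{AdelR}), and the relation $\partial a - a \partial = \partial_x(a)$ holds in it for every $a \in \cO(X)$ (this is the $n=1$ case of equation (\ref{dnstara})). Applying Proposition \ref{Univskewore} with $\phi : \cO(X) \hookrightarrow \cD_r(X)$ and $b = \partial$ then produces a unique ring homomorphism $j(X) : \cD(X) \to \cD_r(X)$ that is the identity on $\cO(X)$ and sends $\partial_x$ to $\partial$.

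To see that $j$ is a morphism of presheaves on $\A(\partial_x/r)$, let $Y \subseteq X$ be $\partial_x/r$-admissible affinoid subdomains. The two composites $\cD(X) \to \cD_r(X) \to \cD_r(Y)$ and $\cD(X) \to \cD(Y) \to \cD_r(Y)$ are both ring homomorphisms that restrict on $\cO(X)$ to the restriction map followed by $\cO(Y) \hookrightarrow \cD_r(Y)$ and that send $\partial_x$ to $\partial$; by the uniqueness in Proposition \ref{Univskewore} they coincide, so $j$ commutes with restriction.

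For injectivity and density I would spell out $j(X)$ on the free left $\cO(X)$-basis $\{\partial_x^n : n \geq 0\}$ of $\cD(X)$. Combining Lemma \ref{multbypartial} with equation (\ref{dnstara}) for $a = 1$ shows that the $n$-th power of $\partial$ computed inside the algebra $\cD_r(X)$ is exactly the monomial $\partial^n$ of the Banach space $\cO(X)\langle \partial/r\rangle$, and hence that $j(X)\big(\sum_{n=0}^{N} a_n \partial_x^n\big) = \sum_{n=0}^{N} a_n \partial^n$; in other words $j(X)$ identifies $\cD(X)$ with the subring of $\cO(X)\langle \partial/r\rangle$ consisting of the finite sums. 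Injectivity is then immediate, since an element of $\cO(X)\langle \partial/r\rangle$ vanishes exactly when all of its coefficients do; and density holds because every $f = \sum_{n \geq 0} a_n \partial^n$ in $\cO(X)\langle \partial/r\rangle$ is the limit of its partial sums, the tail norm $\sup_{n > N} |a_n| r^n$ tending to $0$ by Definition \ref{AdelR}.

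I do not expect a genuine obstacle: the whole argument is bookkeeping around Proposition \ref{Univskewore}. The one point that merits a little care is the concrete identification of the abstractly-produced $j(X)$ with the inclusion of the finite sums, which rests on the fact that the $n$-th power of the symbol $\partial$ in the star-product algebra agrees with the $n$-th monomial --- precisely the computation packaged in Lemma \ref{multbypartial} together with equation (\ref{dnstara}).
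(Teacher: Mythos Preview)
Your proof is correct and follows essentially the same approach as the paper's. The only difference is organizational: the paper defines $j(X)$ directly as the left $\cO(X)$-linear map sending $\partial_x^n \mapsto \partial^n$ and then invokes Lemma~\ref{multbypartial} to verify it is a ring homomorphism, whereas you first obtain the ring homomorphism from the universal property of the skew-Ore extension and then identify it with the explicit inclusion of finite sums; both routes rest on the same computation packaged in Lemma~\ref{multbypartial}, and your use of the uniqueness clause for naturality is a clean way to handle what the paper simply calls ``clear''.
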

\begin{proof} Let $X \in \A(\partial_x/r)$. Then $\cD(X)$ is the skew-polynomial ring $\cO(X)[\partial;\partial_x]$; this gives us an injective left $\cO(X)$-linear map $j(X) : \cD(X) \to \cD_r(X)$ which sends $\partial^n \in \cD(X)$ to $\partial^n \in \cD_r(X)$ for all $n \geq 0$. Lemma \ref{multbypartial} implies that $j(X)$ is a ring homomorphism. The image of $j(X)$ is dense by the definition of $\cD_r(X)$, and it is clear that the maps $j(X)$ commute with the restriction maps in the (pre)sheaves $\cD$ and $\cD_r$.
\end{proof}

\begin{prop}\label{NCTate} Suppose that $r \in \sq{K}$. Then $\cD_r$ is a sheaf of $K$-Banach algebras on $\A(\partial_x/r)$, with vanishing higher \v{C}ech cohomology.
\end{prop}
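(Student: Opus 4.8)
The plan is to reduce everything to the (commutative) Tate Acyclicity Theorem, exploiting the fact that, although $\cD_r(X)$ is a non-commutative ring, \emph{as a Banach $\cO(X)$-module} it is nothing but the affinoid algebra of a disc over $X$. Concretely: since $r \in \sqrt{|K^\times|}$, the Tate algebra $K\langle y/r\rangle$ in a fresh, commuting variable $y$ is a genuine $K$-affinoid algebra; write $\mathbb{D}_r := \Sp K\langle y/r\rangle$ for the corresponding closed disc of radius $r$. For any $\partial_x/r$-admissible affinoid $X$, put $A := \cO(X)$. By Definitions \ref{AdelR} and \ref{DrX} we have $\cD_r(X) = \{\sum_{n\geq 0} a_n \partial^n : a_n \in A, \ |a_n| r^n \to 0\}$ with norm $\sup_n |a_n| r^n$, so the left-$A$-linear bijection $\partial^n \mapsto y^n$ is an isometric isomorphism of Banach $A$-modules
\[ \cD_r(X) \;\cong\; A\langle y/r\rangle \;=\; A \h\otimes_K K\langle y/r\rangle \;=\; \cO(X \times \mathbb{D}_r).\]
By Proposition \ref{DbRing}(b) the restriction maps of the presheaf $\cD_r$ act coefficientwise on the $a_n$, so for affinoid subdomains $Y \subseteq X$ this isomorphism intertwines the restriction $\cD_r(X) \to \cD_r(Y)$ with the pullback $\cO(X \times \mathbb{D}_r) \to \cO(Y \times \mathbb{D}_r)$.

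Now fix a $\partial_x/r$-admissible affinoid $X$ together with an admissible covering of it in $\A(\partial_x/r)$; as admissible coverings in this $G$-topology are finite, this is a finite covering $X = \bigcup_{i=1}^m X_i$ by $\partial_x/r$-admissible affinoid subdomains of $\A$. Each $X_i$ is contained in $X$ and is an affinoid subdomain of $\A$, hence (affinoid subdomains being transitive and local on the target) an affinoid subdomain of $X$; therefore $X_i \times \mathbb{D}_r$ is an affinoid subdomain of the affinoid variety $X \times \mathbb{D}_r$, and $\{X_i \times \mathbb{D}_r\}_{i=1}^m$ is a finite covering of $X \times \mathbb{D}_r$ by affinoid subdomains. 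Applying the identification of the previous paragraph to $X$, to each $X_i$ and to the multiple intersections $X_{i_0} \cap \cdots \cap X_{i_q}$ (whose products with $\mathbb{D}_r$ are the corresponding intersections inside $X \times \mathbb{D}_r$), the augmented \v{C}ech complex of $\cD_r$ for $\{X_i\}$ is identified, as a complex of $K$-Banach spaces, with the augmented \v{C}ech complex of the structure sheaf $\cO$ for the covering $\{X_i \times \mathbb{D}_r\}$ of the affinoid $X \times \mathbb{D}_r$. By Tate's Acyclicity Theorem (see e.g. \cite[Corollary 8.2.1/5]{BGR}) the latter complex is exact, hence so is the former. This says precisely that $\cD_r$ satisfies the sheaf axiom for $\{X_i\}$ and that $\check{H}^q(\{X_i\}, \cD_r) = 0$ for all $q \geq 1$. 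Finally, each $\cD_r(X)$ is a $K$-Banach algebra and each restriction map is a $K$-Banach algebra homomorphism by Proposition \ref{DbRing}, and the equalizer defining the sheaf axiom is computed on underlying $K$-Banach spaces; so $\cD_r$ is in fact a sheaf of $K$-Banach algebras with vanishing higher \v{C}ech cohomology.

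There is no serious obstacle here; the two points requiring care are (i) that $\mathbb{D}_r$ genuinely exists as an affinoid variety — this is exactly where the hypothesis $r \in \sqrt{|K^\times|}$ enters, as otherwise $K\langle y/r\rangle$ is not an affinoid algebra and the ``affinoid box'' $X \times \mathbb{D}_r$ is not available — and (ii) that each member $X_i$ of an admissible covering is an affinoid subdomain of $X$ itself, not merely of $\A$, so that $\{X_i \times \mathbb{D}_r\}$ is an admissible affinoid covering to which Tate's theorem applies. The real content is just the observation that the non-commutative ring $\cD_r(X)$ has, as a topological $\cO(X)$-module, the shape of the structure sheaf of an affinoid box over $X$, so all the sheaf-theoretic information is inherited for free from the classical commutative theory.
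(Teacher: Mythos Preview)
Your proof is correct and follows essentially the same approach as the paper: both identify $\cD_r(X)$ with $\cO(X \times \mathbb{D}_r)$ as $K$-Banach spaces (functorially in $X$), then transport the \v{C}ech complex to the product covering $\{X_i \times \mathbb{D}_r\}$ of the affinoid $X \times \mathbb{D}_r$ and invoke Tate's Acyclicity Theorem. Your version is somewhat more detailed in justifying that the $X_i$ are affinoid subdomains of $X$ and in spelling out the role of the hypothesis $r \in \sqrt{|K^\times|}$, but the underlying argument is the same.
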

\begin{proof} By Proposition \ref{DbRing}, $\cD_r$ is a presheaf of $K$-Banach algebras on the $G$-topology $\A(\partial_x/r)$; here we regard it only as a presheaf of $K$-Banach spaces. Let $\D_r$ denote the closed disc of radius $r$: the condition we imposed on $r$ ensures that $\D_r$ is a $K$-affinoid variety. Then for every $X \in \A(\partial_x/r)$, there is an isomorphism of $K$-Banach spaces 
\[\cD_r(X) = \cO(X) \langle \partial / r \rangle \cong \cO(X \times \D_r)\]
which is functorial in $X$. Let $\cU := \{X_1,\cdots, X_m\}$ be a finite affinoid covering in $\A(\partial_x/r)$. Then because $\D_r$ is itself an affinoid variety, $\cV := \{X_1 \times \D_r, \cdots, X_m \times \D_r\}$ is a finite affinoid covering of $X \times \D_r$, so by Tate's Acyclicity Theorem  \cite[Theorem 4.2.2]{FvdPut}, the natural map $\cO(X \times \D_r) \to \check{H}^0(\cV, \cO)$ is an isomorphism, and $\check{H}^j(\cV, \cO) = 0$ for $j > 0$. The result follows.
\end{proof}

We will also need an overconvergent version of these definitions.

\begin{defn}\label{DagSite} Let $r \in \mathbb{R}_{>0}$. 
\be
\item An affinoid subdomain $X$ of $\A$ is \emph{$(\partial_x/r)^\dag$-admissible} if and only if 
\[ r \geq r(X).\]
\item The \emph{$(\partial_x/r)^\dag$-admissible $G$-topology} on $\A$ has the $(\partial_x/r)^\dag$-admissible affinoids as its admissible sets, and finite coverings as the admissible coverings. 
\item We denote this $G$-topology by $\A(\partial_x/r)^\dag$. 
\item For each $X \in \A(\partial_x/r)^\dag$, we define
\[ \cD^\dag_r(X) := \underset{c>r}{\colim}{}  \hsp \cD_c(X).\]
\ee\end{defn}
Note that this colimit is computed in the category of associative unital $K$-algebras. The connecting maps $\cD_c(X) \to \cD_{c'}(X)$ appearing in this colimit are all injective, so one should think of $\cD^\dag_r(X)$ as the union of the $\cD_c(X)$ as $c$ runs over all real numbers strictly bigger than $r$.

\begin{thm}\label{NCDagTate} For any $r \in \bR_{>0}$, $\cD^\dag_r$ is a sheaf of $K$-algebras on $\A(\partial_x/r)^\dag$ with vanishing higher \v{C}ech cohomology.
\end{thm}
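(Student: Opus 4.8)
The plan is to reduce to the non-overconvergent statement Proposition \ref{NCTate} by a filtered-colimit argument. Fix a finite admissible covering $\cU = \{X_1,\ldots,X_m\}$ of some $X \in \A(\partial_x/r)^\dag$; thus $r \geq r(X)$ and $r \geq r(X_i)$ for all $i$, and by Proposition \ref{Gtop} also $r(X_{i_0} \cap \cdots \cap X_{i_q}) \leq \max_j r(X_{i_j}) \leq r$ for every string of indices. Hence for \emph{every} real $c > r$, and in particular for every $c \in \sq{K}$ with $c > r$, the spaces $X$, the $X_i$ and all their finite intersections are $\partial_x/c$-admissible, so that $\cU$ is a finite admissible covering of $X$ in the finer $G$-topology $\A(\partial_x/c)$ and $\cD_c$ is defined on all members of $\cU$ and on their intersections.

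Since $\sqrt{|K^\times|}$ is dense in $\bR_{>0}$, the set $\{c \in \sq{K} : c > r\}$ is cofinal in $(r,\infty)$, so for every $(\partial_x/r)^\dag$-admissible $Y$ we have $\cD^\dag_r(Y) = \underset{c > r}{\colim}\hsp \cD_c(Y) = \underset{c \in \sq{K},\, c>r}{\colim}\hsp \cD_c(Y)$, a \emph{filtered} colimit of $K$-algebras. (In particular $\cD^\dag_r$ is a presheaf of $K$-algebras on $\A(\partial_x/r)^\dag$, its restriction maps being the colimits of those supplied by Proposition \ref{DbRing}(b).) Because the covering $\cU$ is finite, each \v{C}ech group $C^q(\cU, \cD^\dag_r) = \prod_{i_0 < \cdots < i_q} \cD^\dag_r(X_{i_0} \cap \cdots \cap X_{i_q})$ is a finite product; as filtered colimits commute with finite products, the augmented \v{C}ech complex $0 \to \cD^\dag_r(X) \to C^\bullet(\cU, \cD^\dag_r)$ is the filtered colimit over $c \in \sq{K} \cap (r,\infty)$ of the augmented \v{C}ech complexes $0 \to \cD_c(X) \to C^\bullet(\cU, \cD_c)$.

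By Proposition \ref{NCTate} each of the latter complexes is exact, and a filtered colimit of exact complexes of $K$-vector spaces is exact; hence $0 \to \cD^\dag_r(X) \to C^\bullet(\cU, \cD^\dag_r)$ is exact. As $X$ and $\cU$ were arbitrary, this is precisely the assertion that $\cD^\dag_r$ is a sheaf of $K$-algebras on $\A(\partial_x/r)^\dag$ with $\check{H}^j(\cU, \cD^\dag_r) = 0$ for all $j > 0$. The argument is formal once Proposition \ref{NCTate} is available; the only points needing attention are the cofinality of the $\sq{K}$-indexed subsystem inside the colimit defining $\cD^\dag_r$ (forced on us because $\D_c$ is $K$-affinoid only when $c \in \sq{K}$, which is where Tate's theorem was invoked), and the fact, supplied by Proposition \ref{Gtop}, that a covering admissible for the coarse $G$-topology $\A(\partial_x/r)^\dag$ remains admissible for every finer $\A(\partial_x/c)$ with $c > r$. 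I expect no genuine difficulty beyond this bookkeeping.
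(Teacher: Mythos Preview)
Your proof is correct and follows essentially the same approach as the paper: both reduce to Proposition~\ref{NCTate} by expressing $\cD^\dag_r$ as a filtered colimit over $c \in \sq{K}$ with $c > r$ of the sheaves $\cD_c$, and then use exactness of filtered colimits. The paper phrases the cofinality step by choosing a decreasing sequence $c_n \searrow r$ in $\sq{K}$ rather than using the whole set $\sq{K} \cap (r,\infty)$, but this is immaterial.
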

\begin{proof} It is clear from the definitions that $\A(\partial_x/r)^\dag \subset \A(\partial_x/c)$ whenever $c > r$. Hence $\cD^\dag_r(X)$ is a $K$-algebra for every $X \in \A(\partial_x/r)^\dag$ by Proposition \ref{DbRing}. The restriction maps in $\cD^\dag_r$ respect the $K$-algebra structure, so $\cD^\dag_r$ is a presheaf of $K$-algebras on $\A(\partial_x/r)^\dag$. We can find a decreasing sequence $c_0 > c_1 > \cdots$ in $\sq{K}$ converging to $r$ from above, so that as presheaves on $\A(\partial_x/r)^\dag$ we have
\[ \cD^\dag_r = \underset{n \to \infty}{\colim}{} \hsp \cD_{c_n}.\]
The sheaf property and the vanishing of higher \v{C}ech cohomology of $\cD^\dag_r$ now follow from Corollary \ref{NCTate} and the exactness of direct limits.
\end{proof}

For future use, we record here how the functors $\cD_r$ and $\cD_r^\dag$ behave with respect to finite base change of the ground field. We leave the easy proof to the reader.
\begin{lem}\label{DrBC} Let $X$ be an affinoid subdomain of $\bA$ and let $K'$ be a finite field extension of $K$. 
\be \item For all $r > r(X)$, $\cD_r(X_{K'}) \cong \cD_r(X) \otimes K'$ as $K'$-Banach algebras.
\item For all $r \geq r(X)$, $\cD_r^\dag(X_{K'}) \cong \cD_r^\dag(X) \otimes K'$ as $K'$-algebras.
\ee\end{lem}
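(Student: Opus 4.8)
The plan is to treat part (a) as a base-change statement for the skew-Tate algebra and to deduce part (b) by passing to a colimit.

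First I would check that both sides of (a) are simultaneously defined. Since $X$ is an affinoid subdomain of $\bA$, the $K$-affinoid algebra $\cO(X)$ is of countable type, and $\cO(X_{K'}) = \cO(X) \h\otimes_K K' = \cO(X) \otimes_K K'$ because $K'/K$ is finite. Hence Proposition \ref{SpectralNorm}(c), applied to $S = \partial_x$ on $V = \cO(X)$ and the complete extension $K'/K$, gives
\[ r(X_{K'}) = |\partial_x|_{\sp,\cO(X_{K'})} = |\partial_x|_{\sp,\cO(X)} = r(X), \]
so that $X$ is $(\partial_x/r)$-admissible (resp. $(\partial_x/r)^\dag$-admissible) if and only if $X_{K'}$ is.

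For part (a), recall $\cD_r(X) = \cO(X)\langle \partial/r\rangle$ by Definition \ref{DrX}. The restriction-of-scalars inclusion $\cO(X)\hookrightarrow\cO(X_{K'})\hookrightarrow\cD_r(X_{K'})$ is a bounded $K$-algebra homomorphism carrying $\partial_x$ to $\partial_x$, and $\partial\in\cD_r(X_{K'})$ satisfies $[\partial,f] = \partial_x(f)$ for all $f\in\cO(X)$ together with $\sup_{\ell\geq 0}|\partial^\ell|/r^\ell = 1 < \infty$; so Lemma \ref{BanachUnivProp} produces a bounded $K$-algebra homomorphism $\cD_r(X)\to\cD_r(X_{K'})$ fixing $\partial$. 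Extending scalars along $K\hookrightarrow K'$ yields a $K'$-algebra homomorphism $\Psi\colon \cD_r(X)\otimes_K K'\to\cD_r(X_{K'})$ (it respects multiplication because $K'$ is central in $\cD_r(X_{K'})$). To see $\Psi$ is bijective, fix a $K$-basis $\lambda_1,\dots,\lambda_d$ of $K'$, so that $\cO(X_{K'}) = \bigoplus_{i=1}^d \cO(X)\lambda_i$ and, by the equivalence of all norms on the finite-dimensional $K$-space spanned by the $\lambda_i$, an element $\sum_j \big(\sum_i a_{ij}\lambda_i\big)\partial^j$ lies in $\cD_r(X_{K'})$ precisely when $|a_{ij}|r^j\to 0$ for each $i$, i.e. precisely when $\sum_j a_{ij}\partial^j\in\cD_r(X)$ for each $i$. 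Under the identification $\cD_r(X)\otimes_K K' = \bigoplus_i \cD_r(X)\lambda_i$ this exhibits $\Psi$ as the obvious bijection, so $\Psi$ is a bijective bounded $K'$-linear map between Banach spaces with equivalent norms, hence an isomorphism of $K'$-Banach algebras.

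Part (b) is then formal: for $r\geq r(X) = r(X_{K'})$ and any $c > r$ one has $c > r(X)$, so part (a) gives isomorphisms $\cD_c(X_{K'})\cong\cD_c(X)\otimes_K K'$ compatible with the connecting maps as $c$ decreases to $r$, and since $-\otimes_K K'$ commutes with colimits of $K$-algebras,
\[ \cD^\dag_r(X_{K'}) = \underset{c > r}{\colim}{}\; \cD_c(X_{K'}) \cong \underset{c > r}{\colim}{}\; \big(\cD_c(X)\otimes_K K'\big) \cong \Big(\underset{c > r}{\colim}{}\; \cD_c(X)\Big)\otimes_K K' = \cD^\dag_r(X)\otimes_K K'. \]
There is no substantive obstacle here; the only points deserving a moment's attention are the identity $r(X_{K'}) = r(X)$ (which is where the countable type of $\cO(X)$ and Proposition \ref{SpectralNorm}(c) enter) and the matching of the $c_0$-growth conditions defining the two sides of (a), which rests on the equivalence of all norms on a finite-dimensional $K$-vector space. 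Both are standard, so --- as the lemma advertises --- the proof is essentially bookkeeping.
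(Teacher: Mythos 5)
The paper leaves this proof to the reader, so there is no argument of the authors' to compare against; your proof supplies one and is correct. The two points you highlight are exactly the ones that need checking: the invariance $r(X_{K'}) = r(X)$ via Proposition~\ref{SpectralNorm}(c) (which, incidentally, the paper later records separately as Lemma~\ref{BaseChR}), and the matching of the $c_0$-decay conditions on Laurent coefficients after choosing a $K$-basis of $K'$, which is where finiteness of the extension enters. Your appeal to Lemma~\ref{BanachUnivProp} to build the comparison map, followed by the coordinate-wise bijectivity check, is clean, and passing to the colimit handles part~(b) with no further work.
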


The sheaf of rings $\cD_r$ acts naturally on $\cO$; more precisely, we have the following Lemma which will be frequently useful in performing explicit calculations.

\begin{lem}\label{ActionOnO} Let $r > 0$ and $X \in \bA(\partial_x/r)$. 
\be \item The natural action of $\cD(X)$ on $\cO(X)$ induces a bounded $K$-algebra map
 \[ \sigma_r : \cD_r(X) \to \cB(\cO(X))\]
such that $\sigma_r(a)(b) = ab$ for all $a,b \in \cO(X)$, and $\sigma_r(\partial) = \partial_x$.
\item The map $\sigma_r$ is injective.
\ee\end{lem}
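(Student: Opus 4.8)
The plan is to deduce part (a) from the universal property of the skew-Tate algebra recorded in Lemma \ref{BanachUnivProp}, and then to prove part (b) by evaluating differential operators on the monomials $x^m$.

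For part (a), I would apply Lemma \ref{BanachUnivProp} with $A = \cO(X)$, $\delta = \partial_x \in \cB(\cO(X))$, $B = \cB(\cO(X))$, with $f : \cO(X) \to \cB(\cO(X))$ the homomorphism sending $a$ to the operator ``multiplication by $a$'', and with $b = \partial_x \in \cB(\cO(X))$. One must then check the two conditions in part (b) of that Lemma. Condition (i), namely $b f(a) - f(a) b = f(\delta(a))$ for all $a \in \cO(X)$, is exactly the Leibniz rule $\partial_x(ac) - a\,\partial_x(c) = \partial_x(a)\cdot c$ for $c \in \cO(X)$, i.e. $[\partial_x, f(a)] = f(\partial_x(a))$. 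Condition (ii), that $\sup_{\ell \geq 0} |b^\ell|/r^\ell = \sup_{\ell \geq 0} \|\partial_x^\ell\|_{\cB(\cO(X))}/r^\ell < \infty$, is where the hypothesis $X \in \bA(\partial_x/r)$ is used: by Definition \ref{dxradm} we have $|\partial_x|_{\sp,\cO(X)} = r(X) < r$, so choosing any $\rho$ with $r(X) < \rho < r$ gives $\|\partial_x^\ell\| \leq \rho^\ell$ for all sufficiently large $\ell$, whence $\|\partial_x^\ell\|/r^\ell \leq (\rho/r)^\ell \to 0$ and the supremum is finite. Lemma \ref{BanachUnivProp} then produces the desired bounded $K$-algebra homomorphism $\sigma_r : \cD_r(X) \to \cB(\cO(X))$ extending $f$ and sending $\partial$ to $\partial_x$; the identity $\sigma_r(a)(b) = ab$ for $a,b \in \cO(X)$ simply records that $\sigma_r$ extends $f$.

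For part (b), I would first make the action explicit: writing $P = \sum_{n \geq 0} a_n \partial^n \in \cD_r(X)$ and using that $\sigma_r$ is a continuous algebra homomorphism with $\sigma_r(a_n)$ equal to multiplication by $a_n$ and $\sigma_r(\partial) = \partial_x$, one gets $\sigma_r(P)(g) = \sum_{n \geq 0} a_n \partial_x^n(g)$ for every $g \in \cO(X)$, the series converging in $\cO(X)$. Assume $\sigma_r(P) = 0$ (we may assume $X \neq \emptyset$, otherwise there is nothing to prove). Evaluating on the functions $1, x, x^2, \dots \in \cO(X)$ and using $\partial_x^n(x^m) = \tfrac{m!}{(m-n)!}\, x^{m-n}$ for $n \leq m$ and $\partial_x^n(x^m) = 0$ for $n > m$, we obtain $0 = \sigma_r(P)(x^m) = \sum_{n=0}^{m} a_n \tfrac{m!}{(m-n)!}\, x^{m-n}$ for every $m \geq 0$. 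An induction on $m$ now shows $a_m = 0$: the case $m = 0$ gives $a_0 = 0$, and if $a_0 = \cdots = a_{m-1} = 0$ the displayed identity collapses to $a_m \cdot m! = 0$, so $a_m = 0$ since $K$ has characteristic zero. Hence $P = 0$, proving injectivity.

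Neither step is genuinely hard. The one point in part (a) that deserves care is the passage from ``$X$ is $\partial_x/r$-admissible'' to the boundedness of $\{\|\partial_x^\ell\|/r^\ell\}_{\ell \geq 0}$, i.e. recognising that a strict inequality for the spectral radius forces the normalised operator norms to decay geometrically; and in part (b) one should note that $\sigma_r(P)(g)$ is legitimately computed term-by-term as $\sum_n a_n \partial_x^n(g)$, which follows from continuity of $\sigma_r$ together with the fact that convergence in operator norm implies pointwise convergence in $\cO(X)$.
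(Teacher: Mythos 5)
Your proof is correct and follows essentially the same route as the paper: part (a) is deduced from Lemma \ref{BanachUnivProp} exactly as in the paper (the paper abbreviates the verification of boundedness, but the reasoning via the spectral radius is the same), and part (b) evaluates on monomials $x^m$, where the paper phrases it as taking the least $m$ with $a_m\neq 0$ and observing $\sigma_r(Q)(x^m)=m!\,a_m\neq 0$, which is the contrapositive of your induction.
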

\begin{proof}(a) The map $\sigma_r :  \cO(X) \to \cB(\cO(X))$ which sends $a \in \cO(X)$ to the operator of multiplication by $a$ is a bounded $K$-algebra homomorphism. Definition \ref{dxradm} directly implies that $||\partial_x^k||_{\cO(X)} / r^k \to 0$ as $k \to \infty$. Hence $\sup\limits_{\ell\geq 0}||\partial_x^\ell||_{\cO(X)} / r^\ell <\infty$, so by Lemma \ref{BanachUnivProp}, $\sigma_r$ extends uniquely to a bounded $K$-algebra homomorphism $\sigma_r : \cD_r(X) = \cO(X)\langle \partial/r\rangle \to \cB(\cO(X))$ with the required properties.

(b) Suppose that $Q = \sum_{n\geq 0} a_n \partial^n\in \cD_r(X)$ with $a_n\in \cO(X)$ not all zero. Let $m\geq 0$ be least such that $a_m\neq 0$. Then $x^m\in \cO(X)$ and \[ \sigma_r\left(\sum_{n\geq 0}a_n \partial^n\right)(x^m)= m!a_m.\] Therefore $Q \notin \ker \sigma_r$ and $\sigma_r$ is injective.
\end{proof}

We now turn to the question of calculating, at least in theory, the spectral radius $r(X)$ of an affinoid subdomain $X$ of $\bA$. See \cite[\S 4.1]{ArdWad2023} for the terminology and notation. The spectral radius behaves well with respect to base change:

\begin{lem}\label{BaseChR} Let $K'$ be a complete field extension of $K$ and let $X$ be an affinoid subdomain of $\bA$. Then 
\[ r(X) = r(X_{K'}).\]
\end{lem}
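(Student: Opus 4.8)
The plan is to reduce the claim $r(X) = r(X_{K'})$ to a statement about spectral radii of operators on $K$-Banach spaces, and then invoke Proposition \ref{SpectralNorm}(c). By definition, $r(X) = |\partial_x|_{\sp, \cO(X)}$ and $r(X_{K'}) = |\partial_x|_{\sp, \cO(X_{K'})}$, where on the right-hand side $\partial_x$ denotes the derivation of $\cO(X_{K'})$ extending $\partial_x$ (this is the unique continuous $K'$-linear derivation restricting to $\partial_x$ on $\cO(X)$, obtained by base change). So I must show that the base-changed operator $\partial_x \h\otimes 1$ on $\cO(X_{K'}) = \cO(X) \h\otimes_K K'$ has the same spectral radius as $\partial_x$ on $\cO(X)$.

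First I would recall that $\cO(X)$ is a $K$-affinoid algebra, hence of countable type as a $K$-Banach space (being a quotient of a Tate algebra, which has a countable orthonormal-ish Schauder basis of monomials). Next I would record that $\cO(X_{K'}) \cong \cO(X) \h\otimes_K K'$ as $K'$-Banach algebras, which is the standard behaviour of affinoid algebras under complete base field extension, and that under this identification the derivation $\partial_x$ on $\cO(X_{K'})$ corresponds to $\partial_x \h\otimes 1$; this is immediate from uniqueness of the extension and continuity, since $\partial_x \h\otimes 1$ is a continuous $K'$-linear derivation of $\cO(X) \h\otimes_K K'$ restricting to $\partial_x$ on $\cO(X)$. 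With these identifications in hand, the claim is exactly the assertion $|\partial_x|_{\sp, \cO(X)} = |\partial_x \h\otimes 1|_{\sp, \cO(X) \h\otimes_K K'}$, which is precisely Proposition \ref{SpectralNorm}(c) applied with $V = \cO(X)$, $S = \partial_x$, and the field extension $K'/K$. That finishes the proof.

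The only slightly delicate point — and the part I would be most careful about — is making sure the hypotheses of Proposition \ref{SpectralNorm}(c) are genuinely met: one needs $V = \cO(X)$ to be of countable type over $K$, and one needs to know $K'$ is a complete field extension of $K$ (which is given). Countable type holds because any affinoid algebra over $K$ is a quotient of some Tate algebra $K\langle x_1, \dots, x_n\rangle$, which has a countable orthonormal Schauder basis, and quotients (as well as closed subspaces) of spaces of countable type are again of countable type by \cite[Proposition 1.2.1]{FvdPut}. Everything else is formal bookkeeping about base change of affinoid algebras and derivations, so I do not expect any real obstacle.
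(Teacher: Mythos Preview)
Your proposal is correct and follows exactly the same approach as the paper: observe that $\cO(X)$ is of countable type (as a quotient of a Tate algebra), identify $\cO(X_{K'}) = \cO(X)\h\otimes K'$ with $\partial_x$ acting as $\partial_x\h\otimes 1$, and then invoke Proposition~\ref{SpectralNorm}(c). The paper's proof is just a terser version of what you wrote.
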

\begin{proof} The $K$-affinoid algebra $\cO(X)$ is a quotient of a Tate algebra, and is therefore of countable type as a $K$-Banach space. Now apply Proposition \ref{SpectralNorm}(c) to the bounded $K$-linear operator $\partial_x : \cO(X) \to \cO(X)$, noting that $\cO(X_{K'}) = \cO(X) \h\otimes K'$ by definition of $X_{K'}$.\end{proof}

In view of \cite[Theorem 4.1.8]{ArdWad2023},  we finish $\S \ref{DrSect}$ by explicitly calculating the spectral radius of affinoid subdomains of the affine line which are finite unions of cheeses.

\begin{defn}\label{MinRad}\hsp \be
\item We define $\rho(C(\mathbf{\alpha}, \mathbf{s})) := \min\limits_{1 \leq i \leq g} |s_i|$. 
\item If $X = X_1 \cup \cdots \cup X_m$ is a disjoint union of cheeses $X_i$, we define
\[\rho(X) := \min\limits_i \rho(X_i).\]
\ee\end{defn}

Recall that $\partial_x^{[n]} := \partial_x^n/n!: \cO(X) \to \cO(X)$ denotes the \emph{$n$th divided power} of $\partial_x$.

\begin{lem}\label{NormDivPow} Let $X$ be an affinoid subdomain of $\A$ that is split over $K$. Then 
\[||\partial_x^{[n]}||_{\cO(X)} = \frac{1}{\rho(X)^n} \qmb{for all} n \geq 0.\]
\end{lem}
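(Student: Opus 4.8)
The plan is to reduce the statement to the case of a single $K$-cheese, and then to diagonalise the divided-power operator $\partial_x^{[n]}$ against the natural orthogonal basis of functions on a cheese.

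First, since $X$ is split over $K$, write $X = X_1 \sqcup \cdots \sqcup X_m$ with each $X_j$ a $K$-cheese; then $\cO(X) = \prod_j \cO(X_j)$ with supremum norm equal to the maximum of the $|\cdot|_{X_j}$, and $\partial_x$ acts componentwise, so $\|\partial_x^{[n]}\|_{\cO(X)} = \max_j \|\partial_x^{[n]}\|_{\cO(X_j)}$. As $\rho(X) = \min_j \rho(X_j)$ by Definition \ref{MinRad}(b), we have $\rho(X)^{-n} = \max_j \rho(X_j)^{-n}$, so it is enough to treat a single cheese $C = C(\alpha,s)$, with outer disc of radius $|s_0|$ about $\alpha_0$ and holes of radii $|s_1|,\dots,|s_g|$ about $\alpha_1,\dots,\alpha_g$.

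Next, by the partial-fraction description of $\cO(C)$ recalled in \cite[\S 4.1]{ArdWad2023}, the monomials $\{(x-\alpha_0)^k : k\ge 0\}$ together with $\{(x-\alpha_i)^{-k} : 1\le i\le g,\ k\ge 1\}$ form an orthogonal basis of $\cO(C)$, with $|(x-\alpha_0)^k|_C = |s_0|^k$ and $|(x-\alpha_i)^{-k}|_C = |s_i|^{-k}$; in particular the supremum norm of a function is computed coefficientwise against this basis. The closed subspace spanned by each of these $g+1$ families is $\partial_x$-stable, so $\|\partial_x^{[n]}\|_{\cO(C)}$ is the maximum of the operator norms of $\partial_x^{[n]}$ restricted to them. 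On the outer subspace, $\partial_x^{[n]}(x-\alpha_0)^k = \binom{k}{n}(x-\alpha_0)^{k-n}$; since $\binom{k}{n}\in\Z$ we have $|\binom{k}{n}|\le 1$, with equality when $k = n$, so this restricted operator has norm exactly $|s_0|^{-n}$. On the $i$-th hole subspace, $\partial_x^{[n]}(x-\alpha_i)^{-k} = (-1)^n\binom{k+n-1}{n}(x-\alpha_i)^{-k-n}$, and $\binom{k+n-1}{n}\in\Z$ with $|\binom{k+n-1}{n}|\le 1$ and equality at $k = 1$, so this restricted operator has norm exactly $|s_i|^{-n}$. Hence $\|\partial_x^{[n]}\|_{\cO(C)} = \max\{|s_0|^{-n}, |s_1|^{-n},\dots,|s_g|^{-n}\}$. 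Finally, the defining conditions of a cheese make each hole of $C$ a subset of its outer disc, forcing $|s_i| \le |s_0|$ for $1\le i\le g$; therefore this maximum equals $\big(\min_{1\le i\le g}|s_i|\big)^{-n} = \rho(C)^{-n}$, as required.

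The only input beyond elementary bookkeeping is the orthogonal partial-fraction decomposition of $\cO(C)$ together with the fact that its supremum norm is computed coefficientwise against the monomial basis; this is standard for cheeses and effectively contained in \cite[\S 4.1]{ArdWad2023}. Everything else reduces to the observation that binomial coefficients are $p$-adic integers that take unit value at an extreme index, plus the fact (built into the definition of a cheese) that the outer radius dominates the hole radii; so I do not expect a genuine obstacle here.
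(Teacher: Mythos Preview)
Your proof is correct and follows essentially the same approach as the paper's: reduce to a single cheese, compute $\partial_x^{[n]}$ on the orthogonal partial-fraction basis, and observe that binomial coefficients are $p$-adic integers with equality at the extremal index. The only cosmetic differences are that the paper works with the normalized basis elements $\left(\frac{x-\alpha_0}{s_0}\right)^\ell$ and $\left(\frac{s_i}{x-\alpha_i}\right)^\ell$ and cites \cite[Proposition 2.4.8(a)]{LutJacobians} rather than \cite[\S 4.1]{ArdWad2023} for the orthogonal decomposition.
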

\begin{proof} If $X_1, \ldots, X_m$ are the connected components of $X$, then $\cO(X)$ is the orthogonal direct sum of $K$-Banach spaces $\cO(X_1) \oplus \cdots \oplus \cO(X_m)$, and the differential operators $\partial_x^{[n]}$ respects this decomposition. Therefore $||\partial_x^{[n]}||_{\cO(X)} = \max\limits_{1 \leq i \leq m} ||\partial_x^{[n]}||_{\cO(X_i)}$. On the other hand, by definition we have that $\rho(X)^{-n} = \max\limits_{1 \leq i \leq m} \rho(X_i)^{-n}$, and this reduces us to the case where $X = C(\mathbf{\alpha}, \mathbf{s})$ is a cheese. 

For all $n, \ell \geq 0$ and all $i = 1,\ldots, g$ we have the estimate
\[ \left\vert \partial_x^{[n]} \left( \frac{s_i}{x - \alpha_i} \right)^\ell \right\vert_{X} = \left\vert (-1)^n \binom{\ell-n+1}{n} \left(\frac{-1}{s_i}\right)^n \left(\frac{s_i}{x-\alpha_i}\right)^{\ell + n} \right|_{X} \leq |s_i|^{-n},\]
as well as\[ \left\vert \partial_x^{[n]} \left( \frac{x-\alpha_0}{s_0} \right)^\ell \right\vert_{X} = \left\vert \binom{\ell}{n} \left(\frac{1}{s_0}\right)^n \left(\frac{x-\alpha_0}{s_0}\right)^{\ell - n} \right|_{X} \leq |s_0|^{-n}.\]
Since $\rho(X) = \min\limits_{1 \leq i \leq g} |s_i|$ by definition and since $ \min\limits_{1 \leq i \leq g} |s_i| \leq |s_0|$ by \cite[Definition 4.1.1]{ArdWad2023}, we can apply the explicit description of $\cO(C(\alpha,\mathbf{s}))$ given in \cite[Proposition 2.4.8(a)]{LutJacobians} to obtain $||\partial_x^{[n]}||_{\cO(X)} \leq \rho(X)^{-n}$.  For the reverse inequality, say $\rho(X) = |s_i|$ for some $1 \leq i \leq g$. Then taking $\ell = 1$ in the first estimate above shows that 
\[||\partial_x^{[n]}||_{\cO(X)} \geq \left\vert (-1)^n \left(\frac{-1}{s_i}\right)^n \left( \frac{s_i}{x - \alpha_i} \right) \right\vert_X \left/\right. \left\vert\left( \frac{s_i}{x - \alpha_i} \right)\right\vert_X = |s_i|^{-n} = \rho(X)^{-n}. \qedhere\] 
\end{proof}

Recall from $\S \ref{ConvNotn}$ that $|p| := 1/p$ and that $\varpi := p^{-\frac{1}{p-1}} \in \mathbb{R}_{>0}$. 

\begin{lem}\label{nFacVarpi} For all $n \geq 0$, we have $1 \leq \frac{|n!|}{\varpi^n} \leq p n$.
\end{lem}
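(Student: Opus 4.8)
The plan is to reduce everything to Legendre's classical formula for the $p$-adic valuation of a factorial. Write $v_p$ for the $p$-adic valuation on $\bQ$ normalised by $v_p(p) = 1$, so that $|n!| = p^{-v_p(n!)}$ by the convention $|p| = 1/p$ from $\S\ref{ConvNotn}$, and let $s_p(n)$ denote the sum of the base-$p$ digits of $n$. Legendre's formula gives $v_p(n!) = \frac{n - s_p(n)}{p-1}$. Since $\varpi^n = p^{-n/(p-1)}$ by the definition of $\varpi$, we obtain at once the exact identity
\[ \frac{|n!|}{\varpi^n} = p^{\frac{n}{p-1} - v_p(n!)} = p^{\frac{s_p(n)}{p-1}},\]
valid for every $n \geq 0$. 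Both inequalities in the Lemma will then follow from elementary two-sided bounds on the digit sum $s_p(n)$.

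For the lower bound: when $n = 0$ we have $s_p(0) = 0$, so the right-hand side of the identity equals $1$ and the left inequality holds with equality; when $n \geq 1$ we have $s_p(n) \geq 1$, so the identity gives $\frac{|n!|}{\varpi^n} = p^{s_p(n)/(p-1)} \geq p^{1/(p-1)} > 1$. Hence $1 \leq |n!|/\varpi^n$ in all cases. For the upper bound (which we only need for $n \geq 1$, since the displayed inequality becomes $1 \leq 0$ when $n = 0$): the base-$p$ expansion of $n$ has exactly $\myfloor{\log_p n} + 1$ digits, each at most $p-1$, so $s_p(n) \leq (p-1)\bigl(\myfloor{\log_p n} + 1\bigr)$. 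Dividing by $p-1$ and substituting into the identity yields
\[ \frac{|n!|}{\varpi^n} = p^{\frac{s_p(n)}{p-1}} \leq p^{\myfloor{\log_p n} + 1} \leq p^{\log_p n + 1} = p\hsp n,\]
as required.

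There is no genuine obstacle here: the entire content is the combination of Legendre's formula with the estimate $1 \leq s_p(n) \leq (p-1)(\myfloor{\log_p n}+1)$ for $n \geq 1$, together with the trivial observation that $n = 0$ is the degenerate case in which the left-hand inequality is an equality. The only point requiring a moment's care is the passage from $p^{\myfloor{\log_p n}+1}$ to $p\hsp n$, which uses $\myfloor{\log_p n} \leq \log_p n$ and the monotonicity of $t \mapsto p^t$.
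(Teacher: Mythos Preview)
Your proof is correct and follows essentially the same route as the paper: both reduce to the identity $|n!|/\varpi^n = p^{s_p(n)/(p-1)}$ via Legendre's formula and then bound $s_p(n)$. You are in fact slightly more careful than the paper, which writes $s_p(n) \leq (p-1)(\log_p(n)+1)$ without addressing $n=0$; you correctly observe that the upper bound $\frac{|n!|}{\varpi^n} \leq pn$ is vacuous (indeed false) at $n=0$ and only prove it for $n \geq 1$, which is all that is ever used.
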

\begin{proof} $v_p(n!) = \frac{n - s_p(n)}{p-1}$ where $s_p(n)$ is the sum of the $p$-adic digits of $n$, so 
\[\frac{|n!|}{\varpi^n} = |p|^{v_p(n!) - \frac{n}{p-1}} = |p|^{-\frac{s_p(n)}{p-1}} = p^{\frac{s_p(n)}{p-1}}.\] 
Now $0 \leq s_p(n) \leq (p-1)(\log_p(n) + 1)$, so $1 \leq \frac{|n!|}{\varpi^n} \leq p^{\log_p(n) + 1} = p n $.
\end{proof}

\begin{cor}\label{Heisenberg} For $X$ as in Lemma \ref{NormDivPow}, we have
\[r(X) = \varpi / \rho(X).\]
For $r \in \mathbb{R}_{>0}$, $X$ is $\partial_x/r$-admissible if and only if $r > \varpi / \rho(X)$.
\end{cor}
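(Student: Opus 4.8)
The plan is to read off the operator norms $\|\partial_x^n\|_{\cO(X)}$ from Lemma \ref{NormDivPow}, extract $n$th roots, and take the limit, using Lemma \ref{nFacVarpi} to control the contribution of $|n!|$.

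First I would note that $\partial_x^n = n!\cdot \partial_x^{[n]}$, so Lemma \ref{NormDivPow} gives $\|\partial_x^n\|_{\cO(X)} = |n!| \cdot \|\partial_x^{[n]}\|_{\cO(X)} = |n!|/\rho(X)^n$ for all $n \geq 0$. Hence by the definition of the spectral radius (following \cite[Definition 6.1.3]{Kedlaya}), $r(X) = |\partial_x|_{\sp,\cO(X)} = \lim_{n\to\infty} \|\partial_x^n\|_{\cO(X)}^{1/n} = \frac{1}{\rho(X)}\lim_{n\to\infty} |n!|^{1/n}$.

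Next I would evaluate $\lim_{n\to\infty}|n!|^{1/n}$. Lemma \ref{nFacVarpi} gives $\varpi^n \leq |n!| \leq pn\,\varpi^n$ for all $n \geq 1$, so $\varpi \leq |n!|^{1/n} \leq (pn)^{1/n}\varpi$; since $(pn)^{1/n}\to 1$ as $n\to\infty$, the squeeze gives $\lim_{n\to\infty}|n!|^{1/n} = \varpi$. Substituting back yields $r(X) = \varpi/\rho(X)$, which is the first assertion. The second assertion is then immediate: by Definition \ref{dxradm}(b), $X$ is $\partial_x/r$-admissible if and only if $r > r(X)$, i.e.\ if and only if $r > \varpi/\rho(X)$.

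There is essentially no obstacle here — the work has all been done in Lemma \ref{NormDivPow} (the explicit norm computation on cheeses, which was the genuinely technical step) and in Lemma \ref{nFacVarpi} (the elementary estimate on $v_p(n!)$). The only point requiring a word of care is that the limit defining $|\partial_x|_{\sp,\cO(X)}$ exists and may be computed along any cofinal sequence, which is standard for submultiplicative sequences, and that passing from $\|\partial_x^{[n]}\|$ to $\|\partial_x^n\|$ uses only the scalar $|n!|$ since $\partial_x^{[n]}$ differs from $\partial_x^n$ by the unit-or-non-unit scalar $1/n!$ of $K$.
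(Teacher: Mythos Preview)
Your proposal is correct and follows essentially the same approach as the paper's own proof: combine Lemma \ref{NormDivPow} with the relation $\partial_x^n = n!\,\partial_x^{[n]}$, take $n$th roots, and use Lemma \ref{nFacVarpi} to evaluate $\lim_{n\to\infty}|n!|^{1/n}=\varpi$, then invoke Definition \ref{dxradm}(b) for the second part. You have simply spelled out the squeeze argument in more detail than the paper does.
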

\begin{proof} By Lemma \ref{NormDivPow} we have $||\partial_x^n||_{\cO(X)} = |n!| \cdot ||\partial_x^{[n]}||_{\cO(X)} = \frac{ |n!| }{\rho(X)^n}$. Take $n^{\rm{th}}$-roots, let $n \to \infty$ and apply Lemma \ref{nFacVarpi} to obtain the first statement. The second one follows directly from Definition \ref{dxradm}(b).
\end{proof}

It will be useful to know that for a \emph{general} affinoid subdomain $X$ of $\A$, the spectral radius $r(X)$ is in fact completely determined by $||\partial_x||_{\cO(X)}$; for this we need the following elementary

\begin{lem}\label{FiniteBCforOpNorm} Let $V$ be a $K$-Banach space, let $K'$ be a finite field extension of $K$ and let $V' := V \h\otimes K'$. Let $T : V \to V$ be a bounded $K$-linear map and let $T' := T \h\otimes 1 : V' \to V'$. Then
\[ ||T'|| = ||T||.\]
\end{lem}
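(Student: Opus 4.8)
The plan is to establish the two inequalities $||T'|| \leq ||T||$ and $||T'|| \geq ||T||$ separately. Since $K'/K$ is finite, the completed tensor product $V' = V \h\otimes_K K'$ coincides with the algebraic tensor product $V \otimes_K K'$ (cf.\ \cite{BGR}), so that every $\xi \in V'$ is a \emph{finite} sum $\xi = \sum_i v_i \otimes a_i$ with $v_i \in V$, $a_i \in K'$. For the inequality $||T'|| \leq ||T||$, given such a $\xi$ we have $T'(\xi) = \sum_i T(v_i) \otimes a_i$, whence
\[ |T'(\xi)| \;\leq\; \max_i |T(v_i)| \cdot |a_i| \;\leq\; ||T|| \cdot \max_i |v_i| \cdot |a_i|; \]
taking the infimum over all representations $\xi = \sum_i v_i \otimes a_i$ gives $|T'(\xi)| \leq ||T|| \cdot |\xi|$. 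This direction is purely formal and uses nothing about $K'/K$ beyond isometry of the norm.

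For the reverse inequality I would fix $\alpha \in (0,1)$ and choose a $K$-basis $\omega_1, \ldots, \omega_n$ of $K'$ that is \emph{$\alpha$-orthogonal}, meaning that $\left| \sum_i c_i \omega_i \right|_{K'} \geq \alpha \max_i |c_i| \cdot |\omega_i|$ for all $c_i \in K$; such a basis exists for any finite-dimensional normed space over a complete non-archimedean field (see e.g.\ \cite{SchNFA}). Since $V \otimes_K K' = \bigoplus_i V \otimes \omega_i$, every $\xi \in V'$ has a unique expansion $\xi = \sum_i v_i \otimes \omega_i$ with $v_i \in V$, and a short computation --- expanding an arbitrary representation $\xi = \sum_j v_j' \otimes a_j'$ by writing $a_j' = \sum_i \lambda_{ij}\omega_i$ and using $\alpha$-orthogonality to bound $|\lambda_{ij}| \leq |a_j'|_{K'}/(\alpha|\omega_i|)$ --- yields
\[ \alpha \cdot \max_i |v_i| \cdot |\omega_i| \;\leq\; |\xi| \;\leq\; \max_i |v_i| \cdot |\omega_i|. \]
Applying this to $\xi = w \otimes \omega_1$ shows in particular that $\alpha |w| \cdot |\omega_1| \leq |w \otimes \omega_1| \leq |w| \cdot |\omega_1|$ for all $w \in V$.

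Finally, since $T'(v \otimes \omega_1) = T(v) \otimes \omega_1$, the last estimate gives, for every nonzero $v \in V$,
\[ \frac{|T'(v \otimes \omega_1)|}{|v \otimes \omega_1|} \;=\; \frac{|T(v) \otimes \omega_1|}{|v \otimes \omega_1|} \;\geq\; \frac{\alpha |T(v)| \cdot |\omega_1|}{|v| \cdot |\omega_1|} \;=\; \alpha \cdot \frac{|T(v)|}{|v|}, \]
so $||T'|| \geq \alpha \, ||T||$; letting $\alpha \to 1$ yields $||T'|| \geq ||T||$, and together with the first paragraph this proves the claim. The only ingredient that is not completely formal is the two-sided comparison between the tensor product norm on $V \otimes_K K'$ and a chosen $K$-basis of $K'$: because $K$ is not assumed discretely valued one cannot in general take this basis to be exactly orthogonal, which is why one works with $\alpha$-orthogonal bases and then lets $\alpha \to 1$. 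This is a point to be careful about rather than a genuine obstacle.
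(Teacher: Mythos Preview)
Your proof is correct. Both arguments prove the two inequalities separately by decomposing along a $K$-basis of $K'$, but the choice of basis differs. The paper's proof selects a basis $\{a_1,\ldots,a_n\}$ for $(K')^\circ$ as a $K^\circ$-module with $a_1=1$ and invokes \cite[Lemma 17.2]{SchNFA} to obtain the \emph{exact} formula $\bigl|\sum_i v_i\otimes a_i\bigr|=\max_i |v_i|\,|a_i|$ for the tensor product norm; then $|T'(\sum_i v_i\otimes a_i)|=\max_i|T(v_i)||a_i|\le\|T\|\,|\sum_i v_i\otimes a_i|$ gives $\|T'\|\le\|T\|$, and since $a_1=1$ the inclusion $v\mapsto v\otimes 1$ is an isometry, whence $|T(v)|=|T'(v\otimes 1)|\le\|T'\|\,|v|$ gives the reverse inequality immediately, with no limiting step.

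Your route via $\alpha$-orthogonal bases and $\alpha\to 1$ trades this exact formula for an approximate one. The gain is robustness: you never need $(K')^\circ$ to be free over $K^\circ$, only that finite-dimensional normed $K$-spaces admit $\alpha$-orthogonal bases for every $\alpha<1$, which holds over any complete non-archimedean $K$. The cost is the slightly longer final estimate. If you are willing to assume the integral basis exists (as the paper does), then taking $\omega_1=1$ and $\alpha=1$ collapses your argument to theirs.
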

\begin{proof} Because $[K':K] < \infty$ we can find a basis $\{a_1,\ldots, a_n\}$ for $(K')^\circ$ as a $K^\circ$-module, where $a_1 = 1$. Then $V' = \bigoplus_{i=1}^n V \h\otimes a_i$, and by \cite[Lemma 17.2]{SchNFA} the tensor product norm on $V'$ is given by
\[ \left\vert \sum\limits_{i=1}^n v_i \h\otimes a_i\right\vert = \max\limits_{1 \leq i \leq n} |v_i| |a_i|. \]
Therefore
\[ \left\vert T'\left(\sum\limits_{i=1}^n v_i \h\otimes a_i\right)\right\vert = \max\limits_{1 \leq i \leq n} |T(v_i) | |a_i| \leq ||T|| \cdot \max\limits_{1 \leq i \leq n} |v_i | |a_i| = ||T|| \hsp \left\vert\sum\limits_{i=1}^n v_i \h\otimes a_i \right\vert\]
which implies that $||T'|| \leq ||T||$. For the reverse inequality, note that for any $v \in V$ we have $|T(v)| = |T'(v \otimes a_1)| \leq ||T'|| |v \otimes a_1| = ||T'|| \hsp |v|$; hence $||T|| \leq ||T'||$.
\end{proof}

\begin{cor}\label{rXcalc} Let $X$ be an affinoid subdomain of $\A$. Then
\[ r(X) = \varpi \hsp ||\partial_x||_{\cO(X)}.\]
\end{cor}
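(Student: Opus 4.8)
The plan is to reduce the general case to the split case (Corollary \ref{Heisenberg}) by passing to a finite field extension $K'$ over which $X$ splits, and to check that neither side of the claimed identity changes under this base change. By \cite[Theorem 4.1.8]{ArdWad2023} there is a finite extension $K'/K$ such that $X_{K'}$ is a finite disjoint union of $K'$-cheeses. First I would invoke Lemma \ref{BaseChR} to get $r(X) = r(X_{K'})$, and then Corollary \ref{Heisenberg} applied to the split affinoid $X_{K'}$ to obtain $r(X_{K'}) = \varpi/\rho(X_{K'})$. By Lemma \ref{NormDivPow} we also have $||\partial_x||_{\cO(X_{K'})} = ||\partial_x^{[1]}||_{\cO(X_{K'})} = 1/\rho(X_{K'})$, so combining these gives $r(X_{K'}) = \varpi \hsp ||\partial_x||_{\cO(X_{K'})}$.

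It remains to transfer the operator-norm computation back from $X_{K'}$ to $X$. Here I would apply Lemma \ref{FiniteBCforOpNorm} to the bounded $K$-linear operator $\partial_x : \cO(X) \to \cO(X)$, using that $\cO(X_{K'}) = \cO(X) \h\otimes K'$ and that $\partial_x$ on $\cO(X_{K'})$ is precisely $\partial_x \h\otimes 1$; this yields $||\partial_x||_{\cO(X_{K'})} = ||\partial_x||_{\cO(X)}$. Chaining the three equalities together gives
\[ r(X) = r(X_{K'}) = \varpi \hsp ||\partial_x||_{\cO(X_{K'})} = \varpi \hsp ||\partial_x||_{\cO(X)},\]
as required.

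There is essentially no hard obstacle here: the corollary is a bookkeeping consequence of results already established. The only point requiring a small amount of care is confirming that the global vector field $\partial_x$ on $\A$ restricts compatibly under base change, i.e. that the operator on $\cO(X_{K'}) = \cO(X)\h\otimes K'$ induced by functoriality of $\A \rightsquigarrow \cD_r$ really is $\partial_x \h\otimes 1$ rather than some twist of it; but this is immediate from the fact that $x \in \cO(\A)$ and hence $x \in \cO(X_{K'})$ is the image of $x \in \cO(X)$, so $\partial_x$ acts $K'$-linearly by extension of scalars. Given that, the identity follows formally. (One could alternatively deduce the result directly from Lemma \ref{nFacVarpi} and the definition of spectral radius once one knows $||\partial_x^{[n]}||_{\cO(X)} = ||\partial_x||_{\cO(X)}^n$ for all $n$, but the latter identity itself seems to require the reduction to the split case, so the route above is cleaner.)
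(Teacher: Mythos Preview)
Your proposal is correct and follows essentially the same approach as the paper's own proof: reduce to the split case via a finite base change using Lemma \ref{BaseChR} and Lemma \ref{FiniteBCforOpNorm}, then apply Corollary \ref{Heisenberg} together with the $n=1$ case of Lemma \ref{NormDivPow}. The paper just phrases it slightly more tersely by noting that both sides are invariant under finite base change before specialising to the split case.
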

\begin{proof} Both sides of the equation are invariant under passing to a finite field extension, by Lemma \ref{BaseChR} and Lemma \ref{FiniteBCforOpNorm}. By \cite[Theorem 4.1.8]{ArdWad2023}, we may then assume that $X$ splits over $K$. But now $r(X) = \varpi / \rho(X)$ by Corollary \ref{Heisenberg}, whereas $1 / \rho(X) = ||\partial_x||_{\cO(X)}$ by Lemma \ref{NormDivPow}.
\end{proof}

\subsection{Twisting-automorphisms of $\cD_r$}\label{DivTwDer}In this section we study the line bundles with connectionn that arise from certain Kummer-\'etale coverings of $X$, and we investigate when the action of $\cD$ on these line bundles extends to an action of $\cD_r$. 
\begin{lem}\label{zTwist} Let $X$ be a smooth rigid $K$-analytic variety, let $u \in \cO(X)^\times$ and let $d$ be a non-zero integer. There is an $\cO(X)$-linear ring automorphism 
\[\theta_{u,d} : \cD(X) \to \cD(X)\] such that
\begin{equation}\label{ThetaTwist} \theta_{u,d}(\delta	) = \delta - \frac{1}{d} \frac{\delta(u)}{u} \qmb{for all} \delta \in \cT(X).\end{equation}
\end{lem}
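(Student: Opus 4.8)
The plan is to build $\theta_{u,d}$ using the universal property of the enveloping algebra $\cD(X)=U(\cT(X))$ of the Lie--Rinehart pair $(\cO(X),\cT(X))$: a $K$-algebra endomorphism of $U(\cT(X))$ may be constructed from a pair $(\phi,\tau)$ consisting of a $K$-algebra map $\phi\colon\cO(X)\to\cD(X)$ and a $K$-Lie algebra map $\tau\colon\cT(X)\to\cD(X)$ subject to the two compatibilities $\tau(f\delta)=\phi(f)\tau(\delta)$ and $\tau(\delta)\phi(f)-\phi(f)\tau(\delta)=\phi(\delta(f))$ for $f\in\cO(X)$, $\delta\in\cT(X)$, and it is then uniquely determined by these data. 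I would take $\phi$ to be the canonical inclusion $\cO(X)\hookrightarrow\cD(X)$, set $a_\delta:=\tfrac1d\,\tfrac{\delta(u)}{u}\in\cO(X)$ (which makes sense precisely because $u\in\cO(X)^\times$), and define $\tau(\delta):=\delta-a_\delta$, viewed inside $\cD(X)$ via the canonical maps $\cO(X),\cT(X)\hookrightarrow\cD(X)$.

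It then remains to verify the three conditions. The $\cO(X)$-linearity $\tau(f\delta)=f\tau(\delta)$ is immediate from $a_{f\delta}=f a_\delta$, and the anchor compatibility $\tau(\delta)f-f\tau(\delta)=\delta(f)$ follows at once because $a_\delta$ lies in the commutative subring $\cO(X)\subseteq\cD(X)$ while $\delta f-f\delta=\delta(f)$ there. The only point of substance is that $\tau$ respects Lie brackets. Expanding $[\tau(\delta),\tau(\delta')]=[\delta-a_\delta,\ \delta'-a_{\delta'}]$ in $\cD(X)$ and using $[\delta,g]=\delta(g)$ for $g\in\cO(X)$ together with $[a_\delta,a_{\delta'}]=0$, one finds $[\tau(\delta),\tau(\delta')]=[\delta,\delta']-\delta(a_{\delta'})+\delta'(a_\delta)$, whereas $\tau([\delta,\delta'])=[\delta,\delta']-a_{[\delta,\delta']}$. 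So the claim reduces to the identity $a_{[\delta,\delta']}=\delta(a_{\delta'})-\delta'(a_\delta)$, i.e. $\tfrac{[\delta,\delta'](u)}{u}=\delta\!\big(\tfrac{\delta'(u)}{u}\big)-\delta'\!\big(\tfrac{\delta(u)}{u}\big)$, which I would check in one line using the quotient rule: the two $\tfrac{\delta(u)\delta'(u)}{u^2}$ cross terms cancel, leaving $\tfrac{\delta\delta'(u)-\delta'\delta(u)}{u}$. (This is just the closedness of $\mathrm{d}\log u$.) The universal property then yields the unique $K$-algebra homomorphism $\theta_{u,d}\colon\cD(X)\to\cD(X)$ restricting to the identity on $\cO(X)$ and to $\tau$ on $\cT(X)$; it is $\cO(X)$-linear because it fixes $\cO(X)$ pointwise.

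Finally, to see that $\theta_{u,d}$ is an automorphism I would exhibit its inverse explicitly. Since $\tfrac{\delta(u^{-1})}{u^{-1}}=-\tfrac{\delta(u)}{u}$, the endomorphism $\theta_{u^{-1},d}$ constructed as above fixes $\cO(X)$ pointwise and sends $\delta$ to $\delta+a_\delta$. As $a_\delta\in\cO(X)$ is fixed by $\theta_{u^{-1},d}$, the composite $\theta_{u^{-1},d}\circ\theta_{u,d}$ is a $K$-algebra endomorphism of $\cD(X)$ that is the identity on both $\cO(X)$ and $\cT(X)$, hence equals $\mathrm{id}_{\cD(X)}$ by the uniqueness clause of the universal property; the same applies to $\theta_{u,d}\circ\theta_{u^{-1},d}$. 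I expect no genuine obstacle here: the argument is purely algebraic and valid for an arbitrary Lie--Rinehart pair, the only mild care being the bracket bookkeeping and the recognition that the required cancellation is exactly closedness of the logarithmic derivative of $u$.
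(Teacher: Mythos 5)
Your proof is correct, and it takes a genuinely different route from the paper's. The paper passes to an \'etale cover $Z\to X$ on which $u$ has a $d$-th root $z$, defines $\theta_{u,d}$ as conjugation by $z$ inside $\cD(Z)$, checks the formula $(\ref{ThetaTwist})$, and then argues that this automorphism of $\cD(Z)$ preserves the image of $\cD(X)$ because $X$ is smooth so $\cD(X)$ is generated by $\cO(X)$ and $\cT(X)$. You instead work entirely inside $\cD(X)=U(\cT(X))$ and invoke Rinehart's universal property, with the Lie-bracket compatibility reducing to the closedness of $\dlog u$; this is more elementary and works for an arbitrary Lie--Rinehart pair with no smoothness or analytic input. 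Two remarks. First, the paper's reduction to the affinoid case (``by considering an affinoid covering of $X$, we quickly reduce to the case where $X$ is itself affinoid'') should also appear in your version, since the identification $\cD(X)\cong U(\cT(X))$ and its universal property are only asserted for affinoid $X$; for general $X$ one builds $\theta_{u,d}$ locally and glues, which is immediate but should be said. Second, the paper's choice of the covering-space construction is not gratuitous: later in \S\ref{DivTwDer} it repeatedly uses the element $z$ and the identity $h_{u,d}^{[n]}=z\,\partial_x^{[n]}(z^{-1})$ (Definition~\ref{HunDefn}, Proposition~\ref{PnStdForm}, Corollary~\ref{DivPow2}, Lemma~\ref{NearlyInner}) to derive the norm estimates underpinning Theorem~\ref{ThetaU}, so having $z$ available from the outset streamlines the sequel, whereas your construction would have to reintroduce it for those computations.
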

\begin{proof} By considering an affinoid covering of $X$, we quickly reduce to the case where $X$ is itself affinoid. Let $Z \to X$ be any \'etale map such that $Z$ is affinoid and $\cO(Z)$ contains a unit $z$ such that $z^d = u$; for example we could take $Z := \Sp \cO(X)[T] / (T^d - u)$. There is a canonical $K$-algebra homomorphism $\cD(X) \to \cD(Z)$; it is injective, and we will identify $\cD(X)$ with its image in $\cD(Z)$.

Since $z$ is a unit in $\cD(Z)$, conjugation by $z$ is defines an $\cO(Z)$-linear ring automorphism $\theta_{u,d} : \cD(Z) \to \cD(Z)$ with inverse $\theta_{u^{-1},d}$.  Let $\delta \in \cT(X)$. Then $\delta(u) = \delta(z^d) = d z^{d-1} \delta(z) = d u z^{-1} \delta(z)$, and $0 = \delta(z z^{-1}) = z \delta(z^{-1}) + \delta(z) z^{-1}$ together show that 
\begin{equation} \label{zdz} z \delta(z^{-1}) = - \delta(z) z^{-1} = - \frac{1}{d} \frac{\delta(u)}{u}.\end{equation}
Therefore for all $f \in \cO(X)$ we have
\[ \theta_{u,d}(\delta)(f) = (z \delta z^{-1})(f) = z \delta(z^{-1} f) = z \delta(z^{-1}) f + \delta(f) = \delta(f) - \frac{1}{d} \frac{\delta(u)}{u},\]
which shows that $\theta_{u,d}$ preserves the image of $\cO(X) + \cT(X)$ inside $\cD(Z)$. Since $X$ is smooth, $\cD(X)$ is generated by $\cO(X)$ and $\cT(X)$, so $\theta_{u,d}$ also preserves the entire image of $\cD(X)$ in $\cD(Z)$, and therefore defines an endomorphism $\theta_{u,d}$ of $\cD(X)$. Since the same is true for $\theta_{u^{-1},d}$, this endomorphism must be bijective. \end{proof}

As our notation suggests, $\theta_{u,d}$ only depends on $u$ and $d$ and not on any particular choice of the \'etale map $Z \to X$. It could have been also defined without introducing the covering space $Z$ of $X$. For future use, we record the following useful observation, obtained by taking $Z \to X$ to be the identity map.

\begin{lem}\label{ThetaDthPower} Let $z \in \cO(X)^\times$. Then $\theta_{z^d,d}(Q) = z Q z^{-1}$ for all $Q \in \cD(X)$.
\end{lem}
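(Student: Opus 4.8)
The plan is to derive this as a direct special case of the construction in Lemma \ref{zTwist}, applied with the étale map $Z \to X$ taken to be the identity map $\mathrm{id}_X : X \to X$. First I would observe that when $u = z^d$ with $z \in \cO(X)^\times$, the element $z$ itself is a $d$-th root of $u$ already living in $\cO(X) = \cO(Z)$, so we may legitimately take $Z = X$ in the proof of Lemma \ref{zTwist}. Tracing through that proof, the automorphism $\theta_{u,d}$ of $\cD(X)$ is constructed precisely as conjugation by this $d$-th root $z$ inside $\cD(Z) = \cD(X)$; that is, $\theta_{z^d,d}(Q) = z Q z^{-1}$ for all $Q$ in the image of $\cD(X)$, which is all of $\cD(X)$ since $Z = X$.

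The one point that genuinely needs checking is that $\theta_{u,d}$ does not depend on the choice of étale cover $Z \to X$ used to define it — otherwise the formula we get from the identity cover might differ from the "official" $\theta_{u,d}$. This independence was already asserted in the remark immediately following Lemma \ref{zTwist}. To see it cleanly, note that on $\cT(X)$ the automorphism is pinned down by the explicit formula (\ref{ThetaTwist}), namely $\theta_{u,d}(\delta) = \delta - \frac{1}{d}\frac{\delta(u)}{u}$, which makes no reference to $Z$; and since $X$ is smooth, $\cD(X)$ is generated as a $K$-algebra by $\cO(X)$ together with $\cT(X)$, on both of which the map is determined cover-independently. Hence any two choices of $Z$ yield the same automorphism of $\cD(X)$, and in particular the choice $Z = X$ gives the genuine $\theta_{u,d}$.

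Putting these together: for $z \in \cO(X)^\times$, apply Lemma \ref{zTwist} with $u = z^d$ and the étale cover $Z = X$, $z \in \cO(Z) = \cO(X)$ the evident $d$-th root; the construction there exhibits $\theta_{z^d,d}$ as $Q \mapsto z Q z^{-1}$ on $\cD(X)$, which is the claimed formula. I do not anticipate any real obstacle here — the lemma is essentially a matter of unwinding the previous proof in the degenerate case, with the only subtlety being the (already-established) well-definedness of $\theta_{u,d}$ independently of the auxiliary cover.
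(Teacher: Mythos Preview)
Your proposal is correct and takes essentially the same approach as the paper: the paper simply records this lemma as the observation ``obtained by taking $Z \to X$ to be the identity map'' in the construction of Lemma \ref{zTwist}, which is exactly what you do. Your additional remarks on cover-independence are a helpful elaboration of a point the paper asserts but does not spell out.
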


Our main aim of the remainder of this section is to show that when $X$ is an $\partial_x/r$-admissible affinoid subdomain of $\bA$ then $\theta_{u,d}$ extends to a bounded $K$-algebra automorphism of $\cD_r(X)$ and thus that $\theta_{u,d}$ extends to a $K$-algebra automorphism of $\cD_r^\dag(X)$ whenever $X$ is $(\partial_x/r)^\dag$-admissible. To do this we will use Lemma \ref{BanachUnivProp} and so we must estimate the norms of the images of the elements $\theta_{u,d}(\delta^\ell)$ under the natural inclusion $\cD(X)\to \cD_r(X)$.

\begin{lem}\label{TwistsMultiply} For all $u, v \in \cO(X)^\times$ we have
\[\theta_{uv,d} = \theta_{u,d} \circ \theta_{v,d}.\]
\end{lem}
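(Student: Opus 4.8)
The plan is to verify the identity by evaluating both sides on a generating set of $\cD(X)$. Since $X$ is smooth, $\cD(X)$ is generated as a $K$-algebra by $\cO(X)$ and $\cT(X)$, and both $\theta_{uv,d}$ and $\theta_{u,d}\circ\theta_{v,d}$ are $\cO(X)$-linear ring endomorphisms of $\cD(X)$ by Lemma \ref{zTwist}. Hence it suffices to show that these two endomorphisms agree on $\cT(X)$.

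First I would reduce to the affinoid case by taking an affinoid covering of $X$, exactly as in the proof of Lemma \ref{zTwist}; alternatively one can simply note that both sides are characterised by $\cO(X)$-linearity together with their effect on $\cT(X)$, so the identity is local on $X$ and it is enough to treat $X$ affinoid. Next, fix $\delta \in \cT(X)$. Using $(\ref{ThetaTwist})$ for $\theta_{v,d}$ and then the fact that $\tfrac{1}{d}\tfrac{\delta(v)}{v}\in\cO(X)$ is fixed by the $\cO(X)$-linear map $\theta_{u,d}$, I compute
\[ (\theta_{u,d}\circ\theta_{v,d})(\delta) = \theta_{u,d}\!\left(\delta - \tfrac{1}{d}\tfrac{\delta(v)}{v}\right) = \theta_{u,d}(\delta) - \tfrac{1}{d}\tfrac{\delta(v)}{v} = \delta - \tfrac{1}{d}\tfrac{\delta(u)}{u} - \tfrac{1}{d}\tfrac{\delta(v)}{v}. \]
The Leibniz rule $\delta(uv) = \delta(u)v + u\,\delta(v)$ gives $\tfrac{\delta(uv)}{uv} = \tfrac{\delta(u)}{u} + \tfrac{\delta(v)}{v}$, so the right-hand side equals $\delta - \tfrac{1}{d}\tfrac{\delta(uv)}{uv} = \theta_{uv,d}(\delta)$ by $(\ref{ThetaTwist})$.

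Since $\theta_{uv,d}$ and $\theta_{u,d}\circ\theta_{v,d}$ are both $\cO(X)$-linear ring homomorphisms that agree on $\cO(X)$ (trivially) and on $\cT(X)$ (by the computation above), and $\cO(X)\cup\cT(X)$ generates $\cD(X)$, they coincide on all of $\cD(X)$. This completes the proof. I do not anticipate any genuine obstacle here: the only point requiring a little care is the bookkeeping of logarithmic derivatives, which is just the product rule, and the reduction to the affinoid setting, which is handled exactly as in Lemma \ref{zTwist}.
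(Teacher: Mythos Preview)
Your proof is correct and is precisely the direct verification using formula (\ref{ThetaTwist}) that the paper mentions as one of two possible approaches. The paper's own proof sketch also notes that one can alternatively argue on the covering space where $\theta_{u,d}$ becomes conjugation by a $d$-th root, but your explicit computation with logarithmic derivatives is exactly what the paper has in mind when it says ``it can also be checked directly using the formula (\ref{ThetaTwist}).''
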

\begin{proof} This is a straightforward calculation on the covering space of $X$ obtained by adjoining a $d^{th}$-root of both $u$ and $v$, thinking of $\theta_{uv,d}$ as being conjugation by the $d^{th}$ root of $uv$. It can also be checked directly using the formula (\ref{ThetaTwist}).\end{proof}

Our next result tells us how to rewrite $\theta_{u,d}(\partial^{[n]})$ in standard form as a differential operator of finite order.

\begin{prop}\label{PnStdForm}For every $n \geq 0$ and every $\delta \in \cT(X)$ we have
\[ \theta_{u,d}(\delta^{[n]})  = \sum\limits_{\alpha=0}^n \theta_{u,d}(\delta^{[n-\alpha]})(1)\hsp \delta^{[\alpha]}.\]
\end{prop}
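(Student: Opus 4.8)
The plan is to prove the formula by induction on $n$, after reducing everything to the single function $f := \theta_{u,d}(\delta)(1) \in \cO(X)$. By the defining formula (\ref{ThetaTwist}) together with $\delta(1)=0$, the operator $\theta_{u,d}(\delta) \in \cD(X)$ is exactly the sum $\delta + f$ of $\delta$ with the order-zero (multiplication) operator attached to $f = -\tfrac{1}{d}\tfrac{\delta(u)}{u}$. Abbreviating $g_j := \theta_{u,d}(\delta^{[j]})(1) \in \cO(X)$, so that $g_0 = 1$ and $g_1 = f$, the identity to be proved reads $\theta_{u,d}(\delta^{[n]}) = \sum_{\alpha=0}^n g_{n-\alpha}\, \delta^{[\alpha]}$.

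Before the induction I would record two elementary facts in the enveloping algebra $\cD(X) = \cO(X)[\delta;\delta]$. First, for any $g \in \cO(X)$ the relation $\delta \, g = g\, \delta + \delta(g)$ combined with $\delta \cdot \delta^{[\alpha]} = (\alpha+1)\,\delta^{[\alpha+1]}$ gives $\delta \cdot (g\, \delta^{[\alpha]}) = (\alpha+1)\, g\, \delta^{[\alpha+1]} + \delta(g)\, \delta^{[\alpha]}$, hence $\theta_{u,d}(\delta) \cdot (g\, \delta^{[\alpha]}) = (\alpha+1)\, g\, \delta^{[\alpha+1]} + \big(\delta(g) + fg\big)\, \delta^{[\alpha]}$. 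Second, applying the ring homomorphism $\theta_{u,d}$ to $(j+1)\,\delta^{[j+1]} = \delta \cdot \delta^{[j]}$ and then evaluating the two resulting operators at $1 \in \cO(X)$ (using that $\big[\theta_{u,d}(\delta)\,\theta_{u,d}(\delta^{[j]})\big](1) = \theta_{u,d}(\delta)\big(\theta_{u,d}(\delta^{[j]})(1)\big)$) yields the recursion $\theta_{u,d}(\delta)(g_j) = (j+1)\, g_{j+1}$; since $\theta_{u,d}(\delta)(g) = \delta(g) + fg$ for $g \in \cO(X)$, this says $\delta(g_j) + fg_j = (j+1)\, g_{j+1}$.

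With these in hand the induction is routine. The case $n=0$ is immediate because $g_0 = 1$ and $\theta_{u,d}(\delta^{[0]})$ is the identity. Assuming the formula for $n$, I would compute $(n+1)\,\theta_{u,d}(\delta^{[n+1]}) = \theta_{u,d}(\delta)\,\theta_{u,d}(\delta^{[n]}) = \theta_{u,d}(\delta)\sum_{\alpha=0}^n g_{n-\alpha}\,\delta^{[\alpha]}$, expand each summand via the first fact above, and replace $\delta(g_{n-\alpha}) + fg_{n-\alpha}$ by $(n+1-\alpha)\, g_{n+1-\alpha}$ via the recursion. After reindexing $\beta = \alpha+1$ in the $\delta^{[\alpha+1]}$-sum, the coefficient of $\delta^{[\beta]}$ becomes $(n+1-\beta)\, g_{n+1-\beta} + \beta\, g_{n+1-\beta} = (n+1)\, g_{n+1-\beta}$ for $1 \le \beta \le n$, while the boundary terms $\beta = 0$ and $\beta = n+1$ contribute $(n+1)\, g_{n+1}$ and $(n+1)\, g_0$ respectively; dividing through by $n+1$ closes the induction.

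I do not anticipate a genuine obstacle: the argument is pure bookkeeping in the characteristic-zero ring $\cD(X)$, and no denominator ever vanishes, since the only divisions performed are by $n+1$ and by the positive integers $j+1$ in the recursion. The one place that rewards a little care is the reindexing in the inductive step, where the two boundary indices $\beta = 0$ and $\beta = n+1$ must be separated from the generic range $1 \le \beta \le n$ before the coefficients can be collected.
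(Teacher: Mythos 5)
Your proof is correct, and it takes a genuinely different route from the paper's. The paper proves this identity by working on an \'etale covering $Z\to X$ on which a $d$-th root $z$ of $u$ exists, writing $\theta_{u,d}(Q)=z\,Q\,z^{-1}$ and then expanding $\delta^{[n]}z^{-1}$ via the operator identity $\ell_\delta = r_\delta + \ad_\delta$ and the divided-power binomial theorem; this yields the formula in one stroke but requires passing to the cover and the slightly delicate bookkeeping with $\ad_\delta^{[k]}$. You stay entirely inside $\cD(X)$ and use only two ingredients that the paper has already supplied: the explicit formula $(\ref{ThetaTwist})$ for $\theta_{u,d}(\delta)$, which shows $\theta_{u,d}(\delta)=\delta+f$ with $f=\theta_{u,d}(\delta)(1)$, and the fact that $\theta_{u,d}$ is a ring homomorphism, which upon applying to $(j+1)\delta^{[j+1]}=\delta\cdot\delta^{[j]}$ and evaluating at $1$ gives the key recursion $(j+1)g_{j+1}=\delta(g_j)+fg_j$. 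The induction then closes by the coefficient collection you describe, with the telescoping $\beta\,g_{n+1-\beta}+(n+1-\beta)g_{n+1-\beta}=(n+1)g_{n+1-\beta}$ in the interior range and the two boundary terms handled separately. The trade-off is that the paper's conjugation argument is shorter and makes the structural reason for the identity transparent (it is Leibniz for $\delta^{[n]}z^{-1}$ in disguise), whereas yours is more elementary in that it never leaves $\cD(X)$ and needs no covering space, at the cost of an induction. Both are valid; nothing is missing from your argument.
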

\begin{proof} Choose any \'etale map $Z \to X$ as in the proof of Lemma \ref{zTwist}. For each $a \in \cD(Z)$, let $\ell_a$ and $r_a$ denote the operations of left, respectively, right, multiplication by the element $a$, and let $\ad_a := \ell_a - r_a$. Then 
\[\ad_{\delta}^{[k]}(f) = \delta^{[k]}(f)\]
for each $k \geq 0$ and each $f \in \cO(Z)$. Using the binomial theorem, we now have
\[\begin{array}{lll} \delta^{[n]} z^{-1} &=& \ell_{\delta}^{[n]}(z^{-1}) = \left( r_{\delta} + \ad(\delta) \right)^{[n]}(z^{-1})  \\
&=& \sum\limits_{\alpha=0}^n r_{\delta}^{[\alpha]} \ad_{\delta}^{[n - \alpha]}  (z^{-1}) = \sum\limits_{\alpha=0}^n \delta^{[n - \alpha]}(z^{-1}) \delta^{[\alpha]} . \\  
  \end{array}\]
Hence $\theta_{u,d}(\delta^{[n]}) = z \hsp \delta^{[n]} z^{-1} = \sum\limits_{\alpha=0}^n z \hsp \delta^{[n-\alpha]}(z^{-1}) \delta^{[\alpha]}$. Evaluating both sides of this equation at $1 \in \cO(X)$ shows that $z \delta^{[n]}(z^{-1}) = \theta_{u,d}(\delta^{[n]})(1)$.
\end{proof}

After these generalities, we return to the affine line $\A$ with its local coordinate $x \in \cO_{\A}$. \textbf{We fix an affinoid subdomain $X$ of $\bA$ until the end of $\S \ref{DivTwDer}$}.

\begin{defn}\label{HunDefn} For every $n \geq 0$, we define 
\[h_{u,d}^{[n]} :=  \theta_{u,d}(\partial_x^{[n]})(1) \in \cO(X).\]
\end{defn}
Note that for any \'etale map $Z \to X$ as in the proof of Lemma \ref{zTwist}, we have 
\[ h_{u,d}^{[n]} = z \hsp \partial_x^{[n]}(z^{-1}).\]
Our next task will be to obtain an upper bound for $|h_{u,d}^{[n]}|_X$. Using Proposition \ref{PnStdForm} this will enable us to estimate the norms $|\theta_{u,d}(\partial_x^n)|_{\cD_r(X)}$.

\begin{cor}\label{DivPow2} Suppose that $\alpha\in K$ such that $x-\alpha\in \cO(X)^\times$ and $Y\to X$ is an \'etale cover such that there is $y\in \cO(Y)$ with  $y^d = x - \alpha$. Then
\[ \partial_x^{[m]}(y^k) = \binom{\frac{k}{d}}{m} y^k (x - \alpha)^{-m} \qmb{ for all}  k \in \Z \qmb{and} m\geq 0.\]
\end{cor}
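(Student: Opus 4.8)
The plan is to verify the identity $\partial_x^{[m]}(y^k) = \binom{k/d}{m} y^k (x-\alpha)^{-m}$ by reducing everything to a formal power-series computation on the \'etale cover $Y \to X$, where $y$ is a genuine function with $y^d = x-\alpha$. Since $x - \alpha \in \cO(X)^\times$, the function $y$ is a unit in $\cO(Y)$, so all the negative powers appearing make sense. First I would record the basic derivative rule on $Y$: differentiating $y^d = x-\alpha$ gives $d\, y^{d-1}\partial_x(y) = 1$, hence $\partial_x(y) = \frac{1}{d} y^{1-d} = \frac{1}{d}\, y\,(x-\alpha)^{-1}$, and more generally $\partial_x(y^k) = \frac{k}{d} y^k (x-\alpha)^{-1}$ for every $k \in \Z$ (by the Leibniz/power rule, valid for negative $k$ too since $y$ is a unit). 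This is the $m=1$ case of the claim, using $\binom{k/d}{1} = k/d$.

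Next I would set up an induction on $m$. The cleanest route is to observe that for a fixed scalar $c = k/d$, the function $y^k$ satisfies $\partial_x(y^k) = c\,(x-\alpha)^{-1} y^k$, i.e. $y^k$ is (up to scalar) the standard solution of the first-order equation $(x-\alpha)\partial_x f = c f$. One then proves by induction that $\partial_x^{[m]}\big((x-\alpha)^c\big) = \binom{c}{m}(x-\alpha)^{c-m}$ for the appropriate formal meaning of $(x-\alpha)^c$, which is exactly $y^k$ when $c = k/d$. Concretely: assume $\partial_x^{[m]}(y^k) = \binom{c}{m} y^k (x-\alpha)^{-m}$; apply $\partial_x$ to both sides, use the product rule together with $\partial_x(y^k) = c\, y^k (x-\alpha)^{-1}$ and $\partial_x\big((x-\alpha)^{-m}\big) = -m (x-\alpha)^{-m-1}$, collect terms to get $\big(c - m\big)\binom{c}{m} y^k (x-\alpha)^{-m-1}$, and then multiply by $\frac{1}{m+1}$ (since $\partial_x^{[m+1]} = \frac{1}{m+1}\partial_x \circ \partial_x^{[m]}$) and invoke the Pascal-type identity $\frac{c-m}{m+1}\binom{c}{m} = \binom{c}{m+1}$, which holds for the generalized binomial coefficient with $c \in K$ (here $c = k/d$, and note $m+1$ is invertible in $K$ since $\Char K = 0$). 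This closes the induction.

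Finally, since the map $\cO(X) \to \cO(Y)$ is injective and commutes with $\partial_x^{[m]}$, and since both sides of the asserted identity already lie in $\cO(X)$ — indeed $\partial_x^{[m]}$ preserves $\cO(X) \subseteq \cO(Y)$, and $\binom{k/d}{m}(x-\alpha)^{-m} \in \cO(X)$ because $x - \alpha \in \cO(X)^\times$ — the identity proved on $Y$ descends to the stated identity of elements of $\cO(X)$ (or rather, it is literally an identity in $\cO(Y)$, with the left-hand side interpreted as $\partial_x^{[m]}$ of $y^k \in \cO(Y)$). I do not expect any serious obstacle here: the only mild subtlety is keeping track that all manipulations are legitimate for $k$ negative, which is handled once we know $y \in \cO(Y)^\times$, and checking the generalized Pascal identity, which is a one-line algebraic computation valid over any $\Q$-algebra. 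This corollary is essentially the $p$-adic incarnation of the classical formula $\frac{d^m}{dx^m}(x^c) = c(c-1)\cdots(c-m+1)\,x^{c-m}$.
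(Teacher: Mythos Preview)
Your proof is correct and follows essentially the same induction-on-$m$ argument as the paper: establish $\partial_x(y^k) = \frac{k}{d}\,y^k(x-\alpha)^{-1}$ from $y^d = x-\alpha$, then apply $\partial_x$ to the inductive hypothesis and use the Pascal identity $\frac{c-m}{m+1}\binom{c}{m} = \binom{c}{m+1}$. Your final paragraph about descent to $\cO(X)$ is unnecessary (as you yourself note parenthetically): the identity is asserted in $\cO(Y)$, so there is nothing to descend.
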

\begin{proof} The case $m=0$ is trivial. Since $1=\partial_x(y^d)=d\partial_x(y)y^{d-1}$ we see that $\partial_x(y)=\frac{1}{dy^d}y=\frac{1}{d(x-\alpha)}y$
which implies by the Leibniz rule that $\partial_x(y^k) = \frac{k}{d} \frac{y^k}{x-\alpha}$ as required for the case $m = 1$. Now we proceed by induction on $m$: 
\begin{eqnarray*} \partial_x^{m+1}(y^k) &=& \partial_x\left( m! \binom{\frac{k}{d}}{m} \frac{y^k}{(x-\alpha)^m} \right) \\ 
&=& m! \binom{\frac{k}{d}}{m} \left(\frac{k}{d}\frac{y^k}{x-\alpha} \cdot (x-\alpha)^{-m} - m y^k (x - \alpha)^{-m-1}\right) \\
&=& m! \binom{\frac{k}{d}}{m}\frac{y^k}{(x-\alpha)^{m+1}} \left(\frac{k}{d} - m\right) \\
&=& (m+1)! \binom{\frac{k}{d}}{m+1} \frac{y^k}{(x-\alpha)^{m+1}}.\end{eqnarray*} 
This completes the induction.\end{proof}

Here is our upper bound.
\begin{prop}\label{HunEst}  Let $X$ be an affinoid subdomain of $\bA$, let  $u \in \cO(X)^\times$ and suppose that $p \nmid d$. Then 
\[ |h_{u,d}^{[n]}|_{X} \leq \left(\frac{r(X)}{\varpi}\right)^n \qmb{for all} n \geq 0.\]
\end{prop}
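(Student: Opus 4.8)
The plan is to bound $|h_{u,d}^{[n]}|_X = |z\,\partial_x^{[n]}(z^{-1})|_Y$ on a suitable \'etale cover $Y \to X$ carrying a $d$-th root $z$ of $u$, by reducing to the case where $u$ is essentially a product of linear factors. First I would pass to a finite extension $K'$ of $K$ over which $X$ splits, which is harmless by Lemma \ref{DrBC}, Lemma \ref{BaseChR} (so $r(X)$ is unchanged) and the fact that the supremum norm is insensitive to finite base change; so I may assume $X = X_1 \cup \cdots \cup X_m$ is a disjoint union of cheeses. Since $h_{u,d}^{[n]}$ and the differential operators $\partial_x^{[n]}$ respect the decomposition of $\cO(X)$ as an orthogonal direct sum over connected components, and since $r(X) = \max_i r(X_i)$, I may further assume $X = C(\boldsymbol\alpha, \mathbf{s})$ is a single cheese, so that $r(X) = \varpi/\rho(X)$ with $\rho(X) = \min_i |s_i|$ by Corollary \ref{Heisenberg}.

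Next I would make the key reduction on the unit $u$. Using Lemma \ref{TwistsMultiply}, which gives $\theta_{uv,d} = \theta_{u,d} \circ \theta_{v,d}$, together with Proposition \ref{PnStdForm} (expressing $\theta_{u,d}(\partial_x^{[n]})$ in terms of the $h_{u,d}^{[k]}$), the quantities $h_{uv,d}^{[n]}$ are expressible as sums of products $h_{u,d}^{[k]} \cdot (\text{derivatives of } h_{v,d}^{[n-k]})$, so a multiplicative decomposition of $u$ reduces the estimate to its factors. On a cheese, every unit of $\cO(X)$ is --- up to a scalar in $K^\times$, which contributes nothing since $\theta_{u,d} = \theta_{cu,d}$ for $c \in K^\times$, and up to a small unit in $\cO(X)^{\times\times}$ --- a product of integer powers of the linear factors $(x - \alpha_i)$. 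So the essential cases are $u = (x-\alpha)^k$ for $k \in \Z$ with $x - \alpha \in \cO(X)^\times$, and $u \in \cO(X)^{\times\times}$ a small unit.

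For $u = (x-\alpha)^k$: take $Y \to X$ adjoining $y$ with $y^d = x - \alpha$, set $z = y^k$ so that $z^d = u$; then Corollary \ref{DivPow2} gives $\partial_x^{[m]}(z^{-1}) = \partial_x^{[m]}(y^{-k}) = \binom{-k/d}{m} y^{-k}(x-\alpha)^{-m}$, hence $h_{u,d}^{[n]} = z\,\partial_x^{[n]}(z^{-1}) = \binom{-k/d}{n}(x-\alpha)^{-n}$. The estimate then comes down to $|(x-\alpha)^{-1}|_X \leq 1/\rho(X)$ (clear, since $x - \alpha$ is a unit on the cheese whose reciprocal has sup-norm $\leq |s_i|^{-1} \leq \rho(X)^{-1}$ by the explicit description of $\cO(C(\boldsymbol\alpha,\mathbf{s}))$) combined with the $p$-adic bound $\left|\binom{-k/d}{n}\right| \leq 1$ valid because $p \nmid d$ --- this is where coprimality enters: $k/d \in \Z_{(p)} \subseteq \Z_p$, so $\binom{-k/d}{n}$ lies in $\Z_p$ and has absolute value at most $1$. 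Multiplying out, $|h_{u,d}^{[n]}|_X \leq \rho(X)^{-n} = (r(X)/\varpi)^n$, as wanted. For a small unit $u = 1 + \pi$ with $|\pi|_X < 1$: here one can take $z = u^{1/d} = \sum_{j\geq 0}\binom{1/d}{j}\pi^j \in \cO(X)^\times$ directly (the binomial series converges since $p \nmid d$ forces $1/d \in \Z_p$ and $|\pi|_X < 1$), so $\theta_{u,d}$ is honest conjugation by $z$ by Lemma \ref{ThetaDthPower}, and then $h_{u,d}^{[n]} = z\,\partial_x^{[n]}(z^{-1})$ with $|z|_X = |z^{-1}|_X = 1$; the bound $|\partial_x^{[n]}|_{\cO(X)} = \rho(X)^{-n}$ from Lemma \ref{NormDivPow} finishes this case.

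I expect the main obstacle to be bookkeeping in the reduction from a general unit to the product of linear factors: one must track how the $h^{[\bullet]}$ for a product relate to those for the factors through the iterated application of Lemma \ref{TwistsMultiply} and Proposition \ref{PnStdForm}, and verify that taking (divided-power) derivatives of an already-bounded $h_{v,d}^{[k]}$ does not inflate the norm beyond $(r(X)/\varpi)^{\text{new exponent}}$ --- this uses $|\partial_x^{[m]}(g)|_X \leq \rho(X)^{-m}|g|_X$ from Lemma \ref{NormDivPow} and the ultrametric inequality, but needs to be organized carefully to avoid losing a factor. An alternative, possibly cleaner route that sidesteps the combinatorics: observe that it suffices to prove the bound with $r(X)$ replaced by any $r$ with $r \geq r(X)$ for which $X$ is $(\partial_x/r)^\dag$-admissible, and then appeal directly to the operator-theoretic description --- but the direct computation via Corollary \ref{DivPow2} on linear factors is concrete enough that I would pursue the reduction above and simply be careful with the estimates.
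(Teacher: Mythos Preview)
Your approach is correct and close in spirit to the paper's, but the organisation differs and you have overestimated the bookkeeping obstacle. The convolution you worry about is actually cleaner than you think: since $\theta_{u,d}$ is $\cO(X)$-linear, applying Proposition \ref{PnStdForm} twice and evaluating at $1$ gives simply
\[ h_{uv,d}^{[n]} = \theta_{u,d}\Bigl(\sum_{\alpha=0}^n h_{v,d}^{[n-\alpha]}\partial_x^{[\alpha]}\Bigr)(1) = \sum_{\alpha=0}^n h_{v,d}^{[n-\alpha]}\,h_{u,d}^{[\alpha]}, \]
a pure convolution with no derivatives of the $h_{v,d}^{[\bullet]}$ appearing. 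So once the linear-factor and small-unit cases are done, the ultrametric inequality immediately gives the bound for the product, and your ``main obstacle'' evaporates.

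The paper avoids even this mild iteration: rather than factoring $u$ and invoking multiplicativity, it writes $u = \lambda\nu\prod_i(x-\alpha_i)^{k_i}$ on the cheese, takes a single $d$-th root $z = \epsilon\prod_i z_i^{k_i}$ (with $z_i^d = x-\alpha_i$ and $\epsilon^d = \nu$), and applies the Leibniz rule once to $z\,\partial_x^{[n]}(z^{-1})$, obtaining a sum over multi-indices $m \in \bN^{g+1}$ with $|m|=n$ whose terms are bounded termwise by $\rho(X)^{-n}$ via Corollary \ref{DivPow2} and Lemma \ref{NormDivPow}. This is the same computation you do for a single linear factor, just carried out for the full product in one pass. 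Both routes land on the same estimate $\rho(X)^{-n} = (r(X)/\varpi)^n$; yours is slightly more modular, the paper's slightly more direct.
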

\begin{proof} Suppose first that $X = C(\mathbf{\alpha},\mathbf{s})$ is a cheese. Using \cite[Proposition 2.4.8(b)]{LutJacobians}, write $u = \lambda \cdot \nu \cdot (x-\alpha_1)^{k_1} \cdots (x - \alpha_g)^{k_g}$ for some $\lambda \in K^\times, \nu \in  \cO(X)^{\times\times}$ and $k_i \in \Z$. We form the \'etale cover $Y\to X$ given by 
\[\cO(Y)=\cO(X)[T_1^{\pm 1},\ldots,T_g^{\pm 1}]/\left\langle T_1^d-(x-\alpha_1), \cdots, T_1^d - (x-\alpha_d)\right \rangle.\] 
For each $i = 1,\ldots, g$, let $z_i$ denote the image of $T_i$ in $\cO(Y)$ so that $z_i^d = x - \alpha_i$. Since $p \nmid d$, by \cite[Lemma 4.3.2(a)]{ArdWad2023} we can find $\epsilon \in \cO(X)^{\times\times}$ such that $\nu = \epsilon^d$, so that $u = \lambda \cdot (z_1^{k_1} \cdots z_g^{k_g} \epsilon)^d$. Note that $\theta_{\lambda,d}$ is the identity map because $\partial(\lambda) = 0$; using Lemma \ref{TwistsMultiply} we see that we can assume $\lambda = 1$ so that $z := z_1^{k_1} \cdots z_g^{k_g} \epsilon$ satisfies $z^d = u$. Using the Leibniz rule, and Corollary \ref{DivPow2}, we obtain
\begin{equation}\label{MuCalc}\begin{array}{lll} z \hsp \partial_x^{[n]}(z^{-1}) &=& \epsilon \hsp z_1^{k_1} \cdots z_g^{k_g} \hsp \partial_x^{[n]}(z_1^{-k_1}\cdots z_g^{-k_g} \epsilon^{-1})  \\
&=& \sum\limits_{|m|=n} \left(\prod\limits_{i=1}^g  \binom{-\frac{k_i}{d}}{m_i} (x - \alpha_i)^{-m_i} \right) \cdot \epsilon \hsp \partial_x^{[m_{g+1}]}(\epsilon^{-1})
\end{array}\end{equation}
where the sum runs over all $m \in \N^{g+1}$ such that $|m| := \sum\limits_{i=1}^{g+1}m_i$ equals $n$. Now
\[ | \epsilon  \hsp \partial_x^{[m_{g+1}]}(\epsilon^{-1})|_{X} \leq |\epsilon|_X \cdot ||\partial_x^{[m_{g+1}]}||_{\cO(X)} \cdot |\epsilon^{-1}|_X = \rho(X)^{-m_{g+1}}\]
by Lemma \ref{NormDivPow} because $|\epsilon|_X = 1$. Using $(\ref{MuCalc})$ together with Corollary \ref{Heisenberg}, we now obtain the required estimate
\[ |h_{u,d}^{[n]}|_{X} = |z \hsp \partial_x^{[n]}(z^{-1})|_{X} \leq \max\limits_{|m|=n} \prod_{i=1}^g |s_i|^{-m_i} \cdot \rho(X)^{-m_{g+1}} \leq \rho(X)^{-n} = \left(\frac{r(X)}{\varpi}\right)^n.\]
Next we consider the case where $X$ splits over $K$; that is every connected component $X_i$ of $X$ is a cheese. Once again, choose any \'etale map $Z\to X$ as in the proof of Lemma \ref{zTwist}.  Let $u_i$ denote the restriction of $u$ to $X_i$ and let $z_i$ denote the restriction of $z$ to $Z_i := Z \times_X X_i$. Then using Definition \ref{MinRad}(b) together with Corollary \ref{Heisenberg}, we have
\[ \begin{array}{lllllll} |h_{u,d}^{[n]}|_X &=& |z \hsp \partial_x^{[n]}(z^{-1})|_X &=& \max\limits_i |z_i \hsp \partial_x^{[n]}(z_i^{-1})|_{X_i} &&\\
&\leq & \max\limits_i \rho(X_i)^{-n} &=&  \rho(X)^{-n} &=& \left(\frac{r(X)}{\varpi}\right)^n.\end{array}\]
Finally, suppose that $X$ is arbitrary. By \cite[Theorem 4.1.8]{ArdWad2023}, we can find a finite extension $K'$ of $K$ such that $X$ splits over $K'$. Since $r(X) = r(X_{K'})$ by Lemma \ref{BaseChR}, we obtain
\[ |h_{u,d}^{[n]}|_X = |h_{u,d}^{[n]}|_{X_{K'}} = \left(\frac{r(X_{K'})}{\varpi}\right)^n = \left(\frac{r(X)}{\varpi}\right)^n \qmb{for all} n \geq 0. \qedhere\]
\end{proof}
Our next main result, Theorem \ref{ThetaU}, requires the following elementary estimate.
\begin{lem}\label{RhoEst} Let $0 < \rho < 1$. Then $\max\limits_{t \geq 0} \hsp t \hsp \rho^t = ( e \log(\rho^{-1}) )^{-1}$.
\end{lem}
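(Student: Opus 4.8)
The plan is to reduce the claim to a routine exercise in single-variable real calculus. Write $c := \log(\rho^{-1})$, which is a strictly positive real number since $0 < \rho < 1$; then $t\,\rho^t = t\,e^{-ct}$ for all $t \geq 0$, so it suffices to determine the maximum of the smooth function $f \colon [0,\infty) \to \bR$ given by $f(t) := t\,e^{-ct}$.

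First I would record the boundary behaviour: $f(0) = 0$, $f(t) > 0$ for every $t > 0$, and $f(t) \to 0$ as $t \to \infty$. Together with continuity, this shows that $f$ attains a global maximum at some point of the open interval $(0,\infty)$. Next, differentiating gives $f'(t) = e^{-ct}(1 - ct)$, which vanishes only at $t = 1/c$, is strictly positive on $[0,1/c)$ and strictly negative on $(1/c,\infty)$. Hence $t = 1/c$ is the unique critical point of $f$ on $(0,\infty)$ and therefore the location of its global maximum, and substituting yields $f(1/c) = \tfrac{1}{c}\,e^{-1} = \bigl(e\log(\rho^{-1})\bigr)^{-1}$, as asserted. (Alternatively, one can note that $\log f(t) = \log t - ct$ is strictly concave on $(0,\infty)$ and tends to $-\infty$ at both ends, so it has a unique maximiser, necessarily the zero $t = 1/c$ of its derivative $t^{-1} - c$.)

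There is no real obstacle in this proof; the only point that deserves an explicit word is that the supremum over the \emph{unbounded} interval $[0,\infty)$ is genuinely attained rather than merely approached, and this is precisely what the behaviour $f(t) \to 0$ as $t \to \infty$ (recorded in the first step) guarantees.
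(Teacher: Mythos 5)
Your proof is correct and follows essentially the same route as the paper's: set $c := \log(\rho^{-1})$, differentiate $t\,e^{-ct}$, locate the unique critical point $t = 1/c$, and evaluate. The only cosmetic difference is that the paper uses the second-derivative test at $t = 1/c$ while you argue globality via the sign of $f'$ together with the decay at $0$ and $\infty$, which if anything is a slightly cleaner justification that the supremum is attained.
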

\begin{proof} Write $\lambda := \log(\rho^{-1}) > 0$, and consider the function $f : \bR_{\geq 0} \to \bR$ defined by $f(t) := t \hsp \rho^t = t e^{- \lambda t}$. Then $f'(t) = e^{-\lambda t} - \lambda t e^{-\lambda t}$ vanishes precisely when $t = \lambda^{-1}$, and $f''(t) = -\lambda e^{-\lambda t} - \lambda(1 - \lambda t) e^{-\lambda t}$ is strictly negative at $t = \lambda^{-1}$. So $t = \lambda^{-1}$ a global maximum of $f(t)$ with value $f(\lambda^{-1}) = \lambda^{-1} e^{-1} = ( e \log(\rho^{-1}) )^{-1}$.\end{proof}

\begin{thm}\label{ThetaU}Let $r > 0$, let $X$ be an $\partial_x/r$-admissible affinoid subdomain of $\bA$, let  $u \in \cO(X)^\times$ and suppose that $p \nmid d$. Then $\theta_{u,d} \in \Aut_K \cD(X)$ extends to a bounded $K$-algebra automorphism $\theta_{u,d}$ of $\cD_r(X)$.
\end{thm}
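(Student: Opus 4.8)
The plan is to construct the extension using the universal property of the skew-Tate algebra recorded in Lemma \ref{BanachUnivProp}, and then to deduce invertibility by a density argument. Concretely, I would apply Lemma \ref{BanachUnivProp} with $A = \cO(X)$, $\delta = \partial_x$, $B := \cD_r(X)$, with $f : \cO(X) \hookrightarrow \cD_r(X)$ the natural (isometric, hence bounded) inclusion as the constant-term subring, and with $b := \theta_{u,d}(\partial) = \partial - \tfrac{1}{d}\tfrac{\partial_x(u)}{u} \in \cD(X) \subseteq \cD_r(X)$, using the defining formula (\ref{ThetaTwist}). Condition (b)(i) of Lemma \ref{BanachUnivProp} is then immediate: writing $b = \partial + a_0$ with $a_0 := -\tfrac1d\tfrac{\partial_x(u)}{u} \in \cO(X)$, which is central in $\cO(X)$, one has $ba - ab = [\partial, a] = \partial_x(a)$ for every $a \in \cO(X)$. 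So the whole substance lies in checking condition (b)(ii), namely that $\sup_{\ell\ge 0}|b^\ell|_{\cD_r(X)}/r^\ell < \infty$.

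To do this I would first observe that, since $j : \cD \to \cD_r$ is a ring homomorphism (Lemma \ref{TheMapJ}) and $\theta_{u,d}$ is already a ring automorphism of $\cD(X)$, one has $b^\ell = \theta_{u,d}(\partial)^\ell = \theta_{u,d}(\partial^\ell)$ inside $\cD_r(X)$. Proposition \ref{PnStdForm} together with Definition \ref{HunDefn} rewrites this as $\theta_{u,d}(\partial^{[\ell]}) = \sum_{\alpha=0}^\ell h_{u,d}^{[\ell-\alpha]}\partial^{[\alpha]}$, and hence $\theta_{u,d}(\partial^\ell) = \sum_{\alpha=0}^\ell \tfrac{\ell!}{\alpha!} h_{u,d}^{[\ell-\alpha]}\partial^\alpha$. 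From the description of the norm on $\cD_r(X) = \cO(X)\langle\partial/r\rangle$ we get
\[ |b^\ell|_{\cD_r(X)} \;\le\; \max_{0\le\alpha\le\ell}\, \left|\tfrac{\ell!}{\alpha!}\right|\cdot |h_{u,d}^{[\ell-\alpha]}|_X\cdot r^\alpha. \]
Now $\left|\tfrac{\ell!}{\alpha!}\right| = \left|\binom{\ell}{\alpha}\right|\,|(\ell-\alpha)!| \le |(\ell-\alpha)!| \le (p(\ell-\alpha)+1)\,\varpi^{\ell-\alpha}$ by Lemma \ref{nFacVarpi} (the case $\ell = \alpha$ being trivial), while $|h_{u,d}^{[\ell-\alpha]}|_X \le (r(X)/\varpi)^{\ell-\alpha}$ by Proposition \ref{HunEst} --- this is the only place the hypothesis $p \nmid d$ enters. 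Multiplying these out, the $\alpha$-th term divided by $r^\ell$ is at most $(p(\ell-\alpha)+1)\,\bigl(r(X)/r\bigr)^{\ell-\alpha}$, so
\[ \frac{|b^\ell|_{\cD_r(X)}}{r^\ell} \;\le\; \sup_{m\ge 0}\,(pm+1)\left(\frac{r(X)}{r}\right)^{m} \;=:\; C, \]
and $C < \infty$ because $r(X) < r$ by $\partial_x/r$-admissibility (any expression $m\lambda^m$ with $0<\lambda<1$ is bounded; Lemma \ref{RhoEst} even supplies an explicit value). This establishes (b)(ii).

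Granting these checks, Lemma \ref{BanachUnivProp} produces a unique bounded $K$-Banach algebra homomorphism $\cD_r(X) \to \cD_r(X)$ extending the inclusion $\cO(X)\hookrightarrow\cD_r(X)$ and sending $\partial$ to $\theta_{u,d}(\partial)$; since $\cO(X)$ and $\partial$ generate $\cD(X)$ as a ring, this homomorphism restricts to the original $\theta_{u,d}$ on $\cD(X)$, so it is legitimate to continue calling it $\theta_{u,d}$. For invertibility I would run the same construction with $u$ replaced by $u^{-1} \in \cO(X)^\times$ (for which $p\nmid d$ still holds), obtaining a bounded endomorphism of $\cD_r(X)$ extending $\theta_{u^{-1},d}$; by Lemma \ref{TwistsMultiply} the two composites restrict on $\cD(X)$ to $\theta_{uu^{-1},d} = \theta_{1,d}$, which is the identity since $\partial_x(1)=0$, and since $\cD(X)$ is dense in $\cD_r(X)$ (Lemma \ref{TheMapJ}) and the composites are continuous, they must be the identity on all of $\cD_r(X)$. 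Hence $\theta_{u,d}$ is a bounded $K$-algebra automorphism of $\cD_r(X)$. I expect the main obstacle to be the estimate for (b)(ii): the crucial subtlety is that one must \emph{not} discard the geometric factor $\bigl(r(X)/r\bigr)^{\ell-\alpha}$ --- for instance by the lazy bound $r(X)^{\ell-\alpha}r^\alpha \le r^\ell$ --- because only that decay can absorb the factor linear in $\ell-\alpha$ coming from $|(\ell-\alpha)!|$; once Proposition \ref{HunEst} is in hand everything else is essentially formal.
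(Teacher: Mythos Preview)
Your proposal is correct and follows essentially the same approach as the paper: both apply Lemma \ref{BanachUnivProp} with $b = \theta_{u,d}(\partial)$, rewrite $b^\ell$ via Proposition \ref{PnStdForm} in terms of the $h_{u,d}^{[n]}$, bound these using Proposition \ref{HunEst} and Lemma \ref{nFacVarpi}, and conclude invertibility by constructing $\theta_{u^{-1},d}$ and invoking density. The only cosmetic difference is your handling of the $n=0$ term with the extra $+1$, which the paper absorbs by writing the supremum over all $n\ge 0$ and appealing to Lemma \ref{RhoEst}.
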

\begin{proof} By Lemma \ref{TheMapJ}, there is an injective $\cO(X)$-linear $K$-algebra map $j : \cD(X) \to \cD_r(X)$ such that $j(\partial_x) = \partial$. We will show that the $K$-algebra homomorphism 
\[f := j \circ \theta_{u,d} : \cO(X) \to \cD_r(X)\]
extends to a bounded $K$-algebra homomorphism 
\[\theta_{u,d} :  \cD_r(X) = \cO(X)\langle \partial/r\rangle \to \cD_r(X)\]
which sends $\partial \in \cD_r(X)$ to $b := j(\theta_{u,d}(\partial_x)) = \partial - \frac{1}{d} \frac{\partial_x(u)}{u}\in \cD_r(X)$. This follows from Lemma \ref{BanachUnivProp}, provided we can verify conditions (i) and (ii) in this Lemma. 

(i) Let $a \in \cO(X)$. Then because $\theta_{u,d}$ and $j$ are $K$-algebra homomorphisms,
\[[b, f(a)] = [j(\theta_{u,d}(\partial_x)), j(\theta_{u,d}(a))] = j (\theta_{u,d}([\partial_x, a])) = f ( \partial_x(a) ).\]

(ii) Let $\ell \geq 0$. Then by Proposition \ref{PnStdForm} we have
\[ b^\ell = f(\partial_x^\ell) = \ell! \sum\limits_{\alpha=0}^\ell h_{u,d}^{[\ell - \alpha]} \partial^{[\alpha]} = \sum\limits_{\alpha=0}^\ell h_{u,d}^{[\ell - \alpha]} (\ell - \alpha)! \binom{\ell}{\alpha} \partial^{\alpha}\]
inside $\cD_r(X)$. Using Proposition \ref{HunEst} we can now estimate
\[ \begin{array}{lll} |b^\ell| / r^\ell &\leq&  \sup\limits_{0 \leq \alpha \leq \ell} \left\vert h_{u,d}^{[\ell-\alpha]}\right\vert_X \cdot |(\ell - \alpha)!| \cdot r^{\alpha - \ell}  \leq \\
&\leq &  \sup\limits_{0 \leq \alpha \leq \ell} \left(\frac{r(X)}{\varpi}\right)^{\ell - \alpha} \cdot \frac{|(\ell - \alpha)!|}{r^{\ell - \alpha}} = \\
&=&\sup\limits_{0 \leq n \leq \ell} \left(\frac{r(X)}{r}\right)^n \cdot \frac{|n!|}{\varpi^n}.\end{array}\]
Since $X$ is $\partial_x/r$-admissible, the ratio $\rho := r(X) / r$ is strictly less than $1$. Lemma \ref{nFacVarpi} together with Lemma \ref{RhoEst} now show that
\[ \sup\limits_{\ell \geq 0} |b^\ell| / r^\ell \leq \sup\limits_{\ell \geq 0} \left(p \hsp \sup\limits_{n \geq 0} n \rho ^n \right) < \infty.\]
Lemma \ref{BanachUnivProp} now gives the required $K$-algebra endomorphism $\theta_{u,d}$ of $\cD_r(X)$, which is in fact bijective because $\theta_{u,d} \circ \theta_{u^{-1},d} = \theta_{u^{-1},d} \circ \theta_{u,d} = 1_{\cD_r(X)}$.
\end{proof}

\begin{rmk} It can be shown that the operator norm of the automorphism $\theta_{u,d}$ of $\cD_r(X)$ satisfies
\[ ||\theta_{u,d}|| \leq  p \left[ e  \hsp \log\left(\frac{r}{r(X)}\right) \right]^{-1}.\]
We omit the proof as we do not need this estimate.
\end{rmk}
Inspecting the definition of $\theta_{u,d} : \cD(X) \to \cD(X)$ given in the statement of Lemma \ref{zTwist} shows that it commutes with the restriction maps $\cD(X) \to \cD(Y)$ for any affinoid subdomain $Y$ of $X$. Therefore these automorphisms assemble to give an automorphism $\theta_{u,d} : \cD_X \to \cD_X$ of the sheaf of $K$-algebras $\cD_X$ for any admissible open subspace $X$ of $\A$ and any $u \in \cO(X)^\times$. If $X$ is an admissible open subset of $\bA$ we define $X(\partial_x/r)^\dag$ to be the $G$-topology obtained by restricting the $G$-topology $\bA(d_x/r)^\dag$ from Definition \ref{DagSite} to those affinoid open sets contained in $X$.

\begin{cor}\label{ThetaUDag} Let $X$ be an admissible open subspace of $\A$, let $u \in \cO(X)^\times$ and suppose that $p \nmid d$. Then for every $r > 0$, the restriction of $\theta_{u,d} : \cD \to \cD$ to the $G$-topology $X(\partial_x/r)^\dag$ extends to an automorphism $\theta_{u,d} : \cD^\dag_r \to \cD^\dag_r$ of sheaves of $K$-algebras on $X(\partial_x/r)^\dag$.
\end{cor}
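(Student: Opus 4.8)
The plan is to deduce Corollary \ref{ThetaUDag} from Theorem \ref{ThetaU} by the standard colimit argument that was already used to pass from $\cD_r$ to $\cD^\dag_r$ in Theorem \ref{NCDagTate}. First I would fix $r>0$ and an affinoid $X' \in X(\partial_x/r)^\dag$, so that $r \geq r(X')$. The point is that for every real $c > r$ we then have $c > r(X')$, so $X'$ is $\partial_x/c$-admissible, and Theorem \ref{ThetaU} applies with $r$ replaced by $c$: for each $u \in \cO(X')^\times$ with $p \nmid d$, the automorphism $\theta_{u,d}$ of $\cD(X')$ extends to a bounded $K$-algebra automorphism of $\cD_c(X')$. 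These extensions are compatible with the connecting maps $\cD_c(X') \to \cD_{c'}(X')$ for $c > c' > r$, since both $\theta_{u,d}$ on $\cD_c(X')$ and on $\cD_{c'}(X')$ restrict to the same map $\theta_{u,d}$ on the dense subalgebra $\cD(X')$, and a bounded $K$-algebra endomorphism of a Banach algebra is determined by its restriction to a dense subalgebra. Passing to the colimit $\cD^\dag_r(X') = \colim_{c>r} \cD_c(X')$ therefore yields a $K$-algebra automorphism $\theta_{u,d}$ of $\cD^\dag_r(X')$, whose inverse is $\theta_{u^{-1},d}$ by the same density argument.

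Next I would check that these automorphisms are compatible with restriction. Given affinoid subdomains $Y' \subseteq X'$ inside $X(\partial_x/r)^\dag$ and $u \in \cO(X')^\times$, the observation recorded just before the statement of the corollary --- namely that $\theta_{u,d}$ on $\cD$ commutes with restriction maps $\cD(X') \to \cD(Y')$ --- combined with the commutativity of the maps $j(X') : \cD(X') \to \cD_c(X')$ with restriction (Lemma \ref{TheMapJ}) and with density, shows that the square relating $\theta_{u,d}$ on $\cD_c(X')$ and on $\cD_c(Y')$ commutes; taking the colimit over $c > r$ gives the commuting square for $\cD^\dag_r$. Hence the maps $\theta_{u,d}$ assemble to an endomorphism of the sheaf of $K$-algebras $\cD^\dag_r$ on $X(\partial_x/r)^\dag$, which is an automorphism with inverse $\theta_{u^{-1},d}$.

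Strictly speaking, one should also note that the global $u \in \cO(X)^\times$ restricts to a unit $u|_{X'} \in \cO(X')^\times$ on each admissible affinoid $X' \subseteq X$, and that the local automorphisms $\theta_{u|_{X'},d}$ glue: this is immediate from the restriction-compatibility just established together with the functoriality already present in Lemma \ref{zTwist}, so that $\theta_{u,d}$ is well-defined on all of $\cD^\dag_r$ as a morphism of sheaves of $K$-algebras on the $G$-topology $X(\partial_x/r)^\dag$. There is no serious obstacle here: the entire content of the corollary is already in Theorem \ref{ThetaU}, and the only thing to be careful about is the bookkeeping that the hypothesis $c > r \geq r(X')$ lets us invoke Theorem \ref{ThetaU} uniformly for all $c$ in the colimit, together with the repeated use of density of $\cD(X')$ to see that the various extensions agree and are mutually inverse. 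If any step deserves to be called the main point, it is this uniform applicability of Theorem \ref{ThetaU} across the colimit, but it is essentially formal.
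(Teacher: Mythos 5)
Your proposal is correct and matches the paper's approach, which simply asserts that the corollary follows immediately from Theorem \ref{ThetaU}; you have merely unpacked the routine colimit-and-density bookkeeping that makes "immediately" precise. There is nothing to add or correct.
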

\begin{proof} This follows immediately from Theorem \ref{ThetaU}.
\end{proof}

For future use, we record the fact that the automorphisms $\theta_{u,d}$ are \emph{inner} whenever $u$ happens to be a power of $d$ \emph{in $\cO(X)^\times$}. More precisely, we have the following
\begin{lem}\label{NearlyInner} Let $r > 0$, let $X$ be an $\partial_x/r$-admissible affinoid subdomain of $\bA$, let $v \in \cO(X)^\times$ and suppose that $p \nmid d$. Then
\[ \theta_{z^d, d} (Q) = z \hsp Q \hsp z^{-1} \qmb{for all} Q \in \cD_r(X).\]
\end{lem}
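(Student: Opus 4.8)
The plan is to deduce this from its finite-order analogue, Lemma \ref{ThetaDthPower}, by a density-and-continuity argument; here we write $z$ for the unit appearing in the hypothesis. Since $z \in \cO(X)^\times$ we have $z^d \in \cO(X)^\times$, so Theorem \ref{ThetaU} --- this is where we use that $p \nmid d$ and that $X$ is $\partial_x/r$-admissible --- provides a bounded $K$-algebra automorphism $\theta_{z^d,d}$ of $\cD_r(X)$ extending the automorphism $\theta_{z^d,d}$ of $\cD(X)$ from Lemma \ref{zTwist}. Inspecting the construction in the proof of Theorem \ref{ThetaU}, this extension restricts to the identity on $\cO(X) \subseteq \cD_r(X)$ and sends $\partial$ to $j(\theta_{z^d,d}(\partial_x))$. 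Since $\cD(X) = \cO(X)[\partial;\partial_x]$ is generated as a ring by $\cO(X)$ and $\partial_x$, and since $\theta_{z^d,d} \circ j$ and $j \circ \theta_{z^d,d}$ are ring homomorphisms $\cD(X) \to \cD_r(X)$ agreeing on these generators, I would first record the compatibility $\theta_{z^d,d} \circ j = j \circ \theta_{z^d,d}$.

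Next I would observe that $z$ is a unit of the $K$-Banach algebra $\cD_r(X)$, since both $z$ and $z^{-1}$ lie in the subring $\cO(X) \subseteq \cD_r(X)$; hence conjugation $c_z \colon Q \mapsto zQz^{-1}$ is a $K$-algebra automorphism of $\cD_r(X)$, and it is bounded because it is the composite of left multiplication by $z$ with right multiplication by $z^{-1}$, both bounded operators on the Banach algebra $\cD_r(X)$. Now for $Q \in \cD(X)$, Lemma \ref{ThetaDthPower} gives $\theta_{z^d,d}(Q) = zQz^{-1}$ in $\cD(X)$, so applying $j$ and using the compatibility recorded above together with $j|_{\cO(X)} = \mathrm{id}$,
\[ \theta_{z^d,d}(j(Q)) = j(\theta_{z^d,d}(Q)) = j(zQz^{-1}) = z\, j(Q)\, z^{-1} = c_z(j(Q)). \]
Thus $\theta_{z^d,d}$ and $c_z$ agree on the image $j(\cD(X))$ of $\cD(X)$ in $\cD_r(X)$, which is dense by Lemma \ref{TheMapJ}; as both maps are continuous, they coincide on all of $\cD_r(X)$, which is the assertion.

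There is no real obstacle here; the one point deserving a moment's care is the identification $\theta_{z^d,d}(\partial) = z\partial z^{-1}$ in $\cD_r(X)$. The construction in Theorem \ref{ThetaU} produces $\theta_{z^d,d}(\partial) = \partial - \frac1d \frac{\partial_x(z^d)}{z^d}$, and one checks using $\partial_x(z^d) = d z^{d-1}\partial_x(z)$ and the skew relation $\partial a = a\partial + \partial_x(a)$ in $\cD_r(X)$ that this equals $z\partial z^{-1} = \partial - z^{-1}\partial_x(z)$; alternatively this is already subsumed in the compatibility statement, since $\theta_{z^d,d}(\partial_x) = z\partial_x z^{-1}$ in $\cD(X)$ by Lemma \ref{ThetaDthPower} and $j$ is a ring homomorphism fixing $\cO(X)$.
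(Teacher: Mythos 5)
Your proof is correct and follows essentially the same route as the paper: conjugation by $z$ and $\theta_{z^d,d}$ are both bounded $K$-algebra automorphisms of $\cD_r(X)$, they agree on the dense subalgebra $j(\cD(X))$ by Lemma \ref{ThetaDthPower}, and continuity gives the identity everywhere. The extra compatibility check $\theta_{z^d,d}\circ j = j\circ\theta_{z^d,d}$ that you spell out is implicit in the paper's (much terser) argument and is a reasonable thing to make explicit.
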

\begin{proof} Conjugation by $z$ is a bounded $K$-linear automorphism of $\cD_r(X)$, which agrees with $\theta_{z^d,d}$ on $\cD(X)$ by Lemma \ref{ThetaDthPower}. Also, $\theta_{z^d, d}$ is bounded and $K$-linear on $\cD_r(X)$ by Theorem \ref{ThetaU}. The result now follows since $\cD(X)$ is dense in $\cD_r(X)$.\end{proof}

\subsection{Affine transformations of \ts{\bA}}\label{AffTrans}

In this section we consider how the action of the group of affine transformations of $\bA$ relates to various constructions we have made. Let $\bB := \{g \in \mathbb{GL}_2 : g_{21}=0\}$ be the subgroup scheme of upper-triangular matrices in $\mathbb{GL}_2$.

\begin{defn} \label{AffA} Let $\varrho\colon \bB(K)\to K^\times$ be the character given by
\[ \varrho \left(\begin{pmatrix} a & b \\ 0 & d\end{pmatrix}\right) \mapsto \frac{a}{d}. \] 
\end{defn}
For the necessary background on equivariant structures and equivariant sheaves, we refer the reader to \cite[\S 2.3]{EqDCap}.
\begin{lem}\hfill \be \label{AutAOD}
	
	\item $\cO_{\bP^1}$ has a natural $\mathbb{GL}_2(K)$-equivariant structure as a sheaf of $K$-algebras given by $(g\cdot f)(z)=f(g^{-1}z)$.
	\item The action of $\mathbb{GL}_2(K)$ on $\bP^1$ preserves affinoid subdomains.
	\item If $X$ is an affinoid subdomain of $\bP^1$ and $G\leq \mathbb{GL}_2(K)$ stabilises $X$ then there is a natural group homomorphism $\rho\colon G\to \cB(\cO(X))^\times$. 
	\item The $\mathbb{GL}_2(K)$-equivariant structure on $\cO_{\bP^1}$ extends to an $\mathbb{GL}_2(K)$-equivariant structure as a sheaf of $K$-algebras on $\cD_{\bP^1}$ via \[ \begin{pmatrix}a&b\\c&d\end{pmatrix}\cdot \partial_x=\frac{(-cx+a)^2}{ad-bc}\partial_x.\]
	\item The $\mathbb{GL}_2(K)$-equivariant structure on $\cD_{\bP^1}$ restricts to a $\bB(K)$-equivariant structure on $\cD_{\bA}$ which satisfies 
	\[ g\cdot \partial_x= \varrho(g)\partial_x \qmb{and} g \cdot (x - z) = \varrho(g)^{-1}(x - g \cdot z) \qmb{for all} z \in \overline{K}.\]   	\ee
\end{lem}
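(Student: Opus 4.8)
The plan is to obtain (a)--(c) as formal consequences of the fact that $\mathbb{GL}_2(K)$ acts on $\bP^1$ by rigid-analytic automorphisms via the M\"obius action $g\cdot z = \frac{az+b}{cz+d}$, to deduce (d) by transporting this action through the tangent sheaf, and to derive (e) by restriction together with two explicit computations. For (a): a M\"obius transformation is an automorphism of $\bP^1$ as a rigid variety (its inverse is the M\"obius transformation attached to $g^{-1}$), and any action of a group $G$ on a rigid variety $X$ by automorphisms equips $\cO_X$ with a $G$-equivariant structure, where $g$ sends $f\in\cO(U)$ to $g\cdot f\in\cO(gU)$ with $(g\cdot f)(z)=f(g^{-1}z)$; the cocycle identity $(gh)\cdot f = g\cdot(h\cdot f)$ and the fact that each $g\cdot(-)$ is a $K$-algebra homomorphism are immediate. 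Part (b) is the statement that an automorphism of $\bP^1$ carries admissible opens to admissible opens and affinoid subdomains to affinoid subdomains. For (c), when $G$ stabilises an affinoid subdomain $X$ the map $g\cdot(-)$ is a $K$-algebra automorphism of the affinoid algebra $\cO(X)$, hence continuous, so we may put $\rho(g):=(g\cdot(-))\in\cB(\cO(X))^\times$; the relation $\rho(gh)=\rho(g)\rho(h)$ is the cocycle identity of (a).

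For (d): since $\bP^1$ is smooth, any automorphism $\phi$ of $\bP^1$ induces an automorphism $\phi_\ast$ of $\cT_{\bP^1}$ and hence of the enveloping algebra $\cD_{\bP^1}$, and these are compatible both with the $\cD$-module structure on $\cO_{\bP^1}$ and with the equivariant structure of (a) (see \cite[\S 2.3]{EqDCap}); they inherit the cocycle identity, so we obtain a $\mathbb{GL}_2(K)$-equivariant structure on $\cD_{\bP^1}$ extending that on $\cO_{\bP^1}$. It remains to compute $g\cdot\partial_x$. Using that $g\cdot P$ is determined by $(g\cdot P)(h)=g\cdot\bigl(P\cdot(g^{-1}\cdot h)\bigr)$ for $h\in\cO$, the chain rule applied to $g\cdot z=\frac{az+b}{cz+d}$ gives $(g\cdot\partial_x)(h)=\frac{(-cx+a)^2}{ad-bc}\,\partial_x(h)$, that is $g\cdot\partial_x=\frac{(-cx+a)^2}{ad-bc}\partial_x$. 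One must still check that the right-hand side is a global section of $\cT_{\bP^1}$, not merely of $\cT_{\bA}$: in the chart $w=1/x$ around $\infty$, where $\partial_x=-w^2\partial_w$, the expression becomes $-\frac{(aw-c)^2}{ad-bc}\partial_w$, which is regular at $w=0$; since it is visibly regular on $\bA$ and the two descriptions agree on the overlap, it is indeed a global vector field. (In passing, scalar matrices act trivially, so the structure factors through $\mathbb{GL}_2(K)/K^\times$, though we will not use this.)

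For (e): each $g=\begin{pmatrix}a&b\\0&d\end{pmatrix}\in\bB(K)$ fixes $\infty\in\bP^1$, so $\bB(K)$ stabilises $\bA=\bP^1\setminus\{\infty\}$ and the equivariant structure of (d) restricts to one on $\cD_{\bA}$. Setting $c=0$ in the formula of (d) gives $g\cdot\partial_x=\frac{a^2}{ad}\partial_x=\frac{a}{d}\partial_x=\varrho(g)\partial_x$. For the second identity, the function $g\cdot(x-z)$ is $w\mapsto(g^{-1}\cdot w)-z$; since $g^{-1}\cdot w=\frac{dw-b}{a}$ and $g\cdot z=\frac{az+b}{d}$, this equals $\frac{dw-b}{a}-z=\frac{d}{a}\bigl(w-\tfrac{az+b}{d}\bigr)=\varrho(g)^{-1}\bigl(w-g\cdot z\bigr)$, which is $\varrho(g)^{-1}(x-g\cdot z)$ as a function of $w$. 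I expect the only non-formal step to be the explicit computation in (d): getting the chain rule right and, more easily overlooked, verifying regularity at $\infty$ so that $g\cdot\partial_x$ genuinely lies in $\cT_{\bP^1}(\bP^1)$. Granting (d), part (e) is a routine specialisation and parts (a)--(c) are formal.
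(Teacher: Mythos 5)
Your proof is correct and follows essentially the same path as the paper: derive (a)--(c) from the fact that M\"obius transformations are rigid automorphisms of $\bP^1$, push the action through the tangent sheaf for (d), verify the explicit formula by the chain rule, and specialise to upper-triangular matrices for (e). The paper reaches the same endpoint by invoking the general machinery of \cite[\S 2.3, Lemma 3.4.3, Theorem 6.3.4]{EqDCap}, which constructs $G$-equivariant structures on $\cO_X$ and $\cD_X$ uniformly from a continuous $G$-action on a smooth rigid variety, whereas you build the cocycle isomorphisms more directly. One nice touch you add that the paper leaves implicit is the check that $\frac{(-cx+a)^2}{ad-bc}\,\partial_x$ is regular at $\infty$ (in the coordinate $w=1/x$ it becomes $-\frac{(aw-c)^2}{ad-bc}\,\partial_w$), so that the formula genuinely defines a global section of $\cT_{\bP^1}$; the paper sidesteps this by having the equivariant structure already exist via the cited references and using the formula only to identify it locally. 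Both computations in (d) and (e) agree exactly with the paper's.
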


\begin{proof} (a) The affine algebraic group $\mathbb{GL}_2$ acts on the scheme $\bP^1$ by M\"obius transformations. By \cite[Theorem 6.3.4]{EqDCap}, the action of $\mathbb{GL}_2(K)$ on the rigid analytic variety $\bP^1$ is continuous. In the proof of \cite[Lemma 3.4.3]{EqDCap} it is explained how the structure sheaf $\cO_X$ and the sheaf of finite-order differential operators $\cD_X$ on a smooth rigid analytic variety $X$ equipped with a continuous action of a $p$-adic Lie group $G$ can be endowed in a standard way with natural $G$-equivariant structures. These constructions do not require $G$ to be a $p$-adic Lie group, so they can also be applied in our setting to the continuous $\mathbb{GL}_2(K)$-action on $\bP^1$.
	
	(b) For any affinoid subdomain $X$ of $\bP^1$, each $g\in \mathbb{GL}_2(K)$ induces an isomorphism of rigid analytic $K$-varieties $(X,\cO_X)\to (g(X), \cO_{g(X)})$. Thus $g(X)$ is an affinoid subdomain of $\bP^1$.
	
	(c) is an immediate consequence of (a) together with \cite[Theorem 6.1.3/1]{BGR}.
	
	(d) Using \cite[Example 2.1.4 and Corollary 2.1.9]{EqDCap} we see that for all $g\in \mathbb{GL}_2(K)$ stabilising $X$, the $g$-action on $\cT(X)$ is given by
	\begin{eqnarray}
		\label{GactOandT}  (g\cdot \partial)(f)=g\cdot \partial(g^{-1}\cdot f)  \qmb{for all} \partial \in \cT(X), f \in \cO(X).
	\end{eqnarray} 
	This implies that $\left(\begin{pmatrix} a & b \\ c & d\end{pmatrix}\cdot \partial_x\right)(x)=\frac{(-cx+a)^2}{ad-bc}$, and (d) follows.
	
	(e) The first statement is now immediate given the formula in part (d). For the second one, we write $g = \begin{pmatrix} a & b \\ 0 & d \end{pmatrix}$ and compute
	 \[g \cdot (x-z) = \frac{dx-b}{a} - z = \frac{d}{a}\left(x - \left(\frac{az+b}{d}\right)\right) = \varrho(g)^{-1} (x - g \cdot z). \qedhere\]

\end{proof}
\begin{prop}\label{AutAandr} Let $X$ be an affinoid subdomain of $\bA$ and let $g\in \bB(K)$. Then \[r(g(X))=\frac{r(X)}{|\varrho(g)|}.\] 	
\end{prop}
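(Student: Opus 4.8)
The plan is to transport the operator $\partial_x$ across the isomorphism of affinoid algebras induced by $g$, and then track how its spectral radius rescales. First, I would note that by Lemma \ref{AutAOD}(b) the element $g$ restricts to an isomorphism of rigid $K$-analytic varieties $X \congs g(X)$, hence induces an isomorphism of $K$-affinoid algebras $g_\ast \colon \cO(X) \congs \cO(g(X))$, namely $f \mapsto g \cdot f$ for the action of Lemma \ref{AutAOD}(a); by \cite[Theorem 6.1.3/1]{BGR} both $g_\ast$ and its inverse are bounded. The relevant general fact is then that conjugation by a topological isomorphism of Banach spaces preserves the spectral radius of a bounded operator: if $\varphi \colon V \to W$ is such an isomorphism and $T \in \cB(V)$, then $(\varphi T \varphi^{-1})^k = \varphi T^k \varphi^{-1}$, so $\|(\varphi T \varphi^{-1})^k\|$ and $\|T^k\|$ differ only by the constant factor $\|\varphi\|\,\|\varphi^{-1}\|$, whence $|\varphi T \varphi^{-1}|_{\sp, W} = |T|_{\sp, V}$.

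The key point is that conjugating $\partial_x \in \cB(\cO(X))$ by $g_\ast$ yields a scalar multiple of $\partial_x$ on $\cO(g(X))$. Indeed, the description of the $\bB(K)$-action on the tangent sheaf used in the proof of Lemma \ref{AutAOD} --- equation (\ref{GactOandT}), $(g \cdot \partial)(f) = g \cdot \partial(g^{-1} \cdot f)$ --- says precisely that $g_\ast \circ \partial_x \circ g_\ast^{-1} = g \cdot \partial_x$ as elements of $\cB(\cO(g(X)))$, and by Lemma \ref{AutAOD}(e) we have $g \cdot \partial_x = \varrho(g)\, \partial_x$. (Alternatively, writing $g = \begin{pmatrix} a & b \\ 0 & d \end{pmatrix}$ so that $g$ acts on $\bA$ by $z \mapsto (az + b)/d$, a one-line chain-rule computation gives $g_\ast \circ \partial_x \circ g_\ast^{-1} = \frac{a}{d}\, \partial_x = \varrho(g)\, \partial_x$.)

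Putting the two steps together and using that $|\lambda S|_{\sp} = |\lambda|\, |S|_{\sp}$ for a scalar $\lambda$, we get
\[ r(X) = |\partial_x|_{\sp, \cO(X)} = |g_\ast \circ \partial_x \circ g_\ast^{-1}|_{\sp, \cO(g(X))} = |\varrho(g)\, \partial_x|_{\sp, \cO(g(X))} = |\varrho(g)| \cdot r(g(X)), \]
and rearranging gives $r(g(X)) = r(X)/|\varrho(g)|$, as claimed.

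I do not expect a genuine obstacle: the argument is bookkeeping with the equivariant structure from Lemma \ref{AutAOD} plus two elementary properties of the spectral radius. The only things needing a little care are that $g_\ast$ really is a topological isomorphism of Banach algebras (so that conjugation-invariance of $|\cdot|_{\sp}$ applies) and that the direction of the character $\varrho$ comes out right; the exponent is easy to sanity-check on $g = \operatorname{diag}(a,1)$ and $X = \bD$, where $g(X)$ is the closed disc of radius $|a|$, and Corollary \ref{rXcalc} gives $r(\bD) = \varpi$ while $r(g(\bD)) = \varpi/|a| = r(\bD)/|\varrho(g)|$.
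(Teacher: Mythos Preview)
Your proposal is correct and takes essentially the same approach as the paper: transport $\partial_x$ across the isomorphism $g_\ast$ and use Lemma \ref{AutAOD}(e) to identify the conjugate with $\varrho(g)\partial_x$. The only cosmetic difference is that the paper observes $g^{\cO}(X)$ is in fact an \emph{isometry} for the supremum seminorm (since $|g\cdot f|_{gX} = \sup_{x\in X} |f(g^{-1}gx)| = |f|_X$), which lets one read off $\|\partial_x^n\|_{\cO(gX)} = |\varrho(g)|^{-n}\|\partial_x^n\|_{\cO(X)}$ directly, bypassing the bounded-conjugation argument; but this is a simplification of the same idea, not a different route.
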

\begin{proof}We note that the isomorphism of Banach algebras $g^\cO(X)\colon \cO(X)\to \cO(gX)$ is even an isometry since for every $f\in \cO(X)$ we have \[ |g\cdot f|_{gX}=\sup_{x\in X} f(g^{-1}gx)=|f|_X. \] 
	
	Using Lemma \ref{AutAOD}(e), we see that for any $f\in \cO(gX)$ and $n \geq 0$, we have \[|\partial_x^n(f)|_{g(X)}=|g^{-1}\cdot \left(\partial_x^n(f)\right)|_X=|\varrho(g)|^{-n} |\partial_x^n (g^{-1}\cdot f)|_X.\]
	
Hence $r(g(X))=|\partial_x|_{\sp,\cO(g(X))}=\frac{1}{|\varrho(g)|}|\partial_x|_{\sp, \cO(X)}=\frac{r(X)}{|\varrho(g)|}$ as required.  
	\end{proof}

In view of Definitions \ref{dxradm}(b) and \ref{DagSite}(a), Proposition \ref{AutAandr} and Lemma \ref{AutAOD}(b) together imply the following

\begin{cor}\label{gActsOnSites} Let $r\in \bR_{>0}$. Then every $g \in \bB(K)$ induces homeomorphisms
\[g : \bA(\partial_x/r) \stackrel{\cong}{\longrightarrow} \bA(|\varrho(g)|\partial_x/r) \qmb{and} g : \bA(\partial_x/r)^\dag \stackrel{\cong}{\longrightarrow}  \bA(|\varrho(g)|\partial_x/r)^\dag.\]
\end{cor}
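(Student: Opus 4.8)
The plan is to reduce everything to Proposition \ref{AutAandr} together with the fact that each $g \in \bB(K)$ acts as an automorphism of the rigid analytic variety $\bA$. Indeed, every $g \in \bB(K)$ is upper-triangular and therefore fixes the point $\infty \in \bP^1$, so it restricts to an automorphism of $\bA = \bP^1 \setminus \{\infty\}$; by Lemma \ref{AutAOD}(b) this induces a bijection $X \mapsto g(X)$ on the collection of all affinoid subdomains of $\bA$, with inverse $X \mapsto g^{-1}(X)$. This bijection manifestly respects inclusions and commutes with the formation of finite intersections, since $g(X \cap Y) = g(X) \cap g(Y)$.

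Next I would unwind the admissibility conditions appearing in the two target $G$-topologies. Writing $c := |\varrho(g)|$, the site $\bA(c\partial_x/r)$ has as its admissible sets precisely those affinoid subdomains $Y$ of $\bA$ with $c\cdot r(Y) < r$, and as its admissible coverings the finite coverings; the overconvergent variant $\bA(c\partial_x/r)^\dag$ uses instead the non-strict inequality $c \cdot r(Y) \leq r$, in accordance with Definitions \ref{dxradm}(b) and \ref{DagSite}(a). By Proposition \ref{AutAandr} we have $r(g(X)) = r(X)/c$ for every affinoid subdomain $X$ of $\bA$, so $r(X) < r$ holds if and only if $c\cdot r(g(X)) < r$ holds. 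Hence $X$ is $\partial_x/r$-admissible exactly when $g(X)$ is $c\partial_x/r$-admissible, and the identical equivalence holds verbatim with every $<$ replaced by $\leq$, which handles the dagger case.

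Combining the two observations, the bijection $X \mapsto g(X)$ carries the admissible sets of $\bA(\partial_x/r)$ bijectively onto those of $\bA(c\partial_x/r)$, and since a finite family $\{X_i\}$ of admissible subsets covers $X$ if and only if $\{g(X_i)\}$ covers $g(X)$ (again because $g$ is a bijection), it also matches up admissible coverings in both directions. Together with the compatibility with inclusions and finite intersections noted above, this says exactly that $g$ induces a homeomorphism of $G$-topologies $\bA(\partial_x/r) \congs \bA(|\varrho(g)|\partial_x/r)$ in the sense of \cite[Definition 9.1.1/1]{BGR}, and likewise $\bA(\partial_x/r)^\dag \congs \bA(|\varrho(g)|\partial_x/r)^\dag$ for the overconvergent sites.

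There is essentially no obstacle in this argument: all the analytic content is already isolated in Proposition \ref{AutAandr}, and what remains is the purely formal verification that the geometric action of $g$ is compatible with the combinatorial data (objects, inclusions, finite intersections, finite coverings) defining the $G$-topologies. The only point that requires a moment's attention is the bookkeeping of which inequality ($<$ versus $\leq$) belongs to which of the two sites, and remembering that $r(g(X))$ scales by $|\varrho(g)|^{-1}$ and not by $|\varrho(g)|$, so that the admissible sets of $\bA(\partial_x/r)$ are sent to those of $\bA(|\varrho(g)|\partial_x/r)$ rather than of $\bA(|\varrho(g)|^{-1}\partial_x/r)$.
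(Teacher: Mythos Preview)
Your proof is correct and follows exactly the approach of the paper, which simply invokes Definitions \ref{dxradm}(b) and \ref{DagSite}(a) together with Proposition \ref{AutAandr} and Lemma \ref{AutAOD}(b). You have unpacked in detail precisely the formal verification that the paper leaves implicit.
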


\begin{lem}  \label{AffDr} Let $g\in \bB(K)$ and let $r \in \bR_{>0}$.
	\be \item There is an equivalence of categories \[ g_\ast\colon  \PreSh(\bA(\partial_x/r))\stackrel{\cong}{\longrightarrow} \PreSh(\bA(|\varrho(g)|\partial_x/r)) \] given by $(g_\ast \cF)(X)=\cF(g^{-1}X)$ for all $\cF \in \PreSh(\bA(\partial_x/r))$.
\item $g^\cD$ induces an isomorphism of presheaves of $K$-Banach algebras on $\bA(\partial_x/r)$  
\[ g_r\colon \cD_r\stackrel{\cong}{\longrightarrow} g^\ast \cD_{\frac{r}{\varrho(g)}}.  \] 
\ee\end{lem}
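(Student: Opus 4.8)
The plan is to obtain part (a) formally from Corollary \ref{gActsOnSites}, and to construct the map in part (b) by invoking the universal property of the skew-Tate algebra, Lemma \ref{BanachUnivProp}.

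\emph{Part (a).} By Corollary \ref{gActsOnSites} the assignment $X \mapsto g(X)$ is a homeomorphism of $G$-topologies from $\bA(\partial_x/r)$ onto $\bA(|\varrho(g)|\partial_x/r)$; note that $\bA(|\varrho(g)|\partial_x/r) = \bA(\partial_x/(r/|\varrho(g)|))$, so I will use $r/|\varrho(g)|$ for the real parameter throughout. The functor $g_\ast$ of the statement is pullback of presheaves along the inverse of this homeomorphism, and pullback along a homeomorphism of $G$-topologies is an equivalence of presheaf categories, with quasi-inverse the functor $g^\ast$ given by $(g^\ast\cF)(X) = \cF(g(X))$. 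This gives (a) and lets me use $g^\ast$ freely below.

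\emph{Part (b).} Fix a $\partial_x/r$-admissible affinoid subdomain $X$ of $\bA$. By Proposition \ref{AutAandr} we have $r(g(X)) = r(X)/|\varrho(g)| < r/|\varrho(g)|$, so $g(X)$ is $\partial_x/(r/|\varrho(g)|)$-admissible and $B := (g^\ast\cD_{r/|\varrho(g)|})(X) = \cD_{r/|\varrho(g)|}(g(X))$ is a $K$-Banach algebra by Proposition \ref{DbRing}(a). As recorded in the proof of Proposition \ref{AutAandr}, the equivariant structure map $g^\cO(X)\colon \cO(X) \to \cO(g(X))$ is an isometric isomorphism of $K$-Banach algebras; post-composing it with the inclusion $\cO(g(X)) \hookrightarrow B$ gives a bounded $K$-Banach algebra homomorphism $f\colon \cO(X) \to B$. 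I then apply Lemma \ref{BanachUnivProp} with $A = \cO(X)$, $\delta = \partial_x$, this $f$, and $b := \varrho(g)\partial \in B$. Condition (b)(ii) is immediate since $|b^\ell| = |\varrho(g)|^\ell (r/|\varrho(g)|)^\ell = r^\ell$ for all $\ell \geq 0$. Condition (b)(i) reads $bf(a) - f(a)b = f(\partial_x(a))$ for $a \in \cO(X)$: since $[\partial,h] = \partial_x(h)$ in $B$ for $h \in \cO(g(X))$ the left-hand side is $\varrho(g)\hsp\partial_x(g\cdot a)$, and combining $g\cdot\partial_x = \varrho(g)\partial_x$ from Lemma \ref{AutAOD}(e) with the formula for the $g$-action on vector fields from the proof of Lemma \ref{AutAOD} gives $\varrho(g)\hsp\partial_x(g\cdot a) = (g\cdot\partial_x)(g\cdot a) = g\cdot(\partial_x(a)) = f(\partial_x(a))$, as required. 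Lemma \ref{BanachUnivProp} then produces a bounded $K$-algebra map $g_r(X)\colon \cD_r(X) \to B$ extending $g^\cO(X)$ with $g_r(X)(\partial) = \varrho(g)\partial$, so that $g_r(X)\left(\sum_n a_n\partial^n\right) = \sum_n \varrho(g)^n(g\cdot a_n)\partial^n$, which is an isometry because $g^\cO(X)$ is. Running the construction with $g^{-1}$ in place of $g$ yields a two-sided inverse, and the compatibility of the equivariant structures on $\cO_{\bA}$ and $\cD_{\bA}$ with restriction to affinoid subdomains makes the $g_r(X)$ natural in $X$; hence they assemble into the desired isomorphism of presheaves of $K$-Banach algebras $g_r\colon \cD_r \congs g^\ast\cD_{r/|\varrho(g)|}$ on $\bA(\partial_x/r)$.

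Essentially everything here is bookkeeping on top of results already available; the one step that genuinely requires care is verifying hypothesis (b)(i) of Lemma \ref{BanachUnivProp}, where the scaling $g\cdot\partial_x = \varrho(g)\partial_x$ and the compatibility of the two equivariant structures must be threaded together correctly --- together with keeping the directions of $g_\ast$, $g^\ast$ and of the corresponding $G$-topologies straight.
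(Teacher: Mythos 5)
Your proposal is correct and follows essentially the same route as the paper: part (a) is immediate from Corollary \ref{gActsOnSites}, and part (b) applies Lemma \ref{BanachUnivProp} with exactly the same parameters ($A = \cO(X)$, $\delta = \partial_x$, $B = \cD_{r/|\varrho(g)|}(gX)$, $f$ the composite through $g^\cO(X)$, $b = \varrho(g)\partial$), verifies (b)(i) by the same computation via $g\cdot\partial_x = \varrho(g)\partial_x$ and equation (\ref{GactOandT}), and checks (b)(ii) directly. The only light spot is the claim that ``running the construction with $g^{-1}$ yields a two-sided inverse'' — the paper justifies this via the commutative square (\ref{DgjD}) and density of $j(\cD(X))$, but the same conclusion also follows from the uniqueness clause of Lemma \ref{BanachUnivProp}, so your argument holds.
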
 
\begin{proof}
(a) This follows immediately from Corollary \ref{gActsOnSites}.
	
(b) Let $X \in \bA(\partial_x/r)$. We will apply Lemma \ref{BanachUnivProp} with the following parameters: $A = \cO(X)$, $B = \cD_{\frac{r}{\varrho(g)}}(gX)$, $\delta = \partial_x \in \cB(A)$, $f : A \to B$ is the composition of $g^{\cO}(X) : \cO(X) \to \cO(gX)$ and the inclusion $\cO(gX) \hookrightarrow \cD_{\frac{r}{\varrho(g)}}(gX)$, and $b := \varrho(g) \partial \in B$. First, we must verify conditions (b)(i) and (b)(ii) of this Lemma.

\noindent (i) Let $a \in \cO(X)$. Using Lemma \ref{AutAOD}(e) and (\ref{GactOandT}), we compute in $B$ as follows:
\[ [b, f(a)] = [\varrho(g) \partial, g \cdot a] = \varrho(g) \partial_x( g \cdot a ) = (g \cdot \partial_x)( g \cdot a) = g \cdot (\partial_x \cdot a ) = f(\delta(a)).\]
(ii) Let $\ell \geq 0$; then $|\partial^\ell| = (r / |\varrho(g)|)^\ell$ in $B = \cD_{\frac{r}{\varrho(g)}}(gX)$ by Definition \ref{AdelR}. Hence 
\[ \sup\limits_{\ell \geq 0} |b^\ell|/r^\ell = \sup\limits_{\ell \geq 0}|\varrho(g)|^\ell |\partial^\ell| / r^\ell = 1.\]
Hence by Lemma \ref{BanachUnivProp}(b) $\Rightarrow$(a), the map $f : A \to B$ extends to a bounded $K$-Banach algebra homomorphism $g_r(X) : A \langle \partial / r \rangle = \cD_r(X) \to \cD_{\frac{r}{\varrho(g)}}(gX)$. By construction, this map makes the following diagram commutative:
\begin{equation}\label{DgjD} \xymatrix{ \cD(X) \ar[rr]^{g^{\cD}(X)}\ar[d]_{j_r(X)} && \cD(gX) \ar[d]^{j_{\frac{r}{|\varrho(g)|}}(gX)} \\ \cD_r(X) \ar[rr]_{g_r(X)} && \cD_{\frac{r}{|\varrho(g)|}}(gX) }\end{equation}
where the vertical maps come from Lemma \ref{TheMapJ}. Since the images of these maps are dense by Lemma \ref{TheMapJ}, the fact that $(g^{-1})^{\cD}(gX)$ is a two-sided inverse to $g^{\cD}(X)$ implies that $(g^{-1})_{\frac{r}{\varrho(g)}}(gX)$ is a two-sided inverse for $g_r(X)$. By construction, the maps $g_r$ commute with restriction maps in $\cD_r$ and $\cD_{\frac{r}{|\varrho(g)|}}$. Hence $g_r$ is an isomorphism of sheaves of $K$-Banach algebras as claimed.
\end{proof}

\begin{cor}\label{gDagTransform} Let $g\in \bB(K)$ and let $r\in \R_{>0}$. 
\be\item There is an equivalence of categories \[ g_\ast \colon \mathrm{PreSh}(\bA(\partial_x/r)^\dag)\to\mathrm{PreSh}(\bA(|\varrho(g)|\partial_x/r)^\dag)\] given by $(g_\ast\cF)(X)=\cF(g^{-1}X)$ for all $\cF \in \PreSh(\bA(\partial_x/r))$.
\item $g^\cD$ induces an isomorphism of sheaves of $K$-algebras on $\bA(\partial_x/r)^\dag$ \[ g^\dag_r \colon  \cD_r^\dag\stackrel{\cong}{\longrightarrow}  g^\ast \cD_{\frac{r}{\varrho(g)}}^\dag. \]
	\ee 
\end{cor}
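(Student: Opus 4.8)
The plan is to deduce both assertions from their Banach-algebra analogues in Lemma \ref{AffDr} by passing to the overconvergent colimit. Part (a) is immediate, and is proved exactly as Lemma \ref{AffDr}(a): by Corollary \ref{gActsOnSites}, the element $g$ induces a homeomorphism of $G$-topologies $g : \bA(\partial_x/r)^\dag \congs \bA(|\varrho(g)|\partial_x/r)^\dag$, and any homeomorphism of sites induces an equivalence between the associated presheaf categories given by precomposition with the inverse homeomorphism; this is precisely the assignment $(g_\ast\cF)(X) = \cF(g^{-1}X)$.

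For part (b), fix $X \in \bA(\partial_x/r)^\dag$, so that $r \geq r(X)$ and hence $X \in \bA(\partial_x/c)$ for every real number $c > r$. For each such $c$, Lemma \ref{AffDr}(b) provides an isomorphism of presheaves of $K$-Banach algebras $g_c : \cD_c \congs g^\ast \cD_{c/\varrho(g)}$ on $\bA(\partial_x/c)$, which we restrict to the sub-$G$-topology $\bA(\partial_x/r)^\dag \subseteq \bA(\partial_x/c)$. By Definition \ref{DagSite}(d) we have $\cD^\dag_r = \colim_{c > r} \cD_c$ as presheaves on $\bA(\partial_x/r)^\dag$; moreover, since $c/|\varrho(g)|$ ranges over all reals $> r/|\varrho(g)|$ as $c$ ranges over all reals $> r$, and since $g^\ast$ (being precomposition with a functor between the underlying site categories) commutes with direct limits of presheaves, we obtain $\colim_{c > r} g^\ast \cD_{c/\varrho(g)} = g^\ast \colim_{c > r} \cD_{c/\varrho(g)} = g^\ast \cD^\dag_{r/\varrho(g)}$. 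It therefore suffices to check that the family $(g_c)_{c>r}$ is compatible with the connecting maps $\cD_c \to \cD_{c'}$ for $r < c' < c$ appearing in these two colimits; granting this, $g^\dag_r := \colim_{c>r} g_c$ is a well-defined isomorphism of presheaves of $K$-algebras, and it is then an isomorphism of \emph{sheaves} of $K$-algebras by Theorem \ref{NCDagTate}.

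The compatibility is the only point requiring an argument, and I expect it to be the main (if mild) obstacle. By the commutative square (\ref{DgjD}) in the proof of Lemma \ref{AffDr}, each $g_c(X)$ sits in a square together with $g^{\cD}(X) : \cD(X) \to \cD(gX)$ and the dense inclusions $j$ of finite-order differential operators, and the connecting maps in both colimits likewise restrict to the identity on the finite-order parts. Hence the square relating $g_c(X)$, $g_{c'}(X)$ and the two connecting maps commutes after restriction to the image of $\cD(X)$; since $\cD(X)$ is dense in $\cD_c(X)$ by Lemma \ref{TheMapJ} and all four maps are bounded, hence continuous, the square commutes. Note finally that $\colim_{c>r} g_c$ is automatically a $K$-algebra homomorphism even though the connecting maps of the colimit are not uniformly bounded, because the overconvergent colimit of Definition \ref{DagSite}(d) is formed in the category of $K$-algebras rather than of $K$-Banach algebras; everything else is formal.
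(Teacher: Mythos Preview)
Your proof is correct and follows essentially the same approach as the paper's own proof, which simply cites Proposition \ref{AutAandr} for (a) and Definition \ref{DagSite}, Lemma \ref{AffDr}(b), and Theorem \ref{NCDagTate} for (b). You have spelled out in full the compatibility check for the colimit (via density of $\cD(X)$ and continuity) that the paper leaves implicit.
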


\begin{proof}
(a) follows from Proposition \ref{AutAandr}. (b) follows from (a), Definition \ref{DagSite} and Lemma \ref{AffDr}(b), noting that $\cD_r^\dag$ and $\cD_{\frac{r}{\varrho(g)}}^\dag$ are in fact sheaves on $\bA(\partial_x/r)^\dag$ and $\bA(\varrho(g)\partial_x/r)^\dag$, respectively, by Theorem \ref{NCDagTate}.
\end{proof}

Finally, we record a purely algebraic calculation which tells us how the maps $g^{\cD} : \cD \to g^\ast \cD$ defining the $G$-equivariant structure on $\cD$ interact with the twisting automorphisms $\theta_{u,d}$ from $\S \ref{DivTwDer}$.
\begin{lem}\label{ThetaudG} Let $g \in \bB(K)$ and let $W$ be an affinoid subdomain of $\bA$. For each $u \in \cO(W)^\times$ and $d \geq 1$, the following diagram commutes:
\[\xymatrix{\cD(W) \ar[rr]^{\theta_{u,d}}\ar[d]_{g^{\cD}(W)} && \cD(W) \ar[d]^{g^{\cD}(W)} \\ \cD(gW) \ar[rr]_{\theta_{g \cdot u,d}} && \cD(gW).}\]
\end{lem}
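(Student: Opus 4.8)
The plan is to check commutativity of the square on a generating set of $\cD(W)$. Since the affinoid subdomain $W$ of $\bA$ is smooth, $\cD(W)$ is generated as a $K$-algebra by $\cO(W)$ and $\cT(W)$, and all four maps involved (the two twisting automorphisms and the two copies of $g^{\cD}(W)$) are $K$-algebra homomorphisms; hence it is enough to verify that $g^{\cD}(W)\circ\theta_{u,d}$ and $\theta_{g\cdot u,d}\circ g^{\cD}(W)$ agree on $\cO(W)$ and on $\cT(W)$. Note first that $g\cdot u := g^{\cO}(W)(u)$ lies in $\cO(gW)^\times$ since $g^{\cO}(W)\colon\cO(W)\to\cO(gW)$ is a ring isomorphism, so $\theta_{g\cdot u,d}$ is defined. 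On $\cO(W)$ there is nothing to prove: by Lemma \ref{zTwist} the automorphism $\theta_{u,d}$ is $\cO(W)$-linear, so restricts to the identity on $\cO(W)$, and likewise $\theta_{g\cdot u,d}$ restricts to the identity on $\cO(gW)$; since $g^{\cD}(W)$ restricts to $g^{\cO}(W)$ on functions by construction of the equivariant structure on $\cD$ extending that on $\cO$ (Lemma \ref{AutAOD}), both composites restrict to $g^{\cO}(W)$ on $\cO(W)$.

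For the tangent sheaf, fix $\delta\in\cT(W)$ and write $g\cdot\delta := g^{\cD}(W)(\delta)\in\cT(gW)$. By the defining formula (\ref{ThetaTwist}) and the multiplicativity of $g^{\cO}(W)$,
\[ g^{\cD}(W)\bigl(\theta_{u,d}(\delta)\bigr) \;=\; g\cdot\delta - \frac{1}{d}\,g^{\cO}(W)\!\left(\frac{\delta(u)}{u}\right) \;=\; g\cdot\delta - \frac{1}{d}\,\frac{g\cdot(\delta(u))}{g\cdot u}, \]
while, again by (\ref{ThetaTwist}),
\[ \theta_{g\cdot u,d}\bigl(g^{\cD}(W)(\delta)\bigr) \;=\; \theta_{g\cdot u,d}(g\cdot\delta) \;=\; g\cdot\delta - \frac{1}{d}\,\frac{(g\cdot\delta)(g\cdot u)}{g\cdot u}. \]
Thus the square commutes provided $(g\cdot\delta)(g\cdot u) = g\cdot(\delta(u))$, and this is precisely the equivariance formula (\ref{GactOandT}) applied to the function $f = g\cdot u\in\cO(gW)$, since $(g\cdot\delta)(g\cdot u) = g\cdot\delta\bigl(g^{-1}\cdot(g\cdot u)\bigr) = g\cdot(\delta(u))$.

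There is essentially no obstacle here. The only point requiring care is that $g^{\cD}(W)$ is not $\cO(W)$-linear, but intertwines the $\cO(W)$- and $\cO(gW)$-module structures via $g^{\cO}(W)$, so one must track which ring the various functions live in; after that, everything collapses to the single identity $(g\cdot\delta)(g\cdot u)=g\cdot(\delta(u))$ provided by (\ref{GactOandT}). Alternatively, one could pass to the covering space $Z\to W$ obtained by adjoining a $d$-th root $z$ of $u$ as in the proof of Lemma \ref{zTwist}, note that $g^{\cD}$ sends $z$ to a $d$-th root of $g\cdot u$ and commutes with conjugation by a unit, and read off the claim; but this requires first extending the equivariant structure to $Z$, so the direct computation above is shorter.
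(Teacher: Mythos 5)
Your proof is correct and follows essentially the same route as the paper's: check commutativity on the generators $\cO(W)$ and $\cT(W)$, observe the $\cO$-case is trivial by $\cO$-linearity of the twist, and reduce the vector field case via formula (\ref{ThetaTwist}) to the equivariance identity $(g\cdot\delta)(g\cdot u)=g\cdot(\delta(u))$ from (\ref{GactOandT}). The paper just specializes to $\delta = \partial_x$, which generates $\cT(W)$ over $\cO(W)$ for a subdomain of $\bA$, but the argument is identical.
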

\begin{proof}Because $\cD(W)$ is generated as a ring by $\cO(W)$ and $\partial_x$, it is enough to check that $g \cdot \theta_{u,d}(f) = \theta_{g \cdot u,d}(g \cdot f)$ when $f = \partial_x$ and when $f \in \cO(W)$. Now
\[ \begin{array}{lllll} g \cdot \theta_{u,d}(\partial_x)) &=& g \cdot (\partial_x - \frac{1}{d} \frac{\partial_x(u)}{u}) &=& g \cdot \partial_x - \frac{1}{d}\frac{ g \cdot \partial_x(u) }{g \cdot u} = \\
&=& g \cdot \partial_x - \frac{1}{d} \frac{ (g \cdot \partial_x) (g \cdot u)}{g \cdot u} &=& \theta_{g \cdot u,d}(g \cdot \partial_x),\end{array}\]
which gives the first equality, and the second one holds because the twisting automorphisms $\theta_{u,d}$ and $\theta_{g\cdot u,d}$ are $\cO$-linear.
\end{proof}

\section{Noetherianity of \ts{\cD_r} and flatness of connecting maps}\label{FlatnessChapter}

 In this section we will establish a different interpretation of $\cD_r(X)$, based on work of Berthelot \cite{Berth}, when $X$ is a $\partial_x/r$-admissible affinoid subdomain of the rigid-analytic affine line $\A$ and use it to prove some basic structural facts about these Banach algebras and the relationships between them. In particular we will prove that they are Noetherian whenever $r\in \sqrt{|K^\times|}$ and $X\in \bD(\partial_x/r)$, and that $\cD_s(X)\to \cD_r(Y)$ is flat on both sides whenever $s\geq r$ with $r,s\in \sqrt{|K^\times|}$ and $Y\subseteq X\in \bA(\partial_x/r)$. 

\subsection{Sections of \ts{\cD_r} as divided power algebras}

 We will discuss a construction essentially due to Berthelot involving level $m$ divided powers of $\partial_x$.

\textbf{Throughout $\S \ref{FlatnessChapter}$, we fix the non-negative integer $m$}. 

\begin{defn} We recall some notation from \cite[\S 1.1.2]{Berth}.\hfill \label{divpownot}
	
	For $k,k',k''\geq 0$ with $k'+k''=k$:
	\begin{enumerate}[(a)]
		
		\item $q_k := \lfloor k/p^m\rfloor$ so that $k=q_kp^m + r_k$ with $0\leq r_k<p^m$;
		\item $\binomb{k}{k'}:= \frac{q_k!}{q_{k'}!q_{k''}!} \in \N$; 
		\item $\binoma{k}{k'}:= \binom{k}{k'} \binomb{k}{k'} ^{-1}\in \Z_p$; and given a formal variable $\partial$,
		\item  $\partial^{\langle k\rangle} := q_k! \partial^{[k]}=\frac{q_k!}{k!}\partial^k$.
	\end{enumerate}
\end{defn} It is understood that all of these quantities depend on the parameter $m$, so it would be more correct to write $\langle k \rangle_m$ instead of $\langle k \rangle$ everywhere. However, following Berthelot, we suppress the parameter $m$ from this notation.

Now we suppose that $X$ is an affinoid variety and that $\partial$ is a derivation of $\cO(X)$.

\begin{lem}\label{levelmdivpowmult} The following relations hold in $\cD(X)$:
	\begin{enumerate}[(a)] 
		\item \label{divpow} for all $k,k'\geq 0$, we have \[ \partial ^{\langle k\rangle}\partial^{\langle k'\rangle}=\binoma{k+k'}{k} \partial^{\langle k+k'\rangle};\]
		\item \label{divpow2} for all $k\geq 0$ and $f\in \cO(X)$, we have\[ \partial^{\langle k\rangle}f= \sum_{k'+k''=k} \binomb{k}{k'} \partial^{\langle k'\rangle}(f) \partial^{\langle k''\rangle} . \]
	\end{enumerate}
\end{lem}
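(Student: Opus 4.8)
The plan is to reduce both relations to the corresponding identities for the ordinary divided powers $\partial^{[k]} = \partial^k/k!$ in $\cD(X)$, and then to clear factorials using the bookkeeping built into Definition \ref{divpownot}. Since $K$ has characteristic zero there is no obstruction to dividing by integers, so $\partial^{[k]}$, $\partial^{\langle k\rangle}$ and the rational coefficients $\binoma{k+k'}{k}$ all make sense as elements of $\cD(X)$, and the whole statement will be a formal consequence of the single relation $\partial f - f\partial = \partial(f)$.

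For part (a), I would start from the classical identity $\partial^{[k]}\partial^{[k']} = \binom{k+k'}{k}\,\partial^{[k+k']}$, which is immediate since $\partial^k\partial^{k'} = \partial^{k+k'}$ and $\frac{1}{k!\,k'!} = \binom{k+k'}{k}\frac{1}{(k+k')!}$. Multiplying by $q_k!\,q_{k'}!$ and using $\partial^{\langle j\rangle} = q_j!\,\partial^{[j]}$ gives $\partial^{\langle k\rangle}\partial^{\langle k'\rangle} = q_k!\,q_{k'}!\,\binom{k+k'}{k}\,\partial^{[k+k']}$. On the other hand, unwinding Definition \ref{divpownot}(b),(c) gives $\binoma{k+k'}{k}\,q_{k+k'}! = \binom{k+k'}{k}\binomb{k+k'}{k}^{-1}q_{k+k'}! = \binom{k+k'}{k}\,q_k!\,q_{k'}!$, so that $\binoma{k+k'}{k}\,\partial^{\langle k+k'\rangle} = \binoma{k+k'}{k}\,q_{k+k'}!\,\partial^{[k+k']}$ coincides with what was just computed.

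For part (b), the first step is to record the divided-power Leibniz rule $\partial^{[k]} f = \sum_{k'+k''=k}\partial^{[k']}(f)\,\partial^{[k'']}$ in $\cD(X)$, valid for any derivation $\partial$ of $\cO(X)$ and any $f\in\cO(X)$. I would obtain this exactly as in the proof of Proposition \ref{PnStdForm}: expand $\ell_\partial^k = (r_\partial + \ad(\partial))^k$ binomially on $\cD(X)$ (legitimate because $r_\partial$ and $\ad(\partial)$ commute), apply the result to $f$, use $\ad(\partial)^j(f) = \partial^j(f)$, and divide by $k!$. Multiplying through by $q_k!$ turns this into $\partial^{\langle k\rangle} f = q_k! \sum_{k'+k''=k}\partial^{[k']}(f)\,\partial^{[k'']}$, and since $q_k! = \binomb{k}{k'}\,q_{k'}!\,q_{k''}!$ whenever $k'+k''=k$ straight from Definition \ref{divpownot}(b), distributing the factor $q_k!$ in the shape $\binomb{k}{k'}\cdot q_{k'}!\,q_{k''}!$ recovers the right-hand side.

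I do not expect a genuine obstacle in either part: the content is the two standard divided-power identities together with elementary rearrangement of the definitions. The only point needing a little care is to state and use the divided-power Leibniz rule for an arbitrary derivation $\partial$ of $\cO(X)$, not merely for the coordinate derivation $\partial_x$, but this requires no new idea beyond what already appears in the proof of Proposition \ref{PnStdForm}.
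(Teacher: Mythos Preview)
Your proposal is correct and is exactly the ``easy computation using Notation \ref{divpownot}'' that the paper alludes to: the paper's own proof just cites the skew-Ore Leibniz identity $\partial^k f=\sum_j\binom{k}{j}\partial^j(f)\partial^{k-j}$ and the definitions, and you have spelled out precisely this reduction to the ordinary divided-power identities followed by the factorial bookkeeping $q_k!=\binomb{k}{k'}q_{k'}!q_{k''}!$ and $\binoma{k+k'}{k}q_{k+k'}!=\binom{k+k'}{k}q_k!q_{k'}!$. The only cosmetic point is that Proposition \ref{PnStdForm} appears later in the paper, but since you reproduce the binomial expansion $\ell_\partial^k=(r_\partial+\ad(\partial))^k$ in full there is no logical dependence.
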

\begin{proof} This follows from \cite[p27]{GoodearlWarfield} together with an easy computation using Notation \ref{divpownot}. \end{proof}

\begin{defn}\label{defOXcircpartm} We define  $\cO(X)^{\circ}[\partial]^{(m)}$ to be the $\cO(X)^\circ$-subalgebra of $\cD(X)$ generated by the set $\left \{\partial^{\langle k\rangle}|k\geq 0\right \}$.  \end{defn} 

We note that in particular $\cO(X)^{\circ}[\partial]^{(0)}=\cO(X)^{\circ}[\partial]$. 

\begin{prop} \label{Levelmdivpow}\hfill \be \item If $\sup_{k\geq 1}||\partial^{\langle k\rangle}||_{\cO(X)}\leq 1$, then  $\{\partial^{\langle k\rangle}\mid k\geq 0\}$ is a free generating set for $\cO(X)^\circ[\partial]^{(m)}$ as a left $\cO(X)^\circ$-module; in particular $\cO(X)^\circ[\partial]^{(m)}$ is $p$-adically separated flat $K^\circ$-algebra.

\item If there is $\pi\in K^{\circ\circ}$ with $|\pi|\geq \max(|p|, \sup\limits_{k\geq 1}||\partial^{\langle k\rangle}||)$, then $\cO(X)^\circ[\partial]^{(m)}/(\pi)$ is a commutative $\cO(X)^\circ/(\pi)$-algebra of finite presentation.\ee 
\end{prop}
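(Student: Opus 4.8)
The plan is to reduce everything modulo $\pi$ and identify the quotient with a base change of one universal $\bF_p$-algebra; the key point is that reducing mod $\pi$ simultaneously destroys the noncommutativity and turns the coefficient ring into an $\bF_p$-algebra, which together collapse the statement to a Noetherian base-change.

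First, since $|\pi|<1$ the hypothesis gives $\sup_{k\geq1}\|\partial^{\langle k\rangle}\|_{\cO(X)}\leq|\pi|\leq1$, so part (a) applies and $\cO(X)^\circ[\partial]^{(m)}=\bigoplus_{k\geq0}\cO(X)^\circ\,\partial^{\langle k\rangle}$ as a left $\cO(X)^\circ$-module. As $\pi$ is central, the two-sided ideal $(\pi)$ equals $\bigoplus_k\pi\cO(X)^\circ\partial^{\langle k\rangle}$, so writing $S:=\cO(X)^\circ/(\pi)$, the ring $R:=\cO(X)^\circ[\partial]^{(m)}/(\pi)$ is a free left $S$-module with basis $\{\xi_k\}_{k\geq0}$, the images of the $\partial^{\langle k\rangle}$, with $\xi_k\xi_{k'}=\overline{\binoma{k+k'}{k}}\,\xi_{k+k'}$ by Lemma \ref{levelmdivpowmult}(a). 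I would then check $R$ is commutative: the $\xi_k$ commute with one another already in $\cD(X)$ (they are scalar multiples of powers of $\partial$, or use the symmetry $\binoma{k+k'}{k}=\binoma{k+k'}{k'}$), while for $f\in\cO(X)^\circ$ Lemma \ref{levelmdivpowmult}(b) gives $[\partial^{\langle k\rangle},f]=\sum_{k'+k''=k,\,k'\geq1}\binomb{k}{k'}\partial^{\langle k'\rangle}(f)\,\partial^{\langle k''\rangle}$, and each $\partial^{\langle k'\rangle}(f)$ with $k'\geq1$ satisfies $|\partial^{\langle k'\rangle}(f)|_X\leq\|\partial^{\langle k'\rangle}\|_{\cO(X)}|f|_X\leq|\pi|$, hence lies in $\pi\cO(X)^\circ$; thus $[\partial^{\langle k\rangle},f]\in(\pi)$ and $R$ is a commutative $S$-algebra.

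Next I would introduce the commutative $\bZ_p$-algebra $D:=\bigoplus_{k\geq0}\bZ_p\,e_k$ with $e_0=1$ and $e_k e_{k'}=\binoma{k+k'}{k}e_{k+k'}$, associativity and commutativity being routine (e.g.\ via the $\bZ_p$-linear embedding $e_k\mapsto q_k!\,y^k/k!$ into $\bQ_p[y]$, or directly from multinomial identities). Because $|\pi|\geq|p|$ we have $p\in\pi\cO(X)^\circ$, so $S$ is an $\bF_p$-algebra; combined with the previous paragraph, the structure constants $\overline{\binoma{k+k'}{k}}$ of $R$ lie in $\bF_p\subseteq S$, and the $S$-linear multiplicative map sending $1\otimes\bar e_k$ to $\xi_k$ is a bijection identifying $R$ with the base change $S\otimes_{\bF_p}\overline{D}$, where $\overline{D}:=D/pD=\bigoplus_k\bF_p\,\bar e_k$. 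Since finite presentation of algebras is stable under base change, it now suffices to prove that $\overline{D}$ is a finitely presented $\bF_p$-algebra.

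For this, as $\bF_p$ is Noetherian it is enough to exhibit a finite generating set, and I claim $\{\bar e_k:1\leq k\leq p^m\}$ works. This rests on the elementary fact that $\binoma{k}{\,k-p^m\,}\in\bZ_p^\times$ for every $k>p^m$: indeed $q_k=q_{k-p^m}+1$ (no carry occurs among the low $m$ digits), so $\binomb{k}{k-p^m}=q_k$, while $v_p\!\left(\binom{k}{p^m}\right)=v_p(q_k)$ by Kummer's carry count, whence $v_p\!\left(\binoma{k}{k-p^m}\right)=0$; alternatively this is contained in \cite[\S1.1]{Berth}. Consequently $\bar e_k=\overline{\binoma{k}{k-p^m}}^{-1}\bar e_{k-p^m}\bar e_{p^m}$ for $k>p^m$, and a downward induction shows every $\bar e_k$ lies in the $\bF_p$-subalgebra generated by $\bar e_1,\dots,\bar e_{p^m}$. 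Hence $\overline{D}$ is a quotient of $\bF_p[U_1,\dots,U_{p^m}]$, and being a quotient of a Noetherian ring it is finitely presented; pulling this back through $R\cong S\otimes_{\bF_p}\overline{D}$ finishes the proof. I expect no serious obstacle: the one load-bearing observation is the dual role of the two inequalities in the hypothesis, and the only real computation is the $p$-adic valuation count above — which can itself be bypassed by simply citing Berthelot's analysis of the level-$m$ divided-power algebra.
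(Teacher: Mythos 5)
Your argument proves part (b) but takes (a) as given rather than proving it. The paper's proof of (a) is short --- linear independence of the $\partial^{\langle k\rangle}$ over $\cO(X)^\circ$ is inherited from independence over $\cO(X)$, and the free $\cO(X)^\circ$-span is a subring by Lemma \ref{levelmdivpowmult} together with the hypothesis, which gives $\partial^{\langle k'\rangle}(\cO(X)^\circ)\subseteq\cO(X)^\circ$ --- so the omission is small, but a complete proof should include it.

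For (b), your route is correct and genuinely different from the paper's. Both set up identically: $R:=\cO(X)^\circ[\partial]^{(m)}/(\pi)$ is a free module over $S:=\cO(X)^\circ/(\pi)$ on the classes of the $\partial^{\langle k\rangle}$, and commutativity is checked by bounding commutators with the inequality $\|\partial^{\langle k'\rangle}\|\leq|\pi|$. The divergence is at the finite-presentation step. The paper invokes Berthelot's identity (equation (\ref{d<k>formula}) in the text)
\[\partial^{\langle k\rangle}=u\left(\prod_{j=0}^{m-1}(\partial^{\langle p^j\rangle})^{c_j}\right)(\partial^{\langle p^m\rangle})^c,\qquad u\in\Zp^\times,\]
to show $R$ is generated over $S$ by the images of $\partial^{\langle p^0\rangle},\ldots,\partial^{\langle p^m\rangle}$, then uses $(\partial^{\langle p^j\rangle})^p=\tfrac{p^{j+1}!}{(p^j!)^p}\partial^{\langle p^{j+1}\rangle}\in p\cO(X)^\circ[\partial]^{(m)}$ for $j<m$ to exhibit the explicit presentation $R\cong S[t_0,\ldots,t_m]/(t_0^p,\ldots,t_{m-1}^p)$. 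You instead recognise $R$ as a base change $S\otimes_{\bF_p}\overline{D}$ of the universal $\bF_p$-algebra $\overline{D}=\bigoplus_k\bF_p\,\bar e_k$ with structure constants $\overline{\binoma{k+k'}{k}}$ --- this is where $|\pi|\geq|p|$ enters, to make $S$ an $\bF_p$-algebra --- and reduce to showing $\overline{D}$ is finitely generated, which you get from the single unit $\binoma{k}{k-p^m}\in\Zp^\times$ for $k>p^m$. Your Kummer-style valuation count there is right (adding $p^m$ to $k-p^m$ produces exactly $v_p(q_k)$ carries, which matches $v_p(\binomb{k}{k-p^m})=v_p(q_k)$); Noetherianity of $\bF_p$ upgrades finite generation to finite presentation, and base-change stability finishes. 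Your version is slicker and localises the arithmetic in one unit computation, at the cost of not naming an explicit presentation; the paper's gives the concrete truncated-polynomial-ring shape of $R$. Both are sound.
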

\begin{proof} 
	
	(a) That the set $S=\{\partial^{\langle k\rangle}\mid k\geq 0\}$ is $\cO(X)^\circ$-linearly independent in the left module $\cD(X)$ follows from its $\cO(X)$-linear independence. It thus suffices to show that the (free left) $\cO(X)^\circ$-submodule of $\cD(X)$ generated by $S$ is a subring of $\cD(X)$. This follows from Lemma \ref{levelmdivpowmult} and the assumption $\partial^{\langle k\rangle}(\cO(X)^\circ)\subset \cO(X)^\circ$ for all $k\geq 0$.

	(b) Now suppose that $\pi$ is as in the statement. As in \cite[Corollaire 2.2.5]{Berth} we see that if $k=\sum_{j=0}^{m-1}c_jp^j + cp^m$ with $0\leq c_j< p$ for all $j$ then 
\begin{equation}\label{d<k>formula} \partial^{\langle k\rangle}= u\left( \prod\limits_{j=0}^{m-1} (\partial^{\langle p^j\rangle})^{c_j}\right) (\partial^{\langle p^m\rangle})^c \end{equation} for some $u\in \Z_p^\times$. Thus $\cO(X)^{\circ}[\partial]^{(m)}/(\pi)$ is generated over $\cO(X)^\circ/(\pi)$ by the images of $\partial^{\langle p^j\rangle}$ for $0\leq j \leq m$. Moreover these generators commute with $\cO(X)^\circ/(\pi)$ by Lemma \ref{levelmdivpowmult}(b) together with the assumption on $\pi$ and they obviously commute with each other.
	
Now for any $0\leq j<m$ we have \[ (\partial^{\langle p^j\rangle})^p= \frac{p^{j+1}!}{(p^j!)^p}\partial^{\langle p^{j+1}\rangle} \in p\cO(X)^\circ[\partial]^{(m)}\subseteq \pi \cO(X)^\circ[\partial]^{(m)} .\]  Using this together with part (a) and equation (\ref{d<k>formula}), we conclude that
 \[ \cO(X)^\circ[\partial]^{(m)}/(\pi)\cong \cO(X)^\circ/(\pi)[t_0,\ldots,t_m]/(t_0^p,\ldots,t_{m-1}^p). \]
 Hence $\cO(X)^\circ[\partial]^{(m)}/(\pi)$ is finitely presented over $\cO(X)^\circ/(\pi)$, as required.
\end{proof}	

\begin{defn}\label{defDsmX} Suppose that $X$ is an affinoid subdomain of $\bA$. We write 
\[\cD_s^{(m)}(X):=\cO(X)^{\circ}[\partial_x/s]^{(m)}\] 
for any non-zero $s \in K$.
\end{defn}
This algebra is only well-behaved under specific restrictions on the parameter $s$.

\begin{cor}\label{DsmXfpsep} Suppose that $X$ is an affinoid subdomain of $\bA$ that is split over $K$. Let $s\in K^\times$ satisfy $|s|>1/\rho(X)$. 
\be \item $\cD_s^{(m)}(X)$ is a free left $\cO(X)^\circ$-module on $\{(\partial_x/s)^{\langle k\rangle}\mid k\geq 0\}$; in particular $\cD_s^{(m)}(X)$ is $p$-adically separated flat $K^\circ$-algebra.
	
\item Suppose further that there exists $\pi\in K^\times$ such that 
\[\max\left\{|p|,\frac{1}{s\rho(X)}\right\} \leq |\pi| < 1.\]
Then $\cD_s^{(m)}(X)/(\pi)$ is a finitely presented commutative $K^\circ/(\pi)$-algebra.  
\ee\end{cor}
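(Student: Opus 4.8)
The plan is to deduce both parts from Proposition~\ref{Levelmdivpow}, applied to the derivation $\partial := \partial_x/s$ of $\cO(X)$: by Definition~\ref{defDsmX} we have $\cD_s^{(m)}(X) = \cO(X)^\circ[\partial]^{(m)}$, and the divided powers of Definition~\ref{divpownot}(d) attached to $\partial$ are precisely $\partial^{\langle k\rangle} = (\partial_x/s)^{\langle k\rangle}$. So the only thing to feed into that Proposition is a bound on the operator norms $||\partial^{\langle k\rangle}||_{\cO(X)}$, taken with respect to the supremum norm $|\cdot|_X$ on $\cO(X)$; this is a genuine norm because $X$ splits over $K$.

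First I would compute these norms. Since $s$ is central, $(\partial_x/s)^{[k]} = \partial_x^{[k]}/s^k$, so $\partial^{\langle k\rangle} = q_k!\,\partial^{[k]} = q_k!\,\partial_x^{[k]}/s^k$ and hence $||\partial^{\langle k\rangle}||_{\cO(X)} = |q_k!|\cdot||\partial_x^{[k]}||_{\cO(X)}\cdot|s|^{-k}$. Lemma~\ref{NormDivPow}, which applies as $X$ splits over $K$, gives $||\partial_x^{[k]}||_{\cO(X)} = \rho(X)^{-k}$, so that $||\partial^{\langle k\rangle}||_{\cO(X)} = |q_k!|\,(|s|\rho(X))^{-k}$. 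The hypothesis $|s| > 1/\rho(X)$ says $|s|\rho(X) > 1$; combined with $|q_k!|\leq 1$ this gives $||\partial^{\langle k\rangle}||_{\cO(X)} \leq (|s|\rho(X))^{-k} < 1$ for all $k\geq 1$, so $\sup_{k\geq 1}||\partial^{\langle k\rangle}||_{\cO(X)} \leq 1$ and Proposition~\ref{Levelmdivpow}(a) yields part (a) in full (including its last assertion). Moreover $q_1! = 1$ for every $m\geq 0$, so the $k=1$ term equals $(|s|\rho(X))^{-1}$ and strictly dominates all terms with $k\geq 2$; hence in fact $\sup_{k\geq 1}||\partial^{\langle k\rangle}||_{\cO(X)} = \tfrac{1}{|s|\rho(X)}$.

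For part (b), the assumption $\max\{|p|,\tfrac{1}{|s|\rho(X)}\}\leq|\pi|<1$ is, by the previous computation, exactly the hypothesis of Proposition~\ref{Levelmdivpow}(b) for $\partial = \partial_x/s$; that Proposition gives that $\cD_s^{(m)}(X)/(\pi) = \cO(X)^\circ[\partial]^{(m)}/(\pi)$ is a commutative $\cO(X)^\circ/(\pi)$-algebra of finite presentation --- explicitly, isomorphic to $\cO(X)^\circ/(\pi)[t_0,\dots,t_m]/(t_0^p,\dots,t_{m-1}^p)$ by its proof. To upgrade finite presentation from over $\cO(X)^\circ/(\pi)$ to over $K^\circ/(\pi)$, I would use transitivity of finite presentation together with the fact that $\cO(X)^\circ/(\pi)$ is itself a finitely presented $K^\circ/(\pi)$-algebra when $X$ splits over $K$: writing $X$ as the disjoint union of its connected components, and using that a finite product of finitely presented algebras is finitely presented, reduces this to a single cheese $C(\mathbf{\alpha},\mathbf{s})$, where the explicit orthogonal-basis description of $\cO(C(\mathbf{\alpha},\mathbf{s}))^\circ$ in \cite[Proposition 2.4.8]{LutJacobians} makes it plain --- modulo $\pi$ only finitely many basis monomials survive, while the multiplication relations among the generators $\tfrac{x-\alpha_0}{s_0}$ and $\tfrac{s_i}{x-\alpha_i}$ have coefficients in $K^\circ$.

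The norm computation and the two appeals to Proposition~\ref{Levelmdivpow} are routine; the step needing genuine care is the passage to finite presentation over $K^\circ/(\pi)$ in part (b). This is the second (and less obvious) place where the hypothesis that $X$ splits over $K$ enters --- beyond Lemma~\ref{NormDivPow} --- and it does fail for a general affinoid subdomain of $\bA$, whose ring of integral functions can be badly behaved.
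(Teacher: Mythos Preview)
Your proof of part (a) is correct and matches the paper's argument exactly: estimate $||(\partial_x/s)^{\langle k\rangle}||$ using Lemma~\ref{NormDivPow} and apply Proposition~\ref{Levelmdivpow}(a).

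For part (b), your overall strategy also matches the paper's --- verify the hypothesis of Proposition~\ref{Levelmdivpow}(b) and then upgrade finite presentation from over $\cO(X)^\circ/(\pi)$ to over $K^\circ/(\pi)$. The difference lies in how you handle this last step. The paper does not use the explicit cheese description at all: it simply quotes that $\cO(X)^\circ$ is a topologically finitely generated $K^\circ$-algebra (\cite[2.4.8(a)]{Lut}), invokes \cite[Proposition~1.1(c)]{BL1} to get topologically finitely \emph{presented}, and then reduces mod $\pi$. This is cleaner and in fact works for any reduced $K$-affinoid, so the split hypothesis is only used to invoke Lemma~\ref{NormDivPow}, contrary to your closing remark.

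Your alternative route via the explicit orthogonal basis can also be made to work, but the sentence ``modulo $\pi$ only finitely many basis monomials survive'' is not right as written: none of the basis monomials $\left(\tfrac{x-\alpha_0}{s_0}\right)^n$, $\left(\tfrac{s_i}{x-\alpha_i}\right)^n$ vanish mod $\pi$; what is true is that every \emph{element} becomes a finite $K^\circ/(\pi)$-linear combination of them, so the restricted power series ring becomes an ordinary polynomial ring. From there one still has to check that the ideal of relations among the generators $X_0,\dots,X_g$ is finitely generated over $K^\circ/(\pi)$ --- which is not Noetherian in general --- and ``the multiplication relations have coefficients in $K^\circ$'' is suggestive but not yet a proof. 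So your approach is viable but needs a couple more lines; the paper's appeal to \cite{BL1} avoids this bookkeeping entirely.
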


\begin{proof} (a) By Lemma \ref{NormDivPow}, for all $k \geq 1$ we have \[ ||(\partial_x/s)^{\langle k\rangle}||_X\leq \frac{||\partial_x^{[k]}||_X}{|s|^k}\leq \left(\frac{1}{|s|\rho(X)}\right)^k \leq \frac{1}{|s|\rho(X)} < 1. \]
Now we can apply Proposition \ref{Levelmdivpow}(a).

(b) The above inequality shows that $|\pi| \geq \frac{1}{|s|\rho(X)} \geq \sup\limits_{k \geq 1} ||(\partial_x/s)^{\langle k \rangle}||_X$. Hence by Proposition \ref{Levelmdivpow}(b), $\cO(X)^\circ[\partial]^{(m)}/(\pi)$ is a commutative $\cO(X)^\circ/(\pi)$-algebra of finite presentation. It remains to prove that $\cO(X)^\circ/(\pi)$ is a finitely presented $K^\circ/(\pi)$-algebra.

Now, $\cO(X)^\circ$ is a topologically finitely generated $K^\circ$-algebra by \cite[2.4.8(a)]{Lut}. Thus $\cO(X)^\circ$ is a topologically finitely presented $K^\circ$-algebra by \cite[Proposition 1.1(c)]{BL1} and so $\cO(X)^\circ/(\pi)$ is a finitely presented $K^\circ/(\pi)$-algebra as required. 
\end{proof}

\begin{defn}\label{CompLevelmDivPow}  When $X$ and $s$ satisfy the hypotheses of Corollary \ref{DsmXfpsep} we let $\h{\cD}_s^{(m)}(X)$ denote the $p$-adic completion of $\cD_s^{(m)}(X)$ and \[ \h{\cD}_s^{(m)}(X)_K=K\otimes_{K^\circ} \h{\cD}_s^{(m)}(X).\]
\end{defn}
It follows from the presentation of $\cD_s^{(m)}(X)$ as a free $\cO(X)^\circ$-module in Proposition \ref{Levelmdivpow} that elements of $\h{\cD}_s^{(m)}(X)_K$ can be written uniquely as convergent sums 
\[\sum_{k\geq 0} f_k (\partial_x/s)^{\langle k\rangle} \mbox{ with }f_k\in \cO(X) \mbox{ and }|f_k|_X\to 0 \mbox{ as }k\to \infty.\]
We will view $\h{\cD}_s^{(m)}(X)_K$ as a $K$-Banach algebra with unit ball $\h{\cD}_s^{(m)}(X)$, so that the defining Banach norm on $\h{\cD}_s^{(m)}(X)_K$ is given by
\[\left|\sum_{k\geq 0} f_k (\partial_x/s)^{\langle k\rangle}\right|=\sup_{k\geq 0} |f_k|_X.\]

\begin{notn}\label{varpim} Let $\varpi_m:=(p^m)!^{\frac{1}{p^m}}\in \overline{K}$ so that $|\varpi_m|=p^{-\frac{p^m-1}{p^m(p-1)}}> \varpi$. \end{notn}
\begin{thm}\label{Adellevm} Let $X$ be an affinoid subdomain of $\bA$ that is split over $K$. Let $s\in K^\times$ be such that $|s|>1/\rho(X)$. There is an isomorphism of $K$-Banach algebras \[ \cD_{|\varpi_m s|}(X)  \cong \h{\cD}_{s}^{(m)}(X)_K. \]
\end{thm}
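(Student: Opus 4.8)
The plan is to compare the two Banach algebras by exhibiting mutually inverse bounded $K$-algebra maps using the universal property of the skew-Tate algebra (Lemma \ref{BanachUnivProp}) in one direction, and an explicit formula in the other. Set $A := \cO(X)$, $\delta := \partial_x \in \cB(A)$, and write $r := |\varpi_m s|$. Since $|s| > 1/\rho(X)$ and $|\varpi_m| > \varpi$, Corollary \ref{Heisenberg} gives $r(X) = \varpi/\rho(X) < |\varpi_m s| = r$, so $X$ is $\partial_x/r$-admissible and $\cD_r(X) = A\langle\partial/r\rangle$ is a well-defined $K$-Banach algebra by Proposition \ref{DbRing}. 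Likewise $X$ and $s$ satisfy the hypotheses of Corollary \ref{DsmXfpsep}, so $\h\cD_s^{(m)}(X)_K$ is a well-defined $K$-Banach algebra whose elements are convergent sums $\sum_{k\ge 0} f_k (\partial_x/s)^{\langle k\rangle}$ with $|f_k|_X \to 0$, with norm $\sup_k |f_k|_X$.

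First I would construct a map $\Phi : \cD_r(X) \to \h\cD_s^{(m)}(X)_K$. Apply Lemma \ref{BanachUnivProp} with $B := \h\cD_s^{(m)}(X)_K$, $f : A \hookrightarrow B$ the natural inclusion, and $b := \partial_x \in B$ (the element $(\partial_x/s)^{\langle 1 \rangle}\cdot s$, i.e.\ $s\cdot(\partial_x/s)$). Condition (i) is immediate since $[\partial_x, a] = \partial_x(a) = \delta(a)$ holds in $\cD(X)$ hence in $B$. For condition (ii) I must check $\sup_{\ell \ge 0} |b^\ell|/r^\ell < \infty$; using $b^\ell = \partial_x^\ell = \ell!\,\partial_x^{[\ell]}$ and $\partial_x^{[\ell]} = \binoma{\ell}{0}^{-1}\cdots$, more directly writing $\partial_x^\ell$ in terms of the $(\partial_x/s)^{\langle k\rangle}$: we have $\partial_x^\ell = s^\ell (\partial_x/s)^\ell = s^\ell \cdot \frac{\ell!}{q_\ell!}(\partial_x/s)^{\langle \ell\rangle}$, so $|b^\ell| = |s|^\ell \cdot \frac{|\ell!|}{|q_\ell!|} \cdot 1$, and then $|b^\ell|/r^\ell = \frac{|\ell!|}{|q_\ell!| \, |\varpi_m|^\ell}$. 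By Notation \ref{varpim}, $|\varpi_m|^{p^m} = |(p^m)!|$, so $|\varpi_m|^\ell = |(p^m)!|^{\ell/p^m}$; a standard $p$-adic valuation estimate (of the same flavour as Lemma \ref{nFacVarpi}, comparing $v_p(\ell!) - v_p(q_\ell!)$ with $\frac{\ell}{p^m}v_p((p^m)!)$) shows this ratio is bounded as $\ell \to \infty$. This gives a bounded $K$-algebra map $\Phi$ with $\Phi(\partial) = \partial_x$.

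Next I would construct $\Psi$ in the other direction by the explicit formula $\Psi\bigl(\sum_{k\ge 0} f_k (\partial_x/s)^{\langle k\rangle}\bigr) := \sum_{k\ge 0} f_k \cdot q_k!\, s^{-k} \partial^k$, where $\partial$ is the generator of $\cD_r(X)$; one checks this sum converges in $\cD_r(X)$ because $|q_k!\, s^{-k}\partial^k|_{\cD_r(X)} = |q_k!|\,|s|^{-k} r^k = |q_k!|\,|\varpi_m|^k$, which tends to $0$ — again a consequence of the valuation estimate above, this time giving decay rather than boundedness — so $|f_k \cdot q_k! s^{-k}\partial^k| \to 0$. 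That $\Psi$ is multiplicative follows from Lemma \ref{levelmdivpowmult}, which describes exactly how the $\partial^{\langle k\rangle}$ multiply and commute past $\cO(X)$; alternatively, $\Psi$ restricted to the dense subalgebra $\cD(X)$ agrees with the identity (under the identifications of both sides with completions of $\cD(X)$), and one extends by continuity. Finally, $\Phi$ and $\Psi$ are mutually inverse because both restrict to the identity on the dense subalgebra $\cD(X) \hookrightarrow \cD_r(X)$ (resp.\ $\cD(X) \hookrightarrow \h\cD_s^{(m)}(X)_K$), and both are bounded, hence continuous; a continuous map agreeing with the identity on a dense subalgebra is the identity. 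Tracking norms through $\Phi$ and $\Psi$ and using that both are built from the explicit orthonormal-type bases shows they are in fact isometric, giving the claimed isomorphism of $K$-Banach algebras.

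The main obstacle I anticipate is the $p$-adic valuation bookkeeping: proving that $\dfrac{|\ell!|}{|q_\ell!|\,|\varpi_m|^\ell}$ is bounded above (and bounded away from $0$, for the isometry claim) and that $|q_k!|\,|\varpi_m|^k \to 0$. Writing $\ell = q_\ell p^m + r_\ell$ and using $v_p(\ell!) = \frac{\ell - s_p(\ell)}{p-1}$, these reduce to elementary but slightly fiddly inequalities relating $s_p(\ell)$, $s_p(q_\ell)$ and the digit sum of $r_\ell$; this is morally Berthelot's computation and is the only place real care is needed. Everything else — the two universal-property/formula constructions and the density argument identifying the composites with identity maps — is routine given the earlier results.
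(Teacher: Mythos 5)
Your approach matches the paper's: both apply Lemma \ref{BanachUnivProp} with $B = \h{\cD}_s^{(m)}(X)_K$ and $b = \partial_x$, and both reduce to the two-sided valuation estimate $1 \leq \frac{|\ell!|}{|q_\ell!|\,|\varpi_m|^\ell} \leq p^m$, which is exactly the paper's Lemma \ref{binomest} and which you correctly anticipated. The paper then simply observes that the resulting formula $\phi\bigl(\sum_k f_k \partial^k\bigr) = \sum_k f_k\,\tfrac{k!}{q_k!}s^k\partial^{\langle k\rangle}$ is visibly a bijection with continuous inverse (by comparing the norms on both sides via the same estimate), which is tidier than separately constructing $\Psi$ and re-verifying multiplicativity; but your route reaches the same place.

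One overreach to flag: your final sentence claims $\Phi$ and $\Psi$ are isometric. They are not in general. The bound $1 \leq \frac{|\ell!|}{|q_\ell!|\,|\varpi_m|^\ell} \leq p^m$ is two-sided but not tight, so the isomorphism distorts the norm by a factor up to $p^m$; it is isometric only for $m = 0$. This is harmless for the theorem as stated, since only a $K$-Banach algebra isomorphism (a bounded bijection with bounded inverse) is asserted, and indeed the paper later emphasises in the proof of Theorem \ref{SimpleKIsoc} that these isomorphisms are ``not necessarily isometric''. You should drop or weaken the isometry claim.
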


Before we prove Theorem \ref{Adellevm} we need another $p$-adic binomial estimate. 

\begin{lem} \label{binomest} For all $k\geq 0$, $1\leq \frac{|k!|}{|q_k!||\varpi_m|^k}\leq p^m$.\end{lem}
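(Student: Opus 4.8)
The plan is to reduce the claimed inequalities to a single computation with $p$-adic valuations. Write $v := v_p$ for the valuation on $\overline{K}$ normalised by $v(p) = 1$, so that $|x| = p^{-v(x)}$, and recall Legendre's formula $v(n!) = \frac{n - s_p(n)}{p-1}$, where $s_p(n)$ denotes the sum of the base-$p$ digits of $n$ (as in Lemma \ref{nFacVarpi}). Since $\frac{|k!|}{|q_k!|\,|\varpi_m|^k} = p^{\,v(q_k!) + k\,v(\varpi_m) - v(k!)}$, it will suffice to prove the two inequalities
\[ 0 \;\le\; v(q_k!) + k\, v(\varpi_m) - v(k!) \;\le\; m. \]

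First I would record, either directly from Notation \ref{varpim} or via Legendre's formula applied to $(p^m)!$, that $v(\varpi_m) = \frac{1}{p^m}\,v((p^m)!) = \frac{1}{p^m}\cdot\frac{p^m - 1}{p - 1}$. Next, writing $k = q_k p^m + r_k$ with $0 \le r_k < p^m$ as in Definition \ref{divpownot}(a), the base-$p$ expansion of $k$ consists of the digits of $r_k$ in the places $p^0,\dots,p^{m-1}$ followed by the digits of $q_k$, so that $s_p(k) = s_p(r_k) + s_p(q_k)$. Substituting Legendre's formula for $v(k!)$ and $v(q_k!)$ together with the value of $v(\varpi_m)$ above, and simplifying using the identity $\frac{k(p^m - 1)}{p^m} = k - q_k - \frac{r_k}{p^m}$, a short calculation collapses everything to the clean closed form
\[ v(q_k!) + k\, v(\varpi_m) - v(k!) \;=\; \frac{1}{p-1}\left( s_p(r_k) - \frac{r_k}{p^m} \right). \]

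Finally I would bound the right-hand side from both sides. Since $0 \le r_k < p^m$ we have $0 \le \frac{r_k}{p^m} < 1$, while $s_p(r_k) = 0$ when $r_k = 0$ and $s_p(r_k) \ge 1$ when $r_k \ge 1$; in either case $s_p(r_k) - \frac{r_k}{p^m} \ge 0$, giving the lower bound. For the upper bound, $r_k < p^m$ has at most $m$ base-$p$ digits, each at most $p-1$, so $s_p(r_k) \le m(p-1)$ and hence $s_p(r_k) - \frac{r_k}{p^m} \le m(p-1)$, which makes the displayed quantity at most $m$. Applying $x \mapsto p^{-x}$ turns these two inequalities into $1 \le \frac{|k!|}{|q_k!|\,|\varpi_m|^k} \le p^m$, as required. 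The computation is entirely routine; the only place calling for any care is the digit-sum identity $s_p(k) = s_p(r_k) + s_p(q_k)$ and the bookkeeping that produces the closed form for the valuation, so I would spell out that calculation in a couple of lines rather than leaving it to the reader.
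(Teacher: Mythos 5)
Your proposal is correct and is essentially the same computation as the paper's: both reduce to the identity $(p-1)\log_p\!\left(\frac{|k!|}{|q_k!||\varpi_m|^k}\right) = s_p(r_k) - \frac{r_k}{p^m}$ via Legendre's formula and the digit decomposition $k = q_k p^m + r_k$, and then bound this quantity between $0$ and $(p-1)m$ using $0\le r_k < p^m$. You simply spell out the intermediate bookkeeping (the explicit value of $v(\varpi_m)$ and the digit-sum identity) that the paper leaves implicit.
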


\begin{proof} Recalling from Notation \ref{divpownot}(a) that $k=q_kp^m+r_k$ with $0\leq r_k<p^m$, and that $s_p(n)$ denotes the sum of the $p$-adic digits of $n$, we compute directly
\begin{eqnarray*} (p-1)\log_p\left(\frac{|k!|}{|q_k!||\varpi_m|^k}\right) & = & (q_k-s_p(q_k)) - (k-s_p(k)) + \frac{k(p^m-1)}{p^m} \\ 
 & = & s_p(r_k) - \frac{r_k}{p^m}\leq (p-1)m  \\  
 \end{eqnarray*}
since $0\leq r_k<p^m$. Since also $s_p(r_k)- \frac{r_k}{p^m}\geq 0$ the result follows. 
\end{proof}

\begin{proof}[Proof of Theorem \ref{Adellevm}] 
Let $r=|\varpi_ms|$ so that $\cD_r(X) = \cO(X)\langle \partial/r\rangle$. Since $B:=\h{\cD}_{s}^{(m)}(X)_K$ is a $K$-Banach algebra we may use Lemma \ref{BanachUnivProp} to construct a $K$-Banach algebra homomorphism \[\phi\colon \cO(X)\langle \partial/r\rangle\to B:\] the inclusion $\iota\colon \cO(X)\to B$ is a $K$-Banach algebra homomorphism and $\partial_xf-f\partial_x=\partial_x(f)$, so there is a unique way to define $\phi$ that extends $\iota$ and sends $\partial$ to $\partial_x$ provided $\sup_{\ell\geq 0}\left(|\partial_x^\ell|_B/r^\ell\right)<\infty$. But \[ \left|\partial_x^\ell\right|_B= \left|\frac{\ell!}{q_\ell!}s^\ell\partial^{\langle \ell\rangle}\right|_B=\left|\frac{\ell!}{q_\ell!\varpi_m^\ell}\right|r^\ell \leq p^m r^\ell\] by Lemma  \ref{binomest}.  Now if $f_0,f_1,\ldots \in \cO(X)$ with $|f_k|r^k\to 0$ as $k\to \infty$, then \[ \phi\left(\sum\limits_{n\geq 0} f_k\partial^k\right)= \sum\limits_{k\geq 0} f_k \frac{k!}{q_k!}s^k\partial^{\langle k\rangle}. \] Therefore, because \[ \left|\sum\limits_{k\geq 0} f_k \frac{k!}{q_k!}s^k\partial^{\langle k\rangle}\right|_B=\sup\limits_{k \geq 0} \left(|f_k|r^k\left |\frac{k!}{q_k\varpi_m^k}\right| \right), \] 
we see, using Lemma \ref{binomest} again, that $\phi$ is a bijection with continuous inverse. Therefore it is an isomorphism of $K$-Banach algebras. 
\end{proof}

\begin{cor}\label{DrXNoeth} Suppose that $X$ is an affinoid subdomain of $\mathbb{A}$ and $r\in\sq{K}$ with $r>r(X)$. Then $\cD_r(X)$ is Noetherian. \end{cor}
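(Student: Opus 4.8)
The plan is to transport the question, via a finite extension of the ground field, into the setting of Theorem \ref{Adellevm} and Corollary \ref{DsmXfpsep}: after base change I will identify $\cD_r(X)$ with a completed level-$m$ divided power algebra $\widehat{\cD}^{(m)}_s(X)_K$ whose reduction modulo a suitable $\pi$ is a finitely presented commutative algebra, and then conclude Noetherianity from the standard transfer principle for $\pi$-adically complete filtered rings.

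First I would reduce to a conveniently chosen finite extension $K'/K$. For such $K'$, the natural map $\cD_r(X)\to\cD_r(X_{K'})\cong\cD_r(X)\otimes_K K'$ of Lemma \ref{DrBC}(a) is faithfully flat on both sides (it is base change along the finite free ring extension $K\to K'$), and $r(X_{K'})=r(X)<r$ with $r\in\sq{K}\subseteq\sqrt{|(K')^\times|}$ by Lemma \ref{BaseChR}, so $\cD_r(X_{K'})$ is defined. Since a ring admitting a faithfully flat Noetherian overring is itself Noetherian, it suffices to prove that $\cD_r(X_{K'})$ is Noetherian for one good choice of $K'$. Here the parameter bookkeeping is as follows. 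As $r>r(X)$ and $|\varpi_m|$ decreases to $\varpi$ while remaining in $\sq{K}$ (Notation \ref{varpim}), fix $m$ with $|\varpi_m|<r\varpi/r(X)$, put $r_0:=r/|\varpi_m|\in\sq{K}$, and note $r_0>r(X)/\varpi$. Then choose $K'/K$ finite so that $X_{K'}$ splits over $K'$ (possible by \cite[Theorem 4.1.8]{ArdWad2023}) and $r_0\in|(K')^\times|$ (possible since $r_0\in\sq{K}$). By Corollary \ref{Heisenberg} and Lemma \ref{BaseChR} we have $\rho(X_{K'})=\varpi/r(X)$, so any $s\in(K')^\times$ with $|s|=r_0$ satisfies $|s|>1/\rho(X_{K'})$ and $r=|\varpi_m s|$. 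Finally $\max\{|p|,\,1/(|s|\rho(X_{K'}))\}$ lies in $|(K')^\times|$ and is strictly less than $1$, so some $\pi\in(K')^\times$ realises it; then $X_{K'}$, $s$ and $\pi$ satisfy the hypotheses of Corollary \ref{DsmXfpsep}, and Theorem \ref{Adellevm} supplies an isomorphism $\cD_r(X_{K'})\cong\widehat{\cD}^{(m)}_s(X_{K'})_{K'}$.

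It then remains to show that $\widehat{\cD}^{(m)}_s(X_{K'})_{K'}$ is Noetherian. Writing $A:=\cD^{(m)}_s(X_{K'})$ and $\widehat A:=\widehat{\cD}^{(m)}_s(X_{K'})$, Corollary \ref{DsmXfpsep}(a) shows $A$ is $(K')^\circ$-flat and $p$-adically separated; since $|p|\le|\pi|<1$ its $p$-adic and $\pi$-adic topologies agree, so $\widehat A$ is $\pi$-adically complete and separated with $\widehat A/\pi\widehat A\cong A/\pi A$. By Corollary \ref{DsmXfpsep}(b) the latter is a finitely presented commutative algebra over the Noetherian ring $(K')^\circ/(\pi)$ (a quotient of the discrete valuation ring $(K')^\circ$), hence Noetherian. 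As $\widehat A$ is $\pi$-torsion-free, $\gr_\pi\widehat A\cong(A/\pi A)[t]$ is then Noetherian by the Hilbert basis theorem, and therefore $\widehat A$ itself is Noetherian by the transfer principle for complete filtered rings in the spirit of Berthelot \cite{Berth}. Localising at the central element $\pi$ shows $\widehat A_{K'}=\widehat A[1/\pi]$ is Noetherian, which together with the descent of the previous paragraph proves that $\cD_r(X)$ is Noetherian. I expect the main obstacle to be precisely the parameter bookkeeping in the middle paragraph — arranging one finite base change that simultaneously splits $X$, realises $r_0$ as the absolute value of a field element, and leaves an admissible choice of $\pi$ available — whereas the completion argument and the faithfully flat descent are routine.
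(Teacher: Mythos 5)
Your overall strategy matches the paper's exactly: enlarge $K$ to a finite extension $K'$ over which $X$ splits, choose $m,\,s,\,\pi$ so that Theorem \ref{Adellevm} and Corollary \ref{DsmXfpsep} apply, identify $\cD_r(X_{K'})$ with $\widehat{\cD}^{(m)}_s(X_{K'})_{K'}$, conclude Noetherianity, and descend via faithful flatness of $K\to K'$. The parameter bookkeeping you flagged as the main risk is in fact carried out correctly.

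The gap is in the penultimate step, where you try to unpack what the paper leaves to the citation of \cite[Theorem 4.1.5]{EqDCap}. You assert that $(K')^\circ/(\pi)$ is Noetherian because $(K')^\circ$ is ``a discrete valuation ring.'' But the standing conventions in $\S$\ref{ConvNotn} only require $K$ to be a complete non-archimedean field extension of $F$ — discrete valuedness is \emph{not} assumed here (the paper states that hypothesis explicitly whenever it is needed, e.g.\ at the start of $\S$\ref{LocIrred} and again in Chapter~5, but not in Chapter~3). If, say, $K=\mathbf{C}$, then $(K')^\circ$ is a rank-one valuation ring with dense value group, and $(K')^\circ/(\pi)$ has an infinite ascending chain of ideals (one for each cut of the value group between $0$ and $v(\pi)$), so it is \emph{not} Noetherian. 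Consequently $A/\pi A$ need not be Noetherian, $\gr_\pi\widehat{A}\cong(A/\pi A)[t]$ need not be Noetherian, and the filtered-graded transfer principle you invoke — which goes only in the direction ``$\gr$ Noetherian $\Rightarrow$ filtered ring Noetherian'' — simply does not apply. Thus Noetherianity of $\widehat{A}_{K'}$ cannot be recovered by your route, even though the conclusion is true: \cite[Theorem 4.1.5]{EqDCap} concludes Noetherianity of $\cU_K$ from exactly the flatness, $\pi$-adic completeness, and mod-$\pi$ finite-presentation hypotheses you set up, but its proof must take a different path that does not pass through Noetherianity of the special fibre. So the reduction to $\widehat{\cD}^{(m)}_s(X_{K'})_{K'}$ and the descent to $K$ are both sound, but the ``routine'' completion argument in the middle is where a genuine idea is missing.
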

\begin{proof} Using \cite[Theorem 4.1.8]{ArdWad2023}, choose a finite extension $K'$ of $K$ such that $X$ is split over $K'$. Then Lemma \ref{BaseChR} and Corollary \ref{Heisenberg} tell us that \[ r>r(X)=r(X_{K'})=\varpi/\rho(X_{K'}).\] Since $(|\varpi_m|)_{m=0}^\infty$ is a decreasing sequence converging to $\varpi$ from above, there is some $m\geq 0$ such that $r>|\varpi_m|/\rho(X_{K'})$. By enlarging $K'$ if necessary we may also assume that $r/|\varpi_m|=|s|$ for some $s\in K'^\times$. 

Now by Theorem \ref{Adellevm}, $\cD_r(X_{K'}) \cong \h{\cD}_s^{(m)}(X_{K'})_{K'}$ and the latter is Noetherian by Corollary \ref{DsmXfpsep} and \cite[Theorem 4.1.5]{EqDCap}, so the former is too. 

Lemma \ref{DrBC}(a) now gives an isomorphism of $K'$-Banach algebras \[  \cD_r(X_{K'})\cong K'\otimes \cD_r(X).\] Since $K'$ is faithfully flat over $K$, we deduce that $\cD_r(X)$ is also Noetherian.\end{proof}

\subsection{Discussion of flatness}\label{flatness}

Our main goal for the remainder of $\S \ref{FlatnessChapter}$ is to prove the following theorem about the rings introduced at Definition \ref{DagSite}(d).

\begin{thm} \label{FlatThm} If $s\geq r>0$ and $Y\subseteq X$ both lie in $\bA(\partial_x/r)^\dag$, then $\cD^\dag_r(Y)$ is a flat $\cD^\dag_s(X)$-module on both sides. 
\end{thm}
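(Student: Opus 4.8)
The plan is to deduce Theorem \ref{FlatThm} formally from its finite-level analogues, established earlier in $\S\ref{FlatnessChapter}$: Theorem \ref{changeofbase}, that $\cD_c(Y)$ is flat on both sides over $\cD_c(X)$ for every $c>r$ (then $c>\max\{r(X),r(Y)\}$, so both Banach algebras are defined), and Theorem \ref{flatcotangent}, that $\cD_c(X)$ is flat on both sides over $\cD_d(X)$ whenever $d>c>r(X)$, and similarly with $Y$ in place of $X$. Recall that $\cD^\dag_t(\bullet)=\colim_{c>t}\cD_c(\bullet)$, the transition maps being the inclusions $\cD_c(\bullet)\hookrightarrow\cD_{c'}(\bullet)$ for $c\geq c'>t$, which are flat by Theorem \ref{flatcotangent}. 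Since $s\geq r$, viewing everything inside $\cO(X)[[\partial]]$ gives an inclusion of subrings $\cD^\dag_s(X)\hookrightarrow\cD^\dag_r(X)$, and the structure map of the theorem factors as $\cD^\dag_s(X)\hookrightarrow\cD^\dag_r(X)\to\cD^\dag_r(Y)$. As flatness passes to composites, it suffices to prove (i) $\cD^\dag_r(Y)$ is flat over $\cD^\dag_r(X)$, and (ii) $\cD^\dag_r(X)$ is flat over $\cD^\dag_s(X)$ (the latter only when $s>r$, the case $s=r$ being trivial); I would carry out both on the left, the right-handed case being identical given the right-handed forms of Theorems \ref{changeofbase} and \ref{flatcotangent}.

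The technical core is an elementary colimit principle that I would isolate as a lemma: if $(R_i)_{i\in I}$ is a filtered system of rings with colimit $R$ all of whose transition maps are flat, and $(M_i)_{i\in I}$ is a compatible system of semilinear modules with $M_i$ flat over $R_i$ and colimit $M$, then $M$ is flat over $R$. The proof is routine: a finitely presented $R$-module $N$ is induced from a finitely presented $R_i$-module $N_i$ for some $i$; a finite free $R_i$-resolution $Q_\bullet\to N_i$ base-changes (flatly, since $R_j$ is $R_i$-flat) to a finite free $R_j$-resolution of $N_i\otimes_{R_i}R_j$ for every $j\geq i$, whence $\Tor_1^R(N,M)=H_1(Q_\bullet\otimes_{R_i}M)=\colim_{j\geq i}\Tor_1^{R_j}(N_i\otimes_{R_i}R_j,M_j)=0$; since $\Tor_1^R(-,M)$ commutes with filtered colimits and every module is a filtered colimit of finitely presented ones, $M$ is flat. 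This is precisely where Theorem \ref{flatcotangent} is indispensable rather than a convenience: the ring transition maps really do have to be flat, and — as $\cD^\dag_s(X)$ need not be Noetherian — one genuinely needs finitely presented, not merely finitely generated, test modules. Two specialisations I would use: with $M_i=R_i$ constant, a module over $\colim R_i$ that is flat over every $R_i$ is flat over $\colim R_i$; and the conclusion is unaffected by replacing $I$ by a cofinal sub-poset.

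For (i): apply the lemma with $I=(r,\infty)$ (ordered so that decreasing $c$ is larger), $R_c=\cD_c(X)$, $M_c=\cD_c(Y)$ — the ring transitions are flat by Theorem \ref{flatcotangent}, each $M_c$ is flat over $R_c$ by Theorem \ref{changeofbase} — to obtain $\cD^\dag_r(Y)=\colim_c\cD_c(Y)$ flat over $\cD^\dag_r(X)=\colim_c\cD_c(X)$. For (ii), assume $s>r$. The set $(r,s]$ is cofinal in $(r,\infty)$, so $\cD^\dag_r(X)=\colim_{r<c\leq s}\cD_c(X)$; for each such $c$ and each $d>s$ one has $d>c$, hence $\cD_d(X)\subseteq\cD_c(X)$, and in particular $\cD^\dag_s(X)\subseteq\cD_c(X)$, so $\cD_c(X)$ is a $\cD^\dag_s(X)$-algebra, flat over every $\cD_d(X)$ by Theorem \ref{flatcotangent} (as $d>c>r\geq r(X)$); the constant-module case of the lemma then makes $\cD_c(X)$ flat over $\cD^\dag_s(X)$, and $\cD^\dag_r(X)$, being a filtered colimit of flat $\cD^\dag_s(X)$-modules, is flat over $\cD^\dag_s(X)$. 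Combining (i) and (ii) through the factorization of the first paragraph proves the theorem.

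I do not expect this deduction itself to pose a real obstacle: once Theorems \ref{changeofbase} and \ref{flatcotangent} are in hand it is purely formal, the only points requiring attention being the cofinality bookkeeping among the mismatched index sets $(r,\infty)$, $(s,\infty)$ and $(r,s]$ (which the factorization through $\cD^\dag_r(X)$ is designed to sidestep) and the use of finitely presented modules in the non-Noetherian setting. The genuinely hard work lies upstream, in the proofs of Theorems \ref{changeofbase} and \ref{flatcotangent}, which — following Berthelot — pass through the $p$-adically complete divided-power algebras $\h{\cD}_s^{(m)}(X)_K\cong\cD_{|\varpi_m s|}(X)$ of Theorem \ref{Adellevm}, flat base change for these, and reduction to affinoids that split over a finite extension of $K$.
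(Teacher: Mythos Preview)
Your argument is correct and follows essentially the paper's approach: reduce to the Banach-level Theorems \ref{changeofbase} and \ref{flatcotangent}, then pass to colimits (the paper packages this reduction as Proposition \ref{reduction}, using the separate Lemmas \ref{flatcolim1} and \ref{flatcolim2} rather than your combined simultaneous-colimit lemma, and factors $\cD_s(X)\to\cD_r(X)\to\cD_r(Y)$ at the Banach level before taking limits, whereas you factor at the dagger level first). One small point: Theorems \ref{changeofbase} and \ref{flatcotangent} as stated require the radius parameters to lie in $\sqrt{|K^\times|}$, so your index sets $(r,\infty)$ and $(r,s]$ should be intersected with this dense, hence cofinal, subset.
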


In this section will will perform some reductions and record some technical results from \cite{EqDCap} that we will use to prove Theorem \ref{FlatThm}.  First, we recall some very general results about flatness and direct limits. 

\begin{lem}\label{flatcolim1} Let $U$ be a ring and suppose $M_i$ is a directed system of flat $U$-modules. Then $\colim M_i$ is  a flat $U$-module.  \end{lem} 
\begin{proof} This is \cite[Proposition 5.4.6]{HGK1}.
\end{proof}

\begin{lem} \label{flatfinextn} Let $U$ be a $K$-algebra, $M$ a $U$-module and $K'$ a field extension of $K$. If $M_{K'}:=K'\otimes M$ is flat over $U_{K'}:=K'\otimes U$ then $M$ is flat over $U$. \end{lem}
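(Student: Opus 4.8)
The plan is to deduce flatness of $M$ over $U$ from flatness of $M_{K'}$ over $U_{K'}$ by faithfully flat base change along the field extension $K'/K$. Recall that any field extension $K'/K$ is faithfully flat when regarded as a $K$-module, so a sequence of $K$-vector spaces is exact if and only if it remains exact after applying $K'\otimes_K(-)$.

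To show that $M$ is a flat $U$-module it suffices to take an arbitrary injection $N'\hookrightarrow N$ of right $U$-modules and check that the induced map $N'\otimes_U M\to N\otimes_U M$ is injective. Applying the exact functor $K'\otimes_K(-)$ and using the remark above, this map is injective as soon as $K'\otimes_K(N'\otimes_U M)\to K'\otimes_K(N\otimes_U M)$ is injective. Now I would invoke the standard base-change isomorphism
\[ K'\otimes_K(N\otimes_U M)\;\cong\;(K'\otimes_K N)\otimes_{K'\otimes_K U}(K'\otimes_K M)\;=\;N_{K'}\otimes_{U_{K'}}M_{K'}, \]
which is natural in $N$ and holds for a possibly non-commutative $K$-algebra $U$ (note that $K'$ is central in $U_{K'}=K'\otimes_K U$). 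Under this identification the map in question becomes $N'_{K'}\otimes_{U_{K'}}M_{K'}\to N_{K'}\otimes_{U_{K'}}M_{K'}$. Since $K'/K$ is flat the map $N'_{K'}\to N_{K'}$ is still injective, and since $M_{K'}$ is flat over $U_{K'}$ by hypothesis, tensoring this injection over $U_{K'}$ with $M_{K'}$ preserves injectivity. This is exactly what was needed, so $M$ is flat over $U$.

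There is essentially no obstacle here; the one point deserving a line of justification is the base-change isomorphism above, which I would either quote from a standard reference on homological algebra, or prove directly via the canonical chain
\begin{align*}
(K'\otimes_K N)\otimes_{K'\otimes_K U}(K'\otimes_K M) &\cong (K'\otimes_K N)\otimes_{K'\otimes_K U}\bigl((K'\otimes_K U)\otimes_U M\bigr)\\
&\cong (K'\otimes_K N)\otimes_U M\\
&\cong (N\otimes_U M)\otimes_K K',
\end{align*}
using the isomorphism $K'\otimes_K M\cong(K'\otimes_K U)\otimes_U M$ of left $K'\otimes_K U$-modules together with the fact that $U$ acts trivially on the $K'$-factor. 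Exactly the same argument applies with the roles of left and right modules interchanged, which is why the statement is phrased for $M$ without specifying a side.
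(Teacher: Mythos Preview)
Your proof is correct and uses essentially the same idea as the paper: faithful flatness of $K'/K$ combined with a base-change isomorphism. The paper's proof is phrased in terms of $\Tor$, showing $K'\otimes_K\Tor_i^U(N,M)\cong\Tor_i^{U_{K'}}(N_{K'},M_{K'})=0$ and then invoking faithful flatness, whereas you work directly with the definition of flatness and the $i=0$ base-change isomorphism; the two arguments are equivalent, with yours being slightly more elementary.
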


\begin{proof} We assume that $M$ is a left module but the same argument works for right modules.  Let $N$ be any right $U$-module and write $N_{K'}=K'\otimes N$. For $i>0$,  \[ K'\otimes \Tor_i^U(N,M) \cong \Tor_i^{U_{K'}}(N_{K'}, M_{K'})=0 \]  since $M_{K'}$ is flat over $U_{K'}$. As $K'$ is faithfully flat over $K$ it follows that $\Tor_i^U(N,M)=0$ for all $i>0$ and so $M$ is flat as claimed. \end{proof}

The following lemma can be found for commutative rings at \cite[05UU]{stacks-project} with essentially the same proof. 
\begin{lem} \label{flatcolim2} Let $R_i$ be a directed system of rings and $R=\colim R_i$. Suppose that $M$ is an $R$-module that is flat over $R_i$ for each $i$. Then $M$ is flat as an $R$-module. \end{lem}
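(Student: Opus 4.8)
The plan is to verify flatness of $M$ over $R$ by means of the standard criterion, valid over an arbitrary (not necessarily commutative) ring, that a left module $M$ is flat as soon as the multiplication map $I \otimes_R M \to M$ is injective for every finitely generated right ideal $I \subseteq R$. So I would fix such an ideal $I$, choose generators $r_1, \dots, r_n \in R$, and suppose that $\sum_{j=1}^n r_j \otimes m_j \in I \otimes_R M$ has image $\sum_{j=1}^n r_j m_j = 0$ in $M$; the task is then to show that $\sum_{j=1}^n r_j \otimes m_j$ already vanishes in $I \otimes_R M$.

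Since the colimit $R = \colim R_i$ is filtered, directedness lets me pick an index $i$ together with elements $\tilde r_1, \dots, \tilde r_n \in R_i$ whose images under the structure map $\varphi_i \colon R_i \to R$ are $r_1, \dots, r_n$. Let $I_i := \sum_{j=1}^n \tilde r_j R_i$ be the finitely generated right ideal they generate in $R_i$. As the $R_i$-module structure on $M$ is restriction of scalars along $\varphi_i$, the relation $\sum_{j=1}^n \tilde r_j m_j = 0$ continues to hold in $M$ viewed as an $R_i$-module. By hypothesis $M$ is flat over $R_i$, so $I_i \otimes_{R_i} M \to M$ is injective, and hence $\sum_{j=1}^n \tilde r_j \otimes m_j = 0$ in $I_i \otimes_{R_i} M$.

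It then remains to transport this vanishing to $I \otimes_R M$. The map $\varphi_i$ restricts to a right $R_i$-linear map $I_i \to I$ (with $I$ regarded as a right $R_i$-module via $\varphi_i$), and composing $I_i \otimes_{R_i} M \to I \otimes_{R_i} M$ with the canonical surjection $I \otimes_{R_i} M \to I \otimes_R M$ produces a homomorphism $I_i \otimes_{R_i} M \to I \otimes_R M$ that sends each $\tilde r_j \otimes m_j$ to $r_j \otimes m_j$. Applying it to $\sum_{j=1}^n \tilde r_j \otimes m_j = 0$ gives $\sum_{j=1}^n r_j \otimes m_j = 0$ in $I \otimes_R M$, as wanted; this proves $M$ is $R$-flat, and the mirror argument with finitely generated left ideals gives right flatness should that be needed. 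I do not anticipate any real obstacle here: the only points that require a little care are that the colimit is filtered (so finitely many elements of $R$ admit a common lift to some $R_i$) and that restriction of scalars along $\varphi_i$ is compatible with all the module- and tensor-structures in sight. An essentially equivalent alternative would be to invoke the equational criterion for flatness over an arbitrary ring and observe that any solution in $M$ of a finite homogeneous $R$-linear system is already a solution of a system defined over some $R_i$, where flatness of $M$ over $R_i$ supplies the required factorisation.
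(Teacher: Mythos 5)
Your proof is correct, and it rests on the same key criterion the paper uses, namely that $M$ is flat over $R$ iff $I\otimes_R M\to M$ is injective for every finitely generated right ideal $I$ (cited in the paper as \cite[Proposition 5.4.11]{HGK1}). Where you diverge is in the execution: you lift a single relation $\sum r_j\otimes m_j\mapsto 0$ to a single $R_i$, use flatness of $M$ over that $R_i$ to kill the lifted tensor in $I_i\otimes_{R_i}M$, and then push this vanishing forward along $I_i\otimes_{R_i}M\to I\otimes_R M$. The paper instead identifies $I\otimes_R M\to M$ as the filtered colimit over $j\geq i$ of the injections $I_iR_j\otimes_{R_j}M\to M$ and invokes exactness of filtered colimits together with the fact that colimits commute with tensor products. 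Both arguments work; yours avoids the need to set up and justify the colimit identification $I\otimes_R M\cong\colim_{j\geq i}(I_iR_j\otimes_{R_j}M)$, at the cost of being element-wise. (One small implicit point in your write-up, which is fine but worth being conscious of: you silently use that every element of $I\otimes_R M$ can be written as $\sum_j r_j\otimes m_j$ in terms of the fixed generators $r_1,\ldots,r_n$, which holds because $I$ is generated by them as a right ideal.)
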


\begin{proof} Again we assume that $M$ is a left module but an identical argument works for right modules. 

Let $I$ be a finitely generated right ideal of $R$. By \cite[Proposition 5.4.11]{HGK1} it suffices to show that $I\otimes_R M\to M$ is an injection. Since $I$ is finitely generated we can find an index $i$ such that a set of its generators lies in the image of $R_i\to R$. That is we may choose a finitely generated right ideal $I_i$ of $R_i$ such that $I=I_iR$ (suitably interpreted). Then $I=\colim_{j\geq i} I_iR_j$ and $I\otimes_R M\to M$ is the colimit of the maps $I_iR_j\otimes_{R_j} M\to M$ over all $j\geq i$. Since $M$ is flat over each $R_j$ the maps in this directed family are all injective by \cite[Proposition 5.4.11]{HGK1} again. Since colimits commute with tensor products and colimits over directed sets are exact we're done. \end{proof}

We now reduce Theorem \ref{FlatThm} to proving the flatness of the restriction maps in the sheaves $\cD_r$ constructed in Proposition \ref{DbRing}(a).
\begin{prop}\label{reduction} Suppose that $\cD_r(Y)$ is a flat $\cD_s(X)$-module on both sides whenever $s,r\in \sqrt{|K^\times|}$ with $s\geq r$ and $Y\subseteq X$ both lie in $\bA(\partial_x/r)$. Then Theorem \ref{FlatThm} holds.
\end{prop}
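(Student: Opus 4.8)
The plan is to deduce Theorem \ref{FlatThm} from the hypothesis of the Proposition by a purely formal manipulation of filtered colimits, resting on the identity $\cD^\dag_t(Z) = \colim_{c > t} \cD_c(Z)$ of Definition \ref{DagSite}(d) together with Lemmas \ref{flatcolim1} and \ref{flatcolim2}. Fix $s \geq r > 0$ and $Y \subseteq X$ both in $\bA(\partial_x/r)^\dag$; by Definition \ref{DagSite}(a) this means $s \geq r \geq \max\{r(X),r(Y)\}$. Since $\sqrt{|K^\times|}$ is a divisible subgroup of $\bR_{>0}$ containing $|p|=1/p$ it is dense in $\bR_{>0}$, exactly as used in the proof of Theorem \ref{NCDagTate}; consequently, for any real $t \geq 0$ the subsystem indexed by $\{c \in \sqrt{|K^\times|} : c > t\}$ (with connecting maps the inclusions $\cD_c(Z) \hookrightarrow \cD_{c'}(Z)$ for $c \geq c'$) is cofinal in the system defining $\cD^\dag_t(Z)$, and the same remains true after intersecting the index set with $(t, t']$ for any $t' > t$. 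Thus $\cD^\dag_s(X) = \colim_{d} \cD_d(X)$ with $d$ ranging over $\{d \in \sqrt{|K^\times|} : d > s\}$, and for each such $d$ (noting $d > r$) also $\cD^\dag_r(Y) = \colim_{c} \cD_c(Y)$ with $c$ ranging over $\{c \in \sqrt{|K^\times|} : r < c \leq d\}$.

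First I would pin down the module structure. For $d \in \sqrt{|K^\times|}$ with $d > s$ we have $d > \max\{r(X),r(Y)\}$, so Proposition \ref{DbRing}(b) gives the restriction homomorphism $\cD_d(X) \to \cD_d(Y)$, and composing with the inclusion $\cD_d(Y) \hookrightarrow \cD^\dag_r(Y)$ yields a $K$-algebra map $\cD_d(X) \to \cD^\dag_r(Y)$, compatible with the inclusions $\cD_d(X) \hookrightarrow \cD_{d'}(X)$. Passing to the colimit over such $d$ recovers the canonical map $\cD^\dag_s(X) \to \cD^\dag_r(Y)$, which is the structure map making $\cD^\dag_r(Y)$ the $\cD^\dag_s(X)$-module of the statement, and it simultaneously exhibits $\cD^\dag_r(Y)$ as a $\cD_d(X)$-module for each such $d$.

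The proof is then completed by two applications of the colimit lemmas. By Lemma \ref{flatcolim2} applied to the directed system $(\cD_d(X))_{d > s,\, d \in \sqrt{|K^\times|}}$, whose colimit is $\cD^\dag_s(X)$, it suffices to show that $\cD^\dag_r(Y)$ is flat over $\cD_d(X)$ on both sides for each such $d$. Fix one such $d$. For every $c \in \sqrt{|K^\times|}$ with $r < c \leq d$ we have $c > r \geq \max\{r(X),r(Y)\}$, so $Y \subseteq X$ both lie in $\bA(\partial_x/c)$, and $d \geq c$ with $d, c \in \sqrt{|K^\times|}$; the hypothesis of the Proposition therefore gives that $\cD_c(Y)$ is flat over $\cD_d(X)$ on both sides. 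As $c$ varies, these $\cD_c(Y)$ form a directed system of flat $\cD_d(X)$-modules with colimit $\cD^\dag_r(Y)$, so $\cD^\dag_r(Y)$ is flat over $\cD_d(X)$ by Lemma \ref{flatcolim1}. Since this holds for every $d$, Lemma \ref{flatcolim2} yields the flatness of $\cD^\dag_r(Y)$ over $\cD^\dag_s(X)$ on both sides, which is Theorem \ref{FlatThm}.

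There is no genuine obstacle in this argument: the content is entirely formal, and the only points demanding care are the cofinality of the subsystems indexed by $\sqrt{|K^\times|}$ (which rests on the density observation above) and the verification that all module structures in sight coincide with the canonical restriction maps — both of these are routine bookkeeping.
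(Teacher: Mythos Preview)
Your proof is correct and follows essentially the same approach as the paper: restrict the colimits defining $\cD^\dag_r(Y)$ and $\cD^\dag_s(X)$ to indices in $\sqrt{|K^\times|}$, invoke the hypothesis for each pair $(d,c)$, and then apply Lemmas \ref{flatcolim1} and \ref{flatcolim2}. The only cosmetic difference is the order of presentation --- the paper first establishes flatness of $\cD^\dag_r(Y)$ over each $\cD_{s'}(X)$ via Lemma \ref{flatcolim1} and then applies Lemma \ref{flatcolim2}, whereas you state the reduction via Lemma \ref{flatcolim2} first --- but the underlying argument is the same.
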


\begin{proof} Suppose that whenever $s',r'\in \sqrt{|K^\times|}$ with $s'\geq r'$ and $Y\subseteq X$ both lie $\bA(\partial_x/r')$ then $\cD_{s'}(X)\to \cD_{r'}(Y)$ is flat on both sides. 

To prove Theorem \ref{FlatThm}, pick $s\geq r>0$, $Y\subseteq X$ in $\bA(\partial_x/r)^\dag$. Then for every $r'>r$, $X,Y\in \bA(\partial_x/r')$. Now Lemma \ref{flatcolim1} gives that $\cD_{s'}(X)\to \cD^\dag_r(Y)$ is flat on both sides whenever $s'\in \sqrt{|K^\times|}$ and $s'>r$, since $\cD^\dag_r(Y)=\colim_{r'>r} \cD_{r'}(Y)$ and we may view the colimit as only running over $r'\in (r,s')\cap \sqrt{|K^\times|}$. Then Lemma \ref{flatcolim2} gives that $\cD_s^\dag(X)\to \cD^\dag_r(Y)$ is flat on both sides, as required, since $\cD^\dag_s(X)=\colim_{s'>s} \cD_{s'}(X)$ and again we may view the colimit as only running over $s'\in \sqrt{|K^\times|}$.  \end{proof}

Finally, we recall a couple of results from \cite[\S4.1]{EqDCap} that will help us prove the flatness of the maps between the Banach algebras that appear in Proposition \ref{reduction}. For both, we suppose that $\pi\in K^{\circ\circ}$ such that:
\begin{itemize}
\item $\cU$ is a $\pi$-adically complete and separated and flat $K^\circ$-algebra,
\item $\cU / \pi \cU$ is a commutative $K^\circ / \pi K^\circ$-algebra of finite presentation,
\item $U:=K\otimes_{K^\circ} \cU$.
\end{itemize}

\begin{prop}[Proposition 4.1.7 of \cite{EqDCap}] \label{FlatPrepProp} Suppose that $\cV$ is another $\pi$-adically complete, separated and flat $K^\circ$-algebra which contains $\cU$. Let $y \in \cV$, and suppose that the map $\cU / \pi \cU \to \cV / \pi \cV$ extends to an $K^\circ/\pi K^\circ$-algebra isomorphism $(\cU/\pi\cU)[Y] \cong \cV/\pi\cV$ which sends $Y$ to $y + \pi \cV$. Let $C$ be the centraliser of $y$ in $U$. Then
\be 
\item $V := K\otimes_{K^\circ} \cV$ is a flat $U$-module on both sides, and 
\item for every finitely generated $U$-module $M$ there is a natural isomorphism of $C\langle Y\rangle$-modules $\eta_M \colon M\langle Y\rangle \stackrel{\cong}{\longrightarrow} V \otimes_U M$.
\ee
\end{prop}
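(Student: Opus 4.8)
The plan is to lift the given reduction-mod-$\pi$ isomorphism $\pi$-adically; this will identify $\cV$ with a completed-free module over $\cU$ on both sides, after which part (a) follows from a flatness computation against finitely generated one-sided ideals and part (b) by transporting this structure through $\otimes_U M$. Concretely, I would first introduce the $\pi$-adic completion $\cU\langle Y\rangle$ of the polynomial ring $\cU[Y]$, that is $\{\sum_{j\ge 0}a_jY^j : a_j\in\cU,\ a_j\to 0\ \pi\text{-adically}\}$, and the left $\cU$-linear map $\Phi\colon\cU\langle Y\rangle\to\cV$, $\sum a_jY^j\mapsto\sum a_jy^j$, which is well defined as $\cV$ is $\pi$-adically complete. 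Reducing modulo $\pi$ and using that $Y$ maps to $y+\pi\cV$, the map $\Phi\bmod\pi$ is precisely the given isomorphism $(\cU/\pi\cU)[Y]\xrightarrow{\sim}\cV/\pi\cV$. I would then run the standard successive-approximation argument: surjectivity of $\Phi$ follows from $\cV=\Phi(\cU\langle Y\rangle)+\pi^n\cV$ for all $n$ together with $\pi$-adic completeness of the source, and injectivity from $\pi$-adic separatedness of the source together with $\pi$-torsion-freeness of $\cV$ (which holds since $\cV$ is $K^\circ$-flat). Since $\cV/\pi\cV$ is commutative, the same argument on the other side shows $\cV$ is also completed free as a \emph{right} $\cU$-module on $\{1,y,y^2,\dots\}$. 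Inverting $\pi$ yields a left $U$-module isomorphism $U\langle Y\rangle\xrightarrow{\sim}V$ and a right $U$-module isomorphism $\vartheta\colon U\langle Y\rangle\xrightarrow{\sim}V$, $\sum a_jY^j\mapsto\sum y^ja_j$, where $U\langle Y\rangle:=K\otimes_{K^\circ}\cU\langle Y\rangle$.

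Next I would record the bookkeeping needed. The standing hypotheses force $U$ and $V$ to be two-sided Noetherian --- this is exactly the input used for Corollary \ref{DrXNoeth}, namely \cite[Theorem 4.1.5]{EqDCap} --- so finitely generated modules carry a canonical Banach topology in which finitely generated submodules are closed, and the open mapping theorem gives, for each finitely generated left $U$-module $M$ with a presentation $U^a\to U^b\to M\to 0$, that $U\langle Y\rangle\otimes_UM$ and $M\langle Y\rangle:=\{\sum_j m_jY^j : m_j\in M,\ m_j\to 0\}$ are both the cokernel of $U\langle Y\rangle^a\to U\langle Y\rangle^b$, hence canonically isomorphic (and likewise with sides interchanged). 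With this in hand, part (a) is short: for a finitely generated right ideal $I\subseteq U$, which is closed, the map $I\otimes_UV\to U\otimes_UV=V$ is identified with the coefficientwise inclusion $I\langle Y\rangle\hookrightarrow U\langle Y\rangle$, so $\Tor_1^U(U/I,V)=0$ and $V$ is left-flat over $U$; the symmetric argument, now using $\vartheta$, gives right-flatness.

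For part (b) I would simply \emph{define} $\eta_M$ to be $\vartheta\otimes_U\mathrm{id}_M$ under the identification $M\langle Y\rangle\cong U\langle Y\rangle\otimes_UM$ above; it is then automatically an isomorphism and natural in $M$, so all that remains is to check it is $C\langle Y\rangle$-linear, which reduces to checking that $\vartheta\colon U\langle Y\rangle\to V$ is left $C\langle Y\rangle$-linear. Compatibility with the $Y$-action is immediate, since $\vartheta$ carries the shift to left multiplication by $y$; compatibility with $C$ holds because each $c\in C$ commutes with $y$ --- by the definition of the centraliser --- hence with every power $y^j$, so that $\vartheta(\sum(ca_j)Y^j)=\sum(cy^j)a_j=c\,\vartheta(\sum a_jY^j)$.

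The main obstacle, in the sense of where an idea rather than a routine verification is needed, is twofold. One half is the observation just made: the equivariance asserted in part (b) is \emph{forced} by the fact that conjugation by $y$ fixes the centraliser $C$ pointwise, and recognising this is what turns (b) into a formal consequence of the structure produced in the first paragraph. The other half is the $\pi$-adic lifting itself, whose only genuine subtlety is that the hypothesis pins $Y$ to the specific lift $y+\pi\cV$, so that $\Phi\bmod\pi$ really is the given isomorphism rather than some other identification. Beyond these, the one place a non-formal external input is indispensable is the Noetherianity of $U$ (hence of $U\langle Y\rangle$), which underlies the identification $M\langle Y\rangle\cong U\langle Y\rangle\otimes_UM$ and the closedness of finitely generated ideals used above; the rest of the argument is bookkeeping.
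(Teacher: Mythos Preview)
The paper does not supply its own proof of this statement: it is quoted from \cite[Proposition 4.1.7]{EqDCap} and used as a black box in \S\ref{flatness} (notably in the proof of Proposition \ref{flatbase}). Your reconstruction is correct. The $\pi$-adic lifting of the given isomorphism to identify $\cV$ with $\cU\langle Y\rangle$ on both sides---using commutativity of $\cV/\pi\cV$, which holds since $(\cU/\pi\cU)[Y]$ is commutative by the standing hypothesis on $\cU/\pi\cU$, for the right-handed version $\vartheta$---is the natural approach; the Noetherianity of $U$ does follow from the standing hypotheses via \cite[Theorem 4.1.5]{EqDCap}, exactly as invoked in the proof of Corollary \ref{DrXNoeth}; and your identification $M\langle Y\rangle\cong U\langle Y\rangle\otimes_U M$ through a finite presentation together with the open mapping theorem is the standard mechanism. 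The key observation you isolate---that $C\langle Y\rangle$-linearity of $\vartheta$ reduces to $cy^j=y^jc$ for $c\in C$, which is precisely the centraliser condition---is indeed the substance of part (b).
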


\begin{thm}[Theorem 4.1.8 of \cite{EqDCap}] \label{AbstractFlatness} Suppose that  $y \in U$ is such that $[y,\cU] \subseteq \pi \cU$, that $\cV$ is the $\pi$-adic completion of $\cU[Y;\ad(y)]$ and $V=K\otimes_{K^\circ}\cV$. Then $V / (Y-y)V$ is a flat $U$-module on both sides.
\end{thm}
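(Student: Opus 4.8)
The plan is to deduce Theorem \ref{AbstractFlatness} from Proposition \ref{FlatPrepProp}, applied to a suitably chosen triple, together with an explicit analysis of a single kernel.

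First I would verify that $\cV$ itself satisfies the standing hypotheses. Since $\ad(y)(\cU)\subseteq\pi\cU\subseteq\cU$, the operator $\ad(y)$ restricts to a $K^\circ$-linear derivation of $\cU$, so the skew-Ore extension $\cU[Y;\ad(y)]$ of $\S\ref{skewLaurent}$ is defined and its $\pi$-adic completion $\cV$ is again $\pi$-adically complete, separated and flat over $K^\circ$; moreover $\ad(y)$ vanishes modulo $\pi$, so $\cV/\pi\cV\cong(\cU/\pi\cU)[Y]$, a polynomial ring in one variable over the finitely presented $K^\circ/\pi K^\circ$-algebra $\cU/\pi\cU$, hence itself commutative and of finite presentation. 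In particular $U$ and $V$ are Noetherian. I would then apply Proposition \ref{FlatPrepProp} with its ``$\cU$'' equal to $\cU$, its ``$\cV$'' equal to our $\cV$, and its ``$y$'' equal to $Y\in\cV$ --- the required hypothesis being precisely the isomorphism $\cV/\pi\cV\cong(\cU/\pi\cU)[Y]$ just noted. This gives at once that $V$ is a flat $U$-module on both sides, and that for every finitely generated left $U$-module $M$ there is a natural isomorphism $M\langle Y\rangle\xrightarrow{\ \cong\ }V\otimes_U M$; passing to opposite rings --- noting that $\cV^{\op}$ is the $\pi$-adic completion of $\cU^{\op}[Y;\ad_{\cU^{\op}}(y)]$ --- yields the analogous identification of the algebraic tensor product $N\otimes_U V$ with a module of convergent series $\{\sum_{i\ge0}n_i\otimes Y^i:n_i\in N,\ |n_i|\to0\}$ for every finitely generated right $U$-module $N$.

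Next I would record the elementary but crucial fact that $Y-y$ is \emph{central} in $V$: for $u\in\cU$ one has $[Y-y,u]=\ad(y)(u)-(yu-uy)=0$, hence $[Y-y,u]=0$ for all $u\in U$, and $[Y,y]=\ad(y)(y)=0$ so $[Y-y,Y]=[Y-y,y]=0$; as $U$ and $Y$ topologically generate $V$, $Y-y$ is central. Consequently $(Y-y)V$ is a two-sided ideal, $Q:=V/(Y-y)V$ is a ring and a $U$-bimodule, and multiplication by $Y-y$ is $U$-bilinear. The heart of the matter is then the claim that for \emph{every} finitely generated right $U$-module $N$, multiplication by $Y-y$ is injective on $N\otimes_U V$. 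Using the identification of $N\otimes_U V$ from the previous step (with the canonical norm on $N$ coming from a finitely generated $\cU$-lattice) together with the relation $Yy=yY$ in $V$, one computes that an element of the kernel corresponds exactly to a null sequence $(n_i)_{i\ge0}$ in $N$ with $n_0y=0$ and $n_{j-1}=n_jy$ for all $j\ge1$; in particular $n_0=n_ky^k$ and $n_ky^{k+1}=0$ for every $k\ge0$. It remains to prove that every such sequence is identically zero --- and, taking $N=U$, this special case already shows that $Y-y$ is a non-zero-divisor on $V$.

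That last step is the only genuinely hard point. The difficulty is that $y$ need not lie in $\cU$, and even when it does, multiplication by $y$ is not $U$-linear (since $y$ is not central in $U$), so the ``$y$-torsion sets'' $\{n\in N:ny^k=0\}$ are not submodules and the ascending chain condition is not directly available; one must instead exploit the $\pi$-adic Banach structure of $V$. My plan here is to fix $k$, pick a finitely generated, $\pi$-adically complete and $\pi$-saturated $\cU$-lattice $N^\circ\subseteq N$, and argue modulo $\pi^n$: the hypothesis $\ad(y)(\cU)\subseteq\pi\cU$ should ensure that, after clearing the bounded denominator of $y$, the reduction of $Y-y$ modulo $\pi^n$ behaves like a generic degree-one polynomial over the commutative ring $\cV/\pi^n\cV$, so that a Weierstrass-division argument places $n_k$ in $\pi^nN^\circ$; letting $n\to\infty$ and using that $N$ is $\pi$-adically separated then forces $n_k=0$, for every $k$. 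Granting this, the theorem follows formally: from the short exact sequence of left $U$-modules $0\to V\xrightarrow{\,\cdot(Y-y)\,}V\to Q\to0$ and the flatness of $V$ over $U$, the long exact $\Tor^U(N,-)$-sequence identifies $\Tor_1^U(N,Q)$ with the kernel of $\cdot(Y-y)$ on $N\otimes_U V$, which vanishes by the claim, while $\Tor_i^U(N,Q)\cong\Tor_{i-1}^U(N,V)=0$ for $i\ge2$; since flatness is detected on cyclic (hence finitely generated) modules, $Q$ is a flat left $U$-module, and the symmetric argument over the opposite rings --- legitimate because $Y-y$ is central --- gives flatness on the right.
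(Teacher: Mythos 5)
Your reduction is sound as far as it goes: since $[y,\cU]\subseteq\pi\cU$, the derivation $\ad(y)$ preserves $\cU$ and vanishes mod $\pi$, so $\cV/\pi\cV\cong(\cU/\pi\cU)[Y]$ and Proposition \ref{FlatPrepProp} applies with the element $Y\in\cV$; the centrality of $Y-y$, the $\Tor$ long exact sequence, and the identification of $\ker\bigl(\cdot(Y-y)\bigr)$ on $N\otimes_U V$ with null sequences $(n_i)$ satisfying $n_0y=0$ and $n_{j-1}=n_jy$ are all correct. (The paper does not reprove this statement -- it imports it from \cite{EqDCap}, where it sits immediately after Proposition 4.1.7 $=$ Proposition \ref{FlatPrepProp} -- so this framing is the intended one.)

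The step you defer, however, is not a residual technicality: granted Proposition \ref{FlatPrepProp}, the assertion that every such null sequence vanishes \emph{is} the theorem, and your sketch for it does not work. If $y\in\cU$ the claim is easy -- any finitely generated $\cU$-lattice $N^\circ$ satisfies $N^\circ y\subseteq N^\circ$, so $|n_j|=|n_{i+j}y^i|\le|n_{i+j}|\to0$; indeed in that case $\cU[Y;\ad(y)]=\cU[Y-y]$ with $Y-y$ a central polynomial variable, whence $V/(Y-y)V\cong U$ and there is nothing to prove. The entire content of the theorem is the case $y\in U\setminus\cU$, say $y=\pi^{-c}y_0$ with $y_0\in\cU$, where the recursion only yields $|n_j|\le|\pi|^{-ci}|n_{i+j}|$ and no norm estimate closes the argument; this is precisely where $[y,\cU]\subseteq\pi\cU$ must enter in a non-formal way. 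Your proposed mechanism fails on its own terms: $\cV/\pi^n\cV$ is not commutative for $n\ge2$ (only the reduction mod $\pi$ is); after ``clearing the denominator'' the element becomes $\pi^cY-y_0$, whose leading coefficient is nilpotent mod $\pi^n$, so no Weierstrass-type division is available; and division-with-remainder arguments control cokernels, not the kernel of $Y-y$ on $N\otimes_UV$ for a finitely generated $N$ with torsion. To calibrate what is missing: already for commutative $\cU=K^\circ\langle x\rangle$ and $y=x/\pi$ the claim amounts to flatness of $K\langle x,Y\rangle/(\pi Y-x)\cong K\langle x/\pi\rangle$ over $K\langle x\rangle$, i.e.\ flatness of restriction to a subdisc -- true, but a genuine theorem, not something obtained by reducing mod $\pi^n$ and dividing. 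So the proposal has a real gap at its crux, and filling it requires an argument of the kind given in \cite[Theorem 4.1.8]{EqDCap} rather than the sketched one.
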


\subsection{Flatness of divided power algebras under change of base} \label{flatbasesec}
Our goal for this section is to prove the following theorem.

\begin{thm} \label{changeofbase} If $r\in \sqrt{|K^\times|}$ and $Y\subseteq X$ both lie in $\bA(\partial_x/r)$, then $\cD_r(Y)$ is a flat $\cD_r(X)$-module on both sides.
\end{thm}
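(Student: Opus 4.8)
The plan is to reduce to a ``split'' situation over a finite extension of $K$, reinterpret both algebras as completed divided-power algebras via Theorem \ref{Adellevm}, and then recognise the restriction map as a completed base change along the flat morphism $\cO(X) \to \cO(Y)$. First, since the transpose anti-automorphism of Lemma \ref{Transpose} is defined compatibly with the restriction map $\cD_r(X) \to \cD_r(Y)$ (both send $a \mapsto a$ and $\partial \mapsto -\partial$) and converting left modules into right ones along it is an exact operation, a standard argument reduces us to proving that $\cD_r(Y)$ is flat as a \emph{left} $\cD_r(X)$-module. Next, by \cite[Theorem 4.1.8]{ArdWad2023} choose a finite extension $K'/K$ over which both $X$ and $Y$ split; since $\varpi, r \in \sqrt{|K^\times|} \subseteq \sqrt{|(K')^\times|}$, after enlarging $K'$ we may assume $r = |\varpi_m s|$ for some $m \geq 0$ and $s \in (K')^\times$, and --- using $r > r(X_{K'})$, $r > r(Y_{K'})$ (Lemma \ref{BaseChR}), Corollary \ref{Heisenberg} and the fact that $|\varpi_m| \to \varpi$ from above --- that $m$ is large enough that $|s| > 1/\rho(X_{K'})$ and $|s| > 1/\rho(Y_{K'})$. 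By Lemma \ref{DrBC}(a) the restriction map base-changes correctly, so by Lemma \ref{flatfinextn} it suffices to treat $Y_{K'} \subseteq X_{K'}$; after renaming, we may assume $X, Y$ split over $K$ with $r = |\varpi_m s|$ and $|s|>1/\rho(X),\ |s|>1/\rho(Y)$.

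Now Theorem \ref{Adellevm} identifies the restriction $\cD_r(X) \to \cD_r(Y)$ with $\h{\cD}_s^{(m)}(X)_K \to \h{\cD}_s^{(m)}(Y)_K$, so the task becomes to show that $\h{\cD}_s^{(m)}(Y)_K$ is flat as a left $\h{\cD}_s^{(m)}(X)_K$-module. By Corollary \ref{DsmXfpsep}(a), $\h{\cD}_s^{(m)}(X)_K$ is topologically free as a left $\cO(X)$-Banach module on the divided powers $\{(\partial_x/s)^{\langle k\rangle} : k \geq 0\}$; rewriting a general element in the right normal form $\sum_k (\partial_x/s)^{\langle k\rangle} f_k$ (a norm-bounded, upper-triangular change of coefficients via Lemma \ref{levelmdivpowmult}(b), controlled by Lemma \ref{NormDivPow} and $|s| > 1/\rho(X)$) shows it is likewise topologically free as a \emph{right} $\cO(X)$-Banach module on the same set, and similarly over $\cO(Y)$. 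Comparing right normal forms then yields a canonical isomorphism of left $\h{\cD}_s^{(m)}(X)_K$-modules
\[ \h{\cD}_s^{(m)}(Y)_K \;\cong\; \h{\cD}_s^{(m)}(X)_K \ \widehat\otimes_{\cO(X)} \ \cO(Y), \]
where the completed tensor product uses the right $\cO(X)$-structure on $\h{\cD}_s^{(m)}(X)_K$ and the left $\h{\cD}_s^{(m)}(X)_K$-action comes from the left factor.

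It remains to deduce flatness from this base-change description. As $\h{\cD}_s^{(m)}(X)_K$ is Noetherian (Corollary \ref{DrXNoeth}), for any finitely generated left module $N$ we may choose a resolution $P_\bullet \to N$ by finitely generated free modules; it is strictly exact by the open mapping theorem, each $P_i$ is an orthonormalizable $\cO(X)$-Banach module, and applying $\h{\cD}_s^{(m)}(Y)_K \otimes_{\h{\cD}_s^{(m)}(X)_K}(-)$ turns $P_\bullet$ into the complex $\cO(Y)\widehat\otimes_{\cO(X)} P_\bullet$, whose homology computes $\Tor^{\h{\cD}_s^{(m)}(X)_K}_\bullet(\h{\cD}_s^{(m)}(Y)_K, N)$. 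Hence flatness is equivalent to the assertion that $\cO(Y)\widehat\otimes_{\cO(X)}(-)$ carries this strictly exact complex of orthonormalizable $\cO(X)$-Banach modules to an exact complex; since left--right flatness was arranged in Step 1, this would finish the proof.

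The main obstacle is exactly this last point: that $\cO(Y)$ is a \emph{flat} Banach $\cO(X)$-algebra. This does not follow formally from the (standard) ring-theoretic flatness of $\cO(Y)$ over $\cO(X)$, because completed tensor products are not exact in general. The way I would handle it is to pass to the integral level: modulo $\pi$, Corollary \ref{DsmXfpsep}(b) gives $\cD_s^{(m)}(\bullet)/\pi \cong (\cO(\bullet)^\circ/\pi)\otimes_{K^\circ/\pi}(K^\circ/\pi)[t_0,\dots,t_m]/(t_0^p,\dots,t_{m-1}^p)$, which reduces the question to flatness of $\cO(Y)^\circ/\pi$ over $\cO(X)^\circ/\pi$; one then lifts back to characteristic zero by a $\pi$-adic flatness criterion in the spirit of Proposition \ref{FlatPrepProp} and Theorem \ref{AbstractFlatness} (applied to an iterated construction of $\h{\cD}_s^{(m)}$ from $\cO^\circ$ by adjoining the operators $\partial_x^{\langle p^j\rangle}$). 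Establishing the integral flatness statement is the genuine work: it requires either an explicit computation with the cheese presentations of $\cO(X)^\circ$ and $\cO(Y)^\circ$ from \cite{LutJacobians}, or input from the theory of admissible formal models (the failure of the canonical models $\cO(\bullet)^\circ$ to form open formal subschemes being repaired after an admissible blow-up, which leaves the rational statement unchanged), and it is here that the techniques of Berthelot \cite{Berth} are brought to bear.
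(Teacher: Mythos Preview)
Your strategy is genuinely different from the paper's, and the gap you flag at the end is real and not closed by your sketch.

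The paper does not attempt a global base-change description $\cD_r(Y) \cong \cD_r(X) \,\widehat\otimes_{\cO(X)}\, \cO(Y)$. After the same finite-extension reduction you make, it reduces further: first to the case where $Y$ is a single cheese, then---writing $Y = C_K(\alpha,\mathbf{s})$ and forming a chain $X = X_0 \supset X_1 \supset \cdots \supset X_g = Y$ in which each $X_i$ is obtained from $X_{i-1}$ by a single Weierstrass or Laurent localisation $X_{i-1}(\frac{x-\alpha_i}{s_i})$ or $X_{i-1}(\frac{s_i}{x-\alpha_i})$---to the elementary step $Y = X(x/s)$ or $Y = X(s/x)$. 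For each such step (Hypothesis~\ref{rXmt}) the paper builds auxiliary algebras $\cU_i,\cV_i$ (Notation after Lemma~\ref{adxovers}), identifies $\h\cD^{(m)}_t(X_i)_K$ with a quotient $V_i/(v_i)V_i$ by an explicit central element (Propositions~\ref{ViisoUi} and~\ref{presDim}), and then applies Proposition~\ref{FlatPrepProp} and Theorem~\ref{AbstractFlatness} \emph{directly} to these presentations (Proposition~\ref{flatbase}). No Banach-flatness statement for $\cO(Y)$ over $\cO(X)$ enters.

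Your proposed resolution of the obstacle breaks at a concrete point. The reduction ``to flatness of $\cO(Y)^\circ/\pi$ over $\cO(X)^\circ/\pi$'' is asking for something that is generally \emph{false}: already for $X = \bD$ and $Y = \{|x| \leq |\pi_F|\}$, the reduction of the restriction map is $k[x] \to k[y]$ sending $x \mapsto 0$, which is not flat. You gesture at repairing this via admissible blow-ups, but that is exactly where the substance lies: one would have to replace $\cO(\bullet)^\circ$ by non-canonical formal models and then check that the divided-power construction behaves well under this replacement, none of which is carried out. Moreover, Proposition~\ref{FlatPrepProp} and Theorem~\ref{AbstractFlatness} are not ``flat mod $\pi$ implies flat'' criteria; they are statements about adjoining a \emph{single} variable with a $\pi$-small commutator, which is precisely why the paper's step-by-step reduction to $X(x/s)$ and $X(s/x)$ is what makes them applicable. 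Your base-change idea is conceptually clean and might be completable with an independent proof that affinoid restriction is strictly flat in the Banach sense, but as written the argument is incomplete.
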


Let $s \in K^\times$ and consider the following two affinoid subdomains of $X$:
\[X_1 := X\left(\frac{x}{s}\right) \qmb{and} X_2 := X\left(\frac{s}{x}\right). \]
Thus, we might have something like the following picture.
\begin{center}
\begin{tikzpicture}
\node[draw] at (1,2.5) {$X$};
\fill[blue] (1,0) circle (2cm);
\node[draw] at (5.25,2.5) {$X_1$};
\fill[blue] (5.25,0) circle (0.65cm);
\node[draw] at (9.5,2.5) {$X_2$};
\fill[blue] (9.5,0) circle (2cm);

\fill[white] (-.2,0) circle (0.5cm);   
\fill[white] (1.0,-1.2) circle (0.5cm);   
\fill[white] (0.7,0) circle (0.125cm);
\fill[white] (1.3,0) circle (0.125cm);
\fill[white] (1,-0.3) circle (0.125cm);
\fill[white] (1,0.3) circle (0.125cm);
\fill[white] (1,1.2) circle (0.5cm);   
\fill[white] (2.2,0) circle (0.5cm);

\fill[white] (4.95,0) circle (0.125cm);
\fill[white] (5.55,0) circle (0.125cm);
\fill[white] (5.25,-0.3) circle (0.125cm);
\fill[white] (5.25,0.3) circle (0.125cm);

\fill[white] (8.3,0) circle (0.5cm);   
\fill[white] (9.5,-1.2) circle (0.5cm);   
\fill[white] (9.5,1.2) circle (0.5cm);   
\fill[white] (10.7,0) circle (0.5cm);   
\fill[white] (9.5,0) circle (0.65cm);
\end{tikzpicture} 
\end{center}
Define $\rho := \min(\rho(X), |s|)$ and note that $\rho \leq \min \rho(X_1), \rho(X_2)$. Recall from Notation \ref{varpim} that $\varpi_m=(p^m)!^{\frac{1}{p^m}}$. We will first deal with the special case where the following Hypothesis holds; we will see in the proof of Theorem \ref{changeofbase} that the general case can be reduced to this one.
\begin{hypn}\label{rXmt} \hspace{1cm}
\begin{itemize} \item  $r\in |K^\times|$; 
\item $X\in \bA(\partial_x/r)$ is split over $K$; 
\item $Y$ is non-empty, and $Y = X_1 = X(\frac{x}{s})$ or $Y = X_2 = X(\frac{s}{x})$ for some $s \in K^\times$,
\item there exists $m\in \bN$ such that $\varpi_m\in K^\times$ and $|\varpi_m| / r < \rho$.
\end{itemize}
\end{hypn}
\textbf{We will assume that Hypothesis \ref{rXmt} holds throughout $\S \ref{flatbasesec}$}. The above conditions imply that there exists an element $t\in K^\times$ such that $|t| = r / |\varpi_m|$, which we fix from now on. Then since $|t| > 1 / \rho(X)$ and $|t| > 1 / \rho(X_i)$ for $i=1,2$, Theorem \ref{Adellevm} gives us isomorphisms
\[ \cD_r(X) \cong \h{\cD}_t^{(m)}(X)_K \qmb{and} \cD_r(X_i) \cong \h{\cD}_t^{(m)}(X_i)_K \qmb{for} i=1,2.\]
Thus proving the flatness of $\cD_r(X)\to \cD_r(X_i)$ for $i=1,2$ amounts to proving the flatness of $\h{\cD}_t^{(m)}(X)_K\to \h{\cD}_t^{(m)}(X_i)_K$.
 
Let $Z:=X\times \bD$ with a coordinate $y$ on $\bD$. Then $\cO(Z)=\cO(X)\langle y\rangle$, and the projection map $Z\to X$ induces $K$-Banach algebra isomorphisms $\cO(X_1)\cong \cO(Z)/(sy-x)$ and $\cO(X_2)\cong \cO(Z)/(xy-s)$ via maps $p_i\colon \cO(Z)\to \cO(X_i)$ for $i=1,2$.

Let $\delta_1,\delta_2$ be the bounded $K$-linear derivations of $\cO(Z)$ that extend $\partial_x$ on $\cO(X)$ and which satisfy $\delta_1(y)=1/s$ and $\delta_2(y)=-y^2/s$. Thus $\delta_1(sy-x)=0$ and $\delta_2(xy-s)=-(xy-s)y/s$, so \[\cD(X_1)\cong \cO(Z)[\delta_1]/(sy-x) \quad\mbox{ and }\quad \cD(X_2)\cong \cO(Z)[\delta_2]/(xy-s). \]

\begin{lem}\label{estdeltalem} Let $i=1$ or $i=2$. Then for all $n \geq 0$ we have
\begin{equation}\label{estdeltai} ||\delta_i^{\langle n \rangle}||_Z\leq \rho^{-n}.\end{equation}
\end{lem}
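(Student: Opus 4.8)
The plan is to prove the estimate $\|\delta_i^{\langle n\rangle}\|_Z \le \rho^{-n}$ by induction on the level $m$, or more directly by reducing to the two pieces of how $\delta_i$ acts on $\cO(Z) = \cO(X)\langle y\rangle$: it restricts to $\partial_x$ on $\cO(X)$ (where Lemma \ref{NormDivPow} gives $\|\partial_x^{[k]}\|_{\cO(X)} = \rho(X)^{-k} \le \rho^{-k}$), and it does something simple to powers of $y$. First I would set up the computation of $\delta_i^{[n]}(x^a y^b)$ for monomials, using the Leibniz rule for divided powers $\delta^{[n]}(fg) = \sum_{k=0}^n \delta^{[k]}(f)\delta^{[n-k]}(g)$. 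For $i=1$, since $\delta_1(y) = 1/s$ is a constant, $\delta_1^{[n]}(y^b) = \binom{b}{n}s^{-n}y^{b-n}$, which has $Z$-norm at most $|s|^{-n} \le \rho^{-n}$; combined with the estimate on $\partial_x$ and the fact that $|x|_Z, |y|_Z \le 1$ on $Z = X\times\bD$ (after noting $|x|_X \le $ something — actually one needs $X$ inside a disc so that $\|x\cdot\|$ is controlled, but multiplication norms on $\cO(X)$ are power-multiplicative so $\|\partial_x^{[k]}\|$ already encodes everything), the Leibniz rule gives $\|\delta_1^{[n]}\|_Z \le \rho^{-n}$. For $i=2$, I would first establish by an easy induction (exactly as in Corollary \ref{DivPow2} with $d=1$, $\alpha = 0$, since $\delta_2(y) = -y^2/s$ means $y$ behaves like $s/x$) that $\delta_2^{[n]}(y^b)$ is an explicit scalar multiple of $y^{b+n}s^{-n}$ times a $p$-integral binomial coefficient, again of $Z$-norm at most $|s|^{-n} \le \rho^{-n}$.

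Having the divided-power estimate $\|\delta_i^{[n]}\|_Z \le \rho^{-n}$ in hand, I would then pass to the level-$m$ divided powers $\delta_i^{\langle n\rangle} = q_n!\,\delta_i^{[n]}$. Here the point is that $|q_n!| \le 1$, so trivially $\|\delta_i^{\langle n\rangle}\|_Z = |q_n!|\cdot\|\delta_i^{[n]}\|_Z \le \|\delta_i^{[n]}\|_Z \le \rho^{-n}$, which is exactly $(\ref{estdeltai})$. So in fact the level-$m$ statement follows immediately from the level-$0$ (ordinary divided power) statement; the content is entirely in estimating $\|\delta_i^{[n]}\|_Z$.

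The main obstacle — really the only non-formal point — is controlling the mixed terms in the Leibniz expansion $\delta_i^{[n]}(fg) = \sum_{k} \delta_i^{[k]}(f)\,\delta_i^{[n-k]}(g)$ when $f \in \cO(X)$ and $g$ is a power of $y$: one needs $\|\delta_i^{[k]}(f)\|_Z \le \rho(X)^{-k}|f|_Z$, which is Lemma \ref{NormDivPow} applied on $X$ (using that the norm on $\cO(Z) = \cO(X)\langle y\rangle$ restricts to the supremum norm on $\cO(X)$), together with $\|\delta_i^{[n-k]}(y^b)\|_Z \le |s|^{-(n-k)}$, and then the ultrametric inequality plus $\rho = \min(\rho(X),|s|)$ gives each term a bound of $\rho^{-n}$. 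The only subtlety for $i=2$ is checking that the binomial coefficients $\binom{b}{n-k}$-type quantities appearing after one writes $\delta_2^{[n-k]}(y^b)$ explicitly (via the $d=1$ case of Corollary \ref{DivPow2}) are $p$-adic integers, which they are since $\binom{b}{m} \in \Z$. I would also need to handle the case $b$ negative for $i=2$ (i.e. $y$ could be inverted on $X_2$, but here $y$ is just the disc coordinate so $b \ge 0$ suffices and no inversion is needed). With these routine estimates assembled, the lemma follows; I expect the write-up to be a short direct computation with the Leibniz rule, split into the two cases $i=1$ and $i=2$.
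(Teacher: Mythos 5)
Your proposal is correct and follows essentially the same route as the paper's proof: reduce $\|\delta_i^{\langle n\rangle}\|_Z$ to $\|\delta_i^{[n]}\|_Z$ via $|q_n!|\le 1$, compute $\delta_i^{[k]}(y^m)$ explicitly to get a bound $|s|^{-k}$, and then combine with Lemma~\ref{NormDivPow} via the Leibniz rule and $\rho=\min(\rho(X),|s|)$. The one cosmetic difference is that you propose invoking Corollary~\ref{DivPow2} (with $d=1$ and $y$ playing the role of $s/(x-\alpha)$) to get the $\delta_2^{[k]}(y^m)$ formula, whereas the paper states $\delta_2^{[k]}(y^m)=\binom{m+k-1}{k}y^{m+k}/(-s)^k$ directly by the same short induction; either way the bound $|s|^{-k}$ is immediate.
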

\begin{proof} Since $\delta_i^{\langle n\rangle}=q_n!\delta_i^{[n]}$ with $q_n! \in \mathbb{N}$, it suffices to show that $||\delta_i^{[n]}||_Z\leq \rho^{-n}$. We compute that for all $k,m\geq 0$, we have
\[ \delta_1^{[k]}(y^m)= \binom{m}{k} y^{m-k}/s^k\]  and \[\delta_2^{[k]}(y^m)= \binom{m+k-1}{k} y^{m+k}/(-s)^k.\]
This means that $|\delta_i^{[k]}(y^m)|_Z \leq 1/|s|^k$ for all $k,m\geq 0$. Let $f\in \cO(X)$ and $m, n \geq 0$. Using Lemma \ref{NormDivPow} and the fact that $\rho = \min(\rho(X), |s|)$, we have
\[ \left|\delta_i^{[n]}(fy^m)\right|_Z=\left|\sum_{j+k=n} \partial_x^{[j]}(f) \delta_i^{[k]}(y^m)\right|_Z\leq \sup_{j+k=n} \frac{|f|_X}{\rho(X)^j} \cdot \frac{1}{|s|^k} \leq \frac{|f|_X}{\rho^n}.\]
Hence $||\delta_i^{[n]}||_Z \leq \rho^{-n}$ for all $n \geq 0$ as required. \end{proof}
The following Lemma will enable us to perform the construction of Definition \ref{NegLaurPoly} with $A=\cD_t^{(m)}(X)$ and $\delta=\ad(x/s)$. 

\begin{lem}\label{adxovers} 
	$\ad(\frac{x}{s})\colon \cD_t^{(m)}(X)\to \cD_t^{(m)}(X)_K$ is locally nilpotent and has image contained in $\frac{1}{ts}\cD_t^{[m]}(X)$.
\end{lem}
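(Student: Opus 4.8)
The plan is to prove the two assertions separately, both by direct computation on the free left $\cO(X)^\circ$-basis $\{(\partial_x/t)^{\langle k\rangle}\mid k\ge 0\}$ of $\cD_t^{(m)}(X)$ furnished by Corollary \ref{DsmXfpsep}(a). The key input is the commutation relation of Lemma \ref{levelmdivpowmult}(b), applied to the derivation $\partial_x/t$ (equivalently, rescaling $\partial_x$); this gives, for any $f\in\cO(X)$,
\[ \ad(f)\bigl((\partial_x/t)^{\langle k\rangle}\bigr) = (\partial_x/t)^{\langle k\rangle} f - f\,(\partial_x/t)^{\langle k\rangle} = \sum_{k'+k''=k,\ k'\ge 1} \binomb{k}{k'}\,(\partial_x/t)^{\langle k'\rangle}(f)\,(\partial_x/t)^{\langle k''\rangle}. \]
Since the outer $\partial$-degree $k''$ of each term on the right is strictly smaller than $k$, and since $\ad(f)$ annihilates $\cO(X)^\circ\subseteq\cD_t^{(m)}(X)$, an easy induction on the $\partial_x$-degree of a general element $Q=\sum_k f_k(\partial_x/t)^{\langle k\rangle}$ (a \emph{finite} sum, as $Q\in\cD_t^{(m)}(X)$) shows that $\ad(x/s)$ is locally nilpotent: each application strictly decreases the top $\partial$-degree. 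This is the easy half.

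For the image bound, the main point is to control the coefficient of $(\partial_x/t)^{\langle k''\rangle}$ appearing above when $f=x/s$. Here I would note $(\partial_x/t)^{\langle k'\rangle}(x) = t^{-k'} q_{k'}!\,\partial_x^{[k']}(x)$, which vanishes for $k'\ge 2$ and equals $t^{-1}$ for $k'=1$ (using $q_1!=1$, $\partial_x^{[1]}(x)=1$), so in fact
\[ \ad(x/s)\bigl((\partial_x/t)^{\langle k\rangle}\bigr) = \binomb{k}{1}\,\frac{1}{ts}\,(\partial_x/t)^{\langle k-1\rangle} = q_k\cdot\frac{1}{ts}\,(\partial_x/t)^{\langle k-1\rangle}, \]
since $\binomb{k}{1}=q_k!/(q_1!\,q_{k-1}!)=q_k$ when $k\ge 1$ (and the expression is $0$ for $k=0$). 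As $q_k\in\N$ and $(\partial_x/t)^{\langle k-1\rangle}\in\cD_t^{(m)}(X)$, this shows $\ad(x/s)\bigl((\partial_x/t)^{\langle k\rangle}\bigr)\in\frac{1}{ts}\cD_t^{(m)}(X)$ for every $k$. Combining with left $\cO(X)^\circ$-linearity of $\ad(x/s)$ (which follows since $x/s$ is central modulo nothing — rather, since $\ad$ is a derivation and $\ad(x/s)$ kills $\cO(X)^\circ$, one has $\ad(x/s)(f_kQ_k)=f_k\ad(x/s)(Q_k)$ for $f_k\in\cO(X)^\circ$), we get $\ad(x/s)\bigl(\cD_t^{(m)}(X)\bigr)\subseteq\frac{1}{ts}\cD_t^{(m)}(X)$, as claimed.

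I do not anticipate a serious obstacle here: both claims reduce to the single explicit identity $\ad(x/s)((\partial_x/t)^{\langle k\rangle}) = \frac{q_k}{ts}(\partial_x/t)^{\langle k-1\rangle}$, and the only mild subtlety is bookkeeping the normalisation constants $q_k!$ in Berthelot's divided-power notation (Definition \ref{divpownot}) correctly — in particular checking $\binomb{k}{1}=q_k$ and that the $k'\ge 2$ terms genuinely drop out because $\partial_x^{[k']}(x)=0$. If one prefers to avoid even the appearance of denominators, one can instead observe directly that $ts\cdot\ad(x/s)$ maps the $\cO(X)^\circ$-basis into $\cD_t^{(m)}(X)$, which is what is actually needed to set up the skew-Laurent construction of Definition \ref{NegLaurPoly} with $A=\cD_t^{(m)}(X)$ and $\delta=ts\cdot\ad(x/s)$ (or equivalently to work with $\ad(x/s)$ valued in the localisation $\cD_t^{(m)}(X)_K$, as the statement is phrased).
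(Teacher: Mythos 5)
Your proof is correct and takes essentially the same route as the paper's: apply Lemma \ref{levelmdivpowmult}(b) with $\partial=\partial_x/t$ and $f=x/s$, note that only the $k'=1$ term survives because $\partial_x^{[k']}(x)=0$ for $k'\ge 2$, read off both conclusions from the resulting identity $[(\partial_x/t)^{\langle k\rangle},x/s]=\frac{1}{ts}\binomb{k}{1}(\partial_x/t)^{\langle k-1\rangle}$, and propagate to general elements by $\cO(X)^\circ$-linearity via the free-basis description from Proposition \ref{Levelmdivpow}(a). Your explicit spelling out of the local-nilpotence induction is welcome; the paper leaves it implicit in the phrase ``$\ad(x/s)$ is an $\cO(X)^\circ$-linear derivation.''

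One minor computational slip: the claim $\binomb{k}{1}=q_k$ is not generally correct. Since $q_1\in\{0,1\}$ gives $q_1!=1$, one has $\binomb{k}{1}=q_k!/q_{k-1}!$, and $q_k-q_{k-1}\in\{0,1\}$; so $\binomb{k}{1}=q_k$ when $p^m\mid k$, but $\binomb{k}{1}=1$ (not $q_k$) when $p^m\nmid k$. This does not damage your argument — all you actually use is that $\binomb{k}{1}$ is a nonzero natural number, which holds in both cases — and notice that the paper's proof deliberately leaves $\binomb{k}{1}$ unevaluated, needing only that it lies in $\N$.
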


\begin{proof}
	Suppose $k\geq 1$. Then by Lemma \ref{levelmdivpowmult}\ref{divpow2}, we have 
	\begin{eqnarray*} \left[(\partial_x/t)^{\langle k\rangle}, \frac{x}{s}\right] & = & \sum_{i=1}^{k} \binomb{k}{i}(\partial_x/t)^{\langle i\rangle}\left(\frac{x}{s}\right) (\partial/t)^{\langle k-i\rangle}\\ & = & \frac{1}{ts} \binomb{k}{1}  (\partial_x/t)^{\langle k-1\rangle} \in  \frac{1}{ts}\cD_t^{(m)}(X).\\\end{eqnarray*}  
By Proposition \ref{Levelmdivpow}(a) applied with $\partial = \partial_x/t$, we know that $\{(\partial_x/t)^{\langle k\rangle}\mid k\geq 0\}$ is a free generating set for $\cD_t^{(m)}(X) = \cO(X)^\circ[\partial_x/t]^{(m)}$ as a left $\cO(X)^\circ$-module. Now use the fact that $\ad(\frac{x}{s})$ is an $\cO(X)^\circ$-linear derivation.\end{proof}

\begin{notn} The following notation will be used in the remainder of $\S \ref{flatbasesec}$: \begin{itemize} \item $\cD=\cD_t^{(m)}(X)$ and $D=\h{\cD}_K$; 
		\item 
		$\cD_i=\cD_t^{(m)}(X_i)$ and $D_{i}=(\h{\cD_{i}})_K$ for $i=1,2$;
		\item $\cU_{i}=\cO(Z)^\circ[\delta_i/t]^{(m)}$ and $U_{i}=(\h{\cU_i})_K$ for $i=1,2$;
		\item $\cV_{1}=\cD[T; \ad(x/s)]$ and $V_{1}=(\h{\cV_{1}})_K$;
		\item $\cV_{2}=\cD[T^{-1}; \ad(x/s)]$ and $V_{2}=(\h{\cV_{2})}_K$.
	\end{itemize}
\end{notn}
We will establish that, for $i=1,2$, there is an isomorphism of $K$-Banach algebras $V_i \stackrel{\cong}{\longrightarrow} U_i$, and also that $D_i\cong V_i/(v_i)$ for explicit central elements $v_i$ of $V_i$. Together with Propositions \ref{FlatPrepProp} and \ref{AbstractFlatness}, this will allow us to prove that $D_i$ is flat over $D$. 

\begin{prop} \label{Dbasegood} The $K^\circ$-algebras $\cD$, $\cD_1$, $\cD_2$, $\cU_1$ and $\cU_2$ are all $p$-adically separated and flat $K^\circ$-algebras. There exists $\pi\in K$ with $0 < |\pi| < 1$, such that their reductions mod $\pi$ are all finitely presented commutative algebras over $K^\circ/(\pi)$. \end{prop}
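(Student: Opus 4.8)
The plan is to read everything off from three earlier inputs: Corollary \ref{DsmXfpsep} handles $\cD$, $\cD_1$, $\cD_2$ directly, while $\cU_1$, $\cU_2$ are handled by Proposition \ref{Levelmdivpow}, whose operator-norm hypothesis is supplied by Lemma \ref{estdeltalem}; a single $\pi$ working for all five algebras will be extracted at the end. Throughout we use the parameter $t\in K^\times$ fixed under Hypothesis \ref{rXmt}, which satisfies $|t|=r/|\varpi_m|$ and hence $|t|\rho>1$ with $\rho=\min(\rho(X),|s|)$.

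For $\cD$, $\cD_1$, $\cD_2$: under Hypothesis \ref{rXmt} the variety $X$ is split over $K$, and $X_1=X(x/s)$, $X_2=X(s/x)$ are rational subdomains of the finite disjoint union of $K$-cheeses $X$ with all relevant radii in $|K^\times|$, hence are again finite disjoint unions of $K$-cheeses (this is exactly what makes Theorem \ref{Adellevm} applicable to them in the discussion above), with $\rho\leq\rho(X_1)$ and $\rho\leq\rho(X_2)$. Therefore $|t|>1/\rho(X)$ and $|t|>1/\rho(X_i)$, so Corollary \ref{DsmXfpsep}(a), applied to $X$, $X_1$, $X_2$ with the parameter there taken to be $t$, shows that $\cD=\cD_t^{(m)}(X)$, $\cD_1$ and $\cD_2$ are free left modules over $\cO(X)^\circ$, $\cO(X_1)^\circ$, $\cO(X_2)^\circ$ on the respective divided powers $(\partial_x/t)^{\langle k\rangle}$, and in particular are $p$-adically separated flat $K^\circ$-algebras. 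Since every hole-radius of a cheese of $X$ lies in $K^\times$ and $s\in K^\times$, we have $\rho\in|K^\times|$, while also $|t|\in|K^\times|$ and $\max\{|p|,(|t|\rho)^{-1}\}<1$; so we may choose $\pi\in K^\times$ with $|\pi|=\max\{|p|,(|t|\rho)^{-1}\}$, giving $0<|\pi|<1$. Because $\rho\leq\rho(X),\rho(X_i)$, this $\pi$ meets the hypothesis of Corollary \ref{DsmXfpsep}(b) for each of $X$, $X_1$, $X_2$, so $\cD/(\pi)$, $\cD_1/(\pi)$, $\cD_2/(\pi)$ are finitely presented commutative $K^\circ/(\pi)$-algebras.

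For $\cU_i=\cO(Z)^\circ[\delta_i/t]^{(m)}$ with $Z=X\times\bD$: since $(\delta_i/t)^{\langle k\rangle}=\delta_i^{\langle k\rangle}/t^k$, Lemma \ref{estdeltalem} gives $||(\delta_i/t)^{\langle k\rangle}||_Z\leq(|t|\rho)^{-k}$, whence $\sup_{k\geq 1}||(\delta_i/t)^{\langle k\rangle}||_Z\leq(|t|\rho)^{-1}<1$. Thus Proposition \ref{Levelmdivpow}(a), applied with $\cO(X)$ there replaced by $\cO(Z)$ and $\partial=\delta_i/t$, shows $\cU_i$ is free over $\cO(Z)^\circ$ on $\{(\delta_i/t)^{\langle k\rangle}:k\geq 0\}$, hence is a $p$-adically separated flat $K^\circ$-algebra. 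The same $\pi$ as above satisfies $|\pi|\geq\max\{|p|,\sup_{k\geq 1}||(\delta_i/t)^{\langle k\rangle}||_Z\}$, so Proposition \ref{Levelmdivpow}(b) yields that $\cU_i/(\pi)$ is a finitely presented commutative $\cO(Z)^\circ/(\pi)$-algebra. Finally $\cO(Z)^\circ$ is topologically finitely presented over $K^\circ$ (as $Z$ is affinoid, by \cite[2.4.8(a)]{Lut} and \cite[Proposition 1.1(c)]{BL1}, exactly as in the proof of Corollary \ref{DsmXfpsep}(b)), so $\cO(Z)^\circ/(\pi)$, and therefore $\cU_i/(\pi)$, is finitely presented over $K^\circ/(\pi)$.

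I do not expect a genuine obstacle here: the proposition is essentially a packaging of Corollary \ref{DsmXfpsep}, Proposition \ref{Levelmdivpow} and Lemma \ref{estdeltalem} around one uniform choice of $\pi$. The only points that need a little care are the closure of ``finite disjoint union of $K$-cheeses'' under the rational subdomains $X(x/s)$, $X(s/x)$ (so that Corollary \ref{DsmXfpsep} is actually available for $\cD_1$, $\cD_2$ and the estimate $\rho(X_i)\geq\rho$ holds), the harmless rescaling of Lemma \ref{estdeltalem} by $t^k$, and the transitivity of finite presentation used to pass from $\cO(Z)^\circ/(\pi)$ down to $K^\circ/(\pi)$ --- all of which are routine.
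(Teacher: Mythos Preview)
Your proposal is correct and follows essentially the same route as the paper: choose $\pi$ with $|\pi|=\max\{|p|,(|t|\rho)^{-1}\}$, invoke Corollary \ref{DsmXfpsep} for $\cD$, $\cD_1$, $\cD_2$, and combine Lemma \ref{estdeltalem} with Proposition \ref{Levelmdivpow} for $\cU_1$, $\cU_2$. You spell out a few routine points (that $X_1$, $X_2$ are split over $K$, and the transitivity step giving finite presentation of $\cU_i/(\pi)$ over $K^\circ/(\pi)$) that the paper leaves implicit, but the argument is the same.
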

\begin{proof} Since $X$ is split over $K$, we know that $\rho(X) \in |K^\times|$. Since also $s,t \in K^\times$, we can find an element $\pi \in K$ such that
\begin{equation} \label{FlatBasePi} |\pi| = \max \left\{|p|, \frac{1}{\rho |t|} \right\}.\end{equation}
Since $|p| < 1$ and $\frac{1}{\rho |t|} < 1$ by Hypothesis \ref{rXmt}, we see that $0 < |\pi| < 1$. Now, because $X$, $X_1$ and $X_2$ are split over $K$ and 
\[|t|>\rho^{-1}>\max \{\rho(X)^{-1},\rho(X_1)^{-1},\rho(X_2)^{-1}\},\] 
the cases $\cD$, $\cD_1$ and $\cD_2$ follow from Corollary \ref{DsmXfpsep}. To deal with $\cU_i$ for $i=1$ or $i=2$, we can apply Lemma \ref{estdeltalem} to see that
\[\sup\limits_{n \geq 1} ||(\delta_i/t)^{\langle n\rangle}|| \leq \sup\limits_{n \geq 1}\frac{1}{(\rho |t|)^n} \leq \frac{1}{|\rho|t} < 1.\]
Now we can apply Proposition \ref{Levelmdivpow} with $X = Z$ and $\partial = \delta_i/t$.\end{proof}

\begin{lem}  \label{UVbasis} For $n>0$, $i=1,2$ and $\pi$ given by Proposition \ref{Dbasegood}: \be 
\item $\cO(Z)^{\circ}/(\pi^n)$ is a free $\cO(X)^{\circ}/(\pi^n)$-module on $\{y^\ell+(\pi^n)\st \ell\geq 0\}$;
\item $\cU_{i}/(\pi^n)$ is a free $\cO(X)^\circ/(\pi^n)$-module on $\{\partial_i^{\langle k\rangle}y^\ell+(\pi^n)\st k,\ell\geq 0\}$;
\item $\cV_{1}/(\pi^n)$ is a free $\cO(X)^\circ/(\pi^n)$-module on $\{\partial_x^{\langle k\rangle}T^\ell+(\pi^n)\st k, \ell\geq 0\}$;
\item $\cV_{2}/(\pi^n)$ is a free $\cO(X)^\circ/(\pi^n)$-module on $\{\partial_x^{\langle k\rangle}T^\ell+(\pi^n)\st k\geq 0, \ell\leq 0\}$. \ee

\end{lem}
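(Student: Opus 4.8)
The plan is to deduce all four parts from the free-module descriptions of the \emph{integral} algebras $\cO(Z)^\circ$, $\cU_i$ and $\cD$ that are already available, and then base-change modulo $\pi^n$, with transitivity of freeness doing the rest. Throughout I read $\partial_i^{\langle k\rangle}$ and $\partial_x^{\langle k\rangle}$ as the level-$m$ divided powers of the rescaled derivations $\delta_i/t$ and $\partial_x/t$ occurring in the definitions of $\cU_i$ and $\cD$, and I record for later use that Hypothesis~\ref{rXmt} gives $\rho|t|>1$ and hence $|st|>1$ (since $\rho\le|s|$, $|t|=r/|\varpi_m|$ and $\rho>|\varpi_m|/r$). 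For part (a): as $X$ is split over $K$ it is reduced, so the supremum seminorm on $\cO(Z)=\cO(X)\langle y\rangle$ is the Gauss extension of that on $\cO(X)$, whence $\cO(Z)^\circ=\{\sum_\ell a_\ell y^\ell:a_\ell\in\cO(X)^\circ,\ |a_\ell|_X\to 0\}$. For fixed $n>0$ every such series has $a_\ell\in\pi^n\cO(X)^\circ$ for all but finitely many $\ell$, so the obvious map $(\cO(X)^\circ/(\pi^n))[y]\to\cO(Z)^\circ/(\pi^n)$ is onto, and uniqueness of the Gauss expansion makes it injective; hence $\cO(Z)^\circ/(\pi^n)$ is a polynomial ring over $\cO(X)^\circ/(\pi^n)$, free on $\{y^\ell:\ell\ge 0\}$.

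For part (b) I would first use Lemma~\ref{estdeltalem} to get $\|(\delta_i/t)^{\langle k\rangle}\|_Z\le(\rho|t|)^{-k}<1$ for $k\ge 1$, so that Proposition~\ref{Levelmdivpow}(a), applied with $Z$ and $\delta_i/t$ in place of $X$ and $\partial$, exhibits $\cU_i$ as a free left $\cO(Z)^\circ$-module on $\{(\delta_i/t)^{\langle k\rangle}:k\ge 0\}$. Reducing modulo $\pi^n$ and plugging in part (a), $\cU_i/(\pi^n)$ becomes a free left $\cO(X)^\circ/(\pi^n)$-module on $\{y^\ell(\delta_i/t)^{\langle k\rangle}:k,\ell\ge 0\}$. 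To pass to the basis in the statement, I would invoke Lemma~\ref{levelmdivpowmult}(b) to write each $(\delta_i/t)^{\langle k\rangle}y^\ell$ as $y^\ell(\delta_i/t)^{\langle k\rangle}$ plus an $\cO(Z)^\circ$-linear combination of the $(\delta_i/t)^{\langle k''\rangle}$ with $k''<k$ --- the coefficients $(\delta_i/t)^{\langle k'\rangle}(y^\ell)$ lying in $\cO(Z)^\circ$ precisely because $|st|>1$ --- so the transition matrix is unipotent once one filters by divided-power degree, hence invertible, and $\{(\delta_i/t)^{\langle k\rangle}y^\ell\}$ is a basis too.

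For parts (c) and (d): because $|st|>1$, Lemma~\ref{adxovers} shows $\ad(x/s)$ carries $\cD$ into $\tfrac{1}{ts}\cD\subseteq\cD$ and is locally nilpotent there, so $\cV_1=\cD[T;\ad(x/s)]$ is a bona fide skew-Ore extension, free as a left $\cD$-module on $\{T^\ell:\ell\ge 0\}$, while $\cV_2=\cD[T^{-1};\ad(x/s)]$ is the ring of Definition~\ref{NegLaurPoly}, free as a left $\cD$-module on $\{T^\ell:\ell\le 0\}$. By Corollary~\ref{DsmXfpsep}(a) --- whose hypotheses hold since $X$ is split over $K$ and $|t|>1/\rho(X)$ --- $\cD$ is a free left $\cO(X)^\circ$-module on $\{(\partial_x/t)^{\langle k\rangle}:k\ge 0\}$. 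Composing these two descriptions, $\cV_1$ (respectively $\cV_2$) is a free left $\cO(X)^\circ$-module on $\{(\partial_x/t)^{\langle k\rangle}T^\ell\}$ with $\ell\ge 0$ (respectively $\ell\le 0$), and reducing modulo $\pi^n$ gives (c) and (d).

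The genuinely non-formal ingredient is part (a): identifying the power-bounded subring $\cO(Z)^\circ$ with $\cO(X)^\circ\langle y\rangle$ and seeing that its reduction modulo $\pi^n$ collapses to a polynomial ring. I expect the only other slightly fiddly point to be the triangular change of basis in (b); everything else is bookkeeping --- transitivity of freeness applied to results already in hand --- and should be routine.
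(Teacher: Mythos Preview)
Your proof is correct and tracks the paper's own argument closely in parts (a), (c) and (d). In part (a) you spell out the identification $\cO(Z)^\circ=\cO(X)^\circ\langle y\rangle$ that the paper states in one line; in (c) and (d) both you and the paper simply compose the freeness of $\cD$ over $\cO(X)^\circ$ from Proposition~\ref{Levelmdivpow}/Corollary~\ref{DsmXfpsep}(a) with the freeness of the skew (and negative skew) Ore extensions over $\cD$.

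The only genuine point of divergence is part (b). The paper observes that $\cU_i/(\pi)$ is commutative (this is Proposition~\ref{Dbasegood}), so $\partial_i^{\langle k\rangle}y^\ell\equiv y^\ell\partial_i^{\langle k\rangle}\pmod\pi$, and then invokes a lifting lemma of Lazard \cite[I.2.3.17]{Laz1965} to pass from a basis mod $\pi$ to a basis mod $\pi^n$. You instead run a direct triangular argument: the commutation relation from Lemma~\ref{levelmdivpowmult}(b) makes the transition between the two candidate bases block-unipotent with respect to the filtration by divided-power degree in $\partial_i$, with identity on each graded piece, hence invertible. Your route is self-contained and avoids the external reference; the paper's is shorter but leans on Lazard. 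Both are valid.
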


\begin{proof}
(a) $\cO(Z)=\cO(X)\langle y\rangle$, so $\cO(Z)^\circ=\cO(X)^\circ\langle y\rangle$ and the result follows easily. 

(b) Since $\partial_i^{\langle k\rangle}y^\ell\equiv y^\ell\partial_i^{\langle k\rangle} \mod \pi$ this follows from (a) together with Proposition \ref{Levelmdivpow} and \cite[I.2.3.17]{Laz1965}.

(c) and (d) follow from Proposition \ref{Levelmdivpow} together with the definitions of $A[T;\delta]$ and $A[T^{-1};\delta]$ from \S\ref{skewLaurent}.
\end{proof}

\begin{lem}\label{univV2m} For each $k\geq 0$ we have the following equality in $\cU_{2}$: \[ y(\delta_2/t)^{\langle k\rangle}= \sum_{n\geq 0} (-\ad (x/s))^n\left((\delta_2/t)^{\langle k\rangle}\right) y^{n+1}. \]
\end{lem}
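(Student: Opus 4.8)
The plan is to carry out the entire computation inside the ambient ring $\cD(Z)$ of finite-order differential operators on $Z = X \times \bD$, which is a $K$-algebra (hence a $\bQ$-algebra) containing $\cU_2 = \cO(Z)^\circ[\delta_2/t]^{(m)}$ as a subring, and in which $x/s \in \cO(Z)$ acts as a multiplication operator, so that $\ad(x/s)$ is an inner derivation of $\cD(Z)$. The first step is to pass from the level-$m$ divided powers to ordinary powers: since
\[ (\delta_2/t)^{\langle k\rangle} = q_k!\,(\delta_2/t)^{[k]} = \frac{q_k!}{k!\,t^k}\,\delta_2^k \]
with $q_k!/(k!\,t^k) \in K^\times$ a central unit of $\cD(Z)$, and since $\ad(x/s)$ is $K$-linear, the assertion of the lemma is equivalent to the identity
\[ y\,\delta_2^k = \sum_{n\geq 0} (-1)^n\,\ad(x/s)^n(\delta_2^k)\,y^{n+1} \]
in $\cD(Z)$. (Both sides of the original identity then lie in $\cU_2$: the left side obviously, and the right side because the coefficients of $\ad(x/s)^n((\delta_2/t)^{\langle k\rangle})$ are of the form $(-1/(ts))^n$ times an integer, and $1/(ts) \in K^\circ$ under Hypothesis \ref{rXmt}.)

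The second step is to compute $\ad(x/s)^n(\delta_2^k)$ explicitly. Since $\delta_2$ restricts to $\partial_x$ on $\cO(X)$ and $x/s \in \cO(X)$, we have $\ad(x/s)(\delta_2) = -\partial_x(x/s) = -1/s$, a scalar; applying the Leibniz rule for the derivation $\ad(x/s)$ of $\cD(Z)$ repeatedly then gives
\[ \ad(x/s)^n(\delta_2^k) = (-1)^n\,\frac{k!}{(k-n)!}\,s^{-n}\,\delta_2^{k-n} = (-1)^n\, n!\binom{k}{n}\, s^{-n}\,\delta_2^{k-n} \qmb{for} 0 \leq n \leq k, \]
and $\ad(x/s)^n(\delta_2^k) = 0$ for $n > k$, so all the series in sight are in fact finite sums. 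Substituting this, the identity to be proved collapses to
\[ y\,\delta_2^k = \sum_{n=0}^{k} n!\binom{k}{n}\, s^{-n}\,\delta_2^{k-n}\,y^{n+1}. \]

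The third step is to prove this last identity by induction on $k$, the case $k = 0$ being trivial. For the inductive step I would right-multiply the inductive hypothesis by $\delta_2$ and use the commutation rule $y^{j}\delta_2 = \delta_2\,y^{j} + \tfrac{j}{s}\,y^{j+1}$, which follows from $\delta_2(y) = -y^2/s$ and the Leibniz rule; this rewrites $y\,\delta_2^{k+1}$ as a sum of two finite series, and after reindexing one of them and invoking Pascal's rule $\binom{k}{n} + \binom{k}{n-1} = \binom{k+1}{n}$ the two combine into $\sum_{n=0}^{k+1} n!\binom{k+1}{n}\, s^{-n}\,\delta_2^{(k+1)-n}\,y^{n+1}$, completing the induction. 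Undoing the reduction of the first paragraph then yields the statement of the lemma.

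I do not expect a genuine obstacle here: the argument is an elementary induction, and the only points requiring a little care are the legitimacy of moving between the divided-power form $(\delta_2/t)^{\langle k\rangle}$ and the ordinary-power form $\delta_2^k$ (which is harmless because the positive integer $q_k!$ and the element $t^k$ are units in the $\bQ$-algebra $\cD(Z)$) and the indexing in the Pascal's-rule step. A more structural alternative would be to note that $\ad(y)$ is locally nilpotent on $\cD(Z)$ — so that $\{y^n\}_{n\geq 0}$ is an Ore set by Proposition \ref{GabberOre} and $y$ is invertible in a localisation of $\cD(Z)$ — and that $x/s - y^{-1}$ centralises $\cU_2$, whence the lemma follows from Lemma \ref{adTinverse} applied with $A = \cU_2$ and $\delta = \ad(x/s)$; but the direct computation above is shorter and self-contained.
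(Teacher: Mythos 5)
Your proof is correct, and it reaches the same intermediate ordinary-power identity
\[ y\,\delta_2^k = \sum_{n=0}^k \binom{k}{n}\,n!\,s^{-n}\,\delta_2^{k-n}\,y^{n+1} \]
as the paper does (after the same reduction: both sides lie in $\cU_2$, which is a $K^\circ$-torsionfree subring of the $K$-algebra $\cO(Z)[\delta_2]$, so one may invert $p$ and clear the scalar $q_k!/(k!\,t^k)$). Where you diverge from the paper is in how this identity is established. The paper proves it in one line by binomial expansion: writing right-multiplication as $r_{\delta_2} = \ell_{\delta_2} - \ad(\delta_2)$, applying the binomial theorem to $r_{\delta_2}^k(y)$, and evaluating $(-\ad(\delta_2))^n(y) = (-\delta_2)^n(y) = n!\,y^{n+1}/s^n$ using the formula for $\delta_2^{[n]}(y^m)$ already computed in Lemma~\ref{estdeltalem}, together with the explicit value of $\ad(x/s)^n(\delta_2^k)$ from the proof of Lemma~\ref{adxovers}. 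Your induction on $k$ via the commutation rule $y^j\delta_2 = \delta_2 y^j + \tfrac{j}{s}y^{j+1}$ and Pascal's rule is more elementary and self-contained — it does not lean on the earlier local computations — at the cost of being a bit longer. Both work; the paper's version is more in the spirit of reusing the machinery already built (and more directly parallels the computation of $\ad(x/s)^n(\delta_2^k)$), but there is nothing to criticise in yours. Your final remark that one could instead invoke Lemma~\ref{adTinverse} is also sound, and perhaps conceptually closer to why this particular identity is the one needed to invoke the universal property in Proposition~\ref{ViisoUi}, though you are right that the direct computation is shorter.
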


\begin{proof} By Lemma \ref{adxovers}, $\ad(x/s)$ acts locally nilpotently on $\cD$. Hence the sum on the right hand side of the formula is finite. Since $\cU_{2}$ is flat over $K^\circ$ we may invert $p$ and then after multiplication by a scalar in $K$ reduce to proving that \[ y\delta_2^k = \sum_{n\geq  0} (-\ad (x/s))^n(\delta_2^k) y^{n+1}\] holds inside $\cO(Z)[\delta_2]$. Now we recall that $\ell_{\delta_2}= \ad(\delta_2)+r_{\delta_2}$ so, using a computation from the proof of Lemma \ref{estdeltalem}, we have
\begin{eqnarray*} y\delta_2^k & = & (\ell_{\delta_2} - \ad(\delta_2))^k(y) \\
                           			& = & \sum_{n=0}^k \binom{k}{n} \delta_2^{k-n} (-\delta_2)^{n}(y) \\
 							& = & \sum_{n=0}^k\binom{k}{n}\delta_2^{k-n} \binom{n}{n}n! y^{n+1}/s^{n}. \end{eqnarray*}

On the other hand, we saw in the proof of Lemma \ref{adxovers} that \[ (-\ad(x/s))^n(\delta_2^k)= 1/s^n \binom{k}{n}n! \delta_2^{k-n}. \] The result now follows easily. 
\end{proof}

\begin{prop} \label{ViisoUi} There are $\cD(X)$-linear isomorphisms of $K$-Banach algebras $\phi_i\colon V_{i}\to U_{i}$ with $\phi_1(T)=y$ and $\phi_2(T^{-1})=y$.
\end{prop}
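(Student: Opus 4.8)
The plan is to build each $\phi_i$ by iterated use of the universal properties of the skew extensions (Propositions \ref{Univskewore} and \ref{univTinverse}), to check boundedness by showing $\phi_i$ maps the $K^\circ$-form $\cV_i$ into $\cU_i$, and to get bijectivity by reducing modulo powers of $\pi$ and matching the bases from Lemma \ref{UVbasis}. First I would note that Hypothesis \ref{rXmt} forces $|ts|>1$, since $|s|\ge\rho>|\varpi_m|/r=1/|t|$; hence $1/(ts)\in K^{\circ\circ}$ and, by Lemma \ref{adxovers}, $\ad(x/s)$ is a locally nilpotent derivation of $\cD=\cD_t^{(m)}(X)$ landing in $(1/(ts))\cD\subseteq\cD$, so $\cV_1=\cD[T;\ad(x/s)]$ and $\cV_2=\cD[T^{-1};\ad(x/s)]$ are genuinely defined. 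Applying Proposition \ref{Univskewore} with $A=\cO(X)$, $\delta=\partial_x$, $B=\cD(Z)$, the inclusion $\cO(X)\hookrightarrow\cO(Z)$ and $b=\delta_i$ (legitimate because $\delta_i$ restricts to $\partial_x$ on $\cO(X)$) yields ring homomorphisms $\cD(X)\to\cD(Z)$ sending $\partial_x$ to $\delta_i$; these send $(\partial_x/t)^{\langle k\rangle}=\frac{q_k!}{k!\,t^k}\partial_x^k$ to $(\delta_i/t)^{\langle k\rangle}$ and $\cO(X)^\circ$ into $\cO(X)^\circ\subseteq\cO(Z)^\circ$, hence restrict to ring homomorphisms $\iota_i\colon\cD\to\cU_i$.

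Next I would extend $\iota_1$ and $\iota_2$ to the skew extensions. For $\phi_1$, apply Proposition \ref{Univskewore} again, now with $A=\cD$, $\delta=\ad(x/s)$, $f=\iota_1$ and $b=y\in\cU_1$: the required relation $[y,\iota_1(a)]=\iota_1(\ad(x/s)(a))$ holds for all $a\in\cD$ because $y-x/s$ is central in $\cU_1\otimes_{K^\circ}K$ (one has $[y-x/s,\delta_1]=-\delta_1(y)+\delta_1(x/s)=0$, and $y-x/s$ commutes with $\cO(Z)$), so $[y,\iota_1(a)]=[x/s,\iota_1(a)]=\iota_1([x/s,a])$ using that the map $\cD(X)\to\cD(Z)$ fixes $x$. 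For $\phi_2$, apply Proposition \ref{univTinverse} with $A=\cD$, $\delta=\ad(x/s)$, $f=\iota_2$ and $b=y\in\cU_2$: here the hypothesis $y\,\iota_2(a)=\sum_{n\ge0}(-1)^n\iota_2(\ad(x/s)^n(a))\,y^{n+1}$ is exactly the content of Lemma \ref{univV2m} when $a=(\partial_x/t)^{\langle k\rangle}$, and the general case follows because $\ad(x/s)$ annihilates $\cO(X)^\circ$ and $y$ commutes with $\cO(X)^\circ$, so both sides are left $\cO(X)^\circ$-linear in $a\in\cD=\bigoplus_k\cO(X)^\circ(\partial_x/t)^{\langle k\rangle}$. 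This produces ring homomorphisms $\phi_1\colon\cV_1\to\cU_1$ with $\phi_1|_\cD=\iota_1$, $\phi_1(T)=y$ and $\phi_2\colon\cV_2\to\cU_2$ with $\phi_2|_\cD=\iota_2$, $\phi_2(T^{-1})=y$.

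Finally, since $\phi_i$ carries the $\cD$-module generators $T^{\pm1}$ of $\cV_i$ to $y\in\cU_i$ and restricts to $\iota_i$ on $\cD$, it maps $\cV_i$ into $\cU_i$; it is therefore $p$-adically continuous and extends, after completing and inverting $p$, to a norm-$\le 1$ homomorphism $\phi_i\colon V_i\to U_i$ of $K$-Banach algebras. For bijectivity, take $\pi$ as in Proposition \ref{Dbasegood}: by Lemma \ref{UVbasis}(c),(d) the set $\{(\partial_x/t)^{\langle k\rangle}T^\ell\}$ is an $\cO(X)^\circ/(\pi^n)$-basis of $\cV_i/(\pi^n)$, and $\phi_i$ carries it via $(\partial_x/t)^{\langle k\rangle}T^\ell\mapsto(\delta_i/t)^{\langle k\rangle}y^{|\ell|}$ bijectively onto the $\cO(X)^\circ/(\pi^n)$-basis $\{(\delta_i/t)^{\langle k\rangle}y^{\ell'}\}$ of $\cU_i/(\pi^n)$ from Lemma \ref{UVbasis}(b); hence $\phi_i$ is an isomorphism mod $\pi^n$ for every $n$, so an isomorphism $\h{\cV_i}\congs\h{\cU_i}$ with inverse again of norm $\le1$, and inverting $p$ gives the isometric isomorphism $V_i\congs U_i$. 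The $\cD(X)$-linearity is immediate since $\phi_i|_\cD=\iota_i$ and $\cD(X)$ maps into $\h{\cD}_K$ by Lemma \ref{TheMapJ} and Theorem \ref{Adellevm}. I expect the two universal-property verifications to be the only real content — and the one for $\phi_2$ is essentially delegated to Lemma \ref{univV2m} — with everything else routine bookkeeping with the $p$-adic completions and the explicit bases.
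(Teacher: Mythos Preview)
Your proposal is correct and follows essentially the same approach as the paper: build $\cD(X)$-linear maps $\cD\to\cU_i$ via $\partial_x\mapsto\delta_i$, extend to $\cV_i\to\cU_i$ using the universal properties of Propositions~\ref{Univskewore} and~\ref{univTinverse} (the latter via Lemma~\ref{univV2m}), and then verify bijectivity by matching the explicit bases of Lemma~\ref{UVbasis} modulo $\pi^n$ before completing and inverting $p$. Your extra remarks on why $\ad(x/s)$ takes values in $\cD$, on the $\cO(X)^\circ$-linear reduction of the $\phi_2$ hypothesis to the case $a=(\partial_x/t)^{\langle k\rangle}$, and on isometry are all correct elaborations of points the paper leaves implicit.
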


\begin{proof} Since $\delta_i|_{\cO(X)}=\partial_x$, the natural map $\cO(X)\to \cO(Z)$ arising from the projection $Z\to X$ extends to maps $\cD(X)\to \cD(Z)$ sending $\partial_x$ to $\delta_i$. It is easy to verify that these in turn restrict to ring homomorphisms $g_i\colon \cD\to \cU_{i}$.
	 
Since $y-x/s$ is central in $\cU_{1}$ it is easy to verify that for each $P\in  \cD$, \[ yg_1(P)-g_1(P)y=(x/s)g_1(P)-g_1(P)(x/s)= g_1(\ad(x/s)(P)). \] The universal property of $\cV_{1}$ given by Proposition \ref{Univskewore} thus provides that $g_1$ extends to a unique ring homomorphism $h_1\colon \cV_{1}\to \cU_{1}$ such that $h_1(T)=y$. This in turn induces ring homomorphisms $h_{1,n}\colon \cV_{1}/(\pi^n)\to \cU_{1}/(\pi^n)$ that we claim are isomorphisms. To see this is suffices to observe that $h_{1,n}$ is $\cO(X)^\circ/(\pi^n)$-linear and sends  $\partial_x^{\langle k\rangle}T^\ell+(\pi^n)$ to $\delta_1^{\langle k\rangle}y^\ell + (\pi^n)$; now apply Lemma \ref{UVbasis}(b,c).  It follows that $\h{h_1}\colon \h{\cV_{1}}\to \h{\cU_{1}}$ is an isomorphism and we may take $\phi_1=(\h{h_1})_K$.

Similary, since every element of $\cD$ is an $\cO(X)^\circ$-linear combination of the elements $(\partial_x/t)^{\langle k\rangle}$ by Proposition \ref{Levelmdivpow}(a), and since $\cO(Z)$ is commutative, it follows from Lemma \ref{univV2m} that inside $\cU_2$, for all $P \in \cD$ we have \[ yg_2(P) = \sum_{n\geq 0} g_2((-\ad(x/s))^n(P))y^{n+1}.\] Then the universal property of $\cV_{2}$ given by Proposition \ref{univTinverse} shows that $g_2$ extends to a unique ring homomorphism $h_2\colon \cV_{2}\to \cU_{2}$ sending $T^{-1}$ to $y$. Again this induces ring homomorphisms $h_{2,n}\colon \cV_{2}/(\pi^n)\to \cU_{2}/(\pi^n)$ that we claim are isomorphisms. To see this is suffices to observe that each map $h_{2,n}$ is $\cO(X)^\circ/(\pi^n)$-linear and sends  $\partial^{\langle k\rangle}T^{-\ell}+(\pi^n)$ to $\partial^{\langle k\rangle}y^\ell + (\pi^n)$ so we can appeal to Lemma \ref{UVbasis}(b,d) again and  then see that $\phi_2=(\h{h_2})_K$ is an isomorphism as before.\end{proof}

\begin{prop}\label{presDim} Let $i=1$ or $i=2$. There is an exact sequence of $V_{i}$-modules \[ 0\to V_{i}\stackrel{\cdot v_i} \longrightarrow V_{i}\to D_{i}\to 0, \] where the $v_i$ is the central element of $V_{i}$ given by 
\[ v_1=sT-x \qmb{and} v_2=xT^{-1}-s.\]
\end{prop}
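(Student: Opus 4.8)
The plan is to transfer the statement across the isomorphisms $\phi_i\colon V_i\stackrel{\cong}{\longrightarrow}U_i$ of Proposition \ref{ViisoUi}. Since $\phi_i$ is $\cD(X)$-linear with $\phi_1(T)=y$ and $\phi_2(T^{-1})=y$, it carries $v_1=sT-x$ to $w_1:=sy-x$ and $v_2=xT^{-1}-s$ to $w_2:=xy-s$, both lying in $\cO(Z)^{\circ}\subseteq\cU_i$. So it is enough to exhibit a short exact sequence $0\to U_i\xrightarrow{\,w_i\,}U_i\xrightarrow{\pi_i}D_i\to 0$ of left $U_i$-modules, where the first map is multiplication by $w_i$ and $\pi_i$ is obtained, after $p$-adic completion and inverting $p$, from the ring homomorphism $\cU_i\to\cD_i$ induced by the quotient $\cO(Z)^{\circ}\twoheadrightarrow\cO(X_i)^{\circ}=\cO(Z)^{\circ}/(w_i)$ together with $\delta_i\mapsto\partial_x$. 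That this is a well-defined surjective ring map follows from the universal property of skew-Ore extensions (Proposition \ref{Univskewore}) once one notes that $\delta_1(w_1)=0$ and $\delta_2(w_2)=-\tfrac{y}{s}w_2\in(w_2)$, so that $\delta_i$ is tangent to $X_i\subseteq Z$ and descends to $\partial_x$.

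I would first establish the integral sequence $0\to\cU_i\xrightarrow{\,w_i\,}\cU_i\to\cD_i\to 0$. Injectivity of multiplication by $w_i$ holds because $w_i$ is a non-zero-divisor in $\cD(Z)\supseteq\cU_i$: the ring $\cD(Z)$ is a finite product of domains, one for each connected component of $X$, and on each factor $w_i$ is linear in $y$ with nonzero leading coefficient ($s$ for $w_1$, $x$ for $w_2$). For exactness in the middle, recall that $\{(\delta_i/t)^{\langle k\rangle}\}_{k\ge0}$ is a free $\cO(Z)^{\circ}$-basis of $\cU_i$ (Proposition \ref{Levelmdivpow}(a), whose hypothesis holds by Lemma \ref{estdeltalem} together with the inequality $\rho\lvert t\rvert>1$ built into Hypothesis \ref{rXmt}), while $\{(\partial_x/t)^{\langle k\rangle}\}_{k\ge0}$ is a free $\cO(X_i)^{\circ}$-basis of $\cD_i$ (Corollary \ref{DsmXfpsep}(a)); with respect to these bases $\pi_i$ is reduction of coefficients along $\cO(Z)^{\circ}\twoheadrightarrow\cO(Z)^{\circ}/(w_i)$, so its kernel is $\bigoplus_k\bigl(w_i\cO(Z)^{\circ}\bigr)(\delta_i/t)^{\langle k\rangle}=w_i\cU_i$, as needed. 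For $i=1$ this is even cleaner, since $w_1$ --- equivalently $T-x/s$ --- is central in $\cU_1$, resp.\ $\cV_1$.

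Since $\cU_i$ and $\cD_i$ are $\pi$-adically separated and $K^{\circ}$-flat by Proposition \ref{Dbasegood}, reducing the integral sequence modulo $\pi^n$ keeps it exact for every $n$ (the relevant $\Tor^{K^{\circ}}_1$ vanishes), and the resulting inverse system of short exact sequences has surjective transition maps, so taking inverse limits --- with no $\varprojlim^{1}$-obstruction --- gives $0\to\widehat{\cU_i}\xrightarrow{\,w_i\,}\widehat{\cU_i}\to\widehat{\cD_i}\to 0$; the explicit $\bmod\,\pi^n$ bases of Lemma \ref{UVbasis} may be invoked here to keep this bookkeeping concrete. Applying the faithfully flat functor $K\otimes_{K^{\circ}}(-)$ and then $\phi_i^{-1}$ produces the claimed exact sequence $0\to V_i\xrightarrow{\,v_i\,}V_i\to D_i\to 0$. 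To recover the statement in the form of right multiplication by $v_i$ together with the assertion that $v_i$ is ``central'', one checks that $v_i$ is in any case a \emph{normal} element of $V_i$: $v_1$ is genuinely central (because $[T-x/s,a]=\ad(x/s)(a)-\ad(x/s)(a)=0$ for $a\in\cD$), and for $v_2$ the relation $\delta_2(w_2)=-\tfrac{y}{s}w_2$, transported along $\phi_2$, shows $V_2v_2=v_2V_2$, so that $D_i=V_i/v_iV_i$ is a bona fide ring quotient of $V_i$.

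The step I expect to be delicate is verifying that $\pi_i$ really is ``diagonal'' in the two free bases: this relies on identifying $\cO(X_i)^{\circ}$ with the integral model $\cO(Z)^{\circ}/(w_i)$ of the rational subdomain $X_i$ --- not merely $\cO(X_i)\cong\cO(Z)/(w_i)$ as Banach algebras, but the corresponding statement on unit balls, which is the content of Lemma \ref{UVbasis} --- and on the compatibility of the divided-power symbols under $\delta_i\mapsto\partial_x$; one also needs the regularity of $w_i$ in $\cU_i$. Once these two points are secured, the remaining steps (the $\varprojlim$, the base change $K\otimes_{K^{\circ}}(-)$, and the transport along $\phi_i$) are formal consequences of the structural results of $\S\ref{FlatnessChapter}$.
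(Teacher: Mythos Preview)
Your overall strategy --- transfer along $\phi_i$ to $U_i$, exploit the free $\cO(Z)^\circ$-basis $\{(\delta_i/t)^{\langle k\rangle}\}$ of $\cU_i$, complete, and invert $p$ --- is exactly the paper's, but two linked steps fail as written.

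First, $w_1=sy-x$ and $w_2=xy-s$ need not lie in $\cO(Z)^\circ$: Hypothesis \ref{rXmt} places no upper bound on $|s|$ or on $|x|_X$, so your opening assertion ``both lying in $\cO(Z)^\circ$'' is unjustified. The paper handles this by choosing $n\ge0$ with $\pi^n u_i\in\cO(Z)^\circ$ and working with $\pi^n u_i$ instead.

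Second, and more seriously, the identification $\cO(X_i)^\circ\cong\cO(Z)^\circ/(w_i)$ that your ``diagonal'' argument needs is not available, and Lemma \ref{UVbasis} does not provide it: that lemma only describes $\cO(Z)^\circ/(\pi^n)$ as a free $\cO(X)^\circ/(\pi^n)$-module, saying nothing about $\cO(X_i)^\circ$ or about the ideal $(w_i)$. What is true is that the Banach-level sequence $0\to\cO(Z)\xrightarrow{\cdot u_i}\cO(Z)\to\cO(X_i)\to0$ is exact; passing to unit balls yields a complex $0\to\cO(Z)^\circ\xrightarrow{\cdot\pi^nu_i}\cO(Z)^\circ\to\cO(X_i)^\circ\to0$ whose cohomology is merely $\pi$-torsion, and one invokes the Banach Open Mapping Theorem to bound that torsion by some $\pi^N$. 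Tensoring against the free basis then shows $0\to\cU_i\xrightarrow{\cdot\pi^nu_i}\cU_i\to\cD_i\to0$ has cohomology killed by $\pi^N$, and \cite[Lemma 3.6]{DCapOne} converts this into genuine exactness after $\widehat{(-)}_K$. Your direct attempt to prove integral exactness skips exactly this subtlety.

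One minor point: $v_2$ is genuinely central, not just normal. Since $sT-x$ is central in $\cD[T,T^{-1};\ad(x/s)]$ (your own computation for $v_1$), so is $(sT-x)T^{-1}=s-xT^{-1}=-v_2$; as $\cV_2$ is a subring, $v_2$ is central there too.
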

\begin{proof} Let $u_1=sy-x$ and $u_2=xy-s$.  By Proposition \ref{ViisoUi} it suffices to show that the sequences \[ 0\to U_{1}\stackrel{\cdot u_1}{\longrightarrow} U_{1} \to D_{1}\to 0 \qmb{and} 0\to U_{2}\stackrel{\cdot u_2}{\longrightarrow} U_{2}\to D_{2}\to 0\] are exact. We may pick $n\geq 0$ such that $\pi^nu_1,\pi^nu_2\in \cO(Z)^\circ$. Since  \[ 0\to \cO(Z)\stackrel{\cdot u_i}\to \cO(Z) \stackrel{p_i}\to \cO(X_i)\to 0 \]  is exact, it follows that \[  0\to \cO(Z)^\circ\stackrel{\cdot \pi^nu_i}{\longrightarrow} \cO(Z)^\circ \to \cO(X_i)^\circ \to 0\]  has $\pi$-torsion cohomology. Moreover the Banach Open Mapping Theorem, \cite[Proposition 8.6]{SchNFA} tells us that the maps $\cdot u_i$ and $p_i$ are open onto their image and so there is some $N>0$ such that $\pi^N$ annnihilates the cohomology groups.  We can then use Proposition \ref{Levelmdivpow} to deduce that \[0\to \cU_i \stackrel{\cdot \pi^nu_i}\to \cU_i \to \cD_i \to 0 \] also has cohomology groups annihilated by $\pi^N$. Now apply \cite[Lemma 3.6]{DCapOne}.
\end{proof}

\begin{prop} \label{flatbase} Suppose that Hypothesis \ref{rXmt} holds. Then $D_1$ and $D_2$ are flat as $D$-modules on both sides.
\end{prop}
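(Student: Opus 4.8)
The plan is to prove the two flatness statements separately, since the relevant central elements $v_1$ and $v_2$ produced by Proposition \ref{presDim} have rather different shapes. Throughout I would take the complete base ring of the machinery recalled in \S\ref{flatness} to be $\h\cD$ (with fraction algebra $D$), let $\pi$ be the element produced by Proposition \ref{Dbasegood}, so that $|\pi|=\max\{|p|,(\rho|t|)^{-1}\}$ by \eqref{FlatBasePi}, and record that the standing hypotheses of \S\ref{flatness} are met by $\h\cD$, $\h{\cV_1}$ and $\h{\cV_2}$ by Proposition \ref{Dbasegood} (the equality of the $p$-adic and $\pi$-adic topologies, used implicitly, follows from $|p|\le|\pi|<1$). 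The key preliminary point is that $\ad(x/s)(\cD)\subseteq\tfrac1{ts}\cD$ by Lemma \ref{adxovers}, and since $|1/(ts)|=(|t|\,|s|)^{-1}\le(|t|\rho)^{-1}\le|\pi|$ (using $\rho\le|s|$) we even get $\ad(x/s)(\cD)\subseteq\pi\cD$, hence $[x/s,\h\cD]\subseteq\pi\h\cD$ after completion. In particular $\ad(x/s)$ vanishes modulo $\pi$, so using the free $\cD$-module descriptions of $\cV_1=\cD[T;\ad(x/s)]$ and $\cV_2=\cD[T^{-1};\ad(x/s)]$ from \S\ref{skewLaurent} together with Lemma \ref{adTinverse}, reduction modulo $\pi$ yields $K^\circ/(\pi)$-algebra isomorphisms $\cV_1/\pi\cV_1\cong(\cD/\pi\cD)[T]$ and $\cV_2/\pi\cV_2\cong(\cD/\pi\cD)[T^{-1}]$.

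For $D_1$: by the above, $\cV_1$ is exactly the skew-Ore extension $\h\cD[Y;\ad(y)]$ with $y:=x/s\in D$ and $[y,\h\cD]\subseteq\pi\h\cD$, so Theorem \ref{AbstractFlatness} applies and shows that $V_1/(T-x/s)V_1$ is flat over $D$ on both sides. Since $v_1=sT-x=s(T-x/s)$ is central and $s\in K^\times$, the ideal it generates equals $(T-x/s)V_1$, so $V_1/(v_1)V_1$ is flat over $D$ on both sides; by Proposition \ref{presDim} this quotient is isomorphic to $D_1$, which settles the case $i=1$.

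For $D_2$: now $\cV_2$ is not a skew-Ore extension of $\cD$, so instead I would apply Proposition \ref{FlatPrepProp} with $\cV:=\h{\cV_2}$ and $y:=T^{-1}$, its hypotheses holding by the mod-$\pi$ identification above. Part (a) gives that $V_2$ is flat over $D$ on both sides, and part (b) gives, for each finitely generated $D$-module $M$, a $C\langle Y\rangle$-linear isomorphism $\eta_M:M\langle Y\rangle\xrightarrow{\ \cong\ }V_2\otimes_D M$, where $C$ is the centraliser of $T^{-1}$ in $D$; by Lemma \ref{adTinverse}, $C$ is the centraliser of $x$ in $D$, so in particular $x\in C$. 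Combining the flatness of $V_2$ with the short exact sequence $0\to V_2\xrightarrow{\ \cdot v_2\ }V_2\to D_2\to0$ of Proposition \ref{presDim} (with $v_2=xT^{-1}-s$ central), one identifies $\Tor_1^D(D_2,M)$ with the kernel of multiplication by $v_2=xY-s\in C\langle Y\rangle$ on $M\langle Y\rangle$. Writing a general element as $\sum_{k\ge0}m_kY^k$, this endomorphism sends it to $\sum_{k\ge0}(xm_{k-1}-sm_k)Y^k$ with $m_{-1}:=0$; since $s\in K^\times$ acts invertibly on $M$, an immediate induction on $k$ shows it is injective, so $\Tor_1^D(D_2,M)=0$. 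Expressing an arbitrary $D$-module as a filtered colimit of finitely generated ones then gives $\Tor_1^D(D_2,-)=0$, so $D_2$ is flat over $D$ on one side; the other side is identical after exchanging left and right (equivalently, applying Proposition \ref{FlatPrepProp} to the opposite algebras and using that $\cV_2$ is a free \emph{left} $\cD$-module on $\{T^{-k}:k\ge0\}$).

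The main obstacle is exactly this asymmetry. For $i=1$ the element $v_1$ is, up to the scalar $s$, of the form $T-y$ with $y\in D$, so the self-contained statement Theorem \ref{AbstractFlatness} applies verbatim; for $i=2$ the element $v_2=xT^{-1}-s$ is \emph{not} of the form $Y-y$ (one cannot invert $x$, which is a non-unit in general), so one must instead analyse the quotient $V_2/(v_2)V_2$ by hand via the comparison isomorphism $\eta_M$ of Proposition \ref{FlatPrepProp}. What makes this analysis go through — and what would break the naive version of the same argument applied at $i=1$ — is that the "diagonal" coefficient of multiplication by $v_2$ on $M\langle Y\rangle$ is the invertible scalar $-s$ rather than $x$, so injectivity holds for \emph{every} module $M$, not just for those on which $x$ is a non-zerodivisor.
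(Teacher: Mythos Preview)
Your proof is correct and follows essentially the same route as the paper. For $D_1$ the two arguments are identical (Theorem \ref{AbstractFlatness} with $y=x/s$); for $D_2$ both use Proposition \ref{FlatPrepProp} with $y=T^{-1}$ to get flatness of $V_2$ and the isomorphism $M\langle Y\rangle\cong V_2\otimes_D M$, and then reduce to showing multiplication by $v_2$ is injective on $M\langle Y\rangle$. The only difference is that the paper outsources this last step (and the passage from ``$v_2$-torsion-free'' to ``$D_2$ flat'') to \cite[Lemma 4.1(a), Proposition 4.4]{DCapOne}, whereas you write out the $\Tor$ computation directly via the recursion $m_k=s^{-1}xm_{k-1}$ and a filtered-colimit argument; your version is thus slightly more self-contained.
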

\begin{proof} Let $\pi$ be given by Proposition \ref{Dbasegood}. Since $\h{\cD}$ is a $\pi$-adically complete and separated $K^\circ$-algebra, flat over $K^\circ$ and $\h{\cD}/(\pi)$ is a commutative $K^\circ/(\pi)$-algebra of finite presentation we will apply the results from \S\ref{flatness} with $\cU:=\h{\cD}$. 

By Proposition \ref{presDim}, $D_1 \cong V_1 / (sT - x) V_1$. By Lemma \ref{adxovers}, $[\frac{x}{s}, \cD] \subseteq \frac{1}{ts} \cD$. By (\ref{FlatBasePi}) we have $|\pi t s| \geq |s| / \rho$. Since $\rho = \min(\rho(X), |s|) \leq |s|$, we see that $|\frac{1}{t s}| \leq |\pi|$ which implies that $\frac{1}{ts} \cD \subseteq \pi \cD$. We can now apply Theorem \ref{AbstractFlatness} with $y = x/s$ to deduce that $D_1 \cong V_1 / (T-\frac{x}{s})V_1$ is a flat $D$-module.

Next, by Proposition \ref{presDim}, we have a presentation $D_2 \cong V_2/v_2 V_2$ where $v_2=xT^{-1}-s\in V_2$. By \cite[Proposition 4.4]{DCapOne} applied to the ring map $U \to V_2$ and the element $v_2 \in V_2$, to prove the flatness of $D_2$ as a $D$-module it suffices to prove that $v_2$ is a regular element in $V_2$, $V_2$ is flat as a right $U$-module and that $V_2\otimes_{D} M$ is $v_2$-torsion-free for all finitely generated left $D$-modules $M$.

Note that in $\cV_{2}$ we can use Lemma \ref{adTinverse} to see that for all $P \in \cD$ we have \[ \ad(T^{-1})(P)= \sum_{n = 1}^\infty (-1)^n \ad(x/s)^n(P) T^{-n-1}. \]
We saw above that Lemma \ref{adxovers} implies that $\ad(x/s)^n(\cD) \subseteq \frac{1}{ts} \cD \subseteq \pi \cD$ for all $n \geq 1$. Hence $\ad(T^{-1})(\cD)\subseteq \pi \cV_2$, so the hypotheses of Proposition \ref{FlatPrepProp} are satisfied with $\cU = \h{\cD}$, $\cV = \h{\cV_2}$ and $y=T^{-1}$. The centraliser of $T^{-1}$ in $D$ certainly contains $\cO(X)$. Proposition \ref{FlatPrepProp} now tells us that $V_2$ is a flat $U$-module on both sides, and that for every finitely generated $D$-module $M$ there is a natural isomorphism of $\cO(X)\langle T^{-1}\rangle$-modules $M\langle T^{-1}\rangle \cong V_2\otimes_D M$. We can now apply \cite[Lemma 4.1(a)]{DCapOne} with $t = T^{-1}$ and $f = x/s$ to see that $V_2 \otimes_D M$ is $v_2$-torsion-free for every finitely generated $D$-module $M$. In particular, setting $M = D$ shows that $v_2$ is a regular element in $V_2$. 
\end{proof}

\begin{proof}[Proof of Theorem \ref{changeofbase}]
We may certainly assume that $Y \neq \emptyset$ as the result is trivial otherwise. By Lemma \ref{flatfinextn} and Lemma \ref{DrBC}(a) we may replace $K$ by any finite extension. In particular, by \cite[Theorem 4.1.8]{ArdWad2023} we may assume that $X$ and $Y$ are both split over $K$ and $r\in |K^\times|$. 

If $Y=\bigcup_{i=1}^n Y_i$ is a disjoint union of cheeses then $\cD_r(Y)=\bigoplus_{i=1}^n \cD_r(Y_i)$ and so by \cite[Proposition 5.4.2]{HGK1} we may further assume that $Y$ is a single cheese. 

Next we observe that as $r(Y)<r$, $\rho(Y)>\varpi/r$ by Corollary \ref{Heisenberg}. That is $Y=C_K(\alpha,\bf{s})$ for some $\alpha,\bf{s}$ satisfying the conditions of \cite[Definition 4.1.1]{ArdWad2023} and moreover $|s_i|>\varpi/r$ for all $i=0,\ldots, g$. Since $Y\subset X$ if we define $X_0=X(\frac{x-\alpha_0}{s_0})$ and $X_i=X_{i-1}(\frac{s_i}{x-\alpha_i})$ for $i=1,\ldots,g$ then we see that $X_g=Y$. Then, since a composition of flat morphisms is flat, an induction argument reduces us to the cases $Y=X(\frac{x-\alpha}{s})$ and $Y=X(\frac{s}{x-\alpha})$ where $\alpha\in K$ and $s\in K^\times$. By a change of coordinate $x \mapsto x - \alpha$, we may assume that $\alpha = 0$. 

Since $Y$ lies in $\bA(\partial_x/r)$ by assumption, applying Corollary \ref{Heisenberg} shows that 
\[|s| \geq \rho(Y) = \varpi / r(Y) > \varpi /r.\] 
Since $\varpi_m/r$ is a decreasing sequence converging to $\varpi/r$ from above, we can find a sufficiently large $m$ such that $\rho(Y)>|\varpi_m|/r$. Since $Y = X(x/s)$ or $X(s/x)$, it is clear from Definition \ref{MinRad} that $\rho(X) \geq \rho(Y)$. Hence 
\[\rho = \min (\rho(X), |s|) \geq \rho(Y) >  |\varpi_m| / r.\]
By further enlarging $K$ if necessary, we may assume that $\varpi_m\in K^\times$, and now all conditions of Hypothesis \ref{rXmt} are satisfied. 

Choose any $t \in K^\times$ with $|t| = r / |\varpi_m|$. Theorem \ref{Adellevm} tells us that \[\cD_r(X)\cong \widehat{ \cD_t^{(m)}(X) } = D \qmb{and} \cD_r(X_i)\cong D_i \qmb{for} i=1,2.\] 
Hence $\cD_r(X_1)$ and $\cD_r(X_2)$ are flat $\cD_r(X)$-modules on both sides by Proposition \ref{flatbase}. Since either $Y = X_1 = X(x/s)$ or $Y = X_2 = X(s/x)$, we conclude that $\cD_r(Y)$ is a flat $\cD_r(X)$-module on both sides. \end{proof}

\subsection{Flatness of divided power algebras in cotangent direction}\label{flatcotdirsec}

Our goal in this section is to prove the following Theorem and then use it to complete the proof of Theorem \ref{FlatThm}.

\begin{thm} \label{flatcotangent} Let $s,r\in \sqrt{|K^\times|}$ be given with $s\geq r$, and suppose that $X$ lies in $\bA(\partial_x/r)$. Then $\cD_r(X)$ is a flat $\cD_s(X)$-module on both sides.
\end{thm}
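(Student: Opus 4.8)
The plan is to reduce Theorem \ref{flatcotangent} to the abstract flatness criterion Theorem \ref{AbstractFlatness}, applied to an integral model built from Berthelot's level-$m$ divided power algebras, and then to obtain Theorem \ref{FlatThm} by composing flat maps. First I would make the usual reductions: flatness descends along finite field extensions by Lemma \ref{flatfinextn} together with Lemma \ref{DrBC}(a), so using \cite[Theorem 4.1.8]{ArdWad2023} I may assume $X$ is split over $K$ and $r,s\in|K^\times|$; the case $s=r$ is trivial, so assume $s>r$. By Corollary \ref{Heisenberg} we have $r(X)=\varpi/\rho(X)$, and $X\in\bA(\partial_x/r)$ gives $r\rho(X)>\varpi$. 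Choosing $m$ large enough that $|\varpi_m|/\rho(X)<r$ (possible since $|\varpi_m|\downarrow\varpi$), and enlarging $K$ so that $\varpi_m\in K$ and there are $t_r,t_s\in K^\times$ with $|t_r|=r/|\varpi_m|$ and $|t_s|=s/|\varpi_m|$, Theorem \ref{Adellevm} identifies $\cD_r(X)\cong\h{\cD}_{t_r}^{(m)}(X)_K$ and $\cD_s(X)\cong\h{\cD}_{t_s}^{(m)}(X)_K$ compatibly with the canonical inclusion $\cD_s(X)\hookrightarrow\cD_r(X)$. It therefore suffices to show $\h{\cD}_{t_r}^{(m)}(X)_K$ is flat on both sides over $\h{\cD}_{t_s}^{(m)}(X)_K$.

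The heart of the argument is to realise $\h{\cD}_{t_r}^{(m)}(X)_K$ as $V/(Y-y)V$ for the data of Theorem \ref{AbstractFlatness} applied with $\cU:=\h{\cD}_{t_s}^{(m)}(X)$ and $U:=\cU_K=\cD_s(X)$. I would take $y:=\partial_x/t_r\in U$, morally the operator $\partial$ rescaled so that it becomes the bounded-by-one cotangent coordinate $(\partial_x/t_r)^{\langle1\rangle}$ of the target. Since $\cU$ is free over $\cO(X)^\circ$ on $\{(\partial_x/t_s)^{\langle k\rangle}\}$ (Proposition \ref{Levelmdivpow}(a)), $\ad(\partial_x)$ annihilates each $(\partial_x/t_s)^{\langle k\rangle}$, and $\|\partial_x\|_{\cO(X)}=1/\rho(X)$ by Lemma \ref{NormDivPow}, one gets $\ad(y)(\cU)=\tfrac1{t_r}\ad(\partial_x)(\cU)\subseteq\tfrac{|\varpi_m|}{r\rho(X)}\cU$. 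As $r\rho(X)>\varpi$ and $|\varpi_m|\downarrow\varpi$, after possibly enlarging $m$ and $K$ (and taking $|\pi|$ a rational power of $|p|$, so that $\pi$-adic and $p$-adic completions agree) I may fix $\pi\in K^{\circ\circ}$ with $\max\{|p|,\,\tfrac{|\varpi_m|}{s\rho(X)},\,\tfrac{|\varpi_m|}{r\rho(X)}\}\le|\pi|<1$. The first two bounds force $\cU/\pi\cU$ to be a finitely presented commutative $K^\circ/(\pi)$-algebra by Corollary \ref{DsmXfpsep}(b), and the third gives $[y,\cU]\subseteq\pi\cU$. Letting $\cV$ be the $\pi$-adic completion of $\cU[Y;\ad(y)]$ and $V:=\cV_K$, Theorem \ref{AbstractFlatness} then yields that $V/(Y-y)V$ is a flat $U$-module on both sides.

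It remains to identify $V/(Y-y)V$ with $\cD_r(X)$. I would send $\cU[Y;\ad(y)]\to\cD_r(X)$ via $\cU\hookrightarrow\cD_r(X)$ and $Y\mapsto y$; since $|(\partial_x/t_r)^k|_{\cD_r(X)}=|k!/q_k!|\le p^m|\varpi_m|^k\to0$ by Lemma \ref{binomest}, every series $\sum_k u_k(\partial_x/t_r)^k$ with $u_k\in\cU$ converges in $\cD_r(X)$, so this map is $\pi$-adically continuous, extends to $\cV$ and then $V$, and kills $Y-y$, giving a bounded $K$-algebra map $\bar\phi\colon V/(Y-y)V\to\cD_r(X)$. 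Its image contains $\cO(X)$ and the elements $(q_{p^j}!/(p^j)!)\,y^{p^j}=(\partial_x/t_r)^{\langle p^j\rangle}$ for $0\le j\le m$, which by equation (\ref{d<k>formula}) generate a dense subalgebra; a norm bound for $\bar\phi$ gives closed image, hence surjectivity, and injectivity follows by comparing the free $\cO(X)^\circ$-expansions on both sides (Proposition \ref{Levelmdivpow}(a)) --- a routine $p$-adic computation. This proves Theorem \ref{flatcotangent}. Finally, for Theorem \ref{FlatThm}: by Proposition \ref{reduction} it suffices to treat $\cD_r(Y)$ over $\cD_s(X)$ with $s\ge r$ in $\sq{K}$ and $Y\subseteq X$ in $\bA(\partial_x/r)$; factoring the structure map as $\cD_s(X)\to\cD_r(X)\to\cD_r(Y)$ --- legitimate since $s\ge r>r(X)$ forces $X\in\bA(\partial_x/s)$ --- the first arrow is flat on both sides by Theorem \ref{flatcotangent} and the second by Theorem \ref{changeofbase}, and composites of maps flat on both sides are again flat on both sides.

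The step I expect to be the main obstacle is the simultaneous arrangement, in the second paragraph, of a single uniformiser $\pi$ with $[y,\cU]\subseteq\pi\cU$ \emph{and} with $\cU/\pi\cU$ commutative of finite presentation: for the naive model $\cO(X)^\circ[\partial_x]$ this is impossible whenever $\rho(X)<1$, and it is precisely the passage to a sufficiently high level $m$ --- which trades $\partial_x$ for the uniformly small divided powers $(\partial_x/t_s)^{\langle p^j\rangle}$ --- that makes it work. A secondary, more bookkeeping difficulty is the identification $V/(Y-y)V\cong\cD_r(X)$, which is slightly delicate because $\bar\phi$ is not an isometry (the unit balls do not match, only differ by the bounded factor $p^m$).
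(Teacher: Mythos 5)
Your reduction to the Berthelot integral models, your choice of $\pi$, and your appeal to Theorem \ref{AbstractFlatness} are all in the right spirit --- indeed they are exactly the tools the paper uses --- but the identification $V/(Y-y)V\cong\cD_r(X)$, which you describe first as "a routine $p$-adic computation" and later as a "bookkeeping difficulty" involving a bounded factor $p^m$, is actually where the argument breaks. Set $\mu:=\max\{r/s,\,|\varpi_m|\}$; whenever $s>r$ and $m\ge1$ (and $m\ge1$ is genuinely needed unless $r\rho(X)>1$, which the hypothesis $r\rho(X)>\varpi$ does not guarantee), we have $\mu<1$. With $y=\partial_x/t_r$ one computes
\[
(\partial_x/t_s)^{\langle k\rangle}\,y^{\,n}
=(t_r/t_s)^k\,\frac{q_k!\,(k+n)!}{k!\,q_{k+n}!}\,(\partial_x/t_r)^{\langle k+n\rangle},
\]
and by Lemma \ref{binomest} the scalar in front has absolute value between $(r/s)^k|\varpi_m|^n/p^m$ and $p^m(r/s)^k|\varpi_m|^n$. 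Hence, for any $v=\sum_{k,n}b_{k,n}(\partial_x/t_s)^{\langle k\rangle}Y^n\in V$, the coefficient of $(\partial_x/t_r)^{\langle\ell\rangle}$ in $\bar\phi(v)$ has absolute value at most $(\sup|b_{k,n}|)\,p^m\mu^{\ell}$, decaying \emph{geometrically} in $\ell$. But $\cD_r(X)$ contains elements whose coefficients $a_\ell$ are merely null, say $|a_\ell|\sim p^{-\lceil\sqrt{\ell}\,\rceil}$, and such elements lie outside the image. So $\bar\phi$ is not surjective: the discrepancy with $\cD_r(X)$ is not controlled by $p^m$, it grows without bound with $\ell$, and $V/(Y-y)V\not\cong\cD_r(X)$.

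The root cause is that the single generator $y=\partial_x/t_r$ has no "nilpotence up to the base" property: $y^N=(t_s/t_r)^N\tfrac{N!}{q_N!}(\partial_x/t_s)^{\langle N\rangle}$, and $|(t_s/t_r)^N N!/q_N!|\approx (s|\varpi_m|/r)^N$ can be unbounded as $N\to\infty$. The paper's proof avoids this in two complementary ways that you've skipped. First, it imposes the closeness condition $|s/r|<|p|^{-1/p^m}$ (Hypothesis \ref{NotCotDirFlat}) and recovers the general case at the very end by chaining through $r=r_0<r_1<\cdots<r_N=s$ with each consecutive ratio close. Second, and more importantly, even in the close case the cotangent jump is \emph{not} made in one step: one adjoins the divided powers $(\partial_x/r)^{\langle p^i\rangle}$ to $\cD_s^{(m)}(X)$ one at a time for $i=0,\dots,m$ (Definition \ref{UiDefn}), and the key control at each step is Lemma \ref{ppoweryi}: $\bigl((\partial_x/r)^{\langle p^i\rangle}\bigr)^p\in\cU^0$ under the closeness hypothesis. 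This $p$-th-power fall-back into the base is precisely what lets Proposition \ref{presentUi} bound the torsion of $\ker\phi_i/\cV^i(s_iY_i-z_i)$ by $s_i^{2p-1}$, so that Corollary \ref{presenthKUi} can identify $\h{\cV^i_K}/(Y_i-y_i)\cong\h{\cU^{i+1}_K}$ after inverting $p$. Your single $y$ has no analogous relation, so your one-step identification cannot be repaired: the multi-step adjunction is not a presentational convenience but the mechanism that makes the argument work. (Your final paragraph reproving Theorem \ref{FlatThm} by composing through $\cD_s(X)\to\cD_r(X)\to\cD_r(Y)$ matches the paper exactly and is fine once Theorem \ref{flatcotangent} is in hand.)
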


As in $\S \ref{flatbasesec}$, we can enlarge $K$ to reduce to the case where $X$ is split over $K$. We can also assume that $|K|^\times$ contains any particular finite set of elements in $\sqrt{|K|^\times}$. We will first deal with the special case where the following Hypothesis holds; we will see in the proof of Theorem \ref{flatcotangent} given at the end of $\S \ref{flatcotdirsec}$ below that the general case can be reduced to this one.
\begin{hypn}\label{NotCotDirFlat} \hspace{0.1cm}
\begin{itemize} \item $X$ is an affinoid subdomain of $\bA$ split over $K$;
	\item $m\in \bN$ is such that $p^{-1/p^m}\in K^\times$; 
	\item $s,r\in K^\times$ are such that $|s|>|r|>1/\rho(X)$ and $|s/r|<|p|^{-1/p^m}$;  
	\item $\pi\in K^\times$ is such that $\max\left\{|p|,\frac{1}{|r|\rho(X)}\right\} \leq |\pi| < 1$.
	\end{itemize}
\end{hypn}
\textbf{We will assume that Hypothesis \ref{NotCotDirFlat} holds until the end of the proof of Corollary \ref{presenthKUi} below.}
\begin{defn}\label{UiDefn} Let $\cU^{0}=\cD_s^{(m)}(X)$. For each $i=0,\ldots,m$, define inductively 
\[\cU^{i+1} := \cU^i[(\partial_x/r)^{\langle p^i \rangle}]\quad \subset\quad \cD^{(m)}_{r}(X) \]
so that $\cU^0 \subset \cU^1 = \cD_s^{(m)}(X) [ \partial_x/r ] \subset \cU^2 \subset \cdots \subset \cU^{m+1}={\cD}_{r}^{(m)}(X)$.  
\end{defn}

We will show that for each $i = 0,\cdots,m$, $\h{\cU^{i+1}_K}$ is flat as a $\h{\cU^i_K}$-module on both sides. Then we will appeal to transitivity of flatness to deduce that $\h{\cD}_{r}^{(m)}(X)_K$ is flat as a $\h{\cD}_{s}^{(m)}(X)_K$-module on both sides. Then Theorem \ref{Adellevm} will imply that $\cD_{|r\varpi_m|}(X)$ is flat over $\cD_{|s\varpi_m|}(X)$. 

First we prove an important generalisation of Proposition \ref{Levelmdivpow}. 

\begin{prop} \label{Uifree} Let $i=1,\ldots, m$.
\be
\item  $\cU^i$ is a free $\cO(X)^\circ$-module with basis \[ T_i := \left\{ (\partial_x/r)^{\langle \alpha\rangle}(\partial_x/s)^{\langle p^{i}\beta\rangle} : 0\leq \alpha<p^i, \beta\geq 0\right\}.\] 
\item $\cU^i/(\pi)$ is a finitely presented commutative $K^\circ/\pi K^\circ$-algebra. 
\ee
\end{prop}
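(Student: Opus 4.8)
The plan is to prove (a) and (b) together by induction on $i\ge 0$, the base case $i=0$ being Corollary \ref{DsmXfpsep} (note that $\cU^0=\cD_s^{(m)}(X)$ and $T_0=\{(\partial_x/s)^{\langle\beta\rangle}:\beta\ge 0\}$). Throughout I abbreviate $a_j:=(\partial_x/r)^{\langle p^j\rangle}$ and $b_j:=(\partial_x/s)^{\langle p^j\rangle}$ for $0\le j\le m$, and I use repeatedly the elementary identity $(\partial_x/s)^{\langle k\rangle}=(r/s)^k(\partial_x/r)^{\langle k\rangle}$; in particular $b_j=(r/s)^{p^j}a_j$ with $(r/s)^{p^j}\in K^\circ$ since $|r|<|s|$, so that in the inductive step $\cU^{i}=\cU^{i-1}[a_{i-1}]$ the generator $b_{i-1}$ of $\cU^{i-1}$ becomes redundant and $\cU^{i}=\cO(X)^\circ[a_0,\dots,a_{i-1},b_i,\dots,b_m]$. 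The structural input I need is that $a_{i-1}$ is ``almost integral of degree $p$'' over $\cU^{i-1}$: iterating Lemma \ref{levelmdivpowmult}(a) gives $a_{i-1}^p=C\cdot(\partial_x/r)^{\langle p^{i}\rangle}$ with $C=(p^{i})!/(p^{i-1}!)^p$, a direct computation (as in the proof of Proposition \ref{Levelmdivpow}(b)) gives $v_p(C)=1$, and writing $(\partial_x/r)^{\langle p^{i}\rangle}=(s/r)^{p^{i}}b_i$ yields $a_{i-1}^p=c\,b_i$ with $c=C(s/r)^{p^{i}}$; since $i\le m$ and $|s/r|<|p|^{-1/p^m}$ we have $|c|=|p|\,|s/r|^{p^{i}}<|p|\cdot|p|^{-1}=1$, so $c\in K^{\circ\circ}$ and $a_{i-1}^p\in\cU^{i-1}$.

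For part (a) I would then argue as follows. Together with the relations $a_j^p=p\eta_j a_{j+1}$ for $j\le i-2$ (with $\eta_j\in\Z_p^\times$) and $b_j^p=p\eta_j' b_{j+1}$ for $i\le j\le m-1$, the relation $a_{i-1}^p=c\,b_i$ has coefficient in $K^\circ$ and preserves the ``order'' $\sum c_jp^j+\sum d_jp^j$ of a monomial in the generators; using these relations to lower exponents, together with Lemma \ref{levelmdivpowmult}(b) and the estimates $||(\partial_x/r)^{\langle k\rangle}||_X\le(|r|\rho(X))^{-k}$, $||(\partial_x/s)^{\langle k\rangle}||_X\le(|s|\rho(X))^{-k}$ (both $<1$ for $k\ge1$, as $|s|>|r|>1/\rho(X)$) to push $\cO(X)^\circ$ to the left, one sees that $\cU^{i}$ is the $\cO(X)^\circ$-span of the monomials $\prod_{0\le j<i}a_j^{c_j}\prod_{i\le j<m}b_j^{d_j}\,b_m^{e}$ with $0\le c_j,d_j<p$ and $e\ge 0$ (termination of the reduction being the usual well-ordering argument). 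Using equation (\ref{d<k>formula}) one checks that these monomials correspond bijectively, and up to $\Z_p^\times$-scalars, to the elements of $T_i$, so $T_i$ spans. Finally $T_i$ is $\cO(X)^\circ$-linearly independent, and this needs no induction: by Lemma \ref{levelmdivpowmult}(a), $(\partial_x/r)^{\langle\alpha\rangle}(\partial_x/s)^{\langle p^{i}\beta\rangle}=(r/s)^{p^{i}\beta}\binoma{\alpha+p^{i}\beta}{\alpha}(\partial_x/r)^{\langle\alpha+p^{i}\beta\rangle}$, the map $(\alpha,\beta)\mapsto\alpha+p^{i}\beta$ is a bijection $\{0,\dots,p^{i}-1\}\times\bN\to\bN$, the displayed coefficients lie in $K^\times$, and $\{(\partial_x/r)^{\langle k\rangle}:k\ge 0\}$ is $\cO(X)$-linearly independent in $\cD(X)$ (being nonzero $\cO(X)$-multiples of the $\partial_x^k$). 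Hence $T_i$ is a basis, proving (a).

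For part (b) I would pass to $\cU^{i}/(\pi)$. The norm estimates just recorded, with Lemma \ref{levelmdivpowmult}(b), give $[a_j,f],[b_j,f]\in\pi\cU^{i}$ for all $f\in\cO(X)^\circ$ (using $|\pi|\ge(|r|\rho(X))^{-1}$), so $\cU^{i}/(\pi)$ is a commutative, finitely generated $\cO(X)^\circ/(\pi)$-algebra. Reducing the relations above modulo $\pi$ — where $p\eta_j,p\eta_j'\in\pi K^\circ$ as $|\pi|\ge|p|$, while the image $\bar c$ of $c$ may or may not vanish — one obtains a surjection onto $\cU^{i}/(\pi)$ from the quotient of $\cO(X)^\circ/(\pi)[t_0,\dots,t_{i-1},u_i,\dots,u_m]$ by the ideal generated by
\[ t_0^p,\ \dots,\ t_{i-2}^p,\qquad t_{i-1}^p-\bar c\,u_i,\qquad u_i^p,\ \dots,\ u_{m-1}^p \]
(the $u_\bullet$ relations being vacuous when $i=m$). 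The source is spanned as an $\cO(X)^\circ/(\pi)$-module by the images of the monomials indexing $T_i$, while by part (a) the target is \emph{free} on those same images; a surjection carrying a spanning set onto a basis is an isomorphism, so $\cU^{i}/(\pi)$ is of finite presentation over $\cO(X)^\circ/(\pi)$. Since $\cO(X)^\circ$ is topologically finitely presented over $K^\circ$ by \cite[2.4.8(a)]{Lut} and \cite[Proposition 1.1(c)]{BL1}, the ring $\cO(X)^\circ/(\pi)$ is finitely presented over $K^\circ/(\pi)$; as finite presentation of algebras is transitive, (b) follows.

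The step I expect to be most delicate is, in both parts, the combinatorial matching — via the base-$p$-digit formula (\ref{d<k>formula}) and a check that no carries arise in the relevant binomial/$\binoma{\cdot}{\cdot}$ coefficients — between $T_i$ and the normalised monomials $\prod a_j^{c_j}\prod b_j^{d_j}b_m^{e}$, and the concomitant verification that the listed relations really cut out $\cU^{i}/(\pi)$ on the nose. The ``surjection onto a free module'' device is meant precisely to avoid analysing the defining ideal by hand, but the bookkeeping still has to be carried out carefully; the one genuine subtlety is that $\bar c$ need not vanish, so the relation $t_{i-1}^p=\bar c\,u_i$ does not simply truncate $t_{i-1}$.
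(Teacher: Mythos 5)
Your proof is correct, and while both arguments rest on the same numerical inputs, the organisation differs, most genuinely so for part (b). In both cases the decisive estimate is identical: your computation that $c = \frac{p^i!}{(p^{i-1}!)^p}(s/r)^{p^i}$ has $|c|<1$ is exactly what the paper's Kummer-theorem argument extracts about $(s/r)^{p^i}\binoma{\alpha+\alpha'}{\alpha}$ when $\alpha+\alpha'\ge p^i$, once unwound, and both you and the paper commute $\cO(X)^\circ$ past the generators using Lemma \ref{levelmdivpowmult}(b) together with the bound $|\pi|\ge(|r|\rho(X))^{-1}$. For (a), the paper verifies directly that $\cO(X)^\circ\cdot T_i$ is closed under multiplication via the two inclusions $T_i\cdot\cO(X)^\circ\subseteq\cO(X)^\circ\cdot T_i$ and $T_i\cdot T_i\subseteq K^\circ\cdot T_i$, whereas you reduce arbitrary monomials to a normal form using the single crossing relation $a_{i-1}^p=c\,b_i$ together with $a_j^p,\ b_j^p\in pK^\circ\cdot\cU^i$; these are the same computations under different packaging, and the induction on $i$ you announce is largely cosmetic since both your spanning argument and your independence argument (via the bijection $(\alpha,\beta)\mapsto\alpha+p^i\beta$ and the fact that $\binoma{\alpha+p^i\beta}{\alpha}\in K^\times$) are $i$-uniform. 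For (b), the routes genuinely diverge: the paper exhibits $\cU^i/(\pi)$ as a free module of finite rank over the polynomial subalgebra $U=\cO(X)^\circ/(\pi)[\tau_{0,p^{m-i}}]$ and invokes the Stacks Project for the stability of finite presentation, while you write down an explicit presentation with generators $t_0,\ldots,t_{i-1},u_i,\ldots,u_m$ and relations $t_0^p,\ldots,t_{i-2}^p,\ t_{i-1}^p-\bar c\,u_i,\ u_i^p,\ldots,u_{m-1}^p$ and prove it is exact by the ``spanning set carried onto a basis'' device, which neatly sidesteps having to analyse the defining ideal directly. Your version is more hands-on and avoids the Stacks references, at the cost of the monomial bookkeeping you correctly flag as delicate; the subtlety that $\bar c$ need not vanish (so that $t_{i-1}^p-\bar c\,u_i$ is not a pure monomial relation) is handled correctly by your termination argument, and the well-definedness of the surjection goes through because, as you indicate, $[a_j,f],[b_j,f]\in\pi\cU^i$ for all $f\in\cO(X)^\circ$ — which for $b_j$ with $j\ge i$ should be checked directly via Lemma \ref{levelmdivpowmult}(b) applied to $\partial_x/s$, rather than by rewriting $b_j$ as a scalar times $(\partial_x/r)^{\langle p^j\rangle}$, since the latter need not lie in $\cU^i$ when $j\ge i$.
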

\begin{proof} (a)  Let $k \geq 0$. Recall from $(\ref{d<k>formula})$ that when we write $k=\sum_{j=0}^{m-1}c_jp^j + cp^m$ for some $c_0,\ldots,c_{m-1}\in \{0,\ldots,p-1\}$ and $c \geq 0$, there is a unit $u \in \Z_p^\times$ such that
\begin{equation} \label{Divpowrel} \partial_x^{\langle k\rangle}= u\left( \prod\limits_{j=0}^{m-1} (\partial_x^{\langle p^j\rangle})^{c_j}\right) (\partial_x^{\langle p^m\rangle})^c. \end{equation}
Using this equation together with Proposition \ref{Levelmdivpow}, we see that $T_i$ is a generating set for $\cU^i$ as a $\cO(X)^\circ$-algebra and $T_i$ is linearly independent over $\cO(X)^\circ$. To show that $\cU^i = \cO(X)^\circ \cdot T_i$ it will suffice to prove that $\cO(X)^\circ\cdot T_i$ is closed under multiplication. Since $\cO(X)^\circ$ is closed under multiplication and $K^\circ\subset \cO(X)^\circ$, it will suffice to show that 
\begin{enumerate}
\item $T_i\cdot \cO(X)^\circ\subseteq \cO(X)^\circ\cdot T_i$, and 
\item $T_i\cdot T_i\subseteq K^\circ\cdot T_i$.
\end{enumerate}
(1) Let $0 \leq \alpha < p^i$ and $\beta \geq 0$ be given, so that $t_{\alpha,\beta} := (\partial_x/r)^{\langle \alpha\rangle}(\partial_x/s)^{\langle p^{i}\beta\rangle}$ is a typical element of $T_i$. We can of course rewrite $t_{\alpha,\beta} = (r/s)^{p^i \beta} (\partial_x/r)^{\langle \alpha\rangle}(\partial_x/r)^{\langle p^{i}\beta\rangle}$. By applying (\ref{Divpowrel}) to $\partial_x^{\langle \alpha \rangle}$, $\partial_x^{\langle p^i \beta \rangle}$ and $\partial_x^{\langle \alpha + p^i \beta \rangle}$ in turn, and using the fact that $0 \leq \alpha < p^i$, we find that
\begin{equation}\label{tab} t_{\alpha,\beta} =u_{\alpha,\beta} (r/s)^{p^i\beta} (\partial_x/r)^{\langle \alpha+p^i\beta\rangle}  \qmb{for some} u_{\alpha,\beta}\in \Z_p^\times.\end{equation}
Let $f\in \cO(X)^\circ$. Then Lemma \ref{levelmdivpowmult}\ref{divpow2} tells us that
\begin{equation}\label{ftab} f t_{\alpha,\beta} =u_{\alpha,\beta}(r/s)^{p^i\beta}\sum\limits_{k=0}^{\alpha+p^i\beta} \binomb{\alpha+p^i \beta}{k} (\partial_x/r)^{\langle \alpha+p^i\beta-k\rangle}(f) (\partial_x/r)^{\langle k\rangle}. \end{equation}
For each $0 \leq k \leq \alpha + p^i \beta$, write $k = \alpha_k + p^i \beta_k$ with $0 \leq \alpha_k < p^i$ and $\beta_k \geq 0$, so that necessarily we have $\beta_k \leq \beta$. Then using (\ref{tab}) we have
\[ (r/s)^{p^i \beta} (\partial_x/r)^{\langle k\rangle} = \frac{(r/s)^{p^i (\beta - \beta_k)}}{u_{\alpha_k,\beta_k}}t_{\alpha_k,\beta_k} \in K^\circ \cdot T_i.\]
This allows us to rewrite $(\ref{ftab})$ as follows:
\begin{equation}\label{ftab2} f t_{\alpha,\beta} =u_{\alpha,\beta}\sum\limits_{k=0}^{\alpha+p^i\beta} \binomb{\alpha+p^i \beta}{k} (\partial_x/r)^{\langle \alpha+p^i\beta-k\rangle}(f)  \hspace{0.1cm}  \frac{(r/s)^{p^i (\beta - \beta_k)}}{u_{\alpha_k,\beta_k}}t_{\alpha_k,\beta_k}. \end{equation}
We have chosen $\pi \in K^\times$ to satisfy $\max\left\{|p|,\frac{1}{|r|\rho(X)}\right\} \leq |\pi| < 1$, so $\cD^{(m)}_r(X) / (\pi)$ is a commutative algebra by Corollary \ref{DsmXfpsep}(b). This implies that 
\begin{equation}\label{dxrjf} (\partial_x/r)^{\langle j\rangle}(f)\in \pi\cO(X)^{\circ} \qmb{for all} j \geq 1.\end{equation}
Using $(\ref{ftab2})$, we can now see that $ft_{\alpha,\beta} \in \cO(X)^\circ T_i$, and hence $T_i\cdot \cO(X)^\circ\subseteq\cO(X)^\circ  \cdot T_i$.

(2) Let $0\leq \alpha,\alpha'<p^i$ be given. Using Lemma \ref{levelmdivpowmult}\ref{divpow} together with (\ref{Divpowrel}) we see that if $\alpha+\alpha'<p^i$ then 
\[  (\partial_x/r)^{\langle \alpha\rangle}(\partial_x/r)^{\langle \alpha' \rangle}  = \binoma{\alpha+\alpha'}{\alpha} (\partial_x/r)^{\langle \alpha+\alpha'\rangle}, \] whereas if $p^i\leq \alpha+\alpha'< 2p^i$ then  
 \[ (\partial_x/r)^{\langle \alpha\rangle}(\partial_x/r)^{\langle \alpha' \rangle}  =  u(s/r)^{p^i} \binoma{\alpha+\alpha'}{\alpha} (\partial_x/r)^{\langle \alpha+\alpha'-p^i\rangle}(\partial_x/s)^{\langle p^i\rangle}\] for some $u\in \Z_p^\times$. Similarly for $\beta,\beta'\geq 0$, we have \[ (\partial_x/s)^{\langle p^i\beta\rangle}(\partial_x/s)^{\langle p^i\beta'\rangle}=\binoma{p^i(\beta+\beta')}{p^i\beta}(\partial_x/s)^{\langle p^i(\beta+\beta')\rangle}. \] 
 Thus to see that $K^\circ T_i$ is closed under multiplication it suffices to show that  $(s/r)^{p^i} \binoma{\alpha+\alpha'}{\alpha}\in K^\circ$ whenever $0\leq \alpha,\alpha'<p^i$ but $\alpha+\alpha'\geq p^i$. 
 
Now $0\leq \alpha,\alpha'<p^i$ implies that  $\binomb{\alpha+\alpha'}{\alpha}=1$. If in addition we have $\alpha+\alpha'\geq p^i$, there must be at least one carry when adding $\alpha$ and $\alpha'$ $p$-adically. Therefore by Kummer's Theorem (Theorem \ref{Kum}) we have \[ v_p\left( \binoma{\alpha+\alpha'}{\alpha}\right)=v_p\left(\binom{\alpha+\alpha'}{\alpha}\right)\geq 1.\] 
However we have assumed that $|s|$ and $|r|$ are sufficiently close, in the sense that $|s/r|\leq |p|^{-1/p^m}$. This implies that $|(s/r)^{p^i}| \leq |p|^{-p^i/p^m} \leq |p|^{-1}$ as $i \leq m$, so $p (s/r)^{p^i} \in K^\circ$.  Hence  $(s/r)^{p^i}\binoma{\alpha+\alpha'}{\alpha}\in K^\circ$  as required.

(b) The commutativity of $\cU^i / (\pi)$ is clear given equations $(\ref{ftab2})$ and $(\ref{dxrjf})$. 

It follows from (a) that $\cU^i/(\pi)$ is a free $\cO(X)^\circ$-module on the image of $T_i$. Moreover writing $\tau_{\alpha,\beta}$ for the image of $t_{\alpha,\beta}$ in $\cU^i/(\pi)$ we see, by (\ref{d<k>formula}), that the $\cO(X)^\circ/(\pi)$-submodule $U$ generated by $\{ \tau_{0,p^{m-i}\gamma}\mid\gamma\geq 0\}$ is a polynomial algebra $U=\cO(X)^\circ/(\pi)[\tau_{0,p^{m-i}}]$. Moreover $\cU^i$ is a free $U$-module on the finite set $\{\tau_{\alpha,\beta}\mid 0\leq \alpha< p^i, 0\leq \beta<p^{m-i} \}$. 

By the proof of Corollary \ref{DsmXfpsep}(b), $\cO(X)^\circ/(\pi)$ is a finitely presented $K^\circ/(\pi)$-algebra. Thus, by \cite[00F4]{stacks-project}, $U$ is also a finitely presented $K^\circ/(\pi)$-algebra. Finally $\cU^i/(\pi)$ is a finitely presented $K^\circ/(\pi)$-algebra by \cite[0D46]{stacks-project}.
  \end{proof}

Recall that for each $u\in \cD(X)$, $\ad(u)\colon \cD(X)\to \cD(X)$ is a $K$-linear derivation. 

\begin{lem}\label{adyi} For any $i=0,\ldots, m$, we have $\ad\left((\partial_x/{r})^{\langle p^i\rangle}\right)(\cU^{i})\subseteq \pi\cU^{i}.$
\end{lem}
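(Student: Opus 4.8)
The plan is to verify the claim by induction on $i$, using the explicit structure of $\cU^i$ as a free $\cO(X)^\circ$-module on the basis $T_i$ furnished by Proposition \ref{Uifree}(a), together with the commutativity of $\cU^i/(\pi)$ established in Proposition \ref{Uifree}(b). Write $y_i := (\partial_x/r)^{\langle p^i\rangle}$. Since $\ad(y_i)$ is an $\cO(X)^\circ$-linear derivation of $\cU^i$ --- indeed, for $f \in \cO(X)^\circ$ we have $\ad(y_i)(f) = y_i(f) \in \pi\cO(X)^\circ$ by the reasoning behind $(\ref{dxrjf})$, so $\ad(y_i)$ is genuinely $\cO(X)^\circ$-linear only modulo $\pi$; one must track this carefully --- it suffices to control $\ad(y_i)$ on the basis elements $t_{\alpha,\beta} = (\partial_x/r)^{\langle \alpha\rangle}(\partial_x/s)^{\langle p^i\beta\rangle}$ with $0 \leq \alpha < p^i$, $\beta \geq 0$. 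Actually the cleanest formulation: $\ad(y_i)$ maps $\cU^i$ into $\pi\cU^i$ if and only if the induced map on $\cU^i/(\pi) \to \cU^i/(\pi)$ is zero, and since $\cU^i/(\pi)$ is commutative by Proposition \ref{Uifree}(b), and $y_i + \pi\cU^i$ is an element of this commutative ring, $\ad(y_i)$ is identically zero modulo $\pi$. This is the whole argument.

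Let me make this precise. First I would observe that $y_i = (\partial_x/r)^{\langle p^i\rangle} \in \cU^{i+1} \subseteq \cU^{m+1} = \cD_r^{(m)}(X)$, but more importantly $y_i \in \cU^i$ is false in general --- rather $y_i$ lies in $\cU^{i+1}$, so one should be slightly careful about which ring we are computing $\ad$ in. However, the statement asserts $\ad(y_i)(\cU^i) \subseteq \pi\cU^i$, and this makes sense because $\cU^i \subseteq \cU^{i+1} \subseteq \cD_r^{(m)}(X)$, all computations taking place inside the ambient ring $\cD_r^{(m)}(X)$, and we must check that the output lands back in $\pi\cU^i$ (a priori it only lands in $\cD_r^{(m)}(X)$). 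So the two things to check are: (1) $\ad(y_i)(\cU^i)$ modulo $\pi$ is zero in $\cD_r^{(m)}(X)/(\pi)$, which follows from commutativity of $\cD_r^{(m)}(X)/(\pi)$ (Corollary \ref{DsmXfpsep}(b)) once we know $y_i$ reduces into it; and (2) $\ad(y_i)(\cU^i) \subseteq \cU^i$, i.e. the commutators actually lie in the subring $\cU^i$ and not merely in the bigger ring. For (2) I would use Lemma \ref{levelmdivpowmult}(b): for a basis element $t_{\alpha,\beta}$ of $\cU^i$, expanding $\ad(y_i)(t_{\alpha,\beta}) = y_i t_{\alpha,\beta} - t_{\alpha,\beta} y_i$ and re-expressing via Lemma \ref{levelmdivpowmult} produces an $\cO(X)^\circ$-combination of products of divided powers of $\partial_x/r$ and $\partial_x/s$; one checks, using $(\ref{Divpowrel})$ and the argument of Proposition \ref{Uifree}(a)(2), that each such product, after absorbing the appropriate power of $(s/r)^{p^i}$, lies back in $\cO(X)^\circ \cdot T_i = \cU^i$. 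The key numerical input is again that $|(s/r)^{p^i}| \leq |p|^{-1}$ by Hypothesis \ref{NotCotDirFlat}, so the relevant scalars stay in $K^\circ$.

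The main obstacle is bookkeeping rather than conceptual difficulty: one must carefully verify that $\ad(y_i)$ preserves the submodule $\cU^i$ of the larger ring $\cD_r^{(m)}(X)$ --- this is essentially a repeat of the closure-under-multiplication argument in Proposition \ref{Uifree}(a), specialized to the case of a commutator with the single element $y_i = (\partial_x/r)^{\langle p^i\rangle}$ --- and then once that is known, the containment in $\pi\cU^i$ is immediate from commutativity mod $\pi$. It may be cleanest to argue: the ring $\cU^i/(\pi)$ is commutative and is a subring of $\cD_r^{(m)}(X)/(\pi)$ (using freeness on $T_i$ from Proposition \ref{Uifree}(a) to see the inclusion stays injective mod $\pi$), the element $y_i = (\partial_x/r)^{\langle p^i\rangle}$ reduces into this commutative subring (since $(\partial_x/r)^{\langle p^i\rangle} = t_{0,0}\cdot(\text{something})$... actually $(\partial_x/r)^{\langle p^i\rangle}$ corresponds to $\alpha = 0$, $\beta = 1$ in the $\cU^{i+1}$-basis, but for $\cU^i$ we note $(\partial_x/r)^{\langle p^i\rangle} \in \cU^{i+1}$ is precisely one of the new generators --- so strictly $y_i \notin \cU^i$, and the statement is about the action of an element of $\cU^{i+1}$ on $\cU^i$). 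Given that subtlety, the honest route is: check $\ad(y_i)$ preserves $\cU^i$ by direct computation with the basis $T_i$ and Lemma \ref{levelmdivpowmult}, then invoke commutativity of $\cU^{i+1}/(\pi)$ (which contains $y_i \bmod \pi$ and $\cU^i \bmod \pi$) to conclude the commutators vanish mod $\pi$, hence lie in $\pi\cU^i$.
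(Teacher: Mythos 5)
Your proposal gets most of the ingredients but has a genuine gap at the final step, and it misses the elementary observation that the paper's proof actually hinges on.

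The gap is in the passage from ``$\ad(y_i)(\cU^i) \subseteq \cU^i$ and $\ad(y_i)$ reduces to zero mod $\pi$'' to the conclusion ``$\ad(y_i)(\cU^i) \subseteq \pi\cU^i$.'' In your first formulation you assert that $\cU^i/(\pi)$ embeds into $\cD^{(m)}_r(X)/(\pi)$, deducing this from freeness of $\cU^i$ on $T_i$. But freeness of $\cU^i$ as an $\cO(X)^\circ$-module does not make the reduction map injective, because $T_i$ is not part of an $\cO(X)^\circ$-basis of $\cD^{(m)}_r(X)$: by $(\ref{tab})$, $t_{\alpha,\beta}^{(i)} = u_{\alpha,\beta}(r/s)^{p^i\beta}(\partial_x/r)^{\langle\alpha+p^i\beta\rangle}$ with $u_{\alpha,\beta}\in\Zp^\times$, and $|(r/s)^{p^i\beta}|\to 0$ as $\beta\to\infty$. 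For $\beta$ large enough, $(r/s)^{p^i\beta}\in\pi K^\circ$, so $t_{\alpha,\beta}^{(i)}$ is killed in $\cD^{(m)}_r(X)/(\pi)$ while it survives in $\cU^i/(\pi)$. The map is therefore \emph{not} injective, and commutativity of $\cD^{(m)}_r(X)/(\pi)$ only gives $\ad(y_i)(\cU^i)\subseteq\cU^i\cap\pi\cD^{(m)}_r(X)$, which is strictly larger than $\pi\cU^i$. Your ``honest route'' via $\cU^{i+1}/(\pi)$ has exactly the same problem: you would land in $\cU^i\cap\pi\cU^{i+1}$, and the inclusion $\cU^i\cap\pi\cU^{i+1}\subseteq\pi\cU^i$ fails for the same reason (the quotient $\cU^{i+1}/\cU^i$ has $\pi$-torsion coming from the non-unit transition scalars $(r/s)^{p^i\delta}$).

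The elementary fact you are missing, and which makes the paper's proof so short, is that $y_i = (\partial_x/r)^{\langle p^i\rangle}$ \emph{commutes with every element of} $T_i$, since all of these are scalar multiples of powers of $\partial_x$. Consequently $\ad(y_i)(ft) = \ad(y_i)(f)\,t$ for $f\in\cO(X)^\circ$ and $t\in T_i$, so everything reduces to computing $\ad(y_i)$ on the coefficients in $\cO(X)^\circ$. For those, Lemma \ref{levelmdivpowmult}(b) gives $\ad(y_i)(f) = \sum_{k=0}^{p^i-1}\binomb{p^i}{k}(\partial_x/r)^{\langle p^i-k\rangle}(f)\,(\partial_x/r)^{\langle k\rangle}$, each $(\partial_x/r)^{\langle p^i-k\rangle}(f)$ lies in $\pi\cO(X)^\circ$ by $(\ref{dxrjf})$, and $(\partial_x/r)^{\langle k\rangle}\in\cU^i$ for $k<p^i$ by $(\ref{Divpowrel})$, so $\ad(y_i)(f)\in\pi\cU^i$. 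No commutativity of quotient rings and no saturation assumption is needed; you were close when you gestured at a ``direct computation with the basis $T_i$,'' but had you carried it out you would have found that step alone already yields the containment in $\pi\cU^i$, not merely in $\cU^i$.
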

\begin{proof} Let $f\in \cO(X)^\circ$. By Lemma \ref{levelmdivpowmult}(b), we have
\[ \ad\left((\partial_x/r)^{\langle p^i\rangle}\right)(f)= \sum_{k=0}^{p^i-1} \binomb{p^i}{k} (\partial_x/r)^{\langle {p^i-k}\rangle}(f) \cdot (\partial_x/r)^{\langle k\rangle}.  \]
Now $(\partial_x/r)^{\langle {p^i-k}\rangle}(f) \in \pi \cO(X)^\circ$ by $(\ref{dxrjf})$ and $(\partial_x/{r})^{\langle k\rangle}\in \cU^{i}$ for all $k<p^i$ by (\ref{Divpowrel}), so the sum on the right hand side lies in $\pi \cU^i$. By Proposition \ref{Uifree}(a), we know that $\cU^i$ is a free left $\cO(X)^\circ$-module with basis $T_i$. The result follows because $[(\partial_x/r)^{\langle p^i \rangle}, f t ] = [(\partial_x/r)^{\langle p^i \rangle}, f ] t \in \pi \cU^i$ for all $t \in T_i$. \end{proof}
\begin{lem} \label{ppoweryi} For each $i=0,\ldots, m-1$, $\left((\partial_x/r)^{\langle p^i\rangle}\right)^p\in \cU^0$.
\end{lem}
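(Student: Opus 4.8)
The plan is to reduce the level-$m$ divided power $(\partial_x/r)^{\langle p^i\rangle}$ to an ordinary (level $0$) divided power and then compute its $p$-th power by hand. Since $0\le i\le m-1$ we have $q_{p^i}=\lfloor p^i/p^m\rfloor=0$, so by Definition \ref{divpownot}(d) we get $(\partial_x/r)^{\langle p^i\rangle}=(\partial_x/r)^{[p^i]}=\partial_x^{p^i}/(p^i!\,r^{p^i})$; similarly, since $i+1\le m$ forces $q_{p^{i+1}}\in\{0,1\}$, we have $(\partial_x/s)^{\langle p^{i+1}\rangle}=(\partial_x/s)^{[p^{i+1}]}$. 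This brings everything down to manipulations with ordinary divided powers of $\partial_x$, while $(\partial_x/s)^{\langle p^{i+1}\rangle}$ is one of the algebra generators of $\cU^0=\cD_s^{(m)}(X)=\cO(X)^{\circ}[\partial_x/s]^{(m)}$ by definition.

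Next I would compute the $p$-th power. From $(\partial_x/r)^{[p^i]}=(\partial_x/r)^{p^i}/p^i!$ one gets directly
\[ \left((\partial_x/r)^{\langle p^i\rangle}\right)^p=\frac{(\partial_x/r)^{p^{i+1}}}{(p^i!)^p}=\frac{(p^{i+1})!}{(p^i!)^p}\,(\partial_x/r)^{[p^{i+1}]}=\frac{(p^{i+1})!}{(p^i!)^p}\left(\frac{s}{r}\right)^{p^{i+1}}(\partial_x/s)^{\langle p^{i+1}\rangle}. \]
Since $(\partial_x/s)^{\langle p^{i+1}\rangle}\in\cU^0$ and $\cU^0$ is a $K^\circ$-algebra, it then suffices to show that the scalar $\lambda:=\frac{(p^{i+1})!}{(p^i!)^p}\left(\frac{s}{r}\right)^{p^{i+1}}$ lies in $K^\circ$.

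Finally I would estimate $|\lambda|$. Since $v_p(p^n!)=\frac{p^n-1}{p-1}$, one computes $v_p\!\left(\frac{(p^{i+1})!}{(p^i!)^p}\right)=\frac{p^{i+1}-1}{p-1}-p\cdot\frac{p^i-1}{p-1}=1$, i.e.\ $\left|\frac{(p^{i+1})!}{(p^i!)^p}\right|=|p|$. On the other hand Hypothesis \ref{NotCotDirFlat} gives $|s|>|r|$, so $|s/r|>1$, together with $|s/r|<|p|^{-1/p^m}$; since $i+1\le m$ this yields $|s/r|^{p^{i+1}}\le|s/r|^{p^m}<|p|^{-1}$. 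Multiplying, $|\lambda|=|p|\cdot|s/r|^{p^{i+1}}<|p|\cdot|p|^{-1}=1$, so $\lambda\in K^{\circ\circ}\subseteq K^\circ$ and hence $\left((\partial_x/r)^{\langle p^i\rangle}\right)^p\in\cU^0$, as required. There is no deep obstacle here; the only point that needs care is that $v_p$ of the multinomial coefficient is \emph{exactly} $1$, so that the strict bound $|s/r|<|p|^{-1/p^m}$ imposed in Hypothesis \ref{NotCotDirFlat} is used sharply and cannot be relaxed.
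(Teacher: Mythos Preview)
Your proof is correct and follows essentially the same route as the paper: reduce to ordinary divided powers using $q_{p^i}=q_{p^{i+1}}\in\{0,1\}$, compute $\left((\partial_x/r)^{\langle p^i\rangle}\right)^p=\frac{(p^{i+1})!}{(p^i!)^p}(s/r)^{p^{i+1}}(\partial_x/s)^{\langle p^{i+1}\rangle}$, and check the scalar lies in $K^\circ$ via $v_p\!\left(\frac{(p^{i+1})!}{(p^i!)^p}\right)=1$ and $|(s/r)^{p^{i+1}}|\le|p|^{-1}$. One small remark on your closing comment: the strict inequality in Hypothesis~\ref{NotCotDirFlat} is not actually needed here, since the non-strict bound $|s/r|\le|p|^{-1/p^m}$ would already give $|\lambda|\le 1$, which suffices for $\lambda\in K^\circ$.
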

\begin{proof} The assumption $i + 1 \leq m$ implies that $q_{p^i} = q_{p^{i+1}} = 0$. Hence $\partial^{\langle p^i \rangle} = \partial^{[p^i]}$ and  $\partial^{\langle p^{i+1} \rangle} = \partial^{[p^{i+1}]}$ by Definition \ref{divpownot}(d), so
\[\left((\partial_x/r)^{\langle p^i\rangle}\right)^p = \frac{p^{i+1}!}{(p^i!)^p}(\partial_x/r)^{\langle p^{i+1}\rangle}  =  \frac{p^{i+1}!}{(p^i!)^p}(s/r)^{p^{i+1}}(\partial_x/s)^{\langle p^{i+1}\rangle}.\]
Now as $v_p(p^i!)=\frac{p^i-1}{p-1}$, we can see that $v_p\left(\frac{p^{i+1}!}{(p^i!)^p}\right)=1$. Since $i \leq m-1$ and $|s/r|\leq |p|^{-1/p^m}$, we see that $|(s/r)^{p^{i+1}}|\leq |p|^{-1}$. Hence $\frac{p^{i+1}!}{(p^i!)^p}(s/r)^{p^{i+1}} \in K^\circ$, so $\left((\partial_x/r)^{\langle p^i\rangle}\right)^p \in K^\circ \cdot (\partial_x/s)^{\langle p^{i+1}\rangle} \subset \cD_s^{(m)}(X) = \cU^0$ as required.\end{proof}
\begin{notn}\label{yiVi} Let $i=0,\ldots, m$. \begin{itemize} \item Let $y_i=(\partial_x/{r})^{\langle p^i\rangle}$. \item Let $\delta_i$ be $\ad(y_i)$ viewed as a derivation of $\cU^{i}$.  \item Let $\cV^i$ be the skew-Ore extension $\cU^{i}[Y_i;\delta_i]$. \item Let $s_i=(r/s)^{p^i}\in K^\circ$. \end{itemize} \end{notn}
Note that $\delta_i : \cU^i\to\cU^i$ is well-defined by Lemma \ref{adyi}.

 \begin{prop}\label{presentUi} For each $0\leq i\leq m$ there is a surjective ring homomorphism $\phi_i\colon \cV^i\twoheadrightarrow \cU^{i+1}$ extending the inclusion map $\cU^{i}\to \cU^{i+1}$ and sending $Y_i$ to $y_i$. The kernel contains the central element $s_iY_i - (\partial_x/s)^{\langle p^i\rangle}$ and \[ s_i^{2p-1}\cdot\left(\ker \phi_i/\cV^i(s_iY_i - (\partial_x/s)^{\langle p^i \rangle})\right)=0.\] 
 \end{prop}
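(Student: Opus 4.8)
The plan is to construct $\phi_i$ from the universal property of the skew-Ore extension and then to dissect its kernel by means of the explicit $\cO(X)^\circ$-bases furnished by Proposition~\ref{Uifree}. Since $\delta_i$ is by definition the restriction of $\ad(y_i)$ to $\cU^i$, the pair consisting of the inclusion $\iota\colon\cU^i\hookrightarrow\cU^{i+1}$ and the element $y_i\in\cU^{i+1}$ trivially satisfies the hypothesis of Proposition~\ref{Univskewore}, so there is a unique ring homomorphism $\phi_i\colon\cV^i\to\cU^{i+1}$ with $\phi_i|_{\cU^i}=\iota$ and $\phi_i(Y_i)=y_i$; it is surjective because $\cU^{i+1}=\cU^i[(\partial_x/r)^{\langle p^i\rangle}]$ is generated as a ring by $\cU^i$ and $y_i$. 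Writing $c:=(\partial_x/s)^{\langle p^i\rangle}$, the identity $(\partial_x/s)^{\langle k\rangle}=(r/s)^k(\partial_x/r)^{\langle k\rangle}$ gives $c=(r/s)^{p^i}(\partial_x/r)^{\langle p^i\rangle}=s_iy_i$ inside $\cD_r^{(m)}(X)$, so $\phi_i(s_iY_i-c)=s_iy_i-c=0$ and $v_i:=s_iY_i-c\in\ker\phi_i$. Moreover $c$ and $y_i$ are both scalar multiples of $\partial_x^{\langle p^i\rangle}$, hence commute, so $\delta_i(c)=\ad(y_i)(c)=0$; this gives $[v_i,Y_i]=-[c,Y_i]=\delta_i(c)=0$, while for $a\in\cU^i$ one has $[v_i,a]=s_i\delta_i(a)-[c,a]=[s_iy_i,a]-[c,a]=0$ because $s_iy_i=c$ in $\cD_r^{(m)}(X)$. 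Thus $v_i$ is central and $\cV^iv_i=v_i\cV^i$ is a two-sided ideal inside $\ker\phi_i$.

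The quantitative statement rests on the transparency of $\phi_i$ on bases. By Proposition~\ref{Uifree}(a) and the construction of the skew-Ore extension, $\cV^i$ is a free left $\cO(X)^\circ$-module on $\{\,b_{\alpha,\beta,j}:=(\partial_x/r)^{\langle\alpha\rangle}(\partial_x/s)^{\langle p^i\beta\rangle}Y_i^{\,j} : 0\le\alpha<p^i,\ \beta,j\ge0\,\}$, whereas $\cU^{i+1}$ is free on $\{(\partial_x/r)^{\langle k\rangle}:k\ge0\}$ by Corollary~\ref{DsmXfpsep}(a). Using $(\partial_x/s)^{\langle p^i\beta\rangle}=s_i^{\beta}(\partial_x/r)^{\langle p^i\beta\rangle}$ together with repeated applications of Lemma~\ref{levelmdivpowmult}(a), one finds that $\phi_i(b_{\alpha,\beta,j})=\mu_{\alpha,\beta,j}\,(\partial_x/r)^{\langle\alpha+p^i(\beta+j)\rangle}$, where $\mu_{\alpha,\beta,j}\in K^\circ$ is $s_i^{\beta}$ times a product of $\binoma{\cdot}{\cdot}$-coefficients; in particular $\phi_i$ carries the $\cO(X)^\circ$-basis of $\cV^i$ to nonzero scalar multiples of the basis of $\cU^{i+1}$, with the target of $b_{\alpha,\beta,j}$ depending only on $\alpha$ and on $n:=\beta+j$. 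From $v_i\in\cV^iv_i$ and $[Y_i,c]=0$ one first records the congruences $s_i^{\,k}Y_i^{\,j+k}\equiv c^{\,k}Y_i^{\,j}\pmod{\cV^iv_i}$ for all $j,k\ge0$.

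Grouping the basis of $\cV^i$ into the fibres $\{b_{\alpha,n-j,j}:0\le j\le n\}$ of constant $\alpha$ and $n$, and using that $K^\circ$ is a valuation ring and $\cO(X)^\circ$ is $K^\circ$-flat, one sees that $\ker\phi_i$ is the $\cO(X)^\circ$-span of the normalised syzygies of the scalars $\mu$ over each such fibre. On the other hand, combining the above congruences with $[Y_i,(\partial_x/s)^{\langle p^i\rangle}]=0$ and Lemma~\ref{levelmdivpowmult}(a) yields, for adjacent members of a fibre, $s_i\,b_{\alpha,n-j-1,j+1}\equiv\binoma{p^i(n-j)}{p^i}\,b_{\alpha,n-j,j}\pmod{\cV^iv_i}$. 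Feeding these relations into the syzygy description of $\ker\phi_i$, and bounding the $p$-adic valuations of the $\binoma{\cdot}{\cdot}$-coefficients (and of the quotients $\mu_b/\mu_{b_0}$ occurring in the normalised syzygies) by Kummer's theorem (Theorem~\ref{Kum}), one concludes that $s_i^{2p-1}$ annihilates each generator of $\ker\phi_i$ modulo $\cV^iv_i$, which is the assertion.

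I expect the main obstacle to be this last step: exhibiting a \emph{fixed} power of $s_i$, independent of the fibre index $n$ (and hence of the $Y_i$-degree of the element being reduced), that already drops it into $\cV^iv_i$. The subtlety is that $v_p(s_i)$ is small — this is precisely what the hypothesis $|s/r|<|p|^{-1/p^m}$ in Hypothesis~\ref{NotCotDirFlat} secures — so absorbing a single $\binoma{\cdot}{\cdot}$-coefficient can require several factors of $s_i$; the balance between the Kummer carry bounds and the number of factors of $s_i$ available in each $\mu_b$ is what yields the exponent $2p-1$.
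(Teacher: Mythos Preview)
Your construction of $\phi_i$, its surjectivity, and the centrality of $v_i=s_iY_i-z_i$ (with $z_i:=(\partial_x/s)^{\langle p^i\rangle}$) are fine and match the paper. The gap is in the quantitative part.

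First, a factual error: for $i<m$ the module $\cU^{i+1}$ is \emph{not} free on $\{(\partial_x/r)^{\langle k\rangle}:k\ge 0\}$. Corollary~\ref{DsmXfpsep}(a) describes $\cD_r^{(m)}(X)=\cU^{m+1}$, while for $i<m$ the relevant basis is $T_{i+1}$ from Proposition~\ref{Uifree}(a). So your syzygy description of $\ker\phi_i$ rests on the wrong basis in all but the top case.

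More seriously, even granting a correct fibre decomposition, your reduction scheme does not yield a bound independent of the $Y_i$-degree. Iterating your relation $s_i\,b_{\alpha,n-j-1,j+1}\equiv\binoma{p^i(n-j)}{p^i}\,b_{\alpha,n-j,j}\pmod{\cV^iv_i}$ costs one factor of $s_i$ per step, so reducing a generic kernel element of degree $j$ to degree $0$ forces a factor $s_i^{\,j}$, not $s_i^{2p-1}$. The ``balance'' you invoke between Kummer carry bounds on $\binoma{\cdot}{\cdot}$ and the $s_i$-powers present in the $\mu$'s does not resolve this, because the relevant $\binoma{p^il}{p^i}$ can have bounded $p$-valuation while $j$ is unbounded. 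You have correctly identified the obstacle but not removed it.

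The paper's proof avoids this entirely by invoking a structural fact you do not use: Lemma~\ref{ppoweryi} gives $y_i^p\in\cU^0\subseteq\cU^i$ for $i\le m-1$. Since $z_i/s_i=y_i$, this yields $s_i^{p-1}(z_i/s_i)^n\in\cU^i$ for \emph{all} $n\ge 0$ (write $n=qp+r$), and hence also $s_i^{p-1}(Y_i-z_i/s_i)^n\in\cV^i$ for all $n$. Given $u=\sum_n u_nY_i^n\in\ker\phi_i$, one then expands $s_i^{2p-1}u=\sum_n\sum_k\binom{n}{k}u_n\,s_i^{p-1}(z_i/s_i)^{n-k}\,s_i^{p}(Y_i-z_i/s_i)^k$; every term with $k\ge 1$ factors through $s_iY_i-z_i$ and lies in $\cV^iv_i$, while the $k=0$ remainder $\sum_n u_n s_i^{2p-1}(z_i/s_i)^n$ lies in $\cU^i$ and equals $\phi_i(s_i^{2p-1}u)=0$. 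This is where the exponent $2p-1=(p-1)+p$ actually comes from. The paper also treats $i=m$ separately (where Lemma~\ref{ppoweryi} fails) and in fact obtains the stronger statement $\ker\phi_m=\cV^m v_m$ by a direct basis argument using the $T_m'$-basis; your proposal does not distinguish these cases.
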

\begin{proof} We write $z_i=(\partial_x/s)^{\langle p^i\rangle}=s_iy_i\in \cU^{i}$. Note that $z_i$ and $Y_i$ commute in $\cV^i$. 
 The existence of $\phi_i$ follows from the universal property of the skew-Ore extension, Proposition \ref{Univskewore}. Note that $\phi_i$ is surjective because $\cU^{i+1}$ is generated by $\cU^i$ together with $y_i$, by Definition \ref{UiDefn}. The element $s_iY_i - z_i$ is central in the skew-Ore extension $\cV^i$ because $s_i \delta_i$, $\ad(s_i y_i)$ and $\ad(z_i)$ define the same derivation $\cU^i \to \cU^i$. Moreover, $\phi_i\left(s_iY_i-z_i\right)=s_iy_i-z_i= 0$, so $\cV^i(s_iY_i- z_i)\subseteq \ker \phi_i$.

We suppose first that $i\leq  m-1$.  Lemma \ref{ppoweryi} gives that $y_i^p\in \cU^i$, so that $(z_i/s_i)^p\in \cU^i$. Then, writing $n=qp+r$ with $0\leq r<p$ and $q\geq 0$ we see that 
\begin{equation}\label{sip1} s_i^{p-1}(z_i/s_i)^n= s_i^{p-1-r}z_i^r(z_i/s_i)^{qp} \in \cU^i \qmb{for all} n\geq 0.\end{equation} 
It follows that for any $n \geq 0$ we have 
\begin{equation}\label{sip2} s_i^{p-1}(Y_i-z_i/s_i)^n=s_i^{p-1}\sum\limits_{k=0}^n \binom{n}{k}(-z_i/s_i)^kY_i^{n-k}\in \cV^i. \end{equation}
Take an element $u=\sum\limits_{n=0}^l u_nY_i^n\in \ker \phi_i$ with $u_0, \cdots, u_l\in \cU^{i}$. Then
\begin{eqnarray*} s_i^{2p-1}u & = &\sum\limits_{n=0}^l s_i^{2p-1}u_n(Y_i-z_i/s_i + z_i/s_i)^n \\
	& = & \sum\limits_{n=0}^l \sum\limits_{k=0}^n \binom{n}{k}u_ns_i^{p-1}(z_i/s_i)^{n-k}s_i^p(Y_i-z_i/s_i)^k. 
\end{eqnarray*}
When $k \geq 1$, we can rewrite the summand on the right hand side as follows:
\[\left(\binom{n}{k}u_n \right) \cdot \left(s_i^{p-1}(z_i/s_i)^{n-k}\right) \cdot \left(s_i^{p-1} (Y_i - z_i/s_i)^{k-1}\right) \cdot (s_i Y_i - z_i).\]
The first factor lies in $\cU^i$, the second factor lies in $\cU^i$ by $(\ref{sip1})$ and the third factor lies in $\cV^i$ by $(\ref{sip2})$.  Hence
\[ s_i^{2p-1} u  \equiv \sum\limits_{n=0}^l  s_i^{2p-1}u_n(z_i/s_i)^n \quad \mod \cV^i(s_iY_i - z_i).\]
But since $u \in \ker \phi_i$, we have
\begin{equation}\label{phiisi} 0=\phi_i(s_i^{2p-1}u)=\sum\limits_{n=0}^l  s_i^{2p-1}u_ny_i^n=\sum\limits_{n=0}^l u_ns_i^{2p-1}(z_i/s_i)^n\end{equation} 
and we deduce that $s_i^{2p-1}u \in \cV^i(s_i Y_i - z_i)$ as claimed.

Now we consider the case $i=m$. Again we suppose that $u=\sum\limits_{n=0}^l u_nY_m^n\in \ker \phi_m$ with $u_0, \cdots, u_l\in \cU^{m}$. This time \[ 0=\phi_m(s_m^lu)=\sum\limits_{n=0}^l  s_m^{l}u_ny_m^n=\sum\limits_{n=0}^l s_m^{l-n}u_nz_m^n.\]

Thus \begin{eqnarray*} 
 s_m^l u  & = & \sum\limits_{n=0}^l s_m^{l-n}u_n(s_mY_m-z_m + z_m)^n \\
         & = & \sum\limits_{n=0}^l \sum\limits_{k=0}^n s_m^{l-n} \binom{n}{k}u_n(s_mY_m-z_m)^kz_m^{n-k} \\
       &\in &  \cV^m(s_mY_m-z_m)+ \sum_{n=0}^l  s_m^{l-n}u_nz_m^n  = \cV^m(s_mY_m-z_m).\\ 
\end{eqnarray*}

So for all $u\in\ker \phi_m$, we can find $t\in K^\circ$ and $v=\sum\limits_{n\geq 0} v_nY_m^n\in \cV^m$ such that $tu=v(s_mY_m-z_m)$. Expanding out this equation gives  \[\left (-tu_0+ \sum_{n\geq 1}(s_mv_{n-1}-tu_n)Y_m^n\right)=\sum_{n\geq 0} v_nY_m^nz_m\]
and equating coefficients of $Y_m^n$ we obtain \begin{eqnarray} \label{uvrecurse}
v_0z_m= -tu_0 \mbox{ and }v_nz_m=(s_mv_{n-1}-tu_n) \mbox{ for all }n\geq 1.
\end{eqnarray} 
Now we recall from Proposition \ref{Uifree}(a) that $\cU^m$ is a free $\cO(X)^\circ$-module on  \[T_m=\{(\partial_x/r)^{\langle \alpha\rangle}(\partial_x/s)^{\langle p^m \beta\rangle} : 0\leq \alpha<p^m, \beta\geq 0\}.\] 
Using $(\ref{d<k>formula})$, we see that $(\partial_x/s)^{\langle p^m \beta\rangle}$ only differs from $z_m^\beta = \left((\partial_x/s)^{\langle p^m \rangle}\right)^{\beta}$ by a $p$-adic unit, for any $\beta \geq 0$. Therefore
\[T'_m:=\{(\partial_x/r)^{\langle \alpha\rangle}z_m^\beta : 0\leq \alpha<p^m, \beta\geq 0\}\] 
is also a basis for $\cU^m$ as a $\cO(X)^\circ$-module. Now right multiplication by $z_m$ in $\cU^m$ maps $T'_m$ to $T'_m$ and so if $wz_m\in t\cU^m$ for some $w\in \cU^m$ then $w\in t\cU_m$. Using (\ref{uvrecurse}) we thus see inductively that $v_n\in t\cU_m$ for all $n\geq 0$. It follows that $v/t\in \cV^m$ and so $u=(v/t)(s_mY_m-z_m)\in \cV^m(s_mY_m-z_m)$. We have shown that $\ker \phi_m = \cV^m(s_mY_m-z_m)$, which is even stronger than what we need.
\end{proof}

\begin{cor} \label{presenthKUi} The homomorphism $\h{\phi_{i,K}}\colon \h{\cV^i_K}\to \h{\cU^{i+1}_K}$ induces an isomorphism $\h{\cV^i_K}/(Y_i-y_i)\stackrel{\cong}{\longrightarrow} \h{\cU^{i+1}_K}$. \end{cor}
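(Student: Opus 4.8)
The plan is to upgrade the presentation of $\cU^{i+1}$ supplied by Proposition \ref{presentUi} to a presentation of its $p$-adic completion, in exactly the same spirit as the proof of Proposition \ref{presDim}. Set $z_i := (\partial_x/s)^{\langle p^i\rangle} = s_iy_i \in \cU^i$ and $v_i := s_iY_i - z_i \in \cV^i$, so that $v_i = s_i(Y_i - y_i)$ with $s_i \in K^\times$; hence the two-sided ideal of $\h{\cV^i_K}$ generated by $Y_i - y_i$ coincides with the one generated by $v_i$, and it suffices to show that $\h{\phi_{i,K}}$ induces an isomorphism $\h{\cV^i_K}/v_i\h{\cV^i_K} \stackrel{\cong}{\longrightarrow} \h{\cU^{i+1}_K}$. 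By Proposition \ref{presentUi}, $v_i$ is central in $\cV^i$, $\cV^i v_i \subseteq \ker\phi_i$, and $\ker\phi_i/\cV^i v_i$ is annihilated by $s_i^{2p-1} \in K^{\circ\circ}$. Since $\phi_i(v_i) = s_iy_i - z_i = 0$, the sequence of $K^\circ$-modules
\[ 0 \longrightarrow \cV^i \xrightarrow{\ \cdot v_i\ } \cV^i \xrightarrow{\ \phi_i\ } \cU^{i+1} \longrightarrow 0 \]
is a complex; it is exact on the left because $v_i$ is, up to the central unit $s_i$, a monic linear polynomial in $Y_i$ and $\cV^i = \cU^i[Y_i;\delta_i]$ is a free left $\cU^i$-module on the powers of $Y_i$; and its only other cohomology is $\ker\phi_i/\cV^i v_i$ at the middle term, which is annihilated by $s_i^{2p-1}$.

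Next I would feed this complex into \cite[Lemma 3.6]{DCapOne}, exactly as is done in the proof of Proposition \ref{presDim}. The hypotheses are met: by Proposition \ref{Uifree}(a) each of $\cU^i$ and $\cU^{i+1}$, and therefore $\cV^i$, is a free $\cO(X)^\circ$-module, and $\cO(X)^\circ$ is a $p$-adically separated $K^\circ$-algebra that is flat over $K^\circ$; so all three terms of the complex are flat and $p$-adically separated over $K^\circ$. The conclusion is that the $p$-adically completed complex
\[ \h{\cV^i} \xrightarrow{\ \cdot v_i\ } \h{\cV^i} \xrightarrow{\ \h{\phi_i}\ } \h{\cU^{i+1}} \longrightarrow 0 \]
is right exact and that its remaining cohomology is annihilated by a fixed power of $s_i$, in particular by a nonzero element of $K^\circ$.

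Finally I would apply the exact functor $- \otimes_{K^\circ} K$, which turns that element of $K^\circ$ into a unit and so kills the leftover cohomology, yielding an exact sequence
\[ \h{\cV^i_K} \xrightarrow{\ \cdot v_i\ } \h{\cV^i_K} \xrightarrow{\ \h{\phi_{i,K}}\ } \h{\cU^{i+1}_K} \longrightarrow 0. \]
Since $\phi_i$, hence $\h{\phi_{i,K}}$, is a ring homomorphism and $v_i$ is central and lies in its kernel, this says precisely that $\h{\phi_{i,K}}$ induces an isomorphism $\h{\cV^i_K}/v_i\h{\cV^i_K} \stackrel{\cong}{\longrightarrow} \h{\cU^{i+1}_K}$; and $v_i\h{\cV^i_K} = (Y_i - y_i)\h{\cV^i_K}$ because $v_i = s_i(Y_i-y_i)$ with $s_i$ a unit, which is the claim. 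The only genuinely delicate point is the middle step, namely controlling the cohomology of the completed complex in terms of that of the original; this is exactly what \cite[Lemma 3.6]{DCapOne} handles, and the finiteness/flatness needed to invoke it is precisely what Proposition \ref{Uifree}(a)–(b) was arranged to provide, so no new ingredient beyond re-using that lemma is required.
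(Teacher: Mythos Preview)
Your argument is correct and is essentially the paper's own proof: assemble the complex from Proposition \ref{presentUi}, note its cohomology is killed by $s_i^{2p-1}$, and invoke \cite[Lemma 3.6]{DCapOne} to pass to $\widehat{(-)}_K$. One small caveat: when you justify injectivity of $\cdot v_i$ on $\cV^i$ by writing $v_i = s_i(Y_i - y_i)$ with $s_i$ a ``central unit'', bear in mind that $s_i \in K^{\circ\circ}$ is \emph{not} a unit in the $K^\circ$-algebra $\cV^i$, and $y_i \notin \cU^i$ so $Y_i - y_i \notin \cV^i$; the correct reason is that $\cU^i$ is $K^\circ$-torsion-free (Proposition \ref{Uifree}(a)), so the leading $Y_i$-coefficient $s_iQ_N$ of $Qv_i$ can only vanish if $Q_N = 0$. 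Your final identification $(Y_i - y_i)\h{\cV^i_K} = v_i\h{\cV^i_K}$ is fine, since after tensoring with $K$ the scalar $s_i$ does become a unit.
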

\begin{proof} By Proposition \ref{presentUi}, the complex $0 \to \cV^i (s_i Y_i - z_i) \to \cV^i \stackrel{\phi_i}{\longrightarrow} \cU^{i+1} \to 0$ has cohomology killed by $s_i^{2p-1}$. In this situation, \cite[Lemma 3.6]{DCapOne} tells us that the functor $\widehat{()}_K$ turns this complex into a short exact sequence. Hence the result. \end{proof}
\begin{proof}[Proof of Theorem \ref{flatcotangent}]
We note that by Lemma \ref{flatfinextn} and Lemma \ref{DrBC}(a) it will suffice to prove this result after replacing $K$ by a finite extension. In particular by \cite[Theorem 4.1.8]{ArdWad2023} we may assume that $X$ splits over $K$ and $s,r\in |K^\times|$. Then $r>r(X) = \varpi/\rho(X)$ by Corollary \ref{Heisenberg}. We may also assume $s > r$.

Since the sequence $|\varpi_m|$ converges to $\varpi$ from above, we may choose $m$ sufficiently large, so that $r>|\varpi_m|/\rho(X)$. We fix this $m$ for the rest of this proof. Now, we enlarge $K$ if necessary, to ensure that $\varpi_m\in K^\times$ and $p^{-1/p^m}\in K^\times$. We can then choose $s',r'\in K$ such that $|\varpi_m s'|=s$ and $|\varpi_m r'|=r$. By enlarging $K$ further, we may assume that $K^\times$ contains an element $\pi$ such that $\max\left\{|p|,\frac{1}{|r'|\rho(X)}\right\} \leq |\pi| < 1$.

Now since $r<s$ we see that $1/\rho(X)<|r'|<|s'|$. Then by Theorem \ref{Adellevm}, 
\[\cD_s(X)\cong \h{\cD}_{s'}^{(m)}(X)_K \qmb{and} \cD_r(X)\cong \h{\cD}_{r'}^{(m)}(X)_K.\]
We first consider the case when $s$ and $r$ are sufficiently close, in the sense that 
\[1 < s/r<|p|^{-1/p^m}.\] 
After this assumption, since $|s'/r'| = s/r$, we note that all conditions of Hypothesis \ref{NotCotDirFlat} are satisfied for the elements $s'$ and $r'$ of $K^\times$ in place of $s$ and $r$, respectively.

We form $\cU^0 := \cD_{s'}^{(m)}(X)$, and inductively, $\cU^{i+1} := \cU^i[ (\partial_x/{r'})^{\langle p^i \rangle} ] \subset \cD_{r'}^{(m)}(X)$ as in Definition \ref{UiDefn}, for each $i=0,\cdots, m$. We let $\cV^i$ be the skew-Ore extension $\cU^i[Y_i; \delta_i]$ where $\delta_i : \cU^i \to \cU^i$ is the derivation $\ad(y_i)$ with $y_i := (\partial_x/r')^{\langle p^i\rangle}$, as in Notation \ref{yiVi}. Lemma \ref{adyi} ensures that $[y_i, \cU^i] \subseteq \pi \cU^i$. Now Theorem \ref{AbstractFlatness} together with Corollary \ref{presenthKUi} imply that $\h{\cU^{i+1}_K} \cong \h{\cV^i_K} / (Y_i - y_i)$ is a flat $\h{\cU^i_K}$-module on both sides. By the transitivity of flatness, we see that $\h{\cU^{m+1}_K} = \h{\cD}^{(m)}_{r'}(X)_K = \cD_r(X)$ is a flat $\h{\cU^0_K} = \h{\cD}^{(m)}_{s'}(X)_K = \cD_s(X)$-module on both sides, as claimed.

Returning to full generality, we can find a sequence $r=r_0<r_1<\ldots<r_N=s$, all in $|K^\times|$, such that $1<r_i/r_{i-1}\leq |p|^{-1/p^m}$ for each $1\leq i\leq N$. The transitivity of flatness now reduces us to the special case considered above.
\end{proof}

\begin{proof}[Proof of Theorem \ref{FlatThm}]
By Proposition \ref{reduction} it suffices to show that if $s,r\in \sqrt{|K^\times|}$ with $s\geq r$ and $Y\subseteq X$ both lie in $\bA(\partial_x/r)$, the map $\cD_s(X)\to \cD_r(Y)$ is flat on both sides. By Theorem \ref{flatcotangent} $\cD_s(X)\to \cD_r(X)$ is flat on both sides, and by Theorem \ref{changeofbase}, $\cD_r(X)\to \cD_r(Y)$ is flat on both sides. We're now done by the transitivity of flatness.
\end{proof} 
\section{Overconvergent \ts{\cD}-modules}\label{Dmod}
\subsection{The Huber space and overconvergent sheaves}\label{HuberSpace}
Recall from \cite[\S 5]{SchVdPut} that to every rigid analytic space $X$ we can associate a topological space $\tilde{X}$ which we call the \emph{Huber space} of $X$. The elements of $\tilde{X}$ are the prime filters on the admissible open subsets of $X$. The sets of the form $\tilde{U} := \{ p \in \tilde{X} : U \in p\}$ as $U$ ranges over the admissible open subsets of $X$ form a basis for the topology on $\tilde{X}$. If $X$ is quasi-compact and quasi-separated (qcqs), then $\tilde{X}$ is quasi-compact but in general not Hausdorff. There is a set-theoretic embedding $X \hookrightarrow \tilde{X}$ given by $x \mapsto \fr{m}_x$, where $\fr{m}_x$ is the principal maximal filter consisting of the admissible open subsets of $X$ containing $x$. We will identify $X$ with its image in $\tilde{X}$ via this embedding.  By \cite[\S 5]{SchVdPut} there is an equivalence of categories between the abelian sheaves on $X$ and the abelian sheaves on $\tilde{X}$. Given an abelian sheaf $\cF$ on $X$, we will denote the corresponding sheaf on $\tilde{X}$ by $\tilde{\cF}$; it follows from the proof of \cite[Theorem 1]{SchVdPut} that $\tilde{\cF}(\tilde{W}) = \cF(W)$ for any admissible open subset $W$ of $X$. 

\begin{defn}\label{iUdagDefn} Let $X$ be a rigid analytic space, let $U$ be an admissible open subset of $X$ and let $\cF$ be an abelian sheaf on $X$.
\be \item Let $i : \overline{\tilde{U}} \to \tilde{X}$ denote the inclusion of the closure of $\tilde{U}$ into $\tilde{X}$.
\item Let $\cF_{\overline{U}}$ be the abelian sheaf on $X$ defined by
\[ \widetilde{\cF_{\overline{U}}}= i_{\ast} i^{-1} \tilde{\cF}.\]
\ee\end{defn}

In the setting of Definition \ref{iUdagDefn}, $i^{-1}$ is left adjoint to $i_\ast$, and the counit of the adjunction gives a natural morphism $\cF\to \cF_{\overline{U}}$ called restriction.
 
Note that $\widetilde{\cF_{\overline{U}}}$ is denoted $\tilde{\cF}_{\overline{\tilde{U}}}$ in \cite[\S2.3]{KS}. This construction is local, in the following sense.

\begin{lem}\label{iDagLocal} Let $X$ be a rigid analytic space, let $U$ be an admissible open subset of $X$ and let $\cF$ be an abelian sheaf on $X$.  Let $Y$ be another admissible open subset of $X$. Then there is a natural isomorphism of abelian sheaves on $Y$
\[ (\cF_{\overline{U}})_{|Y} \cong (\cF_{|Y})_{\overline{U\cap Y}}.\]
\end{lem}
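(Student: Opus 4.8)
The statement asserts that the construction $\cF \mapsto \cF_{\overline{U}}$ commutes with restriction to an admissible open $Y$. The plan is to verify this entirely on the level of Huber spaces, where everything becomes a formal statement about $i_\ast i^{-1}$ for closure-inclusions, and then transport it back via the equivalence between abelian sheaves on $X$ and abelian sheaves on $\tilde X$ (and the compatibility $\widetilde{\cF_{|Y}} = \tilde\cF_{|\tilde Y}$, which follows from $\widetilde{\cF}(\tilde W) = \cF(W)$). So the first step is to reduce to proving: for the open inclusion $\tilde Y \hookrightarrow \tilde X$, the sheaf $\left(i_\ast i^{-1}\tilde\cF\right)_{|\tilde Y}$ is naturally isomorphic to $j_\ast j^{-1}\left(\tilde\cF_{|\tilde Y}\right)$, where $i : \overline{\tilde U} \to \tilde X$ and $j : \overline{\tilde U \cap \tilde Y}^{\,\tilde Y} \to \tilde Y$ are the respective closure-inclusions (the closure in the second case taken inside $\tilde Y$).

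\textbf{Key steps.} First I would record the basic topological fact that for the open subspace $\tilde Y$ of $\tilde X$, the closure of $\tilde U \cap \tilde Y$ computed inside $\tilde Y$ equals $\overline{\tilde U} \cap \tilde Y$; this is standard point-set topology (closure in a subspace is the subspace-trace of the closure). Hence the square
\[
\xymatrix{
\overline{\tilde U}\cap \tilde Y \ar[r]^-{i'} \ar[d]_{j} & \overline{\tilde U} \ar[d]^{i} \\
\tilde Y \ar[r]_-{\iota} & \tilde X
}
\]
is a pullback of topological spaces, with $\iota$ and $i'$ open immersions and $i$, $j$ closed immersions. The second step is a base-change identity for this square: for any abelian sheaf $\cG$ on $\overline{\tilde U}$ there is a natural isomorphism $\iota^{-1} i_\ast \cG \cong j_\ast (i')^{-1}\cG$. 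For an \emph{open} immersion $\iota$ this base-change map is an isomorphism by a direct check on stalks, or on a basis of opens: $\left(\iota^{-1} i_\ast\cG\right)(\tilde V) = (i_\ast\cG)(\tilde V) = \cG(i^{-1}\tilde V) = \cG\bigl((i')^{-1}\iota^{-1}\tilde V\bigr) = \left(j_\ast(i')^{-1}\cG\right)(\tilde V)$ for $\tilde V \subseteq \tilde Y$ admissible open, using $i^{-1}\tilde V = i^{-1}\iota(\tilde V)$ and the pullback property to identify it with $(i')^{-1}\iota^{-1}\tilde V$ inside $\overline{\tilde U}\cap\tilde Y$. The third step is to apply this with $\cG = i^{-1}\tilde\cF$, together with the obvious identity $(i')^{-1} i^{-1} = j^{-1}\iota^{-1}$ on inverse images, to obtain
\[
\left(\widetilde{\cF_{\overline U}}\right)_{|\tilde Y} = \iota^{-1}\bigl(i_\ast i^{-1}\tilde\cF\bigr) \cong j_\ast (i')^{-1} i^{-1}\tilde\cF = j_\ast j^{-1}\bigl(\iota^{-1}\tilde\cF\bigr) = \widetilde{(\cF_{|Y})_{\overline{U\cap Y}}}.
\]
Finally, translating back through the equivalence of sheaf categories on $Y$ and $\tilde Y$ gives the claimed natural isomorphism on $Y$, and naturality is automatic since every map used is the canonical one.

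\textbf{Main obstacle.} None of the steps is deep; the only point requiring a little care is the identification of closures, namely that $\tilde U \cap \tilde Y$ has closure $\overline{\tilde U}\cap\tilde Y$ in $\tilde Y$, which in turn rests on knowing that $\{\tilde W\}$ genuinely forms a basis of the topology on $\tilde X$ and that $\tilde Y$ carries the subspace topology with $\{\tilde W : W \subseteq Y\}$ a basis — both are part of the formalism of \cite[\S5]{SchVdPut}. The other mildly technical point is checking that $\iota^{-1}$ of the sheaf $i_\ast i^{-1}\tilde\cF$ is computed correctly; since $\iota$ is an open immersion this is just evaluation on opens contained in $\tilde Y$ and presents no difficulty. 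So I expect the proof to be short, with the bulk of the (minimal) effort going into spelling out the pullback square and the open base-change isomorphism.
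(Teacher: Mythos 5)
Your proposal is correct and follows essentially the same route as the paper: both first pass to the Huber spaces, invoke the point-set fact that the closure of $\tilde U\cap\tilde Y$ in $\tilde Y$ equals $\overline{\tilde U}\cap\tilde Y$ (the paper cites Bourbaki for this), and then apply a base-change identity for the functor $\cG\mapsto\cG_Z$ along the open immersion $\tilde Y\hookrightarrow\tilde X$. The only difference is presentational: you prove the open-immersion base-change isomorphism $\iota^{-1}i_\ast\cong j_\ast(i')^{-1}$ directly by evaluation on basic opens, whereas the paper simply cites the corresponding formula (2.3.19) from Kashiwara--Schapira, so your version is self-contained at the cost of a few extra lines.
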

\begin{proof}Because $\tilde{Y}$ is open in $\tilde{X}$, we deduce from \cite[Chapter I, \S 1.6, Proposition 5]{BourGenTop} that $\tilde{Y} \cap \overline{\tilde{U}}$ is the closure in $\tilde{Y}$ of $\tilde{Y} \cap \tilde{U} = \widetilde{Y \cap U}$. Let $f : \tilde{Y} \hookrightarrow \tilde{X}$ be the open inclusion. Applying the formula \cite[(2.3.19)]{KS} with $Z := \overline{\tilde{U}}$ gives
\[ \widetilde{(\cF_{\overline{U}})_{|Y}} = f^{-1} (\tilde{\cF}_{\overline{\tilde{U}}}) = (f^{-1} \tilde{\cF})_{f^{-1}(\overline{\tilde{U}})} = (f^{-1} \tilde{\cF})_{\overline{\widetilde{U \cap Y}}^{\tilde{Y}}} = \widetilde{(\cF_{|Y})_{\overline{U \cap Y}}}.\]
Now use the equivalence of categories $\Ab(Y) \cong \Ab(\tilde{Y})$ mentioned above. \end{proof}
The following result will prove useful later.

\begin{prop} \label{ApproxOnClosed} Suppose that $\cU$ is an admissible cover of $X$ that is totally ordered by inclusion and $\cF$ is an abelian sheaf on $X$. Then the restriction maps $\cF\to \cF_{\overline{U}}$ for $U\in \cU$ induce an isomorphism \[\cF\stackrel{\cong}{\to}\lim\limits_{\longleftarrow}\cF_{\overline{U}}.\]
\end{prop}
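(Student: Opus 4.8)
\textbf{Proof proposal for Proposition \ref{ApproxOnClosed}.}

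The plan is to work entirely on the Huber space $\tilde X$ and use the description of the sections of $\cF_{\overline U}$ on open sets of $\tilde X$. Recall that for an admissible open $U \subseteq X$ we have $\widetilde{\cF_{\overline U}} = i_\ast i^{-1}\tilde\cF$ where $i : \overline{\tilde U} \hookrightarrow \tilde X$ is the inclusion of the closure. Since $\tilde X$ is the open set of itself, the global sections are $\cF_{\overline U}(X) = (i_\ast i^{-1}\tilde\cF)(\tilde X) = (i^{-1}\tilde\cF)(\overline{\tilde U})$. The restriction map $\cF \to \cF_{\overline U}$ on global sections is the composite $\tilde\cF(\tilde X) \to (i^{-1}\tilde\cF)(\overline{\tilde U})$ given by the unit of adjunction. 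So I need to show that the natural map $\tilde\cF(\tilde X) \to \invlim_{U \in \cU} (i^{-1}_U\tilde\cF)(\overline{\tilde U})$ is an isomorphism, where $i_U : \overline{\tilde U}\hookrightarrow \tilde X$.

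First I would reduce the target: since $i^{-1}$ preserves stalks, $(i^{-1}_U\tilde\cF)(\overline{\tilde U})$ is the set of compatible families of germs of $\tilde\cF$ over the points of $\overline{\tilde U}$, i.e. $(i^{-1}_U\tilde\cF)(\overline{\tilde U}) = \invlim_{V \supseteq \overline{\tilde U}} \tilde\cF(V)$ where $V$ runs over open neighbourhoods of $\overline{\tilde U}$ in $\tilde X$ (this is the standard formula for sections of an inverse image over a subspace, valid because $i$ is a closed embedding and $\tilde X$ is quasi-compact/sober enough — here I would cite or reprove that sections of $i^{-1}\cG$ over a closed subset equal the colimit of sections of $\cG$ over open neighbourhoods, or alternatively the presheaf-inverse-image description from \cite[\S2.3]{KS}). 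Now the key geometric input is that $\cU$ is an admissible cover of $X$ totally ordered by inclusion. This means $\{\tilde U : U \in \cU\}$ is an open cover of $\tilde X$ totally ordered by inclusion, hence $\tilde X = \bigcup_{U \in \cU}\tilde U$, and in particular $\tilde X = \bigcup_{U}\overline{\tilde U}$ as well. Therefore an open set containing $\overline{\tilde U}$ for \emph{all} $U$ is all of $\tilde X$; combining, $\invlim_U (i^{-1}_U\tilde\cF)(\overline{\tilde U})$ receives a map from $\tilde\cF(\tilde X)$ and I must show it is bijective.

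For injectivity: if $s \in \tilde\cF(\tilde X)$ restricts to $0$ in $(i^{-1}_U\tilde\cF)(\overline{\tilde U})$ for every $U$, then $s$ has zero germ at every point of $\overline{\tilde U}$, hence zero germ at every point of $\tilde X = \bigcup \overline{\tilde U}$, so $s = 0$ by the separation axiom for the sheaf $\tilde\cF$. For surjectivity: given a compatible family $(s_U)$, each $s_U$ is (after shrinking neighbourhoods) a section $t_U \in \tilde\cF(V_U)$ on some open $V_U \supseteq \overline{\tilde U} \supseteq \tilde U$; compatibility of the family forces these germs to agree on overlaps. Here I would use that $\cU$ is totally ordered: for $U \subseteq U'$ we have $\tilde U \subseteq \tilde U' \subseteq \overline{\tilde U'}$, and the compatibility in the inverse limit says $t_{U'}$ and $t_U$ have the same germ along $\overline{\tilde U}$, in particular along $\tilde U$. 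Thus the sections $t_U|_{\tilde U} \in \tilde\cF(\tilde U)$ agree on pairwise intersections $\tilde U \subseteq \tilde{U'}$ (which is just $\tilde U$, by total ordering, so the cocycle condition is automatic), and glue via the sheaf axiom on the open cover $\{\tilde U\}$ of $\tilde X$ to a global section $s \in \tilde\cF(\tilde X)$ whose image is $(s_U)$. Finally I would translate back through the equivalence $\Ab(X)\simeq\Ab(\tilde X)$ and note $\cF(X) = \tilde\cF(\tilde X)$, $\cF_{\overline U}(X) = \widetilde{\cF_{\overline U}}(\tilde X)$.

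The main obstacle I anticipate is the bookkeeping around the identification $(i^{-1}\tilde\cF)(\overline{\tilde U}) \cong \invlim_{V \supseteq \overline{\tilde U}} \tilde\cF(V)$ and making sure the ``shrink to a neighbourhood'' step in surjectivity is done coherently across all $U$ simultaneously — i.e. that the germ data genuinely patch to an honest global section rather than merely a compatible system of germs. The total ordering of $\cU$ is exactly what kills the potential obstruction (it trivialises all Čech $1$-cocycles on the cover $\{\tilde U\}$), so the argument should go through cleanly; the one technical point worth stating carefully is why $\{\tilde U : U \in \cU\}$ is an \emph{admissible}/genuine open cover of $\tilde X$, which follows from $\cU$ being an admissible cover of $X$ together with the correspondence between admissible covers of $X$ and open covers of $\tilde X$ recorded in \cite[\S5]{SchVdPut}.
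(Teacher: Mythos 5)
Your proposal takes a genuinely different route from the paper's proof, which stays entirely at the level of sheaves: for $U\subset V$ in $\cU$, Lemma~\ref{iDagLocal} gives $\cF_{\overline V}|_U=(\cF|_U)_{\overline{U\cap V}}=\cF|_U$, so by cofinality of $\{V\in\cU:V\supseteq U\}$ the restriction of the inverse limit to each $U\in\cU$ is identified with $\cF|_U$, and since $\cU$ covers $X$ this yields the sheaf isomorphism. Your argument instead works with global sections over $\tilde X$.

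There are two genuine gaps. First, the proposition asserts an isomorphism of \emph{sheaves}, but you only argue that the map is a bijection on global sections over $\tilde X$; that alone does not give a sheaf isomorphism, and you would need to rerun the argument over every admissible open $Y$ of $X$ with the restricted cover $\{Y\cap U : U\in\cU\}$, which you do not say. Second, and more substantively, your argument relies on the identification $(i^{-1}\tilde\cF)(\overline{\tilde U})\cong\colim_{V\supseteq\overline{\tilde U}}\tilde\cF(V)$ (you wrote $\invlim$, a slip for $\colim$). By definition $i^{-1}\tilde\cF$ is the \emph{sheafification} of the presheaf $W\mapsto\colim_{V\supseteq W}\tilde\cF(V)$, so the assertion that no sheafification is needed over $\overline{\tilde U}$ is a tautness statement about the Huber space that genuinely requires proof. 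In the paper this is in effect the content of Theorem~\ref{iDag} (for affinoid $X$ and rational $U$), which appears \emph{after} Proposition~\ref{ApproxOnClosed} and rests on a carefully chosen cofinal system of wide-open neighbourhoods plus Tate acyclicity; it is not available at the point where you invoke it, and the paper's use of Lemma~\ref{iDagLocal} is precisely what sidesteps the issue. Finally, your surjectivity step leaves unverified that the glued section $s$ has the same germ as $t_U$ along all of $\overline{\tilde U}$ rather than only on the open $\tilde U$; this is true (at a boundary point $p\in\overline{\tilde U}\cap\tilde V$ with $U\subsetneq V$, use that $t_U$ and $t_V$ agree near $\overline{\tilde U}$ by compatibility, and $s$ agrees with $t_V$ on $\tilde V$), but it needs to be spelled out rather than waved at.
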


\begin{proof} If $U,V\in \cU$ with $U\subset V$ then, by \cite[Proposition 2.3.6(iii)]{KS}  \[\cF_{\overline{U}}=(\cF_{\overline{V}})_{\overline{U}}\] and so it is easy to verify that there is a natural morphism $\cF\to \lim\limits_{\longleftarrow} \cF_{\overline{U}}$ with the connecting maps in the limit also given by restriction. 
	
	Moreover, by Lemma \ref{iDagLocal}, for $U\subset V$ in $\cU$ we can compute \[\cF_{\overline{V}}|_{U}=(\cF|_{U})_{\overline{U\cap V}}=\cF|_{U}.\] Since for all $W\in \cU$ there is $V\in \cU$ containing both $U$ and $W$, $\{V\in \cU\st U\subset V\}$ is cofinal in $\cU$, and it follows that $ \cF|_{U}\cong \left(\lim\limits_{\longleftarrow} \cF_{\overline{V}}\right)|_{U}$ for all $U\in \cU$. Since $\cU$ is a cover of $X$ the result follows. 
\end{proof}

We note that the proof above works whenever $\cU$ is a directed set under inclusion; that is whenever any two elements of $\cU$ have a common upper bound in $\cU$ under inclusion.

The operation $\cF \mapsto \cF_{\overline{U}}$ on $\cO_X$-modules behaves well with respect to tensor products.

\begin{lem} \label{DagTensor} If $\cF$ and $\cG$ are two $\cO_X$-modules and $U$ is an admissible open subset of $X$ then there is a natural isomorphism  \[ (\cF\underset{\cO_X}{\otimes}{}\cG)_{\overline{U}}\cong \cF_{\overline{U}}\underset{(\cO_X)_{\overline{U}}}{\otimes}{}\cG_{\overline{U}}.\]
\end{lem}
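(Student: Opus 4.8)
The plan is to reduce the statement to a local computation, using the fact that both sides of the claimed isomorphism are sheaves on $X$ and the construction $\cF \mapsto \cF_{\overline{U}}$ is compatible with restriction to admissible opens (Lemma \ref{iDagLocal}). First I would recall from \cite[\S2.3]{KS} the functorial description $\widetilde{\cF_{\overline{U}}} = i_\ast i^{-1} \tilde{\cF}$, where $i : \overline{\tilde U} \hookrightarrow \tilde X$. The natural restriction morphisms $\cF \to \cF_{\overline{U}}$ and $\cG \to \cG_{\overline{U}}$ are $\cO_X$-linear over $\cO_X \to (\cO_X)_{\overline{U}}$, so by the universal property of tensor product there is a canonical morphism
\[ (\cF \underset{\cO_X}{\otimes} \cG)_{\overline{U}} \longrightarrow \cF_{\overline{U}} \underset{(\cO_X)_{\overline{U}}}{\otimes} \cG_{\overline{U}}, \]
and the content of the lemma is that it is an isomorphism. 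To check this it suffices to check it on stalks, or equivalently after pulling back along $i^{-1}$ on $\overline{\tilde U}$; since sheafification and $i^{-1}$ are exact and commute with tensor products, everything comes down to understanding $i^{-1}$ of a tensor product of $\cO_X$-modules.

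The key step is the observation that $i^{-1}$ (pullback along the closed immersion $i : \overline{\tilde U} \hookrightarrow \tilde X$) is a symmetric monoidal functor: for sheaves of modules $\tilde\cF$, $\tilde\cG$ over a sheaf of rings $\cR$ on $\tilde X$ one has a natural isomorphism $i^{-1}(\tilde\cF \otimes_{\cR} \tilde\cG) \cong i^{-1}\tilde\cF \otimes_{i^{-1}\cR} i^{-1}\tilde\cG$, because $i^{-1}$ is a left adjoint and hence commutes with colimits, and tensor product is a colimit (cokernel of a map of direct sums). Applying this with $\cR = \tilde\cO_X$ gives $i^{-1}(\widetilde{\cF \otimes_{\cO_X} \cG}) \cong i^{-1}\tilde\cF \otimes_{i^{-1}\tilde\cO_X} i^{-1}\tilde\cG$. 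Then I would apply $i_\ast$ to both sides. On the left this directly yields $\widetilde{(\cF \otimes_{\cO_X}\cG)_{\overline U}}$. On the right one needs to identify $i_\ast$ of the tensor product over $i^{-1}\tilde\cO_X$ with the tensor product of the $i_\ast$'s over $i_\ast i^{-1}\tilde\cO_X = \widetilde{(\cO_X)_{\overline U}}$; this is the standard projection-type formula for $i_\ast$ along a closed immersion, valid because $i$ is a closed embedding of topological spaces (so $i_\ast$ is exact and fully faithful, identifying sheaves on $\overline{\tilde U}$ with sheaves on $\tilde X$ supported on $\overline{\tilde U}$, and the module structures match up via the unit $\cR \to i_\ast i^{-1}\cR$).

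The main obstacle I anticipate is making precise the claim that $i_\ast$ commutes with the tensor product in the relevant sense. One clean way to sidestep delicate point-set arguments is to work entirely on $\overline{\tilde U}$: since $i_\ast$ is fully faithful and exact, a morphism of sheaves on $\tilde X$ that are both in the essential image of $i_\ast$ is an isomorphism if and only if its restriction along $i^{-1}$ is, so it is enough to prove $i^{-1}$ of the canonical map is an isomorphism, which is exactly the monoidality of $i^{-1}$ established above. I would also need to double-check naturality of the whole construction in $U$ (so that the isomorphism is compatible with shrinking $U$), which follows formally from naturality of the adjunction counit and of the monoidal structure maps, and that the construction really is the one induced by the restriction morphisms $\cF \to \cF_{\overline U}$, $\cG \to \cG_{\overline U}$ — this is a diagram chase using that $i^{-1}$ sends the counit $\tilde\cF \to i_\ast i^{-1}\tilde\cF$ to the identity. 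Finally I would transport the statement back from $\tilde X$ to $X$ via the equivalence $\Ab(X) \cong \Ab(\tilde X)$, noting this equivalence is itself compatible with tensor products of $\cO_X$-modules.
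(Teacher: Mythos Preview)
Your argument is correct, but the paper takes a shorter route. Rather than unwinding $i_\ast i^{-1}$ and invoking monoidality of $i^{-1}$ together with full faithfulness of $i_\ast$, the paper simply quotes \cite[Proposition 2.3.10]{KS}, which gives natural isomorphisms $\cF_{\overline U}\cong (\cO_X)_{\overline U}\otimes_{\cO_X}\cF$ and $\cG_{\overline U}\cong (\cO_X)_{\overline U}\otimes_{\cO_X}\cG$; tensoring these over $(\cO_X)_{\overline U}$ and using associativity of the tensor product immediately yields the result. Your approach is essentially a direct proof of (the relevant special case of) that Kashiwara--Schapira proposition from first principles, which is why it is longer. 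The payoff of your method is that it is self-contained and makes the monoidal structure explicit; the payoff of the paper's method is brevity, at the cost of deferring the real content to \cite{KS}.
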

\begin{proof}
By \cite[Proposition 2.3.10]{KS}  we have
\[\cF_{\overline{U}}\cong (\cO_X)_{\overline{U}}\underset{\cO_X}{\otimes} \cF \qmb{and} \cG_{\overline{U}}\cong (\cO_X)_{\overline{U}}\underset{\cO_X}{\otimes} \cG.\]
Now tensor these together over $(\cO_X)_{\overline{U}}$ and contract tensors. 
\end{proof}

We will now restrict to the case where $X$ is an affinoid variety, and explain how $\cF_{\overline{U}}$ is related to the more well-known operations in rigid analytic geometry.

\begin{defn}\label{SlightlyLarger} Suppose that $X$ is affinoid, and $U = X\left(\frac{g_1}{g_0},\frac{g_2}{g_0}, \cdots, \frac{g_n}{g_0}\right)$ is a rational subdomain of $X$, where $g_0,\ldots,g_n \in \cO(X)$ generate the unit ideal in $\cO(X)$. Following \cite[Exercise 7.1.12(5)]{FvdPut}, for each $s \in \sq{K}$ such that $s > 1$, define 
\[ U(s) := \{a \in X : |g_i(a)| \leq s |g_0(a)| \qmb{for all} i=1,\ldots, n\}.\]
\end{defn}
These slightly larger rational subdomains of $X$ form a cofinal system of \emph{wide open neighbourhoods} of $U$ in $X$, by \cite[Exercise 7.1.12(5)(a)]{FvdPut}. Note that this notation is slightly misleading because the sets $U(s)$ depend in general on the choice of presentation $U = X\left(\frac{g_1}{g_0},\frac{g_2}{g_0}, \cdots, \frac{g_n}{g_0}\right)$ of $U$ as a rational subdomain of $X$. 

\begin{lem}\label{CofinalWideOpens} Let $V$ be an admissible open subset of $X$.  Then $\tilde{V}$ contains the closure $\overline{\tilde{U}}$ in $\tilde{X}$ if and only if $V$ contains $U(s)$ for some $s \in \sq{K}$ with $s> 1$.\end{lem}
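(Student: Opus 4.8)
The plan is to translate the topological condition on the Huber space into a condition about admissible coverings of $X$, using the fact that $\tilde{X}$ is quasi-compact when $X$ is affinoid and that the sets $U(s)$ form a cofinal system of wide open neighbourhoods of $U$. For one direction, suppose $V \supseteq U(s)$ for some $s \in \sq{K}$ with $s > 1$. Since $U(s)$ is itself a rational subdomain of $X$ containing $U$, and since $\tilde{U(s)}$ is an open subset of $\tilde{X}$ containing $\tilde{U}$, it suffices to show that $\overline{\tilde{U}} \subseteq \tilde{U(s)}$; then $\overline{\tilde{U}} \subseteq \tilde{U(s)} \subseteq \tilde{V}$. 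To see this, I would pick $s' \in \sq{K}$ with $1 < s' < s$ and observe that $\{U(s'), X \setminus U(s)^{\mathrm{cl}}\}$ — suitably interpreted as an admissible covering of $X$ by rational subdomains — separates $\overline{\tilde{U}}$ from the complement of $\tilde{U(s)}$; more precisely, the point-set topology of $\tilde{X}$ (prime filters) gives $\overline{\tilde{U}} = \bigcap \{ \tilde{W} : W$ admissible open, $\tilde{W} \supseteq \tilde{U} \}$, but it is cleaner to use that $\tilde{X} \setminus \tilde{U(s)}$ is closed and disjoint from $\tilde{U}$, hence disjoint from $\overline{\tilde{U}}$ provided we know $\tilde{U}$ and $\tilde{X}\setminus\tilde{U(s)}$ have disjoint closures — which follows because $U(s')$ for $1 < s' < s$ gives an admissible open set whose Huber-space incarnation contains $\overline{\tilde{U}}$ and is contained in $\tilde{U(s)}$.

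For the converse, suppose $\tilde{V} \supseteq \overline{\tilde{U}}$. The family $\{ U(s) : s \in \sq{K}, s > 1\}$ is cofinal among wide open neighbourhoods of $U$, and by \cite[Exercise 7.1.12(5)(a)]{FvdPut} every wide open neighbourhood of $U$ contains some $U(s)$. The key point is that $\overline{\tilde{U}} = \bigcap_{s > 1} \tilde{U(s)}$: indeed each $\tilde{U(s)}$ is an open neighbourhood of $\tilde{U}$, hence of $\overline{\tilde{U}}$ once we know $U(s)$ is a wide open neighbourhood (so that $\overline{\tilde{U}} \subseteq \tilde{U(s)}$ by the first direction), and conversely any prime filter in all the $\tilde{U(s)}$ must lie in the closure of $\tilde{U}$ because the $U(s)$ shrink down to $U$. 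Granting this, $\overline{\tilde{U}} = \bigcap_{s>1}\tilde{U(s)} \subseteq \tilde{V}$, and taking complements inside the quasi-compact space $\tilde{X}$ gives $\tilde{X} \setminus \tilde{V} \subseteq \bigcup_{s > 1} (\tilde{X} \setminus \tilde{U(s)})$. Since $\tilde{X}\setminus\tilde{V}$ is quasi-compact (closed in the quasi-compact $\tilde{X}$) and the sets $\tilde{X}\setminus\tilde{U(s)}$ increase as $s$ decreases, a single $s$ suffices: $\tilde{X}\setminus\tilde{V} \subseteq \tilde{X}\setminus\tilde{U(s)}$, i.e. $\tilde{U(s)} \subseteq \tilde{V}$, i.e. $U(s) \subseteq V$.

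I expect the main obstacle to be handling the point-set topology of the Huber space $\tilde{X}$ carefully — in particular, establishing the identity $\overline{\tilde{U}} = \bigcap_{s > 1}\tilde{U(s)}$ rigorously. This requires knowing that the $\tilde{U(s)}$ really do form a neighbourhood basis of $\overline{\tilde{U}}$, which in turn rests on the cofinality statement \cite[Exercise 7.1.12(5)(a)]{FvdPut} together with the description of closure in terms of prime filters from \cite[\S 5]{SchVdPut}: a prime filter $\fr{p}$ lies in $\overline{\tilde{U}}$ iff every admissible open $W \in \fr{p}$ meets $U$, equivalently iff $\fr{p}$ contains no admissible open set disjoint from $U$. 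One then checks that an admissible open $W$ is disjoint from $U$ iff $W \cap U(s) = \emptyset$ for some $s > 1$ (using that $\{U(s)\}$ exhausts a neighbourhood of $U$ and admissibility), which is exactly the combinatorial heart of the argument. The rest is a routine quasi-compactness reduction, which I would not spell out in detail.
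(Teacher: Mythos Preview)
Your strategy --- establish $\overline{\tilde{U}} = \bigcap_{s>1}\widetilde{U(s)}$ and then extract a single $s$ by compactness --- is different from the paper's. The paper instead uses the retraction $r:\tilde{X}\to\sM(X)$ onto the Berkovich space: it shows $\overline{\tilde{U}} = r^{-1}(\sM(U))$ directly (using that $\sM(U)$ is compact in the Hausdorff space $\sM(X)$ and that $r^{-1}(r(y))=\overline{\{y\}}$), and then both directions follow at once from the definition of wide open neighbourhood in \cite[Exercise 7.1.12(4)]{FvdPut}, which is phrased precisely in terms of the fibres $r^{-1}(a)$, together with the cofinality result \cite[Exercise 7.1.12(5)(a)]{FvdPut}.

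Your argument has two concrete errors. First, the claim ``an admissible open $W$ is disjoint from $U$ iff $W\cap U(s)=\emptyset$ for some $s>1$'' is false: with $X$ the closed unit disc, $U=\{|x|\le|p|\}$ and $W$ the open annulus $\{|p|<|x|\le 1\}$, one has $W\cap U=\emptyset$ yet $W\cap U(s)\ne\emptyset$ for every $s>1$. The claim \emph{is} true for affinoid $W$, and affinoid $W$ suffice since the $\tilde{W}$ with $W$ affinoid form a basis for $\tilde{X}$; but proving it for affinoid $W$ is itself a compactness statement most naturally seen in $\sM(X)$, which brings you back to the paper's method. Second, the final step is not a valid use of quasi-compactness: the sets $\tilde{X}\setminus\widetilde{U(s)}$ are \emph{closed}, not open, so covering the quasi-compact set $\tilde{X}\setminus\tilde{V}$ by them does not directly yield a finite subcover. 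This is repairable by passing to the constructible topology on the spectral space $\tilde{X}$ (where complements of quasi-compact opens become open and closed subsets remain compact), but you must say so. Your first-direction argument is also circular as written --- you reduce $\overline{\tilde{U}}\subseteq\widetilde{U(s)}$ to the same statement for $s'<s$ --- though the covering idea you allude to first can be made to work.
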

\begin{proof} Let $r : \tilde{X} \to \sM(X)$ be the retraction map onto the Berkovich space $\sM(X)$ associated to $X$. We claim that $\overline{\tilde{U}} = r^{-1}(\sM(U))$. To see this, note that $\sM(U) = r(\tilde{U})$ is closed in $\sM(X)$ because $\sM(U)$ is compact and $\sM(X)$ is Hausdorff; since $r$ is continuous this gives $\overline{\tilde{U}} \subseteq r^{-1}(\sM(U))$. For the other inclusion, let $x \in r^{-1}(\sM(U))$ so that $r(x) = r(y)$ for some $y \in \tilde{U}$. Therefore $x \in r^{-1}(r(y)) = \overline{\{y\}}$, by \cite[Lemma 3.2ii]{SchVdPut}, and thus $x \in \overline{\{y\}} \subseteq \overline{\tilde{U}}$ as claimed. 

It follows that $\tilde{V}\supseteq \overline{\tilde{U}}$ if and only if $r^{-1}(a) \subset \tilde{V}$ for all $a \in \sM(U)$. Now, $\tilde{V} \supseteq \overline{\tilde{U}}$ implies that $V$ is a wide open neighbourhood of $U$ in the sense of \cite[Exercise 7.1.12(4)]{FvdPut}. Hence $V$ contains some $U(s)$ by \cite[Exercise 7.1.12(5)(a)]{FvdPut}. Conversely, if $V$ contains some $U(s)$ then $V$ is a wide open neighbourhood of $U$ so for all $a \in \sM(U)$ we can find an open neighbourhood $W_a$ of $r^{-1}(a)$ contained in $V$. But then $\overline{\tilde{U}} = r^{-1}(\sM(U)) = \bigcup\limits_{a \in \sM(U)} r^{-1}(a) \subseteq \bigcup\limits_{a \in \sM(U)} \tilde{W_a} \subseteq \tilde{V}$ as required.\end{proof}

\begin{lem}\label{Acyclic} Let $\cF$ be an abelian presheaf on an affinoid variety $X$, let $U$ be a rational subdomain of $X$, and let $\cF^\dag_U$ be the presheaf
\[\cF^\dag_U : Y \mapsto \underset{s \in \sq{K}, s > 1}{\colim}{} \hsp \cF(Y \cap U(s))\]
on the weak $G$-topology of $X$. Then $\cF^\dag_U$ is a sheaf whenever $\cF$ is a sheaf, and $\cF^\dag_U$ is acyclic whenever $\cF$ is acyclic.
\end{lem}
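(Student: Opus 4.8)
\textbf{Proof plan for Lemma \ref{Acyclic}.}

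The plan is to exploit the fact that the wide-open neighbourhoods $\{U(s) : s \in \sq{K}, s > 1\}$ form a cofinal directed system, so that $\cF^\dag_U$ is nothing but a filtered colimit of the restriction sheaves $\cF(- \cap U(s))$, and filtered colimits of sheaves (over a $G$-topology with a basis) are again sheaves, while filtered colimits are exact and hence preserve acyclicity. First I would verify that $\cF^\dag_U$ is a presheaf: each $U(s)$ is a rational subdomain of $X$ by Definition \ref{SlightlyLarger}, so for a rational subdomain $Y$ of $X$ the intersection $Y \cap U(s)$ is again rational, and the restriction maps of $\cF$ together with the transition maps $U(s') \hookrightarrow U(s)$ for $s' < s$ make the assignment functorial; the colimit is filtered because $\sq{K} \cap (1,\infty)$ is a directed set under $\geq$ (for two values $s_1, s_2$ take $\min(s_1,s_2)$, noting $U(\min(s_1,s_2)) \subseteq U(s_i)$).

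Next I would prove the sheaf property. Fix a finite covering $Y = \bigcup_{j=1}^n Y_j$ of a rational subdomain $Y$ by rational subdomains. For each fixed $s$ we have the covering $Y \cap U(s) = \bigcup_j (Y_j \cap U(s))$, and if $\cF$ is a sheaf then the \v{C}ech complex
\[ 0 \to \cF(Y \cap U(s)) \to \prod_j \cF(Y_j \cap U(s)) \to \prod_{j<k} \cF(Y_j \cap Y_k \cap U(s)) \]
is exact (at the first two spots). Since all the index sets are finite, passing to the filtered colimit over $s$ commutes with the finite products appearing here and is exact, so the colimit complex
\[ 0 \to \cF^\dag_U(Y) \to \prod_j \cF^\dag_U(Y_j) \to \prod_{j<k} \cF^\dag_U(Y_j \cap Y_k) \]
is exact as well; this is precisely the sheaf axiom for $\cF^\dag_U$ on the weak $G$-topology. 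The acyclicity statement is handled by the same device: if $\cF$ is acyclic, then for every finite covering $\cV$ of every rational subdomain $Y$ and every fixed $s$ the \v{C}ech complex $\check{C}^\bullet(\cV \cap U(s), \cF)$ is exact in positive degrees; taking the filtered colimit over $s$ — which is exact and, since $\cV$ is finite, commutes with the formation of the \v{C}ech complex — shows $\check{C}^\bullet(\cV, \cF^\dag_U)$ is exact in positive degrees, i.e. $\cF^\dag_U$ is acyclic.

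The only genuinely delicate point is the compatibility between the colimit and the covering: one needs that intersecting the whole covering $\cV$ of $Y$ with a single $U(s)$ gives a covering of $Y \cap U(s)$ by admissible opens of the ambient weak $G$-topology, and that the transition maps of the colimit system are compatible with these restrictions — this is where it matters that each $U(s)$ is itself a \emph{rational} subdomain (so that $Y_j \cap U(s)$ stays inside the weak $G$-topology) and that the system $\{U(s)\}$ is totally ordered, hence directed. Everything else is a formal consequence of the exactness of filtered colimits. I would also remark that the hypothesis that $U$ is rational (rather than merely an affinoid subdomain) is what makes Definition \ref{SlightlyLarger} available and is used only through the fact that the $U(s)$ are rational subdomains cofinal among wide-open neighbourhoods of $U$, cf.\ Lemma \ref{CofinalWideOpens}.
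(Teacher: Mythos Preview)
Your proposal is correct and follows essentially the same approach as the paper: the paper's proof simply observes that for any finite affinoid covering $\cY = \{Y_1,\ldots,Y_n\}$ of an affinoid subdomain $Y$, the family $\cY_s = \{Y_j \cap U(s)\}$ is a finite affinoid covering of $Y \cap U(s)$, so $C^\bullet_{\aug}(\cY, \cF^\dag_U) = \colim_s C^\bullet_{\aug}(\cY_s, \cF)$, and then invokes the fact that cohomology commutes with filtered colimits. One small remark: the weak $G$-topology consists of affinoid subdomains, not only rational ones, but your argument goes through verbatim since $Y_j \cap U(s)$ is affinoid whenever $Y_j$ is.
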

\begin{proof} If $\cY := \{Y_1, \ldots, Y_n\}$ is a finite affinoid covering of an affinoid subdomain $Y$ of $X$, and if $s \in \sq{K}$ and $s>1$, then $\cY_s := \{Y_1 \cap U(s), \ldots, Y_n \cap U(s)\}$ is a finite affinoid covering of $Y \cap U(s)$. Hence there is an isomorphism of augmented \v{C}ech complexes
\[C^\bullet_{\aug}(\cY, \cF^\dag_U) = \underset{s \in \sq{K},s>1}{\colim}{} \hsp C^\bullet_{\aug}(\cY_s, \cF).\]
The result follows, because cohomology commutes with filtered colimits.
\end{proof}

\begin{thm}\label{iDag} We have $\cF_{\overline{U}} = \cF^\dag_U$ for every abelian sheaf $\cF$ on an affinoid variety $X$ and every rational subdomain $U$ of $X$.
\end{thm}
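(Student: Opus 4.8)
The plan is to show that the two sheaves $\cF_{\overline{U}}$ and $\cF_U^\dag$ agree by exhibiting a natural isomorphism on stalks, or more directly by comparing their sections over affinoid subdomains of $X$ and invoking the equivalence of categories between sheaves on $X$ and sheaves on $\tilde X$. First I would recall from Definition \ref{iUdagDefn} that $\widetilde{\cF_{\overline{U}}} = i_\ast i^{-1} \tilde\cF$, where $i : \overline{\tilde U} \hookrightarrow \tilde X$ is the closed inclusion. By general sheaf theory, for any admissible open $W \subseteq X$ we have
\[ (i_\ast i^{-1} \tilde\cF)(\tilde W) = (i^{-1}\tilde\cF)(\tilde W \cap \overline{\tilde U}) = \underset{\tilde V \supseteq \tilde W \cap \overline{\tilde U}}{\colim}{}\ \tilde\cF(\tilde V),\]
the colimit running over admissible opens $\tilde V$ of $\tilde X$ containing $\tilde W \cap \overline{\tilde U}$. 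The first task is to pass from arbitrary such $\tilde V$ to $\tilde V$ of the special form $\widetilde{W'}$ for $W'$ admissible open in $X$, using that sets of the form $\tilde W'$ form a basis for $\tilde X$; since $\tilde W \cap \overline{\tilde U}$ is quasi-compact (being closed in the quasi-compact $\tilde X$ when $X$ is qcqs, or at worst by reducing to the affinoid case) a finite subcover argument lets me replace a general $\tilde V$ by a single basic open $\widetilde{W'} \supseteq \tilde W \cap \overline{\tilde U}$. This rewrites the stalk-type colimit as $\colim_{W'} \cF(W')$ over admissible opens $W' \subseteq X$ with $\widetilde{W'} \supseteq \tilde W \cap \overline{\tilde U} = \overline{\widetilde{W \cap U}}^{\tilde W}$ (using Lemma \ref{iDagLocal} or the computation in its proof to identify $\tilde W \cap \overline{\tilde U}$ with the closure of $\widetilde{W \cap U}$ inside $\tilde W$).

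Next I would use Lemma \ref{CofinalWideOpens}: for $W$ an affinoid subdomain of $X$, the admissible opens $W'$ of $W$ with $\widetilde{W'} \supseteq \overline{\widetilde{W\cap U}}^{\tilde W}$ are exactly the wide open neighbourhoods of $W \cap U$ in $W$, and among these the sets $(W \cap U)(s)$ — equivalently $W \cap U(s)$, computing the rational-subdomain enlargement using the presentation pulled back from $X$ — form a cofinal system as $s \in \sq K$, $s>1$. Hence the colimit over all such $W'$ collapses to $\colim_{s>1} \cF(W \cap U(s))$, which is precisely the definition of $\cF_U^\dag(W)$. This gives a natural identification $\cF_{\overline U}(W) = \cF_U^\dag(W)$ for affinoid $W$; since both are sheaves (for $\cF_{\overline U}$ this is automatic from the construction, for $\cF_U^\dag$ it is Lemma \ref{Acyclic}) and affinoid subdomains generate the $G$-topology, the identification extends to an isomorphism of sheaves $\cF_{\overline U} \cong \cF_U^\dag$ on all of $X$, and one checks it is compatible with the restriction morphisms $\cF \to \cF_{\overline U}$ and $\cF \to \cF_U^\dag$ by construction.

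The main obstacle I anticipate is the cofinality bookkeeping in the second step: one must be careful that the "wide open neighbourhoods" appearing from the topological description on $\tilde X$ (via Lemma \ref{CofinalWideOpens}) genuinely match the classical wide open neighbourhoods of a rational subdomain, and that the enlargements $U(s)$ — which depend on the chosen presentation — are cofinal among these, quoting \cite[Exercise 7.1.12(5)(a)]{FvdPut}. A secondary subtlety is the quasi-compactness reduction needed to replace a general admissible open $\tilde V \supseteq \tilde W \cap \overline{\tilde U}$ by a basic one: since $X$ affinoid is qcqs, $\tilde X$ is quasi-compact, and $\tilde W \cap \overline{\tilde U}$ is closed hence quasi-compact, so a finite union of basic opens $\widetilde{W_i'}$ covers it, and $\bigcup_i \widetilde{W_i'} = \widetilde{\bigcup_i W_i'}$ is again basic; this is routine but worth stating. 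Once these cofinality points are in place the theorem follows formally.
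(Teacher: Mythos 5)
Your approach follows the paper's fairly closely in outline, but there is a genuine gap at the very first displayed equality. You assert, as ``general sheaf theory,'' that
\[ (i^{-1}\tilde\cF)(\tilde W \cap \overline{\tilde U}) = \underset{\tilde V \supseteq \tilde W \cap \overline{\tilde U}}{\colim}\ \tilde\cF(\tilde V). \]
This colimit formula is, in general, the definition of the \emph{presheaf} inverse image $i^{-1}_{\pre}\tilde\cF$, not of its sheafification $i^{-1}\tilde\cF$. For an arbitrary topological space and an arbitrary open subset of a closed subspace, sheafification changes the sections, so the equality you write down is not automatic. The quasi-compactness reduction you describe later (replacing an arbitrary open $\tilde V$ by a basic one $\widetilde{W'}$) refines the colimit but does nothing to justify that the colimit computes the \emph{sheaf} rather than the presheaf. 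This is exactly the point the paper's proof handles with care: it first proves the identity $(\mathrm{i}^{-1}_{\pre}\tilde\cF)(\tilde Z \cap C) = \colim_{s>1}\cF(Z\cap U(s))$ for the presheaf, and then invokes Lemma \ref{Acyclic} to show that $Z\mapsto\colim_{s>1}\cF(Z\cap U(s))$ already satisfies the sheaf condition on the basis $\{\tilde Z\cap C\}$ of $C$; it is only this sheaf-on-a-basis argument that lets one conclude $i^{-1}\tilde\cF$ and $i^{-1}_{\pre}\tilde\cF$ agree on these basic opens.

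To repair your proof you have two options. Either insert the step the paper uses — verify via Lemma \ref{Acyclic} that the putative section functor $W\mapsto\colim_s\cF(W\cap U(s))$ is a sheaf on a basis for $C$, so that the presheaf $i^{-1}_{\pre}\tilde\cF$ equals the sheaf $i^{-1}\tilde\cF$ there — or invoke a genuine theorem specific to the setting rather than ``general sheaf theory.'' Your quasi-compactness observation does point in a useful direction: the Huber space $\tilde X$ of an affinoid is a spectral space, and there is a standard result (e.g.\ in the Stacks project, in the section on cohomology of spectral spaces) saying that for a closed subset $Z$ of a spectral space $Y$ and any abelian sheaf $\cG$, the natural map $\colim_{Z\subseteq V}\cG(V)\to\Gamma(Z,\cG|_Z)$ is an isomorphism. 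Applied with $Y=\tilde W$ and $Z=\tilde W\cap\overline{\tilde U}$ this would vindicate your formula, but one must cite such a result explicitly; it is not formal nonsense. As written, the step is unjustified, and without it the rest of your cofinality analysis computes the presheaf rather than the sheaf you actually want.
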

\begin{proof} For every open subset $V$ of $C := \overline{\tilde{U}}$, let 
\[ (i_{\pre}^{-1} \tilde{\cF})(V) = \colim \tilde{\cF}(W)\]
where the colimit runs over all open subsets $W$ of $\tilde{X}$ containing $i(V) = V$. This defines a presheaf $i_{\pre}^{-1} \tilde{\cF}$ on $C$, and $i^{-1}\tilde{\cF}$ is the sheafification of this presheaf. We will first show that 
\begin{equation}\label{ipreForm} (i^{-1}_{\pre} \tilde{\cF})( \tilde{Z} \cap C ) = \underset{s \in\sq{K},s > 1}{\colim}{} \hsp \cF( Z \cap U(s) )\end{equation}
for every affinoid subdomain $Z$ of $X$. Consider the open subsets $W$ of $\tilde{X}$ containing $\tilde{Z} \cap C$. Clearly those $W$ already contained in $\tilde{Z}$ are cofinal; since $\tilde{Z} \cap C$ is the closure of $\tilde{Z} \cap \tilde{U} = \widetilde{Z \cap U}$ in $\tilde{Z}$, Lemma \ref{CofinalWideOpens} tells us that the subsets of the form $(Z \cap U)(s)^{\tilde{}}$ are cofinal amongst these. Now $(\ref{ipreForm})$ follows since $(Z \cap U)(s) = Z \cap U(s)$ for each $s$.

Because the functor $Z \mapsto \colim\limits_{s \in\sq{K},s > 1} \cF( Z \cap U(s) )$ satisfies the sheaf condition with respect to finite coverings of $Z$ by affinoid subdomains of $Z$ by Lemma \ref{Acyclic}, and because the $\tilde{Z} \cap C$ form a basis for the topology of $C$, $(\ref{ipreForm})$ now implies that the values of $i^{-1}\tilde{\cF}$ agree with those of $i^{-1}_{\pre}\tilde{\cF}$ on these basic open sets. 

Hence we can calculate that
\[ \cF_{\overline{U}}(Y) = (i_\ast i^{-1} \tilde{\cF})(\tilde{Y}) = (i^{-1} \tilde{\cF})(\tilde{Y} \cap C) = \underset{s \in\sq{K}, s > 1}{\colim}{} \hsp \cF( Y \cap U(s) )\]
for every affinoid subdomain $Y$ of $X$ as claimed.
\end{proof}
Theorem \ref{iDag} implies, in particular, that the sheaf $\cF^\dag_U$ in fact does \emph{not} depend on the choice of presentation of $U$ as a rational subdomain of $X$. We also see that we should think of $\cF_{\overline{U}}$ as being the sheaf of \emph{sections of $\cF$ overconvergent into the complement of $U$}.

\begin{cor}\label{DagTate} Let $U$ be a rational subdomain of the affinoid variety $X$ and let $\cF$ be a coherent $\cO_X$-module. Then the higher \v{C}ech cohomology of $\cF_{\overline{U}}$ is zero.
\end{cor}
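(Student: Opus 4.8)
The plan is to combine Theorem \ref{iDag} with Lemma \ref{Acyclic} and Tate's Acyclicity Theorem. First I would fix a presentation $U = X\left(\frac{g_1}{g_0},\ldots,\frac{g_n}{g_0}\right)$ of $U$ as a rational subdomain of $X$, so that the slightly larger domains $U(s)$ of Definition \ref{SlightlyLarger} are defined for $s \in \sq{K}$ with $s > 1$. By Theorem \ref{iDag} we have $\cF_{\overline{U}} = \cF^\dag_U$, where $\cF^\dag_U(Y) = \colim_{s > 1} \cF(Y \cap U(s))$; and since $\cF$ is a coherent $\cO_X$-module, it is acyclic on the weak $G$-topology of $X$ by Tate's Acyclicity Theorem \cite[Theorem 4.2.2]{FvdPut} (applied to $\cF$ on the affinoid $X$ and its rational subdomains). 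Lemma \ref{Acyclic} then immediately gives that $\cF^\dag_U$ is acyclic on the weak $G$-topology of $X$ as well, i.e. its higher \v{C}ech cohomology with respect to finite affinoid coverings vanishes.

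The one point that needs a word of justification is that \v{C}ech-acyclicity on the weak $G$-topology suffices: we must also know that the restriction of $\cF_{\overline{U}}$ to any rational (or affinoid) subdomain $Y$ of $X$ again has vanishing higher \v{C}ech cohomology, so that the vanishing propagates to all finite affinoid coverings of all affinoid subdomains. But this is exactly the content of applying Lemma \ref{iDagLocal}: $(\cF_{\overline{U}})_{|Y} \cong (\cF_{|Y})_{\overline{U \cap Y}}$, and $\cF_{|Y}$ is still coherent and $U \cap Y$ is still a rational subdomain of the affinoid $Y$, so the same argument applies verbatim with $X$ replaced by $Y$. Hence $\cF_{\overline{U}}$ satisfies the hypotheses of Tate's criterion for being acyclic in the sense required, and its higher \v{C}ech cohomology vanishes.

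I do not expect any serious obstacle here; the corollary is essentially a formal consequence of the identification $\cF_{\overline{U}} = \cF^\dag_U$ established in Theorem \ref{iDag} together with the fact that filtered colimits are exact and commute with the (finite) \v{C}ech complexes, which was already the engine of Lemma \ref{Acyclic}. The only mild care required is to track that "acyclic" is being used in the sense of vanishing higher \v{C}ech cohomology on the weak $G$-topology, and to invoke the locality Lemma \ref{iDagLocal} so that coherence of $\cF$ is all that is needed as input.
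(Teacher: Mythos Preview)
Your proposal is correct and follows essentially the same route as the paper: invoke Theorem \ref{iDag} to identify $\cF_{\overline{U}}$ with $\cF^\dag_U$, use Tate's Acyclicity (for coherent sheaves, \cite[Theorem 4.5.2]{FvdPut}, not just for $\cO$) to see that $\cF$ is acyclic, and then apply Lemma \ref{Acyclic}. Your second paragraph about restricting to subdomains via Lemma \ref{iDagLocal} is a reasonable extra observation but is not needed for the statement as formulated.
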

\begin{proof} This follows from Theorem \ref{iDag}, Lemma \ref{Acyclic} and Tate's Acyclicity Theorem \cite[Theorem 4.5.2 and Theorem 4.2.2]{FvdPut}.
\end{proof}

\subsection{The \ts{\cD^\dag_{\varpi/t}}-module \ts{\cM(\cS,u,d)_{\overline{U_t}}} }\label{KummerSect} In this section we study a Kummer-type finite \'etale covering of the rigid-analytic affine line minus finitely many rational points. We recall that any $K$-scheme $X$ of locally finite type has an analytification $X^{\an}$ that is a rigid analytic space over $K$. The construction $X \mapsto X^{\an}$ is functorial in $X$, and we refer the reader to \cite[\S1.6]{LutJacobians} or \cite[\S 5.4]{BoschFRG} for more details.

We fix $\cS=\{a_1,\ldots,a_h\}$, a finite subset of $K$.
 
\begin{defn}\label{ZeroesPoles} Given $u\in K(x)$ a rational function with factorisation in $\overline{K}(x)$ 
\[ u = \lambda \prod_{i=1}^n (x - b_i)^{k_i}\in K(x) \]
for some $\lambda \in K^\times$, pairwise distinct $b_1,\ldots,b_n\in \overline{K}$ and non-zero integers $k_1,\ldots,k_n$. The \emph{set of finite zeroes/poles} of $u$ is 
\[\cS(u) :=\{b_i \st i=1,\ldots n \}.\] We write  $v_{b_i}(u)= k_i \mbox{ for }i=1,\ldots n\mbox{ and } v_{b}(u)=0 \mbox{ for }b\in \overline{K}\backslash\{b_1,\ldots,b_n\}$.

{\bf We fix $u$ such that $\cS(u)$ is a subset of $\cS$.} Thus \[ u=\lambda\prod_{a\in \cS}(x-a)^{v_a(u)}\] with the $v_a(u)$ integers that may be zero and some $\lambda\in K^\times$. We consider the open embedding of rigid $K$-analytic spaces
\[j := \A - \cS \hookrightarrow \A := (\Spec K\left[x\right])^{\an},\]
and we introduce the finite \'etale covering 
\[f : Z(u,d) \to \A-\cS\]
where
\begin{equation}\label{DefOfZud}Z(u,d) := \left(\Spec K\left[x, \frac{1}{x-a_1},\cdots,\frac{1}{x-a_h}\right][z] / (z^d - u)\right)^{\an}.\end{equation}
\end{defn}
Note that $f_\ast \cO_{Z(u,d)}$ is a free $\cO_{\A-\cS}$-module of rank $d$:
\[ f_\ast \cO_{Z(u,d)} = \bigoplus_{m=0}^{d-1} \cO_{\A-\cS} z^m.\]
Since $f : Z(u,d) \to \A-\cS$ is \'etale, the discussion in $\S \ref{DivTwDer}$ shows that $f_\ast \cO_{Z(u,d)}$ is in fact a $\cD_{\A-\cS}$-module, and that this is a decomposition of $f_\ast \cO_{Z(u,d)}$ into a direct sum of $d$ line bundles with flat connection on $\A - \cS$.  Applying (\ref{zdz}) with $u$ replaced by $u^{-m}$ shows that the $\cD_{\A- \cS}$-action on each summand in the above decomposition is given by 
\begin{equation}\label{GMconn} \partial_x \cdot z^m = \frac{m}{d} \left(\sum_{a\in \cS} \frac{v_a(u)}{x - a}\right) \hsp z^m.\end{equation}

Our main goal in $\S \ref{KummerSect}$ is to study the following $\cD_{\A}$-module.
\begin{defn}\label{UstMud} Define $\cM(\cS,u,d) :=j_\ast (\cO_{\A-\cS} z)$. 
\end{defn}
	
The other line bundles with connection $\cO_{\A - \cS}z^m$ can be dealt with by replacing $u$ by $u^m$ so no generality is lost by restricting to the case $m = 1$.

\begin{defn} \label{USt} For each $t \in \sq{K}$ we define 
\[U(\cS)_t:= U_t  := \{z \in \A : |z - a|  \geq t \qmb{for all} a\in \cS\}.\]
\end{defn}
Clearly $U_t$ is a subset of $\A-\cS$, we have $U_t \subseteq U_{t'}$ whenever $t' < t$, and 
\[  \A - \cS = \bigcup\limits_{t > 0} U_t.\]
We will study the $\cD$-module $\cM(\cS,u,d)$ through its approximations $\cM(\cS,u,d)_{\overline{U}_t}$, as $t$ shrinks to zero: one can use Proposition \ref{ApproxOnClosed} to see that \[\cM(\cS,u,d)=\lim\limits_{\stackrel{\longleftarrow}{t>0}}\cM(\cS,u,d)_{\overline{U}_t}.\] We will give a proof of another form of this statement in Lemma \ref{jLlimit}.

\textbf{We assume until the end of $\S \ref{KummerSect}$ that $t \in \sq{K}$}.

 Next let us see what the local sections of these objects $\cM(\cS,u,d)_{\overline{U}_t}$ look like.

\begin{lem}\label{OvConvSects} Let $\cF$ be an abelian sheaf on $\bA$. Then
\[(\cF_{\overline{U_t}})(X) = \underset{s \in \sq{K}, s < t}{\colim}{} \hsp \cF(X\cap U_s).\]
for every affinoid subdomain $X$ of $\bA$.\end{lem}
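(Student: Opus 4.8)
The plan is to unwind the definition of $\cF_{\overline{U_t}}$ using Theorem \ref{iDag}, which identifies it with the overconvergent sheaf $\cF^\dag_{U_t}$ built out of a presentation of $U_t$ as a rational subdomain of $\bA$. The only subtlety is that $U_t$ is not an affinoid subdomain of the quasi-compact affinoid $\bD$ but rather a rational subdomain of the non-quasi-compact space $\bA$; so I would first reduce the computation to an affinoid situation by working on an arbitrary affinoid subdomain $X$ of $\bA$, and noting that by Lemma \ref{iDagLocal} we have $(\cF_{\overline{U_t}})_{|X} \cong (\cF_{|X})_{\overline{U_t \cap X}}$.

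First I would fix a presentation of $U_t$. Writing $t = |c|$ for some $c \in K^\times$ (possible since $t \in \sq{K}$, after a finite base change; but actually the statement is about a fixed $t \in \sq{K}$ and the sets $U_s$ for $s \in \sq{K}$, so one can arrange the presentation directly), the condition $|z - a| \geq t$ for all $a \in \cS = \{a_1,\dots,a_h\}$ exhibits $U_t \cap X$ as the rational subdomain $X\left(\frac{c}{x-a_1},\dots,\frac{c}{x-a_h}\right)$ of $X$ — at least on the locus where the $x - a_i$ generate the unit ideal, which on $U_t$ they do since not all of them can be small. Then Definition \ref{SlightlyLarger} gives, for $s' \in \sq{K}$ with $s' > 1$, the wide-open neighbourhood $(U_t \cap X)(s') = \{z \in X : |x - a_i|^{-1} \leq s' |c|^{-1}\} = \{z \in X : |x - a_i| \geq t/s' \text{ for all } i\} = X \cap U_{t/s'}$. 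As $s'$ ranges over elements of $\sq{K}$ with $s' > 1$, the quantity $s := t/s'$ ranges over a cofinal family of elements of $\sq{K}$ with $s < t$.

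Now I would simply combine these identifications with Theorem \ref{iDag}:
\[ (\cF_{\overline{U_t}})(X) = (\cF^\dag_{U_t})(X) = \underset{s' \in \sq{K},\, s' > 1}{\colim}{} \cF\big(X \cap (U_t)(s')\big) = \underset{s \in \sq{K},\, s < t}{\colim}{} \cF(X \cap U_s), \]
where the last equality uses the cofinality just established together with the fact that the colimit of a functor over a cofinal subsystem agrees with the colimit over the whole system. The main (very mild) obstacle is bookkeeping: making sure that $U_t \cap X$ really is a rational subdomain of $X$ with the claimed presentation — in particular that the functions $x - a_i$ generate the unit ideal of $\cO(X)$ after restricting to a neighbourhood of $U_t$, and dealing with the edge case where $\cS$ meets $X$ or where some $a_i \notin X$; one handles this by covering $X$ (or rather a wide-open neighbourhood of $U_t \cap X$) by the rational subdomains on which a fixed $x - a_j$ is largest, applying the argument on each piece, and invoking the sheaf property from Lemma \ref{Acyclic}. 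Since Theorem \ref{iDag} has already done the serious work of identifying $\cF_{\overline{U}}$ with $\cF^\dag_U$ independently of the presentation, nothing deep remains.
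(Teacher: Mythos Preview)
Your approach is essentially the paper's: reduce via Lemma~\ref{iDagLocal} to the affinoid $X$, identify $X\cap U_t$ as a rational subdomain, invoke Theorem~\ref{iDag}, and then observe that the wide-open neighbourhoods $(X\cap U_t)(r)$ are exactly $X\cap U_{t/r}$, so the reparametrisation $s=t/r$ gives the result.

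The one place where you overcomplicate things is the worry about whether the $x-a_i$ generate the unit ideal, and the proposed patch of covering $X$ by loci where a fixed $x-a_j$ dominates. This is a red herring: each individual condition $|x-a_i|\geq t$ already defines the rational subdomain $X(t/(x-a_i))$ with generators $g_0=x-a_i$ and $g_1=1$, and $1$ trivially lies in the ideal they generate; the intersection of finitely many rational subdomains is again rational. The paper simply writes $X\cap U_t = X\!\left(\frac{t}{x-a_1},\ldots,\frac{t}{x-a_h}\right)$, checks this on $\mathbf{C}$-points, and computes $(X\cap U_t)(r)(\mathbf{C}) = X\cap U_{t/r}$ directly. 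No covering, no base change to realise $t=|c|$, and no edge-case analysis is needed.
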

\begin{proof} 

It is easy to verify by looking at the $\bf{C}$-points that 
\[X \cap U_t=X\left(\frac{t}{x-a_1},\ldots \frac{t}{x-a_h}\right).\]
Recall now the wide open neighbourhoods $(X \cap U_t)(r)$ of the affinoid subdomain $X \cap U_t$ of $X$ from Definition \ref{SlightlyLarger}.  We can now apply Lemma \ref{iDagLocal} together with Theorem \ref{iDag} to compute
\[ (\cF_{\overline{U_t}})(X) = (\cF_{|X})_{\overline{X \cap U_t}}(X) = \underset{r \in \sq{K}, r > 1}{\colim}{} \hsp \cF((X\cap U_t)(r)).\]
But for any $r \in \sq{K}$ with $r > 1$, we have
\[ (X \cap U_t)(r)(\mathbf{C}) = \bigcap\limits_{i=1}^h \{y \in X(\mathbf{C}) : t \leq r |(y-a_i)| \} = X \cap U_{t/r},\]
so $(\cF_{\overline{U_t}})(X) = \underset{s \in \sq{K}, s < t}{\colim}{} \hsp \cF(X\cap U_s)$ as required.\end{proof}
\begin{cor}\label{iUjOSect} We have $\cM(\cS,u,d)_{\overline{U_t}} = (\cO_{\A})_{\overline{U_t}} \hsp z$ as sheaves of $\cO_{\A}$-modules.
\end{cor}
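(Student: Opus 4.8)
The plan is to unwind both sides of the claimed equality $\cM(\cS,u,d)_{\overline{U_t}} = (\cO_{\A})_{\overline{U_t}} \hsp z$ using the computation of overconvergent sections established just above. Recall from Definition \ref{UstMud} that $\cM(\cS,u,d) = j_\ast(\cO_{\A-\cS}\hsp z)$, where $j : \A - \cS \hookrightarrow \A$ is the open inclusion, so as an $\cO_\A$-module it is simply $j_\ast \cO_{\A - \cS}$ with the twisted connection; the symbol $z$ is just bookkeeping for the generator. The key point is that forming $\cF \mapsto \cF_{\overline{U_t}}$ only sees sections overconverging into the complement of $U_t$, and $U_t$ is already contained in $\A - \cS$, so the pushforward along $j$ becomes invisible after this operation.

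More precisely, I would argue as follows. Fix an affinoid subdomain $X$ of $\A$. By Lemma \ref{OvConvSects} applied to the sheaf $\cF = \cM(\cS,u,d)$, we have
\[ \cM(\cS,u,d)_{\overline{U_t}}(X) = \underset{s \in \sq{K}, s < t}{\colim}{} \hsp \cM(\cS,u,d)(X \cap U_s).\]
Since $U_s \subseteq \A - \cS$ for every $s > 0$, the affinoid $X \cap U_s$ is an admissible open subset of $\A - \cS$, and therefore $\cM(\cS,u,d)(X \cap U_s) = (j_\ast(\cO_{\A-\cS}\hsp z))(X \cap U_s) = \cO_{\A - \cS}(X \cap U_s)\hsp z = \cO_\A(X \cap U_s)\hsp z$. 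Feeding this back into the colimit and applying Lemma \ref{OvConvSects} once more, this time to the sheaf $\cO_\A$, gives
\[ \cM(\cS,u,d)_{\overline{U_t}}(X) = \underset{s \in \sq{K}, s < t}{\colim}{} \hsp \cO_\A(X \cap U_s)\hsp z = (\cO_{\A})_{\overline{U_t}}(X)\hsp z.\]
One then checks that this identification is compatible with restriction maps between affinoid subdomains of $\A$ — which is immediate since all the maps involved are the obvious restriction maps on sections of $\cO_\A$ — so it is an isomorphism of presheaves, hence of sheaves, on the weak $G$-topology of $\A$. To pass to all admissible opens one uses that $\cM(\cS,u,d)_{\overline{U_t}}$ and $(\cO_\A)_{\overline{U_t}}$ are genuine sheaves (the latter by Theorem \ref{iDag} together with Lemma \ref{Acyclic}, the former by the same token since $\cM(\cS,u,d)$ is a sheaf), so agreement on affinoids suffices.

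I do not expect any serious obstacle here: this is essentially a bookkeeping lemma that records the effect of the truncation functor on the specific sheaf $\cM(\cS,u,d)$. The one point requiring a little care is the justification that $\cM(\cS,u,d)(X \cap U_s)$ really equals $\cO_\A(X \cap U_s)\hsp z$ — i.e. that the stalk-wise description of $j_\ast$ gives exactly the sections of $\cO_{\A-\cS}$ over an admissible open of $\A - \cS$ — but this is just the definition of the pushforward sheaf $j_\ast$ along an open immersion. The only thing to be slightly attentive to is keeping the twisted $\cD_\A$-module structure straight: the isomorphism is $\cO_\A$-linear (indeed it is the identity on underlying $\cO_\A$-modules up to the generator $z$), and the statement as given in Corollary \ref{iUjOSect} is only an isomorphism of sheaves of $\cO_\A$-modules, so I would not attempt to say anything about connections at this stage.
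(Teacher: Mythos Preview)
Your proposal is correct and follows essentially the same approach as the paper: apply Lemma \ref{OvConvSects} to compute $\cM(\cS,u,d)_{\overline{U_t}}(X)$ as a colimit, observe that $X \cap U_s \subset \A \backslash \cS$ forces $\cM(\cS,u,d)(X \cap U_s) = \cO(X \cap U_s)\hsp z$, and recognise the resulting colimit as $(\cO_\A)_{\overline{U_t}}(X)\hsp z$. The paper's proof is terser---it stops at the displayed colimit formula and leaves the identification with $(\cO_\A)_{\overline{U_t}}(X)\hsp z$ and the compatibility with restrictions implicit---but your added remarks on these points are correct and do no harm.
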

\begin{proof} Let $X$ be an affinoid subdomain of $\A$ and let $s \in \sq{K}$. Since $X \cap U_s \subset \bA \backslash \cS$, we see that $\cM(\cS,u,d)(X \cap U_s) = \cO(X \cap U_s)z$. So by Lemma \ref{OvConvSects}, 
\begin{equation}\label{OvConvMudSects}\cM(\cS,u,d)_{\overline{U_t}}(X) = \underset{s \in \sq{K}, s < t}{\colim}{\hsp} \cO(X \cap U_s)\hsp z. \qedhere\end{equation}
\end{proof}

Recall from Definition \ref{DagSite}(a) that $\A( t \partial_x/\varpi )^\dag$ is the $G$-topology on $\A$ which consists of those affinoid subdomains such that $r(X) \leq \varpi / t$. In view of Corollary \ref{Heisenberg}, we see that $X \in \A( t \partial_x/\varpi )^\dag$ if and only if $\rho(X) \geq t$. The following technical Lemma is necessary to prove Lemma \ref{Ddagactover} and Proposition \ref{OUrtuMod}.

\begin{lem}\label{XUs} Let $X$ be a $(t \partial_x/\varpi)^\dag$-admissible affinoid subdomain of $\bA$, and let $s \in \sq{K}$ be such that $s < t$. Then $r(X \cap U_s) \leq \varpi/s$.
\end{lem}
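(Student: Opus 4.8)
The statement to prove is that if $X$ is a $(t\partial_x/\varpi)^\dag$-admissible affinoid subdomain of $\bA$ and $s \in \sq{K}$ with $s < t$, then $r(X \cap U_s) \leq \varpi/s$. The plan is to reduce everything to a computation of spectral radii via Corollary \ref{Heisenberg}, which says that $r$ of a split affinoid $Y$ is $\varpi/\rho(Y)$, together with the behaviour of $r$ under intersections from Proposition \ref{Gtop}.

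First I would reformulate the hypothesis and conclusion in terms of $\rho$. Recall from the discussion after Lemma \ref{XUs}'s neighbourhood, i.e. from Corollary \ref{Heisenberg} and Corollary \ref{rXcalc}, that for any affinoid $X$ one has $r(X) = \varpi\,||\partial_x||_{\cO(X)}$, and when $X$ is split, $r(X) = \varpi/\rho(X)$. The hypothesis that $X$ is $(t\partial_x/\varpi)^\dag$-admissible means $r(X) \leq \varpi/t$, equivalently (after passing to a splitting field, using Lemma \ref{BaseChR} to see $r$ is unchanged) $\rho(X) \geq t$. The conclusion $r(X \cap U_s) \leq \varpi/s$ is equivalent to $\rho(X \cap U_s) \geq s$ once $X \cap U_s$ is known to split. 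So by Lemma \ref{BaseChR} and \cite[Theorem 4.1.8]{ArdWad2023} I would pass to a finite extension $K'/K$ over which both $X$ and $X \cap U_s$ split; note $s, t \in \sq{K} = \sq{K'}$ so the hypotheses are preserved.

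The key computational step is then to identify $X \cap U_s$ explicitly. As noted in the proof of Lemma \ref{OvConvSects}, by looking at $\bfC$-points one has $X \cap U_s = X\left(\frac{s}{x-a_1},\ldots,\frac{s}{x-a_h}\right)$ where $\cS = \{a_1,\ldots,a_h\}$. This is itself a rational subdomain of $X$, hence an affinoid subdomain of $\bA$, and one can write it as an iterated intersection: $X \cap U_s = X \cap \bigcap_{i=1}^h \{|x - a_i| \geq s\}$, where each $\{|x-a_i| \geq s\}$ is the complement in $\bA$ of an open disc of radius $s$, itself an affinoid subdomain of $\bA$ (a single cheese with parameter $s_1 = s$, so $\rho = s$ and hence $r = \varpi/s$). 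Now I would apply Proposition \ref{Gtop} repeatedly: $r(X \cap U_s) \leq \max\{r(X), r(\{|x-a_1|\geq s\}), \ldots, r(\{|x-a_h|\geq s\})\}$. Each of the "annulus complement" factors has spectral radius exactly $\varpi/s$ by Corollary \ref{Heisenberg} (these are split cheeses over $K$ with $\rho = s$), and $r(X) \leq \varpi/t < \varpi/s$ since $s < t$. Therefore $r(X \cap U_s) \leq \varpi/s$ as required.

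The main obstacle — really the only thing needing care — is the bookkeeping in the last step: making sure each set $\{z \in \bA : |z - a_i| \geq s\}$ is genuinely an affinoid subdomain of $\bA$ to which Proposition \ref{Gtop} applies, and computing its spectral radius. One has $\{|x - a_i| \geq s\} = \bA\left(\frac{s}{x - a_i}\right)$ in the cheese notation of \cite[\S4.1]{ArdWad2023} with a single hole, so $\rho$ of it is $s$ and Corollary \ref{Heisenberg} gives $r = \varpi/s$; alternatively one computes $||\partial_x||$ directly on $\cO(\bA\langle s/(x-a_i)\rangle)$ and applies Corollary \ref{rXcalc}. Everything else is an immediate chain of inequalities, and passing to a splitting field is harmless because $s, t \in \sq{K}$ and $r$ is invariant under base change. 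So I expect the proof to be two or three lines once the reduction is set up.
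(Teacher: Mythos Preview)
Your overall strategy is correct and essentially the same as the paper's, but there is one genuine error: the set $\{z \in \bA : |z - a_i| \geq s\}$ is \emph{not} an affinoid subdomain of $\bA$. A cheese in the sense of \cite[Definition 4.1.1]{ArdWad2023} always comes with an outer radius $s_0$, i.e.\ it sits inside a closed disc; the complement of an open disc in the whole affine line is unbounded and hence not affinoid. So you cannot apply Proposition~\ref{Gtop} to the pair $X$ and $\{|x-a_i|\geq s\}$, and the claimed computation $r(\{|x-a_i|\geq s\}) = \varpi/s$ via Corollary~\ref{Heisenberg} does not make sense as stated.

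The fix is easy and is exactly what the paper does: choose a closed disc $\D = \Sp K\langle x/s_0\rangle$ with $|s_0|$ large enough that $X \subseteq \D$. Then $\D \cap U_s$ \emph{is} a genuine cheese $C(\alpha,\mathbf{s})$ with $\rho(\D\cap U_s) = s$, hence $r(\D\cap U_s) = \varpi/s$ by Corollary~\ref{Heisenberg}; and $X \cap U_s = X \cap (\D \cap U_s)$, so a single application of Proposition~\ref{Gtop} gives $r(X\cap U_s) \leq \max\{r(X),\varpi/s\} = \varpi/s$. (The paper also only needs to enlarge $K$ so that $s \in |K^\times|$, not all the way to a splitting field for $X$, but your extra base change does no harm.)
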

\begin{proof} In view of  Lemma \ref{BaseChR}, we can replace $K$ by some finite field extension and thereby assume that $s \in K^\times$. Choose a disc $\D := \Sp K \langle x/s_0\rangle$ with $s_0 \in K^\times$ large enough to contain $X$ and note that $\D \cap U_s = C(\alpha,\mathbf{s})$ where $s_1 := s_2 := \cdots := s_h := s$, $\alpha_0 = 0$ and $\alpha_i := a_i$ for each $i=1,\ldots, h$.  Then $X \cap U_s = X \cap C(\alpha,\mathbf{s})$ and $r(C(\alpha,\mathbf{s})) = \frac{\varpi}{\rho(C(\alpha,\mathbf{s}))} = \frac{\varpi}{s}$ by Corollary \ref{Heisenberg},  so
\[r(X \cap U_s) = r(X \cap C(\alpha,\mathbf{s})) \leq \max\{ r(X), r(C(\alpha, \mathbf{s}))\} = \max\{ r(X), \varpi/s\} \]
by Proposition \ref{Gtop}. Our assumptions on $X$, $s$ and $t$ imply that $r(X) \leq \varpi/t < \varpi/s$, so $\max\{ r(X), r(C(\alpha,\mathbf{s}))\} = \varpi/s$.
\end{proof}

\begin{lem}\label{Ddagactover} The $\cD$-action on the restriction of $(\cO_\A)_{\overline{U_t}}$ to $\A( t \partial_x/\varpi )^\dag$ extends to $\cD^\dag_{\varpi/t}$.
\end{lem}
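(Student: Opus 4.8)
The plan is to produce, for each sufficiently small affinoid subdomain $X$ of $\bA$, a compatible family of continuous $\cD^\dag_{\varpi/t}(X)$-module structures on $(\cO_\A)_{\overline{U_t}}(X)$ extending the given $\cD(X)$-action, and to check they glue to a morphism of sheaves of rings. By Definition \ref{DagSite}(d), $\cD^\dag_{\varpi/t}(X) = \colim_{c > \varpi/t} \cD_c(X)$, so it suffices to extend the $\cD(X)$-action to $\cD_c(X)$ for every $c > \varpi/t$, compatibly as $c$ varies; since the transition maps in the colimit are injective (Definition \ref{DagSite}), compatibility is automatic once each extension is shown unique. So fix a $(t\partial_x/\varpi)^\dag$-admissible $X$, equivalently $\rho(X) \geq t$ by Corollary \ref{Heisenberg}, and fix $c > \varpi/t$; choose $s \in \sq{K}$ with $s < t$ close enough to $t$ that $\varpi/s < c$ (possible since $\varpi/s \to \varpi/t$ as $s \to t$), so that $X \cap U_s$ is $(\partial_x/c)$-admissible by Lemma \ref{XUs}.

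First I would use Corollary \ref{iUjOSect} to identify $(\cO_\A)_{\overline{U_t}}(X) = \underset{s \in \sq{K},\, s<t}{\colim}\,\cO(X \cap U_s)$, with the $\cD(X)$-action induced by the usual $\cD(X\cap U_s)$-action on $\cO(X \cap U_s)$ via the restriction map $\cD(X) \to \cD(X\cap U_s)$. By Lemma \ref{ActionOnO}(a) applied to the $(\partial_x/c)$-admissible affinoid $X \cap U_s$, the $\cD(X\cap U_s)$-action on $\cO(X\cap U_s)$ extends to a bounded $K$-algebra map $\sigma_c : \cD_c(X \cap U_s) \to \cB(\cO(X\cap U_s))$ sending $\partial \mapsto \partial_x$. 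Composing with the restriction homomorphism $\cD_c(X) \to \cD_c(X\cap U_s)$ from Proposition \ref{DbRing}(b), we obtain a bounded action of $\cD_c(X)$ on $\cO(X\cap U_s)$; since the restriction maps $\cO(X\cap U_s) \to \cO(X\cap U_{s'})$ for $s' < s$ are $\cD_c(X)$-equivariant (they come from morphisms of rigid spaces), passing to the colimit over $s$ gives an action of $\cD_c(X)$ on $(\cO_\A)_{\overline{U_t}}(X)$ extending the $\cD(X)$-action. Letting $c$ decrease to $\varpi/t$ and using uniqueness of the extension (which holds because $\cD(X)$ is dense in each $\cD_c(X)$ by Lemma \ref{TheMapJ} and the action maps are continuous), these assemble to a $\cD^\dag_{\varpi/t}(X)$-module structure on $(\cO_\A)_{\overline{U_t}}(X)$.

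Next I would check functoriality in $X$: for $Y \subseteq X$ both $(t\partial_x/\varpi)^\dag$-admissible, the restriction map $(\cO_\A)_{\overline{U_t}}(X) \to (\cO_\A)_{\overline{U_t}}(Y)$ is semilinear over $\cD^\dag_{\varpi/t}(X) \to \cD^\dag_{\varpi/t}(Y)$. This reduces to the analogous compatibility at each finite level $\cD_c$, which follows from the uniqueness clause in Lemma \ref{ActionOnO}(a)/Lemma \ref{BanachUnivProp} together with the commutativity of the restriction squares for $\cD$ and for $\cO$: both composites $\cD_c(X) \to \cB(\cO(Y \cap U_s))$ extend the $\cO(X)$-action and send $\partial \mapsto \partial_x$, hence agree. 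Finally, since $\cD^\dag_{\varpi/t}$ is a sheaf on $\A(t\partial_x/\varpi)^\dag$ by Theorem \ref{NCDagTate} and $(\cO_\A)_{\overline{U_t}}$ is (the restriction of) a sheaf, these compatible local module structures give the desired $\cD^\dag_{\varpi/t}$-action on $(\cO_\A)_{\overline{U_t}}$.

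The main obstacle is purely bookkeeping: ensuring that the auxiliary parameter $s$ (and the choice of wide-open neighbourhood hidden inside Lemma \ref{XUs}) can be chosen uniformly enough that the colimit over $s$ commutes with everything, and that the resulting action is genuinely independent of these choices. All of this is handled by the uniqueness assertions in Lemma \ref{BanachUnivProp} and Lemma \ref{ActionOnO}, so no new ideas are needed — the only real input is Lemma \ref{XUs}, which guarantees that shrinking to $U_s$ does not push the spectral radius above $c$.
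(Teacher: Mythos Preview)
Your proposal is correct and follows essentially the same approach as the paper's proof: the key input in both is Lemma \ref{XUs}, which bounds $r(X \cap U_s)$ by $\varpi/s$, combined with Lemma \ref{ActionOnO}(a) to get the $\cD_c$-action on $\cO(X\cap U_s)$. A minor point: the colimit description of $(\cO_\A)_{\overline{U_t}}(X)$ you use comes from Lemma \ref{OvConvSects} rather than Corollary \ref{iUjOSect}, and the equivalence ``$(t\partial_x/\varpi)^\dag$-admissible $\Leftrightarrow \rho(X)\ge t$'' via Corollary \ref{Heisenberg} only holds literally for $X$ split over $K$ (though this characterization is not actually needed in your argument).
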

\begin{proof} Let $X \in \A( t \partial_x/\varpi )^\dag$; we must show that the $\cD(X)$-action on $(\cO_\A)_{\overline{U_t}}(X)$ extends to an action of $\cD^\dag_{\varpi/t}(X)$.

Let $f \in \cO_{\overline{U_t}}(X)$ and $Q \in \cD^\dag_{\varpi/t}(X)$. Then by Lemma \ref{OvConvSects} and Definition \ref{DagSite}(d) we can find some $s'<t$ and $r > \varpi/t$ such that $f \in \cO(X \cap U_{s'})$ and $Q \in \cD_r(X)$. Choose $s \in \sq{K}$ such that 
\[\max\{\varpi/r, s'\} < s < t.\]
Then $f \in \cO(X \cap U_s)$ and $r > \varpi/s$, so $Q \cdot f \in \cO(X \cap U_s) \subset \cO_{\overline{U_t}}(X)$ by Lemma \ref{XUs} and Lemma \ref{ActionOnO}(a). Thus the $\cD(X)$-action on $(\cO_\A)_{\overline{U_t}}(X)$ extends to an action of $\cD^\dag_{\varpi/t}(X)$ as required.
\end{proof}

\textbf{We assume for the remainder of $\S \ref{KummerSect}$ that $p \nmid d$.}

\begin{lem}\label{DrActsOnOz} Let $r > 0$ and let $Y$ be a $\partial_x/r$-admissible affinoid subdomain of $\A - \cS$. Then the natural $\cD(Y)$-action on $\cO(Y) z$ extends to a $\cD_r(Y)$-action.
\end{lem}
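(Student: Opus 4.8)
The plan is to reduce to an application of the universal property of the skew-Tate algebra $\cD_r(Y) = \cO(Y)\langle \partial/r\rangle$ recorded in Lemma \ref{BanachUnivProp}. Since $Y$ is a $\partial_x/r$-admissible affinoid subdomain of $\A-\cS$, in particular $Y$ is disjoint from $\cS$, so $x-a$ is a unit in $\cO(Y)$ for every $a\in\cS$, and hence $u$ restricts to a unit of $\cO(Y)$ (being a scalar multiple of a product of integer powers of the $(x-a)$). The module $\cO(Y)z$ is the line bundle with connection on which $\partial_x$ acts via the Gauss--Manin formula $(\ref{GMconn})$ with $m=1$, i.e.\ by the rule $\partial_x\cdot(fz) = \left(\partial_x(f) + \tfrac1d\,\tfrac{\partial_x(u)}{u}\,f\right)z$ for $f\in\cO(Y)$, because $\sum_{a\in\cS}\tfrac{v_a(u)}{x-a} = \tfrac{\partial_x(u)}{u}$. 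Identifying $\cO(Y)z$ with $\cO(Y)$ as an $\cO(Y)$-module, this exhibits the $\cD(Y)$-action as coming from the algebra homomorphism $\cD(Y)\to\cB(\cO(Y))$ sending $\partial_x$ to the bounded $K$-linear operator $D := \partial_x + \tfrac1d\tfrac{\partial_x(u)}{u}\in\cB(\cO(Y))$, where the second summand is the operator of multiplication by the element $\tfrac1d\tfrac{\partial_x(u)}{u}\in\cO(Y)$.

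To extend this to $\cD_r(Y)$ I would apply Lemma \ref{BanachUnivProp} with $A=\cO(Y)$, $\delta=\partial_x$, $B=\cB(\cO(Y))$, $f:\cO(Y)\to B$ the multiplication map, and $b=D$. Condition (b)(i), namely $b\,f(a)-f(a)\,b = f(\partial_x(a))$ for all $a\in\cO(Y)$, holds because the multiplication operator $\tfrac1d\tfrac{\partial_x(u)}{u}$ commutes with every multiplication operator $f(a)$, while $[\partial_x, f(a)] = f(\partial_x(a))$ as operators on $\cO(Y)$. The remaining point is condition (b)(ii): $\sup_{\ell\geq 0}\|b^\ell\|/r^\ell<\infty$. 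Here is where the $\partial_x/r$-admissibility of $Y$ and the hypothesis $p\nmid d$ enter. I would use the twisting automorphism $\theta_{u,d}$ of $\S\ref{DivTwDer}$: by Lemma \ref{zTwist} (applied with $X=Y$), $\theta_{u,d}(\partial_x) = \partial_x - \tfrac1d\tfrac{\partial_x(u)}{u}$ inside $\cD(Y)$, so $D = \sigma(\theta_{u^{-1},d}(\partial_x))$ where $\sigma=\sigma_r:\cD(Y)\to\cB(\cO(Y))$ is the injective bounded action map of Lemma \ref{ActionOnO}. Hence $b^\ell = \sigma(\theta_{u^{-1},d}(\partial_x^\ell))$, and by Proposition \ref{PnStdForm} (or directly the estimate in the proof of Theorem \ref{ThetaU}) we have $|b^\ell|/r^\ell \leq \|\sigma\|\cdot |\theta_{u^{-1},d}(\partial_x^\ell)|_{\cD_r(Y)}/r^\ell$, and the latter is bounded uniformly in $\ell$ exactly by the computation carried out in the proof of Theorem \ref{ThetaU}, which shows $\sup_{\ell\geq 0}|\theta_{u^{-1},d}(\partial_x^\ell)|_{\cD_r(Y)}/r^\ell<\infty$ since $p\nmid d$ and $r>r(Y)$.

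The cleanest way to package this is in fact to avoid re-running the estimate: since $u^{-1}\in\cO(Y)^\times$ and $Y$ is $\partial_x/r$-admissible and $p\nmid d$, Theorem \ref{ThetaU} gives a bounded $K$-algebra automorphism $\theta_{u^{-1},d}$ of $\cD_r(Y)$ extending the one on $\cD(Y)$; composing with the bounded $K$-algebra map $\sigma_r:\cD_r(Y)\to\cB(\cO(Y))$ of Lemma \ref{ActionOnO}(a) yields a bounded $K$-algebra map $\cD_r(Y)\to\cB(\cO(Y))$ which on $\cD(Y)$ is precisely the original action of $\cD(Y)$ on $\cO(Y)z$ (after the identification $\cO(Y)z\cong\cO(Y)$), because $\sigma_r\circ\theta_{u^{-1},d}$ and the action map agree on the generators $\cO(Y)$ and $\partial_x$ of $\cD(Y)$. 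By density of $\cD(Y)$ in $\cD_r(Y)$ (Lemma \ref{TheMapJ}) this extension is unique, and it is the desired $\cD_r(Y)$-action on $\cO(Y)z$. The only genuinely delicate input is Theorem \ref{ThetaU}, whose hypothesis $p\nmid d$ is essential and already proved; everything else here is bookkeeping and the verification that the several candidate actions agree on generators. Thus I expect no real obstacle beyond carefully lining up the identifications $\cO(Y)z\cong\cO(Y)$ and $D=\sigma(\theta_{u^{-1},d}(\partial_x))$.
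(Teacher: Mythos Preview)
Your proposal is correct and, in its final ``cleanest'' formulation, is exactly the paper's proof: one observes that the $\cD(Y)$-action on $\cO(Y)z$ is $(P,fz)\mapsto \sigma_r(\theta_{u^{-1},d}(P))(f)\,z$, and then extends by composing the bounded automorphism $\theta_{u^{-1},d}$ of $\cD_r(Y)$ from Theorem \ref{ThetaU} with the action map $\sigma_r$ from Lemma \ref{ActionOnO}. Your preliminary direct attack via Lemma \ref{BanachUnivProp} with $b=D$ is also valid but, as you yourself note, it just re-derives the estimate already packaged in Theorem \ref{ThetaU}.
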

\begin{proof} Since $\partial_x(z)=\frac{1}{d}\frac{\partial_x(u)}{u} z$, the $\cD(Y)$-action on $\cO(Y) z$ is given by \[ (P,fz)\mapsto \sigma_r(\theta_{u^{-1},d}(P))(f) z,\] where $\theta_{u^{-1},d}$ was defined in Lemma \ref{zTwist} and $\sigma_r$ was defined in Lemma \ref{ActionOnO}. Thus to extend it to a $\cD_r(Y)$-action it suffices to extend the automorphism $\theta_{u^{-1},d}$ of $\cD(Y)$ to an automorphism of $\cD_r(Y)$; that this can be done is guaranteed by Theorem \ref{ThetaU}.
\end{proof}

This result enables us to extend Lemma \ref{Ddagactover} in the following manner.

\begin{prop}\label{OUrtuMod} The $\cD$-action on the restriction of $\cM(\cS,u,d)_{\overline{U_t}}$ to $\A(t \partial_x/\varpi)^\dag$ extends to $\cD^\dag_{\varpi/t}$. \end{prop}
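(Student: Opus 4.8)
The plan is to mimic the proof of Lemma \ref{Ddagactover}, but now working with the twisted sheaf $\cM(\cS,u,d)_{\overline{U_t}}$ instead of the untwisted sheaf $(\cO_\A)_{\overline{U_t}}$. Fix an affinoid subdomain $X \in \A(t\partial_x/\varpi)^\dag$; by Corollary \ref{Heisenberg} this means $\rho(X) \geq t$, equivalently $r(X) \leq \varpi/t$. I must show that the natural $\cD(X)$-action on $\cM(\cS,u,d)_{\overline{U_t}}(X)$ extends to an action of $\cD^\dag_{\varpi/t}(X)$, compatibly with restriction maps.

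By Corollary \ref{iUjOSect} we have $\cM(\cS,u,d)_{\overline{U_t}} = (\cO_\A)_{\overline{U_t}}\,z$ as $\cO_\A$-modules, and by equation (\ref{OvConvMudSects}) the sections over $X$ are $\underset{s \in \sq{K},\, s<t}{\colim}\hsp \cO(X\cap U_s)\,z$. Given $fz \in \cM(\cS,u,d)_{\overline{U_t}}(X)$ with $f \in \cO(X\cap U_{s'})$ for some $s' < t$, and given $Q \in \cD^\dag_{\varpi/t}(X)$, Definition \ref{DagSite}(d) lets me choose $r > \varpi/t$ with $Q \in \cD_r(X)$. Pick $s \in \sq{K}$ with $\max\{\varpi/r, s'\} < s < t$, so that $f \in \cO(X\cap U_s)$ and $r > \varpi/s$. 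By Lemma \ref{XUs}, $X\cap U_s$ is $\partial_x/r$-admissible (indeed $r(X\cap U_s) \leq \varpi/s < r$), and $X\cap U_s \subseteq \A - \cS$, so Lemma \ref{DrActsOnOz} applies: the $\cD(X\cap U_s)$-action on $\cO(X\cap U_s)z$ extends to a $\cD_r(X\cap U_s)$-action. Composing the restriction map $\cD_r(X) \to \cD_r(X\cap U_s)$ (from Proposition \ref{DbRing}(b), valid since both $X$ and $X\cap U_s$ are $\partial_x/r$-admissible) with this action, we obtain $Q\cdot (fz) \in \cO(X\cap U_s)z \subseteq \cM(\cS,u,d)_{\overline{U_t}}(X)$.

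It then remains to check this is well-defined (independent of the choices of $s'$, $r$, $s$) and gives a genuine module action extending the $\cD(X)$-action, and that it is compatible with restriction along inclusions $X' \subseteq X$ in $\A(t\partial_x/\varpi)^\dag$. Well-definedness follows because all the relevant actions are restrictions of the single $\cD_r$-action built via the twisting automorphism $\theta_{u^{-1},d}$ in Lemma \ref{DrActsOnOz}, and the $\cD_r$-actions for varying $r$ are mutually compatible since they all come from $\theta_{u^{-1},d}$ extended via the uniqueness clause in Lemma \ref{BanachUnivProp}; similarly the restriction maps in $\cD_r$ and in $\cM(\cS,u,d)_{\overline{U_t}}$ intertwine these actions by construction. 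I do not anticipate a serious obstacle here: the substance of the argument is already packaged in Lemma \ref{DrActsOnOz} (which in turn rests on Theorem \ref{ThetaU}) and in Lemma \ref{XUs}, and the present statement is essentially the assembly of these with the colimit description of overconvergent sections. If anything, the only mildly delicate point is bookkeeping the passage to the colimit and confirming that shrinking $s$ further only makes things more admissible, which is exactly what the inequality $r(X\cap U_s) \leq \varpi/s$ from Lemma \ref{XUs} guarantees.
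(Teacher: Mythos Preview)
Your proposal is correct and follows essentially the same approach as the paper's proof: both use the colimit description (\ref{OvConvMudSects}), choose $s$ with $\max\{\varpi/r,s'\}<s<t$, invoke Lemma \ref{XUs} to see $X\cap U_s$ is $\partial_x/r$-admissible, apply Lemma \ref{DrActsOnOz} on $X\cap U_s$, and then restrict along $\cD_r(X)\to\cD_r(X\cap U_s)$ via Proposition \ref{DbRing}(b). Your additional remarks on well-definedness and compatibility with restriction are appropriate bookkeeping that the paper leaves implicit.
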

\begin{proof} 
Let $X \in \A(t \partial_x/\varpi)^\dag$ and suppose that $f \in \cM(\cS,u,d)_{\overline{U_t}}(X)$ and $Q \in \cD^\dag_{\varpi/t}(X)$. Then by (\ref{OvConvMudSects}) and Definition \ref{DagSite}(d) we can find some $s'<t$ and $r > \varpi/t$ such that $f \in \cO(X \cap U_{s'})z$ and $Q \in \cD_r(X)$. 

Choose $s \in \sq{K}$ such that 
\[\max\{\varpi/r, s'\} < s < t.\]

Then $f \in \cO(X \cap U_s)z$ so if the $\cD(X)$-action on  $\cO(X \cap U_s)z$ extends to $\cD_r(X)$ then $Q\cdot f\in \cM(\cS,u,d)_{\overline{U_t}}(X)$.

 Now $r(X \cap U_s) \leq \varpi/s$ by Lemma \ref{XUs} and $\varpi/s<r$, so $X \cap U_s$ is $\partial_x/r$-admissible. Hence the $\cD(X \cap U_s)$-action on $\cO(X \cap U_s)z$ extends to $\cD_r(X \cap U_s)$ by Lemma \ref{DrActsOnOz}. Since $X$ and $X \cap U_s$ are both $\partial_x/r$-admissible, by Proposition \ref{DbRing}(b) there is a ring homomorphism $\cD_r(X) \to \cD_r(X \cap U_s)$ extending $\cD(X) \to \cD(X \cap U_s)$, and we deduce that the $\cD(X)$-action on $\cO(X\cap U_s)z$ extends to $\cD_r(X)$ as required.
\end{proof}

In the remainder of this section we will establish an explicit presentation of $\cM(\cS, u,d)_{\overline{U_t}}$ as a $\cD^\dag_{\varpi/t}$-module, assuming $t\in \sq{K}$ is sufficiently small and a positivity condition on the exponents of $u$.

\begin{defn}\label{TheRelator} We define
\be \item $\Delta_\cS := \prod\limits_{a \in \cS} (x - a)$, and
\item $R_\cS(u,d) :=\Delta_{\cS} \partial_x - \frac{1}{d} \sum\limits_{a\in \cS} v_{a}(u) \prod\limits_{b\in \cS\backslash\{a\}}(x - b)= \Delta_{\cS}( \partial_x - \frac{1}{d} \dlog(u) ) $.
\ee\end{defn}
Note that $R_\cS(u,d) \in \cD(\bA)$. We will write $R(u,d)=R_{\cS(u)}(u,d)$ and $\Delta=\Delta_{\cS(u)}$. 

\begin{lem}\label{OUrtuPres} There is a complex of $\cD^\dag_{\varpi/t}$-modules on $\A(t \partial_x/\varpi)^\dag$
\[0 \to \cD^\dag_{\varpi/t} \stackrel{\cdot R_{\cS}(u,d)}{\longrightarrow}\cD^\dag_{\varpi/t} \stackrel{\cdot z}{\longrightarrow} \cM(\cS,u,d)_{\overline{U_t}} \to 0.\]
\end{lem}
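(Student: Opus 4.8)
The plan is as follows. Since $\cD^\dag_{\varpi/t}$ is a sheaf on $\A(t\partial_x/\varpi)^\dag$ by Theorem \ref{NCDagTate} and the restriction of $\cM(\cS,u,d)_{\overline{U_t}}$ to this $G$-topology is a $\cD^\dag_{\varpi/t}$-module by Proposition \ref{OUrtuMod}, it is enough to exhibit $\cdot R_\cS(u,d)$ and $\cdot z$ as morphisms of sheaves of left $\cD^\dag_{\varpi/t}$-modules and to check that the composite $\cdot R_\cS(u,d)$ followed by $\cdot z$ vanishes; that makes the displayed sequence a complex. (That $\cdot z$ is moreover an epimorphism, and that the sequence is exact at the middle term, is the content of the forthcoming Theorem \ref{SheafyPres}, which rests on the positivity hypothesis on the exponents $v_a(u)$; I do not address it here.)

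For the right-hand map, write $z$ also for the image in $\cM(\cS,u,d)_{\overline{U_t}}(\A)$ of the canonical generating section $z$ of the line bundle $\cO_{\A-\cS}z \subseteq j_\ast(\cO_{\A-\cS}z) = \cM(\cS,u,d)$ under the restriction morphism of Definition \ref{iUdagDefn}. For each $X \in \A(t\partial_x/\varpi)^\dag$ the assignment $Q \mapsto Q\cdot z$ is a left $\cD^\dag_{\varpi/t}(X)$-linear map $\cD^\dag_{\varpi/t}(X)\to \cM(\cS,u,d)_{\overline{U_t}}(X)$ by Proposition \ref{OUrtuMod}, and these maps commute with restriction because the section $z$ does, so $\cdot z$ is a morphism of sheaves of left $\cD^\dag_{\varpi/t}$-modules. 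For the left-hand map, $R_\cS(u,d) \in \cD(\A)$ by Definition \ref{TheRelator}, so it has a well-defined image in $\cD^\dag_{\varpi/t}(X)$ for every $X$ via Lemma \ref{TheMapJ} together with the inclusion $\cD_c(X)\hookrightarrow \cD^\dag_{\varpi/t}(X)$ for any $c > \varpi/t$; right multiplication by this element is left $\cD^\dag_{\varpi/t}$-linear and respects restrictions, hence is a morphism of sheaves of left $\cD^\dag_{\varpi/t}$-modules.

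Finally, to see $R_\cS(u,d)\cdot z = 0$ in $\cM(\cS,u,d)_{\overline{U_t}}(X)$ for every admissible $X$: by Lemma \ref{OvConvSects} we have $\cM(\cS,u,d)_{\overline{U_t}}(X) = \colim_{s<t}\cM(\cS,u,d)(X\cap U_s)$, and each $X\cap U_s$ is an admissible open of $\A-\cS$ on which $\cM(\cS,u,d)$ restricts to $\cO_{\A-\cS}z$ and, by the way the action was built in Proposition \ref{OUrtuMod} out of Lemma \ref{DrActsOnOz} and Theorem \ref{ThetaU}, the $\cD^\dag_{\varpi/t}$-action restricts to the natural $\cD$-action on $\cO_{\A-\cS}z$; so it suffices to prove the identity $R_\cS(u,d)\cdot z = 0$ in the $\cD(\A-\cS)$-module $\cO_{\A-\cS}z$. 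On $\A-\cS$ the function $\dlog(u) = \sum_{a\in\cS}v_a(u)/(x-a)$ lies in $\cO(\A-\cS)$, so there $R_\cS(u,d) = \Delta_\cS\cdot(\partial_x - \tfrac1d\dlog(u))$; and formula (\ref{GMconn}) with $m=1$ gives $\partial_x\cdot z = \tfrac1d\dlog(u)\,z$, whence $(\partial_x - \tfrac1d\dlog(u))\cdot z = 0$ and therefore $R_\cS(u,d)\cdot z = \Delta_\cS\cdot 0 = 0$. This is routine, and there is no genuine obstacle; the only two points deserving attention are that the factorisation $R_\cS(u,d) = \Delta_\cS(\partial_x - \tfrac1d\dlog(u))$ holds inside $\cD(\A-\cS)$ (it fails over $\A$, which is precisely why $R_\cS(u,d)$ is recorded in cleared polynomial form) and the compatibility on each $X\cap U_s$ of the two actions just noted, both of which are immediate from the constructions of $\S\ref{DrSect}$ and $\S\ref{KummerSect}$.
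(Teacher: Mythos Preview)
Your proposal is correct and follows essentially the same approach as the paper: invoke Proposition \ref{OUrtuMod} for the $\cD^\dag_{\varpi/t}$-module structure, observe that $R_\cS(u,d)\in\cD(\A)$ so right multiplication is a well-defined sheaf map, and verify $R_\cS(u,d)\cdot z=0$. The paper compresses your last paragraph into the single phrase ``a direct calculation shows that $R_\cS(u,d)\cdot z=0$''; your unpacking via the factorisation $R_\cS(u,d)=\Delta_\cS(\partial_x-\tfrac1d\dlog(u))$ on $\A-\cS$ together with (\ref{GMconn}) is exactly the calculation intended.
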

\begin{proof} Note that the restriction of $\cM(\cS,u,d)_{\overline{U_t}}$ to $\A(t \partial_x/\varpi)^\dag$ is a $\cD^\dag_{\varpi/t}$-module by Proposition \ref{OUrtuMod}. The second non-zero arrow sends $Q \in \cD^\dag_{\varpi/t}$ to $Q \cdot z \in \cM(\cS,u,d)_{\overline{U_t}}$.  By definition, $R_\cS(u,d) = \Delta_\cS \partial_x - \frac{1}{d} \sum_{i=1}^h v_{a_i}(u) \prod_{j \neq i}(x - a_j) \in \cD$, so $R_\cS(u,d) \in \cD_{\varpi/t}^\dag$ as well. A direct calculation shows that $R_\cS(u,d) \cdot z = 0$.
\end{proof}

We will now carry out three local calculations that will be used to prove that the complex appearing in Lemma \ref{OUrtuPres} is exact, at least when the $a_i$ live in different holes of $U_t$, and thus provides a finite presentation for $\cM(u,d)_{\overline{U_t}}$ as a $\cD^\dag_{\varpi/t}$-module. The first shows that the restriction of the complex to $U_t$ is exact.

\begin{prop}\label{YPres} Let $Y$ be an $(t \partial_x/\varpi)^\dag$-admissible affinoid subdomain of $U_t$. Then the following complex is exact: 
\[0 \to \cD^\dag_{\varpi/t}(Y) \stackrel{\cdot R_\cS(u,d)}{\longrightarrow}\cD^\dag_{\varpi/t}(Y) \stackrel{\cdot z}{\longrightarrow} \cM(\cS,u,d)_{\overline{U_t}}(Y) \to 0.\]
\end{prop}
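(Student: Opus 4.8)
The plan is to exploit the fact that on $Y$, which is contained in $U_t$, the element $z$ is invertible and the connection is ``trivialised'' by $z$: more precisely, the twisting automorphism $\theta_{u,d}$ of $\cD_r(Y)$ from Theorem \ref{ThetaU} conjugates the whole picture into the untwisted one. First I would reduce to working with a fixed $\cD_r(Y)$ rather than $\cD^\dag_{\varpi/t}(Y)$: since $Y$ is $(t\partial_x/\varpi)^\dag$-admissible we have $r(Y)\le\varpi/t$, and $\cD^\dag_{\varpi/t}(Y)=\colim_{r>\varpi/t}\cD_r(Y)$; because direct limits are exact, it suffices to prove that for each $r>\varpi/t$ the complex
\[0 \to \cD_r(Y) \stackrel{\cdot R_\cS(u,d)}{\longrightarrow}\cD_r(Y) \stackrel{\cdot z}{\longrightarrow} \cO(Y\cap U_s)\hsp z \to 0\]
is exact in the colimit over $s<t$ (using $(\ref{OvConvMudSects})$ and the description of $\cD^\dag_{\varpi/t}(Y)$-sections). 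Actually it is cleaner to fix $s<t$ with $\varpi/s<r$, so that $Y\cap U_s$ is $\partial_x/r$-admissible by Lemma \ref{XUs}, note $z\in\cO(Y\cap U_s)^\times$, and prove exactness of $0\to\cD_r(Y\cap U_s)\xrightarrow{\cdot R_\cS(u,d)}\cD_r(Y\cap U_s)\xrightarrow{\cdot z}\cO(Y\cap U_s)z\to 0$; the statement for $Y$ then follows by applying the flat (Theorem \ref{changeofbase}) restriction $\cD_r(Y)\to\cD_r(Y\cap U_s)$ together with a colimit argument, or more directly since $Y\subseteq U_t$ one can take $Y$ itself once $r(Y)<r$.

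Set $W:=Y\cap U_s$ (or $W=Y$) and write $A:=\cO(W)$, $\cD_r:=\cD_r(W)$. Since $z^d=u\in A^\times$, Lemma \ref{NearlyInner} gives $\theta_{u,d}(Q)=zQz^{-1}$ for all $Q\in\cD_r$ — here I would need $u$ to be a $d$-th power \emph{in $\cO(W)^\times$}, which on the cheese-like $W$ contained in $U_s$ follows from \cite[Lemma 4.3.2]{ArdWad2023} as in the proof of Proposition \ref{HunEst} (possibly after a finite base change, harmless by Lemma \ref{DrBC}(a) and faithful flatness). Granting this, conjugation by $z$ is a ring automorphism of $\cD_r$, and it carries the element $R_\cS(u,d)=\Delta_\cS(\partial_x-\tfrac1d\dlog u)$ to $\Delta_\cS\hsp z(\partial_x-\tfrac1d\dlog u)z^{-1}=\Delta_\cS\partial_x$ by the computation in the proof of Lemma \ref{zTwist} (equation $(\ref{zdz})$). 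Thus the complex in question is isomorphic, via right multiplication by $z$ in the appropriate slots, to
\[0 \to \cD_r \xrightarrow{\ \cdot \Delta_\cS\ }\cD_r \xrightarrow{\ \cdot 1\ } A \to 0,\]
i.e.\ we are reduced to showing: (i) right multiplication by $\Delta_\cS$ on $\cD_r$ is injective, and (ii) $\cD_r/\cD_r\Delta_\cS\cong A$ as left $\cD_r$-modules via $Q\mapsto Q\cdot 1$. Since $\Delta_\cS=\prod_{a\in\cS}(x-a)$ is a unit in $A=\cO(W)$ (each $x-a$ is invertible on $W\subseteq U_s$), right multiplication by $\Delta_\cS$ is in fact a \emph{bijection} of $\cD_r$ onto itself: indeed for $a\in K$ with $x-a\in A^\times$ we have $Q\cdot(x-a)=(x-a)(Q + \text{lower order correction})$, and more simply $\Delta_\cS\in A^\times\subseteq\cD_r^\times$. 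Hence the map $\cdot\Delta_\cS$ is injective and surjective, which simultaneously proves (i) and shows $\cD_r/\cD_r\Delta_\cS=0$?? — no: the surjectivity of \emph{right} multiplication does not kill the quotient, so I must be careful: $\cD_r\Delta_\cS=\Delta_\cS\cD_r$ only if $\Delta_\cS$ is central, which it is not. The correct statement is that $\cD_r\Delta_\cS=\cD_r\cdot(x-a_1)\cdots(x-a_h)$ and one checks directly, using that each $(x-a_i)\in A^\times$ so multiplication on the left by $(x-a_i)^{-1}\in\cD_r$ shows $\cD_r(x-a_i)=\cD_r$; hence $\cD_r\Delta_\cS=\cD_r$ and the cokernel of $\cdot\Delta_\cS$ is zero. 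But the cokernel of the original second map is $Az$, not zero — so the resolution before conjugation has nonzero cokernel $Az\cong A$, and after conjugation the cokernel becomes $A$ with $Q\mapsto Q\cdot 1$; the point is that $\cdot z:\cD_r\to Az$ need not be $\cdot\Delta_\cS$-surjective-trivial. Let me restate cleanly: the correct reduction is to show (i) $R_\cS(u,d)$ acts injectively on the right of $\cD^\dag_{\varpi/t}(Y)$ and (ii) the sequence $\cD^\dag_{\varpi/t}(Y)\xrightarrow{\cdot R}\cD^\dag_{\varpi/t}(Y)\xrightarrow{\cdot z}\cM(\cS,u,d)_{\overline{U_t}}(Y)\to 0$ is exact, and after conjugating by $z$ both reduce to the well-known \emph{algebraic} fact that $\cD\xrightarrow{\cdot\Delta}\cD\xrightarrow{\cdot 1}\cO\to 0$ on $\bA-\cS$ is a projective resolution — the content being to transport this across the completion, which is where the flatness theorems and the colimit description of $\cD^\dag_{\varpi/t}(Y)$-sections do the work. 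Concretely: the injectivity (i) holds because $\Delta_\cS\in\cO(Y)^\times$ (as $Y\subseteq U_t$ misses all $a\in\cS$), so left-multiplication by $\Delta_\cS^{-1}\in\cO(Y)\subseteq\cD^\dag_{\varpi/t}(Y)$ shows $\cdot R$ is a split injection onto $\cD^\dag_{\varpi/t}(Y)\cdot(\partial_x-\tfrac1d\dlog u)$; and exactness at the middle and right follows by the same conjugation, identifying the complex with $\cD^\dag_{\varpi/t}(Y)\xrightarrow{\cdot(\partial_x-\tfrac1d\dlog u)}\cD^\dag_{\varpi/t}(Y)\to\cO_{\overline{U_t}}(Y)\to 0$ and then noting $\partial_x-\tfrac1d\dlog u$ is conjugate \emph{by $z$} to $\partial_x$, whose cokernel as a right-module map to $\cO_{\overline{U_t}}(Y)=\cM(\cO_\A)_{\overline{U_t}}(Y)$ is exactly the defining surjection $Q\mapsto Q\cdot 1$ of the trivial connection, with kernel $\cD^\dag_{\varpi/t}(Y)\cdot\partial_x$ by Lemma \ref{ActionOnO}(b) plus the colimit formula $(\ref{OvConvMudSects})$.

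\textbf{Main obstacle.} The genuinely delicate point is verifying exactness at the middle term — i.e.\ that $\ker(\cdot z)=\cD^\dag_{\varpi/t}(Y)\cdot R_\cS(u,d)$ — \emph{after} completing and passing to overconvergent sections, rather than at the level of the naive algebraic $\cD$-module. The conjugation by $z$ trick cleanly reduces this to the case $R=\Delta_\cS\partial_x$ with trivial connection, and there the claim is that $Q\cdot 1=0$ in $\cO_{\overline{U_t}}(Y)$ forces $Q\in\cD^\dag_{\varpi/t}(Y)\cdot\partial_x$; for $Q\in\cD_r(Y)$ this is Lemma \ref{ActionOnO}(b) (the action of $\partial^n$ on $1$ is $0$ for $n\ge1$), but I need the $\cO(Y)$-coefficient of $Q$ to actually \emph{vanish in $\cO_{\overline{U_t}}(Y)$}, i.e.\ after the colimit over $s<t$ — so I must check that $\cO(Y)\hookrightarrow\cO_{\overline{U_t}}(Y)=\colim_{s<t}\cO(Y\cap U_s)$ is injective, which follows from connectedness/irreducibility considerations or more simply from the fact that the $Y\cap U_s$ are Zariski-dense in $Y$. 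Once that injectivity is in hand the whole proposition falls out; the later Proposition (the global \ref{SheafyPres}) will need the further work of gluing these local resolutions across the holes, but that is deferred.
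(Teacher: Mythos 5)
Your core idea — twist by $\theta_{u,d}$ to reduce to the trivial connection $\Delta_\cS\partial_x$, then split off $\cO(Y)$ — is exactly the paper's approach, but your write-up carries several unnecessary detours and one genuine misconception worth flagging.

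First, you do not need the colimit description $(\ref{OvConvMudSects})$ or any passage to $Y \cap U_s$ at all: since $Y \subseteq U_t$, we have $U_t \cap Y = Y$, so Lemma \ref{iDagLocal} gives $(\cM(\cS,u,d)_{\overline{U_t}})_{|Y} \cong (\cM(\cS,u,d)_{|Y})_{\overline{Y}} = \cM(\cS,u,d)_{|Y} = \cO_Y z$. In particular the target of your complex is literally $\cO(Y)z$, and your ``main obstacle'' (injectivity of $\cO(Y) \hookrightarrow \cO_{\overline{U_t}}(Y)$) is in fact an equality. Once you see this, the colimit bookkeeping and the Zariski-density argument you propose both evaporate.

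Second, your worry about whether $u$ is a $d$-th power in $\cO(W)^\times$, and the appeal to Lemma \ref{NearlyInner}, is a distraction. The twisting automorphism $\theta_{u,d}$ extends to $\cD^\dag_{\varpi/t}(Y)$ for any $u \in \cO(Y)^\times$ coprime-to-$p$ case by Corollary \ref{ThetaUDag} — it need not be literally inner. You never need to produce an honest $z \in \cO(Y)^\times$; the abstract automorphism suffices, and Lemma \ref{RudProps}(a) (equivalently Lemma \ref{zTwist} plus Definition \ref{TheRelator}(b)) gives $\theta_{u,d}^{-1}(R_\cS(u,d)) = \Delta_\cS\partial_x$ directly. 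Your passage to a finite base extension is likewise unnecessary.

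Third, your false start with right-multiplication by $\Delta_\cS$, and the subsequent invocation of Lemma \ref{ActionOnO}(b), can be replaced by one clean observation: since $\Delta_\cS$ is a unit in $\cO(Y)$, the left ideal $\cD^\dag_{\varpi/t}(Y)\cdot\Delta_\cS\partial_x$ equals $\cD^\dag_{\varpi/t}(Y)\cdot\partial_x$, and the decomposition $\cD_r(Y) = \cO(Y) \oplus \cD_r(Y)\partial_x$ of $\cO(Y)$-modules, which holds for every $r > \varpi/t$ straight from Definition \ref{AdelR}, passes to the colimit and transports along $\theta_{u,d}$ to $\cD^\dag_{\varpi/t}(Y) = \cO(Y) \oplus \cD^\dag_{\varpi/t}(Y)\cdot\theta_{u,d}(\partial_x)$. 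Both injectivity of $\cdot R_\cS(u,d)$ and exactness at the middle then follow in one stroke, with no need to single out Lemma \ref{ActionOnO}(b). So: correct strategy, but prune the detours and the proof collapses to a few lines.
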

\begin{proof} Since $Y$ is contained in $U_t$, $u$ and $\Delta$ are both units in $\cO(Y)$ and it follows from Lemma \ref{iDagLocal} that $\cO(Y) z \to \cM(\cS,u,d)_{\overline{U_t}}(Y)$ is an isomorphism. By Corollary \ref{ThetaUDag}, $Q \mapsto z \hsp Q \hsp z^{-1}$ induces an automorphism $\theta_{u,d}$ of $\cD^\dag_{\varpi/t}(Y)$, then by Definition \ref{TheRelator}(b) and Lemma \ref{zTwist}
	\[\theta_{u,d}^{-1}(R_\cS(u,d))= \Delta\partial_x.\]

	Since $\Delta$ is a unit in $\cO(Y)$, we see that $\cD^\dag_{\varpi/t}(Y) \cdot R_\cS(u,d) = \cD^\dag_{\varpi/t}(Y) \cdot \theta_{u,d}(\partial_x)$. By construction of the rings $\cD_r(Y)$, there is a direct sum decomposition of $\cO(Y)$-modules: $\cD_r(Y) = \cO(Y) \oplus \cD_r(Y) \cdot \partial_x$ whenever $r > \varpi/t$. Therefore we also have the decomposition
	\[\cD^\dag_{\varpi/t}(Y)= \cO(Y) \oplus \cD^\dag_{\varpi/t}(Y) \theta_{u,d}(\partial_x)\]
	and the result now follows easily.
\end{proof}

We record an easy but important consequence of the proof of Proposition \ref{YPres}.

\begin{cor}\label{LnYsimple} Let $Y$ be a $(t \partial_x/\varpi)^\dag$-admissible subdomain of $U_t$. Then the $\cO(Y)$-module $\cM(\cS,u,d)_{\overline{U_t}}(Y)$ is free of rank $1$, and it is simple as a $\cD^\dag_{\varpi/t}(Y)$-module whenever $Y$ is connected as an affinoid domain.
\end{cor}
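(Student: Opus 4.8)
\textbf{Proof proposal for Corollary \ref{LnYsimple}.}

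The plan is to read off both assertions directly from the work already done in the proof of Proposition \ref{YPres}. First I would note that since $Y$ is a $(t\partial_x/\varpi)^\dag$-admissible subdomain of $U_t$, Lemma \ref{iDagLocal} (together with the observation recorded at the start of the proof of Proposition \ref{YPres}) shows that the restriction map $\cO(Y)z \to \cM(\cS,u,d)_{\overline{U_t}}(Y)$ is an isomorphism of $\cO(Y)$-modules, because $u$ and $\Delta_\cS$ are units in $\cO(Y)$. Hence $\cM(\cS,u,d)_{\overline{U_t}}(Y)$ is free of rank $1$ over $\cO(Y)$, generated by $z$. This gives the first assertion.

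For the second assertion, suppose $Y$ is connected and let $N$ be a non-zero $\cD^\dag_{\varpi/t}(Y)$-submodule of $\cM(\cS,u,d)_{\overline{U_t}}(Y)$; I must show $N$ is the whole module. Using the automorphism $\theta_{u,d}$ of $\cD^\dag_{\varpi/t}(Y)$ from Corollary \ref{ThetaUDag}, conjugation by $z$ intertwines the $\cD^\dag_{\varpi/t}(Y)$-action on $\cM(\cS,u,d)_{\overline{U_t}}(Y) = \cO(Y)z$ with the standard action of $\cD^\dag_{\varpi/t}(Y)$ on $\cO(Y)$, exactly as in the proof of Proposition \ref{YPres} where it was shown that $\theta_{u,d}^{-1}(R_\cS(u,d)) = \Delta_\cS\partial_x$ and that $\cD^\dag_{\varpi/t}(Y) = \cO(Y)\oplus \cD^\dag_{\varpi/t}(Y)\partial_x$. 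So it suffices to show that $\cO(Y)$ is simple as a $\cD^\dag_{\varpi/t}(Y)$-module, i.e. that every non-zero $\partial_x$-stable $\cO(Y)$-submodule of $\cO(Y)$ is all of $\cO(Y)$. Pick $0 \neq f \in N$; since $Y$ is connected, $\cO(Y)$ is an integral domain, and differentiating $f$ repeatedly one reduces to showing that $1 \in N$: this is the standard fact that the only $\cD$-submodules of $\cO(Y)$ on a connected smooth rigid variety are $0$ and $\cO(Y)$ — and since $\cD(Y)$ acts through $\cD^\dag_{\varpi/t}(Y)$ via the dense inclusion $j : \cD(Y) \hookrightarrow \cD^\dag_{\varpi/t}(Y)$ of Lemma \ref{TheMapJ}, $\partial_x$-stability under $\cD(Y)$ already forces $N = \cO(Y)z$.

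The one point that needs a little care — and is the main (modest) obstacle — is justifying the classical simplicity statement "the only $\cD(Y)$-submodules of $\cO(Y)$ are $0$ and $\cO(Y)$" for a connected affinoid subdomain $Y$ of $\bA$. The cleanest route: given $0 \neq f \in \cO(Y)$, write $f = \sum_{n \geq 0} c_n (x-\alpha)^n$ as a power series at some point $\alpha$ where $f$ does not vanish (or vanishes to finite order, using that $\cO(Y)$ is a domain since $Y$ is connected), let $m = v_\alpha(f)$ be that order, and observe $\partial_x^{[m]}(f)$ is a unit in the local ring — but more simply, since $\cO(Y)$ is a domain, some $\partial_x^{[m]}(f)$ has strictly smaller order of vanishing than $f$ at $\alpha$; iterating, $N$ contains an element with nonzero constant term at $\alpha$, and then a suitable $\cO(Y)$-multiple gives a unit times that element, hence $N = \cO(Y)$. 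This is entirely routine and the rest of the corollary is immediate from the already-established structure in Proposition \ref{YPres}.
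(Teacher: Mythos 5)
Your handling of the first assertion and your reduction of the second to $\cD(Y)$-simplicity both line up with the paper: simplicity over $\cD(Y)$ suffices because any $\cD^\dag_{\varpi/t}(Y)$-submodule is in particular a $\cD(Y)$-submodule. The divergence is that after twisting by $\theta_{u,d}$ to reduce further to the simplicity of $\cO(Y)$ itself as a $\cD(Y)$-module, you attempt a from-scratch argument, whereas the paper skips the twist and simply cites \cite[Lemma 4.3.2]{ArdWad2023}, a result giving simplicity of any rank-one free $\cO(Y)$-module with flat connection once $\cT(Y)$ is free of rank one over $\cO(Y)$.

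The from-scratch argument has a gap at its final step. After applying divided powers of $\partial_x$ you obtain $g \in N$ with $g(\alpha) \neq 0$ at a single chosen point $\alpha$, and then assert that ``a suitable $\cO(Y)$-multiple gives a unit times that element.'' This does not follow: $g(\alpha)\neq 0$ does not make $g$ a unit of $\cO(Y)$ (take $g = x-a$ with $a \in Y(K)$, $a\neq\alpha$), and in the domain $\cO(Y)$ no $\cO(Y)$-multiple of a non-unit can become a unit. Nor can one ``iterate away'' the remaining zeros, since lowering the vanishing order at $\alpha$ may raise it elsewhere. A correct elementary argument runs instead as follows: after a finite extension of $K$ (harmless, since faithful flatness transfers any proper nonzero submodule downwards) you may assume $Y$ is a cheese, so $\cO(Y)$ is a PID; then $N = (f)$, and $\partial_x$-stability gives $\partial_x(f) \in (f)$, i.e.\ $f \mid \partial_x(f)$; if $f$ vanished to order $m\geq 1$ at some $\beta$, then $\partial_x(f)$ would vanish to order exactly $m-1$ there (we are in characteristic zero), contradicting $f\mid\partial_x(f)$; hence $f$ has no zeros and is a unit, so $N=\cO(Y)$. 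The paper's citation of \cite[Lemma 4.3.2]{ArdWad2023} accomplishes the same thing without the twist or the PID digression.
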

\begin{proof} The first statement follows from the proof of Proposition \ref{YPres}. 

	For the second statement it suffices to show that  $\cM(\cS,u,d)_{\overline{U_t}}(Y)$ is a simple $\cD(Y)$-module via restriction whenever $Y$ is connected. Given the first part this follows from \cite[Lemma 4.3.2]{ArdWad2023}, because $\cT(Y)$ is freely generated by $\partial_x$ as a $\cO(Y)$-module.
\end{proof}

Our next step is to prove that the complex in Lemma \ref{OUrtuPres} is exact in the case when $|\cS|=1$ under appropriate mild conditions on $u$. First we need a couple of technical results to enable us to a make a certain estimate. Together, they amount to a slightly more general version of \cite[Lemme 4.2.1]{Berth1990}.

\begin{lem}\label{BerthMMP} Suppose that $V$ is a a reduced $K$-affinoid variety and $f\in \cO(V)$. Then there is $s\in \sqrt{|K|^\times}$ with $s<t$ such that whenever $g \in \cO(V)$ vanishes on $V(t/f)$, $g$ also vanishes on $V(s/f)$.
\end{lem}

\begin{proof}
	We first consider the case where $V$ is irreducible so that $\cO(V)$ is an integral domain. Suppose first that $V(t/f)$ is non-empty. Then the restriction map $\cO(V)\to \cO(V(t/f))$ is injective by the proof of \cite[Proposition 4.2]{ABB}, so  any choice of $s$ will do. On the other hand, if $V(t/f)=\emptyset$, then $|f|_V<t$ by the maximum modulus principle, so choosing $|f|_V<s<t$ ensures that $V(s/f) = \emptyset$ and yields the result. 
	
	In the general case, let $V_1,\ldots,V_n$ be the irreducible components of $V$ and let $f_i := f_{|V_i}$ be the restriction of $f$ to $V_i$ for each $i=1,\cdots, n$. Now, for each $s < t$ with $s \in \sq{K}$ there is a commutative diagram \[ \xymatrix{ \cO(V) \ar[r] \ar[d] & \cO(V(s/f)) \ar[r] \ar[d] & \cO(V(t/f)) \ar[d] \\
		 \bigoplus\limits_{i=1}^n \cO(V_i) \ar[r] & \bigoplus\limits_{i=1}^n \cO(V_i(s/f_i)) \ar[r] & \bigoplus\limits_{i=1}^n \cO(V_i(t/f_i))  .
	}.\] The vertical arrow on the left is injective because $V$ is reduced. Hence the middle vertical arrow is injective, because the restriction map $\cO(V)\to \cO(V(s/f))$ is flat.
	     
 Since each $V_i$ is irreducible, we may find $s_i<t$ in $\sqrt{|K^\times|}$ such that every $g \in \cO(V_i)$ vanishing on $V_i(t/f_i)$ also vanishes on $V_i(s_i/f_i)$. By enlarging each $s_i$ if necessary, we may assume that $s_1 = \cdots = s_n =: s$, say.  
	     
	Suppose now that $g \in \cO(V)$ vanishes on $V(t/f)$. Then by the commutativity of the diagram, $g_{|V_i}$ vanishes on $V_i(t/f_i)$ for all $i$. Hence by the above paragraph, $g_{|V_i}$ also vanishes on $V_i(s/f_i)$ for all $i$. Using the fact that the middle arrow in the diagram is injective, we deduce that $g$ also vanishes on $V(s/f)$ as required. 
\end{proof}
  
\begin{lem}\label{Berth421} Let $V$ be a reduced $K$-affinoid variety and $f\in \cO(V)$. Suppose moreover that $(v_n)$ is a sequence in $\cO(V)$ such that \begin{enumerate}[(i)] \item $\sum\limits_{n=0}^\infty v_nT^n\in \cO(V)[[T]]$ has radius of convergence greater than $1/t$, and  \item the series $\sum\limits_{n=0}^\infty v_nf^{-n}$ converges to zero in $\cO(V(t/f))$.\ee Then, writing $w_m:=\sum\limits_{n=0}^m v_nf^{m-n}\in \cO(V)$ for each $m \geq 0$, the radius of convergence of $\sum\limits_{m=0}^\infty w_mT^m\in \cO(V)[[T]]$ is also greater than $1/t$.
\end{lem}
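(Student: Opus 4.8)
The goal is to deduce good convergence of the ``telescoped'' series $\sum_m w_m T^m$ from the convergence of $\sum_n v_n T^n$ together with the vanishing condition (ii). The basic algebraic identity to keep in mind is the relation between the two power series: formally, $\left(\sum_m w_m T^m\right) = \left(\sum_n v_n T^n\right) \cdot \left(\sum_{k\geq 0} f^k T^k\right) = \frac{\sum_n v_n T^n}{1 - fT}$, since $w_m = \sum_{n=0}^m v_n f^{m-n}$ is exactly the Cauchy product of $(v_n)$ with the geometric sequence $(f^n)$. So the issue is purely about the pole of this product at $T = 1/f$: a priori $\sum_m w_m T^m$ might only converge on $|T| < 1/|f|_V$, but hypothesis (ii) says the numerator vanishes at the relevant points, which should cancel the pole and restore the larger radius of convergence $> 1/t$ coming from (i). This is the local version (over a single affinoid $V$, with $f$ playing the role of a single linear factor) of the statement that division by $(x-a)$ of a section vanishing at $a$ does not shrink the domain of overconvergence.

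\textbf{Key steps.} First I would reduce, using Lemma~\ref{BerthMMP}, to the situation where the vanishing in (ii) is witnessed on a \emph{slightly smaller} rational subdomain: choose $s \in \sqrt{|K^\times|}$ with $s < t$ so that every $g \in \cO(V)$ vanishing on $V(t/f)$ also vanishes on $V(s/f)$; I will arrange (shrinking $s$ if needed) that $s$ is close enough to $t$ that $\sum_n v_n T^n$ still converges for $|T| < 1/s$ (possible by (i)). Next, pick a radius $c \in \sqrt{|K^\times|}$ with $s < c < t$ at which $\sum_n v_n f^{-n}$ converges to zero in $\cO(V(c/f))$ — here I would first have to check that hypothesis (ii), stated for $V(t/f)$, propagates to the larger $V(c/f)$; this is where Lemma~\ref{BerthMMP} is used, applied to the partial sums or to the limit: the $N$-th tail $g_N := \sum_{n \geq N} v_n f^{-n}$, being a limit of the differences of partial sums, vanishes on $V(t/f)$ in the sense of being ``small'', so after replacing $t$ by $s$ we get the tails small on $V(s/f)$, hence on $V(c/f)$ by monotonicity of these domains in the radius. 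Then, working inside $\cO(V(c/f))$ where $f$ is a unit and $|f^{-1}|_{V(c/f)} \leq 1/c$, I would estimate $|w_m|_{V(c/f)}$. Writing $w_m = f^m \sum_{n=0}^m v_n f^{-n} = f^m\left(\sigma - \sum_{n > m} v_n f^{-n}\right)$ where $\sigma := \sum_{n \geq 0} v_n f^{-n} = 0$ on $V(c/f)$ by (ii) (extended), we get $w_m|_{V(c/f)} = - f^m \sum_{n > m} v_n f^{-n}$, and hence $|w_m|_{V(c/f)} \leq |f|_{V(c/f)}^m \cdot \sup_{n > m} |v_n|_V \cdot |f^{-1}|_{V(c/f)}^n$. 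Using (i), $|v_n|_V (1/c')^n \to 0$ for some $c' \in (s, c)$ with $1/c' <$ radius of convergence; combined with $|f|_{V(c/f)} \leq c$ (since on $V(c/f)$ one has $|f| \geq$ the radius there, actually $|f|_{V(c/f)} \le |f|_V$ and $|f^{-1}|_{V(c/f)} \le 1/c$), a short computation gives $|w_m|_{V(c/f)} (1/c'')^m \to 0$ for a suitable $c'' < t$, which is the desired conclusion. Finally I would transport this bound from $V(c/f)$ back to $V$: since $w_m \in \cO(V)$ and the restriction map $\cO(V) \to \cO(V(c/f))$ need not be isometric, I instead note that the \emph{proof of Proposition~\ref{YPres}}-style argument, or more simply the flatness/injectivity already used in Lemma~\ref{BerthMMP}, lets one bound $|w_m|_V$ in terms of $|w_m|$ on a cofinal family of these subdomains — or, cleaner, I would from the outset only claim radius of convergence $> 1/t$ as a statement about the coefficients $|w_m|_V$, and derive that directly from the identity $w_m = \sum_{n=0}^m v_n f^{m-n}$ together with $|v_n|_V$ decaying and a separate bound on the ``cancellation'' forced by (ii) read off on $V(c/f)$ using that $\cO(V) \hookrightarrow \cO(V(c/f))$ when $V$ is reduced.

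\textbf{Main obstacle.} The delicate point is the passage between norms on $\cO(V)$ and norms on the rational subdomains $\cO(V(s/f))$: the estimate on $|w_m|$ is naturally obtained on $V(c/f)$ (where $f$ is invertible and (ii) can be exploited), but the conclusion must be phrased on $V$ itself, and the restriction maps are not isometries. Handling this cleanly is exactly what Lemma~\ref{BerthMMP} is designed for — it replaces the vague ``vanishes on $V(t/f)$'' by the usable ``vanishes on $V(s/f)$'' — so the real work is setting up the chain of radii $s < c' < c'' < c < t$ correctly so that (i) survives the shrink to $c$, (ii) propagates to $c$ via Lemma~\ref{BerthMMP}, and the geometric-series estimate closes with room to spare below $t$. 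The binomial/valuation bookkeeping is routine; the only genuinely non-formal ingredient is the injectivity of $\cO(V) \to \cO(V(s/f))$ for reduced $V$, which is already packaged into Lemma~\ref{BerthMMP}.
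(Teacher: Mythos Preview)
Your instincts are right, but you have left the ``main obstacle'' genuinely unresolved, and the paper's solution to it is something you have not found. The estimate you obtain on $|w_m|_{V(c/f)}$ cannot be transported back to $|w_m|_V$ by injectivity or flatness: the restriction map $\cO(V)\to\cO(V(c/f))$ can strictly decrease norms. What the paper does instead is cover $V$ by the two rational subdomains $V(f/s)$ and $V(s/f)$. On $V(s/f)$ your argument applies and gives $|w_m|_{V(s/f)}\le Ls^m$. On the complementary piece $V(f/s)$, where $|f|\le s$, no cancellation is needed at all: the crude bound
\[
|w_m|_{V(f/s)}\;\le\;\max_{0\le n\le m}|v_n|_V\,|f|_{V(f/s)}^{\,m-n}\;\le\;\max_{0\le n\le m}Lr^n s^{m-n}\;\le\;Ls^m
\]
is immediate since $r<s$. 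Then $|w_m|_V=\max\{|w_m|_{V(f/s)},|w_m|_{V(s/f)}\}\le Ls^m$, and the radius of convergence is at least $1/s>1/t$.

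There is also a small slip in how you invoke Lemma~\ref{BerthMMP}: you apply it on $V$ to a limit $\sigma=\sum v_nf^{-n}$ that does not live in $\cO(V)$ (since $f$ need not be a unit there). The paper first passes to $W:=V(r/f)$ for a suitable $r<t$, where the series converges to some $w\in\cO(W)$; one checks $w|_{W(t/f)}=0$ from (ii), and then applies Lemma~\ref{BerthMMP} to $W$ to get $w|_{W(s/f)}=0$, noting $W(s/f)=V(s/f)$ once $r<s$. This fixes the logical gap you flagged in your own discussion of ``propagating (ii) to the larger $V(c/f)$''.
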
 	
\begin{proof}  By condition (i) we may choose $r\in \sqrt{|K|^\times}$ such that $r<t$ and \[ \lim_{n\to \infty}|v_n|_V/r^n= 0.\] Thus we may define $L := \sup_{n \geq 0} |v_n|_V/r^n <\infty$ and $W:=V(r/f)$. Because $|f^{-1}|_{W} \leq 1/r$, we have $|v_nf^{-n}|_{W} \leq |v_n|_V /r^n \to 0$ as $n\to \infty$, so the series $\sum\limits_{n=0}^\infty v_nf^{-n}$ converges in $\cO(W)$ to an element $w$, say. Note that condition (ii) implies that $w$ vanishes on $W(t/f)$.

Using Lemma \ref{BerthMMP} we may choose $s\in \sqrt{|K|^\times}$ with $s<t$ such that whenever $g\in \cO(W)$ vanishes on $W(t/f)$, $g$ also vanishes on $W(s/f)$. By enlarging $s$ if necessary, we may also assume that $r < s$.  Thus, $w_{|W(s/f)} = 0$.

We cover $V$ by its two affinoid subdomains $V(f/s)$ and $V(s/f)$. Noting that $w_{|V(s/f)} = 0$ because $V(s/f) = W(s/f)$, we can estimate $|w_m|_{V(s/f)}$ as follows: \[ |w_m|_{V(s/f)}=\left|-\sum_{n>m}v_nf^{m-n}\right|_{V(s/f)} \leq \sup_{n> m}L r^{-n} s^{m-n} \leq Ls^m.\] 

To estimate $|w_m|_{V(f/s)}$, we note that $|v_n|_{V(f/s)} \leq |v_n|_V \leq L r^n$ for all $n \geq 0$, so 
\[ |w_m|_{V(f/s)}\leq \max_{0\leq n\leq m} |v_nf^{m-n}|_{V_1}\leq \max_{0\leq n\leq m}  Lr^{n} s^{m-n}\leq Ls^m \]
since $r<s$.  Therefore $|w_m|_V =\max \{|w_m|_{V(f/s)},|w_m|_{V(s/f)}\} \leq Ls^m$ and hence $\sum\limits_{m=0}^\infty w_mT^m$ has radius of convergence at least $1/s$. \end{proof} 	

We're now ready for our next local calculation, dealing with the critical case when $|\cS|=1$. Note that, in this case, if $X\in \bA(t\partial_x/\varpi)^\dag$ and $a_1\not\in X(K)$ then $X\subseteq U_t$ since if $X$ splits over $K'$ then $\rho(X_{K'})\geq t$. Thus, given Proposition \ref{YPres}, we might as well assume $a_1\in X(K)$. 

\begin{thm}\label{StdPres} Suppose that $X\in \bA(t\partial_x/\varpi)^\dag$ and $a_1\in X(K)$. Suppose also that $h=1$ and write $a:=a_1$ and $k=v_{a}(u)$ so that $u=(x-a)^k$. Then the complex
\[0 \to \cD^\dag_{\varpi/t}(X) \stackrel{\cdot R_\cS(u,d)}{\longrightarrow}\cD^\dag_{\varpi/t}(X) \stackrel{\cdot z}{\longrightarrow} \cM(\cS,u,d)_{\overline{U_t}}(X) \to 0\]
is exact whenever $\frac{k}{d} \notin \bN$. 
\end{thm}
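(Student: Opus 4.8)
The plan is to reduce first to the case where $X$ is the closed disc $\bD_t(a) := \{z \in \bA : |z-a| \le t\}$, and then to handle that case by an explicit computation resting on the two-sided $p$-adic estimate
\[ \varpi^m\, m^{-c_0} \ \le\ \Big| m! \tbinom{k/d}{m} \Big| \ \le\ \varpi^m\, m^{c_1} \qquad (m \ge 1), \]
valid for constants $c_0,c_1$ depending only on $k/d$ and $p$: since $k/d$ is rational and not a non-negative integer, its $p$-adic expansion is eventually periodic and non-terminating, so has bounded gaps between consecutive non-zero digits, and combining the resulting count of $v_p\big(m!\binom{k/d}{m}\big)=\sum_{j=0}^{m-1}v_p(k/d-j)$ with Lemma \ref{nFacVarpi} gives both inequalities. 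In particular $m!\binom{k/d}{m}=\tfrac kd(\tfrac kd-1)\cdots(\tfrac kd-m+1)\ne 0$ for all $m\ge 1$.

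\emph{Reduction to a disc.} Translating by $x\mapsto x-a$, I may take $a=0$; here $R_\cS(u,d)=x\partial_x-\tfrac kd$ and $u=x^k$. Since $X$ is $(t\partial_x/\varpi)^\dag$-admissible, Corollary \ref{Heisenberg} forces $\rho(X_{K'})\ge t$ over any splitting field $K'$, whence $\bD_t:=\bD_t(0)\subseteq X$. Then $\{\bD_t,\,X\cap U_t\}$ is a finite covering of $X$ in $\bA(t\partial_x/\varpi)^\dag$ with intersection the annulus $\{|x|=t\}$, and both $X\cap U_t$ and $\{|x|=t\}$ lie in $U_t$. By Theorem \ref{NCDagTate}, Corollary \ref{DagTate} and Corollary \ref{iUjOSect}, each of the three sheaves in the complex has vanishing higher \v{C}ech cohomology, so Mayer--Vietoris exhibits the complex of $\cD^\dag_{\varpi/t}(X)$-modules as the kernel of a short exact sequence of complexes over $X$, over $\bD_t\sqcup(X\cap U_t)$, and over $\{|x|=t\}$. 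Proposition \ref{YPres} makes the last two complexes exact, so the associated long exact sequence reduces the problem to $X=\bD_t$.

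\emph{The disc case.} With $X=\bD_t$ put $\lambda:=k/d\in\Zp$ (using $p\nmid d$). From $\partial_x(z)=\tfrac\lambda x z$, Corollary \ref{DivPow2} gives $\partial^m\cdot z=m!\binom\lambda m x^{-m}z$, so by Corollary \ref{iUjOSect} and $(\ref{OvConvMudSects})$ the map $\cdot z$ sends $Q=\sum_{m\ge0}Q_m\partial^m$ to $\big(\sum_{m\ge0}Q_m\,m!\binom\lambda m\,x^{-m}\big)z$, while $\cM(\cS,u,d)_{\overline{U_t}}(\bD_t)$ consists of the series $gz$ with $g=\sum_{n\in\bZ}c_nx^n$, $|c_n|t^n\to0$ as $n\to+\infty$, and $|c_{-m}|\sigma^{-m}\to0$ for some $\sigma\in\sq K$ with $\sigma<t$. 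I will then prove three things. (i) \emph{Surjectivity of $\cdot z$}: given $gz$, pick such a $\sigma$ and $\rho\in\sq K$ with $\varpi/t<\rho<\varpi/\sigma$; then $Q:=\sum_{n\ge0}c_nx^n+\sum_{m\ge1}\tfrac{c_{-m}}{m!\binom\lambda m}\partial^m$ lies in $\cO(\bD_t)\langle\partial/\rho\rangle\subseteq\cD^\dag_{\varpi/t}(\bD_t)$ by the lower estimate, and $Q\cdot z=gz$. (ii) \emph{Injectivity of $\cdot R_\cS(u,d)$}: with $R=x\partial-\lambda$ one gets $QR=\sum_k\big(xQ_{k-1}+(k-\lambda)Q_k\big)\partial^k$ (set $Q_{-1}=0$), and since $\lambda\ne k$ for every $k\ge0$ the relation $QR=0$ forces $Q_k=0$ inductively. (iii) \emph{Exactness at the middle}: let $P\subseteq\cD^\dag_{\varpi/t}(\bD_t)$ be the subspace of elements of shape $\sum_{n\ge0}c_nx^n+\sum_{m\ge1}b_m\partial^m$; comparing coefficients of powers of $x$ shows $\cdot z|_P$ is injective, and (i) shows it is onto $\cM(\cS,u,d)_{\overline{U_t}}(\bD_t)$, so $\cdot z|_P$ is bijective. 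Using $x\partial=R+\lambda$ and $R\partial^{k-1}=\partial^{k-1}R-(k-1)\partial^{k-1}$ one rewrites each monomial $x^i\partial^k$ modulo $\cD^\dag_{\varpi/t}(\bD_t)\cdot R$ as an element of $P$ — the scalars produced, of the form $\lambda-k+j$ or $m!\binom\lambda m$, being non-zero exactly because $\lambda\notin\bN$ — and assembling these while checking convergence yields $\cD^\dag_{\varpi/t}(\bD_t)=P\oplus\cD^\dag_{\varpi/t}(\bD_t)\cdot R$. Since $R\cdot z=0$, this decomposition together with the bijectivity of $\cdot z|_P$ forces $\ker(\cdot z)=\cD^\dag_{\varpi/t}(\bD_t)\cdot R$, which with (i) and (ii) gives the asserted exactness.

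I expect the main obstacle to be step (iii): converting the formal monomial reduction into an honest \emph{topological} direct-sum decomposition of the overconvergent ring $\cD^\dag_{\varpi/t}(\bD_t)$ forces one to bound, simultaneously and uniformly in the colimit index, the Banach norms of the normal form and of the cofactor against $R$; it is precisely here that \emph{both} halves of the estimate above — hence the rationality of $k/d$ together with $k/d\notin\bN$ — must be used carefully. Everything else is routine bookkeeping in the Banach algebras $\cD_c(\bD_t)$ and their colimit.
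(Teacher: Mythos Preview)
Your reduction to the closed disc $\bD_t(a)$ has a gap: the inference ``$\rho(X_{K'})\ge t$ and $a\in X$, whence $\bD_t(a)\subseteq X$'' is false in general. Take $X$ to be a cheese containing $a$ with a hole of radius exactly $t$ centred at some $\alpha$ with $|\alpha-a|=t$; then $a\in X$ and $\rho(X)=t$, yet that hole lies in the annulus $\{|x-a|=t\}\subset\bD_t(a)$, so $\bD_t(a)\not\subseteq X$. Replacing $\bD_t(a)$ by $X\cap\bD_t(a)$ does give a legitimate $\bA(t\partial_x/\varpi)^\dag$-covering of $X$, and Proposition~\ref{YPres} plus \v Cech acyclicity then reduces you to $X\cap\bD_t(a)$; but this is no longer a bare disc, and your normal-form subspace $P$ --- which rests on the scalar Taylor basis $\{x^j\}_{j\ge 0}$ for $\cO(\bD_t)$ --- no longer furnishes a complement to $\cD^\dag_{\varpi/t}\cdot R$ there.

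The paper avoids any reduction and works directly on the given $X$, allowing $\cO(X)$-valued (rather than scalar) coefficients $P_n$. Your steps (i) and (ii) adapt immediately to this setting. For the hard step, middle exactness, the paper writes down the candidate preimage explicitly,
\[
Q_n=\bigl(n!(n-\lambda)\tbinom{\lambda}{n}\bigr)^{-1}\sum_{m\le n}m!\tbinom{\lambda}{m}P_m(x-a)^{n-m},
\]
and then invokes the preparatory Lemma~\ref{Berth421} (a mild generalisation of a lemma of Berthelot) to prove that these partial sums overconverge. Your two-sided estimate on $\bigl|m!\tbinom{k/d}{m}\bigr|$ is correct and is precisely the non-Liouville input the paper also uses.
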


\begin{proof} 
This is based on \cite[Proposition 5.1.2(ii)]{Berth1990}, which Berthelot attributes to Laumon. Recall that the sequence is a complex follows from Lemma \ref{OUrtuPres}. Note also that $R_\cS(u,d)=R(u,d)$ since $k\neq 0$. 

First we prove that the last non-zero map is surjective. To the end we consider a typical element $P=\sum\limits_{n=0}^\infty P_n\partial_x^n\in \cD^\dag_{\varpi/t}(X)$ with $P_n\in \cO(X)$. The required convergence condition is that there is some $r>\varpi/t$ such that $|P_n|r^n\to 0$ as $n\to \infty$. We furthermore compute that \begin{equation} P\cdot z=\sum\limits_{n=0}^\infty\binom{\frac{k}{d}}{n}n!P_n(x-a)^{-n} \hsp z. \label{Pz}\end{equation} 
	
Take an element $g \in \cM(\cS,u,d)(X)_{\overline{U_t}}$. Using $(\ref{OvConvMudSects}$), we can find some $s\in \sqrt{|K|^\times}$ with $s < t$ such that $g = f z$ for some $f \in \cO(X \cap U_{s})$. Since $\cO(X \cap U_s) = \cO(X) \langle s / (x-a) \rangle$, we can find a sequence $(b_n)$ in $\cO(X)$ such that
\[f=\sum\limits_{n=0}^\infty b_n (x-a)^{-n}  \qmb{with}  \lim\limits_{n \to \infty} |b_n|s^{-n}=0.\]
Thus to prove surjectivity, it suffices to show that if $|b_n|s^{-n}\to 0$ as $n\to \infty$ for some $s<t$, then $\left|\frac{b_n}{n!} \binom{\frac{k}{d}}{n}^{-1}\right| r^n\to 0$ as $n\to \infty$ for some $r>\varpi/t$.

Now as noted in \cite[Proposition 13.1.5, Corollary 13.1.7]{Kedlaya}, the rational number $\frac{k}{d}$ is $p$-adic non-Liouville and so the radius of convergence of the series
	\[\sum\limits_{n= 0}^\infty \frac{T^n}{n!}\binom{\frac{k}{d}}{n}^{-1}\]
 is at least $\varpi$; that is $\sup_{n\geq 0}\left|\frac{1}{n!}\binom{\frac{k}{d}}{n}^{-1}\right|w^n<\infty$ for all $w<\varpi$. Now, because $s < t$, we can choose $w$ such that $\varpi s / t < w < \varpi$. Then $r:=w/s>\varpi/t$ works.
	
	Next we prove the injectivity of the first non-zero map. From Definition \ref{TheRelator}(b) we can see that
\[ R(u,d) = (x-a)\partial_x - \frac{k}{d}.\]
It follows from this that for every $Q=\sum\limits_{n=0}^\infty Q_n\partial_x^n\in \cD^\dag_{\varpi/t}(X)$ we have \begin{equation} Q \cdot R(u,d)= \sum_{n= 0}^\infty \left((n-\frac{k}{d})Q_n+ (x-a)Q_{n-1}\right)\partial_x^n \label{QR} \end{equation}
and so if $Q\cdot R(u,d)=0$ then \[(n-\frac{k}{d})Q_n+ (x-a)Q_{n-1}=0 \qmb{for all} n \geq 0,\] where $a_{-1}:=0$. Since $\frac{k}{d} \notin \N$, an easy recurrence gives that $a_n=0$ for all $n$. 
	
	It remains to show that the kernel of $\cdot z$ is contained in the image of $\cdot R(u,d)$.  So we suppose that $P=\sum\limits_{n=0}^\infty P_n \partial_x^n\in \cD^\dag_{\varpi/t}(X)$ satisfies $P\cdot z=0$, and deduce using (\ref{Pz}) that
	\[\sum_{n=0}^\infty \binom{\frac{k}{d}}{n}n!P_n(x-a)^{-n}=0\in \cO(X)_{\overline{U_t}}\subset \cO\left(X\left(\frac{t}{x-a}\right)\right).\]
	
	Let $b_m :=\sum\limits_{n=0}^m\binom{\frac{k}{d}}{n}n!P_n(x-a)^{m-n}$. Because $\frac{k}{d} \notin \N$ by assumption, we can define
	\[Q:=\sum_{n=0}^\infty \frac{c_n}{n!}\partial_x^n \qmb{where}  c_n:=\left[\binom{\frac{k}{d}}{n}(n-\frac{k}{d}) \right]^{-1}  b_n \qmb{for all} n \geq 0. \] 
	
	We wish to show that $Q\in \cD^\dag_{\varpi/t}(X)$. Granted this, it is an easy computation, using (\ref{QR}), to check that $Q\cdot R(u,d)=P$ giving the result. 
	
	Since $P\in \cD^\dag_{\varpi/t}$, there is some $r>\varpi/t$ such that $|P_n|r^n\to 0$ as $n\to \infty$. Moreover, for any $s>\varpi$, $|n!|/s^n$ is bounded by Lemma \ref{nFacVarpi} and $\left|\binom{k/d}{n}\right|\leq 1$ for all $n$ since $p\nmid d$. Thus, choosing $s \in (\varpi,tr)$, we see that $\sum\limits_{n=0}^\infty\binom{k/d}{n}n!P_nT^n$ has radius of convergence at least $r/s>1/t$ in $\cO(X)[[T]]$. Now we may appeal to Lemma \ref{Berth421}, with $f=x-a$ and $v_n = \binom{k/d}{n}{n!}P_n$ to see that $\sum\limits_{m=0}^\infty b_mT^m$ has radius of convergence greater than $1/t$. Since $\frac{k}{d}$ is non-Liouville, $\sum\limits_{n=0}^\infty \frac{T^n}{\frac{k}{d}-n}$ has radius of convergence $1$ and as noted above $\sum\limits_{n=0}^\infty \frac{T^n}{n!}\binom{\frac{k}{d}}{n}^{-1}$ has radius of convergence $\varpi$. Combining all of these estimates, we see that there is some $u>\varpi/t$ such that $|\frac{c_n}{n!}|u^n\to 0$ as $n\to \infty$ as required. 
	 \end{proof}

Next, we collect several basic facts about the linear differential operator $R_\cS(u,d)$.

\begin{lem}\label{RudProps} \hsp 
	\be \item $R_\cS(u,d) = \Delta_\cS \theta_{u,d}(\partial_x)$.
	\item $R_\cS(u,d) = \theta_{u\Delta_\cS^d, d}(\partial_x)  \Delta_\cS$.
	\item Let $v \in K(x)$ with $\cS(v) \subset \cS$.  Then
	\[ R_\cS(uv, d) = \theta_{u,d}(R_\cS(v,d)).\]
	\ee\end{lem}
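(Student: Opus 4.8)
\textbf{Proof proposal for Lemma \ref{RudProps}.}

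The plan is to prove all three identities by direct algebraic manipulation, using only the definition of $\theta_{u,d}$ from Lemma \ref{zTwist} (equation (\ref{ThetaTwist})) together with Definition \ref{TheRelator}(b), and then exploiting the multiplicativity relation $\theta_{uv,d} = \theta_{u,d}\circ\theta_{v,d}$ from Lemma \ref{TwistsMultiply}. For part (a), I would simply compute $\Delta_\cS\,\theta_{u,d}(\partial_x) = \Delta_\cS(\partial_x - \tfrac1d\dlog(u))$ and observe that this is precisely $\Delta_\cS\partial_x - \tfrac1d\Delta_\cS\dlog(u) = \Delta_\cS\partial_x - \tfrac1d\sum_{a\in\cS}v_a(u)\prod_{b\in\cS\setminus\{a\}}(x-b)$, which is the second expression in Definition \ref{TheRelator}(b) for $R_\cS(u,d)$. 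Here I use that $\dlog(\Delta_\cS) = \sum_{a\in\cS}\tfrac{1}{x-a}$ so that $\Delta_\cS\dlog(u) = \sum_{a\in\cS}v_a(u)\,\Delta_\cS/(x-a) = \sum_{a\in\cS}v_a(u)\prod_{b\neq a}(x-b)$, which is a polynomial in $\cO(\bA)$, confirming $R_\cS(u,d)\in\cD(\bA)$ as noted after Definition \ref{TheRelator}.

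For part (b), the natural route is to push $\Delta_\cS$ from the left of $\theta_{u,d}(\partial_x)$ to the right. Since $\theta_{u,d}(\partial_x) = \partial_x - \tfrac1d\dlog(u)$, I would write $\Delta_\cS\,\partial_x = \partial_x\,\Delta_\cS - \partial_x(\Delta_\cS) = \partial_x\,\Delta_\cS - \dlog(\Delta_\cS)\Delta_\cS$ (using $\partial_x(\Delta_\cS) = \dlog(\Delta_\cS)\,\Delta_\cS$), hence
\[ R_\cS(u,d) = \Delta_\cS\,\partial_x - \tfrac1d\dlog(u)\,\Delta_\cS = \partial_x\Delta_\cS - \dlog(\Delta_\cS)\Delta_\cS - \tfrac1d\dlog(u)\Delta_\cS. \]
Now $\dlog(u\Delta_\cS^d) = \dlog(u) + d\,\dlog(\Delta_\cS)$, so $\tfrac1d\dlog(u\Delta_\cS^d) = \tfrac1d\dlog(u) + \dlog(\Delta_\cS)$, and the bracketed terms combine to $-\tfrac1d\dlog(u\Delta_\cS^d)\Delta_\cS$. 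Therefore $R_\cS(u,d) = \bigl(\partial_x - \tfrac1d\dlog(u\Delta_\cS^d)\bigr)\Delta_\cS = \theta_{u\Delta_\cS^d,d}(\partial_x)\,\Delta_\cS$, as required. One subtlety to note: $u\Delta_\cS^d$ need not have $\cS(u\Delta_\cS^d)\subseteq\cS$ unless one allows the exponents $v_a$ to be arbitrary integers, but since $\theta_{w,d}$ only depends on $\dlog(w)$ the formula (\ref{ThetaTwist}) makes sense for any $w\in K(x)^\times$, so this causes no difficulty.

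For part (c), assuming $\cS(v)\subseteq\cS$ and noting $\cS(uv)\subseteq\cS$, I would apply part (a) three times together with Lemma \ref{TwistsMultiply}: $R_\cS(uv,d) = \Delta_\cS\,\theta_{uv,d}(\partial_x) = \Delta_\cS\,\theta_{u,d}(\theta_{v,d}(\partial_x))$. Since $\theta_{u,d}$ is $\cO$-linear (Lemma \ref{zTwist}) it fixes $\Delta_\cS$, so $\Delta_\cS\,\theta_{u,d}(\theta_{v,d}(\partial_x)) = \theta_{u,d}(\Delta_\cS\,\theta_{v,d}(\partial_x)) = \theta_{u,d}(R_\cS(v,d))$, again using part (a) for the last equality. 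I do not anticipate any genuine obstacle here; the only care needed is to track that all the $\dlog$ computations take place in $K(x)$ and that $\theta_{u,d}$ extends unproblematically from units to rational functions via its defining formula, which the paper has already set up. The ``hardest'' step is really just the bookkeeping in part (b) of moving $\Delta_\cS$ across $\partial_x$ and recognizing the resulting logarithmic derivative, which is routine.
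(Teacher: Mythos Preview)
Your proof is correct. Parts (a) and (c) match the paper's argument exactly. For part (b) you take a slightly different route: you commute $\Delta_\cS$ past $\partial_x$ by hand via $\Delta_\cS\partial_x = \partial_x\Delta_\cS - \partial_x(\Delta_\cS)$ and then recognise $\tfrac1d\dlog(u)+\dlog(\Delta_\cS)=\tfrac1d\dlog(u\Delta_\cS^d)$. The paper instead invokes Lemma \ref{ThetaDthPower}, which says that $\theta_{\Delta_\cS^d,d}$ is literally conjugation by $\Delta_\cS$, and then applies Lemma \ref{TwistsMultiply}: $\theta_{u\Delta_\cS^d,d}(\partial_x)\Delta_\cS = \theta_{\Delta_\cS^d,d}(\theta_{u,d}(\partial_x))\Delta_\cS = \Delta_\cS\theta_{u,d}(\partial_x)\Delta_\cS^{-1}\cdot\Delta_\cS = R_\cS(u,d)$ by part (a). Your approach is more elementary and self-contained; the paper's is a one-line application of a structural fact already established. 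Both work without issue, and your remark that only $\dlog(u\Delta_\cS^d)$ enters so no constraint on $\cS(u\Delta_\cS^d)$ is needed is well taken.
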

\begin{proof} (a) This follows directly from Definition \ref{TheRelator}(b) and Lemma \ref{zTwist}.
	
	(b) For any $v \in \cO(X)^\times$, $\theta_{v^d,d}$ is conjugation by $v$ in $\cD(X)$ by Lemma \ref{zTwist}. Applying Lemma \ref{TwistsMultiply} together with part (a), we see that
	\[\theta_{u \Delta_\cS^d, d}(\partial_x)  \Delta_\cS = \theta_{\Delta_\cS^d,d} \left(\theta_{u,d}(\partial_x)\right)  \Delta_\cS = \Delta_\cS \theta_{u,d}(\partial_x) = R_\cS(u,d).\]
	(c) We can compute 
	\[R_\cS(uv,d) = \Delta_\cS \theta_{uv,d}(\partial_x) = \Delta_\cS \theta_{u,d}(\theta_{v,d}(\partial_x)) = \theta_{u,d}(R_\cS(v,d))\]
	using part (a), Lemma \ref{TwistsMultiply} and the fact that $\theta_{u,d}$ is $\cO(X)$-linear.
\end{proof}

Here is our final local calculation.

\begin{prop}\label{BiPres} Let $Y\in \bA(t \partial_x / \varpi)^\dag$ containing at most one $a_i$. Then
	\[0 \to \cD^\dag_{\varpi/t}(Y) \stackrel{\cdot R_\cS(u,d)}{\longrightarrow}\cD^\dag_{\varpi/t}(Y) \stackrel{\cdot z}{\longrightarrow} \cM(\cS,u,d)_{\overline{U_t}}(Y) \to 0.\]
	is exact, provided that $v_{a_i}(u)/d \notin \bN$.
\end{prop}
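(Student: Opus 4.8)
The plan is to reduce the two-hole case to the local results already proved, namely Proposition \ref{YPres} (exactness over affinoids inside $U_t$) and Theorem \ref{StdPres} (exactness over affinoids containing a single point $a_i$), by exploiting the change-of-base flatness of the rings $\cD_r$ established in Chapter 3. First I would dispose of the trivial possibilities: if $Y$ contains none of the $a_i$, then after splitting over a finite extension $K'$ we have $\rho(Y_{K'}) \geq t$, hence $Y \subseteq U_t$, and Proposition \ref{YPres} applies directly. So I may assume $Y$ contains exactly one point, say $a = a_j$, and write $k = v_a(u)$, $u = (x-a)^k v$ where $v = \lambda \prod_{a' \in \cS, a' \neq a}(x-a')^{v_{a'}(u)}$ is a unit on $Y$ (since $Y$ misses all the other $a_i$). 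The complex is already known to be a complex by Lemma \ref{OUrtuPres}, so only exactness at the two left-hand spots needs attention.

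\textbf{Twisting down to a single point.} The key device is Lemma \ref{RudProps}(c): since $\cS(v) \subseteq \cS$ and $u = (x-a)^k \cdot v$, we have $R_\cS(u,d) = \theta_{v,d}\left(R_\cS((x-a)^k, d)\right)$. By Corollary \ref{ThetaUDag}, $\theta_{v,d}$ (noting $v$ is a unit on $Y$, and $p \nmid d$) extends to a ring automorphism of $\cD^\dag_{\varpi/t}(Y)$; it therefore induces an isomorphism of the complex $0 \to \cD^\dag_{\varpi/t}(Y) \xrightarrow{\cdot R_\cS((x-a)^k,d)} \cD^\dag_{\varpi/t}(Y) \xrightarrow{\cdot z'} \cM(\{a\},(x-a)^k,d)_{\overline{U_t^{\{a\}}}}(Y)$ — where I interpret the right-hand term via the identification $z = (\text{unit}) \cdot z'$ coming from $u = (x-a)^k v$ — with the complex in the statement, \emph{provided} I can match up the overconvergence subsheaves. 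Here one must be slightly careful: $\cM(\cS,u,d)_{\overline{U_t}}$ uses overconvergence into \emph{all} holes of $\cS$, whereas on $Y$, which meets only the hole at $a$, Lemma \ref{iDagLocal} together with the computation $X \cap U_t = X(t/(x-a_1),\ldots,t/(x-a_h))$ from Lemma \ref{OvConvSects} shows that the restriction of $\cM(\cS,u,d)_{\overline{U_t}}$ to $Y$ only "sees" the hole at $a$. More precisely, I would show $\cM(\cS,u,d)_{\overline{U_t}}(Y) \cong \cM(\{a\}, (x-a)^k, d)_{\overline{U(\{a\})_t}}(Y)$ as $\cD^\dag_{\varpi/t}(Y)$-modules, using Corollary \ref{iUjOSect} ($\cM(\cS,u,d)_{\overline{U_t}} = (\cO_\A)_{\overline{U_t}} z$), the fact that $(\cO_\A)_{\overline{U_t}}|_Y$ depends only on the holes of $U_t$ that meet $Y$ (by Lemma \ref{iDagLocal} and Lemma \ref{OvConvSects}), and that $v$ is a unit on $Y$.

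\textbf{Applying the one-point case.} Having made these identifications, exactness of the complex in the statement over $Y$ becomes equivalent to exactness of $0 \to \cD^\dag_{\varpi/t}(Y) \xrightarrow{\cdot R_{\{a\}}((x-a)^k,d)} \cD^\dag_{\varpi/t}(Y) \xrightarrow{\cdot z'} \cM(\{a\},(x-a)^k,d)_{\overline{U(\{a\})_t}}(Y) \to 0$. If $a \in Y(K)$ this is precisely Theorem \ref{StdPres}, whose hypothesis $\frac{k}{d} \notin \bN$ is exactly the hypothesis $v_{a_i}(u)/d \notin \bN$ of the present Proposition. If $a \notin Y(K)$ but $a$ lies in a hole of $Y$ — which is the situation when $Y$ "contains $a_i$" in the sense relevant here — I would pass to a finite extension $K'$ over which $Y$ splits and $a \in Y_{K'}(K')$, apply Theorem \ref{StdPres} over $K'$, and descend using Lemma \ref{DrBC}(b), $\cD^\dag_r(Y_{K'}) \cong \cD^\dag_r(Y) \otimes K'$, together with faithful flatness of $K'$ over $K$ (and Lemma \ref{DrBC}-type base change for the module term, which follows from Lemma \ref{OvConvSects} and standard properties of affinoid base change).

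\textbf{Main obstacle.} The routine part is the twisting bookkeeping; the step requiring genuine care is the identification $\cM(\cS,u,d)_{\overline{U_t}}(Y) \cong \cM(\{a\}, (x-a)^k,d)_{\overline{U(\{a\})_t}}(Y)$ as $\cD^\dag_{\varpi/t}(Y)$-modules — i.e. checking that the overconvergent structure genuinely localizes to the single relevant hole and that the twisting automorphism $\theta_{v,d}$ intertwines the two module structures correctly on sections. Concretely, one must verify that for $Y$ meeting only the hole at $a$, $Y \cap U_s = Y \cap U(\{a\})_s$ for all small $s$, which is immediate from the $\bfC$-points description since the conditions $|y - a_i| \geq s$ for $i \neq j$ are automatically satisfied on $Y$ once $s$ is small enough that $Y$ avoids those holes; this uses that $Y$ is an affinoid subdomain missing $a_i$ for $i \neq j$, so $\inf_{y \in Y}|y - a_i| > 0$. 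Once this is in place, the rest is formal.
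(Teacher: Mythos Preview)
Your proposal is correct and follows essentially the same route as the paper: dispose of the case $Y\subseteq U_t$ via Proposition~\ref{YPres}, then write $u=(x-a)^k v$ with $v\in\cO(Y)^\times$, use Lemma~\ref{RudProps}(c) to get $R_\cS(u,d)=\theta_{v,d}(R_\cS((x-a)^k,d))$, apply the automorphism $\theta_{v,d}$ of $\cD^\dag_{\varpi/t}(Y)$ from Corollary~\ref{ThetaUDag}, identify the module term, and invoke Theorem~\ref{StdPres}. Two minor remarks: first, your base-change discussion is unnecessary, since $\cS\subset K$ by the standing hypotheses of \S\ref{KummerSect}, so ``$a_i\in Y$'' already means $a_i\in Y(K)$; second, your careful treatment of the identification $\cM(\cS,u,d)_{\overline{U_t}}(Y)\cong\cM(\{a\},(x-a)^k,d)_{\overline{U(\{a\})_t}}(Y)$ is exactly what is needed to bridge to Theorem~\ref{StdPres} (which is stated for $h=1$), and the paper handles this step more tersely by simply asserting the correspondence under $\theta_{v,d}^{-1}$.
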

\begin{proof} If none of the $a_i$ live in $Y$ then we're done by Proposition \ref{YPres}. So we may
	suppose $a_i\in Y(K)$ and define $a:=a_i$, $k:=v_{a_i}(u)$ and $v=u/(x-a)^{k}$ so that $v$ is a unit in $\cO(Y)$. In particular $\theta_{v,d}$ induces an automorphism of $\cD^\dag_{\varpi/t}(Y)$ by Corollary \ref{ThetaUDag}. Moreover by Lemma \ref{RudProps}(c), \[R_\cS(u,d)= \theta_{v,d}(R_\cS((x-a)^k,d)).\] Thus $\cD^{\dag}_{\varpi/t}(Y)R_\cS(u,d)=\cD^\dag_{\varpi/t}(Y)\theta_{v,d}(R_\cS((x-a)^k,d))$. Under $\theta^{-1}_{v,d}$ the $\cD^\dag_{\varpi/t}(Y)$-module $\cM(\cS,u,d)_{\overline{U_t}}(Y)$ corresponds to the $\cD^\dag_{\varpi/t}(Y)$-module $\cM(\cS,(x-a)^k,d)_{\overline{U_t}}(Y)$. Now we can apply Theorem \ref{StdPres}. 
	 \end{proof}


\begin{defn} For $i=1,\ldots,h$ let $W_i$ be the affinoid subdomain of $\bA$ whose $\mathbf{C}$-points are obtained by removing the open discs $\{z\in {\bf C}\mid |z-a_j|<t \}$ for $j\neq i$ from the closed disc $\{z\in {\bf C}\mid |z-a_i|\leq t \}$. 
\end{defn}
We note that the condition $t\leq\min\limits_{i \neq j}|a_i - a_j|$ means that each $W_i$ contains $a_i$ but does not contain $a_j$ for $i\neq j$. 

\begin{lem} \label{CheckCov} \hsp
	\be \item Each $W_i$ is $\A(t \partial_x / \varpi)^\dag$-admissible.
	\item Let $X \in \bA(t \partial_x/\varpi)^\dag$, and suppose that
	\[ t \leq \min \{ |a_i - a_j| : a_i,a_j \in X({\bf C}) \qmb{and} i \neq j \}.\]
Then $\{X \cap W_1,\ldots, X \cap W_h, X \cap U_t\}$ is an admissible covering of $X$.
	\ee\end{lem}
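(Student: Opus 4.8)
The plan is to handle the two parts separately, both by reducing to the cheese-theoretic description of $(t\partial_x/\varpi)^\dag$-admissible affinoids from $\S\ref{DrSect}$. For part (a), I would pass to a finite field extension $K'/K$ with $t\in|K'^\times|$, over which $W_i$ becomes a single $K'$-cheese with outer radius $t$ and every hole (there may be none, in which case $W_i$ is a closed disc of radius $t$) of radius $t$; thus $\rho(W_i)=t$ in the sense of Definition \ref{MinRad}. Then $r(W_i)=r((W_i)_{K'})=\varpi/\rho((W_i)_{K'})=\varpi/t$ by Lemma \ref{BaseChR} and Corollary \ref{Heisenberg}, so $r(W_i)\le\varpi/t$, which is exactly the condition for $W_i$ to be $\bA(t\partial_x/\varpi)^\dag$-admissible by Definition \ref{DagSite}(a).

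For part (b), first note that each $X\cap W_i$ is an affinoid subdomain of $X$, being the intersection of the affinoid $X$ with the affinoid subdomain $W_i$ of $\bA$, and that $X\cap U_t=X(\tfrac{t}{x-a_1},\dots,\tfrac{t}{x-a_h})$ is a rational subdomain of $X$, as observed in the proof of Lemma \ref{OvConvSects}. All of these are moreover $(t\partial_x/\varpi)^\dag$-admissible: $r(X\cap W_i)\le\max\{r(X),r(W_i)\}\le\varpi/t$ by Proposition \ref{Gtop} and part (a), while $r(X\cap U_t)\le\varpi/t$ by the same computation used to prove Lemma \ref{XUs} (with $s=t$ in place of $s<t$). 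Since the $G$-topology $\bA(t\partial_x/\varpi)^\dag$ has all finite coverings as its admissible coverings (Definition \ref{DagSite}(b)), it therefore remains only to verify that $X\cap W_1,\dots,X\cap W_h,X\cap U_t$ cover $X$ set-theoretically.

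For this I would argue on $\mathbf{C}$-points. Let $y\in X(\mathbf{C})$. If $|y-a|\ge t$ for every $a\in\cS$ then $y\in U_t$ and we are done; otherwise $|y-a_i|<t$ for some $i$, and I claim $y\in W_i$. The crucial point is that $a_i\in X(\mathbf{C})$: by \cite[Theorem 4.1.8]{ArdWad2023} and Lemma \ref{BaseChR}, $X_{\mathbf{C}}$ is a finite disjoint union of $\mathbf{C}$-cheeses, and since $r(X)\le\varpi/t$ each of these has all of its radii $\ge t$ by Corollary \ref{Heisenberg}; writing $C(\alpha,\mathbf{s})$ for the component containing $y$, one checks $|a_i-\alpha_0|\le\max\{|a_i-y|,|y-\alpha_0|\}\le|s_0|$ and, for each hole $\ell$, $|a_i-\alpha_\ell|=|y-\alpha_\ell|\ge|s_\ell|$ (the ultrametric equality holding because $|y-a_i|<t\le|s_\ell|\le|y-\alpha_\ell|$), so $a_i$ lies in this component. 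The same computation shows that if $|y-a_j|<t$ for some $j\ne i$, then also $a_j\in X(\mathbf{C})$, whence $|a_i-a_j|\le\max\{|a_i-y|,|y-a_j|\}<t$, contradicting the hypothesis $t\le\min\{|a_i-a_j|:a_i,a_j\in X(\mathbf{C}),\ i\ne j\}$. Hence $|y-a_j|\ge t$ for every $j\ne i$, which together with $|y-a_i|<t$ says precisely that $y\in W_i(\mathbf{C})$, so $y\in(X\cap W_i)(\mathbf{C})$.

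The main obstacle is exactly the claim $a_i\in X(\mathbf{C})$: although the final statement concerns only points of $X$, its proof genuinely uses the admissibility hypothesis on $X$ — through the identification $r(X)\le\varpi/t\iff\rho(X_{\mathbf{C}})\ge t$ provided by Corollary \ref{Heisenberg} — to force every component of $X_{\mathbf{C}}$ to contain the entire open $t$-disc around each of its points. Everything else (the affinoid-subdomain bookkeeping, the admissibility of the members, and the remaining ultrametric inequalities) is routine.
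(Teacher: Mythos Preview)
Your proof is correct and follows essentially the same route as the paper's. Part (a) is identical; for part (b) both arguments reduce to the $\mathbf{C}$-point covering and hinge on the claim that $|y-a_i|<t$ with $y\in X(\mathbf{C})$ forces $a_i\in X(\mathbf{C})$, derived from $\rho(X_{\mathbf{C}})\ge t$. The only differences are cosmetic: you verify this claim by an explicit cheese calculation whereas the paper argues by contradiction (if $a_i\notin X(\mathbf{C})$ then the open $t$-disc about $a_i$ misses $X(\mathbf{C})$), and you spell out the $(t\partial_x/\varpi)^\dag$-admissibility of each member of the covering, which the paper leaves implicit.
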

\begin{proof} (a) We choose a finite field extension $K'$ of $K$ such that $t \in |K'|$; then $W_i \times_K K'$ becomes a cheese such that $\rho(W_i \times_K K') = t$. Hence \[ r(W_i) = r(W_i \times_K K') = \varpi/t \] by Lemma \ref{BaseChR} and Corollary \ref{Heisenberg}.	

	(b) Applying Lemma \ref{BaseChR} and Corollary \ref{Heisenberg} again shows
\[\rho(X \times_K {{\bf C}}) = \varpi / r(X \times_K {\bf C}) \geq \frac{\varpi}{\varpi/t} = t.\]
Suppose that $a \in X({\bf C})$ and $|a - a_i| < t$ for some $i$. If $a_i \notin X({\bf C})$, then because $\rho(X \times_K {\bf{C}}) \geq t$, the intersection of $X({\bf C})$ with the open disc of radius $t$ around $a_i$ is empty. But $|a - a_i| <t$ and $a \in X({\bf C})$ --- contradiction --- so in fact $a_i \in X({\bf C})$ whenever $a \in X({\bf C})$ and $|a - a_i| < t$. We will now show that
\[X({\bf C}) = (X \cap U_t)({\bf C}) \cup (X \cap W_1)({\bf C}) \cup \cdots \cup (X \cap W_h({\bf C}).\]
Indeed, if $a \in X({\bf C}) - (X \cap U_t)({\bf C})$ then $a \in X(\bf C)$ and $|a - a_i| < t$ for some $i$, so that $a_i \in X({\bf C})$ by the above. But if also $|a - a_j| < t$ for some $j \neq i$, then $a_j \in X({\bf C})$ by the above as well, and then
\[|a_i - a_j| \leq \max \{|a - a_i|, |a - a_j|\} < t\]
contradicts our hypothesis on $t$. So in fact $|a - a_j| \geq t$ for all $j \neq i$ which forces $a \in (X \cap W_i)(\bf{C})$ and establishes our claim. Since  $\{X \cap U_t,  X \cap W_1,\ldots, X \cap W_h\}$ is a finite set of affinoid subdomains of $X$, which covers $X$ on ${\bf C}$-points, we conclude that it is an admissible covering of $X$.
\end{proof}

Here is the main result of $\S \ref{KummerSect}$.
\begin{thm}\label{SheafyPres} Let $X\in\A(t \partial_x/\varpi)^\dag$. Suppose that
\begin{itemize}
\item $t \in \sqrt{|K^\times|}$ and $p \nmid d$,
\item $t \leq \min \{ |a_i - a_j| : a_i,a_j \in X({\bf C}) \qmb{and} i \neq j \}$, and
\item $k_i/d\not\in \bN$ whenever $a_i \in X({\bf C})$.
\end{itemize}
Then the complex of $\cD^\dag_{\varpi/t}(X)$-modules
	\[0 \to \cD^\dag_{\varpi/t}(X) \stackrel{\cdot R_{\cS}(u,d)}{\longrightarrow}\cD^\dag_{\varpi/t}(X) \to \cM(\cS,u,d)_{\overline{U_t}}(X) \to 0\]
from Lemma \ref{OUrtuPres} is exact.
\end{thm}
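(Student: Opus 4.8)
The plan is to deduce the global exactness statement from the local calculations already established, using the sheaf-theoretic machinery of \S\ref{HuberSpace} together with a \v{C}ech-cohomology argument on a suitable finite admissible covering of $X$. Concretely, I would proceed as follows.

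First I would set up the covering. By Lemma \ref{CheckCov}(b), the hypothesis $t \leq \min\{|a_i-a_j| : a_i,a_j \in X(\mathbf{C}), i \neq j\}$ guarantees that $\cW := \{X \cap W_1,\ldots, X \cap W_h, X \cap U_t\}$ is a finite admissible covering of $X$, and by Lemma \ref{CheckCov}(a) each $W_i$ (hence each member of $\cW$, using Proposition \ref{Gtop}) is $\A(t\partial_x/\varpi)^\dag$-admissible, so all the terms of the complex restrict to honest modules over $\cD^\dag_{\varpi/t}$ on each piece and on all the pairwise and higher intersections. The key point is that $\cD^\dag_{\varpi/t}$ has vanishing higher \v{C}ech cohomology on $\A(t\partial_x/\varpi)^\dag$ by Theorem \ref{NCDagTate}, and $\cM(\cS,u,d)_{\overline{U_t}}$ has vanishing higher \v{C}ech cohomology on the weak $G$-topology by Corollary \ref{DagTate} applied to the coherent sheaf underlying it (using Corollary \ref{iUjOSect}, which identifies it with $(\cO_\A)_{\overline{U_t}} z$).

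Next I would run the standard argument: form the short complex of presheaves $0 \to \cD^\dag_{\varpi/t} \xrightarrow{\cdot R_\cS(u,d)} \cD^\dag_{\varpi/t} \xrightarrow{\cdot z} \cM(\cS,u,d)_{\overline{U_t}} \to 0$, which by Lemma \ref{OUrtuPres} is a complex. Each member of the covering $\cW$ either is contained in $U_t$ (in which case Proposition \ref{YPres} gives local exactness), or meets exactly one $a_i$ (in which case $X \cap W_i$ contains $a_i$ but no $a_j$ for $j\neq i$, so Proposition \ref{BiPres} applies, using the hypothesis $k_i/d \notin \bN$); and likewise every nonempty intersection $X \cap W_{i_0} \cap \cdots$ is $\A(t\partial_x/\varpi)^\dag$-admissible and contains at most one $a_i$, so Proposition \ref{BiPres} (or Proposition \ref{YPres}) applies on it too. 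Therefore the complex is exact on every member of $\cW$ and on all intersections. Taking the associated \v{C}ech double complex with respect to $\cW$ and using the acyclicity of all three sheaves on this covering, a diagram chase in the first quadrant (the usual spectral-sequence or staircase argument) shows that the complex of global sections $0 \to \cD^\dag_{\varpi/t}(X) \xrightarrow{\cdot R_\cS(u,d)} \cD^\dag_{\varpi/t}(X) \xrightarrow{\cdot z} \cM(\cS,u,d)_{\overline{U_t}}(X) \to 0$ is exact.

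The main obstacle, I expect, is purely bookkeeping rather than conceptual: one must check carefully that every nonempty intersection of members of $\cW$ is again $\A(t\partial_x/\varpi)^\dag$-admissible (this follows from Proposition \ref{Gtop}) and contains at most one of the $a_i$ (which follows from the hypothesis on $t$, exactly as in the proof of Lemma \ref{CheckCov}(b): two distinct $a_i, a_j$ cannot both lie within distance $<t$ of a common point), so that the hypotheses of Proposition \ref{BiPres} — in particular the condition $v_{a_i}(u)/d \notin \bN$ — are available at each stage. Once that is in place, the \v{C}ech argument is formal. One small point worth being explicit about is that exactness of a complex of sheaves on the members and intersections of a finite covering, all of whose terms are \v{C}ech-acyclic for that covering, implies exactness on global sections; this is a standard consequence of the hypercohomology spectral sequence of the \v{C}ech bicomplex, which I would invoke rather than reprove.
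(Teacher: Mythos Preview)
Your proposal is correct and follows essentially the same approach as the paper's own proof: both use the covering $\{X\cap U_t, X\cap W_1,\ldots,X\cap W_h\}$ from Lemma~\ref{CheckCov}(b), invoke the \v{C}ech-acyclicity of $\cD^\dag_{\varpi/t}$ and $\cM(\cS,u,d)_{\overline{U_t}}$ (Theorem~\ref{NCDagTate} and Corollary~\ref{DagTate}), and reduce to checking exactness on each intersection of covering members via Proposition~\ref{BiPres}. The paper's version is simply terser, absorbing your case distinction into the single observation that every such intersection contains at most one $a_i$ and hence Proposition~\ref{BiPres} applies uniformly.
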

\begin{proof}  Lemma \ref{CheckCov}(b) tells us that $\{X \cap U_t, X \cap W_1, \ldots, X \cap W_h\}$ is an $\A(t \partial_x / \varpi)^\dag$-admissible covering of $X$. By Theorem \ref{NCDagTate} and Theorem \ref{DagTate}, we know that $\cD^\dag_{\varpi/t}$ and $\cM(\cS,u,d)_{\overline{U_t}}$ are sheaves on $\A(t \partial_x/\varpi)^\dag$ with vanishing higher \v{C}ech cohomology. Therefore it suffices to show that the sequence
	\[0 \to \cD^\dag_{\varpi/t}(Y) \stackrel{\cdot R_\cS(u,d)}{\longrightarrow}\cD^\dag_{\varpi/t}(Y) \to \cM(\cS,u,d)_{\overline{U_t}}(Y) \to 0\]
	is exact whenever $Y$ is an intersection of any members of our covering; note that this $Y$ is still $(t \partial_x / \varpi)^\dag$-admissible. Since each member of our covering contains at most one $a_i$ the exactness follows from Proposition \ref{BiPres}.\end{proof}

Finally, we record a purely algebraic calculation which tells us how the the relators $R_{\cS(u)}(u,d)$ from Definition \ref{TheRelator} interact with the maps $g^{\cD} : \cD \to g^\ast \cD$ defining the $G$-equivariant structure on $\cD$. Recall from $\S \ref{AffTrans}$ that $\bB$ denotes the subgroup scheme $\{g \in \mathbb{GL}_2 : g_{21}=0\}$ of upper-triangular matrices in $\mathbb{GL}_2$. Recall also the isomorphism $g^\dag_r \colon  \cD_r^\dag\stackrel{\cong}{\longrightarrow}  g^\ast \cD_{\frac{r}{\varrho(g)}}^\dag$ from Corollary \ref{gDagTransform}(b).

\begin{lem}\label{ChiOfRelDag} Let $g \in \bB(K)$, let $W$ be an affinoid subdomain of $\bA$ and let $u \in K(x) \cap \cO(W)^\times$. 
\be \item We have $g \cdot R_{\cS(u)}(u,d) =  \varrho(g)^{1-|\cS(u)|} \hsp R_{\cS(g \cdot u)}(g \cdot u,d)$.
\item Suppose that $W \in \bA(\partial_x/r)^\dag$ for some $r \in \bR_{>0}$. Then
\[g^\dag_r(W)\left(R_{\cS(u)}(u,d)\right) =  \varrho(g)^{1-|\cS(u)|} \ R_{\cS(g \cdot u)}(g \cdot u,d).\]
\ee \end{lem}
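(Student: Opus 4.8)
\textbf{Proof plan for Lemma \ref{ChiOfRelDag}.}

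The plan is to prove part (a) by a direct computation using the $\bB(K)$-equivariant formulas for $\partial_x$ and for the linear factors $x-z$ recorded in Lemma \ref{AutAOD}(e), and then to deduce part (b) formally from part (a) together with the compatibility of $g^\dag_r$ with the restriction maps $j$.

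For part (a), first I would write $u = \lambda \prod_{a \in \cS(u)} (x-a)^{v_a(u)}$ with $\lambda \in K^\times$, so that $\cS(u) = \{a_1,\dots,a_n\}$ with $n = |\cS(u)|$ and each $v_{a_i}(u) \neq 0$. Applying $g$ to $u$ via Lemma \ref{AutAOD}(a) and using Lemma \ref{AutAOD}(e), which gives $g \cdot (x-a) = \varrho(g)^{-1}(x - g\cdot a)$, one computes
\[ g \cdot u = \lambda \prod_{i=1}^n \left(\varrho(g)^{-1}(x - g \cdot a_i)\right)^{v_{a_i}(u)} = \lambda \hsp \varrho(g)^{-\sum_i v_{a_i}(u)} \prod_{i=1}^n (x - g\cdot a_i)^{v_{a_i}(u)}. \]
In particular the M\"obius images $g \cdot a_1,\dots,g \cdot a_n$ are still pairwise distinct (as $g$ acts bijectively on $\bP^1(\overline K)$ and none of the $a_i$ is sent to $\infty$ since $g \in \bB(K)$ fixes $\infty$), each with the same nonzero exponent $v_{a_i}(u)$, so $\cS(g\cdot u) = g \cdot \cS(u)$ and $v_{g \cdot a_i}(g\cdot u) = v_{a_i}(u)$. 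Now I would recall from Definition \ref{TheRelator}(b) that $R_{\cS(u)}(u,d) = \Delta_{\cS(u)}\bigl(\partial_x - \tfrac1d \dlog(u)\bigr)$ where $\Delta_{\cS(u)} = \prod_{a \in \cS(u)}(x-a)$. The equivariant structure on $\cD$ from Lemma \ref{AutAOD}(d),(e) is a homomorphism of sheaves of $K$-algebras, so $g$ applied to $R_{\cS(u)}(u,d)$ is computed factor by factor. Using $g \cdot (x - a_i) = \varrho(g)^{-1}(x - g\cdot a_i)$ gives $g \cdot \Delta_{\cS(u)} = \varrho(g)^{-n}\Delta_{\cS(g\cdot u)}$; using $g \cdot \partial_x = \varrho(g)\partial_x$ from Lemma \ref{AutAOD}(e) and the fact that $\dlog$ is invariant, i.e. $g \cdot \dlog(u) = \dlog(g\cdot u)\cdot \varrho(g)$ (which one sees from $(g\cdot\partial_x)(g\cdot u) = g\cdot(\partial_x u)$ in (\ref{GactOandT}) and $g\cdot\partial_x = \varrho(g)\partial_x$), one obtains $g \cdot (\partial_x - \tfrac1d\dlog(u)) = \varrho(g)(\partial_x - \tfrac1d\dlog(g\cdot u))$. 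Multiplying, $g \cdot R_{\cS(u)}(u,d) = \varrho(g)^{-n}\varrho(g) \hsp \Delta_{\cS(g\cdot u)}(\partial_x - \tfrac1d\dlog(g\cdot u)) = \varrho(g)^{1-n} R_{\cS(g\cdot u)}(g\cdot u,d)$, which is exactly (a). An alternative, and perhaps cleaner, route to the same identity is to use the factorisation $R_{\cS(u)}(u,d) = \Delta_{\cS(u)}\theta_{u,d}(\partial_x)$ from Lemma \ref{RudProps}(a) together with the commutation of $g^{\cD}$ with the twisting automorphisms $\theta_{u,d}$ established in Lemma \ref{ThetaudG}: then $g \cdot R_{\cS(u)}(u,d) = (g\cdot\Delta_{\cS(u)})\hsp g^{\cD}(\theta_{u,d}(\partial_x)) = \varrho(g)^{-n}\Delta_{\cS(g\cdot u)}\hsp\theta_{g\cdot u,d}(g\cdot\partial_x) = \varrho(g)^{-n}\Delta_{\cS(g\cdot u)}\hsp\theta_{g\cdot u,d}(\varrho(g)\partial_x) = \varrho(g)^{1-n}R_{\cS(g\cdot u)}(g\cdot u,d)$, since $\theta_{g\cdot u,d}$ is $\cO$-linear and hence commutes with the scalar $\varrho(g)$.

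For part (b), the point is simply that $R_{\cS(u)}(u,d) \in \cD(W)$ lies in the dense subring $j(W)(\cD(W))$ of $\cD^\dag_r(W)$ (via Lemma \ref{TheMapJ}, noting $W \in \bA(\partial_x/r)^\dag \subset \bA(\partial_x/c)$ for $c>r$), and the isomorphism $g^\dag_r$ from Corollary \ref{gDagTransform}(b) is by construction $g^{\cD}$-compatible: it fits into the commutative square analogous to (\ref{DgjD}) relating $j$, $g^\dag_r$ and $g^{\cD}$. Concretely, $g^\dag_r(W)$ restricted to the image of $\cD(W)$ agrees with $g^{\cD}(W) : \cD(W) \to \cD(gW)$ followed by the inclusion $j(gW)$. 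Since both $R_{\cS(u)}(u,d)$ and its image under $g^{\cD}(W)$ computed in part (a), namely $\varrho(g)^{1-|\cS(u)|}R_{\cS(g\cdot u)}(g\cdot u,d)$, lie in the respective images of $\cD$, part (b) follows immediately by applying $j$ to the identity in part (a). I do not anticipate any real obstacle here: part (a) is a bookkeeping exercise in the equivariant formulas, and part (b) is a formal consequence of the density/compatibility already set up in Lemma \ref{TheMapJ} and Corollary \ref{gDagTransform}. If there is a subtlety to watch, it is only in confirming that the scalar $\varrho(g) \in K^\times$ commutes past $\theta_{g\cdot u,d}$ and past the sheaf maps correctly (it does, since everything in sight is $K$-linear), and in recording cleanly that $\cS(g\cdot u) = g\cdot\cS(u)$ with preserved multiplicities, which is where the hypothesis $g \in \bB(K)$ (so that $g$ fixes $\infty$ and no finite zero/pole escapes to $\infty$) is used.
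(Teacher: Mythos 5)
Your proposal is correct, and your ``alternative, cleaner route'' — factoring $R_{\cS(u)}(u,d)=\Delta_{\cS(u)}\theta_{u,d}(\partial_x)$ via Lemma \ref{RudProps}(a), applying Lemma \ref{AutAOD}(e) and Lemma \ref{ThetaudG}, and then deducing part (b) from the commutative square (\ref{DgjD}) — is exactly the argument in the paper. The direct $\dlog$-based computation you give first is also fine, just a slightly more hands-on version of the same bookkeeping.
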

\begin{proof} (a) By Lemma \ref{AutAOD}(e), we have $g \cdot (x-z) = \varrho(g)^{-1} (x - g \cdot z)$ for any $z \in \overline{K}$. Therefore $v_{g \cdot z}(g \cdot u) = v_z(u)$, so $\cS(g \cdot u) = g \cdot \cS(u)$ and 
\[g \cdot \Delta_u = \prod_{z \in \cS(u)} \varrho(g) (x - g \cdot z) = \varrho(g)^{-|\cS(u)|} \Delta_{g \cdot u}.\]
Because we also have $ g \cdot \partial_x = \varrho(g) \partial_x$ by Lemma \ref{AutAOD}(e), Lemma \ref{ThetaudG} gives
\[\begin{array}{lll} g \cdot R_{\cS(u)}(u,d) &=& g\cdot \Delta_u \hsp g \cdot \theta_{u,d}(\partial_x) = \varrho(g)^{-|\cS(u)|} \Delta_{g \cdot u} \hsp \theta_{g \cdot u,d}(g \cdot \partial_x) \\
&=& \varrho(g)^{1-|\cS(u)|} R_{\cS(g \cdot u)}(g \cdot u,d).   \end{array}\qedhere\]

(b) This follows from part (a) and the commutative square (\ref{DgjD}).
\end{proof}

\subsection{Line bundles with connection on the local Drinfeld space}\label{UpperHP} In $\S \ref{UpperHP}$, we work with the following data.

\begin{notn}\label{UpperHPhyp1} \hsp
\be \item $\Omega := \bP^{1,\an} - \bP^1(F)$ is the  the Drinfeld upper half plane,
\item $\Upsilon=\bD\cap \Omega$,
\item $j \colon \Upsilon \hookrightarrow \bD$ is the open embedding,
\item $[\sL] \in \PicCon( \Upsilon)[p']$,
\item $d$ is the order of $[\sL]$ in $\PicCon(\Upsilon)$. Thus $d$ is coprime to $p$.
\ee\end{notn}

We refer the reader to \cite[Definition 3.2.1]{ArdWad2023} for the terminology, and we fix a $\cD$-linear isomorphism $\psi \colon \sL^{\otimes d} \stackrel{\cong}{\longrightarrow} \cO_{\Upsilon}$.

\begin{defn}\label{UnCovering} Let $n \geq 0$ be an integer.

\be \item $V_n := \bA \backslash \bigcup\limits_{a \in \cO_F} \{|z - a| < |\pi_F|^n\}$, 
\item $\Upsilon_n:= V_n\cap \bD$,
\item $\sL_n := j_\ast(\sL_{\overline{\Upsilon_n}})$, and
\item $h_n := q^{n+1}$.
\ee\end{defn}

See Definition \ref{iUdagDefn} for the meaning of the sheaf $\sL_{\overline{\Upsilon_n}}$ on $\Upsilon$.

We note that if $\cS :=\{a_1,\ldots,a_{h_n}\} \subset \cO_F$ is any set of coset representatives for the group $\cO_F / \pi_F^{n+1} \cO_F$, then $V_n$ was denoted $U(\cS)_{|\pi_F|^n}$ in \S\ref{KummerSect}. 

We also note that $\Upsilon_n$ is the cheese obtained by removing $h_n$ open balls of radius $|\pi_F|^n$ from $\bD$ with centres at $a_1,\ldots,a_{h_n}$ and $\{\Upsilon_n\}_{n\geq 0}$ forms an admissible cover of $\Upsilon$ totally ordered by inclusion. 

\begin{lem}\label{jLlimit} The maps $j_\ast\sL\to \sL_n$ given by restriction induce an isomorphism $j_\ast \sL \cong \invlim \sL_n$ of $\cD$-modules on $\bD$.
\end{lem}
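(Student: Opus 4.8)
The plan is to use the two general sheaf-theoretic facts established earlier, namely Proposition \ref{ApproxOnClosed} and Lemma \ref{iDagLocal}, applied to the totally ordered admissible cover $\{\Upsilon_n\}_{n \geq 0}$ of $\Upsilon$. First I would observe that $\{\Upsilon_n : n \geq 0\}$ is an admissible cover of $\Upsilon$ totally ordered by inclusion, with $\Upsilon_n \subseteq \Upsilon_{n+1}$, so Proposition \ref{ApproxOnClosed} applies to the abelian sheaf $\sL$ on $\Upsilon$ and gives a restriction-induced isomorphism $\sL \stackrel{\cong}{\to} \invlim \sL_{\overline{\Upsilon_n}}$ of abelian sheaves on $\Upsilon$. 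Here I should take a moment to match notation: $\sL_{\overline{\Upsilon_n}}$ is the sheaf on $\Upsilon$ of Definition \ref{iUdagDefn}, and $\sL_n$ is its pushforward $j_\ast(\sL_{\overline{\Upsilon_n}})$ to $\bD$ by Definition \ref{UnCovering}(c).

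Next I would push everything forward along the open embedding $j \colon \Upsilon \hookrightarrow \bD$. The functor $j_\ast$ is a right adjoint, hence commutes with inverse limits, so applying $j_\ast$ to the isomorphism $\sL \cong \invlim \sL_{\overline{\Upsilon_n}}$ yields $j_\ast \sL \cong \invlim j_\ast(\sL_{\overline{\Upsilon_n}}) = \invlim \sL_n$ of abelian sheaves on $\bD$, and by construction the connecting maps and the comparison map are exactly the ones induced by the restriction maps $j_\ast \sL \to \sL_n$. It then remains to check that this isomorphism is one of $\cD$-modules. This is where a small amount of care is needed: one must verify that the restriction maps $\sL \to \sL_{\overline{\Upsilon_n}}$, and hence $j_\ast \sL \to \sL_n$, are morphisms of $\cD$-modules. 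This follows because the restriction map $\sL \to \sL_{\overline{\Upsilon_n}}$ is the counit of the $(i^{-1}, i_\ast)$-adjunction of Definition \ref{iUdagDefn}, which is a morphism of sheaves of $\cO$-modules compatible with the $\cD$-action — concretely, on an affinoid $X$ it is the natural map $\sL(X) \to \colim_{s} \sL(X \cap \Upsilon_s)$ of Lemma \ref{OvConvSects}, visibly $\cD(X)$-linear — and $j_\ast$ preserves $\cD$-linearity. An inverse limit of $\cD$-modules taken in abelian sheaves is automatically a $\cD$-module and the comparison map is $\cD$-linear, so we are done.

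I do not expect any serious obstacle here; the statement is essentially a formal consequence of Proposition \ref{ApproxOnClosed} together with the fact that $j_\ast$ commutes with inverse limits. The only point requiring genuine (if routine) attention is the bookkeeping that identifies the inverse system $\{\sL_{\overline{\Upsilon_n}}\}$ on $\Upsilon$ with $\{\sL_n\}$ after applying $j_\ast$, together with the compatibility of all maps with the $\cD$-action; once $\cO$-linearity and compatibility with the connection are recorded, $\cD$-linearity is immediate since $\cD$ is generated by $\cO$ and $\cT$. One could alternatively give a hands-on proof by evaluating both sides on an arbitrary affinoid subdomain $X$ of $\bD$ using Lemma \ref{OvConvSects} and the explicit description of sections, but invoking Proposition \ref{ApproxOnClosed} is cleaner and avoids re-proving a special case of it.
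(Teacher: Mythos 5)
Your proof is correct and follows essentially the same route as the paper's: apply Proposition \ref{ApproxOnClosed} to the totally ordered cover $\{\Upsilon_n\}$ on $\Upsilon$, push forward along $j_\ast$ (which commutes with inverse limits as a right adjoint), and observe that all maps in sight are $\cD$-linear. The paper states the $\cD$-linearity check more tersely ("It is easy to see that all relevant maps are $\cD$-linear") whereas you spell it out via the counit of the $(i^{-1}, i_\ast)$-adjunction and Lemma \ref{OvConvSects}, but the substance is identical.
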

\begin{proof} Since $j_\ast$ is a right adjoint it commutes with limits, and so it suffices to show that the restriction maps induce an isomorphism $\sL\cong \invlim \sL_{\overline{\Upsilon_n}}$ of $\cD$-modules on $\Upsilon$. Since the $\Upsilon_n$ forms an admissible cover of $\Upsilon$ which is totally ordered by inclusion, by Proposition \ref{ApproxOnClosed}, the restriction maps induce such an isomorphism of abelian sheaves on $\Upsilon$. It is easy to see that all relevant maps are $\cD$-linear.
\end{proof}

Note that there is a natural bijection \[ h(\Upsilon_n)\backslash \{D_\infty\}\to \cO_F/\pi^{n+1}\cO_F \] sending $D$ to $D\cap \cO_F$ for each $D\in h(\Upsilon_n)\backslash\{D_\infty\}$. 

\begin{notn}\label{HoleNotn}For each $a\in \cO_F$ and $n\geq 0$, let $D_{a,n}$ be the element of $h(\Upsilon_n)\backslash\{D_\infty\}$ corresponding to $a+\pi_F^{n+1}\cO_F$ under this bijection. \end{notn}

We introduce the natural map abelian groups
\[ M_{n,d}\colon \PicCon(\Upsilon)[d]\to M_0(h(\Upsilon_n),\bZ/d\bZ)\]
obtained by composing the restriction map
\[\PicCon(\Upsilon)[d]\to \Con(\Upsilon_n)[d];\hspace{0.5cm} \sL\mapsto \sL|_{\Upsilon_n}\] 
with the isomorphisms $\theta_d$ and $\mu_{\Upsilon_n,d}$  from \cite[Definition 3.1.15 and Corollary 4.3.3]{ArdWad2023}:
\[\xymatrix{\Con(\Upsilon_n)[d] \ar[rr]_{\cong}^{\theta_d} && \frac{\cO(\Upsilon_n)^\times}{K^\times \cO(\Upsilon_n)^{\times d}} \ar[rr]_{\cong}^{\mu_{\Upsilon_n,d}} && M_0\left(h(\Upsilon_n),\frac{\bZ}{d\bZ} \right)}.\] 
For each $n \geq 0$, there is a unique $\bZ/d\bZ$-valued measure on $h(\Upsilon_n)$ of total value zero, whose value on each $D_{a,n}$ is $1$:
\[ \nu_n(D_{a,n}) = 1 + d \bZ \qmb{for all} a \in \cO_F, \qmb{and} \nu_n(D_\infty) = -q^{n+1} + d \bZ.\]

\begin{lem} \label{UpsMeasures}  Let $n \geq 0$.
	\be
	\item $M_0(h(\Upsilon_n), \bZ/d\bZ)^I$ is freely generated as a $\bZ/d\bZ$-module by $\nu_n$. 
	\item The restriction map $r_{n+1} : M_0(h(\Upsilon_{n+1}),\bZ/d\bZ)^I \to M_0(h(\Upsilon_n),\bZ/d\bZ)^I$ sends $\nu_{n+1}$ to $q\nu_n$.
	\item $r_{n+1}$ is an isomorphism of $\bZ/d\bZ$-modules.
	\item The restriction map $\PicCon(\Upsilon)^I[d] \to \Con(\Upsilon_n)^I[d]$ is an isomorphism.
	\item $\PicCon(\Upsilon)^I[d]$ is a free $\bZ/d\bZ$-module of rank $1$.
	\ee 
\end{lem}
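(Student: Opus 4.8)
The plan is to prove the five parts in order, as each feeds into the next. For part (a), I would unwind the definition of $M_0(h(\Upsilon_n),\bZ/d\bZ)$: a $\bZ/d\bZ$-valued measure of total value zero on the finite set of holes $h(\Upsilon_n)$ is just a function $h(\Upsilon_n)\to\bZ/d\bZ$ summing to zero, and the Iwahori group $I$ acts on $h(\Upsilon_n)$ through its finite quotient acting on $\bD$, permuting the holes $D_{a,n}$ (for $a$ running over $\cO_F/\pi_F^{n+1}\cO_F$) transitively while fixing $D_\infty$. An $I$-invariant such measure must therefore be constant, say equal to $c$, on all the $D_{a,n}$; the total-value-zero condition forces its value on $D_\infty$ to be $-q^{n+1}c$, so the invariant submodule is exactly $\bZ/d\bZ\cdot\nu_n$, and $\nu_n$ has additive order $d$ since its value on $D_{a,n}$ is $1+d\bZ$. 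The transitivity of the $I$-action on $\{D_{a,n}\}$ and the fixing of $D_\infty$ is the only geometric input here and should follow from the description of the M\"obius action of the Iwahori subgroup on $\bD$.

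For part (b), I would use the fact that the restriction map $h(\Upsilon_{n+1})\to h(\Upsilon_n)$ on holes collapses the $q$ holes $D_{b,n+1}$ lying inside $D_{a,n}$ (i.e. with $b\equiv a \bmod \pi_F^{n+1}$) onto $D_{a,n}$, and pushes $D_\infty$ to $D_\infty$; the restriction map on measures is the pushforward, so $r_{n+1}(\nu_{n+1})$ evaluated on $D_{a,n}$ is the sum of the $q$ values $\nu_{n+1}(D_{b,n+1})=1$, giving $q$, hence $r_{n+1}(\nu_{n+1})=q\nu_n$ using part (a) to identify the target. Part (c) is then immediate: $r_{n+1}$ is a $\bZ/d\bZ$-linear map of free rank-one $\bZ/d\bZ$-modules sending the generator $\nu_{n+1}$ to $q\nu_n$, and $q$ is coprime to $p$; since $d$ is also coprime to $p$ (it is the order of a $p'$-torsion class), $\gcd(q,d)=1$, so $q$ is a unit mod $d$ and $r_{n+1}$ is an isomorphism.

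For part (d), the restriction map $\PicCon(\Upsilon)^I[d]\to\Con(\Upsilon_n)^I[d]$ fits, after applying the isomorphisms $\theta_d$ and $\mu_{\Upsilon_n,d}$ of \cite{ArdWad2023} and taking $I$-invariants (these isomorphisms are $I$-equivariant, which I would need to check or cite), into the map $M_{n,d}$ restricted to invariants: $\PicCon(\Upsilon)^I[d]\to M_0(h(\Upsilon_n),\bZ/d\bZ)^I$. Injectivity and surjectivity can be obtained by passing to the limit over $n$: since $\{\Upsilon_n\}$ is an admissible cover of $\Upsilon$ totally ordered by inclusion, a line bundle with connection on $\Upsilon$ is determined by (and can be glued from) its compatible restrictions to the $\Upsilon_n$, so $\PicCon(\Upsilon)^I[d]\cong\invlim_n \Con(\Upsilon_n)^I[d]$; but by part (c) the transition maps in this inverse system are all isomorphisms, so the limit equals $\Con(\Upsilon_n)^I[d]$ for every $n$, which is part (d). Finally part (e) follows by combining (a), (the $\theta_d,\mu$ isomorphisms), and (d): $\PicCon(\Upsilon)^I[d]\cong M_0(h(\Upsilon_n),\bZ/d\bZ)^I\cong\bZ/d\bZ$ via $\nu_n$, a free $\bZ/d\bZ$-module of rank one.

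I expect the main obstacle to be part (d), specifically establishing that $\PicCon(\Upsilon)^I[d]$ really is the inverse limit of the $\Con(\Upsilon_n)^I[d]$ with the stated transition maps and that all the comparison isomorphisms $\theta_d$, $\mu_{\Upsilon_n,d}$ are compatible with restriction and $I$-equivariant — i.e. the bookkeeping of descent/gluing for line bundles with connection along the admissible cover $\{\Upsilon_n\}$, together with checking that taking $I$-invariants commutes with this inverse limit (which is automatic since $I$-invariants is a limit). The geometric facts about the $I$-action on the holes needed in (a) and (b) are the other point requiring care, but those are elementary once the M\"obius action on $\bD$ is made explicit.
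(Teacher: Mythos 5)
Your proposal matches the paper's proof essentially step for step: the two-orbit structure of the $I$-action on the holes in (a), the $q$-to-$1$ collapsing of holes for (b), coprimality of $q$ and $d$ for (c), and the inverse-limit reduction for (d)--(e). The one point you flag as the ``main obstacle'' is indeed where the paper supplies two precise citations that your sketch leaves implicit: first, $\PicCon(\Upsilon_m)=\Con(\Upsilon_m)$ (\cite[Proposition 4.1.11]{ArdWad2023}), which is what lets you pass from $\PicCon$ on the whole of $\Upsilon$ to $\Con$ on the affinoid pieces; and second, the isomorphism $\PicCon(\Upsilon)^I\stackrel{\cong}{\longrightarrow}\invlim\PicCon(\Upsilon_m)^I$ (\cite[Proposition 3.1.9]{ArdWad2023}), which is where the descent/gluing along the quasi-Stein cover and the fact that $I$-invariants commutes with inverse limits are absorbed. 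With those two references your argument closes up and is the same as the paper's.
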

\begin{proof} (a) The natural $I$-action on $\cO_F/\pi_F^{n+1}\cO_F\cup\{\infty\}$ has two orbits: $\cO_F / \pi_F^{n+1}\cO_F$ and $\{\infty\}$. Therefore the natural $I$-action on $h(\Upsilon_n)$ also has two orbits, namely $\{D_{b,n} : b \in \cO_F\}$ and $\{D_\infty\}$. If $\nu$ is an $I$-invariant measure on $h(\Upsilon_n)$, then it must be constant on these orbits. If, in addition, it has total value zero, then we easily see that $\nu = \nu(D_{0,n}) \nu_n$ as required.
	
	(b) Fix $b \in \cO_F$. In view of part (a), the map $r_{n+1}$ must send $\nu_{n+1}$ to some integer multiple of $\nu_n$. However the restriction map $h(\Upsilon_{n+1}) \to h(\Upsilon_n)$ sends $D_{b + \pi_F^{n+2} c, n+1}$ to $D_{b, n}$ for each $c \in \cO_F$, and the preimage of $D_{b,n}$ under this map consists of precisely $q$ such holes.  Hence $r_{n+1}(\nu_{n+1}) = q \nu_n$.
	
	(c) This follows immediately from parts (a) and (b) since $d$ and $q$ are coprime.
	
	(d) For each $m \geq n$ there is a natural commutative square
	\[\xymatrix{ \Con(\Upsilon_{m+1})^I \ar[rr]\ar[d]^\cong_{\mu_{\Upsilon_{m+1},d}\circ\theta_d}&& \Con(\Upsilon_m)^I \ar[d]_\cong^{\mu_{\Upsilon_m,d}\circ\theta_d} \\ M_0(h(\Upsilon_{m+1}), \bZ/d\bZ)^I \ar[rr]_{r_{m+1}} && M_0(h(\Upsilon_m), \bZ/d\bZ)^I }\]
	whose vertical arrows are isomorphisms by \cite[Corollary 4.3.4]{ArdWad2023}. Using part (b) we see that the top horizontal arrow is bijective. Hence the connecting maps in the inverse system $(\Con(\Upsilon_m)^I)_{m \geq n}$ are all isomorphisms, so the projection map
	\[\invlim \Con(\Upsilon_m)^I \to \Con(\Upsilon_n)^I\]
	is an isomorphism as well. Next, we have $\PicCon(\Upsilon_m) = \Con(\Upsilon_m)$ by \cite[Proposition 4.1.11]{ArdWad2023}. The family $(\Upsilon_m)_{m \geq n}$ forms an increasing admissible covering of $\Upsilon$ by geometrically connected affinoid subdomains, so \cite[Proposition 3.1.9]{ArdWad2023} implies that the restriction map
	\[ \PicCon(\Upsilon)^I  \longrightarrow \invlim \PicCon(\Upsilon_m)^I \]
	is an isomorphism. The composition of this restriction map with the projection $\PicCon(\Upsilon_m)^I \to \PicCon(\Upsilon_n)^I$ is the restriction map $\PicCon(\Upsilon)^I \to \PicCon(\Upsilon_n)^I$ in the statement of (d), and it is therefore an isomorphism as required.
	
	(e) Use part (d), \cite[Corollary 4.3.4]{ArdWad2023} and part (a).
\end{proof}
\begin{defn}\label{AlgGen} Let $X$ be an affinoid subdomain of $\bD$. A section $\dot{z} \in \sL_n(X)$ is said to be an \emph{algebraic generator} if there exists $u \in K(x)$ such that
\be \item $\sL_{n|X} = (\cO_X)_{\overline{X \cap V_n}} \dot{z}$, 
\item $\psi(\dot{z}^{\otimes d}) = u$,
\item $\cS(u) \subset F$ (see Definition \ref{ZeroesPoles} for this notation),
\item $|a-b|\geq |\pi_F|^n$ for all $a\neq b\in \cS(u) \cap X$, and
\item $\frac{v_{a}(u)}{d}\not \in \bN$ for all $a\in \cS(u) \cap X$.
\ee
We call $u$ the \emph{associated rational function}. 
\end{defn}
 
We will now prove that algebraic generators exist. 

\begin{thm}\label{AlgGensExist} Let $X$ be an affinoid subdomain of $\bD$.
\be \item Let $\cS\subset \cO_F$ be any set of coset representatives for the group $\cO_F/\pi^{n+1}\cO_F$ and let $(k_a)_{a\in \cS}$ be a family of integers such that for all $a \in \cS$ we have
\begin{equation} \label{MuThetaL} \frac{k_a}{d} \notin \bN \qmb{and} M_{n,d}([\sL])(D_{a,n})={k_a}+d\Z. \end{equation}
Then there exists an algebraic generator $\dot{z} \in \sL_n(X)$ with associated rational function of the form
\[ u = \lambda \prod_{a \in \cS} (x-a)^{k_a} \qmb{for some} \lambda \in K^\times.\] 
\item Algebraic generators always exist. 
\ee
\end{thm}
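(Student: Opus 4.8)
Our plan is to prove (a) directly and to deduce (b) by choosing the exponents suitably. Fix the data of part (a), set $u_0 := \prod_{a\in\cS}(x-a)^{k_a}\in K(x)$, and let $\cM := \cM(\cS,u_0,d)$ be the Kummer $\cD_{\bA}$-module of Definition \ref{UstMud}, with canonical generator $z$ satisfying $z^d = u_0$; note $\cS(u_0)\subseteq\cS\subseteq\cO_F\subseteq F$. Since $\Omega$ omits every $F$-point we have $\Upsilon\subseteq\bA-\cS$, so $\cM|_\Upsilon = \cO_\Upsilon z$ is an honest line bundle with flat connection, and the multiplication map $\psi_\cM\colon(\cM|_\Upsilon)^{\otimes d}\stackrel{\cong}{\longrightarrow}\cO_\Upsilon$, $z^{\otimes d}\mapsto u_0$, exhibits $[\cM|_\Upsilon]\in\PicCon(\Upsilon)[d]$.

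The first step is to show $[\cM|_\Upsilon] = [\sL]$ in $\PicCon(\Upsilon)$. Because $\cS$ is a full set of representatives for $\cO_F/\pi_F^{n+1}\cO_F$, the point $a\in\cS$ lies in the hole $D_{a,n}$ of $\Upsilon_n$ and in no other finite hole, so by the explicit description of $\mu_{\Upsilon_n,d}\circ\theta_d$ recorded in \cite[Corollary 4.3.3]{ArdWad2023} we have
\[ M_{n,d}\bigl([\cM|_\Upsilon]\bigr)(D_{a,n}) = \mu_{\Upsilon_n,d}\bigl(\overline{u_0|_{\Upsilon_n}}\bigr)(D_{a,n}) = k_a + d\Z \qmb{for all} a\in\cS, \]
which equals $M_{n,d}([\sL])(D_{a,n})$ by the hypothesis (\ref{MuThetaL}). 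Since both $\Z/d\Z$-valued measures have total value zero on $h(\Upsilon_n)$, they agree on the remaining hole $D_\infty$ as well, so $M_{n,d}([\cM|_\Upsilon]) = M_{n,d}([\sL])$; as $\theta_d$ and $\mu_{\Upsilon_n,d}$ are isomorphisms this forces $[\cM|_{\Upsilon_n}] = [\sL|_{\Upsilon_n}]$ in $\Con(\Upsilon_n)[d]$. Restricting this equality further along $\Upsilon_m\subseteq\Upsilon_n$ for all $m\geq n$ and using that $\PicCon(\Upsilon)\to\invlim[m\geq n]\PicCon(\Upsilon_m)$ is injective — the $\Upsilon_m$ with $m\geq n$ forming an increasing admissible cover of $\Upsilon$ by geometrically connected affinoids, see \cite[Proposition 3.1.9]{ArdWad2023} — we deduce $[\cM|_\Upsilon] = [\sL]$ in $\PicCon(\Upsilon)$.

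Now fix a $\cD$-linear isomorphism $\alpha\colon\sL\stackrel{\cong}{\longrightarrow}\cM|_\Upsilon$. Applying the $\cD$-linear exact functor $(-)_{\overline{\Upsilon_n}} = (\cO_\Upsilon)_{\overline{\Upsilon_n}}\otimes_{\cO_\Upsilon}(-)$ (Lemma \ref{DagTensor} and \cite[Proposition 2.3.10]{KS}) and then $j_\ast$ gives a $\cD$-linear isomorphism $\sL_n = j_\ast(\sL_{\overline{\Upsilon_n}})\stackrel{\cong}{\longrightarrow} j_\ast\bigl((\cO_\Upsilon)_{\overline{\Upsilon_n}}z\bigr)$, and a cofinality argument (the colimit of Lemma \ref{OvConvSects} computing $(\cO_{\bA})_{\overline{V_n}}(X)$ may be taken over subdomains $X\cap U(\cS)_s$, $s\uparrow|\pi_F|^n$, already lying inside $\Upsilon$, together with Lemma \ref{iDagLocal}) identifies the target with $\cM(\cS,u_0,d)_{\overline{V_n}}|_{\bD}$. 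Let $\dot z\in\sL_n(X)$ be the image of the canonical generator $z\in\cM(\cS,u_0,d)_{\overline{V_n}}(X)$ under the inverse of this isomorphism. Axiom (a) of Definition \ref{AlgGen} holds because $\cM(\cS,u_0,d)_{\overline{V_n}}|_X = (\cO_X)_{\overline{X\cap V_n}}z$ by Lemma \ref{iDagLocal} and the isomorphism above is $(\cO_{\bD})_{\overline{V_n}}$-linear. For axiom (b): both $\psi$ and $\psi_\cM\circ\alpha^{\otimes d}$ are $\cD$-linear isomorphisms $\sL^{\otimes d}\to\cO_\Upsilon$, hence differ by multiplication by a horizontal unit of $\cO_\Upsilon$; since $\Upsilon$ is geometrically connected, $\cO(\Upsilon)^{\partial_x = 0} = K$, so $\psi_\cM\circ\alpha^{\otimes d} = c\cdot\psi$ for some $c\in K^\times$. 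Passing to overconvergent sections (applying $(-)_{\overline{\Upsilon_n}}$ to $\psi$, $\psi_\cM$ and $\alpha$, and using Lemma \ref{DagTensor}) and evaluating on $\dot z^{\otimes d}\mapsto z^{\otimes d}\mapsto u_0$ gives $\psi(\dot z^{\otimes d}) = c^{-1}u_0 = \lambda\prod_{a\in\cS}(x-a)^{k_a}$ with $\lambda := c^{-1}\in K^\times$. Axioms (c)–(e) are then immediate: the associated function $\lambda\prod_{a\in\cS}(x-a)^{k_a}$ has all zeroes and poles in $\cS\subseteq F$; any two distinct elements of $\cS$ lie in distinct cosets mod $\pi_F^{n+1}$ and so are at distance $\geq|\pi_F|^n$; and $v_a = k_a$ with $k_a/d\notin\bN$ by (\ref{MuThetaL}). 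This proves (a).

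For (b), given $X$ and $n$, pick any set $\cS$ of coset representatives for $\cO_F/\pi_F^{n+1}\cO_F$, and for each $a\in\cS$ let $k_a\in\{-d,-d+1,\ldots,-1\}$ be the unique element of that range reducing to $M_{n,d}([\sL])(D_{a,n})$ modulo $d$; then $k_a/d < 0$ lies outside $\bN$, so (\ref{MuThetaL}) is satisfied and part (a) applies. The main obstacle is the first step: one must match the normalisation conventions of $\mu_{\Upsilon_n,d}$ and $\theta_d$ from \cite{ArdWad2023} precisely enough to read off that the class of $\prod_{a\in\cS}(x-a)^{k_a}$ corresponds to the measure $D_{a,n}\mapsto k_a + d\Z$, and then to package the resulting equality of classes on $\Upsilon$ — via the interplay of $j_\ast$, the functor $(-)_{\overline{V_n}}$ and the colimits defining overconvergent sections — into an actual generating section of the sheaf $\sL_n$ over $X$.
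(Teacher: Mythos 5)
Your proposal has a genuine gap in its first step, namely the claim that $[\cM|_\Upsilon] = [\sL]$ in $\PicCon(\Upsilon)$. What you actually prove is only $[\cM|_{\Upsilon_n}] = [\sL|_{\Upsilon_n}]$ in $\Con(\Upsilon_n)[d]$: the hypothesis (\ref{MuThetaL}) only constrains the image of $[\sL]$ under $M_{n,d}$, which is the class of $\sL$ restricted to a single affinoid $\Upsilon_n$. The map $\PicCon(\Upsilon)[d] \to \Con(\Upsilon_n)[d]$ is very far from injective — its codomain $M_0(h(\Upsilon_n),\Z/d\Z)$ is a finite group of order $d^{q^{n+1}-1}$, while the source is a profinite group that grows with the number of holes at each level. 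The passage you invoke, ``restricting further along $\Upsilon_m\subseteq\Upsilon_n$'' and then applying injectivity of $\PicCon(\Upsilon)\to\invlim\PicCon(\Upsilon_m)$, has the inclusions backwards ($\Upsilon_n\subseteq\Upsilon_m$ for $m\geq n$) and, even once corrected, requires knowing equality at \emph{all} levels $m\geq n$, whereas you only have equality at level $n$. Nothing in the hypotheses determines $M_{m,d}([\sL])$ for $m>n$, so the global equality on $\Upsilon$ simply cannot be deduced.

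The correct target — and the one the paper aims for — is more modest but harder to land: one must show that the \emph{overconvergent} class $[\sL|_V]$ agrees with $u_0\cdot K^\times\cO(V)^{\times d}$ under $\theta_d$, where $V$ is a wide open neighbourhood of $\Upsilon_n$ strictly contained in $\Upsilon_{n+1}$. This is exactly what is needed to produce a section of $\sL_n=j_\ast(\sL_{\overline{\Upsilon_n}})$, since $\sL_n(X)$ is computed by sections over such wide opens. Lifting the equality of classes from $\Upsilon_n$ to $V$ requires the injectivity of the restriction map $r_n:\cO(V)^\times/K^\times\cO(V)^{\times d}\to\cO(\Upsilon_n)^\times/K^\times\cO(\Upsilon_n)^{\times d}$, and this is the real substance of the proof: it does not follow from soft considerations, and the paper establishes it via a somewhat delicate Galois descent argument over the totally ramified extension $L=K(\pi_F^{1/\ell})$, using that $V_L$ is a genuine $L$-cheese and invoking the structure theory of $\cO(V_L)^\times$. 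Your proof supplies no substitute for this step. The remainder of your argument — transporting the canonical generator $z$ across the isomorphism, identifying $\psi(\dot z^{\otimes d})$ up to a horizontal constant, and part (b) — is sound, but it all hinges on having a trivialisation of $\sL$ that overconverges beyond $\Upsilon_n$, which is exactly what the missing step would provide.
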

\begin{proof} (a) Using Lemma \ref{iDagLocal} we can see that if $\dot{z}\in \sL_n(\bD)$ is algebraic generator then $\dot{z}|_X$ is an algebraic generator with the same associated rational function and so we only need to consider the case $X = \bD$ itself. 

Let $u_0 := \prod\limits_{a \in \cS} (x-a)^{k_a} \in \cO(\Upsilon_n)^\times$. Then the assumption (\ref{MuThetaL}) tells us that
\[ \mu_{\Upsilon_n,d}( u_0 K^\times \cO(\Upsilon_n)^{\times d} ) = M_{n,d}([\sL|) = \mu_{\Upsilon_n,d}(\theta_d(\sL|_{\Upsilon_n})).\]
Since $\mu_{\Upsilon_n,d}$ is an isomorphism, we see that 
\begin{equation} \label{thetadLUpsnn} \theta_d(\sL|_{\Upsilon_n}) = u_0 K^\times \cO(\Upsilon_n)^{\times d}.\end{equation}
Write $\cS=\{a_1,\ldots, a_{h_n}\}$, choose an integer $\ell$ coprime to $p$ and define
\[V  := \Sp K \left\langle x, \frac{\pi_F^{\ell n+1}}{(x - a_1)^\ell}, \cdots, \frac{\pi_F^{\ell n+1}}{(x - a_{h_n})^\ell} \right\rangle.\]
This is a $K$-form of the $\bfC$-cheese $V_{\bfC}$ obtained from $\bD_{\bfC}$ by removing the open balls of radius $|\pi_F|^{n + \frac{1}{\ell}}$ around the $a_i$. Observe that $V$ is a wide open neighbourhood of $\Upsilon_n$ contained in $\Upsilon_{n+1}$. Since $V$ and $\Upsilon_n$ are geometrically connected, we have the following commutative square whose vertical arrows are given by restriction:
\[\xymatrix{ \Con(V)[d] \ar[rrrr]^{\theta_d}_{\cong}\ar[d] &&&& \cO(V)^\times/K^\times\cO(V)^{\times d}\ar[d]^{r_n} \\
 \Con(\Upsilon_n)[d] \ar[rrrr]_{\theta_d}^{\cong} &&&& \cO(\Upsilon_n)^\times/K^\times\cO(\Upsilon_n)^{\times d}. }\]
We claim that $r_n$ is injective. Assuming this, and noting that $u_0$ also lies in $\cO(V)^\times$, it follows from (\ref{thetadLUpsnn}) that $\theta_d([\sL|_V]) = u_0 K^\times \cO(V)^{\times d}$ inside $\cO(V)^\times/K^\times\cO(V)^{\times d}$. Using the construction of $\theta_d$ given at \cite[Proposition 3.1.14]{ArdWad2023}, we can find a section $\dot{z} \in \sL(V)$ and some $\lambda \in K^\times$ such that $\sL(V) = \cO(V) \dot{z}$ and $\psi(\dot{z}^{\otimes d}) = u := \lambda u_0$. Since $V$ is affinoid and $\sL$ is a coherent $\cO$-module, in fact we also have $\sL_{|V} = \cO_{|V} \dot{z}$. Since $V$ is a wide open neighbourhood of $\Upsilon_n$, i.e. $\widetilde{V}$ contains the closure of $\widetilde{\Upsilon_n}$, we may view $\dot{z}$ as an element of $\sL_n(\bD)$ and deduce that $\sL_{n|\bD} = (\cO_\bD)_{\overline{\Upsilon_n}} \dot{z}$. 

Finally, $v_a(u) = k_a$ for all $a \in \cS$. As none of the $k_a$ can be zero, given the assumption that $\frac{k_a}{d} \notin \bN$ for all $a \in \cS$,  we have $\cS(u)=\cS$ and this is contained in $\cO_F$. We now see that Definition \ref{AlgGen}(c,d,e) is satisfied and $\dot{z} \in \sL_n(\bD)$ is an algebraic generator, once we have checked the injectivity of $r_n$ claimed above.

Let $L = K(\pi_F^{\frac{1}{\ell}})$. Then in the notation of \cite[Definition 4.1.1]{ArdWad2023}, $V_L$ is isomorphic to the $L$-cheese $C_L(0, a_1,\cdots,a_{h_n}, 1, \pi_F^{n+\frac{1}{\ell}}, \cdots, \pi_F^{n+\frac{1}{\ell}})$ as an $L$-affinoid variety. The map $r_n$ appears in the following commutative diagram:
\[\xymatrix{ \cO(V)^\times/K^\times\cO(V)^{\times d} \ar[r]\ar[d]_{r_n} & \cO(V_L)^\times/L^\times\cO(V_L)^{\times d} \ar[r]^(0.55){\mu_{V_L,d}} & M_0(h(V_L), \frac{\bZ}{d\bZ}) \ar[d] \\
\cO(\Upsilon_n)^\times/K^\times\cO(\Upsilon_n)^{\times d} \ar[r] & \cO(\Upsilon_{n,L})^\times/L^\times\cO(\Upsilon_{n,L})^{\times d} \ar[r]_(0.55){\mu_{\Upsilon_{n,L},d}} & M_0(h(\Upsilon_{n,L}), \frac{\bZ}{d\bZ}). 
}\]
Since $|h(V_L)| = |h(\Upsilon_{n,L})| = h_n$, the vertical arrow on the right is bijective. Since $\mu_{V_L,d}$ is an isomorphism by \cite[Corollary 4.3.3(c)]{ArdWad2023}, it remains to show that the first horizontal arrow in the top row is injective, or equivalently, that
\[ L^\times \cO(V_L)^{\times d} \hsp \cap \hsp \cO(V)^\times \subseteq K^\times \cO(V)^{\times d}.\]
Let $S$ denote the subgroup of $\mathcal{O}(V)^\times$ generated by $x-a_1,\cdots,x-a_{h_n}$. View $S$ also as a subgroup of $\cO(V_L)^\times$. Applying \cite[Proposition 4.1.10]{ArdWad2023} to $V_L$, we have
\[ \cO(V_L)^\times = \cO(V_L)^{\times\times} \cdot L^\times \cdot S.\] 
Now $L^\times =L^{\times\times}\cdot K^\times \cdot \pi_F^{\frac{1}{\ell}\bZ}$ because $L = K(\pi_F^{\frac{1}{\ell}})$ is a totally ramified extension of $K$. Using \cite[Lemma 4.3.2(a)]{ArdWad2023} we then see that 
\[ L^\times \cO(V_L)^{\times d} = \pi^{\frac{d}{\ell}\bZ} \cdot \cO(V_L)^{\times\times} \cdot K^{\times d} \cdot S^d.\] 
Since $K^{\times d} S^d \subseteq K^\times \cO(V)^{\times d}$, by the modular law it remains to show that
\[ \pi_F^{\frac{d}{\ell}\bZ} \cO(V_L)^{\times\times} \cap \cO(V)^\times \subseteq K^\times \cO(V)^{\times d}.\]
Let $\zeta \in \overline{K}$ be a primitive $\ell^{th}$ root of unity and let $M := L(\zeta)$, a Galois extension of $K$ with Galois group $\cG$. Let $u \in \pi_F^{\frac{d}{\ell}\bZ} \cO(V_L)^{\times\times} \cap \cO(V)^\times$. Then $u$ is fixed by the natural action of $\cG$ on $\cO(V_M)$ and we can write $u = \pi_F^{\frac{m}{\ell}} (1 + \epsilon) $ for some $m \in d\bZ$ and $\epsilon \in \cO(V_L)^{\circ\circ}$. Choose $\sigma \in \cG$ which sends $\pi_F^{\frac{1}{\ell}}$ to $\zeta \pi_F^{\frac{1}{\ell}}$; then
\[ \sigma(\pi_F^{\frac{m}{\ell}}) (1 + \sigma(\epsilon)) = \pi_F^{\frac{m}{\ell}}(1 + \epsilon), \qmb{so} \zeta^m = \frac{\sigma(\pi_F^{\frac{m}{\ell}})}{\pi_F^{\frac{m}{\ell}}} = \frac{1 + \epsilon}{1 + \sigma(\epsilon)} \in \cO(V_M)^{\times\times}.\]
Since $p \neq \ell$ and $(\zeta^m)^\ell = 1$, it follows from \cite[Lemma 4.3.2(a)]{ArdWad2023}  that $\zeta^m = 1$. We conclude that $\ell \mid m$. Therefore $\pi_F^{\frac{m}{\ell}} \in K^\times$ and $\epsilon = \pi_F^{-\frac{m}{\ell}} u - 1 \in \cO(V)$. Since $\epsilon \in \cO(V_L)^{\circ\circ}$, we see that $\epsilon \in \cO(V)^{\circ\circ}$ and $u \in K^\times \cO(V)^{\times\times}$. Applying \cite[Lemma 4.3.2(a)]{ArdWad2023} again shows that $u \in K^\times \cO(V)^{\times d}$ and completes the proof.

(b) Choose any set $\cS$ of coset representatives for $\pi_F^{n+1} \cO_F$ in $\cO_F$, and choose any family $(\ell_a)_{a \in \cS}$ of integers such that $M_{n,d}([\sL])(a+\pi_F^{n+1}\cO_F)=\frac{\ell_a}{d}+\Z$ for all $a \in \cS$. Let $M$ be any positive integer such that $M > \max_{a \in \cS} \frac{\ell_a}{d}$ and let $k_a := \ell_a - Md < 0$ for all $a \in \cS$. Then $\frac{k_a}{d} \notin \bN$ and $\frac{\ell_a}{d} + \Z = \frac{k_a}{d} + \Z$ for all $a \in \cS$. We can now apply part (a) to the family of integers $(k_a)_{a \in \cS}$ to conclude.\end{proof}



 We recall the Definition \ref{UstMud} of $\cM(\cS,u,d)$ for any $u\in K(x)$ with $\cS(u)\subset \cS$. 

\begin{prop}\label{CleanLn} Let $X$ be an affinoid subdomain of $\bD$ and $\cS_n$ be a set of coset representatives for the group $\cO_F/\pi_F^{n+1}\cO_F$. Moreover, let $\dot{z}\in\sL_n(X)$ be an algebraic generator, with associated rational function $u_n$ such that $\cS(u_n)\subset \cS_n$.  There is a $\cD_X$-linear isomorphism
\[ \left(\cM(\cS_n, u_n, d)_{\overline{V_n}}\right)_{|X} \stackrel{\cong}{\longrightarrow} \sL_{n|X}\]
which sends $z$ to $\dot{z}$.
\end{prop}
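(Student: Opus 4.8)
The plan is to produce the desired $\cD_X$-linear isomorphism by first handling the case $X = \bD$ and then restricting, using Lemma \ref{iDagLocal} and the compatibility of all the relevant constructions with restriction. First I would note that, by the definition of an algebraic generator (Definition \ref{AlgGen}), the section $\dot z \in \sL_n(X)$ satisfies $\psi(\dot z^{\otimes d}) = u_n$ and $\sL_{n|X} = (\cO_X)_{\overline{X\cap V_n}}\,\dot z$. On the other side, Corollary \ref{iUjOSect} identifies $\cM(\cS_n,u_n,d)_{\overline{V_n}}$ with $(\cO_\bA)_{\overline{V_n}}\,z$ as a sheaf of $\cO_\bA$-modules, where $z$ is a $d$-th root of $u_n$ with $\partial_x z = \tfrac1d \tfrac{\partial_x(u_n)}{u_n} z$; here I am using $V_n = U(\cS_n)_{|\pi_F|^n}$ in the notation of $\S\ref{KummerSect}$ (recorded after Definition \ref{UnCovering}). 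So on the level of $\cO_X$-modules both sides are free of rank one over the ring $(\cO_X)_{\overline{X\cap V_n}}$, generated by $z$ and $\dot z$ respectively, and the map $z \mapsto \dot z$ is an $\cO_X$-linear isomorphism. The real content is to check that this map is compatible with the $\cD_X$-action.

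Next I would verify the $\cD$-linearity. Since $\cD_X$ is generated as a sheaf of rings by $\cO_X$ and the global vector field $\partial_x$ (as $\bD$ is smooth, one-dimensional, with $\cT_\bD$ free on $\partial_x$), it suffices to check that the map commutes with the action of $\partial_x$. On $\cM(\cS_n,u_n,d)$, by $(\ref{GMconn})$ with $m=1$, we have $\partial_x \cdot z = \tfrac1d\bigl(\sum_{a\in\cS_n}\tfrac{v_a(u_n)}{x-a}\bigr) z = \tfrac1d \dlog(u_n)\, z$. On $\sL_n$, the connection is determined by $\psi$: the isomorphism $\psi:\sL^{\otimes d}\congto \cO_\Upsilon$ is $\cD$-linear, so $\partial_x(\psi(\dot z^{\otimes d})) = 0$ reads, after applying Leibniz, as $d\,\psi(\dot z^{\otimes(d-1)}\otimes \partial_x\dot z) + (\text{terms})= \partial_x(u_n)$, giving $\partial_x \dot z = \tfrac1d\tfrac{\partial_x(u_n)}{u_n}\dot z = \tfrac1d\dlog(u_n)\,\dot z$; this is exactly equation $(\ref{zdz})$ applied on the étale cover, combined with the fact that $\dot z^{\otimes d}$ maps to $u_n$. (Concretely: both $z$ and $\dot z$ are ``$d$-th roots of $u_n$'' equipped with the Gauss--Manin/Kummer connection, so they satisfy the same ODE.) Hence $z\mapsto \dot z$ intertwines the two $\partial_x$-actions, and is therefore $\cD_X$-linear.

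Finally I would reduce the general $X$ to the case $X=\bD$ — or rather observe that the argument above works verbatim for any affinoid subdomain $X$ of $\bD$ on which an algebraic generator $\dot z$ is given, since Definition \ref{AlgGen} is stated relative to such an $X$ and Corollary \ref{iUjOSect}, $(\ref{GMconn})$ and $(\ref{zdz})$ are all local statements. One should also remark that the isomorphism is independent of the choice of coset representatives $\cS_n \supseteq \cS(u_n)$: Theorem \ref{iDag} shows $\cF_{\overline U}$ depends only on $U$ and not on its presentation, and $V_n = U(\cS_n)_{|\pi_F|^n}$ depends only on $n$, not on $\cS_n$.

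The main obstacle, such as it is, is bookkeeping rather than conceptual: one must be careful that the sheaf $\bigl(\cM(\cS_n,u_n,d)_{\overline{V_n}}\bigr)_{|X}$ really does carry a $\cD_X$-module structure (not merely a $\cD^\dag_{\varpi/t}$-structure on the overconvergent site), but this is automatic because $\cM(\cS_n,u_n,d)$ is a $\cD_\bA$-module by construction (Definition \ref{UstMud}, $j_\ast$ of a $\cD$-module) and $\cF\mapsto \cF_{\overline{V_n}}$ preserves $\cD$-modules since it is given by $(\cO_\bA)_{\overline{V_n}}\otimes_{\cO_\bA}(-)$ (Lemma \ref{DagTensor}-style, via \cite[Proposition 2.3.10]{KS}) and $\cD_\bA$ acts through $\cO_\bA$-module maps. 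The only genuinely delicate point is matching the normalisation constants in the two connections, which the identities $(\ref{GMconn})$ and $(\ref{zdz})$ are designed to make transparent.
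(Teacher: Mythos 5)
Your proof is correct and takes essentially the same route as the paper: both identify the two sides as free $(\cO_X)_{\overline{X\cap V_n}}$-modules of rank one (via Corollary \ref{iUjOSect}, Lemma \ref{iDagLocal}, and Definition \ref{AlgGen}(a)), send $z\mapsto \dot z$, and then deduce $\cD$-linearity from $\psi(\dot z^{\otimes d}) = u_n$. You merely spell out the $\cD$-linearity check — which the paper leaves as a one-line assertion — by matching the two connections through $(\ref{GMconn})$ and $(\ref{zdz})$.
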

\begin{proof} By Corollary \ref{iUjOSect} applied with $t=|\pi_F|^n$, and by Lemma \ref{iDagLocal}, we have
\[ \left(\cM(\cS_n,u_n,d)_{\overline{V_n}}\right)|_X = \left((\cO_ \bA)_{\overline{V_n}}\right)|_Xz = (\cO_X)_{\overline{V_n\cap X}} z.  \]
By Definition \ref{AlgGen}(a) we know that $\sL_{n|X} = (\cO_X)_{\overline{X \cap V_n}}\dot{z}$. Hence sending $z$ to $\dot{z}$ defines an $\cO_X$-linear isomorphism 
\[\left(\cM(\cS_n,u_n,d)_{\overline{V_n}}\right)_{|X} \stackrel{\cong}{\longrightarrow} \sL_{n|X}.\]  
The fact that $\psi(\dot{z}^{\otimes d}) = u_n$ implies that this isomorphism is in fact $\cD$-linear. 
\end{proof}

We are now in the situation of $\S \ref{KummerSect}$ and can begin to reap the rewards.

\begin{defn}\label{CurlyDn} Let $n \geq 0$.
\be \item $\bD_n$ will denote the $G$-topology $\bD_n := \bD(|\pi_F|^n \partial_x / \varpi)^\dag$ on $\bD$.
\item $\sD_n$ will denote the sheaf $\cD^\dag_{\varpi/|\pi_F|^n}$ on $\bD_n$. 
\ee
\end{defn}

In view of Corollary \ref{Heisenberg} and Lemma \ref{BaseChR}, we see that an affinoid subdomain $X$ of $\bD$ lies in $\bD_n$ if and only if $\rho(X_\mathbf{C}) \geq |\pi_F|^n$. We note the following flatness properties that follow from work done in \S\ref{FlatnessChapter}.

\begin{lem}\label{Flat} \hsp
	\be \item $\sD_n(X)$ is a flat right $\sD_{n+1}(X)$-module whenever $X \in \bD_{n+1}$.
	\item $\sD_n(Y)$ is a flat right $\sD_n(X)$-module whenever $Y \subseteq X$ in $\bD_n$.
	\ee
\end{lem}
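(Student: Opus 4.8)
\textbf{Proof plan for Lemma \ref{Flat}.} Both parts should be immediate applications of the flatness results proved in $\S \ref{FlatnessChapter}$, once we unwind the definitions. Recall from Definition \ref{CurlyDn} that $\sD_n = \cD^\dag_{\varpi/|\pi_F|^n}$, and set $r_n := \varpi/|\pi_F|^n$, so that $r_n < r_{n+1}$ since $|\pi_F| < 1$. The only subtlety is to check that the relevant affinoids genuinely lie in the overconvergent $G$-topologies appearing in the hypotheses of Theorems \ref{changeofbase} and \ref{flatcotangent}; this is where one must be careful about the direction of the inequality.

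\textbf{Part (a).} Suppose $X \in \bD_{n+1}$, i.e.\ $X$ is $(|\pi_F|^{n+1}\partial_x/\varpi)^\dag$-admissible, which by Definition \ref{DagSite}(a) means $r(X) \leq \varpi/|\pi_F|^{n+1} = r_{n+1}$. Since $r_n < r_{n+1}$, we also have $r(X) \leq r_{n+1}$ but we need $X \in \bA(\partial_x/r_n)^\dag$, i.e.\ $r(X) \leq r_n$ --- wait, this need not hold. Instead I will argue directly with the colimit definition: $\sD_n(X) = \cD^\dag_{r_n}(X) = \colim_{c > r_n} \cD_c(X)$ and $\sD_{n+1}(X) = \cD^\dag_{r_{n+1}}(X) = \colim_{c' > r_{n+1}} \cD_{c'}(X)$. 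The key point is the inclusion of indexing sets: every $c' > r_{n+1}$ also satisfies $c' > r_n$. So for $c' > r_{n+1}$, Theorem \ref{flatcotangent} applied with $s = c'$ and $r = c'$ is vacuous; rather, I want: for $c$ with $r_n < c \le r_{n+1} < c'$, the ring $\cD_c(X)$ is flat over $\cD_{c'}(X)$ on both sides --- this is Theorem \ref{flatcotangent} with the larger radius $s = c'$ and the smaller radius $r = c$ (note $\cD_r$ is flat over $\cD_s$ for $s \geq r$, i.e.\ the \emph{smaller-radius} ring is flat over the \emph{larger-radius} ring). Then $\sD_n(X) = \colim_{r_n < c} \cD_c(X)$ is a colimit of rings each flat over every $\cD_{c'}(X)$ with $c' > r_{n+1}$ (by Lemma \ref{flatcolim1}), hence flat over $\sD_{n+1}(X) = \colim_{c' > r_{n+1}}\cD_{c'}(X)$ by Lemma \ref{flatcolim2}. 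This requires that $X$ be $\partial_x/c$-admissible for the relevant $c$, which follows from $r(X) \le r_{n+1} < c'$ for the $c'$ and from $r(X) \le r_{n+1}$; for the intermediate $c \in (r_n, r_{n+1}]$ one restricts the colimit defining $\sD_n(X)$ to run only over $c > r_{n+1}$, which is cofinal, so in fact one only ever needs $\cD_c(X)$ for $c > r_{n+1} \ge r(X)$. The main obstacle here is purely bookkeeping: arranging the colimits so that all the Banach algebras that appear are genuinely defined (which forces one to replace the naive colimit $\colim_{c > r_n}$ by the cofinal sub-colimit $\colim_{c > r_{n+1}}$) and then citing Theorem \ref{flatcotangent} with the radii in the correct order.

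\textbf{Part (b).} Suppose $Y \subseteq X$ both lie in $\bD_n$, so $r(Y) \le r_n$ and $r(X) \le r_n$; thus $Y \subseteq X$ both lie in $\bA(\partial_x/r_n)^\dag$. By Theorem \ref{FlatThm} applied with $s = r = r_n$, the map $\cD^\dag_{r_n}(X) \to \cD^\dag_{r_n}(Y)$ makes $\sD_n(Y) = \cD^\dag_{r_n}(Y)$ a flat $\cD^\dag_{r_n}(X) = \sD_n(X)$-module on both sides. (Alternatively one can invoke Theorem \ref{changeofbase} directly at each finite level $c > r_n$, using that both $X$ and $Y$ lie in $\bA(\partial_x/c)$, and then pass to the colimit via Lemmas \ref{flatcolim1} and \ref{flatcolim2}.) This part is essentially immediate from Theorem \ref{FlatThm} and requires no new ideas.

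Since the statement only asserts flatness on the right (i.e.\ as right modules), and Theorems \ref{changeofbase}, \ref{flatcotangent}, \ref{FlatThm} all give flatness on \emph{both} sides, there is ample room; the only care needed is, as noted, the orientation of the radius inequality in Theorem \ref{flatcotangent} and the cofinality juggling in the colimits defining the overconvergent rings.
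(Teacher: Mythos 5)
Your part (b) coincides with the paper's argument: the paper's entire proof of this lemma is the one-line observation that it follows from Theorem \ref{FlatThm}, and for (b) you invoke it exactly as intended, with $s=r=\varpi/|\pi_F|^n$ and $Y\subseteq X$ both in $\bA(\partial_x/(\varpi/|\pi_F|^n))^\dag$.

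For part (a) the intended deduction is equally direct: apply Theorem \ref{FlatThm} with $Y=X$, $r=\varpi/|\pi_F|^n$ and $s=\varpi/|\pi_F|^{n+1}$. The worry that derailed you --- that $X\in\bD_{n+1}$ only gives $r(X)\le\varpi/|\pi_F|^{n+1}$ rather than $r(X)\le\varpi/|\pi_F|^n$ --- is a red herring: by Definition \ref{CurlyDn} (via Definition \ref{DagSite}(d)), $\sD_n$ is a sheaf on $\bD_n$, so the expression $\sD_n(X)$ only makes sense when $X\in\bD_n$; since $\varpi/|\pi_F|^n<\varpi/|\pi_F|^{n+1}$ we have $\bD_n\subseteq\bD_{n+1}$, so the operative hypothesis is $X\in\bD_n$, under which Theorem \ref{FlatThm} applies verbatim (and this is the only case used later, e.g.\ with $X=\bD$). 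Your substitute argument --- flatness of $\cD_c(X)$ over $\cD_{c'}(X)$ for $\varpi/|\pi_F|^n<c\le\varpi/|\pi_F|^{n+1}<c'$ via Theorem \ref{flatcotangent}, then Lemmas \ref{flatcolim1} and \ref{flatcolim2} --- is essentially the paper's Proposition \ref{reduction} specialised to $Y=X$, so it is valid but redundant. The one genuinely incorrect step is the patch you propose for the supposed edge case: ``restrict the colimit defining $\sD_n(X)$ to run only over $c>\varpi/|\pi_F|^{n+1}$, which is cofinal''. In the direct limit $\colim_{c>\varpi/|\pi_F|^n}\cD_c(X)$ the transition maps go from larger radius to smaller radius, so a subfamily is cofinal precisely when its radii accumulate at $\varpi/|\pi_F|^n$; the subfamily $\{c>\varpi/|\pi_F|^{n+1}\}$ computes $\sD_{n+1}(X)$, not $\sD_n(X)$, and restricting to it would reduce the assertion to the trivial statement that $\sD_{n+1}(X)$ is flat over itself. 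The legitimate restriction --- the one made in the proof of Proposition \ref{reduction} --- is to $c\in(\varpi/|\pi_F|^n,\varpi/|\pi_F|^{n+1}]$, cofinal at the correct end, which is what guarantees that each term $\cD_c(X)$ is a module over every $\cD_{c'}(X)$ with $c'>\varpi/|\pi_F|^{n+1}$ before Lemmas \ref{flatcolim1} and \ref{flatcolim2} are invoked (and one also restricts to radii in $\sqrt{|K^\times|}$, as Theorem \ref{flatcotangent} requires).
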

\begin{proof} This is a consequence of Theorem \ref{FlatThm}.
\end{proof}

\begin{cor}\label{LnDn} The natural action of the sheaf of finite order differential operators $\cD$ on the restriction of $\sL_n$ to $\bD_n$ extends to $\sD_n$.
\end{cor}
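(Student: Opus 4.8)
The statement to prove is Corollary \ref{LnDn}: the natural $\cD$-action on $\sL_n$, restricted to the $G$-topology $\bD_n$, extends to an action of the sheaf $\sD_n = \cD^\dag_{\varpi/|\pi_F|^n}$. The plan is to reduce this to the local statement proved in Proposition \ref{OUrtuMod} via the explicit identification of $\sL_n$ with an overconvergent Kummer-type module provided by Proposition \ref{CleanLn}. Since we are asked to extend an action of a sheaf of rings to an action of a larger sheaf of rings, and both $\cD$ and $\sD_n$ are sheaves on the appropriate $G$-topologies (the latter by Theorem \ref{NCDagTate}), it suffices by a standard gluing argument to carry out the extension locally on a suitable admissible affinoid covering of $\bD$ and then check compatibility on overlaps.

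\textbf{Key steps.} First I would fix an affinoid subdomain $X \in \bD_n$ and a set $\cS_n$ of coset representatives for $\cO_F / \pi_F^{n+1}\cO_F$, so that in the notation of $\S \ref{KummerSect}$ we have $V_n = U(\cS_n)_{|\pi_F|^n}$. By Theorem \ref{AlgGensExist}(b), after passing to a sufficiently fine admissible covering of $\bD$ we may assume that $\sL_n$ admits an algebraic generator $\dot z \in \sL_n(X)$ with associated rational function $u_n$ satisfying $\cS(u_n) \subseteq \cS_n$ (note $p \nmid d$ by Notation \ref{UpperHPhyp1}, which is exactly the hypothesis needed in $\S\ref{KummerSect}$). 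Proposition \ref{CleanLn} then gives a $\cD$-linear isomorphism $(\cM(\cS_n, u_n, d)_{\overline{V_n}})_{|X} \cong \sL_{n|X}$ sending $z \mapsto \dot z$. Second, I would invoke Proposition \ref{OUrtuMod}, applied with $t = |\pi_F|^n \in \sq{K}$, which states precisely that the $\cD$-action on the restriction of $\cM(\cS_n, u_n, d)_{\overline{V_n}}$ to $\bA(|\pi_F|^n \partial_x/\varpi)^\dag$ extends to $\cD^\dag_{\varpi/|\pi_F|^n} = \sD_n$. Transporting this action across the isomorphism of Proposition \ref{CleanLn} gives the desired extension of the $\cD$-action on $\sL_{n|X}$ to an $\sD_n$-action. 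Third, I would check that these locally-defined $\sD_n$-actions are compatible on overlaps: for $Y \subseteq X$ two members of the covering, both the restriction $(\cM(\cS_n,u_n,d)_{\overline{V_n}})_{|Y}$ and the algebraic generator $\dot z|_Y$ behave well under restriction (by Lemma \ref{iDagLocal}, $\dot z|_Y$ is again an algebraic generator with the same associated rational function), so the two $\sD_n$-actions restrict compatibly, and the $\sD_n$-module structures glue to a global $\sD_n$-module structure on $\sL_{n|\bD_n}$ extending the $\cD$-action.

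\textbf{Main obstacle.} The substantive content has essentially all been front-loaded into Proposition \ref{OUrtuMod}, Proposition \ref{CleanLn} and Theorem \ref{AlgGensExist}, so the proof itself is short; the only real care needed is the bookkeeping in the gluing step — ensuring that the choice of algebraic generator on overlapping affinoids, which is \emph{not} canonical, nonetheless induces the \emph{same} $\sD_n$-action (this follows because any two algebraic generators on a connected affinoid differ by a unit in $\cO$, and conjugation by such a unit is an inner automorphism of $\sD_n$ by Lemma \ref{NearlyInner}, hence does not change the module structure up to the canonical identification; alternatively one observes directly that the $\sD_n$-action is uniquely determined by the requirement that it extend the $\cD$-action, since $\cD(X)$ is dense in $\sD_n(X)$). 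I expect the cleanest route is to emphasise this uniqueness: the extension, if it exists, is unique by density of $\cD$ in $\sD_n$, so the locally-constructed extensions automatically agree on overlaps and glue, and the corollary follows at once.
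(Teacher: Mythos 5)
Your argument is essentially correct, but it takes an unnecessarily roundabout route. The paper's proof is shorter precisely because it avoids the covering-and-gluing step that you spend most of your effort justifying: Theorem~\ref{AlgGensExist}(b) produces an algebraic generator $\dot z \in \sL_n(X)$ for \emph{every} affinoid subdomain $X$ of $\bD$, in particular for $X = \bD$ itself (indeed, the proof of Theorem~\ref{AlgGensExist}(a) explicitly reduces to the case $X = \bD$ and constructs the generator there). Fixing a set $\cS_n$ of coset representatives and applying Proposition~\ref{CleanLn} to the resulting global generator yields a single $\cD_{\bD}$-linear isomorphism of sheaves $\cM(\cS_n, u_n, d)_{\overline{V_n}} \cong \sL_n$ on all of $\bD$. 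Proposition~\ref{OUrtuMod}, applied with $t = |\pi_F|^n$, then equips the left-hand side with an $\sD_n$-action as a sheaf on $\bD_n$, and transporting this across the global isomorphism finishes the proof in one stroke --- no local choices, no compatibility on overlaps, no uniqueness-by-density argument.

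That said, your discussion of uniqueness is a sound observation and would make the gluing genuinely painless if one did go that route; but note that the density argument for uniqueness is a little delicate to make precise, since $\sD_n(X)$ carries no natural Banach topology (it is a colimit of the $\cD_r(X)$), so one would need to argue via the individual $\cD_r(X)$'s and the boundedness of the action maps. The paper's global approach sidesteps all of this, which is why it is the preferable formulation.
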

\begin{proof} Let $\cS_n$ be any set of coset representatives for the group $\cO_F/\pi^{n+1}\cO_F$. Theorem \ref{AlgGensExist} and Proposition \ref{CleanLn} together give us a $\cD_{\bD}$-linear isomorphism 
\[\cM(\cS_n, u_n, d)_{\overline{V_n}} \stackrel{\cong}{\longrightarrow} \sL_n.\] 
We can now apply Proposition \ref{OUrtuMod} with $t = |\pi_F|^n$ to transport the $\sD_n$-action on $\cM(\cS_n, u_n, d)_{\overline{V_n}}$ to $\sL_n$ along this isomorphism. \end{proof}


Recall the relator $R_{\cS(u)}(u,d)$ from Definition \ref{TheRelator}(b).
\begin{cor}\label{MnPres} Let $X \in \bD_n$ and let $\dot{z}\in\sL_n(X)$ be an algebraic generator, with associated rational function $u_n$. Then $\sL_n(X)$ is a finitely presented $\sD_n(X)$-module:
\[\sL_n(X) = \sD_n(X) \cdot \dot{z} \cong \sD_n(X) / \sD_n(X) R_{\cS(u_n)}(u_n,d).\]
\end{cor}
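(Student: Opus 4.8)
The plan is to transport the presentation already established in $\S\ref{KummerSect}$ along the isomorphism coming from the existence of algebraic generators. More precisely, let $\cS_n$ be a set of coset representatives for $\cO_F/\pi_F^{n+1}\cO_F$ large enough that $\cS(u_n)\subseteq\cS_n$; this is possible by Definition \ref{AlgGen}(c), since $\cS(u_n)\subseteq F$ and, if necessary, we may enlarge the given $\cS_n$ so that it contains coset representatives hitting each of the finitely many points of $\cS(u_n)$. Proposition \ref{CleanLn}, applied with this $\cS_n$ and the algebraic generator $\dot z$, produces a $\cD_X$-linear isomorphism $(\cM(\cS_n,u_n,d)_{\overline{V_n}})_{|X}\congs \sL_{n|X}$ sending $z$ to $\dot z$. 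Corollary \ref{LnDn} tells us this isomorphism is compatible with the $\sD_n$-actions (both sides being obtained by transporting the $\sD_n$-action along the comparison isomorphisms of $\S\ref{KummerSect}$ via Proposition \ref{OUrtuMod}). Taking sections over $X$ therefore reduces the claim to the corresponding statement for $\cM(\cS_n,u_n,d)_{\overline{V_n}}(X)$ as a $\sD_n(X)=\cD^\dag_{\varpi/|\pi_F|^n}(X)$-module.

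Next I would verify that the hypotheses of Theorem \ref{SheafyPres} are met with $t=|\pi_F|^n\in\sq{K}$, $\cS=\cS_n$ and $u=u_n$. We have $p\nmid d$ by the standing hypothesis in Notation \ref{UpperHPhyp1}, and $t\in\sq{K}$ since $\pi_F\in F\subseteq K$. The condition $t\leq\min\{|a_i-a_j|:a_i,a_j\in X(\bfC),\ i\neq j\}$ follows from Definition \ref{AlgGen}(d), which says precisely that $|a-b|\geq|\pi_F|^n$ for all distinct $a,b\in\cS(u_n)\cap X$ (and the elements of $\cS_n\setminus\cS(u_n)$ contribute trivial factors to $u_n$, so they play no role — strictly, one should note that the conclusion of Theorem \ref{SheafyPres} only involves the behaviour at $a_i\in X(\bfC)$ with $k_i\ne0$, i.e. at points of $\cS(u_n)$, so the separation condition only needs to be checked there). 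Finally the condition $k_i/d\notin\bN$ for $a_i\in X(\bfC)$ is exactly Definition \ref{AlgGen}(e). Theorem \ref{SheafyPres} then gives the exact sequence
\[ 0 \to \cD^\dag_{\varpi/|\pi_F|^n}(X) \xrightarrow{\ \cdot R_{\cS_n}(u_n,d)\ } \cD^\dag_{\varpi/|\pi_F|^n}(X) \to \cM(\cS_n,u_n,d)_{\overline{V_n}}(X) \to 0, \]
where the second map sends $Q$ to $Q\cdot z$.

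It remains to replace $R_{\cS_n}(u_n,d)$ by $R_{\cS(u_n)}(u_n,d)$. Since $v_a(u_n)=0$ for $a\in\cS_n\setminus\cS(u_n)$, the differential operator $\dlog(u_n)=\sum_{a\in\cS(u_n)}v_a(u_n)/(x-a)$ is unchanged, and Definition \ref{TheRelator}(b) gives $R_{\cS_n}(u_n,d)=\Delta_{\cS_n}(\partial_x-\tfrac1d\dlog(u_n))$ while $R_{\cS(u_n)}(u_n,d)=\Delta_{\cS(u_n)}(\partial_x-\tfrac1d\dlog(u_n))$, so the two relators differ by left multiplication by the unit $\prod_{a\in\cS_n\setminus\cS(u_n)}(x-a)\in\cO(X)^\times$ (each such $x-a$ is a unit on $X$ because $a\notin X(\bfC)$, as $|a-a'|\ge|\pi_F|^n$ and $X\subseteq\bD$ — here I would use that $a\in\cO_F$, so if $a$ were in $X$ then the corresponding hole of $\Upsilon_n$ would force a contradiction; more simply, one can just enlarge $\cS_n$ only by points not in $X(\bfC)$). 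Hence $\cD^\dag_{\varpi/|\pi_F|^n}(X)\cdot R_{\cS_n}(u_n,d)=\cD^\dag_{\varpi/|\pi_F|^n}(X)\cdot R_{\cS(u_n)}(u_n,d)$ as left ideals, and $R_{\cS(u_n)}(u_n,d)$ is still a (left) non-zero-divisor since $R_{\cS_n}(u_n,d)$ is. Combining everything and using $\sD_n(X)=\cD^\dag_{\varpi/|\pi_F|^n}(X)$ and the identification $z\mapsto\dot z$ yields $\sL_n(X)=\sD_n(X)\cdot\dot z\cong\sD_n(X)/\sD_n(X)R_{\cS(u_n)}(u_n,d)$, as required.

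\textbf{Main obstacle.} The only genuinely delicate point is the bookkeeping around the choice of $\cS_n$: Definition \ref{AlgGen} guarantees $\cS(u_n)\subseteq F$ but not that $\cS(u_n)$ is literally contained in a prescribed set of coset representatives, so one must be slightly careful to choose (or enlarge) $\cS_n$ compatibly and then check that the extra factors are invertible on $X$ and that passing from $R_{\cS_n}$ to $R_{\cS(u_n)}$ does not change the left ideal or destroy the non-zero-divisor property. All of this is routine once set up correctly; everything else is a direct appeal to Proposition \ref{CleanLn}, Corollary \ref{LnDn}, and Theorem \ref{SheafyPres}.
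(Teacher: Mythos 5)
Your overall strategy matches the paper's: apply Proposition~\ref{CleanLn} to identify $\sL_{n|X}$ with $\left(\cM(\cS,u_n,d)_{\overline{V_n}}\right)_{|X}$, and then invoke Theorem~\ref{SheafyPres} to produce the one-relator presentation. The difference is in the choice of the finite set $\cS$. The paper simply takes $\cS = \cS(u_n)$, so the relator $R_{\cS}(u_n,d)$ comes out as $R_{\cS(u_n)}(u_n,d)$ directly and hypotheses (ii), (iii) of Theorem~\ref{SheafyPres} are \emph{exactly} Definition~\ref{AlgGen}(d),(e) --- no transfer step is needed at all. You instead take $\cS = \cS_n$ and then try to pass from $R_{\cS_n}(u_n,d)$ to $R_{\cS(u_n)}(u_n,d)$. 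This detour creates two genuine problems which the paper's route sidesteps.

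First, you cannot even apply Theorem~\ref{SheafyPres} with $\cS = \cS_n$ unless every point of $\cS_n \cap X$ actually lies in $\cS(u_n)$: for any $a_i \in \cS_n \cap X$ with $v_{a_i}(u_n) = 0$, the hypothesis \textquotedblleft$k_i/d \notin \bN$ whenever $a_i \in X(\bfC)$\textquotedblright\ of Theorem~\ref{SheafyPres} fails, since $0/d = 0 \in \bN$. You note parenthetically that the conclusion \textquotedblleft only involves the behaviour at $a_i$ with $k_i \neq 0$\textquotedblright, but this is not accurate --- both the hypotheses and the relator $R_{\cS}(u,d) = \Delta_{\cS}(\partial_x - \tfrac{1}{d}\dlog u)$ depend on all of $\cS$, not just $\cS(u)$.

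Second, your claim that $\prod_{a\in\cS_n\setminus\cS(u_n)}(x-a)$ is a unit in $\cO(X)^\times$ is not justified. It requires all the extra $a$'s to lie outside $X(\bfC)$, but your suggested fix, \textquotedblleft enlarge $\cS_n$ only by points not in $X(\bfC)$\textquotedblright, is not available: $\cS_n$ is a full set of coset representatives for $\cO_F/\pi_F^{n+1}\cO_F$, of fixed cardinality $q^{n+1}$, and if $X$ contains an entire coset then any representative of that coset necessarily lies in $X(\bfC)$. In effect, both issues would evaporate if you proved that $\cS_n \cap X = \cS(u_n) \cap X$ --- but once you have that, the transfer step is vacuous and you may as well take $\cS = \cS(u_n)$ from the start, which is precisely what the paper does. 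I recommend you drop the detour, set $\cS = \cS(u_n)$, verify the hypotheses of Theorem~\ref{SheafyPres} via Definition~\ref{AlgGen}(d),(e), and finish in two lines.
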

\begin{proof} By Proposition \ref{CleanLn}, there is a $\cD_X$-linear isomorphism 
	\[ \left(\cM(\cS(u_n), u_n, d)_{\overline{V_n}}\right)_{|X} \stackrel{\cong}{\longrightarrow} (\sL_n)_{|X}\]
sending $z$ to $\dot{z}$. Note that $|a - b| \geq |\pi_F|^n$ for all $a \neq b \in \cS(u_n) \cap X$ by Definition \ref{AlgGen}(d) and $\frac{v_a(u_n)}{d} \notin \bN$ for all $a \in \cS(u_n) \cap X$ by Definition \ref{AlgGen}(e). Now we may apply Theorem \ref{SheafyPres} with $t = |\pi_F|^n$.
\end{proof}

\subsection{Local irreducibility} \label{LocIrred}
We assume throughout $\S \ref{LocIrred}$ that our  ground field $K$ is discretely valued, with uniformiser $\pi_K$ and residue field $k$. Let $X$ be an affinoid subdomain of the closed unit disc $\bD$ obtained by removing finitely many open discs of radius $1$ \emph{not} containing the point $x = 0$. Thus
\[ X = \Sp K \left\langle x, \frac{1}{x - \alpha_1}, \cdots, \frac{1}{x - \alpha_v}\right\rangle\]
for some $\alpha_1,\ldots, \alpha_v \in K$ with $|\alpha_1| = \cdots = |\alpha_v| = 1$. We also consider the case where $v = 0$ when $X = \bD$. The main result of $\S \ref{LocIrred}$ is the following

\begin{thm}\label{SimpleKIsoc} Let $\lambda \in K \backslash \Z$ and let $X$ be as above. Then the $\cD^\dag_{\varpi}(X)$-module $\cD^\dag_{\varpi}(X) / \cD^\dag_{\varpi}(X)(x \partial - \lambda)$ is  simple if $\lambda \in \Zp$, and zero otherwise.\end{thm}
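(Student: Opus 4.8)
The plan is to reduce, via Berthelot's theory of Frobenius descent, the question of simplicity of the cyclic $\cD^\dag_\varpi(X)$-module $N(\lambda) := \cD^\dag_\varpi(X)/\cD^\dag_\varpi(X)(x\partial-\lambda)$ to a case where the exponent $\lambda$ has been ``pushed into'' the range where the underlying connection is visibly convergent, and then to run a direct argument there. First I would dispose of the trivial assertion: if $\lambda \notin \Zp$, then since $p\nmid d$-type considerations give that the relator $R = x\partial - \lambda$ is what Definition \ref{TheRelator} produces for $u = x^{k}$ with $k/d = \lambda$, the rational function $x^\lambda$ has no convergent $d$-th root anywhere on $X$ (its Taylor coefficients $\binom{\lambda}{n}$ are unbounded in $p$-adic absolute value exactly when $\lambda \notin \Zp$, by the classical criterion cited around Proposition \ref{UniqueSol} and in \cite{Kedlaya}); unwinding the presentation $N(\lambda) \cong \cM(\{0\},x^k,d)_{\overline{U_t}}(X)$ from Corollary \ref{MnPres} shows the ``local horizontal section'' that would generate a nonzero quotient simply does not exist, forcing $N(\lambda) = 0$. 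Concretely, one checks that $\sigma_\varpi$ (Lemma \ref{ActionOnO}) cannot be applied to build a module structure, i.e. the would-be module is not a finitely generated $\cO^\dag$-module, and a short argument with the associated $p$-adic differential equation shows it must vanish.

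For the main case $\lambda \in \Zp\setminus\Z$, I would argue that $N(\lambda)$ is simple by combining two inputs. The first is Corollary \ref{LnYsimple} / the argument behind Proposition \ref{YPres}: away from the single singular point $x=0$, i.e. after restriction to any connected $Y \subseteq U_t$ inside $X$, the module $\cM(\{0\},x^k,d)_{\overline{U_t}}(Y)$ is free of rank $1$ over $\cO(Y)$ and simple over $\cD^\dag_{\varpi/t}(Y)$ because $\cT(Y)$ is free on $\partial_x$ and \cite[Lemma 4.3.2]{ArdWad2023} applies. So any nonzero $\cD^\dag_\varpi(X)$-submodule $M \subseteq N(\lambda)$ has $M_{|Y} = N(\lambda)_{|Y}$ for all such $Y$; it remains to propagate this equality across the hole at $x=0$, i.e. through the affinoid $W$ containing $0$. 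The second input is Frobenius descent: writing $F\colon \bD \to \bD$ for the $p$-power map $x\mapsto x^p$, one has $F^\ast(x\partial - \lambda) \sim (x\partial - p\lambda)$ (up to the unit $p$ and the change of coordinate), so $N(\lambda)$ and $N(p\lambda)$ are related by $F^+$, which Berthelot's theorem (\cite{Berth2}, as invoked in the paper's summary of Chapter 4) shows to be an equivalence onto its essential image preserving (ir)reducibility. Since $\lambda \in \Zp$, iterating $F^+$ replaces $\lambda$ by $p^m\lambda \to 0$; for $m$ large enough, $p^m\lambda$ lies in a small enough disc around $0$ in $\Zp$ that the binomial series $\sum \binom{p^m\lambda}{n} T^n$ has radius of convergence $>1$, i.e. $x^{p^m\lambda}$ is a genuine rigid analytic function — but then, crucially, $p^m\lambda \notin \Z$ still, so the module is \emph{not} the trivial one and one checks it is simple by the explicit description: it is the line bundle with connection $\cO_X \cdot x^{p^m\lambda/d}$ extended by $j_\ast$, whose only $\cD^\dag_\varpi(X)$-submodules would correspond to $\cO_X$-subsheaves stable under $\partial + (\text{non-integer multiple of})\,\dlog x$, and the non-integrality of the exponent rules these out exactly as in \cite[Lemma 4.3.2]{ArdWad2023}.

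The main obstacle, and the place I expect the real work to be, is making the Frobenius-descent step rigorous at the level of the overconvergent sheaves $\cD^\dag_\varpi$ on the site $\bA(\partial_x/\varpi)^\dag$ rather than at the level of classical arithmetic $\cD$-modules on a smooth formal scheme: one must check that $F$ is finite étale of the right degree away from $0$ and suitably ramified at $0$, that pullback $F^\ast$ carries $\partial_x/\varpi$-admissible affinoids to $\partial_x/\varpi$-admissible affinoids (this uses Proposition \ref{AutAandr}-style estimates for the non-linear map $F$, or rather Corollary \ref{Heisenberg} applied to compute $r(F^{-1}X)$), and that the descent datum on $F^\ast N(\lambda)$ matches the one coming from $N(p\lambda)$. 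Alternatively — and this may be cleaner — one bypasses descent and argues directly: suppose $M \subsetneq N(\lambda)$ is a nonzero submodule; by the generic simplicity above, $N(\lambda)/M$ is supported at $x=0$, so it is a nonzero holonomic $\cD^\dag_\varpi(W)$-module with support a point, hence (by the local structure of such modules, via a Kashiwara-type statement over $\cD^\dag_\varpi(W)$) contains a copy of the ``delta module'' $\cD^\dag_\varpi(W)/\cD^\dag_\varpi(W)x$; comparing Euler-operator eigenvalues — $x\partial$ acts as $\lambda$ on $N(\lambda)$ but as $-1$ (then $-2,\dots$) on the delta module — and using $\lambda \in \Zp \setminus \Z$ so that $\lambda \ne -1,-2,\dots$, one derives a contradiction. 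I would develop this direct argument as the primary route, keeping Frobenius descent in reserve for the delicate point of classifying point-supported modules, since that classification is where \cite{Berth2} and the Frobenius-descent machinery genuinely earn their keep.
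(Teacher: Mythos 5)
Your high-level strategy shares two ingredients with the paper's actual proof — Frobenius descent, and a generic-vs.-point-supported dichotomy — but the specific way you deploy Frobenius descent is not correct, and the fallback argument leans on an unestablished input.

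The Frobenius descent step is the main problem. The descent equivalence $F^\natural$ of \cite[Th\'eor\`eme 2.3.6]{Berth3} goes from level-$m$ modules on $\sX$ to level-$0$ modules on $\sX'$ in a single step, and the paper's Proposition~\ref{KummerFrobDesc} shows it replaces the exponent $\lambda$ by $\lambda_i = \frac{\lambda-i}{p^m}$ for the unique $i$ with $\lambda \in i + p^m K^\circ$ (after discarding the other summands, which are killed because their relators are units). This is a shift-and-divide, not a multiply by $p$; you cannot ``iterate $F^+$'' inside a fixed dagger level because the equivalence changes the level, and the dagger ring $\cD^\dag_\varpi(X)$ is the \emph{colimit} over levels, not a single one. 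Moreover the claim that for $m$ large $x^{p^m\lambda}$ is ``a genuine rigid analytic function'' with radius of convergence $>1$ is false: for any $\mu \in \Zp\setminus\Z$ the series $\sum\binom{\mu}{n}T^n$ has radius exactly $1$, so shrinking $\lambda$ $p$-adically does not produce overconvergence. The paper instead uses a single application of $F^\natural$ to reduce Theorem~\ref{SimpleDmModule} to the level-$0$ case, where $\lambda_i \notin \Z$ still holds, and then proves simplicity at level $0$ by a direct Noetherian-ideal argument: the integral lattice $D = \h\sD^{(0)}_{\sX}(\sX)$ is Noetherian (Lemma~\ref{AppNak}, using that $K$ is discretely valued), the Euler operators $E_n = \binom{\ad(x\partial)}{n}$ preserve $\pi_K$-closed ideals containing $x\partial-\lambda$ (Lemma~\ref{EulerOps}), and then the reduction modulo $\pi_K$ together with the identity $(\ref{xdm})$-type relations and $\lambda\notin\Z$ forces any proper such ideal to contain $1$ (Proposition~\ref{Ibar} through Corollary~\ref{Simple}). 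The colimit over $m$ then yields Theorem~\ref{SimpleKIsoc}. Your alternative ``direct'' route, invoking a Kashiwara-type classification of point-supported $\cD^\dag_\varpi(W)$-modules, is not something the paper establishes and is considerably heavier machinery than the ideal-theoretic argument the paper actually gives — the paper's argument is more elementary and avoids needing any such classification.

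For the case $\lambda \notin \Zp$, your argument via Taylor coefficients of $x^\lambda$ doesn't close the loop: what is needed is that $x\partial - \lambda$ is a unit in $\cD^\dag_\varpi(X)$, and non-convergence of a candidate solution does not directly show that. The elementary case $|\lambda| > 1$ is handled by spectral-radius considerations ($|x\partial|_{\sp} < 1 < |\lambda|$), but the harder case $\lambda \in K^\circ\setminus\Zp$ (which occurs once $K \neq \Qp$) genuinely requires something more; the paper covers it by observing that, after Frobenius descent, the descended exponent $\lambda_i = \frac{\lambda-i}{p^m}$ satisfies $|\lambda_i| > 1$ for all $i$ once $\lambda \notin \bigcup_i (i + p^m K^\circ)$ (Corollary~\ref{KummerIsocDesc}(a), combined with Lemma~\ref{ZpBalls}). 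This is where the unit-ness of the relator actually comes from in full generality.
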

Let $\sX := \Spf \cO(X)^\circ$ and note that $\sX$ is the formal completion of the punctured affine line $\Spec K^\circ \left[x, \frac{1}{x-\alpha_1},\cdots, \frac{1}{x - \alpha_d}\right]$ along its special fibre. Recall Berthelot's ring $\h{\sD}^{(m)}_{\sX,\bQ}(\sX)$ of level-$m$ arithmetic  differential operators on $\sX$ for some fixed integer $m \geq 0$ from \cite[p. 46, (2.4.1.4)]{Berth}.

\begin{defn} For each $\lambda \in K$, define 
\[\sM^{(m)}(\lambda) := \frac{\h{\sD}^{(m)}_{\sX,\bQ}}{\h{\sD}^{(m)}_{\sX,\bQ} (x \partial - \lambda)} \in \coh(\h{\sD}^{(m)}_{\sX,\bQ}).\]
\end{defn}

We will actually prove the following stronger statement.

\begin{thm}\label{SimpleDmModule} Let $\lambda \in K \backslash \Z$, let $\sX$ be as above and let $m \geq 0$. Then the $\h{\sD}^{(m)}_{\sX,\bQ}(\sX)$-module $\sM^{(m)}(\lambda)(\sX)$ is simple if $\lambda \in \bigcup\limits_{i=0}^{p^m-1} i + p^m K^\circ$, and zero otherwise.
\end{thm}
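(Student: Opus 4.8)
\textbf{Proof strategy for Theorem \ref{SimpleDmModule}.}

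The plan is to exploit Berthelot's theory of Frobenius descent, which identifies $\h{\sD}^{(m)}_{\sX,\bQ}$-modules with $\h{\sD}^{(0)}_{\sX',\bQ}$-modules via pullback along the $m$-fold relative Frobenius $\sX \to \sX'$ (after a suitable identification of the base). First I would reduce to the case $m = 0$: the module $\sM^{(m)}(\lambda)$ should be the Frobenius pullback of $\sM^{(0)}(\lambda')$ for an appropriate $\lambda'$ related to $\lambda$ by the $p^m$-th power map on the parameter (concretely, if $\lambda \in i + p^m K^\circ$ then writing $x = x'^{1/p^m}$ informally, the operator $x\partial - \lambda$ transforms into something proportional to $x'\partial' - \lambda'$ with $\lambda' = \lambda/p^m$ up to the combinatorics of $p^m$-th roots; this is exactly the content of the condition $\lambda \in \bigcup_{i=0}^{p^m-1} i + p^m K^\circ$, which is precisely the locus where the naive pullback of a cyclic module stays cyclic with the right relator). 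When $\lambda \notin \bigcup_i (i + p^m K^\circ)$, the unit $x\partial - \lambda$ becomes invertible in $\h{\sD}^{(m)}_{\sX,\bQ}(\sX)$ — one checks $x\partial - \lambda$ has a bounded inverse because $\lambda$ is then at distance $\geq$ some positive constant from all the "bad" values, making the geometric-series inverse converge in the level-$m$ norm — hence the module vanishes; this handles the "zero otherwise" clause directly and uniformly in $m$.

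For the case $m=0$, $\lambda \in K^\circ$, I would prove simplicity of $\sM^{(0)}(\lambda)(\sX)$ directly. The key input is that $\h{\sD}^{(0)}_{\sX,\bQ}(\sX)$ is (after base change) closely related to the ring $\cD^\dag_\varpi(X)$ via Theorem \ref{Adellevm} and Corollary \ref{Heisenberg} — indeed $\rho(X) = 1$ here, so $r(X) = \varpi$ and $\cD^\dag_\varpi(X) = \colim_{c > \varpi} \cD_c(X)$, while $\h{\sD}^{(0)}_{\sX,\bQ}(\sX)$ is the completion $\h{\cD}^{(0)}_s(X)_K$ for $|s| = 1$, which by Theorem \ref{Adellevm} is $\cD_{|\varpi_0|}(X) = \cD_\varpi(X)$; passing to the inductive limit over $m$ (equivalently $s$ shrinking to $1$) recovers $\cD^\dag_\varpi(X)$, which is how Theorem \ref{SimpleKIsoc} will be deduced afterwards. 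To establish simplicity I would argue that any nonzero submodule $N$ of $\sM^{(0)}(\lambda)(\sX)$ contains the image $\bar{1}$ of $1$: take $0 \neq \bar{Q} \in N$, lift to $Q \in \h{\sD}^{(0)}_{\sX,\bQ}(\sX)$, write $Q = \sum_{k \geq 0} f_k \partial^{\langle k\rangle}$ in the Berthelot normal form, and use the relation $x\partial \equiv \lambda$ to rewrite $Q \bar{1}$ as a single convergent series in $\cO(X)$ times $\bar 1$ — explicitly $\partial^k \bar 1 = c_k x^{-k} \bar 1$ with $c_k = \prod_{j=0}^{k-1}(\lambda - j)$ as in the computation preceding \eqref{Pz} — reducing the problem to showing that the $\cO(X)$-submodule of $\cM := \cO(X)_{\overline{U_1}} \bar 1$ generated by a single nonzero overconvergent function, under the differential operators, is everything. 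This local statement is essentially \cite[Lemma 4.3.2]{ArdWad2023} applied to the line bundle with connection on $X$ cut out by the operator $x\partial - \lambda$, together with the overconvergence bookkeeping from Theorem \ref{SheafyPres} and Theorem \ref{StdPres}: since $\lambda \notin \Z$, the hypothesis $\frac{k}{d} \notin \bN$ in Theorem \ref{StdPres} is satisfied (with the relevant normalization $u = x^\lambda$ informally, $d = 1$), so the presentation $\sM^{(0)}(\lambda)(X) \cong \cD^\dag_\varpi(X)/\cD^\dag_\varpi(X)(x\partial - \lambda)$ matches $\cM(\{0\}, x^\lambda, 1)_{\overline{U_1}}(X)$ and simplicity follows from Corollary \ref{LnYsimple} once one is away from the singularity, patched with the behaviour at $x = 0$.

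The main obstacle I anticipate is the Frobenius descent step: making precise that $\sM^{(m)}(\lambda)$ is genuinely the level-$m$ Frobenius pullback of $\sM^{(0)}(\lambda/p^m)$ requires carefully tracking how the relator $x\partial - \lambda$ behaves under Berthelot's $F^\ast$ and, crucially, verifying that the "good" congruence locus $\bigcup_{i=0}^{p^m - 1}(i + p^m K^\circ)$ is exactly where the pullback of a simple cyclic module remains a simple cyclic module rather than splitting or acquiring extra sections — the issue being that $x \mapsto x^{p^m}$ is not étale at the origin, so the naive pullback of $\cM(\{0\}, x^\lambda, 1)$ involves a ramified cover and one must check that the overconvergent completion kills the resulting non-freeness outside $x = 0$ while the relator computation at $x = 0$ still goes through. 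A cleaner route, which I would pursue in parallel, is to avoid Frobenius descent entirely and prove Theorem \ref{SimpleDmModule} by the same direct submodule argument as for $m = 0$, using the explicit description of $\h{\sD}^{(m)}_{\sX,\bQ}(\sX) = \h{\cD}^{(m)}_s(X)_K \cong \cD_{|\varpi_m|}(X)$ from Theorem \ref{Adellevm} with $|s| = 1$: the condition $\lambda \in \bigcup_{i=0}^{p^m-1}(i + p^m K^\circ)$ is then precisely the condition under which the operator $x\partial - \lambda$ fails to be a unit in $\cD_{|\varpi_m|}(X)$ (because $\varpi_m > \varpi$ forces a slightly coarser valuation, shrinking the "non-Liouville" window from $\Z$ to the coarser lattice $\Z_p$-cosets), and simplicity in the non-unit case reduces, exactly as before, to \cite[Lemma 4.3.2]{ArdWad2023} combined with the overconvergence arguments of Theorem \ref{StdPres}. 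I expect the estimate controlling when $x\partial - \lambda$ is invertible in $\cD_{|\varpi_m|}(X)$ versus $\cD^\dag_\varpi(X)$ — i.e. pinning down that the relevant radius of convergence threshold is $|\varpi_m|$ and not $\varpi$ — to be the delicate quantitative heart of the proof.
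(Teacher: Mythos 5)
You correctly identify Frobenius descent as the organizing idea, and your treatment of the ``zero otherwise'' case is essentially sound (the paper deduces it from Corollary \ref{KummerIsocDesc}(a) after computing $F^\natural(\sM^{(m)}(\lambda)) \cong \bigoplus_{i=0}^{q-1}\h{\sD}^{(0)}_{\sX',\bQ}/\h{\sD}^{(0)}_{\sX',\bQ}(x'\partial' - \frac{\lambda - i}{q})$ via Garnier's explicit formula for the Dwork operator, Proposition \ref{KummerFrobDesc}; each factor with $|\lambda_i| > 1$ is killed because $x'\partial' - \lambda_i$ becomes a unit, which is exactly your geometric-series heuristic made precise). Note, however, that the paper works with the \emph{descent} functor $F^\natural$, not $F^\ast$, and the direct-sum decomposition over $i = 0, \ldots, p^m - 1$ is what pins down the exact index set $\bigcup_i i + p^m K^\circ$; your $\lambda' = \lambda/p^m$ should be $\lambda' = (\lambda - i)/p^m$ for the unique surviving $i$.

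The genuine gap is in your proposed argument for simplicity at $m = 0$. Both of your routes channel the problem through the $\cM(\cS,u,d)$ overconvergence framework of $\S \ref{KummerSect}$, but that framework is built for a \emph{rational} exponent $\frac{k}{d}$ with $p \nmid d$ (this is a standing assumption throughout $\S \ref{KummerSect}$, and Theorem \ref{StdPres} uses the non-Liouville property of $\frac{k}{d}$), whereas Theorem \ref{SimpleDmModule} allows arbitrary $\lambda \in K \backslash \Z$ — in particular irrational $p$-adic integers, for which ``$u = x^\lambda$'' is not a rational function and non-Liouville is an extra hypothesis you cannot assume. Worse, Corollary \ref{LnYsimple} only gives simplicity on connected subdomains of $U_t$, i.e. \emph{away} from the singularity at $x = 0$; the ``patching with behaviour at $x=0$'' you invoke is precisely the hard content. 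In the paper that patching is performed in Theorem \ref{LnDsimple} using Corollary \ref{LnWSimple}, which is itself deduced from Theorem \ref{SimpleKIsoc} — so your argument would be circular. The actual proof of the $m = 0$ case (Corollary \ref{Simple} and the lemmas building to it) is a completely different, purely algebraic argument over the $\pi_K$-adic lattice $D = \h{\sD}^{(0)}_{\sX}(\sX)$: one shows that any $\pi_K$-closed left ideal $I$ strictly containing $D(x\partial - \lambda)$ is stable under the divided-power Euler operators $\binom{\ad(x\partial)}{n}$ (Lemma \ref{EulerOps}), then passes to $\overline{D} = D/\pi_K D$ (a localised Weyl algebra over the residue field) where these operators act diagonally on $\{x^m, \partial^m\}$, extracts a monomial in $\overline{I}$ (Proposition \ref{Ibar}), lifts via Nakayama and Noetherianity (Lemma \ref{AppNak}), and finally uses the recursion forced by $x\partial - \lambda$ with $\lambda \notin \Z$ to kill degree (Proposition \ref{ImeetsSubrings}, Corollary \ref{Simple}). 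This works uniformly for all $\lambda \in K^\circ \backslash \Z$ with no rationality or Diophantine restriction, which is exactly what your proposed route cannot supply.
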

The following elementary statement may help to explain the sets appearing in Theorem \ref{SimpleDmModule}.
\begin{lem}\label{ZpBalls} We have $\Zp = \bigcap\limits_{m=0}^\infty \bigcup\limits_{i=0}^{p^m-1} i + p^m K^\circ$.
\end{lem}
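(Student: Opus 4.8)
\textbf{Proof plan for Lemma \ref{ZpBalls}.}

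The plan is to prove the two inclusions separately. For the inclusion $\Zp \subseteq \bigcap_m \bigcup_{i=0}^{p^m-1} i + p^m K^\circ$, fix $a \in \Zp$ and $m \geq 0$. Since $a$ is a $p$-adic integer we can truncate its $p$-adic expansion: there is a unique $i \in \{0,1,\dots,p^m-1\}$ with $a \equiv i \pmod{p^m \Zp}$, so $a - i \in p^m \Zp \subseteq p^m K^\circ$ because $\Zp \subseteq K^\circ$ (recall $|p| = 1/p < 1$, so $p \in K^{\circ\circ}$, hence $\Zp \subseteq K^\circ$). Thus $a \in i + p^m K^\circ$ for this $i$, and as $m$ was arbitrary, $a$ lies in the intersection.

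For the reverse inclusion, suppose $a \in \bigcap_m \bigcup_{i=0}^{p^m-1} i + p^m K^\circ$. First I would observe that taking $m = 0$ forces $a \in K^\circ$, so $|a| \leq 1$. Now I would build a Cauchy sequence of integers converging to $a$: for each $m \geq 1$ the hypothesis gives an integer $i_m \in \{0,\dots,p^m-1\}$ with $|a - i_m| \leq |p^m| = p^{-m}$. Then $|i_{m+1} - i_m| \leq \max\{|a - i_{m+1}|, |a - i_m|\} \leq p^{-m}$, so $(i_m)_{m \geq 1}$ is a Cauchy sequence in $\Z$ with respect to the $p$-adic absolute value (which agrees with $|\cdot|$ on $\Z$ since $|p| = 1/p$). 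Hence $(i_m)$ converges to some $b \in \Zp$, and since $|a - i_m| \to 0$ we get $a = b \in \Zp$. This completes the proof.

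I expect no genuine obstacle here; the only point requiring a moment's care is the compatibility of the norm $|\cdot|$ on $K$ with the standard $p$-adic absolute value on $\Z$ (guaranteed by the normalization $|p| = 1/p$ fixed in $\S\ref{ConvNotn}$), which is what lets the completeness of $\Zp$ inside $K$ be invoked. Everything else is a routine manipulation of $p$-adic expansions and ultrametric inequalities.
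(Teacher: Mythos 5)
Your proof is correct. The forward inclusion is handled identically to the paper (it is immediate from the $p$-adic expansion). For the reverse inclusion you take a genuinely different route: you construct a Cauchy sequence $(i_m)$ of integer approximants to $a$ and invoke completeness of $\Zp$ inside $K$ to conclude $a \in \Zp$. The paper instead argues contrapositively, using compactness of $\Zp$ and Hausdorffness of $K$ to find $m$ with $|\lambda - \alpha| > |p^m|$ for all $\alpha \in \Zp$ whenever $\lambda \notin \Zp$, whence $\lambda$ lies outside the finite union $\bigcup_{i=0}^{p^m-1} i + p^m K^\circ$. The two approaches express the same topological fact — that $\Zp$ is closed in $K$ and $\{ \bigcup_{i<p^m} i + p^m K^\circ \}_m$ forms a neighbourhood basis of $\Zp$ — but the paper's compactness argument is a line shorter, while yours is more explicit and avoids any appeal to compactness (it only needs completeness). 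Your remark that $|p| = 1/p$ is what guarantees $|\cdot|$ restricts to the $p$-adic absolute value on $\Z$ (and hence that $\Zp$ embeds isometrically in $K^\circ$) is exactly the right point to flag; one could add that this makes $\Zp$ a complete subset of $K$, hence closed, which is where the limit $b$ of your Cauchy sequence lives.
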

\begin{proof} Certainly the forward inclusion holds. On the other hand, if $\lambda \notin \Zp$ then because $\Zp$ is compact and $K$ is Hausdorff, we can find $m \geq 0$ sufficiently large so that $|\lambda - \alpha| > |p^m|$ for all $\alpha \in \Zp$. Then $\lambda \notin \bigcup\limits_{i=0}^{p^m-1} i + p^m K^\circ$.
\end{proof}

\begin{proof}[Proof of Theorem \ref{SimpleKIsoc}] Write $D := \cD^{\dag}_\varpi(X)$ and $D_m := \h{\sD}^{(m)}_{\sX,\bQ}(\sX)$ for each $m \geq 0$. Let $r_m := |p^m!|^{\frac{1}{p^m}}$; then by Theorem \ref{Adellevm} there is a (not necessarily isometric) isomorphism of $K$-Banach algebras $D_m \cong \cD_{r_m}(X)$. Because the sequence of real numbers $r_m$ approaches $\varpi$ as $m \to \infty$, it follows from Definition \ref{DagSite}(d) that $D = \colim\limits_{m \geq 0} D_m$. 

Suppose that $\lambda \notin \Zp$. Then by Lemma \ref{ZpBalls}, $\lambda \notin \bigcup\limits_{i=0}^{p^m-1} i + p^m K^\circ$ for some $m \geq 0$, and then $M_m := \sM^{(m)}(\lambda)(\sX)$ is zero by Theorem \ref{SimpleDmModule}. Therefore $D / D(x \partial - \lambda) = D \otimes _{D_m} M_m$ is zero as well. 

Now suppose that $\lambda \in \Zp$.  Since colimits are exact, the $D$-module $M := D / D(x\partial - \lambda)$ is the colimit of the $D_m$-modules $M_m = D_m / D_m (x \partial - \lambda)$. Note that the image of the canonical generator $v_m$  of $M_m$ inside $M$ equals the canonical generator $v$ of $M$. Now given any non-zero $w \in M$, because $M = \colim\limits_{m \geq 0} M_m$ we can find $m \geq 0$ such that $w$ is the image in $M$ of some non-zero $w_m \in M_m$. Since $\lambda \in \Zp$,  $M_m$ is a simple $D_m$-module by Theorem \ref{SimpleDmModule} and Lemma \ref{ZpBalls}, we can find $Q_m \in D$ such that $Q_m \cdot w_m = v_m$ in $M_m$. Hence $Q_m \cdot w = v$ in $M$. So any non-zero $D$-submodule of $M$ contains $v$ and is hence equal to $M$. \end{proof}

We will now use Berthelot's theory of \emph{Frobenius descent} to reduce Theorem \ref{SimpleDmModule} to the case $m = 0$. Let $q := p^m$, let $x'$ be a new local coordinate and let $\sX'$ be the formal completion of the punctured affine line $\Spec K^\circ \left[x', \frac{1}{x'-\alpha_1^q},\cdots, \frac{1}{x' - \alpha_d^q}\right]$ along its special fibre. There is a natural lift of the relative Frobenius morphism
\[ F : \sX \to \sX'\]
which sends $x' \in \cO_{\sX'}$ to $x^q \in \cO_{\sX}$ and which is completely determined by this property as a morphism of formal $K^\circ$-schemes. In this situation, Berthelot's Frobenius descent theorem, \cite[Th\'eor\`eme 2.3.6]{Berth3}, tells us that the functors of \emph{Frobenius pullback} and \emph{Frobenius descent}
\[ \begin{array}{llllll} F^\ast &:& \coh(\sD^{(0)}_{\sX'})  &\longrightarrow & \coh(\sD^{(m)}_{\sX}), \quad &\sN \mapsto \cO_{\sX} \underset{\cO_{\sX'}}{\otimes}{} \sN, \qmb{and} \\
F^\natural &:& \coh(\sD^{(m)}_{\sX}) &\longrightarrow&  \coh(\sD^{(0)}_{\sX'}), \quad& \sM \mapsto \left(\sD^{(0)}_{\sX'} \underset{\cO_{\sX'}}{\otimes}{} \cO_{\sX}^\vee\right) \underset{\sD^{(m)}_{\sX}}{\otimes}{} \sM\end{array}\]
are mutually inverse equivalences of categories, where $\cO_{\sX}^\vee := \mathpzc{Hom}_{\cO_{\sX'}}(\cO_{\sX}, \cO_{\sX'})$. These equivalences extend in a natural way to $p$-adic completions
\[ F^\natural : \coh(\h{\sD}^{(m)}_{\sX}) \stackrel{\cong}{\longrightarrow} \coh(\h{\sD}^{(0)}_{\sX'}) \qmb{and} F^\natural : \coh(\h{\sD}^{(m)}_{\sX,\bQ}) \stackrel{\cong}{\longrightarrow} \coh(\h{\sD}^{(0)}_{\sX',\bQ})\]
which, abusing notation, we will denote by the same symbols. We are now in a position to be able to compute the Frobenius descent of $\sM^{(m)}(\lambda)$.

\begin{prop}\label{KummerFrobDesc}Let $\lambda \in K$ and set $\lambda_i := \frac{\lambda - i}{q}$ for each $i=0,\ldots,q-1$. Then
\[ F^\natural(\sM^{(m)}(\lambda)) \cong \bigoplus_{i=0}^{q-1} \frac{\h{\sD}^{(0)}_{\sX',\bQ}}{\h{\sD}^{(0)}_{\sX',\bQ} (x' \partial' - \lambda_i)}\]
where $\partial' \in \cT_{\sX'}$ is the unique derivation such that $\partial'(x') = 1$.
\end{prop}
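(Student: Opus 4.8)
\textbf{Proof plan for Proposition \ref{KummerFrobDesc}.}

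The plan is to compute the Frobenius descent $F^\natural(\sM^{(m)}(\lambda))$ directly from the definition, working on global sections over $\sX'$. Recall that $\sM^{(m)}(\lambda) = \h{\sD}^{(m)}_{\sX,\bQ} / \h{\sD}^{(m)}_{\sX,\bQ}(x\partial - \lambda)$, so its underlying $\cO_{\sX,\bQ}$-module is the line bundle with connection $\cO_{\sX,\bQ} \cdot \dot{v}$ on which $\partial$ acts by $\partial(\dot v) = \frac{\lambda}{x}\dot v$ (compare the Gauss--Manin type formula (\ref{GMconn})); equivalently $\sM^{(m)}(\lambda)$ is $\cM(\{0\}, x^\lambda, 1)$-like data $\theta_{x^{-\lambda},1}$-twisted from the trivial module, restricted to $\sX$. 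First I would observe that the key geometric input is $F_\ast \cO_{\sX} = \bigoplus_{i=0}^{q-1} \cO_{\sX'} \cdot x^i$ as $\cO_{\sX'}$-modules, since $F$ sends $x'$ to $x^q$ and $x$ is finite of degree $q$ over $x'$. Dually, $\cO_{\sX}^\vee = \mathpzc{Hom}_{\cO_{\sX'}}(\cO_{\sX}, \cO_{\sX'})$ has the dual basis $\{(x^i)^\vee : 0 \le i \le q-1\}$.

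Next I would carry out the descent computation. Since $\sM := \sM^{(m)}(\lambda)$ is $\cO_{\sX}$-coherent (a line bundle with connection), $F^\natural(\sM) = (\sD^{(0)}_{\sX'} \otimes_{\cO_{\sX'}} \cO_{\sX}^\vee) \otimes_{\sD^{(m)}_{\sX}} \sM$ simplifies: as an $\cO_{\sX'}$-module it is $\cO_{\sX}^\vee \otimes_{\cO_{\sX}} \sM \cong \mathpzc{Hom}_{\cO_{\sX'}}(\cO_{\sX}, \sM)$, which by the above basis decomposition is a free $\cO_{\sX'}$-module of rank $q$ with basis $\{(x^i)^\vee \otimes \dot v : 0 \le i \le q-1\}$ (equivalently $F_\ast$ of the rank-one module $\sM$, decomposed using $x^i \dot v$, $0 \le i \le q-1$). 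The point is then to identify the $\sD^{(0)}_{\sX'}$-action, i.e. the action of $\partial'$. Using $\partial' = \frac{1}{q} x^{1-q} \partial$ on $\sX$ (from $x' = x^q$, so $\partial = q x^{q-1}\partial'$) together with $\partial(x^i \dot v) = (i + \lambda)x^{i-1}\dot v$, I would compute
\[ \partial'(x^i \dot v) = \frac{1}{q}x^{1-q}\partial(x^i\dot v) = \frac{i+\lambda}{q} x^{i-q}\dot v = \frac{i+\lambda}{q} \cdot \frac{1}{x'} x^i \dot v,\]
where I used $x^{i-q} = x^i / x^q = x^i/x'$. Hence each summand $\cO_{\sX'} \cdot x^i \dot v$ is $\partial'$-stable, and on it $x'\partial'$ acts as the scalar $\lambda_i = \frac{\lambda - i'}{q}$ after reindexing $i \mapsto i'$ appropriately (matching the residue classes $i$ of $\lambda$ modulo $q$; one must be slightly careful that the statement's $\lambda_i = (\lambda - i)/q$ corresponds to the basis vector indexed so that $x^{?}\dot v$ has eigenvalue $(\lambda - i)/q$, which forces the identification $x^{q-i}\dot v \leftrightarrow \lambda_i$ or similar). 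This gives $F^\natural(\sM) \cong \bigoplus_{i=0}^{q-1} \cO_{\sX',\bQ}\cdot w_i$ with $x'\partial' \cdot w_i = \lambda_i w_i$, hence each summand is a cyclic $\h{\sD}^{(0)}_{\sX',\bQ}$-module isomorphic to $\h{\sD}^{(0)}_{\sX',\bQ}/\h{\sD}^{(0)}_{\sX',\bQ}(x'\partial' - \lambda_i)$ exactly as in the presentation given by Theorem \ref{StdPres}/Corollary \ref{MnPres} (applied to $u = (x')^{\lambda_i}$-type data; more precisely, the cyclicity follows since $w_i$ generates and the annihilator of $w_i$ is the left ideal generated by $x'\partial' - \lambda_i$, which one checks as in the injectivity argument of Theorem \ref{StdPres} using that $x'$ is invertible in $\cO_{\sX'}$).

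The main obstacle I anticipate is \emph{bookkeeping the indexing and the precise form of the descent functor}: one must pin down whether $F^\natural$ on an $\cO$-coherent module is literally $\mathpzc{Hom}_{\cO_{\sX'}}(\cO_{\sX}, -)$ with its natural $\sD^{(0)}_{\sX'}$-structure, and then track how the chosen $\cO_{\sX'}$-basis $\{x^i\}$ of $\cO_{\sX}$ interacts with the dual basis and with the connection, so that the eigenvalues come out as $(\lambda - i)/q$ rather than some permutation or shift thereof. The computation $\partial' (x^i \dot v) = \frac{i+\lambda}{q} \cdot \frac{x^i \dot v}{x'}$ is the heart of it and is genuinely short; everything else is making sure the equivalence $F^\natural$ from \cite[Théorème 2.3.6]{Berth3} is being applied with the correct normalisations (in particular the twist by $\cO_{\sX}^\vee$ versus $\cO_{\sX}$, and the fact that for a line bundle with connection the higher Tor terms vanish so that $F^\natural$ reduces to the naive module-theoretic pushforward). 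Once the $m=0$, rank-$q$ decomposition is in hand, the passage to $p$-adic completion and to $\bQ$-coefficients is harmless since $F^\natural$ is stated to extend compatibly to $\coh(\h{\sD}^{(m)}_{\sX,\bQ})$.
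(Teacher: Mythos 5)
Your central premise — that $\sM^{(m)}(\lambda)$ is an $\cO_{\sX,\bQ}$-coherent rank-one module, ``the line bundle with connection $\cO_{\sX,\bQ}\cdot\dot v$'' — is false, and the whole argument rests on it. The affinoid $\sX$ in $\S\ref{LocIrred}$ \emph{contains} the point $x=0$ (the discs removed from $\bD$ have radius $1$ and do not contain $0$), so the relator $x\partial-\lambda$ has a singularity there and $\sM^{(m)}(\lambda)$ is not generated by $\dot v$ as an $\cO_{\sX,\bQ}$-module: the classes of $\partial^n\dot v$, $n\ge 1$, are not $\cO$-linear combinations of $\dot v$. This is already visible from the answer you are trying to prove: $F^\natural$ is an equivalence of categories (\cite[Th\'eor\`eme 2.3.6]{Berth3}) and therefore sends $\cO$-coherent rank-one objects to $\cO$-coherent rank-one objects, yet Proposition~\ref{KummerFrobDesc} exhibits $F^\natural(\sM^{(m)}(\lambda))$ as a direct sum of $q$ rank-one pieces, which would be impossible for a line bundle with connection. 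Relatedly, the asserted simplification $F^\natural(\sM)\cong\cO_{\sX}^\vee\otimes_{\cO_{\sX}}\sM$ cannot be right even when $\sM$ is $\cO$-coherent (take $\sM=\cO_{\sX}$: the left side is $\cO_{\sX'}$, the right side has rank $q$); you have in effect traded $F^\natural$ for a Frobenius direct image $F_\ast$, which is a different functor.

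Because of this, the ``heart of it'' computation, while arithmetically correct as a statement about the connection on $\sX\setminus\{0\}$, does not compute the descent. Note also that your eigenvalues come out as $\frac{\lambda+i}{q}$, while the statement has $\frac{\lambda-i}{q}$: these differ by integer shifts, and over $\sX'$ (which also contains $x'=0$) the modules $\h{\sD}^{(0)}_{\sX',\bQ}/\h{\sD}^{(0)}_{\sX',\bQ}(x'\partial'-\mu)$ and $\h{\sD}^{(0)}_{\sX',\bQ}/\h{\sD}^{(0)}_{\sX',\bQ}(x'\partial'-(\mu+1))$ are not obviously isomorphic, so this is not merely a reindexing to wave away.

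The paper's actual argument computes $F^\natural$ of the \emph{cyclic presentation}: using Garnier's explicit description \cite{Garnier} of Berthelot's bimodule isomorphism $\Phi: F^\ast F^\flat\h{\sD}^{(0)}_{\sX'}\stackrel{\cong}{\to}\h{\sD}^{(m)}_{\sX}$, involving the Dwork operator $H$ and the twisted embedding $Q\mapsto Q^\circ$, one shows
\[
\Phi^{-1}\!\left(\tfrac{x\partial-\lambda}{q}\right) \;=\; \sum_{i=0}^{q-1} x^i\otimes(x'\partial'-\lambda_i)\otimes\theta_i,
\]
i.e.\ the transferred relator is \emph{diagonal} in the natural basis $\{x^i\otimes -\otimes\theta_j\}$. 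Since $F^\natural(\h{\sD}^{(m)}_{\sX,\bQ})\cong\bigoplus_{j=0}^{q-1}\h{\sD}^{(0)}_{\sX',\bQ}\otimes\theta_j$ is a free left $\h{\sD}^{(0)}_{\sX',\bQ}$-module of rank $q$, the descent of the cyclic module is the cokernel of right-multiplication by this diagonal operator, which is exactly the asserted direct sum. If you want to pursue your idea, you would need to work with this bimodule presentation and track the Dwork operator; the naive $\cO$-module pushforward of a formal solution does not compute $F^\natural$.
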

\begin{proof} In \cite[Proposition 4.1.2(ii)]{Berth3}, Berthelot established a canonical isomorphism of $\h{\sD}^{(m)}_{\sX}$-bimodules
\[ \Phi : F^\ast F^\flat \h{\sD}^{(0)}_{\sX'} := \cO_{\sX} \underset{\cO_{\sX'}}{\otimes}{} \h{\sD}^{(0)}_{\sX'} \underset{\cO_{\sX'}}{\otimes}{} \cO_{\sX}^\vee \stackrel{\cong}{\longrightarrow} \h{\sD}^{(m)}_{\sX}.\]
We will use the following explicit description of $\Phi$ due to Garnier \cite{Garnier}; we will follow the clear exposition of Garnier's results found at \cite[\S 2.2]{AbeExplFrob}. Note that $\cO_{\sX}$ is a free $\cO_{\sX'}$-module with basis $\{1, x, \cdots, x^{q-1}\}$; let $\{\theta_0, \theta_1, \cdots, \theta_{q-1}\}$ be the dual basis for $\cO_{\sX}^\vee$ so that 
\[\cO_{\sX} \underset{\cO_{\sX'}}{\otimes}{} \h{\sD}^{(0)}_{\sX'} \underset{\cO_{\sX'}}{\otimes}{} \cO_{\sX}^\vee  = \bigoplus_{i,j=0}^{q-1} x^i \otimes \h{\sD}^{(0)}_{\sX'} \otimes \theta_j.\]
Then the isomorphism $\Phi$ is defined as follows:
\begin{itemize}
\item $\Phi( x^i \otimes Q \otimes \theta_j) = x^i Q^\circ H x^{-j}$ for any $Q \in \h{\sD}^{(0)}_{\sX'}$ and $0\leq i,j < q$, where
\item $H := \frac{1}{q}\sum\limits_{\zeta^q = 1} \sum\limits_{k=0}^\infty (\zeta - 1)^k x^k \partial^{[k]}\in \h{\sD}^{(m)}_{\sX}x^{q-1}$ is the \emph{Dwork operator}, and
\item $Q \mapsto Q^\circ$ is the non-unital ring homomorphism $\h{\sD}^{(0)}_{\sX'} \hookrightarrow \h{\sD}^{(m)}_{\sX}$ determined by the rules $1^\circ = H, (x')^\circ = x^q H$ and $(\partial')^{\circ} = (q x^{q-1})^{-1} \partial H$.
\end{itemize}
Note that $(x' \partial')^\circ = x^q (q x^{q-1})^{-1} \partial H = \frac{1}{q} x \partial H$. Therefore
\[ \Phi( x^i \otimes (x' \partial' - \lambda_i) \otimes \theta_i) = x^i \left(\frac{1}{q} x \partial H - \frac{\lambda - i}{q} H \right) H x^{-i} = \frac{1}{q}(x \partial - \lambda) x^i H x^{-i}\]
for any $i=0,\ldots, q-1$, where we used the relations $x^i(x \partial) = (x\partial - i) x^i$ and $H^2 = H$. Because $\sum\limits_{i=0}^{q-1} x^i H x^{-i} = 1$ in $\h{\sD}^{(m)}_{\sX}$ by \cite[Proposition 2.5.1]{Garnier}, we see that
\begin{equation}\label{PhiInvxdl} \Phi^{-1}\left( \frac{x \partial - \lambda}{q} \right) = \sum\limits_{i=0}^{q-1} x^i \otimes (x' \partial' - \lambda_i) \otimes \theta_i.\end{equation}
By the Frobenius descent theorem, \cite[Th\'eor\`eme 2.3.6]{Berth3},
\[F^\natural(\h{\sD}^{(m)}_{\sX,\bQ}) \quad \underset{\cong}{\overset{F^\natural(\Phi^{-1})}{{\longrightarrow}}} \quad F^\natural F^\ast F^\flat \h{\sD}^{(0)}_{\sX'} \quad \cong \quad F^\flat(\h{\sD}^{(0)}_{\sX',\bQ}) = \bigoplus\limits_{j=0}^{q-1} \h{\sD}^{(0)}_{\sX',\bQ} \otimes \theta_j\]
which is a free left $\h{\sD}^{(0)}_{\sX',\bQ}$-module of rank $q$. By definition, $\sM^{(m)}(\lambda)$ is the cokernel of right-multiplication by $\frac{x \partial - \lambda}{q}$ on $\h{\sD}^{(m)}_{\sX,\bQ}$. Using (\ref{PhiInvxdl}), we conclude that $F^\natural(\sM^{(m)}(\lambda))$ is the cokernel of the `diagonal' left $\h{\sD}^{(0)}_{\sX',\bQ}$-linear endomorphism
\[ \bigoplus_{i=0}^{q-1} \cdot (x' \partial' - \lambda_i) \otimes 1 : \bigoplus\limits_{j=0}^{q-1} \h{\sD}^{(0)}_{\sX',\bQ} \otimes \theta_j \to \bigoplus\limits_{j=0}^{q-1} \h{\sD}^{(0)}_{\sX',\bQ} \otimes \theta_j.\]
The result follows.\end{proof}

\begin{cor}\label{KummerIsocDesc} Let $\lambda \in K$. 
\be \item If $\lambda \notin \bigcup\limits_{i=0}^{q-1} i + q K^\circ$, then $x \partial - \lambda$ is a unit in $\h{\sD}^{(m)}_{\sX,\bQ}$ and $\sM^{(m)}(\lambda) = 0$.
\item Otherwise there exists a unique integer $i$ such that $0 \leq i < q$ and $\lambda \in i + q K^\circ$, and then
\[F^\natural(\sM^{(m)}(\lambda)) \quad \cong \quad  \frac{\h{\sD}^{(0)}_{\sX',\bQ}}{\h{\sD}^{(0)}_{\sX',\bQ} (x' \partial' - \frac{\lambda - i}{q})}.\]
\ee\end{cor}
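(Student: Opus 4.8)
The final statement to prove is Corollary \ref{KummerIsocDesc}, which is an immediate consequence of Proposition \ref{KummerFrobDesc}. So the plan is short.

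\textbf{Plan.} For part (a), suppose $\lambda \notin \bigcup_{i=0}^{q-1} i + qK^\circ$. Then for each $i = 0,\ldots,q-1$ the scalar $\lambda_i = \frac{\lambda - i}{q}$ satisfies $\lambda_i \notin K^\circ$, i.e. $|\lambda_i| > 1$. I would first argue that in this situation $x'\partial' - \lambda_i$ is a unit in $\h{\sD}^{(0)}_{\sX',\bQ}$: since $x'\partial'$ has operator norm $\leq 1$ on the relevant Banach completion (because $\rho(X') = 1$, so $\partial'$ has spectral radius $\varpi$ and $x'\partial'$ is bounded by $1$ by the explicit description of $\cD_r(X')$), and $|\lambda_i| > 1$, the element $-\lambda_i(1 - \lambda_i^{-1} x'\partial')$ is invertible by a geometric series argument. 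Consequently each summand $\h{\sD}^{(0)}_{\sX',\bQ}/\h{\sD}^{(0)}_{\sX',\bQ}(x'\partial' - \lambda_i)$ vanishes, so by Proposition \ref{KummerFrobDesc} we get $F^\natural(\sM^{(m)}(\lambda)) = 0$; since $F^\natural$ is an equivalence of categories, $\sM^{(m)}(\lambda) = 0$. The unit assertion for $x\partial - \lambda$ in $\h{\sD}^{(m)}_{\sX,\bQ}$ then follows either directly from the same geometric-series argument applied on $\sX$ (noting $\rho(X) = 1$ as well, so $x\partial$ is bounded by $1$ and $|\lambda| > 1$ since $\lambda \notin i + qK^\circ \supseteq K^\circ$ forces $|\lambda| > 1$), or by transporting invertibility through the isomorphism $\Phi$ and equation (\ref{PhiInvxdl}).

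For part (b), if $\lambda \in \bigcup_{i=0}^{q-1} i + qK^\circ$, then the $p$-adic balls $i + qK^\circ$ for $0 \leq i < q$ are pairwise disjoint (their centres are distinct residues mod $p^m$ and the radius $|q| = |p|^m$ is small enough to separate them), so there is a unique $i$ with $\lambda \in i + qK^\circ$. For that index, $\lambda_i = \frac{\lambda-i}{q} \in K^\circ$, while for all other indices $j \neq i$ we have $\lambda_j \notin K^\circ$, hence by the unit argument of part (a) the summands with $j \neq i$ vanish. Plugging this into the direct sum decomposition of Proposition \ref{KummerFrobDesc} leaves exactly
\[ F^\natural(\sM^{(m)}(\lambda)) \cong \frac{\h{\sD}^{(0)}_{\sX',\bQ}}{\h{\sD}^{(0)}_{\sX',\bQ}(x'\partial' - \tfrac{\lambda-i}{q})}\]
as claimed.

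\textbf{Main obstacle.} There is no serious obstacle here, as the corollary is essentially bookkeeping on top of Proposition \ref{KummerFrobDesc}. The only point requiring a small amount of care is the claim that $x'\partial' - \mu$ is invertible in $\h{\sD}^{(0)}_{\sX',\bQ}$ when $|\mu| > 1$: one needs to know that $x'\partial'$ (equivalently $x\partial$ on $\sX$) has operator norm at most $1$ on the Banach algebra in question. This follows from the identification $\h{\sD}^{(0)}_{\sX',\bQ} \cong \cD_\varpi^\dag(X')$-style completions together with $\rho(X') = 1$ (here $X'$ is again obtained from $\bD$ by deleting discs of radius $1$ not containing $0$, so Lemma \ref{NormDivPow} gives $||\partial_{x'}^{[k]}||_{\cO(X')} = 1$ for all $k$), and the elementary fact that $||x'\partial_{x'}||_{\cO(X')} \leq 1$; then $\sum_{k\geq 0} \mu^{-k-1}(x'\partial')^k$ converges and inverts $\mu - x'\partial'$. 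Once this is in hand, both parts are immediate.
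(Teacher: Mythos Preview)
Your approach is essentially the paper's: apply Proposition \ref{KummerFrobDesc}, kill the summands with $|\lambda_j|>1$ by a geometric-series inversion of $x'\partial'-\lambda_j$ in $\h{\sD}^{(0)}_{\sX',\bQ}$, and use that $F^\natural$ is an equivalence.

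There is one genuine error in part (a). You assert that $\lambda \notin \bigcup_{i=0}^{q-1}(i+qK^\circ)$ forces $|\lambda|>1$, writing ``$i+qK^\circ \supseteq K^\circ$''. This containment is false: $i+qK^\circ$ is a ball of radius $|q|<1$, strictly smaller than $K^\circ$. Concretely, if $K/\bQ_p$ is ramified and $\pi_K$ is a uniformiser, then $\lambda=\pi_K$ satisfies $|\lambda|<1$ yet $|\lambda-i|>|q|$ for every $i\in\{0,\ldots,q-1\}$, so $\lambda\notin\bigcup_i(i+qK^\circ)$. Thus the direct geometric-series argument on $\sX$ does not go through in general.

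Your second proposed route is the correct one, and is exactly what the paper does: from $F^\natural(\sM^{(m)}(\lambda))=0$ and the fact that $F^\natural$ is an equivalence one gets $\sM^{(m)}(\lambda)=0$, whence $x\partial-\lambda$ is a unit in $\h{\sD}^{(m)}_{\sX,\bQ}$. The remainder of your argument (disjointness of the balls $i+qK^\circ$ and the identification of the single surviving summand in part (b)) matches the paper.
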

\begin{proof} (a) In this case, $\lambda_i = \frac{\lambda - i}{q} \notin K^\circ$ for all $i = 0, \ldots, q-1$, so $|\lambda_i| > 1$ for all $i$. But then $x' \partial' - \lambda_i$ is a unit in $\h{\sD}^{(0)}_{\sX',\bQ}$, so $F^\natural(\sM^{(m)}(\lambda)) = 0$ by Proposition \ref{KummerFrobDesc}. So $\sM^{(m)}(\lambda) = 0$ and $x \partial - \lambda$ is a unit in $\h{\sD}^{(m)}_{\sX,\bQ}$ because $F^\natural$ is an equivalence of categories.

(b) Note that if $0 \leq i < j < q$, then $q$ does not divide $j - i$, so $|j - i| > |q|$. Hence $(i + q K^\circ) \cap (j + q K^\circ)  = \emptyset$, and if $\lambda \in i + qK^\circ$ then $\lambda \notin j + q K^\circ$ for any other $j$. This means that $|\lambda_j| > 1$ whenever $j \neq i$, and then $x' \partial' - \lambda_j$ is a unit in $\h{\sD}^{(0)}_{\sX',\bQ}$ as above. Now apply Proposition \ref{KummerFrobDesc}.
\end{proof}

We now assume that $m = 0$ and focus on the algebra $D := \h{\sD}^{(0)}_{\sX}(\sX)$, which is $\pi_K$-adically complete and separated. It is a lattice inside the Tate-Weyl algebra $D_K = \h{\sD}^{(0)}_{\sX,\bQ}(\sX)$. If $I$ is a left ideal in $D$, then we say that $I$ is \emph{$\pi_K$-closed} if the $K^\circ$-module $D / I$ is torsionfree, or equivalently, if $I = I_K \cap D$. 

\begin{lem}\label{AppNak} \hsp
\be \item The algebra $D$ is Noetherian. 
\item Let $J \leq I$ be two left ideals of $D$ and suppose that $I$ is $\pi_K$-closed. Then $I + \pi_K D = J + \pi_K D$ implies that $I = J$.
\ee \end{lem}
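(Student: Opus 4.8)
\textbf{Proof proposal for Lemma \ref{AppNak}.}

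The plan is to deduce both statements from the corresponding facts about the Tate--Weyl algebra $D_K$, which is already known to be Noetherian, using a $\pi_K$-adic Nakayama argument. First, for part (a): the $\pi_K$-adic completion $D$ has associated graded ring $\gr D = D / \pi_K D$ with respect to the $\pi_K$-adic filtration, and one checks that $D / \pi_K D$ is finitely generated (indeed finitely presented) as a $\cO(X)^\circ / \pi_K$-algebra --- this is essentially the content of Proposition \ref{Levelmdivpow}(b) applied with $\partial = \partial_x$ and the level-$0$ divided powers, together with the fact that $\cO(X)^\circ / \pi_K$ is a finitely presented $k$-algebra (which follows as in the proof of Corollary \ref{DsmXfpsep}(b), using that $X$ splits over $K$ since $K$ is discretely valued and $X$ is a cheese). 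Since $D / \pi_K D$ is commutative and finitely generated over the Noetherian ring $k$, it is Noetherian; since $D$ is $\pi_K$-adically complete and separated and $\pi_K$-torsionfree, a standard lemma (e.g. \cite[Theorem 4.1.5]{EqDCap}, which was already invoked for exactly this purpose in the proof of Corollary \ref{DrXNoeth}) gives that $D$ itself is Noetherian. Alternatively one can cite Theorem \ref{Adellevm} to identify $D_K$ with $\cD_{r_0}(X)$ and Corollary \ref{DrXNoeth} to get Noetherianity of $D_K$, but one still needs the integral statement, so the direct route via $\gr D$ is cleanest.

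For part (b): suppose $J \leq I$ are left ideals with $I$ $\pi_K$-closed and $I + \pi_K D = J + \pi_K D$. The quotient $M := I / J$ is a finitely generated left $D$-module by part (a), and the hypothesis says $M = \pi_K M$. If $M$ were known to be finitely generated, Nakayama's lemma for the $\pi_K$-adically complete ring $D$ would force $M = 0$ --- but one must be slightly careful because $M$ need not be $\pi_K$-adically separated a priori. The clean way around this: since $I$ is $\pi_K$-closed, $D / I$ is $\pi_K$-torsionfree, and $D$ is $\pi_K$-adically separated, so $I = \bigcap_n (I + \pi_K^n D)$. Iterating the hypothesis, $I + \pi_K D = J + \pi_K D$ gives $I + \pi_K^n D = J + \pi_K^n D$ for all $n \geq 1$ by an easy induction (multiply the relation $I \subseteq J + \pi_K D$ by $\pi_K^{n-1}$ and feed it back in, using $J \subseteq I$). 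Intersecting over all $n$ yields $I = \bigcap_n (J + \pi_K^n D) \supseteq J$, and we need the reverse-flavoured conclusion $I \subseteq J$, i.e. $\bigcap_n (J + \pi_K^n D) = J$. This last equality is where the Noetherianity of $D$ enters essentially: by the Artin--Rees lemma for the Noetherian $\pi_K$-adically complete ring $D$ applied to the submodule $J$ of the finitely generated module $D$, the $\pi_K$-adic topology on $J$ agrees with the subspace topology, and since $D$ is $\pi_K$-adically separated this forces $\bigcap_n (J + \pi_K^n D) = J$ (closedness of $J$ in the $\pi_K$-adic topology of $D$, which holds for all submodules of finitely generated modules over a Noetherian complete ring).

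The main obstacle, such as it is, is bookkeeping rather than depth: one must make sure the induction $I + \pi_K^n D = J + \pi_K^n D$ is run correctly (it is: from $I \subseteq J + \pi_K D$ one gets $\pi_K^{n-1} I \subseteq \pi_K^{n-1} J + \pi_K^n D \subseteq J + \pi_K^n D$, hence $I + \pi_K^n D = (I + \pi_K^{n-1} D \cap \text{stuff})$ --- more simply, $I \subseteq J + \pi_K D$ and $J \subseteq I$ give $I = J + \pi_K I$, and iterating $I = J + \pi_K I = J + \pi_K J + \pi_K^2 I = \cdots = J + \pi_K^n I \subseteq J + \pi_K^n D$), and that one correctly invokes separatedness/closedness of $J$ in the $\pi_K$-adic topology, which is exactly the Noetherian-complete Artin--Rees input from part (a). Everything else is formal. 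I would present part (b) via the one-line identity $I = J + \pi_K I$ followed by iteration and $\pi_K$-adic separatedness of $D/J$ (which itself follows from $D$ Noetherian complete), making the whole argument about three lines once part (a) is in hand.
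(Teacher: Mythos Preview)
Your part (a) contains a genuine error: $D/\pi_K D$ is \emph{not} commutative. Proposition~\ref{Levelmdivpow}(b) does not apply, since at level $m=0$ one has $\partial_x^{\langle 1\rangle}=\partial_x$ with $\|\partial_x\|_{\cO(X)}=1/\rho(X)=1$, so no $\pi\in K^{\circ\circ}$ can satisfy the required bound $|\pi|\geq\sup_{k\geq1}\|\partial_x^{\langle k\rangle}\|$. Concretely, $D/\pi_K D$ is the skew-polynomial ring $B[\overline{\partial};d/dx]$ with $B=k[x,(x-\overline{\alpha_1})^{-1},\ldots,(x-\overline{\alpha_v})^{-1}]$, and the relation $[\overline{\partial},x]=1$ exhibits the noncommutativity. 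The paper obtains Noetherianity of this ring from the fact that $B$ is Noetherian (here the discrete valuation hypothesis on $K$ is used, to ensure $k$ is a field) together with the standard fact that skew-polynomial extensions of Noetherian rings are Noetherian. Your lifting step is then fine, though note that $\gr_{\pi_K} D\cong(D/\pi_K D)[s]$ rather than $D/\pi_K D$ itself.

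For part (b), your final approach is correct but circuitous. The identity $I=J+\pi_K I$ is indeed the heart of the matter, but deriving it from $I+\pi_K D=J+\pi_K D$ genuinely uses $\pi_K$-closedness, which you do not make explicit: the paper does this in one line via the modular law, $I=I\cap(J+\pi_K D)=J+(I\cap\pi_K D)=J+\pi_K I$, where the last equality is precisely $I\cap\pi_K D=\pi_K I$, i.e.\ torsionfreeness of $D/I$. From here the paper applies Nakayama's lemma directly: $I$ is finitely generated by part (a) and $\pi_K$ lies in the Jacobson radical since $D$ is $\pi_K$-adically complete. Your worry about $\pi_K$-adic separatedness of $M=I/J$ is misplaced, since Nakayama needs only finite generation (which you already noted holds); the Artin--Rees detour works but is unnecessary.
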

\begin{proof} (a) $\overline{D} := D / \pi_K D$ is the skew-polynomial ring $B[\overline{\partial}; \frac{d}{dx}]$ where 
\[B := k\left[x, \frac{1}{x - \overline{\alpha_1}}, \cdots, \frac{1}{x - \overline{\alpha_d}}\right] = \cO( \sX \times_{K^\circ} k)\]
and $\overline{\alpha_i}$ denotes the image of $\alpha_i$ in the residue field $k$ of $K^\circ$. Since $k$ is a field, both $B$ and $\overline{D}$ are Noetherian rings \footnote{This uses the assumption that $K^\circ$ is a discrete valuation ring.}. The associated graded ring of $D$ with respect to the $\pi_K$-adic filtration is isomorphic to a polynomial ring $\overline{D}[s]$ and is therefore is Noetherian. Hence $D$ is Noetherian by \cite[Chapter II, \S 1.2, Proposition 3(1)]{LVO}. 

(b) Note that $I \cap \pi_K D = \pi_K I$ since $I$ is $\pi_K$-closed. Hence, by the modular law
\[ I = I \cap (I + \pi_K D) = I \cap (J + \pi_K D) = J + (I \cap \pi_K D) = J + \pi_K I\]
Now $I$ is finitely generated by part (a), and $\pi_K$ lies in the Jacobson radical of $D$ because $D$ is $\pi_K$-adically complete, so $I = J$ by Nakayama's Lemma. \end{proof}

We will assume until the end of $\S \ref{LocIrred}$ that $\lambda \in K^\circ$.

\begin{lem}\label{EulerOps} Let $I$ be a $\pi_K$-closed left ideal of $D$ containing $t := x \partial - \lambda$. Then $I$ is stable under the operators $E_n := \binom{\ad(x \partial)}{n} : D_K \to D_K$ for all $n\geq 0$.
\end{lem}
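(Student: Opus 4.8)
The plan is to verify by induction on $n$ that $E_n(t) \in I$ for all $n \geq 0$, and then to deduce stability of $I$ under each $E_n$ from the fact that $I$ is a left ideal. The base case $n = 0$ is immediate since $E_0 = \binom{\ad(x\partial)}{0} = 1$ and $t \in I$ by hypothesis. For the inductive step, the key computation is to understand how $\ad(x\partial)$ acts on $t = x\partial - \lambda$; since $\lambda$ is a scalar it is central, so $\ad(x\partial)(t) = \ad(x\partial)(x\partial) = 0$, which would trivially give $E_n(t) = 0$ for $n \geq 1$. So I expect the intended reading is rather that $I$ is stable under the operators $E_n$ applied to \emph{all} of $D_K$, not just to $t$, and the point is that $\ad(x\partial)$ does not send $I$ into itself on the nose but $\binom{\ad(x\partial)}{n}$ does — this is the standard phenomenon that in residue characteristic $p$ one must use divided-power versions of the Euler operator.

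Concretely, I would argue as follows. Let $Q \in I$ be arbitrary; I want $E_n(Q) \in I$ for all $n$. Write $Q = \sum_j a_j \partial^j$ with $a_j \in \cO(\sX)^\circ$ (or work with the level-zero divided-power presentation). The operator $\ad(x\partial)$ acts on the monomial $a_j \partial^j$ in a controlled way: using $[x\partial, a] = x\partial_x(a)$ for $a \in \cO(\sX)$ and $[x\partial, \partial^j] = -j\partial^j$, one gets $\ad(x\partial)(a_j\partial^j) = (x\partial_x(a_j) - j a_j)\partial^j$, so $\ad(x\partial)$ is `diagonalisable' with integer-plus-operator eigenvalues. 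The binomial $\binom{\ad(x\partial)}{n}$ then has the effect of multiplying the $\partial^j$-component essentially by $\binom{x\partial_x - j}{n}$, which maps $\cO(\sX)^\circ$ into itself because $x\partial_x$ preserves $\cO(\sX)^\circ$ (it is the Euler derivation on a formal punctured line, and $\binom{x\partial_x}{n} = (x\partial_x)^{[n]}$-type divided powers are integral here — this is exactly the kind of estimate underlying Berthelot's level-$m$ formalism and is why the problem restricts to $\h{\sD}^{(0)}$). The upshot: $E_n$ preserves the lattice $D$ inside $D_K$.

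Granting that $E_n(D) \subseteq D$, I then need $E_n(I) \subseteq I$. The cleanest route is to first establish it modulo $\pi_K$ and then bootstrap. Reducing mod $\pi_K$, $\overline{D} = B[\overline\partial; d/dx]$ is an honest (commutative-base) skew-polynomial ring, $\ad(x\overline\partial)$ is an ordinary inner derivation, and $\binom{\ad(x\overline\partial)}{n}$ is a genuine operator on $\overline{D}$; the reduction $\overline{I}$ of $I$ contains $\overline{t} = x\overline\partial - \overline\lambda$, and one checks directly (now in characteristic $p$ with honest divided powers of an inner derivation, using that the ideal generated by $\overline t$ together with $B$-linearity absorbs the relevant combinations) that $\binom{\ad(x\overline\partial)}{n}(\overline I) \subseteq \overline I$. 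Lifting: for $Q \in I$, $E_n(Q) \in D$ by the lattice statement and $E_n(Q) + \pi_K D \in \overline I$, so $E_n(Q) \in I + \pi_K D$. One then iterates, using that $I$ is $\pi_K$-closed (hence $I = \bigcap_{k}(I + \pi_K^k D)$, since $D/I$ is $\pi_K$-torsionfree and $\pi_K$-adically separated), to conclude $E_n(Q) \in I$.

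The main obstacle I anticipate is precisely the integrality claim $E_n(D) \subseteq D$: naively $\binom{\ad(x\partial)}{n}$ involves dividing by $n!$, and one must check that the combination of $p$-adic valuations coming from $n!$ in the denominator is compensated, either by the fact that $\binom{x\partial_x - j}{n} \in \Z_p$ whenever it is applied to something already in $\cO(\sX)^\circ$ (because $\binom{\cdot}{n}$ of an integer is an integer, and the `operator part' $x\partial_x$ contributes only integral divided powers on a $K^\circ$-model of the punctured line), or by appealing directly to Berthelot's comparison $\h{\sD}^{(0)}_{\sX}(\sX) \cong \cD_{\varpi_0}(X)$-style norm estimates from the theory developed around Definition \ref{divpownot} and Proposition \ref{Levelmdivpow}. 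I would set this up by working in the free $\cO(\sX)^\circ$-module basis $\{\partial^{\langle k\rangle}\}$ of Proposition \ref{Levelmdivpow}(a) (with $m = 0$, so $\partial^{\langle k\rangle} = \partial^{[k]}$), expressing $\ad(x\partial)$ in these coordinates, and checking term by term that $\binom{\ad(x\partial)}{n}$ has matrix entries in $\cO(\sX)^\circ$; everything after that is the two-step mod-$\pi_K$/lift argument above, which is routine.
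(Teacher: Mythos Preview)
Your step 1 (the integrality claim $E_n(D)\subseteq D$ via the formula $\binom{\ad(x\partial)}{n}(f\partial^m)=\binom{x\partial_x-m}{n}(f)\,\partial^m$) is correct and is exactly what the paper does first.

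The gap is in step 2. Your mod-$\pi_K$ reduction does not help: the operator $E_n$ on $\overline{D}$ is \emph{not} expressible as a polynomial in $\ad(\overline{t})$ with coefficients in $k$ once $p\mid n!$, so knowing $\ad(\overline t)(\overline I)\subseteq\overline I$ gives nothing, and your parenthetical ``honest divided powers of an inner derivation \ldots\ absorbs the relevant combinations'' is exactly the unproved assertion. Moreover, even granting $E_n(Q)\in I+\pi_K D$, your iteration does not bootstrap to $I+\pi_K^2 D$: writing $E_n(Q)=Q'+\pi_K R$ with $Q'\in I$, you have no control over $R$.

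The paper's argument avoids the reduction entirely and works in $D_K$. The key point you are missing is that $\ad(x\partial)=\ad(t)$ (since $\lambda$ is central), and because $t\in I$ and $I$ is a left ideal, for any $Q\in I$ both $tQ$ and $Qt$ lie in $I$; hence $\ad(t)(I)\subseteq I$. Therefore the \emph{falling factorial} $\ad(t)_{[n]}=\ad(t)(\ad(t)-1)\cdots(\ad(t)-n+1)=n!\,E_n$, being an integer-coefficient polynomial in $\ad(t)$, also preserves $I$. Combined with step~1, which says $\ad(t)_{[n]}(D)\subseteq n!D$, one gets $\ad(t)_{[n]}(I)\subseteq I\cap n!D=n!I$ (using that $D/I$ is $K^\circ$-torsionfree, i.e.\ $\pi_K$-closedness), and dividing by $n!$ gives $E_n(I)\subseteq I$. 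So $\pi_K$-closedness is used once, purely algebraically, to pass from $n!E_n$ to $E_n$ --- no $\pi_K$-adic approximation is needed.
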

\begin{proof}  We first consider the case where $I = D$ itself. For any $f \in \cO(\sX)$ and any $m \geq 0$ we calculate $\ad(x \partial)( f \partial^m ) = [x \partial, f \partial^m] = (x\partial - m)(f) \partial^m$. Therefore for any $n \geq 0$ we see that
\begin{equation}\label{BinAdEuler} \binom{\ad(x \partial)}{n}(f \partial^m) = \binom{x \partial - m}{n}(f) \partial^m = \sum\limits_{i=0}^n \binom{-m}{n-i} x^i\partial^{[i]}(f) \hsp \partial^m\end{equation}
which implies that $\binom{\ad(x \partial)}{n}$ preserves $D$ as required. Now, $\ad(x \partial) = \ad(t)$. Letting 
\[\ad(t)_{[n]} := \ad(t) (\ad(t) - 1) \cdots (\ad(t) - n+1) = n! \binom{\ad(t)}{n}\]
we see that $\ad(t)_{[n]}(D) \subseteq n! D$. Returning to the general case where $I$ is not necessarily equal to $D$, note that $\ad(t)(I) = [t,I] \subseteq I$ because $t \in I$ by assumption. Hence
\[ \ad(t)_{[n]}(I) \subseteq I \cap n! D = n! I\]
because $D/I$ is torsionfree. Dividing through by $n!$ we conclude that $I$ is stable under $\binom{\ad(t)}{n} = \binom{\ad(x\partial)}{n}$ as required.
\end{proof}

\begin{prop}\label{Ibar} Let $I$ be a $\pi_K$-closed left ideal of $D$ properly containing $Dt$. Then for some $n \geq 0$, either $x^n \in I + \pi_K D$ or $\partial^n \in I + \pi_K D$. \end{prop}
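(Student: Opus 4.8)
The plan is to pass to the residue ring $\overline{D} := D/\pi_K D = B[\overline{\partial}; \frac{d}{dx}]$ and analyse the image $\overline{I}$ of $I$ there. Since $I$ properly contains $Dt$ and $I$ is $\pi_K$-closed, the quotient $D/I$ is a proper quotient of $D/Dt$; reducing mod $\pi_K$, the ideal $\overline{I}$ properly contains $\overline{D}\hspace{0.1cm}\overline{t}$ where $\overline{t} = x\overline{\partial} - \overline{\lambda}$ (note $\lambda \in K^\circ$ so $\overline{\lambda} \in k$ makes sense). It suffices to show that $\overline{I}$ contains a power of $x$ or a power of $\overline{\partial}$, since that is exactly the assertion $x^n \in I + \pi_K D$ or $\partial^n \in I + \pi_K D$. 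The key structural input is Lemma \ref{EulerOps}: $I$ is stable under all the operators $E_n = \binom{\ad(x\partial)}{n}$, and these descend to operators on $\overline{D}$ (by the formula (\ref{BinAdEuler}) they preserve $D$, hence $\pi_K D$, hence act on $\overline{D}$). Thus $\overline{I}$ is stable under the induced operators $\overline{E}_n = \binom{\ad(x\overline{\partial})}{n}$.

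The idea is then to use this Euler-operator stability to produce, from a single element of $\overline{I} \setminus \overline{D}\hspace{0.1cm}\overline{t}$, an element of $\overline{I}$ that is `homogeneous' for the Euler grading, i.e. an eigenvector for $\ad(x\overline{\partial})$ or at least one lying in a single weight space, of the form $f(x) x^a \overline{\partial}^b$ or more precisely a sum over a fixed value of $b - (\text{something})$. Concretely, for $f \in B$ the operator $\ad(x\overline{\partial})$ acts on $f\overline{\partial}^m$ by $(x\frac{d}{dx} - m)(f)\overline{\partial}^m$; when $\overline{\lambda} \notin \mathbb{F}_p$ (or more carefully, when $\overline{\lambda}$ is not an integer mod $p$) the operator $x\frac{d}{dx} - \overline{\lambda}$ on $B$ behaves like a near-bijection away from the monomials $x^j$ with $j \equiv \overline{\lambda}$, and one should be able to use $\overline{E}_n$ to kill off all but one homogeneous component of a chosen element of $\overline{I}$. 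Having extracted such a homogeneous element, say $c x^a \overline{\partial}^b$ with $c \in B^\times$ or $c$ a unit times a product of the $\frac{1}{x-\overline{\alpha_i}}$'s, one concludes that $x^a \overline{\partial}^b \in \overline{I}$; then combining with $\overline{t} = x\overline{\partial} - \overline{\lambda} \in \overline{I}$ one bootstraps: multiplying $x^a\overline{\partial}^b$ on the right/left by powers of $x$ and $\overline{\partial}$ and using the relation $\overline{t}$ to eliminate mixed terms should force either a pure power of $x$ or a pure power of $\overline{\partial}$ into $\overline{I}$.

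More carefully, I would organize the argument as follows. First, reduce mod $\pi_K$ and record that $\overline{I} \supsetneq \overline{D}\hspace{0.1cm}\overline{t}$ and $\overline{I}$ is $\overline{E}_n$-stable for all $n$. Second, use the $\overline{E}_n$-stability to show that $\overline{I}$ is generated as a left $\overline{D}$-ideal together with $\overline{t}$ by its intersection with the `weight spaces' $\overline{D}_w$ for $\ad(x\overline{\partial})$; i.e. $\overline{I} = \overline{D}\hspace{0.1cm}\overline{t} + \sum_w (\overline{I} \cap \overline{D}_w)$. This is the step that uses that the $\overline{E}_n$ separate the weights — one needs that distinct weights $w \ne w'$ appearing in an element of $\overline{I}$ can be separated by a polynomial in $\ad(x\overline{\partial})$, which holds because the relevant eigenvalues in characteristic $p$ either are all distinct (in which case a Vandermonde/Lagrange argument works) or, when they collide mod $p$, one still has enough room because the ideal is already assumed to contain $\overline{t}$ and the collisions are controlled by $\overline{\lambda}$. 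Third, pick $w$ with $\overline{I} \cap \overline{D}_w \not\subseteq \overline{D}\hspace{0.1cm}\overline{t}$ and analyse $\overline{D}_w$ concretely: it is a free $B_0$-module (where $B_0 = k[x,x^{-1}, (x-\overline{\alpha_i})^{-1}, \ldots]$ localized appropriately) on a single generator $x^{a}\overline{\partial}^{b}$ of weight $w$, modulo the relation coming from $\overline{t}$; a direct computation in this one-dimensional module yields that $\overline{I} \cap \overline{D}_w$, being a $B_0$-submodule not contained in the $\overline{t}$-part, must contain $x^{a} \overline{\partial}^{b}$ up to a unit, hence contains a monomial. Fourth, from a monomial $x^a \overline{\partial}^b \in \overline{I}$ and $\overline{t} \in \overline{I}$, deduce a pure power: if $b = 0$ we are done with $x^a$; if $a = 0$ we are done with $\overline{\partial}^b$; otherwise compute $\overline{t} \cdot x^{a-1}\overline{\partial}^{b} = x^a \overline{\partial}^{b+1} + (\text{lower})$ and $x^a\overline{\partial}^b \cdot \overline{\partial} - $ adjustments to cascade down to a pure power.

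The main obstacle, I expect, is the characteristic-$p$ subtlety in the separation-of-weights step: the eigenvalues of $\ad(x\overline{\partial})$ on the relevant finite set of monomials are integers reduced mod $p$, and these can collide, so the naive Vandermonde argument to isolate a single weight space via the $\overline{E}_n$ can fail. One has to argue that the collisions only ever identify a weight with $\overline{\lambda}$-related weights, and that for those the element can be absorbed into $\overline{D}\hspace{0.1cm}\overline{t}$ — this is presumably where the hypothesis $\lambda \in K^\circ$ (rather than $\lambda \in \mathbb{Z}_p$) and the later reduction to $\lambda \in K^\circ \setminus \mathbb{Z}$ enter. If that separation genuinely fails one would instead have to work with generalized weight spaces (kernels of powers of $\ad(x\overline{\partial}) - w$) and run a slightly more delicate filtration argument, but the skeleton above should still go through.
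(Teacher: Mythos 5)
Your high-level plan — reduce modulo $\pi_K$, exploit the $E_n$-stability of $\overline{I}$ coming from Lemma \ref{EulerOps}, and then separate into $\ad(x\overline{\partial})$-eigenspaces — is the right one, and matches the paper's strategy. But you explicitly flag the main obstacle (characteristic-$p$ collisions of eigenvalues, since $\binom{m}{n}\equiv\binom{m'}{n}\pmod p$ whenever $m\equiv m'\pmod p$ and $n<p$) and you do not resolve it. Your suggested fix — invoking $\lambda\in K^\circ\setminus\Z$ — is misdirected: Proposition \ref{Ibar} makes no such hypothesis, and its proof cannot and does not use it. The actual resolution in the paper is a purely combinatorial fact: the binomial coefficients $\binom{z}{n}$ for $n\geq 0$ form a $k$-basis for the ring of \emph{locally constant} $k$-valued functions on $\Z_p$. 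Consequently, for any finite set of distinct integers and any chosen one among them, there is a polynomial $\sum_{n\leq N}\epsilon_n\binom{z}{n}$ in the $E_n$ that equals $1$ on the chosen integer and $0$ on the rest — even when some of them are congruent mod $p$. This is precisely what kills the collision problem without any hypothesis on $\lambda$.

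The second ingredient you are missing is a technical simplification that makes the explicit eigenvalue computation feasible. You attempt to work with ``weight spaces'' of $\overline{D}$ directly, which involves negative powers of $x-\overline{\alpha_i}$ and is awkward. The paper instead observes that $\overline{D}$ is an Ore localisation of the Weyl algebra $W=k[x][\overline{\partial};\frac{d}{dx}]$, so by \cite[Proposition 2.1.16(iii)]{MCR} the ideal $\overline{I}$ is generated by $J:=\overline{I}\cap W$, and $J$ still strictly contains $W\overline{t}$. One then works inside $W/W\overline{t}$, which has a concrete $k$-basis $\{v_m : m\in\Z\}$ given by classes of $x^m$ ($m\geq 0$) and $\overline{\partial}^{-m}$ ($m<0$), on which the $E_n$ act diagonally via $E_n(v_m)=\binom{m}{n}v_m$. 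Applying the separating operator to a nonzero element of $J/W\overline{t}$ and using that $k$ is a field then directly produces some $v_r\in J/W\overline{t}$, i.e.\ a pure power of $x$ or $\overline{\partial}$ in $\overline{I}$. Your proposed final ``bootstrap'' from a mixed monomial $x^a\overline{\partial}^b$ is unnecessary once one works in $W/W\overline{t}$, since the basis elements are already pure powers.
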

\begin{proof} Let $\overline{I} := (I + \pi_K D)/\pi_K D$. Since $I$ is $\pi_K$-closed, we see that $\overline{I}$ strictly contains $\overline{D t} = (D t + \pi_K D)/\pi_K D$ by Lemma \ref{AppNak}(b).  Now $\overline{I}$ is a left ideal in $\overline{D} = B[\overline{\partial}; \frac{d}{dx}]$ which is an Ore localisation of the Weyl algebra $W := k[x][\overline{\partial}; \frac{d}{dx}]$ at the Ore set consisting of powers of the polynomial $\prod_{i=1}^d (x - \overline{\alpha_i})$. Therefore, by \cite[Proposition 2.1.16(iii)]{MCR}, $\overline{I} = \overline{D} \cdot J$ where $J:= \overline{I} \cap W$; note that $J$ strictly contains $W \overline{t}$ because $\overline{I}$ strictly contains $\overline{D} \overline{t}$. 

Now, $W / W \overline{t}$ is a $k$-vector space with basis $\{v_m: m \in \Z\}$ where $v_m := \overline{x^m}+ W \overline{t}$ if $m \geq 0$ and $v_m := \overline{\partial^{-m}}+ W \overline{t}$ if $m < 0$. It follows from (\ref{BinAdEuler}) that
\[ E_n(v_m) = \binom{m}{n} v_m \qmb{for all} n \geq 0, m \in \Z.\]
Pick a non-zero element $u \in J / W \overline{t}$ ; then we can write $u$ as a finite sum 
\[u = \sum_{m \in \Z} u_m v_m\]
for some $u_m \in k$, not all zero. Suppose that in fact $u_r \neq 0$ for some $r \in \Z$. The binomial coefficients $\binom{z}{n}$ form a basis for the ring of locally constant $k$-valued functions on $\Zp$. We can find such a function, $\epsilon$ say, such that $\epsilon(r) = 1$ and $\epsilon(m) = 0$ for all $m \in \Z \backslash \{r\}$ such that $u_m \neq 0$. Write $\epsilon = \sum\limits_{n=0}^N \epsilon_n \binom{z}{n}$; then 
\[ \left(\sum\limits_{n=0}^N \epsilon_n E_n\right)(u) = \sum_{n=0}^N \sum_{m \in \Z} \epsilon_n u_m \binom{m}{n} v_m = \sum_{m \in \Z} u_m v_m \epsilon(m) = u_r v_r.\]
Now $E_n(J) \subseteq J$ for all $n \geq 0$ by Lemma \ref{EulerOps}, so $u_r v_r \in J / W \overline{t}$. Since $k$ is a field, we conclude that $v_r \in J/W\overline{t}$ and the result follows.
\end{proof}

\begin{cor}\label{PolysInI} Let $I$ be a $\pi_K$-closed left ideal of $D$ properly containing $Dt$. Then either
\be \item there exist $a_n, a_{n-1}, \cdots, a_0 \in K^\circ \langle \partial\rangle$ with $a_n \neq 0$ and 
\[ a_n x^n + a_{n-1} x^{n-1} + \cdots + a_1 x + a_0 \in I, \qmb{or}\]
\item there exist $a_n, a_{n-1}, \cdots, a_0 \in \cO(\sX)$ with $a_n \neq 0$ and 
\[ a_n \partial^n + a_{n-1} \partial^{n-1} + \cdots + a_1 \partial + a_0 \in I.\]
\ee
\end{cor}
\begin{proof} By Proposition \ref{Ibar}, we know that either $\overline{x}^n \in \overline{I}$ or $\overline{\partial}^n \in \overline{I}$ for some $n \geq 0$. Suppose first that $\overline{x}^n \in \overline{I}$, and let $S := \{1, x, \cdots, x^{n-1}\}$. Because the image of $S$ in $B / B \overline{x}^n$ spans it as a $k$-vector space, its image also generates $\overline{D} / \overline{D} \overline{x}^n$ as a $k[\partial]$-module; hence its image in $\overline{D} / \overline{I} = \overline{D/I}$ generates it as a $k[\partial]$-module. Since $D/I$ is $\pi_K$-adically separated by Lemma \ref{AppNak}(a) and \cite[Chapter II, \S 1.2, Theorem 10(1)]{LVO},  we conclude using \cite[Chapter I, Theorem 5.7]{LVO} that the image of $S$ in $D/I$ generates it as a $K^\circ\langle \partial\rangle$-module. Hence we may write $x^n$ as a $K^\circ \langle \partial \rangle$-linear combination vectors in this image, and hence (a) holds. Alternatively, if $\overline{\partial}^n \in \overline{I}$ then the same argument using $T := \{1, \partial, \cdots, \partial^{n-1}\}$ instead of $S$ and $\cO(\sX)$ instead of $K^\circ\langle \partial \rangle$ shows that (b) holds.
\end{proof}

\begin{prop}\label{ImeetsSubrings} Let $I$ be a $\pi_K$-closed left ideal of $D$ properly containing $Dt$ and suppose that $\lambda \in K^\circ\backslash \Z$. Then either $I \cap K^\circ \langle \partial \rangle \neq \{0\}$ or $I \cap \cO(\sX) \neq \{0\}$.
\end{prop}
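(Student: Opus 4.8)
The plan is to exploit Corollary \ref{PolysInI}, which already hands us an element of $I$ that is polynomial in exactly one of the two variables $x$ or $\partial$. There are two cases, and the strategy is to show that in each case we can ``clear out'' the dependence on the remaining variable using the Euler-type operators $E_n = \binom{\ad(x\partial)}{n}$, whose action on $I$ we have under control by Lemma \ref{EulerOps} (which applies precisely because $t = x\partial - \lambda \in I$ and $I$ is $\pi_K$-closed).

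First I would treat case (a) of Corollary \ref{PolysInI}: there is a non-zero $P = a_n x^n + \cdots + a_1 x + a_0 \in I$ with all $a_i \in K^\circ\langle \partial\rangle$ and $a_n \neq 0$. Writing each $a_i = \sum_j a_{ij}\partial^j$, I would compute $\ad(x\partial)(x^i \partial^j) = (j - i) x^i \partial^j$ — note this is an \emph{exact} eigenvalue computation in $D_K$ since $\ad(x\partial)(x^i) = -i x^i$ and $\ad(x\partial)(\partial^j) = j\partial^j$ (beware the sign; I will double-check against (\ref{BinAdEuler}), recalling $E_n(x^m) = \binom{-0\cdot\text{(wrong)}}{}$, i.e. one uses $\binom{x\partial - m}{n}$ acting on the $\partial^m$ side). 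The monomials $x^i\partial^j$ occurring in $P$ therefore lie in finitely many $\ad(x\partial)$-eigenspaces, indexed by the values of $j - i$. Using the standard fact that the binomial functions $\binom{z}{n}$ separate points of $\Z$, for each eigenvalue $\mu$ appearing I can build a $K^\circ$-linear combination $\sum_n \epsilon_n E_n$ (with $\epsilon_n \in K^\circ$, since the relevant interpolation happens over a finite subset of $\Z$ where denominators are units) that projects $P$ onto its $\mu$-component $P_\mu \in I$. Since $E_n(I) \subseteq I$ by Lemma \ref{EulerOps}, each $P_\mu \in I$; at least one $P_\mu$ is non-zero. But each $P_\mu$ is $\cO(\sX)$- or $K^\circ\langle\partial\rangle$-spanned by monomials of the form $x^i\partial^{i+\mu}$; writing $w = x\partial$ and using that $x^i \partial^{i} = (w)(w-1)\cdots(w-i+1)$ up to a factor, I would rewrite $P_\mu$, in the case $\mu \geq 0$, as $Q\cdot \partial^\mu$ where $Q$ is a polynomial in $w$ with $K^\circ$-coefficients (and symmetrically $x^{-\mu}\cdot Q$ when $\mu < 0$). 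Now $w = t + \lambda$, so $Q(w) \equiv Q(\lambda) \bmod Dt$, and hence $Q(\lambda) \partial^\mu \in I + Dt = I$. Since $\lambda \notin \Z$ (hence $\lambda \notin \{0,1,\dots\}$) the leading behaviour forces $Q(\lambda) \neq 0$ provided $Q$ has a term surviving — I would need to check $Q(\lambda) \neq 0$ by noting $Q$ is a non-zero polynomial in $w$ and if $Q(\lambda)=0$ then $\lambda$ is a root, which is still possible; the fix is that \emph{some} non-zero eigencomponent $P_\mu$ must, after this reduction, yield a non-zero scalar multiple of $\partial^\mu$ or of a power of $x$, because the reductions modulo $Dt$ of the various eigencomponents cannot all vanish unless $P \in Dt$. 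So we land with a non-zero element of $I$ of the form (scalar)$\cdot \partial^{\mu}$ with $\mu > 0$, hence $\partial^\mu \in I$, i.e. $I \cap K^\circ\langle\partial\rangle \neq \{0\}$; or $\mu = 0$, giving a non-zero scalar in $I$, whence $I = D$ and both conclusions hold trivially; or $\mu < 0$, giving $x^{-\mu} \in I$, i.e. $I \cap \cO(\sX) \neq \{0\}$ (indeed $I\cap K^\circ\langle\partial\rangle$ as well after multiplying, but $x^{-\mu} \in \cO(\sX)$ suffices). Case (b) is handled by the transpose/symmetric argument, swapping the roles of $x$ and $\partial$ throughout (the relevant symmetry is the involution $(-)^T$ from Lemma \ref{Transpose}, or simply the fact that the same eigenspace decomposition applies to $a_n\partial^n + \cdots + a_0$ with $a_i \in \cO(\sX)$).

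The main obstacle I anticipate is the bookkeeping in the reduction modulo $Dt$: making sure that when I collapse each $\ad(x\partial)$-eigencomponent $P_\mu$ to the normal form ``polynomial in $w$ times $\partial^\mu$ (or $x^{-\mu}$)'' I correctly track the $K^\circ$-integrality of all the coefficients appearing (the eigenspace projectors $\sum \epsilon_n E_n$ must have $K^\circ$-coefficients, which is fine because the Lagrange-type interpolation is over a \emph{bounded} set of integers where the Vandermonde determinant is a $p$-adic unit — but I should verify this carefully, perhaps by choosing the $\epsilon_n$ to realize $\prod_{m' \neq m}\frac{z - m'}{m - m'}$ restricted to the finite index set), and that at least one eigencomponent survives the reduction modulo $Dt$ so that we genuinely produce a non-zero element of one of the two subrings. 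I would also need the elementary observation that $x^i\partial^i$ and $\partial^i x^i$ differ from $w(w-1)\cdots$ by explicit lower-order terms in $w$, which is routine. Everything else — the existence of the starting polynomial element, the stability of $I$ under the $E_n$, the fact that $\binom{z}{n}$ interpolate — is already available from the excerpt.
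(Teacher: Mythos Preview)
Your approach has a genuine gap. The claim that ``the monomials $x^i\partial^j$ occurring in $P$ therefore lie in finitely many $\ad(x\partial)$-eigenspaces'' is false. In case (a) of Corollary \ref{PolysInI}, the coefficients $a_i$ lie in $K^\circ\langle\partial\rangle$, which is a \emph{completed} power-series ring: each $a_i = \sum_{j \geq 0} a_{ij}\partial^j$ may have infinitely many non-zero terms. Since $\partial^j x^i$ is an $\ad(x\partial)$-eigenvector with eigenvalue $i - j$ (note the sign: $\ad(x\partial)(x) = x$ and $\ad(x\partial)(\partial) = -\partial$), the eigenvalues appearing in $P$ can range over all of $(-\infty, n] \cap \bZ$. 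You therefore cannot build a finite $K^\circ$-linear combination of the $E_n$ that kills all but one eigencomponent, and no Lagrange-interpolation argument over a bounded set of integers applies. (The analogous step in the proof of Proposition \ref{Ibar} works precisely because one has passed to $\overline{D}$, where the element lies in the polynomial Weyl algebra $W$ and hence involves only finitely many monomials.)

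The paper avoids this entirely by a direct degree-lowering trick. In case (a), observe that in $D/Dt$ one has $\partial \cdot x^m v_\lambda = (\lambda + m) x^{m-1} v_\lambda$, and hence for any $a \in K^\circ\langle\partial\rangle$ (which commutes with $\partial$) one gets $\partial \cdot a x^m \equiv a(\lambda + m) x^{m-1} \bmod Dt$. Choosing the element $\sum_m a_m x^m \in I$ with $n$ \emph{minimal}, left-multiplying by $\partial$ produces another element of $I$ of $x$-degree $n-1$ with leading coefficient $a_n(\lambda + n)$, which is non-zero because $\lambda \notin \Z$ and $K^\circ\langle\partial\rangle$ is a domain --- contradicting minimality unless $n = 0$. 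Case (b) is the mirror argument using $x \cdot a\partial^m \equiv a(\lambda - m + 1)\partial^{m-1} \bmod Dt$. No eigenspace decomposition is needed at all.
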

\begin{proof} Let $v_\lambda$ be the canonical generator of $D / Dt$ and let $m \geq 1$. Then
\[\begin{array}{lll} \partial \cdot x^mv_\lambda &=& \partial x \cdot x^{m-1} v_\lambda = (x^{m-1}\partial x + [\partial x, x^{m-1}]) \cdot v_\lambda = \\
&=& x^{m-1} (x \partial + 1 + (m-1)) v_\lambda = (\lambda + m)x^{m-1} v_\lambda.\end{array}\]
It follows that for any  $a \in K^\circ \langle \partial \rangle$ and any $m \geq 1$, we have
\[ \partial \cdot a x^m \equiv a (\lambda + m)x^{m-1} \mod Dt.\]
Suppose now that $a_n x^n + a_{n-1} x^{n-1} + \cdots + a_1 x + a_0 \equiv 0 \mod I$, for some $a_i \in K^\circ \langle \partial \rangle$ with $a_n \neq 0$ and $n \geq 0$ \emph{least possible}. Suppose for a contradiction that $n \geq 1$. Then 
\[ 0 = \partial \cdot 0 \equiv \partial \cdot \sum\limits_{m=0}^n a_m x^m \equiv \sum\limits_{m=1}^n a_m (\lambda + m) x^{m-1} + \partial a_0 \quad \mod I.\]
Since $\lambda \notin \Z$ and since $K^\circ \langle \partial \rangle$ is a domain, we see that $a_m (\lambda+m) \neq 0$, so this congruence contradicts the minimality of $n$. Therefore $n = 0$ and $I \cap K^\circ \langle \partial \rangle \neq \{0\}$ as claimed. Otherwise, by Corollary \ref{PolysInI}, we have the congruence $a_n \partial^n + a_{n-1} \partial^{n-1} + \cdots + a_1 \partial + a_0 \equiv 0 \mod I$ for some $a_i \in \cO(\sX)$ with $a_n \neq 0$. A similar calculation shows that
\begin{equation}\label{xdm} x \cdot a \partial^m \equiv a (\lambda - m + 1) \partial^{m-1} \quad \mod Dt \qmb{for all} m \geq 1, a \in \cO(\sX)\end{equation}
and using this congruence we conclude using a similar argument that in fact we can choose $n = 0$ and therefore $I \cap \cO(\sX) \neq \{0\}$.
\end{proof}

\begin{cor}\label{Simple} If $\lambda \in K^\circ \backslash \Z$, then the only $\pi_K$-closed left ideal $I$ of $D$ properly containing $D (x \partial - \lambda)$ is $D$ itself.
\end{cor}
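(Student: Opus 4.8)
The plan is to show that if $I$ is a $\pi_K$-closed left ideal of $D$ properly containing $Dt$ with $t = x\partial - \lambda$, then $I = D$; so suppose for a contradiction that $I \neq D$. By Proposition \ref{ImeetsSubrings} we have either $I \cap K^\circ\langle\partial\rangle \neq \{0\}$ or $I \cap \cO(\sX) \neq \{0\}$, so I would split into these two cases.

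First suppose $I \cap \cO(\sX) \neq \{0\}$, and pick a non-zero $f \in I \cap \cO(\sX)$. Since $\cO(\sX) = \cO(X)^\circ$ is a domain, $f \neq 0$ means $f$ is a regular element, and after inverting $\pi_K$ it becomes a unit in $\cO(X)$ unless it vanishes somewhere; but the point is that over $D_K$, the element $f$ need not be a unit. Instead I would argue as follows: $\ad(t)$ acts on $I$, and using the computation $\ad(x\partial)(g) = (x\partial)(g)$ for $g \in \cO(\sX)$ one sees that $I \cap \cO(\sX)$ is stable under the Euler operator $x\partial$, hence (as in Lemma \ref{EulerOps} / Proposition \ref{Ibar}) stable under all $E_n = \binom{\ad(x\partial)}{n}$. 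Expanding $f = \sum a_m x^m$ (Laurent expansion, as $x$ is invertible up to the poles at the $\alpha_i$ — actually better to work on the cover or use that $\cO(\sX)$ is generated by $x$ and the $\frac{1}{x-\alpha_i}$) and applying a suitable locally constant combination of the $E_n$ picks out a single monomial-type component; more robustly, since $I \cap \cO(\sX)$ is a non-zero $x\partial$-stable ideal-like subset, and $x\partial$ acts on $x^m$ by multiplication by $m$, I can extract a non-zero element on which $x\partial$ acts with a single eigenvalue, forcing a unit of $\cO(X)$ (a non-zero scalar multiple of a power of $x$ times a unit) to lie in $I$, whence $I = D$, contradiction. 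The cleanest route is: $I \cap \cO(\sX)$ is $\pi_K$-closed inside $\cO(\sX)$, non-zero, and $x\partial$-stable; passing mod $\pi_K$ and using that $\overline{x\partial}$ acts semisimply on $B = \cO(\sX \times_{K^\circ} k)$ with the monomials $x^m$ as eigenvectors, one gets that $\overline{I} \cap B$ contains a unit of $B$, hence $\overline{I} = \overline{D}$, hence $I = D$ by Lemma \ref{AppNak}(b) — contradicting $I \neq D$.

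Next suppose $I \cap K^\circ\langle\partial\rangle \neq \{0\}$; this is the dual case and I would handle it by the same mechanism, using the congruence \eqref{xdm} rewritten for the variable $\partial$: $x\partial$ acts on $\partial^m$ (modulo $Dt$, and hence compatibly on $D/I$) by multiplication by $-m + \text{const}$, so again $I \cap K^\circ\langle\partial\rangle$ is stable under the Euler operator $\ad(x\partial)$ and hence under all $E_n$, and the semisimple action of $\overline{x\partial}$ on $k\langle\partial\rangle = k[\partial]$ with eigenvectors $\partial^m$ lets me extract a non-zero scalar in $\overline{I}$, forcing $\overline{I} = \overline{D}$ and $I = D$ by Lemma \ref{AppNak}(b) — again a contradiction. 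In both cases the key leverage is that $I$ being $\pi_K$-closed lets us reduce mod $\pi_K$ without losing information (Lemma \ref{AppNak}(b)), and that $\ad(t) = \ad(x\partial)$ preserves $I$ and acts semisimply with integer eigenvalues on the relevant subrings.

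The main obstacle I anticipate is making the ``extract a single eigenvalue component'' step fully rigorous: $I \cap \cO(\sX)$ and $I \cap K^\circ\langle\partial\rangle$ are only $K^\circ$-submodules (one-sided, not two-sided ideals of those subrings in an obvious way), so I must be careful that the operators $E_n$ genuinely preserve these intersections — this follows because $E_n$ preserves both $I$ (Lemma \ref{EulerOps}, whose proof only used $t \in I$ and $\pi_K$-closedness) and the subring in question (from \eqref{BinAdEuler} and its $\partial$-analogue). Given that, the argument that a non-zero element of a finite-dimensional-eigenvalue-supported $x\partial$-stable space can be collapsed to a single eigenvector via a locally constant function of $x\partial$ is exactly the binomial-basis trick already used in the proof of Proposition \ref{Ibar}, so I would simply invoke that pattern. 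Once a unit of $B$ lands in $\overline{I}$, Lemma \ref{AppNak}(b) finishes immediately.
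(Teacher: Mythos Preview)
Your approach has a genuine gap. The crucial point you miss is that extracting a monomial $x^m$ or $\partial^m$ in $I$ is \emph{not} enough: neither $x$ nor $\partial$ is a unit in $D$ (indeed $0 \in X$ by construction, so $x$ vanishes there), so landing on $x^m$ or $\partial^m$ for some $m > 0$ does not yield $1 \in I$. Your sentence ``forcing a unit of $\cO(X)$ (a non-zero scalar multiple of a power of $x$ times a unit) to lie in $I$'' is simply wrong for this reason. The paper, after extracting $\partial^n \in I$ (respectively $x^n \in I$), uses the hypothesis $\lambda \notin \Z$ a \emph{second} time, via the relation $x \cdot \partial^m \equiv (\lambda - m + 1)\partial^{m-1} \bmod Dt$ (respectively its dual), together with $\pi_K$-closedness, to descend inductively from $\partial^n \in I$ to $1 \in I$. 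Your argument never invokes $\lambda \notin \Z$ after Proposition \ref{ImeetsSubrings}; as a sanity check, note that the ideal $Dx$ is proper, $\pi_K$-closed, and meets $\cO(\sX)$ nontrivially, so any argument from ``$I \cap \cO(\sX) \neq 0$'' to ``$I = D$'' that does not use $t \in I$ and $\lambda \notin \Z$ must fail.

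There is also a technical obstruction to your plan of working modulo $\pi_K$: over the residue field $k$ of characteristic $p$, the Euler operator $\overline{x\partial}$ does \emph{not} have one-dimensional eigenspaces on $k[x]$ (for instance $1$ and $x^p$ both lie in the $0$-eigenspace), so the binomial-basis trick cannot isolate a single monomial there. The paper circumvents this by first passing from $I \cap K^\circ\langle\partial\rangle$ to $I \cap K^\circ[\partial]$ (using that non-zero ideals of the Tate algebra $K\langle\partial\rangle$, and likewise of $\cO(X)$, are generated by polynomials), and then running the Euler-operator extraction in characteristic $0$ on a \emph{finite} sum, dividing out the resulting non-zero scalar via $\pi_K$-closedness.
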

\begin{proof} Every non-zero ideal in the Tate algebra $K \langle \partial \rangle$ is generated by a non-zero element of $K[\partial]$. So, if $I \cap K^\circ \langle \partial \rangle$ is non-zero, then $I_K \cap K[\partial]$ is non-zero. Clearing denominators, we may even assume that $I \cap K^\circ[\partial]$ is non-zero. By applying an appropriate linear combination of the Euler operators $E_n$ as in the proof of Proposition \ref{Ibar} to a non-zero element of $I \cap K^\circ[\partial]$, and using the fact that $I$ is $\pi_K$-closed, we see that $\partial^n \in I$ for some $n \geq 0$. Choosing $n$ minimal possible and using the relation $(\ref{xdm})$ together with the fact that $\lambda \notin \Z$, we see that $n = 0$ and hence $1 \in I$.

Otherwise, by Proposition \ref{ImeetsSubrings}, we know that $I \cap \cO(\sX)$ is non-zero. Again, every ideal in the affinoid algebra $\cO(X) = \cO(\sX)_K$ is generated by a non-zero element of $K[x]$, and we similarly conclude that $1 \in I$.
\end{proof}

\begin{proof}[Proof of Theorem \ref{SimpleDmModule}] Corollary \ref{KummerIsocDesc}(a) tells us that $\sM^{(m)}(\lambda)(\sX)$ is zero if $\lambda \notin \bigcup\limits_{i=0}^{p^m-1} i + p^m K^\circ$, so we may assume that $\lambda$ \emph{does} lie in this union. Then we can apply  Corollary \ref{KummerIsocDesc}(b) to see that $F^\natural(\sM^{(m)}(\lambda))$ is isomorphic to $\frac{\h{\sD}^{(0)}_{\sX',\bQ}}{\h{\sD}^{(0)}_{\sX',\bQ} (x' \partial' - \frac{\lambda - i}{p^m})}$ for some uniquely determined $i \in \{0,1,\ldots, p^m - 1\}$. 

Since $F^\natural$ is an equivalence of categories by \cite[Th\'eor\`eme 2.3.6]{Berth3}, it is enough to prove that $F^\natural(\sM^{(m)}(\lambda))(\sX')$ is a simple $\h{\sD}^{(0)}_{\sX',\bQ}(\sX')$-module.  Note that $\frac{\lambda - i}{p^m} \notin \Z$ because $\lambda \notin \Z$ by assumption, so we have reduced the problem to the case where $m = 0$. But now $\sM^{(0)}(\lambda)(\sX) = D_K / D_K (x\partial - \lambda)$, which is a simple $D_K$-module by Corollary \ref{Simple}.
\end{proof}

\section{Group actions and equivariance}\label{CoadDrin}
\subsection{The crossed product and the secret action}\label{CPandSA}
We recall some constructions from \cite[\S 2.1]{EqDCap}. Let $L$ be a commutative ring, let $S$ be an $L$-algebra and let $G$ be a group acting on $S$ by $L$-algebra automorphisms from the left. We denote the result of the group action of $g \in G$ on $s \in S$ by $g\cdot s$. The \emph{skew-group ring} $S \rtimes G$ is a free left $S$-module with basis $G$, and its multiplication is determined by 
\begin{equation}\label{SkewGpRngMult} (sg) \cdot (s'g') = (s (g \cdot s') )(gg') \qmb{for all} s,s' \in S, \quad g,g' \in G.\end{equation}

The skew-group ring satisfies the following universal property. 

\begin{lem} \label{SkewGpRngUniv} Suppose that $U$ is an $L$-algebra, $\sigma\colon S\to U$ is an $L$-algebra homomorphisms and $\rho\colon G\to U^\times$ is a group homomorphism such that \[\rho(g)\sigma(s)\rho(g)^{-1}=\sigma(g\cdot s) \mbox{ for all }g\in G\mbox{ and }s\in S.\] There is a unique $L$-algebra homomorphism $\sigma\rtimes \rho\colon S\rtimes G\to U$ such that \[(\sigma\rtimes\rho)(se)=\sigma(s)\mbox{ for all }s\in S\mbox{ and  }(\sigma\rtimes \rho)(1g)=\rho(g)\mbox{ for all }g\in G.\] 
\end{lem}
	
\begin{proof} It is straightforward to verify that $\sigma\rtimes \rho$ must be the $L$-linear extension of the map sending $sg$ to $\sigma(s)\rho(g)$ and that by (\ref{SkewGpRngMult}) this is indeed an $L$-algebra homomorphism.
\end{proof}
	
\begin{defn}\label{Triv}  Let $S \rtimes G$ be a skew-group ring. A \emph{trivialisation} is a group homomorphism $\beta : G \to S^\times$ such that for all $g \in G$, the conjugation action of $\beta(g) \in S^\times$ on $S$ coincides with the action of $g \in G$ on $S$.
\end{defn}

\begin{defn}\label{DefnTriv} Let $N$ be a normal subgroup of $G$, and suppose that $\beta : N \to S^\times$ is a trivialisation of the sub-skew-group ring $S \rtimes N$.
\be \item We define \[S \rtimes_N^\beta G := \frac{ S \rtimes G }{ (S \rtimes G) \cdot \{\beta(n)^{-1} n - 1 : n \in N\} }.\]
\item We say that $\beta$ is \emph{$G$-equivariant} if 
\[\beta({}^gn) = g \cdot \beta(n) \qmb{for all} g \in G \qmb{and} n \in N.\]
\ee Here ${}^gn := gng^{-1}$ denotes the conjugation action of $G$ on $N$.\end{defn}
Note that $S \rtimes_N^\beta G$ is \emph{a priori} only a left $S \rtimes G$-module. 

\begin{lem}\label{RingSGN} Suppose that $N$ is a normal subgroup of $G$, and that $\beta : N \to S^\times$ is a $G$-equivariant trivialisation. Then 
\be \item $S \rtimes_N^\beta G$ is an associative ring.
\item $S \rtimes_N^\beta G$ is isomorphic to a crossed product $S \ast (G/N)$.
\ee \end{lem}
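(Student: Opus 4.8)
The plan is to build the ring structure on $S \rtimes_N^\beta G$ directly, and then exhibit the crossed-product description.

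For part (a), first I would note that, as a left $S$-module, $S \rtimes_N^\beta G$ is free on any set of coset representatives for $N$ in $G$: indeed the relations $\beta(n)^{-1}n - 1$ (for $n \in N$) allow one to rewrite $n = \beta(n) \in S^\times$ inside the quotient, so every element $sg$ can be brought into the form $s' \, t$ for a chosen representative $t$ of $gN$, and the freeness of $S\rtimes G$ over $S$ on $G$ together with $\beta(n) \in S^\times$ shows this expression is unique. The main point is then to check that the left ideal $J := (S\rtimes G)\cdot\{\beta(n)^{-1}n - 1 : n \in N\}$ is in fact a two-sided ideal, so that the quotient inherits a ring structure. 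For this it is enough to verify that for each generator $\beta(n)^{-1}n - 1$ and each $sg \in S\rtimes G$, the product $(\beta(n)^{-1}n - 1)\cdot sg$ lies back in $J$. Writing this out using (\ref{SkewGpRngMult}): $(\beta(n)^{-1}n)(sg) = \beta(n)^{-1}(n\cdot s)(ng) = \beta(n)^{-1}(n\cdot s)\,(ng n^{-1} n)g = \beta(n)^{-1}(n\cdot s)({}^n\!g)^{-1}{}^n\!g\cdots$ — more cleanly, I would compute
\[ (\beta(n)^{-1}n - 1)\cdot sg = \beta(n)^{-1}(n\cdot s)\,ng - sg. \]
Now $ng = ({}^g\!n^{-1})^{-1}\, g \cdot n = $ hmm; better: write $ng = g\,g^{-1}ng = g\,({}^{g^{-1}}\!n)$, so $ng \equiv g\,\beta({}^{g^{-1}}\!n) \pmod J$ (using the relation for the element ${}^{g^{-1}}\!n \in N$, which is legitimate since $N$ is normal). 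Using $G$-equivariance of $\beta$, $\beta({}^{g^{-1}}\!n) = g^{-1}\cdot\beta(n)$, and since $\beta(n)$ acts on $S$ as $n$ does, one gets $n\cdot s = \beta(n)\, s\, \beta(n)^{-1}$. Substituting and simplifying, the two terms cancel modulo $J$, so $(\beta(n)^{-1}n-1)\cdot sg \in J$. Hence $J$ is two-sided and $S\rtimes_N^\beta G$ is an associative ring. I expect this cancellation computation to be the main (though still routine) obstacle: one must be careful with the order of the conjugations and repeatedly use all three hypotheses — normality of $N$, that $\beta$ is a homomorphism, and $G$-equivariance together with the defining property of a trivialisation.

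For part (b), I would recall the definition of a crossed product $S \ast (G/N)$: it is a free left $S$-module on a set of symbols $\{\overline{u}_{\bar g} : \bar g \in G/N\}$ indexed by $G/N$, with multiplication $\overline{u}_{\bar g}\,\overline{u}_{\bar h} = \tau(\bar g,\bar h)\,\overline{u}_{\bar g\bar h}$ for a $2$-cocycle $\tau : G/N \times G/N \to S^\times$, and $\overline{u}_{\bar g}\, s = (\bar g \cdot s)\,\overline{u}_{\bar g}$ for a suitable action of $G/N$ on $S$ (well-defined because the $N$-action on $S$ is inner via $\beta$). Concretely: fix a set-theoretic section $\bar g \mapsto \gamma(\bar g) \in G$ of the projection $G \to G/N$, let $\overline{u}_{\bar g}$ be the image of $\gamma(\bar g)$ in $S\rtimes_N^\beta G$, define the $G/N$-action on $S$ by $\bar g \cdot s := \gamma(\bar g)\cdot s$ conjugated appropriately (i.e. the image of $g\cdot s$ is independent of the choice of $g$ in the coset $\bar g$, precisely because changing $g$ by $n \in N$ changes $g\cdot s$ by the inner automorphism $\beta(n)$, which is trivial on the image — this needs checking against the relation defining $S\rtimes_N^\beta G$), and define $\tau(\bar g,\bar h) := \beta\big(\gamma(\bar g)\gamma(\bar h)\gamma(\bar g\bar h)^{-1}\big) \in S^\times$, which makes sense since $\gamma(\bar g)\gamma(\bar h)\gamma(\bar g\bar h)^{-1} \in N$. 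Then I would check the two defining relations of a crossed product hold in $S\rtimes_N^\beta G$ (both follow from (\ref{SkewGpRngMult}) and the relation $n \equiv \beta(n)$), and that the resulting $S$-linear map $S\ast(G/N) \to S\rtimes_N^\beta G$ sending $\overline{u}_{\bar g}$ to the image of $\gamma(\bar g)$ is a bijection — surjectivity is clear from the presentation, and injectivity follows from the freeness observation at the start of part (a). The $2$-cocycle identity for $\tau$ and associativity of the $G/N$-action are then automatic consequences of associativity of $S\rtimes_N^\beta G$ established in part (a), so no independent verification is needed.
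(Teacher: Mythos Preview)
Your approach is correct and complete; the paper itself gives no proof beyond a citation to \cite[Lemma 2.2.4]{EqDCap}, so there is nothing to compare against except your own argument. Your computation in part (a) is the right one: cleaning it up, one finds directly that
\[
(\beta(n)^{-1}n - 1)\cdot sg \;=\; sg\cdot(\beta(m)^{-1}m - 1) \qquad\text{where } m = {}^{g^{-1}}n \in N,
\]
using in turn the trivialisation identity $n\cdot s = \beta(n)s\beta(n)^{-1}$ and the $G$-equivariance $\beta(n) = g\cdot\beta(m)$. This shows the left ideal $J$ is two-sided, and your freeness argument for the quotient on a transversal of $G/N$ is also fine (one can phrase it as observing that the ring map $S\rtimes N \to S$, $sn\mapsto s\beta(n)$, has kernel exactly the left ideal generated by the $\beta(n)^{-1}n-1$). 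Your crossed-product construction in part (b) is the standard one, and you are right that the cocycle and outer-action identities come for free from the associativity established in (a).
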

\begin{proof} This is \cite[Lemma 2.2.4]{EqDCap}.\end{proof}
We will now place ourselves in the following situation.

\begin{hypn}\label{MorphCP}\hsp
\be \item $G$ is a group, $S$ and $S'$ are $L$-algebras,
\item $G$ acts on $S$ and $S'$ by $L$-algebra automorphisms,
\item $f : S' \to S$ is a $G$-equivariant $L$-algebra homomorphism,
\item $H' \leq H$ are two normal subgroups of $G$,
\item $\beta: H \to S^\times$ is a $G$-equivariant trivialisation of the $G$-action on $S$,
\item $\beta': H' \to S'^\times$ is a $G$-equivariant trivialisation of the $G$-action on $S'$,
\item $f(\beta'(h')) = \beta(h')$ for all $h' \in H'$.
\ee\end{hypn}
It follows from \cite[Lemma 2.2.7]{EqDCap} that the ring homomorphism $f : S' \to S$ extends to a ring homomorphism $f \rtimes 1 : S' \rtimes_{H'}^{\beta'} G \to S \rtimes_H^\beta G$.  To aid legibility, we will now drop the trivialisations $\beta$ and $\beta'$ from the notation.

\begin{lem}\label{SecretAction} Let $M'$ be an $S' \rtimes_{H'} G$-module. Then there is an action of $H$ on $M := S \otimes_{S'} M'$ by left $S$-linear automorphisms, given by
\[h \star (s \otimes m') = s \hsp \beta(h)^{-1} \otimes h \cdot m' \qmb{for all} h \in H, s \in S, m' \in M'.\]
The restriction of this action to $H'$ is trivial.\end{lem}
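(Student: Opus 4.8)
The plan is to first check that the formula $h \star (s \otimes m') = s\,\beta(h)^{-1} \otimes h \cdot m'$ actually descends to a well-defined map on the tensor product $M = S \otimes_{S'} M'$, and then to verify that $h \mapsto (h\star -)$ is a group homomorphism into the group of left $S$-linear automorphisms of $M$. Well-definedness is the only subtle point: the tensor product is over $S'$, so one must show that $h \star (s f(s') \otimes m') = h \star (s \otimes s' m')$ for all $s \in S$, $s' \in S'$, $m' \in M'$, where $s' m'$ denotes the $S'\rtimes_{H'}G$-action restricted to $S'$. Expanding the left-hand side gives $s f(s') \beta(h)^{-1} \otimes h\cdot m'$. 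Since $\beta$ is a $G$-equivariant trivialisation, conjugation by $\beta(h)$ implements the action of $h$ on $S$, and $f$ is $G$-equivariant, so $f(s')\beta(h)^{-1} = \beta(h)^{-1}\beta(h) f(s') \beta(h)^{-1} = \beta(h)^{-1} f(h^{-1}\cdot s')$... wait, I need to be careful about the direction; the correct manipulation is $f(s')\beta(h)^{-1} = \beta(h)^{-1}\,(\beta(h) f(s') \beta(h)^{-1}) = \beta(h)^{-1} f(h \cdot s')$. Hmm, actually $\beta(h) f(s')\beta(h)^{-1} = h\cdot f(s') = f(h\cdot s')$, so $f(s')\beta(h)^{-1} = \beta(h)^{-1} f(h\cdot s')$. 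Therefore the left-hand side becomes $s\,\beta(h)^{-1} f(h\cdot s') \otimes h\cdot m' = s\,\beta(h)^{-1} \otimes (h\cdot s')(h\cdot m') = s\,\beta(h)^{-1}\otimes h\cdot(s' m')$, which is exactly $h\star(s\otimes s' m')$ since in $S'\rtimes_{H'}G$ we have $h s' h^{-1} = h\cdot s'$. This is the main (and only real) obstacle.

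Next I would verify the homomorphism property: for $h_1, h_2 \in H$,
\[ h_1 \star (h_2 \star (s\otimes m')) = h_1 \star (s\,\beta(h_2)^{-1}\otimes h_2\cdot m') = s\,\beta(h_2)^{-1}\beta(h_1)^{-1}\otimes h_1\cdot(h_2\cdot m'). \]
Since $\beta$ is a group homomorphism, $\beta(h_2)^{-1}\beta(h_1)^{-1} = (\beta(h_1)\beta(h_2))^{-1} = \beta(h_1 h_2)^{-1}$, and $h_1\cdot(h_2\cdot m') = (h_1 h_2)\cdot m'$ because $M'$ is an $S'\rtimes_{H'}G$-module, so the $G$-action on $M'$ is a genuine action; hence $h_1\star(h_2\star(s\otimes m')) = (h_1 h_2)\star(s\otimes m')$. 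That each $h\star -$ is left $S$-linear is immediate from the formula, and since $\beta(h)^{-1}$ is a unit the map $h\star -$ is invertible with inverse $h^{-1}\star -$.

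Finally, for the last sentence, I would observe that if $h' \in H'$ then $\beta'(h') \in S'^\times$ trivialises the $H'$-action on $M'$ in the sense that $h'\cdot m' = \beta'(h') m'$ for all $m' \in M'$ — this is precisely what it means for $M'$ to be a module over $S'\rtimes_{H'}^{\beta'} G$, since the defining relations of that ring force $h' = \beta'(h')$ as operators on $M'$. Combined with Hypothesis \ref{MorphCP}(g), namely $f(\beta'(h')) = \beta(h')$, we get
\[ h'\star(s\otimes m') = s\,\beta(h')^{-1}\otimes h'\cdot m' = s\, f(\beta'(h'))^{-1}\otimes \beta'(h') m' = s\otimes f(\beta'(h'))^{-1}\beta'(h') m' = s\otimes m', \]
where the third equality moves $f(\beta'(h'))^{-1} \in f(S'^\times)$ across the tensor product over $S'$. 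Hence the restriction of $\star$ to $H'$ is trivial, completing the proof.
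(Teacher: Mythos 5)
Your proof is correct and follows essentially the same route as the paper's: well-definedness via the identity $\beta(h)f(s')\beta(h)^{-1}=f(h\cdot s')$ coming from $G$-equivariance of $f$ and the trivialisation property of $\beta$, and triviality on $H'$ via $h'\cdot m'=\beta'(h')m'$ combined with Hypothesis \ref{MorphCP}(g), with the routine verification of the action axioms (which the paper leaves implicit) spelled out. The only slip is notational: after moving $f(\beta'(h'))^{-1}$ across the tensor over $S'$, the second slot should read $\beta'(h')^{-1}\beta'(h')m'$ (the element $\beta'(h')^{-1}\in S'$, not $f(\beta'(h'))^{-1}\in S$), but this does not affect the argument.
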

\begin{proof} We will first check that the formula makes sense, that is $h \star (s s' \otimes m') = h \star (s \otimes s' m')$ holds for all $s' \in S'$, $s \in S$, $h \in H$ and $m' \in M'$. We have
\[\begin{array}{lll} (ss') \beta(h)^{-1} \otimes h \cdot m' &=& s \hsp \beta(h)^{-1} \hsp \hsp \beta(h) s' \beta(h)^{-1} \otimes h\cdot m' = \\
&=& s  \hsp\beta(h)^{-1}  (h \cdot s') \otimes h \cdot m' = \\
&=& s \hsp \beta(h)^{-1} \otimes (h \cdot s') (h\cdot m') = \\
&=& s \hsp \beta(h)^{-1} \otimes h \cdot (s' m') \end{array}\]
as required. It is straightforward to verify that the given formula defines a left $H$-action on $M$ by left $S$-linear automorphisms. Because $M'$ is a $S' \rtimes_{H'}G$-module, by Definition \ref{DefnTriv}(a) we may view it as an $S' \rtimes G$-module where $h' \cdot m' = \beta'(h') \cdot m'$ for all $h' \in H'$ and $m' \in M'$.  So if $h'$ lies in $H'$ then by Hypothesis \ref{MorphCP}(g),
\[s \beta(h')^{-1} \otimes h' \cdot m' = s f(\beta'(h')^{-1}) \otimes h'\cdot m' = s \otimes \beta'(h')^{-1} \cdot (h' \cdot m') = s \otimes m'\]
and hence $h' \star (s \otimes m') = s \otimes m'$ for all $h' \in H'$, $s \in S$ and $m' \in M'$ as claimed.\end{proof}

\begin{defn} Let $M'$ be an $S' \rtimes_{H'}G$-module. We call the $H$-action on the $S$-module $S \otimes_{S'} M$ defined in Lemma \ref{SecretAction} the \emph{secret $H$-action}. 
\end{defn}

It follows from Lemma \ref{SecretAction} that via the secret $H$-action, $S \otimes_{S'}M'$ is naturally a left $S[H/H']$-module. We will use the notation
\[ (S \otimes_{S'}M')_H := \frac{S \otimes_{S'}M'}{\{h \star v - v : v \in S \otimes_{S'}M', h \in H\}}.\]
to denote the module of left $H$-coinvariants of $S \otimes_{S'}M'$ under the secret $H$-action. 

\begin{thm}\label{IndModCoinv} For every $S' \otimes_{H'}G$-module $M'$, there is a natural isomorphism
\[(S \rtimes_H G) \underset{(S'\rtimes_{H'}G)}{\otimes}{} M' \quad \cong \quad (S \otimes_{S'} M')_H\]
of left $S$-modules.
\end{thm}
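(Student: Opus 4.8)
The plan is to produce the isomorphism directly on the level of generators and relations. First I would observe that there is an obvious left $S$-linear map
\[ \Psi : S \underset{S'}{\otimes} M' \longrightarrow (S \rtimes_H G) \underset{S' \rtimes_{H'} G}{\otimes} M', \qquad s \otimes m' \mapsto (s \cdot e) \otimes m',\]
using the ring homomorphism $f \rtimes 1 : S' \rtimes_{H'} G \to S \rtimes_H G$ of \cite[Lemma 2.2.7]{EqDCap} to make sense of the target as an $S' \rtimes_{H'} G$-module via $f$. I would check that $\Psi$ is well-defined by the same computation as in Lemma \ref{SecretAction}, and then that $\Psi$ kills the relations $h \star v - v$ for $h \in H$: indeed, writing $v = s \otimes m'$,
\[ \Psi(h \star v) = (s \hsp \beta(h)^{-1} e) \otimes h \cdot m' = (s \hsp \beta(h)^{-1} h) \otimes m' = (se) \otimes m' = \Psi(v),\]
where the middle equality uses the defining relation $\beta(h)^{-1} h \equiv 1$ in $S \rtimes_H G$. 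Hence $\Psi$ descends to a left $S$-linear map $\overline{\Psi} : (S \underset{S'}{\otimes} M')_H \to (S \rtimes_H G) \underset{S' \rtimes_{H'} G}{\otimes} M'$.

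Next I would construct an inverse. Since $S \rtimes_H G$ is a crossed product $S \ast (G/H)$ by Lemma \ref{RingSGN}(b), it is free as a left $S$-module on any set of coset representatives for $H$ in $G$; combined with $H' \leq H$ this lets me write down a left $S$-linear map $\Phi : (S \rtimes_H G) \underset{S' \rtimes_{H'} G}{\otimes} M' \to (S \underset{S'}{\otimes} M')_H$ on elements $(s g) \otimes m'$ by the rule $(sg) \otimes m' \mapsto \overline{s \otimes g \cdot m'}$. The content here is that this is well-defined over $S' \rtimes_{H'} G$: for $s' \in S'$ I need $(s g f(s')) \otimes m' \mapsto (s g) \otimes s' m'$, which follows from $g f(s') = (g \cdot f(s')) g = f(g\cdot s') g$ together with $g \cdot s' \in S'$ and $S$-linearity over $S'$ in $S \otimes_{S'} M'$; for $h' \in H'$ I need the image to be insensitive to replacing $g$ by $g h'$ and simultaneously $m'$ by $\beta'(h')^{-1} m'$, i.e. that $\overline{s \otimes g h' \cdot (\beta'(h')^{-1} m')} = \overline{s \otimes g \cdot m'}$; unwinding, $g h' \cdot (\beta'(h')^{-1} m') = g \cdot (h' \cdot \beta'(h')^{-1} m') = g \cdot (\beta'(h') \beta'(h')^{-1} m') = g \cdot m'$ since the $S' \rtimes_{H'} G$-module structure has $h'$ act as $\beta'(h')$ on $M'$. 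Finally the relation $\beta(h)^{-1} h \equiv 1$ in $S \rtimes_H G$ must be sent to $0$ in the coinvariants: $(s \beta(h)^{-1} h) \otimes m' \mapsto \overline{s \beta(h)^{-1} \otimes h \cdot m'} = \overline{h \star (s \otimes m')} = \overline{s \otimes m'}$, which is exactly $\Phi$ applied to $(se) \otimes m'$. So $\Phi$ is well-defined.

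Then I would verify $\Phi$ and $\overline{\Psi}$ are mutually inverse, which is immediate on the generating elements: $\Phi(\overline{\Psi}(\overline{s \otimes m'})) = \Phi((se)\otimes m') = \overline{s \otimes m'}$, and conversely $\overline{\Psi}(\Phi((sg) \otimes m')) = \overline{\Psi}(\overline{s \otimes g \cdot m'}) = (se) \otimes (g \cdot m') = (sg) \otimes m'$ in the target (the last step because $sg = (se)(1g)$ and $(1g) \otimes m' = 1 \otimes g \cdot m'$ over $S' \rtimes_{H'} G$, using $g \in G$ acting on $M'$ through the $S' \rtimes_{H'} G$-structure). Naturality in $M'$ is clear from the formulas. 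The main obstacle, and the only place real care is needed, is the well-definedness of $\Phi$ over the ring $S' \rtimes_{H'} G$ --- one has to keep straight the three layers of structure ($f$, the two trivialisations $\beta, \beta'$, and the compatibility $f \circ \beta' = \beta|_{H'}$ from Hypothesis \ref{MorphCP}(g)) and confirm they interact exactly as needed; everything else is bookkeeping with the crossed-product multiplication \eqref{SkewGpRngMult}.
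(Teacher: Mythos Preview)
Your proof is correct and takes essentially the same approach as the paper: your $\Phi$ and $\overline{\Psi}$ are exactly the paper's maps $\theta$ and $\psi$, just constructed in the opposite order. The paper organises the well-definedness check for $\Phi$ ($=\theta$) slightly differently --- first descending from $S \rtimes G$ to $S \rtimes_H G$, then checking balance over $S' \rtimes_{H'} G$ on a general element $s'\gamma(g')$ rather than separately on $S'$ and on $H'$ --- but the computations are the same.
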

\begin{proof} Let $[v]$ denote the image of $v \in S \otimes_{S'}M'$ in $(S \otimes_{S'} M')_H$. Consider the map $\alpha : (S \rtimes G) \times M' \longrightarrow (S \otimes_{S'} M')_H$ given by $(s g, m') \mapsto [s \otimes g \cdot m']$ for $s \in S, g \in G$ and $m' \in M'$. Let $s \in S, g \in G, h \in H$ and $m' \in M'$. Since 
\[\begin{array}{lll} \alpha(sg \beta(h), m') &=& \alpha(s \beta({}^gh)g, m') = [s \beta({}^gh) \otimes g\cdot m'] = \\
&=& [s \otimes {}^gh \cdot (g\cdot m')] = [s \otimes (gh) \cdot m'] = \alpha( s g h, m' )\end{array}\]
we see that $\alpha$ is zero on $(S \rtimes G)\{\beta(h) - h : h \in H\} \hsp \times \hsp M'$ and therefore descends to a map $\overline{\alpha} : (S \rtimes_HG) \times M' \longrightarrow (S \otimes_{S'}M')_H.$ Recalling from \cite[\S 2.2(3)]{EqDCap} that $s \gamma(g)$ denotes the image of $s g \in S \rtimes G$ in the crossed product $S \rtimes_{H} G$, we see that $\overline{\alpha}$ is given by $\overline{\alpha}(s \gamma(g), m') = [s \otimes g \cdot m']$ for all $s \in S, g \in G$ and $m' \in M'$. 

Let $s \gamma(g) \in S \rtimes_HG$, $s'\gamma(g') \in S' \rtimes_{H'}G$ and $m' \in M'$. Then we compute
\[\begin{array}{lll} \overline{\alpha}(s \gamma(g) \cdot s'\gamma(g') , m') &=&\overline{\alpha}(s f(g\cdot s') \gamma(gg'), m') = [s f(g\cdot s') \otimes (gg' \cdot m')], \\
\overline{\alpha}(s \gamma(g), s'\gamma(g') \cdot m') &=& [s \otimes g \cdot (s'\cdot (g'\cdot m'))] = [s \otimes ((g \cdot s')g) \cdot (g'\cdot m')] = \\
&=& [s \otimes (g \cdot s') \cdot (gg' \cdot m')] = [s f(g \cdot s') \otimes (gg' \cdot m')].\end{array}\]
Hence $\overline{\alpha}$ is $S' \rtimes_{H'} G$-balanced, and therefore descends to a well-defined map
\begin{equation}\label{Theta} \theta : (S \rtimes_H G) \underset{(S'\rtimes_{H'}G)}{\otimes}{} M' \longrightarrow (S \otimes_{S'} M')_H\end{equation}
given by $\theta( s \gamma(g) \otimes m') = [s \otimes g\cdot m']$ for all $s \in S, g\in G$ and $m' \in M'$. It is clear that $\theta$ is $S$-linear. On the other hand, there is evidently a well-defined $S$-linear map
\[\varphi : S \otimes_{S'} M' \longrightarrow (S \rtimes_H G) \underset{(S'\rtimes_{H'}G)}{\otimes}{} M'\]
given by $\varphi( s \otimes m' ) = s \otimes m'$ for all $s \in S$ and $m' \in M'$. Since 
\[\begin{array}{lll} \varphi( h \star (s \otimes m') ) &=& \varphi(s \beta(h)^{-1} \otimes h \cdot m') = s \beta(h)^{-1} \otimes h \cdot m' = \\
&=&s \gamma(h)^{-1} \otimes h \cdot m' = s \otimes h^{-1} \cdot (h \cdot m') = s \otimes m', \end{array}\]
we see that $\varphi$ factors through $(S \otimes_{S'} M')_H$ and induces an $S$-linear map 
\[\psi : (S \otimes_{S'} M')_H \longrightarrow (S \rtimes_H G) \underset{(S'\rtimes_{H'}G)}{\otimes}{} M'\]
given by $\psi([s \otimes m']) = s \otimes m'$ for all $s \in S$ and $m' \in M'$. It is now straightforward to check that $\theta$ and $\psi$ are mutually inverse bijections.
\end{proof}

Now let $M'$ be an $S' \rtimes_{H'} G$-module, let $M$ be an $S \rtimes_H G$-module and suppose we're given an $S' \rtimes_{H'} G$-linear map 
\[\tau : M' \to M,\]
where we regard $M$ as an $S' \rtimes_{H'}G$-module via restriction along the ring homomorphism $f \rtimes 1 : S' \rtimes_{H'}^{\beta'} G \to S \rtimes_H^\beta G$ coming from \cite[Lemma 2.2.7]{EqDCap}. Our next goal is to establish a sufficient criterion for the induced $S \rtimes_H G$-linear map
\begin{equation}\label{TauTilde} \widetilde{\tau} : (S \rtimes_H G) \underset{(S'\rtimes_{H'}G)}{\otimes}{} M' \longrightarrow M\end{equation}
given by $\widetilde{\tau}(v \otimes m') = v \cdot \tau(m')$, where $v \in S \rtimes_H G$ and $m' \in M'$, to be injective.

\begin{lem}\label{EquivariantStar} Let $M'$ be an $S' \rtimes_{H'}G$-module. 
\be \item There is an $S$-\emph{semi}linear action of $G$ on $S \otimes_{S'} M'$ given by
\[g \cdot (s \otimes m') = (g\cdot s) \otimes (g \cdot m')\qmb{for all} g \in G, s \in S, m' \in M'.\]
\item For every $g \in G, h\in H$ and $v \in S \otimes_{S'}M'$ we have
\[ g \cdot (h \star ( g^{-1} \cdot v) ) = ({}^g h) \star v .\]
\ee\end{lem}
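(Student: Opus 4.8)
The plan is to verify both parts by direct computation on elementary tensors, the only genuine point requiring care being the well-definedness of the $G$-action in part (a).

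For part (a), I would first check that the proposed formula $g\cdot(s\otimes m') = (g\cdot s)\otimes(g\cdot m')$ respects the defining relation $s f(s')\otimes m' = s\otimes s'm'$ of the tensor product $S\otimes_{S'}M'$, for $s'\in S'$. Applying the formula to the left-hand side of this relation yields $\bigl((g\cdot s)\, f(g\cdot s')\bigr)\otimes(g\cdot m')$, where I have used the $G$-equivariance of $f$ (Hypothesis \ref{MorphCP}(c)) to write $g\cdot f(s') = f(g\cdot s')$; by the tensor relation this equals $(g\cdot s)\otimes\bigl((g\cdot s')(g\cdot m')\bigr)$, which is the formula applied to the right-hand side. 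Hence for each $g$ the formula descends to a well-defined additive endomorphism of $S\otimes_{S'}M'$. The group-action axioms $1\cdot v = v$ and $(gh)\cdot v = g\cdot(h\cdot v)$ then follow at once on elementary tensors from the fact that $G$ acts on both $S$ and $M'$ by group automorphisms (in particular each $g\cdot(-)$ is then invertible, with inverse $g^{-1}\cdot(-)$), and $S$-semilinearity $g\cdot(tv) = (g\cdot t)(g\cdot v)$ is checked the same way using $g\cdot(ts) = (g\cdot t)(g\cdot s)$.

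For part (b), since both sides of the asserted identity are additive in $v$ it suffices to take $v = s\otimes m'$ with $s\in S$ and $m'\in M'$. Unwinding the definition of $g^{-1}\cdot(-)$ from part (a) and of $\star$ from Lemma \ref{SecretAction}, one computes
\[g\cdot\bigl(h\star(g^{-1}\cdot(s\otimes m'))\bigr) = g\cdot\bigl((g^{-1}\cdot s)\beta(h)^{-1}\otimes h\cdot(g^{-1}\cdot m')\bigr) = \bigl(s\,(g\cdot\beta(h))^{-1}\bigr)\otimes\bigl((ghg^{-1})\cdot m'\bigr),\]
using $g\cdot(g^{-1}\cdot s) = s$ and that $G$ acts on $M'$ by a group action. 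Now invoking the $G$-equivariance of the trivialisation $\beta$ from Hypothesis \ref{MorphCP}(e), i.e.\ $g\cdot\beta(h) = \beta({}^gh)$, the right-hand side becomes $s\,\beta({}^gh)^{-1}\otimes({}^gh)\cdot m' = ({}^gh)\star(s\otimes m') = ({}^gh)\star v$, as required.

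I do not expect a real obstacle here: this is a bookkeeping lemma whose entire content is tracking which equivariance hypothesis ($f$ for part (a), $\beta$ for part (b)) legitimises each step. The one place to stay alert is that the $G$-action of part (a) is only $S$-\emph{semi}linear and does not commute with left multiplication by $S$, so in part (b) one must apply $g\cdot(-)$ to the whole expression before extracting the scalar $\beta(h)^{-1}$, rather than the other way round.
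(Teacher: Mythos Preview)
Your proposal is correct and follows essentially the same argument as the paper: for (a) you verify well-definedness on the tensor relation using $G$-equivariance of $f$, and for (b) you reduce to elementary tensors and compute directly, invoking the $G$-equivariance of $\beta$ at the key step. The paper's proof differs only cosmetically, writing $\beta(h)^{-1}$ as $\beta(h^{-1})$ before applying equivariance.
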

\begin{proof}  (a) We check that the formula is well-defined. Let $g \in G$, $s \in S$, $s' \in S'$ and $m' \in M'$. Because $f : S' \to S$ is $G$-equivariant, we have
\[ \begin{array}{lll} g \cdot (sf(s') \otimes m') &=& (g \cdot s f(s')) \otimes (g \cdot m') = (g\cdot s)(g \cdot f(s')) \otimes (g \cdot m') = \\
&=& (g \cdot s) f(g \cdot s') \otimes (g \cdot m') = (g  \cdot s) \otimes (g \cdot s')(g \cdot m') \\
&=& (g\cdot s) \otimes g \cdot (s'\cdot m') = g \cdot (s \otimes s' \cdot m')\end{array}\]
as required. It is straightforward to see that this defines a $G$-action on $S \otimes_{S'}M'$ which is $S$-semilinear in the sense that
\[ g \cdot (s \cdot v) = (g\cdot s) \cdot (g \cdot v) \qmb{for all} g \in G, s \in S, v \in S \otimes_{S'} M'.\]
(b) We may assume that $v = s \otimes m'$ with $s \in S$ and $m' \in M'$. Let $g \in G$ and $h \in H$; using Definition \ref{DefnTriv}(b), we compute
\[\begin{array}{lll} g \cdot( h \star (g^{-1} \cdot (s \otimes m'))) &=& g \cdot (h \star (g^{-1} \cdot s \otimes g^{-1}\cdot m')) = \\
&=& g \cdot ( (g^{-1} \cdot s) \beta(h)^{-1} \otimes h \cdot (g^{-1} \cdot m') )= \\
&=& g \cdot ((g^{-1} \cdot s) \beta(h^{-1})) \otimes g \cdot (h \cdot (g^{-1} \cdot m')) = \\
&=& s \hsp g \cdot \beta(h^{-1}) \otimes g h g^{-1} \cdot m' = \\
&=& s \beta( ({}^gh)^{-1} ) \otimes ({}^gh) \cdot m' = ({}^g h) \star (s \otimes m'). \qedhere
\end{array}\]
\end{proof}

We will now restrict our scope slightly, and assume until the end of $\S\ref{CPandSA}$ that
\begin{itemize}
\item $H'$ has finite index in $H$,
\item $L$ is a field of characteristic zero.
\end{itemize}
Let $M'$ be an $S' \otimes_{H'} G$-module. By Lemma \ref{SecretAction} and the above assumptions, we see that $S \otimes_{S'}M'$ is a module over the group ring $L[H/H']$ which is semisimple because we're assuming that our ground field $L$ has characteristic zero. Because the action of $L[H/H']$ on $S \otimes_{S'}M'$ commutes with the action of $S$, we obtain a canonical $S$-module decomposition
\begin{equation}\label{IndDecomp} S \otimes_{S'} M' \cong \bigoplus_{[V] \in \Irr_L(H/H')} e_V \star (S \otimes_{S'} M')\end{equation}
where $\Irr_L(H/H')$ is the set of isomorphism classes of simple $L[H/H']$-modules, and for each $[V] \in \Irr_L(H/H')$, $e_V$ is the primitive central idempotent of $L[H/H']$ such that $1 - e_V$ generates the annihilator of $V$ in $L[H/H']$ as a two-sided ideal. 

Let $V$ be an $L[H/H']$-module and let $g \in G$. Let $gV := \{[g] v : v \in V\}$ where $[g]$ is a formal symbol, and define an $H$-action on $gV$ by the rule $h \cdot [g]v = [g] h^gv$ for all $h \in H$ and $v \in V$. This is well defined because $H$ is normal in $G$, and since $H'$ acts trivially on $V$ and is also normal in $G$, we see that $[g]V$ is again an $L[H/H']$-module. In this way we obtain a permutation action of $G$ on $\Irr_L(H/H')$ given by $g \cdot [V] = [gV]$. 

\begin{cor}\label{outerGaction} Let $M'$ be an $S' \rtimes_{H'}G$-module. The $S$-semilinear $G$-action on $S \otimes_{S'}M'$ permutes the direct summands appearing on the right hand side of (\ref{IndDecomp}):
\[ g \cdot \left(e_V \star (S \otimes_{S'} M')\right) = e_{gV} \star (S \otimes_{S'} M') \qmb{for all} g\in G \qmb{and} [V] \in \Irr_L(H/H').\]
\end{cor}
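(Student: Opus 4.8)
The plan is to reduce the statement to a purely module-theoretic fact about how an $S$-semilinear group action interacts with the central idempotents of the group algebra $L[H/H']$, using the compatibility identity established in Lemma \ref{EquivariantStar}(b). First I would unwind what must be shown: we need $g \cdot \left(e_V \star w\right) = e_{gV} \star (g \cdot w)$ for all $w \in S \otimes_{S'} M'$ and then observe that since $g\cdot(-)$ is a bijection of $S \otimes_{S'} M'$ onto itself by Lemma \ref{EquivariantStar}(a), applying this with $w$ ranging over $S \otimes_{S'} M'$ (equivalently, over $e_V \star (S\otimes_{S'}M')$) yields the displayed equality of direct summands. So the crux is the identity $g \cdot (e_V \star w) = e_{gV} \star (g \cdot w)$.

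To prove this, write $e_V = \sum_{h \in H/H'} c_h \, \overline{h}$ as an element of $L[H/H']$, where $\overline h$ denotes the image of $h \in H$ and $c_h \in L$; here I am using a fixed set of coset representatives for $H'$ in $H$, which is legitimate since $e_V$ lies in the centre of $L[H/H']$ and $H'$ acts trivially in the secret action. Then $e_V \star w = \sum_h c_h\, (h \star w)$, so applying $g\cdot(-)$ and using Lemma \ref{EquivariantStar}(b) in the form $g \cdot (h \star w') = ({}^g h) \star (g \cdot w')$ (take $w' = g^{-1}\cdot w$, or equivalently substitute directly), we get
\[ g \cdot (e_V \star w) = \sum_{h} c_h \, ({}^g h) \star (g \cdot w) = \left(\sum_h c_h\, \overline{{}^g h}\right) \star (g \cdot w).\]
It remains to identify $\sum_h c_h\, \overline{{}^g h}$ with the idempotent $e_{gV}$. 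This is where the definition of the $G$-action on $\Irr_L(H/H')$ enters: the conjugation map $h \mapsto {}^g h$ induces an $L$-algebra automorphism $\gamma_g$ of $L[H/H']$ (well-defined because $H$ and $H'$ are both normal in $G$), and by construction the $L[H/H']$-module $gV$ is $V$ pulled back along $\gamma_g^{-1}$ — concretely $\overline{h}$ acts on $[g]v$ the way $\overline{{}^{g^{-1}}h}$, i.e. $\gamma_g^{-1}(\overline h)$, acts on $v$. Therefore the annihilator of $gV$ is $\gamma_g$ applied to the annihilator of $V$, so $\gamma_g(e_V) = e_{gV}$; and $\gamma_g(e_V) = \gamma_g\!\left(\sum_h c_h \overline h\right) = \sum_h c_h \overline{{}^g h}$, which is exactly the element appearing above. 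This completes the argument.

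I expect the only genuinely delicate point to be the bookkeeping in the last step — matching conventions between "the $G$-action on $\Irr_L(H/H')$" as defined in the paragraph before the corollary (via $h \cdot [g]v = [g]\, h^g v$, i.e. conjugation by $g$ on the acting element) and the automorphism $\gamma_g$ of the group algebra, so that one gets $\gamma_g(e_V) = e_{gV}$ rather than $\gamma_{g^{-1}}(e_V) = e_{gV}$. One should check carefully which variance is used in $h^g$ versus ${}^g h$ in Lemma \ref{EquivariantStar}(b); if a discrepancy appears it is absorbed by replacing $g$ with $g^{-1}$ in an intermediate step, and the final statement of the corollary is symmetric enough to accommodate this. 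Everything else — the $S$-linearity, the finiteness of the sum, the semisimplicity ensuring the idempotents exist — is already in place from the standing hypotheses ($H'$ of finite index in $H$, $L$ of characteristic zero) and from Lemma \ref{SecretAction} and Lemma \ref{EquivariantStar}.
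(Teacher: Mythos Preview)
Your proposal is correct and follows essentially the same approach as the paper: extend Lemma \ref{EquivariantStar}(b) by $L$-linearity from group elements to elements of $L[H/H']$, observe that the conjugation automorphism $\gamma_g$ of $L[H/H']$ sends $e_V$ to $e_{gV}$, and conclude $g \cdot (e_V \star w) = e_{gV} \star (g \cdot w)$, which together with the bijectivity of $g\cdot(-)$ gives the equality of summands. The paper records this in one line as $g \cdot (e_V \star v) = {}^g e_V \star (g \cdot v)$ with ${}^g e_V = e_{gV}$; your expanded version with the coefficient sum $\sum_h c_h\, \overline{{}^g h}$ and your careful check of the variance in $h^g$ versus ${}^g h$ simply make explicit what the paper leaves to the reader.
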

\begin{proof} The conjugation action of $g \in G$ on $L[H/H']$ sends $e_V$ to $e_{gV}$. Now 
\[g \cdot (e_V \star v) = {}^ge_V \star (g \cdot v) \qmb{for all} g \in G, v \in S \otimes_{S'}M'\]
by Lemma \ref{EquivariantStar}(b) and the result follows.
\end{proof}

Given an $S$-module $M$, let $\ell(M)$ denote its length. We can now state and prove our criterion. 
\begin{thm}\label{Criterion}  Assume that Hypothesis \ref{MorphCP} holds, that $H'$ has finite index in $H$ and that $L$ is a field of characteristic zero. Let $M'$ be an $S' \rtimes_{H'} G$-module, let $M$ be a $S \rtimes_H G$-module, let $\tau : M' \to M$ be an $S' \rtimes_{H'} G$-linear map and consider the $S$-linear map
\[ \Phi = 1 \otimes \tau : S \otimes_{S'} M' \longrightarrow M\]
induced by $\tau$. Suppose further that the following conditions hold.
\be 
\item the secret $H$-action on $S \otimes_{S'}M'$ is non-trivial,
\item the $G$-action on $\Irr_L(H/H')$ has exactly two orbits, and
\item $\ell(\ker \Phi) \leq |\Irr_L(H/H')|-1$.
\ee
Then $\Phi$ factors through the coinvariants $(S \otimes_{S'} M')_H$ under the secret $H$-action on $S \otimes_{S'}M'$, and the induced map $\overline{\Phi} : (S \otimes_{S'} M')_H \longrightarrow M$ is injective.
\end{thm}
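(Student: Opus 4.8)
The plan is to prove Theorem \ref{Criterion} in two stages: first show that $\Phi$ kills every element of the form $h\star v-v$, so that it factors through $(S\otimes_{S'}M')_H$, and then bound the length of the kernel of the induced map $\overline\Phi$ to force it to be zero. For the first stage, I would unwind the definition of $\Phi=1\otimes\tau$ and the secret action from Lemma \ref{SecretAction}: for $s\in S$, $m'\in M'$ and $h\in H$ we have $\Phi(h\star(s\otimes m'))=s\,\beta(h)^{-1}\cdot\tau(h\cdot m')$, and since $\tau$ is $S'\rtimes_{H'}G$-linear and $M$ is an $S\rtimes_H G$-module in which $h$ acts as $\beta(h)$, this equals $s\,\beta(h)^{-1}\beta(h)\cdot\tau(m')=s\cdot\tau(m')=\Phi(s\otimes m')$. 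Hence $\Phi$ factors as $\overline\Phi$ through the coinvariants, and $\overline\Phi$ is automatically $S$-linear; it is in fact $S\rtimes_H G$-linear once one checks (via Theorem \ref{IndModCoinv}) that $\overline\Phi$ agrees with the map $\widetilde\tau$ of \eqref{TauTilde} under the isomorphism $\theta$ of \eqref{Theta}, which is a formal diagram chase.

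For the second stage, I would use the idempotent decomposition \eqref{IndDecomp}. Write $N:=S\otimes_{S'}M'$, so $N=\bigoplus_{[V]}e_V\star N$ over $[V]\in\Irr_L(H/H')$, and the coinvariants $N_H$ are exactly the summand $e_{\mathrm{triv}}\star N$ corresponding to the trivial $L[H/H']$-module, because $L$ has characteristic zero. Condition (a), the non-triviality of the secret action, says precisely that the complementary summand $(1-e_{\mathrm{triv}})\star N$ is non-zero, i.e. that $N_H\subsetneq N$. The kernel of $\overline\Phi$ is $(\ker\Phi)/(\ker\Phi\cap\{h\star v-v\})$, so $\ell(\ker\overline\Phi)\le\ell(\ker\Phi)\le|\Irr_L(H/H')|-1$ by condition (c). The key point now is a length comparison: $\ker\overline\Phi$ sits inside $N_H=e_{\mathrm{triv}}\star N$, and one wants to show this is too small to contain a submodule of the required length, deriving a contradiction unless $\ker\overline\Phi=0$.

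To make that comparison work I would bring in condition (b) and Corollary \ref{outerGaction}. Since $G$ permutes the set $\Irr_L(H/H')$ with exactly two orbits, one of which must be the fixed point $\{[\mathrm{triv}]\}$ (the trivial module is $G$-stable because $G$ normalises $H$ and $H'$), the other orbit has size $|\Irr_L(H/H')|-1$ and $G$ acts transitively on the corresponding summands of $N$ by $S$-semilinear automorphisms. Because $\overline\Phi$ is $S\rtimes_H G$-linear, its kernel is a $G$-stable $S$-submodule of $N$; one shows that $\ker\overline\Phi$ being non-zero and contained in the $S$-direct summand $e_{\mathrm{triv}}\star N$ forces, after applying $G$ and using transitivity on the non-trivial summands together with the fact that $\Phi$ itself has kernel of length $\le|\Irr_L(H/H')|-1$, that $\ker\Phi$ would meet each of the $|\Irr_L(H/H')|-1$ non-trivial summands non-trivially — hence would have length at least $|\Irr_L(H/H')|-1$ contributed from there \emph{plus} a non-zero contribution from $e_{\mathrm{triv}}\star N=N_H$, exceeding the bound in (c). This contradiction gives $\ker\overline\Phi=0$, i.e. $\overline\Phi$ is injective.

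The main obstacle I anticipate is the bookkeeping in the last paragraph: precisely relating $\ker\overline\Phi\subseteq N_H$ to $\ker\Phi\subseteq N$ and propagating non-triviality across the $G$-orbit of summands, so that the three hypotheses (a), (b), (c) interlock to produce the numerical contradiction. In particular one must be careful that $\ker\Phi$ need not be a direct sum of its intersections with the summands $e_V\star N$ — only a $G$-stable submodule — so the argument should be phrased in terms of lengths and the semisimplicity of the $L[H/H']$-action on $N$ (which guarantees $\ker\Phi$ \emph{does} decompose compatibly with the idempotents $e_V$), rather than trying to split $\ker\Phi$ by hand. Everything else — the factorisation through coinvariants, the identification $N_H=e_{\mathrm{triv}}\star N$, and the $S\rtimes_HG$-linearity of $\overline\Phi$ — is routine given Lemma \ref{SecretAction}, Theorem \ref{IndModCoinv} and Corollary \ref{outerGaction}.
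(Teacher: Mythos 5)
Your approach is correct and essentially the same as the paper's: show $(H-1)\star N\subseteq\ker\Phi$ by the direct computation $\Phi(h\star(s\otimes m'))=\Phi(s\otimes m')$, then use (a), (b), Corollary \ref{outerGaction} and (c) to force equality by a length count. One note on the final paragraph: the non-vanishing of \emph{all} non-trivial summands $e_V\star N$ follows from (a), (b) and Corollary \ref{outerGaction} alone, independently of any assumption on $\ker\overline{\Phi}$, so the sentence claiming that $\ker\overline{\Phi}\neq 0$ ``forces, after applying $G$,\ldots'' misstates the logical dependency even though the arithmetic that follows is right. With the non-vanishing of the non-trivial summands in hand, the inequalities $\ell((H-1)\star N)\geq|\Irr_L(H/H')|-1\geq\ell(\ker\Phi)$ combined with the inclusion $(H-1)\star N\subseteq\ker\Phi$ give $(H-1)\star N=\ker\Phi$ directly, so the concern you raise at the end about decomposing $\ker\Phi$ compatibly with the idempotents is unnecessary, as is the $S\rtimes_H G$-linearity of $\overline{\Phi}$.
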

\begin{proof} Consider the $S$-module decomposition of $N := S \otimes_{S'}M'$ given by (\ref{IndDecomp}):
\[N = e_{\mathbbm{1}} \star N \quad\oplus\quad (H-1)\star N, \qmb{where} (H-1)\star N = \bigoplus\limits_{[V] \neq [\mathbbm{1}]} e_V \star N\]
and where $\mathbbm{1}$ denotes the trivial $L[H/H']$-module.  Note that 
\[\begin{array}{lll}\Phi(h \star (s \otimes m')) &=& \Phi( s \beta(h)^{-1} \otimes h\cdot m') = s \beta(h)^{-1} \cdot \tau(h \cdot m') = \\
&=& s \beta(h)^{-1} \cdot (h \cdot \tau(m')) = s \cdot \tau(m') = \Phi(s \otimes m')\end{array}\]
for all $h \in H, s \in S$ and $m' \in M'$, because the $G$-action on $M$ satisfies $h \cdot m = \beta(h) \cdot m$ as $M$ is an $S \rtimes_HG$-module. In other words: $(H - 1) \star N \subseteq \ker \Phi$. Hence
\[ \ell( (H - 1) \star N) \leq \ell(\ker \Phi) .\]
We know that $e_V \star N \neq 0$ for at least one non-trivial $[V] \in \Irr_L(H/H')$ by (a). Since $g[\mathbbm{1}] = [\mathbbm{1}]$ for all $g \in G$, Corollary \ref{outerGaction} together with (b) then implies that $e_V \star N \neq 0$ \emph{for all} non-trivial $[V] \in \Irr_L(H/H')$. This gives us the inequality
\[ \left\vert\Irr_L(H/H')\right\vert - 1 \leq \ell\left(\bigoplus\limits_{[V] \neq [\mathbbm{1}]} e_V \star N\right) = \ell( (H-1) \ast N).\]
On the other hand, (c) gives that  
\[\ell(\ker \Phi) \leq \left\vert\Irr_K(H/H')\right\vert- 1.\]
It follows that the inclusion $(H-1)\star N \subseteq \ker \Phi$ is in fact an equality. 

Hence $\Phi$ factors through the module of coinvariants $N_H$ of $N$ under the secret $H$-action, and induces an $S$-linear injection $\overline{\Phi} : N_H \stackrel{\cong}{\longrightarrow} M$. 
\end{proof}

\subsection{The Iwahori action on differential operators on the closed unit disc} \label{IwahoriAction}
In this section, we explore in detail the action of the $p$-adic Lie group $\mathbb{GL}_2(K)$ on the rigid $K$-analytic projective line $\bP^1 := \bP^{1,\an}$. Fix a coordinate $x \in \cO_{\bP^1}$. Because of the local nature of our constructions, we will focus specifically on the affinoid subdomains of the closed unit disc $\bD = \Sp K\langle x \rangle = \{z\in \bP^1 : |z| \leq 1\}$.
\begin{defn}\label{GenIwahori} The \emph{generalised Iwahori subgroup of $\mathbb{GL}_2(K)$} is 
\[ \cG := \left\{ \begin{pmatrix} a & b \\ c & d \end{pmatrix} \in \mathbb{GL}_2(K^\circ) : |c| < 1 \right\}.\]
\end{defn}

We note that if $\begin{pmatrix} a & b \\ c & d \end{pmatrix}\in \cG$ then $|a|,|d|=1$ since $|ad-bc|=1$, $|a|,|b|,|d|\leq 1$ and $|c|<1$. We also see that $\cG$ stabilises $\bD$ under the standard M\"obius action of $\mathbb{GL}_2(K)$ on $\bP^1$. In fact, the stabiliser of $\bD$ in $\mathbb{GL}_2(K)$ is $\cG Z$, where $Z$ is the centre of $\mathbb{GL}_2(K)$. By Lemma \ref{AutAOD} the $\cG$-action on $\bD$ induces $\cG$-actions on $\cO(\bD)$ and $\cD(\bD)$. The $\cG$-action on the divided powers $\partial_x^{[n]}\in  \cD(\bD)$ of $\partial_x$ is given as follows.

\begin{lem}\label{GdotDn} Let $g = \begin{pmatrix}a & b \\ c & d \end{pmatrix} \in \cG$. Then 
\be \item $g \cdot x = \frac{dx - b}{-cx +a}$, and 
\item $g \cdot \partial_x^{[n]} =  \sum\limits_{i=1}^n \frac{\binom{n-1}{i-1} (-cx+a)^{n+i} (-c)^{n-i}}{{(ad - bc)^n}} \partial_x^{[i]}$ for all $n \geq 0$.
\ee\end{lem}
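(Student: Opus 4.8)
Part (a) is just the Möbius formula for the action: by Lemma \ref{AutAOD}(a) we have $(g\cdot f)(z) = f(g^{-1}z)$, and $g^{-1} = (ad-bc)^{-1}\begin{pmatrix} d & -b \\ -c & a\end{pmatrix}$, so applying this to $f = x$ gives $g\cdot x = \frac{dx-b}{-cx+a}$ directly. Note this lies in $\cO(\bD)$ precisely because $g \in \cG$: the denominator $-cx+a$ is a unit in $\cO(\bD)$ since $|c| < 1 = |a|$, so $g\cdot x \in \cO(\bD)^\times$ with $|g \cdot x|_{\bD} \leq 1$.

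For part (b), the plan is to reduce to the $n = 1$ case using the equivariant structure on $\cD_{\bP^1}$ from Lemma \ref{AutAOD}(d), and then prove the general formula by induction on $n$. The $n = 1$ case is exactly Lemma \ref{AutAOD}(d): $g \cdot \partial_x = \frac{(-cx+a)^2}{ad-bc}\partial_x$, which is the $n = 1$ instance of the claimed formula (the sum has the single term $i = 1$, giving $\frac{\binom{0}{0}(-cx+a)^2(-c)^0}{ad-bc}\partial_x$). For the inductive step, I would use the fact that $g$ acts by a ring automorphism on $\cD(\bD)$ together with the Leibniz-type relation $\partial_x^{[n]} \cdot \partial_x = (n+1)\partial_x^{[n+1]}$, or more conveniently, compute $g \cdot \partial_x^{[n+1]}$ by noting $\partial_x^{[n+1]} = \frac{1}{n+1}\partial_x^{[n]}\partial_x$ and applying the automorphism: $g \cdot \partial_x^{[n+1]} = \frac{1}{n+1}(g\cdot\partial_x^{[n]})(g\cdot\partial_x)$. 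Writing $w := -cx+a \in \cO(\bD)^\times$ and $\Delta := ad-bc$, we have $g\cdot\partial_x = \frac{w^2}{\Delta}\partial_x$ and $\partial_x(w) = -c$, so I would substitute the inductive hypothesis for $g\cdot\partial_x^{[n]}$ and carry out the commutator computation $\partial_x^{[i]} \cdot \frac{w^2}{\Delta}\partial_x = \frac{1}{\Delta}\sum_{j}\partial_x^{[i-j]}(w^2)\,\partial_x^{[j]}\partial_x$, using $\partial_x^{[k]}(w^2) = 0$ for $k \geq 3$ and the explicit values $\partial_x^{[0]}(w^2) = w^2$, $\partial_x^{[1]}(w^2) = -2cw$, $\partial_x^{[2]}(w^2) = c^2$. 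Combining terms and using the multiplication rule $\partial_x^{[j]}\partial_x^{[1]} = (j+1)\partial_x^{[j+1]}$ together with a Pascal-type binomial identity $\binom{n-1}{i-1} + \binom{n-1}{i-2} = \binom{n}{i-1}$ (suitably weighted) should reassemble the coefficients into the $n+1$ form $\frac{\binom{n}{i-1}w^{n+1+i}(-c)^{n+1-i}}{\Delta^{n+1}}$.

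An alternative, cleaner route I would consider is to prove the formula by a generating-function argument: the family $\{\partial_x^{[i]}\}$ is characterized by how it acts on powers of $x$, and one can verify the identity by checking that both sides agree as operators $\cO(\bD) \to \cO(\bD)$, using $g\cdot\partial_x^{[n]} = g \circ \partial_x^{[n]} \circ g^{-1}$ on $\cO(\bD)$ via \eqref{GactOandT}. Testing on monomials $(g\cdot x)^m = \left(\frac{dx-b}{w}\right)^m$ and comparing coefficients of $\partial_x^{[i]}$ reduces everything to a finite binomial identity. Either way, the main obstacle is purely combinatorial bookkeeping: tracking the powers of $w$, of $-c$, and of $\Delta$ through the commutator expansion and verifying the binomial coefficient identity. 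There is no conceptual difficulty — the equivariance of $\cD_{\bP^1}$ and the divided-power multiplication rules from Lemma \ref{levelmdivpowmult} (with $m = 0$) do all the structural work; what remains is to check that the explicit coefficients match, which I would do by the Pascal-identity induction sketched above.
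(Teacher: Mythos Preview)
Your proposal is correct, but the paper takes a different and somewhat slicker route for part (b). Instead of inducting on $n$ with commutator bookkeeping, the paper makes the change of variable $y := -cx + a$ so that $\partial_x = -c\,\partial_y$ and hence $g\cdot\partial_x = \frac{-c}{\det(g)}\,y^2\partial_y$; taking divided powers then reduces the problem to proving the single combinatorial identity
\[
(y^2\partial_y)^{[n]} = \sum_{i=1}^n \binom{n-1}{i-1} y^{n+i}\,\partial_y^{[i]},
\]
which is verified by applying both sides to $y^k$ and invoking Vandermonde's identity $\binom{n+k-1}{n} = \sum_i \binom{n-1}{n-i}\binom{k}{i}$. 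Your inductive approach works but requires tracking three simultaneous binomial manipulations through the Leibniz expansion of $\partial_x^{[i]}\cdot w^2\partial_x$; the paper's substitution absorbs the $w$ and $-c$ factors into the scalar $(-c/\Delta)^n$ up front, leaving a clean one-variable identity. Your ``alternative route'' of testing on monomials is in fact close to what the paper does, but without the change of coordinates you would be testing on powers of $x$ rather than $y$, which reintroduces the clutter.
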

\begin{proof}(a) This is straightforward. (b) The case $n = 1$ follows from Lemma \ref{AutAOD}(c). In general, let $y := -cx + a$ so that $\partial_x(y) = -c$ and $\partial_x = -c \partial_y$. Then the case $n=1$ can be rewritten as $g \cdot \partial_x = -\frac{c y^2}{\det(g)} \partial_y$, and it follows that $g\cdot \partial_x^{[n]} = \left(\frac{-c}{\det(g)}\right)^n (y^2 \partial_y)^{[n]}$. Now, by Vandermonde's identity, for $k\in \N$, \begin{eqnarray*}
 (y^2\partial_y)^{[n]}(y^k) & = & \binom{n+k-1}{n}y^{k+n}\\ & = & \sum\limits_{i=0}^{n} \binom{n-1}{n-i}\binom{k}{i}y^{k+n}\\ & = & \sum\limits_{i=1}^{n} \binom{n-1}{i-1}y^{n+i}\partial_y^{[i]}(y^k).\end{eqnarray*}
Thus as $K[y]$ is dense in $\cO(\bD)$ we see that 
\[ (y^2 \partial_y)^{[n]} = \sum\limits_{i=1}^n \binom{n-1}{i-1} y^{n+i} \partial_y^{[i]}\] from which the result quickly follows.
\end{proof}

Recall the natural inclusion $j : \cD(X) \hookrightarrow \cD_r(X)$ from Lemma \ref{TheMapJ}.

\begin{cor}\label{rnormofgdxn} Let $g = \begin{pmatrix}a & b \\ c & d \end{pmatrix} \in \cG$. Let  $r > 0$  and let $X$ be an $\partial_x/r$-admissible and $g$-stable affinoid subdomain of $\bD$. Then $|a - cx|_X = 1$, and for all $n\geq 1$, 
\[ | j(g \cdot \partial_x^n)| = \max\limits_{1 \leq i \leq n} \left|\binom{n-1}{i-1}\frac{n!}{i!}\right| \hsp  |c|^{n-i} \hsp r^i.\]
\end{cor}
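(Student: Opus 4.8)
The plan is to read everything off the explicit formula for $g \cdot \partial_x^{[n]}$ recorded in Lemma \ref{GdotDn}(b), once the normalisation $|a-cx|_X = 1$ is in hand. For the latter: as noted after Definition \ref{GenIwahori}, $g \in \cG$ forces $|a| = 1$ and $c \in K^{\circ\circ}$; since $X \subseteq \bD$ the restriction map $\cO(\bD) \to \cO(X)$ is contractive, so $|x|_X \leq 1$ and hence $|cx|_X \leq |c| < 1 = |a|$. The strong triangle inequality for the non-archimedean seminorm $|\cdot|_X$ then gives $|a - cx|_X = 1$ (assuming, as we implicitly must, that $X$ is non-empty; the statement is vacuous otherwise).

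Next I would multiply the identity in Lemma \ref{GdotDn}(b) through by $n!$ and use $n!\hsp\partial_x^{[i]} = \tfrac{n!}{i!}\partial_x^i$ to write, inside $\cD(X)$,
\[ g \cdot \partial_x^n = \sum_{i=1}^n c_{n,i}\hsp \partial_x^i, \qquad c_{n,i} := \binom{n-1}{i-1}\frac{n!}{i!}\cdot\frac{(a-cx)^{n+i}(-c)^{n-i}}{(ad-bc)^n} \in \cO(X);\]
here the formula of Lemma \ref{GdotDn}, proved on $\cD(\bD)$, transfers verbatim to the $g$-stable subdomain $X$ because the $\cG$-equivariant structure on $\cD_{\bP^1}$ is compatible with restriction. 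Since $j$ is $\cO(X)$-linear and sends $\partial_x^i$ to $\partial^i$ by Lemma \ref{TheMapJ}, we get $j(g\cdot\partial_x^n) = \sum_{i=1}^n c_{n,i}\partial^i$ in $\cD_r(X) = \cO(X)\langle\partial/r\rangle$, so by the norm formula of Definition \ref{AdelR},
\[ |j(g\cdot\partial_x^n)| = \max_{1\leq i\leq n} |c_{n,i}|_X\hsp r^i.\]
Finally, power-multiplicativity of $|\cdot|_X$ together with $|ad-bc| = 1$ and $|a-cx|_X = 1$ collapses $|c_{n,i}|_X$ to $\left|\binom{n-1}{i-1}\tfrac{n!}{i!}\right|\,|c|^{n-i}$, and substituting this into the previous display gives exactly the claimed identity.

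There is really no serious obstacle here: the one point that deserves attention is that the Iwahori condition $|c| < 1$ is \emph{strict}, which is precisely what allows the strong triangle inequality to force $|a - cx|_X = 1$ even when $X$ touches the boundary of $\bD$ (where $|x|_X = 1$); without strictness one would only obtain $|a-cx|_X \leq 1$. Everything else is a bookkeeping computation with the formula of Lemma \ref{GdotDn} and the definition of the Banach norm on $\cD_r(X)$.
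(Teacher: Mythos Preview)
Your proof is correct and follows essentially the same route as the paper's: establish $|a-cx|_X=1$ and $|ad-bc|=1$ from the Iwahori condition and $X\subseteq\bD$, then read the norm off Lemma \ref{GdotDn}(b) and Definition \ref{AdelR}. The paper's proof is a terse two-sentence version of exactly this computation; you have simply unpacked the details (multiplying through by $n!$, identifying the coefficients $c_{n,i}$, and invoking power-multiplicativity of $|\cdot|_X$) that the paper leaves implicit.
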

\begin{proof} Note that $|x|_X \leq 1$ since $X \subset \bD$ by assumption. Since $|a| = 1$ and $|c|<1$ by Definition \ref{GenIwahori}, we see that $|a - cx|_X = 1$, as well as $|ad-bc| =  1$. We can now use Lemma \ref{GdotDn}(b) together with Definition \ref{AdelR} to obtain the result. \end{proof}

Recall that if $X \subseteq \bD$ is an affinoid subdomain, $\cG_X$ denotes its stabiliser in $\cG$.  

\begin{prop}\label{NormOfRhor} Let $X$ be a $\partial_x/r$-admissible affinoid subdomain of $\bD$ and let $g = \begin{pmatrix} a & b\\ c & d \end{pmatrix} \in \cG_X$. Then the $g$-action on $\cD(X)$ extends to a bounded $K$-algebra automorphism $\rho_r(g)$ of $\cD_r(X)$.
\end{prop}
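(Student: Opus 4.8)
The plan is to invoke the universal property of $\cD_r(X)$ from Lemma \ref{BanachUnivProp}, applied with $A = \cO(X)$, $\delta = \partial_x$, $B = \cD_r(X)$, $f$ the composition $\cO(X) \xrightarrow{g^\cO} \cO(X) \hookrightarrow \cD_r(X)$ (here $g$ stabilises $X$, so $g^\cO(X)$ is an automorphism of $\cO(X)$), and $b := j(g \cdot \partial_x) \in \cD_r(X)$. First I would record that $g \cdot \partial_x = \frac{(a - cx)^2}{ad-bc} \partial_x$ by Lemma \ref{AutAOD}(d), so that $b = \frac{(a-cx)^2}{ad-bc} \partial \in \cD_r(X)$ makes sense. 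Then I must check the two hypotheses of Lemma \ref{BanachUnivProp}(b): condition (i), the commutation relation $b f(a') - f(a') b = f(\partial_x(a'))$ for all $a' \in \cO(X)$, follows exactly as in the proof of Lemma \ref{AffDr}(b) — namely $[b, g \cdot a'] = (g \cdot \partial_x)(g \cdot a') = g \cdot (\partial_x(a')) = f(\partial_x(a'))$, using the $\cG$-equivariant structure on $\cD_{\bP^1}$ and formula (\ref{GactOandT}). The content is therefore condition (ii): $\sup_{n \geq 0} |b^n|/r^n < \infty$.

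To verify (ii), the key point is that $b^n = j(g \cdot \partial_x^n)$, since $j$ and the $g$-action are both ring homomorphisms and $j(g \cdot \partial_x) = b$. Thus I need to bound $|j(g \cdot \partial_x^n)|/r^n$ uniformly in $n$, and Corollary \ref{rnormofgdxn} gives the exact formula
\[ |j(g \cdot \partial_x^n)|/r^n = \max_{1 \leq i \leq n} \left| \binom{n-1}{i-1} \frac{n!}{i!} \right| |c|^{n-i} r^{i-n}. \]
Writing $n - i = m$, each term is $\left|\binom{n-1}{m}\right| \cdot \left|\frac{n!}{(n-m)!}\right| \cdot (|c|/r)^m$. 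Since $|c| < 1$ (as $g \in \cG$) and $r > 0$, the factor $(|c|/r)^m$ decays geometrically in $m$ once... well, not necessarily if $r < |c|$, but $r$ is fixed while we have extra room: I would use that $\left|\binom{n-1}{m}\right| \leq 1$ and that $\left|\frac{n!}{(n-m)!}\right| = |n(n-1)\cdots(n-m+1)| \leq 1$ trivially — all these are $p$-adic integers of norm $\leq 1$. That immediately gives $|j(g\cdot\partial_x^n)|/r^n \leq \max_{0 \leq m \leq n-1} (|c|/r)^m$, which is bounded (by $1$ if $|c| \leq r$, or by $(|c|/r)^{n-1}$ otherwise — still, I want a bound independent of $n$). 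If $|c| > r$ this naive estimate is not uniform, so the real work is to extract enough $p$-adic decay from the binomial/factorial factors to beat the growth of $(|c|/r)^m$; here one uses $v_p(n!/(n-m)!) \geq$ something, or more cleanly, that $g$ is in the unit ball $\mathbb{GL}_2(K^\circ)$ so $\frac{(a-cx)^2}{ad-bc}$ has supremum norm $1$ on $X$ (which is $|a-cx|_X = 1$, already noted), hence $g \cdot \partial_x$ has the same operator norm as $\partial_x$ on $\cO(X)$, giving $||g \cdot \partial_x^n||_{\cO(X)} = ||\partial_x^n||_{\cO(X)}$ and therefore $|j(g\cdot\partial_x^n)| = |j(\partial_x^n)| = r(X)^n \cdot (\ldots)$ — wait, more precisely $|j(\partial_x^n)| \leq$ something controlled by $r(X)^n$, and $r(X) < r$.

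I expect the main obstacle to be exactly this uniform bound when $|c|$ is not small relative to $r$: the cleanest route is probably to observe that because $X$ is $g$-stable and $g \in \mathbb{GL}_2(K^\circ)$, the map $g^\cO(X)$ is an isometry of $\cO(X)$ (as in the proof of Proposition \ref{AutAandr}, $|g \cdot f|_X = |f|_X$), and that the equivariant structure intertwines the $\partial_x$-action with its $g$-twist in a way that preserves spectral radius — so $|\partial_x|_{\sp, \cO(X)} = r(X) < r$ forces $\limsup_n ||g \cdot \partial_x^n||^{1/n}_{\cO(X)} \leq r(X) < r$, which via $|j(Q)| \leq ||\sigma_r(Q)||$ type estimates (or directly via Corollary \ref{rXcalc} and Lemma \ref{GdotDn}) yields $\sup_n |b^n|/r^n < \infty$. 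Once (ii) is established, Lemma \ref{BanachUnivProp}(b)$\Rightarrow$(a) produces a bounded $K$-algebra homomorphism $\rho_r(g) : \cD_r(X) \to \cD_r(X)$ extending the $g$-action on $\cD(X)$ and sending $\partial$ to $b$; applying the same construction to $g^{-1}$ and using density of $\cD(X)$ in $\cD_r(X)$ together with the uniqueness clause of Lemma \ref{BanachUnivProp} shows $\rho_r(g) \circ \rho_r(g^{-1}) = \rho_r(g^{-1}) \circ \rho_r(g) = \mathrm{id}$, so $\rho_r(g)$ is an automorphism. This completes the proof.
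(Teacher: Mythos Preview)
Your overall framework is exactly right: invoke Lemma \ref{BanachUnivProp} with $A = \cO(X)$, $\delta = \partial_x$, $B = \cD_r(X)$, $b = j(g\cdot\partial_x)$, verify (i) via the equivariant structure, and for (ii) start from Corollary \ref{rnormofgdxn}. The paper does precisely this, and your verification of (i) and your observation that $b^n = j(g\cdot\partial_x^n)$ are correct.

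The gap is in (ii). You correctly notice that the naive bound $\left|\binom{n-1}{m}\right| \leq 1$, $\left|\frac{n!}{(n-m)!}\right| \leq 1$ only yields $(|c|/r)^m$, which need not be uniformly bounded since one can have $\varpi < r < |c| < 1$. But your proposed rescue via spectral radius does not close this gap: the isometry $g^{\cO}(X)$ forces $\|g\cdot\partial_x^n\|_{\cO(X)} = \|\partial_x^n\|_{\cO(X)}$ as \emph{operator norms}, whereas what you need to bound is $|j(g\cdot\partial_x^n)|$ in the Banach space $\cD_r(X)$, i.e.\ $\sup_i |a_i|_X r^i$ for the coefficients $a_i$ of $g\cdot\partial_x^n$ in the $\partial$-basis. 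There is no inequality of the form $|j(Q)| \leq C\,\|\sigma_r(Q)\|$ available, so the spectral-radius argument controls the wrong quantity.

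The missing idea is to exploit the factorial more carefully. Writing $\binom{n-1}{i-1}\frac{n!}{i!} = (n-i)!\binom{n-1}{i-1}\binom{n}{i}$, one gets $\left|\binom{n-1}{i-1}\frac{n!}{i!}\right| \leq |(n-i)!|$. Now Lemma \ref{nFacVarpi} gives $|m!| \leq pm\,\varpi^m$, so with $m = n-i$ the bound becomes
\[
\frac{|j(g\cdot\partial_x^n)|}{r^n} \leq p\max_{m \geq 0} m\left(\frac{|c|\varpi}{r}\right)^m.
\]
The crucial second ingredient is that $X \subseteq \bD$ forces $r(X) \geq \varpi$ (via Corollary \ref{Heisenberg} after base change, since $\rho(X_{K'}) \leq 1$), hence $r > r(X) \geq \varpi$ and therefore $|c|\varpi/r < |c| < 1$. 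Lemma \ref{RhoEst} then gives a finite uniform bound. This is exactly what the paper does; without the $\varpi$-decay coming from $|(n-i)!|$ and the inequality $r > \varpi$, condition (ii) cannot be verified.
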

\begin{proof} 
Fix $n \geq 0$. Using Corollary \ref{rnormofgdxn} we have
\[\frac{|j(g \cdot  \partial_x^n) |}{|r^n|} =  \max\limits_{1 \leq i \leq n} \left\vert \binom{n-1}{i-1}\frac{n!}{i!}\right\vert |c|^{n-i} r^{i-n}.\]
Now $\left\vert\binom{n-1}{i-1}\frac{n!}{i!}\right\vert = \left\vert(n-i)! \binom{n-1}{i-1} \binom{n}{i}\right\vert \leq |(n-i)!|$, so by Lemma \ref{nFacVarpi},
\[\frac{|j(g \cdot  \partial_x^n) |}{|r^n|} \leq \max\limits_{0 \leq i \leq n-1} |i!| \left( \frac{|c|}{r} \right)^i \leq p \max\limits_{0 \leq i \leq n-1}  i \left(\frac{|c|\varpi}{r}\right)^i.\]
By \cite[Theorem 4.1.8]{ArdWad2023}, there is a finite extension $K'$ of $K$ such that $X$ splits over $K'$.  Lemma \ref{BaseChR} and Corollary \ref{Heisenberg} now imply that
\[ r(X) = r(X_{K'}) = \varpi / \rho(X_{K'}) \geq \varpi,\] because $\rho(X_{K'}) \leq 1$ as $X_{K'} \subseteq \bD_{K'}$.  Since $X$ is $\partial_x/r$-admissible, we conclude that $r > r(X) \geq \varpi$, so that $|c|\varpi/r < |c| < 1$.  Lemma \ref{RhoEst} now tells us that
\[ \sup\limits_{n \geq 0} \frac{|j(g \cdot  \partial_x^n) |}{|r^n|} < \infty.\]
Recall the group homomorphism $\rho\colon \cG_X\to \cB(\cO(X))^\times$ given by Lemma \ref{AutAOD}(c). Using the above inequality, we can now apply Lemma \ref{BanachUnivProp} with $A=\cO(X)$, $\delta=\partial_x$, $B=\cD_r(X)$, $f=j\circ \rho(g)$ and $b=j(g\cdot \partial_x)$ to deduce that $\rho(g)$ extends to a bounded $K$-algebra endomorphism $\rho_r(g)$ of $\cD_r(X)$. It is easy to see that $\rho_r(gh) = \rho_r(g) \rho_r(h)$ for $g,h \in \cG_X$, so in fact each $\rho_r(g)$ is an automorphism.
\end{proof}
Note that the automorphism $\rho_r(g)$ in Proposition \ref{NormOfRhor} is only $\cO(X)$-semilinear. We now have at our disposal the skew-group ring
\[ \cD_r(X) \rtimes \cG_X\]
whenever $r > r(X)$.  

\begin{cor} Let $X$ be an affinoid subdomain of $\bD$.
\be \item For every $r > r(X)$, the $\cO(X)$-semilinear action of  $\cG_X$ on $\cD(X)$ extends to an $\cO(X)$-semilinear action on $\cD_r(X)$ by bounded $K$-linear automorphisms.
\item The $\cO(X)$-semilinear action of $\cG_X$ on $\cD(X)$ extends to an $\cO(X)$-semilinear action on $\cD^\dag_{r(X)}(X)$ by $K$-algebra automorphisms.
\ee\end{cor}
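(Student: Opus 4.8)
The plan is to deduce both statements quickly from Proposition \ref{NormOfRhor} together with the formalism of the skew-group ring already at our disposal, being careful that the maps are only $\cO(X)$-semilinear. For part (a), let $r > r(X)$. Proposition \ref{NormOfRhor} gives, for each $g \in \cG_X$, a bounded $K$-algebra automorphism $\rho_r(g)$ of $\cD_r(X)$ extending the $g$-action on $\cD(X)$. First I would check that $g \mapsto \rho_r(g)$ is a group homomorphism into $\Aut_K \cD_r(X)$: for $g, h \in \cG_X$ the two bounded $K$-linear automorphisms $\rho_r(gh)$ and $\rho_r(g) \circ \rho_r(h)$ agree on $\cO(X)$ (both restrict to $\rho(gh) = \rho(g)\rho(h)$ by Lemma \ref{AutAOD}(c)) and they send $\partial \in \cD_r(X)$ to the same element, namely $j((gh) \cdot \partial_x) = j(g \cdot (h \cdot \partial_x))$, which one computes using Lemma \ref{GdotDn}(b) and the fact that $\rho_r(g)$ is a ring homomorphism extending $\rho(g)$. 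Since $\cO(X)$ and $\partial$ generate a dense $K$-subalgebra of $\cD_r(X)$ and all maps in sight are continuous, the two automorphisms coincide. The uniqueness part of Lemma \ref{BanachUnivProp} also shows each $\rho_r(g)$ is the unique bounded $K$-algebra extension of $\rho(g)$ sending $\partial$ to $j(g\cdot\partial_x)$, so this action is canonical. Finally, $\cO(X)$-semilinearity of $\rho_r(g)$ is inherited from that of $\rho(g)$ on $\cD(X)$ by density, since $\rho_r(g)(fQ) = \rho_r(g)(f)\rho_r(g)(Q) = (g\cdot f)\rho_r(g)(Q)$ holds for $f \in \cO(X)$, $Q \in \cD(X)$ and extends by continuity to all $Q \in \cD_r(X)$.

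For part (b), recall from Definition \ref{DagSite}(d) that $\cD^\dag_{r(X)}(X) = \colim_{c > r(X)} \cD_c(X)$, a colimit of $K$-algebras along injective connecting maps. I would like to assemble the automorphisms $\rho_c(g)$ for varying $c > r(X)$ into a single automorphism of this colimit. The point is compatibility: for $r(X) < c < c'$ the connecting map $\cD_c(X) \to \cD_{c'}(X)$ (which exists by Proposition \ref{DbRing}(b), as $X$ is both $\partial_x/c$- and $\partial_x/c'$-admissible) intertwines $\rho_c(g)$ and $\rho_{c'}(g)$ — again because both composites are bounded $K$-algebra maps $\cD_c(X) \to \cD_{c'}(X)$ that agree on the dense subalgebra generated by $\cO(X)$ and $\partial$, using the uniqueness in Lemma \ref{BanachUnivProp}. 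Taking the colimit over $c > r(X)$ therefore produces a $K$-algebra automorphism $\rho^\dag(g)$ of $\cD^\dag_{r(X)}(X)$, the assignment $g \mapsto \rho^\dag(g)$ is a group homomorphism by part (a) applied at each level, and $\cO(X)$-semilinearity passes to the colimit. This action extends the $\cG_X$-action on $\cD(X)$ since $\cD(X) \hookrightarrow \cD_c(X) \hookrightarrow \cD^\dag_{r(X)}(X)$ is compatible with all the $\rho$'s.

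The routine but slightly delicate verifications are the compatibility checks — that $\rho_r$ is multiplicative and that the $\rho_c$ are compatible across the colimit — and in both cases the mechanism is the same: reduce to the generators $\cO(X)$ and $\partial$ and invoke the uniqueness clause of Lemma \ref{BanachUnivProp} (equivalently, density of $\cD(X)$ in $\cD_r(X)$ together with continuity). The only genuine subtlety to flag is that none of these maps are $\cO(X)$-linear, only $\cO(X)$-semilinear, so one must phrase the universal property applications with $\delta = \partial_x$ and $f = j \circ \rho(g)$ exactly as in the proof of Proposition \ref{NormOfRhor} rather than trying to use an $\cO(X)$-linear variant; there is no real obstacle, just bookkeeping. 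I do not expect any step here to be hard.
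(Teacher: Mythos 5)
Your proposal is correct and takes essentially the same route as the paper, which simply states that (a) follows immediately from Proposition \ref{NormOfRhor} and (b) from (a) together with Definition \ref{DagSite}(d); you are filling in the compatibility and density checks that the paper leaves implicit (note that the multiplicativity $\rho_r(gh)=\rho_r(g)\rho_r(h)$ is already asserted at the end of the proof of Proposition \ref{NormOfRhor}, so re-deriving it is belt-and-braces rather than a new step).
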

\begin{proof} (a) This follows immediately from Proposition \ref{NormOfRhor}. 

(b) This follows from part (a) in view of Definition \ref{DagSite}(d).\end{proof}
We now explain how $\rho(\cG_X)$ and $\sigma_r(\cD_r(X))$ interact inside $\cB(\cO(X))$.

\begin{lem}\label{gQg} Let $X$ be a $\partial_x/r$-admissible affinoid subdomain of $\bD$. The maps $\rho \colon \cG_X \longrightarrow \cB(\cO(X))^\times$ from Lemma \ref{AutAOD}(c) and $\sigma_r \colon  \cD_r(X) \to \cB(\cO(X))$ from Lemma \ref{ActionOnO} satisfy
\[ \rho(g) \circ \sigma_r(Q) \circ \rho(g)^{-1} = \sigma_r( g \cdot Q )\]
for all $Q \in \cD_r(X)$ and all $g \in \cG_X$, where $g \cdot Q = \rho_r(g)(Q)$. 
\end{lem}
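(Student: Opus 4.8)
The plan is to reduce the identity to a statement about the three standard types of elements that topologically generate $\cD_r(X)$, and then verify it on each by direct computation, using the universal property of $\cD_r(X)$ (Lemma \ref{BanachUnivProp}) to upgrade the pointwise checks to an identity of $K$-algebra maps. Concretely, both sides of the claimed equation are $K$-linear maps $\cD_r(X) \to \cB(\cO(X))$. First I would observe that $\sigma_r$ is continuous (bounded, by Lemma \ref{ActionOnO}(a)) and that conjugation by $\rho(g)$ is a continuous automorphism of $\cB(\cO(X))$, while $Q \mapsto g \cdot Q = \rho_r(g)(Q)$ is continuous on $\cD_r(X)$ by Proposition \ref{NormOfRhor}; hence $Q \mapsto \rho(g) \sigma_r(Q) \rho(g)^{-1}$ and $Q \mapsto \sigma_r(g \cdot Q)$ are both continuous $K$-linear maps on $\cD_r(X)$. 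Since $\cD(X)$ is dense in $\cD_r(X)$ by Lemma \ref{TheMapJ}, it suffices to check that these two maps agree on $\cD(X)$; and since $\cD(X) = \cO(X)[\partial;\partial_x]$ is generated as a ring by $\cO(X)$ and $\partial_x$, and both maps are $K$-algebra homomorphisms on $\cD(X)$ (each is a composite of ring homomorphisms, noting that $\sigma_r|_{\cD(X)}$ is the usual action map, which is a ring homomorphism), it is enough to verify the identity on $f \in \cO(X)$ and on $\partial_x$.

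For $f \in \cO(X)$, the identity $\rho(g)\, \sigma_r(f)\, \rho(g)^{-1} = \sigma_r(g \cdot f)$ is just the statement that the equivariant structure on $\cO_{\bP^1}$ from Lemma \ref{AutAOD}(a) is compatible with multiplication operators, which is immediate from the formula $(g \cdot f)(z) = f(g^{-1}z)$: for $h \in \cO(X)$, $(\rho(g)\sigma_r(f)\rho(g)^{-1})(h)(z) = f(g^{-1}z) h(z) = \sigma_r(g\cdot f)(h)(z)$. For $\partial_x$, I would use the definitions: $\sigma_r(\partial_x) = \partial_x$ as an operator on $\cO(X)$ by Lemma \ref{ActionOnO}(a), and $g \cdot \partial_x$ (as an element of $\cD(X)$, hence of $\cD_r(X)$) acts on $\cO(X)$ by $h \mapsto g \cdot (\partial_x(g^{-1} \cdot h))$ — this is exactly formula (\ref{GactOandT}) in Lemma \ref{AutAOD}. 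Reading (\ref{GactOandT}) as $(g \cdot \partial_x)(h) = \rho(g)(\partial_x(\rho(g)^{-1}(h)))$ gives precisely $\sigma_r(g \cdot \partial_x) = \rho(g) \circ \sigma_r(\partial_x) \circ \rho(g)^{-1}$, as required.

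I do not expect any serious obstacle here: the content is entirely bookkeeping, the key points being (i) that all maps in sight are continuous so that density of $\cD(X)$ reduces us to the dense subalgebra, and (ii) that on $\cD(X)$ everything is compatible by the very construction of the $\cG$-equivariant structure on $\cD_{\bP^1}$ in $\S\ref{AffTrans}$. The only mild subtlety is to make sure that $g \cdot Q$ on the left-hand side (defined via $\rho_r(g)$ on $\cD_r(X)$) and $g \cdot Q$ on the right-hand side (defined via the equivariant structure on $\cD(X)$, extended) refer to the same thing — but this is exactly the content of Proposition \ref{NormOfRhor}, which says $\rho_r(g)$ extends the $g$-action on $\cD(X)$, so on the dense subalgebra $\cD(X)$ there is no ambiguity, and continuity does the rest.
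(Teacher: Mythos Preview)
Your proposal is correct and follows essentially the same approach as the paper: verify the identity on $\cO(X)$ and on $\partial_x$ by direct computation using (\ref{GactOandT}), then extend to $\cD(X)$ multiplicatively and to $\cD_r(X)$ by continuity. The paper's argument is terser but structurally identical.
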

\begin{proof} Suppose that $g\in \cG_X$ and $f,h\in \cO(X)$. Then 
\[ (\rho(g)\circ \sigma_r(f)\circ \rho(g)^{-1})(h) = g \cdot(f \hspace{0.1cm} g^{-1} \cdot h)  =  (g \cdot f) \hspace{0.1cm} h =  \sigma_r(g\cdot f)(h), \] and using (\ref{GactOandT}) we see that
 \[ (\rho(g)\circ \partial_x \circ \rho(g)^{-1})(h) = g\cdot (\partial_x(g^{-1} \cdot h)) = (g\cdot \partial_x)(h).\]Since $\sigma_r$ is a $K$-algebra homomorphism and $\rho_r(g)$ acts on $\cD_r(X)$ by $K$-algebra homomorphisms it follows that $\rho(g)\circ \sigma_r(Q)\circ \rho(g)^{-1}=\sigma_r(g\cdot Q)$ for all $Q\in \cD(X)$ and then for all $Q\in \cD_r(X)$ by continuity. 
\end{proof}
The universal property of skew group rings, Lemma \ref{SkewGpRngUniv}, gives the following
\begin{cor} Let $X$ be a $\partial_x/r$-admissible affinoid subdomain of $\bD$. The natural $\cD_r(X) \rtimes \cG_X$-action on $\cO(X)$ induces a $K$-algebra homomorphism $\sigma_r \rtimes \rho : \cD_r(X) \rtimes \cG_X \longrightarrow \cB(\cO(X))$.
\end{cor}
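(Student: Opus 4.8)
The plan is to invoke the universal property of the skew-group ring, namely Lemma \ref{SkewGpRngUniv}, with the commutative ring $L := K$, the $K$-algebra $S := \cD_r(X)$, the group $G := \cG_X$, and the target $K$-algebra $U := \cB(\cO(X))$. I already have the required data: the $K$-algebra homomorphism $\sigma_r : \cD_r(X) \to \cB(\cO(X))$ from Lemma \ref{ActionOnO}(a), and the group homomorphism $\rho : \cG_X \to \cB(\cO(X))^\times$ from Lemma \ref{AutAOD}(c).

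The one hypothesis of Lemma \ref{SkewGpRngUniv} that needs checking is the compatibility relation $\rho(g) \hsp \sigma_r(Q) \hsp \rho(g)^{-1} = \sigma_r(g \cdot Q)$ for all $g \in \cG_X$ and all $Q \in \cD_r(X)$, where $g \cdot Q$ denotes $\rho_r(g)(Q)$ in the $\cO(X)$-semilinear action of $\cG_X$ on $\cD_r(X)$ furnished by Proposition \ref{NormOfRhor}. But this is precisely Lemma \ref{gQg}, which has just been established. (Implicit here is that this action of $\cG_X$ on $\cD_r(X)$ is exactly the one used to form the skew-group ring $\cD_r(X) \rtimes \cG_X$ in the paragraph preceding the corollary, which is indeed the case.)

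Given these inputs, Lemma \ref{SkewGpRngUniv} immediately produces a unique $K$-algebra homomorphism $\sigma_r \rtimes \rho : \cD_r(X) \rtimes \cG_X \to \cB(\cO(X))$ satisfying $(\sigma_r \rtimes \rho)(Q e) = \sigma_r(Q)$ for $Q \in \cD_r(X)$ and $(\sigma_r \rtimes \rho)(1 g) = \rho(g)$ for $g \in \cG_X$. It is the $K$-linear extension of $Q g \mapsto \sigma_r(Q) \circ \rho(g)$, which is visibly the homomorphism induced by the natural action of $\cD_r(X) \rtimes \cG_X$ on $\cO(X)$ (where $\cD_r(X)$ acts via $\sigma_r$ and $\cG_X$ acts via $\rho$, i.e.\ by M\"obius transformations as in Lemma \ref{AutAOD}(a)), completing the proof. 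There is no real obstacle here: the corollary is a formal consequence of the universal property once Lemma \ref{gQg} is in hand, and the only thing to be careful about is matching the notation ($g \cdot Q = \rho_r(g)(Q)$) between the statement and the cited lemmas.

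\begin{proof} By Proposition \ref{NormOfRhor}, the $\cO(X)$-semilinear action of $\cG_X$ on $\cD(X)$ extends to an action on $\cD_r(X)$ by bounded $K$-algebra automorphisms $\rho_r(g)$; this is the action used to form the skew-group ring $\cD_r(X) \rtimes \cG_X$. Apply Lemma \ref{SkewGpRngUniv} with $L = K$, $S = \cD_r(X)$, $G = \cG_X$, $U = \cB(\cO(X))$, the map $\sigma_r : \cD_r(X) \to \cB(\cO(X))$ from Lemma \ref{ActionOnO}(a), and the group homomorphism $\rho : \cG_X \to \cB(\cO(X))^\times$ from Lemma \ref{AutAOD}(c). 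The required compatibility
\[ \rho(g) \circ \sigma_r(Q) \circ \rho(g)^{-1} = \sigma_r(g \cdot Q) \qmb{for all} g \in \cG_X, \ Q \in \cD_r(X)\]
holds by Lemma \ref{gQg}. Hence Lemma \ref{SkewGpRngUniv} yields a unique $K$-algebra homomorphism $\sigma_r \rtimes \rho : \cD_r(X) \rtimes \cG_X \to \cB(\cO(X))$ with $(\sigma_r \rtimes \rho)(Qe) = \sigma_r(Q)$ and $(\sigma_r \rtimes \rho)(1g) = \rho(g)$; it is the $K$-linear extension of $Qg \mapsto \sigma_r(Q) \circ \rho(g)$ and therefore agrees with the homomorphism induced by the natural action of $\cD_r(X) \rtimes \cG_X$ on $\cO(X)$.
\end{proof}
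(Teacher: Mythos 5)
Your proof is correct and matches the paper's reasoning exactly: the paper derives this corollary as an immediate application of the universal property of skew-group rings (Lemma \ref{SkewGpRngUniv}), with the compatibility hypothesis supplied by Lemma \ref{gQg}. You have spelled out the same argument in full detail.
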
 
We will now explain how to construct a trivialisation of a certain sufficiently large sub-skew-group ring of this skew-group ring.
\begin{defn}\label{Beta} For each real number $r > \varpi$, we define
\be\item a subgroup $\cG_r := \{g \in \cG : \quad |g\cdot x - x|_{\bD} < \varpi/r\}$ of $\cG$, and
\item a function $\beta : \cG_r \longrightarrow \cD_r(\bD)$ defined by
\[ \beta(g) := \sum\limits_{n=0}^\infty (g \cdot x - x)^n \partial^{[n]}. \]
\ee\end{defn}
Lemma \ref{nFacVarpi} shows that for any $g \in \cG_r$ we have
\[ \lim\limits_{n\to\infty} \left\vert\frac{(g\cdot x - x)^n}{n!}\right\vert_{\bD} \cdot r^n \leq \lim\limits_{n\to\infty} \left( \frac{|g\cdot x - x|_{\bD}}{\varpi/r} \right)^n = 0.\]
Hence $\beta(g)$ \emph{does} define an element of $\cD_r(\bD)$ whenever $g \in \cG_r$ by Definition \ref{AdelR}. The following explicit description of the group $\cG_r$ will be useful later.
\begin{lem}\label{GrCalc} $\cG_r = \left\{ \begin{pmatrix}a & b \\ c & d\end{pmatrix} \in \cG: \hsp |b| < \frac{\varpi}{r},  \hsp |c| < \frac{\varpi}{r}, \hsp |a-d|<\frac{\varpi}{r}\right\}$.
\end{lem}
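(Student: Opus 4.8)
The plan is to compute the M\"obius displacement $g\cdot x - x$ explicitly as a rational function and then read off its supremum norm on $\bD$ using the multiplicativity of the Gauss norm.

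First I would invoke Lemma \ref{GdotDn}(a), which gives $g\cdot x = \frac{dx-b}{-cx+a}$ for $g = \begin{pmatrix} a & b \\ c & d\end{pmatrix} \in \cG$. Subtracting $x$ and clearing the common denominator yields
\[ g\cdot x - x = \frac{cx^2 + (d-a)x - b}{a - cx}. \]
Next, recalling the elementary observations made just after Definition \ref{GenIwahori} --- that $g \in \cG$ forces $|a| = |d| = 1$, $|b| \leq 1$ and $|c| < 1$ --- I would note that $a - cx = a(1 - (c/a)x)$ is a unit in $\cO(\bD) = K\langle x\rangle$: indeed $|(c/a)x|_{\bD} = |c| < 1$, so $1 - (c/a)x$ is a unit with Gauss-norm-$1$ inverse, and hence $|a - cx|_{\bD} = |(a-cx)^{-1}|_{\bD} = 1$. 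Since the Gauss norm on $K\langle x\rangle$ is multiplicative, it follows that
\[ |g\cdot x - x|_{\bD} = |cx^2 + (d-a)x - b|_{\bD} = \max\{|c|, \, |a-d|, \, |b|\}. \]

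Finally, since $r > \varpi$ we have $\varpi/r < 1$, so unwinding Definition \ref{Beta}(a): an element $g \in \cG$ lies in $\cG_r$ exactly when $\max\{|c|,|a-d|,|b|\} < \varpi/r$, i.e.\ when $|b| < \varpi/r$, $|c| < \varpi/r$ and $|a-d| < \varpi/r$ hold simultaneously, which is precisely the claimed description (note that each of these conditions already implies the corresponding bound needed for membership in $\cG$). There is no genuine obstacle here; the only point requiring a little care is the computation of $|(a-cx)^{-1}|_{\bD}$, which is handled by the geometric-series estimate $|(c/a)x|_{\bD} = |c| < 1$ recorded above.
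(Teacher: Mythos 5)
Your proof is correct and follows essentially the same route as the paper: compute $g\cdot x - x$ as a rational function via Lemma \ref{GdotDn}(a), observe $|a - cx|_{\bD} = 1$, and then use multiplicativity of the Gauss norm on $\bD$ to read off $|g\cdot x - x|_{\bD} = \max\{|b|,|c|,|a-d|\}$. The only cosmetic difference is that you justify $|a-cx|_{\bD}=1$ directly via a geometric-series argument, whereas the paper cites Corollary \ref{rnormofgdxn}, which records the same fact.
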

\begin{proof} Let $g = \begin{pmatrix} a & b \\ c & d\end{pmatrix} \in \cG$. Since $|c| < 1$, the invertibility of $g$ in $M_2(K^\circ)$ forces $|a| = |d| = 1$. By Corollary \ref{rnormofgdxn}, we have $|a - c x|_{\bD} = 1$. Next, using Lemma \ref{GdotDn}(a) we have
\[g\cdot x - x = \frac{c x^2 + (d-a)x - b}{-cx + a}\quad.\]
Since $|\cdot|_{\bD}$ is multiplicative, we conclude that $|g \cdot x - x|_{\bD} = |cx^2 + (d-a)x -b |_{\bD} = \max\{|c|, |d-a|, |b|\}$. Hence $g \in \cG_r$ if and only if $|b|, |c|, |d-a|$ are all strictly less than $\varpi/r$ as claimed.
\end{proof}
The main reason for Definition \ref{Beta} comes from the following

\begin{prop}\label{SigmaRho} Let $X$ be a $\partial_x/r$-admissible affinoid subdomain of $\bD$. Then
\[ \sigma_r(\beta(g)) = \rho(g) \qmb{for all} g \in \cG_r \cap \cG_X.\]
\end{prop}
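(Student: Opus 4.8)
The plan is to show that the two bounded $K$-linear operators $\sigma_r(\beta(g))$ and $\rho(g)$ on $\cO(X)$ agree by checking they agree on the dense subalgebra $K[x] \subseteq \cO(X)$, and in fact it suffices to check agreement on the powers $x^k$ for $k \geq 0$. Since both sides are $K$-linear and continuous, and since $K[x]$ is dense in $\cO(X)$ (recall $X \subseteq \bD$), this reduces the problem to a purely algebraic identity in $\cO(X)$.

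First I would record what each operator does to $x^k$. On the one hand, $\rho(g)(x^k) = g \cdot x^k = (g \cdot x)^k$ by Lemma \ref{AutAOD}(a), since $\rho(g)$ is the $K$-algebra automorphism $f \mapsto f(g^{-1} z)$. On the other hand, using Lemma \ref{ActionOnO}(a) we have $\sigma_r(f)(h) = fh$ and $\sigma_r(\partial) = \partial_x$, so since $\sigma_r$ is a bounded $K$-algebra homomorphism and $\beta(g) = \sum_{n \geq 0} (g\cdot x - x)^n \partial^{[n]}$ converges in $\cD_r(\bD)$ (this convergence was verified just before the statement, using Lemma \ref{nFacVarpi} and the defining inequality $|g \cdot x - x|_\bD < \varpi/r$), we get
\[ \sigma_r(\beta(g))(x^k) = \sum_{n=0}^\infty (g\cdot x - x)^n \hsp \partial_x^{[n]}(x^k) = \sum_{n=0}^{k} \binom{k}{n} (g\cdot x - x)^n x^{k-n}.\]
The right-hand side is just the binomial expansion of $\big((g\cdot x - x) + x\big)^k = (g\cdot x)^k$. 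Hence $\sigma_r(\beta(g))(x^k) = (g\cdot x)^k = \rho(g)(x^k)$ for all $k \geq 0$, and the two operators agree on $K[x]$.

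To finish, I would invoke boundedness and density: $\sigma_r$ is bounded by Lemma \ref{ActionOnO}(a) and $\beta(g) \in \cD_r(\bD)$ maps to $\cD_r(X)$ under the restriction homomorphism of Proposition \ref{DbRing}(b), so $\sigma_r(\beta(g)) \in \cB(\cO(X))$; likewise $\rho(g) \in \cB(\cO(X))^\times$ by Lemma \ref{AutAOD}(c). Two bounded $K$-linear maps on a $K$-Banach space that agree on a dense subspace are equal, so $\sigma_r(\beta(g)) = \rho(g)$ on all of $\cO(X)$. One small point to be careful about: I should make sure the restriction of $\beta(g)$ computed in $\cD_r(\bD)$ and then restricted to $\cD_r(X)$ is genuinely the element $\sum_n (g\cdot x - x)^n \partial^{[n]}$ interpreted in $\cD_r(X)$ — this is immediate because the restriction map $\cD_r(\bD) \to \cD_r(X)$ from Proposition \ref{DbRing}(b) sends $\sum a_n \partial^n$ to $\sum (a_n)_{|X} \partial^n$ and is continuous. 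There is essentially no serious obstacle here; the only thing requiring a line of care is bookkeeping the convergence and the fact that $\sigma_r$ commutes with the convergent sum, which follows from continuity of $\sigma_r$ together with the fact that $\sigma_r$ is multiplicative and $K$-linear on the dense subalgebra $\cD(X)$ and extends continuously.
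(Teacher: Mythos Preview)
Your computation that $\sigma_r(\beta(g))(x^k) = (g\cdot x)^k = \rho(g)(x^k)$ is correct and matches the paper. However, your density claim is wrong: $K[x]$ is \emph{not} dense in $\cO(X)$ for a general affinoid subdomain $X \subseteq \bD$. For instance, if $X = \Sp K\langle x, 1/(x-\alpha)\rangle$ is a cheese, then $1/(x-\alpha) \in \cO(X)$ cannot be approximated by polynomials in $x$. Knowing only that two bounded $K$-linear maps agree on $K[x]$ gives you agreement on the closure of $K[x]$, which is the image of $\cO(\bD)$ in $\cO(X)$ --- a proper subspace in general.

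The paper repairs exactly this gap by first proving an extra fact you skipped: $\sigma_r(\beta(g))$ is a \emph{$K$-algebra homomorphism} on $\cO(X)$. This is not automatic (an element of $\cB(\cO(X))$ need not be multiplicative), and the paper checks it via the Leibniz rule for divided powers:
\[\left(\sum_{n\geq 0} w^n \partial_x^{[n]}\right)(ab) = \left(\sum_{i\geq 0} w^i \partial_x^{[i]}(a)\right)\left(\sum_{j\geq 0} w^j \partial_x^{[j]}(b)\right).\]
Once this is known, $\sigma_r(\beta(g))\rho(g)^{-1}$ is a continuous $K$-algebra endomorphism of $\cO(X)$ fixing $\cO(\bD)$ pointwise. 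Since $\cO(X)$ contains a dense $\cO(\bD)$-subalgebra generated by quotients $a/b$ with $a,b \in \cO(\bD)$, any algebra endomorphism fixing $\cO(\bD)$ must fix all such quotients and hence all of $\cO(X)$. Your binomial argument is the right core computation, but multiplicativity of $\sigma_r(\beta(g))$ is the missing ingredient that lets you pass from $\cO(\bD)$ to $\cO(X)$.
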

\begin{proof} Fix $g \in \cG_r \cap \cG_X$ and write $w := g\cdot x - x$. Then for every $a,b \in \cO(X)$, 
\[ \left(\sum\limits_{n=0}^\infty w^n \partial_x^{[n]}\right)(ab) = \sum\limits_{n=0}^\infty w^n\sum_{i=0}^n \partial^{[i]}_x(a) \partial_x^{[n-i]}(b) = \left(\sum\limits_{i=0}^\infty w^i \partial_x^{[i]}(a) \right) \left(\sum\limits_{j=0}^\infty w^j \partial_x^{[j]}(b)\right).\]
It follows that $\sigma_r(\beta(g)) : \cO(X) \to \cO(X)$ is a \emph{$K$-algebra homomorphism}, so for every integer $m \geq 0$ we have
\[\sigma_r(\beta(g))(x^m) = \sum\limits_{n=0}^\infty w^n \binom{m}{n} x^{m-n} = (w+x)^m = (g\cdot x)^m = \rho(g)(x^m).\]
Because $\sigma_r(\beta(g))$ and $\rho(g)$ are both continuous $K$-linear operators on $\cO(X)$, we conclude that $\sigma_r(\beta(g))\rho(g)^{-1} : \cO(X) \to \cO(X)$  is a $K$-algebra homomorphism which fixes $K\langle x \rangle = \cO(\bD) \subseteq \cO(X)$ pointwise. Since $\cO(X)$ contains a dense $\cO(\bD)$-subalgebra generated by elements of the form $a/b$ with $a,b \in \cO(\bD)$, the continuity of $\sigma_r(\beta(g))$ and $\rho(g)$ implies that $\sigma_r(\beta(g))\rho(g)^{-1}$ in fact fixes all of $\cO(X)$.
\end{proof}

\begin{thm}\label{GrGxTriv} Let $X$ be a $\partial_x/r$-admissible affinoid subdomain of $\bD$. Then
\[\beta : \cG_r \cap \cG_X \longrightarrow \cD_r(X)^\times\]
is a $\cG_X$-equivariant trivialisation.
\end{thm}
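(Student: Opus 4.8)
The plan is to verify the three defining properties that $\beta$ must satisfy: it is a group homomorphism into $\cD_r(X)^\times$, it trivialises the $\cG_X$-action in the sense of Definition \ref{Triv}, and it is $\cG_X$-equivariant in the sense of Definition \ref{DefnTriv}(b). The key observation that unlocks all three is Proposition \ref{SigmaRho}: it identifies $\sigma_r(\beta(g))$ with $\rho(g)$ for every $g \in \cG_r \cap \cG_X$, and by Lemma \ref{ActionOnO}(b) the map $\sigma_r : \cD_r(X) \to \cB(\cO(X))$ is \emph{injective}. So it suffices to check each property after applying $\sigma_r$, where the statements become statements about the honest endomorphism algebra $\cB(\cO(X))$ and the group homomorphism $\rho : \cG_X \to \cB(\cO(X))^\times$ from Lemma \ref{AutAOD}(c).

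First I would record that $\beta$ does land in $\cD_r(X)^\times$: the restriction map $\cD_r(\bD) \to \cD_r(X)$ from Proposition \ref{DbRing}(b) carries $\beta(g)$ (which lies in $\cD_r(\bD)$ by the estimate following Definition \ref{Beta}) to an element of $\cD_r(X)$, and since $\sigma_r(\beta(g)) = \rho(g)$ is invertible in $\cB(\cO(X))$ with inverse $\rho(g^{-1}) = \sigma_r(\beta(g^{-1}))$, injectivity of $\sigma_r$ forces $\beta(g)\beta(g^{-1}) = \beta(g^{-1})\beta(g) = 1$ in $\cD_r(X)$; in particular $\beta(g) \in \cD_r(X)^\times$. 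Next, for the homomorphism property, I note that for $g, h \in \cG_r \cap \cG_X$ we have $\sigma_r(\beta(g)\beta(h)) = \sigma_r(\beta(g)) \circ \sigma_r(\beta(h)) = \rho(g) \circ \rho(h) = \rho(gh) = \sigma_r(\beta(gh))$, using that $\sigma_r$ and $\rho$ are both multiplicative; injectivity of $\sigma_r$ then gives $\beta(g)\beta(h) = \beta(gh)$. (One should first check that $\cG_r \cap \cG_X$ is actually a subgroup of $\cG_X$, which is immediate from Lemma \ref{GrCalc} since the conditions $|b|, |c|, |a-d| < \varpi/r$ defining $\cG_r$ are stable under multiplication and inversion in $\cG$ — this is a short matrix computation.)

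For the trivialisation property, I must show that conjugation by $\beta(g) \in \cD_r(X)^\times$ on $\cD_r(X)$ coincides with the automorphism $\rho_r(g)$ from Proposition \ref{NormOfRhor}, i.e. $\beta(g) Q \beta(g)^{-1} = g \cdot Q$ for all $Q \in \cD_r(X)$ and all $g \in \cG_r \cap \cG_X$. Applying $\sigma_r$ to the left-hand side gives $\sigma_r(\beta(g)) \circ \sigma_r(Q) \circ \sigma_r(\beta(g))^{-1} = \rho(g) \circ \sigma_r(Q) \circ \rho(g)^{-1}$, which equals $\sigma_r(g \cdot Q)$ by Lemma \ref{gQg}; injectivity of $\sigma_r$ concludes. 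Finally, for $\cG_X$-equivariance I must verify $\beta({}^h g) = h \cdot \beta(g)$ for $h \in \cG_X$ and $g \in \cG_r \cap \cG_X$, where $h \cdot \beta(g) = \rho_r(h)(\beta(g))$. Here I should first check that $\cG_r$ is normal in $\cG$ (so that ${}^h g$ genuinely lies in $\cG_r \cap \cG_X$): using Lemma \ref{GrCalc} this is the statement that the ideal-like conditions $|b|,|c| < \varpi/r$ and $|a-d| < \varpi/r$ cut out a normal subgroup, which again is a direct matrix calculation — conjugating $g = \begin{pmatrix} a & b \\ c & d\end{pmatrix}$ by an element of $\cG$ preserves these valuative bounds because $\cG \subseteq M_2(K^\circ)$. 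Granting this, I apply $\sigma_r$ to both sides: $\sigma_r(\beta({}^h g)) = \rho({}^h g) = \rho(h) \rho(g) \rho(h)^{-1}$, while $\sigma_r(\rho_r(h)(\beta(g))) = \sigma_r(h \cdot \beta(g)) = \rho(h) \circ \sigma_r(\beta(g)) \circ \rho(h)^{-1} = \rho(h) \circ \rho(g) \circ \rho(h)^{-1}$ by Lemma \ref{gQg} again; injectivity of $\sigma_r$ finishes.

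**Main obstacle.** The conceptual content is entirely carried by Proposition \ref{SigmaRho} and the injectivity in Lemma \ref{ActionOnO}(b), both already available, so no step is genuinely hard. The one place requiring care rather than invocation is checking that $\cG_r$ is a normal subgroup of $\cG$ (equivalently that $\cG_r \cap \cG_X$ is a subgroup of $\cG_X$ normal under $\cG_X$-conjugation) via the explicit description in Lemma \ref{GrCalc}; this is a routine but slightly fiddly computation with $2\times 2$ matrices over $K^\circ$, tracking that the off-diagonal entries and the difference of diagonal entries stay in the open disc of radius $\varpi/r$ under products, inverses, and conjugation. I expect this bookkeeping to be the only real work in the proof.
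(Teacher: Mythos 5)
Your proof is correct and, for parts of the verification corresponding to the homomorphism, trivialisation, and equivariance properties, it matches the paper's argument precisely: reduce each identity to its image under $\sigma_r$, use Proposition~\ref{SigmaRho} to rewrite $\sigma_r(\beta(g))$ as $\rho(g)$, invoke Lemma~\ref{gQg} where conjugation appears, and conclude by the injectivity of $\sigma_r$ from Lemma~\ref{ActionOnO}(b). The one place you diverge from the paper is the normality of $\cG_r$ in $\cG$ — the step you flag as the ``main obstacle.'' You propose to push through the explicit matrix conditions of Lemma~\ref{GrCalc}; this does close, and the cleanest way to see it is to observe that $g \in \cG_r$ if and only if $g - dI$ (with $d$ the bottom-right entry of $g$) has all four entries of norm $< \varpi/r$, and conjugation by $h \in \mathbb{GL}_2(K^\circ)$ carries $g - dI$ to $h g h^{-1} - d I$, preserving that smallness condition; one then reads off the three required bounds on the entries of $hgh^{-1}$. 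The paper avoids matrix bookkeeping entirely with a more structural argument: it writes $\cG_r = \bigcup_{|s| < \varpi/r} \cG(s)$ where $\cG(s) := \{g \in \cG : |g\cdot x - x|_\bD \leq |s|\}$, and observes that $\cG(s)$ is precisely the kernel of the induced $\cG$-action on the quotient $(\overline{K}^\circ/s\overline{K}^\circ)[x]$ — normal for free, and the union of a chain of normal subgroups is normal. That argument also delivers the subgroup property of $\cG_r$ (which you raise separately) at no extra cost, since a kernel is automatically a subgroup. Both routes are sound; the paper's is somewhat slicker and scales better if one wanted analogous statements in higher rank, while yours is more concrete.
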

\begin{proof} According to Definition \ref{Triv} and \ref{DefnTriv}(b), we have to verify that
\be\item $\beta$ is a group homomorphism into $\cD_r(X)^\times$,
\item $\beta(h) \hsp a \hsp \beta(h)^{-1} = h \cdot a$ for all $h \in H := \cG_r \cap \cG_X$ and $a \in \cD_r(X)$,
\item $H$ is normal in $\cG_X$, and
\item $\beta(ghg^{-1}) = g \cdot \beta(h)$ for all $g \in \cG_X$ and $h \in H$.
\ee
(a) Using Proposition \ref{SigmaRho}, we note that
\[\sigma_r(\beta(g)\beta(h)) = \sigma_r(\beta(g))\sigma_r(\beta(h)) = \rho(g)\rho(h) = \rho(gh) = \sigma_r(\beta(gh))\]
for all $g,h \in H$. Since $\sigma_r$ is injective by Lemma \ref{ActionOnO}(b), we conclude that the map $\beta : \cG_r \to \cD_r(X)$ is multiplicative. Since $\beta(1) = 1$ it follows that the image of $\beta$ is contained in $\cD_r(X)^\times$.

(b) Using Proposition \ref{SigmaRho} and Lemma \ref{gQg} we see that  
\[\sigma_r(\beta(g) \hsp a \hsp \beta(g)^{-1}) = \rho(g) \hsp \sigma_r(a) \hsp \rho(g)^{-1} = \sigma_r(g \cdot a) \qmb{for all} a \in \cD_r(X).\] The result follows from the injectivity of $\sigma_r$ again.  

(c) It suffices to prove that $\cG_r$ is normal in $\cG$. For each $0 \neq s \in \overline{K}^\circ$, let $\cG(s) := \{g \in \cG : |g\cdot x - x| \leq |s|\}$; since $|\overline{K}^\times|$ is dense in $\bR$, we see that
\[ \cG_r = \underset{|s| < \varpi/r}{\bigcup}{} \cG(s).\]
Hence it suffices to see that each $\cG(s)$ is normal in $\cG$. However, the action of $\cG$ on $\overline{K}^\circ \langle x \rangle$ is $\overline{K}^\circ$-linear and therefore induces an action of $\cG$ on the polynomial ring $(\overline{K}^\circ / s \overline{K}^\circ)[x]$. Then $\cG(s)$ is the kernel of this action and is therefore normal in $\cG$.

(d) $\sigma_r(\beta(ghg^{-1})) = \rho(ghg^{-1}) = \rho(g)\circ\rho(h)\circ\rho(g)^{-1}= \rho(g) \circ\sigma_r(\beta(h)) \circ\rho(g)^{-1}$ by Proposition \ref{SigmaRho}. This last equals $\sigma_r(g \cdot \beta(h))$ by Lemma \ref{gQg}. Once again the result follows because $\sigma_r$ is injective. \end{proof}
It follows from Theorem \ref{GrGxTriv} and Lemma \ref{RingSGN} that whenever $X$ is a $\partial_x/r$-admissible affinoid subdomain of $\bD$, we may form the crossed product
\[ \cD_r(X) \hsp \overset{\hsp \beta}{\underset{\cG_r \cap \cG_X}{\rtimes}} \cG_X.\]
\begin{cor}\label{CPactsOnO} Let $X$ be a $\partial_x/r$-admissible affinoid subdomain of $\bD$. Then the action of $\cD_r(X) \rtimes \cG_X$ on $\cO(X)$ descends to an action of $\cD_r(X) \hsp \rtimes_{\cG_r \cap \cG_X}^{\beta} \cG_X$.
\end{cor}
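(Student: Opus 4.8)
The plan is to deduce Corollary \ref{CPactsOnO} from the universal property of the crossed product together with the trivialisation result just established. First I would recall the setup: by the Corollary preceding Definition \ref{Beta}, the natural action of $\cD_r(X) \rtimes \cG_X$ on $\cO(X)$ is encoded by the $K$-algebra homomorphism $\sigma_r \rtimes \rho : \cD_r(X) \rtimes \cG_X \to \cB(\cO(X))$ obtained from $\sigma_r$ and $\rho$ via Lemma \ref{SkewGpRngUniv}. To descend this to the crossed product $\cD_r(X) \rtimes_{\cG_r \cap \cG_X}^\beta \cG_X$, which by Definition \ref{DefnTriv}(a) is the quotient of $\cD_r(X) \rtimes \cG_X$ by the left ideal generated by $\{\beta(g)^{-1}g - 1 : g \in \cG_r \cap \cG_X\}$, it suffices to check that $\sigma_r \rtimes \rho$ kills each generator $\beta(g)^{-1}g - 1$.

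Next I would carry out precisely that check. For $g \in \cG_r \cap \cG_X$ we have $(\sigma_r \rtimes \rho)(\beta(g)^{-1} g) = \sigma_r(\beta(g))^{-1} \rho(g)$ by the defining properties of $\sigma_r \rtimes \rho$ in Lemma \ref{SkewGpRngUniv}. But Proposition \ref{SigmaRho} tells us exactly that $\sigma_r(\beta(g)) = \rho(g)$, so this equals $\rho(g)^{-1}\rho(g) = 1 = (\sigma_r \rtimes \rho)(1)$. Hence $\sigma_r \rtimes \rho$ vanishes on $\beta(g)^{-1}g - 1$ for all $g$ in the normal subgroup $\cG_r \cap \cG_X$, and since it is a ring homomorphism it vanishes on the whole left ideal these elements generate. (We already know from Theorem \ref{GrGxTriv} and Lemma \ref{RingSGN} that $\cD_r(X) \rtimes_{\cG_r \cap \cG_X}^\beta \cG_X$ is an associative ring, so the quotient makes sense.) Therefore $\sigma_r \rtimes \rho$ factors through the quotient map $\cD_r(X) \rtimes \cG_X \twoheadrightarrow \cD_r(X) \rtimes_{\cG_r \cap \cG_X}^\beta \cG_X$, yielding the desired $K$-algebra homomorphism to $\cB(\cO(X))$, i.e. the promised action of $\cD_r(X) \rtimes_{\cG_r \cap \cG_X}^\beta \cG_X$ on $\cO(X)$ extending the $\cD_r(X)$- and $\cG_X$-actions.

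There is essentially no serious obstacle here: all the real work — the convergence of $\beta(g)$, the identity $\sigma_r(\beta(g)) = \rho(g)$, and the verification that $\beta$ is a $\cG_X$-equivariant trivialisation — has already been done in Proposition \ref{SigmaRho} and Theorem \ref{GrGxTriv}. The only mild point worth stating carefully is that $\sigma_r \rtimes \rho$ is a homomorphism of \emph{left} $\cD_r(X) \rtimes \cG_X$-modules as well as of rings, so that vanishing on a generating set of the defining left ideal forces vanishing on the whole ideal; this is immediate. So the proof will be short: invoke the Corollary after Lemma \ref{gQg} for the existence of $\sigma_r \rtimes \rho$, invoke Proposition \ref{SigmaRho} to see the generators of the defining ideal lie in its kernel, and conclude by the universal property of the quotient.

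\begin{proof} By the Corollary following Lemma \ref{gQg}, the natural action of $\cD_r(X) \rtimes \cG_X$ on $\cO(X)$ is given by the $K$-algebra homomorphism $\sigma_r \rtimes \rho : \cD_r(X) \rtimes \cG_X \to \cB(\cO(X))$, which by Lemma \ref{SkewGpRngUniv} satisfies $(\sigma_r \rtimes \rho)(Q \hsp e) = \sigma_r(Q)$ for $Q \in \cD_r(X)$ and $(\sigma_r \rtimes \rho)(1 \hsp g) = \rho(g)$ for $g \in \cG_X$. Write $H := \cG_r \cap \cG_X$; by Theorem \ref{GrGxTriv} this is a $\cG_X$-equivariant trivialisation, so $\cD_r(X) \rtimes^\beta_H \cG_X$ is the quotient of $\cD_r(X) \rtimes \cG_X$ by the left ideal generated by $\{\beta(g)^{-1} g - 1 : g \in H\}$, and is an associative ring by Lemma \ref{RingSGN}(a). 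For each $g \in H$ we compute, using that $\sigma_r \rtimes \rho$ is a ring homomorphism together with Proposition \ref{SigmaRho},
\[ (\sigma_r \rtimes \rho)(\beta(g)^{-1} g - 1) = \sigma_r(\beta(g))^{-1} \rho(g) - 1 = \rho(g)^{-1} \rho(g) - 1 = 0.\]
Since $\sigma_r \rtimes \rho$ is in particular a morphism of left $\cD_r(X) \rtimes \cG_X$-modules, it vanishes on the entire left ideal generated by these elements, and hence factors through the quotient map $\cD_r(X) \rtimes \cG_X \twoheadrightarrow \cD_r(X) \rtimes^\beta_H \cG_X$. The resulting $K$-algebra homomorphism $\cD_r(X) \rtimes^\beta_{\cG_r \cap \cG_X} \cG_X \to \cB(\cO(X))$ gives the required action of $\cD_r(X) \rtimes^\beta_{\cG_r \cap \cG_X} \cG_X$ on $\cO(X)$.
\end{proof}
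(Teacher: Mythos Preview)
Your proof is correct and follows essentially the same approach as the paper, which simply cites Proposition \ref{SigmaRho}; you have just unpacked the one-line deduction in full detail.
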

\begin{proof} This follows immediately from Proposition \ref{SigmaRho}. \end{proof}

To finish $\S \ref{IwahoriAction}$, we record a formula which computes the effect of applying a twisting automorphism $\theta_{u,d}$ from Theorem \ref{ThetaU} to the special infinite-order differential operators of the form $\beta(g)$. Recall the functions $h_{u,d}^{[m]}$ from Definition \ref{HunDefn}.

\begin{prop}\label{TwistedCocycle} Let $X$ be a $\partial_x/r$-admissible affinoid subdomain of $\bD$, let $g \in \cG_r \cap \cG_X$, let $u \in \cO(X)^\times$ and suppose that $p \nmid d$.
\be \item The element $c_{u,d}(g) := \sum\limits_{m=0}^\infty (g \cdot x - x)^m h_{u,d}^{[m]}$ lies in $1 + \cO(X)^{\circ\circ}.$
\item Inside $\cD_r(X)$ we have
\[ \theta_{u,d}(\beta(g)) = c_{u,d}(g) \hsp \beta(g).\]
\item We have $c_{uv,d}(g) = c_{u,d}(g) \, c_{v,d}(g)$ for any other $v \in \cO(X)^\times$. 
\item We have $c_{u,d}(g)^d = \frac{u}{g \cdot u}$ in $\cO(X)^\times$.
\ee
\end{prop}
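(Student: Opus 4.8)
The four parts are closely linked and I will prove them in order, with each part feeding into the next. For part (a), the key point is that $g \in \cG_r$ forces $|g\cdot x - x|_{\bD} < \varpi/r$ by Definition \ref{Beta}(a), while $|h_{u,d}^{[m]}|_X \leq (r(X)/\varpi)^m$ by Proposition \ref{HunEst}. Since $X$ is $\partial_x/r$-admissible we have $r(X) < r$, so the $m$-th term has norm at most $(|g\cdot x - x|_{\bD} \cdot r(X)/\varpi)^m < (r(X)/r)^m < 1$ for $m \geq 1$, whence the sum converges in $\cO(X)$ and lies in $1 + \cO(X)^{\circ\circ}$ (the $m=0$ term being $h_{u,d}^{[0]} = 1$).

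For part (b), the cleanest route is to compute $\theta_{u,d}(\beta(g))$ directly. Pick any \'etale cover $Z \to X$ with a unit $z \in \cO(Z)$ such that $z^d = u$, so that $\theta_{u,d}$ is conjugation by $z$ on $\cD_r(Z)$ (by Lemma \ref{NearlyInner}, after extending $\theta_{u,d}$ to $\cD_r(Z)$ via Theorem \ref{ThetaU}). Then $\theta_{u,d}(\beta(g)) = z \beta(g) z^{-1}$. Using the expansion $\beta(g) = \sum_n (g\cdot x - x)^n \partial_x^{[n]}$ together with the Leibniz-type identity $\partial_x^{[n]} z^{-1} = \sum_{\alpha=0}^n \partial_x^{[n-\alpha]}(z^{-1}) \partial_x^{[\alpha]}$ from the proof of Proposition \ref{PnStdForm}, and the fact that $h_{u,d}^{[n-\alpha]} = z\,\partial_x^{[n-\alpha]}(z^{-1})$, one rearranges $z \beta(g) z^{-1}$ as a product of two infinite sums. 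One sum is $c_{u,d}(g) = \sum_m (g\cdot x - x)^m h_{u,d}^{[m]}$; the other should be recognised as $\beta(g)$ itself after reindexing, using that $(g\cdot x - x)$ is a scalar-like factor (a fixed element of $\cO(X)$) and that $\partial_x^{[i]}\partial_x^{[j]} = \binom{i+j}{i}\partial_x^{[i+j]}$. Care is needed to justify rearranging the double series, but this is legitimate since everything converges absolutely in the Banach algebra $\cD_r(Z)$ under our admissibility hypotheses; alternatively one can verify the identity on a dense subalgebra and pass to the limit.

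Part (c) follows immediately from $\theta_{uv,d} = \theta_{u,d} \circ \theta_{v,d}$ (Lemma \ref{TwistsMultiply}) combined with part (b): applying both sides to $\beta(g)$ gives $c_{uv,d}(g)\beta(g) = \theta_{u,d}(c_{v,d}(g)\beta(g)) = c_{v,d}(g)\,c_{u,d}(g)\beta(g)$, where the last equality uses that $\theta_{u,d}$ is $\cO(X)$-linear (so it fixes the coefficient $c_{v,d}(g) \in \cO(X)$) and part (b) again. Cancelling the unit $\beta(g)$ yields the claim. For part (d), the natural approach is to apply part (c) iteratively: $c_{u,d}(g)^d = c_{u^d, d}(g)$ by induction via (c). It therefore suffices to compute $c_{u^d,d}(g)$, and here $\theta_{u^d,d}$ is conjugation by $u$ in $\cD(X)$ itself (Lemma \ref{ThetaDthPower}, or Lemma \ref{NearlyInner}), so by part (b), $u\,\beta(g)\,u^{-1} = c_{u^d,d}(g)\,\beta(g)$. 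Applying $\sigma_r$ and using $\sigma_r(\beta(g)) = \rho(g)$ (Proposition \ref{SigmaRho}) gives $\rho(g) \circ \sigma_r(u^{-1}) = \sigma_r(c_{u^d,d}(g)^{-1}) \circ \rho(g) \circ \sigma_r(u^{-1})$... more directly: $\sigma_r(u)\,\rho(g)\,\sigma_r(u)^{-1} = \sigma_r(c_{u^d,d}(g))\,\rho(g)$, and evaluating both sides of $\sigma_r(u\beta(g)u^{-1}) = \sigma_r(c_{u^d,d}(g)\beta(g))$ at the constant function $1 \in \cO(X)$: the left side gives $u \cdot \rho(g)(u^{-1}) \cdot \rho(g)(1) = u \cdot (g\cdot u^{-1})$, while the right side gives $c_{u^d,d}(g) \cdot \rho(g)(1) = c_{u^d,d}(g)$. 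Hence $c_{u^d,d}(g) = u/(g\cdot u)$, and so $c_{u,d}(g)^d = u/(g\cdot u)$ as required.

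\textbf{Main obstacle.} The crux is part (b): making the formal manipulation of the double series $z\beta(g)z^{-1}$ rigorous and correctly identifying the ``leftover'' factor as $\beta(g)$ rather than some twisted variant. The reindexing is slightly delicate because the coefficient $w := g\cdot x - x$ does not commute with $\partial_x$, so one must be careful about whether powers of $w$ end up on the left or interleaved; the saving grace is that in the expression $z \sum_n w^n \partial_x^{[n]} z^{-1}$ one first moves $z^{-1}$ leftward past the $\partial_x^{[n]}$ (producing the $h_{u,d}^{[\bullet]}$ coefficients) and only then contracts, so the $w$'s stay put. Once the bookkeeping in (b) is settled, (a), (c) and (d) are short.
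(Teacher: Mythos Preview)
Your proposal is correct and follows essentially the paper's approach for all four parts; the only divergence is in (b), where you take a slight detour.

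For (b), the paper works directly in $\cD_r(X)$ without passing to an \'etale cover. Since $\theta_{u,d}$ extends to a bounded $K$-algebra automorphism of $\cD_r(X)$ by Theorem~\ref{ThetaU} and is $\cO(X)$-linear, one simply applies it term-by-term to $\beta(g) = \sum_n w^n \partial^{[n]}$ (where $w := g\cdot x - x$) and invokes Proposition~\ref{PnStdForm} to get
\[
\theta_{u,d}(\beta(g)) = \sum_{n\ge 0} w^n \sum_{\alpha=0}^n h_{u,d}^{[n-\alpha]}\,\partial^{[\alpha]}
= \sum_{\alpha\ge 0} w^\alpha\Bigl(\sum_{m\ge 0} w^m h_{u,d}^{[m]}\Bigr)\partial^{[\alpha]}
= c_{u,d}(g)\,\beta(g).
\]
No cover is needed, and the identity $\partial_x^{[i]}\partial_x^{[j]} = \binom{i+j}{i}\partial_x^{[i+j]}$ plays no role. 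Your worry about $w$ not commuting with $\partial_x$ is unfounded: the reindexing only requires commuting $w^\alpha$ past $c_{u,d}(g)$, and both lie in the commutative ring $\cO(X)$. So the ``main obstacle'' you flag is in fact a non-issue. Parts (a), (c) and (d) match the paper's argument exactly.
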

\begin{proof} (a) Write $w := g \cdot x - x$. Since $g \in \cG_r$ and $X \subseteq \bD$, we know that 
\[|w|_X \leq |w|_{\bD} < \varpi/r\]
by Definition \ref{Beta}(a). Also, $|h_{u,d}^{[m]}|_X \leq (r(X)/\varpi)^m$ by Theorem \ref{HunEst}, hence
\[ \left\vert w^m h_{u,d}^{[m]} \right|_X < \left(\frac{\varpi}{r}\right)^m \cdot \left(\frac{r(X)}{\varpi}\right)^m = \left(\frac{r(X)}{r}\right)^m \]
which tends to zero as $m \to \infty$ because $r(X) < r$. We conclude that $c_{u,d}(g)$ converges in $\cO(X)$ to an element of $1 + \cO(X)^{\circ\circ} \subset \cO(X)^\times$.

(b) By Definition \ref{Beta}(b), we have $\beta(g) = \sum\limits_{n=0}^\infty w^n \partial^{[n]}$. Therefore
\[\begin{array}{lll} \theta_{u,d}(\beta(g)) &=& \sum\limits_{n=0}^\infty w^n \sum\limits_{\alpha=0}^n h_{u,d}^{[n-\alpha]} \partial^{[\alpha]} = \\
&=& \sum\limits_{\alpha=0}^\infty w^\alpha \left( \sum\limits_{n=\alpha}^\infty w^{n-\alpha} h_{u,d}^{[n-\alpha]}\right) \partial^{[\alpha]} = \\
&=& \sum\limits_{\alpha=0}^\infty w^\alpha \left(\sum\limits_{m=0}^\infty w^m h_{u,d}^{[m]}\right) \partial^{[\alpha]} = \\
&=& c_{u,d}(g) \hsp \beta(g) \end{array}\]
by Theorem \ref{PnStdForm}, as claimed. 

(c) Lemma \ref{TwistsMultiply} implies that $\theta_{uv,d} = \theta_{u,d} \circ \theta_{v,d}$ as automorphisms of $\cD_r(X)$. Applying part (b) to the units $uv$, $v$ and $u$ in turn shows that
\[ c_{uv,d}(g) \beta(g) = \theta_{uv,d}(\beta(g)) = \theta_{u,d}( \theta_{v,d}(\beta(g)) = \theta_{u,d}( c_{v,d}(g) \beta(g) ) = c_{v,d}(g) c_{u,d}(g) \beta(g)\]
since $\theta_{u,d}$ is $\cO(X)$-linear. The claimed identity now follows because $\beta(g)$  is a unit in $\cD_r(X)$ by Theorem \ref{GrGxTriv}.

(d) By Lemma \ref{NearlyInner} together with Proposition \ref{SigmaRho}, inside $\cB(\cO(X))$ we have
\[\sigma_r(\theta_{u^d,d}(\beta(g)) = \sigma_r( u  \, \beta(g) \, u^{-1} ) = u \, \sigma_r(\beta(g)) \, u^{-1} = u \, \rho(g) \, u^{-1}. \]
But $\theta_{u^d,d}(\beta(g)) = c_{u^d,d}(g) \beta(g) = c_{u,d}(g)^d \beta(g)$ by (b,c) above, so
\[ u \, \rho(g) \, u^{-1} = \sigma_r( c_{u,d}(g)^d \beta(g) ) = c_{u,d}(g)^d \sigma_r (\beta(g)) = c_{u,d}(g)^d \rho(g)\]
again by Proposition \ref{SigmaRho}. Apply these operators to $1 \in \cO(X)$ to find
\[ c_{u,d}(g)^d = c_{u,d}(g)^d \rho(g) (1) = (u \, \rho(g) \, u^{-1})(1) = u \, \rho(g)(u^{-1}) = u / g \cdot u. \qedhere\]\end{proof}

\subsection{Equivariant line bundles with connection on local Drinfeld space}\label{CompSect}
Recall the generalised Iwahori subgroup $\cG$ of $\mathbb{GL}_2(K)$ from Definition \ref{GenIwahori}; it consists of matrices with coefficients in $K$. We will now focus on certain groups of matrices with entries in $F$. 

\begin{notn}\label{CongSubs} \,
\be\item Let $I=\cG\cap \mathbb{GL}_2(\cO_F)$ be the \emph{Iwahori subgroup}.
\item Let $J$ be a closed subgroup of $I$.
\item For each $n \geq 0$, write $I_n := \ker(I\to \mathbb{GL}_2(\cO_F / \pi_F^n \cO_F))$ and $J_n :=J\cap I_n$.
\item Let $\bD_n / J$ denote the $G$-topology on $\bD$ whose open sets are the $J$-stable members of $\bD_n$ and whose coverings are the finite coverings.
\ee\end{notn}
The $G$-topology $\bD_n$ on $\bD$ was introduced at Definition \ref{CurlyDn}.
\begin{defn}\label{UpperHPhyp2} \,
\be \item $[\sL] \in \PicCon^I( \Upsilon)_{\tors}$ is such that $\omega[\sL] \in \PicCon(\Upsilon)^I[p']$,
\item $d \geq 1$ is the order of $\omega[\sL]$ in $\PicCon(\Upsilon)^I$, 
\item $e \in d \bZ$ is the order of $[\sL]$ in $\PicCon^I(\Upsilon)$.
\ee \end{defn}
Thus $\sL$ is an $I$-equivariant line bundle with flat connection on $\Upsilon = \bD \cap \Omega$, and we can find an $I$-equivariant $\cD$-linear isomorphism 
\[ \psi' : \sL^{\otimes e} \stackrel{\cong}{\longrightarrow} \cO.\]

Clearly, whenever $J$ is as in Definition \ref{CongSubs}, $J_n$ is a normal subgroup of $J$; note also that $J_n$ is a pro-$p$ group whenever $n \geq 1$ because the groups $I_1 / I_{m+1}$ are all finite $p$-groups. Recall now the normal subgroups $\cG_r$ of $\cG$ from Definition \ref{Beta}.

\begin{lem}\label{JG} $I_{n+1} \leq \cG_r$ whenever $r < \varpi/|\pi_F|^{n+1}$.
\end{lem}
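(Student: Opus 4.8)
\textbf{Proof proposal for Lemma \ref{JG}.} The plan is to reduce immediately to the explicit matrix description of $\cG_r$ provided by Lemma \ref{GrCalc}. Recall from Notation \ref{CongSubs}(c) that $I_{n+1}$ consists of those $g = \begin{pmatrix} a & b \\ c & d\end{pmatrix} \in I = \cG \cap \mathbb{GL}_2(\cO_F)$ that reduce to the identity modulo $\pi_F^{n+1}$; that is, $a - 1, d-1, b, c$ all lie in $\pi_F^{n+1}\cO_F$. In particular $|b| \leq |\pi_F|^{n+1}$, $|c| \leq |\pi_F|^{n+1}$, and $|a - d| \leq \max\{|a-1|, |d-1|\} \leq |\pi_F|^{n+1}$.

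Next I would feed these inequalities into Lemma \ref{GrCalc}. Since by hypothesis $r < \varpi / |\pi_F|^{n+1}$, we have $|\pi_F|^{n+1} < \varpi/r$, and therefore each of $|b|$, $|c|$, $|a-d|$ is strictly less than $\varpi/r$. As $I_{n+1} \subseteq I \subseteq \cG$ by Definition \ref{GenIwahori} and Notation \ref{CongSubs}(a), Lemma \ref{GrCalc} then gives $g \in \cG_r$ directly. Since $g$ was an arbitrary element of $I_{n+1}$, this establishes $I_{n+1} \leq \cG_r$.

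There is essentially no obstacle here: the content is entirely bookkeeping with the two descriptions of the relevant subgroups, and the only subtlety worth a sentence is to note that $|\cdot|$ on $F$ restricts from $|\cdot|$ on $K$ (so that the norm estimates used in Lemma \ref{GrCalc}, which are phrased over $K$, apply verbatim to matrices with $F$-entries). One could alternatively avoid Lemma \ref{GrCalc} altogether and argue directly from Definition \ref{Beta}(a): using Lemma \ref{GdotDn}(a) and the multiplicativity of $|\cdot|_{\bD}$ one computes $|g\cdot x - x|_{\bD} = |cx^2 + (d-a)x - b|_{\bD} = \max\{|c|, |d-a|, |b|\} \leq |\pi_F|^{n+1} < \varpi/r$ for $g \in I_{n+1}$, which is exactly the defining condition for $\cG_r$. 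Either route is a few lines.
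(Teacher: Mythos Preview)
Your proposal is correct and follows essentially the same approach as the paper: both unpack the definition of $I_{n+1}$ to get $|b|, |c|, |a-d| \leq |\pi_F|^{n+1} < \varpi/r$ and then invoke Lemma \ref{GrCalc}. The alternative route you sketch via Definition \ref{Beta}(a) is also fine, but the paper uses the first one.
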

\begin{proof} If $g = \begin{pmatrix} g_{11} & g_{12} \\ g_{21} & g_{22} \end{pmatrix}\in I_{n+1}$ then $g_{11}-1, g_{12}, g_{21}, g_{22}-1$ all lie in $\pi_F^{n+1} \cO_F$, which implies that 
\[\max\{|g_{12}|,|g_{21}|,|g_{11}-g_{22}|\} \leq |\pi_F|^{n+1} <  \varpi/r,\] 
by our assumption on $r$. Hence $g \in \cG_r$ by Lemma \ref{GrCalc}.
\end{proof}

Recall $\beta\colon \cG_r\cap \cG_X\to \sD_r(X)^\times$ from Theorem \ref{GrGxTriv} for $X\in \bD_w$ and $r>r(X)$. 
\begin{prop}\label{SheafDnJ} Let $n \geq 0$ and let $H$ be a closed normal subgroup of $J$. Then there is a sheaf $\sD_n \rtimes_{H_{n+1}} J$ on $\bD_n/J$ whose sections over $X \in \bD_n/J$ are given by $\sD_n(X) \rtimes_{H_{n+1}}^{\beta}J$. This sheaf has vanishing higher \v{C}ech cohomology.
\end{prop}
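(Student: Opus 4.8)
The plan is to build the presheaf $X \mapsto \sD_n(X) \rtimes_{H_{n+1}}^\beta J$ on $\bD_n/J$ by first noting that the assignment $X \mapsto \sD_n(X) \rtimes J$ is a presheaf (the restriction maps $\sD_n(X) \to \sD_n(Y)$ from Theorem \ref{NCDagTate}/Proposition \ref{DbRing} are $J$-equivariant by Corollary \ref{gDagTransform}, so they induce ring homomorphisms $\sD_n(X) \rtimes J \to \sD_n(Y) \rtimes J$ by the universal property Lemma \ref{SkewGpRngUniv}), and then quotienting out by the elements $\beta(h)^{-1}h - 1$ for $h \in H_{n+1}$ as in Definition \ref{DefnTriv}(a). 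The first thing to check is that this quotient makes sense sectionwise: for $X \in \bD_n/J$, the group $J$ stabilises $X$, so $H_{n+1} \leq J$ stabilises $X$, and $H_{n+1} \leq I_{n+1} \leq \cG_r$ for any $r < \varpi/|\pi_F|^{n+1}$ by Lemma \ref{JG}; since $\sD_n(X) = \cD^\dag_{\varpi/|\pi_F|^n}(X) = \colim_{c > \varpi/|\pi_F|^n} \cD_c(X)$, we may pick such an $r$ (indeed any $r$ slightly above $\varpi/|\pi_F|^n$ works since $|\pi_F| < 1$) and apply Theorem \ref{GrGxTriv} to get a $\cG_X$-equivariant — hence $J$-equivariant — trivialisation $\beta : \cG_r \cap \cG_X \to \cD_r(X)^\times \subseteq \sD_n(X)^\times$ restricting to one on $H_{n+1}$. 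Then $H_{n+1}$ is normal in $J$, so Lemma \ref{RingSGN} tells us $\sD_n(X) \rtimes_{H_{n+1}}^\beta J$ is an associative ring.

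\textbf{Functoriality of the crossed product.} The next step is to verify that the crossed product construction is compatible with restriction maps, so that $\sD_n \rtimes_{H_{n+1}} J$ is genuinely a presheaf. Given $Y \subseteq X$ in $\bD_n/J$, the restriction map $f : \sD_n(X) \to \sD_n(Y)$ is $J$-equivariant and, crucially, commutes with $\beta$: the operators $\beta(h) = \sum_m (h \cdot x - x)^m \partial^{[m]}$ are defined by an explicit formula in Definition \ref{Beta}(b) which is manifestly preserved by the restriction map $\cD_r(X) \to \cD_r(Y)$ (which sends $a_m \partial^m \mapsto a_m|_Y \partial^m$ and fixes $x$ up to restriction), so $f(\beta(h)) = \beta(h)$ for all $h \in H_{n+1}$. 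This puts us exactly in the setting of Hypothesis \ref{MorphCP} (with $S' = \sD_n(X)$, $S = \sD_n(Y)$, $G = J$, $H' = H = H_{n+1}$, $\beta' = \beta$), so \cite[Lemma 2.2.7]{EqDCap} gives a ring homomorphism $f \rtimes 1 : \sD_n(X) \rtimes_{H_{n+1}}^\beta J \to \sD_n(Y) \rtimes_{H_{n+1}}^\beta J$ extending $f$. Composition of such maps is evidently compatible, giving the presheaf structure.

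\textbf{Sheaf property and acyclicity.} For the sheaf condition and vanishing \v{C}ech cohomology, I would argue as follows. By Lemma \ref{RingSGN}(b), $\sD_n(X) \rtimes_{H_{n+1}}^\beta J$ is a crossed product $\sD_n(X) \ast (J/H_{n+1})$, so as a \emph{left $\sD_n(X)$-module} it is free on a set of coset representatives for $J/H_{n+1}$; moreover this module structure is compatible with restriction, so as a presheaf of left $\sD_n$-modules, $\sD_n \rtimes_{H_{n+1}} J \cong \bigoplus_{\gamma \in J/H_{n+1}} \sD_n \cdot \gamma$, a (possibly infinite, but the \v{C}ech complex of an arbitrary direct sum of sheaves is the direct sum of the \v{C}ech complexes, and cohomology commutes with direct sums here because \v{C}ech complexes of finite coverings are involved) direct sum of copies of the sheaf $\sD_n$. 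Since $\sD_n$ is a sheaf with vanishing higher \v{C}ech cohomology on $\bD_n$ by Theorem \ref{NCDagTate}, and restriction of a sheaf to the coarser $G$-topology $\bD_n/J$ preserves these properties (admissible coverings in $\bD_n/J$ are in particular admissible coverings in $\bD_n$), we conclude that $\sD_n \rtimes_{H_{n+1}} J$ is a sheaf of abelian groups with vanishing higher \v{C}ech cohomology. Being a sheaf of abelian groups compatible with the ring structure on sections, it is a sheaf of rings.

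\textbf{Main obstacle.} I expect the genuinely delicate point to be confirming that $\beta$ is insensitive to restriction, i.e. that $\beta$ computed in $\cD_r(X)$ and then restricted to $\cD_r(Y)$ agrees with $\beta$ computed directly in $\cD_r(Y)$ — this is what makes the $f \rtimes 1$ maps well defined and compatible, and hence what makes the whole thing a presheaf before one even talks about sheafifying. This reduces to the observation that $\beta(g)$ is given by a formula involving only $g \cdot x - x \in \cO(\bD) \subseteq \cO(X)$ and the divided powers $\partial^{[m]}$, all of which behave functorially under restriction; but one should be slightly careful that the series $\sum_m (g\cdot x - x)^m \partial^{[m]}$ converges in the \emph{same} $\cD_r$ for $X$ and for $Y$, which is fine since the convergence condition in Definition \ref{Beta} only involves $|g \cdot x - x|_{\bD}$, not $|\cdot|_X$. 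Everything else is bookkeeping with the universal properties already established in \S\ref{CPandSA}.
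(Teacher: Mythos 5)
Your proof is correct and takes essentially the same route as the paper: both pick $r$ with $\varpi/|\pi_F|^n < r < \varpi/|\pi_F|^{n+1}$ to invoke Lemma \ref{JG} and Theorem \ref{GrGxTriv}, form the crossed product via Lemma \ref{RingSGN}, and deduce the sheaf property and acyclicity from freeness of the crossed product over $\sD_n$ on $J/H_{n+1}$ together with Theorem \ref{NCDagTate}. Your treatment of functoriality (verifying the conditions of Hypothesis \ref{MorphCP} and appealing to \cite[Lemma 2.2.7]{EqDCap}) is in fact more explicit than the paper's one-line remark "This construction is functorial in $X$", and your observation that $\beta$ is insensitive to restriction because Definition \ref{Beta}(b) defines it as a universal element of $\cD_r(\bD)$ is exactly the right resolution of the point you flag.
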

\begin{proof} Let $X \in \bD_n/J$. Note that $\sD_n(X) = \cD^\dag_{\varpi/|\pi_F|^n}(X) = \bigcup\limits_{r > \varpi/ |\pi_F|^n} \cD_r(X)$ by Definition \ref{DagSite}(d). Let $r > 0$ satisfy
\[\varpi/|\pi_F|^n < r < \varpi/|\pi_F|^{n+1}.\]
Then because $r > \varpi/|\pi_F|^n \geq r(X)$, the $\cG_X$-equivariant trivialisation 
\[\beta : \cG_r \cap \cG_X \to \cD_r(X)^\times\]
is well-defined by Theorem \ref{GrGxTriv}. On the other hand $H_{n+1} \leq \cG_r$ by Lemma \ref{JG} because $r < \varpi/|\pi_F|^{n+1}$.  Since $X$ is $J$-stable, the restriction of $\beta$ to $H_{n+1}$ is a $J$-equivariant trivialisation, which means that we can form the crossed product $\cD_r(X) \rtimes_{H_{n+1}}^{\beta}J$ which is an associative ring by Lemma \ref{RingSGN}. Letting $r$ approach $\varpi/|\pi_F|^n$ from above and taking the colimit, we obtain the ring $\sD_n(X) \rtimes_{H_{n+1}}^\beta J$ which is still a crossed product of $\sD_n(X)$ with $J/H_{n+1}$.

This construction is functorial in $X \in \bD_n/J$, and in this way we obtain the presheaf $\sD_n \rtimes_{H_{n+1}} J$ on this $G$-topology. Because this presheaf is a free $\sD_n$-module on $J/H_{n+1}$, we conclude that it is a sheaf with vanishing higher \v{C}ech cohomology using Theorem \ref{NCDagTate}.
\end{proof}

Recall from Definition \ref{UnCovering} the truncation $\sL_n =  j_\ast(\sL_{\overline{\Upsilon_n}})$ of $\sL$ to $\Upsilon_n$, which is naturally a $\sD_n$-module on $\bD_n$ by Corollary \ref{LnDn}.

\begin{lem} \label{tensorLn}Suppose that $\sM$ is another $\cD_{\Upsilon}$-module satisfying the same hypotheses as $\sL$, and let $\sM_n=j_\ast(\sM_{\overline{\Upsilon_n}})$. 
\be \item $\sL_n\otimes_{(\cO_\bD)_{\overline{\Upsilon_n}}}\sM_n$ is naturally a $\sD_n$-module. 
\item Let $X \in \bD_n$, and $r>\varpi/|\pi_F|^n$. Then 
\[ \beta(g)\cdot(l\otimes v)=(\beta(g)\cdot l)\otimes (\beta(g)\cdot v)  \qmb{for all}  g\in \cG_r\cap \cG_X, l\in \sL_n(X), v\in \sM_n(X).\] 
\ee\end{lem}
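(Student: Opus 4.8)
\textbf{Proof proposal for Lemma \ref{tensorLn}.}

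The plan is to establish part (a) first by constructing the $\sD_n$-action on the tensor product sheaf via the diagonal action of differential operators, then to deduce part (b) as a compatibility statement about the operators $\beta(g)$.

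For part (a), I would argue locally on $\bD_n$, so fix $X \in \bD_n$. By Corollary \ref{iUjOSect} and Lemma \ref{iDagLocal} we know that $\sL_n|_X = (\cO_X)_{\overline{\Upsilon_n \cap X}} \dot{z}$ for an algebraic generator $\dot{z}$ (after passing to a cover, or working with the presentation $\sL_n(X) = \sD_n(X)/\sD_n(X) R_{\cS(u_n)}(u_n,d)$ from Corollary \ref{MnPres}), and similarly for $\sM_n$. The tensor product $\sL_n \otimes_{(\cO_\bD)_{\overline{\Upsilon_n}}} \sM_n$ is a sheaf of $(\cO_\bD)_{\overline{\Upsilon_n}}$-modules by Lemma \ref{DagTensor} applied with $U = \Upsilon_n$, hence is an $\cO_\bD$-module on which $\cD_\bD$ acts via the Leibniz rule: $\partial_x \cdot (l \otimes v) = (\partial_x \cdot l) \otimes v + l \otimes (\partial_x \cdot v)$. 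The key point is that this finite-order $\cD$-action extends to $\sD_n = \cD^\dag_{\varpi/|\pi_F|^n}$. One way to see this: on $\Upsilon_n$ both $\sL_n$ and $\sM_n$ become line bundles trivialised after adjoining roots $z$ of $u_n$ and $w$ of the rational function attached to $\sM$, so $\sL_n \otimes \sM_n$ is generated by $z \otimes w$, whose $\cD$-module structure is given by conjugating by $(zw)^{-1}$, i.e. by the twisting automorphism $\theta_{(u_n v_n)^{-1},\mathrm{lcm}}$. Since $p \nmid d$ and $p \nmid$ the order of $\sM$, Theorem \ref{ThetaU} (and Corollary \ref{ThetaUDag}) shows this automorphism extends to $\sD_n$, exactly as in the proof of Lemma \ref{DrActsOnOz} and Proposition \ref{OUrtuMod}. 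Running the argument of Proposition \ref{OUrtuMod} verbatim with $u_n v_n$ in place of $u_n$ gives the required $\sD_n$-action on $\sL_n \otimes \sM_n$, which is visibly compatible with restriction maps, hence globalizes to a sheaf.

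For part (b), fix $X \in \bD_n$ and $r > \varpi/|\pi_F|^n$, so that $r > r(X)$ since $\rho(X_\mathbf{C}) \geq |\pi_F|^n$ forces $r(X) = \varpi/\rho(X_\mathbf{C}) \leq \varpi/|\pi_F|^n$. Let $g \in \cG_r \cap \cG_X$. Recall $\beta(g) = \sum_{m \geq 0}(g \cdot x - x)^m \partial^{[m]} \in \cD_r(X)^\times \subseteq \sD_n(X)^\times$ from Definition \ref{Beta}. The claim is that $\beta(g)$ acts diagonally on $\sL_n(X) \otimes \sM_n(X)$. The cleanest route is to use Proposition \ref{SigmaRho}, which tells us that under the representation $\sigma_r$ on $\cO(X)$ the operator $\beta(g)$ acts as the ring homomorphism $\rho(g)$ given by the Möbius action; more precisely, $\beta(g)$ acts on $\cO(X)$ as a $K$-algebra homomorphism (this is the computation in the proof of Proposition \ref{SigmaRho}: $\sigma_r(\beta(g))(ab) = \sigma_r(\beta(g))(a)\,\sigma_r(\beta(g))(b)$, which follows from the Leibniz rule $\partial_x^{[m]}(ab) = \sum_{i}\partial_x^{[i]}(a)\partial_x^{[m-i]}(b)$). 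Since the $\sD_n$-action on $\sL_n \otimes \sM_n$ constructed in part (a) is the diagonal/Leibniz extension, and $\beta(g)$ is a power series in the divided powers $\partial^{[m]}$ with the same Leibniz combinatorics, the identity $\beta(g) \cdot (l \otimes v) = (\beta(g) \cdot l) \otimes (\beta(g) \cdot v)$ follows by the same "coproduct" bookkeeping: writing $w := g \cdot x - x$, we expand
\[
\beta(g)\cdot(l \otimes v) = \sum_{m \geq 0} w^m \sum_{i=0}^m (\partial_x^{[i]}\cdot l)\otimes(\partial_x^{[m-i]}\cdot v) = \Bigl(\sum_{i \geq 0} w^i \partial_x^{[i]}\cdot l\Bigr)\otimes\Bigl(\sum_{j \geq 0}w^j\partial_x^{[j]}\cdot v\Bigr),
\]
where the rearrangement is justified by convergence in the Banach modules $\sL_n(X)$ and $\sM_n(X)$ for the norms induced by $\cD_r(X)$, using $|w|_{\bD} < \varpi/r$ together with the growth estimates on $\partial_x^{[m]}$ acting on sections (Theorem \ref{HunEst}, exactly as in the proof of Proposition \ref{TwistedCocycle}(a)).

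The main obstacle I anticipate is part (a): verifying that the naive Leibniz $\cD$-action genuinely extends to the sheaf $\sD_n$ on the tensor product, rather than just on each factor separately. The subtlety is that one cannot simply invoke "$\sD_n$ acts on $\sL_n$ and on $\sM_n$" and tensor the actions — one must check that the diagonal map $\cD \to \cD \otimes \cD$ (comultiplication) is compatible with the Banach completions defining $\cD_r$, i.e. that $\sD_n(X) \to \sD_n(X) \otimes_{\cO(X)} \sD_n(X)$ lands in an appropriately completed tensor product acting on $\sL_n(X) \otimes \sM_n(X)$. I expect this is handled exactly as in the proof of Proposition \ref{OUrtuMod} by recognizing $\sL_n \otimes \sM_n$ as $\cM(\cS_n, u_n v_n, \mathrm{lcm}(d,d'))_{\overline{V_n}}$ up to the relevant twisting automorphism, reducing everything to Theorem \ref{ThetaU} and Lemma \ref{DrActsOnOz}; this requires checking that $u_n v_n$ has zeros/poles in $F$ and appropriate non-integrality of exponents, which may need a short argument or a mild genericity hypothesis, but should cause no real difficulty given $p \nmid de$.
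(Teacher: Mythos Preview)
Your proposal is correct and matches the paper closely on part (b): the expansion of $\beta(g)$ via the Leibniz rule for divided powers and the rearrangement into the product $(\beta(g)\cdot l)\otimes(\beta(g)\cdot v)$ is exactly what the paper does.

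For part (a), your approach works but is more laborious than necessary. You propose to work locally, choose algebraic generators for both $\sL_n$ and $\sM_n$, recognise the tensor product as generated by $z\otimes w$ with action governed by $\theta_{(u_n v_n)^{-1},\cdot}$, and then invoke Theorem \ref{ThetaU} and the argument of Proposition \ref{OUrtuMod}. The paper instead observes that by Lemma \ref{DagTensor} one has
\[
\sL_n \underset{(\cO_\bD)_{\overline{\Upsilon_n}}}{\otimes} \sM_n \;\cong\; \bigl(\sL \underset{\cO_\Upsilon}{\otimes} \sM\bigr)_{\overline{\Upsilon_n}},
\]
and since $\sL\otimes_{\cO_\Upsilon}\sM$ is again a torsion line bundle with connection on $\Upsilon$ of prime-to-$p$ order, Corollary \ref{LnDn} applies directly to give the $\sD_n$-action. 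This bypasses entirely the verification you flag at the end concerning the exponents of $u_n v_n$: those non-integrality and separation conditions are only needed for the explicit presentation in Theorem \ref{SheafyPres}, not for the existence of the $\sD_n$-action, so your worry is unfounded in any case --- but the paper's route makes this manifest.
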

\begin{proof} By Theorem \ref{AlgGensExist} and Proposition \ref{CleanLn} there is $\dot{w} \in \sM_n(V_n)$ such that $\sM_n=(\cO_\bD \dot{w})_{\overline{\Upsilon_n}}$. Thus by Lemma \ref{DagTensor}, $\sL_n\otimes_{(\cO_{\bD})_{\overline{\Upsilon_n}}}\sM_n\cong (\sL\otimes_{\cO_{\Omega\cap \bD}}\sM)_{\overline{\Upsilon_n}}$ so $\sD_n$ acts naturally on this by Corollary \ref{LnDn}. Now for each $m \geq 0$,
\[ \partial_x^{[m]}(l\otimes v)=\sum\limits_{i=0}^m\partial_x^{[i]}(l)\otimes \partial_x^{[m-i]}(v) \qmb{ for all } l\in \sL_n(X) \qmb{and} v\in \sM_n(X),\] 
so writing $w=g\cdot x-x$ as in Proposition \ref{SigmaRho} we see that 
\[ \beta(g)(l\otimes v)=\sum\limits_{m=0}^\infty w^m \sum\limits_{i=0}^m \partial_x^{[i]}(l)\otimes \partial_x^{[m-i]}(v)=\beta(g)(l)\otimes\beta(g)(v). \qedhere \]
\end{proof}

We may view $\sL_n$ as a \emph{$J$-equivariant} $\sD_n$-module on $\bD_n$, so its restriction to $\bD_n/J$ may be viewed as a $\sD_n \rtimes J$-module. 
\begin{thm}\label{DnJnJmodule} Let $H$ be a closed normal subgroup of $J$ and recall that $e$ is the order of $[\sL]$ in $\PicCon^I(\Upsilon)_{\tors}$. For each $n\geq v_{\pi_F}(e)$, the $\sD_n \rtimes J$-action on $\sL_n$ factors through $\sD_n \rtimes_{H_{n+1}} J$.
\end{thm}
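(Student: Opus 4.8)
The plan is to verify that the generators of the left ideal defining $\sD_n \rtimes_{H_{n+1}} J$ — namely the elements $\beta(h)^{-1}h - 1$ for $h \in H_{n+1}$ — act as zero on $\sL_n$, so that the $\sD_n \rtimes J$-action descends to the quotient. Equivalently, I must show that for every $h \in H_{n+1}$ and every local section $\dot{z}$ of $\sL_n$, the group element $h$ acts on $\dot{z}$ by left multiplication by $\beta(h) \in \sD_n^\times$. Since the question is local on $\bD_n/J$ and $\sL_n$ is generated over $\sD_n$ by an algebraic generator $\dot{z}$ with associated rational function $u$ (Theorem \ref{AlgGensExist}, Corollary \ref{MnPres}), it suffices to check $h \cdot \dot{z} = \beta(h) \cdot \dot{z}$ for such a generator on a suitable $J$-stable affinoid $X \in \bD_n$; by covering $\bD$ by $J$-translates and using the sheaf property of $\sD_n \rtimes_{H_{n+1}} J$ from Proposition \ref{SheafDnJ}, one reduces to a single such $X$.

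First I would set up the comparison over a $\partial_x/r$-admissible affinoid $X \subseteq \bD$ with $\varpi/|\pi_F|^n < r < \varpi/|\pi_F|^{n+1}$, so that $H_{n+1} \leq \cG_r$ by Lemma \ref{JG} and $\beta$ is a trivialisation by Theorem \ref{GrGxTriv}. The key identity I want is: if $h \in H_{n+1}$ then $h \cdot \dot{z} = \beta(h) \cdot \dot{z}$ in $\sL_n(X)$. Writing $\dot{z}$ via the isomorphism $\cM(\cS,u,d)_{\overline{V_n}} \cong \sL_n$ of Proposition \ref{CleanLn}, so $\dot{z}$ corresponds to $z$ with $z^d = u$ and $\psi'(\dot{z}^{\otimes e}) = 1$, I would compute $h \cdot z$ using the equivariant structure on $\cO_{\bP^1}$ (Lemma \ref{AutAOD}) and compare with $\sigma_r(\beta(h))$ acting through the connection. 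The crucial input is Proposition \ref{TwistedCocycle}: the operator $\theta_{u,d}(\beta(h))$ equals $c_{u,d}(h)\,\beta(h)$ where $c_{u,d}(h) \in 1 + \cO(X)^{\circ\circ}$ satisfies $c_{u,d}(h)^d = u/(h \cdot u)$. Since $\cD$ acts on $\cO z$ through $\theta_{u^{-1},d}$ (see the proof of Lemma \ref{DrActsOnOz}), the difference between "$h$ acts geometrically" and "$h$ acts by $\beta(h)$" is governed precisely by the cocycle $c_{u,d}(h)$, and the discrepancy is controlled by whether a certain $d$-th (indeed $e$-th) root of $u/(h\cdot u)$ is trivial.

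The main obstacle, and the step requiring the hypothesis $n \geq v_{\pi_F}(e)$, is showing that this cocycle discrepancy vanishes. Concretely: the $I$-equivariant trivialisation $\psi' : \sL^{\otimes e} \cong \cO$ forces $h \cdot \dot{z}$ and $\beta(h)\cdot \dot{z}$ to differ by an $e$-th root of unity $\chi(h) \in \mu_e(K) \cap \cO(X)^{\times\times}$ lurking inside $1 + \cO(X)^{\circ\circ}$ — this uses that both sections are $\cO$-generators of the rank-one module $\sL_n(X)$ over the connected affinoid $X$, so their ratio is a scalar, and raising to the $e$-th power and applying $\psi'$ shows the scalar is an $e$-th root of unity. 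By \cite[Lemma 4.3.2(a)]{ArdWad2023} (small units which are roots of unity of order coprime to... — here one needs $p \nmid e$ or at least $p \nmid d$), a small unit that is an $e$-th root of unity must equal $1$ provided $e$ is coprime to $p$; since $\omega[\sL]$ has order $d$ coprime to $p$ and $e \in d\bZ$ is the order of $[\sL]$, one must be slightly careful, but the condition $n \geq v_{\pi_F}(e)$ ensures $h \in H_{n+1}$ acts trivially enough on the relevant $e$-torsion data. I would carry out this final vanishing by: (i) identifying $\chi : H_{n+1} \to \mu_e$ as a homomorphism; (ii) observing it factors through $H_{n+1}/H_{n+1+\text{something}}$, a finite $p$-group when the relevant level exceeds $v_{\pi_F}(e)$; (iii) concluding a $p$-group homomorphism to $\mu_e$ with $p \nmid (e/p^{\infty}\text{-part})$ is trivial on the prime-to-$p$ part, and the $p$-part is handled by the $\cD$-linearity forcing $\chi$ into $\cO(X)^{\times\times}$ which contains no nontrivial roots of unity of order coprime to $p$. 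Assembling these, $\chi(h) = 1$ for all $h \in H_{n+1}$, giving $h \cdot \dot{z} = \beta(h) \cdot \dot{z}$ and hence the desired factorisation of the action through $\sD_n \rtimes_{H_{n+1}} J$.
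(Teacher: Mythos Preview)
Your overall strategy is right and matches the paper's: reduce to an algebraic generator $\dot z$, form the element $\tilde\beta(g) := \beta(g)^{-1}g$, observe it fixes the overconvergent structure ring pointwise, write $\tilde\beta(g)\cdot\dot z = c(g)\dot z$, show $c$ is a group homomorphism, and argue $c(g)$ is a root of unity which must be trivial because $J_{n+1}$ is pro-$p$.

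The gap is in your handling of the $p$-part of $e$. Using the $I$-equivariant trivialisation $\psi':\sL^{\otimes e}\cong\cO$ you correctly obtain $c(g)^e=1$. Since $J_{n+1}$ is pro-$p$, writing $e=p^k f$ with $p\nmid f$ only gives $c(g)\in\mu_{p^k}$. Your step (iii) then claims the $p$-part is killed because ``$\cD$-linearity forces $\chi$ into $\cO(X)^{\times\times}$, which contains no nontrivial roots of unity of order coprime to $p$.'' But this is backwards: the issue is precisely roots of unity of $p$-power order, and those \emph{do} lie in $1+\cO(X)^{\circ\circ}$ (e.g.\ $|\zeta_p-1|<1$). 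So nothing in your argument excludes $c(g)$ being a nontrivial $p^k$-th root of unity, and you never actually use the hypothesis $n\geq v_{\pi_F}(e)$.

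The paper's fix is to get rid of the $p$-part \emph{before} running the cocycle argument. One decomposes $[\sL]=[\sL_p][\sL_{p'}]$ with $[\sL_p]$ of order $p^k$ and $[\sL_{p'}]$ of order $f$. Since $\omega[\sL]$ has order $d$ coprime to $p$, one gets $\omega[\sL_p]=0$, so $\sL_p\cong\cO_\chi$ for some character $\chi:I\to\mu_{p^k}(K)$. The hypothesis $n\geq v_{\pi_F}(e)$ is used exactly here: it forces $I_n\leq I^{p^k}$, hence $\chi(I_n)=1$. Consequently the image of $[\sL]$ in $\PicCon^{I_n}(\Upsilon)$ has order $f$, so there is an \emph{$I_n$-equivariant} isomorphism $\psi:\sL^{\otimes f}\cong\cO$. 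Now the same cocycle computation gives $c(g)^f=1$, and since $J_{n+1}$ is pro-$p$ and $p\nmid f$, every $g\in J_{n+1}$ is an $f$-th power, so $c(g)=1$. This is the missing idea.
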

\begin{proof} Since $H_{n+1}\leq J_{n+1}$ it suffices to consider the case $H=J$. We fix $n\geq v_{\pi_F}(e)$. Write $e=p^kf$ with $f$ coprime to $p$. Then there are tensor powers $\sL_p$ and $\sL_{p'}$ of $\sL$ such that $[\sL_p]$ has order $p^k$, $[\sL_{p'}]$ has order $f$ and \[ [\sL]=[\sL_p][\sL_{p'}]\in \PicCon^I(\Upsilon).\] 
Because $\omega[\sL]$ has order $d$ coprime to $p$ by Definition \ref{UpperHPhyp2}, the order of $\omega[\sL_p] = \omega[\sL] \cdot \omega[\sL_{p'}]^{-1}$ is also coprime to $p$. Hence $\omega [\sL_p] = [\cO_{\Upsilon}]$. Because $\Upsilon$ is geometrically connected and quasi-Stein, using \cite[Proposition 3.2.14]{ArdWad2023} we can find a character $\chi\in \Hom(I,\mu_{p^k}(K))$ such that $[\sL_p]=[\cO_{\chi}]$ in $\PicCon^I(\Upsilon)$. The assumption $n \geq v_{\pi_F}(e)$ implies that $I_n \leq I^{p^k}$, and therefore $\chi(I_n)=1$. It follows that the image of $[\sL]$ in $\PicCon^{I_n}(\Upsilon)$ under the restriction map has order $f$, and so we can choose an $I_n$-equivariant isomorphism of line bundles with connection \[\psi\colon \sL^{\otimes f}\stackrel{\cong}{\longrightarrow} \cO.\]  

	 Let $X \in\bD_n/ J$. Since $\Upsilon_n$ lies in $\bD_n/J$ as well by Corollary \ref{Heisenberg}, we see that $X \cap \Upsilon_n \in \bD_n/ J$ also. Let $g \in J_{n+1}$, so that $g \in \cG_r$ for any $r$ satisfying $\varpi/|\pi_F|^n < r < \varpi/|\pi_F|^{n+1}$ by Lemma \ref{JG}. Corollary \ref{CPactsOnO} implies that $\beta(g) - g \in \sD_n(X \cap \Upsilon_n) \rtimes J$ acts trivially on $\cO(X \cap \Upsilon_n)$; hence $\tilde{\beta}(g) := \beta(g)^{-1}g \in (\sD_n(X \cap \Upsilon_n) \rtimes J)^\times$ fixes $\cO(X \cap \Upsilon_n)$ pointwise. 

Let $A := \cO_{\overline{X \cap \Upsilon_n}}(X)$ and use Theorem \ref{AlgGensExist} to find an algebraic generator $\dot{z} \in \sL_n(X)$ with associated rational function $u = \psi(\dot{z}^{\otimes f})$; then $\sL_n(X) = A \cdot \dot{z}$ by Proposition \ref{CleanLn}. On the other hand, $\sL_n(X)$ is a $\sD_n(X)$-module by Corollary \ref{LnDn} and it is even a $\sD_n(X) \rtimes J$-module by the above remarks. Write $\tilde{\beta}(g) \cdot \dot{z} = c(g) \dot{z}$ for some $c(g) \in A$. Since $\psi$ is $J_n$-equivariant and since $\tilde{\beta}(g)$ fixes $A$ pointwise, we see via Lemma \ref{tensorLn} that for all $g \in J_{n+1}$,
\[  c(g)^{f} u = \psi( (c(g) \dot{z})^{\otimes {f}}) = \psi((\tilde{\beta}(g) \cdot \dot{z})^{\otimes {f}}) = \tilde{\beta}(g) \cdot u =u\]
which implies that $c(g)^{f} = 1$ for all $g \in J_{n+1}.$ Because $\beta : J_{n+1} \to \sD_n(X)^\times$ is a trivialisation, it follows from \cite[Lemma 2.2.2]{EqDCap} that $\tilde{\beta}(gh) = \tilde{\beta}(g)\tilde{\beta}(h)$ for all $g,h \in J_{n+1}$. Now, $\tilde{\beta}(g)$ fixes $c(h) \in A$ and therefore commutes with it in $\sD_n(X \cap \Upsilon_n) \rtimes J$. Therefore

\[\begin{array}{lllllll} c(gh) \dot{z} &=& \tilde{\beta}(gh)\dot{z} &=& \tilde{\beta}(g) \tilde{\beta}(h) \dot{z} &=& \tilde{\beta}(g) c(h) \dot{z}  \\
&=& c(h) \tilde{\beta}(g) \dot{z} &=& c(h) c(g) \dot{z} &=& c(g) c(h) \dot{z}.\end{array}\]
Hence $c : J_{n+1} \to A^\times$ is a group homomorphism such that $c(g)^f =  1$ for all $g \in J_{n+1}$. Since $J_{n+1}$ is a pro-$p$ group and since $p \nmid f$, for each $g \in J_{n+1}$ there exists $h \in J_{n+1}$ such that $g = h^f$. Hence $c(g) = c(h^f) = c(h)^f = 1$ for all $g \in J_{n+1}$ and we conclude that $\tilde{\beta}(g) \cdot \dot{z} = \dot{z}$. Hence $\tilde{\beta}(J_{n+1})$ fixes $\sL_n(X) = A \cdot \dot{z}$ pointwise and the result follows.
\end{proof}

Since $\Upsilon_{n+1}$ contains $\Upsilon_n$, there is a canonical $\sD_{n+1}(\bD)$-linear restriction map
\[\sL_{n+1}(\bD) \longrightarrow \sL_n(\bD).\]
Let $X \in \bD_n$. Post-composing $\sL_{n+1}(\bD) \longrightarrow \sL_n(\bD)$ with the restriction map $\sL_n(\bD) \to \sL_n(X)$, in view of Corollary \ref{LnDn} we obtain a morphism 
\begin{equation}\label{CompMap} \sD_n \underset{\sD_{n+1}(\bD)}{\otimes}{} \sL_{n+1}(\bD) \longrightarrow \sL_n\end{equation}
of sheaves of $\sD_n$-modules on $\bD_n$. For the moment, suppose further that $X$ is $J$-stable. Then we find ourselves in the setting of Hypothesis \ref{MorphCP} with $G := J$, $f : S' \to S$ the restriction map $S' := \sD_{n+1}(\bD) \longrightarrow S := \sD_n(X)$, $H' := J_{n+2}$, $H := J_{n+1}$, and the trivialisations $\beta'$ and $\beta$ used in Proposition \ref{SheafDnJ}. 

Then $M' := \sL_{n+1}(\bD)$ is an $S' \rtimes_{H'} G$-module by Theorem \ref{DnJnJmodule}, so by Lemma \ref{SecretAction} there is a secret $H = J_{n+1}$-action on 
\[S \underset{S'}{\otimes}M' = \sD_n(X) \underset{\sD_{n+1}(\bD)}{\otimes}{} \sL_{n+1}(\bD)\]
by $\sD_n(X)$-linear endomorphisms, given by
\[h \star (s \otimes m') = s \hsp \beta(h)^{-1} \otimes h \cdot m' \qmb{for all} h \in H, s \in S, m' \in M'.\]
Inspecting this formula, we see that in fact it makes sense for any $X \in \bD_n$, not necessarily $J$-stable, because we can view $\beta(h)$ as an element of $\sD_n(\bD)^\times$ and then consider its image under the $\sD_n(X)^\times$. We will now study the module of coinvariants $(S \otimes_{S'}M)_H$ with respect to this secret $H$-action.
\begin{prop} Whenever $n \geq v_{\pi_F}(e)$, $X \mapsto \left(\sD_n(X) \underset{\sD_{n+1}(\bD)}{\otimes}{} \sL_{n+1}(\bD) \right)_{J_{n+1}}$ is a sheaf on $\sD_n$-modules on $\bD_n$, with vanishing higher \v{C}ech cohomology.
\end{prop}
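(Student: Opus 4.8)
The plan is to reduce the sheaf property and acyclicity of $X \mapsto \left(\sD_n(X) \underset{\sD_{n+1}(\bD)}{\otimes}{} \sL_{n+1}(\bD)\right)_{J_{n+1}}$ to the corresponding statements for the presheaf $\sD_n \underset{\sD_{n+1}(\bD)}{\otimes}{} \sL_{n+1}(\bD)$ itself, which in turn follow from known facts about $\sD_n$ established earlier in the paper. The first step is to pin down the underlying $\sD_n$-module structure of the presheaf $P : X \mapsto \sD_n(X) \underset{\sD_{n+1}(\bD)}{\otimes}{} \sL_{n+1}(\bD)$. By Theorem \ref{AlgGensExist} and Corollary \ref{MnPres}, the global sections $\sL_{n+1}(\bD)$ admit a finite presentation $\sD_{n+1}(\bD)/\sD_{n+1}(\bD) \cdot R(u_{n+1},d)$ as a $\sD_{n+1}(\bD)$-module. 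Hence $P(X) \cong \sD_n(X)/\sD_n(X) \cdot R(u_{n+1},d)$ for every $X \in \bD_n$, so $P$ is just the cokernel of the $\sD_n$-linear map $\cdot R(u_{n+1},d) : \sD_n \to \sD_n$ of sheaves on $\bD_n$. Since $\sD_n$ is a sheaf with vanishing higher \v{C}ech cohomology by Theorem \ref{NCDagTate}, and since a cokernel of a morphism of acyclic sheaves on a $G$-topology with finite coverings is again an acyclic sheaf (this is the standard long-exact-sequence argument, applied to the short exact sequence of presheaves $0 \to \operatorname{im} \to \sD_n \to P \to 0$ together with $0 \to \ker \to \sD_n \to \operatorname{im} \to 0$, using that the image presheaf is acyclic because $\sD_n$ is Noetherian — Corollary \ref{DrXNoeth} — so kernels are coherent and one controls $\check{H}^1$), we deduce that $P$ is a sheaf of $\sD_n$-modules with vanishing higher \v{C}ech cohomology. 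One should be slightly careful here: $\sD_n(X) = \cD^\dag_{\varpi/|\pi_F|^n}(X)$ is a colimit of Noetherian Banach algebras rather than Noetherian itself, so I would run the acyclicity argument at each finite level $\cD_r(X)$ using Proposition \ref{NCTate} and Corollary \ref{DrXNoeth}, then pass to the colimit using exactness of filtered direct limits as in the proof of Theorem \ref{NCDagTate}.

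The second step handles the passage to $J_{n+1}$-coinvariants. The secret $J_{n+1}$-action of Lemma \ref{SecretAction} is by $\sD_n(X)$-linear automorphisms, and crucially it commutes with the restriction maps $P(X) \to P(Y)$ for $Y \subseteq X$ in $\bD_n$: this is because $h \star (s \otimes m') = s\beta(h)^{-1} \otimes h \cdot m'$, the element $\beta(h) \in \sD_n(\bD)^\times$ restricts compatibly under $\sD_n(\bD) \to \sD_n(X) \to \sD_n(Y)$, and the restriction maps are left $\sD_n$-linear. Therefore, via Theorem \ref{DnJnJmodule} (applicable since $n \geq v_{\pi_F}(e)$, taking $H = J$), $P$ is a presheaf of modules over the group ring $K[J_{n+1}/J_{n+2}]$, and taking $J_{n+1}$-coinvariants is the functor $(-) \otimes_{K[J_{n+1}/J_{n+2}]} K$ applied levelwise. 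Since $J_{n+1}/J_{n+2}$ is a finite $p$-group and $K$ has characteristic zero, $K[J_{n+1}/J_{n+2}]$ is semisimple, so coinvariants is an exact functor; more to the point, $P(X)_{J_{n+1}}$ is a direct summand of $P(X)$ as a $\sD_n(X)$-module, cut out by the central idempotent $e_{\mathbbm 1} \in K[J_{n+1}/J_{n+2}]$ as in (\ref{IndDecomp}). Since this idempotent is the same element at every level and commutes with restriction, the assignment $X \mapsto e_{\mathbbm 1} \star P(X) = P(X)_{J_{n+1}}$ is a direct summand of the sheaf $P$ inside the category of presheaves of $\sD_n$-modules on $\bD_n$.

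The final step is to conclude: a presheaf which is a direct summand of a sheaf with vanishing higher \v{C}ech cohomology is itself a sheaf with vanishing higher \v{C}ech cohomology, because the \v{C}ech complex functor is additive, so $\check{H}^i(\cU, P) \cong \check{H}^i(\cU, P_{J_{n+1}}) \oplus \check{H}^i(\cU, (H_{n+1}-1)\star P)$ for any finite covering $\cU$, and the left side vanishes in all degrees for $i = 0$ meaning exactness at degree $0$ forces the sheaf condition on each summand, while vanishing for $i > 0$ transfers to each summand. This completes the proof. I do not anticipate a serious obstacle; the one point requiring a little care is making the levelwise Noetherian/coherence argument at the $\cD_r$-level before taking the colimit $\cD^\dag_{\varpi/|\pi_F|^n} = \colim_r \cD_r$, so that the image and kernel presheaves of $\cdot R(u_{n+1},d)$ are controlled and the \v{C}ech $\check{H}^1$ vanishes; this is precisely parallel to the argument already used in Theorem \ref{NCDagTate} and Theorem \ref{SheafyPres}, so it is routine here.
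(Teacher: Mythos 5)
Your second step — identifying the $J_{n+1}$-coinvariants as a direct summand of the presheaf $P := \sD_n \otimes_{\sD_{n+1}(\bD)} \sL_{n+1}(\bD)$ via a central idempotent, and transferring acyclicity to the summand — is sound and is essentially what the paper does (the paper phrases it as exactness of the coinvariants functor in characteristic zero, applied levelwise to the augmented \v{C}ech complex). The gap is in your first step. You assert that a cokernel of a morphism of acyclic sheaves is again an acyclic sheaf, and you try to prop this up by invoking Noetherianity of $\cD_r(X)$ to say ``kernels are coherent, so one controls $\check{H}^1$.'' That implication is false in general, and the Noetherianity route would require a Tate-type acyclicity theorem for arbitrary coherent $\sD_n$-modules (not just for $\sD_n$ itself), which the paper does not establish and which you do not prove. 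What is actually needed, and what you omit entirely, is Lemma \ref{Flat}(a,b): $\sD_n(X)$ is a \emph{flat} right $\sD_{n+1}(\bD)$-module. Tensoring the presentation $0 \to \sD_{n+1}(\bD) \stackrel{\cdot r}{\to} \sD_{n+1}(\bD) \to \sL_{n+1}(\bD) \to 0$ (from Corollary \ref{MnPres}) by the flat module $\sD_n(X)$ gives the \emph{exact} sequence of presheaves $0 \to \sD_n \stackrel{\cdot r}{\to} \sD_n \to P \to 0$ on $\bD_n$; applying $C^{\aug}(\cX,-)$ to this short exact sequence and using that the first two augmented complexes are exact by Theorem \ref{NCDagTate} immediately yields exactness of the third. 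No decomposition into $\ker$ and $\im$, no coherence argument, and no passage through individual $\cD_r$ levels is required once flatness is in hand. You should replace your Noetherianity reasoning with the flatness argument.
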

\begin{proof} Whenever $Y \subseteq X$ are members of $\bD_n$, the restriction maps 
\[\sD_n(X) \underset{\sD_{n+1}(\bD)}{\otimes}{} \sL_{n+1}(\bD) \longrightarrow \sD_n(Y) \underset{\sD_{n+1}(\bD)}{\otimes}{} \sL_{n+1}(\bD)\] 
are equivariant with respect to the secret action, so we do have a presheaf of left $\sD_n$-modules. By Theorem \ref{AlgGensExist} we may choose an algebraic generator $\dot{z}$ for $\sL_{n+1}(\bD)$ with associated rational function $u$. We write $r := R_{\cS(u)}(u,d)$ for brevity. By Corollary \ref{MnPres}, we have an exact sequence of $\sD_{n+1}(\bD)$-modules
\[ 0 \to \sD_{n+1}(\bD) \stackrel{\cdot r}{\longrightarrow} \sD_{n+1}(\bD) \to \sL_{n+1}(\bD)  \to 0.\]
Applying the functor $\sD_n \otimes_{\sD_{n+1}(\bD)}-$ to this sequence, we obtain the following sequence of presheaves of $\sD_n$-modules on $\bD_n$:
\[ 0 \to \sD_n \stackrel{\cdot r}{\longrightarrow} \sD_n \longrightarrow \sD_n \underset{\sD_{n+1}(\bD)}{\otimes}{} \sL_{n+1}(\bD)  \to 0\]
which is exact because $\sD_n(X)$ is a flat right $\sD_{n+1}(\bD)$-module by Lemma \ref{Flat}(a,b) for any $X \in \bD_n$. Let $\cX$ be a $\bD_n$-covering. Then
\[0 \to C^{\aug}(\cX, \sD_n) \stackrel{\cdot r}{\longrightarrow} C^{\aug}(\cX,\sD_n) \to C^{\aug}(\cX, \sD_n \underset{\sD_{n+1}(\bD)}{\otimes}{} \sL_{n+1}(\bD)) \to 0\]
is an exact sequence of augmented \v{C}ech complexes. Since the first two of these complexes is exact by Theorem \ref{NCDagTate}, so is the third. Now, $\sD_n \underset{\sD_{n+1}(\bD)}{\otimes}{} \sL_{n+1}(\bD)$ carries a secret $\sD_n$-linear $J_{n+1}$-action, and the operation of taking $J_{n+1}$-coinvariants is exact since $K$ is a field of characteristic zero, and the action of $J_{n+1}$ factors through the finite group $J_{n+1} /J_{n+2}$ by Lemma \ref{SecretAction}. Therefore the complex $C^{\aug}(\cX, (\sD_n \underset{\sD_{n+1}(\bD)}{\otimes}{} \sL_{n+1}(\bD))_{J_{n+1}})$ is exact, and the result follows.
\end{proof}

We denote this sheaf by $\left(\sD_n \underset{\sD_{n+1}(\bD)}{\otimes}{} \sL_{n+1}(\bD) \right)_{J_{n+1}}$.
\begin{lem}\label{SheafyCompMap} The comparison map $(\ref{CompMap})$ induces a morphism 
\[  \Xi : \left(\sD_n \underset{\sD_{n+1}(\bD)}{\otimes}{} \sL_{n+1}(\bD) \right)_{J_{n+1}} \longrightarrow \sL_n\]
of sheaves of $\sD_n$-modules on $\bD_n$, whenever $n \geq v_{\pi_F}(e)$.
\end{lem}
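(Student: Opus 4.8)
The plan is to show that the comparison map (\ref{CompMap}) kills the image of the operators $h \star (-) - \mathrm{id}$ for $h \in J_{n+1}$, so that it factors through the coinvariants and yields a well-defined morphism of sheaves. Since both source and target are sheaves of $\sD_n$-modules on $\bD_n$, and since $\bD_n$ has a basis of affinoids, it suffices to work section-wise over an arbitrary $X \in \bD_n$: I must check that for every $h \in J_{n+1}$, every $s \in \sD_n(X)$ and every $m' \in \sL_{n+1}(\bD)$, the images of $h \star (s \otimes m') = s\,\beta(h)^{-1} \otimes h \cdot m'$ and of $s \otimes m'$ under (\ref{CompMap}) agree in $\sL_n(X)$. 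Recall that (\ref{CompMap}) sends $s \otimes m'$ to $s \cdot (m'|_X)$, where $m'|_X$ denotes the image of the restriction of $m'$ to $\sL_n(\bD)$ followed by restriction to $\sL_n(X)$.

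First I would unwind what $h \cdot m'$ means. By Theorem \ref{DnJnJmodule} (applied with $H = J$, valid since $n \geq v_{\pi_F}(e)$), the $\sD_{n+1}(\bD) \rtimes J$-action on $\sL_{n+1}(\bD)$ factors through $\sD_{n+1}(\bD) \rtimes_{J_{n+2}} J$, which means precisely that $h$ acts on $\sL_{n+1}(\bD)$ as the infinite-order differential operator $\beta'(h) \in \sD_{n+1}(\bD)^\times$ for $h \in J_{n+2}$ — but more relevantly, for $h \in J_{n+1}$ the action of $h$ on $\sL_n(\bD)$ (where the truncation level is $n$, not $n+1$) coincides with multiplication by $\beta(h)$, again by Theorem \ref{DnJnJmodule} applied at level $n$. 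The key compatibility is that the restriction map $\sL_{n+1}(\bD) \to \sL_n(\bD)$ is $\sD_{n+1}(\bD)$-linear and $J$-equivariant, and that $\beta'(h)$ restricts to $\beta(h)$ under the ring map $\sD_{n+1}(\bD) \to \sD_n(\bD)$ (this is immediate from Definition \ref{Beta}(b), since $\beta(g)$ is defined by the same formula $\sum (g \cdot x - x)^n \partial^{[n]}$ regardless of the truncation level, and the connecting maps in the $\sD_\bullet$ send $\partial^{[n]}$ to $\partial^{[n]}$).

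So, writing $\overline{m'} \in \sL_n(\bD)$ for the image of $m'$ and then its further restriction to $\sL_n(X)$, the image of $h \star (s \otimes m')$ under (\ref{CompMap}) is $s\,\beta(h)^{-1} \cdot \overline{h \cdot m'} = s\,\beta(h)^{-1} \cdot (\beta(h) \cdot \overline{m'}) = s \cdot \overline{m'}$, using that $h$ acts on $\sL_n(X)$ as $\beta(h)$ and that $\beta(h)\beta(h)^{-1} = 1$ in $\sD_n(X)^\times$. This is exactly the image of $s \otimes m'$, so (\ref{CompMap}) is constant on secret-$J_{n+1}$-orbits and hence factors through the coinvariant quotient. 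Since (\ref{CompMap}) is $\sD_n$-linear and the quotient map is $\sD_n$-linear, the induced map $\Xi$ is $\sD_n$-linear; and since all the maps involved commute with restriction in $\bD_n$, $\Xi$ is a morphism of sheaves.

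The only mild subtlety — and the point I would be most careful about — is bookkeeping with the two trivialisations $\beta$ and $\beta'$ and the level shift: one must confirm that the secret action is defined using $\beta(h) \in \sD_n(X)^\times$ (the image of $\beta'(h)$, or equivalently $\beta(h)$ computed directly at level $n$) and that Theorem \ref{DnJnJmodule} is invoked at the correct truncation level $n$ to conclude $h \cdot \overline{m'} = \beta(h) \cdot \overline{m'}$ in $\sL_n(X)$. Everything else is formal. I do not expect any genuine obstacle here; this lemma is a routine consequence of Theorem \ref{DnJnJmodule} together with the definitions, and the real content is deferred to Theorem \ref{XiIso} (that $\Xi$ is an isomorphism).
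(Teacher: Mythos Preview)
Your proposal is correct and follows essentially the same approach as the paper's proof: both show that the comparison map factors through the $J_{n+1}$-coinvariants by invoking Theorem \ref{DnJnJmodule} at level $n$ to see that $\beta(h)^{-1}h$ fixes $\sL_n(X)$ for $h \in J_{n+1}$. The paper's version is terser (one sentence, citing Theorem \ref{DnJnJmodule} ``applied with $J_{n+1}$ instead of $J$''), but your explicit unwinding of the secret action and the computation $s\,\beta(h)^{-1}\cdot(\beta(h)\cdot\overline{m'}) = s\cdot\overline{m'}$ is exactly the content of that citation.
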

\begin{proof} Let $X \in \bD_n $. The comparison map from $(\ref{CompMap})$ factors through the $J_{n+1}$-coinvariants of $\sD_n(X) \otimes_{\sD_{n+1}(\bD)} \sL_{n+1}(\bD)$, because $\beta(h)^{-1} h$ fixes $\sL_n(X)$ for any $h \in J_{n+1}$ by Theorem \ref{DnJnJmodule}, applied with $J_{n+1}$ instead of $J$.
\end{proof}

Our goal is now to show that under a certain mild additional condition on the group $J$, the connecting map $\Xi$ is in fact an isomorphism. We will verify that this is the case \emph{locally} on the $G$-topology $\bD_n$. We will use a particular covering of $\bD$, similar to the covering of $\bA$ appearing in Lemma \ref{CheckCov} above.

\begin{defn}\label{Win} For each $a\in \cO_F$ define 
\[ B_{a,n} := C_K(a;\pi^{n})\qmb{and}W_{a,n} := C_K(a, b_1, \ldots, b_{q-1};\pi_F^n,\ldots, \pi_F^n)\]where $b_1,\ldots,b_{q-1}\in a+\pi_F^n\cO_F$ are chosen so that   \[\pi^n\cO_F/\pi^{n+1}\cO_F=\{\pi_F^{n+1}\cO_F, (b_1-a)+\pi^{n+1}_F\cO_F,(b_{q-1}-a) +\pi^{n+1}_F\cO_F\}.\]   

We also define $\cU_n=\{ \Upsilon_n, W_{a,n}\st a\in \cO_F\}$. 
\end{defn}

Recall here that $\Upsilon_n = \bD \backslash \bigcup\limits_{a \in \cO_F} \{|z - a| < |\pi_F|^n\}$ --- see Definition \ref{UnCovering}(a,b).

We note that $W_{a,n}$ does not depend on the choices of $b_1,\ldots,b_{q-1}$ and both $W_{a,n}$ and $B_{a,n}$ only depend on the image of $a$ in $\cO_F/\pi^{n+1}\cO_F$. Thus $|\cU_n|=h_n+1$. We also note that $W_{a,n}\cap \cO_F=a+\pi^{n+1}\cO_F$ by construction. 

\begin{lem} \label{CheckCov2} $\cU_n$ is an $\bD_n / I_{n+1}$-admissible covering of $\D$.
\end{lem}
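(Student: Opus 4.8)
\textbf{Proof plan for Lemma \ref{CheckCov2}.}

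The statement to prove is that $\cU_n = \{\Upsilon_n, W_{a,n} : a \in \cO_F\}$ is a $\bD_n/I_{n+1}$-admissible covering of $\bD$. There are three things to check: (1) each member of $\cU_n$ lies in $\bD_n$, i.e. is $(|\pi_F|^n\partial_x/\varpi)^\dag$-admissible; (2) each member is $I_{n+1}$-stable; and (3) the finitely many members of $\cU_n$ do in fact cover $\bD$, so that this is an admissible covering in the $G$-topology $\bD_n/I_{n+1}$.

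For (1), the argument is exactly as in Lemma \ref{CheckCov}(a): after passing to a finite extension $K'/K$ containing $\pi_F^{1/1}$ (or simply an extension with $|\pi_F|^n \in |K'^\times|$, which is automatic here), both $\Upsilon_n \times_K K'$ and $W_{a,n}\times_K K'$ become cheeses with $\rho$ equal to $|\pi_F|^n$, so $r(\Upsilon_n) = r(W_{a,n}) = \varpi/|\pi_F|^n$ by Lemma \ref{BaseChR} and Corollary \ref{Heisenberg}; hence both lie in $\bD_n$. For (3), the covering property should be checked on $\bfC$-points just as in Lemma \ref{CheckCov}(b): if $z \in \bD(\bfC)$ with $|z-a|< |\pi_F|^n$ for some $a\in \cO_F$, then $z$ lies in $W_{a,n}(\bfC)$, since removing the $q-1$ open sub-discs of radius $|\pi_F|^n$ around $b_1,\dots,b_{q-1}$ from the closed disc of radius $|\pi_F|^n$ around $a$ exactly leaves the points at distance $<|\pi_F|^n$ from some point of $a+\pi_F^{n+1}\cO_F$ together with those at distance exactly $|\pi_F|^n$; and if $z$ is at distance $\geq |\pi_F|^n$ from every point of $\cO_F$ it lies in $\Upsilon_n$ by Definition \ref{UnCovering}(a,b). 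Since $\cU_n$ is a finite set of affinoid subdomains of $\bD$ covering $\bD$ on $\bfC$-points, it is an admissible covering of $\bD$ in the usual rigid-analytic sense, hence in particular a finite admissible covering in the $G$-topology $\bD_n/I_{n+1}$ once we know each member is an admissible set there.

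For (2), the $I_{n+1}$-stability, one computes the M\"obius action. Each $g = \begin{pmatrix} a' & b' \\ c' & d'\end{pmatrix}\in I_{n+1}$ satisfies $a'-1,b',c',d'-1\in \pi_F^{n+1}\cO_F$, so for $z\in \bfC$ with $|z|\leq 1$ we have $g\cdot z = \frac{d'z-b'}{-c'z+a'}$ and $|{-c'z+a'}| = 1$, hence $|g\cdot z - z| = |c'z^2 + (d'-a')z - b'| \leq |\pi_F|^{n+1} < |\pi_F|^n$ as in Lemma \ref{GrCalc}. Thus $g$ moves every point of $\bD$ by strictly less than $|\pi_F|^n$, so it preserves the set of points at distance $\geq|\pi_F|^n$ from every point of $\cO_F$ (using also that $g$ fixes $\cO_F\cup\{\infty\}$ as a set and permutes $\cO_F/\pi_F^{n+1}\cO_F$, in fact trivially since $g\in I_{n+1}$), giving $g\cdot \Upsilon_n = \Upsilon_n$; and similarly $g\cdot W_{a,n} = W_{g\cdot a,n} = W_{a,n}$ because $g\cdot a \equiv a \pmod{\pi_F^{n+1}\cO_F}$ and $W_{a,n}$ depends only on $a + \pi_F^{n+1}\cO_F$. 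I do not expect any of these steps to present a genuine obstacle; the only mild care needed is bookkeeping in (3) to confirm that $\{\Upsilon_n\}\cup\{W_{a,n}\}$ together with the holes of $\Upsilon_n$ and the excised sub-discs of each $W_{a,n}$ really do partition $\bD(\bfC)$ with no gap, which is the same elementary tree-of-discs combinatorics as in Lemma \ref{CheckCov}(b).
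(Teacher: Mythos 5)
Your proof follows essentially the same route as the paper: reduce admissibility and the covering property to Lemma \ref{CheckCov} (applied with $t = |\pi_F|^n$ and the coset representatives as the set $\cS$), and then use the estimate $|g\cdot z - z| \leq |\pi_F|^{n+1}$ for all $g \in I_{n+1}$ and $z \in \bD(\bfC)$ to prove $I_{n+1}$-stability of each member of $\cU_n$. The one spot to tighten is the claim $g\cdot W_{a,n} = W_{g\cdot a,n}$, which is not automatic since the M\"obius action is not affine; but your estimate already suffices to argue directly, as the paper does, that $g$ maps $W_{a,n}(\overline{K})$ into itself by applying the ultrametric inequality to the defining conditions $|z - a|\le|\pi_F|^n$ and $|z-b_j|\ge|\pi_F|^n$.
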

\begin{proof} Let $\{a_1,\ldots,a_{h_n}\}$ be a set of coset representatives for $\cO_F/\pi^{n+1}\cO_F$ so that $\cU_n=\{W_{a_1},\ldots, W_{a_{h_n}}, \Upsilon_n\}$. Because $\min\limits_{i\neq j} |a_i- a_j| = |\pi_F|^n$, it follows from Lemma \ref{CheckCov} that $\cU_n$ is indeed a $\bD_n$-admissible covering of $\bD$, so it remains to verify that each of its members is $I_{n+1}$-stable. 
	
	Now $\Upsilon_n$ itself is $I_{n+1}$-stable: it is even stable under the Iwahori subgroup $\mathbb{GL}_2(\cO_F) \cap \cG$ because it permutes connected components of $\bD - V_n$. 
	
	Fix $g \in I_{n+1}$, and $a,b_1,\ldots,b_{q-1}$ in $\cO_F$ so that \[W_{a,n}= C_K(a, b_1, \ldots, b_{q-1};\pi_F^n,\ldots, \pi_F^n).\] Moreover let $z \in W_{a,n}(\overline{K})$. Then $|z - a| \leq |\pi_F|^n$ and $|z - b_j| \geq |\pi_F|^n$ for each $j=1,\ldots,q-1$. Because each entry of $g - 1$ lies in $\pi_F^{n+1} \cO_F$, we see that
	\[ |g \cdot z - z| \leq |\pi_F^{n+1}|.\]
	Hence $|g\cdot z - a| \leq |\pi_F|^n$ and $|g\cdot z - b_j| = |z - b_j| \geq |\pi_F|^n$ for each $j=1,\ldots q-1$. We conclude that $g\cdot z \in W_{a,n}(\overline{K})$ as well. So $W_{a,n}$ is $I_{n+1}$-stable.\end{proof}

\begin{defn}\label{DefOfN} We introduce the following standard subgroup of $\GL_2(\cO_F)$:
\[N := \begin{pmatrix} 1 & \cO_F \\ 0 & 1 \end{pmatrix} \quad \leq \quad \GL_2(\cO_F).\]
\end{defn}

We can now state the main technical result of this part of the paper. 

\textbf{We assume until the end of $\S \ref{CoadDrin}$ that the field $K$ is discretely valued.}
\begin{thm}\label{CompOnW} Fix $n \geq v_{\pi_F}(e)$, $a \in \cO_F$ and write $W := W_{a,n}$. Assume that $d \mid (q+1)$ and $N_{n+1} \leq J_{n+1}$. Then the following comparison map is an isomorphism:
\[\Xi(W) : \left(\sD_n(W) \underset{\sD_{n+1}(W)}{\otimes}{} \sL_{n+1}(W)\right)_{J_{n+1}} \stackrel{\cong}{\longrightarrow} \sL_n(W).\]
\end{thm}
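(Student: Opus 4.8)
The plan is to invoke the abstract criterion of Theorem \ref{Criterion} in the situation set up just before the statement: $G := J$, $S' := \sD_{n+1}(W)$, $S := \sD_n(W)$, $H' := J_{n+2}$, $H := J_{n+1}$, $f$ the restriction map, $M' := \sL_{n+1}(W)$, $M := \sL_n(W)$, and $\tau$ the restriction map $\sL_{n+1}(W) \to \sL_n(W)$, which is $S' \rtimes_{H'} J$-linear by Theorem \ref{DnJnJmodule}. First I would check surjectivity of $\Xi(W)$: since $\sL_n(W) = \sD_n(W) \cdot \dot{z}_n$ for an algebraic generator $\dot z_n$ by Corollary \ref{MnPres}, and since the restriction map $\sL_{n+1}(W) \to \sL_n(W)$ hits an algebraic generator of $\sL_n(W)$ (one can take the restriction of a suitable algebraic generator of $\sL_{n+1}$, using Theorem \ref{AlgGensExist} and the fact that the divisors $\cS_{n+1} \cap W$, $\cS_n \cap W$ can be matched up since $W$ contains only one hole of $\Upsilon_n$), surjectivity of $\Xi(W)$ is clear. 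So the real content is injectivity, for which Theorem \ref{Criterion} requires three things: (a) the secret $J_{n+1}$-action on $S \otimes_{S'} M'$ is non-trivial; (b) the $J$-action on $\Irr_K(J_{n+1}/J_{n+2})$ has exactly two orbits; (c) $\ell(\ker \Phi) \le |\Irr_K(J_{n+1}/J_{n+2})| - 1$, where $\Phi = 1 \otimes \tau : S \otimes_{S'} M' \to M$.

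\textbf{Computing the pieces.} Steps (b) and (c) should be the routine ones. For (c), using Corollary \ref{MnPres} I would present both $M' = \sL_{n+1}(W)$ and $M = \sL_n(W)$ as cyclic one-relator $\sD_\bullet(W)$-modules $\sD_{n+1}(W)/\sD_{n+1}(W) R(u_{n+1},d)$ and $\sD_n(W)/\sD_n(W) R(u_n,d)$; then $S \otimes_{S'} M' \cong \sD_n(W)/\sD_n(W) R(u_{n+1},d)$ (using flatness of $\sD_n(W)$ over $\sD_{n+1}(W)$ from Lemma \ref{Flat}) and $\ker \Phi$ is governed by the ratio of the relators $R(u_{n+1},d)$ and $R(u_n,d)$, which I would analyse via Berthelot--Abe characteristic cycles (as flagged in the introduction and developed in $\S\ref{MicRings}$, $\S\ref{CharCycleSect}$) to get the length bound; the hypothesis $d \mid (q+1)$ is presumably what makes this bound come out to exactly $|\Irr_K(J_{n+1}/J_{n+2})| - 1$. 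For (b), $J_{n+1}/J_{n+2}$ is an elementary abelian $p$-group on which $J$ acts by conjugation through $J/J_{n+1}$; the assumption $N_{n+1} \le J_{n+1}$ forces $J_{n+1}/J_{n+2}$ to contain the image of $N_{n+1}$, and a direct matrix computation of the conjugation action of the torus and unipotent parts of the Iwahori on the reduction $\fr{gl}_2(\cO_F/\pi_F)$ of the Lie algebra should show there are exactly two orbits on $\Irr_K = $ characters of $J_{n+1}/J_{n+2}$, namely the trivial character and everything else.

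\textbf{The main obstacle.} The hard part will be (a): showing the secret $J_{n+1}$-action on $\sD_n(W) \otimes_{\sD_{n+1}(W)} \sL_{n+1}(W)$ is non-trivial. This is exactly the point the introduction identifies as ``surprisingly difficult''. The plan, following $\S\ref{TheIntegral}$, is to argue by contradiction: if the secret action were trivial, then identifying $S \otimes_{S'} M'$ with a cyclic module and tracking the operators $\beta(g)$ from Definition \ref{Beta}, the relation $h \star v = v$ for all $h \in J_{n+1}$ would translate (after passing to a suitable microlocalisation of $\sD_n$ and using Proposition \ref{TwistedCocycle}, which computes $\theta_{u,d}(\beta(g)) = c_{u,d}(g)\beta(g)$ with $c_{u,d}(g)^d = u/(g\cdot u)$) into the assertion that a certain explicit $p$-adic differential equation admits a rigid analytic solution $\zeta_{w,d}$ on all of $W = W_{a,n}$. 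I would then produce the explicit local formal power-series solution at the relevant point (Proposition \ref{UniqueSol}), and estimate the $p$-adic valuations of its Taylor coefficients using the binomial estimates of $\S\ref{BinEsts}$; these valuations grow too fast (or too slow) for the radius of convergence to reach the full disc $W$ (Theorem \ref{ZetaUnbounded}), contradicting the supposed global solution. Granting (a), (b) and (c), Theorem \ref{Criterion} yields that $\Phi$ factors through $(S \otimes_{S'} M')_{J_{n+1}}$ and the induced map $\overline{\Phi} = \Xi(W)$ is injective; combined with surjectivity this gives the claimed isomorphism.
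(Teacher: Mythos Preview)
Your overall architecture is right --- reduce to Theorem \ref{Criterion}, verify its three hypotheses, with (a) being the deep input from $\S\ref{TheIntegral}$--$\S\ref{BinEsts}$ --- but your instantiation of the parameters $G,H,H'$ does not work, and this is a genuine gap.

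You take $G=J$, $H=J_{n+1}$, $H'=J_{n+2}$. The trouble is that the only hypothesis on $J$ is that $J$ is a closed subgroup of $I$ with $N_{n+1}\le J_{n+1}$. For such a $J$ you cannot control the quotient $J_{n+1}/J_{n+2}$: it need not be isomorphic to $k_F$, its set of irreducible $K$-characters need not have size $q$, and there is no reason for the $J$-conjugation action on $\Irr_K(J_{n+1}/J_{n+2})$ to have exactly two orbits. So neither your verification of (b) nor your length bound (c), which you want to equal $|\Irr_K(J_{n+1}/J_{n+2})|-1$, goes through. The paper avoids this by a two-step manoeuvre. First it passes from $J_{n+1}$-coinvariants to $N_{n+1}$-coinvariants via the commutative triangle
\[\xymatrix{ \left(\sD_n(W) \underset{\sD_{n+1}(W)}{\otimes}{} \sL_{n+1}(W)\right)_{N_{n+1}} \ar[rr] \ar@{->>}[d] & & \sL_n(W) \\
\left(\sD_n(W) \underset{\sD_{n+1}(W)}{\otimes}{} \sL_{n+1}(W)\right)_{J_{n+1}} \ar[rru]_{\Xi(W)} & & }\]
Since $N_{n+1}\le J_{n+1}$ the vertical map is onto; surjectivity of $\Xi(W)$ (Proposition \ref{VarPhiIsOnto}) then reduces the problem to \emph{injectivity} of the \emph{horizontal} map. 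Second, Theorem \ref{Criterion} is applied not with $G=J$ but with $G=B$, the conjugated torus of Corollary \ref{Baction}, and with $H=N_{n+1}$, $H'=N_{n+2}$. The point is that $B$ need not lie in $J$ at all: what is used is only that $B\le I$ stabilises $W$ and that $\sL$ is $I$-equivariant, so $M$ and $M'$ carry $B$-actions. Now $H/H'=N_{n+1}/N_{n+2}\cong k_F$, the $B$-action becomes the multiplicative $k_F^\times$-action on $\widehat{k_F}$, giving exactly two orbits (Corollary \ref{Baction}); after enlarging $K$ to contain $\zeta_p$ one has $|\Irr_K(k_F)|=q$, matching the bound $\ell(\ker\Phi)\le q-1$ from the characteristic-cycle computation (Lemma \ref{LengthUB}); and the non-triviality of the secret action is checked for the single element $g=\begin{pmatrix}1&-\pi_F^{n+1}\\0&1\end{pmatrix}\in N_{n+1}$ (Corollary \ref{SAnontriv}).

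A smaller point: your surjectivity claim for $\Xi(W)$ is not as immediate as you suggest. The paper's Proposition \ref{VarPhiIsOnto} splits into the case $k\neq d$, where $\sL_n(W)$ is simple (Corollary \ref{LnWSimple}), and the case $k=d$, which requires an explicit computation because simplicity fails.
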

The proof of Theorem \ref{CompOnW} will occupy the entirety of $\S \ref{PfOfCompOnW}$ and can be found at the end of $\S \ref{PfOfCOW}$. 

In the remainder of $\S\ref{CompSect}$ we will use Theorem \ref{CompOnW} to derive some important consequences. To do this, we need two more technical Lemmas.

\begin{lem}\label{GenByGS} Let $Y \subseteq X$ both lie in $\bD_n$. Then the action map 
\[\sD_n(Y) \underset{\sD_n(X)}{\otimes}{} \sM(X) \longrightarrow \sM(Y)\] is an isomorphism when $\sM$ is one of the following $\sD_n$-modules:
\be \item  $\sM = \sL_n$, or
\item $\sM = \left(\sD_n \underset{\sD_{n+1}(\bD)}{\otimes}{} \sL_{n+1}(\bD)\right)_{J_{n+1}}$ and $n \geq v_{\pi_F}(e)$.
\ee\end{lem}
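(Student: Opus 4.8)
\textbf{Proof strategy for Lemma \ref{GenByGS}.}
The plan is to reduce both cases to a finite-presentation statement for the relevant $\sD_n$-module, which then turns the assertion into an elementary base-change computation for presented modules over the flat ring extension $\sD_n(X) \to \sD_n(Y)$ supplied by Lemma \ref{Flat}(b). First I would record the general principle: if $M$ is a $\sD_n(X)$-module admitting a finite presentation
\[ \sD_n(X)^a \longrightarrow \sD_n(X)^b \longrightarrow M \longrightarrow 0, \]
and if $\sD_n(X) \to \sD_n(Y)$ is flat on the appropriate side, then applying $\sD_n(Y) \otimes_{\sD_n(X)} -$ and comparing with the analogous presentation of $M(Y)$ reduces everything to checking that the presentation maps are compatible with restriction; this compatibility is automatic because the presentations we use are built from restriction-compatible data (algebraic generators and relators from $\S \ref{KummerSect}$).

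For part (a), I would invoke Theorem \ref{AlgGensExist} to choose an algebraic generator $\dot{z} \in \sL_n(\bD)$ with associated rational function $u_n$, so that by Corollary \ref{MnPres} both $\sL_n(X)$ and $\sL_n(Y)$ are presented as $\sD_n(-)/\sD_n(-) \cdot R_{\cS(u_n)}(u_n,d)$, with the restriction map $\sL_n(X) \to \sL_n(Y)$ induced by $\dot z \mapsto \dot z$. Since $\sD_n(Y)$ is a flat right $\sD_n(X)$-module by Lemma \ref{Flat}(b), tensoring the short exact sequence
\[ 0 \to \sD_n(X) \xrightarrow{\cdot R_{\cS(u_n)}(u_n,d)} \sD_n(X) \to \sL_n(X) \to 0 \]
with $\sD_n(Y)$ over $\sD_n(X)$ yields exactly the corresponding sequence for $Y$, and the induced map on cokernels is the action map, which is therefore an isomorphism. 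One subtlety: Corollary \ref{MnPres} requires $X, Y \in \bD_n$, which holds by hypothesis, and requires $\dot z|_X$, $\dot z|_Y$ to be algebraic generators — this follows from Lemma \ref{iDagLocal} exactly as in the proof of Theorem \ref{AlgGensExist}(a).

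For part (b), I would first handle the module $\sN := \sD_n \otimes_{\sD_{n+1}(\bD)} \sL_{n+1}(\bD)$ before taking coinvariants. Choosing an algebraic generator for $\sL_{n+1}(\bD)$ with relator $r := R_{\cS(u)}(u,d)$ and using Corollary \ref{MnPres}, we get $\sL_{n+1}(\bD) \cong \sD_{n+1}(\bD)/\sD_{n+1}(\bD) r$; applying $\sD_n(-) \otimes_{\sD_{n+1}(\bD)} -$ and using the flatness of $\sD_n(Z)$ over $\sD_{n+1}(\bD)$ (Lemma \ref{Flat}(a) composed with Lemma \ref{Flat}(b), valid for $Z \in \bD_n$) gives $\sN(Z) \cong \sD_n(Z)/\sD_n(Z) r$ for $Z = X, Y$, with the action map $\sD_n(Y) \otimes_{\sD_n(X)} \sN(X) \to \sN(Y)$ an isomorphism by the same cokernel argument as in (a). It then remains to pass to $J_{n+1}$-coinvariants: since $\bQ \subseteq K$ and the secret action factors through the finite group $J_{n+1}/J_{n+2}$ by Lemma \ref{SecretAction}, the functor of $J_{n+1}$-coinvariants is exact and commutes with the (right-exact) base change $\sD_n(Y) \otimes_{\sD_n(X)} -$; moreover the secret-action formula $h \star (s \otimes m') = s\,\beta(h)^{-1} \otimes h\cdot m'$ is manifestly compatible with restriction $\sD_n(X) \to \sD_n(Y)$, so the coinvariants base-change map is identified with the action map of the sheaf $\sM$. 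I expect the main obstacle to be bookkeeping the interaction between the tensor-product base change and the coinvariants quotient — specifically, verifying that the natural map $\sD_n(Y) \otimes_{\sD_n(X)} (\sN(X))_{J_{n+1}} \to (\sN(Y))_{J_{n+1}}$ really is the one induced by the action and is an isomorphism; this is a diagram chase using right-exactness of both $\otimes$ and $(-)_{J_{n+1}}$ together with part of the argument already in place from the $\sN$-level isomorphism, but it needs to be spelled out carefully.
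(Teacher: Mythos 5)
Your proposal is correct and takes essentially the same route as the paper: fix a global algebraic generator via Theorem~\ref{AlgGensExist}, exploit the one-relator presentation $\sD_n(Z)/\sD_n(Z)R_{\cS(u)}(u,d)$ from Corollary~\ref{MnPres} for $Z\in\{X,Y,\bD\}$, and compare presentations by base change. Two minor points of difference. First, your invocations of flatness (Lemma~\ref{Flat}) are unnecessary: the paper's argument never tensors a short exact sequence, only a right-exact one. It forms the commutative diagram whose top row is $\sD_n(Y)\otimes_{\sD_n(X)}\sD_n(X)\to\sD_n(Y)\otimes_{\sD_n(X)}\sD_n(X)\to\sD_n(Y)\otimes_{\sD_n(X)}\sL_n(X)\to 0$ (automatically right-exact), whose bottom row is the analogous presentation of $\sL_n(Y)$, and whose first two vertical arrows are trivially isomorphisms; the Five Lemma then handles the cokernels. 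Second, for part (b) the paper streamlines the coinvariants step that you rightly flag as the point needing care: rather than first establishing the isomorphism at the $\sN$-level and then arguing that $(-)_{J_{n+1}}$ commutes with base change, it immediately represents the coinvariants $(D/Dr)_{J_{n+1}}$ as $\operatorname{coker}\bigl((1-\epsilon)\star : D/Dr\to D/Dr\bigr)$, where $\epsilon$ is the principal idempotent of $K[J_{n+1}/J_{n+2}]$. This gives a three-term right-exact presentation of the coinvariants module itself, which can then be base-changed and compared by the identical Five-Lemma diagram as in part (a), with no separate commutation argument. Your two-step route would work once spelled out, but the idempotent trick makes the bookkeeping mechanical.
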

\begin{proof} (a) Using Theorem \ref{AlgGensExist}, choose an algebraic generator $\dot{z}$ for $\sL_n(\bD)$ with associated rational function $u$, write $r := R_{\cS(u)}(u,d)$ and consider the following commutative diagram:
\[\xymatrix{\sD_n(Y) \underset{\sD_n(X)}{\otimes} \sD_n(X) \ar[r]^{\cdot r}\ar[d] & \sD_n(Y) \underset{\sD_n(X)}{\otimes} \sD_n(X) \ar[r]\ar[d] & \sD_n(Y) \underset{\sD_n(X)}{\otimes} \sL_n(X) \ar[r]\ar@{..>}[d] & 0 \\
\sD_n(Y) \ar[r]_{\cdot r} & \sD_n(Y) \ar[r] &  \sL_n(Y) \ar[r] & 0.
}\]
The rows are exact by Corollary \ref{MnPres}. The first two vertical maps are isomorphisms, so the third vertical map is also an isomorphism by the Five Lemma. This deals with the case $\sM = \sL_n$. 

(b) Now, using Theorem \ref{AlgGensExist} again, choose an algebraic generator $\dot{z}$ for $\sL_{n+1}(\bD)$ with associated rational function $u$. Write $D := \sD_n(X)$, $D' := \sD_n(Y)$, $r := R_{\cS(u)}(u,d)$ and $H := J_{n+1}$ for brevity, and let $\epsilon \in K[J_{n+1}/J_{n+2}]$ be the principal idempotent --- the average of all the group elements of the finite group $J_{n+1}/J_{n+2}$. Then the sequence $D/Dr \stackrel{1 - \epsilon}{\longrightarrow} D/Dr \longrightarrow (D/Dr)_H \to 0$ is exact, so the first row in the commutative diagram
\[\xymatrix{ D' \underset{D}{\otimes}{} \frac{D}{Dr} \ar[rr]^{1 \hsp \otimes \hsp (1-\epsilon)\star} \ar[d] && D' \underset{D}{\otimes}{} \frac{D}{Dr} \ar[r]\ar[d] & D'\underset{D}{\otimes}{} (\frac{D}{Dr})_H \ar[r]\ar@{..>}[d]  & 0 \\ 
\frac{D'}{D'r} \ar[rr]_{(1-\epsilon)\star} && \frac{D'}{D'r} \ar[r] & (\frac{D'}{D'r})_H \ar[r] & 0
}\]
is exact. The second row is exact for the same reason, and the first two vertical maps are isomorphisms. Hence the third map is an isomorphism by the Five Lemma.

By Corollary \ref{MnPres}, we have isomorphisms $\sM(X) \cong (D/Dr)_H$ and $\sM(Y) \cong (D'/D'r)_H$. Hence $\sD_n(Y) \underset{\sD_n(X)}{\otimes}{} \sM(X) \longrightarrow \sM(Y)$ is also an isomorphism. \end{proof}

\begin{lem}\label{RestrictLn} For each $X \in \bD_n$, the restriction map $\sL_{n+1}(\bD) \to \sL_{n+1}(X)$ induces a natural $\sD_n(X)$-linear isomorphism
\[ \sD_n(X) \underset{\sD_{n+1}(\bD)}{\otimes}{} \sL_{n+1}(\bD) \stackrel{\cong}{\longrightarrow} \sD_n(X) \underset{\sD_{n+1}(X)}{\otimes}{} \sL_{n+1}(X).\]
\end{lem}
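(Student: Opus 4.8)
The plan is to transport everything through the standard presentations coming from Theorem \ref{AlgGensExist} and Corollary \ref{MnPres}, and then invoke the flatness results of $\S \ref{FlatnessChapter}$. Concretely, using Theorem \ref{AlgGensExist} I would fix an algebraic generator $\dot{z}$ for $\sL_{n+1}(\bD)$ with associated rational function $u$, and abbreviate $r := R_{\cS(u)}(u,d) \in \cD(\bD)$. By Corollary \ref{MnPres} applied over $\bD$ and over $X$ respectively (noting that $\dot{z}|_X$ is again an algebraic generator with the same associated rational function, by Lemma \ref{iDagLocal} and Definition \ref{AlgGen}), we get exact sequences
\[ 0 \to \sD_{n+1}(\bD) \xrightarrow{\ \cdot r\ } \sD_{n+1}(\bD) \to \sL_{n+1}(\bD) \to 0 \]
and the analogous one with $\bD$ replaced by $X$. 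Here I should double-check that $X \in \bD_n$ forces $X \in \bD_{n+1}$ as well (since $\varpi/|\pi_F|^n < \varpi/|\pi_F|^{n+1}$, being $(|\pi_F|^n\partial_x/\varpi)^\dag$-admissible implies being $(|\pi_F|^{n+1}\partial_x/\varpi)^\dag$-admissible), so that $\sD_{n+1}(X)$ and the map $\cdot r$ on it make sense, and moreover that the relevant hypotheses of Theorem \ref{SheafyPres} hold on $X$ at level $n+1$ exactly as they do on $\bD$.

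\textbf{Key steps.} Applying $\sD_n(X) \otimes_{\sD_{n+1}(\bD)} -$ to the first exact sequence gives a complex
\[ \sD_n(X) \underset{\sD_{n+1}(\bD)}{\otimes}{} \sD_{n+1}(\bD) \xrightarrow{\ 1\otimes \cdot r\ } \sD_n(X) \underset{\sD_{n+1}(\bD)}{\otimes}{} \sD_{n+1}(\bD) \to \sD_n(X) \underset{\sD_{n+1}(\bD)}{\otimes}{} \sL_{n+1}(\bD) \to 0 \]
which is exact on the left because $\sD_n(X)$ is flat as a right $\sD_{n+1}(\bD)$-module: indeed by Lemma \ref{Flat}(a) $\sD_{n+1}(\bD)$ is flat over... wait, one needs flatness of $\sD_n(X)$ over $\sD_{n+1}(\bD)$, which follows from Theorem \ref{FlatThm} since $X \subseteq \bD$ and $\varpi/|\pi_F|^n \geq \varpi/|\pi_F|^{n+1}$, both rings being sections of the sheaves $\cD_r^\dag$ over members of the appropriate $\dag$-sites. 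Similarly $\sD_n(X)$ is flat over $\sD_{n+1}(X)$. So I obtain a commutative ladder
\[
\xymatrix{
\sD_n(X) \underset{\sD_{n+1}(\bD)}{\otimes}{} \sD_{n+1}(\bD) \ar[r]^{\ 1\otimes \cdot r} \ar[d]_{\cong} & \sD_n(X) \underset{\sD_{n+1}(\bD)}{\otimes}{} \sD_{n+1}(\bD) \ar[r] \ar[d]_{\cong} & \sD_n(X) \underset{\sD_{n+1}(\bD)}{\otimes}{} \sL_{n+1}(\bD) \ar[r] \ar@{..>}[d] & 0 \\
\sD_n(X) \underset{\sD_{n+1}(X)}{\otimes}{} \sD_{n+1}(X) \ar[r]_{\ 1\otimes \cdot r} & \sD_n(X) \underset{\sD_{n+1}(X)}{\otimes}{} \sD_{n+1}(X) \ar[r] & \sD_n(X) \underset{\sD_{n+1}(X)}{\otimes}{} \sL_{n+1}(X) \ar[r] & 0
}
\]
in which both rows are exact, the left two vertical arrows are the evident isomorphisms $\sD_n(X)\otimes_{\sD_{n+1}(\bD)}\sD_{n+1}(\bD) \cong \sD_n(X) \cong \sD_n(X)\otimes_{\sD_{n+1}(X)}\sD_{n+1}(X)$ (and these intertwine the two copies of $\cdot r$ because $r$ is a fixed element of $\cD(\bD) \subseteq \sD_{n+1}(\bD)$ mapping to the corresponding element of $\sD_{n+1}(X)$ under restriction), and the right vertical arrow is the map induced by the restriction $\sL_{n+1}(\bD) \to \sL_{n+1}(X)$. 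The Five Lemma then forces the dotted arrow to be an isomorphism, and a quick check of the formulas shows it is $\sD_n(X)$-linear, which is exactly the assertion.

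\textbf{Main obstacle.} The proof is essentially a diagram chase, so there is no deep obstacle; the one genuine point requiring care is the bookkeeping of which flatness statement from Theorem \ref{FlatThm} (equivalently Lemma \ref{Flat}) is being invoked and verifying that $X \in \bD_n$ really does lie in all the $\dag$-sites needed so that $\sD_{n+1}(X)$, the map $\cdot r$ on it, and the presentation of $\sL_{n+1}(X)$ via Corollary \ref{MnPres} all make sense simultaneously. One must also confirm that the left-hand square of the ladder genuinely commutes, i.e. that under the identifications $\sD_n(X)\otimes_{\sD_{n+1}(\bD)}\sD_{n+1}(\bD)\cong\sD_n(X)$ and $\sD_n(X)\otimes_{\sD_{n+1}(X)}\sD_{n+1}(X)\cong\sD_n(X)$, right multiplication by $r\in\sD_{n+1}(\bD)$ and right multiplication by the image of $r$ in $\sD_{n+1}(X)$ correspond; this is immediate since the restriction map $\sD_{n+1}(\bD)\to\sD_{n+1}(X)$ is a ring homomorphism sending $r$ to $r$.
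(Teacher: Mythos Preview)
Your proof is correct and follows essentially the same route as the paper. The paper's argument is slightly more streamlined: it first invokes Lemma \ref{GenByGS}(a) (with $n$ replaced by $n+1$) to get that the action map $\sD_{n+1}(X) \otimes_{\sD_{n+1}(\bD)} \sL_{n+1}(\bD) \to \sL_{n+1}(X)$ is an isomorphism, and then applies $\sD_n(X) \otimes_{\sD_{n+1}(X)} -$ together with associativity of tensor product. But Lemma \ref{GenByGS}(a) is itself proved by exactly the Five Lemma ladder you wrote down, so you have simply merged the two steps into one. One small remark: your invocation of flatness (Theorem \ref{FlatThm}) to get left exactness of the top row is harmless but unnecessary---right exactness of the tensor product already gives that both rows present the rightmost terms as cokernels of $\cdot r$, and functoriality of cokernels suffices to conclude the dotted arrow is an isomorphism once the first two vertical maps are.
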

\begin{proof} By Lemma \ref{GenByGS}(a) applied with $n$ replaced by $n+1$, the action map 
\[ \sD_{n+1}(X) \underset{\sD_{n+1}(\bD)}{\otimes}{} \sL_{n+1}(\bD) \longrightarrow \sL_{n+1}(X)\]
is an isomorphism. Now apply the functor $\sD_n(X) \underset{\sD_{n+1}(X)}{\otimes}{} -$ to this isomorphism, and use the associativity of tensor product.
\end{proof}

Next, we deal with the `easy' member of the covering $\cU_n$, namely $\Upsilon_n$.
\begin{prop}\label{CompOnUn} The comparison map
\[\Xi(\Upsilon_n) : \left(\sD_n(\Upsilon_n) \underset{\sD_{n+1}(\Upsilon_n)}{\otimes}{} \sL_{n+1}(\Upsilon_n)\right)_{J_{n+1}} \longrightarrow \sL_n(\Upsilon_n)\]
is an isomorphism for each $n \geq 0$.
\end{prop}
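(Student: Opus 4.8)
The plan is to exploit the fact that $\Upsilon_n$ is an affinoid subdomain of $\bD$ that is \emph{contained in} $\Upsilon = \bD \cap \Omega$, so the line bundle with connection $\sL$ restricts to $\Upsilon_n$ without any truncation issues, and everything trivialises. First I would note that since $\Upsilon_n \subseteq \Upsilon_{n+1} \subseteq \Upsilon$, Lemma \ref{iDagLocal} gives $(\sL_{n+1})_{|\Upsilon_n} = \sL_{|\Upsilon_n}$ and $(\sL_n)_{|\Upsilon_n} = \sL_{|\Upsilon_n}$; in particular the restriction map $\sL_{n+1}(\Upsilon_n) \to \sL_n(\Upsilon_n)$ underlying the comparison map $(\ref{CompMap})$ is simply the identity on $\sL(\Upsilon_n)$. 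Next, $\Upsilon_n$ lies in $\bD_n$ by Corollary \ref{Heisenberg}, so $\sD_n(\Upsilon_n)$ and $\sD_{n+1}(\Upsilon_n)$ both act on $\sL(\Upsilon_n)$; and crucially, $\Upsilon_n$ is contained in $U(\cS)_{|\pi_F|^n}$ for any coset system $\cS$ for $\cO_F/\pi_F^{n+1}\cO_F$, so by Corollary \ref{LnYsimple} (or rather the structure already visible in the proof of Proposition \ref{YPres}, applied with $t = |\pi_F|^n$ and $t = |\pi_F|^{n+1}$) the $\cO(\Upsilon_n)$-module $\sL(\Upsilon_n)$ is free of rank $1$, generated by some $\dot{z}$ with $\psi(\dot{z}^{\otimes d})$ a unit in $\cO(\Upsilon_n)$.

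The key computation is then that the natural map $\sD_n(\Upsilon_n) \otimes_{\sD_{n+1}(\Upsilon_n)} \sL_{n+1}(\Upsilon_n) \to \sL_n(\Upsilon_n)$ is already an isomorphism \emph{before} taking $J_{n+1}$-coinvariants. For this I would use the presentation from Corollary \ref{MnPres}: writing $u$ for the associated rational function of $\dot{z}$ (a unit on $\Upsilon_n$), we have $\sL_n(\Upsilon_n) \cong \sD_n(\Upsilon_n)/\sD_n(\Upsilon_n) R_{\cS(u)}(u,d)$ and $\sL_{n+1}(\Upsilon_n) \cong \sD_{n+1}(\Upsilon_n)/\sD_{n+1}(\Upsilon_n) R_{\cS(u)}(u,d)$ with the \emph{same} relator $R_{\cS(u)}(u,d)$. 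Since by Corollary \ref{ThetaUDag} the twisting automorphism $\theta_{u,d}$ is defined on both $\sD_n(\Upsilon_n)$ and $\sD_{n+1}(\Upsilon_n)$ (as $u$ is a unit and $p \nmid d$) and $\theta_{u,d}^{-1}(R_{\cS(u)}(u,d)) = \Delta \partial_x$ with $\Delta$ a unit, both quotients are isomorphic to $\sD_{\bullet}(\Upsilon_n)/\sD_{\bullet}(\Upsilon_n) \partial_x \cong \cO(\Upsilon_n)$ compatibly, so applying $\sD_n(\Upsilon_n)\otimes_{\sD_{n+1}(\Upsilon_n)}-$ to the presentation of $\sL_{n+1}(\Upsilon_n)$ and using the flatness from Lemma \ref{Flat}(a) yields exactly the presentation of $\sL_n(\Upsilon_n)$. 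Hence $\sD_n(\Upsilon_n)\otimes_{\sD_{n+1}(\Upsilon_n)} \sL_{n+1}(\Upsilon_n) \stackrel{\cong}{\to} \sL_n(\Upsilon_n)$.

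It remains to check that the secret $J_{n+1}$-action on $\sD_n(\Upsilon_n)\otimes_{\sD_{n+1}(\Upsilon_n)} \sL_{n+1}(\Upsilon_n)$ is trivial, so that passing to coinvariants changes nothing. By Lemma \ref{SecretAction} the secret action of $h \in J_{n+1}$ sends $s \otimes m'$ to $s\beta(h)^{-1} \otimes h\cdot m'$; under the identification of $\sL_{n+1}(\Upsilon_n)$ with $\cO(\Upsilon_n)\dot{z}$ and the fact that on $\Upsilon_n$ the truncation is trivial, $h \cdot m'$ equals $\beta(h) \cdot m'$ because $\sL_{n+1}$ restricted to $\Upsilon_n$ is $\sL_{|\Upsilon_n}$ and Theorem \ref{DnJnJmodule} (applied with $J_{n+1}$ in place of $J$, which is legitimate since $n \ge v_{\pi_F}(e) \ge 0$) shows $\beta(h)^{-1}h$ acts trivially; hence $s\beta(h)^{-1}\otimes h\cdot m' = s\beta(h)^{-1}\otimes \beta(h)\cdot m' = s \otimes m'$, using that $\beta(h) \in \sD_{n+1}(\Upsilon_n)^\times$ can be moved across the tensor. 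Therefore the coinvariants map is an isomorphism and $\Xi(\Upsilon_n)$ is too. The main obstacle I anticipate is the bookkeeping around which group ($J$, $J_{n+1}$) the trivialisation lives over and checking that $\beta(h)$ genuinely lies in the smaller ring $\sD_{n+1}(\Upsilon_n)^\times$ (it does, since $\beta(h)$ is defined using only $h\cdot x - x$ and divided powers $\partial^{[k]}$, and the needed convergence radius is governed by $\varpi/|\pi_F|^{n+1} > \varpi/|\pi_F|^n$, i.e. Lemma \ref{JG}); but this is the same argument already used in the proof of Proposition \ref{SheafDnJ}, so no genuinely new difficulty arises on this member of the cover.
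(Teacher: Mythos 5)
Your overall strategy is viable, and the bulk of the argument is on the right track, but there is a genuine gap in the final step. The key problem is your claim that $\beta(h)$ lies in $\sD_{n+1}(\Upsilon_n)^\times$ for $h \in J_{n+1}$, which you use to move $\beta(h)$ across the tensor $\otimes_{\sD_{n+1}(\Upsilon_n)}$. This is false. By Lemma \ref{GrCalc}, for $h \in I_{n+1} \setminus I_{n+2}$ one has $\max\{|b|,|c|,|a-d|\} = |\pi_F|^{n+1}$, so $h \in \cG_r$ only for $r < \varpi/|\pi_F|^{n+1}$; hence $\beta(h) \in \cD_r(\bD)$ only for such $r$, whereas $\sD_{n+1} = \cD^\dag_{\varpi/|\pi_F|^{n+1}} = \bigcup_{c > \varpi/|\pi_F|^{n+1}} \cD_c$. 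In other words, Lemma \ref{JG} gives $\beta(J_{n+1}) \subset \sD_n(\Upsilon_n)^\times$ but \emph{not} $\subset \sD_{n+1}(\Upsilon_n)^\times$ — this is precisely why, in Proposition \ref{SheafDnJ}, the trivialisation for $\sD_{n+1}$ is only over $H_{n+2}$, not $H_{n+1}$. Consequently, the chain $s\beta(h)^{-1}\otimes \beta(h)\cdot m' = s\beta(h)^{-1}\beta(h)\otimes m' = s\otimes m'$ is not licensed: you may only slide elements of $\sD_{n+1}(\Upsilon_n)$ through the balanced tensor, and $\beta(h)$ is not such an element for $h \in J_{n+1}\setminus J_{n+2}$. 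Indeed, if $\beta(h)$ were always in $S'$, the secret action of Lemma \ref{SecretAction} would be trivial on the nose, defeating the purpose of introducing coinvariants at all.

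The good news is that you do not need a direct proof that the secret action is trivial; you already have the ingredients to argue it a posteriori, and this is what the paper does. Having shown that the action map $\varphi : \sD_n(\Upsilon_n)\otimes_{\sD_{n+1}(\Upsilon_n)}\sL_{n+1}(\Upsilon_n) \to \sL_n(\Upsilon_n)$ is an isomorphism (your flatness-plus-identical-relator argument works, after noting that an algebraic generator $\dot z$ of $\sL_{n+1}(\bD)$ restricted to $\Upsilon_n$ is simultaneously an algebraic generator for $\sL_n$ and $\sL_{n+1}$ there, since $\cS(u_{n+1})\cap\Upsilon_n = \emptyset$ makes conditions (d),(e) of Definition \ref{AlgGen} vacuous), you should simply observe: since $\varphi$ factors through the surjection onto the $J_{n+1}$-coinvariants (Lemma \ref{SheafyCompMap}), an isomorphism $\varphi$ forces that surjection to be injective, hence an isomorphism, hence the secret action is trivial. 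The paper's own proof is a minor variant that avoids even the flatness computation: it deduces surjectivity of $\varphi$ from the simplicity of $\sL_n(\Upsilon_n)$ (Corollary \ref{LnYsimple}), and then injectivity from the fact that both source and target are free $\cO(\Upsilon_n)$-modules of rank one, a surjective $\cO$-linear map between two such being automatically injective. Either route is fine; the one thing you must not do is claim $\beta(h)\in \sD_{n+1}(\Upsilon_n)^\times$ for general $h \in J_{n+1}$.
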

\begin{proof} The map $\Xi(\Upsilon_n)$ appears in the following commutative diagram:
\[ \xymatrix{ \sD_n(\Upsilon_n) \underset{\sD_{n+1}(\Upsilon_n)}{\otimes}{} \sL_{n+1}(\Upsilon_n) \ar[rr]^(0.6){\varphi} \ar@{->>}[d] & & \sL_n(\Upsilon_n) \\  \left(\sD_n(\Upsilon_n) \underset{\sD_{n+1}(\Upsilon_n)}{\otimes}{} \sL_{n+1}(\Upsilon_n)\right)_{J_{n+1}} \ar[rru]_{\Xi(\Upsilon_n)} & & } \]
Now, $\varphi$ is non-zero and $\sL_n(\Upsilon_n)$ is simple as a $\sD_n(\Upsilon_n)$-module by Corollary \ref{LnYsimple}, so $\varphi$ must be surjective. On the other hand, $\sL_n(\Upsilon_n)$ is a free $\cO(\Upsilon_n)$-module of rank $1$ by Corollary \ref{LnYsimple}, and examining the proof of Corollary \ref{LnYsimple} we see that $\sD_n(\Upsilon_n) \underset{\sD_{n+1}(\Upsilon_n)}{\otimes}{} \sL_{n+1}(\Upsilon_n)$ is a free $\cO(\Upsilon_n)$-module of rank one as well. Because a surjective $\cO(\Upsilon_n)$-linear map between two such modules must be injective, we conclude that $\varphi$ is an isomorphism. 

It follows that the vertical map is injective. Since it is clearly surjective, it is an isomorphism. Therefore $\Xi(\Upsilon_n)$ is also an isomorphism.
\end{proof}

The next two statements are the main results of $\S \ref{CompSect}$.

\begin{thm}\label{XiIso} Assume that $n \geq v_{\pi_F}(e)$, $N_{n+1} \leq J_{n+1}$ and $d \mid (q+1)$. Then the comparison map from Lemma \ref{SheafyCompMap} 
\[  \Xi : \left(\sD_n \underset{\sD_{n+1}(\bD)}{\otimes}{} \sL_{n+1}(\bD) \right)_{J_{n+1}} \longrightarrow \sL_n\]
is an isomorphism of sheaves of $\sD_n$-modules on $\bD_n$.
\end{thm}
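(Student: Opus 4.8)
The plan is to deduce the global statement \textbf{Theorem \ref{XiIso}} from its local incarnation \textbf{Theorem \ref{CompOnW}} using the sheaf-theoretic machinery already in place. Since both the source $\left(\sD_n \underset{\sD_{n+1}(\bD)}{\otimes}{} \sL_{n+1}(\bD) \right)_{J_{n+1}}$ and the target $\sL_n$ are sheaves of $\sD_n$-modules on $\bD_n$ with vanishing higher \v{C}ech cohomology --- the former by the Proposition preceding Lemma \ref{SheafyCompMap}, the latter by Theorem \ref{NCDagTate} and Theorem \ref{DagTate} --- it suffices to check that $\Xi(X)$ is an isomorphism for every $X$ in an admissible covering of $\bD$ in the $G$-topology $\bD_n$, together with a descent/gluing argument. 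The natural covering to use is $\cU_n = \{\Upsilon_n, W_{a,n} : a \in \cO_F\}$ from Definition \ref{Win}, which by Lemma \ref{CheckCov2} is a $\bD_n/I_{n+1}$-admissible covering of $\bD$, hence in particular a $\bD_n$-admissible covering.

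First I would recall the compatibility of $\Xi$ with restriction: by Lemma \ref{GenByGS}(a) and (b), for $Y \subseteq X$ in $\bD_n$ the action maps $\sD_n(Y) \otimes_{\sD_n(X)} \sL_n(X) \to \sL_n(Y)$ and $\sD_n(Y) \otimes_{\sD_n(X)} \left(\sD_n(X) \underset{\sD_{n+1}(\bD)}{\otimes} \sL_{n+1}(\bD)\right)_{J_{n+1}} \to \left(\sD_n(Y) \underset{\sD_{n+1}(\bD)}{\otimes} \sL_{n+1}(\bD)\right)_{J_{n+1}}$ are isomorphisms, so $\Xi$ is determined by its values on any admissible covering, and checking it is an isomorphism is a local question. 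Then I would verify $\Xi(X)$ is an isomorphism for each member $X$ of $\cU_n$: for $X = \Upsilon_n$ this is exactly Proposition \ref{CompOnUn}, and for $X = W_{a,n}$ this is exactly Theorem \ref{CompOnW} (whose hypotheses $n \geq v_{\pi_F}(e)$, $d \mid (q+1)$, $N_{n+1} \leq J_{n+1}$ are precisely those assumed here). I should also note that Lemma \ref{RestrictLn} identifies $\sD_n(W) \underset{\sD_{n+1}(\bD)}{\otimes} \sL_{n+1}(\bD)$ with $\sD_n(W) \underset{\sD_{n+1}(W)}{\otimes} \sL_{n+1}(W)$, so the statement of Theorem \ref{CompOnW} matches the statement of the sheaf map $\Xi$ on sections over $W_{a,n}$.

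Finally I would run the gluing argument. A morphism of sheaves on a $G$-topology which is an isomorphism on the members of an admissible covering, and which is compatible with restrictions to intersections, is an isomorphism: injectivity and surjectivity of $\Xi$ can each be checked on stalks, or more directly one observes that $\ker \Xi$ and $\coker \Xi$ are sheaves on $\bD_n$ whose sections over every member of the covering $\cU_n$ vanish, hence they are zero sheaves because $\cU_n$ covers $\bD$; here one uses left-exactness of the section functor to control the kernel and the vanishing of $\check{H}^1$ on the covering for a covering argument on the cokernel, exactly as in the proof that $\cD_r$-modules satisfy Tate-type descent. Alternatively, one can appeal to the fact that both sheaves are of the form $\sD_n \otimes_{\sD_n(\bD)}(-)$ applied to their global sections by Lemma \ref{GenByGS}, reducing the whole statement to the assertion that $\Xi(\bD)$ is an isomorphism, and then $\bD = \bigcup \cU_n$ with the \v{C}ech complex argument finishes it. The main obstacle is entirely contained in Theorem \ref{CompOnW}, which is deferred to $\S \ref{PfOfCompOnW}$; given that result, the present theorem is a formal consequence of sheaf gluing and the change-of-base isomorphisms in Lemma \ref{GenByGS} and Lemma \ref{RestrictLn}, so there is no serious difficulty remaining at this stage beyond bookkeeping with the $G$-topologies.
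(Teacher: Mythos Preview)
Your proposal is correct and follows essentially the same route as the paper: reduce to the admissible covering $\cU_n$ via Lemma \ref{CheckCov2}, use Lemma \ref{GenByGS} to see that both sheaves are generated by their sections so that it suffices to check $\Xi(Y)$ on each $Y\in\cU_n$, and then invoke Proposition \ref{CompOnUn} for $\Upsilon_n$ and Theorem \ref{CompOnW} (via Lemma \ref{RestrictLn}) for the $W_{a,n}$. The paper's proof is just a terser version of exactly this argument.
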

\begin{proof} Since  $\cU_n$ is a covering of $\bD$ by Lemma \ref{CheckCov2} and since $\Xi$ is a map of sheaves, it is enough to show $\Xi_{|Y}$ is an isomorphism for each $Y \in \cU_n$. After Lemma \ref{GenByGS}, it is enough to check that $\Xi(Y)$ is an isomorphism for each $Y \in \cU_n$. However in view of Lemma \ref{RestrictLn}, this follows from Proposition \ref{CompOnUn} and Theorem \ref{CompOnW}.
\end{proof}

\begin{cor}\label{JnGenBySn} Assume that $n \geq v_{\pi_F}(e)$, $N_{n+1} \leq J_{n+1}$ and $d \mid (q+1)$. Then for each $X \in \bD_n/J$, the action map
\[ \sD_n(X) \underset{J_{n+1}}{\rtimes}{} J \quad \underset{ \sD_{n+1}(\bD) \underset{J_{n+2}}{\rtimes}{}  J  }{\otimes}{} \quad \sL_{n+1}(\bD) \quad \longrightarrow \quad \sL_n(X)\]
is an $\sD_n(X) \underset{J_{n+1}}{\rtimes}{} J$-linear isomorphism.
\end{cor}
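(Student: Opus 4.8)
The plan is to deduce Corollary \ref{JnGenBySn} from Theorem \ref{XiIso} by identifying the complicated-looking tensor product on the left-hand side with the $J_{n+1}$-coinvariants of $\sD_n(X) \otimes_{\sD_{n+1}(\bD)} \sL_{n+1}(\bD)$ using the general machinery of $\S \ref{CPandSA}$, and then invoking the local evaluation $\Xi(X)$ of the sheaf isomorphism $\Xi$. Concretely, I would first apply Theorem \ref{IndModCoinv} with the parameters $G := J$, $S' := \sD_{n+1}(\bD)$, $S := \sD_n(X)$, $H' := J_{n+2}$, $H := J_{n+1}$, the map $f : S' \to S$ the restriction homomorphism, the trivialisations $\beta'$ and $\beta$ as used in Proposition \ref{SheafDnJ}, and $M' := \sL_{n+1}(\bD)$. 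One must check that Hypothesis \ref{MorphCP} holds here: parts (a)--(d) and (f)--(g) are routine (equivariance of restriction maps, normality of the congruence subgroups, compatibility of the $\beta$'s), while part (e), that $\beta : J_{n+1} \to \sD_n(X)^\times$ is a $J$-equivariant trivialisation, is exactly Theorem \ref{GrGxTriv} combined with Lemma \ref{JG}; and that $M' = \sL_{n+1}(\bD)$ is indeed an $S' \rtimes_{H'} J$-module is Theorem \ref{DnJnJmodule} applied with $H := J$. Theorem \ref{IndModCoinv} then gives a natural $\sD_n(X)$-linear isomorphism
\[ \left(\sD_n(X) \underset{J_{n+1}}{\rtimes}{} J\right) \underset{\sD_{n+1}(\bD) \underset{J_{n+2}}{\rtimes}{} J}{\otimes}{} \sL_{n+1}(\bD) \quad \cong \quad \left( \sD_n(X) \underset{\sD_{n+1}(\bD)}{\otimes}{} \sL_{n+1}(\bD) \right)_{J_{n+1}},\]
where the right-hand side is the module of coinvariants under the secret action.

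Next I would check that this abstract isomorphism is compatible with the action maps into $\sL_n(X)$. On the left, the action map of the statement factors as $v \otimes m' \mapsto v \cdot \tau(m')$ where $\tau : \sL_{n+1}(\bD) \to \sL_n(X)$ is the restriction-then-restriction map, which is $\sD_{n+1}(\bD) \rtimes_{J_{n+2}} J$-linear precisely because $\sL_n(X)$ is a $\sD_n(X) \rtimes_{J_{n+1}} J$-module by Theorem \ref{DnJnJmodule} (applied with $J_{n+1}$ in place of $J$). Chasing through the explicit formulas for $\theta$ in Theorem \ref{IndModCoinv} and for $\Xi$ in Lemma \ref{SheafyCompMap}, one sees that under the isomorphism above the action map of Corollary \ref{JnGenBySn} corresponds to $\Xi(X)$ composed with the canonical identification of $\sD_n(X) \otimes_{\sD_{n+1}(\bD)} \sL_{n+1}(\bD)$ with $\sD_n(X) \otimes_{\sD_{n+1}(X)} \sL_{n+1}(X)$ from Lemma \ref{RestrictLn}.

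Finally, $\Xi(X)$ is an isomorphism: by Theorem \ref{XiIso}, $\Xi$ is an isomorphism of sheaves of $\sD_n$-modules on $\bD_n$, and $X \in \bD_n/J$ is in particular a member of $\bD_n$, so evaluating the sheaf isomorphism on $X$ gives that $\Xi(X)$ is bijective. (Alternatively, if one prefers to argue locally, Lemma \ref{GenByGS}(b) reduces the bijectivity on $X$ to bijectivity on the members of the covering $\cU_n$, which is Proposition \ref{CompOnUn} and Theorem \ref{CompOnW}.) Composing with Lemma \ref{RestrictLn} and the identification of the previous paragraph yields the desired $\sD_n(X) \rtimes_{J_{n+1}} J$-linear isomorphism. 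The main obstacle I anticipate is purely bookkeeping rather than conceptual: one must carefully verify all seven clauses of Hypothesis \ref{MorphCP} in this setting and, more delicately, track the various explicit formulas (the secret action $h \star (s \otimes m') = s\beta(h)^{-1} \otimes h\cdot m'$, the map $\theta$, and the comparison map $\Xi$) through the identifications to confirm that the abstract isomorphism of Theorem \ref{IndModCoinv} really does intertwine the two action maps into $\sL_n(X)$; this is the one place where an error could creep in.
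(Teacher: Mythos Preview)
Your proposal is correct and follows essentially the same approach as the paper: identify the action map with $\widetilde{\tau}$, apply Theorem \ref{IndModCoinv} to reduce to the coinvariants, and then invoke Theorem \ref{XiIso} to conclude that $\Xi(X)$ is an isomorphism. The paper's proof is just a terser version of yours, packaging everything into a single commutative triangle; your mention of Lemma \ref{RestrictLn} is an unnecessary detour, since $\Xi(X)$ already has domain $\left(\sD_n(X) \otimes_{\sD_{n+1}(\bD)} \sL_{n+1}(\bD)\right)_{J_{n+1}}$ by construction.
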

\begin{proof} The action map in question is an instance of the map $\widetilde{\tau}$ from (\ref{TauTilde}); hence it is $\sD_n(X) \rtimes_{J_{n+1}} J$-linear. It appears as the diagonal map in the following commutative triangle, where the vertical arrow is the isomorphism from Theorem \ref{IndModCoinv}:
\[\xymatrix{ \sD_n(X) \underset{J_{n+1}}{\rtimes}{} J \quad \underset{ \sD_{n+1}(\bD) \underset{J_{n+2}}{\rtimes}{} J  }{\otimes}{} \quad \sL_{n+1}(\bD) \ar[rrdd]^{\widetilde{\tau}} \ar[dd]_{\cong} & & \\ & & \\
\left( \sD_n(X) \underset{\sD_{n+1}(\bD)}{\otimes}{} \sL_{n+1}(\bD)\right)_{J_{n+1}} \ar[rr]_(0.65){\Xi(X)} & & \sL_n(X).}\]
Since $\Xi(X)$ is an isomorphism by Theorem \ref{XiIso}, this diagonal map must also be an isomorphism. \end{proof}

\section{Microlocal analysis}\label{PfOfCompOnW}

\subsection{Some microlocal rings}\label{MicRings} We fix an affinoid subdomain $X$ of $\A$ in this section. Recall from Definition \ref{dxradm} that $r(X)$ denotes the spectral seminorm of $\partial_x \in \cB(\cO(X))$ and recall the notation $A\langle \partial/r, s/\partial \rangle$ from Definition \ref{AdelSR}.
\begin{defn}\label{Est} Define $\cE_{[s,r]}(X) := \cO(X) \langle \partial/r, s/\partial \rangle$ for every $r \geq s \geq r(X)$.
\end{defn}
By Theorem \ref{skewannulus}, $\cE_{[s,r]}(X)$ is an associative $K$-Banach algebra with the norm
\begin{equation}\label{MicroNorm} \left\vert \sum\limits_{n \in \Z} a_n \partial^n \right\vert = \sup\limits_{s \leq t \leq r} \sup_{n \in \Z} |a_n| t^n\end{equation}
which contains an inverse $\partial^{-1}$ for $\partial$. By Lemma \ref{BanachStarProd}, there is a natural isometric embedding of $K$-Banach algebras $\cD_r(X) \hookrightarrow \cE_{[s,r]}(X)$ given by regarding a power series in $\partial$ as a Laurent series with zero negative terms.

Now let $u \in \cO(X)^\times$ be a unit as in $\S \ref{DivTwDer}$, let $d$ be a non-zero integer coprime to $p$ and let $z$ satisfy the equation $z^d = u$. We will show that each $\cE_{[s,r]}(X)$ contains an inverse that we will denote by $\xi_{u,d}$ to the twisted derivation $\theta_{u,d}(\partial_x)$ from $\S \ref{DivTwDer}$. Recall the coefficients $h_{u, d}^{[n]} = \theta_{u,d}(\partial_x^{[n]})(1) = z \partial_x^{[n]}(z^{-1})$ from Definition \ref{HunDefn}.

\begin{lem}\label{rhz} For every $\ell \geq 0$ we have
\be \item $(\ell+1) h_{u, d}^{[\ell+1]} = \sum\limits_{n=0}^\ell h_{u,d}^{[n]} \partial_x^{[\ell-n]}(h_{u, d}^{[1]})$, and
\item $(\ell+1) h_{u, d}^{[\ell+1]} = \partial_x( h_{u, d}^{[\ell]} ) + h_{u, d}^{[1]} h_{u, d}^{[\ell]} $.
\ee\end{lem}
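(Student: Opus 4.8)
The identity to be proved concerns the generating function $H(T) := \sum_{\ell \geq 0} h_{u,d}^{[\ell]} T^\ell$ of the coefficients $h_{u,d}^{[\ell]} = z\,\partial_x^{[\ell]}(z^{-1})$. The key observation is that these coefficients are intimately related to the logarithmic derivative of $z^{-1}$: since $h_{u,d}^{[1]} = z\,\partial_x(z^{-1}) = -\frac1d\frac{\partial_x(u)}{u}$ by the computation $(\ref{zdz})$, part (b) will follow from the three-term recursion
\[ (\ell+1)\,h_{u,d}^{[\ell+1]} = \partial_x\bigl(h_{u,d}^{[\ell]}\bigr) + h_{u,d}^{[1]}\,h_{u,d}^{[\ell]},\]
which is essentially the statement that $H(T)$ satisfies the logarithmic-type differential equation coming from $z^{-1}$ being a $d$-th root of $u^{-1}$. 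The first thing I would do is pass to an \'etale cover $Z \to X$ as in the proof of Lemma \ref{zTwist}, where $z$ genuinely exists as a unit in $\cO(Z)$, and carry out the computation there; by the injectivity of $\cD(X) \to \cD(Z)$ nothing is lost.

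For part (b): I would apply the Leibniz rule for divided powers to $z^{-1}$. Concretely, $\partial_x^{[\ell+1]}(z^{-1})$ can be expressed by differentiating $\partial_x^{[\ell]}(z^{-1})$ once: $(\ell+1)\partial_x^{[\ell+1]}(z^{-1}) = \partial_x\bigl(\partial_x^{[\ell]}(z^{-1})\bigr)$. Multiplying through by $z$ and using the product rule $\partial_x(z\,\partial_x^{[\ell]}(z^{-1})) = \partial_x(z)\,\partial_x^{[\ell]}(z^{-1}) + z\,\partial_x^{[\ell+1]}(z^{-1})(\ell+1)^{-1}\cdot(\ell+1)$ gives, after rearranging,
\[ z\,\partial_x\bigl(\partial_x^{[\ell]}(z^{-1})\bigr) = \partial_x\bigl(z\,\partial_x^{[\ell]}(z^{-1})\bigr) - \partial_x(z)\,\partial_x^{[\ell]}(z^{-1}),\]
i.e. $(\ell+1) h_{u,d}^{[\ell+1]} = \partial_x(h_{u,d}^{[\ell]}) - z^{-1}\partial_x(z)\,h_{u,d}^{[\ell]}$. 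Since $z^{-1}\partial_x(z) = -z\,\partial_x(z^{-1}) = -h_{u,d}^{[1]}$ by the relation $0 = \partial_x(z z^{-1})$, we get exactly part (b). The only subtlety is keeping track of the factor $(\ell+1)$ correctly when relating $\partial_x \circ \partial_x^{[\ell]}$ to $\partial_x^{[\ell+1]}$, which is routine.

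For part (a): this is the statement that the Leibniz expansion of $h_{u,d}^{[1]} = z\,\partial_x(z^{-1})$ against $z\,\partial_x^{[\ell]}(z^{-1})$ collapses correctly. I would either iterate part (b) — writing $\partial_x(h_{u,d}^{[\ell]})$ via the divided-power Leibniz rule applied to $h_{u,d}^{[\ell]} = z\,\partial_x^{[\ell]}(z^{-1})$ and reorganising — or argue directly: by Proposition \ref{PnStdForm} we have $\theta_{u,d}(\partial_x^{[\ell+1]})(1) = h_{u,d}^{[\ell+1]}$, and applying $\partial_x^{[\ell-n]}$ to $h_{u,d}^{[1]} = z\partial_x(z^{-1})$ and summing against $h_{u,d}^{[n]}$ is precisely the Leibniz expansion of $z\,\partial_x^{[\ell+1]}(z^{-1})$ up to the combinatorial factor $(\ell+1)$ coming from $\partial_x^{[\ell+1]} = \frac{1}{\ell+1}\partial_x \circ \partial_x^{[\ell]}$ combined with the Leibniz rule $\partial_x(fg) = \partial_x(f)g + f\partial_x(g)$ iterated. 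The cleanest route is probably to prove (b) first and then deduce (a) by an easy induction on $\ell$, using the divided-power Leibniz identity $\partial_x^{[\ell-n]}(fg) = \sum_{i} \partial_x^{[i]}(f)\,\partial_x^{[\ell-n-i]}(g)$ to re-sum.

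\textbf{Main obstacle.} There is no deep obstacle here; the result is a formal consequence of the Leibniz rule and the defining relation $z^d = u$. The only place to be careful is bookkeeping with the integer factors $(\ell+1)$ and the divided-power binomial coefficients, and making sure the manipulations are legitimate in $\cD(Z)$ (not merely formally) — but since $z$ is an honest unit there, everything is valid. I expect part (b) to be a two-line computation and part (a) to follow from (b) by a short induction.
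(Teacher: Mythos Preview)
Your proposal is correct and uses the same ingredients as the paper: the product rule, the relation $z^{-1}\partial_x(z) = -z\,\partial_x(z^{-1}) = -h_{u,d}^{[1]}$, and the divided-power Leibniz formula, all carried out on an \'etale cover where $z$ exists. Your argument for (b) is exactly the paper's, just written in the opposite direction (you start from the left-hand side, the paper expands the right-hand side).

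For (a), the paper proceeds directly rather than via (b): it writes $\partial_x(z^{-1}) = z^{-1}\cdot h_{u,d}^{[1]}$, applies the divided-power Leibniz rule $\partial_x^{[\ell]}(ab) = \sum_n \partial_x^{[n]}(a)\,\partial_x^{[\ell-n]}(b)$ to this product, and multiplies by $z$. This is precisely the ``argue directly'' option you outline before settling on induction. Your preferred induction route (deduce (a) from (b)) is not obviously as clean as you suggest --- unwinding $\partial_x(h_{u,d}^{[\ell]})$ in terms of lower $h_{u,d}^{[n]}$'s requires essentially the same Leibniz expansion you'd use in the direct argument --- so the direct Leibniz computation is the efficient choice here.
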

\begin{proof} (a) By the Leibniz rule, $\partial_x^{[\ell]}(ab) = \sum\limits_{n=0}^\ell \partial_x^{[n]}(a) \partial_x^{[\ell-n]}(b)$. Applying this with $a = z^{-1}$ and $b = z \partial_x(z^{-1}) = h_{u, d}^{[1]}$, we have
\[\begin{array}{lll} (\ell+1)h_{u, d}^{[\ell+1]} &=& z\partial_x^{[\ell]}(\partial_x(z^{-1}))  \\
&=& z \sum\limits_{n=0}^\ell \partial_x^{[n]}(z^{-1}) \hsp \partial_x^{[\ell-n]}(h_{u, d}^{[1]}) \\
&=& \hsp \sum\limits_{n=0}^\ell  h_{u,d}^{[n]} \partial_x^{[\ell-n]}(h_{u, d}^{[1]}).\end{array}\]
(b) We expand the right hand side:
\[\begin{array}{lll}\partial_x(h_{u, d}^{[\ell]}) + h_{u, d}^{[1]} h_{u, d}^{[\ell]} &=& \partial_x(z \partial_x^{[\ell]}(z^{-1})) + z \partial_x(z^{-1}) \cdot z \partial_x^{[\ell]}(z^{-1}) \\
&=& \partial_x(z) \partial_x^{[\ell]}(z^{-1}) + z \partial_x(\partial_x^{[\ell]}(z^{-1})) - z^{-1} \partial_x(z) \cdot z \partial_x^{[\ell]}(z^{-1}) \\
&=& (\ell+1) h_{u, d}^{[\ell+1]}. 
\end{array}\]
We have used here that $z \partial_x(z^{-1}) + z^{-1} \partial_x(z) = \partial_x(z^{-1}\cdot z) = \partial_x(1) = 0$. \end{proof}
\begin{defn}\label{XiZ} Define $\xi_{u,d} := \sum\limits_{n = -\infty}^\infty (\xi_{u,d})_n \partial^n \in \prod\limits_{n= -\infty}^{-1} \cO(X) \partial^n$, where
\[ (\xi_{u,d})_{-n} = \left\{ \begin{array}{cl}  (-1)^{n-1} (n-1)! h_{u, d}^{[n-1]}  & \mbox{if }  n \geq 1, \\ 0 & \mbox{otherwise.}  \end{array} \right.\]
\end{defn}

\begin{lem}\label{XiZConv} The element $\xi_{u,d}$ lies in $\cE_{[s,r]}(X)$ whenever  $r \geq s > r(X)$.
\end{lem}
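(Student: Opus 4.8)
The claim is that $\xi_{u,d}$ lies in $\cE_{[s,r]}(X) = \cO(X)\langle \partial/r, s/\partial\rangle$ whenever $r \geq s > r(X)$. By Definition \ref{AdelSR} and the asymmetric form of the norm in (\ref{AsymNorm}), since $\xi_{u,d}$ has only strictly negative powers of $\partial$, it suffices to check that $|(\xi_{u,d})_{-n}|_X \hsp s^{-n} \to 0$ as $n \to \infty$; equivalently, that $|(\xi_{u,d})_{-n}|_X^{1/n} \to 0$ no faster than allows $|(\xi_{u,d})_{-n}|_X s^{-n}$ to tend to zero, which will follow once we bound $|(\xi_{u,d})_{-n}|_X$ by roughly $r(X)^n$ up to mild correction factors. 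Concretely, $(\xi_{u,d})_{-n} = (-1)^{n-1}(n-1)! \hsp h_{u,d}^{[n-1]}$ for $n \geq 1$, so I would estimate $|(n-1)! \hsp h_{u,d}^{[n-1]}|_X$.

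The key input is Proposition \ref{HunEst}, which gives $|h_{u,d}^{[m]}|_X \leq (r(X)/\varpi)^m$ for all $m \geq 0$, using the hypothesis $p \nmid d$ (which is in force in this section). Therefore
\[ |(\xi_{u,d})_{-n}|_X = |(n-1)!| \cdot |h_{u,d}^{[n-1]}|_X \leq |(n-1)!| \left(\frac{r(X)}{\varpi}\right)^{n-1}. \]
Now I would invoke Lemma \ref{nFacVarpi}, which says $|(n-1)!| \leq \varpi^{n-1} \cdot p(n-1)$ (more precisely $|m!|/\varpi^m \leq pm$), to cancel the $\varpi^{n-1}$ in the denominator:
\[ |(\xi_{u,d})_{-n}|_X \leq p(n-1) \cdot \varpi^{n-1} \cdot \frac{r(X)^{n-1}}{\varpi^{n-1}} = p(n-1) \hsp r(X)^{n-1}. \]
Then for $t \in [s,r]$ we have $t \geq s > r(X)$, so $r(X)/t < 1$, and hence
\[ |(\xi_{u,d})_{-n}|_X \hsp t^{-n} \leq p(n-1) \hsp r(X)^{n-1} t^{-n} = \frac{p(n-1)}{t} \left(\frac{r(X)}{t}\right)^{n-1} \longrightarrow 0 \]
as $n \to \infty$, uniformly for $t \in [s,r]$ since $r(X)/t \leq r(X)/s < 1$. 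This is exactly the convergence condition of Definition \ref{AdelSR}, so $\xi_{u,d} \in \cE_{[s,r]}(X)$.

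I do not anticipate any real obstacle here: the statement is an elementary convergence check, and all the necessary estimates (the bound on $|h_{u,d}^{[m]}|_X$ in Proposition \ref{HunEst} and the factorial bound in Lemma \ref{nFacVarpi}) are already established in the excerpt. The only minor point to be careful about is the strict inequality $s > r(X)$, which is what makes the geometric ratio $r(X)/s$ strictly less than $1$ and forces the polynomial factor $p(n-1)$ to be absorbed; if one only had $s \geq r(X)$ the argument would fail, which is presumably why this lemma is stated with the strict hypothesis (in contrast to, say, the $\dag$-admissibility conditions elsewhere).
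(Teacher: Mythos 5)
Your proof is correct and is essentially identical to the paper's own argument: both reduce to the convergence of $|(\xi_{u,d})_{-n}|_X\,t^{-n}$ using Proposition \ref{HunEst} to bound $|h_{u,d}^{[n-1]}|_X$ and Lemma \ref{nFacVarpi} to control $|(n-1)!|$, arriving at the same estimate $\frac{p(n-1)}{t}\left(\frac{r(X)}{t}\right)^{n-1}\to 0$. The only cosmetic difference is your initial remark that it suffices to check at $t=s$; the paper simply runs the estimate for all $t\in[s,r]$ at once.
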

\begin{proof} Since $(\xi_{u,d})_n = 0$ for any $n \geq 0$, by Definition \ref{AdelSR} it is enough to prove that for if $s \leq t \leq r$, then $|(\xi_{u,d})_{-n}| t^{-n} \to 0$ as $n \to +\infty$. Now for $n \geq 1$ we have
\[\frac{|(\xi_{u,d})_{-n}|}{t^n} = \frac{|(n-1)! h_{u, d}^{[n-1]}|}{t^n} \leq  \frac{|(n-1)!|}{t^n} \left(\frac{r(X)}{\varpi}\right)^{n-1}  \leq \frac{p (n-1)}{t} \left(\frac{r(X)}{t}\right)^{n-1} \]
by Lemma \ref{nFacVarpi} and Proposition \ref{HunEst}. This converges to zero as $t \geq s > r(X)$.
\end{proof}
\begin{prop}\label{MicroInverse} Suppose that $r \geq s > r(X)$. Then $\xi_{u,d}$ is a two-sided inverse to $\theta_{u,d}(\partial_x)$ in $\cE_{[s,r]}(X)$.
\end{prop}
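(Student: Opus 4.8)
The plan is to verify directly that $\xi_{u,d}$ is a right inverse and a left inverse to $\theta_{u,d}(\partial_x)$ inside the associative Banach algebra $\cE_{[s,r]}(X)$. First I would record the standard-form expression for $\theta_{u,d}(\partial_x)$: by Definition \ref{HunDefn} and Proposition \ref{PnStdForm} (with $n=1$) we have $\theta_{u,d}(\partial_x) = \partial + h_{u,d}^{[1]}$, where we view $h_{u,d}^{[1]} \in \cO(X)$ as a degree-zero element of $\cE_{[s,r]}(X)$. Since $\theta_{u,d}(\partial_x)$ lies in $\cD_r(X) \subseteq \cE_{[s,r]}(X)$ and $\xi_{u,d} \in \cE_{[s,r]}(X)$ by Lemma \ref{XiZConv}, both products $\theta_{u,d}(\partial_x) \cdot \xi_{u,d}$ and $\xi_{u,d} \cdot \theta_{u,d}(\partial_x)$ make sense, and by associativity of $\cE_{[s,r]}(X)$ (Theorem \ref{skewannulus}) it suffices to check that one of them equals $1$; in fact I would check both, or note that a two-sided identity follows formally once we know $\xi_{u,d}$ is a nonzero left-or-right inverse of a unit — but the cleanest route is a direct computation of the coefficients of $\theta_{u,d}(\partial_x)\cdot\xi_{u,d}$.

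The core computation is as follows. Writing $\xi_{u,d} = \sum_{n\geq 1} (\xi_{u,d})_{-n}\partial^{-n}$ with $(\xi_{u,d})_{-n} = (-1)^{n-1}(n-1)!\,h_{u,d}^{[n-1]}$, I would use Lemma \ref{HexCheck} together with equation (\ref{dnstara}) to expand $h_{u,d}^{[1]} \cdot \partial^{-n}$ and $\partial \cdot \partial^{-n}$ in $\cE_{[s,r]}(X)$: we have $\partial\cdot \partial^{-n} = \partial^{-n+1}$ and $f\cdot \partial^{-n}$ is just the term $f\partial^{-n}$ (left multiplication by $\cO(X)$ is on the left, so no commutator is introduced). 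Hence
\[ \theta_{u,d}(\partial_x)\cdot \xi_{u,d} = \sum_{n\geq 1}(\xi_{u,d})_{-n}\,\partial^{-n+1} + \sum_{n\geq 1} h_{u,d}^{[1]}\,(\xi_{u,d})_{-n}\,\partial^{-n}. \]
The coefficient of $\partial^0$ is $(\xi_{u,d})_{-1} = h_{u,d}^{[0]} = 1$ (since $h_{u,d}^{[0]} = \theta_{u,d}(1)(1)=1$), which handles the constant term. For the coefficient of $\partial^{-m}$ with $m\geq 1$ I would show it vanishes, namely
\[ (\xi_{u,d})_{-m-1} + h_{u,d}^{[1]}(\xi_{u,d})_{-m} = (-1)^m m!\,h_{u,d}^{[m]} + h_{u,d}^{[1]}\,(-1)^{m-1}(m-1)!\,h_{u,d}^{[m-1]} = 0, \]
which is exactly the identity $m\,h_{u,d}^{[m]} = h_{u,d}^{[1]}h_{u,d}^{[m-1]} - \dots$ — wait, more precisely it is part (b) of Lemma \ref{rhz} rearranged? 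Actually the relation needed is $m h_{u,d}^{[m]} = h_{u,d}^{[1]} h_{u,d}^{[m-1]}$, which is \emph{not} Lemma \ref{rhz}(b) on the nose, so here I would instead multiply $\xi_{u,d}$ by $\theta_{u,d}(\partial_x)$ \emph{on the other side}, i.e. compute $\xi_{u,d}\cdot \theta_{u,d}(\partial_x) = \xi_{u,d}\cdot\partial + \xi_{u,d}\cdot h_{u,d}^{[1]}$, and use Lemma \ref{HexCheck}/(\ref{dnstara}) to commute $\partial^{-n}$ past $h_{u,d}^{[1]}$: $\partial^{-n}\cdot h_{u,d}^{[1]} = \sum_{j\geq 0}\binom{-n}{j}\partial_x^{[j]}(h_{u,d}^{[1]})\,\partial^{-n-j}$ — hmm, $\partial^{-n}\cdot f = \sum_j \binom{-n}{j}\delta^j(f)\partial^{-n-j}$ from (\ref{dnstara}), with $\delta = \partial_x$ and divided powers. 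Collecting the coefficient of $\partial^{-m}$ and using Lemma \ref{rhz}(a) (the Leibniz-type recursion $(\ell+1)h_{u,d}^{[\ell+1]} = \sum_{n=0}^\ell h_{u,d}^{[n]}\partial_x^{[\ell-n]}(h_{u,d}^{[1]})$) is what makes the telescoping work; this is precisely why Lemma \ref{rhz} was proved.

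The main obstacle is bookkeeping: correctly tracking which side the scalars $h_{u,d}^{[n]}$ sit on when commuting them past negative powers of $\partial$, and matching the resulting combinatorial sum of divided-power derivatives against exactly the recursion in Lemma \ref{rhz}(a) (and possibly Lemma \ref{rhz}(b) for cross-checking). I expect no analytic subtlety beyond convergence, which is already guaranteed by Lemma \ref{XiZConv} and the fact that $\cE_{[s,r]}(X)$ is a Banach algebra, so the identities, once verified on the dense subalgebra $\cO(X)[\partial,\partial^{-1}]$ of finite Laurent polynomials via Lemma \ref{HexCheck}, extend by continuity — though since $\theta_{u,d}(\partial_x)$ itself is a genuine element with only two nonzero coefficients, no density argument is actually needed and the coefficientwise check is finite-dimensional in each degree. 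I would conclude that $\theta_{u,d}(\partial_x)\xi_{u,d} = \xi_{u,d}\theta_{u,d}(\partial_x) = 1$, so $\xi_{u,d}$ is the claimed two-sided inverse.
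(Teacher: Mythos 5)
Your overall strategy is the same as the paper's: verify both products coefficientwise using the explicit star-product formula (equivalently equation (\ref{dnstara})) and reduce the resulting cancellations to the two parts of Lemma~\ref{rhz}. But there is a concrete error in your first computation, and you misdiagnose it in a way that undermines your confidence in the route the paper actually calls the easy one.

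When you expand $\theta_{u,d}(\partial_x)\cdot\xi_{u,d} = (\partial + h_{u,d}^{[1]})\cdot\xi_{u,d}$ and write $\partial\cdot\xi_{u,d} = \sum_{n\ge 1}(\xi_{u,d})_{-n}\,\partial^{-n+1}$, you have dropped the commutator: left multiplication by $\partial$ does not slide past the scalar coefficient $(\xi_{u,d})_{-n}\in\cO(X)$. The correct expansion is
\[\partial\cdot(\xi_{u,d})_{-n}\partial^{-n} = (\xi_{u,d})_{-n}\,\partial^{-n+1} + \partial_x\bigl((\xi_{u,d})_{-n}\bigr)\,\partial^{-n},\]
so the coefficient of $\partial^{-m}$ (for $m\ge 1$) picks up the extra term $\partial_x((\xi_{u,d})_{-m})$. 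Restoring it, the vanishing you need becomes $(-1)^m m!\,h_{u,d}^{[m]} + (-1)^{m-1}(m-1)!\bigl(\partial_x(h_{u,d}^{[m-1]}) + h_{u,d}^{[1]}h_{u,d}^{[m-1]}\bigr) = 0$, which is precisely Lemma~\ref{rhz}(b) with $\ell = m-1$. The ``identity'' $m\,h_{u,d}^{[m]} = h_{u,d}^{[1]}h_{u,d}^{[m-1]}$ that you correctly flagged as not being in the paper is false; the failure is not in Lemma~\ref{rhz} but in your expansion. The paper in fact treats this product ($\theta_{u,d}(\partial_x)\cdot\xi_{u,d}$) as the \emph{easier} of the two, reducing it to Lemma~\ref{rhz}(b) exactly as above.

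Your pivot to $\xi_{u,d}\cdot\theta_{u,d}(\partial_x)$ is the paper's main computation, done via formula (\ref{MicroMult}) and Lemma~\ref{rhz}(a), and your sketch of it is in the right direction (one also needs the identity $\binom{-n}{m}=(-1)^m\binom{n+m-1}{m}$ to make the bookkeeping tidy). One small imprecision: in equation (\ref{dnstara}) the derivatives appearing are ordinary powers $\partial_x^j$, not divided powers $\partial_x^{[j]}$, so be careful with the notation when collecting terms. Once the commutator is restored and the divided-power slip corrected, the proof goes through by the same two applications of Lemma~\ref{rhz} that the paper uses.
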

\begin{proof} First note that $\xi_{u,d}$ does lie in $\cE_{[s,r]}(X)$ by Lemma \ref{XiZConv}. Let $a = \xi_{u,d}$ and $b = \theta_{u,d}(\partial_x)$ so that 
\[a_{-n} = \left\{ \begin{array}{lll} (-1)^{n-1} (n-1)! h_{u, d}^{[n-1]} & \qmb{if}  n \geq 1 \\ 0 & \qmb{otherwise,}  \end{array} \right. \quad b_j = \left\{ \begin{array}{lll} h_{u, d}^{[1]} & \qmb{if} j = 0, \\ 1 &\qmb{if} j=1, \\ 0 & \qmb{otherwise.} \end{array}\right.\]
Note that $\binom{-n}{m} = (-1)^m \binom{n+m-1}{m}$ for any $n,m \geq 0$. Let $\ell \in \Z$; then using $(\ref{MicroMult})$ we have
\[\begin{array}{lll} a \ast_{-1-\ell} b &=& \sum\limits_{n=1}^\infty a_{-n} \sum\limits_{m = 0}^\infty \binom{-n}{m}\partial_x^m(b_{m+n-1-\ell})  \\
&=& \sum\limits_{n=1}^\infty (-1)^{n-1} (n-1)! h_{u, d}^{[n-1]} \sum\limits_{m=0}^\infty (-1)^m \binom{n+m-1}{m} \partial_x^m(b_{m+n-1-\ell})  \\
&=& \sum\limits_{n=0}^\infty h_{u,d}^{[n]} \sum\limits_{m=0}^\infty (-1)^{n+m} (n+m)! \partial_x^{[m]}(b_{m+n-\ell}). \end{array}\]
Now since $b_j$ vanishes unless $j \in \{0,1\}$ and since $m + n \geq 0$ in this sum, we see that this expression vanishes whenever $\ell \leq -2$, i.e. $-1-\ell \geq 1$. When $\ell = -1$ (so that $-\ell-1 = 0$), $m + n - \ell \in \{0,1\}$ forces $m = n = 0$ and the expression collapses to give $1$. So we will assume that $\ell \geq 0$. Now we still have to have $m + n - \ell \in \{0,1\}$ otherwise $b_{n+m-\ell}$ vanishes. If $m + n - \ell= 1$ then $\partial_x^{[m]}(b_{m+n-\ell}) = \partial_x^{[m]}(1) = 0$ unless $m = 0$; so this happens only when $m = 0$ and $n = \ell + 1$; in this case we get a contribution of
\[h_{u, d}^{[\ell+1]} (-1)^{\ell+1} (\ell+1)!\]
to the big sum. On the other hand, if $m + n - \ell = 0$ then $m = \ell - n \geq 0$ forces $n \leq \ell$, and we obtain a contribution of
\[ \sum_{n=0}^\ell h_{u,d}^{[n]} (-1)^\ell \ell! \partial_x^{[\ell-n]}(h_{u, d}^{[1]})\]
to the big sum. This is $(-1)^\ell \ell! (\ell+1) h_{u, d}^{[\ell+1]} = (-1)^\ell (\ell+1)! h_{u, d}^{[\ell+1]}$ by Lemma \ref{rhz}(a) and it therefore cancels with the first term. This proves that
\[ a \ast_k b = \delta_{k,0} \qmb{for all} k \in \Z\]
and therefore $\xi_{u,d} \cdot \theta_{u,d}(\partial_x) = 1$. Showing that $\theta_{u,d}(\partial_x) \cdot \xi_{u,d} = 1$ using $(\ref{MicroMult})$ is similar but much easier, and reduces to Lemma \ref{rhz}(b). We leave the details to the reader.
\end{proof}

\begin{defn} For each affinoid subdomain $X$ of $\A$, we define the \emph{Robba ring}
\[ \overset{\circ}{\cE}(X) := \bigcup\limits_{r > r(X)} \bigcap\limits_{r(X) < s \leq r} \cE_{[s,r]}(X).\]
\end{defn}

\begin{cor}\label{RobbaInverts} $\theta_{u,d}(\partial_x)$ is a unit in $\overset{\circ}{\cE}(X)$ with inverse $\xi_{u,d}$.
\end{cor}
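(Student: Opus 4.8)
The statement is immediate once we assemble the pieces already in hand. By Proposition \ref{MicroInverse}, for every pair of radii $r \geq s > r(X)$ the element $\xi_{u,d} \in \cE_{[s,r]}(X)$ is a two-sided inverse to $\theta_{u,d}(\partial_x)$. The plan is simply to observe that the definition of $\overset{\circ}{\cE}(X)$ exactly packages these statements together: an element of $\overset{\circ}{\cE}(X)$ is a Laurent series which, for \emph{some} $r > r(X)$, lies in $\cE_{[s,r]}(X)$ for \emph{all} $s \in (r(X), r]$.

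First I would note that both $\theta_{u,d}(\partial_x)$ and $\xi_{u,d}$ belong to $\overset{\circ}{\cE}(X)$. For $\theta_{u,d}(\partial_x) = \partial - \frac{1}{d}\frac{\partial_x(u)}{u}$ this is clear since it lies in $\cD(X) \subseteq \cD_r(X) \hookrightarrow \cE_{[s,r]}(X)$ for every choice of $r \geq s > r(X)$; in particular, fixing any $r_0 > r(X)$, it lies in $\bigcap_{r(X) < s \leq r_0} \cE_{[s,r_0]}(X)$. For $\xi_{u,d}$, Lemma \ref{XiZConv} shows it lies in $\cE_{[s,r]}(X)$ for every $r \geq s > r(X)$; again fixing $r_0 > r(X)$ gives $\xi_{u,d} \in \bigcap_{r(X) < s \leq r_0} \cE_{[s,r_0]}(X) \subseteq \overset{\circ}{\cE}(X)$.

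Next I would check that the equations $\xi_{u,d} \cdot \theta_{u,d}(\partial_x) = 1$ and $\theta_{u,d}(\partial_x) \cdot \xi_{u,d} = 1$ hold in $\overset{\circ}{\cE}(X)$. This follows because multiplication in $\overset{\circ}{\cE}(X)$ is computed compatibly with that in each $\cE_{[s,r]}(X)$ (the star product formula \eqref{MicroMult} does not depend on the radii, only on the ambient convergence), and Proposition \ref{MicroInverse} already verifies both identities inside any single $\cE_{[s,r]}(X)$ with $r \geq s > r(X)$. Hence the identities, which are equalities of Laurent series with coefficients in $\cO(X)$, hold in $\overset{\circ}{\cE}(X)$ as well. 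Therefore $\theta_{u,d}(\partial_x)$ is a unit in $\overset{\circ}{\cE}(X)$ with inverse $\xi_{u,d}$, as claimed. There is no real obstacle here: the entire content was done in Proposition \ref{MicroInverse}, and the only thing to be careful about is the bookkeeping of quantifiers in the definition of $\overset{\circ}{\cE}(X)$ — namely that one fixes a single upper radius $r_0$ and works with all lower radii $s \leq r_0$ simultaneously, which is exactly the range over which Proposition \ref{MicroInverse} applies.
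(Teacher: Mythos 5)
Your proposal is correct and follows essentially the same approach as the paper's own proof: both simply invoke Proposition \ref{MicroInverse} on each $\cE_{[s,r]}(X)$ with $r \geq s > r(X)$, then observe that $\xi_{u,d}$ lies in $\bigcap_{r(X) < s \leq r}\cE_{[s,r]}(X) \subset \overset{\circ}{\cE}(X)$. Your version is slightly more verbose in spelling out the compatibility of the star product across the different radii, but there is no substantive difference.
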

\begin{proof} We know that $u = z^d$ is a unit in $\cO(X)$. Now apply Proposition \ref{MicroInverse} to see that whenever $r(X) < s \leq r$, $\xi_{u,d} \in \cE_{[s,r]}(X)$ is a two-sided inverse to $\theta_{u,d}(\partial_x)$ in $\cE_{[s,r]}$. It follows that
\[\xi_{u,d} \in \bigcap\limits_{r(X)< s \leq r} \cE_{[s,r]}(X) \subset \overset{\circ}{\cE}(X). \qedhere\]
\end{proof}

\subsection{The characteristic cycle}\label{CharCycleSect}
We now specialise to the case where $X \subseteq \bD$ is a cheese with $\rho(X) = 1$, and the ground field $K$ is discretely valued, with uniformiser $\pi_K$ and residue field $k$. In this case, $\sX := \Spf \cO(X)^\circ$ is a smooth affine formal scheme, whose special fibre $\sX_0$ is the complement of finitely many points in the affine line over $k$. In \cite{AbeMicro}, Tomoyuki Abe defined several sheaves of microlocal rings
\[ \pi^{-1} \Berth{X}{m} \subset \Abe{X}{m,\dag} \subset \cdots \subset \Abe{X}{m,m+2} \subset  \Abe{X}{m,m+1} \subset \Abe{X}{m,m} =\Abe{X}{m}\]
of level $m$ on the cotangent bundle $T^\ast \sX_0$ of $\sX_0$; here $\Berth{X}{m}$ denotes Berthelot's sheaf of level-$m$ arithmetic differential operators on $\sX_0$ and $\pi : T^\ast \sX_0 \to \sX_0$ denotes the structure morphism of this cotangent bundle. The definition of these sheaves uses the microlocalization of $\pi^{-1} \gr \sD^{(m)}_{\sX}$ on the level-$m$ cotangent bundle $T^{(m)\ast}\sX_0$ of $\sX$, \cite[\S 2.2, 2.4]{AbeMicro}, together an identification of $T^{(m)\ast}\sX_0$ with $T^\ast \sX_0$ which is carried out in \cite[\S 3.3]{AbeMicro}.

Recall the numbers $\varpi_m = (p^m)!^{\frac{1}{p^m}} \in \overline{K}$ from Notation \ref{varpim}.
\begin{defn}\hsp
 \be \item For each $m \geq 0$, write $r_m := |\varpi_m| = |(p^m)!|^{\frac{1}{p^m}}$.
\item Let $\Cot{X} = T^\ast \sX_0 \backslash s(\sX_0)$ denote the complement of the zero-section $s(\sX_0)$ in the cotangent bundle $T^\ast \sX_0$ of $\sX_0$.
\ee\end{defn}

Our next result explains the relationship between the microlocal rings $\cE_{[s,r]}(X)$ that appeared in $\S \ref{MicRings}$ and Abe's rings.
\begin{prop}\label{RelWithAbe} Let $m'\geq m \geq 0$ and let $X \in \D(\partial_x/\varpi)^\dag$. Then the embedding $\cO(X) \hookrightarrow \Gamma( \Cot{X}, \Abe{X}{m,m'} )$ extends to an injective bounded $K$-algebra map
\[ \cE_{[r_{m'},r_m]}(X) \hookrightarrow \Gamma( \Cot{X}, \Abe{X}{m,m'} ).\]
This extension is functorial in $X \in \D(\partial_x/\varpi)^\dag$.
\end{prop}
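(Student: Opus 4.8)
The strategy is to compare the two $K$-algebras through their associated graded rings after identifying both with suitable completions of the level-$m$ divided-power algebra $\cD^{(m)}_s(X)$. Recall from Theorem \ref{Adellevm} that for any $s \in K^\times$ with $|s| > 1/\rho(X)$ and $r := |\varpi_m s|$ there is a $K$-Banach algebra isomorphism $\cD_r(X) \cong \h{\cD}^{(m)}_s(X)_K$. In our case $\rho(X) = 1$, so we need $|s| > 1$; since $X \in \D(\partial_x/\varpi)^\dag$ means $r(X) \leq \varpi$, i.e. $r_m = |\varpi_m| > \varpi \geq r(X)$, the ring $\cD_{r_m}(X)$ makes sense and is identified with $\h{\cD}^{(m)}_1(X)_K$ (after possibly enlarging $K$ so that an element of norm $r_m/|\varpi_m| = 1$ exists — but $1 \in K^\times$ already). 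By construction, $\Gamma(\sX_0, \Berth{X}{m})$ is the analogue of $\h{\cD}^{(m)}_1(X)_K = \cD_{r_m}(X)$ in Berthelot's setting, and the microlocalisation $\Abe{X}{m,m}$ restricted to $\Cot{X}$ is obtained by inverting the principal symbol of $\partial_x$, which over $\Cot{X}$ is a unit. The plan is therefore: (i) identify $\cE_{[r_{m'},r_m]}(X)$ as a completion of the localisation $\cD_{r_m}(X)[\partial^{-1}]$ at the powers of $\partial$, using the microlocalisation picture of $\S \ref{skewLaurent}$ and Definition \ref{AdelSR}; (ii) identify $\Gamma(\Cot{X}, \Abe{X}{m,m'})$ as the corresponding completion on the Abe side using \cite[\S 2.2, 2.4]{AbeMicro} and the identification of level-$m$ and ordinary cotangent bundles from \cite[\S 3.3]{AbeMicro}; (iii) match the two completions by comparing the defining norms.

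In more detail, first I would recall that $\cE_{[s,r]}(X) = \cO(X)\langle \partial/r, s/\partial \rangle$ is by Theorem \ref{skewannulus} an associative $K$-Banach algebra containing $\cD_r(X)$ as an isometric subalgebra (Lemma \ref{BanachStarProd}) and containing $\partial^{-1}$; and that its norm $(\ref{MicroNorm})$ is $\sup_{s \leq t \leq r} \sup_n |a_n| t^n$. When $s = r_{m'}$ and $r = r_m$ with $m' \geq m$, we have $r_{m'} \leq r_m$ (the sequence $|\varpi_m|$ is decreasing), so this is a genuine two-parameter annulus. On the Abe side, $\Abe{X}{m,m'}$ is built from the microlocalisation of $\pi^{-1}\gr \sD^{(m)}_{\sX}$ with respect to the multiplicative set generated by $\partial$, then completed in a way controlled by the pair of radii $(r_m, r_{m'})$: the two indices $(m,m')$ in Abe's notation track precisely the `inner' and `outer' radii of the Banach annulus, with $m$ giving the radius $r_m$ for the non-negative powers of $\partial$ and $m'$ giving $r_{m'}$ for the negative powers. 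Thus both algebras are completions of the common dense subalgebra $\cD_{r_m}(X)[\partial, \partial^{-1}]$ (finite Laurent polynomials in $\partial$ with coefficients in a dense subring of $\cO(X)$), with respect to norms that I claim agree. I would verify this by checking that the defining Banach norm of $\Abe{X}{m,m'}$, when restricted to the monomials $f \partial^n$ with $f \in \cO(X)$ and $n \in \Z$, is $|f|_X \cdot r_m^n$ for $n \geq 0$ and $|f|_X \cdot r_{m'}^n$ for $n < 0$ — exactly formula $(\ref{AsymNorm})$ applied with $r = r_m$, $s = r_{m'}$. Granting this, the identity map on $\cD_{r_m}(X)[\partial,\partial^{-1}]$ extends to an isometric (hence injective, bounded) $K$-algebra isomorphism onto the closure, which is $\Gamma(\Cot{X}, \Abe{X}{m,m'})$; composing with the inclusion $\cE_{[r_{m'},r_m]}(X) \hookrightarrow$ (its own completion, which is itself since it is Banach) gives the desired injective bounded map. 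Functoriality in $X \in \D(\partial_x/\varpi)^\dag$ is inherited from the functoriality of $\cD_r$ (Proposition \ref{DbRing}(b)), of Abe's microlocalisation construction \cite[\S 3.3]{AbeMicro}, and of the restriction maps; one checks the square commutes on the dense subalgebra $\cO(X)$ and on $\partial^{\pm 1}$, whence everywhere by continuity.

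\textbf{Main obstacle.} The delicate point is step (ii)–(iii): matching the norm on $\cE_{[r_{m'},r_m]}(X)$ with the Banach norm that Abe places on $\Gamma(\Cot{X}, \Abe{X}{m,m'})$. Abe's rings are defined via a graded/microlocal construction with a built-in normalisation of the symbol of $\partial_x$ that differs by the factorial factors $|p^m!|$ from the `naive' powers of $\partial$ — this is precisely the content of the constant $\varpi_m$ and of the binomial estimate Lemma \ref{binomest} that made Theorem \ref{Adellevm} work. So the bulk of the work is a careful bookkeeping of these divided-power normalisations: one must show that under the identification $T^{(m)\ast}\sX_0 \cong T^\ast\sX_0$ of \cite[\S 3.3]{AbeMicro}, the section ring $\Gamma(\Cot{X}, \Abe{X}{m,m'})$ acquires exactly the norm $(\ref{AsymNorm})$ with radii $r_m = |\varpi_m|$ and $r_{m'} = |\varpi_{m'}|$, and not, say, radii $1$ or $\varpi$. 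I expect this to require re-deriving the explicit description of $\Abe{X}{m,m'}(\Cot{X})$ as $\{\sum_{n \in \Z} a_n \partial^{\langle n \rangle} : \text{suitable decay}\}$ in the style of the formula after Definition \ref{CompLevelmDivPow}, then converting between $\partial^{\langle n \rangle}$ and $\partial^n$ via Lemma \ref{binomest} for $n \geq 0$ and an analogous estimate for $n < 0$ (the negative-power analogue, which has not yet appeared and would need to be established — this is the genuinely new computation). The functoriality and the algebra-homomorphism properties, by contrast, are routine once the normalisations are pinned down.
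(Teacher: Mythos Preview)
Your diagnosis of the technical crux is right: the whole proof comes down to a divided-power normalisation estimate relating $\partial^n$ to $\partial^{\langle n\rangle_{(m)}}$ for both signs of $n$, and you correctly anticipate that a negative-$n$ analogue of Lemma \ref{binomest} must be established. The paper does exactly this in Lemma \ref{Rescale}, which produces constants $\epsilon^{(m)}_n\in\overline{K}$ with $(\partial/\varpi_m)^n=\epsilon^{(m)}_n\partial^{\langle n\rangle_{(m)}}$ and $1\le|\epsilon^{(m)}_n|\le|p|^{-m}$ for all $n\in\Z$.

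However, your packaging of the argument has a genuine error. You aim to show that $\cE_{[r_{m'},r_m]}(X)$ and $\Gamma(\Cot{X},\Abe{X}{m,m'})$ are isometrically isomorphic as completions of a common dense subalgebra with respect to \emph{equal} norms. Neither claim holds. First, the two norms are only equivalent, not equal: the factors $\epsilon^{(m)}_n$ are bounded between $1$ and $|p|^{-m}$ but are not identically $1$. Second, and more seriously, the map is not surjective. Consulting the explicit description in Lemma \ref{AbeExplicit}, the Abe ring requires the negative-degree coefficients (in the divided-power basis) only to be \emph{bounded}, whereas Definition \ref{AdelSR} requires $|a_j|s^j\to 0$ as $j\to-\infty$. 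So $\cE_{[r_{m'},r_m]}(X)$ sits strictly inside $\Gamma(\Cot{X},\Abe{X}{m,m'})$, and your strategy of matching two completions cannot succeed as stated.

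The paper sidesteps this by not attempting an isomorphism at all. It applies the universal property of skew-Laurent algebras, Proposition \ref{SkewLaurentUP}, with $A=\cO(X)$, $B=\Gamma(\Cot{X},\Abe{X}{m,m'})$, $b=\partial\in B$: conditions (i) and (ii) are immediate, and condition (iii) --- that $\sup_{n\ge 0}|\partial^n|_B/r_m^n$ and $\sup_{n\le 0}|\partial^n|_B/r_{m'}^n$ are finite --- follows directly from Lemma \ref{Rescale} together with the fact that $|\partial^{\langle n\rangle_{(m)}}|_B=|\partial^{\langle -n\rangle_{(m')}}|_B=1$. This gives a bounded $K$-algebra map, and injectivity and functoriality are then routine. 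Your approach could be repaired by dropping the isometry/isomorphism claims and instead showing only that the norm on $B$ dominates the $\cE$-norm on the Laurent polynomials up to a constant --- but at that point you have essentially rediscovered the universal-property argument.
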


To prove this, we will need to recall the explicit description of Abe's ring appearing on the right hand side as a set of Laurent series in $\partial$ satisfying a particular convergence condition. This, unfortunately, requires some detailed notation involving Berthelot's modified divided powers $\partial^{\langle n \rangle_{(m)}}$, as well as their \emph{inverses}.

\begin{notn} Let $m, n \in \N$.
\be \item Write $q_n := \myfloor{n/p^m}$ and $\partial^{\langle n \rangle_{(m)}} := \frac{q_n! }{n!} \partial^n$.
\item Let $i_n := \myceil{n/p^m}$ and $\ell_n = i_np^m - n$ and define 
\[\partial^{\langle -n \rangle_{(m)}} := \partial^{\langle \ell_n \rangle_{(m)}} \left( \partial^{\langle i_n p^m \rangle_{(m)}}\right)^{-1} = \frac{i_n!}{\ell_n! (i_np^m)!} \partial^{-n}\]
with the product understood to be carried out in $\Gamma( \Cot{X}, \Abe{X}{m} )$. 
\ee\end{notn}
This notation comes from \cite[p. 282]{AbeMarmora}.
\begin{lem}\label{AbeExplicit} Let $0 \leq m \leq m'$. We have the following explicit descriptions:
\[ \begin{array}{lll} \Gamma(\sX, \Berth{X}{m}) &=& \left\{ \sum\limits_{n \in \N} a_n \partial^{\langle n \rangle_{(m)}} : a_n \in \cO(X), \lim\limits_{n \to +\infty} a_n = 0\right\},\\
\Gamma(\Cot{X}, \Abe{X}{m}) &=& \left\{ \sum\limits_{n \in \Z} a_n \partial^{\langle n \rangle_{(m)}} : a_n \in \cO(X), \hsp \sup\limits_{n < 0} |a_n| < \infty, \hsp \lim\limits_{n \to +\infty} a_n = 0\right\},\\
\Gamma(\Cot{X}, \Abe{X}{m,m'}) &=& \left\{ \sum\limits_{n \in \Z} a_n \partial^n : \sum\limits_{n < 0} a_n\partial^n \in \Abe{X}{m'}, \sum\limits_{n \geq 0} a_n\partial^n \in \Abe{X}{m}\right\}.
\end{array}\]
\end{lem}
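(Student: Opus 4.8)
\textbf{Proof proposal for Lemma \ref{AbeExplicit}.}

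The plan is to recall the relevant definitions from Abe's paper \cite{AbeMicro} and match them up with the explicit coordinate descriptions claimed here; this is essentially a bookkeeping exercise once the right inputs are assembled, but the bookkeeping is delicate because of the two-index filtration. The first two displays are not new: the description of $\Gamma(\sX,\Berth{X}{m})$ as convergent series $\sum a_n \partial^{\langle n\rangle_{(m)}}$ with $a_n \to 0$ is \cite[(2.4.1.4) and the discussion following]{Berth} (after base change to $\bQ$ and taking global sections over the smooth affine $\sX$), and the description of $\Gamma(\Cot{X},\Abe{X}{m})$ is the global-sections version of the microlocalization carried out in \cite[\S 2.2, \S 2.4]{AbeMicro}, transported from the level-$m$ cotangent bundle $T^{(m)\ast}\sX_0$ to $T^\ast\sX_0$ via the identification of \cite[\S 3.3]{AbeMicro}. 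So the first step is to cite these precisely, noting that $\Cot{X}$ is the complement of the zero section so that the sections over it are exactly those Laurent series in $\partial^{\langle\bullet\rangle_{(m)}}$ whose negative-degree part has bounded (not necessarily null) coefficients, while the non-negative part still converges.

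The second step, and the only one requiring genuine attention, is the description of $\Gamma(\Cot{X},\Abe{X}{m,m'})$. Here I would unwind the definition of the intermediate sheaves $\Abe{X}{m,m'}$ from \cite{AbeMicro}: by construction $\Abe{X}{m,m'}$ sits between $\Abe{X}{m'}$ (for the negative/microlocal directions) and $\Abe{X}{m}$ (for the differential-operator direction), being built as a completion of $\sD^{(m)}_{\sX}$ with respect to a norm that uses the level-$m'$ divided powers for the inverse powers of $\partial$ and the level-$m$ divided powers for the positive powers. Taking global sections and splitting a Laurent series $\sum_{n\in\Z} a_n\partial^n$ into its non-negative part $\sum_{n\geq 0}a_n\partial^n$ and its negative part $\sum_{n<0}a_n\partial^n$, the defining convergence condition decouples into: the positive part lies in $\Gamma(\sX,\Berth{X}{m})\subseteq \Gamma(\Cot{X},\Abe{X}{m})$, and the negative part lies in $\Gamma(\Cot{X},\Abe{X}{m'})$. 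This is exactly the asserted description, once one observes that writing $a_n\partial^n$ versus $a_n\partial^{\langle n\rangle_{(m)}}$ only rescales the coefficient by a $p$-adic unit times a factorial ratio, so the two normalizations of the coefficient sequences are interchangeable for the purpose of stating the membership conditions (which is why the statement can afford to write plain $\partial^n$ in the last line). I would make this rescaling remark explicit using Lemma \ref{binomest} (or the elementary estimate $1 \leq |n!|/(|q_n!|\,\varpi^n)\leq p^m$ there) so that convergence in the $\partial^{\langle n\rangle_{(m)}}$-basis and in the $\partial^n$-basis agree up to bounded factors.

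The third step is simply to record that $\cO(X)\subseteq$ each of these rings via the degree-zero inclusion, and that the inclusions $\pi^{-1}\Berth{X}{m}\subseteq \Abe{X}{m,\dag}\subseteq\cdots\subseteq\Abe{X}{m,m}=\Abe{X}{m}$ are visible at the level of these Laurent-series descriptions as successive relaxations of the growth condition on the negative-degree coefficients — this is needed for consistency with the chain of inclusions displayed just before Definition \ref{Est} and for the next result (Proposition \ref{RelWithAbe}). The main obstacle I anticipate is not conceptual but notational: keeping straight the three different divided-power normalizations ($\partial^{[n]}$, $\partial^{\langle n\rangle}=\partial^{\langle n\rangle_{(m)}}$, and the inverse divided powers $\partial^{\langle -n\rangle_{(m)}}$) and verifying that the convergence condition defining $\Abe{X}{m,m'}$ really does split as cleanly as claimed across the positive/negative divide, rather than having cross-terms — but since $\partial$ is a unit and the ring is $\Z$-filtered with the two completions acting on complementary halves of the index set, no cross-terms arise. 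I would therefore present this as a short proof consisting of three citations plus the splitting observation, relegating the factorial-rescaling estimate to a one-line appeal to Lemma \ref{binomest}.
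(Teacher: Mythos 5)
Your proposal matches the paper's proof in spirit: the paper's own proof is a one-line citation to \cite[(2.4.1.2)]{Berth} and \cite[\S 1.1.1, \S 1.1.2]{AbeMarmora}, i.e.\ it, too, treats this as pure unwinding of the definitions. The substance of your splitting argument for the $(m,m')$-sheaf (positive part governed by level-$m$ convergence, negative part by level-$m'$) is the correct content of the third display, and the observation that the membership conditions there are sheaf-theoretic and hence basis-independent is the correct reason one may write $\partial^n$ rather than $\partial^{\langle n\rangle_{(m)}}$ in that line.

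One small caution about the way you phrase the rescaling: writing $a_n\partial^n = b_n\partial^{\langle n\rangle_{(m)}}$ introduces the factor $\frac{n!}{q_n!}$, and Lemma \ref{binomest} shows this factor is $|\varpi_m|^n$ times something bounded above and below by constants, \emph{not} bounded on its own. So it is not accurate to say the two coordinate systems ``agree up to bounded factors'' — the conversion involves a genuine geometric weight $|\varpi_m|^n$. This does not damage your argument, because the third display's conditions are abstract sheaf memberships and therefore basis-free; you do not need to translate coefficient conditions across bases at all. But if you keep the rescaling remark in the write-up, you should state it correctly (the factor is a geometric weight times a bounded quantity, which is exactly the content used later in Proposition \ref{RelWithAbe}) or simply delete it, since the basis-independence of sheaf membership already justifies the last display without it.
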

\begin{proof} This follows from \cite[(2.4.1.2)]{Berth} and \cite[\S 1.1.1, \S 1.1.2]{AbeMarmora}.
\end{proof}

\begin{lem}\label{Rescale} For each $m\in \N$ and $n \in \Z$, there exists $\epsilon^{(m)}_n \in \overline{K}$ such that
\begin{equation}\label{Epsilon} \left(\frac{\partial}{\varpi_m}\right)^n = \epsilon^{(m)}_n \partial^{\langle n \rangle_{(m)}} \qmb{and} 1 \leq |\epsilon^{(m)}_n| \leq |p|^{-m}. \end{equation}
\end{lem}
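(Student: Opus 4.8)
\textbf{Proof plan for Lemma \ref{Rescale}.} The statement is essentially just a bookkeeping comparison between two ways of rescaling the powers of $\partial$: dividing by the scalar $\varpi_m$ on the one hand, and forming Berthelot's level-$m$ modified divided powers $\partial^{\langle n \rangle_{(m)}}$ on the other. So the plan is to write out both sides explicitly and read off the required scalar $\epsilon^{(m)}_n$, then estimate its absolute value using the binomial estimate from Lemma \ref{binomest}.

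First I would dispose of the case $n \geq 0$. By definition $\partial^{\langle n \rangle_{(m)}} = \frac{q_n!}{n!} \partial^n$, whereas $\left(\frac{\partial}{\varpi_m}\right)^n = \varpi_m^{-n} \partial^n$. Comparing, we are forced to take
\[ \epsilon^{(m)}_n := \frac{n!}{q_n! \, \varpi_m^n},\]
and then Lemma \ref{binomest} gives exactly $1 \leq |\epsilon^{(m)}_n| = \frac{|n!|}{|q_n!| |\varpi_m|^n} \leq p^m = |p|^{-m}$, which is the desired bound. For $n < 0$, write $n = -N$ with $N > 0$. Here $\partial^{\langle -N \rangle_{(m)}} = \frac{i_N!}{\ell_N! (i_N p^m)!} \partial^{-N}$, where $i_N = \lceil N/p^m \rceil$ and $\ell_N = i_N p^m - N$, with the product interpreted inside $\Gamma(\Cot{X}, \Abe{X}{m})$ where $\partial$ is invertible. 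Since $\left(\frac{\partial}{\varpi_m}\right)^{-N} = \varpi_m^N \partial^{-N}$, comparing coefficients of $\partial^{-N}$ forces
\[ \epsilon^{(m)}_{-N} := \varpi_m^N \cdot \frac{\ell_N! (i_N p^m)!}{i_N!}.\]
It then remains to check $1 \leq |\epsilon^{(m)}_{-N}| \leq |p|^{-m}$.

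For this last estimate I would argue as follows. Note $i_N p^m = N + \ell_N$ with $0 \leq \ell_N < p^m$, so $i_N = q_{N + \ell_N}$ and the quantity $\frac{(i_N p^m)!}{i_N!}$ is precisely $\frac{(N+\ell_N)!}{q_{N+\ell_N}!}$. Applying Lemma \ref{binomest} with $k = N + \ell_N$ gives $1 \leq \frac{|(N+\ell_N)!|}{|q_{N+\ell_N}!| |\varpi_m|^{N+\ell_N}} \leq p^m$. Rearranging,
\[ |\epsilon^{(m)}_{-N}| = |\ell_N!| \cdot |\varpi_m|^{-\ell_N} \cdot \frac{|(N+\ell_N)!| \, |\varpi_m|^{N + \ell_N}}{|q_{N+\ell_N}!|} \cdot |\varpi_m|^{-N} \cdot |\varpi_m|^{-\ell_N}\]
— let me instead present the computation cleanly: $|\epsilon^{(m)}_{-N}| = |\varpi_m|^N \cdot |\ell_N!| \cdot \frac{|(N+\ell_N)!|}{|q_{N+\ell_N}!|}$, and since $\frac{|(N+\ell_N)!|}{|q_{N+\ell_N}!|}$ lies between $|\varpi_m|^{N+\ell_N}$ and $p^m |\varpi_m|^{N+\ell_N}$ by Lemma \ref{binomest}, we get $|\varpi_m|^N \cdot |\ell_N!| \cdot |\varpi_m|^{N+\ell_N} \leq |\epsilon^{(m)}_{-N}| \leq p^m |\varpi_m|^N \cdot |\ell_N!| \cdot |\varpi_m|^{N+\ell_N}$. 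This doesn't immediately collapse, so the honest approach is to observe that $\ell_N < p^m$ forces $q_{\ell_N} = 0$, hence $\partial^{\langle \ell_N \rangle_{(m)}} = \frac{1}{\ell_N!}\partial^{\ell_N}$ is an honest divided power and $|\ell_N!| \geq |p^m - 1|! \geq \dots$; more efficiently, apply Lemma \ref{binomest} directly to the ratio $\frac{\ell_N! (i_Np^m)!}{i_N! \, (p^m)!^{\,\cdot}}$... The cleanest route, which I would follow in the writeup, is to note that $\epsilon^{(m)}_{-N} = \left(\epsilon^{(m)}_{\ell_N}\right) / \left(\epsilon^{(m)}_{i_N p^m}\right)$ by the defining relation $\partial^{\langle -N\rangle_{(m)}} = \partial^{\langle \ell_N\rangle_{(m)}}(\partial^{\langle i_N p^m\rangle_{(m)}})^{-1}$ together with $\left(\frac{\partial}{\varpi_m}\right)^{-N} = \left(\frac{\partial}{\varpi_m}\right)^{\ell_N}\left(\left(\frac{\partial}{\varpi_m}\right)^{i_Np^m}\right)^{-1}$; and then since $i_N p^m$ is divisible by $p^m$ we have $q_{i_Np^m} = i_N$ and one checks directly that $|\epsilon^{(m)}_{i_Np^m}| = 1$ (the $p$-adic digit sum contributes nothing), while $|\epsilon^{(m)}_{\ell_N}|$ satisfies $1 \leq |\epsilon^{(m)}_{\ell_N}| \leq p^m$ by the $n \geq 0$ case. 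This gives the bound for $n < 0$.

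\textbf{Main obstacle.} There is no serious mathematical difficulty here; the only thing to be careful about is the arithmetic of floors and ceilings relating $q_n$, $i_n$, $\ell_n$, and the interpretation of the negative divided powers as genuine inverses inside $\Gamma(\Cot{X}, \Abe{X}{m})$ rather than formal symbols. I expect the bulk of the writeup to be confirming that the identity $\left(\frac{\partial}{\varpi_m}\right)^{-N} = \left(\frac{\partial}{\varpi_m}\right)^{\ell_N}\left(\left(\frac{\partial}{\varpi_m}\right)^{i_N p^m}\right)^{-1}$ holds in that ring — which is immediate since $\partial$ is a unit there and $\ell_N - i_N p^m = -N$ — and then invoking Lemma \ref{binomest} twice with the correct arguments. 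The result is surely intended to be left as an exercise, so a short argument of the above shape suffices.
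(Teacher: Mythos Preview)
Your proposal is correct. For $n \geq 0$ you invoke Lemma \ref{binomest}, which is exactly the computation the paper redoes from scratch using Legendre's formula, so these agree. For $n < 0$ you take a slightly different (and arguably cleaner) route than the paper: rather than directly computing $(p-1)v_p(\epsilon^{(m)}_{-N})$ via Legendre's formula, you use the multiplicative factorisation $\epsilon^{(m)}_{-N} = \epsilon^{(m)}_{\ell_N}\big/\epsilon^{(m)}_{i_N p^m}$ and reduce to the positive case together with the observation that $|\epsilon^{(m)}_{i_N p^m}| = 1$. That last claim is indeed immediate from $s_p(i_N p^m) = s_p(i_N)$, so the argument goes through. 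The paper's direct computation and your factorisation trick are interchangeable here; your version has the virtue of recycling Lemma \ref{binomest} rather than repeating its proof.
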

\begin{proof}  Fix $m \geq 0$. For every $n \geq 0$ we define
\[\epsilon^{(m)}_{-n} := \frac{\varpi_m^n \hsp \ell_n! \hsp i_n! }{ (i_n \hsp p^m)! } \qmb{and} \epsilon^{(m)}_n = \frac{ \hsp n!}{\varpi_m^n q_n!}.\] 
Note that $\partial^{\langle \ell_n \rangle_{(m)}}$ is simply $\frac{\partial^{\ell_n}}{\ell_n!}$ because $0 \leq \ell_n < p^m$. Now we can compute
\[ \begin{array}{lll} \epsilon^{(m)}_{-n} \partial^{\langle -n\rangle_{(m)}} &=& \epsilon^{(m)}_{-n}  \cdot \partial^{\langle \ell_n \rangle_{(m)}}  \cdot \left( \partial^{\langle i_n p^m \rangle_{(m)}}\right)^{-1} = \\ 

&=& \frac{\varpi_m^n \hsp \ell_n! \hsp i_n! }{ (i_n \hsp p^m)! }  \cdot \frac{\partial^{\ell_n}}{\ell_n!} \cdot \left( \frac{i_n! \partial^{i_n p^m}}{ (i_n p^m)! }\right)^{-1} = \\

&=& \varpi_m^n \partial^{\ell_n - i_n p^m}  = \left( \frac{\partial}{\varpi_m} \right)^{-n}, \qmb{and} \\
\epsilon^{(m)}_n \partial^{\langle n\rangle_{(m)}} &=& \frac{n!}{\varpi_m^n q_n!} \cdot \frac{q_n! \partial^n}{n!} = \left( \frac{\partial}{\varpi_m} \right)^n.
\end{array}\]
Therefore regardless of the sign of $n$, the first equality in $(\ref{Epsilon})$ holds. 

Next, using the fact that $(p-1)v_p(a!) = a - s_p(a)$ for any $a \in \N$ where $s_p(a)$ denotes the sum the $p$-adic digits of $a$, we compute
\[\begin{array}{lll} (p-1)v_p(\epsilon^{(m)}_{-n}) &=& \frac{n}{p^m}(p^m-1) + (\ell_n - s_p(\ell_n)) + (i_n - s_p(i_n)) - (i_n p^m - s_p(i_n p^m)) = \\
&=& -\frac{n}{p^m} + n + (\ell_n - i_n p^m) + i_n + s_p(i_n p^m) - s_p(i_n) - s_p(\ell_n) = \\
&=& \frac{\ell_n}{p^m} - s_p(\ell_n) \geq 0 - (p-1)m \end{array}\]
because $\ell_n - i_np^m = -n$, $0 \leq \ell_n < p^m$ and because $s_p(i_np^m) = s_p(i_n)$. 

Similarly, writing $n = \alpha_n + p^m q_n$ we compute
\[\begin{array}{lll} (p-1) v_p(\epsilon^{(m)}_n) &=& (n - s_p(n)) - \frac{n}{p^m}(p^m - 1) - (q_n - s_p(q_n)) = \\ 
&=& \frac{n}{p^m} - (s_p(\alpha_n) + s_p(q_n))  - q_n + s_p(q_n) = \\
&=& \frac{\alpha_n}{p^m}  - s_p(\alpha_n)  \geq 0 - (p-1)m
\end{array}\]
because $0 \leq \alpha_n < p^m$ and $s_p(n) = s_p(\alpha_n) + s_p(q_n)$. The result follows.
\end{proof}

\begin{proof}[Proof of Proposition \ref{RelWithAbe}] We will apply Proposition \ref{SkewLaurentUP}. Set $A = \cO(X) = \cO(\sX)_K$, $\delta = \partial_x \in \cB(\cO(X))$, $s := r_{m'} \leq r_m =: r$, $B := \Gamma(\Cot{X}, \Abe{X}{m,m'})$ and $b := \partial \in B$. Note that $B$ is a $K$-Banach algebra with a norm $|\cdot |_B$ extending the standard norm on $K$ and unit ball given by $\Gamma(\Cot{X}, \widehat{\sE}_{\mathscr{X}}^{(m,m')})$: more precisely this norm on $B$ is determined by the following relations:
\begin{equation}\label{Bnorm} |\partial^{\langle n \rangle_{(m)}}|_B = |\partial^{\langle -n \rangle_{(m')}}|_B = 1 \qmb{for all} n \geq 0.\end{equation}
We must verify the conditions of Proposition \ref{SkewLaurentUP}, of which (i) and (ii) are clear. Thus it remains to show that
\[\sup\limits_{n \geq 0} |\partial^n|_B/r_m^n < \infty \qmb{and} \sup\limits_{n \leq 0} |\partial^n|_B/r_{m'}^n < \infty.\]
Since $|\varpi_m| = r_m$, both of these facts follow from Lemma \ref{Rescale} together with equation (\ref{Bnorm}). Thus Proposition \ref{SkewLaurentUP} gives the required bounded $K$-algebra homomorphism $\cE_{[r_{m'},r_m]}(X) = \cO(X) \langle \partial/r_m, r_{m'}/\partial\rangle \to B$ extending the inclusion $\cO(X) \hookrightarrow B$. It is easy to see that this homomorphism is injective, as well as functorial for those affinoid subdomains $X$ of $\bD$ with $\rho(X) = 1$.\end{proof}

Now recall the notation $\S \ref{KummerSect}$: \[ u = \lambda \prod_{i=1}^h (x - a_i)^{k_i} \qmb{and} z^d = u\]
for some $k_1,\ldots,k_h \in \Z \backslash\{0\}$, some $\lambda \in K^\times$ and some pairwise distinct $a_1,\ldots, a_h \in X(K)$. Our next goal is to study the following coherent $\Berth{X}{0}$-module on $\sX$
\[ \sM^{(0)} := \Berth{X}{0} / \Berth{X}{0} R_{\cS(u)}(u,d).\]
We will now calculate its \emph{characteristic variety} $\Char^{(0)}(\sM^{(0)})$, in the sense of \cite[\S 5.2.5]{Berth2}, \cite[\S 5.14]{AW13} and \cite[\S 2.14]{AbeMicro}.
\begin{lem}\label{CharVar} If $p \nmid d$, then $\Char^{(0)}(\sM^{(0)}) = \sX_0 \cup T^\ast_{\overline{a_0}} \sX_0 \cup \cdots \cup T^\ast_{\overline{a_h}} \sX_0$.
\end{lem}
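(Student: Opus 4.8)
\textbf{Proof strategy for Lemma \ref{CharVar}.}

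The plan is to compute the characteristic variety by passing to the associated graded ring and identifying the symbol of the relator $R_{\cS(u)}(u,d)$. Recall that $\sM^{(0)} = \Berth{X}{0}/\Berth{X}{0} R_{\cS(u)}(u,d)$ is presented by a single relator, so its characteristic variety is the zero locus in $T^\ast \sX_0$ of the principal symbol of $R_{\cS(u)}(u,d)$, together with the zero section coming from the fact that $\sM^{(0)}$ is supported on all of $\sX_0$ (the module is non-zero on every connected component). More precisely, $\gr \Berth{X}{0}$ (with respect to the order filtration) is the coordinate ring of $T^\ast \sX_0 = \Spec \cO(\sX_0)[y]$, where $y = \gr \partial$; and $\Char^{(0)}(\sM^{(0)})$ is $V(\gr R)$ where $\gr R$ is the symbol of the relator. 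So the first step is simply to read off this symbol.

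By Definition \ref{TheRelator}(b) we have
\[ R_{\cS(u)}(u,d) = \Delta_{\cS(u)} \partial_x - \tfrac{1}{d} \sum_{a \in \cS(u)} v_a(u) \prod_{b \in \cS(u) \setminus \{a\}} (x-b), \]
where $\Delta_{\cS(u)} = \prod_{i=1}^h (x - a_i)$ (using $\cS(u) = \{a_1,\dots,a_h\}$ in the notation of $\S\ref{KummerSect}$). The first summand $\Delta_{\cS(u)}\partial_x$ has order $1$, while the second summand has order $0$; hence the principal symbol is $\gr R = \Delta_{\cS(u)} \cdot y$ in $\cO(\sX_0)[y]$ — more precisely, the reduction mod $\pi_K$ of $\Delta_{\cS(u)}$ times $y$. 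Here I would use that $p \nmid d$ to ensure the coefficient $\Delta_{\cS(u)}$ does not reduce to zero after reducing the relator mod $\pi_K$: since $\lambda \in K^\times$ and the $a_i \in X(K)$ are pairwise distinct with $|a_i| \leq 1$, the polynomial $\Delta_{\cS(u)}$ has reduction $\prod (x - \overline{a_i}) \neq 0$, with the $\overline{a_i}$ pairwise distinct because $\rho(X) = 1$ forces $|a_i - a_j| = 1$ for $i \neq j$. (The hypothesis $p \nmid d$ enters to guarantee the level-zero relator is the "right" generator — i.e. that $\Berth{X}{0} R_{\cS(u)}(u,d)$ is $\pi_K$-saturated and the symbol computation is clean; this is exactly the kind of statement underpinning Theorem \ref{SheafyPres} and Corollary \ref{MnPres}.)

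Then $V(\gr R) = V(\Delta_{\cS(u)} y) = V(y) \cup V(\Delta_{\cS(u)}) = \sX_0 \cup \bigcup_{i=1}^h V(x - \overline{a_i})$, where $V(y)$ is the zero section (identified with $\sX_0$) and each $V(x - \overline{a_i})$ is the cotangent fibre $T^\ast_{\overline{a_i}} \sX_0$ over the point $\overline{a_i} \in \sX_0$. Matching with the statement (which writes $a_0$ for an additional point — I would reconcile the indexing with the convention in Definition \ref{ZeroesPoles} and the surrounding text, where $\cS(u) = \{a_1,\ldots,a_h\}$, so the "$a_0$" in the displayed formula is presumably a typo or refers to the $\cS$-vs-$\cS(u)$ distinction and should be absorbed), this gives the claimed decomposition. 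The only mild subtlety — and the step I expect to need the most care — is verifying that reducing the one-relator presentation mod $\pi_K$ really does commute with taking $\gr$, i.e. that $\Berth{X}{0} R_{\cS(u)}(u,d)$ is a $\pi_K$-closed left ideal whose symbol ideal is generated by $\gr R$; this is where I would invoke that $\sM^{(0)}$ is $\cO$-coherent away from $\cS(u)$ and flat over $K^\circ$, reducing to the local computations already performed (e.g. in the proof of Corollary \ref{DsmXfpsep} and Proposition \ref{Levelmdivpow}), together with the standard fact that for a cyclic module over a filtered ring the characteristic variety is cut out by the symbol of the relator.
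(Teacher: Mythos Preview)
Your approach is essentially the same as the paper's: compute the principal symbol of $R_{\cS(u)}(u,d)$ and read off its zero locus. The paper does exactly this, identifying $\Gr R_{\cS(u)}(u,d) = \prod_i (x-\overline{a_i}) \cdot y$ in $\cO(\sX_0)[y]$ and concluding directly.

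Two points of your reasoning need correction, however. First, the role of $p \nmid d$: it is \emph{not} to prevent $\Delta_{\cS(u)}$ from reducing to zero (that polynomial is monic, so it never does), but rather to ensure $\frac{1}{d} \in \Zp$, so that the relator $R_{\cS(u)}(u,d)$ actually lies in the integral model $D = \widehat{\sD}^{(0)}(\sX)$. Once it does, its image in $D/\pi_K D$ has order-$1$ symbol $\prod_i (x - \overline{a_i}) \cdot y$ immediately, and the characteristic variety falls out. No appeal to $\pi_K$-saturation or to the machinery around Theorem \ref{SheafyPres} is needed; the paper's proof is a direct two-line symbol computation.

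Second, your claim that the $\overline{a_i}$ are pairwise distinct because $\rho(X) = 1$ is false: the condition $\rho(X) = 1$ constrains the \emph{holes} of $X$, not the points $a_i \in X(K)$, which may well satisfy $|a_i - a_j| < 1$. Indeed, Example \ref{CyclRwd} has all $q$ of them reducing to $\overline{0}$. The union in the statement of the Lemma tolerates this redundancy, so your final answer is unaffected; the distinction between $h$ and the number $g$ of \emph{distinct} reductions only matters later, in Theorem \ref{CharCycle}, when multiplicities enter. (Your suspicion about the index $a_0$ is correct: it is a typo for $a_1$, consistent with $\cS(u) = \{a_1,\ldots,a_h\}$.)
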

\begin{proof} By Definition \ref{TheRelator}(b), we know that
\[ R_{\cS(u)}(u,d) = \prod\limits_{i=1}^h (x - a_i) \partial_x \hsp - \hsp \frac{1}{d} \sum\limits_{i=1}^h k_i \prod\limits_{j \neq i}(x - a_j).\]
Because $p \nmid d$, this is an element of $D := \widehat{\sD}^{(0)}(\sX)$, and we see that the principal symbol of the image of $R_{\cS(u)}(u,d)$ in $D / \pi_K D$ is 
\begin{equation}\label{GrRz} \Gr R_{\cS(u)}(u,d) = \prod\limits_{i=1}^h (x - \overline{a_i})\cdot y\end{equation}
where we identify $\Gr D$ with the polynomial ring $\cO(\sX_0)[y]$. Hence
\[\Char^{(0)}(\sM^{(0)}) = \Supp \left(\frac{ \Gr D }{ \Gr D \cdot \prod\limits_{i=1}^h (x - \overline{a_i}) \cdot y} \right) = \sX_0 \cup T^\ast_{\overline{a_1}} \sX_0 \cup \cdots \cup T^\ast_{\overline{a_h}} \sX_0. \qedhere\]
\end{proof}

Write $\sY_i := T^\ast_{\overline{a_i}} \sX_0$ for each $i = 1,\ldots, h$. We assume without loss of generality that $\sY_1,\ldots,\sY_g$ are pairwise distinct and that every other $\sY_i$ equals $\sY_j$ for some $j \leq g$. Thus $|a_i - a_j| = 1$ whenever $1 \leq i \neq j \leq g$.
\begin{prop}\label{RzUnitOrNot} Let $0 \leq m \leq m'$ and write $\cE := \Abe{X}{m,m'}$. Then
\be \item $R_{\cS(u)}(u,d)$ is a unit in $\cE( T^\ast \sX_0 - \left(\sX_0 \cup \sY_1 \cup \cdots \cup \sY_h) \right)$,
\item $R_{\cS(u)}(u,d)$ is \emph{not} a unit in $\cE\left( T^\ast \sX_0 - (\sX_0 \cup \sY_1 \cup \cdots \cup \widehat{\sY_i} \cup \cdots \cup \sY_g) \right)$ for any $1 \leq i \leq g$, and
\item $R_{\cS(u)}(u,d)$ is \emph{not} a unit in $\cE ( T^\ast \sX_0 - \left(\sY_1 \cup \cdots \cup \sY_g\right))$.
\ee\end{prop}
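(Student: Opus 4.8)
The three parts all assert (non-)invertibility of the relator $R := R_{\cS(u)}(u,d)$ in various section rings of Abe's microlocal sheaf $\cE = \Abe{X}{m,m'}$ over open subsets of $T^\ast \sX_0$. The guiding principle is that invertibility of $R$ in $\cE(V)$ is governed by the support of $\sM^{(0)}$, equivalently by which of the components $\sX_0, \sY_1, \dots, \sY_g$ of $\Char^{(0)}(\sM^{(0)})$ (computed in Lemma \ref{CharVar} and the paragraph after it) are contained in $V$; the key input is the factorisation $(\ref{GrRz})$ of $\Gr R$, namely $\Gr R = \prod_{i=1}^h (x - \overline{a_i}) \cdot y$ in $\cO(\sX_0)[y]$.

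For part (a), set $V := T^\ast \sX_0 - (\sX_0 \cup \sY_1 \cup \cdots \cup \sY_h)$, which is precisely $\Cot{X}$ with all the covertical fibres over the $\overline{a_i}$ removed. On $V$, both the function $y$ (invertible since we are away from the zero-section $\sX_0$) and each $x - \overline{a_i}$ (invertible since we are away from $\sY_i$) become units in the relevant microlocalised graded ring. Hence $\Gr R$ is a unit in $\Gr \cE(V)$. Since $\cE(V)$ carries a complete, exhaustive, separated $\Z$-filtration whose associated graded is this microlocalised polynomial ring, a standard successive-approximation argument (lifting the inverse of $\Gr R$ through the filtration, as in \cite[\S 2]{AVVO} or the proof that $\theta_{u,d}(\partial_x)$ is invertible in the Robba ring, Corollary \ref{RobbaInverts}) shows $R$ itself is a unit in $\cE(V)$.

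For parts (b) and (c), I will argue by contradiction: if $R$ were a unit in $\cE(V)$ for the relevant $V$, then $\cE(V) \otimes_{\Berth{X}{0}} \sM^{(0)}$ would vanish, i.e. the microlocalisation of $\sM^{(0)}$ to $V$ is zero, forcing $\Char^{(0)}(\sM^{(0)}) \subseteq T^\ast \sX_0 \setminus V$. In case (b), $V = T^\ast \sX_0 - (\sX_0 \cup \sY_1 \cup \cdots \cup \widehat{\sY_i} \cup \cdots \cup \sY_g)$ contains $\sY_i$, but $\sY_i \subseteq \Char^{(0)}(\sM^{(0)})$ by Lemma \ref{CharVar}; this contradiction shows $R$ is not a unit. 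In case (c), $V = T^\ast \sX_0 - (\sY_1 \cup \cdots \cup \sY_g)$ contains the zero-section $\sX_0$, which again lies in $\Char^{(0)}(\sM^{(0)})$ by Lemma \ref{CharVar}, giving the contradiction. Concretely, the non-vanishing of the microlocalisation is detected at the graded level: $\Gr R = \prod(x-\overline{a_i}) y$ is a zero-divisor (indeed not a unit) in the microlocalised ring $\Gr \cE(V)$ precisely because $V$ meets a component on which $y$, resp. some $x - \overline{a_i}$, vanishes, and non-invertibility of $\Gr R$ propagates up the filtration to non-invertibility of $R$.

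\textbf{Main obstacle.} The routine parts are the graded computation (already done in $(\ref{GrRz})$) and the filtration-lifting formalism. The delicate point is making the passage between "support of $\sM^{(0)}$" and "(non-)invertibility of $R$ in $\cE(V)$" fully rigorous for Abe's two-parameter rings $\Abe{X}{m,m'}$ — in particular checking that the section functor $\cE(V)$ over an open $V \subseteq T^\ast \sX_0$ behaves like an honest microlocalisation with respect to the graded structure (flatness of $\Berth{X}{0} \to \cE(V)$ after inverting $\pi_K$, exactness allowing the computation of $\cE(V) \otimes \sM^{(0)}$ via the presentation $\cE(V)/\cE(V) R$), and that non-invertibility at the graded level over $V$ genuinely obstructs invertibility of $R$ rather than merely of $\Gr R$. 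I expect to handle this by quoting the relevant structural properties of $\Abe{X}{m,m'}$ from \cite{AbeMicro} (§2.2, §2.4, §3.3) together with the explicit Laurent-series description in Lemma \ref{AbeExplicit}, reducing everything to the observation that a filtered ring element is a unit iff its principal symbol is, once the filtration is complete and separated and the graded ring has the relevant local structure.
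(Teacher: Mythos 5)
Your proposal for part (a) takes a genuinely different route from the paper and, with care, could be made to work: the paper instead uses the factorisation $R_{\cS(u)}(u,d) = \Delta_{\cS(u)} \cdot \theta_{u,d}(\partial_x)$ from Lemma \ref{RudProps}(a), observes that $\Delta_{\cS(u)} \in \cO(\sU_{\rig})^\times$, and invokes the \emph{explicit} microlocal inverse $\xi_{u,d}$ of $\theta_{u,d}(\partial_x)$ constructed in Proposition \ref{MicroInverse}, transported into $\cE$ via the embedding of Proposition \ref{RelWithAbe}. The explicit inverse buys a lot: the paper never needs to establish a filtration-lifting formalism for $\cE(V)$, and it sets up the machinery that then makes (b) and (c) short.

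For parts (b) and (c), however, there is a genuine gap — indeed two. First, your primary argument is circular: you want to say ``if $R$ were a unit in $\cE(V)$ then $\Supp(\cE \otimes \sM^{(0)}) \cap V = \emptyset$, contradicting $\Char^{(0)}(\sM^{(0)}) \cap V \neq \emptyset$'' — but the equality $\Supp(\cE \otimes \sM^{(0)}) = \Char^{(0)}(\sM^{(0)})$ you need for that contradiction is exactly Corollary \ref{MStable}, which the paper derives \emph{from} this proposition. Without it, the characteristic variety only gives you an \emph{upper} bound for the support, so nothing forces $\cE(V) \otimes \sM^{(0)} \neq 0$. Second, the fallback claim that ``non-invertibility of $\Gr R$ propagates up the filtration to non-invertibility of $R$'' is the genuinely delicate point, and you acknowledge you have not verified it. This is not just bookkeeping: Abe's $\Abe{X}{m,m'}$ carries both a $\pi_K$-adic filtration and an order filtration (with negative parts allowed only on one side of the $(m,m')$ split — see Lemma \ref{AbeExplicit}), and the naive argument via ``$\Gr(\cdot)$ is a domain so $\sigma(R)\sigma(R^{-1})=1$'' needs to be pinned to a specific filtration whose associated graded is the microlocalised polynomial ring; this is not automatic and is not what the paper does.

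The paper's proofs of (b) and (c) sidestep all of this with two concrete, non-graded observations, and you should be aware they are quite different from each other. For (b), the paper uses the explicit Laurent series: by Lemma \ref{RudProps}(b) and Proposition \ref{MicroInverse}, the inverse $R^{-1} = \Delta_{\cS(u)}^{-1}\xi_{u\Delta_{\cS(u)}^d,d}$ has $\partial^{-1}$-coefficient exactly $\Delta_{\cS(u)}^{-1}$; by the Laurent-series description of $\cE(V)$ (Lemma \ref{AbeExplicit}), membership of $R^{-1}$ in $\cE(\Cot{V})$ would force $\Delta_{\cS(u)}^{-1} \in \cO(\sV_{\rig})$, which fails because $\Delta_{\cS(u)}(a_i) = 0$. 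For (c), the section ring over $T^\ast\sX_0 - (\sY_1 \cup \cdots \cup \sY_g)$ is not a microlocalisation at all — it is Berthelot's $\Berth{X}{m}(\sU)$ with only non-negative powers of $\partial$; this embeds into $\cD_r(\sU_{\rig})$ for $\varpi < r < r_m$, and invertibility of $\theta_{u,d}(\partial_x)$ there would (via Theorem \ref{ThetaU}) make $\partial_x$ itself a unit in $\cD_r(\sU_{\rig})$, which is absurd. You cannot treat (b) and (c) symmetrically with a single graded argument, precisely because in (c) the open set $V$ contains the zero-section and $\cE(V)$ has no microlocal direction to exploit.
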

\begin{proof} (a) Let $\sU = \sX - \{\overline{a_1},\cdots, \overline{a_h}\}$ so that 
\[T^\ast \sX_0 - \left(\sX_0 \cup \sY_1 \cup \cdots \cup \sY_h) \right) = \Cot{U}.\]
Now $\Delta_{\cS(u)} = \prod_{i=1}^h (x - a_i)$ is a polynomial in $x$ which does not vanish on $\sU_{\rig}$. Hence
\[\Delta_{\cS(u)} \in \cO(\sU_{\rig})^\times \subset \left(\Gamma(\sU, \Berth{X}{m})\right)^\times \subset \left(\Gamma(\Cot{U}, \cE)\right)^\times.\] 
On the other hand, $\theta_{u,d}(\partial_x)$ is a unit in $\cE_{[r_{m'},r_m]}(\sU_{\rig})$ by Proposition \ref{MicroInverse}. Using Proposition \ref{RelWithAbe}, we deduce that $\theta_{u,d}(\partial_x)$ is also a unit in $\cE(\Cot{U})$. Hence $R_{\cS(u)}(u,d) \in \cE(\Cot{U})^\times$ as claimed.

(b) Fix $i = 1,\ldots, g$ and let $\sV := \sU \cup \{\overline{a_i}\}$, so that 
\[T^\ast \sX_0 - \left(\sX_0 \cup \sY_1 \cup \cdots \cup \widehat{\sY_i} \cup \cdots \cup \sY_g \right) = \Cot{V}\]
because the $\sY_1,\ldots,\sY_g$ are pairwise distinct. Now, by part (a), $R_{\cS(u)}(u,d) \in \cE(\Cot{U})^\times$; we must show that $R_{\cS(u)}(u,d)^{-1}$ does not lie in the subring $\cE(\Cot{V})$ of $\cE(\Cot{U})$. Using Lemma \ref{RudProps}(b) and Proposition \ref{MicroInverse} we see that 
\[R_{\cS(u)}(u,d)^{-1} = \Delta_{\cS(u)}^{-1} \theta_{u \Delta_{\cS(u)}^d,d}(\partial_x)^{-1} = \Delta_{\cS(u)}^{-1} \xi_{u \Delta_{\cS(u)}^d,d}.\] 
By Definition \ref{XiZ}, the coefficient of $\partial^{-1}$ in this Laurent series is $\Delta_{\cS(u)}^{-1}$, which must lie in $\cO(\sV_{\rig})$ if $R_{\cS(u)}(u,d)^{-1} \in \cE(\Cot{V})$. But $\Delta_{\cS(u)}$ has a zero at $x = a_i \in \sV_{\rig}$ and therefore is not a unit in $\cO(\sV_{\rig})$ --- a contradiction.

(c) This time we have $T^\ast \sX_0 - \left(\sY_1 \cup \cdots \cup \sY_g\right) = T^\ast \sV_0$, and 
\[\cE(T^\ast \sV_0) = \Berth{X}{m}(\sU).\]
If $R_{\cS(u)}(u,d)$ is a unit in $\Berth{X}{m}(\sU)$ then since $\Delta_{\cS(u)} \in \cO(\sU_{\rig})^\times$, we see that $\theta_{u,d}(\partial_x)$ has to be a unit in $\Berth{X}{m}(\sU)$ as well. Choose $r \in \bR$ such that $\varpi < r < r_m$. Then $\Berth{X}{m}(\sU)$ embeds naturally into $\cD_r(\sU_{\rig})$, so $\theta_{u,d}(\partial_x) \in \cD_r(\sU_{\rig})$. But the automorphism $\theta_{u,d}$ of $\cD(\sU_{\rig})$ extends to a bounded $K$-algebra automorphism of $\cD_r(\sU_{\rig})$ by Theorem \ref{ThetaU}, and it follows that $\partial_x$ is a unit in $\cD_r(\sU_{\rig})$ which is not the case.
\end{proof}

\begin{cor}\label{MStable} For any integers $m' \geq m \geq 0$, we have
\[\Supp\left( \Abe{X}{m,m'} / \Abe{X}{m,m'} R_{\cS(u)}(u,d) \right) = \sX_0 \cup \sY_1 \cup \cdots \cup \sY_h = \Char^{(0)}(\sM^{(0)}).\]
\end{cor}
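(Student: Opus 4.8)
\textbf{Proof of Corollary \ref{MStable}.} The plan is to use Proposition \ref{RzUnitOrNot} to pin down the support of the cyclic module $\sN^{(m,m')} := \Abe{X}{m,m'} / \Abe{X}{m,m'} R_{\cS(u)}(u,d)$ as a closed subset of $T^\ast \sX_0$, and then to observe that this subset coincides with $\Char^{(0)}(\sM^{(0)})$ computed in Lemma \ref{CharVar}. Recall that for a cyclic module $\cE / \cE R$ over a sheaf of rings $\cE$ on a topological space $T$, a point $t \in T$ fails to lie in the support precisely when $R$ becomes a unit in the stalk $\cE_t$, equivalently when $R$ is a unit in $\cE(V)$ for some open neighbourhood $V$ of $t$. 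So the task is to determine exactly for which points $P \in T^\ast \sX_0$ the element $R_{\cS(u)}(u,d)$ is locally a unit in $\Abe{X}{m,m'}$ near $P$.

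First I would recall the structure of the underlying space: $T^\ast \sX_0$ decomposes as the union of the zero section $\sX_0 = s(\sX_0)$ and the punctured cotangent bundle $\Cot{X}$, and the irreducible components that could conceivably appear in the support are the zero section and the conormal fibres $\sY_i = T^\ast_{\overline{a_i}}\sX_0$, of which the distinct ones are $\sY_1,\ldots,\sY_g$. Since $\Abe{X}{m,m'}$ is a sheaf of rings on $T^\ast \sX_0$ that restricts to $\Berth{X}{m}$ on the zero section (via $\pi^{-1}$), and $R_{\cS(u)}(u,d)$ lives in $\Berth{X}{m}(\sX)$, the support is a closed conical subset. I would then argue containment in both directions. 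For the inclusion $\Supp(\sN^{(m,m')}) \subseteq \sX_0 \cup \sY_1 \cup \cdots \cup \sY_h$: any point $P$ not in this union lies in the open set $T^\ast\sX_0 - (\sX_0 \cup \sY_1 \cup \cdots \cup \sY_h) = \Cot{U}$ where $\sU = \sX - \{\overline{a_1},\ldots,\overline{a_h}\}$, and Proposition \ref{RzUnitOrNot}(a) says $R_{\cS(u)}(u,d)$ is a unit in $\cE(\Cot{U})$, hence a unit in the stalk at $P$; so $P \notin \Supp(\sN^{(m,m')})$. This gives one inclusion.

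For the reverse inclusion I would show that each of $\sX_0$ and $\sY_1,\ldots,\sY_g$ meets the support; since the support is closed and conical and each $\sY_i$ is irreducible, showing a single point of $\sY_i$ (resp.\ of $\sX_0$) lies in the support forces all of $\sY_i$ (resp.\ all of $\sX_0$) into it, and the remaining $\sY_i$ with $i > g$ coincide with some $\sY_j$, $j \leq g$. Suppose for contradiction that some $\sY_i$ ($1 \le i \le g$) were disjoint from $\Supp(\sN^{(m,m')})$; then $R_{\cS(u)}(u,d)$ would be a unit on a neighbourhood of $\sX_0 \cup \sY_1 \cup \cdots \cup \widehat{\sY_i}\cup \cdots \cup \sY_g$ together with $\Cot{U}$, and by quasi-compactness of the relevant closed set and the sheaf property this would make $R_{\cS(u)}(u,d)$ a unit in $\cE(T^\ast\sX_0 - (\sX_0 \cup \sY_1 \cup \cdots \cup \widehat{\sY_i}\cup\cdots\cup\sY_g))$, contradicting Proposition \ref{RzUnitOrNot}(b). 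The same argument using Proposition \ref{RzUnitOrNot}(c) in place of (b) handles the zero section $\sX_0$: if $\sX_0$ were disjoint from the support then $R_{\cS(u)}(u,d)$ would be a unit in $\cE(T^\ast \sX_0 - (\sY_1 \cup \cdots \cup \sY_g))$, again a contradiction. Combining the two inclusions gives $\Supp(\sN^{(m,m')}) = \sX_0 \cup \sY_1 \cup \cdots \cup \sY_h$, and Lemma \ref{CharVar} identifies this with $\Char^{(0)}(\sM^{(0)})$, which is independent of $m, m'$.

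The main obstacle I anticipate is the passage from ``unit on each member of an open cover of a closed set'' to ``unit on the open complement of that closed set'': one needs that the sections of $\Abe{X}{m,m'}$ over $T^\ast\sX_0$ minus a closed conical subset are recovered as an inverse limit (or that $\cE$ is a genuine sheaf with the appropriate gluing on these conical opens), so that a compatible family of local inverses glues to a global inverse. This should follow from the construction of Abe's microlocal sheaves in \cite{AbeMicro} — these are sheaves on $T^\ast\sX_0$, and the closed sets in question are quasi-compact — but some care is needed to phrase it correctly; I would cite the relevant sheaf-theoretic properties from \cite[\S 2.2, \S 2.4, \S 3.3]{AbeMicro} rather than reprove them. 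Everything else is a formal consequence of Proposition \ref{RzUnitOrNot}, Lemma \ref{CharVar}, and the standard description of the support of a cyclic module.
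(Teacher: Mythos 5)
Your proof is correct and essentially matches the paper's approach, which is simply a one-line appeal to Lemma \ref{CharVar} and Proposition \ref{RzUnitOrNot}; you have spelled out the reduction from support-equality to the three unit/non-unit assertions. The gluing concern you flag at the end is in fact a non-issue: two-sided inverses in an associative ring are unique, so local inverses of $R_{\cS(u)}(u,d)$ on an open cover automatically agree on overlaps and glue, given only that $\Abe{X}{m,m'}$ is a sheaf of rings — no further inverse-limit or quasi-compactness argument is needed. (You do tacitly use that vanishing of the stalk of the cyclic quotient at a point forces $R_{\cS(u)}(u,d)$ to be a two-sided unit there — a left inverse plus Noetherianity of the stalk gives this — and that the support is conical so that meeting a fibre $\sY_i$ forces containing it; both are standard structural facts about Abe's microlocal sheaves that the authors take as given.)
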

\begin{proof} This follows immediately from Lemma \ref{CharVar} and Proposition \ref{RzUnitOrNot}.
\end{proof}
We can now calculate the \emph{characteristic cycle} of the module that we are really interested in, namely
\[\sD^\dag_{\sX,\Q} / \sD^\dag_{\sX,\Q} R_{\cS(u)}(u,d).\]
In the papers by Abe \cite{AbeSwan} and Abe-Marmora \cite{AbeMarmora}, one finds at least two variants of the characteristic cycle, applicable for various kinds of modules. We are mainly interested in the cycle defined at \cite[\S 1.5.2]{AbeMarmora} for holonomic $\sD^\dag_{\sX,\Q}$-modules $\sM$, denoted $\Cycl(\sM)$. To calculate it, we will need the following notation. 
\begin{defn}\label{Multi} Let $u = \lambda \prod_{i=1}^h (x - a_i)^{k_i}$ be given, with $\lambda \in K^\times$, $a_1,\ldots, a_h \in X(K)$ pairwise distinct and $k_1,\ldots, k_h$ all non-zero. Arrange the $a_i$'s so that $a_1,\ldots, a_g$ satisfy the following conditions:
\be\item $|a_i - a_j| = 1$ for all $1 \leq i\neq j \leq g$, and 
\item for every $1 \leq \ell \leq h$, there is a unique $i_\ell \in \{1, \ldots, g\}$ such that $|a_\ell - a_{i_\ell}| < 1$.
\ee
Then define for each $i = 1,\ldots, g$ the \emph{$i$-th multiplicity} as follows:
\[ m_{\overline{a_i}}(u) := |\{1 \leq \ell \leq h : i_\ell = i\}|.\]
Recall also that $\sY_i$ denotes $T^\ast_{\overline{a_i}}\sX_0$ for each $i = 1,\ldots, g$.
\end{defn}

\begin{thm}\label{CharCycle} Suppose that $p \nmid d$ and that $u = \lambda \prod_{i=1}^h (x - a_i)^{k_i}$ is given.  Then 
\[\Cycl\left( \sD^\dag_{\sX,\Q} / \sD^\dag_{\sX,\Q} R_{\cS(u)}(u,d)\right) = [\sX_0] + \sum_{i=1}^g m_{\overline{a_i}}(u) [\sY_i].\]
\end{thm}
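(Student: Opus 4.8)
The plan is to compute the characteristic cycle by relating the holonomic $\sD^\dag_{\sX,\Q}$-module $\sM^\dag := \sD^\dag_{\sX,\Q}/\sD^\dag_{\sX,\Q} R_{\cS(u)}(u,d)$ to the level-$0$ coherent $\Berth{X}{0}$-module $\sM^{(0)}$ via the microlocal rings $\Abe{X}{m,m'}$, and then to read off the multiplicities along each irreducible component from the vanishing cycles computed by the microlocalisation. The decomposition of the support $\Char^{(0)}(\sM^{(0)}) = \sX_0 \cup \sY_1 \cup \cdots \cup \sY_g$ from Lemma \ref{CharVar} and Corollary \ref{MStable} already tells us which components appear with nonzero multiplicity; it remains to identify the coefficient of $[\sX_0]$ as $1$ and the coefficient of each $[\sY_i]$ as $m_{\overline{a_i}}(u)$.

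\textbf{Key steps.} First I would reduce to the local situation near each point $\overline{a_i}$: since the characteristic cycle is additive on the irreducible components of the characteristic variety and each $\sY_i = T^\ast_{\overline{a_i}}\sX_0$ meets the zero section only at $\overline{a_i}$, the multiplicity $m_{\sY_i}$ can be computed after restricting to a small formal neighbourhood $\sX^{(i)}$ of $\overline{a_i}$; on such a neighbourhood all the factors $(x - a_\ell)$ with $i_\ell \ne i$ become units, so by Lemma \ref{RudProps}(c) and the fact that the $\theta$-twists are unit-conjugations (Lemma \ref{zTwist}, Lemma \ref{NearlyInner}) the relator $R_{\cS(u)}(u,d)$ becomes, up to conjugation by a unit and a unit factor, the relator $R((x-a_i)^{\kappa_i}, d)$ where $\kappa_i := \sum_{i_\ell = i} k_\ell$ is the total exponent accumulated in the hole around $a_i$. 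Wait — this gives total exponent, not cardinality; so instead I would keep track more carefully: on $\sX^{(i)}$ one has $u = v\cdot\prod_{i_\ell=i}(x-a_\ell)^{k_\ell}$ with $v \in \cO(\sX^{(i)}_{\rig})^\times$, and $\Gr R_{\cS(u)}(u,d) = \prod_{\ell:i_\ell=i}(x-\overline{a_i})^{1}\cdot(\text{unit})\cdot y$ in the residue polynomial ring — here $\overline{a_\ell} = \overline{a_i}$ for all these $\ell$ — so by (\ref{GrRz}) the order of vanishing of the principal symbol along $\overline{a_i}$ is exactly $m_{\overline{a_i}}(u)$, the number of such $\ell$. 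This is the computation that produces the multiplicity $m_{\overline{a_i}}(u)$ rather than $\kappa_i$. Second, I would invoke the compatibility of $\Cycl(-)$ with the characteristic cycle of the associated level-$0$ module: by \cite[\S 1.5.2]{AbeMarmora} (and the support computation of Corollary \ref{MStable}, which shows the microlocalised module has no spurious components), the cycle $\Cycl(\sM^\dag)$ is $[\sX_0]$ plus, along each $\sY_i$, the multiplicity of the coherent $\Berth{X}{0}$-module $\sM^{(0)}$ along $\sY_i$, which by the graded computation above is $m_{\overline{a_i}}(u)$. The coefficient of $[\sX_0]$ is $1$ because, after inverting $\Delta_{\cS(u)}$ and $\partial$, the module $\sM^{(0)}$ becomes free of rank one over the appropriate localisation (the generic rank of $\cO_{\A-\cS}z$ is $1$), so the zero-section component appears with multiplicity one.

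\textbf{Main obstacle.} The delicate point is the precise bookkeeping in Abe's microlocal formalism: making rigorous the claim that $\Cycl(\sM^\dag)$ along $\sY_i$ equals the multiplicity of the level-$0$ module $\sM^{(0)}$ along $\sY_i$, using the chain of microlocal rings $\Abe{X}{m,m'}$ and the identification $T^{(m)\ast}\sX_0 \cong T^\ast\sX_0$. This requires carefully citing the definition of $\Cycl$ for holonomic $\sD^\dag$-modules from \cite{AbeMarmora} and checking that Proposition \ref{RzUnitOrNot} and Corollary \ref{MStable} supply exactly the input needed: that the microlocalised relator $R_{\cS(u)}(u,d)$ is a unit precisely away from $\sX_0 \cup \sY_1 \cup \cdots \cup \sY_g$, and that its symbol order along each $\sY_i$ is stable under passage from level $0$ to the microlocal rings. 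Once that framework is in place, the multiplicity extraction is the elementary graded-ring computation $(\ref{GrRz})$ restricted near $\overline{a_i}$, and the $[\sX_0]$ coefficient is the generic-rank-one statement; assembling these over $i = 1,\ldots, g$ gives the stated formula.
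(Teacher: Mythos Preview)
Your approach is essentially the paper's: use Corollary \ref{MStable} as the input to the Abe--Marmora machinery to reduce $\Cycl(\sM^\dag)$ to the level-$0$ cycle $\Cycl^{(0)}(\sM^{(0)})$, and then compute the latter from the principal symbol $(\ref{GrRz})$. The paper's execution is cleaner in two respects: first, it names the key condition explicitly --- Corollary \ref{MStable} says precisely that $\sM^{(0)}$ is \emph{stable} in the sense of \cite[\S 1.3.8]{AbeMarmora}, which is exactly the hypothesis of \cite[Proposition 1.4.3]{AbeMarmora} giving $\Cycl(\sM) = \Cycl^{(0)}(\sM^{(0)})$ --- and second, it reads off all multiplicities at once from the global formula $(\ref{GrRz})$ via \cite[\S 2.1.12]{AbeSwan}, so your local reduction to a neighbourhood of each $\overline{a_i}$ and the $\theta$-twist manipulations (including the false start with $\kappa_i$) are unnecessary detours.
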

\begin{proof} Let $\sM^{(0)} := \Berth{X}{0} / \Berth{X}{0} R_{\cS(u)}(u,d)$ as above. It follows from Corollary \ref{MStable} that
\[ \Supp\left( \Abe{X}{m,m'} \underset{\pi^{-1} \Berth{X}{0}}{\otimes}{} \pi^{-1}\sM^{(0)} \right) = \Char^{(0)}(\sM^{(0)})\]
for any integers $m' \geq m \geq 0$. This means that the holonomic $\Berth{X}{0}$-module $\sM^{(0)}$ is \emph{stable} in the sense of \cite[\S 1.3.8]{AbeMarmora}.  Let $\sM := \sD^\dag_{\sX,\Q} \otimes_{\Berth{X}{0}} \sM^{(0)}$. Because we are assuming that each $a_i$ is a $K$-rational point of $\sX$, we can now apply \cite[Definition 1.5.2 and Proposition 1.4.3]{AbeMarmora} to see that
\[\Cycl(\sM) = \Cycl^{(0)}(\sM^{(0)})\]
where $\Cycl^{(0)}(\sM^{(0)})$ is the characteristic cycle of the holonomic $\Berth{X}{0}$-module $\sM^{(0)}$ defined at \cite[Definition 2.1.17]{AbeSwan}. Using \cite[\S 2.1.12]{AbeSwan}, we see that $\Cycl^{(0)}(\sM^{(0)})$ is the usual class of the coherent $\cO(T^\ast \sX_0)$-module $\cO(T^\ast \sX_0) / \Gr R_{\cS(u)}(u,d) \cdot \cO(T^\ast \sX_0)$ in the Grothendieck group of coherent $\cO(T^\ast \sX_0)$-modules of dimension at most $1$. The result now follows in view of equation $(\ref{GrRz})$ and Definition \ref{Multi}.
\end{proof}

\begin{rmk}\label{ZetaInK} Recall from $\S \ref{ConvNotn}$ that $q$ denotes the size of the residue field of $k$. Since $K$ contains the finite field extension $F$ of $\Qp$ whose residue field has size $q$ --- see $\S \ref{ConvNotn}$ --- it follows that $K$ contains a primitive $(q-1)$-th root of unity $\zeta$.
\end{rmk}

\begin{ex}\label{CyclRwd} Suppose that $0 \in X(K)$, and let 
\[ w := \frac{1}{ (x^q - \pi_F^{q-1} x)^k }\]
 for some non-zero integer $k$, so that
\[\cS(w) = \{a_1,\ldots,a_q\} = \{0, \pi_F, \pi_F\zeta, \pi_F\zeta^2, \cdots, \pi_F\zeta^{q-2}\} \subset K.\]
Because $|\pi_F| < 1$, all of these points map to the same point $\overline{0}$ in $\sX_0$ under the reduction map $X(K) \to \sX_0$. Therefore $g = 1$ and $m_{\overline{0}}(w) = q$. So, in this case Theorem \ref{CharCycle} tells us that
\[\Cycl\left( \frac{\sD^\dag_{\sX,\Q}}{\sD^\dag_{\sX,\Q} R_{\cS(w)}(w,d)}\right) = [\sX_0] + q [T^\ast_0\sX_0].\]
\end{ex}

\subsection{The differential equation \ts{\nabla(\zeta) = 1 - c}}\label{TheIntegral}
Recall the admissible open subspaces $V_n$ of $\bA$ from Definition \ref{UnCovering}. We are interested here in the following three affinoid subdomains of $\D = \Sp K\langle x \rangle$, one containing the next:
\begin{itemize}
\item $X = \Sp K \langle x, \frac{1}{x^{q-1}-1}\rangle$, 
\item $X \cap V_1 := X - \bigcup\limits_{a \in \pi_F \cO_F} \{|z - a| < |\pi_F|\}$, and 
\item $X \cap V_0 = \Sp K\langle x, \frac{1}{x^q - x}\rangle = X - \{|z| < 1\}$.
\end{itemize} 
Viewed as subsets of $\bP^{1,\an}$, $X$ has $q$ big holes (each of radius $1$), $X \cap V_0$ has $q+1$ big holes and $X \cap V_1$ has $2q$ holes, of which $q$ are big and $q$ are small (radius $|\pi_F|$).  Note that we have the following relations:
\[ X \cap V_0 \subset X \cap V_1 \subset X \qmb{and}  \cO(X \cap V_0) \supset \cO(X \cap V_1) \supset \cO(X).\]
Here is a drawing of these three affinoid domains when $q = 5$:
\begin{center}
\begin{tikzpicture}
\node[draw] at (1,2.5) {$X \cap V_0$};
\fill[blue] (1,0) circle (2cm);
\node[draw] at (5.25,2.5) {$X \cap V_1$};
\fill[blue] (5.25,0) circle (2cm);
\node[draw] at (9.5,2.5) {$X$};
\fill[blue] (9.5,0) circle (2cm);

\fill[white] (-.2,0) circle (0.5cm);
\fill[white] (1,-1.2) circle (0.5cm);
\fill[white] (1,0) circle (0.5cm);
\fill[white] (1,1.2) circle (0.5cm);
\fill[white] (2.2,0) circle (0.5cm);

\fill[white] (4.05,0) circle (0.5cm);   
\fill[white] (5.25,-1.2) circle (0.5cm);   
\fill[white] (4.95,0) circle (0.125cm);
\fill[white] (5.55,0) circle (0.125cm);
\fill[white] (5.25,-0.3) circle (0.125cm);
\fill[white] (5.25,0) circle (0.125cm);
\fill[white] (5.25,0.3) circle (0.125cm);
\fill[white] (5.25,1.2) circle (0.5cm);   
\fill[white] (6.45,0) circle (0.5cm);   

\fill[white] (8.3,0) circle (0.5cm);   
\fill[white] (9.5,-1.2) circle (0.5cm);   
\fill[white] (9.5,1.2) circle (0.5cm);   
\fill[white] (10.7,0) circle (0.5cm);   

\end{tikzpicture} 
\end{center}

For future use, we record the basic invariants $\rho(Y)$ and $r(Y)$ from Definition \ref{MinRad} and Definition \ref{dxradm} for these cheeses in the following table, using the basic relationship $r(Y) = \varpi / \rho(Y)$ given by Corollary \ref{Heisenberg}:

\[\begin{array}{c|ccc} Y & X \cap V_0 & X \cap V_1 & X\\ \hline \rho(Y) & 1 & |\pi_F| & 1\\ r(Y) & \varpi& \varpi/|\pi_F|& \varpi \end{array}\]

Fix an integer $d \geq 1$ which divides $q + 1$ and an integer $k$ such that $1 \leq k \leq d$. We choose the following unit on $X \cap V_1$ whose mod-$d$-divisor is appropriately invariant:
\[ w := \frac{1}{ (x^q - \pi_F^{q-1} x)^k }.\]
\begin{defn} Define $h = \begin{pmatrix} 1 & -\pi_F \\ 0 & 1 \end{pmatrix} \in \cG$.\end{defn}
Since $h\cdot x = x  + \pi_F$ by Lemma \ref{GdotDn}(a), it follows from Definition \ref{Beta} that $h \in \cG_r$ whenever $\varpi < r < \varpi/|\pi_F|$ and that 
\[\beta(h) = \sum\limits_{n=0}^\infty \pi_F^n \partial_x^{[n]} \in \cD^\dag_\varpi(X).\]
We begin with the first step of the proof of our main result of interest, namely Theorem \ref{BetaNotInSum}, as a way of motivating the calculations that follow.

Let $Z$ be an admissible open subspace of a rigid analytic space $Y$ and let $\cF$ be a sheaf on $Y$. We say that the local section $f \in \cF(Z)$ \emph{extends to $Y$} if it lies in the image of the restriction map $\cF(Y) \to \cF(Z)$. 

\begin{prop}\label{XiBetaZero} Suppose that $\beta(h) \in \cD^\dag_{\varpi/|\pi_F|}(X) + \cD^\dag_\varpi(X) R_{\cS(w)}(w,d)$. Then $(\xi_{w,d} \beta(h))_0 \in \cO(X \cap V_0)$ extends to $X \cap V_1$.
\end{prop}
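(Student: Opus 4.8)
The plan is to unwind the hypothesis through the microlocal machinery of $\S\ref{MicRings}$ and $\S\ref{CharCycleSect}$. First I would record the setup: by Corollary \ref{RobbaInverts} the twisted derivation $\theta_{w\Delta^d,d}(\partial_x)$ is invertible in $\overset{\circ}{\cE}(X)$, and by Lemma \ref{RudProps}(b) we have $R_{\cS(w)}(w,d) = \theta_{w\Delta_{\cS(w)}^d,d}(\partial_x)\,\Delta_{\cS(w)}$, so $R_{\cS(w)}(w,d)^{-1} = \Delta_{\cS(w)}^{-1}\,\xi_{w\Delta_{\cS(w)}^d,d}$ as in the proof of Proposition \ref{RzUnitOrNot}(b). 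Since $\beta(h) \in \cD^\dag_\varpi(X)$ (indeed $\beta(h) = \sum \pi_F^n\partial_x^{[n]}$), multiplying the hypothesised identity $\beta(h) = P + Q\,R_{\cS(w)}(w,d)$ with $P \in \cD^\dag_{\varpi/|\pi_F|}(X)$ and $Q \in \cD^\dag_\varpi(X)$ on the right by $R_{\cS(w)}(w,d)^{-1} = \xi_{w,d}$ (using that $R_{\cS(w)}(w,d)^{-1}$ equals $\xi_{w,d}$ up to the harmless unit factor explained in $\S\ref{MicRings}$; I would be careful here to match the exact normalisation of $\xi_{w,d}$ from Definition \ref{XiZ} with the relator) gives $\xi_{w,d}\,\beta(h) = P\,\xi_{w,d} + Q$ inside the appropriate microlocal ring $\cE_{[s,r]}(X)$ or $\overset{\circ}{\cE}(X)$.

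Next I would isolate the coefficient of $\partial^{-1}$. Since $Q \in \cD^\dag_\varpi(X)$ is a power series in $\partial$ with only non-negative powers, its $\partial^{-1}$-coefficient vanishes, so $(\xi_{w,d}\beta(h))_{-1}$... — wait, the statement concerns $(\xi_{w,d}\beta(h))_0$, so the relevant bookkeeping is about the $\partial^0$-coefficient. The point is that $(P\,\xi_{w,d})_0 + Q_0 = (\xi_{w,d}\beta(h))_0$, and $Q_0 \in \cO(X)$ while the key term $(P\,\xi_{w,d})_0$ must be analysed. The element $P \in \cD^\dag_{\varpi/|\pi_F|}(X) = \cD^\dag_{\varpi/|\pi_F|}$ applied at $X$ lives on the larger spectral radius; I would want to argue that $P\,\xi_{w,d}$, computed as a Laurent series via the star product of Definition \ref{DefnStarProd}, has its $\partial^0$-coefficient lying in $\cO(X\cap V_1)$. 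This is where $\S\ref{DrSect}$'s base-change and admissibility results enter: $X\cap V_1$ is $(\partial_x/r)$-admissible for $r = \varpi/|\pi_F|$ by Corollary \ref{Heisenberg} since $\rho(X\cap V_1) = |\pi_F|$, so $\cD^\dag_{\varpi/|\pi_F|}(X)$ maps into $\cD_r(X\cap V_1)$, and the coefficients of $P\,\xi_{w,d}$ expand into convergent sums of elements of $\cO(X\cap V_1)$ by the convergence estimates of Lemma \ref{StarProdEstimate} together with the norm bound $|h_{w,d}^{[n]}|_{X\cap V_1} \le (r(X\cap V_1)/\varpi)^n = |\pi_F|^{-n}$ from Proposition \ref{HunEst}. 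Since $(\xi_{w,d}\beta(h))_0 = (P\,\xi_{w,d})_0 + Q_0$ and $Q_0 \in \cO(X) \subseteq \cO(X\cap V_1)$, we conclude $(\xi_{w,d}\beta(h))_0 \in \cO(X\cap V_1)$, which is precisely the assertion that this section of $\cO_{\bA}$, a priori only defined on $X\cap V_0$, extends to $X\cap V_1$.

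The main obstacle I anticipate is the careful tracking of the star-product coefficient $(P\,\xi_{w,d})_0$: one must show that the formal bi-infinite series manipulations used to invert $R_{\cS(w)}(w,d)$ actually take place inside a ring where the $\partial^0$-coefficient is a genuine rigid-analytic function on $X\cap V_1$ rather than merely on $X\cap V_0$ or in some completion. Concretely this means choosing $s, r$ with $\varpi < s \le r < \varpi/|\pi_F|$ carefully so that $P \in \cD_r(X)$, $\xi_{w,d} \in \cE_{[s,r]}(X)$ (using Lemma \ref{XiZConv}, valid since $s > r(X) = \varpi$), the product $P\,\xi_{w,d}$ lies in $\cE_{[s,r]}(X)$, and then reading off from the norm $(\ref{MicroNorm})$ that the coefficient of $\partial^0$ satisfies a bound forcing it into $\cO(X\cap V_1)$ — here I would invoke that $\cO(X\cap V_1) = \cO(X)\langle |\pi_F|/(x-a) : a\rangle$ is exactly the completion governing the radius $|\pi_F|$ holes, so a coefficient with the right growth rate in the $h_{w,d}^{[n]}$-expansion is automatically in it. A secondary subtlety is confirming the precise relationship between $R_{\cS(w)}(w,d)^{-1}$ and $\xi_{w,d}$: Lemma \ref{RudProps}(b) introduces the factor $\Delta_{\cS(w)}$, and since $\Delta_{\cS(w)} = x^q - \pi_F^{q-1}x$ is a unit in $\cO(X\cap V_1)$ but not in $\cO(X)$, I would need $P$ to absorb this unit correctly — but as $P$ is applied at $X$, and $\Delta_{\cS(w)}^{-1} \in \cO(X\cap V_1)$, the bookkeeping still lands in $\cO(X\cap V_1)$ as required, consistent with the statement.
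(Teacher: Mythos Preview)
There is a genuine gap. Your central identity $\xi_{w,d}\beta(h) = P\,\xi_{w,d} + Q$ is simply not what one obtains by multiplying $\beta(h) = P + Q\,R_{\cS(w)}(w,d)$ on either side by an inverse of the relator. Right-multiplication gives $\beta(h)\,R^{-1} = P\,R^{-1} + Q$, and left-multiplication gives $R^{-1}\beta(h) = R^{-1}P + R^{-1}QR$; neither is what you wrote, and in a highly non-commutative ring there is no way to commute $\xi_{w,d}$ past $\beta(h)$ for free. Moreover $R_{\cS(w)}(w,d)^{-1}$ is not $\xi_{w,d}$ up to a ``harmless'' unit: the factor $\Delta_{\cS(w)}^{\pm 1}$ is not a unit in $\cO(X)$, so it cannot simply be absorbed into $P$ or $Q$.

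The paper's argument proceeds as follows, and you are missing both halves. First, it right-multiplies by $\xi_{w,d}$ and uses Lemma \ref{RudProps}(a) (not (b)): since $R_{\cS(w)}(w,d) = \Delta_{\cS(w)}\,\theta_{w,d}(\partial_x)$, one has $R_{\cS(w)}(w,d)\,\xi_{w,d} = \Delta_{\cS(w)} \in \cO(X)$, whence $(\beta(h)-P)\,\xi_{w,d} \in \cD^\dag_\varpi(X)\cdot\Delta_{\cS(w)}$ has $\partial^0$-coefficient in $\cO(X)$. Your analysis of $(P\,\xi_{w,d})_0 \in \cO(X\cap V_1)$ is essentially the same as the paper's for that term, invoking that $\xi_{w,d} \in \overset{\circ}{\cE}(X\cap V_1)$. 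But this only yields $(\beta(h)\,\xi_{w,d})_0 \in \cO(X\cap V_1)$, which is not yet the statement. Second --- and this is the step you omit entirely --- one must pass from $(\beta(h)\,\xi_{w,d})_0$ to $(\xi_{w,d}\,\beta(h))_0$. The paper observes that conjugation by $\beta(h)$ commutes with $\partial$ (since $\beta(h)$ is a power series in $\partial$ with scalar coefficients) and therefore acts on the $\partial^i$-coefficient of any element simply by applying $h^{-1}$ to it; since $h$ stabilises $X\cap V_1$, the conclusion follows. Without this conjugation trick your argument does not reach the quantity in the statement.
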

\begin{proof}Suppose that $Q \in \cD^\dag_{\varpi/|\pi_F|}(X)$ is such that $\beta(h) - Q \in \cD^\dag_\varpi(X) R_{\cS(w)}(w,d)$.  Note that $\theta_{w,d}(\partial_x)$ is a unit in the Robba ring $\overset{\circ}{\cE}(X \cap V_0)$ with inverse $\xi_{w,d}$ by Corollary \ref{RobbaInverts}, because $w \in \cO(X \cap V_0)^\times$. Now, in $\overset{\circ}{\cE}(X \cap V_0)$ we have the equation 
\[\beta(h) \xi_{w,d} = (\beta(h) - Q) \xi_{w,d} + Q \xi_{w,d}\]
Comparing the constant term coefficients in this equation, we have
\begin{equation}\label{BetaSum}  (\beta(h)  \xi_{w,d})_0 =  ((\beta(h) - Q)  \xi_{w,d})_0 + (Q  \xi_{w,d})_0. \end{equation}
By Lemma \ref{zTwist} applied to $w \in \cO(X \cap V_0)^\times$, we have
\[R_{\cS(w)}(w,d) = \Delta_{\cS(w)} \theta_{w,d}(\partial_x)\]
which implies that $R_{\cS(w)}(w,d) \hsp \xi_{w,d} = \Delta_{\cS(w)}$ inside $\overset{\circ}{\cE}(X \cap V_0)$. Since $\beta(h) - Q \in \cD^\dag_\varpi(X) R_{\cS(w)}(w,d)$ by assumption, we conclude that
\begin{equation}\label{XBZ1} ((\beta(h) - Q) \xi_{w,d})_0 \in \cO(X).\end{equation}
Since $\xi_{w,d}$ also lies in $\overset{\circ}{\cE}(X \cap V_1)$ --- again by Corollary \ref{RobbaInverts} --- and since 
\[Q \in \cD^\dag_{\varpi / |\pi_F|}(X) \subseteq \cD^\dag_{\varpi/|\pi_F|}(X \cap V_1),\]
by comparing the multiplication in $\overset{\circ}{\cE}(X \cap V_0)$ and $\overset{\circ}{\cE}(X \cap V_1)$ we see that
\begin{equation}\label{XBZ2} (Q \xi_{w,d})_0 \in \cO(X \cap V_1).\end{equation}
Since $X \supset X \cap V_1$, (\ref{BetaSum}), (\ref{XBZ1}) and (\ref{XBZ2}) imply that $(\beta(h)  \xi_{w,d})_0$ extends to $X \cap V_1$.

Next, $\beta(h) \in \cD^\dag_\varpi(X \cap V_0)^\times \subset \overset{\circ}{\cE}(X \cap V_0)^\times$, so conjugation by $\beta(h)$ defines an automorphism $S$ of $\overset{\circ}{\cE}(X \cap V_0)$ given by $S(u) := \beta(h)^{-1}  \hsp u \hsp  \beta(h)$ for any $u \in \overset{\circ}{\cE}(X \cap V_0)$. Note that $S( \beta(h) \xi_{w,d} ) = \xi_{w,d} \beta(h)$. Since $\beta(h) = \sum\limits_{n=0}^\infty \pi_F^n \partial_x^{[n]}$ commutes with $\partial_x$, it follows from statement (b) in the proof of Theorem \ref{GrGxTriv} that
\[ S(u) = \beta(h)^{-1}  \hsp u \hsp \beta(h) = \sum_{i \in \Z} (h^{-1} \cdot u_i) \partial_x^i \qmb{for any} u = \sum_{i \in \Z} u_i \partial_x^i\in \overset{\circ}{\cE}(X \cap V_0).\]
Because both $X \cap V_1$ and $X \cap V_0$ are $h$-stable, we see that 
\[(\xi_{w,d} \beta(h))_0 = S(\beta(h) \xi_{w,d})_0 = h^{-1} \cdot (\beta(h) \xi_{w,d})_0\]
also extends to $X \cap V_1$, as desired. \end{proof}

The key to our argument is that the rigid analytic function $(\xi_{w,d} \beta(h))_0$ on $X \cap V_0$ can be explicitly computed, and shown that in fact it \emph{cannot} extend to $X \cap V_1$. To do this, we study the solutions of a certain differential equation satisfied by this function. Recall the units $c_{u,d}(g)$ from Proposition \ref{TwistedCocycle}.

\begin{lem}\label{XVzeroSol} Let $\nabla := \theta_{w,d}(\partial_x) \in \cD(X \cap V_0)$. Then 
\[ \nabla \left( \xi_{w,d} \beta(h))_0 \right) = 1 - c_{w,d}(h^{-1}).\]
\end{lem}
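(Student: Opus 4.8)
The plan is to compute $\nabla$ applied to $(\xi_{w,d}\beta(h))_0$ directly, exploiting the fact that $\nabla = \theta_{w,d}(\partial_x)$ has $\xi_{w,d}$ as a two-sided inverse in the Robba ring $\overset{\circ}{\cE}(X\cap V_0)$ by Corollary \ref{RobbaInverts}. First I would observe that inside $\overset{\circ}{\cE}(X\cap V_0)$ the product $\nabla\cdot\xi_{w,d}\beta(h)$ equals $\beta(h)$, since $\nabla\xi_{w,d}=1$. Writing out the multiplication rule $(\ref{MicroMult})$ for the Laurent series $\nabla$ (which has only the degree-$0$ and degree-$1$ coefficients, namely $h_{w,d}^{[1]}$ and $1$ in the sense that $\nabla=\partial_x - \tfrac1d\dlog(w)$) against the series $\xi_{w,d}\beta(h)$, I would extract the coefficient of $\partial^0$ on both sides. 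On the right the coefficient of $\partial^0$ in $\beta(h)=\sum_n\pi_F^n\partial_x^{[n]}$ is $1$ (the $n=0$ term). On the left, extracting the constant term of $\nabla\cdot(\xi_{w,d}\beta(h))$ involves the constant term and the coefficient of $\partial^{-1}$ of $\xi_{w,d}\beta(h)$; one checks via the Leibniz-type formula $(\ref{MicroMult})$ that this constant term is precisely $\partial_x\big((\xi_{w,d}\beta(h))_0\big) + h_{w,d}^{[1]}\cdot(\xi_{w,d}\beta(h))_0 + (\text{correction})$, i.e. $\nabla\big((\xi_{w,d}\beta(h))_0\big)$ plus a term coming from the $\partial^{-1}$-coefficient of $\xi_{w,d}\beta(h)$ paired with the degree-$1$ part of $\nabla$.

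Next I would identify that correction term. The coefficient of $\partial^{-1}$ in $\xi_{w,d}\beta(h)$ can be computed from Definition \ref{XiZ} of $\xi_{w,d}$ and Definition \ref{Beta}(b) of $\beta(h)$: $\xi_{w,d} = \sum_{n\geq 1}(-1)^{n-1}(n-1)!\,h_{w,d}^{[n-1]}\partial^{-n}$ and $\beta(h)=\sum_{m\geq 0}\pi_F^m\partial_x^{[m]}$, and the $\partial^{-1}$-coefficient of the product, via $(\ref{MicroMult})$, should telescope using Lemma \ref{rhz} and the defining sum $\sum_m \pi_F^m h_{w,d}^{[m]}$, which by Proposition \ref{TwistedCocycle}(a) is exactly $c_{w,d}(h)$. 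Here it will be cleaner to note that $h^{-1} = \begin{pmatrix}1&\pi_F\\0&1\end{pmatrix}$, so $h^{-1}\cdot x - x = -\pi_F$ wait — I must be careful with signs: $h\cdot x = x+\pi_F$, so $h^{-1}\cdot x = x - \pi_F$, and the relevant cocycle evaluated at $h^{-1}$ is $c_{w,d}(h^{-1}) = \sum_{m}(h^{-1}\cdot x - x)^m h_{w,d}^{[m]} = \sum_m(-\pi_F)^m h_{w,d}^{[m]}$. Matching this against the $\partial^{-1}$-coefficient computation should produce exactly $1 - c_{w,d}(h^{-1})$ after moving $\nabla\big((\xi_{w,d}\beta(h))_0\big)$ to one side and the $\partial^0$-coefficient of $\beta(h)$, which is $1$, to the other.

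A cleaner route, which I would actually prefer to carry out, is to apply the conjugation automorphism. Since $\theta_{w,d}(\beta(h)) = c_{w,d}(h)\,\beta(h)$ by Proposition \ref{TwistedCocycle}(b), and since $\theta_{w,d}(\partial_x)=\nabla$ while $\theta_{w,d}$ fixes $\cO(X\cap V_0)$, one gets $\theta_{w,d}(\xi_{w,d}) = \nabla^{-1}$ in the Robba ring. Then $\theta_{w,d}(\xi_{w,d}\beta(h)) = \nabla^{-1}c_{w,d}(h)\beta(h)$, and left-multiplying by $\nabla$ gives $\nabla\cdot\theta_{w,d}(\xi_{w,d}\beta(h)) = c_{w,d}(h)\beta(h)$. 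Comparing $\partial^0$-coefficients and using that $\theta_{w,d}$ acts coefficient-wise trivially so $\theta_{w,d}(\xi_{w,d}\beta(h))_0 = (\xi_{w,d}\beta(h))_0$ modulo a $\partial^{-1}$-term contribution — hmm, this still has the same subtlety. So ultimately the honest approach is the direct coefficient extraction via $(\ref{MicroMult})$, using Lemma \ref{rhz}(b) to handle the telescoping exactly as in the proof of Proposition \ref{MicroInverse}. The main obstacle I anticipate is bookkeeping: carefully tracking which coefficients of $\xi_{w,d}\beta(h)$ contribute to the $\partial^0$-coefficient of $\nabla\cdot(\xi_{w,d}\beta(h))$, and correctly summing the binomial/factorial weights so that the non-$\nabla(\text{-})$ part collapses to the single scalar $1 - c_{w,d}(h^{-1})$; the sign of $\pi_F$ (working with $h^{-1}$ versus $h$) is the place where an error is most likely. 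Once the telescoping is set up as in Lemma \ref{rhz}, the identity should fall out, and I would then record it as the displayed formula $\nabla\big((\xi_{w,d}\beta(h))_0\big) = 1 - c_{w,d}(h^{-1})$.
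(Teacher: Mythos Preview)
Your first approach is correct and is in fact a slightly different route from the paper's. Let me spell out what you are actually doing so you see that no telescoping via Lemma~\ref{rhz} is needed.

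Write $P := \xi_{w,d}\beta(h)$. Since $\nabla = \partial + h_{w,d}^{[1]}$ has only degree-$0$ and degree-$1$ coefficients, formula $(\ref{MicroMult})$ gives
\[
(\nabla \cdot P)_0 \;=\; h_{w,d}^{[1]}\, P_0 \;+\; \bigl(P_{-1} + \partial_x(P_0)\bigr) \;=\; \nabla(P_0) + P_{-1}.
\]
Since $\nabla \cdot P = \beta(h)$ and $\beta(h)_0 = 1$, this yields $\nabla(P_0) = 1 - P_{-1}$. Now $P_{-1} = \sum_{n\geq 1}(\xi_{w,d})_{-n}\,\beta(h)_{n-1}$ because the coefficients of $\beta(h)$ are scalars, and substituting Definition~\ref{XiZ} and $\beta(h)_{n-1} = \pi_F^{n-1}/(n-1)!$ gives $P_{-1} = \sum_{m\geq 0}(-\pi_F)^m h_{w,d}^{[m]} = c_{w,d}(h^{-1})$ directly, with no telescoping. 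Your sign discussion is correct: $h^{-1}\cdot x - x = -\pi_F$.

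The paper takes the complementary route: it first computes $P_0 = -\sum_{n\geq 1}\frac{1}{n}(-\pi_F)^n h_{w,d}^{[n-1]}$ explicitly via $(\ref{MicroMult})$, then applies $\nabla$ term by term using $\nabla(h_{w,d}^{[n-1]}) = n\, h_{w,d}^{[n]}$ (which is Definition~\ref{HunDefn} applied to $\nabla = \theta_{w,d}(\partial_x)$), and recognises the resulting sum as $1 - c_{w,d}(h^{-1})$. Your approach avoids ever writing down $P_0$ and replaces the differentiation step by the single Robba-ring identity $\nabla\cdot P = \beta(h)$; the paper's approach has the advantage of producing an explicit formula for $P_0$, though that formula is not used elsewhere. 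Your detour through $\theta_{w,d}$ is unnecessary, as you noticed; drop it and the reference to Lemma~\ref{rhz}, and the argument is clean.
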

\begin{proof} The constant term of the product $\xi_{w,d} \beta(h)$ is given by formula $(\ref{MicroMult})$:
\[ (\xi_{w,d} \beta(h))_0 = \xi_{w,d} \ast_0 \beta(h) = \sum_{ i \in \Z} (\xi_{w,d})_i \sum_{m=0}^\infty \binom{i}{m} \partial_x^m(\beta(h)_{m-i}).\]
Since all coefficients of $\beta(h)$ are scalars, we see that $\partial_x^m(\beta(h)_{m-i})$ is non-zero only when $m = 0$. Changing the summation to $n = -i $ and using Definition \ref{XiZ}, this sum is equal to
\[\sum\limits_{n \in \mathbb{Z}} (\xi_{w,d})_{-n} \beta(h)_n = \sum\limits_{n=1}^\infty (-1)^{n-1} (n-1)! h_{w, d}^{[n-1]} \cdot \frac{\pi_F^n}{n!}= -\sum\limits_{n=1}^\infty \frac{1}{n} (-\pi_F)^n h_{w, d}^{[n-1]}.\]
Note that by Lemma \ref{HunEst}, $|h_{w,d}^{[n-1]}|_{X\cap V_0} \leq (r(X \cap V_0)/\varpi)^n = 1$ for all $n \geq 1$, so this sum converges in $\cO(X \cap V_0)$. 

On the other hand, since $h_{w,d}^{[n-1]} = \frac{1}{(n-1)!} \nabla^{n-1}(1)$ by Definition \ref{HunDefn}, we have
\[ \nabla( h_{w,d}^{[n-1]} ) = \nabla\left( \frac{1}{(n-1)!} \nabla^{n-1}(1)\right) = n \, h_{w,d}^{[n]} \qmb{for all} n \geq 1.\]
The twisted derivation $\nabla : \cO(X \cap V_0) \to \cO(X \cap V_0)$ is continuous, so 
\[ \nabla( (\xi_{w,d} \beta(h))_0 ) = - \nabla \left(\sum\limits_{n=1}^\infty \frac{1}{n} (-\pi_F)^n h_{w, d}^{[n-1]}\right) = - \sum\limits_{n=1}^\infty (-\pi_F)^n h_{w,d}^{[n]}.\]
Using Proposition \ref{TwistedCocycle}(a), we see that this is equal to $1-c_{w,d}(h^{-1})$ because $h^{-1} \cdot x = x - \pi_F$ by Lemma \ref{GdotDn}(a). \end{proof}
\begin{defn} Let $Y$ be the subset $\bP^{1,\an}$ obtained by removing all the open balls of radius $|\pi_F|$ around all points in $\pi_F \cO_F$:
\[ Y :=  \bP^{1,\an} \, \backslash\, \bigcup\limits_{a \in \pi_F \cO_F} \{|z-a| < |\pi_F|\}.\]
\end{defn}
Note that this $Y$ contains $X \cap V_1$ and therefore also $X \cap V_0$. It turns out that the differential equation from Lemma \ref{XVzeroSol} satisfied by $(\xi_{w,d}\beta(h))_0$ makes sense on this larger subset of $\bP^{1,an}$, namely $Y \supset X \cap V_0$. 

To understand $Y$ better, we consider the involution $s : \bP^{1,\an} \longrightarrow \bP^{1,\an}$ given by the M\"obius action of $\begin{pmatrix} 0 & \pi_F \\ 1 & 0 \end{pmatrix} \in \mathbb{GL}_2(F)$ on $\bP^{1,\an}$:
\[ s(a) = \pi_F/a \qmb{for all} a \in \bP^{1,\an}.\]
\begin{lem} $s$ swaps $X$ and $Y$. 
\end{lem}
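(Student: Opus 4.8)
The claim is that the involution $s : \bP^{1,\an} \to \bP^{1,\an}$ given by $s(a) = \pi_F/a$ swaps $X = \Sp K\langle x, \frac{1}{x^{q-1}-1}\rangle$ and $Y = \bP^{1,\an} \setminus \bigcup_{a \in \pi_F\cO_F}\{|z-a|<|\pi_F|\}$. The plan is to verify this on $\bfC$-points, which suffices since both $X$ and $Y$ are affinoid subdomains of $\bP^{1,\an}$ and $s$ is an automorphism of rigid analytic varieties (being induced by an element of $\GL_2(F)$ acting by M\"obius transformations, cf. Lemma \ref{AutAOD}).

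First I would describe both sets explicitly on $\bfC$-points. By definition, $Y(\bfC) = \{a \in \bP^1(\bfC) : |a - b| \geq |\pi_F| \text{ for all } b \in \pi_F\cO_F\}$; since $\pi_F\cO_F$ is the closed ball of radius $|\pi_F|$ around $0$, a point $a \in \bfC$ lies in $Y$ iff either $|a| > |\pi_F|$, or $|a| \leq |\pi_F|$ but $a$ avoids all the residue discs of radius $|\pi_F|$ inside $\pi_F\cO_F$ — and since those residue discs cover $\pi_F\cO_F$, this forces $|a| > |\pi_F|$. More precisely one checks $Y(\bfC) = \{a : |a| \geq |\pi_F|, \text{ and } a \notin b + \pi_F^2\cO_{\bfC} \text{ for } b \in \pi_F\cO_F \text{ unless } |a| > |\pi_F|\}$; the clean statement is that $Y(\bfC)$ consists of those $a$ with $|a|\geq|\pi_F|$ whose reduction (after scaling by $\pi_F^{-1}$) does not lie in $k_F \subset \bfC/\pi_F^2\cO$, i.e. $Y$ is $\bD$ (as a disc of radius $|\pi_F^{-1}|\cdot|\pi_F| \cdots$) minus the $q$ open residue balls — I would match this against the cheese description. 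Meanwhile $X(\bfC) = \{a \in \bfC : |a| \leq 1, |a^{q-1}-1| \geq 1\}$; the condition $|a^{q-1}-1|\geq 1$ together with $|a|\leq 1$ forces $|a|=1$ and cuts out the complement of the $q-1$ open residue balls around the $(q-1)$-st roots of unity, together with the constraint that excludes the residue ball around $0$ automatically. Actually $X(\bfC) = \{a : |a| = 1\} \setminus \bigcup_{\zeta^{q-1}=1}\{|a - \zeta| < 1\}$, i.e. $\bP^1(\bfC)$ minus the open ball $\{|a|<1\}$, the open ball $\{|a|>1\}$ (equivalently the residue disc at $\infty$), and the $q-1$ residue discs at the roots of unity — so $X$ is $\bP^1$ minus $q+1$ open residue discs of radius $1$: those at $0,\infty$, and the $q-1$ roots of $x^{q-1}=1$. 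Hmm — I should double check whether $0$'s disc is actually removed; since $|a|\leq 1$ is imposed, $|a|<1$ is allowed unless $|a^{q-1}-1|<1$, but $|a|<1 \Rightarrow |a^{q-1}|<1 \Rightarrow |a^{q-1}-1|=1$, so $\{|a|<1\}\subseteq X(\bfC)$ after all. So $X$ is $\bD$ minus the $q-1$ open residue discs at the roots of unity, i.e. $q-1$ holes of radius $1$ plus one "hole at infinity". I will recompute carefully at write-up time.

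The core computation: I would show $s$ maps $X(\bfC)$ bijectively onto $Y(\bfC)$ by tracking what $a \mapsto \pi_F/a$ does to residue discs. Since $|s(a)| = |\pi_F|/|a|$, the locus $|a|=1$ maps to $|s(a)| = |\pi_F|$, and $|a|<1$ maps to $|s(a)|>|\pi_F|$; conversely $|a|>1$ maps to $|s(a)| < |\pi_F|$. Next, an open residue disc $\{|a-\zeta|<1\}$ with $\zeta^{q-1}=1$ maps under the M\"obius transformation $s$ to an open disc; since $s$ is an isometry-preserving fractional linear map on the relevant region and $s(\zeta) = \pi_F/\zeta = \pi_F\zeta^{q-2}$ (as $\zeta^{q-1}=1$), one computes that $\{|a-\zeta|<1\}$ maps to $\{|b - \pi_F\zeta^{-1}| < |\pi_F|^2/|\zeta|^2 \cdot (\text{something})\}$ — more simply, $s$ sends the complement of the $q-1$ root-of-unity discs and the $\{|a|>1\}$ disc inside $\bD$ onto $Y$. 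The cleanest route is: $X = \bD \setminus \bigcup_{\zeta^{q-1}=1}\{|z-\zeta|<1\}$, and $s$ sends $\bD = \{|z|\le 1\}$... no wait, $s(\bD)$ isn't contained in $\bP^1$ nicely. I think the right framing is that $X$ and $Y$ are both unions of residue tubes and $s$ permutes the relevant residue discs: the $q-1$ discs $\{|z-\zeta|<1\}$ ($\zeta^{q-1}=1$) together with $\{|z|>1\}$ — that's $q$ open discs removed from $\bP^1$ to get... no, $X$ also excludes nothing at $0$. Let me just say: I will show $s$ carries the $q$ removed open discs of $Y$ (those around $\pi_F\cO_F$, which break into $q$ residue discs of radius $|\pi_F|^2$... no, radius $|\pi_F|$) to the $q$ removed open discs defining $X$, by direct computation with the M\"obius map on each disc, using that $s$ fixes the set $\{0,\infty\}\cup\mu_{q-1}$ setwise appropriately. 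The main obstacle is bookkeeping the radii and centres of images of discs under $a \mapsto \pi_F/a$ correctly — in particular confirming that $\{|a| = |\pi_F|,\ \overline{a/\pi_F} \notin k_F^\times\cup\{0\}\}$ is exactly $Y$ and that its $s$-preimage is exactly $X$ — and making sure the "hole at $\infty$" and "hole at $0$" are handled consistently on $\bP^{1,\an}$ rather than just on the affine line. Once the $\bfC$-point identification $s(X(\bfC)) = Y(\bfC)$ is established, the fact that $s$ is an isomorphism of rigid spaces and $s^2 = \mathrm{id}$ gives that $s$ restricts to mutually inverse isomorphisms $X \cong Y$, completing the proof.
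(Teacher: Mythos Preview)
Your approach is exactly the paper's: work on $\bfC$-points and show that $s$ exchanges the $q$ open discs forming the complement of $X$ in $\bP^1$ with the $q$ open discs forming the complement of $Y$. Your description of $X$ (after the self-correction) is right: its complement is the polar disc $\{|z|>1\}$ together with the $q-1$ open unit discs around the $(q-1)$st roots of unity; the complement of $Y$ consists of the $q$ open discs of radius $|\pi_F|$ around $0$ and around $\pi_F c$ for $c\in\cO_F^\times$ running over representatives of $k_F^\times$. The polar disc visibly swaps with $\{|z|<|\pi_F|\}$ under $s$.

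What you flag as the ``main obstacle'' --- tracking radii and centres under $z\mapsto \pi_F/z$ --- dissolves into a one-line chain of equivalences, which is exactly how the paper does it: for $c\in\cO_F^\times$,
\[
|s(z)-\pi_F c|<|\pi_F| \;\Longleftrightarrow\; |1-cz|<|z| \;\Longleftrightarrow\; |z-c^{-1}|<|z| \;\Longleftrightarrow\; |z-c^{-1}|<1,
\]
using the ultrametric inequality for the last two steps. Since $c\mapsto c^{-1}$ permutes the $(q-1)$st roots of unity, this matches the remaining $q-1$ holes. Your attempted radius $|\pi_F|^2/|\zeta|^2$ is off; the image disc has radius $|\pi_F|$, and working with the inequality directly (rather than first computing centre and radius) avoids that slip.
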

\begin{proof} To show that $s$ sends $X$ onto $Y$, it is enough to do this on ${\bf C}$-points, and for this, it suffices to show that $s$ exchanges the complements of $X({\bf C})$ and $Y({\bf C})$ in $\bP^{1,\an}({\bf C})$. These complements are disjoint unions of $q$ open balls. Now, $s$ exchanges the polar hole $\{z \in \bP^1({\bf C}) : |z| > 1\}$ of $X$ with the open ball of radius $|\pi_F|$ around zero. Next, if $c \in \cO_F^\times$ and $z \in {\bf C}$, then using the ultrametric inequality, we see that
\[ |s(z) - \pi_F c| < |\pi_F| \Leftrightarrow |1 - c z| < |z| \Leftrightarrow |z - c^{-1}| < |z| \Leftrightarrow |z - c^{-1}| < |c^{-1}| = 1.\] 
This shows that $s$ exchanges the remaining $q-1$ open balls contained in the complements of $X$ and $Y$. 
\end{proof}

\begin{cor}\label{CoordRingOfY} $Y$ is an affinoid subvariety of $\bP^{1,\an}$ and there is an isomorphism of $K$-affinoid algebras\[ K \left\langle x, \frac{1}{x^{q-1}-1}\right\rangle = \cO(X) \stackrel{\cong}{\longrightarrow} \cO(Y)\]
which sends $x \in \cO(X)$ to $y := \pi_F/x \in \cO(Y)$.
\end{cor}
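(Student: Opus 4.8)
\textbf{Proof proposal for Corollary \ref{CoordRingOfY}.}

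The plan is to use the isomorphism of rigid analytic varieties $s : Y \stackrel{\cong}{\longrightarrow} X$ established in the previous Lemma to transport the known description of $\cO(X)$ over to $\cO(Y)$. Since $s$ is the M\"obius involution attached to $\begin{pmatrix} 0 & \pi_F \\ 1 & 0 \end{pmatrix}\in \mathbb{GL}_2(F)$, it is an isomorphism of $K$-affinoid varieties, so first I would note that $Y$ is indeed affinoid, being isomorphic to the affinoid $X = \Sp K\langle x, \frac{1}{x^{q-1}-1}\rangle$. The isomorphism $s$ induces a pullback isomorphism of $K$-affinoid algebras $s^\ast : \cO(X) \stackrel{\cong}{\longrightarrow} \cO(Y)$, and by the formula $s(a) = \pi_F/a$ this sends the coordinate $x\in \cO(X)$ to the function $y := \pi_F/x \in \cO(Y)$.

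The only point requiring a small argument is that $y = \pi_F/x$ genuinely defines an element of $\cO(Y)$ and that $\cO(X)\to \cO(Y)$, $x\mapsto y$, is the claimed isomorphism. For this I would observe that $Y$ does not meet the open ball $\{|z| < |\pi_F|\}$ around zero (this ball is one of the $q$ removed balls, corresponding to $a = 0 \in \pi_F\cO_F$); hence $|x| \geq |\pi_F|$ everywhere on $Y$, so $x$ is a unit on $Y$ with $|\pi_F/x|_Y \leq 1$, and $y = \pi_F/x \in \cO(Y)^\circ$. Moreover $x^{q-1} - 1$ pulls back under $x\mapsto y$ to $y^{q-1}-1 = \pi_F^{q-1}/x^{q-1} - 1 = (\pi_F^{q-1} - x^{q-1})/x^{q-1}$, and the previous Lemma's computation shows precisely that the complement of $Y$ in $\bP^{1,\an}$ consists of the open balls of radius $1$ around the points $c^{-1}$ with $c\in \cO_F^\times$, i.e. around the $(q-1)$-st roots of unity, together with the ball around $0$; so $x^{q-1}-\pi_F^{q-1}$ — equivalently $y^{q-1}-1$ — is invertible on $Y$. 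Therefore the $K$-algebra homomorphism $K\langle x, \frac{1}{x^{q-1}-1}\rangle \to \cO(Y)$ sending $x\mapsto y$ is well-defined, and it agrees with $s^\ast$ because both are $K$-algebra maps determined by where they send $x$ and $s^\ast(x) = x\circ s = \pi_F/x = y$. Since $s^\ast$ is an isomorphism, so is this map.

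I do not anticipate a serious obstacle here: the content is entirely contained in the previous Lemma identifying the complements of $X$ and $Y$, and the rest is the standard fact that a M\"obius transformation in $\mathbb{GL}_2(F)$ induces an isomorphism of affinoid algebras on stable affinoid subdomains (cf. Lemma \ref{AutAOD}(a),(b)). The one thing to be careful about is bookkeeping the removed balls: making sure that the ball around $0$ (index $a=0$) accounts for the polar hole $\{|z|>1\}$ of $X$ under $s$, and that the balls around the nonzero $c\in\cO_F^\times/(1+\pi_F\cO_F)$, i.e. around the $(q-1)$-st roots of unity, match the $q-1$ unit balls in the complement of $X$. This is exactly what was checked in the proof of the preceding Lemma, so it can simply be invoked.
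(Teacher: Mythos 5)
Your proof is correct in its essentials and follows the route the paper clearly intends: the involution $s$ from the preceding Lemma gives an isomorphism of rigid $K$-analytic varieties $Y\to X$, hence a pullback isomorphism $s^\ast:\cO(X)\to\cO(Y)$ of $K$-affinoid algebras, which sends the coordinate $x$ to $x\circ s=\pi_F/x=y$; this single observation already yields both claims (that $Y$ is affinoid and that $\cO(Y)\cong K\langle x,\tfrac{1}{x^{q-1}-1}\rangle$ via $x\mapsto y$).

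One remark on your second paragraph, which is meant to verify directly that $x\mapsto y$ lands in $\cO(Y)$ and is well-defined: the sentence ``the previous Lemma's computation shows precisely that the complement of $Y$ in $\bP^{1,\an}$ consists of the open balls of radius $1$ around the points $c^{-1}$... together with the ball around $0$'' mislabels the two complements. What the Lemma shows is that the complement of $X$ consists of the polar hole and the $q-1$ open unit balls around the $(q-1)$-st roots of unity, while the complement of $Y$ consists of the $q$ open balls of radius $|\pi_F|$ around representatives of $\pi_F\cO_F/\pi_F^2\cO_F$, and $s$ exchanges these two collections. The conclusion you draw ($x^{q-1}-\pi_F^{q-1}$ is invertible on $Y$) is nonetheless correct, since its zeros $\pi_F\zeta^i$ lie in the removed balls of $Y$; but as you note yourself, this verification is redundant once $s^\ast$ is an isomorphism, since $s^\ast\bigl(\tfrac{1}{x^{q-1}-1}\bigr)$ automatically provides the inverse of $y^{q-1}-1$ in $\cO(Y)$. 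So I would either drop the secondary check or state the complements accurately; neither change affects the validity of the proof.
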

\begin{lem}\label{ExtendToY} \,
\be \item  The function $\left( \frac{y^q}{\pi_F^q(1 - y^{q-1})}\right)^k$ is an extension of $w \in \cO(X \cap V_0)^\times$ to $Y \backslash \{\infty\}$.
\item The differential operator  $- \frac{1}{\pi_F} \theta_{(1 - y^{q-1})^{-k},d}\left( y^2 \partial_y - \frac{qk}{d} y \right) \in \cD(Y)$ is an extension of $\nabla \in \cD(X \cap V_0)$ from Lemma \ref{XVzeroSol} to $Y$.
\item There is an extension of $c_{w,d}(h^{-1}) \in \cO(X \cap V_0)^\times$ to $Y$ of the form  \[c_{w,d}(h^{-1}) = \left(1 + p  y \frac{f(y)}{1-y^{q-1}} \right)^{\frac{1}{d}} \in \cO(Y)^{\times\times} \qmb{for some} f(y) \in \bZ[y].\]
\ee
\end{lem}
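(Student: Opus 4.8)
\textbf{Proof strategy for Lemma \ref{ExtendToY}.}

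The plan is to verify all three items by a direct substitution $x = \pi_F/y$, using the isomorphism $\cO(X) \cong \cO(Y)$ of Corollary \ref{CoordRingOfY} and the formulae for how $\partial_x$, $\theta_{u,d}$ and the cocycle $c_{u,d}$ transform under the M\"obius action of $\begin{pmatrix} 0 & \pi_F \\ 1 & 0 \end{pmatrix}$. For part (a), I would simply substitute $x = \pi_F/y$ into $w = (x^q - \pi_F^{q-1}x)^{-k}$ and simplify: $x^q - \pi_F^{q-1}x = \pi_F^q/y^q - \pi_F^q/y = \pi_F^q(1-y^{q-1})/y^q$, so $w = (y^q / (\pi_F^q(1-y^{q-1})))^k$. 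The only thing to check is that the denominator $1 - y^{q-1}$ is a unit on $Y \setminus \{\infty\}$, which follows from Corollary \ref{CoordRingOfY} since $y^{q-1}-1$ corresponds to $x^{q-1}-1$, a unit on $X$ hence on $Y$ after the coordinate change; one notes $y$ has a simple zero only at $\infty \in Y$, so the product is defined away from $\infty$.

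For part (b), I would first record how $\partial_x$ transforms: writing $y = \pi_F/x$ we have $\partial_x(y) = -\pi_F/x^2 = -y^2/\pi_F$, so $\partial_x = -\frac{1}{\pi_F} y^2 \partial_y$ as a derivation of $\cO(X\cap V_0) = \cO(Y\setminus\{0,\ldots\})$ (this is precisely the $n=1$ case of Lemma \ref{GdotDn}(b) for the swap matrix, up to the standard rescaling). Then $\nabla = \theta_{w,d}(\partial_x)$, and using part (a) together with Lemma \ref{TwistsMultiply} to split off the monomial factor $(y^q/\pi_F^q)^k = (y^k/\pi_F^k)^d \cdot (\text{unit})$ — more precisely, writing $w = \lambda \cdot (y^k)^d \cdot (1-y^{q-1})^{-k}$ for a scalar $\lambda$ and applying Lemma \ref{ThetaDthPower}/Lemma \ref{NearlyInner} to the $d$-th power factor and $\theta_{\lambda,d} = \mathrm{id}$ — I compute $\nabla = y^k \, \theta_{(1-y^{q-1})^{-k},d}(\partial_x) \, y^{-k}$. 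Conjugating $\partial_x = -\frac{1}{\pi_F}y^2\partial_y$ by $y^k$ gives $y^k \partial_x y^{-k} = -\frac{1}{\pi_F}(y^2\partial_y - k y)$, wait — one must be careful with the order of operations: $y^k (y^2\partial_y) y^{-k} = y^2\partial_y + y^2 \cdot (-k y^{-1}) = y^2 \partial_y - k y$; since $\theta_{(1-y^{q-1})^{-k},d}$ is $\cO(Y)$-linear it commutes with multiplication by $y^k$, so I must instead move the $\theta$ outside, obtaining $\nabla = -\frac{1}{\pi_F}\theta_{(1-y^{q-1})^{-k},d}(y^2\partial_y - ky)$. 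Comparing with the claimed operator, this forces checking $ky = \frac{qk}{d}y$, which is false in general; the resolution is that when splitting off the $d$-th power one should extract $(y^{k'})^d$ with $k' = qk/d \in \bN$ (legitimate since $d \mid q$ is not assumed, but $d \mid q+1$ and $1\le k \le d$... ) — so the main subtlety here is choosing the correct monomial split so that the residual exponent matches $qk/d$; I expect this bookkeeping to be the first genuine obstacle, resolved by writing $w = \lambda\,(y^{qk/d})^d (1-y^{q-1})^{-k}$ after noting $d \mid qk$ (which holds since $d \mid q+1$ forces... one would need $\gcd(d,q)=1$, giving $d\mid k$, hence $d\mid qk$). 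I would then conjugate by $y^{qk/d}$ to land on $y^2\partial_y - \frac{qk}{d}y$ exactly.

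For part (c), I would apply Proposition \ref{TwistedCocycle}(d), which gives $c_{w,d}(h^{-1})^d = w/(h^{-1}\cdot w)$. Using part (a) and $h^{-1}\cdot y = $ (the image of $h^{-1}\cdot x = x - \pi_F$ under $x\mapsto \pi_F/x$), namely $h^{-1}$ acts on $y$ by $y \mapsto \pi_F/((\pi_F/y) - \pi_F) = y/(1-y)$, I compute the ratio $w/(h^{-1}\cdot w)$ as an explicit rational function in $y$ and check it has the form $1 + py\,f(y)/(1-y^{q-1})$ for some $f(y) \in \bZ[y]$; the factor of $p$ should emerge from the identity $(1-y)^{q-1} - (1-y^{q-1}) \equiv 0 \bmod p$ (binomial coefficients $\binom{q-1}{i}$ times... ) combined with $q = p^{v_p(q)}\cdot(\ldots)$, or more directly from a Frobenius-type congruence modulo $p$. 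That this rational expression actually extends to all of $Y$ (including $\infty$) as a unit in $\cO(Y)^{\times\times}$ follows because $c_{w,d}(h^{-1}) \in \cO(X\cap V_0)^{\times\times}$ by Proposition \ref{TwistedCocycle}(a) and the explicit formula visibly has no poles or zeros on $Y$ once rewritten in $y$; taking $d$-th roots is legitimate on a small-unit since $p\nmid d$ (Lemma \ref{zTwist}/\cite[Lemma 4.3.2]{ArdWad2023}). The extraction of the $p$-divisibility in $f$ and confirming integrality of its coefficients is the part most likely to require a careful finite computation, but it is elementary. Throughout, the conceptual content is just that the swap $s$ conjugates the whole picture, and the explicit formulae are forced.
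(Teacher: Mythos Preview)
Your approaches to parts (a) and (c) are correct and essentially coincide with the paper's: substitute $x=\pi_F/y$, and for (c) use Proposition~\ref{TwistedCocycle}(d) to reduce to computing $w/(h^{-1}\cdot w)$ explicitly after noting $h^{-1}\cdot y = y/(1-y)$. The paper carries out exactly the finite computation you describe, reducing to $k=1$ and checking $y(1-y)^q - y^q(1-y) \equiv y - y^q \pmod{py^2\bZ[y]}$.

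Your approach to part (b), however, has a genuine gap. You try to write $w$ as (scalar)$\cdot (y^{qk/d})^d \cdot (1-y^{q-1})^{-k}$ and then invoke Lemma~\ref{ThetaDthPower} to replace $\theta_{y^{qk},d}$ by conjugation by $y^{qk/d}$. But this requires $qk/d \in \bZ$. Since $d \mid q+1$ forces $\gcd(d,q)=1$, we would need $d \mid k$; as $1 \leq k \leq d$ this holds only when $k=d$, and the interesting case is precisely $k \neq d$. Your earlier attempt with $(y^k)^d = y^{kd}$ fails for the same reason: $kd \neq qk$ unless $d=q$, which is incompatible with $d \mid q+1$.

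The fix is simple and is what the paper does: do not try to extract a $d$-th power at all. Apply the defining formula $(\ref{ThetaTwist})$ from Lemma~\ref{zTwist} directly to $u = y^{qk}$ and $\delta = y^2\partial_y$, giving
\[
\theta_{y^{qk},d}(y^2\partial_y) = y^2\partial_y - \frac{1}{d}\frac{(y^2\partial_y)(y^{qk})}{y^{qk}} = y^2\partial_y - \frac{qk}{d}\,y,
\]
which is valid for any unit $u$ regardless of whether it is a $d$-th power. Then Lemma~\ref{TwistsMultiply} applied to the factorisation $w = \pi_F^{-qk}(1-y^{q-1})^{-k}\,y^{qk}$ (with $\theta_{\pi_F^{-qk},d} = \mathrm{id}$) gives $\theta_{w,d}(y^2\partial_y) = \theta_{(1-y^{q-1})^{-k},d}\bigl(y^2\partial_y - \tfrac{qk}{d}y\bigr)$, and combining with $\partial_x = -\tfrac{1}{\pi_F}y^2\partial_y$ yields the claim.
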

\begin{proof} (a) Recall that $w = 1/(x^q - \pi_F^{q-1} x)^k$ is a meromorphic function on $\bP^1$ with no zeroes or poles on $Y \backslash \{\infty\}$. Now substitute $x = \pi_F/y$ into $w$.
	
(b) First we note that $-\frac{1}{\pi_F} y^2 \partial_y$ is an extension of the derivation $\partial_x\in \cT(X\cap V_0)$ to $Y$ since $-\frac{1}{\pi_F} y^2 \partial_y(x)=1$. Since $w$ is a unit on $Y \backslash \{\infty\}$ by (a), $\theta_{w,d}(\partial_x) \in \cD(X \cap V_0)$ extends to $Y \backslash \{\infty\}$. Next, using Lemma \ref{zTwist} on $Y \backslash \{\infty\}$, we compute
\[ \theta_{y^{qk}, d}( y^2 \partial_y ) = y^2 \left(\partial_y  - \frac{qk}{d} \frac{1}{y} \right) = y^2 \partial_y - \frac{qk}{d} y.\]
So, this differential operator in fact extends to $Y$. Since $w = \pi_F^{-qk} (1 - y^{q-1})^{-k} y^{qk}$ and $(1 - y^{q-1})^{-k} \in \cO(Y)^\times$, we can now apply Lemma \ref{TwistsMultiply} on $Y \backslash \{\infty\}$ to find 
\[ \theta_{w,d}(y^2\partial_y) = \theta_{(1 - y^{q-1})^{-k},d}\left( \theta_{y^{qk}, d} ( y^2 \partial_y ) \right) \in \cD(Y). \]
Putting everything together, we see that
\[ - \frac{1}{\pi_F} \theta_{(1 - y^{q-1})^{-k},d}\left( y^2 \partial_y - \frac{qk}{d} y \right) \in \cD(Y)\]
extends $\nabla \in \cD(X \cap V_0)$ to $Y$ as claimed.

(c) We will show that there exists a unit $c \in \cO(Y)^{\times\times}$ such that 
\[w / h^{-1} \cdot w = c^d.\]
We can then use Proposition \ref{TwistedCocycle}(d) to see that $c|_{X \cap V_0}$ and $c_{w,d}(h^{-1})$ are both $d$th roots of $w / h^{-1}\cdot w \in \cO(X \cap V_0)$.  Now Proposition \ref{TwistedCocycle}(a) tells us that $c_{w,d}(h^{-1}) \in  \cO(X \cap V_0)^{\times\times}$, whereas $c|_{X \cap V_0} \in \cO(X \cap V_0)^{\times\times}$ because $c \in \cO(Y)^{\times\times}$. Since this group is $d$-divisible by \cite[Lemma 4.3.2(a)]{ArdWad2023}, we conclude that $c|_{X \cap V_0} = c_{w,d}(h^{-1})$. Hence $c \in \cO(Y)^{\times\times}$ extends $c_{w,d}(h^{-1})$ to $Y$.

It remains to produce the unit $c \in \cO(Y)^{\times\times}$ such that $c^d = w / h^{-1}\cdot w$. It is enough to consider the case $k = 1$. First, we compute
\begin{equation}\label{hinvy} h^{-1} \cdot y = \frac{\pi_F}{h^{-1} \cdot x} = \frac{\pi_F}{x - \pi_F} = \frac{y}{1-y}\end{equation}
which implies that $h$ stabilises $Y$. Using this together with part (a), we then have
\[ w = \pi_F^{-q} \frac{ y^{q+1} }{y - y^q} \quad\qmb{and}\quad h^{-1} \cdot w = \pi_F^{-q} \frac{ \left(\frac{y}{1-y}\right)^{q+1}}{\frac{y}{1-y} - \left(\frac{y}{1-y}\right)^q} \]
Dividing through,  $y^{q+1}\pi_F^{-q}$ cancels and we obtain
\[ \frac{w}{h^{-1} \cdot w} = \frac{y(1-y)^q - y^q(1-y)}{y-y^q}.\]
Since $y(1-y)^q  - y^q(1-y) \equiv y - y^q \mod p y^2 \bZ[y]$, we see that 
\[ \frac{w}{h^{-1} \cdot w} = 1 + p y \frac{f(y)}{1-y^{q-1}} \qmb{for some} f(y) \in \bZ[y].\]
Thus $w / h^{-1}\cdot w \in \cO(Y)^{\times\times}$ and we can use \cite[Lemma 4.3.2(a)]{ArdWad2023} to define
\begin{equation}\label{DefnOfC} c := \left(1 + p y  \frac{f(y)}{1-y^{q-1}} \right)^{\frac{1}{d}} \in \cO(Y)^{\times\times} .\end{equation}
Then $c^d = w/h^{-1}\cdot w$ as desired.
\end{proof}

Because of Lemma \ref{ExtendToY}, we can consider the differential equation on $Y$:
\[ \nabla(\zeta) = c - 1, \quad \zeta \in \cO_Y,\]
and its sheaf of solutions. More precisely, with $\nabla \in \cD(Y)$ the extension defined by Lemma \ref{ExtendToY}(b) and $c \in \cO(Y)^\times$ as defined by Lemma \ref{ExtendToY}(c), for each admissible open subspace $U$ of $Y$, we can consider the following subset $\cF(U)$ of $\cO(U)$:
\[ \cF(U) := \{\zeta \in \cO(U) : \nabla_{|U}(\zeta) = c_{|U}.\}\]
It is clear that $\cF$ is a \emph{subsheaf of sets} of $\cO_Y$. We begin the study of $\cF$ by looking at formal meromorphic solutions near the point $x = \infty$ or $y = 0$. We identify the completed local ring $\widehat{\cO_{Y,\infty}}$ of $\cO_Y$ at $y = 0$ with the ring of formal power series $K[[y]]$ and look for formal solutions in its field of fractions $K((y))$. Actually, it will be more convenient to work in a larger field, namely 
\[ K((t)) := K((y))[t] / \langle t^d - y \rangle.\]
For every admissible open subset $U$ of $Y$ containing $\infty$, we have natural $K$-algebra and $\cD(U)$-module homomorphisms 
\[\cO(U) \longrightarrow \cO_{Y,\infty} \hookrightarrow \widehat{\cO_{Y,\infty}} = K[[y]] \hookrightarrow K((y)) \hookrightarrow K((t)).\]
\begin{lem}\label{GermAtInfty} Let $U$ be a connected affinoid subdomain of $Y$ containing $\infty$, and let $\rho^U_\infty : \cO(U) \to \widehat{\cO_{Y,\infty}} = K[[y]]$ be the natural map. Then $\rho^U_\infty$ is injective and $\cD(U)$-linear. \end{lem}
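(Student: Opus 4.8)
\textbf{Proof plan for Lemma \ref{GermAtInfty}.}

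The plan is to prove the two assertions separately, with injectivity being the substantive point. The $\cD(U)$-linearity is essentially formal: the map $\rho^U_\infty$ is the composite $\cO(U) \to \cO_{Y,\infty} \hookrightarrow \widehat{\cO_{Y,\infty}}$, and each of these maps is a morphism of $\cD(U)$-modules because localisation at a point and completion both commute with the action of a global differential operator. More precisely, $\nabla$ (and indeed any $Q \in \cD(U)$) acts on $\cO(U)$ by a $K$-linear operator built from multiplication operators and the derivation $\partial_x$; both of these extend compatibly to $\cO_{Y,\infty}$ and to its completion $K[[y]]$ via the coordinate change $x = \pi_F/y$, since $\partial_x = -\tfrac{1}{\pi_F} y^2 \partial_y$ by Lemma \ref{ExtendToY}(b) and $y^2 \partial_y$ certainly preserves $K[[y]]$. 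So the first thing I would do is record this and move on.

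For injectivity, the key input is that $U$ is a \emph{connected} affinoid subdomain of $Y$ containing the point $\infty$. Since $Y$ is an affinoid subvariety of $\bP^{1,\an}$ with $\cO(Y) \cong K\langle x, \frac{1}{x^{q-1}-1}\rangle$ by Corollary \ref{CoordRingOfY}, a connected affinoid subdomain $U$ of $Y$ has a reduced, and in fact integral, coordinate ring: any affinoid subdomain of $\bP^{1,\an}$ is reduced (it is a finite union of cheeses, and the coordinate ring of a cheese is an integral domain by \cite[Proposition 2.4.8]{LutJacobians} or \cite[Proposition 4.1.10]{ArdWad2023}), and connectedness over $K$ pins it down to a single integral domain. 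Thus $\cO(U)$ is an integral domain. The point $\infty \in U$ corresponds to a maximal ideal $\fr{m}_\infty$ of $\cO(U)$, and the kernel of $\rho^U_\infty$ is contained in $\bigcap_{n \geq 0} \fr{m}_\infty^n$ — indeed it is exactly the kernel of the composite $\cO(U) \to \cO_{Y,\infty} = \cO(U)_{\fr{m}_\infty} \to \widehat{\cO(U)_{\fr{m}_\infty}}$, and $\cO(U)_{\fr{m}_\infty}$ is a Noetherian local domain, so by Krull's intersection theorem the map to its completion is injective. It therefore suffices to show that $\cO(U) \to \cO(U)_{\fr{m}_\infty}$ is injective, which is immediate since $\cO(U)$ is a domain.

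The one point that needs a little care — and which I expect to be the main (mild) obstacle — is the identification $\widehat{\cO_{Y,\infty}} = K[[y]]$ together with the claim that $\cO_{Y,\infty}$ is a regular one-dimensional local ring with uniformiser $y$; this is what makes Krull's theorem applicable and identifies the completion with a formal power series ring in the coordinate $y = \pi_F/x$. This follows because $\infty$ is a smooth $K$-rational point of the smooth rigid curve $Y \subseteq \bP^{1,\an}$: near $\infty$, $Y$ looks like a disc in the coordinate $y$, so $\cO_{Y,\infty}$ is the localisation of $K\langle y \rangle$ (or of a suitable affinoid subalgebra) at the maximal ideal $(y)$, which is a discrete valuation ring with completion $K[[y]]$. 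Once this is in hand the proof is complete: $\rho^U_\infty$ factors as $\cO(U) \hookrightarrow \cO_{Y,\infty} \hookrightarrow K[[y]]$, the first inclusion because $\cO(U)$ is a domain, the second by Krull, and the composite is $\cD(U)$-linear by the first paragraph.
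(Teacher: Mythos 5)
Your proof is correct, but it takes a genuinely different route from the paper's. Both proofs dismiss $\cD(U)$-linearity as formal, so the comparison is about injectivity. The paper argues that, since $\infty \in U$, the small closed disc $\Sp K\langle y/\pi_F^n\rangle$ lies inside $U$ for $n \gg 0$; it then invokes the identity principle (cited as \cite[Proposition 4.2]{ABB}) to see that the restriction map from the connected reduced affinoid $U$ to this disc is injective, and finishes by embedding $K\langle y/\pi_F^n\rangle$ into $K[[y]]$. You instead observe that $\cO(U)$ is an integral domain (being the coordinate ring of a connected smooth affinoid curve), factor $\rho^U_\infty$ through the algebraic localisation $\cO(U)_{\fr{m}_\infty}$, and apply Krull's intersection theorem to the Noetherian local domain $\cO(U)_{\fr{m}_\infty}$. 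Your route is more self-contained and purely commutative-algebraic; the paper's is shorter because it offloads the substance onto a single citation. One small imprecision worth flagging in your write-up: the rigid-analytic local ring $\cO_{Y,\infty}$ (colimit over affinoid neighbourhoods) is in general strictly larger than the algebraic localisation $\cO(U)_{\fr{m}_\infty}$, so the equality you wrote is not literally correct — but this does not affect the argument, since the two rings have the same $\fr{m}_\infty$-adic completion $K[[y]]$ and the map $\cO(U) \to K[[y]]$ visibly factors through $\cO(U)_{\fr{m}_\infty}$, which is where you run Krull. With that clarification your proof is a valid alternative to the paper's.
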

\begin{proof} Only the injectivity requires proof. For this, note that the closed disc $\Sp K\langle y / \pi_F^n \rangle$ is contained in $U$ for sufficiently large $n$ since $\infty \in U$. Because $U$ is connected, the restriction map $\cO(U) \to K \langle y / \pi_F^n \rangle$ is injective by \cite[Proposition 4.2]{ABB}. The result follows, because $K\langle y / \pi_F^n \rangle$ embeds into $K[[y]]$.
\end{proof} 
For every connected affinoid subdomain $U$ of $Y$ containing $\infty$, we will also write
\[ \rho^U_\infty : \cO(U)[1/y] \longrightarrow K((y))\]
for the localisation at powers of $y$ of the map $\rho^U_\infty$ from Lemma \ref{GermAtInfty}. This map is also injective and $\cD(U)$-linear.

\begin{lem}\label{ExtendToKt} \,
\be \item The derivation $y^2 \partial_y : \cO(Y) \to \cO(Y)$ extends to a continuous $K$-linear derivation of $K((t))$. 
\item The action of $h^{-1}$ on $\cO(Y)$ extends to a continuous $K$-algebra automorphism $\eta$ of $K((t))$ via $\eta(t)=t(1-t^d)^{-\frac 1 d }$.
\item The resulting $K$-linear operators on $K((t))$ commute.
\ee\end{lem}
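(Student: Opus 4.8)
\textbf{Proof plan for Lemma \ref{ExtendToKt}.}

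The plan is to treat the three parts in order, each time exhibiting the relevant operator on $K((t))$ by an explicit formula and then checking that it restricts correctly to $\cO(Y)$. For part (a), recall from Corollary \ref{CoordRingOfY} that $y = \pi_F/x$ and that $\cO(Y) \cong K\langle x, \frac{1}{x^{q-1}-1}\rangle$, so $\cO(Y)$ is generated as a Banach algebra by $y$ and $(1-y^{q-1})^{-1}$. First I would note that $y^2\partial_y$ is the derivation of $\cO(Y)$ sending $y \mapsto y^2$, hence extends to $K((y))$ by the formula $\sum_n a_n y^n \mapsto \sum_n n a_n y^{n+1}$ (continuity is clear since this preserves $y$-adic order up to a shift). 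Then, since $t^d = y$, differentiating gives $d t^{d-1}\, \partial(t) = \partial(y) = y^2 = t^{2d}$, so we are forced to set $\partial(t) = \frac{1}{d} t^{d+1}$, and one checks directly that the $K$-linear extension $\sum_n c_n t^n \mapsto \frac{1}{d}\sum_n n c_n t^{n+1}$ is a continuous derivation of $K((t))$ restricting to $y^2\partial_y$ on $K((y))$, hence on $\cO(Y) \subseteq K((y))$. (The pre-factor $-1/\pi_F$ appearing in Lemma \ref{ExtendToY}(b) is just a scalar and plays no role here.)

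For part (b), the key input is equation (\ref{hinvy}), which says $h^{-1}\cdot y = \frac{y}{1-y}$. Thus the automorphism of $\cO(Y)$ induced by $h^{-1}$ extends to the continuous $K$-algebra automorphism of $K((y))$ determined by $y \mapsto \frac{y}{1-y} = y + y^2 + \cdots$ (this is a well-defined continuous substitution since the image has $y$-adic order $1$, and it is invertible with inverse induced by $h$, i.e. $y \mapsto \frac{y}{1+y}$). To lift this to $K((t))$, I want $\eta(t)^d = \eta(t^d) = \eta(y) = \frac{y}{1-y} = \frac{t^d}{1-t^d}$, which suggests $\eta(t) = t(1-t^d)^{-1/d}$ where $(1-t^d)^{-1/d} = \sum_{m\geq 0}\binom{-1/d}{m}(-t^d)^m \in K[[t^d]]$ makes sense as a power series (its constant term is $1$, so it is a unit). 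One must check: (i) $\eta$ so defined is a continuous $K$-algebra endomorphism of $K((t))$ — this follows because $\eta(t)$ has $t$-adic order $1$ and is thus again a valid substitution; (ii) $\eta$ restricts to the correct map on $K((y)) = K((t^d))$, i.e. $\eta(t^d) = t^d(1-t^d)^{-1} = \frac{y}{1-y}$, which is immediate; (iii) $\eta$ is bijective, with inverse the substitution $t \mapsto t(1+t^d)^{-1/d}$ coming from the action of $h$. In particular, since $\eta$ agrees on $K((y))$ with the extension of the $h^{-1}$-action and $\cO(Y) \hookrightarrow K((y)) \hookrightarrow K((t))$, $\eta$ extends the $h^{-1}$-action on $\cO(Y)$.

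For part (c), I need to verify that the two operators just constructed — call them $\partial := y^2\partial_y$ extended, and $\eta$ — commute on $K((t))$, i.e. $\partial \circ \eta = \eta \circ \partial$. Since both are continuous $K$-linear and $K((t))$ is topologically generated over $K$ by $t$, and since $\eta$ is a ring homomorphism while $\partial$ is a derivation, it suffices to check the identity $\partial(\eta(t)) = \eta(\partial(t))$: both sides are then determined on a general element by the Leibniz rule and continuity (more precisely, $\partial\circ\eta$ and $\eta\circ\partial$ are both $\eta$-twisted derivations $K((t)) \to K((t))$ — i.e. satisfy $L(fg) = L(f)\eta(g) + \eta(f)L(g)$ — that agree on $t$, hence agree everywhere). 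Now $\eta(\partial(t)) = \eta(\frac1d t^{d+1}) = \frac1d t^{d+1}(1-t^d)^{-(d+1)/d}$, while $\partial(\eta(t)) = \partial\big(t(1-t^d)^{-1/d}\big) = \partial(t)(1-t^d)^{-1/d} + t\,\partial\big((1-t^d)^{-1/d}\big)$; using $\partial(t^d) = t^{d}\cdot \partial(y)/\!\ldots$, more directly $\partial(t^d) = t^{2d}$ (as $y^2\partial_y(y)=y^2$), so $\partial\big((1-t^d)^{-1/d}\big) = \frac1d (1-t^d)^{-1/d-1} t^{2d}$, and assembling terms gives $\frac1d t^{d+1}(1-t^d)^{-1/d} + \frac1d t^{2d+1}(1-t^d)^{-(d+1)/d} = \frac1d t^{d+1}(1-t^d)^{-(d+1)/d}\big[(1-t^d) + t^d\big] = \frac1d t^{d+1}(1-t^d)^{-(d+1)/d}$, which matches. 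Hence the operators commute. I expect the only mild subtlety — the "main obstacle", such as it is — to be bookkeeping: making sure the fractional-power series $(1-t^d)^{\pm 1/d}$ are manipulated correctly and that the "$\eta$-twisted derivation agreeing on a generator" reduction is stated cleanly; the computations themselves are routine.
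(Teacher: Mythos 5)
Your proof is correct, and parts (a) and (b) match the paper's approach: define the operators on formal Laurent series by $\partial(t) = \tfrac{1}{d}t^{d+1}$ and $\eta(t) = t(1-t^d)^{-1/d}$, check they preserve the $t$-adic filtration, and verify the restriction to $K[[y]]$ is the extension of the given operators on $\cO(Y)$ (the paper goes directly to $K[[t]]$ rather than passing through $K((y))$ first, but this is stylistic).

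For part (c) you take a genuinely different route. The paper checks that $D := \tfrac{1}{d}t^{d+1}\partial_t$ and $\eta D \eta^{-1}$ agree on $K((y))$ — a one-line verification at the single element $x = \pi_F/y$, since $D(x) = -\pi_F$ and $\eta(x) = x - \pi_F$ gives $D\eta(x) = \eta D(x)$ immediately — and then invokes that $K((t))/K((y))$ is a finite \'etale extension (degree $d$, separable because $p \nmid d$), so the $K((y))$-linear derivation $D - \eta D\eta^{-1}$ of $K((t))$ must vanish. You instead verify the identity directly at the generator $t$ of $K((t))$ and use the observation that $\partial\circ\eta$ and $\eta\circ\partial$ are both $\eta$-twisted derivations, so agreement on $t$ (and, via the twisted Leibniz rule applied to $t\cdot t^{-1}=1$, on $t^{-1}$) propagates to $K[t,t^{-1}]$ and then by $t$-adic continuity to all of $K((t))$. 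Your explicit computation with $(1-t^d)^{\pm 1/d}$ checks out. The paper's route buys a shorter calculation at the cost of invoking the \'etale-extension / K\"ahler-differentials input; yours is elementary and self-contained but requires the slightly longer (though routine) binomial-series bookkeeping. Both are sound.
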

\begin{proof} (a) Consider the $K$-linear derivation $\frac{1}{d} t^{d+1} \partial_t : K[[t]] \to K[[t]]$. It respects the $t$-adic filtration on $K[[t]]$ and restricts to $y^2 \partial_y$ on $K[[y]] \subset K[[t]]$ because 
\[ \frac{1}{d} t^{d+1} \partial_t(y) = t^{d+1} t^{d-1} = y^2 = y^2\partial_y(y).\]
Now extend this derivation to the field of fractions $K((t))$ of $K[[t]]$.

(b) Define $\eta : K[[t]] \to K[[t]]$ by setting 
\begin{equation}\label{DefnEta}\eta( f(t) ) = f( t (1 - t^d)^{-\frac{1}{d}} ) \qmb{for all} f(t) \in K[[t]].\end{equation}
This is a $K$-algebra automorphism which respects the $t$-adic filtration on $K[[t]]$. Its unique extension $\eta$ to the field of fractions $K((t))$ of $K[[t]]$ is then given by the same formula. Because $y = t^d$, its restriction to $K[[y]] \subset K[[t]]$ satisfies
\[ \eta(y) = ( t ( 1 - t^d)^{-\frac{1}{d}})^d = y / (1 - y).\]
On the other hand, the automorphism $h^{-1} : \cO(Y) \to \cO(Y)$ sends $y$ to $\frac{y}{1-y}$ by (\ref{hinvy}), so it preserves the ideal $y \cO(Y)$ and therefore extends uniquely to a $y$-adically continuous $K$-algebra automorphism of $\h{\cO_{Y,\infty}} = K[[y]]$ which sends $y$ to $\frac{y}{1-y}$. This extension is then the restriction of $\eta$ to $K[[y]]$. 

(c) It remains to see that the derivation $D = \frac{1}{d} t^{d+1} \partial_t$ commutes with $\eta$. It is enough to show that the restriction of $D$ to $K((y))$ agrees with the restriction of $\eta D \eta^{-1}$ to $K((y))$, because then the derivation $D - \eta D \eta^{-1}$ of $K((t))$ is $K((y))$-linear and therefore must vanish on the finite \'etale extension $K((t))$ of $K((y))$. Now $D(x) = y^2 \partial_y(x) = - \pi_F$ and $\eta(x) = h^{-1} \cdot x = x - \pi_F$, so $D(\eta(x)) = D(x - \pi_F) = D(x) = \eta(D(x))$. Therefore $\eta D \eta^{-1}$ and $D$ agree on $K[y]$, but both are $y$-adically continuous and $K[y]$ is $y$-adically dense in $K[[y]]$, so they must be equal.
\end{proof}

Now we define $\epsilon := (1 - y^{q-1})^{\frac{k}{d}} \in K[[y]]$ and $z := \epsilon^{-1} t^{qk} \in K((t))$. Using Lemma \ref{ExtendToY}(a), we have
\[z^d = (1 - y^{q-1})^{-k} y^{qk} = \pi_F^k\rho^Y_\infty(w).\]
\textbf{We assume from now on until the end of $\S \ref{TheIntegral}$ that $k \neq d$.} Under this assumption, we see that $qk \equiv -k \neq 0 \mod d$, so that $a - \frac{qk}{d} \notin \bZ$ for any integer $a$. This allows us to make the following
\begin{defn}\label{DefnOfJ} For each $m \geq 0$, define $\alpha_m := \frac{(-1)^m \binom{\frac{k}{d}}{m}}{(q-1)m - \frac{qk}{d} - 1}$, and set
\[ J := t^{-qk} y^{-1} \sum\limits_{m=0}^\infty  \alpha_m y^{(q-1)m} \in t^{-qk} K((y)).\]
\end{defn}

\begin{lem}\label{Jintegral} We have $(y^2 \partial_y)(J) = t^{-qk} \epsilon = z^{-1}$.
\end{lem}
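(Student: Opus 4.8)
The plan is to verify the claimed identity $(y^2\partial_y)(J) = z^{-1}$ by a direct term-by-term computation, since $J$ is defined as an explicit power series in $y^{q-1}$ (up to the prefactor $t^{-qk}y^{-1}$). First I would recall from Lemma~\ref{ExtendToY}(a) and the discussion just before this lemma that $z^{-1} = \epsilon^{-1}t^{qk}\cdot(\text{stuff})$; more precisely $z = \epsilon^{-1}t^{qk}$ with $\epsilon = (1-y^{q-1})^{k/d}$, so $z^{-1} = \epsilon t^{-qk}$, and the target is $z^{-1} = t^{-qk}(1-y^{q-1})^{k/d}$. The key observation is that the binomial series $(1-y^{q-1})^{k/d} = \sum_{m\geq 0}(-1)^m\binom{k/d}{m}y^{(q-1)m}$, so I need to show that applying $y^2\partial_y$ to $t^{-qk}y^{-1}\sum_m \alpha_m y^{(q-1)m}$ reproduces exactly $t^{-qk}\sum_m (-1)^m\binom{k/d}{m}y^{(q-1)m}$.

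The main computational step uses Lemma~\ref{ExtendToKt}(a), which says $y^2\partial_y$ extends to the continuous derivation $\frac{1}{d}t^{d+1}\partial_t$ of $K((t))$; on monomials $t^N$ this acts by $\frac{N}{d}t^{N+d}$, equivalently on $y^a t^b$ it acts as a derivation with $y^2\partial_y(y) = y^2$ (so $y^2\partial_y(y^a) = a\,y^{a+1}$) and $y^2\partial_y(t^b) = \frac{b}{d}t^{b}y$ (since $t^d = y$). Applying this to the general term $\alpha_m\, t^{-qk}y^{(q-1)m - 1}$, the Leibniz rule gives
\[
(y^2\partial_y)\bigl(\alpha_m t^{-qk} y^{(q-1)m-1}\bigr) = \alpha_m\Bigl(-\frac{qk}{d} + (q-1)m - 1\Bigr)t^{-qk}y^{(q-1)m},
\]
and by the definition $\alpha_m = \dfrac{(-1)^m\binom{k/d}{m}}{(q-1)m - \frac{qk}{d} - 1}$ the bracketed factor cancels the denominator, leaving $(-1)^m\binom{k/d}{m}t^{-qk}y^{(q-1)m}$. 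Summing over $m$ (legitimate by continuity of the derivation in the $t$-adic topology on $K((t))$, together with convergence of $J$ which relies on $k/d$ being $p$-adically non-Liouville, already used in Theorem~\ref{StdPres}) yields $t^{-qk}\sum_m (-1)^m\binom{k/d}{m}y^{(q-1)m} = t^{-qk}(1-y^{q-1})^{k/d} = t^{-qk}\epsilon = z^{-1}$.

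I do not expect any genuine obstacle here: the only subtlety is bookkeeping — making sure the denominators $(q-1)m - \frac{qk}{d} - 1$ are never zero, which is exactly the content of the hypothesis $k\neq d$ (forcing $qk\equiv -k\not\equiv 0\bmod d$, hence $(q-1)m - \frac{qk}{d} - 1\notin\mathbb{Z}$, in particular nonzero) noted just before Definition~\ref{DefnOfJ} — and confirming that the series $J$ genuinely lies in $t^{-qk}K((y))$ so that the formal manipulations take place in a well-defined field. Both points are already established in the surrounding text, so the proof is essentially a two-line verification once the action of $y^2\partial_y$ on $y^at^b$ is written down explicitly.
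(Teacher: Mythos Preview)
Your proof is correct and follows essentially the same term-by-term computation as the paper, which packages the Leibniz step as $(y^2\partial_y)(t^{-qk}y^{-1}f) = t^{-qk}(y\partial_y - \tfrac{qk}{d} - 1)(f)$ before applying it to the series $\sum_m \alpha_m y^{(q-1)m}$. One small remark: the convergence of $J$ in $K((t))$ is purely formal ($t$-adic, since the exponents $(q-1)m\to\infty$), so the non-Liouville property of $k/d$ plays no role here --- only the nonvanishing of the denominators, which you correctly attribute to the standing hypothesis $k\neq d$.
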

\begin{proof} Let $f(t) \in K((t))$. Since $y = t^d$, using the Leibniz rule we have
\[ \begin{array}{lll} (y^2 \partial_y) (t^{-qk} y^{-1} f(t)) &=& f(t) \left(\frac{1}{d} t^{d+1} \partial_t\right)( t^{-qk-d} )  + t^{-qk} y^{-1} \, (y^2 \partial_y)(f(t)) \\ &=& t^{-qk} \left( y \partial_y - \frac{qk}{d} - 1 \right)(f(t)).\end{array}\] 
Since $y^2 \partial_y$ is $t$-adically continuous on $K((t))$, it remains to show that
\[ (y \partial_y - \frac{qk}{d} - 1) \left( \sum\limits_{m=0}^\infty  \alpha_m y^{(q-1)m} \right) = \epsilon.\]
Using the binomial expansion inside $K[[y]]$, we have
\[ \epsilon = (1 - y^{q-1})^{\frac{k}{d}} = \sum\limits_{m=0}^\infty (-1)^m \binom{\frac{k}{d}}{m} y^{(q-1)m}.\]
The result follows, because 
\[ (y \partial_y - \frac{qk}{d} - 1)(y^{(q-1)m}) = \left((q-1)m - \frac{qk}{d} - 1\right) y^{(q-1)m}\]
and $((q-1)m - \frac{qk}{d} - 1)\alpha_m = (-1)^m \binom{\frac{k}{d}}{m}$ for all $m \geq 0$.
\end{proof}
In other words, we think of $J$ informally as the integral 
\[\boxed{J = \int^{y} \left(\frac{1 - y^{q-1}}{y^q}\right)^{\frac{k}{d}} \frac{dy}{y^2}}\] 
\begin{defn} Define $\zeta_{w,d} := -\pi_F z \, (\eta(J) - J) \in K((t))$. 
\end{defn}

The operator $\nabla$ also extends to $K((t))$, by Lemma \ref{ExtendToKt} and Lemma \ref{ExtendToY}(b). Because $z^d = \pi_F^k\rho^W_\infty(w)$ holds in $K((t))$,  Lemma \ref{ExtendToY}(b) implies that
\[ \nabla = -\frac{1}{\pi_F} z (y^2 \partial_y) z^{-1}.\]
\begin{prop}\label{ZetaSolvesDE} We have $\nabla(\zeta_{w,d}) = \rho^Y_\infty(c) - 1$.
\end{prop}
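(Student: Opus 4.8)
The plan is to compute $\nabla(\zeta_{w,d})$ directly from the definition $\zeta_{w,d} = -\pi_F z\,(\eta(J) - J)$ and the formula $\nabla = -\frac{1}{\pi_F} z\,(y^2\partial_y)\,z^{-1}$, which holds in $K((t))$ by Lemma \ref{ExtendToY}(b) together with $z^d = \pi_F^k \rho^Y_\infty(w)$. Substituting gives
\[ \nabla(\zeta_{w,d}) = -\frac{1}{\pi_F} z\,(y^2\partial_y)\,z^{-1}\bigl(-\pi_F z\,(\eta(J)-J)\bigr) = z\,(y^2\partial_y)\bigl(\eta(J) - J\bigr).\]
So the first key step is to evaluate $(y^2\partial_y)(\eta(J))$ and $(y^2\partial_y)(J)$. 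The second of these is exactly Lemma \ref{Jintegral}: $(y^2\partial_y)(J) = z^{-1}$. For the first, I would use Lemma \ref{ExtendToKt}(c), which says that the derivation $y^2\partial_y$ (i.e. $\frac{1}{d}t^{d+1}\partial_t$) commutes with the automorphism $\eta$ on $K((t))$; hence $(y^2\partial_y)(\eta(J)) = \eta\bigl((y^2\partial_y)(J)\bigr) = \eta(z^{-1})$. Therefore
\[ \nabla(\zeta_{w,d}) = z\,\bigl(\eta(z^{-1}) - z^{-1}\bigr) = z\,\eta(z^{-1}) - 1.\]

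The remaining task, and the heart of the argument, is to identify $z\,\eta(z^{-1})$ with $\rho^Y_\infty(c) - 1 + 1 = \rho^Y_\infty(c)$, i.e. to show $z/\eta(z) = \rho^Y_\infty(c)$. Here I would unwind the definitions: $z = \epsilon^{-1} t^{qk}$ with $\epsilon = (1 - y^{q-1})^{k/d}$, and $z^d = \pi_F^k \rho^Y_\infty(w)$ in $K((t))$. Applying $\eta$ (which extends the action of $h^{-1}$ on $\cO(Y)$ by Lemma \ref{ExtendToKt}(b)) gives $\eta(z)^d = \eta(z^d) = \pi_F^k\,\eta(\rho^Y_\infty(w)) = \pi_F^k\,\rho^Y_\infty(h^{-1}\cdot w)$, using that $\rho^Y_\infty$ is $h$-equivariant in the appropriate sense (the map $\rho^Y_\infty$ is a localisation of the $\cD(Y)$-linear, $h$-equivariant germ map; one should check $h^{-1}$ stabilises $Y$, which was noted in the proof of Lemma \ref{ExtendToY}(c)). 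Dividing, $\bigl(z/\eta(z)\bigr)^d = w/(h^{-1}\cdot w)$ as elements of $K((t))$ via $\rho^Y_\infty$. By the proof of Lemma \ref{ExtendToY}(c), $\rho^Y_\infty(c)^d = \rho^Y_\infty(w/h^{-1}\cdot w)$ as well, and $\rho^Y_\infty(c) \in 1 + y K[[y]]$ since $c \in \cO(Y)^{\times\times}$. So both $z/\eta(z)$ and $\rho^Y_\infty(c)$ are $d$-th roots of the same element of $1 + yK[[t]]$; since $1 + yK[[t]]$ is uniquely $d$-divisible (as $p\nmid d$), it suffices to check that $z/\eta(z)$ also lies in $1 + yK[[t]]$. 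This follows from the explicit form: $z/\eta(z) = \bigl(\eta(\epsilon)/\epsilon\bigr)\cdot\bigl(t/\eta(t)\bigr)^{qk}$, and $\eta(t) = t(1-t^d)^{-1/d}$ gives $t/\eta(t) = (1-y)^{1/d} \in 1 + yK[[y]]$, while $\eta(\epsilon)/\epsilon \in 1 + yK[[y]]$ because $\eta$ preserves $1 + yK[[y]]$ and fixes $1$; hence $z/\eta(z) \in 1 + yK[[t]]$. Therefore $z/\eta(z) = \rho^Y_\infty(c)$, and $\nabla(\zeta_{w,d}) = \rho^Y_\infty(c) - 1$ as claimed.

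The main obstacle I anticipate is bookkeeping around the embedding $\rho^Y_\infty$ and the extension $\eta$: one must be careful that $\eta$ restricted to $\rho^Y_\infty(\cO(Y))$ really does implement $h^{-1}$, that $z^d = \pi_F^k\rho^Y_\infty(w)$ is used consistently in $K((t))$ (not just in $\cO(X\cap V_0)$), and that the uniqueness of $d$-th roots is applied in the correct completed ring. These are all routine given Lemma \ref{ExtendToKt}, Lemma \ref{ExtendToY} and \cite[Lemma 4.3.2(a)]{ArdWad2023}, but they need to be threaded together cleanly. No genuinely hard estimate is required at this stage; the delicate growth estimate is deferred to Theorem \ref{ZetaUnbounded}, which lies beyond the present statement.
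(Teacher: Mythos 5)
Your argument is correct and follows essentially the same route as the paper: first reduce to $\nabla(\zeta_{w,d}) = z(\eta(z^{-1}) - z^{-1})$ via Lemma \ref{Jintegral} and the commutativity of $\eta$ with $y^2\partial_y$, then identify $z/\eta(z)$ with $\rho^Y_\infty(c)$ by comparing $d$-th powers and invoking unique $d$-divisibility of $1 + tK[[t]]$. The only cosmetic difference is that you verify $z/\eta(z) \equiv 1$ by direct computation from $z = \epsilon^{-1}t^{qk}$, whereas the paper deduces it from the fact that $\eta$ respects the $t$-adic filtration and is trivial on the associated graded; both are fine.
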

\begin{proof} Since $\eta$ and $y^2 \partial_y$ commute by Lemma \ref{ExtendToKt}(c), Lemma \ref{Jintegral} tells us that
\[ \begin{array}{lll} z^{-1} \nabla(\zeta_{w,d}) &=& z^{-1} \nabla( -\pi_F z (\eta(J) - J) ) \\ &=& (y^2 \partial_y) ( \eta(J) - J ) \\ &=& \eta( (y^2 \partial_y)(J) ) - (y^2 \partial_y)(J) \\ &=& \eta(z^{-1}) - z^{-1}.\end{array}\]
Next, recall from the proof of Lemma \ref{ExtendToY}(c) that $c^d = \frac{w}{h^{-1} \cdot w}$ in $\cO(Y)$. Hence
\[\rho^Y_\infty(c)^d = \frac{\rho^W_\infty(w)}{ \eta(\rho^W_\infty(w))} =  \left(\frac{z}{\eta(z)}\right)^d.\]
The definition of $\eta$ given at (\ref{DefnEta}) shows that $\eta$ respects the $t$-adic filtration on $K((t))$ and induces the trivial automorphism on the associated graded. Hence $\frac{z}{\eta(z)} \equiv 1 \mod t K[[t]]$. Looking at $(\ref{DefnOfC})$, we see that $\rho^Y_\infty(c) \in K((t))$ also satisfies $\rho^Y_\infty(c) \equiv 1 \mod t K[[t]]$. Since $1 + t K[[t]]$ is $d$-divisible, we conclude that
\[ \rho^Y_\infty(c) = \frac{z}{\eta(z)}.\]
Therefore $\nabla(\zeta_{w,d}) = z( \eta(z^{-1}) - z^{-1} ) = \rho^Y_\infty(c) - 1$ as desired.\end{proof}

\begin{prop} The element $t^{qk}(\eta(J) - J)$ lies in $K[[y]]$.
\end{prop}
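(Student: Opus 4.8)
The goal is to show that $t^{qk}(\eta(J)-J) \in K[[y]]$, i.e. that the \emph{difference} $\eta(J)-J$, which a priori lives in $t^{-qk}K((y))$, becomes an honest power series in $y$ after multiplication by $t^{qk}$. Recall $J = t^{-qk}y^{-1}\sum_{m\geq 0}\alpha_m y^{(q-1)m} \in t^{-qk}K((y))$, and $\eta$ is the continuous $K$-algebra automorphism of $K((t))$ determined by $\eta(t) = t(1-t^d)^{-1/d}$, equivalently $\eta(y) = y/(1-y)$.

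The plan is to compute $t^{qk}\eta(J)$ and $t^{qk}J$ separately modulo $K[[y]]$ and show their principal parts (the negative-power-of-$y$ parts) agree. First I would write $t^{qk}J = y^{-1}\sum_{m\geq 0}\alpha_m y^{(q-1)m}$; its polar part consists of the finitely many terms with $(q-1)m < 1$, which is just the $m=0$ term, giving principal part $\alpha_0 y^{-1}$. Since $\alpha_0 = \frac{1}{-qk/d - 1}$ by Definition \ref{DefnOfJ}, we get $t^{qk}J \equiv \alpha_0 y^{-1} \pmod{K[[y]]}$. Next, for $\eta(J)$: applying $\eta$ to $J = t^{-qk}y^{-1}\sum_m \alpha_m y^{(q-1)m}$ and using $\eta(y) = y/(1-y)$, $\eta(t^{-qk}) = t^{-qk}(1-y)^{qk/d}$ (since $\eta(t^d) = y/(1-y)$ forces $\eta(t) = t(1-t^d)^{-1/d}$, hence $\eta(t^{-qk}) = t^{-qk}(1-y)^{qk/d}$), we obtain
\[ t^{qk}\eta(J) = (1-y)^{qk/d}\,\frac{1-y}{y}\sum_{m\geq 0}\alpha_m \left(\frac{y}{1-y}\right)^{(q-1)m} = \frac{(1-y)^{qk/d+1}}{y}\sum_{m\geq 0}\alpha_m \frac{y^{(q-1)m}}{(1-y)^{(q-1)m}}. \]
Each summand $(1-y)^{qk/d + 1 - (q-1)m}y^{(q-1)m - 1}$ lies in $y^{(q-1)m-1}K[[y]]$, so it contributes to the polar part of $t^{qk}\eta(J)$ only when $m = 0$, and the $m=0$ contribution is $\alpha_0(1-y)^{qk/d+1}y^{-1} \equiv \alpha_0 y^{-1} \pmod{K[[y]]}$ because $(1-y)^{qk/d+1} \equiv 1 \pmod{yK[[y]]}$.

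Putting these together, $t^{qk}\eta(J) \equiv \alpha_0 y^{-1} \equiv t^{qk}J \pmod{K[[y]]}$, so $t^{qk}(\eta(J)-J) \in K[[y]]$ as required. The main subtlety to handle carefully is the convergence and rearrangement of the infinite sums: I need to confirm that the reindexed series $\sum_m \alpha_m y^{(q-1)m}/(1-y)^{(q-1)m}$ genuinely converges in $t^{qk}\cdot t^{-qk}K((y)) = K((y))[t]$ after multiplying by the prefactor — this follows from $y$-adic continuity of $\eta$ on $K((t))$ (Lemma \ref{ExtendToKt}(b)) applied to the $y$-adically convergent expansion of $J$, since $\eta$ respects the $t$-adic (hence $y$-adic) filtration, so one may apply $\eta$ term-by-term — and that the expansion $(1-y)^{a} = \sum_{j\geq 0}\binom{a}{j}(-y)^j$ for $a = qk/d + 1 - (q-1)m$ makes sense, which it does since $qk/d \in \frac{1}{d}\bZ \subset \Zp$ (as $p\nmid d$). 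The only genuinely non-routine point is being precise about why only the $m=0$ term can contribute a pole: this is because for $m\geq 1$ we have $(q-1)m - 1 \geq q - 2 \geq 0$, so every such term is already in $K[[y]]$ before applying $\eta$ or after, and $\eta$ preserves $K[[y]]$ (Lemma \ref{ExtendToKt}(b) restricted to $K[[y]]$).
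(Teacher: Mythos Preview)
Your proof is correct and follows essentially the same route as the paper. The paper computes term-by-term that
\[
t^{qk}\bigl(\eta(t^{-qk}y^{r-1}) - t^{-qk}y^{r-1}\bigr) \;=\; y^{r}\cdot\frac{(1-y)^{1+\frac{qk}{d}-r}-1}{y}\ \in\ K[[y]]
\]
for every $r\ge 0$, and then sums over $r=(q-1)m$; this handles all $m\ge 0$ uniformly and simultaneously yields the explicit expansion used in the following Corollary. Your argument unpacks the same computation but singles out the $m=0$ term and matches polar parts, which is equivalent. The only cosmetic loss is that you do not record the closed formula for $t^{qk}(\eta(J)-J)$ that the paper extracts along the way, but that is immediate from your expression for $t^{qk}\eta(J)$ minus your expression for $t^{qk}J$.
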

\begin{proof}Let $r$ be an integer; then since $t/\eta(t) = (1-y)^{\frac{1}{d}} \in K[[y]]$ and $y/\eta(y) = 1-y \in K[[y]]$, using the binomial expansion for $(1 - y)^\alpha$ we see that
\[t^{qk}(\eta(t^{-qk} y^{r-1}) - t^{-qk} y^{r-1}) = y^{r-1} \left( \frac{t^{qk} y^{1-r}}{\eta(t^{qk} y^{1-r})} - 1\right) = y^r \left( \frac{ (1 - y)^{1 +\frac{qk}{d} - r} - 1}{y} \right)\]
lies in $K[[y]]$, provided that $r \geq 0$. Using Definition \ref{DefnOfJ}, and the above with $r = m(q-1)$ where $m \geq 0$, we conclude that
\begin{equation}\label{DivJ} t^{qk}(\eta(J)-J) = \sum\limits_{m=0}^\infty \alpha_m y^{m(q-1)} \left( \frac{ (1-y)^{1+ \frac{qk}{d} - m(q-1)} - 1}{y} \right)\end{equation}
lies in $K[[y]]$ as desired. \end{proof}
\begin{cor}\label{ZetaSeries} The element $\zeta_{w,d}$ lies in $K[[y]]$. In fact, we have
\[\zeta_{w,d} = \pi_F (1 - y^{q-1})^{-\frac{k}{d}} \sum_{m=0}^\infty (-1)^m \binom{\frac{k}{d}}{m} y^{(q-1)m} \left[ \frac{ (1 - y)^{1 + \frac{qk}{d} - (q-1)m} - 1}{\left(1 + \frac{qk}{d} - (q-1)m\right)y} \right].\]
\end{cor}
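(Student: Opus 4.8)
The plan is to unwind the definition of $\zeta_{w,d} = -\pi_F z(\eta(J) - J)$ and substitute the explicit formula for $t^{qk}(\eta(J) - J)$ obtained in the preceding proposition. First I would recall that $z = \epsilon^{-1} t^{qk}$ where $\epsilon = (1 - y^{q-1})^{k/d} \in K[[y]]$, so that
\[ \zeta_{w,d} = -\pi_F \epsilon^{-1} t^{qk}(\eta(J) - J) = -\pi_F (1-y^{q-1})^{-\frac{k}{d}} \cdot t^{qk}(\eta(J) - J).\]
This already shows $\zeta_{w,d} \in K[[y]]$, since $(1-y^{q-1})^{-k/d} \in K[[y]]$ by the binomial expansion and $t^{qk}(\eta(J) - J) \in K[[y]]$ by the previous proposition.

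Next I would plug in equation (\ref{DivJ}), which gives
\[ t^{qk}(\eta(J) - J) = \sum_{m=0}^\infty \alpha_m y^{(q-1)m} \left( \frac{(1-y)^{1 + \frac{qk}{d} - (q-1)m} - 1}{y} \right),\]
and substitute the definition $\alpha_m = \frac{(-1)^m \binom{k/d}{m}}{(q-1)m - \frac{qk}{d} - 1}$ from Definition \ref{DefnOfJ}. The only cosmetic manipulation needed is to rewrite the denominator $(q-1)m - \frac{qk}{d} - 1 = -\left(1 + \frac{qk}{d} - (q-1)m\right)$, which absorbs the overall minus sign from $-\pi_F$ and lines the expression up exactly with the claimed formula. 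Collecting terms gives
\[ \zeta_{w,d} = \pi_F (1-y^{q-1})^{-\frac{k}{d}} \sum_{m=0}^\infty (-1)^m \binom{\frac{k}{d}}{m} y^{(q-1)m} \left[ \frac{(1-y)^{1 + \frac{qk}{d} - (q-1)m} - 1}{\left(1 + \frac{qk}{d} - (q-1)m\right) y} \right],\]
which is precisely the asserted identity.

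There is essentially no obstacle here: the statement is a bookkeeping consequence of results already established in this subsection, namely the expansion of $z$, Lemma \ref{Jintegral}/Definition \ref{DefnOfJ} for $J$ and $\alpha_m$, and equation (\ref{DivJ}). The one point requiring a word of care is that the factor $1 + \frac{qk}{d} - (q-1)m$ appearing in the denominator is never zero: this is exactly the running hypothesis $k \neq d$ (so that $qk \equiv -k \not\equiv 0 \bmod d$, whence $a - \frac{qk}{d} \notin \bZ$ for every integer $a$), which is what made Definition \ref{DefnOfJ} legitimate in the first place, so each bracketed term is a well-defined element of $K[[y]]$ and the termwise rearrangement is valid $y$-adically.
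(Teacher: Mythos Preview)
Your proof is correct and follows essentially the same approach as the paper: expand $z = \epsilon^{-1}t^{qk}$, invoke the previous proposition's formula (\ref{DivJ}) for $t^{qk}(\eta(J)-J)$, and substitute the definition of $\alpha_m$, absorbing the sign from the denominator $(q-1)m - \frac{qk}{d} - 1 = -\left(1 + \frac{qk}{d} - (q-1)m\right)$.
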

\begin{proof} Recall that $z = \epsilon^{-1} t^{qk}$ where $\epsilon = (1 - y^{q-1})^{\frac{k}{d}} \in K[[y]]^\times$. Then
\[\zeta_{w,d} = -\pi_F z (\eta(J)-J) = -\pi_F (1-y^{q-1})^{-\frac{k}{d}} t^{qk} (\eta(J)-J).\]
Substituting $\alpha_m = \frac{(-1)^m \binom{\frac{k}{d}}{m}}{(q-1)m - \frac{qk}{d} - 1}$ into (\ref{DivJ}), gives the formula for $\zeta_{w,d}$.\end{proof}
For future use, we record the following 
\begin{prop}\label{UniqueSol} $\zeta_{w,d}$ is the \emph{unique} solution to the differential equation \newline $\nabla(\zeta) = \rho^Y_\infty(c) - 1$ with $\zeta \in K((y))$.
\end{prop}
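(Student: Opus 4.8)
The plan is to reduce the uniqueness statement to the assertion that the differential operator $\nabla$ has no nonzero element of $K((y))$ in its kernel, since two solutions of the inhomogeneous equation $\nabla(\zeta) = \rho^Y_\infty(c) - 1$ differ by an element of $\ker(\nabla|_{K((y))})$, and existence of the solution $\zeta_{w,d} \in K[[y]] \subset K((y))$ has already been established in Corollary \ref{ZetaSeries} and Proposition \ref{ZetaSolvesDE}. So the whole content is: if $\zeta \in K((y))$ satisfies $\nabla(\zeta) = 0$ then $\zeta = 0$.

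First I would unwind the definition of $\nabla$ on $K((t))$. By Lemma \ref{ExtendToY}(b) together with the identity $z^d = \pi_F^k \rho^W_\infty(w)$, we have $\nabla = -\frac{1}{\pi_F} z (y^2 \partial_y) z^{-1}$ as an operator on $K((t))$, where $z = \epsilon^{-1} t^{qk}$ with $\epsilon = (1 - y^{q-1})^{k/d} \in K[[y]]^\times$. Hence $\nabla(\zeta) = 0$ for $\zeta \in K((y))$ is equivalent to $(y^2\partial_y)(z^{-1}\zeta) = 0$, i.e. $z^{-1}\zeta$ is killed by the derivation $D := \frac{1}{d}t^{d+1}\partial_t$ of $K((t))$ (which restricts to $y^2\partial_y$ on $K((y))$). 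The element $w := z^{-1}\zeta = \epsilon \, t^{-qk}\zeta$ lies in $t^{-qk}K((y)) \subset K((t))$. So I must show: the only element of $t^{-qk}K((y))$ killed by $D$ is zero — equivalently, writing $w = t^{-qk}g$ with $g \in K((y))$, we get from the computation in the proof of Lemma \ref{Jintegral} that $D(t^{-qk}g) = t^{-qk}(y\partial_y - \frac{qk}{d})(g)$, so the condition becomes $(y\partial_y - \frac{qk}{d})(g) = 0$ in $K((y))$.

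Now the key point: if $g = \sum_{n \gg -\infty} g_n y^n \in K((y))$ is nonzero, then $(y\partial_y - \frac{qk}{d})(g) = \sum_n (n - \frac{qk}{d}) g_n y^n$, and this vanishes iff $(n - \frac{qk}{d})g_n = 0$ for every $n$. But we are in exactly the situation (stated just before Definition \ref{DefnOfJ}) where $k \neq d$, hence $qk \equiv -k \not\equiv 0 \pmod d$, so $\frac{qk}{d} \notin \bZ$ and therefore $n - \frac{qk}{d} \neq 0$ for every integer $n$. Consequently $g_n = 0$ for all $n$, i.e. $g = 0$, hence $\zeta = z \cdot t^{-qk}\epsilon^{-1}\cdot(\text{something zero}) = 0$. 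Thus $\ker(\nabla|_{K((y))}) = 0$ and the two solutions coincide. I would then assemble these observations: given any $\zeta \in K((y))$ with $\nabla(\zeta) = \rho^Y_\infty(c)-1$, the difference $\zeta - \zeta_{w,d}$ lies in $K((y))$ (using that $\zeta_{w,d}\in K[[y]]\subseteq K((y))$ by Corollary \ref{ZetaSeries}) and is annihilated by $\nabla$, hence is zero.

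The only mild subtlety — and the step I'd be most careful about — is keeping straight which field the various elements live in and that $\nabla$ genuinely acts $K$-linearly and compatibly on $K((t)) \supseteq K((y)) \supseteq K[[y]]$, so that the kernel computation can legitimately be performed coefficient-by-coefficient in $K((y))$ rather than in the larger field $K((t))$; this is already guaranteed by Lemma \ref{ExtendToKt}(a) and the explicit formula for $\nabla$, so there is no real obstacle, just bookkeeping. Everything else is the one-line observation that a first-order Euler-type operator $y\partial_y - \mu$ with $\mu \notin \bZ$ is injective on $K((y))$.
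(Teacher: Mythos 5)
Your argument is correct and follows essentially the same route as the paper: reduce uniqueness to injectivity of $\nabla$ on $K((y))$, conjugate it into the Euler operator $y\partial_y - \tfrac{qk}{d}$, and invoke $\tfrac{qk}{d}\notin\bZ$. The paper conjugates directly by $\epsilon$ inside $K((y))$ via the formula $\nabla = -\tfrac{1}{\pi_F}\epsilon^{-1}(y^2\partial_y - \tfrac{qk}{d}y)\epsilon$ from Lemma \ref{ExtendToY}(b), which avoids the detour through $K((t))$; also note your intermediate identity should read $D(t^{-qk}g) = t^{-qk}\,y\,(y\partial_y - \tfrac{qk}{d})(g)$ (a factor of $y$ is missing in your display), though since $y$ is a unit in $K((y))$ this does not affect the conclusion.
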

\begin{proof} Proposition \ref{ZetaSolvesDE} and Corollary \ref{ZetaSeries} tell us that $\zeta_{w,d}$ is \emph{a} solution of $\nabla(\zeta) = \rho^Y_\infty(c)-1$, so we need to show it is unique. It is enough to show that the map $\nabla : K((y)) \to K((y))$ is injective. Now, $\nabla = -\frac{1}{\pi_F} \epsilon^{-1}  (y^2 \partial_y - \frac{qk}{d} y) \epsilon$ by Lemma \ref{ExtendToY}(b), so it is enough to show that $y \partial_y - \frac{qk}{d} : K((y) \to K((y))$ is injective. Since 
\[ (y \partial_y - \frac{qk}{d}) \left(\sum_n a_n y^n \right) = \sum_n \left(n - \frac{qk}{d}\right) a_n y^n\]
this is an immediate consequence of the fact that $\frac{qk}{d} \notin \bZ$. \end{proof}

Next, we introduce the following affinoid subdomain of $\bP^{1,\an}$:
\[ W := \Sp K \langle y / \pi_F \rangle.\]
We identify $Y$ with $\Sp K \langle y, \frac{1}{y^{q-1}-1}\rangle$ using Corollary \ref{CoordRingOfY}, and note that $W$ is contained in $Y$ because $1 - y^{q-1} = 1 + \pi_F^{q-1} (y/\pi_F)^{q-1} \in \cO(W)^{\times\times}$.
\begin{lem}\label{ROCzeta} $\cF(W) \neq \emptyset$.
\end{lem}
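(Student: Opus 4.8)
The plan is to exhibit an explicit element of $\cF(W)$, namely the restriction to $W$ of the formal solution $\zeta_{w,d}\in K[[y]]$ constructed in Corollary \ref{ZetaSeries} (here we are of course using the standing assumption $k\neq d$, under which $\zeta_{w,d}$ is defined). By Proposition \ref{ZetaSolvesDE} we already know $\nabla(\zeta_{w,d})=\rho^Y_\infty(c)-1$ inside $K[[y]]$, so it remains only to check two things: that the power series $\zeta_{w,d}$ lies in the subring $\cO(W)=K\langle y/\pi_F\rangle$ of $K[[y]]$, and that the differential equation then descends to $\cO(W)$.

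For the convergence, I would argue directly from the closed formula in Corollary \ref{ZetaSeries}. Expanding the bracketed term with the binomial series, one verifies the elementary identity
\[ \frac{(1-y)^{1+\frac{qk}{d}-(q-1)m}-1}{\left(1+\frac{qk}{d}-(q-1)m\right)y}\;=\;-\sum_{i\geq 0}\frac{(-1)^i}{i+1}\binom{\beta_m}{i}\,y^i,\qquad \beta_m:=\tfrac{qk}{d}-(q-1)m, \]
whose point is that it absorbs the possibly $p$-adically small denominator $1+\tfrac{qk}{d}-(q-1)m$ into a binomial coefficient. Since $d\mid q+1$ we have $p\nmid d$, so $\tfrac{k}{d}$ and $\tfrac{q}{d}=\tfrac{p^f}{d}$ both lie in $\Zp$; hence $\beta_m\in\Zp$, so $|\binom{k/d}{m}|\leq 1$ and $|\binom{\beta_m}{i}|\leq 1$, while $|\tfrac{1}{i+1}|\leq i+1$. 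Because the coefficient of $y^N$ in the prefactor $(1-y^{q-1})^{-k/d}$ also has absolute value $\leq 1$, multiplying out shows that the coefficient $a_N$ of $y^N$ in $\zeta_{w,d}$ is (by the ultrametric inequality) bounded in absolute value by $\max$ over finitely many products of quantities of absolute value $\leq 1$ with a single factor $\tfrac{1}{i+1}$, $i\leq N$, times $\pi_F$; thus $|a_N|\leq |\pi_F|(N+1)$. Therefore $|a_N|\,|\pi_F|^N\leq (N+1)|\pi_F|^{N+1}\to 0$, which is exactly the condition for $\zeta_{w,d}\in\cO(W)$.

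Finally, $W$ is a connected affinoid containing the point $\infty$ (i.e. $y=0$), so Lemma \ref{GermAtInfty} gives an injective $\cD(W)$-linear map $\rho^W_\infty:\cO(W)\hookrightarrow K[[y]]$ which is compatible with $\rho^Y_\infty$ under restriction $\cO(Y)\to\cO(W)$. Applying $\rho^W_\infty$ to $\nabla_{|W}\big((\zeta_{w,d})_{|W}\big)$ recovers $\nabla(\zeta_{w,d})=\rho^Y_\infty(c)-1=\rho^W_\infty\big((c-1)_{|W}\big)$; by injectivity of $\rho^W_\infty$ we conclude $\nabla_{|W}\big((\zeta_{w,d})_{|W}\big)=(c-1)_{|W}$ in $\cO(W)$, so $(\zeta_{w,d})_{|W}\in\cF(W)$ and $\cF(W)\neq\emptyset$.

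I do not expect a serious obstacle here: the whole content is the elementary $p$-adic estimate of the Taylor coefficients of $\zeta_{w,d}$, and the only delicate point is the identity above, which makes visible that after absorbing $1+\tfrac{qk}{d}-(q-1)m$ into $\binom{\beta_m}{i}$ the only surviving denominators are the harmless $\tfrac{1}{i+1}$, growing at most linearly in $N$ and hence dominated by $|\pi_F|^N$. (One could alternatively avoid the explicit formula and solve $\nabla(\zeta)=c-1$ by integrating term by term after conjugating away $\theta_{(1-y^{q-1})^{-k},d}$, but then one must pass to a disc of radius slightly greater than $|\pi_F|$ — available after a finite base change since $|\overline{K}^\times|$ is dense — to recover the same slack; the route via Corollary \ref{ZetaSeries} is cleaner.)
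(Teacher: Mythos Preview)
Your proof is correct, and the route is genuinely different from the paper's. The paper estimates each $m$-summand of the formula in Corollary \ref{ZetaSeries} directly in the sup norm on $W$: it bounds the bracketed factor by $\tfrac{|\pi_F|^{-1}}{|1+\tfrac{qk}{d}-(q-1)m|}$ and then invokes the fact that $\tfrac{qk}{d}\in\bZ_p\cap\bQ$ has $p$-adic type $1$ (is non-Liouville) to see that $\tfrac{|\pi_F|^{(q-1)m-1}}{|\tfrac{qk}{d}-((q-1)m-1)|}\to 0$. You instead expand the bracket as a power series and use the identity $\tfrac{1}{\mu}\binom{\mu}{i+1}=\tfrac{1}{i+1}\binom{\mu-1}{i}$ to absorb the dangerous denominator $1+\tfrac{qk}{d}-(q-1)m$ into an integral binomial coefficient $\binom{\beta_m}{i}$, so that only the harmless factor $\tfrac{1}{i+1}$ (with $|\tfrac{1}{i+1}|\leq i+1$) survives. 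This gives the coefficient bound $|a_N|\leq |\pi_F|(N+1)$ by the ultrametric inequality, and hence convergence in $\cO(W)$, with no appeal to the non-Liouville property. Your approach is more elementary; the paper's is shorter because it works with the sup norm on $W$ and never unpacks the coefficients. (Amusingly, the paper does use your identity later, in the proof of Theorem \ref{ZetaUnbounded}.) The final step---pulling the equation $\nabla(\zeta_{w,d})=\rho^Y_\infty(c)-1$ back to $\cO(W)$ via the injective $\cD(W)$-linear map $\rho^W_\infty$---is the same in both arguments.
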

\begin{proof} Recall \cite[Definition 13.1.1]{Kedlaya} that the \emph{type} of a number $\lambda \in K$ is the radius of convergence of the formal power series $\sum\limits_{n=0}^\infty \frac{t^n}{\lambda - n}$. If $\lambda$ happens to lie in $\Zp \cap \mathbb{Q}$, then $\lambda$ has a \emph{recurrent} $p$-adic expansion, and now \cite[Proposition 13.1.4]{Kedlaya} implies that $\lambda$ has type $1$. In other words, for any real number $0 < r < 1$, we have
\begin{equation}\label{RatTypeOne} \lim\limits_{n \to \infty} \frac{r^n}{|\lambda - n|} = 0 \qmb{for any} \lambda \in \Zp \cap \mathbb{Q}.\end{equation}
Now, the spectral norm $|\cdot|_W$ on $\cO(W) = K \langle y / \pi_F\rangle$ satisfies $|y|_W = |\pi_F| < 1$. Then for any $m \geq 0$, we have
\[\left\vert (-1)^m \binom{\frac{k}{d}}{m} y^{(q-1)m} \left[ \frac{ (1 - y)^{1 + \frac{qk}{d} - (q-1)m} - 1}{\left(1 + \frac{qk}{d} - (q-1)m\right)y} \right] \right\vert_W \leq \frac{ |\pi_F|^{(q-1)m-1} }{|\frac{qk}{d} - ((q-1)m - 1)| }\]
because $|(-1)^m \binom{\frac{k}{d}}{m}| \leq 1$ and $|(1-y)^{1 - \frac{k}{d} - (q-1)m} - 1|_W \leq 1$.  Because $\frac{qk}{d} \in \Zp \cap \mathbb{Q}$, (\ref{RatTypeOne}) implies that the partial sums
\[\zeta_n := \pi_F (1 - y^{q-1})^{-\frac{k}{d}} \sum_{m=0}^n (-1)^m \binom{\frac{k}{d}}{m} y^{(q-1)m} \left[ \frac{ (1 - y)^{1 + \frac{qk}{d} - (q-1)m} - 1}{\left(1 + \frac{qk}{d} - (q-1)m\right)y} \right]\]
of the power series $\zeta_{w,d}$ converge with respect to the spectral norm $|\cdot|_W$ to an element $\zeta' \in \cO(W)$. Because $\zeta_n - \zeta_{n-1}$  converges to zero $y$-adically in $\cO(W)$, it follows that $\zeta_n$ also converges $y$-adically to $\zeta'$ in $\cO(W)$. 

Recall the $\cD(W)$-linear and injective map $\rho^W_\infty : \cO(W) \to \widehat{\cO_{Y,\infty}} = K[[y]]$ from Lemma \ref{GermAtInfty}. Since $\rho^W_\infty$ is continuous with respect to the $y$-adic topologies on $\cO(W)$ and $K[[y]]$, we deduce that 
\[\rho^W_\infty(\zeta') = \rho^W_\infty(\lim\limits_{n\to \infty} \zeta_n) = \lim\limits_{n\to\infty} \rho^W(\zeta_n) = \zeta_{w,d}.\]
Since $\rho^W_\infty$ is $\cD(W)$-linear, $\rho^W_\infty(\nabla(\zeta')) = \nabla(\zeta_{w,d}) = \rho^W_\infty(c_{|W}-1)$ by Proposition \ref{UniqueSol}, so $\nabla(\zeta') = c_{|W}-1$ because $\rho^W_\infty$ is injective. Thus $\zeta' \in \cF(W)$ as desired. \end{proof}

Our next result relies on a long computation of the $p$-adic valuations of certain binomial coefficients, which we perform in $\S \ref{BinEsts}$.

\begin{thm}\label{ZetaUnbounded} The series $\zeta_{w,d} \in K[[y]]$ does \emph{not} have bounded coefficients.
\end{thm}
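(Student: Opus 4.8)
The plan is to show that the explicit power series
\[
\zeta_{w,d} = \pi_F (1 - y^{q-1})^{-\frac{k}{d}} \sum_{m=0}^\infty (-1)^m \binom{\frac{k}{d}}{m} y^{(q-1)m} \left[ \frac{ (1 - y)^{1 + \frac{qk}{d} - (q-1)m} - 1}{\left(1 + \frac{qk}{d} - (q-1)m\right)y} \right]
\]
from Corollary \ref{ZetaSeries} has Taylor coefficients whose $p$-adic valuations tend to $-\infty$. The factor $\pi_F(1-y^{q-1})^{-k/d}$ has bounded (indeed integral) coefficients, so it suffices to show that the inner sum
\[
\Sigma(y) := \sum_{m=0}^\infty (-1)^m \binom{\frac{k}{d}}{m} y^{(q-1)m} \cdot \frac{ (1 - y)^{1 + \frac{qk}{d} - (q-1)m} - 1}{\left(1 + \frac{qk}{d} - (q-1)m\right)y}
\]
has unbounded coefficients. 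Expanding each bracket via the binomial series for $(1-y)^{\alpha_m}$ with $\alpha_m = 1 + \frac{qk}{d} - (q-1)m$, the bracketed term is $\sum_{j \geq 1} \frac{1}{\alpha_m}\binom{\alpha_m}{j}(-1)^j y^{j-1}$, so the coefficient of $y^{N}$ in $\Sigma$ is a finite sum, over $m$ with $(q-1)m \leq N$, of terms of the shape $\pm \binom{k/d}{m}\frac{1}{\alpha_m}\binom{\alpha_m}{N - (q-1)m + 1}$. The binomial coefficients $\binom{k/d}{m}$ are $p$-adic integers since $p \nmid d$, so the whole difficulty is in estimating $v_p\left(\frac{1}{\alpha_m}\binom{\alpha_m}{j}\right)$ for the exponent $\alpha_m \in \mathbb{Z}_p$.

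The main obstacle — and the reason $\S \ref{BinEsts}$ exists — is precisely this: one must find, for suitably chosen large $N$, a single dominant term in the sum defining the $y^N$-coefficient whose valuation is very negative and which is not cancelled by the others. The natural strategy is: (i) pick $N$ of a special form, e.g. $N = (q-1)p^s$ or $N$ just below such a value, so that for the choice $m = 0$ (or some other distinguished small $m$) the quantity $j = N - (q-1)m + 1$ and the exponent $\alpha_0 = 1 + qk/d$ conspire to make $v_p\left(\frac{1}{\alpha_0}\binom{\alpha_0}{j}\right)$ equal to $-c \cdot s$ for some positive constant $c$, using Kummer's theorem (Theorem \ref{Kum}) on carries in the $p$-adic addition underlying $\binom{\alpha_0}{j} = \binom{\alpha_0}{\alpha_0 - j}\cdots$; and (ii) show that every other term in the sum has strictly larger valuation, so there is no cancellation and the $y^N$-coefficient of $\Sigma$ — hence of $\zeta_{w,d}$ — has valuation $\leq -cs + O(1) \to -\infty$ as $s \to \infty$. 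For step (ii) one compares $v_p\left(\frac{1}{\alpha_m}\binom{\alpha_m}{N-(q-1)m+1}\right)$ across $m$: since $\alpha_m$ decreases linearly in $m$ with slope $-(q-1)$ coprime to $p$ in a controlled way, and $qk/d \notin \mathbb{Z}$ (recall the standing assumption $k \neq d$, so $qk \equiv -k \not\equiv 0 \bmod d$), the denominators $\alpha_m$ have bounded valuation, and the carry count for $m \neq 0$ is smaller, yielding the strict inequality.

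Concretely I would first reduce to analyzing $a_N := $ (coefficient of $y^N$ in $\Sigma$) and write it as $\sum_{0 \leq m \leq N/(q-1)} (-1)^{m+N-(q-1)m+1} \binom{k/d}{m} \frac{1}{\alpha_m} \binom{\alpha_m}{N-(q-1)m+1}$. Then I would invoke the binomial-coefficient valuation estimates established in $\S \ref{BinEsts}$ (which presumably give, for $\alpha \in \mathbb{Z}_p$ and $j \geq 1$, a formula or sharp bound for $v_p\binom{\alpha}{j}$ in terms of carries/digit sums of $j$ and $-\alpha$), apply them to each $\alpha_m$, and select an infinite sequence $N = N_s \to \infty$ on which the $m=0$ term strictly dominates in $-v_p$. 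The verification that the dominant term is not cancelled is the delicate point, requiring the non-Liouville / recurrent-expansion structure of the rational number $qk/d \in \mathbb{Z}_p \cap \mathbb{Q}$ already used in Lemma \ref{ROCzeta}, but now run in the opposite direction: there the recurrence gave type $1$ and hence convergence on $W$; here the recurrence controls the carry pattern of $\alpha_0 = 1 + qk/d$ and forces $v_p\binom{\alpha_0}{N_s+1}$ to grow linearly in $s$. I expect this last unboundedness estimate, together with the non-cancellation argument, to be the core of the proof, with everything else being bookkeeping on the already-established explicit formula for $\zeta_{w,d}$.
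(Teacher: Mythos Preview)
Your overall strategy is right --- extract a coefficient, isolate a dominant term, show non-cancellation, show its valuation tends to $-\infty$ --- but several specifics are off and you miss a key simplification that the paper uses.

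First, the paper does not work with the full coefficient of $y^N$. It introduces the $K[[s]]$-linear projection $\Phi : K[[y]] \twoheadrightarrow K[[s]]$ (with $s = y^{q-1}$) that kills $y^i$ for $(q-1) \nmid i$, and computes $\Phi(\zeta_{w,d})$. After applying $\Phi$ and using the identity $\frac{1}{\mu}\binom{\mu}{n+1} = \frac{1}{n+1}\binom{\mu-1}{n}$, the coefficient of $s^n$ in $\frac{1}{\pi_F}(1-s)^{k/d}\Phi(\zeta_{w,d})$ becomes the clean finite sum
\[
-\sum_{r=0}^{n} \frac{\binom{\frac{k}{d}}{r}\,\binom{\frac{qk}{d}-(q-1)r}{(n-r)(q-1)}}{(n-r)(q-1)+1}.
\]
This is exactly the quantity analysed in $\S\ref{BinEsts}$ (Corollary~\ref{SumEstimate}). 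Since $\Phi$ preserves bounded-coefficient series, unboundedness of this projected series forces unboundedness of $\zeta_{w,d}$. Working directly with all $y^N$ as you propose would mean controlling a more complicated sum for no gain.

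Second, your guesses about the dominant term and the special $N$ are both wrong. The paper does \emph{not} take $N$ of the form $(q-1)p^s$, and the dominant index is \emph{not} $m=0$ or any small $m$. Instead one chooses $n = n_N$ to be the least positive integer with $n \equiv \frac{qk}{q^2-1} \pmod{q^N}$ (with a parity condition on $N$ depending on $k$), and the dominant index is $r = s_n := n - (1 + q + \cdots + q^{M_n})$ where $M_n$ is maximal with $1+q+\cdots+q^{M_n} \le n$. This $s_n$ is singled out because it is the \emph{unique} $r \in \{0,\ldots,n\}$ maximising $v_p((n-r)(q-1)+1)$, the maximum being $(M_n+1)f$. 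The congruence on $n$ is arranged so that the numerator binomial $\binom{\frac{qk}{d}-(q-1)s_n}{(n-s_n)(q-1)}$ has $p$-adic valuation \emph{zero} (Proposition~\ref{SecondVal}). The non-cancellation step (Theorem~\ref{UniqueMin}) is then a careful carry-counting argument (Proposition~\ref{CountNoCarries}) comparing $\cN_m(\lambda-r,r)$ to $\cN_m(\lambda-s_n,s_n)$ for $\lambda = \frac{k}{q+1}$, using Kummer's theorem; this is where the explicit $q$-adic expansions of $s_n$ and $\lambda - s_n$ computed in Lemma~\ref{qExpCalc} enter. Finally Proposition~\ref{TheVal} shows the dominant term has valuation $\le \frac{3-N}{2} \to -\infty$.

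So: right architecture, but you would need to discover the projection $\Phi$, the correct special sequence $n_N$, and the correct dominant index $s_n$ before the estimates of $\S\ref{BinEsts}$ apply.
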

\begin{proof} Using the binomial theorem, we compute that for any $0 \neq \mu \in \Zp$, 
\[ \frac{ (1-y)^\mu - 1 }{\mu y} = \frac{1}{\mu y} \sum_{n=1}^\infty \binom{\mu}{n} (-y)^n  = -\frac{1}{\mu} \sum_{n=0}^\infty \binom{\mu}{n+1} (-y)^n.\]
Let $s := y^{q-1}$, and let $\Phi : K[[y]] \twoheadrightarrow K[[s]]$ be the $K[[s]]$-linear projection operator, which sends $y^i$ to $0$ if $q-1 \nmid i$, and which is the identity on $K[[s]] \subset K[[y]]$. Using the elementary identity $\frac{1}{\mu} \binom{\mu}{n+1} = \frac{(\mu-1)\cdots(\mu-n)}{(n+1)!} = \frac{1}{n+1}\binom{\mu-1}{n}$ we compute that
\begin{equation}\label{PhiMuY}\Phi \left(\frac{ (1-y)^\mu - 1 }{\mu y}\right) = - \sum_{\ell = 0}^\infty \binom{\mu-1}{\ell(q-1)} \frac{ (-s)^{\ell} }{\ell(q-1)+1}.\end{equation}
Set $\mu_r := 1 + \frac{qk}{d} - (q-1)r$ and note that $\mu_r \neq 0$ for any $r \in \N$ because $\frac{k}{d} \notin \Z$ by our choice of $k$ and $d$. Take the series appearing in Corollary \ref{ZetaSeries}, change the dummy variable from $m$ to $r$, apply $\Phi$ and substitute in $(\ref{PhiMuY})$ to obtain
\[ \begin{array}{lll} \frac{1}{\pi_F} (1-s)^{\frac{k}{d}} \Phi(\zeta_{w,d}) &=& \sum\limits_{r=0}^\infty  \binom{\frac{k}{d}}{r} (-s)^r \cdot \left( - \sum\limits_{\ell = 0}^\infty \binom{\mu_r-1}{\ell(q-1)} \frac{ (-s)^{\ell} }{\ell(q-1)+1} \right) \\
&=& -\sum\limits_{n=0}^\infty \left( \sum\limits_{r=0}^n  \frac{ \binom{\frac{k}{d}}{r} \binom{\frac{qk}{d}-(q-1)r}{(n-r)(q-1)} }{(n-r)(q-1)+1} \right) s^n.\\
\end{array}\]
We observe that the formula on the right hand side only involves $\frac{k}{d}$. Since $d \mid (q+1)$ and $1 \leq k < d$, after multiplying both $k$ and $d$ by $\frac{q+1}{d}$ if necessary, we may assume that $d = q+1$ and $1 \leq k \leq q$.

By Corollary \ref{SumEstimate} below, the $p$-adic valuation of the coefficient of $s^n$ appearing in the above expansion is not bounded below as $n$ varies: in other words, this series does not have bounded coefficients. Since the operator $\Phi$ sends $K^\circ[[y]]_K$ to $K^\circ[[s]]_K$ we conclude that $\zeta_{w,d}$ also does not have bounded coefficients.
\end{proof}

\begin{cor}\label{NoSolsOnY} We have $\cF(Y) = \emptyset$.
\end{cor}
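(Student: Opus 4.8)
The plan is to argue by contradiction, transporting the equation to the formal completion of $Y$ at the point $\infty$, where Proposition~\ref{UniqueSol} pins down the unique solution as the explicit power series $\zeta_{w,d}$, and then extracting a contradiction from Theorem~\ref{ZetaUnbounded}.

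First I would suppose that $\cF(Y)$ contains an element $\zeta$, so that $\zeta\in\cO(Y)$ and $\nabla(\zeta)=c-1$ with $\nabla\in\cD(Y)$ and $c\in\cO(Y)^\times$ as in Lemma~\ref{ExtendToY}. By Corollary~\ref{CoordRingOfY}, $Y$ is a connected affinoid subdomain of $\bP^{1,\an}$ containing $\infty$, so Lemma~\ref{GermAtInfty} supplies an injective, $\cD(Y)$-linear map $\rho^Y_\infty\colon\cO(Y)\to\widehat{\cO_{Y,\infty}}=K[[y]]$. Applying $\rho^Y_\infty$ to the identity $\nabla(\zeta)=c-1$, and using that the action of $\nabla$ on $K((y))$ (extended as in Lemma~\ref{ExtendToKt} and Lemma~\ref{ExtendToY}(b)) is intertwined by $\rho^Y_\infty$, I obtain $\nabla(\rho^Y_\infty(\zeta))=\rho^Y_\infty(c)-1$ inside $K[[y]]\subseteq K((y))$. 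The uniqueness statement of Proposition~\ref{UniqueSol} then forces $\rho^Y_\infty(\zeta)=\zeta_{w,d}$.

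The second step is the boundedness estimate. Here I would use that $\infty$ corresponds to $y=0$ and that $Y$, as $\Sp K\langle y,\tfrac{1}{y^{q-1}-1}\rangle$, contains the closed disc $\{|y|\le\rho\}$ for every $\rho\in\sqrt{|K^\times|}$ with $\rho<1$ (the holes removed to form $Y$ being small balls around the $(q-1)$-th roots of unity, all of norm $1$). Since restriction along $\{|y|\le\rho\}\hookrightarrow Y$ is norm-decreasing, the maximum modulus principle on $\{|y|\le\rho\}$ gives $\max_m|a_m|\rho^m\le|\zeta|_Y$ when $\rho^Y_\infty(\zeta)=\sum_{m\ge0}a_m y^m$; letting $\rho\to1^-$ yields $|a_m|\le|\zeta|_Y$ for all $m$, i.e.\ $\rho^Y_\infty(\zeta)$ has bounded coefficients. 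Combined with the identification $\rho^Y_\infty(\zeta)=\zeta_{w,d}$ from the previous paragraph, this contradicts Theorem~\ref{ZetaUnbounded}, which asserts that $\zeta_{w,d}$ does not have bounded coefficients. Hence no such $\zeta$ exists and $\cF(Y)=\emptyset$.

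I do not expect a genuine obstacle at this stage: the only points to check carefully are the compatibility of $\rho^Y_\infty$ with the two incarnations of $\nabla$ and the inclusion $\{|y|\le\rho\}\subseteq Y$, both of which are routine. All of the real work has already been carried out in deriving the explicit formula for $\zeta_{w,d}$ (Corollary~\ref{ZetaSeries}) and in the delicate $p$-adic estimates behind Theorem~\ref{ZetaUnbounded}.
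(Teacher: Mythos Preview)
Your proposal is correct and follows essentially the same approach as the paper: assume a solution $\zeta\in\cO(Y)$ exists, push it through $\rho^Y_\infty$ using $\cD(Y)$-linearity (Lemma~\ref{GermAtInfty}), invoke the uniqueness statement of Proposition~\ref{UniqueSol} to identify $\rho^Y_\infty(\zeta)$ with $\zeta_{w,d}$, and contradict Theorem~\ref{ZetaUnbounded}. The only cosmetic difference is in the boundedness step: the paper simply observes that $\cO(Y)=K\langle y,\tfrac{1}{y^{q-1}-1}\rangle$ immediately gives $\rho^Y_\infty(\cO(Y))\subseteq K^\circ[[y]]_K$, whereas you derive the same conclusion via restriction to the discs $\{|y|\le\rho\}$ and the maximum modulus principle; both arguments are equivalent and equally short.
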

\begin{proof} Suppose for a contradiction that $\zeta \in \cO(Y)$ satisfies $\nabla(\zeta) = c - 1$. Now $\cO(Y) = K\langle y,\frac{1}{y^{q-1}-1}\rangle$ by Corollary \ref{CoordRingOfY}, so 
\[\rho^Y_\infty\left(\cO(Y)\right) \subseteq K^\circ[[y]]_K.\] 
Since $\rho^Y_\infty : \cO(Y) \to K[[y]]$ is $\cD(Y)$-linear by Lemma \ref{GermAtInfty}, we have $\nabla(\rho^Y_\infty(\zeta)) = \rho^Y_\infty(c) - 1$, so $\rho^Y_\infty(\zeta) = \zeta_{w,d}$ by Proposition \ref{UniqueSol}. But then $\zeta_{w,d} \in K^\circ[[y]]_K$ which contradicts Theorem \ref{ZetaUnbounded}.\end{proof}

\begin{lem}\label{XcapV0sols} We have $|\cF(X \cap V_0)| \leq 1$.
\end{lem}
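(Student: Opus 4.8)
The plan is to reduce the assertion to the injectivity of the twisted derivation $\nabla = \theta_{w,d}(\partial_x) \in \cD(X \cap V_0)$ acting on $\cO(X \cap V_0)$. Indeed, if $\zeta_1,\zeta_2 \in \cF(X \cap V_0)$, then $\zeta := \zeta_1 - \zeta_2$ satisfies $\nabla(\zeta) = 0$, so it suffices to show that the only solution of $\nabla(\zeta)=0$ in $\cO(X \cap V_0)$ is $\zeta = 0$; this gives $|\cF(X \cap V_0)| \leq 1$ (and is of course compatible with $\cF(X \cap V_0)$ being empty).

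To prove this I would pass to the cyclic cover. Since $p \nmid d$ (because $d \mid q+1$ while $q$ is a power of $p$) and $w \in \cO(X\cap V_0)^\times$ — on $X\cap V_0$ we have $|x| = 1$ whereas $|\pi_F^{q-1}x| < 1$, so $x^q - \pi_F^{q-1}x$ is a unit — the cover $f : Z := \Sp \cO(X\cap V_0)[T]/(T^d - w) \to X\cap V_0$ is finite \'etale, with $z$ the image of $T$ satisfying $z^d = w$. By Lemma \ref{ThetaDthPower}, $\nabla = z\partial_x z^{-1}$ inside $\cD(Z)$, so $\nabla(\zeta) = 0$ forces $\partial_x(z^{-1}\zeta) = 0$ in $\cO(Z)$, i.e. $z^{-1}\zeta$ is horizontal for the trivial connection. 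On each connected component $Z_i$ of $Z$ (a connected smooth affinoid, which maps surjectively onto the connected base $X\cap V_0$ because finite \'etale maps are open and closed) a function with vanishing differential is a constant; hence $z^{-1}\zeta|_{Z_i} = c_i$ for some constant $c_i$, so that $\zeta|_{Z_i} = c_i\, z|_{Z_i}$.

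Now suppose $\zeta \neq 0$; then $c_i \neq 0$ for some $i$. As $z$ is a unit on $Z_i$, $\zeta$ does not vanish on $Z_i$, hence (by surjectivity of $Z_i \to X\cap V_0$) not on $X\cap V_0$, so $\zeta \in \cO(X\cap V_0)^\times$. Comparing $w|_{Z_i} = z|_{Z_i}^d$ with $\zeta|_{Z_i}^d = c_i^d\, z|_{Z_i}^d$ inside $\cO(Z_i)$, and using that $\cO(X\cap V_0) \hookrightarrow \cO(Z_i)$ with $\cO(X\cap V_0)$ geometrically integral (it is a $K$-cheese, cf. \cite[\S 4.1]{ArdWad2023}), one finds $w/\zeta^d = c_i^{-d}$ is algebraic over $K$, hence lies in $K$; write $w = \lambda \zeta^d$ with $\lambda \in K^\times$. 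Then the order $v_{\fr{h}}(w)$ of $w$ along the ``inner'' hole $\fr{h} = \{|z| < 1\}$ of the cheese $X \cap V_0$ would be divisible by $d$. But $x^q - \pi_F^{q-1}x = x(x^{q-1} - \pi_F^{q-1})$ has exactly $q$ zeroes lying in $\fr{h}$, namely $0$ and the $q-1$ points $\pi_F\zeta^j$, so $v_{\fr h}(w) = -qk$; since $d \mid q+1$ we have $qk \equiv -k \pmod d$, which is nonzero mod $d$ because $1 \leq k \leq d-1$ (recall we are in the case $k \neq d$). This contradiction forces $\zeta = 0$.

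The main obstacle is purely a matter of bookkeeping: handling the connected components of the \'etale cover, the ``field of constants'' of $Z_i$ (which need only be some finite extension of $K$, not $K$ itself), and the standard facts that a $K$-cheese is geometrically integral and that $df = 0$ on a connected smooth affinoid implies $f$ is a constant. A more conceptual variant would identify $(\cO(X\cap V_0),\nabla)$ with the restriction to $X \cap V_0$ of the line bundle with connection $\cM(\cS(w),w^{-1},d)$ from $\S \ref{KummerSect}$ and invoke its simplicity via \cite[Lemma 4.3.2]{ArdWad2023}, as in the proof of Corollary \ref{LnYsimple}; but one still needs the same valuation computation to exclude triviality of the connection, so the elementary argument above seems the most economical.
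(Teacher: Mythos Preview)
Your proof is correct and shares the same core idea as the paper's: reduce to the injectivity of $\nabla$ on $\cO(X\cap V_0)$, show that a nonzero element of $\ker\nabla$ would force $w$ to be a $d$-th power in $\cO(X\cap V_0)^\times$ up to a scalar in $K^\times$, and then obtain a contradiction from the valuation at the inner hole (namely $d\nmid qk$).

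The execution differs slightly. You pass to the \'etale $d$-cover $Z$, use $\nabla=z\partial_xz^{-1}$ there, and read off $\zeta^d=\lambda^{-1}w$ from the constancy of $z^{-1}\zeta$ on a connected component. The paper instead first observes that $w\equiv x^{-qk}\bmod \cO(X\cap V_0)^{\times\times}$ and uses $d$-divisibility of $\cO(X\cap V_0)^{\times\times}$ (\cite[Lemma~4.3.2(a)]{ArdWad2023}) together with Lemmas \ref{TwistsMultiply} and \ref{ThetaDthPower} to conjugate $\nabla$ to the explicit operator $\partial_x+\tfrac{qk}{d}\,x^{-1}$ on $\cO(X\cap V_0)$; a direct computation then gives $v^d=\lambda x^{-qk}$, and the contradiction comes from applying $\mu_{X\cap V_0}$. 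This avoids the bookkeeping with connected components and fields of constants of the cover that you (rightly) flag as the only delicate point in your route. Conversely, your argument does not require the preliminary reduction of $w$ to $x^{-qk}$. Either way the endpoint is the same valuation argument, and both are short.
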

\begin{proof} It is enough to show that $\nabla = \theta_{w,d}(\partial_x)$ is injective on $\cO(X \cap V_0)$. Now,
\[w = \frac{1}{(x^q - \pi_F^{q-1} x)^k} = \frac{x^{-qk}}{(1 - (\pi_F/x)^{q-1})^k} \equiv x^{-qk} \, \mod \cO(X \cap V_0)^{\times\times}.\]
Since $\cO(X \cap V_0)^{\times\times}$ is $d$-divisible by \cite[Lemma 4.3.2(a)]{ArdWad2023}, we can now use Lemma \ref{TwistsMultiply} and Lemma \ref{ThetaDthPower} to see that
\[\nabla = u  \hsp \theta_{x^{-qk}, d}(\partial_x) \hsp u^{-1} \qmb{for some} u \in \cO(X \cap V_0)^\times.\]
Thus it remains to show that $\theta_{x^{-qk},d}(\partial_x) = \partial_x + \frac{qk}{d}\frac{1}{x}$ is injective on $\cO(X \cap V_0)$. Suppose that $\partial_x(v) = -\frac{qk}{d} \frac{v}{x}$ for some non-zero $v \in \cO(X \cap V_0)$. Then $\partial_x(v^dx^{qk}) = 0$ so $v^d = \lambda x^{-qk}$ for some $\lambda \in K$. Then $\lambda \neq 0$ since $v \neq 0$, so $v \in \cO(X \cap V_0)^\times$. Let $D := \{|x| < 1\}$ and apply the map $\mu_{X \cap V_0}$ from \cite[Proposition 4.3.1]{ArdWad2023} to see that $d \cdot \mu_{X \cap V_0}(v)(D) = -qk \cdot \mu_{X \cap V_0}(x)(D) = -qk$. This contradicts $-\frac{qk}{d} \notin \bZ$.\end{proof}
We can finally prove the main result of $\S \ref{TheIntegral}$.
\begin{thm}\label{BetaNotInSum} Let $w = \frac{1}{(x^q - \pi_F^{q-1} x)^k}$, where $d \mid (q+1)$ and $1 \leq k \leq d$. Then \[\beta(h) \notin \cD^\dag_{\varpi/|\pi_F|}(X) + \cD^\dag_\varpi(X) R_{\cS(w)}(w,d).\]
\end{thm}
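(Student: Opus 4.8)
The plan is to argue by contradiction and combine the results of $\S\ref{TheIntegral}$ through the sheaf property of the solution sheaf $\cF$ on $Y$. So suppose, for a contradiction, that $\beta(h) \in \cD^\dag_{\varpi/|\pi_F|}(X) + \cD^\dag_\varpi(X) R_{\cS(w)}(w,d)$. First I would feed this hypothesis into Proposition \ref{XiBetaZero} to conclude that $\zeta_0 := (\xi_{w,d}\beta(h))_0 \in \cO(X\cap V_0)$ extends to a function $\tilde\zeta_0 \in \cO(X\cap V_1)$. Next, Lemma \ref{XVzeroSol} identifies the differential equation satisfied by $\zeta_0$ on $X\cap V_0$, namely $\nabla(\zeta_0) = 1 - c_{w,d}(h^{-1})$; writing $c \in \cO(Y)^\times$ for the extension of $c_{w,d}(h^{-1})$ provided by Lemma \ref{ExtendToY}(c) and $\nabla \in \cD(Y)$ for the extension of $\nabla$ from Lemma \ref{ExtendToY}(b), I would argue that $\nabla(\tilde\zeta_0)$ and $1-c$, as elements of $\cO(X\cap V_1)$, restrict to the same function on the nonempty affinoid subdomain $X\cap V_0$, and therefore coincide — here I use that $X\cap V_1$ is a cheese, so $\cO(X\cap V_1)$ is an integral domain and restriction to a nonempty affinoid subdomain is injective, as in \cite[Proposition 4.2]{ABB}. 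This shows that $-\tilde\zeta_0 \in \cF(X\cap V_1)$.

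The remaining ingredients are already packaged: $\cF(W) \neq \emptyset$ by Lemma \ref{ROCzeta}, and $\cF(Y) = \emptyset$ by Corollary \ref{NoSolsOnY} (which is where the hard analytic input — the unboundedness of $\zeta_{w,d}$ from Theorem \ref{ZetaUnbounded} — enters). I would then verify the two geometric facts that make these fit together: that $\{X\cap V_1, W\}$ is an admissible covering of $Y$ (both are affinoid subdomains of the affinoid $Y$ and they cover $Y$ on points, so it is a finite pure covering in the sense of \cite{BGR}), and that $X\cap V_1 \cap W = X\cap V_0$ — this comes from unwinding the defining Laurent conditions: along their intersection both $X\cap V_1$ and $W$ lie in the locus $|x|=1$, and the condition $|x^{q-1}-1|=1$ cutting out $X$ there matches exactly the condition cutting out $X\cap V_0 = X\setminus\{|x|<1\}$.

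Granting these, the restrictions of $-\tilde\zeta_0 \in \cF(X\cap V_1)$ and of a chosen $\zeta' \in \cF(W)$ to the overlap $X\cap V_0$ both lie in $\cF(X\cap V_0)$ — the latter because $\cF$ is a subsheaf of $\cO_Y$ — and $\cF(X\cap V_0)$ has at most one element by Lemma \ref{XcapV0sols}; hence the two local sections agree on the overlap. Since $\cF$ is a sheaf, they glue to a global section of $\cF(Y)$, contradicting $\cF(Y) = \emptyset$. This gives the theorem.

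I expect the main obstacle to be the geometric bookkeeping — pinning down precisely that $\{X\cap V_1, W\}$ covers $Y$ and that $X\cap V_1 \cap W = X\cap V_0$, since these are the only steps not already isolated as a lemma — together with keeping the normalisations straight: the equation on $X\cap V_0$ is $\nabla(\zeta_0) = 1 - c_{w,d}(h^{-1})$, so it is $-\zeta_0$, and not $\zeta_0$, that is a section of the solution sheaf $\cF$ of $\nabla(\zeta) = c-1$. Everything else is an assembly of analytic input already established.
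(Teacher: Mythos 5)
Your argument is essentially the paper's proof of the theorem for the case $k \neq d$, and for that case it is correct: Proposition~\ref{XiBetaZero} produces the extension $\tilde\zeta_0 \in \cO(X\cap V_1)$, Lemma~\ref{XVzeroSol} identifies the ODE, the injectivity of restriction promotes the equation to $X \cap V_1$, the identities $(X\cap V_1)\cap W = X\cap V_0$ and $(X\cap V_1)\cup W = Y$ are right, and the gluing argument against $\cF(Y)=\emptyset$ goes through. Your bookkeeping on the sign is also right.

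However, the statement covers $1 \leq k \leq d$, and there is a genuine gap at $k = d$. The whole machinery you are relying on — Definition~\ref{DefnOfJ}, Lemma~\ref{Jintegral}, Proposition~\ref{UniqueSol}, Lemma~\ref{ROCzeta}, Theorem~\ref{ZetaUnbounded}, Corollary~\ref{NoSolsOnY}, and crucially Lemma~\ref{XcapV0sols} — is developed under the standing hypothesis $k \neq d$, introduced precisely so that $a - \frac{qk}{d} \notin \bZ$ for every integer $a$. When $k = d$ one has $\frac{qk}{d} = q \in \bZ$, and the arguments break: $\alpha_m$ in Definition~\ref{DefnOfJ} can have a vanishing denominator, uniqueness in Proposition~\ref{UniqueSol} fails, and $\cF(X\cap V_0)$ is no longer at most a singleton (Lemma~\ref{XcapV0sols} fails because $\nabla = z\partial_x z^{-1}$ with $z = (x^q-\pi_F^{q-1}x)^{-1} \in \cO(X\cap V_0)^\times$ now has the nontrivial solution $z$ in its kernel). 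So the gluing argument cannot be run. The paper handles $k=d$ by an entirely separate and much simpler computation: when $k = d$, $w = \Delta_{\cS(w)}^{-d}$ and Lemma~\ref{RudProps}(b) gives $R_{\cS(w)}(w,d) = \partial_x \Delta_{\cS(w)} \in \cD^\dag_\varpi(X)\,x$, so it suffices to show $\beta(h) \notin \cD^\dag_{\varpi/|\pi_F|}(X) + \cD^\dag_\varpi(X)\,x$, which is done directly by applying the transpose involution from Lemma~\ref{Transpose}, evaluating at $x=0$, and comparing growth rates via Lemma~\ref{nFacVarpi}. You need to supply something along these lines for $k=d$ to complete the proof.
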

\begin{proof} First, we treat the case when $k\neq d$. Suppose for a contradiction that $\beta(h) \in \cD^\dag_{\varpi/|\pi_F|}(X) + \cD^\dag_\varpi(X) R_{\cS(w)}(w,d)$. Then we can find $\zeta \in \cO(X \cap V_1)$ such that $\zeta_{|X \cap V_0} = -(\xi_{w,d} \beta(h))_0$ by Proposition \ref{XiBetaZero}. Now, $-(\xi_{w,d} \beta(h))_0 \in \cF(X \cap V_0)$ by Lemma \ref{XVzeroSol}. Because the restriction map $\cO(X \cap V_1) \to \cO(X \cap V_0)$ is injective, we deduce that $\zeta \in \cF(X \cap V_1)$. 

Now, since $W({\bf C}) = \{a \in {\bf C} : |a| \geq 1\} \cup \{\infty\}$ in the $x$-coordinate, we see that $X \cap W = X \cap V_0$. Since also $V_0 \subseteq V_1$, we have
\[(X \cap V_1) \cap W = X \cap V_0.\]
On the other hand, if $a \in {\bf C}$ and $|a - \zeta^i| < 1$ for some $i = 0 ,\cdots, q-2$, then $|a| = 1$ and $a \in W({\bf C})$. Therefore $W$ contains all of the $q$ `large' holes of $X \cap V_1$. Hence
\[ (X \cap V_1) \cup W = Y\]
and $\{W, X \cap V_1\}$ is an affinoid covering of $Y$. 

By Lemma \ref{ROCzeta}, we can find some $\zeta' \in \cF(W)$. Then both $\zeta|_{X \cap V_0}$ and $\zeta'|_{X \cap V_0}$ lie in $\cF(X \cap V_0)$, so they are equal by Lemma \ref{XcapV0sols}. Finally, Tate's Acyclicity Theorem implies that the local solutions $\zeta \in \cF(X \cap V_1)$ and $\zeta' \in \cF(W)$ glue to give an element in $\cF(Y)$, which contradicts Corollary \ref{NoSolsOnY}.

The case $k = d$ requires a special, but much easier, argument. First note that when $k = d$, $w = (x^q - \pi_F^{q-1}x)^{-d}$ is a $d$-th power,  and in fact we that we have
\[ w = \Delta_{\cS(w)}^{-d}.\]
Hence we can apply Lemma \ref{RudProps}(b) to see that
\[R_{\cS(w)}(w,d) = \theta_{w \Delta_{\cS(w)}^d, d}(\partial_x) \Delta_{\cS(w)} = \partial_x \Delta_{\cS(w)},\]
because $\theta_{1,d}$ is the identity map in view of equation (\ref{ThetaTwist}).  Therefore $R_{\cS(w)}(w,d)$ lies in the left ideal $\cD^\dag_\varpi(X) x$, and it will be enough to show that
\[ \beta(h) \notin \cD^\dag_{\varpi/|\pi_F|}(X)  + \cD^\dag_\varpi(X)  x.\]
Suppose for a contradiction that $\beta(h) = P + Q x$ for some $P \in \cD^\dag_{\varpi/|\pi_F|}(X)$ and $Q \in \cD^\dag_\varpi(X)$. Hence $P \in \cD_r(X)$ for some $r > \varpi/|\pi_F|$ and $Q \in \cD_s(X)$ for some $s > \varpi$, by Definition \ref{DagSite}(d). Since $r > \varpi/|\pi_F| > \varpi$, we may shrink $s$ if necessary to ensure that $r \geq s > \varpi$. Now, applying the involutive transpose automorphism $(-)^T : \cD_s(X) \to \cD_s(X)$ from Lemma \ref{Transpose} shows that
\begin{equation}\label{TranspBetaH} \beta(h)^T = P^T + x Q^T.\end{equation}
By Definition \ref{AdelR} we can find a sequence of elements $(a_n)_{n=0}^\infty$ in $\cO(X)$, such that
\[ \lim\limits_{n\to\infty} |a_n|_Xr^n = 0 \qmb{and} P^T = \sum\limits_{n=0}^\infty a_n \partial^n \in \cD_r(X).\]
Next, we use the fact that $(-)^T$ is continuous and $\partial^T = -\partial$ to compute
\[ \beta(h)^T = \left(\sum\limits_{n=0}^\infty \frac{\pi_F^n}{n!} \partial^n\right)^T = \sum\limits_{n=0}^\infty \frac{\pi_F^n}{n!} (-\partial)^n = \beta(-h).\]
Setting $x = 0$ in equation (\ref{TranspBetaH}) and equating coefficients shows that
\[ \frac{ (-\pi_F)^n }{n!} = a_n(0) \qmb{for all} n \geq 0.\]
Finally, Lemma \ref{nFacVarpi} tells us that $\frac{1}{|n!|} \geq \frac{pn}{\varpi^n}$. Hence
\[ pn \cdot \left(\frac{|\pi_F|r}{\varpi}\right)^n \leq \frac{|\pi_F|^nr^n}{|n!| }|a_n(0)| \leq |a_n|_X r^n \to 0 \qmb{as} n \to \infty\]
which is impossible because $r > \varpi / |\pi_F|$ implies that $\frac{|\pi_F|r}{\varpi}>1$.
\end{proof}

\subsection{Estimates of \ts{p}-adic valuations of binomial coefficients}\label{BinEsts}

The goal of this section is to estimate the $p$-adic growth rate of the coefficient \begin{equation} \label{coeff}\sum_{r=0}^n \frac{ \binom{\frac{k}{d}}{r} \binom{\frac{qk}{d}-(q-1)r}{(n-r)(q-1)} }{(n-r)(q-1)+1} \end{equation} that arises in the proof of Theorem \ref{ZetaUnbounded} for $1\leq k\leq q$. In particular we will  see that for certain special values of $n$, the term corresponding to the value of $r$ that maximises the $p$-adic valuation of the denominator $(n-r)(q-1)+1$ will dominate this sum and moreover the $p$-adic norm of this term will be unbounded as $n$ increases through these special values. To achieve this we will appeal to a Theorem of Kummer to estimate the $p$-adic valuation of the binomial coefficients that appear in the sum.

For the remainder of the section we {\bf fix $1\leq k\leq q$ and define $f=v_p(q)$.}

Let $\lambda \in \Zp$ and $n \in \N$ be given, and consider their $p$-adic expansions
\[ \lambda = \lambda_0 + p \lambda_1 + p^2 \lambda_2 + \cdots, \quad n = n_0 + p n_1 + p n_2 + \cdots \]
where the $p$-adic digits $\lambda_i$ and $n_i$ all lie in $\{0,1,...,p-1\}$. Since $n$ is assumed to be a natural number, we know that $n_i = 0$ for all sufficiently large $i$. 

\begin{defn}\label{CarryDefn} Let $\lambda \in \Zp$ and $n \in \N$ be as above.
\be \item Let $i \geq 0$. The  \emph{$i$-th carry function} $\gamma_i(\lambda, n)$ is defined as follows:
\[\gamma_0(\lambda,n) = \left\{ \begin{array}{lll} 1 &\qmb{if} & \lambda_0 + n_0 > p-1 \\ 0 & \qmb{if} & \lambda_0 + n_0 \leq p-1\end{array}\right.\]
and for $i \geq 1$, it is defined recursively by
\[ \gamma_i(\lambda,n) = \left\{ \begin{array}{lll} 1 &\qmb{if} & \lambda_i + n_i + \gamma_{i-1}(\lambda,n) > p-1 \\ 0 & \qmb{if} & \lambda_i + n_i + \gamma_{i-1}(\lambda,n) \leq p-1.\end{array}\right.\]
\item We define
\[L(\lambda, n) := \inf\{ i \geq 0 : \gamma_j(\lambda,n)=0 \qmb{for all} j\geq i\}.\]
\item For each $m \geq 0$, the \emph{$m$-th non-carry function} is defined to be
\[ \cN_m(\lambda, n) := |\{0 \leq i < m : \gamma_i(\lambda,n) = 0\}|.\]
\item We define $\langle \lambda | n \rangle$ to be the binomial coefficient $\binom{\lambda+n}{n}$.
\ee\end{defn}
Thus $L(\lambda,n)$ records the position after which there are no further carries when performing the addition of $\lambda$ and $n$ in $\Zp$; it is possible that $L(\lambda,n) = \infty$ but this happens only when $\lambda_i = p-1$ for all sufficiently large $i$. More precisely we have the following Lemma.

\begin{lem}\label{CarStop} $L(\lambda,n)=\infty$ only when $\lambda$ is a negative integer and $n\geq -\lambda$. 
\end{lem}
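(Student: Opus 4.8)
The statement to prove is Lemma \ref{CarStop}: $L(\lambda,n)=\infty$ only when $\lambda$ is a negative integer and $n\geq -\lambda$.

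\textbf{Approach.} The plan is to argue contrapositively and by examining the eventual behaviour of the $p$-adic digits. Recall that $L(\lambda,n)=\infty$ means that the carry functions $\gamma_i(\lambda,n)$ are nonzero for arbitrarily large $i$; since the recursion for $\gamma_i$ shows that once $\gamma_{i-1}=0$ and $n_i=0$ we get $\gamma_i=0$, and since $n_i=0$ for all $i$ large (as $n\in\bN$), an infinite sequence of nonzero carries forces $\gamma_i(\lambda,n)=1$ for \emph{all} $i\geq i_0$ for some threshold $i_0$. First I would pin down this threshold: choose $N$ with $n_i=0$ for all $i\geq N$, and observe that if $\gamma_{i-1}=0$ for some $i>N$ then $\gamma_j=0$ for all $j\geq i$, contradicting $L=\infty$; hence $\gamma_i(\lambda,n)=1$ for all $i\geq N-1$ (or all $i\geq N$, choosing the bookkeeping carefully).

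\textbf{Key steps.} Having established that $\gamma_i(\lambda,n)=1$ for all $i\geq N$, feed this back into the defining recursion: for $i>N$ we have $n_i=0$ and $\gamma_{i-1}=1$, so the condition $\gamma_i=1$ reads $\lambda_i+0+1>p-1$, i.e. $\lambda_i\geq p-1$, forcing $\lambda_i=p-1$ for all $i>N$. A standard fact about $p$-adic integers is that $\lambda\in\Zp$ has all digits equal to $p-1$ from some point on precisely when $\lambda$ is a negative integer; I would either cite this or give the one-line argument that $\lambda+p^{N+1}\cdot\frac{1}{1-p}\cdots$ — more cleanly, that $\lambda = m + p^{N+1}\sum_{j\geq 0}(p-1)p^j = m - p^{N+1}$ for the integer $m := \sum_{i=0}^{N}\lambda_i p^i$, so $\lambda\in\Z$ and $\lambda<0$. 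This gives the first half of the conclusion. For the second half, $n\geq -\lambda$: since $L(\lambda,n)=\infty$ there are carries at every position $\geq N$, and a carry at position $i$ certainly means $\lambda_i+n_i+\gamma_{i-1}\geq p$; more to the point, the existence of infinitely many carries in the addition $\lambda+n$ performed in $\Zp$ is incompatible with $\lambda+n$ being a non-negative integer (a non-negative integer has finitely many nonzero digits, so only finitely many carries), hence $\lambda+n$ is either a negative integer or has infinitely many nonzero digits; but $\lambda+n = (m-p^{N+1})+n$ with $m,n,p^{N+1}\in\Z$, so $\lambda+n\in\Z$, and it is not a non-negative integer, so $\lambda+n<0$, i.e. $n<-\lambda$ — wait, this gives the wrong inequality, so here I must be more careful: I would instead directly track the digits of $n$ versus $-\lambda = p^{N+1}-m$ and show the carry condition at all large positions forces $n$ to be large enough, or simply re-examine whether the intended inequality in the source is $n \geq -\lambda$ or $n > -\lambda$ and reconcile the digit computation with it; most likely the cleanest route is to note $\gamma_N(\lambda,n)=1$ together with $\lambda_i = p-1$ for $i>N$ implies by induction $\gamma_i = 1$ automatically, so the only genuine constraint is $\gamma_N(\lambda,n)=1$, which combined with $n_i=0$ for $i\geq N$ unwinds to $m + (\text{something}) \geq$ a power of $p$, yielding $n\geq -\lambda$.

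\textbf{Main obstacle.} The delicate point is the precise relationship between ``carry at position $N$'' and the inequality $n\geq-\lambda$; this is really the statement that adding $n$ to $\lambda=m-p^{N+1}$ produces a carry out of position $N$ exactly when $n+m\geq p^{N+1}$, i.e. when $n\geq p^{N+1}-m=-\lambda$. I would prove this by writing $n = \sum_{i\le N} n_i p^i$ (valid since $n_i=0$ for $i>N$), $m=\sum_{i\le N}\lambda_i p^i$, and checking by induction on $i$ that $\gamma_i(\lambda,n)$ equals the carry out of position $i$ in the integer addition $m+n$, so that $\gamma_N=1 \iff m+n\geq p^{N+1}$. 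Everything else is routine digit bookkeeping; I do not anticipate any conceptual difficulty beyond getting this equivalence — and the direction of the final inequality — exactly right.
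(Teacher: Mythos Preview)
Your argument for the first half (that $\lambda$ is a negative integer) is essentially the paper's: from $n_j=0$ for $j\geq m$ and $L(\lambda,n)=\infty$ you deduce $\gamma_j(\lambda,n)=1$ for all $j\geq m-1$ and hence $\lambda_j=p-1$ for all $j\geq m$, so $\lambda\in\Z_{<0}$.

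For the second half, your first attempt contains an actual error: you assert that infinitely many carries in $\lambda+n$ is incompatible with $\lambda+n$ being a non-negative integer. This is false --- consider $\lambda=-1$, $n=1$: every $\gamma_i$ equals $1$, yet $\lambda+n=0\in\bN$. You then catch the wrong inequality and pivot to the equivalence $\gamma_N(\lambda,n)=1\iff m+n\geq p^{N+1}$, which is correct and does finish the proof.

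The paper's route for this half is considerably shorter and in fact exploits exactly the phenomenon that tripped you up. Once you know $\gamma_{j-1}=1$, $\lambda_j=p-1$, $n_j=0$ for all $j\geq m$, you can read off the $j$-th digit of $\lambda+n$ directly: it is $\lambda_j+n_j+\gamma_{j-1}\bmod p=(p-1)+0+1\bmod p=0$. Thus $(\lambda+n)_j=0$ for all $j\geq m$, so $\lambda+n\in\bN$, i.e.\ $n\geq-\lambda$. No separate inequality-tracking is needed: the infinite string of carries is precisely what forces the high digits of $\lambda+n$ to vanish.
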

\begin{proof}
Since $n\in \bN$ there is $m\geq 1$ such that $n_j=0$ for all $j\geq m$. 

Suppose that $L(\lambda,n)=\infty$. Then by definition $\gamma_j(\lambda,n)=1$ for infinitely many values of $j$. But, by \ref{CarryDefn}(a), if $\gamma_j(\lambda,n)=1$ for $j\geq m$ then $\gamma_{j-1}(\lambda,n)=1$ and $\lambda_{j}=p-1$. It follows that $\gamma_j(\lambda,n)=1$ for all $j\geq m-1$ and $\lambda_j=p-1$ for all $j\geq m$ and $\lambda$ is a negative integer. Moreover $(\lambda+n)_j=0$ for all $j\geq m$; that is $\lambda+n\in bN$ and so $n\geq -\lambda$ as claimed.
\end{proof}
In our applications, $L(\lambda,n)$ will always be finite. The function $\cN_m(\lambda,n)$ records the number of non-carries that occurred in the first $m$ digits when performing the addition of $\lambda$ and $n$ in $\Zp$.

We have now developed enough language to precisely state Kummer's theorem on the $p$-adic valuation of binomial coefficients.

\begin{thm}[Kummer, 1852]\label{Kum} Let $\lambda \in \Zp$ and $n \in \N$ be given, and suppose that $L(\lambda,n) < \infty$. Then 
\[ v_p \Bin{\lambda}{n} = m - \cN_m(\lambda,n) \qmb{for all} m \geq L(\lambda,n).\]
\end{thm}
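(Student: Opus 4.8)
The plan is to reduce the statement to the classical case in which $\lambda$ is a non-negative integer, and then to derive that case from Legendre's formula $v_p(n!)=\frac{n-s_p(n)}{p-1}$, just as in the proof of Lemma \ref{nFacVarpi}.

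First I would dispose of the degenerate behaviour. By Lemma \ref{CarStop}, the hypothesis $L(\lambda,n)<\infty$ prevents $\lambda$ from equalling any of $-1,-2,\ldots,-n$, so none of the factors $\lambda+1,\ldots,\lambda+n$ vanishes and hence $\Bin{\lambda}{n}=\binom{\lambda+n}{n}=\tfrac1{n!}\prod_{j=1}^n(\lambda+j)$ is a non-zero element of $\Zp$; in particular $v_p\Bin{\lambda}{n}$ is finite. Now fix $m\geq L:=L(\lambda,n)$ and pick $N$ large: larger than $m$, larger than the last index at which $n$ has a non-zero digit, and large enough that $N-v_p(n!)>v_p\Bin{\lambda}{n}$. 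Let $\lambda^{(N)}:=\sum_{i=0}^{N-1}\lambda_ip^i\in\{0,1,\ldots,p^N-1\}$, so $\lambda\equiv\lambda^{(N)}\pmod{p^N}$. Then $\prod_{j=1}^n(\lambda+j)\equiv\prod_{j=1}^n(\lambda^{(N)}+j)\pmod{p^N}$, and dividing by $n!$ gives $\Bin{\lambda}{n}\equiv\Bin{\lambda^{(N)}}{n}\pmod{p^{N-v_p(n!)}}$; by the ultrametric inequality and the choice of $N$ this forces $v_p\Bin{\lambda}{n}=v_p\Bin{\lambda^{(N)}}{n}$. Moreover $\lambda$ and $\lambda^{(N)}$ share their digits in positions $0,\ldots,N-1$, so $\gamma_i(\lambda,n)=\gamma_i(\lambda^{(N)},n)$ for $i<N$; since $N>m\geq L$ all of these carries, and all later carries of both numbers, vanish, whence $L(\lambda^{(N)},n)\leq L$, $\cN_m(\lambda^{(N)},n)=\cN_m(\lambda,n)$, and the total number of carries in $\lambda+n$ and in $\lambda^{(N)}+n$ coincide. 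So it is enough to treat the case $\lambda=a\in\N$.

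For non-negative integers $a,b$ I would argue directly. Put $c_i:=\gamma_i(a,b)$ with $c_{-1}:=0$, and let $d_i$ be the $i$-th $p$-adic digit of $a+b$; the recursive definition of the carry functions is exactly the assertion that $d_i=a_i+b_i+c_{i-1}-p\,c_i$ for all $i\geq 0$. Summing over $i$ (finitely many non-zero terms) and using $\sum_{i\geq0}c_{i-1}=\sum_{i\geq0}c_i$ yields the standard identity $s_p(a+b)=s_p(a)+s_p(b)-(p-1)\sum_{i\geq0}c_i$. On the other hand Legendre's formula gives
\[ v_p\binom{a+b}{b}=v_p((a+b)!)-v_p(a!)-v_p(b!)=\frac{s_p(a)+s_p(b)-s_p(a+b)}{p-1},\]
so $v_p\binom{a+b}{b}=\sum_{i\geq0}c_i$, the total number of carries in the addition of $a$ and $b$. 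Finally, for any $m\geq L(a,b)$ there are no carries in positions $\geq m$, so $\sum_{i\geq0}c_i=\sum_{i=0}^{m-1}c_i=m-\cN_m(a,b)$; combined with the reduction above this proves the theorem.

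I expect the only genuine subtlety to be the bookkeeping in the truncation step — ensuring $N$ is large enough that replacing $\lambda$ by $\lambda^{(N)}$ disturbs neither the valuation of the binomial coefficient nor the carry pattern below position $m$ — but this is routine once Lemma \ref{CarStop} has ruled out the vanishing of $\Bin{\lambda}{n}$. The digit-sum identity and its combination with Legendre's formula are entirely standard.
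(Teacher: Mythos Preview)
Your proof is correct and follows essentially the same strategy as the paper: reduce to the classical case $\lambda\in\N$ by truncating the $p$-adic expansion of $\lambda$, then invoke the integer version of Kummer's theorem. The paper truncates at position $L(\lambda,n)$ and cites the literature for the integer case, whereas you truncate at a sufficiently large $N$ and supply a self-contained derivation from Legendre's formula; these are cosmetic differences only.
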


Under the assumption that $L(\lambda,n) < \infty$, the carrying will stop in finite time in the sense that eventually all $\gamma_i(\lambda,n)$ are zero. Then for sufficiently large $m$, the quantity on the right hand side is independent of $m$ and returns the total number of carries that occurred during the addition of $\lambda$ and $n$ in $\Zp$ --- this is perhaps a more usual way of formulating Kummer's theorem. We also note that if $L(\lambda,n)=\infty$ then $\langle \lambda|n\rangle=0$ so $v_p(\langle \lambda|n\rangle)=\infty$ in that case.

\begin{proof}[Proof of Theorem \ref{Kum}] A proof in the case that $\lambda\in \bN$ appeared in \cite{Kummer1852}. A modern account of the proof in that case can be found in \cite[Theorem 3.7]{Granville}. 
	
Now let $\mu\in \bN$ be such that $0\leq \mu< p^{L(\lambda,n)}$ and $v_p(\lambda-\mu)\geq L(\lambda,n)$; that is the $p$-adic expansion of $\mu$ is the truncation of the $p$-adic expansion of $\lambda$ at the $L(\lambda,n)$th digit. Then it is easy to verify that $L(\lambda,n)=L(\mu,n)$ and $\cN_m(\lambda,n)=\cN_m(\mu,n)$ for all $m$. Moreover $v_p\Bin{\lambda}{n}=v_p\Bin{\mu}{n}$ since $v_p(\lambda+i)=v_p(\mu+i)\leq L(\lambda,n)$ for all $1\leq i\leq n$. 
\end{proof}
In most cases of interest to us, the possibility of carrying process \emph{not} stopping in finite time is ruled out by the following 


\begin{notn} Given $\alpha_0,\alpha_1,\cdots, \alpha_{j-1} \in \{0,1,\cdots, q-1\}$, write
	\[ [\alpha_0, \alpha_1, \cdots, \alpha_{j-1}]_j := \alpha_0 q^0 + \alpha_1 q^1 + \cdots + \alpha_{j-1} q^{j-1}.\]
	We call $\alpha_i$ the  \emph{$i$th $q$-adic digit} of this \emph{$q$-expansion}. \end{notn}

\begin{lem}\label{nocarry} Suppose that $\lambda\in \bZ_p$, $n,j\in \bN$ with \[\lambda\equiv [\alpha_0,\ldots,\alpha_{j-1}]_j \mod q^j \mbox{ and  }n=[\beta_0,\ldots, \beta_{j-1}]_j. \]\be \item If $\beta_i+\alpha_i<q-1$ for some $0<i<j$ then $\gamma_{(i+1)f-1}(\lambda,n)=0$. 
 \item If $\beta_{j-1}+\alpha_{j-1}<q-1$ then $L(\lambda,n)<(j+1)f$.  
\ee\end{lem}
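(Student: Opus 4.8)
The plan is to reduce everything to the single observation that $q=p^{f}$ (as $q$ is the order of the residue field $k_{F}$, whose characteristic is $p$, and $f=v_{p}(q)$). Consequently the $q$-adic expansion of a number is obtained from its $p$-adic expansion by grouping the $p$-adic digits into consecutive blocks of length $f$, and under this grouping the base-$p$ carry process of Definition \ref{CarryDefn}(a) is exactly the base-$q$ carry process. More precisely, the hypothesis $\lambda\equiv[\alpha_{0},\ldots,\alpha_{j-1}]_{j}\bmod q^{j}$ says that for each $0\leq i<j$ the $p$-adic digits $\lambda_{if},\lambda_{if+1},\ldots,\lambda_{if+f-1}$ of $\lambda$ are precisely the base-$p$ digits of $\alpha_{i}$; likewise $n=[\beta_{0},\ldots,\beta_{j-1}]_{j}$ pins down the first $jf$ base-$p$ digits of $n$, and $n<q^{j}=p^{jf}$ forces $n_{m}=0$ for all $m\geq jf$.

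The key auxiliary claim I would isolate is a \emph{block carry lemma}: for every $i\geq 0$, writing $c:=\gamma_{if-1}(\lambda,n)$ (with the convention $\gamma_{-1}:=0$) and letting $a_{i},b_{i}\in\{0,\ldots,q-1\}$ be the $i$-th $q$-adic digits of $\lambda$ and of $n$, one has $\gamma_{(i+1)f-1}(\lambda,n)=1$ if and only if $a_{i}+b_{i}+c\geq q$. This is a routine fact about positional notation: running the recursion of Definition \ref{CarryDefn}(a) through the $f$ columns starting at position $if$ with carry-in $c$ produces the base-$p$ digits of the remainder of $a_{i}+b_{i}+c$ modulo $q$, and emits the carry $\lfloor (a_{i}+b_{i}+c)/q\rfloor$, which lies in $\{0,1\}$ since $a_{i}+b_{i}+c\leq 2(q-1)+1<2q$. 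I would prove it by a short induction on the column index $\ell=0,1,\ldots,f-1$ within the block.

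Granting the block carry lemma, part (a) is immediate: for $0<i<j$ the $i$-th $q$-adic digits of $\lambda$ and of $n$ are genuinely $\alpha_{i}$ and $\beta_{i}$, and since $c\in\{0,1\}$ the hypothesis $\alpha_{i}+\beta_{i}<q-1$ gives $\alpha_{i}+\beta_{i}+c\leq q-1<q$, whence $\gamma_{(i+1)f-1}(\lambda,n)=0$. For part (b), apply the block carry lemma with $i=j-1$: once more $\alpha_{j-1}+\beta_{j-1}+c\leq(q-2)+1=q-1<q$, so $\gamma_{jf-1}(\lambda,n)=0$. Combining this with $n_{m}=0$ for $m\geq jf$, a trivial induction on $m$ using $\gamma_{m}(\lambda,n)=1\iff\lambda_{m}+n_{m}+\gamma_{m-1}(\lambda,n)>p-1$ shows $\gamma_{m}(\lambda,n)=0$ for all $m\geq jf-1$, hence $L(\lambda,n)\leq jf-1<(j+1)f$ as required (and in particular $L(\lambda,n)<\infty$).

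I do not anticipate a genuine obstacle; the only point needing care is the bookkeeping in the block carry lemma — keeping track of the convention $\gamma_{-1}=0$, and checking that the edge cases $i=0$ and $j=1$ are subsumed (they are, since the lemma's induction starts each block with an arbitrary carry-in $c\in\{0,1\}$, so nothing special happens at the bottom block).
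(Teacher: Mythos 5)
Your proof is correct and follows essentially the same route as the paper's: the paper packages what you call the block carry lemma as the observation that $\gamma_{(i+1)f-1}(\lambda,n)$ equals $\gamma_{f-1}(\alpha_i,\beta_i)$ or $\gamma_{f-1}(\alpha_i+1,\beta_i)$ according to whether $\gamma_{if-1}(\lambda,n)$ is $0$ or $1$, and then uses $\alpha_i+1+\beta_i<q$ to kill both possibilities, while part (b) is finished exactly as you describe using $n_m=0$ for $m\geq jf$. Your explicit $\gamma_{-1}:=0$ convention and the closing induction treat the edge case $j=1$ a little more carefully than the paper's terse ``follows from part (a) and Definition \ref{CarryDefn}(a)'', but the underlying argument is the same.
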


\begin{proof} (a) Under the hypothesis $\beta_i+\alpha_i<q-1$ \[\gamma_{(i+1)f-1}(\lambda,n)=\begin{cases} \gamma_{f-1}(\alpha_{i},\beta_i) & \mbox{if }\gamma_{if-1}(\lambda,n)=0 \\ \gamma_{f-1}(\alpha_i+1,\beta_i) & \mbox{if }\gamma_{if-1}(\lambda,n)=1\end{cases}\] and since $\alpha_i+1+\beta_i<q$ this is zero in either case. 

(b) Since $n<q^j$, we have $n_i=0$ whenever $i\geq jf$. The result now follows from part (a) and Definition \ref{CarryDefn}(a).
\end{proof}

\begin{defn} For $n\geq 1$, let $M=M_n\in \bN$ be largest possible such that $1+q+\cdots+q^M\leq n$. We also define \[s:=s_n:=n- (1+q+\cdots + q^M)\geq 0.\]
\end{defn}

It is easy to see that $s_n<q^{M_n+1}$ since otherwise we could make $M$ larger.

\begin{lem} \label{Arch} Let $0 \leq r \leq n$ be an integer. Then 
	\[ v_p(r-s_n) < (M_n+1)f\]
	whenever $r \neq s_n$. 
\end{lem}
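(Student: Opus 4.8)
\textbf{Proof proposal for Lemma \ref{Arch}.}

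The key observation is that since $q$ is the size of a residue field it is a \emph{prime power}, so in fact $q = p^f$ exactly (as $f = v_p(q)$). Consequently $q^{M_n+1} = p^{(M_n+1)f}$ and $v_p(q^{M_n+1}) = (M_n+1)f$. This converts the claimed $p$-adic valuation bound into the purely archimedean assertion that, for $0 \le r \le n$ with $r \ne s_n$, the integer $r - s_n$ is \emph{not} divisible by $q^{M_n+1}$. The plan is to prove this by a size estimate.

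First I would record the two relevant bounds. On the one hand $0 \le s_n < q^{M_n+1}$, which was noted immediately before the statement (if $s_n \ge q^{M_n+1}$ one could replace $M_n$ by $M_n+1$). On the other hand, by the maximality of $M_n$ we have $n < 1 + q + \cdots + q^{M_n+1} = q + q^2 + \cdots + q^{M_n+1} + 1$, and summing the geometric series gives $q + \cdots + q^{M_n+1} < q^{M_n+1}\tfrac{q}{q-1} \le 2q^{M_n+1}$ since $q \ge 2$; hence $0 \le r \le n < 2q^{M_n+1}$. Combining the two, $-q^{M_n+1} < r - s_n < 2q^{M_n+1}$.

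Now suppose for a contradiction that $v_p(r - s_n) \ge (M_n+1)f$, i.e. $q^{M_n+1} \mid r - s_n$. Since $r \ne s_n$ the difference is a nonzero multiple of $q^{M_n+1}$ lying in the open interval $(-q^{M_n+1}, 2q^{M_n+1})$, so necessarily $r - s_n = q^{M_n+1}$. Then $r = s_n + q^{M_n+1} = n - (1 + q + \cdots + q^{M_n}) + q^{M_n+1}$, and the constraint $r \le n$ forces $q^{M_n+1} \le 1 + q + \cdots + q^{M_n}$. But $1 + q + \cdots + q^{M_n} = \tfrac{q^{M_n+1}-1}{q-1} < q^{M_n+1}$ for every $q \ge 2$, a contradiction. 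Hence $v_p(r - s_n) < (M_n+1)f$, as required.

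I do not anticipate any serious obstacle here: the whole argument is a short elementary estimate, and the only point requiring mild care is keeping track of the geometric-series inequalities so that the target difference is trapped strictly between $-q^{M_n+1}$ and $2q^{M_n+1}$, leaving $q^{M_n+1}$ as the only possible nonzero multiple and then eliminating it via $r \le n$.
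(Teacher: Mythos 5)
Your proof is correct and rests on the same elementary idea as the paper's: trap $r-s_n$ in an interval around $0$ that is too short to contain a nonzero multiple of $q^{M_n+1}=p^{(M_n+1)f}$. The paper's version is slightly more direct: it uses $n-s_n=1+q+\cdots+q^{M_n}<q^{M_n+1}$ together with $s_n<q^{M_n+1}$ to get $|r-s_n|<q^{M_n+1}$ at once, whereas you first bound $n<2q^{M_n+1}$ (leaving $r-s_n=q^{M_n+1}$ as a possible case) and then eliminate that case using $r\le n$, which amounts to re-deriving the same inequality $n-s_n<q^{M_n+1}$; the two arguments are substantively identical.
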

\begin{proof}
	Since $s<q^{M+1}$ and $n-s=1+q+\cdots + q^M< q^{M+1}$, \[s - q^{M+1} < 0 \leq s \leq n < s + q^{M+1},\] which implies that 
	\[\{0,1,\cdots, n\} \cap (s + q^{M+1} \bZ) = \{s\}.\]
	Suppose that $v_p(r-s) \geq (M+1)f$. Then $q^{M+1}$ divides $r - s$, so $r \in s + q^{M+1} \bZ$. Since $0 \leq r \leq n$ by assumption, we conclude that $r = s$.\end{proof}

\begin{cor}\label{vprs} Let $0 \leq r \leq n$ be an integer. Then
	\[ v_p((n-r)(q-1) + 1) = \left\{ \begin{array}{lll} (M_n+1)f &\qmb{if}& r = s_n, \\ v_p(r - s) &\qmb{if} & r \neq s_n. \end{array} \right. \]
\end{cor}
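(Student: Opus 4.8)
The plan is to reduce the whole statement to the elementary identity
\[(n - s_n)(q-1) + 1 = q^{M_n+1},\]
which is immediate from the definition $n - s_n = 1 + q + \cdots + q^{M_n}$ together with the geometric-series formula $(1 + q + \cdots + q^{M_n})(q-1) = q^{M_n+1} - 1$. First I would add and subtract $s_n$ inside the bracket and apply this identity to rewrite
\[(n-r)(q-1)+1 = q^{M_n+1} - (r - s_n)(q-1).\]

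From here the two cases are handled separately. If $r = s_n$, the second term on the right-hand side vanishes and $(n-r)(q-1)+1 = q^{M_n+1}$, whose $p$-adic valuation is $(M_n+1)v_p(q) = (M_n+1)f$ since $q = p^f$. If $r \neq s_n$, note first that $q - 1 \equiv -1 \pmod p$, so $q-1$ is a $p$-adic unit and $v_p((r - s_n)(q-1)) = v_p(r - s_n)$. By Lemma \ref{Arch}, $v_p(r - s_n) < (M_n+1)f = v_p(q^{M_n+1})$; in particular the two summands $q^{M_n+1}$ and $(r-s_n)(q-1)$ have distinct $p$-adic valuations. The ultrametric property of $v_p$ then forces the valuation of their difference to equal the smaller of the two, namely $v_p(r - s_n)$, which is exactly the claimed value.

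The argument involves no real obstacle: the only substantive input is Lemma \ref{Arch}, which supplies precisely the strict inequality needed to invoke $v_p(a - b) = \min\{v_p(a), v_p(b)\}$ in the regime $v_p(a) \neq v_p(b)$. Everything else is the one-line geometric-series computation above and the bookkeeping of the two cases, so I expect the proof to be very short.
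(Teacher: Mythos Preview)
Your proposal is correct and follows essentially the same route as the paper: both proofs rewrite $(n-r)(q-1)+1$ as $q^{M_n+1}+(s_n-r)(q-1)$ via the geometric-series identity $(n-s_n)(q-1)+1=q^{M_n+1}$, observe that $v_p(q-1)=0$, invoke Lemma~\ref{Arch} for the strict inequality, and finish with the non-Archimedean triangle inequality.
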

\begin{proof} By definition of $s$, $v_p((n-s)(q-1)+1) = v_p(q^{M+1}) = (M+1)f$. Now
	\[ (n-r)(q-1) + 1 = (q-1)(s - r) \quad + \quad (n-s)(q-1) + 1\]
	and $m := v_p(r - s)$ is strictly less than $v_p(q^{M+1}) = (M+1)f$ whenever $r \neq s$ by Lemma \ref{Arch}. Since $v_p(q-1) = 0$, the result now follows from the non-Archimedean triangle inequality. \end{proof}

Thus, combining the last two results, we see that $s_n$ has been chosen to maximise $p$-adic value of the denominator of (\ref{coeff}) for given $n$.
 
We are also interested in the following complicated-looking binomial coefficient.
\begin{defn}\label{SNR} For each $n, r \in \N$ with $0 \leq r \leq n$, define
\[S_{n,r} := \Bin{ \frac{qk}{q+1} - n(q-1) \hsp }{\hsp (n-r)(q-1)} = \binom{\frac{qk}{q+1}-r(q-1)}{(n-r)(q-1)}.\]
\end{defn}

In order to control this, until Corollary \ref{SumEstimate} below, we \textbf{fix} a possibly very large positive integer $N \geq 0$, and define $n_N$ to be the smallest positive integer such that 
\[n_N\equiv \frac{qk}{q^2-1}\mod q^{N}\] 

This expression obviously depends on $N$, but because of its frequent appearance we will abbreviate it to $n := n_N$ until Corollary \ref{SumEstimate}. The reason for this choice of the form of $n$ is the following result which will enable us to ignore the $S_{n,r}$ in the numerator of (\ref{coeff}) when estimating its $p$-adic value. 

\begin{prop}\label{SecondVal}  $v_p(S_{n,r})\geq 0$ for all $0\leq r\leq n$ and $v_p(S_{n,s_n}) = 0$.
\end{prop}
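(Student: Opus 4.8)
The plan is to use Kummer's Theorem (Theorem \ref{Kum}) to compute $v_p(S_{n,r})$ as a count of carries, and to show this count is always nonnegative and is exactly zero at $r = s_n$. Recall $S_{n,r} = \Bin{\mu_r}{\nu_r}$ with $\mu_r := \frac{qk}{q+1} - r(q-1)$ and $\nu_r := (n-r)(q-1)$; since $\nu_r \geq 0$ and Kummer's theorem gives $v_p(\langle \mu_r | \nu_r \rangle) \geq 0$ automatically as soon as we check $L(\mu_r,\nu_r) < \infty$, the first inequality $v_p(S_{n,r}) \geq 0$ is almost immediate: I would invoke Lemma \ref{CarStop}, noting $\mu_r$ is not a negative integer (it lies in $\frac{qk}{q+1} + \bZ$ and $\frac{k}{q+1} \notin \bZ$ as $1 \leq k \leq q$), so the carrying process terminates and hence $v_p(S_{n,r}) = m - \cN_m(\mu_r,\nu_r) \geq 0$ for $m$ large, since trivially $\cN_m \leq m$. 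Actually one must be slightly careful: $v_p(\langle \lambda | n \rangle)$ counts carries in adding $\lambda$ and $n$, and the number of carries is always $\geq 0$, so this half is genuinely routine.

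The substance is the equality $v_p(S_{n,s_n}) = 0$, i.e. that \emph{no} carries occur when adding $\mu_{s} := \frac{qk}{q+1} - s(q-1)$ to $\nu_{s} := (n - s)(q-1) = q^{M+1} - 1 + \cdots$ — wait, more precisely $(n-s_n)(q-1) = (1 + q + \cdots + q^M)(q-1) = q^{M+1} - 1$. So I need to show there are no carries in the $p$-adic addition of $\frac{qk}{q+1} - s_n(q-1)$ and $q^{M+1}-1$. The key observation is that $q^{M+1} - 1 = [q-1, q-1, \ldots, q-1]_{M+1}$ has all its bottom $(M+1)$ $q$-adic digits equal to $q-1$ (equivalently, its bottom $(M+1)f$ $p$-adic digits are all $p-1$), and is zero above that. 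The chosen form $n \equiv \frac{qk}{q^2-1} \pmod{q^N}$ is engineered precisely so that $\mu_{s_n} = \frac{qk}{q+1} - s_n(q-1)$ has its bottom several $q$-adic digits all equal to $0$: indeed $\frac{qk}{q+1} - (q-1)\cdot\frac{qk}{q^2-1} = \frac{qk}{q+1} - \frac{qk}{q+1} = 0$, so $\mu_{s_n} \equiv -(q-1)(s_n - \frac{qk}{q^2-1}) \equiv 0 \pmod{q^{?}}$ for a controlled power determined by how $s_n$ relates to $\frac{qk}{q^2-1} \bmod q^N$ and by $M$. Adding a number whose bottom digits are all $0$ to a number whose bottom digits are all $p-1$ produces no carry in those positions; above those positions one must check the digits of $q^{M+1}-1$ have run out (they're $0$) and that the residual carrying from $\mu_{s_n}$ alone contributes nothing new. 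This requires tracking the relationship between $N$, $M = M_n$, and the $q$-adic digits of $s_n$, using that $s_n < q^{M+1}$ and $1 + q + \cdots + q^M \leq n < q^{N}$ forces $M + 1 \leq N$ roughly.

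The main obstacle I anticipate is the careful bookkeeping of $q$-adic versus $p$-adic digit positions and verifying that the low-order agreement $\mu_{s_n} \equiv 0$ holds to high enough $q$-adic precision (at least $q^{M+1}$) relative to $N$, so that the no-carry conclusion covers all positions up to and beyond $L(\mu_{s_n}, \nu_{s_n})$; Lemma \ref{nocarry}(b) should be the right tool for bounding $L$ once the digit pattern is established. I would structure the argument as: (i) compute $\nu_{s_n} = q^{M+1}-1$ and read off its $q$-adic expansion; (ii) show $v_q(\mu_{s_n})$ — or rather the number of vanishing low $q$-adic digits of $\mu_{s_n}$ — is at least $M+1$, using $n \equiv \frac{qk}{q^2-1} \pmod{q^N}$, the definition $s_n = n - (1+\cdots+q^M)$, and $M + 1 \leq N$; (iii) conclude that in the addition of $\mu_{s_n}$ and $\nu_{s_n}$ every digit of $\nu_{s_n}$ sits below the vanishing zone of $\mu_{s_n}$, so no carry is generated, whence $\cN_m = m$ for all $m \geq L$ and $v_p(S_{n,s_n}) = 0$ by Theorem \ref{Kum}.
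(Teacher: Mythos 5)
Your overall plan — compute $v_p(S_{n,r})$ via Kummer's Theorem as a count of carries, get $\geq 0$ for free, and show zero carries occur at $r=s_n$ by identifying a digit pattern where the two summands have disjoint nonzero digit ranges — is exactly what the paper does. But you have misidentified the first argument of $\Bin{\lambda}{m} = \binom{\lambda+m}{m}$ in Kummer's Theorem: since $S_{n,r} = \binom{\frac{qk}{q+1}-r(q-1)}{(n-r)(q-1)}$, the $\lambda$ to which Theorem \ref{Kum} applies is $\alpha := \frac{qk}{q+1} - n(q-1)$, which is independent of $r$, and not your $\mu_r := \frac{qk}{q+1} - r(q-1)$ (which equals $\lambda + m$, the \emph{sum}). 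This matters for your step (ii): you claim the bottom $q$-adic digits of $\mu_{s_n}$ vanish, but in fact $\mu_{s_n} = \alpha + (q^{M+1}-1)$ and $\alpha \equiv 0 \pmod{q^N}$ with $M+1 \leq N$, so $\mu_{s_n} \equiv q^{M+1}-1 \pmod{q^N}$: its bottom $M+1$ $q$-adic digits are all $q-1$, not $0$. So the digit pattern you try to establish for $\mu_{s_n}$ is false as stated.

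The fix is simply to track $\alpha$ instead: the congruence $n \equiv \frac{qk}{q^2-1} \pmod{q^N}$ gives $\alpha \equiv 0 \pmod{q^N}$ directly, so all $p$-adic digits $\alpha_j$ vanish for $j < Nf$, while $\beta := (n-s_n)(q-1) = q^{M+1}-1 < q^N$ has $\beta_j = 0$ for $j \geq Nf$. At every position at most one digit is nonzero (and there is no incoming carry to promote it), so no carries occur, and Theorem \ref{Kum} gives $v_p(S_{n,s_n}) = 0$. This is the paper's argument. Your ``key observation'' about adding a zero-digit number to an all-$(p-1)$-digit number is the right intuition, but it applies to the pair $(\alpha, \beta)$, not to $(\mu_{s_n}, \nu_{s_n})$. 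As a minor note, the paper dispatches the first inequality in one line: $S_{n,r}$ is a binomial coefficient with a $p$-adic-integer top argument and a natural-number bottom argument, hence a $p$-adic integer; your Kummer-plus-Lemma-\ref{CarStop} route is correct but heavier than needed.
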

\begin{proof}The first part is well-known since $qk/(q+1)$ is a $p$-adic integer.
	
	We define $\alpha := \frac{qk}{q+1} - n(q-1)\in \bZ_p$ so that $\alpha  \equiv 0 \mod q^{N}$.
	That is, the $p$-adic digits $\alpha_j$ of $\alpha$ are  zero whenever $j < Nf$. Similarly, since $M<N$, 
	\[\beta := (n-s)(q-1) \leq q^{M+1}-1 < q^{N}\]
	so $\beta_j = 0$ for all $j \geq Nf$. It follows that $\gamma_j(\alpha,\beta) = 0$ for all $j \geq 0$, and therefore $v_p(S_{n,s}) = v_p \Bin{\alpha}{\beta} = 0$ by Theorem \ref{Kum}.
\end{proof}

 For reasons that will become apparent later {\bf we now assume \[\begin{cases} N \mbox{ is even } & \mbox{if } 1\leq k\leq q-1 \mbox{ or } k=2=q \\ N \mbox{ is odd } & \mbox{if } k=q>2.\end{cases}\]} It is also convenient to suppose that $N\geq 6$. 

We can now compute the precise value of $M_n$ in terms of $N$ depending on the values of $k$ and $q$.

\begin{lem}\label{valM} \hfill \[ M_n = \begin{cases}    N-1 & \mbox{if }  1\leq k \leq q-2 \mbox{ or }k=q>2,\\ 
	N-2 & \mbox{if }  \left(k=q-1 \mbox{ and } q>2\right) \mbox{ or } k=q=2,\\ 
	N-3 & \mbox{if }k=1, q=2. \end{cases}\]	
\end{lem}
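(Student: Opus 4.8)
The plan is to determine $M_n$ from the $q$-adic expansion of $n=n_N$, which we can write down explicitly. The starting point is the $p$-adic (equivalently, $q$-adic) expansion of $\tfrac{qk}{q^2-1}\in\Zp$. Writing $\tfrac{1}{q^2-1}=-(1+q^2+q^4+\cdots)$ as a geometric series gives $-\tfrac{qk}{q^2-1}=qk+q^3k+q^5k+\cdots$; when $1\le k\le q-1$ this incurs no carries, so it has $q$-adic digits $0,k,0,k,0,k,\dots$, while when $k=q$ it equals $q^2+q^4+q^6+\cdots$ with digits $0,0,1,0,1,0,\dots$. Applying the standard rule for negating a $p$-adic integer then yields the $q$-adic digits of $\tfrac{qk}{q^2-1}$ itself: for $1\le k\le q-1$ they are $0$ in position $0$, $q-k$ in position $1$, and then $q-1$ in even positions $\ge 2$ alternating with $q-1-k$ in odd positions $\ge 3$; for $k=q$ they are $0,0$ in positions $0,1$, then $q-1$ in position $2$, and then $q-1$ in odd positions $\ge 3$ alternating with $q-2$ in even positions $\ge 4$.

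Since $n_N$ is by definition the least positive integer congruent to $\tfrac{qk}{q^2-1}$ modulo $q^N$, its $q$-expansion is exactly the truncation of the above at positions $0,\dots,N-1$. This is where the parity hypotheses on $N$ imposed just before the lemma are used: they pin down the digit of $n_N$ in position $N-1$, and, when that digit happens to vanish --- which occurs precisely for $k=q-1$ with the relevant parity, and in the smallest residue fields --- they pin down the position of the true leading digit. The result is an explicit description of $n_N$ as a $q$-adic number whose nonzero part spans positions $0$ up to $N-1$, $N-2$, or $N-3$ according to the three cases in the statement.

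To pass from this to $M_n$, I would clear the denominator $q-1$ in the defining inequality: $1+q+\cdots+q^M\le n_N$ is equivalent to $q^{M+1}\le(q-1)n_N+1$, so $M_n+1$ is the position of the leading $q$-adic digit of $(q-1)n_N+1$. Multiplying the explicit expansion of $n_N$ by $q-1=q\cdot 1-1$ is a shift-and-subtract (with a single borrow rippling through the middle digits), and adding $1$ is harmless; carrying this out identifies the leading digit. For $q\ge 4$ the position-$(N-1)$ digit of $n_N$ is large enough that it alone decides the comparison, giving $M_n=N-1$ at once in the generic case. For $q\in\{2,3\}$ the digits $q-1-k$ and $q-2$ may equal $0$ or $1$, so at the borderline $M=N-1$ the naive leading-digit estimate is inconclusive and one must sum the entire lower-order tail of $n_N$ --- a geometric series --- and compare the resulting closed form with $1+q+\cdots+q^{N-1}=\tfrac{q^N-1}{q-1}$; this is exactly what produces the shifts $N-2$ and $N-3$. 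The main obstacle is therefore this low-$q$ bookkeeping: there the inequality $1+q+\cdots+q^M\le n_N$ is genuinely delicate at $M=N-1$, and only an explicit evaluation of the geometric sums (as in the sample computations $n_N=\tfrac{q^N+3}{4}$ for $k=q-1$, $q=3$, and the analogous formulas when $q=2$) settles on which side of the threshold $n_N$ lies.
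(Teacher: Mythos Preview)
Your plan is correct and matches the paper's proof: both compute the $q$-adic expansion of $n_N$ (the paper via the identity $n\equiv 1+\frac{(q-k-1)q+(q-1)}{1-q^2}$, you via negating the geometric series for $-\tfrac{qk}{q^2-1}$, which gives the same digits) and then compare $n_N$ against $1+q+\cdots+q^M$ directly for the candidate values of $M$. One small imprecision: the claim that for $q\ge 4$ the position-$(N-1)$ digit \emph{alone} decides is not literally true when that digit equals $1$ (namely $k=q-2$); the paper handles this by looking at the top two digits, bounding $n>q^{N-1}+2q^{N-2}>1+q+\cdots+q^{N-1}$, and your shift-and-subtract reformulation would recover the same thing.
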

\begin{proof}
	Since $n\leq q^N$ it is immediate that $M\leq N-1$ in all cases. 	 
		 
	We see that \[ n\equiv 1+ \frac{(q-k-1)q+(q-1)}{1-q^2}=2+ \frac{(2q-k-1)q+(q-2)}{1-q^2} \mod q^{N} \] so, for $ 1\leq  k\leq q-1$, since $N$ is even, the $q$-adic expansion of $n$ is  
	\[ n= 0 + (q-k)q + (q-1)q^2 + (q-1-k)q^3  + \cdots + (q-1)q^{N-2} + (q-1-k)q^{N-1},  \] 
 For $k=q>2$, since $N$ is odd, the $q$-adic expansion of $n$ is 
 \[ n = 0 + 0\cdot q + (q-1)q^2 + (q-1)q^3 + (q-2)q^4
 + \cdots +  (q-1)q^{N-2} + (q-2)q^{N-1},   \] and for $k=q=2$, since $N$ is even, 
\[ n = 0 + 0\cdot q + (q-1)q^2 + (q-1)q^3 + (q-2)q^4
+ \cdots +  (q-2)q^{N-2} + (q-1)q^{N-1},   \]
 where in all cases, the $q$-digits after the $3$rd digit repeat with period $2$ until the last $q$-digit.

If $1\leq k\leq q-2$ then $q \geq 3$, and it is now easy to see that \[ 1+ q + q^2 + \cdots + q^{N-1} < 2q^{N-2} + q^{N-1}< n\] so $M=N-1$. 

Similarly if $k=q-1$ and if $q \geq 3$, then \[1+q+q^2 + \cdots + q^{N-2}<2q^{N-2}<n<q^{N-1} \] 
and so $M=N-2$, and if $k=q$ and $q \geq 3$, then
 \[ 1+q+q^2+\cdots + q^{N-1}<n<q^N\] 
so $M = N-2$.

The two cases where $q=2$ can be treated in a similar manner noting that when $q=2$, $1+q+q^2+\cdots +q^M=q^{M+1}-1$. 
\end{proof}

Our next job will be to compute the $q$-adic expansions of $\frac{k}{q+1} - s_n$ and $s_n$ up to the $f(M+1)$st $p$-adic digit in all cases.

\begin{lem}\label{qExpCalc} Write $\lambda := \frac{k}{q+1} \in \Zp \backslash \N$.

\be \item Suppose that $1 \leq k \leq q-2$. Then $M$ is odd and
\[ \begin{array}{rclcccccr} s &=& [q-1, & q-k-2, &q-2, & q-k-2, &\cdots, & q-2, & q-k-2]_{M+1} \\
\lambda-s &\equiv& [k+1, & 1, & k+1, & 1, &\cdots, &k+1 , &1]_{M+1}
\end{array}\]
\item Suppose that $k = q-1$ and $q>2$. Then $M$ is even and 
\[ \begin{array}{rclccccr} s &=& [q-1, & q-1, &q-3, &\cdots, &q-1, &q-3]_{M+1} \\
\lambda-s &\equiv& [0, &2, &0, &\cdots, &2, &0]_{M+1}\end{array}\]

\item Suppose that $k=1$ and $q=2$. Then $M$ is odd and 
\[ \begin{array}{rclcccccr} s & = & [1, & 1 , & 1 ,& 0, & \cdots, & 1 , & 0]_{M+1} \\
	\lambda-s &\equiv& [0, &0, &1, &0, & \cdots , &1, &0 ]_{M+1}
	\end{array}\]
\item Suppose that $k=2$ and $q=2$. Then $M$ is even and
\[ \begin{array}{rclccccccr} s & = & [ 1, & 0, & 1 ,& 1, & 0, & \cdots, & 1 , & 0]_{M+1} \\
\lambda-s &\equiv& [1, &0, &0, & 1, & 0 & \cdots, &1, &0]_{M+1}\end{array} \]
\item Suppose that $k = q$ and $q>2$. Then  $M$ is even and
\[ \begin{array}{rclccccr} s &=& [q-1, &q-2, &q-3, &\cdots, &q-2, &q-3]_{M+1} \\
\lambda-s &\equiv& [1, &2, &1, &\cdots, &2, &1]_{M+1}
\end{array}\]\ee
where $\equiv$ denotes congruence modulo $q^{M+1}$.

Note that all of the displayed expansions are recurrent with period $2f$ but not purely periodic: the two $q$-digits just preceeding the $\cdots$ sign are the repeating ones. \end{lem}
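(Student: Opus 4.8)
\textbf{Proof strategy for Lemma \ref{qExpCalc}.} The plan is to deduce both expansions from two already-available inputs. The first is the $q$-adic expansion of $n = n_N$, which was written down explicitly in the proof of Lemma \ref{valM} in each of the relevant cases (it is recurrent with period $2f$, with $q$-digits cycling through a pattern such as $q-1,\,q-1-k$ or $q-1,\,q-2$ according to the values of $k$ and $q$, and with low digit $0$). The second is the value and parity of $M = M_n$ supplied by Lemma \ref{valM} together with the standing parity hypothesis on $N$; in particular $M+1 \le N$ in every case, which we use freely.

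\textbf{The expansion of $s$.} By definition $s = n - (1 + q + \cdots + q^M)$ with $0 \le s < q^{M+1}$, so the $q$-digits of $s$ in positions $0, \ldots, M$ are obtained by performing the $q$-adic subtraction (equivalently, $p$-adic subtraction with digits grouped into blocks of length $f$) of the repunit $[1,1,\ldots,1]_{M+1}$ from the known expansion of $n$. Since the lowest $q$-digit of $n$ is $0$, a single borrow is produced which cascades until it reaches the first nonzero $q$-digit of $n$; thereafter the subtraction is digit-by-digit with no further borrows, until the top digit, whose behaviour one checks against the explicit value of $M$ from Lemma \ref{valM}. Carrying this out separately in the five cases ($1 \le k \le q-2$; $k = q-1$ with $q>2$; $k=1$ with $q=2$; $k=2$ with $q=2$; $k=q>2$) produces the displayed expansions of $s$, as well as the asserted parity of $M$.

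\textbf{The expansion of $\lambda - s$ modulo $q^{M+1}$.} Since $n \equiv \tfrac{qk}{q^2-1} \pmod{q^N}$ by the choice $n = n_N$, and $M+1 \le N$, we get $\lambda - n \equiv \tfrac{k}{q+1} - \tfrac{qk}{q^2-1} = \tfrac{-k}{q^2-1} \pmod{q^{M+1}}$, hence
\[ \lambda - s \;=\; (\lambda - n) + \frac{q^{M+1}-1}{q-1} \;\equiv\; \frac{-k}{q^2-1} - \frac{1}{q-1} \;=\; \frac{-(k+q+1)}{q^2-1} \pmod{q^{M+1}}. \]
In $\Z_q$ one has $\tfrac{-1}{q^2-1} = \sum_{i\ge 0} q^{2i}$, so $\tfrac{-(k+q+1)}{q^2-1} = (k+q+1)\sum_{i\ge 0}q^{2i}$ and its $q$-adic digits are read off from the base-$q$ expansion of $k+q+1$: this is $[k+1,1]_2$ for $1 \le k \le q-2$ (digits alternating $k+1, 1$), $[0,2]_2$ for $k=q-1>2$, and $[1,2]_2$ for $k=q>2$; for $q=2$ the integer $k+q+1 \in \{4,5\}$ spans three binary positions, so convolving with $\sum q^{2i}$ creates a mild overlap that has to be resolved by hand, which is precisely what produces the special shape of the two $q=2$ cases. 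Truncating at position $M$ and using the parity of $M$ gives the stated expansions of $\lambda-s$; as a consistency check one verifies $(\lambda-s)+s \equiv \lambda \pmod{q^{M+1}}$ against the (easily computed) expansion of $\lambda = \tfrac{k}{q+1}$.

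\textbf{Main obstacle.} There is no conceptual difficulty: everything reduces to finite, explicit $q$-adic borrow/carry arithmetic. The real work is the bookkeeping across all five cases, with particular care needed at the lowest digit (where the borrow cascades), at the top digit (where the expansions are recurrent but not purely periodic, so the period-$2f$ pattern does not extend cleanly to position $0$), and in the two $q=2$ cases, where $2f=2$ is too short to hold $k+q+1$ in one block.
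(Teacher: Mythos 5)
Your proposal is correct, and for $\lambda - s$ it is essentially the argument the paper gives: both reduce to $\lambda - s \equiv \frac{-(k+q+1)}{q^2-1} \pmod{q^{M+1}}$ and then read off $q$-adic digits from $\frac{1}{1-q^2} = \sum_{i\ge 0} q^{2i}$. Your computation of $s$ takes a mildly different route. You subtract the repunit $1 + q + \cdots + q^M$ digit-by-digit from the explicit $q$-adic expansion of $n$ supplied in the proof of Lemma \ref{valM}. The paper instead works from the closed form
\[ s \;\equiv\; \frac{-qk-q-1}{1-q^2} \;\equiv\; 1 + \frac{(q-2)+(q-k-2)\,q}{1-q^2} \;\equiv\; 2 + \frac{(q-3)+(2q-k-2)\,q}{1-q^2} \pmod{q^{M+1}} \]
and expands $\frac{1}{1-q^2}$ directly, using the first rewriting for $1 \le k \le q-1$ and the second for $k = q$; for $q=2$ it notes the shortcut $s + 2^{M+1} = n+1$ and reuses the expansion of $n$. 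Both routes are elementary $q$-adic arithmetic, so this is a cosmetic difference. One caveat about your sketch of the subtraction: the claim that the borrow cascades to the first nonzero $q$-digit of $n$ and that ``thereafter the subtraction is digit-by-digit with no further borrows'' holds only for $1 \le k \le q-2$. When $k \in \{q-1, q\}$ (and in the $q=2$ cases) the digit of $n$ at position $1$ is $0$ or $1$, so the borrow propagates past it, and \emph{further} borrows recur at every subsequent odd position where $n$ has digit $0$ (e.g.\ at positions $3, 5, \ldots$ when $k=q-1$). Your displayed expansions are nonetheless the correct output, but the bookkeeping must track borrows through all positions, not just the first two; you already flag the $q=2$ cases, but the borrows are equally nontrivial for $k \in \{q-1,q\}$ with $q>2$.
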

\begin{proof} The parity of $M$ can be deduced in each case from Lemma \ref{valM} and the parity of $N$. 
	
	 Since $q^{M+1}$ divides $q^N$ in all cases, \begin{eqnarray*} n &\equiv &\frac{-qk}{1-q^2} \mod q^{M+1} \mbox{ so} \\  s &\equiv& \frac{-qk}{1-q^2}-\frac{1}{1-q} \mod q^{M+1}\\ & \equiv &\frac{-qk-q-1}{1-q^2} \mod q^{M+1}\\& \equiv & 1 + \frac{(q-2)+ (q-k-2)q}{1-q^2} \\ &\equiv & 2 + \frac{(q-3)+(2q-k-2)q}{1-q^2}, \mbox{ and }\\ 
		\lambda-s & \equiv & \frac{k}{q+1}+\frac{qk+q+1}{1-q^2}=\frac{k+q+1}{1-q^2} \mod q^{M+1}.\end{eqnarray*}	
All the calculations can now be done in a straightforward manner. For the case where $k = q$, it is helpful to note that $s \leq n < q^{2N+1}$ by definition of $n$, so that the last two $q$-digits in the $q$-expansion of $s$ are $q-3$ and $q-2$. For the cases where $q=2$ it is helpful to remember that $s+2^{M+1}=n+1$ and to use the $q$-expansions of $n$ found in the proof of Lemma \ref{valM}.
\end{proof}

\begin{cor}\label{NoCarryInLastSpot} In all five cases appearing in Lemma \ref{qExpCalc}, the carrying in the sum $s + (\lambda - s)$ stops before the $(M+1)f$ position. More precisely, we have
\[L(\lambda-s,s) < (M+1)f.\]
 \end{cor}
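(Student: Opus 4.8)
The plan is to read off the claim directly from the explicit $q$-adic expansions established in Lemma \ref{qExpCalc}, together with the no-carry criterion of Lemma \ref{nocarry}(b). In each of the five cases, Lemma \ref{qExpCalc} gives the $q$-adic digits of $s$ and (the truncation mod $q^{M+1}$ of) $\lambda - s$ in positions $0,1,\dots,M$. So the first step is simply to add, in each case, the top $q$-digit $\alpha_M$ of $s$ and the top $q$-digit $\beta_M$ of $\lambda - s$ and check that $\alpha_M + \beta_M < q-1$; then Lemma \ref{nocarry}(b), applied with $j = M+1$, gives $L(\lambda-s,s) < (M+2)f$. But we want the sharper bound $L(\lambda-s,s)<(M+1)f$, so I would instead apply Lemma \ref{nocarry}(a) with $i = M$ (or rather argue directly): if the top pair of $q$-digits sums to strictly less than $q-1$ and there are no digits beyond position $M$ (which holds since $s < q^{M+1}$ and, after truncating $\lambda$, also $\lambda - s < q^{M+1}$), then the carry out of the block of $p$-digits making up the $M$-th $q$-digit is $0$, i.e. $\gamma_{(M+1)f-1}(\lambda-s,s)=0$, and hence $\gamma_j(\lambda-s,s)=0$ for all $j \geq (M+1)f - 1$, which is exactly $L(\lambda-s,s) < (M+1)f$.

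Concretely, the digit checks are: in case (a), top digits $q-k-2$ and $1$, sum $q-k-1 \leq q-3 < q-1$; in case (b), $q-3$ and $0$, sum $q-3 < q-1$ (using $q>2$); in case (c), $0$ and $0$; in case (d), $0$ and $0$; in case (e), $q-3$ and $1$, sum $q-2 < q-1$ (using $q>2$). In every case the top pair of digits sums to at most $q-2$, so there is no carry out of the final $q$-digit, which is the whole point. I would also remark that one must first replace $\lambda$ by its truncation $\mu \equiv \lambda \bmod q^{M+1}$ with $0 \le \mu < q^{M+1}$, exactly as in the proof of Theorem \ref{Kum}: this does not change any of $L$, $\cN_m$, or $\langle \lambda-s\,|\,s\rangle$ for $m \le (M+1)f$, and it is what makes ``no digits beyond position $M$'' literally true.

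The only subtlety — and the thing I'd be most careful about — is the indexing between $q$-adic digits and $p$-adic digits: the $i$-th $q$-adic digit of an integer occupies $p$-adic positions $if, if+1, \dots, (i+1)f-1$, and ``the carry stops before position $(M+1)f$'' means $\gamma_j = 0$ for all $j \geq (M+1)f - 1$, equivalently for all $j \geq (M+1)f$ together with $\gamma_{(M+1)f-1}=0$. Since the $M$-th (top) $q$-digits of the two summands add to something $\leq q-2$, even allowing an incoming carry of $1$ from position $(M+1)f - 2$ the block-sum is at most $q-1 \leq q^1 - 1$, so it fits in $f$ $p$-adic digits with no carry out; this is precisely the computation inside the proof of Lemma \ref{nocarry}(a) with $\gamma_{Mf-1} \in \{0,1\}$. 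Thus no genuine obstacle arises: the corollary is a bookkeeping consequence of Lemma \ref{qExpCalc}, and the proof is just the five one-line digit comparisons above followed by an invocation of (the argument of) Lemma \ref{nocarry}. I would write it as: ``This is immediate from Lemma \ref{qExpCalc}: in each of the five cases the $M$-th $q$-adic digits of $s$ and of (the truncation mod $q^{M+1}$ of) $\lambda - s$ sum to at most $q-2 < q-1$, and since neither $s$ nor this truncation has any nonzero digit in position $> M$, the argument of Lemma \ref{nocarry}(a) gives $\gamma_{(M+1)f-1}(\lambda-s,s)=0$, whence $\gamma_j(\lambda-s,s)=0$ for all $j\ge (M+1)f-1$; that is, $L(\lambda-s,s)<(M+1)f$.''
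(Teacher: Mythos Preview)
Your proposal is correct and follows the same approach as the paper, whose entire proof is: ``In each case we can use the expressions for $s$ and $\lambda-s$ in Lemma \ref{qExpCalc} and Lemma \ref{nocarry}(b).'' One minor slip: in case~(a) the sum $q-k-1$ is $\leq q-2$ (not $\leq q-3$) when $k=1$, but this still satisfies $<q-1$; and you are right to observe that Lemma~\ref{nocarry}(b) as literally stated gives only $L<(M+2)f$, whereas its proof (i.e.\ part~(a) together with $s<q^{M+1}$) actually yields the needed $L<(M+1)f$.
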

We note that the parity of $N$ in each case was chosen to make this true. 
\begin{proof} In each case we can use the expressions for $s$ and $\lambda-s$ in Lemma \ref{qExpCalc} and Lemma \ref{nocarry}(b). 
\end{proof}

The remaining substantive result will be the following Proposition, whose proof highlights the importance of \emph{counting the }non\emph{-carry positions} in the sum $(\lambda - r) + r = \lambda$.
\begin{prop}\label{CountNoCarries} Let $\lambda \in \Zp \backslash \N$, $s \in \N$ and $\ell \geq 1$ be given, such that
\[L(\lambda-s,s) < \ell.\]
Then for any $r \in \N$ with $v_p(r-s) < \ell$, we have
\[ v_p \Bin{\lambda-r}{r} - v_p(r - s) \quad  > \quad v_p \Bin{\lambda-s}{s} - \ell.\]
\end{prop}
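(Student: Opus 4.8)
The plan is to phrase everything in terms of Kummer's theorem (Theorem \ref{Kum}). For $\mu \in \Zp$ and $n \in \N$ with $L(\mu,n) < \infty$, choosing $m \geq L(\mu,n)$ in Theorem \ref{Kum} shows that $v_p\Bin{\mu}{n}$ equals the total number of carries occurring in the $p$-adic addition of $\mu$ and $n$; I will write $c(\mu,n)$ for this number. Since $\Bin{\lambda-r}{r} = \binom{\lambda}{r}$ is non-zero (because $\lambda \notin \N$), we have $L(\lambda-r,r) < \infty$ and $v_p\Bin{\lambda-r}{r} = c(\lambda-r,r)$; likewise $v_p\Bin{\lambda-s}{s} = c(\lambda-s,s)$ using the hypothesis $L(\lambda-s,s) < \ell$. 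Set $m_0 := v_p(r-s)$; since $m_0 < \ell < \infty$ we have $r \neq s$, so $m_0$ is a genuine finite valuation and $r \equiv s \pmod{p^{m_0}}$.

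First I would establish that the two additions agree below position $m_0$. From $r \equiv s \pmod{p^{m_0}}$ the $p$-adic digits of $r$ and $s$ coincide in positions $0,\ldots,m_0-1$, and $\lambda - r \equiv \lambda - s \pmod{p^{m_0}}$ likewise forces the digits of $\lambda-r$ and $\lambda-s$ to coincide in those positions. An immediate induction on $i$ using the recursion of Definition \ref{CarryDefn}(a) then gives $\gamma_i(\lambda-r,r) = \gamma_i(\lambda-s,s)$ for all $0 \leq i \leq m_0-1$. In particular the number $c_0$ of carry positions lying in $\{0,\ldots,m_0-1\}$ is the same for both additions.

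The decisive step is the upper bound for $c(\lambda-s,s)$ coming from $L := L(\lambda-s,s) \leq \ell-1$: by definition of $L$ there is no carry of the $s$-addition at any position $\geq L$, so all its carries lie in $\{0,\ldots,L-1\}$, of which $c_0$ lie below $m_0$ and at most $L - m_0 \leq \ell - 1 - m_0$ lie in $\{m_0,\ldots,L-1\}$; hence $v_p\Bin{\lambda-s}{s} = c(\lambda-s,s) \leq c_0 + \ell - 1 - m_0$. On the other hand $c(\lambda-r,r) \geq c_0$ trivially, since the $r$-addition already has $c_0$ carries below $m_0$ and a nonnegative number elsewhere. Combining the two inequalities,
\[ v_p\Bin{\lambda-r}{r} - v_p(r-s) \;\geq\; c_0 - m_0 \;>\; c_0 - m_0 - 1 \;=\; \big(c_0 + \ell - 1 - m_0\big) - \ell \;\geq\; v_p\Bin{\lambda-s}{s} - \ell,\]
which is exactly the asserted inequality.

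I do not expect a genuine obstacle here: the argument is elementary digit-bookkeeping once it is organised correctly. The two points requiring care are (i) extracting from the hypothesis $L(\lambda-s,s) < \ell$ the fact that the $s$-addition has no carry at position $\ell-1$ or beyond — this is precisely what produces the crucial strict gain of one in the estimate — and (ii) disposing of the degenerate cases $r = s$ (ruled out by $m_0 < \ell$) and $\Bin{\lambda-r}{r} = 0$ (ruled out by $\lambda \notin \N$; in any case the statement is then trivial as the left-hand side is $+\infty$).
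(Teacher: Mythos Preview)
Your proof is correct and follows essentially the same strategy as the paper's. The paper phrases everything via the non-carry counting function $\cN_m$ and the truncation $t_m$, whereas you count carries directly; these are dual formulations, and the two key steps --- matching the carry pattern of the two additions in positions $0,\ldots,m_0-1$ with $m_0 = v_p(r-s)$, then using $L(\lambda-s,s) \leq \ell-1$ to bound the remaining carries of the $s$-addition above by $\ell-1-m_0$ --- are identical in content.
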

\begin{proof} Theorem \ref{Kum} implies that
\begin{equation}\label{vpBin} v_p\Bin{\lambda-s}{s} = \ell - \cN_\ell(\lambda-s,s).\end{equation}
Define the \emph{$m$-th truncation function} $t_m : \Zp \to \N$ for $m \geq 0$ by
\[ t_m \left( \sum_{i=0}^\infty \mu_i p^i \right) = \sum_{i=0}^{m-1} \mu_i p^i\]
where $\mu_i \in \{0,\ldots,p-1\}$. Note that the sum on the left is computed in $\Zp$ but the sum on the right is computed in $\N$.  Because $L(\lambda-s,s) < \ell$ by assumption, Definition \ref{CarryDefn}(b) implies that $\gamma_{\ell - 1}(\lambda-s,s) = 0$. Therefore
\begin{equation}\label{LastNoCarry} \cN_\ell(\lambda-s,s) > \cN_{\ell - 1}(\lambda-s,s) \geq \cN_m(\lambda-s,s) = \cN_m(t_m(\lambda-s),t_m(s)) \end{equation}
whenever $0 \leq m < \ell$.  This inequality applies in particular when $m := v_p(r - s)$ which is strictly less than $\ell$ by hypothesis. Note also that because $r \equiv s$ mod $p^m$, we know that $t_m(r) = t_m(s)$ and $t_m(\lambda-r) = t_m(\lambda-s)$. 

Next, we have the crude estimate
 \begin{equation}\label{Crude} \cN_j(\lambda-r,r) \leq \cN_m(t_m(\lambda-r), t_m(r)) + (j - m) \qmb{for all} j\geq m.\end{equation}

Finally, let $j = \max \{m, L(\lambda-r,r)\}$ which is finite by Lemma \ref{CarStop} because $\lambda \notin \N$. Then Theorem \ref{Kum} implies that
\begin{equation}\label{vpBin2} v_p\Bin{\lambda-r}{r} = j - \cN_j(\lambda-r,r).\end{equation}
 Applying $(\ref{vpBin2})$, $(\ref{Crude})$, $(\ref{LastNoCarry})$ and $(\ref{vpBin})$, we obtain 
\[\begin{array}{lll} v_p\Bin{\lambda-r}{r} - m &=& j-\cN_j(\lambda-r,r) - m   \\
&\geq & -\cN_m(t_m(\lambda-r), t_m(r)) \\
&=  & -\cN_m(t_m(\lambda-s), t_m(s)) \\
& > & - \cN_\ell(\lambda-s,s) \\
&=& v_p\Bin{\lambda-s}{s} - \ell
 \end{array}\]
as required. \end{proof}

We now put everything together to obtain our main estimate.
\begin{thm}\label{UniqueMin} Let $0 \leq r \leq n$ be given with $r \neq s$. Then 
\[ v_p \left( \frac{\Bin{\lambda-r}{r}}{(n-r)(q-1)+1} \right) > v_p \left( \frac{\Bin{\lambda-s}{s}}{(n-s)(q-1)+1} \right).\]
\end{thm}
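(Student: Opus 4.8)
The plan is to reduce the displayed inequality directly to the conclusion of Proposition \ref{CountNoCarries}. Write $\lambda := \frac{k}{q+1}$ and $\ell := (M_n + 1)f$. By Corollary \ref{vprs} the two denominators have valuations $v_p\bigl((n - s)(q-1) + 1\bigr) = \ell$ and $v_p\bigl((n - r)(q-1) + 1\bigr) = v_p(r - s)$ for $r \neq s$, and by Definition \ref{CarryDefn}(d) we have $\Bin{\lambda - r}{r} = \binom{\lambda}{r}$ and $\Bin{\lambda - s}{s} = \binom{\lambda}{s}$. Since $v_p$ of a quotient is the difference of the valuations, the assertion of the theorem is exactly
\[ v_p\Bin{\lambda - r}{r} - v_p(r - s) \;>\; v_p\Bin{\lambda - s}{s} - \ell,\]
which is precisely the conclusion of Proposition \ref{CountNoCarries} applied with this $\lambda$, this $\ell$, and the given $s = s_n$.

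What remains is to verify the three hypotheses of Proposition \ref{CountNoCarries} in our situation. First, $\lambda = \frac{k}{q+1}$ lies in $\Zp \setminus \N$: it is a $p$-adic integer because $q$ is a power of $p$, so $q + 1 \equiv 1$ modulo $p$ and hence $q+1$ is a unit in $\Zp$; and it is not a natural number since $1 \leq k \leq q$ forces $0 < \lambda < 1$. Second, $L(\lambda - s, s) < \ell$: this is exactly Corollary \ref{NoCarryInLastSpot}, which itself rests on the five-case computation of the $q$-adic expansions of $s$ and $\lambda - s$ in Lemma \ref{qExpCalc} together with the parity hypotheses imposed on $N$. Third, $v_p(r - s) < \ell$ whenever $0 \leq r \leq n$ and $r \neq s$: this is Lemma \ref{Arch}. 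With all three in hand, Proposition \ref{CountNoCarries} yields the displayed inequality and the theorem follows.

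For this particular statement there is essentially no remaining obstacle: the genuine work — the Kummer-carry bookkeeping of Proposition \ref{CountNoCarries}, the delicate $q$-adic digit computations of Lemma \ref{qExpCalc} and Corollary \ref{NoCarryInLastSpot}, and the non-Archimedean cancellation of Corollary \ref{vprs} — has already been carried out in the preceding results. The only point that needs care is purely bookkeeping: one must use the \emph{same} value $\ell = (M_n + 1)f$ in all three inputs, i.e. read off $M_n$ from $N$ exactly as in Lemma \ref{valM}, so that the bound $L(\lambda - s, s) < \ell$ furnished by Corollary \ref{NoCarryInLastSpot} is consistent with the denominator valuation $(M_n+1)f$ furnished by Corollary \ref{vprs}. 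Once the statement is proved, I would record (presumably as Corollary \ref{SumEstimate}) that combining it with Proposition \ref{SecondVal} shows the $r = s$ term strictly dominates the $p$-adic valuation of the sum $(\ref{coeff})$, and that this valuation equals $-\ell = -(M_n+1)f \to -\infty$ as $N \to \infty$, which is what feeds back into the proof of Theorem \ref{ZetaUnbounded}.
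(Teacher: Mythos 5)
Your proof is correct and follows the same route as the paper: reduce via Corollary \ref{vprs} to the inequality in Proposition \ref{CountNoCarries}, then verify its hypotheses using Corollary \ref{NoCarryInLastSpot} and Lemma \ref{Arch}. The only (welcome) difference is that you explicitly check $\lambda = \frac{k}{q+1} \in \Zp \setminus \N$, a hypothesis of Proposition \ref{CountNoCarries} that the paper's proof leaves implicit.
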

\begin{proof} By Corollary \ref{vprs}, it is enough to show that
\[ v_p(\Bin{\lambda-r}{r}) - v_p(r-s) > v_p(\Bin{\lambda-s}{s}) - (M+1)f.\]
We set $\ell := (M+1)f$. Then Corollary \ref{NoCarryInLastSpot} tells us $L(\lambda - s, s) < \ell$, and Lemma \ref{Arch} tells us that $v_p(r-s) < \ell$. Now the required inequality follows from Proposition \ref{CountNoCarries}.
\end{proof}
It remains to estimate the carries in the sum $s+ (\lambda - s)$.

\begin{prop}\label{TheVal} For each $N\geq 6$ of suitable parity depending on $k$ and $q$
\[ v_p \left( \frac{\Bin{\lambda-s}{s}}{s(q-1)+1} \right) \leq  \frac{3-N}{2}  .\]
\end{prop}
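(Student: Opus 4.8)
The plan is to convert the assertion into a lower bound on the number of non\textendash carry positions arising when $\lambda-s$ and $s$ are added $p$\textendash adically, and then to extract that bound from the explicit $q$\textendash expansions recorded in Lemma~\ref{qExpCalc}.

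First I would observe that $\lambda := \tfrac{k}{q+1}$ lies in $\Zp\setminus\N$ (since $1\le k\le q$), that $\Bin{\lambda-s}{s}=\binom{\lambda}{s}$, and that the denominator equals $q^{M+1}$ by the defining property $n-s = 1+q+\cdots+q^M$ of $s=s_n$, so that its $p$\textendash adic valuation is $(M+1)f$ (Corollary~\ref{vprs}). Since $L(\lambda-s,s)<(M+1)f$ by Corollary~\ref{NoCarryInLastSpot}, Kummer's theorem (Theorem~\ref{Kum}) applied with $m=(M+1)f$ yields
\[ v_p\Bin{\lambda-s}{s} \;=\; (M+1)f \;-\; \cN_{(M+1)f}(\lambda-s,s),\]
so the left\textendash hand side of the proposition equals precisely $-\cN_{(M+1)f}(\lambda-s,s)$. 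The proposition is therefore equivalent to the inequality $\cN_{(M+1)f}(\lambda-s,s)\ge \tfrac{N-3}{2}$.

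To prove this I would argue case by case through the five pairs of $q$\textendash expansions of $s$ and $\lambda-s$ from Lemma~\ref{qExpCalc}. Here one uses that $q=p^f$, so that each $q$\textendash adic digit occupies a block of $f$ consecutive $p$\textendash adic positions and a carry out of the $q$\textendash digit in block $j$ is exactly the carry $\gamma_{(j+1)f-1}(\lambda-s,s)$. In every case the displayed expansions are recurrent with period two in the $q$\textendash digits \textemdash{} the parities imposed on $N$ just before Lemma~\ref{valM} being chosen precisely so that the recurrent tail behaves as shown \textemdash{} and one checks that a carry propagates out of a $q$\textendash block in at most one of every two consecutive blocks of the tail. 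In a block with no carry out, the top $p$\textendash position is automatically a non\textendash carry position, and in fact a short computation (the relevant $q$\textendash digit of $\lambda-s$ being one of $0,1,2$, and the incoming carry being a single bit) shows that generically all $f$ positions of the block are non\textendash carrying; combining these contributions with those of the leading one or two exceptional $q$\textendash digits, and using $M\in\{N-1,N-2,N-3\}$ from Lemma~\ref{valM}, produces at least $\tfrac{N-3}{2}$ non\textendash carry positions among the first $(M+1)f$.

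The hard part will be the bookkeeping in this last step: it has to be carried out separately in each of the five cases of Lemma~\ref{qExpCalc}, tracking how carries cross the interfaces between $q$\textendash digit blocks and ordinary $p$\textendash digit positions and accounting both for the non\textendash periodic leading $q$\textendash digits and for the exact value of $M$ (the $O(1)$ gap between $M$ and $N$ being exactly why the final bound takes the slightly awkward form $\tfrac{3-N}{2}$). Finally I would note that, together with Theorem~\ref{UniqueMin} (the $r=s$ summand uniquely minimises the $p$\textendash adic valuation among the terms of (\ref{coeff})) and Proposition~\ref{SecondVal} ($v_p(S_{n,r})\ge 0$, with equality at $r=s$), this estimate shows that the coefficient of $s^{n_N}$ in the expansion appearing in the proof of Theorem~\ref{ZetaUnbounded} has $p$\textendash adic valuation $-\cN_{(M+1)f}(\lambda-s,s)\le \tfrac{3-N}{2}$, hence tends to $-\infty$ with $N$ \textemdash{} which is the content of Corollary~\ref{SumEstimate}.
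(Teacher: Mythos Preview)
Your proposal is correct and follows essentially the same route as the paper: reduce via Kummer's theorem and Corollary~\ref{vprs} to a lower bound on $\cN_{(M+1)f}(\lambda-s,s)$, then read off that bound from the periodic $q$-expansions in Lemma~\ref{qExpCalc} together with the values of $M$ from Lemma~\ref{valM}. The paper streamlines your ``at most one of every two $q$-blocks carries out'' step by invoking Lemma~\ref{nocarry}(a) directly to obtain $\cN_{(M+1)f-1}(\lambda-s,s)\ge\lfloor(M-2)/2\rfloor$, so no tracking of carries inside individual $q$-blocks is needed.
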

\begin{proof} 
	We note that by Theorem \ref{Kum} and Corollary \ref{NoCarryInLastSpot}, \[v_p(\Bin{\lambda-s}{s})=(M+1)f-1-\cN_{(M+1)f-1}(\lambda-s,s).\] Considering Lemma \ref{nocarry}(a) and Lemma \ref{qExpCalc} we see that \[\cN_{(M+1)f-1}(\lambda-s,s)\geq \left\lfloor \frac{M-2}{2}\right\rfloor.\] Moreover by Corollary \ref{vprs} $v_p((n-s)(q-1)+1)=(M+1)f$. 
	
However, $M \geq N - 3$ by Lemma \ref{valM}. Hence \[ v_p \left( \frac{\Bin{\lambda-s}{s}}{s(q-1)+1} \right) \leq - \left\lfloor  \frac{M}{2}\right\rfloor\leq -\frac{N-3}{2} . \qedhere \]
\end{proof}

Here is the main result of $\S$\ref{BinEsts}. 

\begin{cor}\label{SumEstimate}  The $p$-adic valuation of the rational number  
\[ \sum_{r=0}^{n}  \frac{ \binom{\frac{k}{q+1}}{r} \binom{\frac{qk}{q+1}-(q-1)r}{(n-r)(q-1)}}{(n-r)(q-1)+1}\] is not bounded below as $n$ varies. 
\end{cor}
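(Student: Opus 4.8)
The plan is to isolate, for each large $N$ of the prescribed parity, a single dominant term in the sum indexed by $r = s_n$, and to show that its $p$-adic valuation tends to $-\infty$ while every other term has strictly larger valuation, so that the whole sum inherits the valuation of the dominant term by the non-archimedean triangle inequality. Concretely, I would fix $\lambda := \frac{k}{q+1} \in \Zp \setminus \N$, and for $n := n_N$ (the smallest positive integer congruent to $\frac{qk}{q^2-1}$ modulo $q^N$) consider the term
\[ T_{n,s_n} := \frac{\binom{\lambda}{s_n}\, S_{n,s_n}}{(n - s_n)(q-1) + 1}, \]
recalling that $\binom{\lambda}{r} = \langle \lambda - r \mid r\rangle = \Bin{\lambda - r}{r}$ in the notation of Definition \ref{CarryDefn}(d). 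The first step is to note that the factor $S_{n,s_n}$ appearing in the numerator contributes nothing to the valuation: by Proposition \ref{SecondVal} we have $v_p(S_{n,s_n}) = 0$ and $v_p(S_{n,r}) \geq 0$ for all $r$, so the numerator factors $S_{n,r}$ can only increase $p$-adic valuations and in particular do not spoil the dominance.

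The second step is the estimate that $T_{n,s_n}$ dominates all other terms. By Theorem \ref{UniqueMin} we already have, for every $0 \leq r \leq n$ with $r \neq s_n$,
\[ v_p\!\left( \frac{\Bin{\lambda - r}{r}}{(n-r)(q-1)+1} \right) > v_p\!\left( \frac{\Bin{\lambda - s_n}{s_n}}{(n-s_n)(q-1)+1} \right), \]
and combining this with $v_p(S_{n,r}) \geq 0$ shows that the full summand at $r$ has strictly larger valuation than the full summand at $r = s_n$. Since the minimum of the valuations of a finite sum of $p$-adic numbers is attained by a unique term, the non-archimedean triangle inequality gives
\[ v_p\!\left( \sum_{r=0}^n \frac{\binom{\lambda}{r}\binom{\frac{qk}{q+1} - (q-1)r}{(n-r)(q-1)}}{(n-r)(q-1)+1} \right) = v_p\!\left( \frac{\Bin{\lambda - s_n}{s_n}\, S_{n,s_n}}{(n - s_n)(q-1)+1} \right) = v_p\!\left( \frac{\Bin{\lambda - s_n}{s_n}}{(n - s_n)(q-1)+1} \right). \]

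The third step is to show that this quantity is unbounded below. That is exactly Proposition \ref{TheVal}: for each suitable $N \geq 6$ we have
\[ v_p\!\left( \frac{\Bin{\lambda - s_n}{s_n}}{(n - s_n)(q-1)+1} \right) \leq \frac{3 - N}{2}, \]
and since $N$ ranges over arbitrarily large integers of the prescribed parity, the right-hand side tends to $-\infty$. Hence the $p$-adic valuation of the displayed sum is not bounded below as $n$ varies, which is the assertion.

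The only real subtlety — and the step I would flag as the conceptual heart, though it has already been done in the preceding subsection — is the interplay between the three competing contributions in each summand: the numerator binomial $\binom{\lambda}{r}$, the auxiliary binomial $S_{n,r}$, and the denominator $(n-r)(q-1)+1$. The delicate point is that although the denominator valuation is maximised precisely at $r = s_n$ (Corollary \ref{vprs}), this alone would not suffice, since the numerator valuation could conceivably dip; Proposition \ref{CountNoCarries} (via the non-carry counting in the addition $(\lambda - r) + r = \lambda$) is what guarantees that the numerator valuation cannot drop fast enough to overtake the denominator gain, and the parity choice of $N$ in Corollary \ref{NoCarryInLastSpot} is what makes $L(\lambda - s_n, s_n) < (M_n+1)f$, so that Kummer's theorem applies cleanly. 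With those results in hand the proof of Corollary \ref{SumEstimate} is the short assembly described above; I would simply invoke $v_p(S_{n,r}) \geq 0$, Theorem \ref{UniqueMin}, the triangle inequality, and Proposition \ref{TheVal} in that order.
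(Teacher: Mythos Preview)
Your argument is correct and follows exactly the paper's approach: the paper's own proof is the one-liner ``Apply Proposition \ref{SecondVal}, Theorem \ref{UniqueMin} and Corollary \ref{TheVal}'', and you have simply unpacked this by using Proposition \ref{SecondVal} to neutralise the $S_{n,r}$ factors, Theorem \ref{UniqueMin} to establish strict dominance of the $r=s_n$ term, the ultrametric inequality to transfer that valuation to the whole sum, and Proposition \ref{TheVal} to send it to $-\infty$.
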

\begin{proof} Apply Proposition \ref{SecondVal}, Theorem \ref{UniqueMin} and Corollary \ref{TheVal}.
\end{proof}

\subsection{Proof of Theorem \ref{CompOnW}}\label{PfOfCOW}
We assume in this section that the hypotheses of Theorem \ref{CompOnW} hold, namely:
\begin{itemize}
\item $[\sL] \in \PicCon^I(\Upsilon)_{\tors}$ and $\omega[\sL] \in \PicCon(\Upsilon)[p']$,
\item the order $d$ of $\omega[\sL]$ in $\PicCon(\Upsilon)$ divides $q+1$, 
\item $e \in d \bZ$ is the order of $[\sL]$ in $\PicCon^I(\Upsilon)$,
\item $n \geq v_{\pi_F}(e)$ and $a \in \cO_F$,
\item $N_{n+1} \leq J_{n+1}$.
\end{itemize}

The map $M_{n,d}$ from $\S \ref{UpperHP}$ sends $\PicCon(\Upsilon)[d]^I$ into $M_0(h(\Upsilon_n),\bZ/d\bZ)^I$. Since $\omega[\sL] \in \PicCon(\Upsilon)[d]^I$, we can now use Lemma \ref{UpsMeasures}(a) to make the following
\begin{defn}\label{TheK} We let $k \in \{1,2,\cdots, d\}$ be the unique integer such that
\[ M_{n,d}(\omega[\sL]) = k \nu_n.\]
\end{defn}


Recall from Definition \ref{Win} and \cite[Definition 4.1.1]{ArdWad2023} the affinoid $W := W_{a,n}$:
\[W = \Sp K \left\langle \tau, \frac{1}{\tau^{q-1} - 1}\right\rangle \qmb{where} \tau:= \frac{x - a}{\pi_F^n}.\]
We fix a primitive $(q-1)^{th}$ root $\zeta$ of $1$ in $K^\times$ using Remark \ref{ZetaInK} and note the following basic properties of our affinoid domain $W$. 
\begin{lem}\label{WUn} \hsp
\be \item $W \cap V_n$ is obtained from $W$ by removing the open disc $\{|\tau| < 1\}$.
\item $W \cap V_{n+1}$ is obtained from $W$ by removing the $q$ open discs 
\[ \{|\tau - \pi_F| < |\pi_F|\}, \quad \{|\tau - \pi_F \zeta| < |\pi_F|\}, \quad \cdots \quad \{|\tau - \pi_F \zeta^{q-2}| < |\pi_F|\}, \quad \{|\tau| < |\pi_F|\}.\]
\ee\end{lem}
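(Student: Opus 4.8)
The plan is to verify both assertions at the level of $\bfC$-points, in the style of Lemma~\ref{CheckCov}: the sets displayed on the right-hand sides are rational subdomains of the affinoid $W$ (namely $W$ with finitely many open discs deleted), both sides are affinoid subdomains of $\bA$, and such subdomains are determined by their $\bfC$-points, so it suffices to match these. I work throughout in the coordinate $\tau=(x-a)/\pi_F^n$, in which $W=\Sp K\left\langle \tau,\frac{1}{\tau^{q-1}-1}\right\rangle$ and hence $W(\bfC)=\{z\in\bfC:|z|\le 1,\ |z^{q-1}-1|\ge 1\}$. Let $\mu=\{1,\zeta,\dots,\zeta^{q-2}\}$ be the group of $(q-1)$-st roots of unity; it is contained in $\cO_F^\times$, and the reduction map $\mu\to k_F^\times$ is a bijection since $\mu_{q-1}(F)\cong k_F^\times$. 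Because $\tau^{q-1}-1=\prod_{\eta\in\mu}(\tau-\eta)$ and every $\eta\in\mu$ has $|\eta|=1$, for $|z|\le 1$ we have $|z^{q-1}-1|\ge 1$ if and only if the reduction $\overline z$ lies outside $k_F^\times$; thus $W$ is the closed unit disc with the $q-1$ residue discs $D_\eta:=\{|\tau-\eta|<1\}$, $\eta\in\mu$, removed.

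Next I translate the discs deleted in forming $V_m$ (for $m=n$ and $m=n+1$) into the $\tau$-coordinate. For $a'\in\cO_F$ the disc $\{|x-a'|<|\pi_F|^m\}$ becomes $\{|\tau-c|<|\pi_F|^{m-n}\}$ with $c:=(a'-a)/\pi_F^n\in F$. If $a'\not\equiv a\pmod{\pi_F^n\cO_F}$ then $|c|>1$, so every point of the disc has $|\tau|=|c|>1$ and it is disjoint from $W\subseteq\{|\tau|\le 1\}$; hence only the values $c\in\cO_F$ (i.e.\ $a'\equiv a\pmod{\pi_F^n\cO_F}$) contribute, and the discs actually removed inside $W$ are among the $\{|\tau-c|<|\pi_F|^{m-n}\}$ for $c\in\cO_F$. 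For part~(a), $m=n$: the disc $\{|\tau-c|<1\}$ depends only on $\overline c\in k_F$. If $\overline c\in k_F^\times$, let $\eta\in\mu$ be the unique root of unity with $\overline\eta=\overline c$; then $|c-\eta|\le|\pi_F|$, so $\{|\tau-c|<1\}\subseteq D_\eta$, which was already removed from $W$. If $\overline c=0$ the disc is $\{|\tau|<1\}$, and since $|\tau-\eta|=|\eta|=1$ whenever $|\tau|<1$ and $\eta\in\mu$, we have $\{|\tau|<1\}\subseteq W$. Hence $W\cap V_n=W\setminus\{|\tau|<1\}$, as claimed.

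For part~(b), $m=n+1$; the relevant discs are $\{|\tau-c|<|\pi_F|\}$ with $c\in\cO_F$, depending only on $c\bmod\pi_F^2\cO_F$. If $\overline c\in k_F^\times$ the argument above gives $\{|\tau-c|<|\pi_F|\}\subseteq D_\eta$ for the appropriate $\eta$, again already absent from $W$. If instead $c\in\pi_F\cO_F$, write $c=\pi_F w$ with $w\in\cO_F$; then $\{|\tau-\pi_F w|<|\pi_F|\}\subseteq\{|\tau|\le|\pi_F|\}\subseteq W$, this disc depends only on $\overline w\in k_F$, and it is realised by $a'=a+\pi_F^{n+1}w\in\cO_F$. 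Running $\overline w$ over $k_F=\{0\}\cup k_F^\times$ with the representatives $0,\ \zeta^0=1,\ \zeta,\dots,\zeta^{q-2}$ produces exactly the $q$ pairwise disjoint open discs $\{|\tau|<|\pi_F|\}$ and $\{|\tau-\pi_F\zeta^j|<|\pi_F|\}$, $j=0,\dots,q-2$. Therefore $W\cap V_{n+1}$ is $W$ with precisely these $q$ discs removed, which is part~(b).

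There is no real obstacle here: the one point to be careful about is to keep the two ``levels'' of discs apart --- those centred at units of $\cO_F$ have already been excised in passing from the closed unit disc to $W$ and must not be double-counted, whereas those centred in $\pi_F\cO_F$ do lie inside $W$ --- together with the elementary check that the latter fall into exactly $q$ residue classes modulo $\pi_F^2\cO_F$.
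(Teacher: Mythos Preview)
Your proof is correct and follows essentially the same approach as the paper's: translate the discs removed in forming $V_n$ and $V_{n+1}$ into the $\tau$-coordinate and check which of them lie inside $W$ versus inside the $q-1$ residue discs already excised from the closed unit disc. The paper's argument is phrased as a brief counting (``$q^2$ discs of radius $|\pi_F|^{n+1}$, of which $q(q-1)$ lie in the deleted holes, leaving $q$''), whereas you explicitly parametrize the surviving discs by $\overline{w}\in k_F$ and thereby identify them with the list in the statement; this extra care is harmless and arguably makes the verification of part~(b) more transparent.
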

\begin{proof} (a) $V_n$ is obtained from $\bD$ by removing all open discs of radius $|\pi_F|^n$ around all points in $\cO_F$ with respect to the $x$-coordinate. Now $W$ is contained in a closed disc $\Sp K \langle \tau \rangle$ of radius $|\pi_F|^n$ which contains exactly $q$ such open discs. We have removed $q-1$ of these when forming $W$. Hence $W \cap V_n$ is obtained from $W$ by removing the last of these open discs, namely $\{ |\tau| < 1 \}$.

(b) This time $\Sp K \langle \tau \rangle$ contains $q^2$ open discs of radius $|\pi_F|^{n+1}$ with respect to the $x$-coordinate. Of these, $q(q-1)$ are contained in the $q-1$ open discs of radius $|\pi_F|^n$ which are removed when we form $W$. This shows that $W$ contains exactly $q$ more such open discs that have to be removed when forming $W \cap V_{n+1}$.
\end{proof}
When $n = 0, q = 5$, a picture of $W \supset W \cap V_{n+1} \supset W \cap V_n$ can be found in $\S \ref{TheIntegral}$. 

\begin{thm}\label{ExplicitMeasure} There exists $c \in K(\tau) \cap \cO(W)^\times$ with $\cS(c) \subseteq F$, and an algebraic generator $\dot{z} \in \sL_{n+1}(W)$ with associated rational function 
\[ \psi(\dot{z}^{\otimes d}) = u \hsp c \qmb{where} u := \frac{1}{(\tau^q - \pi_F^{q-1}\tau)^k}.\]
\end{thm}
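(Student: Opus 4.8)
The plan is to apply the general existence result for algebraic generators, Theorem \ref{AlgGensExist}, to the line bundle $\sL$ restricted to $W = W_{a,n}$, at level $n+1$, and then to identify the resulting rational function concretely. First I would choose a convenient set $\cS_{n+1}$ of coset representatives for $\cO_F/\pi_F^{n+2}\cO_F$ so that the $q$ representatives lying in the residue class $a + \pi_F^{n+1}\cO_F$ are exactly $a, a+\pi_F^{n+1}, a+\pi_F^{n+1}\zeta, \dots, a+\pi_F^{n+1}\zeta^{q-2}$, i.e. the centres of the $q$ holes of $W \cap V_{n+1}$ described in Lemma \ref{WUn}(b); in the $\tau$-coordinate these become $0, \pi_F, \pi_F\zeta, \dots, \pi_F\zeta^{q-2}$, whose product of $(\tau - b)$ over these $b$ is exactly $\tau^q - \pi_F^{q-1}\tau$. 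The point is that by Definition \ref{AlgGen} and Lemma \ref{iDagLocal}, an algebraic generator over $W$ only "sees" those holes of $\Upsilon_{n+1}$ that lie inside $W$, so the associated rational function can be taken with zeroes and poles supported on $\cS(u) \cap W = \{0,\pi_F,\dots,\pi_F\zeta^{q-2}\}$ in the $\tau$-coordinate, together with possibly a unit $c$ coming from the other representatives, which are units on $W$.

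Next I would compute the measure $M_{n+1,d}([\sL])$ restricted to the holes of $\Upsilon_{n+1}$ contained in $W$. By Lemma \ref{UpsMeasures}(b) the restriction map $M_0(h(\Upsilon_{n+1}),\bZ/d\bZ)^I \to M_0(h(\Upsilon_n),\bZ/d\bZ)^I$ sends $\nu_{n+1}$ to $q\nu_n$, and since $d \mid q+1$ we have $q \equiv -1 \pmod d$, so $M_{n+1,d}(\omega[\sL])$ equals $-k\nu_{n+1}$ up to the usual normalisation; combined with Definition \ref{TheK} this pins down the exponents $k_b$ of $(\tau - b)$ modulo $d$ for each of the $q$ centres $b$: they are all congruent to $-k \pmod d$, or equivalently one can arrange them to be $-k$ (hence $\frac{k_b}{d} \notin \bN$ since $1 \le k \le d$ and we may take $k_b < 0$). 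Feeding these exponents into Theorem \ref{AlgGensExist}(a), applied with $X = \bD$ and then restricted to $W$ via Lemma \ref{iDagLocal}, produces an algebraic generator $\dot{z} \in \sL_{n+1}(W)$ whose associated rational function has the form $\lambda' \prod_b (x - b)^{k_b}$. Rewriting in the $\tau$-coordinate and separating out the factor corresponding to the $q$ centres inside $W$ from the rest, we get $\psi(\dot z^{\otimes d}) = u\, c$ with $u = (\tau^q - \pi_F^{q-1}\tau)^{-k}$ (after absorbing scalars and small units, using the $d$-divisibility of $\cO(W)^{\times\times}$ from \cite[Lemma 4.3.2(a)]{ArdWad2023}) and $c \in K(\tau) \cap \cO(W)^\times$ collecting the factors $(x-b)^{k_b}$ for $b$ ranging over the coset representatives outside $a + \pi_F^{n+1}\cO_F$, which are units on $W$ because those $b$ lie at distance $\ge |\pi_F|^n$ from $W$. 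Since $\cS(u\,c) \subseteq F$ automatically (all the $b$ lie in $F$), the conditions (c), (d), (e) of Definition \ref{AlgGen} hold: (d) holds because distinct $b$ in our list differ by at least $|\pi_F|^{n+1}$, and (e) holds by the congruence arrangement above.

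The main obstacle is the bookkeeping in the second step: correctly matching the $I$-invariant measure $\nu_{n+1}$, the normalisation constant $k$ from Definition \ref{TheK}, and the sign coming from $q \equiv -1 \pmod d$, so that the exponents $k_b$ of the $q$ relevant linear factors all come out congruent to $-k$ modulo $d$ (rather than, say, differing between the "centre" hole $b = a$ and the others). Here I would use that $W \cap \cO_F = a + \pi_F^{n+1}\cO_F$ and the explicit description of the bijection $h(\Upsilon_{n+1})\backslash\{D_\infty\} \to \cO_F/\pi_F^{n+2}\cO_F$ from Notation \ref{HoleNotn}, together with the fact that the $q$ holes of $W \cap V_{n+1}$ correspond to the $q$ preimages in $\cO_F/\pi_F^{n+2}\cO_F$ of $a + \pi_F^{n+1}\cO_F \in \cO_F/\pi_F^{n+1}\cO_F$, on each of which $\nu_{n+1}$ takes value $1$. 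A secondary, but routine, point is checking that the leftover factor $c$ genuinely lies in $\cO(W)^\times$: this follows since each omitted centre $b'$ satisfies $|\tau(b')| = |b' - a|/|\pi_F|^n \ge 1$ with $b' \ne $ any of our $q$ centres, so $(\tau - \tau(b'))$ is a unit on $W = \Sp K\langle \tau, \frac{1}{\tau^{q-1}-1}\rangle$, and a finite product of units is a unit.
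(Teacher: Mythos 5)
Your proposal is correct and follows essentially the same route as the paper: choose coset representatives for $\pi_F^{n+2}\cO_F$ in $\cO_F$ containing the $q$ preimages in $W$ of $a+\pi_F^{n+1}\cO_F$ (namely $a$ and $a+\pi_F^{n+1}\zeta^i$), deduce $M_{n+1,d}(\omega[\sL]) = -k\nu_{n+1}$ from Lemma \ref{UpsMeasures}(b,c) and $q\equiv -1 \bmod d$, apply Theorem \ref{AlgGensExist}(a) with all $k_b=-k$, and factor the resulting rational function as $u\cdot c$ with $c$ collecting the linear factors at representatives outside $a+\pi_F^{n+1}\cO_F$, which are units on $W$. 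The only superfluous step is the appeal to $d$-divisibility of $\cO(W)^{\times\times}$ to absorb scalars: since the statement only requires $c\in K(\tau)\cap\cO(W)^\times$ with $\cS(c)\subseteq F$, the constant $\lambda\in K^\times$ and the power of $\pi_F$ arising from the coordinate change $\tau=(x-a)/\pi_F^n$ can simply be swept into $c$ directly.
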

\begin{proof} We define $\cS_1 := \{0\} \cup \{1,\zeta,\zeta^2,\cdots,\zeta^{q-2}\}$ and note that $\cS_1$ forms a set of coset representatives for $\pi_F \cO_F$ in $\cO_F$ containing $0$. 
Let $\cS$ be any set of coset representatives for $\pi_F^{n+2} \cO_F$ in $\cO_F$ containing $a + \pi_F^n \cS_1 = W(\overline{K}) \cap \cO_F$. 

Using Definition \ref{TheK} together with Lemma \ref{UpsMeasures}(b) we see that
\[r_{n+1}(M_{n+1,d}(\omega[\sL])) = M_{n,d}(\omega[\sL]) = k \nu_n = r_{n+1}(-k\nu_{n+1}).\]
Lemma \ref{UpsMeasures}(c) then implies that $M_{n+1,d}(\omega[\sL]) = -k \nu_{n+1}$, and hence
\[ M_{n+1,d}(\omega[\sL])(D_{b,n+1}) = - k + d \bZ \qmb{for all} b \in \cO_F.\]
Note that $-\frac{k}{d} \notin \bN$ because $k \geq 1$. We can now apply Theorem \ref{AlgGensExist}(a) with $n+1$ in place of $n$ with $k_b := -k$ for all $b \in \cS$, to choose an algebraic generator $\dot{y} \in \sL_{n+1}(W)$ with associated rational function $v$ such that
\[ v_b(v) = k \qmb{for all} b \in \cS.\]
Now, since $u = \frac{1}{(\tau^q - \pi_F^{q-1}\tau)^k}, \tau = \frac{x - a}{\pi^n_F}$ and $k\neq 0$, we see that $\cS(u) = a + \pi^n_F \cS_1$, and that $v_b(u) = v_b(v)$ for all $b \in \cS(u)$. Therefore the rational function $c := v/u$ satisfies $\cS(c) = \cS(v) \backslash \cS(u) = \cS \backslash (a + \pi^n_F \cS_1) \subseteq F$. This finite set does not meet $W(\overline{K})$ because $\cS \cap W(\overline{K}) = a + \pi^n_F \cS_1$. Hence $c \in \cO(W)^\times$ as claimed. \end{proof}  

\begin{prop}\label{LnWgen} Let $[\dot{z}]$ denote the image of $\dot{z} \in \sL_{n+1}(W)$ in $\sL_n(W)$ under the natural restriction map $\sL_{n+1}(W) \to \sL_n(W)$.
Then the element 
\[\dot{y} := (1 - (\pi_F/\tau)^{q-1})^{k/d} [\dot{z}] \in \sL_n(W)\] 
is an algebraic generator for $\sL_n(W)$ with associated rational function $c / \tau^{qk}$. 
\end{prop}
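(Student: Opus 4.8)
The plan is to verify, one by one, the five conditions of Definition \ref{AlgGen} for the section $\dot y\in\sL_n(W)$, with candidate associated rational function $c/\tau^{qk}$. The key first input is the value of $\psi(\dot y^{\otimes d})$. Since the fixed isomorphism $\psi\colon\sL^{\otimes d}\stackrel{\cong}{\longrightarrow}\cO_\Upsilon$ is a morphism of sheaves, it is compatible with the truncation functors $\cF\mapsto\cF_{\overline U}$ and with restriction maps, and these functors commute with tensor products by Lemma \ref{DagTensor}; hence $\psi([\dot z]^{\otimes d})$ is obtained from $\psi(\dot z^{\otimes d})=uc$ by the natural overconvergence map $(\cO_\bD)_{\overline{V_{n+1}}}(W)\to(\cO_\bD)_{\overline{V_n}}(W)$, which carries the rational function $uc$ to itself, so $\psi([\dot z]^{\otimes d})=uc$. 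Now $\tau^q-\pi_F^{q-1}\tau=\tau^q\bigl(1-(\pi_F/\tau)^{q-1}\bigr)$, so $\bigl(1-(\pi_F/\tau)^{q-1}\bigr)^k u=\tau^{-qk}$, and by $\cO$-linearity of $\psi$ we obtain
\[ \psi(\dot y^{\otimes d})=\bigl(1-(\pi_F/\tau)^{q-1}\bigr)^k\psi([\dot z]^{\otimes d})=\bigl(1-(\pi_F/\tau)^{q-1}\bigr)^k uc=\frac{c}{\tau^{qk}}, \]
which is Definition \ref{AlgGen}(b).

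Conditions (c), (d) and (e) then follow from the shape of $c/\tau^{qk}$. We have $\cS(c)\subseteq F$ by Theorem \ref{ExplicitMeasure} and $\cS(\tau^{qk})=\{a\}\subseteq F$; since $\cS(c)$ is disjoint from $W$ while $a\in W$ (because $\tau^{q-1}-1=-1\ne 0$ at $\tau=0$), there is no cancellation and $\cS(c/\tau^{qk})=\cS(c)\cup\{a\}\subseteq F$, giving (c); the same disjointness shows $\cS(c/\tau^{qk})\cap W=\{a\}$, a single point, so (d) is vacuous; and (e) holds because $v_a(c/\tau^{qk})=v_a(c)-qk=-qk$, whose quotient by $d$ is negative and hence not in $\bN$. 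This leaves condition (a). Since $\Upsilon_n\subseteq\Upsilon_{n+1}$ with $V_n\cap\Upsilon=\Upsilon_n$, the identity $(\sL_{\overline{\Upsilon_{n+1}}})_{\overline{\Upsilon_n}}=\sL_{\overline{\Upsilon_n}}$ (from \cite[Proposition 2.3.6(iii)]{KS}, used also in Proposition \ref{ApproxOnClosed}), together with the compatibility of $j_\ast$ and of restriction to $W$ with the truncation functors (Lemma \ref{iDagLocal}), gives $\sL_{n|W}=(\sL_{n+1|W})_{\overline{W\cap V_n}}$. As $\dot z$ is an algebraic generator for $\sL_{n+1}(W)$ we have $\sL_{n+1|W}=(\cO_W)_{\overline{W\cap V_{n+1}}}\dot z$, so applying $(-)_{\overline{W\cap V_n}}$ and using the transitivity $\bigl((\cO_W)_{\overline{W\cap V_{n+1}}}\bigr)_{\overline{W\cap V_n}}=(\cO_W)_{\overline{W\cap V_n}}$ (again \cite[Proposition 2.3.6(iii)]{KS}, since $W\cap V_n\subseteq W\cap V_{n+1}$) yields $\sL_{n|W}=(\cO_W)_{\overline{W\cap V_n}}[\dot z]$. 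Finally $\dot y$ differs from $[\dot z]$ by the factor $\bigl(1-(\pi_F/\tau)^{q-1}\bigr)^{k/d}$; by Lemma \ref{OvConvSects} the ring $(\cO_W)_{\overline{W\cap V_n}}(W)$ is the colimit of the $\cO(W\cap U_s)$ over $s\in\sqrt{|K^\times|}$ with $s<|\pi_F|^n$, and on $W\cap U_s$ with $|\pi_F|^{n+1}<s<|\pi_F|^n$ one has $|(\pi_F/\tau)^{q-1}|<1$, so $1-(\pi_F/\tau)^{q-1}$ is a small unit there, with a $d$-th root in $\cO(W\cap U_s)^{\times\times}$ by \cite[Lemma 4.3.2(a)]{ArdWad2023} (note $p\nmid d$, as $d\mid q+1$); passing to the colimit shows $\bigl(1-(\pi_F/\tau)^{q-1}\bigr)^{k/d}$ is a unit in $(\cO_W)_{\overline{W\cap V_n}}(W)$, whence $\sL_{n|W}=(\cO_W)_{\overline{W\cap V_n}}\dot y$ and (a) holds.

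The manipulation of rational functions and the checking of (b)--(e) is routine; I expect the main obstacle to be condition (a), specifically making precise the identification $\sL_{n|W}=(\sL_{n+1|W})_{\overline{W\cap V_n}}$ through the pushforward $j_\ast$ and the localizations involved, and verifying on the correct cofinal system of wide-open neighbourhoods of $W\cap V_n$ that $\pi_F/\tau$ has norm strictly less than $1$ --- this last point being exactly what guarantees that the $d$-th root of $1-(\pi_F/\tau)^{q-1}$ lives in the overconvergent ring $(\cO_W)_{\overline{W\cap V_n}}(W)$ and not merely in $\cO(W\cap V_n)$.
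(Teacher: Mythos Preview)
Your proof is correct and follows essentially the same route as the paper: verify conditions (a)--(e) of Definition \ref{AlgGen} for $\dot y$ with rational function $c/\tau^{qk}$, using the factorisation $\tau^q-\pi_F^{q-1}\tau=\tau^q(1-(\pi_F/\tau)^{q-1})$ for (b) and the fact that $\cS(c/\tau^{qk})\cap W=\{a\}$ for (c)--(e). The only notable difference is in the execution of (a): the paper works with the single explicit affinoid $W(\pi_F/\tau^2)$, on which $|\pi_F/\tau|\le|\pi_F|^{1/2}<1$ so $1-(\pi_F/\tau)^{q-1}$ is a small unit there, and observes directly that $\cO(W)\langle\pi_F/\tau^2\rangle\subseteq(\cO_W)_{\overline{W\cap V_n}}(W)$ via Lemma \ref{WUn}(a); your approach via the cofinal system $W\cap U_s$ with $|\pi_F|^{n+1}<s<|\pi_F|^n$ is equivalent and equally valid.
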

\begin{proof} Since $\dot{z}$ generates $\sL_{n+1}(W)$ as an $(\cO_W)_{\overline{W \cap V_{n+1}}}(W)$-module, we see that $[\dot{z}]$ generates $\sL_n(W)$ as an $(\cO_W)_{\overline{W \cap V_n}}(W)$-module. Note that $1 - (\pi_F/\tau)^{q-1} \in 1 + \cO(W)\langle \pi_F/\tau^2 \rangle^{\circ\circ}$, so by \cite[Lemma 4.3.2(a)]{ArdWad2023} it is a $d$th power in $1 + \cO(W)\langle \pi_F/\tau^2\rangle^{\circ\circ}$. Since $\cO(W)\langle \pi_F/\tau^2\rangle\subseteq \cO(W)_{\overline{W \cap V_n}}$ in view of Lemma \ref{WUn}(a), we conclude that $\dot{y}$ also generates $\sL_n(W)$ as an $\cO(W)_{\overline{W \cap V_n}}$-module as required by Definition \ref{AlgGen}(a). Using Theorem \ref{ExplicitMeasure}, we see that the associated rational function to $\dot{y}$ is
\[ \psi( \dot{y}^{\otimes d} ) = \left( 1 - (\pi_F/\tau)^{q-1} \right)^k \frac{c}{(\tau^q - \pi_F^{q-1}\tau)^k}  = \frac{c}{\tau^{qk}} .\]
Next, $\cS(c/\tau^{qk}) \subseteq \{a\} \cup \cS( c ) \subseteq K$ by Theorem \ref{ExplicitMeasure}, so Definition \ref{AlgGen}(c) is satisfied. 
Since $c \in \cO(W)^\times$ and $k \neq 0$, we see that $\cS(c / \tau^{qk}) \cap W = \{a\}$. Definition \ref{AlgGen}(d) is now satisfied vacuously by $\dot{y}$, whereas Definition \ref{AlgGen}(e) holds because $v_a(c / \tau^{qk}) / d = -qk/d \notin \bN$ as $k \geq 1$.
\end{proof}

After Theorem \ref{ExplicitMeasure} and Proposition \ref{LnWgen} and Corollary \ref{MnPres}, we have explicit presentations for the $\sD_n(W)$-module $\sL_n(W)$ and $\sD_{n+1}(W)$-module $\sL_{n+1}(W)$.  Next, we will make the connection with the theory developed earlier on in $\S \ref{PfOfCompOnW}$ by making a change of coordinates: recall from $\S \ref{TheIntegral}$ the affinoid domain 
\[X := \Sp K \left\langle x, \frac{1}{x^{q-1}-1} \right\rangle.\]
\begin{lem}\label{gan} Let $g_{a,n} := \begin{pmatrix} 1 & -a \\ 0 & \pi_F^n \end{pmatrix} \in \bB(K)$. Then $g_{a,n}(W) = X$. \end{lem}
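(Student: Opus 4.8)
The plan is to verify the asserted equality of affinoid subdomains by checking it on $\bfC$-points, and then to appeal to the standard fact that an affinoid subdomain of $\bP^1$ is determined by its set of $\bfC$-points (as is used repeatedly, e.g. in the proofs of Lemma \ref{CheckCov} and Lemma \ref{CheckCov2}).

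First I would unwind the M\"obius action. Writing $g_{a,n} = \begin{pmatrix} 1 & -a \\ 0 & \pi_F^n \end{pmatrix}$, the associated automorphism of $\bP^1$ sends a point $z$ to $g_{a,n}z = \frac{z-a}{\pi_F^n}$ (this is the standard action, and is also read off from Lemma \ref{GdotDn}(a) applied to the coordinate), and $g_{a,n}(W)$ is the image of $W$ under this map by Lemma \ref{AutAOD}(b). Setting $w := \frac{z-a}{\pi_F^n}$, so that $z = a + \pi_F^n w$, this is precisely the change of coordinate carrying $x$ to $\tau = \frac{x-a}{\pi_F^n}$, which is what makes the statement plausible.

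Next I would transport the explicit $\bfC$-point description of the cheese $W = W_{a,n}$ through this substitution. Recall from Definition \ref{Win} that $W(\bfC)$ is the closed disc $\{\, |z - a| \le |\pi_F|^n \,\}$ with the $q-1$ open discs $\{\, |z - b_i| < |\pi_F|^n \,\}$ removed, where $b_i = a + \pi_F^n c_i$ and the residues $\{0, \bar c_1, \dots, \bar c_{q-1}\}$ exhaust $k_F = \cO_F/\pi_F\cO_F$; in particular $|c_i| = 1$ for every $i$. Under $z = a + \pi_F^n w$ the conditions $|z-a| \le |\pi_F|^n$ and $|z-b_i| \ge |\pi_F|^n$ become $|w| \le 1$ and $|w - c_i| \ge 1$, so $g_{a,n}(W)(\bfC) = \{\, w \in \bfC : |w| \le 1,\ |w - c_i| \ge 1 \text{ for } i = 1,\dots,q-1 \,\}$. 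I would then compare this with $X(\bfC) = \{\, w \in \bfC : |w| \le 1,\ |w^{q-1} - 1| \ge 1 \,\}$: when $|w| < 1$ both descriptions contain $w$ automatically, and when $|w| = 1$ the condition ``$|w - c_i| \ge 1$ for all $i$'' says $\bar w \notin \{\bar c_1, \dots, \bar c_{q-1}\} = k_F^\times$, whereas $|w^{q-1} - 1| \ge 1$ says $\bar w \notin \mu_{q-1}(k_{\bfC})$; since $k_F = \mathbb{F}_q$ already contains all $(q-1)$-th roots of unity and $q-1$ is prime to $p$, we have $\mu_{q-1}(k_{\bfC}) = k_F^\times$, so the two conditions agree. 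Hence $g_{a,n}(W)(\bfC) = X(\bfC)$, and the lemma follows.

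The argument is entirely routine; the only points requiring a little care are fixing the direction and the exact formula of the M\"obius action, correctly reading off the reductions $\bar c_i$ of the hole-centres of $W_{a,n}$ from Definition \ref{Win}, and the elementary identification $\mu_{q-1}(k_{\bfC}) = k_F^\times$. I do not anticipate any genuine obstacle here.
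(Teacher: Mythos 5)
Your proof is correct and is essentially the same as the paper's, which simply states that $g_{a,n}\cdot z = \frac{z-a}{\pi_F^n}$ and that ``using the definitions of $W$ and $X$, we see that $z \in W(\overline{K})$ if and only if $g_{a,n}\cdot z \in X(\overline{K})$.'' You are merely filling in the routine verification (reading off the hole-centres of $W_{a,n}$ from Definition \ref{Win}, noting $\mu_{q-1}(k_{\bfC}) = k_F^\times$) that the paper leaves implicit.
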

\begin{proof} We have $g_{a,n} \cdot z = \frac{z - a}{\pi_F^n}$ for all $z \in \overline{K}$. Using the definitions of $W$ and $X$, we see that $z \in W(\overline{K})$ if and only if $g_{a,n} \cdot z \in X(\overline{K})$. Hence $g_{a,n}(W) = X$.
\end{proof}
\begin{cor} There is a $K$-algebra isomorphism
\[ \chi : \sD_n(W) \stackrel{\cong}{\longrightarrow} \sD_0(X)\]
such that $\chi(\tau) = x$ and $\chi(\pi_F^n \partial_x) = \partial_x$, and $\chi(\sD_{n+m}(W)) = \sD_m(X)$ for all $m \geq 0$.
\end{cor}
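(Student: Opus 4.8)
The plan is to obtain $\chi$ by specialising the equivariant transformation isomorphism of Corollary \ref{gDagTransform}(b) to the affine transformation
\[ g := g_{a,n} = \begin{pmatrix} 1 & -a \\ 0 & \pi_F^n \end{pmatrix} \in \bB(K), \qquad \varrho(g) = \pi_F^{-n}, \]
which satisfies $g(W) = X$ by Lemma \ref{gan}.

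First I would assemble the numerics. Since $W = W_{a,n}$ is a $K$-cheese all of whose radii equal $|\pi_F|^n$ (Definition \ref{Win}), we have $\rho(W) = |\pi_F|^n$, hence $r(W) = \varpi/|\pi_F|^n$ by Corollary \ref{Heisenberg}; thus $W \in \bA(\partial_x/r)^\dag$ for $r := \varpi/|\pi_F|^n$. Likewise $\rho(X) = 1$, so $r(X) = \varpi$; and consistently, Proposition \ref{AutAandr} gives $r(X) = r(g(W)) = r(W)/|\varrho(g)| = \varpi = r/|\varrho(g)|$, so $X$ is $(\partial_x/(r/|\varrho(g)|))^\dag$-admissible. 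Evaluating the sheaf isomorphism $g^\dag_r \colon \cD^\dag_r \xrightarrow{\cong} g^\ast \cD^\dag_{r/\varrho(g)}$ of Corollary \ref{gDagTransform}(b) at $W$ therefore produces a $K$-algebra isomorphism
\[ \chi := g^\dag_r(W) \colon \cD^\dag_r(W) = \sD_n(W) \xrightarrow{\ \cong\ } \cD^\dag_\varpi(g(W)) = \cD^\dag_\varpi(X) = \sD_0(X), \]
which is a ring homomorphism because the members of the colimit defining $\cD^\dag_r$ are the Banach algebras $\cD_c$ and each $g_c(W)$ is a $K$-Banach algebra map.

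Next I would check the two explicit formulas. By construction (the commutative square (\ref{DgjD}) and Lemma \ref{AffDr}(b)), on the subalgebra $\cO(W)$ the map $\chi$ restricts to the equivariant structure map $h \mapsto g \cdot h$, and it sends $\partial_x$ to $\varrho(g)\partial_x$. Writing $\tau = \varrho(g)(x-a)$ and using $g\cdot a = 0$ together with Lemma \ref{AutAOD}(e), we obtain $g\cdot\tau = \varrho(g)\,g\cdot(x-a) = \varrho(g)\varrho(g)^{-1}(x - g\cdot a) = x$, so $\chi(\tau) = x$; and $\chi(\pi_F^n\partial_x) = \pi_F^n\varrho(g)\partial_x = \partial_x$ since $\varrho(g) = \pi_F^{-n}$ (this last identity is also just $g\cdot\partial_x = \varrho(g)\partial_x$ from Lemma \ref{AutAOD}(e)).

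Finally, for the nested statement, I would run the same argument with the smaller radius $r_m := \varpi/|\pi_F|^{n+m}$ (still $\geq r(W)$), obtaining an isomorphism $g^\dag_{r_m}(W)\colon \sD_{n+m}(W) \xrightarrow{\cong} \cD^\dag_{r_m/|\varrho(g)|}(X) = \cD^\dag_{\varpi/|\pi_F|^m}(X) = \sD_m(X)$. The maps $g^\dag_{r}$ for varying $r$ are compatible with the inclusions $\cD^\dag_{r_m} \hookrightarrow \cD^\dag_r$: for the Banach pieces $\cD_c$ the diagram relating $g_c$ and $g_{c'}$ for $c' \leq c$ commutes because both composites restrict to the equivariant map on $\cO$ and send $\partial_x$ to $\varrho(g)\partial_x$, so they agree by the uniqueness clause of Lemma \ref{BanachUnivProp} (using density of $\cD$ in $\cD_c$), and one passes to the colimit. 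Consequently $\chi$ restricted to the subring $\sD_{n+m}(W)$ is the composite of $g^\dag_{r_m}(W)$ with the inclusion $\sD_m(X) \hookrightarrow \sD_0(X)$, whence $\chi(\sD_{n+m}(W)) = \sD_m(X)$.

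There is no genuinely hard step here; the only point requiring care is matching the radius normalisations — reading "$r/\varrho(g)$" in Corollary \ref{gDagTransform} as "$r/|\varrho(g)|$" and feeding in $|\varrho(g_{a,n})| = |\pi_F|^{-n}$, so that $(\varpi/|\pi_F|^{n+m})\cdot|\pi_F|^n = \varpi/|\pi_F|^m$ — together with the routine compatibility of the $g^\dag_r$ as $r$ varies.
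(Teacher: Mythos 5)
Your proposal is correct and follows exactly the same route as the paper: take $g = g_{a,n}$, apply Corollary \ref{gDagTransform}(b) with $r = \varpi/|\pi_F|^n$ to obtain $\chi$, verify the two formulas using Lemma \ref{AutAOD}(e), and rerun the same argument with $r = \varpi/|\pi_F|^{n+m}$ for the nested claim. You are slightly more explicit than the paper about why the various $g^\dag_r(W)$ are compatible under the inclusions $\sD_{n+m}(W) \hookrightarrow \sD_n(W)$, which the paper leaves implicit; that extra care is welcome but not a genuine departure.
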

\begin{proof} Note that $\varrho(g_{a,n}) = \pi_F^{-n}$ by Definition \ref{AffA} and that $g_{a,n}(W) = X$ by Lemma \ref{gan}. The affinoid $W$ lies in $\bA( |\pi_F|^n \partial_x/\varpi)^\dag$ by Lemma \ref{CheckCov2}, so we can apply Corollary \ref{gDagTransform} with $r = \varpi/|\pi_F|^n$ to get the required $K$-algebra isomorphism
\[\chi := (g_{a,n})^\dag_{\varpi/|\pi_F|^n}(W) : \sD_n(W) = \cD^\dag_{\varpi/|\pi_F|^n}(W) \stackrel{\cong}{\longrightarrow} \cD^\dag_\varpi(X) = \sD_0(X).\]
Then $\chi(\tau) = g_{a,n} \cdot \tau = (g_{a,n} \cdot x - a)/\pi_F^n = ((a + \pi_F^n x) - a) / \pi_F^n = x$ and $\chi(\partial_x) = g_{a,n} \cdot \partial_x  = \varrho(g_{a,n}) \partial_x = \pi_F^{-n}\partial_x$ by Lemma \ref{AutAOD}(e). 

Finally, we can apply Corollary \ref{gDagTransform} again with $r = \varpi/|\pi_F|^{n+m}$ to see that $\chi$ maps $\sD_{n+m}(W)$ onto $\sD_m(X)$ for all $m \geq 0$.
\end{proof}

Next, we record the presentations for $\sL_n(W)$ and $\sL_{n+1}(W)$ after applying this change of coordinates as well as untwisting via the untwisting automorphism $\theta_{\chi(c),d}^{-1}$ of $\sD_0(X) = \cD^\dag_\varpi(X)$ from Corollary \ref{ThetaUDag}.   More precisely:

\begin{defn} \hspace{1cm}
\be \item Define $b :=  \chi(c) \in \cO(X)^\times$ and $w := (x^q - \pi_F^{q-1} x)^{-k} = \chi(u)$.
\item Let  $M := \sD_0(X) \underset{\sD_n(W)}{\otimes}{} \sL_n(W)$ where $\sD_0(X)$ is viewed as a $\sD_n(W)$-module via the isomorphism $\theta_{b,d}^{-1} \circ \chi : \sD_n(W) \stackrel{\cong}{\longrightarrow} \sD_0(X)$.
\item Let $M' := \sD_1(X) \underset{\sD_{n+1}(W)}{\otimes}{} \sL_{n+1}(W)$ where $\sD_1(X)$ is viewed as a $\sD_{n+1}(W)$-module via the isomorphism $\theta_{b,d}^{-1} \circ \chi : \sD_{n+1}(W) \stackrel{\cong}{\longrightarrow} \sD_1(X)$.
\ee\end{defn}

\begin{thm}\label{MandMdash} We have
\[M' \cong \frac{\sD_1(X)}{\sD_1(X) R_{\cS(w)}( w, d )}\qmb{and}M \cong \frac{\sD_0(X)}{\sD_0(X) R_{\cS(x^{-qk})}(x^{-qk}, d)} .\]
\end{thm}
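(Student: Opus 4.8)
The strategy is to reduce everything to the cyclic presentations of $\sL_{n+1}(W)$ and $\sL_n(W)$ supplied by Corollary \ref{MnPres}, and then to track how the base-change isomorphisms transform the defining relators. First I would apply Theorem \ref{ExplicitMeasure} together with Corollary \ref{MnPres} to the algebraic generator $\dot z\in\sL_{n+1}(W)$ with associated rational function $uc$, obtaining a $\sD_{n+1}(W)$-linear isomorphism $\sL_{n+1}(W)\cong\sD_{n+1}(W)/\sD_{n+1}(W)R_{\cS(uc)}(uc,d)$; likewise Proposition \ref{LnWgen} and Corollary \ref{MnPres} applied to $\dot y\in\sL_n(W)$ with associated rational function $c/\tau^{qk}$ give $\sL_n(W)\cong\sD_n(W)/\sD_n(W)R_{\cS(c/\tau^{qk})}(c/\tau^{qk},d)$. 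Writing $\phi:=\theta_{b,d}^{-1}\circ\chi$ for the ring isomorphism in question (on $\sD_{n+1}(W)$ in the first case, on $\sD_n(W)$ in the second), and using that applying $\sD_1(X)\otimes_{\sD_{n+1}(W)}-$, resp. $\sD_0(X)\otimes_{\sD_n(W)}-$, along a ring isomorphism turns $\sD/\sD r$ into $\sD'/\sD'\phi(r)$ by right-exactness of tensor, we get $M'\cong\sD_1(X)/\sD_1(X)\cdot\phi(R_{\cS(uc)}(uc,d))$ and $M\cong\sD_0(X)/\sD_0(X)\cdot\phi(R_{\cS(c/\tau^{qk})}(c/\tau^{qk},d))$. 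It remains to identify these left ideals.

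For the $\chi$-part I would use Lemma \ref{ChiOfRelDag}(b): since $\chi=(g_{a,n})^\dag_{\varpi/|\pi_F|^n}(W)$ with $g_{a,n}(W)=X$ (Lemma \ref{gan}) and $\chi(\tau)=x$, one has $\chi(uc)=\chi(u)\chi(c)=wb$ and $\chi(c/\tau^{qk})=b/x^{qk}$, whence $\chi(R_{\cS(uc)}(uc,d))$ is a nonzero scalar multiple of $R_{\cS(wb)}(wb,d)$ and $\chi(R_{\cS(c/\tau^{qk})}(c/\tau^{qk},d))$ is a nonzero scalar multiple of $R_{\cS(b/x^{qk})}(b/x^{qk},d)$ — the scalar being a power of $\varrho(g_{a,n})=\pi_F^{-n}$, which drops out of the left ideal. (Lemma \ref{ChiOfRelDag} is stated for a unit rational function, but $uc$ and $c/\tau^{qk}$ do have poles inside $W$; the identity for the purely algebraic operator $R_{\cS(v)}(v,d)\in\cD(\bA)$ nonetheless follows from the $\mathbb{GL}_2$-equivariant structure on $\cD$ recorded in Lemma \ref{AutAOD}(e) by the same computation, so this is harmless.) For the $\theta_{b,d}^{-1}$-part I would use Lemma \ref{RudProps}(c): writing $\cS(wb)=\cS(w)\sqcup\cS(b)$ gives $R_{\cS(wb)}(wb,d)=\theta_{b,d}(R_{\cS(wb)}(w,d))$ and $R_{\cS(wb)}(w,d)=\Delta_{\cS(b)}R_{\cS(w)}(w,d)$ by Lemma \ref{RudProps}(a), so $\theta_{b,d}^{-1}(R_{\cS(wb)}(wb,d))=\Delta_{\cS(b)}R_{\cS(w)}(w,d)$ since $\theta_{b,d}^{-1}$ is $\cO(X)$-linear. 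The identical computation with $w$ replaced by $x^{-qk}$, using $\cS(x^{-qk})=\{0\}$ and $\Delta_{\cS(b/x^{qk})}=x\,\Delta_{\cS(b)}$, gives $\theta_{b,d}^{-1}(R_{\cS(b/x^{qk})}(b/x^{qk},d))=\Delta_{\cS(b)}R_{\cS(x^{-qk})}(x^{-qk},d)$. Assembling the two parts, $\phi(R_{\cS(uc)}(uc,d))$ and $\phi(R_{\cS(c/\tau^{qk})}(c/\tau^{qk},d))$ differ from $R_{\cS(w)}(w,d)$ and $R_{\cS(x^{-qk})}(x^{-qk},d)$ only by left multiplication by the element $(\text{scalar})\cdot\Delta_{\cS(b)}$, which will yield the two asserted isomorphisms once that element is shown to be a unit.

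The point that needs care — and the one I expect to be the main obstacle — is precisely checking that $\Delta_{\cS(b)}=\prod_{a\in\cS(b)}(x-a)$ is a unit in $\cO(X)^\times$, hence in $\sD_0(X)^\times$ and $\sD_1(X)^\times$, so that it may be discarded; equivalently, that $\cS(b)$ is disjoint from $X$, and in particular from $\cS(w)=\{0,\pi_F,\pi_F\zeta,\dots,\pi_F\zeta^{q-2}\}\subseteq X$ and from $\{0\}\subseteq X$. This is exactly the content of the last assertion of Theorem \ref{ExplicitMeasure}: there $c\in K(\tau)\cap\cO(W)^\times$ with $\cS(c)\subseteq F$ \emph{not meeting} $W$, so $\cS(b)=g_{a,n}\cdot\cS(c)$ misses $g_{a,n}(W)=X$. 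The remaining bookkeeping — that $uc$ and $c/\tau^{qk}$ are genuine associated rational functions in the sense of Definition \ref{AlgGen} so that Corollary \ref{MnPres} applies (this is Theorem \ref{ExplicitMeasure} and Proposition \ref{LnWgen}), that $p\nmid d$ follows from $d\mid q+1$ so that $\theta_{b,d}$ extends to $\sD_m(X)$ by Corollary \ref{ThetaUDag}, and that $w$ and $x^{-qk}$ have the expected divisors on $X$ — is routine.
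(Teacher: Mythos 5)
Your proof is correct and follows essentially the same route as the paper's: pass to the cyclic presentations from Corollary \ref{MnPres}, transform the relators via Lemma \ref{ChiOfRelDag}(b) and Lemmas \ref{RudProps}(a,c), and discard the unit factor $\Delta_{\cS(b)}$ (which is a unit because Theorem \ref{ExplicitMeasure} gives $\cS(c)\cap W=\emptyset$, hence $\cS(b)\cap X=\emptyset$). You also correctly flag and defuse a point the paper passes over silently — Lemma \ref{ChiOfRelDag} is stated for $u\in K(x)\cap\cO(W)^\times$, which fails for $uc$ and $c/\tau^{qk}$, but the identity is a purely algebraic statement about the finite-order operator $R_{\cS(u)}(u,d)\in\cD(\bA)$ and follows directly from the formula in Definition \ref{TheRelator}(b) together with Lemma \ref{AutAOD}(e), so the conclusion stands.
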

\begin{proof} By Theorem \ref{ExplicitMeasure} and Corollary \ref{MnPres}, we have
\[ \sL_{n+1}(W) = \sD_{n+1}(W) \cdot{z} \cong \sD_{n+1}(W) / \sD_{n+1}(W) R_{\cS(uc)}(u c, d).\]
Now, applying Lemma \ref{ChiOfRelDag}(b) with $r = \varpi/|\pi_F|^n$ shows that
\[ \chi(R_{\cS(uc)}(uc,d)) \equiv R_{\cS(\chi(uc))}(\chi(uc),d) \quad \mod K^\times.\]
Next, $\chi(uc) = w b$, so Lemma \ref{RudProps}(c) gives
\[R_{\cS(\chi(uc))}(\chi(uc), d) = R_{\cS(wb)}(w b, d) = \theta_{b,d} ( R_{\cS(wb)}(w,d)).\]
Since $\cS(w) \cap \cS(b) = \emptyset$, $\cS(wb)$ is the disjoint union of $\cS(w)$ and $\cS(b)$, so $\Delta_{\cS(wb)} = \Delta_{\cS(w)} \Delta_{\cS(b)}$ and $R_{\cS(wb)}(w,d) = \Delta_{\cS(b)} R_{\cS(w)}(w,d)$. Therefore
\[ \chi(R_{\cS(uc)}(uc,d)) \equiv  \theta_{b,d}\left(\Delta_{\cS(b)} R_{\cS(w)}(w,d)\right) \quad \mod K^\times.\]
However, $b$ and hence $\Delta_{\cS(b)}$ are units in $\cO(X)$, so we have
\begin{equation}\label{ChiRwd} \theta_{b,d}^{-1} \chi \left(\sD_{n+1}(W) R_{\cS(uc)}(u c, d)\right) = \sD_1(X) R_{\cS(w)}(w,d).\end{equation}
This implies that $M' \cong \frac{\sD_1(X)}{\sD_1(X) R_{\cS(w)}( w, d )}$ as $\sD_1(X)$-modules. The second isomorphism is deduced similarly using Proposition \ref{LnWgen} instead of Theorem \ref{ExplicitMeasure}. \end{proof}

\textbf{We assume until the end of $\S \ref{PfOfCOW}$ that the field $K$ is discretely valued.}
\begin{cor}\label{LnWSimple} The $\sD_n(W)$-module $\sL_n(W)$ is simple if $k \neq d$.
\end{cor}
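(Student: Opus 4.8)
The plan is to deduce this from the local irreducibility result Theorem \ref{SimpleKIsoc}, using the explicit presentation of $\sL_n(W)$ obtained in Theorem \ref{MandMdash}.

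First I would observe that the map $\theta_{b,d}^{-1}\circ\chi\colon \sD_n(W)\stackrel{\cong}{\longrightarrow}\sD_0(X)$ used in $\S\ref{PfOfCOW}$ is a ring isomorphism, and that restriction of scalars along a ring isomorphism preserves the lattice of submodules of a module, hence preserves simplicity. Therefore $\sL_n(W)$ is simple as a $\sD_n(W)$-module if and only if the $\sD_0(X)$-module $M=\sD_0(X)\otimes_{\sD_n(W)}\sL_n(W)$ is simple. By Theorem \ref{MandMdash}, $M\cong \sD_0(X)/\sD_0(X)R_{\cS(x^{-qk})}(x^{-qk},d)$, and $\sD_0(X)=\cD^\dag_\varpi(X)$ by Definition \ref{CurlyDn}; so it suffices to prove that this cyclic $\cD^\dag_\varpi(X)$-module is simple.

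Next I would compute the relator. Since $k\geq 1$ (Definition \ref{TheK}), the rational function $x^{-qk}$ has $\cS(x^{-qk})=\{0\}$, with $\Delta_{\{0\}}=x$ and $v_0(x^{-qk})=-qk$, so Definition \ref{TheRelator}(b) gives $R_{\cS(x^{-qk})}(x^{-qk},d)=x\partial_x+\tfrac{qk}{d}=x\partial_x-\lambda$, where $\lambda:=-\tfrac{qk}{d}$. I would then check that $X=\Sp K\langle x,\tfrac{1}{x^{q-1}-1}\rangle$ is precisely of the form considered in $\S\ref{LocIrred}$: by Remark \ref{ZetaInK}, $K$ contains a primitive $(q-1)$-th root of unity $\zeta$, so $x^{q-1}-1=\prod_{i=0}^{q-2}(x-\zeta^i)$ with $|\zeta^i|=1$ and $x=0$ lying in none of the open unit discs about the $\zeta^i$; moreover $K\langle x,\tfrac{1}{x^{q-1}-1}\rangle=K\langle x,\tfrac{1}{x-1},\dots,\tfrac{1}{x-\zeta^{q-2}}\rangle$, which exhibits $X$ as $\bD$ with finitely many open discs of radius $1$ not containing $x=0$ removed.

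Finally I would feed $\lambda=-\tfrac{qk}{d}$ into Theorem \ref{SimpleKIsoc}, which requires $\lambda\in\Zp\setminus\Z$. Here I use that $d\mid q+1$: it gives $q\equiv -1\pmod d$, hence $qk\equiv -k\pmod d$, and since $1\leq k<d$ (as $k\neq d$) we get $d\nmid qk$, so $\lambda\notin\Z$; and since $q$ is a power of $p$, $q+1\equiv 1\pmod p$, so $d$ is coprime to $p$ and $v_p(\lambda)=v_p(q)+v_p(k)\geq v_p(q)>0$, so $\lambda\in\Zp$. As $K$ is assumed discretely valued in this section, Theorem \ref{SimpleKIsoc} applies and shows $M$ is simple, whence $\sL_n(W)$ is simple. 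The argument is short; the only steps needing attention are the identification of $X$ with the affinoid appearing in $\S\ref{LocIrred}$ and the elementary arithmetic verifying that $-qk/d$ is a non-integral $p$-adic integer, so I do not expect a genuine obstacle.
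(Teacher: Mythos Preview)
Your proposal is correct and follows essentially the same approach as the paper: reduce via Theorem \ref{MandMdash} to the cyclic $\cD^\dag_\varpi(X)$-module with relator $x\partial_x+\tfrac{qk}{d}$, check that $-\tfrac{qk}{d}\in\Zp\setminus\Z$ using $d\mid q+1$ and $k\neq d$, and invoke Theorem \ref{SimpleKIsoc}. You supply more detail than the paper (the verification that $X$ has the required shape and that $\lambda\in\Zp$), but the argument is the same.
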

\begin{proof}  After Theorem \ref{MandMdash}, it is enough to check that $\frac{\sD_0(X)}{\sD_0(X) R_{\cS(x^{-qk})}(x^{-qk}, d)}$ is a simple $\sD_0(X)$-module. Now $R_{\cS(x^{-qk})}(x^{-qk}, d) = x \partial_x + \frac{qk}{d}$ by Definition \ref{TheRelator}(b), and $-\frac{qk}{d} \in \bZ_p \backslash \bZ$ because $q \equiv -1 \mod d$ and $k \neq 0 \mod d$ by our assumption on $k$. We can now apply Theorem \ref{SimpleKIsoc} to conclude.
\end{proof}

\begin{prop}\label{VarPhiIsOnto} The natural action map
\[\varphi : \sD_n(W) \underset{\sD_{n+1}(W)}{\otimes}{} \sL_{n+1}(W) \longrightarrow \sL_n(W)\]
given by $\varphi(Q \otimes v) := Q \cdot \tau(v)$ for all $Q \in \sD_n(W)$ and $v \in \sL_{n+1}(W)$, is surjective.
\end{prop}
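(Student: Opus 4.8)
The plan is to exhibit a single element in the image of $\varphi$ which already generates $\sL_n(W)$ over $\sD_n(W)$. First, note that $\im(\varphi)$ is precisely the $\sD_n(W)$-submodule of $\sL_n(W)$ generated by $\tau(\sL_{n+1}(W))$, and in particular it contains $[\dot{z}] := \tau(\dot{z})$, where $\dot{z} \in \sL_{n+1}(W)$ is the algebraic generator produced by Theorem \ref{ExplicitMeasure} (whose associated rational function is $uc$, with $u = (\tau^q - \pi_F^{q-1}\tau)^{-k}$) and $\tau : \sL_{n+1}(W) \to \sL_n(W)$ is the restriction map. It therefore suffices to prove that $\sL_n(W) = \sD_n(W)\cdot[\dot{z}]$.

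To obtain this I would check that $[\dot{z}]$ is itself an algebraic generator of $\sL_n(W)$ in the sense of Definition \ref{AlgGen}, and then invoke Corollary \ref{MnPres}, which gives $\sL_n(W) = \sD_n(W)\cdot[\dot{z}]$ directly. For Definition \ref{AlgGen}(a): by Proposition \ref{LnWgen} the element $\dot{y} := g\,[\dot{z}]$ is an algebraic generator of $\sL_n(W)$, where $g := (1 - (\pi_F/\tau)^{q-1})^{k/d}$; since $g$ and $g^{-1}$ both lie in $\cO(W)\langle \pi_F/\tau^2\rangle$ by \cite[Lemma 4.3.2(a)]{ArdWad2023}, and $\cO(W)\langle \pi_F/\tau^2\rangle \subseteq (\cO_W)_{\overline{W\cap V_n}}(W)$ by Lemma \ref{WUn}(a), we have $[\dot{z}] = g^{-1}\dot{y}$, whence $\sL_{n|W} = (\cO_W)_{\overline{W\cap V_n}}\,[\dot{z}]$. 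For Definition \ref{AlgGen}(b)--(e): the associated rational function of $[\dot{z}]$ is
\[ \psi\big([\dot{z}]^{\otimes d}\big) \;=\; g^{-d}\,\psi\big(\dot{y}^{\otimes d}\big) \;=\; (1 - (\pi_F/\tau)^{q-1})^{-k}\cdot \frac{c}{\tau^{qk}} \;=\; \frac{c}{(\tau^q - \pi_F^{q-1}\tau)^k} \;=\; uc, \]
the very same rational function as for $\dot{z}$; so clauses (c)--(e) hold for $[\dot{z}]$ for exactly the reasons they hold for $\dot{z}$ --- namely $\cS(uc) \subseteq F$, $\cS(uc)\cap W = \{a\}$ since $c \in \cO(W)^\times$ and the other finite zeroes and poles of $u$ lie in the $q-1$ discs deleted from the closed disc of radius $|\pi_F|^n$ in forming $W$, and $v_a(uc)/d = -k/d \notin \bN$. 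Applying Corollary \ref{MnPres} then finishes the proof.

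I do not anticipate a genuine difficulty here: the proposition is a formal consequence of the explicit cyclic presentations already established. The only point requiring care is the bookkeeping above --- checking that an algebraic generator at level $n+1$, once restricted to level $n$ and corrected by the overconvergent unit $g^{-1}$, is still an algebraic generator at level $n$ with associated rational function the honest rational function $uc$ (and not a transcendental unit). A shorter route that avoids this, valid whenever $k \neq d$, is to use Corollary \ref{LnWSimple}: since $[\dot{z}] = g^{-1}\dot{y} \neq 0$ and $\sL_n(W)$ is a simple $\sD_n(W)$-module, the nonzero submodule $\sD_n(W)\cdot[\dot{z}]$ must equal all of $\sL_n(W)$.
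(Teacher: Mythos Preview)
Your main argument has a genuine gap: the claim that $\cS(uc)\cap W = \{a\}$ is false. Recall that $u = (\tau^q - \pi_F^{q-1}\tau)^{-k}$ with $\tau = (x-a)/\pi_F^n$, so the finite zeroes and poles of $u$ lie at $\tau = 0$ and at $\tau = \pi_F\zeta^i$ for $i=0,\ldots,q-2$, that is, at the $q$ points $x=a$ and $x=a+\pi_F^{n+1}\zeta^i$. All of these lie in the small disc $\{|\tau|\le |\pi_F|\}\subset W$; none of them are in the $q-1$ deleted discs $\{|\tau-\zeta^j|<1\}$ (compare Lemma \ref{WUn}). Thus $\cS(uc)\cap W$ contains $q$ points with mutual distance $|\pi_F|^{n+1}<|\pi_F|^n$, so condition (d) of Definition \ref{AlgGen} at level $n$ fails and $[\dot{z}]$ is \emph{not} an algebraic generator for $\sL_n(W)$; Corollary \ref{MnPres} therefore does not apply. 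This is precisely why Proposition \ref{LnWgen} passes to $\dot{y}=g\,[\dot{z}]$: the transcendental unit $g=(1-(\pi_F/\tau)^{q-1})^{k/d}$ absorbs the $q-1$ extra singularities, leaving only the one at $x=a$. But $g\notin\cO(W)$, so you cannot recover $\dot{y}$ from $[\dot{z}]$ by acting with an element of $\sD_n(W)$ in the obvious way.

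Your shorter route via Corollary \ref{LnWSimple} is correct and is exactly what the paper does when $k\neq d$. The case $k=d$, however, is not covered by simplicity (Corollary \ref{LnWSimple} only asserts simplicity for $k\neq d$), and the paper treats it by a direct computation: one checks that $\tau^{q-1}\dot{y}=(\tau^{q-1}-\pi_F^{q-1})[\dot{z}]\in\im\varphi$ (the coefficient here \emph{does} lie in $\cO(W)$), and then, after the change of coordinates of Theorem \ref{MandMdash} identifying $\sL_n(W)$ with $(j_\ast\cO_{\bA\setminus\{0\}})_{\overline{U_0}}(X)$, one verifies $x^{-q}\in\sD_0(X)\cdot x^{-1}$ via $x^{-q}=(-1)^{q-1}\partial_x^{[q-1]}\cdot x^{-1}$, whence $\dot{y}\in\sD_n(W)\cdot\tau^{q-1}\dot{y}$.
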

\begin{proof} The restriction map $\tau : \sL_{n+1}(W) \to \sL_n(W)$ is non-zero, so $\varphi$ is a non-zero $\sD_n(W)$-linear map. If $k \neq d$ then $\sL_n(W)$ is a simple $\sD_n(W)$-module by Corollary \ref{LnWSimple}, so $\varphi$ is surjective. 

Suppose now that $k = d$. Inspecting the definition of the algebraic generator $\dot{y} \in \sL_n(W)$ from Proposition \ref{LnWgen}, we see that
\[ \tau^{q-1} \dot{y} = (\tau^{q-1} - \pi_F^{q-1}) [\dot{z}] \in \im \varphi.\]
Since $\sL_n(W) = \sD_n(W) \cdot \dot{y}$, it remains to show that $\dot{y} \in \sD_n(W) \cdot \tau^{q-1} \dot{y}$. Now, 
\[ M = \sD_0(X) \underset{\sD_n(W)}{\otimes}{} \sL_n(W) \cong \frac{\sD_0(X)}{\sD_0(X) R_{\cS(x^{-qd})}(x^{-qd}, d)}\]
as $\sD_0(X)$-modules by Theorem \ref{MandMdash}. However, $\cS(x^{-qd}) = \{0\}$, and
\[R_{\{0\}}(x^{-qd}, d) = x \partial_x + q = R_{\{0\}}(x^{-q}, 1),\]
so applying Theorem \ref{StdPres} with $u = x^{-q}$, $t = 1$ and $d = 1$, we see that
\begin{equation}\label{Mk=d} M \cong (j_\ast \cO_{\bA - \{0\}})_{\overline{U_0}}(X)\end{equation}
as $\sD_0(X)$-modules. Inside $(j_\ast \cO_{\bA - \{0\}})_{\overline{U_0}}(X)$, we compute 
\[x^{-q } = (-1)^{q-1}\partial_x^{[q-1]} \cdot x^{-1} \in \sD_0(X) \cdot x^{q-1} x^{-q},\]
and hence $\dot{y} \in \sD_n(W) \cdot \tau^{q-1} \dot{y}$ because the isomorphism (\ref{Mk=d}) sends $1 \otimes \dot{y}$ to $x^{-q}$.  \end{proof}


Next, we need to obtain the following upper bound.
\begin{lem}\label{LengthUB}  We have $\ell(\ker \varphi) \leq q-1$.
\end{lem}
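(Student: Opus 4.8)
<br>

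The plan is to estimate the length of $\ker\varphi$ by realising both the source and target of $\varphi$ as $\cD_r$-modules with known characteristic cycles, and then arguing that the characteristic cycle of $\ker\varphi$ is controlled by the difference of these cycles. More precisely, after applying the change of coordinates $\chi$ and the untwisting automorphism $\theta_{b,d}^{-1}$ as in Theorem \ref{MandMdash}, the map $\varphi$ becomes (up to isomorphism) a $\sD_0(X)$-linear map
\[ \Phi : \sD_0(X) \underset{\sD_1(X)}{\otimes}{} M' \longrightarrow M\]
where $M' \cong \sD_1(X) / \sD_1(X) R_{\cS(w)}(w,d)$ and $M \cong \sD_0(X)/\sD_0(X) R_{\cS(x^{-qk})}(x^{-qk},d)$, with $w = (x^q - \pi_F^{q-1}x)^{-k}$. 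So it suffices to bound $\ell(\ker \Phi)$.

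First I would pick a real number $r$ strictly between $\varpi$ and $\varpi/|\pi_F|$ in $\sq{K}$ (enlarging $K$ if necessary, which is harmless by Lemma \ref{DrBC} and faithful flatness), so that $X \in \bA(\partial_x/r)^\dag$ and $\cD_r(X)$ is a Noetherian ring by Corollary \ref{DrXNoeth}. Since $\sD_0(X) = \cD^\dag_\varpi(X) = \colim_{c > \varpi} \cD_c(X)$ and everything in sight is finitely presented by Corollary \ref{MnPres}, I would replace $\sD_0(X), \sD_1(X)$ by $\cD_r(X), \cD_{r'}(X)$ for suitable $r > r'$ both in $(\varpi, \varpi/|\pi_F|)$; the relevant modules and $\Phi$ are obtained from their $\cD_{r'}$-level versions by base change along the flat maps of Theorem \ref{FlatThm}, so lengths are preserved. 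Then $\cD_r(X)$ corresponds via Theorem \ref{Adellevm} to Berthelot's ring $\h{\sD}^{(0)}_{\sX}(\sX)_K$ for the formal model $\sX = \Spf \cO(X)^\circ$ (note $\rho(X) = 1$, so $r(X) = \varpi$ and $X \in \D(\partial_x/\varpi)^\dag$), and I can pass to the corresponding coherent $\sD^\dag_{\sX,\bQ}$-modules. Now Theorem \ref{CharCycle} computes the characteristic cycles: for $M$ the relator is $R_{\{0\}}(x^{-qk},d) = x\partial_x + qk/d$ with $\cS = \{0\}$, a single $K$-point reducing to $\overline 0$, so $\Cycl(M) = [\sX_0] + q[T^\ast_{\overline 0}\sX_0]$ by Example \ref{CyclRwd} (with multiplicity $q$ since $\cS(w) = \{0, \pi_F, \pi_F\zeta, \ldots, \pi_F\zeta^{q-2}\}$ all reduce to $\overline 0$). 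For $\sD_0(X)\otimes_{\sD_1(X)} M'$, it is presented by the \emph{same} relator $R_{\cS(w)}(w,d)$ now regarded over $\cD_r(X)$, so its cycle is also $[\sX_0] + q[T^\ast_{\overline 0}\sX_0]$ — wait, this needs care: the point is rather that $\Phi$ is surjective (Proposition \ref{VarPhiIsOnto}) between two holonomic modules whose characteristic cycles are computed by Theorem \ref{CharCycle}, and $\Cycl$ is additive on short exact sequences, so $\Cycl(\ker\Phi) = \Cycl(\sD_0\otimes_{\sD_1}M') - \Cycl(M)$.

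The key computation is therefore: the source $\sD_0(X)\otimes_{\sD_1(X)}M'$ has cycle $[\sX_0] + q[T^\ast_{\overline 0}\sX_0]$ coming from the $q$ zeros of $\tau^q - \pi_F^{q-1}\tau$ all reducing to $\overline 0$, whereas $M$ also has cycle $[\sX_0] + q[T^\ast_{\overline 0}\sX_0]$ — so naively the difference is zero and $\ker\Phi$ would be zero, contradicting Theorem \ref{XiIso}'s spirit. The resolution is that $\sD_0(X)\otimes_{\sD_1(X)}M'$ is \emph{not} $\sD_0(X)/\sD_0(X)R_{\cS(w)}(w,d)$ as a $\sD_0(X)$-module; rather, base-changing the presentation of $M'$ over $\sD_1(X) = \cD_{r'}(X)$ (which corresponds to a \emph{level-$1$} Berthelot ring, roughly $\Berth{X}{1}$-analytic) up to $\sD_0(X) = \cD_r(X)$ (a level-$0$ ring) genuinely enlarges the cycle: the divisor $\Div(\tau^q - \pi_F^{q-1}\tau)$ has $q$ points that are distinct at level $1$ (they sit in a disc of radius $|\pi_F|$, which is "visible" at level $1$ since $\rho(X\cap V_1) = |\pi_F|$) but collapse to a single fat point of multiplicity $q$ at level $0$. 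So $\Cycl(\sD_0\otimes_{\sD_1}M') = [\sX_0] + q[T^\ast_{\overline 0}\sX_0]$ while $\Cycl(M) = [\sX_0] + q[T^\ast_{\overline 0}\sX_0]$ — no: for $M$, the relator $x\partial_x + qk/d$ after rewriting is $R_{\cS(x^{-qk})}$ with a single pole at $0$ of order $qk$, and its characteristic variety is $\sX_0 \cup T^\ast_{\overline 0}\sX_0$ with multiplicity equal to the single multiplicity $m_{\overline 0}(x^{-qk}) = 1$ (one point in $\cS$), giving $\Cycl(M) = [\sX_0] + [T^\ast_{\overline 0}\sX_0]$. Hence $\Cycl(\ker\Phi) = (q-1)[T^\ast_{\overline 0}\sX_0]$, and since a holonomic module supported on $\sX_0\cup T^\ast_{\overline 0}\sX_0$ with cycle $(q-1)[T^\ast_{\overline 0}\sX_0]$ has length at most $q-1$ (each simple subquotient contributes a positive integer multiple of $[T^\ast_{\overline 0}\sX_0]$ or $[\sX_0]$ to the cycle, and the coefficient of $[\sX_0]$ in $\Cycl(\ker\Phi)$ is $0$), we conclude $\ell(\ker\Phi) \leq q - 1$, hence $\ell(\ker\varphi)\leq q-1$.

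The main obstacle I anticipate is making rigorous the claim that $\ell(\ker\Phi)$ is bounded by the coefficient-sum of $\Cycl(\ker\Phi)$: one needs that the microlocalised rings $\Abe{X}{m}$ and Abe's theory of characteristic cycles for $\sD^\dag$-modules indeed give additivity on exact sequences \emph{and} that each simple holonomic module has a nonzero effective cycle with integer coefficients, so that length is dominated by the total multiplicity. This should follow from \cite[\S 1.5, 2.1]{AbeMarmora} and \cite{AbeSwan}, but the precise bookkeeping — in particular checking $\Cycl(\sD_0\otimes_{\sD_1}M')$ genuinely equals $[\sX_0] + q[T^\ast_{\overline 0}\sX_0]$ via Theorem \ref{CharCycle} applied to the level-$0$ relator $R_{\cS(w)}(w,d)$ with $w = (x^q-\pi_F^{q-1}x)^{-k}$ and $g = 1$, $m_{\overline 0}(w) = q$ (Example \ref{CyclRwd}), together with the analogous computation for $M$ using $m_{\overline 0}(x^{-qk}) = 1$ — is where the care is needed. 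A secondary point is justifying the reduction from $\sD_0, \sD_1$ (colimits of Banach algebras, corresponding to $\sD^\dag$) to fixed Banach levels $\cD_r, \cD_{r'}$ without changing lengths, which uses Noetherianity (Corollary \ref{DrXNoeth}), flatness of the connecting maps (Theorem \ref{FlatThm}), and exactness of filtered colimits.
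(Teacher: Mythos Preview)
Your proposal is correct and takes essentially the same approach as the paper: after the change of coordinates from Theorem \ref{MandMdash}, compute $\Cycl(\sD_0(X)\otimes_{\sD_1(X)}M') = [\sX_0] + q[T^\ast_{\overline 0}\sX_0]$ via Example \ref{CyclRwd} and $\Cycl(M) = [\sX_0] + [T^\ast_{\overline 0}\sX_0]$ via Theorem \ref{CharCycle} (since $m_{\overline 0}(x^{-qk})=1$), use surjectivity (Proposition \ref{VarPhiIsOnto}) and additivity to get $\Cycl(\ker\psi) = (q-1)[T^\ast_{\overline 0}\sX_0]$, and bound the length by $|\Cycl|$.

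One remark: the reduction to fixed Banach levels $\cD_r(X), \cD_{r'}(X)$ that you propose as a preliminary step is unnecessary and should be dropped. Theorem \ref{CharCycle} and the Abe--Marmora cycle machinery are formulated directly for holonomic $\sD^\dag_{\sX,\bQ}$-modules, and $\sD_0(X) = \cD^\dag_\varpi(X)$ \emph{is} $\sD^\dag_{\sX,\bQ}(\sX)$ (as in the proof of Theorem \ref{SimpleKIsoc}); the identification $\sD_0(X)\otimes_{\sD_1(X)}M' \cong \sD_0(X)/\sD_0(X)R_{\cS(w)}(w,d)$ follows immediately from flatness (Lemma \ref{Flat}(a)) applied to the presentation in Theorem \ref{MandMdash}, with no need to descend to a fixed level.
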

\begin{proof} After applying the change of coordinates made precise by Theorem \ref{MandMdash}, the map $\varphi$ induces a non-zero $\sD_0(X)$-linear map
\[ \psi : \frac{\sD_0(X)}{\sD_0(X) R_{\cS(w)}(w,d)} \cong \sD_0(X) \underset{\sD_1(X)}{\otimes}{} M' \longrightarrow M \cong \frac{\sD_0(X)}{\sD_0(X) R_{\cS(x^{-qk})}(x^{-qk}, d)}.\]
Now, by applying Theorem \ref{CharCycle} --- see also Example \ref{CyclRwd} --- we know that
\[\begin{array}{lll} \Cycl\left( \frac{\sD^\dag_{\Q}(\sX)}{\sD^\dag_{\Q}(\sX) R_{\cS(w)}(w,d)}\right) &=& [\sX_0] + q [T^\ast_0\sX_0], \quad \mbox{whereas} \\
\Cycl\left( \frac{\sD^\dag_{\Q}(\sX)}{\sD^\dag_{\Q}(\sX) R(x^{-qk},d)}\right) &=& [\sX_0] + [T^\ast_0\sX_0] \end{array}.\]
Since $\varphi$ is surjective by Proposition \ref{VarPhiIsOnto}, so is $\psi$. Therefore 
\[\Cycl(\ker \psi) = (q-1)[T_0^\ast \sX_0].\]
Since $|\Cycl|$ is additive on short exact sequences and $|\Cycl(V)| \geq 1$ for non-zero holonomic $\sD^\dag_{\Q}(\sX)$-modules $V$, we see that $\ell(V) \leq |\Cycl(V)|$ for every holonomic $\sD^\dag_{\Q}(\sX)$-module $V$. Hence $\ell(\ker \varphi) = \ell(\ker \psi) \leq |\Cycl(\ker \psi)| = q-1$. \end{proof}

\begin{defn} Define $g = \begin{pmatrix} 1 & -\pi_F^{n+1} \\ 0 & 1 \end{pmatrix} \in N_{n+1}$ and $h := \begin{pmatrix} 1 & -\pi_F \\ 0 & 1 \end{pmatrix} \in N_1$. \end{defn}
It follows from Lemma \ref{JG} that $\beta(g) \in \cD_r(\bD)^\times$ whenever $r < \varpi / |\pi_F|^{n+1}$. Since $\varpi/|\pi_F|^n < \varpi/|\pi_F|^{n+1}$, $\beta(g)$ lies in $\sD_n(\bD)^\times$ and hence can also be viewed as an element of $\sD_n(W)^\times$.

\begin{prop} \label{BetaNotInSumOnW} $\beta(g) \notin \sD_{n+1}(W) + \sD_n(W) R_{\cS(uc)}(u c,d).$
\end{prop}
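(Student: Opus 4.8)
The plan is to transport the non-membership statement along the coordinate change $\chi : \sD_n(W) \xrightarrow{\ \sim\ } \sD_0(X)$ from the Corollary following Lemma \ref{gan} and reduce to Theorem \ref{BetaNotInSum}. The first ingredient is the matrix identity
\[ g_{a,n}\, g\, g_{a,n}^{-1} = h, \]
a routine $2\times 2$ computation with $g_{a,n} = \begin{pmatrix} 1 & -a \\ 0 & \pi_F^n\end{pmatrix}$, $g = \begin{pmatrix} 1 & -\pi_F^{n+1}\\ 0 & 1\end{pmatrix}$ and $h = \begin{pmatrix}1 & -\pi_F \\ 0 & 1\end{pmatrix}$. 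The second ingredient is the identity $\chi(\beta(g)) = \beta(h)$. To get it, recall that $\chi$ is a $K$-algebra isomorphism with $\chi(\tau) = x$ and $\chi(\pi_F^n\partial_x) = \partial_x$, hence $\chi(\partial_x^{[m]}) = \pi_F^{-nm}\partial_x^{[m]}$ for all $m$; since $g\cdot x - x = \pi_F^{n+1}$ and $h\cdot x - x = \pi_F$ by Lemma \ref{GdotDn}(a), and the coefficients appearing in Definition \ref{Beta}(b) are scalars, we compute
\[ \chi(\beta(g)) = \sum_{m=0}^\infty (\pi_F^{n+1})^m\, \chi(\partial_x^{[m]}) = \sum_{m=0}^\infty (\pi_F^{n+1})^m\, \pi_F^{-nm}\, \partial_x^{[m]} = \sum_{m=0}^\infty \pi_F^m\, \partial_x^{[m]} = \beta(h). \]

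With these in hand I would argue by contradiction: assume $\beta(g) \in \sD_{n+1}(W) + \sD_n(W)\, R_{\cS(uc)}(uc,d)$ and apply the ring isomorphism $\chi$. Since $\chi(\sD_{n+1}(W)) = \sD_1(X)$ and $\chi(\sD_n(W)) = \sD_0(X)$, this yields $\beta(h) \in \sD_1(X) + \sD_0(X)\,\chi(R_{\cS(uc)}(uc,d))$. The computation in the proof of Theorem \ref{MandMdash} leading to $(\ref{ChiRwd})$ shows that $\chi(R_{\cS(uc)}(uc,d))$ equals $\theta_{b,d}(\Delta_{\cS(b)}\, R_{\cS(w)}(w,d))$ up to a scalar in $K^\times$, where $b = \chi(c) \in \cO(X)^\times$ and $w = \chi(u)$; as $\theta_{b,d}$ is $\cO(X)$-linear and $\Delta_{\cS(b)} \in \cO(X)^\times$, the left ideal $\sD_0(X)\,\chi(R_{\cS(uc)}(uc,d))$ coincides with $\sD_0(X)\,\theta_{b,d}(R_{\cS(w)}(w,d))$. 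Writing $\sD_1(X) = \cD^\dag_{\varpi/|\pi_F|}(X)$ and $\sD_0(X) = \cD^\dag_\varpi(X)$, I obtain
\[ \beta(h) \in \cD^\dag_{\varpi/|\pi_F|}(X) + \cD^\dag_\varpi(X)\, \theta_{b,d}(R_{\cS(w)}(w,d)). \]

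Next I would remove the twist. By Corollary \ref{ThetaUDag}, $\theta_{b,d}^{-1} = \theta_{b^{-1},d}$ is an automorphism of $\cD^\dag_\varpi(X)$ which restricts to one of $\cD^\dag_{\varpi/|\pi_F|}(X)$ (all of these maps being the unique continuous extensions of $\theta_{b^{-1},d}$ on the dense subalgebra $\cD(X)$); applying it to the previous display gives $\theta_{b^{-1},d}(\beta(h)) \in \cD^\dag_{\varpi/|\pi_F|}(X) + \cD^\dag_\varpi(X)\, R_{\cS(w)}(w,d)$. Now $h$ stabilises $X$ — if $|z|\le 1$ and $|z-\zeta^i|\ge 1$ for all $(q-1)$-th roots of unity $\zeta^i$, the same holds for $z+\pi_F$ since $|\pi_F|<1$ — and $h \in \cG_r$ for any $\varpi < r < \varpi/|\pi_F|$, so Proposition \ref{TwistedCocycle}(b) gives $\theta_{b^{-1},d}(\beta(h)) = c_{b^{-1},d}(h)\,\beta(h)$ with $c_{b^{-1},d}(h) \in 1 + \cO(X)^{\circ\circ} \subseteq \cO(X)^\times$ by Proposition \ref{TwistedCocycle}(a). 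Left-multiplying by the unit $c_{b^{-1},d}(h)^{-1} \in \cO(X)^\times$ — which fixes both $\cD^\dag_{\varpi/|\pi_F|}(X)$ and the left ideal $\cD^\dag_\varpi(X)\,R_{\cS(w)}(w,d)$ — yields
\[ \beta(h) \in \cD^\dag_{\varpi/|\pi_F|}(X) + \cD^\dag_\varpi(X)\, R_{\cS(w)}(w,d), \]
contradicting Theorem \ref{BetaNotInSum}. I do not expect a genuine obstacle: the argument merely threads the coordinate change $\chi$, the untwisting automorphism $\theta_{b,d}$, and the harmless scalar/unit ambiguities through the completions $\cD^\dag_r$, all of which are already in place. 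The one point worth stating carefully is the compatibility of $\chi$ and of $\theta_{b,d}$ with the inclusion $\cD^\dag_{\varpi/|\pi_F|}(X) \subseteq \cD^\dag_\varpi(X)$, which follows from the density of $\cD(X)$ together with Definition \ref{DagSite}(d) and Corollary \ref{ThetaUDag}.
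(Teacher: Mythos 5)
Your proof is correct and follows essentially the same route as the paper: transport through $\chi$, compute $\chi(\beta(g))=\beta(h)$, rewrite the relator using (\ref{ChiRwd}), untwist by $\theta_{b^{-1},d}$ via Proposition \ref{TwistedCocycle}(b), absorb the unit $c_{b^{-1},d}(h)$, and appeal to Theorem \ref{BetaNotInSum}. The only (cosmetic) difference is that you apply $\chi$ and $\theta_{b^{-1},d}$ in two stages with a bit more explicit bookkeeping, whereas the paper applies the composite $\theta_{b,d}^{-1}\circ\chi$ in one step.
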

\begin{proof} Suppose for a contradiction that $\beta(g) \in \sD_{n+1}(W) + \sD_n(W) R_{\cS(uc)}(uc,d)$. Then we can use equation $(\ref{ChiRwd})$ to deduce that
\[ \theta_{b,d}^{-1}\chi(\beta(g)) \in \sD_1(X) + \sD_0(X) \cdot \sD_1(X)R_{\cS(w)}(w,d).\] 
Since $\chi(\pi_F^n\partial_x) = \partial_x$ and $g \cdot x - x = \pi_F^{n+1}$ by Lemma \ref{GdotDn}(a), we see that
\[\chi(\beta(g)) = \chi\left(\sum\limits_{m=0}^\infty \pi_F^{(n+1)m} \partial_x^{[m]} \right) = \sum\limits_{m=0}^\infty \pi_F^m \partial_x^{[m]} =\beta(h).\]
By Proposition \ref{TwistedCocycle}, we can find an element $c_{b^{-1},d}(h) \in \cO(X)^\times$ such that
\[\theta_{b,d}^{-1} \chi(\beta(g)) = \theta_{b^{-1},d}(\beta(h)) = c_{b^{-1},d}(h) \beta(h).\]
Since $\sD_1(X) \subseteq \sD_0(X)$, we conclude that $c_{b^{-1},d}(h) \beta(h) \in \sD_1(X) + \sD_0(X)R_{\cS(w)}(w,d)$.  Since $c_{b^{-1},d}(h)$ is a unit in $\cO(X)$, this implies that $\beta(h) \in \sD_1(X) + \sD_0(X) R_{\cS(w)}(w,d)$.  This contradicts Theorem \ref{BetaNotInSum}.
\end{proof}
Note that since $g \in N_{n+1} \leq J_{n+1}$ by our assumption on $J$, and $W$ is $N_{n+1}$-stable by Lemma \ref{CheckCov2}, the $\sD_{n+1}(W)$-module carries a compatible action of $g$.

\begin{cor}\label{SAnontriv} The secret $g$-action from Lemma \ref{SecretAction} on 
\[\sD_n(W) \underset{\sD_{n+1}(W)}{\otimes} \sL_{n+1}(W)\]
is non-trivial.
\end{cor}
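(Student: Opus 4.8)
The plan is to argue by contradiction. Suppose the secret $g$-action on $N := \sD_n(W) \underset{\sD_{n+1}(W)}{\otimes} \sL_{n+1}(W)$ is trivial. This action is left $\sD_n(W)$-linear by Lemma \ref{SecretAction}, and $N$ is generated as a left $\sD_n(W)$-module by the single element $1 \otimes \dot z$, where $\dot z \in \sL_{n+1}(W)$ is the algebraic generator supplied by Theorem \ref{ExplicitMeasure} (so that $\sL_{n+1}(W) = \sD_{n+1}(W) \cdot \dot z$ by Corollary \ref{MnPres}, recalling $W \in \bD_n \subseteq \bD_{n+1}$). Hence triviality of the secret $g$-action is equivalent to the single equation $g \star (1 \otimes \dot z) = 1 \otimes \dot z$, and it is this that one would contradict.

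Next I would unwind this equation. Since $g \in N_{n+1} \leq I_{n+1}$ stabilises $W$ by Lemma \ref{CheckCov2}, it acts on the $I_{n+1}$-equivariant truncation $\sL_{n+1}(W)$, and using $\sL_{n+1}(W) = \sD_{n+1}(W) \cdot \dot z$ one may write $g \cdot \dot z = Q \cdot \dot z$ for some $Q \in \sD_{n+1}(W)$. The formula of Lemma \ref{SecretAction} then gives
\[ g \star (1 \otimes \dot z) = \beta(g)^{-1} \otimes (g \cdot \dot z) = \beta(g)^{-1} \otimes Q \dot z = \beta(g)^{-1} Q \otimes \dot z, \]
where $\beta(g)$ is a well-defined unit of $\sD_n(W)$ as recorded in the discussion preceding the statement. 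Corollary \ref{MnPres} identifies $\sL_{n+1}(W)$ with $\sD_{n+1}(W) / \sD_{n+1}(W) R_{\cS(uc)}(uc,d)$ as a $\sD_{n+1}(W)$-module, $\dot z$ corresponding to the class of $1$; by right-exactness of tensor product this gives $N \cong \sD_n(W) / \sD_n(W) R_{\cS(uc)}(uc,d)$ as a left $\sD_n(W)$-module, with $P \otimes \dot z$ the class of $P$.

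Under this identification the assumed equality $\beta(g)^{-1} Q \otimes \dot z = 1 \otimes \dot z$ becomes $\beta(g)^{-1} Q - 1 \in \sD_n(W) R_{\cS(uc)}(uc,d)$; multiplying on the left by the unit $\beta(g)$ yields $Q - \beta(g) \in \sD_n(W) R_{\cS(uc)}(uc,d)$, hence
\[ \beta(g) \in Q + \sD_n(W) R_{\cS(uc)}(uc,d) \subseteq \sD_{n+1}(W) + \sD_n(W) R_{\cS(uc)}(uc,d). \]
This directly contradicts Proposition \ref{BetaNotInSumOnW}, so the secret $g$-action must be non-trivial. The argument is short because all the real work has already been done: the genuine obstacle is Proposition \ref{BetaNotInSumOnW} itself, which rests on the unboundedness result Theorem \ref{ZetaUnbounded}. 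The only points here requiring any care are the cyclicity of $N$ over $\sD_n(W)$ and the precise description of $N$ as the cokernel of right multiplication by $R_{\cS(uc)}(uc,d)$ — both immediate from Corollary \ref{MnPres} — together with the bookkeeping (already carried out in the text) that $\beta(g)$, which lies in $\cD_r(\bD)^\times$ only for $r < \varpi/|\pi_F|^{n+1}$, nonetheless defines a unit of $\sD_n(W) = \cD^\dag_{\varpi/|\pi_F|^n}(W)$.
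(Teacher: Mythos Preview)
Your proof is correct and follows essentially the same route as the paper's: argue by contradiction, write $g\cdot\dot z = Q\dot z$ with $Q\in\sD_{n+1}(W)$, compute $g\star(1\otimes\dot z)=\beta(g)^{-1}Q\otimes\dot z$, and conclude $\beta(g)\in\sD_{n+1}(W)+\sD_n(W)R_{\cS(uc)}(uc,d)$, contradicting Proposition~\ref{BetaNotInSumOnW}. The only difference is that you obtain $\ann_{\sD_n(W)}(1\otimes\dot z)=\sD_n(W)R_{\cS(uc)}(uc,d)$ via right-exactness of the tensor product applied to the presentation of Corollary~\ref{MnPres}, whereas the paper invokes the flatness of $\sD_n(W)$ over $\sD_{n+1}(W)$ from Lemma~\ref{Flat}(a); your route is slightly more elementary since flatness is not actually needed here.
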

\begin{proof} Write $S:= \sD_n(W)$ and $S' := \sD_{n+1}(W)$. Choose an algebraic generator $\dot{z} \in \sL_{n+1}(W)$ as in Theorem \ref{ExplicitMeasure} and write $r := R_{\cS(uc)}(u c, d)$. By Corollary \ref{MnPres}, we can choose $Q \in S'$ such that $Q \cdot \dot{z} = g \cdot \dot{z}$. If the secret action was trivial, then
\[ 1 \otimes \dot{z} = g \star (1 \otimes \dot{z}) = \beta(g)^{-1} \otimes g \cdot \dot{z} = \beta(g)^{-1} \otimes Q \cdot \dot{z} = \beta(g)^{-1} Q \otimes \dot{z}.\]
Now $\ann_{S'}(\dot{z}) = S' r$ by Corollary \ref{MnPres}, so $\ann_S(1 \otimes \dot{z}) = S r$ because $S$ is a flat $S'$-module by Lemma \ref{Flat}(a), Therefore $\beta(g)^{-1} Q - 1 \in S r$. Hence $Q - \beta(g) \in S r$ so $\beta(g) \in S' + S  r$, which contradicts Proposition \ref{BetaNotInSumOnW}.
\end{proof}

Next, we need some elementary group theory. Let $k_F$ denote the residue field of our $p$-adic field $F$ and let $k_F^\times = k_F - \{0\}$. We view $k_F$ as a finite elementary abelian $p$-group of order $q$, and $k_F^\times$ as a cyclic group of order $q-1$. As in $\S \ref{CPandSA}$, let $\Irr_{\overline{K}}(k_F)$ be the set of isomorphism classes of simple $\overline{K}[k_F]$-modules, which is naturally in bijection with the set $\widehat{k_F}$ of $\overline{K}^\times$-valued characters of $k_F$. The group $k_F^\times$ acts on $k_F$ by multiplication; since this action is by abelian group homomorphisms, we obtain an induced action of $k_F^\times$ on $\widehat{k_F}$ given by 
\[(t \cdot \chi)(a) = \chi(t^{-1} a) \qmb{for all} \chi \in \widehat{k_F}, a \in k_F, t \in k_F^\times.\]
\begin{lem}\label{BactsOnN} $k_F^\times$ acts transitively and freely on $\widehat{k_F} \backslash \{0\}$.
\end{lem}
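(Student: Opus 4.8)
\textbf{Proof plan for Lemma \ref{BactsOnN}.} The plan is to reduce everything to two elementary observations: that $\widehat{k_F}\backslash\{0\}$ has exactly $q-1$ elements, and that the stabiliser in $k_F^\times$ of any non-trivial character is trivial. Transitivity will then follow automatically by an orbit-counting argument, since a free action whose orbits are all of the same size $q-1$ on a set of size $q-1$ must be transitive.

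First I would record that $|\widehat{k_F}| = q$. Since $k_F$ is a finite field of characteristic $p$, as an abelian group it is isomorphic to $(\bZ/p\bZ)^f$ where $q = p^f$; and $\overline{K}$ contains a primitive $p$-th root of unity because it contains the algebraic closure of $\bQ_p$. Hence $\widehat{k_F} = \Hom(k_F, \overline{K}^\times)$ is (non-canonically) isomorphic to $k_F$ and in particular has order $q$, so $|\widehat{k_F}\backslash\{0\}| = q-1$.

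Next I would prove freeness of the action on $\widehat{k_F}\backslash\{0\}$. Suppose $t \in k_F^\times$ and $\chi \in \widehat{k_F}\backslash\{0\}$ satisfy $t \cdot \chi = \chi$; I must show $t = 1$. By definition of the action this says $\chi(t^{-1}a) = \chi(a)$ for all $a \in k_F$, equivalently $\chi\bigl((t^{-1}-1)a\bigr) = 1$ for all $a \in k_F$. If $t \neq 1$ then $t^{-1} - 1$ is a non-zero element of the field $k_F$, so the map $a \mapsto (t^{-1}-1)a$ is a bijection of $k_F$, and therefore $\chi$ is identically $1$, contradicting $\chi \neq 0$. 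Hence $t = 1$, so the stabiliser of $\chi$ is trivial and the action is free.

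Finally, combining these: every $k_F^\times$-orbit inside $\widehat{k_F}\backslash\{0\}$ has cardinality $|k_F^\times| = q-1$ by freeness, and $\widehat{k_F}\backslash\{0\}$ itself has cardinality $q-1$, so there is exactly one orbit and the action is transitive. I do not anticipate any real obstacle here; the only point requiring a moment's care is the identification $|\widehat{k_F}| = q$, which rests on the presence of $p$-th roots of unity in $\overline{K}$.
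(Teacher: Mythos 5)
Your proof is correct and takes essentially the same route as the paper: the same freeness argument (using that $(t^{-1}-1)k_F = k_F$ for $t\neq 1$ in a field, forcing $\chi$ to be trivial), followed by the same cardinality count $|k_F^\times| = |\widehat{k_F}\backslash\{0\}|$ to conclude transitivity. You merely spell out the step $|\widehat{k_F}|=q$ more explicitly than the paper does.
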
 
\begin{proof} Suppose $t \in k_F^\times$ fixes $0 \neq \chi \in \widehat{k_F}$ under the above action. Then $(t^{-1} - 1)a \in \ker \chi$ for all $a \in k_F$. Since $k_F$ is a \emph{field}, $(t^{-1} - 1)k_F = k_F$ unless $t = 1$. But $\chi \neq 0$ forces $\ker \chi < k_F$, so indeed $t = 1$. The result follows since $|k_F^\times| = |\widehat{k_F} \backslash \{0\}|$. 
\end{proof}

\begin{cor}\label{GmOnGa} The action of $k_F^\times$ on $\Irr_K(k_F)$ has exactly two orbits.
\end{cor}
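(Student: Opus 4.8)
The plan is to reduce the statement about simple $K[k_F]$-modules to Lemma \ref{BactsOnN} by Galois descent. Since $k_F$ is a finite abelian group and $K$ has characteristic zero, the group algebra $K[k_F]$ is a finite product of finite field extensions of $K$, and after base change $\overline{K}[k_F] \cong \prod_{\chi \in \widehat{k_F}} \overline{K}$; the absolute Galois group $G := \Gal(\overline{K}/K)$ permutes the factors via its natural action on character values, and the simple $K[k_F]$-modules correspond bijectively to the $G$-orbits on $\widehat{k_F}$. So the first step is to record the natural bijection $\Irr_K(k_F) \cong \widehat{k_F}/G$, under which the $k_F^\times$-action on $\Irr_K(k_F)$ is transported from the $k_F^\times$-action on $\widehat{k_F}$ defined just before Lemma \ref{BactsOnN}.

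Next I would check that the $k_F^\times$-action and the $G$-action on $\widehat{k_F}$ commute: for $t \in k_F^\times$, $\sigma \in G$, $\chi \in \widehat{k_F}$ and $a \in k_F$ one has $(t\cdot {}^\sigma\chi)(a) = \sigma(\chi(t^{-1}a)) = {}^\sigma(t\cdot\chi)(a)$. Consequently $k_F^\times$ carries $G$-orbits to $G$-orbits, and this is exactly the induced action on $\Irr_K(k_F)$. Then I would exhibit the two orbits: the trivial character $0 \in \widehat{k_F}$ is fixed by $k_F^\times$ and forms its own $G$-orbit, so $\{[\mathbbm{1}]\}$ is one $k_F^\times$-orbit; any other $G$-orbit lies inside $\widehat{k_F}\setminus\{0\}$, and given two such $G$-orbits $O$ and $O'$ one picks $\chi \in O$, $\chi' \in O'$ and uses Lemma \ref{BactsOnN} to find $t \in k_F^\times$ with $t\cdot\chi = \chi'$; commutativity of the two actions gives $t\cdot O = O'$, so all $G$-orbits other than $\{0\}$ lie in a single $k_F^\times$-orbit. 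Hence $\Irr_K(k_F)$ splits into exactly two orbits.

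The only point needing any care is the descent identification $\Irr_K(k_F) \cong \widehat{k_F}/G$ together with its compatibility with the $k_F^\times$-action; this is the standard structure theory of group rings of finite abelian groups over a field of characteristic zero, and everything else in the argument is formal. (Alternatively, one can bypass descent entirely by twisting simple $K[k_F]$-modules directly by the automorphisms $a \mapsto ta$ of $k_F$ and running the same orbit-counting argument, but invoking Lemma \ref{BactsOnN} through the character identification is the shortest route.)
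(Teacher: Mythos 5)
Your proposal is correct and takes essentially the same route as the paper: both reduce to $\overline{K}$ via Lemma \ref{BactsOnN} and then descend by observing that the $k_F^\times$-action commutes with the $\Gal(\overline{K}/K)$-action on $\widehat{k_F}$ (equivalently, on the primitive idempotents of $\overline{K}[k_F]$, which is how the paper phrases it), so that $k_F^\times$ permutes Galois orbits and hence acts on $\Irr_K(k_F)$ with one orbit for the trivial character and one for everything else. Your formulation in terms of simple modules and $\widehat{k_F}/G$ versus the paper's in terms of primitive idempotents are two faces of the same standard Galois-descent argument; the explicit commutativity check you write out is exactly the observation the paper invokes in passing.
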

\begin{proof} When $K = \overline{K}$, this follows directly from Lemma \ref{BactsOnN} because the bijection $\Irr_{\overline{K}}(k_F) \cong \widehat{k_F}$ respects the $k_F^\times$-action on both sides. In general, because our ground field $K$ has characteristic zero, we may identify $\Irr_K(k_F)$ with the set of primitive idempotents of the group ring $K[k_F]$, and every such idempotent is the sum of a $\cG_K$-orbit of primitive idempotents of $\overline{K}[k_F]$, where $\cG_K = \Gal(\overline{K}/K)$. Since the $\cG_K$-action on $\overline{K}[k_F]$ commutes with the $k_F^\times$-action, we deduce from the case $K = \overline{K}$ that any two non-principal idempotents in $K[k_F]$ lie in the same $k_F^\times$-orbit. 
\end{proof}

We now spell out the application of these ideas that we will need.

\begin{cor}\label{Baction} Let $B := \begin{pmatrix} 1 & a \\ 0 & 1 \end{pmatrix} \begin{pmatrix} \cO_F^\times & 0 \\ 0 & 1 \end{pmatrix} \begin{pmatrix} 1 & -a \\ 0 & 1 \end{pmatrix}$. Then 
\be \item $B$ stabilises $W = \Sp K \langle \tau, \frac{1}{\tau^{q-1} - 1}\rangle$ where $\tau = \frac{x - a}{\pi_F^n}$. 
\item $B$ normalises $N_{n+1} = \begin{pmatrix} 1 & \pi_F^{n+1} \cO_F \\ 0 & 1 \end{pmatrix}$  and $N_{n+2} = \begin{pmatrix} 1 & \pi_F^{n+2} \cO_F \\ 0 & 1 \end{pmatrix}$.
\item the $B$-action on $\Irr_K(N_{n+1} / N_{n+2})$ has exactly two orbits.
\ee\end{cor}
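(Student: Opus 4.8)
The plan is to write $B = t_a\, T\, t_a^{-1}$, where $t_a := \begin{pmatrix} 1 & a \\ 0 & 1\end{pmatrix}$ is a translation and $T := \left\{\begin{pmatrix} u & 0 \\ 0 & 1\end{pmatrix} : u \in \cO_F^\times\right\}$ is the diagonal torus, and to reduce each of the three assertions to a short computation with $2 \times 2$ matrices, invoking Corollary \ref{GmOnGa} at the very end. For part (a), I would first use Lemma \ref{AutAOD}(b) to reduce the statement $g(W) = W$ for $g \in B$ to a statement on $\bfC$-points. A direct matrix computation shows $t_a \begin{pmatrix} u & 0 \\ 0 & 1 \end{pmatrix} t_a^{-1} = \begin{pmatrix} u & a(1-u) \\ 0 & 1\end{pmatrix}$, whose M\"obius action sends $z \in \bfC$ to $u(z-a) + a$; in the coordinate $\tau = (x-a)/\pi_F^n$ this is simply $\tau \mapsto u\tau$. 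Since $|u| = 1$ this preserves $\{|\tau| \leq 1\}$, and since $W = \Sp K\langle \tau, \frac{1}{\tau^{q-1}-1}\rangle$ is obtained from this disc by deleting the open discs $\{|\tau - \zeta| < 1\}$ as $\zeta$ runs over the $(q-1)$-st roots of unity, it remains to observe that multiplication by $u$ permutes these discs: $u^{-1}\zeta$ reduces to $\bar u^{-1}\bar\zeta \in k_F^\times$, which is itself the reduction of a $(q-1)$-st root of unity, because reduction mod $\pi_F$ identifies the group of $(q-1)$-st roots of unity in $\cO_F^\times$ (the Teichm\"uller lifts, which exist by Hensel's lemma) bijectively with $k_F^\times$. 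Hence $B$ stabilises $W$.

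For part (b), the key observations are that $t_a$ commutes with $N_{n+1}$ and $N_{n+2}$, because the unipotent upper-triangular matrices $\begin{pmatrix} 1 & * \\ 0 & 1\end{pmatrix}$ form an abelian group, and that $T$ normalises both, since $\begin{pmatrix} u & 0 \\ 0 & 1\end{pmatrix}\begin{pmatrix} 1 & b \\ 0 & 1\end{pmatrix}\begin{pmatrix} u^{-1} & 0 \\ 0 & 1\end{pmatrix} = \begin{pmatrix} 1 & ub \\ 0 & 1\end{pmatrix}$ and $u\,\pi_F^m\cO_F = \pi_F^m\cO_F$ for $u \in \cO_F^\times$. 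Combining these, $B = t_a T t_a^{-1}$ normalises $t_a N_m t_a^{-1} = N_m$ for $m \in \{n+1, n+2\}$. The same computation yields part (c): conjugation by $g = t_a\begin{pmatrix} u & 0 \\ 0 & 1 \end{pmatrix} t_a^{-1} \in B$ carries $\begin{pmatrix} 1 & b \\ 0 & 1\end{pmatrix}$ to $\begin{pmatrix} 1 & ub \\ 0 & 1\end{pmatrix}$, so under the group isomorphism $N_{n+1}/N_{n+2} \stackrel{\cong}{\longrightarrow} k_F$ sending the class of $\begin{pmatrix} 1 & b \\ 0 & 1\end{pmatrix}$ to $\overline{b/\pi_F^{n+1}}$, the $B$-action on $N_{n+1}/N_{n+2}$ is the multiplication action of $k_F^\times$ on $k_F$ pulled back along the surjection $B \twoheadrightarrow k_F^\times$, $g \mapsto \bar u$. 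Consequently the $B$-action on $\Irr_K(N_{n+1}/N_{n+2})$ agrees with the $k_F^\times$-action on $\Irr_K(k_F)$, which has exactly two orbits by Corollary \ref{GmOnGa}.

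The whole statement is essentially bookkeeping, and I do not expect a genuine obstacle; the only place demanding a little care is part (a), where one must transport the conjugated-torus action correctly through the coordinate change $\tau = (x-a)/\pi_F^n$ and notice that multiplication by a unit merely permutes, rather than fixes, the deleted discs of $W$.
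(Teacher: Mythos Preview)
Your proof is correct and follows the same approach as the paper, which also reduces (a) to a direct calculation with the M\"obius action (via Lemma \ref{GdotDn}(a)) and handles (b,c) together by computing that diagonal conjugation acts on the upper-triangular entry as multiplication by a unit, then applying Corollary \ref{GmOnGa}. Your treatment is simply more explicit: the paper does not spell out the coordinate change to $\tau$ or the permutation of the deleted discs, but your argument for (a) is exactly the ``easy calculation'' the paper alludes to.
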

\begin{proof} (a) This is an easy calculation using Lemma \ref{GdotDn}(a). 

(b,c) We compute that $\begin{pmatrix} \alpha & 0 \\ 0 & 1 \end{pmatrix}$ conjugates $\begin{pmatrix} 1 &  \pi_F^{n+1} a \\ 0 & 1\end{pmatrix}$ to $\begin{pmatrix} 1 &  \pi_F^{n+1} a \alpha^{-1} \\ 0 & 1\end{pmatrix}$, so that the conjugation $B$-action on $N_{n+1}/N_{n+2}$ is completely determined by the multiplication action of $k_F^\times$ on $k_F$. Now apply Corollary \ref{GmOnGa}.
\end{proof}

We can finally put everything together and prove Theorem \ref{CompOnW}.

\begin{proof}[Proof of Theorem \ref{CompOnW}] Consider the commutative triangle
\[\xymatrix{ \left(\sD_n(W) \underset{\sD_{n+1}(W)}{\otimes}{} \sL_{n+1}(W)\right)_{N_{n+1}}  \ar[rr] \ar@{->>}[d] & & \sL_n(W) \\
\left(\sD_n(W) \underset{\sD_{n+1}(W)}{\otimes}{} \sL_{n+1}(W)\right)_{J_{n+1}}\ar[rru] & & }\]
where we must show that the diagonal map is an isomorphism. Because $J_{n+1}$ contains $N_{n+1}$ by assumption, the vertical map is surjective. Proposition \ref{VarPhiIsOnto} implies that the diagonal map is surjective. Hence, to show that it is an isomorphism it is enough to check that the horizontal map is injective. By enlarging $K$ if necessary, we will assume that $K$ contains a primitive $p^{th}$-root of unity $\zeta_p$. 

We will apply Theorem  \ref{Criterion} with the following parameters: $S = \sD_n(W)$, $S' = \sD_{n+1}(W)$, $M = \sL_n(W)$, $M' = \sL_{n+1}(W)$, $G$ is the subgroup $B$ from Lemma \ref{Baction}, $H = N_{n+1}$ and $H' = N_{n+2}$. Note that $M$ is an $S \rtimes B$-module and $M'$ is an $S' \rtimes B$-module, and these actions descend to $S \rtimes_H B$ and $S' \rtimes_{H'} B$ respectively by Theorem \ref{DnJnJmodule}. Note also that the restriction map $M' \to M$ is $B$-equivariant because our line bundle $\sL$ is assumed to be $I$-equivariant in Definition \ref{UpperHPhyp2}. All points of Hypotheses \ref{MorphCP} clearly hold, except possibly for points (e,f) of Hypothesis \ref{MorphCP} which, after decoding the notation, follow from Theorem \ref{GrGxTriv} since $G = B \leq \cG_W$. 

Next, we verify the conditions (a-c) of Theorem \ref{Criterion}.

(a) This is Corollary \ref{SAnontriv}.

(b) This is Corollary \ref{Baction}(c).

(c) Since $K$ is assumed to contain $\zeta_p$ and since $H/H' = N_{n+1}/N_{n+2} \cong k_F$ is an elementary abelian $p$-group of order $q$, we see that $\left\vert \Irr_K(H/H')\right \vert = q$. Now the required inequality follows from Lemma \ref{LengthUB}. 

Theorem \ref{Criterion} now tells us that the comparison map $(S \otimes_{S'} M')_H \to M$ is injective as desired. \end{proof}

\section{Coadmissibility and applications}\label{Applications}
\subsection{Quasi-coherent modules over a tower of rings}\label{QcohTower}
We begin with some algebraic preliminaries. Recall that a \emph{tower of rings} is a diagram 
\[S_\bullet = S_0 \gets S_1 \gets S_2 \gets \cdots \]
of associative, unital rings, and unital ring homomorphisms. A \emph{morphism} of towers of rings $f_\bullet : T_\bullet \to S_\bullet$ is a commutative diagram of the form
\[ \xymatrix{ T_0 \ar[d]^{f_0} & T_1 \ar[d]^{f_1}\ar[l] & T_2 \ar[d]^{f_2}\ar[l] & \cdots\ar[l] \\ 
S_0 & S_1 \ar[l] & S_2 \ar[l] & \cdots\ar[l] }\]
An \emph{$S_\bullet$-module} is a diagram $M_\bullet := M_0 \gets M_1 \gets M_2 \gets \cdots$ where each $M_n$ is an $S_n$-module, and every connecting map $\mu^M_{n+1,n} : M_{n+1} \to M_n$ is an $S_{n+1}$-linear map for $n \geq 1$, where $M_n$ is regarded as an $S_{n+1}$-module via restriction of scalars along the ring homomorphism $\mu^S_{n+1,n} : S_{n+1} \to S_n$. A \emph{morphism of $S_\bullet$-modules} is defined in the obvious way. The space of \emph{global sections} of $M_\bullet$ is 
\[\Gamma(M_\bullet) := \invlim M_n,\]
this is naturally a module over $\Gamma(S_\bullet) = \invlim S_n$. We say that the $S_\bullet$-module $M_\bullet$ is \emph{quasi-coherent} if the following map is an isomorphism for all $n \geq 0$:
\[ 1 \otimes \mu^M_{n+1,n} : S_n \underset{S_{n+1}}{\otimes}{} M_{n+1} \longrightarrow M_n.\]
Given a morphism $f_\bullet : T_\bullet \to S_\bullet$ and an $S_\bullet$-module $M_\bullet$, we can form the \emph{pullback} $T_\bullet$-module $T_\bullet \otimes_{S_\bullet} M_\bullet$ defined by 
\[ (T_\bullet \otimes_{S_\bullet} M_\bullet)_n = T_n \otimes_{S_n} M_n \qmb{for all} n \geq 0,\]
whose connecting maps for $n \geq m$ are given by 
\[\mu^{T \otimes_S M}_{n,m} = \mu^T_{n,m} \otimes_{\mu^S_{n,m}} \mu^M_{n,m} : T_n \otimes_{S_n} M_n \longrightarrow T_m \otimes_{S_m} M_m.\]
\begin{lem}\label{QCohBC} If $M_\bullet$ quasi-coherent, then so is $T_\bullet \otimes_{S_\bullet} M_\bullet$.
\end{lem}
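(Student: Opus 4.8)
The statement to prove is Lemma 7.1.1: if $M_\bullet$ is a quasi-coherent $S_\bullet$-module and $f_\bullet : T_\bullet \to S_\bullet$ is a morphism of towers of rings, then the pullback $T_\bullet \otimes_{S_\bullet} M_\bullet$ is again quasi-coherent.

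The plan is to unwind the definitions and reduce to a single associativity-of-tensor-product calculation, one degree at a time. Fix $n \geq 0$. By definition of quasi-coherence I must show that the natural map
\[ 1 \otimes \mu^{T \otimes_S M}_{n+1,n} : T_n \underset{T_{n+1}}{\otimes}{} \left(T_{n+1} \underset{S_{n+1}}{\otimes}{} M_{n+1}\right) \longrightarrow T_n \underset{S_n}{\otimes}{} M_n \]
is an isomorphism. First I would use the standard associativity isomorphism for tensor products together with the fact that $T_n$ becomes an $S_{n+1}$-module via the composite $\mu^S_{n+1,n} \circ f_{n+1} = f_n \circ \mu^T_{n+1,n}$ (commutativity of the defining square of $f_\bullet$) to rewrite the source as
\[ T_n \underset{T_{n+1}}{\otimes}{} T_{n+1} \underset{S_{n+1}}{\otimes}{} M_{n+1} \;\cong\; T_n \underset{S_{n+1}}{\otimes}{} M_{n+1} \;\cong\; T_n \underset{S_n}{\otimes}{} \left(S_n \underset{S_{n+1}}{\otimes}{} M_{n+1}\right), \]
where in the last step I have inserted $S_n \otimes_{S_{n+1}} (-)$ using associativity again, viewing $T_n$ as an $S_n$-module. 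Then I would apply the functor $T_n \otimes_{S_n} (-)$ to the isomorphism $1 \otimes \mu^M_{n+1,n} : S_n \otimes_{S_{n+1}} M_{n+1} \stackrel{\cong}{\to} M_n$ supplied by the quasi-coherence of $M_\bullet$, obtaining an isomorphism $T_n \otimes_{S_n}(S_n \otimes_{S_{n+1}} M_{n+1}) \cong T_n \otimes_{S_n} M_n$.

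The only real point requiring care — and it is a matter of bookkeeping rather than a genuine obstacle — is checking that the composite of all these canonical isomorphisms actually agrees with the map $1 \otimes \mu^{T\otimes_S M}_{n+1,n}$ named in the definition of quasi-coherence, i.e. that it sends $t \otimes (t' \otimes m)$ to $(t \cdot f_{n+1}(t')) \otimes \mu^M_{n+1,n}(m)$. This follows by tracking a pure tensor through each arrow: the associativity maps send $t \otimes (t' \otimes m) \mapsto (tt') \otimes m \mapsto t \otimes (1_{S_n} \otimes \overline{m})$ where $\overline m$ denotes the image of $m$ in $M_{n+1}$ regarded as living over $S_{n+1}$ via $f_{n+1}$, and then $1 \otimes (1 \otimes \mu^M_{n+1,n})$ sends this to $t \otimes \mu^M_{n+1,n}(m)$, which is exactly the image under $1 \otimes \mu^{T\otimes_S M}_{n+1,n}$ after identifying $T_n \otimes_{S_n} M_n$ appropriately. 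Since each individual arrow in the chain is an isomorphism, so is their composite, and hence $1 \otimes \mu^{T\otimes_S M}_{n+1,n}$ is an isomorphism for every $n$. As $n$ was arbitrary, $T_\bullet \otimes_{S_\bullet} M_\bullet$ is quasi-coherent. I expect the entire argument to be a few lines once the identifications are set up; there is no serious difficulty, the work being purely formal manipulation of restriction and extension of scalars along the commuting square defining $f_\bullet$.
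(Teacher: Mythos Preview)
Your proposal is correct and follows exactly the same approach as the paper: both arguments factor the map $1 \otimes \mu^{T\otimes_S M}_{n+1,n}$ through $T_n \otimes_{S_{n+1}} M_{n+1} \cong T_n \otimes_{S_n}(S_n \otimes_{S_{n+1}} M_{n+1})$ via associativity and then apply $T_n \otimes_{S_n}(-)$ to the quasi-coherence isomorphism for $M_\bullet$. The paper packages this as a single commutative square rather than a chain of identifications, and does not write out the pure-tensor check (your tracking contains a small slip --- the element $t'$ should persist as $tt'$ through the second associativity step --- but the square commutes regardless since all maps are canonical).
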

\begin{proof} Consider the following commutative square:
\[ \xymatrix{ T_n \underset{T_{n+1}}{\otimes}{} \left(T_{n+1} \underset{S_{n+1}}{\otimes}{} M_{n+1}\right) \ar[rr]^(0.6){1 \otimes \mu^{T \otimes_S M}_{n+1,n}}  & & T_n \underset{S_n}{\otimes}{} M_n \\ 
T_n \underset{S_{n+1}}{\otimes}{} M_{n+1} \ar[rr]_{\cong}\ar[u]^{\cong} & & T_n \underset{S_n}{\otimes}{} (S_n \underset{S_{n+1}}{\otimes}{} M_{n+1}). \ar[u]_{1 \otimes \mu^M_{n+1,n}}}\]
The vertical arrow on the left and the bottom horizontal arrow are isomorphisms by the associativity of tensor product, whereas the vertical arrow on the right is an isomorphism because $M_\bullet$ is quasi-coherent. So the top horizontal arrow is also an isomorphism.
\end{proof}

For each integer $c \geq 0$, we can form the \emph{shifted tower} $S_\bullet[c]$:
\[S_\bullet[c] = S_c \gets S_{c+1} \gets S_{c+2} \gets \cdots\]
so that $S_\bullet[c]_n = S_{n+c}$ for all $n\geq 0$. There is a \emph{shift functor} $M_\bullet \mapsto M_\bullet[c]$ from $S_\bullet$-modules to $S_\bullet[c]$-modules, given by $M_\bullet[c]_n = M_{n+c}$ for all $n \geq 0$. The connecting maps $\mu^S_{n+c,c} : S_{n+c} \to S_n$ in $S_\bullet$ induce a morphism of towers of rings
\[ \iota_{S}^c : S_\bullet[c] \longrightarrow S_\bullet\]
and there is a similar morphism of towers of abelian groups $\iota_{M}^c : M_\bullet[c] \to M_\bullet$.
\begin{lem}\label{ShiftLim} Let $c \geq 0$ and let $M_\bullet$ be an $S_\bullet$-module.
\be \item The map $\Gamma(\iota_{S}^c) : \Gamma(S_\bullet[c]) \to \Gamma(S_\bullet)$ is a ring isomorphism.
 \item The map $\Gamma(\iota_{M}^c) : \Gamma(M_\bullet[c]) \to \Gamma(M_\bullet)$ is an isomorphism of abelian groups.
\ee\end{lem}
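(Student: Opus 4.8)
\textbf{Proof strategy for Lemma \ref{ShiftLim}.} The plan is to observe that both statements are instances of a completely general and elementary fact about inverse limits: a cofinal subsystem of an inverse system has the same inverse limit. More precisely, the morphism $\iota_S^c : S_\bullet[c] \to S_\bullet$ is the map on inverse limits induced by viewing $\{S_{n+c}\}_{n \geq 0}$ as a subdiagram of $\{S_n\}_{n \geq 0}$, and the index set $\{n + c : n \geq 0\} = \{c, c+1, c+2, \dots\}$ is cofinal in $\bN$. Since all the transition maps in $S_\bullet$ between indices $\geq c$ already appear in $S_\bullet[c]$ (the tower $S_\bullet[c]$ is literally the full subdiagram of $S_\bullet$ on the objects indexed by $c, c+1, \dots$), the universal property of the inverse limit gives that $\Gamma(\iota_S^c)$ is a bijection, and it is visibly a ring homomorphism because each component map $S_{n+c} \to S_{n+c}$ is the identity.

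In more concrete terms, I would spell out the inverse map. Given an element $(s_n)_{n \geq 0} \in \Gamma(S_\bullet[c]) = \invlim_n S_{n+c}$, define $\Psi((s_n)_n) \in \prod_{m \geq 0} S_m$ by sending the $m$-th coordinate to $\mu^S_{c, m}(s_0)$ when $m \leq c$ (using the composite connecting map $S_c \to S_m$ in $S_\bullet$) and to $s_{m-c}$ when $m \geq c$. One checks that this tuple is compatible with the connecting maps of $S_\bullet$ — for $m \geq c$ this is exactly the compatibility condition defining $\Gamma(S_\bullet[c])$, and for $m < c$ it follows from functoriality of the composite connecting maps — so $\Psi((s_n)_n) \in \Gamma(S_\bullet)$. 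Then $\Gamma(\iota_S^c) \circ \Psi$ and $\Psi \circ \Gamma(\iota_S^c)$ are both the identity: the first because $\iota_S^c$ is the identity in each degree $n + c$, and the second because any element of $\Gamma(S_\bullet)$ is determined by its coordinates in degrees $\geq c$ (its coordinates in degrees $< c$ being forced by the connecting maps). This gives part (a); it is a ring isomorphism since $\Psi$ is evidently multiplicative and unital.

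Part (b) is proved by exactly the same argument with $S$ replaced by $M$ throughout: the $S_\bullet$-module structure plays no role, one only uses that $M_\bullet[c]$ is the full subdiagram of the diagram of abelian groups $M_\bullet$ on the indices $\geq c$, which is cofinal. The inverse to $\Gamma(\iota_M^c)$ is the analogous map $(m_n)_n \mapsto$ the tuple whose $m$-th coordinate is $\mu^M_{c,m}(m_0)$ for $m \leq c$ and $m_{m-c}$ for $m \geq c$.

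I do not anticipate any genuine obstacle here; this is a bookkeeping lemma whose only content is the cofinality observation. The one small point to be careful about is that the connecting maps $\mu^S_{n,m}$ (and $\mu^M_{n,m}$) for $n > m$ are understood to be the composites of the elementary maps $\mu^S_{i+1,i}$, and that these composites are functorial in the sense $\mu^S_{n,m} = \mu^S_{\ell,m} \circ \mu^S_{n,\ell}$ for $m \leq \ell \leq n$ — this is what makes the coordinates in degrees $< c$ well-defined and forced, and it is immediate from the definitions given at the start of $\S\ref{QcohTower}$.
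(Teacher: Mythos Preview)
Your approach is correct and matches the paper's: both are the cofinality argument, and the paper's one-line proof simply names the projection $\invlim S_n \to \invlim S_{n+c}$ as the two-sided inverse. However, your ``concrete'' paragraph has the directions muddled. The map $\Psi$ you construct --- sending $(s_n)_n \in \Gamma(S_\bullet[c])$ to the tuple with $m$-th coordinate $s_{m-c}$ (for $m \geq c$) and $\mu^S_{c,m}(s_0)$ (for $m < c$) --- is not the inverse of $\Gamma(\iota_S^c)$; it \emph{is} $\Gamma(\iota_S^c)$. Indeed, by definition $(\iota_S^c)_m = \mu^S_{m+c,m}$, so the $m$-th coordinate of $\Gamma(\iota_S^c)((s_n)_n)$ is $\mu^S_{m+c,m}(s_m)$, which by the compatibility condition equals $s_{m-c}$ when $m \geq c$. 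The genuine inverse goes the other way: it is the projection $\Gamma(S_\bullet) \to \Gamma(S_\bullet[c])$ sending $(t_m)_{m \geq 0}$ to $(t_{n+c})_{n \geq 0}$. Your verifications (``$\iota_S^c$ is the identity in each degree $n+c$'', ``an element of $\Gamma(S_\bullet)$ is determined by its coordinates in degrees $\geq c$'') are exactly the checks that this projection is a two-sided inverse, so the argument is salvaged by simply swapping the roles of $\Psi$ and $\Gamma(\iota_S^c)$.
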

\begin{proof} (a) Because $c \geq 0$, we have the projection map $\invlim S_n \to \invlim S_{n+c}$, and it is a two-sided inverse to $\Gamma(\iota^c_S) = \invlim \mu^S_{n+c,n} : \invlim S_{n+c} \to \invlim S_n$. 

(b) This is entirely similar.\end{proof}
In fact, $\Gamma(\iota_M^c)$ is a $\Gamma(S_\bullet[c])$-linear isomorphism, when its codomain is regarded as a $\Gamma(S_\bullet[c])$-module via the map $\Gamma(\iota^c_S)$.

\begin{prop}\label{TStowers} Let $c \geq 0$ and let $f_\bullet : T_\bullet \to S_\bullet$ and $g_\bullet : S_\bullet[c] \to T_\bullet$ be morphisms of towers of rings such that the diagram
\[\xymatrix{T_\bullet[c] \ar[d]_{f_\bullet[c]}\ar[rr]^{\iota^c_{T}} & & T_\bullet \ar[d]^{f_\bullet}\\ S_\bullet[c] \ar[rr]_{\iota^c_{S}}\ar[urr]_{g_\bullet} && S_\bullet} \]
is commutative. Then:
\be \item $\Gamma(f_\bullet) : \Gamma(T_\bullet) \to \Gamma(S_\bullet)$ is a ring isomorphism.
\item Let $M_\bullet$ be a quasi-coherent $S_\bullet$-module and let $L_\bullet := T_\bullet \otimes_{S_\bullet[c]} M_\bullet[c]$. Then there is a $\Gamma(T_\bullet)$-linear isomorphism $\Gamma(L_\bullet) \to \Gamma(M_\bullet)$, where the $\Gamma(S_\bullet)$-module $\Gamma(M_\bullet)$ is regarded as a $\Gamma(T_\bullet)$-module via $\Gamma(f_\bullet)$.
\ee 
\end{prop}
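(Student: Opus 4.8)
For part (a), I would first unpack the diagram: composing the two triangles and using that $\iota^c_T$ and $\iota^c_S$ induce isomorphisms on global sections (Lemma~\ref{ShiftLim}(a)), the hypothesis forces $\Gamma(g_\bullet)$ and $\Gamma(f_\bullet[c])$ to be mutually inverse up to the identifications $\Gamma(\iota^c_S),\Gamma(\iota^c_T)$. Concretely, $\Gamma(f_\bullet)\circ\Gamma(\iota^c_T) = \Gamma(\iota^c_S)\circ\Gamma(f_\bullet[c])$ and $\Gamma(f_\bullet[c])\circ\Gamma(g_\bullet) = \Gamma(\iota^c_T)$, $\Gamma(f_\bullet)\circ\Gamma(\iota^c_T)\circ\Gamma(g_\bullet) = \Gamma(\iota^c_S)$ (this last by commutativity of the outer square, reading along the bottom edge). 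Since $\Gamma(\iota^c_S)$ is an isomorphism, $\Gamma(f_\bullet)$ has a right inverse, namely $\Gamma(\iota^c_T)\circ\Gamma(g_\bullet)\circ\Gamma(\iota^c_S)^{-1}$. For the left inverse one applies the same reasoning after shifting: $g_\bullet$ and $f_\bullet[c]$ exhibit $T_\bullet[c]$ and $S_\bullet[c]$ as "mutually cofinal" towers, so $\Gamma(g_\bullet)$ is an isomorphism, and then $\Gamma(f_\bullet) = \Gamma(\iota^c_S)\circ\Gamma(f_\bullet[c])\circ\Gamma(\iota^c_T)^{-1} = \Gamma(\iota^c_S)\circ\Gamma(g_\bullet)^{-1}\circ\Gamma(\iota^c_T)^{-1}$ is a composite of isomorphisms. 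I would present it cleanly as: $\Gamma(\iota^c_T) = \Gamma(f_\bullet[c])^{-1}$-adjacent manipulations, but the safest writeup just checks $\Gamma(f_\bullet)$ has inverse $\Gamma(\iota^c_T)\circ\Gamma(g_\bullet)\circ\Gamma(\iota^c_S)^{-1}$ directly by composing both ways and invoking Lemma~\ref{ShiftLim}.

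**Part (b), the main content.** Here I would first observe that $M_\bullet[c]$ is a quasi-coherent $S_\bullet[c]$-module (quasi-coherence is preserved by shifting, since the relevant base-change maps for $S_\bullet[c]$ are among those for $S_\bullet$), hence by Lemma~\ref{QCohBC} its pullback $L_\bullet = T_\bullet \otimes_{S_\bullet[c]} M_\bullet[c]$ along $g_\bullet$ is a quasi-coherent $T_\bullet$-module. The key point is that for a quasi-coherent module over a tower, the global sections of a pullback along a morphism of towers that is an isomorphism on $\Gamma$ agree with the global sections of the original module. The cleanest route: by Lemma~\ref{ShiftLim}(b) applied to $M_\bullet$, $\Gamma(M_\bullet[c]) \cong \Gamma(M_\bullet)$; so it suffices to produce a $\Gamma(T_\bullet)$-linear isomorphism $\Gamma(T_\bullet \otimes_{S_\bullet[c]} M_\bullet[c]) \cong \Gamma(M_\bullet[c])$, compatibly with the $\Gamma(S_\bullet[c]) \cong \Gamma(T_\bullet)$ identification $\Gamma(g_\bullet)$ (which is an isomorphism by part (a) applied with the roles suitably arranged, or directly). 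Thus I have reduced to the following general lemma, which I would state and prove: \emph{if $g_\bullet : A_\bullet \to B_\bullet$ is a morphism of towers of rings with $\Gamma(g_\bullet)$ an isomorphism, and $N_\bullet$ is a quasi-coherent $A_\bullet$-module, then the natural map $\Gamma(B_\bullet \otimes_{A_\bullet} N_\bullet) \to \Gamma(N_\bullet)$ is a $\Gamma(B_\bullet)$-linear isomorphism.}

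**Proving the reduction lemma.** For this I would exploit quasi-coherence of $N_\bullet$: since $B_n \otimes_{A_n} N_n \cong B_n \otimes_{A_n}(A_n \otimes_{A_{n+1}} N_{n+1}) \cong B_n \otimes_{A_{n+1}} N_{n+1}$, the tower $(B_\bullet \otimes_{A_\bullet} N_\bullet)$ has connecting maps that factor through these identifications, and one checks the natural map to $N_\bullet$ is compatible. The inverse limit computation then comes down to showing $\invlim (B_n \otimes_{A_n} N_n) \to \invlim N_n$ is bijective. Injectivity and surjectivity I would establish by a direct diagram chase on compatible systems, using that an element of $\invlim N_n$ gives, via the unit $n_m \mapsto 1 \otimes n_m$, a compatible system in the $B$-tower, and that $\Gamma(g_\bullet)$ being an isomorphism lets one move scalars. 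In fact the very slickest argument: quasi-coherence of $N_\bullet$ means $B_\bullet \otimes_{A_\bullet} N_\bullet$ is already "determined" by $N_\bullet$ in the sense that the canonical map $B_\bullet \otimes_{A_\bullet} N_\bullet \to$ (the $B_\bullet$-module with $n$-th term $B_n \otimes_{A_\infty} N_\infty$ where $A_\infty = \Gamma(A_\bullet)$)... — but this requires more care, so I would instead just do the honest $\invlim$ chase. \textbf{The main obstacle} I anticipate is precisely this last point: verifying that the natural map on inverse limits is an isomorphism requires knowing that the connecting maps in the tensored tower do not "lose information", and the only leverage for that is quasi-coherence together with $\Gamma(g_\bullet)$ being an isomorphism — one must be careful that these two facts genuinely combine, since neither alone suffices (quasi-coherence alone does not let one change the base ring of the tower; $\Gamma(g_\bullet)$ being iso alone does not control individual terms). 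I expect the argument to mirror the standard fact that coadmissible modules over a Fréchet–Stein algebra are recovered from their "local" data, and I would look to \cite{EqDCap} or \cite{DCapOne} for the precise bookkeeping if the direct chase gets unwieldy.
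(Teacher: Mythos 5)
Your part (a) argument is correct and essentially the paper's. Your part (b), however, contains a genuine gap: the reduction lemma you isolate is \emph{false} as stated. Consider the following counterexample: take $A_n := K[[x]]$ for all $n$ with identity connecting maps, $B_n := K[x]/(x^n)$ with the natural quotient connecting maps, $g_n : K[[x]] \twoheadrightarrow K[x]/(x^n)$ the quotient map, and $N_n := K((x))$ for all $n$ with identity connecting maps. Then $\Gamma(A_\bullet) \cong K[[x]] \cong \invlim K[x]/(x^n) = \Gamma(B_\bullet)$, so $\Gamma(g_\bullet)$ is an isomorphism, and $N_\bullet$ is quasi-coherent (it is a constant tower over a constant tower of rings). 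But $B_n \otimes_{A_n} N_n = K[x]/(x^n) \otimes_{K[[x]]} K((x)) = 0$ because $x$ is simultaneously nilpotent and invertible in the tensor product, so $\Gamma(B_\bullet \otimes_{A_\bullet} N_\bullet) = 0 \neq K((x)) = \Gamma(N_\bullet)$. So neither of the candidate ``natural maps'' can possibly be an isomorphism, and in fact it is unclear what ``natural map $\Gamma(B_\bullet \otimes_{A_\bullet} N_\bullet) \to \Gamma(N_\bullet)$'' would even mean, because there is no termwise map $B_n \otimes_{A_n} N_n \to N_n$ available from only the data of $g_\bullet$.

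The hypothesis you discard in passing to the reduction lemma --- the morphism $f_\bullet : T_\bullet \to S_\bullet$ in the opposite direction making the two triangles commute --- is precisely what closes the argument. The paper uses $f_\bullet$ \emph{termwise} to define a $T_\bullet$-linear map $\beta_\bullet : L_\bullet \to M_\bullet$ by $\beta_m(t \otimes y) := f_m(t) \cdot \mu^M_{m+c,m}(y)$, alongside the unit $\alpha_m(x) := 1 \otimes x$ which is $S_\bullet[c]$-linear; the triangle identities $\beta_m \circ \alpha_m = \mu^M_{m+c,m}$ and $\alpha_m \circ \beta_{m+c} = \mu^L_{m+c,m}$ are verified using the commutativity $g_m \circ f_{m+c} = \mu^T_{m+c,m}$ of the hypothesis diagram. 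This places $\alpha_\bullet, \beta_\bullet$ in a commuting diagram of exactly the same shape as the one in the hypothesis, whence $\Gamma(\beta_\bullet)$ is an isomorphism by the same abstract two-sided-inverse argument as in part (a). The moral is that part (b) is not a consequence of part (a) plus quasi-coherence; it is a \emph{module analogue} of part (a), requiring the full ``mutually cofinal'' structure on both rings and modules. To repair your argument, reformulate the reduction lemma to carry the opposite-direction morphism $f_\bullet$ (or the full cofinal-pair structure), and then the proof is forced to take essentially the paper's form.
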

\begin{proof} (a) Applying $\Gamma$ gives the commutative diagram
\[\xymatrix{\Gamma(T_\bullet[c]) \ar[d]_{\Gamma(f_\bullet[c])}\ar[rr]^{\Gamma(\iota^c_{T})} & & \Gamma(T_\bullet) \ar[d]^{\Gamma(f_\bullet)}\\ \Gamma(S_\bullet[c]) \ar[rr]_{\Gamma(\iota^c_{S})}\ar[urr]_{\Gamma(g_\bullet)} && \Gamma(S_\bullet).} \]
Since the horizontal arrows are isomorphisms by Lemma \ref{ShiftLim}(a), we conclude that the diagonal arrow has both a right inverse and a left inverse. Hence it is an isomorphism, and consequently so is $\Gamma(f_\bullet)$.

(b) Noting that $L_m = T_m \otimes_{S_{m+c}} M_{m+c}$ for each $m \geq 0$, we define additive maps
\[  M_{m+c} \quad \stackrel{\alpha_m}{\longrightarrow} \quad L_m \quad \stackrel{\beta_m}{\longrightarrow}  \quad M_m\]
as follows: for $x,y \in M_{m+c}$ and $t \in T_m$ we set
\begin{equation}\label{AlphaBeta} \alpha_m(x) = 1 \otimes x \qmb{and} \beta_m(t \otimes y) = f_m(t) \cdot \mu^M_{m+c,m}(y).
\end{equation}
Now let $n\geq m \geq 0$ and consider the following diagrams:
\[ \xymatrix{ M_{n+c} \ar[r]^{\alpha_n}\ar[d]_{\mu^M_{n+c,m+c}} & L_n \ar[r]^{\beta_n}\ar[d]_{\mu^L_{n,m}} & M_n \ar[d]^{\mu^M_{n,m}} \\
M_{m+c} \ar[r]_{\alpha_m} & L_m \ar[r]_{\beta_m} & M_m} \quad\quad \xymatrix{L_{m+c} \ar[d]_{\beta_{m+c}}\ar[rr]^{\mu^L_{m+c,c}} & & L_m \ar[d]^{\beta_m}\\ M_{m+c} \ar[rr]_{\mu^M_{m+c,m}}\ar[urr]_{\alpha_m} && M_m.} 
\]
The verification that these diagrams commute is straightforward; for example, to show that $\alpha_m \circ \beta_{m+c} = \mu^L_{m+c,c}$, let $t \in T_{m+c}$ and $y \in M_{m+2c}$ and use the fact that $g_m \circ f_{m+c} = \mu^T_{m+c,m}$ to calculate as follows:
\[\begin{array}{lll} \alpha_m(\beta_{m+c}(t \otimes y)) &=& \alpha_m(f_{m+c}(t) \cdot \mu^M_{m+2c,m+c}(y)) \\
&=& 1 \otimes f_{m+c}(t) \cdot \mu^M_{m+2c,m+c}(y) \\
&=& g_m(f_{m+c}(t))\otimes \mu^M_{m+2c,m+c}(y)\\
&=& \mu^T_{m+c,m}(t) \otimes \mu^M_{m+2c,m+c}(y) \\
&=& \mu^L_{m+c,m}(t \otimes y).\end{array}\]
Thus we have defined morphisms of towers of abelian groups $\alpha_\bullet : M_\bullet[c] \to L_\bullet$ and $\beta_\bullet : L_\bullet \to M_\bullet$, and they fit into the following commutative diagram:
\[\xymatrix{L_\bullet[c] \ar[d]_{\beta_\bullet[c]}\ar[rr]^{\iota^c_{L}} & & L_\bullet \ar[d]^{\beta_\bullet}\\ M_\bullet[c] \ar[rr]_{\iota^c_{M}}\ar[urr]_{\alpha_\bullet} && M_\bullet.} \]
 Considering the $T_\bullet$-module $L_\bullet$ as an $S_\bullet[c]$-module via the morphism $g_\bullet : S_\bullet[c] \to T_\bullet$, and the $S_\bullet$-module $M_\bullet$ as a $T_\bullet$-module via the morphism $f_\bullet : T_\bullet \to S_\bullet$, we can verify that $\alpha_\bullet$ becomes a morphism of $S_\bullet[c]$-modules, whereas $\beta_\bullet$ becomes a morphism of $T_\bullet$-modules. Apply the global sections functor $\Gamma$ to this diagram. Using Lemma \ref{ShiftLim}(b) we deduce as in part (a) that $\Gamma(\beta_\bullet) : \Gamma(L_\bullet) \to \Gamma(M_\bullet)$ is an isomorphism of $\Gamma(T_\bullet)$-modules.
\end{proof}

\subsection{An alternative presentation for the algebra \ts{\w\cD(X,J)}}\label{CompareSandT}
We now return to the setting of $\S \ref{CoadDrin}$ so that $\bD \subset \bP^1$ is the closed unit disc, with local coordinate $x$. Recall the $G$-topology $\bD_m$ on $\bD$ from Definition \ref{CurlyDn}.

\begin{defn}\label{mX}  For each affinoid subdomain $X$ of $\bD$, let $m_X$ be the least integer $m$ such that $X \in \bD_m$.
\end{defn}

\begin{lem}\label{PiMstable} Let $X$ be an affinoid subdomain of $\bD$ and let $m \geq 0$. The derivation $\pi_F^m \partial_x$ of $\cO(X)$ preserves $\cO(X)^\circ$ if and only if $m \geq m_X$.
\end{lem}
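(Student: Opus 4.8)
The plan is to reduce the claim to the already-established formula $r(X) = \varpi \hsp ||\partial_x||_{\cO(X)}$ from Corollary \ref{rXcalc}, together with the description of $\bD_m$-membership in terms of $\rho(X_{\mathbf{C}})$. Recall that $X \in \bD_m$ means precisely that $X$ is $(|\pi_F|^m\partial_x/\varpi)^\dag$-admissible, i.e. $r(X) \leq \varpi/|\pi_F|^m$, equivalently $||\partial_x||_{\cO(X)} \leq 1/|\pi_F|^m = |\pi_F|^{-m}$. So $m_X$ is the least $m$ with $|\pi_F|^m \cdot ||\partial_x||_{\cO(X)} \leq 1$.

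First I would observe that $\pi_F^m \partial_x$ preserves $\cO(X)^\circ$ if and only if the operator norm $||\pi_F^m \partial_x||_{\cO(X)} \leq 1$; this is immediate from the definition of $\cO(X)^\circ = \{f : |f|_X \leq 1\}$ and of the operator norm, since $\cO(X)^\circ$ is the unit ball of the Banach space $\cO(X)$. Indeed, if $||\pi_F^m \partial_x||_{\cO(X)} \leq 1$ then $|\pi_F^m \partial_x(f)|_X \leq |f|_X \leq 1$ for every $f \in \cO(X)^\circ$; conversely, if $\pi_F^m\partial_x$ preserves $\cO(X)^\circ$, then for any $f$ with $|f|_X \leq 1$ we get $|\pi_F^m\partial_x(f)|_X \leq 1$, and a standard scaling argument (multiplying $f$ by elements of $K^\times$ to bring $|f|_X$ arbitrarily close to $1$ from below, using that $|K^\times|$ is dense in the relevant range or more simply that $||T|| = \sup_{|f|_X \leq 1}|Tf|_X$ for the unit-ball-defined norm) gives $||\pi_F^m\partial_x||_{\cO(X)} \leq 1$. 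Since $||\pi_F^m\partial_x||_{\cO(X)} = |\pi_F|^m \cdot ||\partial_x||_{\cO(X)}$, the condition becomes $|\pi_F|^m \cdot ||\partial_x||_{\cO(X)} \leq 1$.

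Then I would simply note that, by Corollary \ref{rXcalc}, $||\partial_x||_{\cO(X)} = r(X)/\varpi$, so the condition $|\pi_F|^m\cdot||\partial_x||_{\cO(X)} \leq 1$ is equivalent to $r(X) \leq \varpi/|\pi_F|^m$, which by Definition \ref{DagSite}(a) is exactly the statement that $X$ is $(|\pi_F|^m\partial_x/\varpi)^\dag$-admissible, i.e. $X \in \bD_m$. By Definition \ref{mX}, this holds if and only if $m \geq m_X$ (using that $\bD_m \subseteq \bD_{m+1}$, which is clear from the definitions since enlarging $m$ weakens the admissibility condition). This completes the proof.

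The only mildly delicate point is the scaling argument in the first step: one must be careful that the operator norm $||T||$ genuinely equals $\sup\{|Tf|_X : |f|_X \leq 1\}$ rather than just being bounded by it, but this is exactly the content of the definition of $||T||$ given in $\S\ref{ConvNotn}$ for a normed space whose norm is the gauge of the unit ball $\cO(X)^\circ$ — and for affinoid algebras the supremum seminorm is a norm (as $\cO(X)$ is reduced or, in general, by working with the residue norm), so no genuine obstacle arises. I expect this to be a short, essentially formal verification with no substantive difficulty; the work was all done in establishing Corollary \ref{rXcalc}.
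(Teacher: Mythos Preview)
Your proposal is correct and takes essentially the same approach as the paper: unwind Definition~\ref{mX} and Definition~\ref{DagSite}(a) to get $m \geq m_X \Leftrightarrow r(X) \leq \varpi/|\pi_F|^m$, invoke Corollary~\ref{rXcalc} to rewrite this as $||\partial_x||_{\cO(X)} \leq |\pi_F|^{-m}$, and identify this with $\pi_F^m\partial_x$ preserving $\cO(X)^\circ$. The paper's proof is the same three-line chain of equivalences; your extra paragraph on the scaling step is more explicit than the paper, which simply asserts the last equivalence without comment.
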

\begin{proof} By Definitions \ref{CurlyDn}(a) and \ref{DagSite}(a), $m \geq m_X$ if and only if $r(X) \leq \varpi / |{\pi_F}|^m$. By Corollary \ref{rXcalc}, this is equivalent to $||\partial_x||_{\cO(X)} \leq 1 / |{\pi_F}|^m$. This, in turn, is equivalent to $\pi_F^m \partial_x$ preserving $\cO(X)^\circ$ inside $\cO(X)$.
\end{proof}

\begin{notn} We will write $\cA := \cO(X)^\circ$ and $\cL := \cA \partial_x$ until the end of $\S \ref{CompareSandT}$.
\end{notn}
We refer the reader to \cite[Definition 6.1]{DCapOne} for the definition of Lie lattices.

\begin{cor}\label{U=D} Let $X$ be an affinoid subdomain of $\bD$. For every $m \geq m_X$,
\be \item $\pi_F^m \cL$ is an $\cA$-Lie lattice in $\cT(X)$, and
\item there is a natural isometric isomorphism of $K$-Banach algebras
\[ j : \hK{U(\pi_F^m \cL)}  \stackrel{\cong}{\longrightarrow}  \cD_{|{\pi_F}|^{-m}}(X).\]
\ee\end{cor}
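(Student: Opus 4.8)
The plan is to deduce this corollary from Theorem \ref{Adellevm} (the case $m=0$) applied with the right parameters, together with the explicit description of the enveloping algebra $U(\pi_F^m\cL)$ and its completion. First I would fix $m\geq m_X$. By Corollary \ref{rXcalc} and Definition \ref{CurlyDn}(a), the hypothesis $m\geq m_X$ is equivalent to $||\partial_x||_{\cO(X)}\leq |\pi_F|^{-m}$, so $\pi_F^m\partial_x$ preserves $\cA=\cO(X)^\circ$ by Lemma \ref{PiMstable}. Since $\cT(X)$ is a free $\cO(X)$-module of rank one generated by $\partial_x$, the $\cA$-submodule $\pi_F^m\cL=\cA\cdot(\pi_F^m\partial_x)$ is an $\cA$-Lie lattice in $\cT(X)$ (it is finitely generated, $\cA$-torsionfree, spans $\cT(X)$ over $K$, and is closed under the Lie bracket and the $\cA$-action because $[\pi_F^m\partial_x,\pi_F^m\partial_x]=0$ and $(\pi_F^m\partial_x)(\cA)\subseteq\cA$); this gives part (a).

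For part (b), recall that $U(\pi_F^m\cL)$ is the skew-polynomial ring $\cA[\Theta;\pi_F^m\partial_x]$ with $\Theta:=\pi_F^m\partial_x$ acting as a derivation of $\cA$, and that by the definition of the completion (see \cite[Definition 6.1]{DCapOne}), $\hK{U(\pi_F^m\cL)}$ is the $K$-Banach algebra whose unit ball is the $\pi_K$-adic completion of $U(\pi_F^m\cL)$ and whose elements are convergent sums $\sum_{k\geq 0} a_k\Theta^k$ with $a_k\in\cO(X)$ and $|a_k|_X\to 0$, with norm $|\sum a_k\Theta^k|=\sup_k |a_k|_X$. Comparing with Definition \ref{AdelR}, sending $\Theta\mapsto \pi_F^m\partial$ and $a\in\cO(X)$ to itself gives an obvious candidate isometry onto $\cO(X)\langle \pi_F^m\partial/1\rangle$. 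Now observe that the rescaling $\partial\mapsto \pi_F^m\partial$ identifies $\cO(X)\langle\pi_F^m\partial/1\rangle$ isometrically with $\cO(X)\langle\partial/|\pi_F|^{-m}\rangle=\cD_{|\pi_F|^{-m}}(X)$: indeed $|\pi_F^m\partial|=|\pi_F|^m\cdot|\pi_F|^{-m}=1$, so the change of variable is norm-preserving, and one checks directly that it is a ring homomorphism since both sides are built from the same skew-Ore extension of $\cO(X)$ by a derivation that is $\pi_F^m\partial_x$ on $\cO(X)^\circ$. To make this precise and rigorous I would instead invoke Theorem \ref{Adellevm} with level $0$: with $s:=\pi_F^{-m}\in K^\times$ (after enlarging $K$ if necessary, then descending via Lemma \ref{DrBC}(a) and Lemma \ref{FiniteBCforOpNorm}), provided $|s|=|\pi_F|^{-m}>1/\rho(X)$ — which holds exactly because $m\geq m_X$, by Corollary \ref{Heisenberg} — one gets $\cD_{|\varpi_0 s|}(X)=\cD_{|\pi_F|^{-m}}(X)\cong \h{\cD}_s^{(0)}(X)_K$, and by Definition \ref{defOXcircpartm} and Definition \ref{defDsmX}, $\cD_s^{(0)}(X)=\cO(X)^\circ[\partial_x/s]^{(0)}=\cA[(\pi_F^m\partial_x)]=U(\pi_F^m\cL)$, whose $\pi_K$-adic completion, base-changed to $K$, is precisely $\hK{U(\pi_F^m\cL)}$ by Definition \ref{CompLevelmDivPow} and the identification of Lie lattice completions.

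The main step requiring care is checking that the isomorphism of Theorem \ref{Adellevm} is genuinely \emph{isometric} in this case: Theorem \ref{Adellevm} as stated produces an isomorphism of $K$-Banach algebras via Lemma \ref{BanachUnivProp}, but the estimate there a priori only gives boundedness with a constant $p^m$, which for $m=0$ is $p^0=1$ — so in fact when $m=0$ the map $\phi$ of that proof is norm-preserving on the nose (the Lemma \ref{binomest} estimate reads $1\leq |k!|/(|q_k!||\varpi_0|^k)\leq 1$ since $\varpi_0=0!^{1}=1$, $q_k=k$). Thus I would simply specialise the proof of Theorem \ref{Adellevm} to $m=0$ to extract the isometry, rather than quoting the general (merely bounded) statement. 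Everything else — naturality in $X$, i.e. compatibility with restriction maps — follows since all the maps involved ($\hK{U(\pi_F^m\cL)}\to\cD_{|\pi_F|^{-m}}(X)$ and the restriction maps on both sides) are determined by their effect on $\cO(X)$ and on the generator $\partial_x$, and these are manifestly compatible. I expect the only genuine obstacle to be bookkeeping around the finite base extension needed to realise $\pi_F^{-m}$ and $\varpi_0$ in $K^\times$ (trivial here since $\pi_F\in K$ and $\varpi_0=1$) and confirming the isometry claim; the rest is formal.
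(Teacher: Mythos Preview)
Your informal direct argument --- identifying $U(\pi_F^m\cL)$ with the skew-polynomial ring $\cA[\Theta;\pi_F^m\partial_x]$, describing its completion elementwise, and matching this against Definition \ref{AdelR} for $\cD_{|\pi_F|^{-m}}(X)$ via $\Theta\mapsto\pi_F^m\partial$ --- is essentially what the paper does. The paper makes it precise by invoking the universal property of $U(\pi_F^m\cL)$ to build the map and then citing Rinehart's Theorem \cite[Theorem 3.1]{Rinehart} to justify that $\{(\pi_F^m\partial_x)^n\}_{n\geq 0}$ is genuinely an orthonormal $\cO(X)$-basis for $\hK{U(\pi_F^m\cL)}$; you assert this description without justification, and Rinehart is the missing citation.

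Your detour through Theorem \ref{Adellevm} at Berthelot-level $0$, which you view as the ``rigorous'' route, has two gaps. First, Theorem \ref{Adellevm} requires $X$ to be \emph{split over $K$}, and the Corollary is stated for arbitrary affinoid subdomains of $\bD$; the base extension you mention is motivated by realising $\pi_F^{-m}$ and $\varpi_0$ in $K^\times$ (both already there), not by the split hypothesis, and you do not carry out the descent for the enveloping-algebra side. Second, your claim that $|s|=|\pi_F|^{-m}>1/\rho(X)$ ``holds exactly because $m\geq m_X$'' is wrong: $m\geq m_X$ only yields $r(X)\leq\varpi/|\pi_F|^m$, hence $1/\rho(X)=r(X)/\varpi\leq|\pi_F|^{-m}$ with possible equality (e.g.\ when $\rho(X)=|\pi_F|^m$), so the strict hypothesis of Theorem \ref{Adellevm} may fail. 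Both issues are fixable, but the paper's direct construction sidesteps them entirely; you should simply complete your first argument by citing Rinehart and drop the Theorem \ref{Adellevm} detour.
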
 
\begin{proof} (a) Because $m \geq m_X$, Lemma \ref{PiMstable} implies that $\pi_F^m \cL$ is a sub $(K^\circ, \cA)$-Lie algebra of $\cT(X)$. Now apply \cite[Definition 6.1]{DCapOne}.

(b) There is a unique $\cA$-linear map $j_{\pi_F^m\cL} : \pi_F^m \cL \to \cD_{|{\pi_F}|^{-m}}(X) =: D$ that sends $\pi_F^m \partial_x$ to $\pi_F^m \partial$. Let $j_{\cA}$ denote the inclusion of $\cA$ into $D$. It is easy to verify that $j_{\pi_F^m \cL}$ respects Lie brackets and satisfies $[j_{\pi_F^m \cL} (v), j_{\cA} (a)] = j_{\pi_F^m \cL}(v(a))$ for all $v \in \pi_F^m \cL$ and $a \in \cA$. The universal property \cite[\S 2.1]{DCapOne} of the enveloping algebra $U(\pi_F^m \cL)$ implies that $j_{\pi_F^m\cL}$ extends to an $K^\circ$-algebra homomorphism $j : U(\pi_F^m \cL) \to D$. Because the image of this map by construction lands in the unit ball of the $K$-Banach algebra $D$, it extends further to a $K$-Banach algebra homomorphism $j : \hK{U(\pi_F^m \cL)} \to D$. Finally, Rinehart's Theorem \cite[Theorem 3.1]{Rinehart} implies that $\{\pi_F^{mn} \partial_x^n : n \geq 0\}$ is an orthonormal basis for the $\cO(X)$-Banach module $\hK{U(\pi_F^m \cL)}$ in the sense of \cite[\S 1.2, p.7]{FvdPut}, whereas it follows from the construction of $D$ that $\{\pi_F^{mn} \partial^n : n \geq 0\}$ is an orthonormal basis for $D$. Since $j$ sends $\partial_x^n$ to $\partial^n$ for each $n \geq 0$, we conclude that it is an isometric isomorphism.
\end{proof}
Recall from Notation \ref{CongSubs} the \emph{Iwahori subgroup} $I$ of $GL_2(\cO_F)$, given by
\begin{equation}\label{IwahoriEq} I := \left\{ \begin{pmatrix} a & b \\ c & d\end{pmatrix}\in GL_2(\cO_F) : c\equiv 0 \mod \pi_F\cO_F\right\}.\end{equation}
We fix a closed subgroup $J$ of $I$ and let $X$ be a $J$-stable affinoid subdomain of $\bD$. We now recall the construction of the algebra $\w\cD(X, J)$ from \cite[Definition 3.3.1]{EqDCap}. 

Recall from \cite[Definition 3.2.13]{EqDCap} that $(\cJ,H)$ is an \emph{$\cA$-trivialising pair} if $\cJ$ is a $J$-stable $\cA$-Lie lattice in $\Der_K(\cO(X))$ and $H$ is an open normal subgroup of $J$ such that $\rho_X(H) \leq \exp(p^\epsilon \cJ)$ inside $\Aut_K(\cO(X))$, where $\rho_X : J \to \Aut_K(\cO(X))$ is the action of $J$ on $\cO(X)$. For each $\cA$-trivialising pair $(\cJ,H)$, there is a crossed product $\hK{U(\cJ)} \rtimes_H J$, and $\w\cD(X,J)$ is the inverse limit of these crossed products taken over all possible $\cA$-trivialising pairs $(\cJ,H)$.

\begin{lem}\label{TrivPair} Let $X$ be a $J$-stable affinoid subdomain of $\bD$. Then $(\pi_F^m \cL, J_{m + \epsilon v_{\pi_F}(p)})$ is an $\cA$-trivialising pair whenever $m \geq m_X$.
\end{lem}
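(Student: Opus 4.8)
The plan is to verify the three conditions defining an $\cA$-trivialising pair $(\pi_F^m\cL,\ J_{m+\epsilon v_{\pi_F}(p)})$ in the sense of \cite[Definition 3.2.13]{EqDCap}, using the infinite-order differential operators $\beta(g)$ from Definition \ref{Beta} as the bridge between the group $J$ and the Lie lattice $\pi_F^m\cL\subseteq\Der_K(\cO(X))$.

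Two of the conditions are routine. That $\pi_F^m\cL$ is an $\cA$-Lie lattice in $\cT(X)=\Der_K(\cO(X))$ for $m\geq m_X$ is Corollary \ref{U=D}(a); it is $J$-stable because for $g=\bigl(\begin{smallmatrix}a&b\\c&d\end{smallmatrix}\bigr)\in J\subseteq I$ one has $g\cdot\partial_x=\frac{(-cx+a)^2}{ad-bc}\partial_x$ by Lemma \ref{AutAOD}(d), and $|a|=1$, $|c|<1$, $|x|_X\le 1$, $|ad-bc|=1$ force the coefficient into $\cA^\times$, while the $g$-action on $\cO(X)$ is an isometry and hence preserves $\cA$. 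And $J_{m+\epsilon v_{\pi_F}(p)}=J\cap I_{m+\epsilon v_{\pi_F}(p)}$ is an open normal subgroup of $J$, since $I_n=\ker(I\to\GL_2(\cO_F/\pi_F^n))$ is open and normal in $I$ and $m+\epsilon v_{\pi_F}(p)\geq 1$.

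The substantive step is to show $\rho_X(J_{m+\epsilon v_{\pi_F}(p)})\subseteq\exp(p^\epsilon\pi_F^m\cL)$ in $\Aut_K(\cO(X))$. I would fix $g\in J_{m+\epsilon v_{\pi_F}(p)}$ and set $r:=|\pi_F|^{-m}$; since $m\geq m_X$ we have $r>r(X)$ by Lemma \ref{PiMstable}, so $\cD_r(X)$, the faithful bounded map $\sigma_r:\cD_r(X)\to\cB(\cO(X))$ of norm $\leq 1$ (Lemma \ref{ActionOnO}), Proposition \ref{SigmaRho}, and the isometric isomorphism $j:\hK{U(\pi_F^m\cL)}\xrightarrow{\ \cong\ }\cD_r(X)$ sending $\pi_F^m\partial_x$ to $\pi_F^m\partial$ (Corollary \ref{U=D}(b)) are all available. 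For such $g$ the matrix entries satisfy $|b|,|c|,|a-d|\leq|\pi_F|^{m+\epsilon v_{\pi_F}(p)}=|p|^\epsilon|\pi_F|^m$, which is $<\varpi|\pi_F|^m=\varpi/r$ precisely because $|p|^\epsilon<\varpi$, i.e. $\epsilon>\frac1{p-1}$; hence $g\in\cG_r$ by Lemma \ref{GrCalc}, so $\beta(g)=\sum_{n\geq 0}(g\cdot x-x)^n\partial^{[n]}$ is defined in $\cD_r(\bD)$, hence in $\cD_r(X)$ after restriction (Proposition \ref{DbRing}(b)). A short computation in the norm of Definition \ref{AdelR} gives $|(g\cdot x-x)^n\partial^{[n]}|_{\cD_r(X)}\leq|p|^{\epsilon n-v_p(n!)}\leq|p|^\epsilon$ for $n\geq 1$, so $|\beta(g)-1|_{\cD_r(X)}\leq|p|^\epsilon<1$ and $\log\beta(g)=\sum_{k\geq 1}\frac{(-1)^{k-1}}{k}(\beta(g)-1)^k$ converges in $\cD_r(X)$ with $|\log\beta(g)|_{\cD_r(X)}\leq|p|^\epsilon$. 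Applying $\sigma_r$ and Proposition \ref{SigmaRho} (so $\sigma_r(\beta(g))=\rho_X(g)$), we get $\sigma_r(\log\beta(g))=\log\rho_X(g)$, and $\log\rho_X(g)$ is a \emph{derivation} of $\cO(X)$ because $\rho_X(g)$ is a $K$-algebra automorphism: the multiplication $\mu:\cO(X)\h\otimes\cO(X)\to\cO(X)$ intertwines $\rho_X(g)\h\otimes\rho_X(g)$ with $\rho_X(g)$, and applying $\log$ yields the Leibniz identity $\mu\circ(\log\rho_X(g)\h\otimes 1+1\h\otimes\log\rho_X(g))=\log\rho_X(g)\circ\mu$. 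Since $\Der_K(\cO(X))=\cO(X)\partial_x$, we may write $\log\rho_X(g)=v\partial_x$ with $v:=\log\rho_X(g)(x)\in\cO(X)$; injectivity of $\sigma_r$ gives $\log\beta(g)=v\partial$ in $\cD_r(X)$, whence $|v|_X|\pi_F|^{-m}=|\log\beta(g)|_{\cD_r(X)}\leq|p|^\epsilon$, i.e. $v\partial_x\in p^\epsilon\pi_F^m\cA\partial_x=p^\epsilon\pi_F^m\cL$. Transporting through $j$ then gives $\beta(g)=j(\exp(v\partial_x))$ with $v\partial_x\in p^\epsilon\pi_F^m\cL$, so $\rho_X(g)=\sigma_r(\beta(g))=\exp(v\partial_x)\in\exp(p^\epsilon\pi_F^m\cL)$ inside $\Aut_K(\cO(X))$, as required.

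The main obstacle is exactly this last step, and within it the need to bound $v=\log\rho_X(g)(x)$ by $|p|^\epsilon|\pi_F|^m$ and not merely by $|p|^\epsilon$: the crude operator-norm estimate on $\cO(X)$ yields only the weaker bound, which is why the argument must be organised around the Banach algebra $\cD_r(X)$ with $r=|\pi_F|^{-m}$ — in which $\partial$ has norm $|\pi_F|^{-m}$ — together with the faithful action $\sigma_r$ and the operators $\beta(g)$. The numerical choices $r=|\pi_F|^{-m}$, the shift $\epsilon v_{\pi_F}(p)$ in the congruence level, and the inequality $\epsilon>\frac1{p-1}$ (which simultaneously places $g$ in $\cG_r$ and makes $\log\beta(g)$ converge with the right norm) are precisely what make all the estimates line up.
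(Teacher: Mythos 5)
Your argument is correct, but it is genuinely different from the paper's. The paper first invokes \cite[Corollary 4.3.7]{EqDCap} to reduce to the case $X = \bD$, then observes directly that $\rho_{\bD}(J_{m+\epsilon v_{\pi_F}(p)})$ is contained in $1 + p^\epsilon \pi_F^m \End_{K^\circ}(\cO(\bD)^\circ)$, and finally cites \cite[Lemma 3.2.5(a)]{EqDCap} for the fact that $\log$ of an algebra automorphism that close to the identity lands in $p^\epsilon\pi_F^m\Der_{K^\circ}(\cO(\bD)^\circ)$. You bypass both citations: working directly on $X$, you route everything through the Banach algebra $\cD_r(X)$ with $r = |\pi_F|^{-m}$, where $\partial$ has norm exactly $|\pi_F|^{-m}$, use the operators $\beta(g)$ and the faithful map $\sigma_r$ to compute $\log\rho_X(g)$, verify by hand that $\log$ of an automorphism of $\cO(X)$ is a derivation via the intertwining identity $\mu\circ(T\h\otimes T)=T\circ\mu$, and read off the norm bound $|v|_X\le |p|^\epsilon|\pi_F|^m$ from $\log\beta(g)=v\partial$ in $\cD_r(X)$. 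Your key observation — that $\epsilon>\tfrac{1}{p-1}$ simultaneously places $J_{m+\epsilon v_{\pi_F}(p)}$ inside $\cG_r$ (via Lemma \ref{GrCalc}) and makes $\log\beta(g)$ converge with norm $\le|p|^\epsilon$ — is a clean reformulation of the estimate the paper performs on $\cO(\bD)^\circ$. Your route buys a self-contained argument tied to the $\beta(g)$-machinery that dominates $\S\ref{CoadDrin}$ (indeed your computation of $\log\beta(g)$ is in the same spirit as Proposition \ref{SigmaRho} and Theorem \ref{GrGxTriv}), at the cost of redoing the standard $\exp/\log$ Lie-theory bookkeeping; the paper's route buys brevity by outsourcing that bookkeeping to established results in \cite{EqDCap}. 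One small correction: Lemma \ref{ActionOnO} does not assert that $\|\sigma_r\|\le 1$, only that $\sigma_r$ is bounded — the bound $\|\sigma_r\|\le 1$ does happen to hold for your specific $r=|\pi_F|^{-m}$ with $m\ge m_X$ (because then $\rho(X)\ge|\pi_F|^m$), but you never actually use it, since all the quantitative estimates are carried out inside $\cD_r(X)$ before applying $\sigma_r$, so the parenthetical claim can simply be dropped.
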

\begin{proof} The affine formal model $\cA = \cO(X)^\circ$ in $\cO(X)$ is $J$-stable since $X$ is $J$-stable, and it is $\pi_F^m \cL$-stable by Lemma \ref{PiMstable}. Hence by \cite[Corollary 4.3.7]{EqDCap}, it will be enough to verify that $(\pi_F^m \cO(\bD)^\circ \partial_x, J_{m + \epsilon v_{\pi_F}(p)})$ is an $\cO(\bD)^\circ$-trivialising pair. Therefore we may assume that $X = \bD$. 

Recall congruence subgroup $J_n$ of $J$ from Definition \ref{CongSubs} and note that $\rho_{\bD}(J_n)$ acts trivially on $\cO(\bD)^\circ$ modulo $\pi_F^n$. If $\cE$ denotes the ring of $K^\circ$-linear endomorphisms of $\cO(\bD)^\circ$, then $\rho_{\bD}(J_{m + \epsilon v_{\pi_F}(p)}) \leq 1 + p^\epsilon \pi_F^m \cE$. Since every element of $\rho_{\bD}(J)$ is an $K^\circ$-algebra automorphism of $\cO(\bD)^\circ$, it follows from the proof of  \cite[Lemma 3.2.5(a)]{EqDCap} that $\log \rho_{\bD}(J_{m + \epsilon v_{\pi_F}(p)}) \leq p^\epsilon \pi_F^m \Der_{K^\circ} \cO(\bD)^\circ = p^\epsilon \pi_F^m \cO(\bD)^\circ \partial_x$. \end{proof}

After Lemma \ref{TrivPair}, \cite[Theorem 3.2.12]{EqDCap} and Corollary \ref{U=D}(b), we have at our disposal the crossed product
\[ \hsp \cD_{|{\pi_F}|^{-m}}(X) \underset{J_{m+\epsilon v_{\pi_F}(p)}}{\rtimes}{}J\]
for every $J$-stable affinoid subdomain $X$ of $\bD$ and every $m \geq m_X$. Note that the trivialisation of the $J_{m+ \epsilon v_{\pi_F}(p)}$-action on $\cD_{|{\pi_F}|^{-m}}(X)$ is $j \circ \beta_{\pi_F^m \cL}$, where $\beta_{\pi_F^m \cL}$ is the trivialisation of the $J_{m + \epsilon v_{\pi_F}(p)}$-action on $\hK{U(\pi_F^m \cL)}$ obtained from \cite[Theorem 3.2.12]{EqDCap} and Lemma \ref{TrivPair}, and $j$ comes from Corollary \ref{U=D}(b). On the other hand, when $m \geq m_X$, by Proposition \ref{SheafDnJ} we also have the crossed product
\[ \sD_m(X) \underset{J_{m+1}}{\rtimes}{} J \hsp = \hsp \cD^\dag_{\varpi / |{\pi_F}|^m}(X) \underset{J_{m+1}}{\rtimes}{} J.\]

\begin{defn}\label{DefnOfSandT} Let $X$ be a $J$-stable affinoid subdomain of $\bD$ and let $m \geq m_X$. Define
\be \item $\cT_m(X) \hsp := \hsp \cD_{|{\pi_F}|^{-m}}(X) \underset{J_{m+\epsilon v_{\pi_F}(p)}}{\rtimes}{}J$, and
\item $\cS_m(X) := \sD_m(X) \underset{J_{m+1}}{\rtimes}{} J \hsp = \hsp \cD^\dag_{\varpi / |{\pi_F}|^m}(X) \underset{J_{m+1}}{\rtimes}{} J$.
\ee\end{defn}

Note that the algebra $\cT_m(X)$ is functorial in $X$ and therefore defines a presheaf $\cT_m$ on the $G$-topology $\bD_m/J$ --- see Definition \ref{CongSubs}(d). 

\begin{prop}\label{TmPres} For every $J$-stable affinoid subdomain $X$ of $\bD$, the tower
\[ \cT_{m_X}(X) \gets \cT_{m_X + 1}(X) \gets \cT_{m_X + 2}(X) \gets \cdots\]
gives a Fr\'echet-Stein presentation for the algebra $\w\cD(X, J)$.
\end{prop}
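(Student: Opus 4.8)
The plan is to identify the tower $\bigl(\cT_m(X)\bigr)_{m \geq m_X}$ with a cofinal subtower of the defining presentation of $\w\cD(X,J)$, and then to read off the Fr\'echet--Stein conditions from the results of Chapters 2 and 3. First I would note that for every $m \geq m_X$ the pair $\bigl(\pi_F^m\cL,\, J_{m+\epsilon v_{\pi_F}(p)}\bigr)$ is an $\cA$-trivialising pair by Lemma \ref{TrivPair}, so that $\hK{U(\pi_F^m\cL)}\rtimes_{J_{m+\epsilon v_{\pi_F}(p)}}J$ is one of the crossed products whose inverse limit defines $\w\cD(X,J)$. The isometric isomorphism $j\colon \hK{U(\pi_F^m\cL)}\stackrel{\cong}{\to}\cD_{|\pi_F|^{-m}}(X)$ of Corollary \ref{U=D}(b) intertwines the trivialisation $\beta_{\pi_F^m\cL}$ of the $J_{m+\epsilon v_{\pi_F}(p)}$-action on $\hK{U(\pi_F^m\cL)}$ with the trivialisation $\beta$ of $\S\ref{IwahoriAction}$; granting this, $j$ extends to a ring isomorphism $\hK{U(\pi_F^m\cL)}\rtimes_{J_{m+\epsilon v_{\pi_F}(p)}}J \stackrel{\cong}{\to}\cT_m(X)$, compatibly with the transition maps, which on both sides come from the inclusions $\cD_{|\pi_F|^{-m}}(X)\hookrightarrow\cD_{|\pi_F|^{-(m-1)}}(X)$ (equivalently $\pi_F^m\cL\subseteq\pi_F^{m-1}\cL$) together with $J_{m+\epsilon v_{\pi_F}(p)}\subseteq J_{m-1+\epsilon v_{\pi_F}(p)}$.

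Next I would verify that $\bigl((\pi_F^m\cL,\, J_{m+\epsilon v_{\pi_F}(p)})\bigr)_{m \geq m_X}$ is a cofinal chain inside the directed system of all $\cA$-trivialising pairs. Any $\cA$-Lie lattice $\cJ$ in $\cT(X) = \cO(X)\partial_x$ is commensurable with $\cL = \cA\partial_x$ and hence contains $\pi_F^{N}\cL$ for some $N$, while any open subgroup $H$ of $J$ contains a congruence subgroup $J_{N'}$; taking $m$ large enough, $(\pi_F^m\cL,\, J_{m+\epsilon v_{\pi_F}(p)})$ refines $(\cJ,H)$. Hence the inverse limit over this chain agrees with the inverse limit over all $\cA$-trivialising pairs, namely $\w\cD(X,J)$, and since $\cD(X)\rtimes J$ has dense image in every $\cT_m(X)$, the induced Fr\'echet topology and the identification of each $\cT_m(X)$ with the corresponding Hausdorff completion of $\w\cD(X,J)$ are the same as in the original presentation; in particular $\invlim[m \geq m_X]\cT_m(X) \cong \w\cD(X,J)$ as topological $K$-algebras.

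It remains to check the two structural conditions. Each $\cD_{|\pi_F|^{-m}}(X)$ is Noetherian by Corollary \ref{DrXNoeth}: for $m \geq m_X$ we have $|\pi_F|^{-m}\in |K^\times|\subseteq\sqrt{|K^\times|}$ and $|\pi_F|^{-m} > \varpi|\pi_F|^{-m}\geq r(X)$; and $\cT_m(X)$ is free of finite rank over $\cD_{|\pi_F|^{-m}}(X)$ on each side, so $\cT_m(X)$ is a Noetherian $K$-Banach algebra. For flatness of the transition maps, Theorem \ref{flatcotangent} --- applied with $r := |\pi_F|^{-(m-1)}$ and $s := |\pi_F|^{-m}$, noting $s \geq r$, that both lie in $\sqrt{|K^\times|}$, and that $X \in \bA(\partial_x/r)$ because $m - 1 \geq m_X$ --- shows that $\cD_{|\pi_F|^{-(m-1)}}(X)$ is a flat $\cD_{|\pi_F|^{-m}}(X)$-module on both sides; forming the crossed product with the finite groups $J/J_{m+\epsilon v_{\pi_F}(p)}$ and $J/J_{m-1+\epsilon v_{\pi_F}(p)}$ preserves this flatness, by the crossed-product base-change formalism of \cite{EqDCap} already used in $\S\ref{CoadDrin}$. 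Thus $\cT_{m-1}(X)$ is flat over $\cT_m(X)$ on both sides for all $m \geq m_X + 1$, and combined with the previous two paragraphs this exhibits $\cT_{m_X}(X)\gets\cT_{m_X+1}(X)\gets\cdots$ as a (two-sided) Fr\'echet--Stein presentation of $\w\cD(X,J)$.

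The step I expect to be the main obstacle is the claim in the first paragraph that $j$ intertwines the two trivialisations: on one side $\beta_{\pi_F^m\cL}$ is produced by the abstract exponential construction of \cite{EqDCap}, while on the other side $\beta$ is the explicit infinite-order operator $\beta(h) = \sum_n (h\cdot x - x)^n\partial^{[n]}$ of Definition \ref{Beta}. One must check that $j$ carries one to the other and that the identification is natural in $m$ --- once this is in place the rest is a routine cofinality argument together with direct appeals to Corollary \ref{DrXNoeth} and Theorem \ref{flatcotangent}. A secondary point, that crossing a flat base-change map with a finite group stays flat on both sides, is part of the crossed-product formalism of \cite{EqDCap} and needs only to be cited.
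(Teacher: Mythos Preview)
Your approach is correct and follows essentially the same outline as the paper, though you do more work than necessary in two places.

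First, your identified ``main obstacle''---intertwining $j\circ\beta_{\pi_F^m\cL}$ with the explicit $\beta$ of Definition \ref{Beta}---is not an obstacle at all here: looking at the sentence immediately preceding Definition \ref{DefnOfSandT}, the crossed product $\cT_m(X)$ is \emph{defined} using the trivialisation $j\circ\beta_{\pi_F^m\cL}$ transported from \cite{EqDCap}, not using $\beta$. So $j$ extends to an isomorphism $\hK{U(\pi_F^m\cL)}\rtimes_{J_{m+\epsilon v_{\pi_F}(p)}}J\stackrel{\cong}{\to}\cT_m(X)$ automatically, and the cofinality argument you give then identifies the inverse limit with $\w\cD(X,J)$. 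The comparison of the two trivialisations is carried out later, in the proof of Theorem \ref{TmSmComp}, and is needed only to relate $\cT_\bullet$ to $\cS_\bullet$.

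Second, the paper's proof is a bare citation of Lemma \ref{TrivPair} together with \cite[Lemma 3.3.4]{EqDCap} and the proof of \cite[Theorem 3.4.8]{EqDCap}: the Noetherianity and flatness of the tower $\hK{U(\pi_F^m\cL)}\rtimes_{J_{m+\epsilon v_{\pi_F}(p)}}J$ are already established in \cite{EqDCap} in the general Lie-lattice framework. Your route---verifying these directly for $\cD_{|\pi_F|^{-m}}(X)$ via Corollary \ref{DrXNoeth} and Theorem \ref{flatcotangent}, then passing to the finite-rank crossed product---is a legitimate alternative made possible by Corollary \ref{U=D}(b), but it duplicates work already packaged in the reference. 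Either way the conclusion is the same.
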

\begin{proof}  Use Lemma \ref{TrivPair}, \cite[Lemma 3.3.4]{EqDCap} and the proof of \cite[Theorem 3.4.8]{EqDCap}.
\end{proof}
We will now make a precise comparison of the two constructions.
\begin{thm}\label{TmSmComp} For every $m \geq 0$, there is a commutative diagram 
\[ \xymatrix{ && \cT_{m+\epsilon v_{\pi_F}(p)} \ar[dd]_{f_{m + \epsilon v_{\pi_F}(p)}}\ar[rr] & & \cT_m \ar[dd]^{f_m} \\ \cD \rtimes J \ar[urr] \ar[drr] && && \\ && \cS_{m+\epsilon v_{\pi_F}(p)} \ar[rr]\ar[uurr]_{g_m} && \cS_m.}
\]
of presheaves of $K$-algebras on $\bD_m /J$.\end{thm}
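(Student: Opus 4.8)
The plan is to construct the comparison maps $f_m \colon \cT_m \to \cS_m$ locally on each $J$-stable affinoid $X \in \bD_m/J$ and then to glue. Fix such an $X$ with $m \geq m_X$ and recall that $\cT_m(X) = \cD_{|\pi_F|^{-m}}(X) \rtimes_{J_{m+\epsilon v_{\pi_F}(p)}} J$ while $\cS_m(X) = \sD_m(X) \rtimes_{J_{m+1}} J$. First I would produce the map of rings that each crossed product receives from $\cD(X) \rtimes J$. For $\cT_m(X)$ this is the composite of the $\cO(X)$-linear inclusion $\cD(X) \hookrightarrow \cD_{|\pi_F|^{-m}}(X)$ from Lemma \ref{TheMapJ} with the quotient map defining the crossed product; the relevant universal property is Lemma \ref{SkewGpRngUniv} applied to $\sigma = j$ and $\rho = $ the $J$-action $\rho_m$ on $\cD_{|\pi_F|^{-m}}(X)$ extending $\cD(X)$ via Proposition \ref{NormOfRhor}, and then one checks this descends to $\cT_m(X)$ because the trivialisation used there (namely $j \circ \beta_{\pi_F^m\cL}$) agrees with $\beta$ restricted to $J_{m+\epsilon v_{\pi_F}(p)}$ — this is where one needs that the Lie-lattice trivialisation of \cite[Theorem 3.2.12]{EqDCap} and the explicit operator $\beta(g) = \sum (g\cdot x - x)^n \partial^{[n]}$ of Definition \ref{Beta} coincide, which should follow from Proposition \ref{SigmaRho} together with the injectivity of $\sigma_r$ from Lemma \ref{ActionOnO}(b).

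Next I would define $f_m(X) \colon \cT_m(X) \to \cS_m(X)$. Since $\cS_m(X) = \colim_{r > \varpi/|\pi_F|^m} \cD_r(X) \rtimes_{J_{m+1}} J$ and in particular $\cD_{|\pi_F|^{-m}}(X)$ maps into $\sD_m(X) = \cD^\dag_{\varpi/|\pi_F|^m}(X)$ — note $|\pi_F|^{-m} > \varpi/|\pi_F|^m$ since $\varpi < 1$ — there is a $K$-algebra homomorphism $\cD_{|\pi_F|^{-m}}(X) \to \sD_m(X)$, functorial in $X$ by Proposition \ref{DbRing}(b) and Definition \ref{DagSite}(d). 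Now $J_{m+1} \leq J_{m+\epsilon v_{\pi_F}(p)}$ (since $\epsilon v_{\pi_F}(p) \geq 1$), so the trivialisation $\beta$ on $J_{m+\epsilon v_{\pi_F}(p)}$ restricts to the one on $J_{m+1}$, and one may apply \cite[Lemma 2.2.7]{EqDCap} to extend $\cD_{|\pi_F|^{-m}}(X) \to \sD_m(X)$ to a ring homomorphism $f_m(X) \colon \cD_{|\pi_F|^{-m}}(X) \rtimes_{J_{m+\epsilon v_{\pi_F}(p)}} J \to \sD_m(X) \rtimes_{J_{m+1}} J$ of crossed products. The hypothesis of that Lemma — compatibility of trivialisations under the ring map — holds because both sides restrict the single operator-valued $\beta$ defined on the ambient group $\cG_r \cap \cG_X$ from Theorem \ref{GrGxTriv}, and $\beta$ is manifestly compatible with the restriction maps $\cD_{|\pi_F|^{-m}}(X)\to \sD_m(X)$ since it is given by the same explicit formula in both rings. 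Similarly I would define $g_m(X) \colon \cS_{m+\epsilon v_{\pi_F}(p)}(X) \to \cT_m(X)$: the underlying ring map $\cD^\dag_{\varpi/|\pi_F|^{m+\epsilon v_{\pi_F}(p)}}(X) \to \cD_{|\pi_F|^{-m}}(X)$ exists because, comparing radii, $\varpi/|\pi_F|^{m+\epsilon v_{\pi_F}(p)} = \varpi |p|^{-\epsilon} |\pi_F|^{-m} < |\pi_F|^{-m}$ using $\varpi = p^{-1/(p-1)}$ and the definition of $\epsilon$ — so this is where I must be slightly careful with the inequality $\varpi |p|^{-\epsilon} < 1$, which reads $p^{-1/(p-1)} \cdot p^{\epsilon} < 1$, i.e. $\epsilon < 1/(p-1)$; for $p>2$ this is $1 < 1/(p-1)$ which is false, so in fact the correct reading is that $\varpi/|\pi_F|^{m+\epsilon v_{\pi_F}(p)}$ may exceed $|\pi_F|^{-m}$ and one instead uses that $\cD^\dag$ is a union, choosing $c > \varpi/|\pi_F|^{m+\epsilon v_{\pi_F}(p)}$ with $c \leq |\pi_F|^{-m}$ — this requires $\varpi/|\pi_F|^{m+\epsilon v_{\pi_F}(p)} < |\pi_F|^{-m}$, equivalently $\varpi < |\pi_F|^{\epsilon v_{\pi_F}(p)} = |p|^{\epsilon}$, i.e. $p^{-1/(p-1)} < p^{-\epsilon}$, i.e. $\epsilon < 1/(p-1)$. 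I expect this numerical point — reconciling $\epsilon$ with $\varpi$ and the index shift $\epsilon v_{\pi_F}(p)$ — to be the main obstacle, and it is likely the very reason the index shift in the statement is $\epsilon v_{\pi_F}(p)$ rather than $1$; I would resolve it by examining the precise bound $p^\epsilon \pi_F^m$ versus $\varpi \pi_F^m$ appearing in Lemma \ref{TrivPair} and choosing the intermediate radius accordingly.

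Finally I would verify the commutativity of the pentagon. All four morphisms out of $\cD \rtimes J$ are, by construction, the canonical quotient maps composed with the inclusions $j$ into the various completions, so the two triangles involving $\cD \rtimes J$ commute because the relevant diagrams of Banach-algebra inclusions commute by Lemma \ref{TheMapJ} and Proposition \ref{DbRing}(b), and these inclusions have dense image; hence any two $K$-algebra homomorphisms out of the completions agreeing on $\cD(X)$ and on $J$ agree everywhere. The square $f_{m+\epsilon v_{\pi_F}(p)}$ vs. $f_m$ and the triangle $g_m$ vs. the restriction $\cT_{m+\epsilon v_{\pi_F}(p)} \to \cT_m$ commute for the same density reason: each composite is a ring homomorphism between crossed products restricting to a composite of restriction maps on the $\cD$-part (which commute as presheaf structure maps) and to the identity (or the inclusion) on the group part $J$. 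Since all constructions are functorial in $X \in \bD_m/J$ — the crossed product $\cT_m(X)$ is a free $\cD_{|\pi_F|^{-m}}(X)$-module on $J/J_{m+\epsilon v_{\pi_F}(p)}$ and restriction maps are induced from those of $\cD_{|\pi_F|^{-m}}$ — the maps $f_m(X), g_m(X)$ assemble into morphisms of presheaves $f_m \colon \cT_m \to \cS_m$ and $g_m \colon \cS_{m+\epsilon v_{\pi_F}(p)} \to \cT_m$ on $\bD_m/J$, completing the diagram.
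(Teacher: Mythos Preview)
Your overall strategy is essentially the paper's: build $f_m$ and $g_m$ on each $X \in \bD_m/J$ from maps between the underlying completed Weyl algebras, use \cite[Lemma 2.2.7]{EqDCap} to extend to the crossed products, check the two trivialisations $j \circ \beta_{\pi_F^m\cL}$ and $\beta$ agree via Proposition \ref{SigmaRho} and the injectivity of $\sigma_r$, and then invoke density of $\cD(X)\rtimes J$ for the commutativity. That much is right.

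The genuine gap is in your construction of $g_m$, and it is not the subtle obstruction you fear --- it is a sign error in how the restriction maps between the $\cD_r$ go. For $r' > r$ the convergence condition in $\cD_{r'}(X)$ is \emph{stricter}, so the natural $K$-algebra map goes $\cD_{r'}(X) \to \cD_r(X)$. Consequently a map $\cD^\dag_{r_1}(X) = \colim_{c > r_1} \cD_c(X) \to \cD_{r_2}(X)$ exists precisely when $r_1 \geq r_2$, not when $r_1 < r_2$. With $r_1 = \varpi/|\pi_F|^{m+\epsilon v_{\pi_F}(p)}$ and $r_2 = |\pi_F|^{-m}$, the required inequality is $\varpi \, |p|^{-\epsilon} \geq 1$, i.e.\ $\epsilon \geq 1/(p-1)$, which is exactly what the definition of $\epsilon$ guarantees (it equals $1$ for $p>2$ and $2$ for $p=2$). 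So there is no obstacle at all: the index shift $\epsilon v_{\pi_F}(p)$ is chosen precisely so that this inequality holds, and your attempt to find $c$ with $c > r_1$ and $c \leq r_2$ is looking for the wrong thing. The paper simply picks $r$ with $\varpi < r\,|\pi_F|^{m+\epsilon v_{\pi_F}(p)} < \varpi/|\pi_F|$, observes that then $r > |\pi_F|^{-m}$, gets $g_{m,r}\colon \cD_r(X) \to \cD_{|\pi_F|^{-m}}(X)$, and takes the colimit.

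A smaller slip: you wrote $J_{m+1} \leq J_{m+\epsilon v_{\pi_F}(p)}$, but since $\epsilon v_{\pi_F}(p) \geq 1$ the correct inclusion is $J_{m+\epsilon v_{\pi_F}(p)} \leq J_{m+1}$ (larger subscript means smaller congruence subgroup). This is in fact what Hypothesis \ref{MorphCP}(d) and \cite[Lemma 2.2.7]{EqDCap} require: the normal subgroup in the source crossed product must be contained in that of the target, and you should check the compatibility $f_m(j\circ\beta_{\pi_F^m\cL}(h)) = \beta(h)$ only for $h \in J_{m+\epsilon v_{\pi_F}(p)}$, not the other way round.
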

\begin{proof} Fix $X \in \bD_m /J$. We will first construct the $K$-algebra map
\[f_m(X) : \cT_m(X) \to \cS_m(X).\] 
Let $r$ be a real number satisfying $\varpi / |{\pi_F}|^m < r < |{\pi_F}|^{-m} \min\{\varpi / |{\pi_F}|, 1\}$. Then in particular $r < |{\pi_F}|^{-m}$, so by Lemma \ref{BanachUnivProp} there is a $K$-Banach algebra homomorphism $f_{m,r} : \cD_{|{\pi_F}|^{-m}}(X)\to \cD_r(X)$ which fixes $\cO(X)$ and sends $\partial \in \cD_{|{\pi_F}|^{-m}}(X)$ to $\partial \in \cD_r(X)$. This map is $J$-equivariant for the $J$-actions on $\cD_{|{\pi_F}|^{-m}}(X)$ and $\cD_r(X)$ that were constructed in Proposition \ref{NormOfRhor}. Because also $\varpi / |{\pi_F}|^m < r < \varpi / |{\pi_F}|^{m+1}$, we can apply Lemma \ref{JG} together with Theorem \ref{GrGxTriv} to construct the crossed product $\cD_r(X) \underset{J_{m+1}}{\rtimes} J$ as in the proof of Proposition \ref{SheafDnJ}. Then by \cite[Lemma 2.2.7]{EqDCap}, $f_{m,r}$ extends to a $K$-algebra homomorphism
\[ f_{m,r} \rtimes 1_J : \cD_{|{\pi_F}|^{-m}}(X) \underset{J_{m + \epsilon v_{\pi_F}(p)}}{\rtimes}{} J \quad \longrightarrow \quad \cD_r(X) \underset{J_{m+1}}{\rtimes} J \]
provided we can show that it respects the two trivialisations 
\[j \circ \beta_{\pi_F^m \cL} : J_{m + \epsilon v_{\pi_F}(p)} \longrightarrow \cD_{|{\pi_F}|^{-m}}(X)^\times \qmb{and}\beta : J_{m+1} \to \cD_r(X)^\times.\]
Recall from \cite[Lemma 3.2.10(a)]{EqDCap} the action map $\psi_{\pi_F^m\cL} : \hK{U(\pi_F^m \cL)} \to \cB(\cO(X))$ so that $\psi_{\pi_F^m \cL} \circ \beta_{\pi_F^m \cL}$ is the restriction of $\rho_X$ to $J_{m+\epsilon v_{\pi_F}(p)}$ by \cite[Definition 3.2.11]{EqDCap}. Recall the action map $\sigma_r : \cD_r(X) \to \cB(\cO(X))$ from Lemma \ref{ActionOnO}, and note that $\psi_{\pi_F^m \cL}$ is just the restriction of $\sigma_r$ to $\hK{U(\pi_F^m \cL)}$; more precisely, we have 
\begin{equation}\label{PsiFactors}\sigma_r \circ f_{m,r} \circ j = \psi_{\pi_F^m \cL}\end{equation}
 where $j$ is the isomorphism from Corollary \ref{U=D}(b). Then
\[\sigma_r \circ f \circ j \circ \beta_{\pi_F^m \cL} = \psi_{\pi_F^m \cL} \circ \beta_{\pi_F^m \cL} = \rho_{X | J_{m+\epsilon v_{\pi_F}(p)}} = \sigma_r \circ \beta_{|J_{m + \epsilon v_{\pi_F}(p)}}\]
by Proposition \ref{SigmaRho}, and therefore $\beta_{|J_{m + \epsilon v_{\pi_F}(p)}} = f \circ j \circ \beta_{\pi_F^m \cL}$ because $\sigma_r$ is injective by Lemma \ref{ActionOnO}(b). This is precisely what is required to apply \cite[Lemma 2.2.7]{EqDCap}, and completes the construction of $f_{m,r} \rtimes 1_J$. It is clear that if $\varpi/|{\pi_F}|^m < r' \leq r$ then $f_{m,r'} \rtimes 1$ is compatible with $f_{m,r} \rtimes 1$ in the sense that $f_{m,r'} \rtimes 1_J$ is the composition of $f_{m,r} \rtimes 1_J$ with $\cD_r(X) \rtimes_{J_{m + 1}} J \to \cD_{r'}(X) \rtimes_{J_{m+1}} J$. We can now define
\[ f_m(X) :\cT_m(X) \to \cS_m(X)\]
to be the colimit of the maps $f_{m,r} \rtimes 1_J$ as $r$ approaches $\varpi/|{\pi_F}|^m$ from above. 

We will next similarly construct the $K$-algebra homomorphism 
\[g_m(X) : \cS_{m + \epsilon v_{\pi_F}(p)}(X) \to \cT_m(X).\]
Let $r$ be a real number such that $\varpi < r |{\pi_F}|^{m + \epsilon v_{\pi_F}(p)} < \varpi/{\pi_F}$, so that the crossed product $\cD_r(X) \rtimes_{J_{m + \epsilon v_{\pi_F}(p) + 1}} J$ is defined as in the proof of Proposition \ref{SheafDnJ}. Note that then $r|{\pi_F}|^m > \varpi / |{\pi_F}|^{\epsilon v_{\pi_F}(p)} = |p|^{\frac{1}{p-1} - \epsilon} \geq 1$, so $r \geq |{\pi_F}|^{-m}$. Hence by Lemma \ref{BanachUnivProp}, there is a natural $K$-Banach algebra homomorphism $g_{m,r} : \cD_r(X) \to \cD_{|{\pi_F}|^{-m}}(X)$ which fixes $\cO(X)$ and sends $\partial\in \cD_r(X)$ to $\partial \in \cD_{|{\pi_F}|^{-m}}(X)$. Now, the diagram
\[ \xymatrix{ \cD_r(X) \ar[rr]^{g_{m,r}} \ar[d]_{\sigma_r} & & \cD_{|{\pi_F}|^{-m}}(X)\ar[d]^{j^{-1}}_{\cong} \\\cB(\cO(X)) & &  \hK{U(\pi_F^m \cL)}\ar[ll]^{\psi_{\pi_F^m\cL}}}\]
is commutative, so that $\sigma_r = \psi_{\pi_F^m\cL} \circ j^{-1} \circ g_{m,r}$. Therefore
\[\psi_{\pi_F^m \cL} \circ j^{-1} \circ g_{m,r} \circ \beta = \sigma_r \circ \beta = \rho_X = \psi_{\pi_F^m \cL} \circ \beta_{\pi_F^m \cL|J_{m + \epsilon v_{\pi_F}(p) + 1}}\]
again by Proposition \ref{SigmaRho} and \cite[Definition 3.2.11]{EqDCap}. The factorisation (\ref{PsiFactors}) together with Lemma \ref{ActionOnO}(b) shows that $\psi_{\pi_F^m \cL}$ is injective, and we deduce that
 \[g_{m,r} \circ \beta = j \circ \beta_{\pi_F^m \cL|J_{m + \epsilon v_{\pi_F}(p) + 1}}.\]
Because the map $g_{m,r}$ is $J$-equivariant, this equation allows us to apply \cite[Lemma 2.2.7]{EqDCap} and deduce that $g_{m,r}$ extends to a $K$-algebra homomorphism
\[ g_{m,r} \rtimes 1_J : \cD_r(X) \underset{J_{m+\epsilon v_{\pi_F}(p) + 1}}{\rtimes}{} J \quad \longrightarrow \quad \cD_{|{\pi_F}|^{-m}}(X) \underset{J_{m + \epsilon v_{\pi_F}(p)}}{\rtimes}{} J. \]
Passing to the limit as $r$ approaches $\varpi / |{\pi_F}|^{m + \epsilon v_{\pi_F}(p)}$ from above, we obtain the required $K$-algebra homomorphism
\[g_m(X) : \cS_{m + \epsilon v_{\pi_F}(p)}(X) \to \cT_m(X).\]
Finally, we leave to the reader the straightforward verification that the diagram in the statement of the Theorem is commutative, and that the morphisms $f_m(X)$ and $g_m(X)$ are functorial in $X$.
\end{proof}

\begin{cor}\label{wDJ} For each $n \geq 0$ there is an isomorphism 
\[ \w\cD(-,J) \quad \stackrel{\cong}{\longrightarrow} \quad \lim\limits_{\stackrel{\longleftarrow}{m \geq n}} \sD_m \underset{J_{m+1}}{\rtimes}{} J\]
of presheaves of $K$-algebras on $\bD_n / J$.
\end{cor}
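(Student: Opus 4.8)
The plan is to obtain Corollary \ref{wDJ} as a formal consequence of the tower machinery of \S\ref{QcohTower}, fed with the comparison diagram of Theorem \ref{TmSmComp}. Fix $n\geq 0$. For every $J$-stable affinoid subdomain $X$ of $\bD$ lying in $\bD_n$ one has $m_X\leq n$ in the notation of Definition \ref{mX} (since $\bD_n/J\subseteq\bD_m/J$ for all $m\geq n$), so the $K$-algebras $\cT_m(X)$ and $\cS_m(X)$ of Definition \ref{DefnOfSandT} are defined for all $m\geq n$; by Theorem \ref{TmSmComp} they are functorial in $X$, so $\cT_m$ and $\cS_m$ restrict to presheaves of $K$-algebras on $\bD_n/J$ for each $m\geq n$. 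Reindexing, I would form the two towers of presheaves of $K$-algebras on $\bD_n/J$ given by $T_\bullet:=(\cT_{m+n})_{m\geq 0}$ and $S_\bullet:=(\cS_{m+n})_{m\geq 0}$, the connecting maps of $S_\bullet$ being those induced by the natural ring homomorphisms $\sD_{m+1}(X)\to\sD_m(X)$ together with the inclusions $J_{m+2}\leq J_{m+1}$ (equivalently, the maps implicit in the statement of the corollary), and those of $T_\bullet$ the restriction maps appearing in Theorem \ref{TmSmComp}. Take the shift parameter to be $c:=\epsilon\, v_{\pi_F}(p)\geq 0$.

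Read at the indices $m+n$ and $m+n+c$, and with the $\cD\rtimes J$ vertex discarded, Theorem \ref{TmSmComp} supplies precisely the data required by Proposition \ref{TStowers}: a morphism of towers $f_\bullet:T_\bullet\to S_\bullet$ with $(f_\bullet)_m=f_{m+n}$, a morphism of towers $g_\bullet:S_\bullet[c]\to T_\bullet$ with $(g_\bullet)_m=g_{m+n}$, and the commutativity of the square relating $f_\bullet[c]$, $\iota^c_{T}$, $f_\bullet$, $\iota^c_{S}$ and $g_\bullet$ — the two triangles and outer square of that hypothesis being subdiagrams of the commutative diagram produced by Theorem \ref{TmSmComp} once one matches up which of its arrows serve as $\iota^c_{T}$, $\iota^c_{S}$, and so on. Proposition \ref{TStowers}(a) then gives, for each $X\in\bD_n/J$, that $\Gamma(f_\bullet)(X):\Gamma(T_\bullet)(X)\to\Gamma(S_\bullet)(X)$ is an isomorphism of $K$-algebras.

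It remains to identify the two sides. By Definition \ref{DefnOfSandT}(b) we have $\Gamma(S_\bullet)(X)=\invlim[m\geq n]\cS_m(X)=\invlim[m\geq n]\big(\sD_m(X)\underset{J_{m+1}}{\rtimes}{} J\big)$, which is the presheaf of the statement evaluated at $X$; and Proposition \ref{TmPres} identifies $(\cT_m(X))_{m\geq m_X}$ as a Fr\'echet--Stein presentation of $\w\cD(X,J)$, so $\w\cD(X,J)=\invlim[m\geq m_X]\cT_m(X)$, which by cofinality of $\{m\geq n\}$ in $\{m\geq m_X\}$ (using $m_X\leq n$) agrees with $\invlim[m\geq n]\cT_m(X)=\Gamma(T_\bullet)(X)$, compatibly with restriction in $X$. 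Composing these identifications with $\Gamma(f_\bullet)$ produces for each $X$ a $K$-algebra isomorphism $\w\cD(X,J)\congs\invlim[m\geq n]\sD_m(X)\underset{J_{m+1}}{\rtimes}{} J$, and naturality in $X$ — each $f_m$ being a morphism of presheaves — assembles these into the asserted isomorphism of presheaves on $\bD_n/J$. The only genuine work is the verification in the second paragraph that Theorem \ref{TmSmComp} really does encode a pair of tower morphisms fitting into the commutative square of Proposition \ref{TStowers}; this is index bookkeeping rather than a real obstacle, since Theorem \ref{TmSmComp} was formulated for exactly this purpose, the one point to watch being the cofinality/reindexing step, which guarantees both that the target tower is defined at all (precisely the inequality $m_X\leq n$) and that truncating the presentation tower at $m=n$ changes neither the inverse limit nor $\w\cD(X,J)$.
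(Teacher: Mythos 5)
Your proof is correct and is precisely what the paper's one‑line proof is compressing: the paper cites exactly Proposition~\ref{TmPres}, Theorem~\ref{TmSmComp}, and Proposition~\ref{TStowers}(a), and you have correctly unwound how these three ingredients assemble, including the shift $c=\epsilon v_{\pi_F}(p)$, the reindexing of towers so that they start at $n$, and the cofinality step $m_X\leq n$ needed to match $\invlim_{m\geq n}\cT_m(X)$ with $\w\cD(X,J)$. This is the same route, just written out in full.
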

\begin{proof} Use Proposition \ref{TmPres}, Theorem \ref{TmSmComp} and Proposition \ref{TStowers}(a).
\end{proof}

For future use, we establish here the following flatness result.

\begin{thm}\label{SflatOverT} $\cS_n(X)$ is a flat right $\cT_n(X)$-module whenever $X \in \bD_n/J$. 
\end{thm}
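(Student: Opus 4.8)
The plan is to reduce the flatness of $\cS_n(X)$ over $\cT_n(X)$ to the flatness of $\sD_n(X)$ over $\cD_{|{\pi_F}|^{-n}}(X)$, which is already at our disposal. First I would recall that $\cT_n(X) = \cD_{|{\pi_F}|^{-n}}(X) \rtimes_{J_{n+\epsilon v_{\pi_F}(p)}} J$ and $\cS_n(X) = \sD_n(X) \rtimes_{J_{n+1}} J$ are both crossed products of a base ring with a (pro-finite) group, namely with $J/J_{n+\epsilon v_{\pi_F}(p)}$ and $J/J_{n+1}$ respectively, by Lemma \ref{RingSGN}(b). The map $f_n(X) : \cT_n(X) \to \cS_n(X)$ from Theorem \ref{TmSmComp} is compatible with the crossed-product structures: it sends the copy of $\cD_{|{\pi_F}|^{-n}}(X)$ into $\sD_n(X)$ via the natural (injective, Lemma \ref{TheMapJ}) map $\cD_{|{\pi_F}|^{-n}}(X) \hookrightarrow \cD^\dag_{\varpi/|{\pi_F}|^n}(X) = \sD_n(X)$, and it sends the group element $\gamma(g)$ to $\gamma(g)$ for $g \in J$.

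The key structural point is that a crossed product $R \ast G$ is, as a left and right $R$-module, free on the group elements (a basis for $R \ast G$ over $R$ is any transversal of $G$). So $\cS_n(X) = \sD_n(X) \rtimes_{J_{n+1}} J$ is free as a right $\sD_n(X)$-module on $J/J_{n+1}$, and likewise $\cT_n(X)$ is free as a right $\cD_{|{\pi_F}|^{-n}}(X)$-module on $J/J_{n+\epsilon v_{\pi_F}(p)}$. Since $J_{n+1} \leq J_{n+\epsilon v_{\pi_F}(p)}$ when $\epsilon v_{\pi_F}(p) \geq 1$ — which holds because $v_{\pi_F}(p) \geq 1$ and $\epsilon \geq 1$ — there is a natural surjection $J/J_{n+1} \twoheadrightarrow J/J_{n+\epsilon v_{\pi_F}(p)}$ and a compatible decomposition of $\cS_n(X)$ as a right $\cT_n(X)$-module. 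Concretely I would argue that $\cS_n(X) \cong \sD_n(X) \otimes_{\cD_{|{\pi_F}|^{-n}}(X)} \cT_n(X)$ as right $\cT_n(X)$-modules: the right-hand side is, by freeness of $\cT_n(X)$ over $\cD_{|{\pi_F}|^{-n}}(X)$, isomorphic to a direct sum of copies of $\sD_n(X)$ indexed by $J/J_{n+\epsilon v_{\pi_F}(p)}$, and one checks this matches the crossed-product description of $\cS_n(X)$ using that $\sD_n(X)$ is free over $\cD_{|{\pi_F}|^{-n}}(X)$... wait, that last freeness is false, so instead I would phrase it as: $\cS_n(X) \cong \sD_n(X) \otimes_{\cD_{|{\pi_F}|^{-n}}(X)} \cT_n(X)$ as right $\cT_n(X)$-modules, where the isomorphism sends $Q \otimes T$ to (the image of) $Q \cdot T$; bijectivity follows by comparing the two sides as right $\cD_{|{\pi_F}|^{-n}}(X)$-modules, where both become $\bigoplus_{J/J_{n+\epsilon v_{\pi_F}(p)}} \sD_n(X)$ after choosing transversals compatibly. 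Granting this base-change identity, flatness follows: by Theorem \ref{FlatThm} (or more precisely Lemma \ref{Flat} together with the inclusion $\cD_{|{\pi_F}|^{-n}}(X) \hookrightarrow \cD^\dag_{\varpi/|{\pi_F}|^n}(X)$, noting $|{\pi_F}|^{-n} > \varpi/|{\pi_F}|^n$ since $\varpi < 1$), $\sD_n(X) = \cD^\dag_{\varpi/|{\pi_F}|^n}(X)$ is flat as a right $\cD_{|{\pi_F}|^{-n}}(X)$-module, and base change preserves flatness, so $\cS_n(X) \cong \sD_n(X) \otimes_{\cD_{|{\pi_F}|^{-n}}(X)} \cT_n(X)$ is flat as a right $\cT_n(X)$-module.

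The step I expect to be the main obstacle is verifying carefully that the comparison map $f_n(X) : \cT_n(X) \to \cS_n(X)$ really does induce the claimed right $\cT_n(X)$-module isomorphism $\cS_n(X) \cong \sD_n(X) \otimes_{\cD_{|{\pi_F}|^{-n}}(X)} \cT_n(X)$ — i.e. that the crossed-product structure interacts with the base-ring inclusion exactly as in the free case. This requires unwinding the definitions in \cite[\S2.2]{EqDCap} of crossed products and of the induced map $f \rtimes 1$, checking that a transversal of $J$ over $J_{n+1}$ can be chosen refining a transversal over $J_{n+\epsilon v_{\pi_F}(p)}$, and confirming that the trivialisation cocycles (the $\beta(g)$'s) are respected under $f_n(X)$ — but this last compatibility is exactly what was established in the proof of Theorem \ref{TmSmComp}. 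Once the module identification is in place, invoking flatness of the connecting map $\cD_{|{\pi_F}|^{-n}}(X) \to \cD^\dag_{\varpi/|{\pi_F}|^n}(X)$ from Theorem \ref{FlatThm} and stability of flatness under base change finishes the argument.
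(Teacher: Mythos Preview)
There is a genuine gap. You reversed the inclusion of congruence subgroups: writing $c := \epsilon v_{\pi_F}(p) \geq 1$, the groups $I_m$ (and hence $J_m$) \emph{decrease} with $m$, so $J_{n+c} \leq J_{n+1}$, not the other way round. Consequently the surjection goes $J/J_{n+c} \twoheadrightarrow J/J_{n+1}$, and as a right $\sD_n(X)$-module one has $\cS_n(X)$ free of rank $|J/J_{n+1}|$ whereas $\sD_n(X) \otimes_{\cD_{|{\pi_F}|^{-n}}(X)} \cT_n(X)$ is free of rank $|J/J_{n+c}|$. These ranks differ whenever $c>1$, so your key isomorphism $\cS_n(X) \cong \sD_n(X) \otimes_{\cD_{|{\pi_F}|^{-n}}(X)} \cT_n(X)$ fails in general.

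What your base-change argument actually produces is the intermediate crossed product $C := \sD_n(X) \rtimes_{J_{n+c}} J$, and indeed this is flat over $\cT_n(X)$ for exactly the reason you give (flatness of $\sD_n(X)$ over $\cD_{|{\pi_F}|^{-n}}(X)$ from Theorem \ref{flatcotangent} and Lemma \ref{flatcolim1}). The paper follows precisely this route, but then has to supply the missing step: $\cS_n(X)$ is flat as a right $C$-module. This is not formal --- one is passing from a crossed product with $J/J_{n+c}$ to a quotient crossed product with $J/J_{n+1}$ --- and the paper handles it by invoking \cite[Lemma 2.2]{SchmidtBB}, using that $C$ is free (hence flat) as a right $\sD_n(X)$-module. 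So your plan is half of the paper's proof; you still need an argument for flatness of $\cS_n(X)$ over $C$.
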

\begin{proof} Write $c := \epsilon v_{\pi_F}(p)$. Form the crossed product $C := \cD^\dag_{\varpi / |{\pi_F}|^n}(X) \underset{J_{n+c}}{\rtimes}{} J$, and consider the following commutative diagram:
\[\xymatrix{ \cT_n(X)  = \cD_{|{\pi_F}|^{-n}}(X) \underset{J_{n + c}}{\rtimes}{} J \ar[d]\ar[rr] && \cS_n(X) = \cD^\dag_{\varpi / |{\pi_F}|^n}(X) \underset{J_{n+1}}{\rtimes}{} J \\ 
C = \cD^\dag_{\varpi / |{\pi_F}|^n}(X) \underset{J_{n+c}}{\rtimes}{J} \ar[rru] && D := \cD^\dag_{\varpi/|{\pi_F}|^n}(X). \ar[ll]\ar[u]}\]
Note that $C$ is a flat right $D$-module being a crossed product of $D$ with the group $J / J_{n+c}$, which allows us to deduce from the right-module version of \cite[Lemma 2.2]{SchmidtBB} that $\cS_n(X)$ is a flat right $C$-module. On the other hand, we deduce from Theorem \ref{flatcotangent} and Lemma \ref{flatcolim1} that $C$ is a flat right $\cT_n(X)$-module. Hence $\cS_n(X)$ is a flat right $\cT_n(X)$-module as claimed. \end{proof}

\subsection{Coadmissibility of \ts{j_\ast \sL} on the local Drinfeld space} \label{LocalCoad}

We begin with the following elementary result.

\begin{lem}\label{DfpSfp} Let $D \hookrightarrow S$ be an extension of rings such that $S$ is finitely generated and projective as a left $D$-module. Then every $S$-module $M$ which is finitely presented as a $D$-module is also finitely presented as an $S$-module.
\end{lem}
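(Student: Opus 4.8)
The plan is to exploit the fact that $S$ is finitely generated projective over $D$ to lift a finite $D$-presentation of $M$ to a finite $S$-presentation. First I would choose a finite $D$-presentation $D^a \xrightarrow{\phi} D^b \xrightarrow{\epsilon} M \to 0$ of $M$ as a left $D$-module, so that $M = \coker(\phi)$ with $\phi$ given by a $b \times a$ matrix over $D$. Because $D \hookrightarrow S$, applying $S \otimes_D -$ gives an exact sequence $S^a \to S^b \to S \otimes_D M \to 0$ of left $S$-modules. The key point is that the multiplication map $S \otimes_D M \to M$ is an isomorphism of $S$-modules: surjectivity is clear since $M$ is already generated over $D$ by $\epsilon(e_1),\dots,\epsilon(e_b)$, and injectivity holds because $M$ is an $S$-module, so the canonical map $M \to S \otimes_D M$, $m \mapsto 1 \otimes m$, is a two-sided inverse (here one uses that $M$ is a genuine $S$-module, not merely a $D$-module; this is where the hypothesis that $M$ is an $S$-module enters). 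Hence $S^a \to S^b \to M \to 0$ is exact, exhibiting $M$ as a finitely \emph{generated} $S$-module with a finitely generated module of relations contained in $\im(S^a \to S^b)$.

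This already gives finite generation of $M$ over $S$, but to get finite \emph{presentation} I still need the kernel $N := \ker(S^b \to M)$ to be finitely generated over $S$; a priori I only know that $N$ is the image of $S^a$, i.e. that $N$ is a quotient of $S^a$, which is enough. More precisely: the exact sequence $S^a \to S^b \to M \to 0$ \emph{is} a finite presentation of $M$ as an $S$-module, since the composite $S^a \to S^b$ has image exactly $N$ (by right-exactness of $S \otimes_D -$, the image of $S^a \to S^b$ is $S \otimes_D \im(\phi) = S \otimes_D \ker(\epsilon)$, which maps onto $\ker(S^b \to S\otimes_D M) = \ker(S^b \to M)$ under the identification $S \otimes_D M \cong M$). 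So in fact the only thing to verify carefully is exactness of the three-term sequence at $S^b$ after the identification $S \otimes_D M \cong M$, and this is immediate from the right-exactness of $S \otimes_D -$ applied to $D^a \to D^b \to M \to 0$.

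Thus the projectivity and finite generation of $S$ over $D$ are, somewhat surprisingly, not even needed for the argument as I have sketched it — the statement as phrased follows from $D \hookrightarrow S$ alone together with right-exactness of base change and the fact that $M$ is an $S$-module. (The hypotheses are presumably there because they are what holds in the intended application, or because in a non-Noetherian setting one wants the relation module to be finitely generated and projectivity makes $S \otimes_D -$ well-behaved on kernels; but for the bare assertion "finitely presented $D$-module $\Rightarrow$ finitely presented $S$-module" the projectivity is harmless surplus.) If one wants to use the hypotheses honestly, the alternative route is: pick a finite $D$-presentation, note $S \otimes_D M \cong M$ as above, and then observe that since $S$ is finitely generated projective over $D$, the functor $S \otimes_D -$ preserves finite presentations, so $S \otimes_D M$ is finitely presented over $S$.

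The main obstacle — really the only subtlety — is checking that the canonical map $S \otimes_D M \to M$ is an isomorphism, and in particular its injectivity, which relies essentially on $M$ carrying an $S$-module structure compatible with its $D$-module structure: the inverse sends $m$ to $1 \otimes m$, and one must check $s \otimes m \mapsto sm \mapsto 1 \otimes sm = s \otimes m$, using that $s \otimes m = (s \cdot 1_S) \otimes m = 1_S \otimes (sm)$ in $S \otimes_D M$ when we are allowed to move $s$ across the tensor — which is valid because $s \in S$ and the tensor is over $D \subseteq S$... actually here one must be slightly careful, since moving $s \in S$ across a tensor over $D$ is not automatic; the correct statement is that the composite $M \to S\otimes_D M \to M$ is the identity and $S \otimes_D M \to M \to S \otimes_D M$ is the identity on the image of $M$, and one concludes by the $D$-generation of $M$. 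I would write this out as the one genuinely load-bearing lemma and let everything else follow formally.
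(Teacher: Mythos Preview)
Your argument has a genuine gap: the claimed isomorphism $S \otimes_D M \cong M$ is false. Take $D = k$ a field, $S = k[x]/(x^2)$ (free of rank $2$ over $D$), and $M = k$ with $x$ acting by zero. Then $S \otimes_D M \cong S$ has $k$-dimension $2$, while $M$ has $k$-dimension $1$. Your map $\iota: m \mapsto 1 \otimes m$ is a one-sided inverse to $\mu$, but $\iota \circ \mu$ sends $s \otimes m$ to $1 \otimes sm$, and in $S \otimes_D M$ one cannot slide $s \in S$ across a tensor taken over $D$. You noticed this at the end but then tried to patch it by saying $\iota \circ \mu$ is the identity on the image of $\iota$ and ``one concludes by $D$-generation of $M$'': this does not work, because $\iota$ is not $S$-linear, so knowing $\iota \circ \mu$ on an $S$-generating set does not determine it.

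Your remark that the hypotheses on $S$ are ``surplus'' should have been a red flag. The lemma fails without finite generation of $S$ over $D$: with $D = k$ and $S = k[x_1, x_2, \ldots]$, the module $M = k$ (all $x_i$ acting by zero) is finitely presented over $D$ but not over $S$, since $(x_1, x_2, \ldots)$ is not finitely generated.

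The paper's argument uses both hypotheses. One takes $D$-generators $v_1,\ldots,v_n$ of $M$ with $\ker(\theta: D^n \to M)$ finitely generated, and lets $\psi: S^n \to M$ be the corresponding $S$-linear surjection. Since $S$ is projective over $D$, Schanuel's Lemma (comparing the two $D$-module presentations $0 \to \ker\theta \to D^n \to M \to 0$ and $0 \to \ker\psi \to S^n \to M \to 0$) gives $S^n \oplus \ker\theta \cong D^n \oplus \ker\psi$ as $D$-modules. Finite generation of $S$ over $D$ then forces $\ker\psi$ to be finitely generated over $D$, hence over $S$.
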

\begin{proof} We can find a finite generating set  $\{v_1,\ldots,v_n\}$ for $M$ as a $D$-module such that if $\theta : D^n \twoheadrightarrow M$ denotes the corresponding surjection then $\ker \theta$ is also finitely generated. Clearly $\{v_1,\ldots,v_n\}$ also generates $M$ as an $S$-module; let $\psi : S^n \twoheadrightarrow M$ be the corresponding surjection. Since $S$ is a projective $D$-module, Schanuel's Lemma implies that $S^n \oplus \ker \theta \cong D^n \oplus \ker \psi$ as $D$-modules. Since $S$ is finitely generated as a $D$-module, $\ker \psi$ is finitely generated as a $D$-module being a homomorphic image of $S^n \oplus \ker \theta$. Hence $\ker \psi$ is also finitely generated as an $S$-module.
\end{proof}

\textbf{We now return to the setting of $\S \ref{CompSect}$}.  We assume $[\sL] \in \PicCon^I(\Upsilon)_{\tors}$ satisfies Definition \ref{UpperHPhyp2}, and fix a closed subgroup $J$ of the Iwahori group $I$.

\begin{cor}\label{LnSfp} Let $n \geq v_{\pi_F}(e)$ and let $X \in \bD_n / J$. Then $\sL_n(X)$ is a finitely presented $\cS_n(X)$-module.
\end{cor}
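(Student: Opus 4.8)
The plan is to exhibit $\sL_n(X)$ as a module which is already finitely presented over the smaller ring $\sD_n(X)$, and then to transfer this finiteness up to $\cS_n(X)$ by invoking Lemma \ref{DfpSfp}, using that $\cS_n(X)$ is a finite free ring extension of $\sD_n(X)$.

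First I would unwind the hypothesis: $X \in \bD_n/J$ means $X$ lies in $\bD_n$ and is $J$-stable. Applying Theorem \ref{AlgGensExist} to the affinoid subdomain $X$ of $\bD$ produces an algebraic generator $\dot z \in \sL_n(X)$ with some associated rational function $u$, and then Corollary \ref{MnPres} yields the presentation $\sL_n(X) \cong \sD_n(X)/\sD_n(X)\,R_{\cS(u)}(u,d)$. In particular $\sL_n(X)$ is a finitely presented $\sD_n(X)$-module. Next, because $X$ is $J$-stable, $\sL_n$ restricts to a $J$-equivariant $\sD_n$-module on $\bD_n/J$, so $\sL_n(X)$ is a $\sD_n(X)\rtimes J$-module; since $n \geq v_{\pi_F}(e)$, Theorem \ref{DnJnJmodule} applied with $H=J$ shows that this action factors through $\sD_n(X)\rtimes_{J_{n+1}}J = \cS_n(X)$, so $\sL_n(X)$ is genuinely a $\cS_n(X)$-module.

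Finally I would note that $J_{n+1} = J\cap I_{n+1}$ has finite index in $J$, as $J/J_{n+1}$ embeds into the finite group $I/I_{n+1}$. Hence by Proposition \ref{SheafDnJ} (or Lemma \ref{RingSGN}(b)) the ring $\cS_n(X)$ is a crossed product of $\sD_n(X)$ with the finite group $J/J_{n+1}$; in particular it is free, and a fortiori finitely generated and projective, as a left $\sD_n(X)$-module, and $\sD_n(X)$ sits inside it as a subring via the identity element of the crossed product. Lemma \ref{DfpSfp} applied to the ring extension $\sD_n(X)\hookrightarrow\cS_n(X)$ then upgrades the finite presentation of $\sL_n(X)$ as a $\sD_n(X)$-module to a finite presentation as a $\cS_n(X)$-module, which is the assertion.

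This statement is essentially a formal assembly of results already established, so there is no deep obstacle; the only points that require care are checking that the ambient hypothesis $n \geq v_{\pi_F}(e)$ is exactly what is needed to make $\sL_n(X)$ into a $\cS_n(X)$-module (via Theorem \ref{DnJnJmodule}), and recording that $J/J_{n+1}$ is finite so that $\cS_n(X)$ is indeed a \emph{finite} free extension of $\sD_n(X)$, which is the hypothesis of Lemma \ref{DfpSfp}.
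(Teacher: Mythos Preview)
Your proof is correct and follows essentially the same approach as the paper: both obtain a finite presentation over $\sD_n(X)$ via Theorem \ref{AlgGensExist} and Corollary \ref{MnPres}, use Theorem \ref{DnJnJmodule} to extend the action to $\cS_n(X)$, observe that $\cS_n(X)$ is a crossed product of $\sD_n(X)$ with the finite group $J/J_{n+1}$ and hence finite free as a left $\sD_n(X)$-module, and conclude with Lemma \ref{DfpSfp}. Your version is slightly more explicit in justifying why $J/J_{n+1}$ is finite, but the argument is otherwise identical.
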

\begin{proof} We know that $\sL_n(X)$ is a finitely presented $\sD_n(X)$-module, by Theorem \ref{AlgGensExist} and Corollary \ref{MnPres}. The $\sD_n(X)$-action on $\sL_n(X)$ extends to an $\cS_n(X)$-action by Theorem \ref{DnJnJmodule}. By construction, $\cS_n(X)$ is a crossed product of $\sD_n(X)$ with the finite group $J / J_{n+1}$ and is therefore finitely generated and free as a left $\sD_n(X)$-module. Now apply Lemma \ref{DfpSfp}.
\end{proof}

The following consequence will be useful later on.

\begin{prop}\label{LnSnCoh} Let $n \geq v_{\pi_F}(e)$ and let $X \in \bD_n / J$. Then the map
\[ \cS_n(X) \underset{\cS_n(\bD)}{\otimes}{} \sL_n(\bD) \longrightarrow \sL_n(X)\]
induced by the $\cS_n(X)$-action on $\sL_n(X)$ is an isomorphism.
\end{prop}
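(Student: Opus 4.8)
The plan is to follow the same pattern used in Lemma \ref{GenByGS} and Lemma \ref{RestrictLn}: reduce the statement about $\cS_n$-modules to the already-established quasi-coherence of the $\sD_n$-modules $\sL_n$, using the fact that $\cS_n$ is a crossed product of $\sD_n$ with a finite group, hence finite and free as a one-sided $\sD_n$-module. First I would recall from Proposition \ref{SheafDnJ} (applied with $H = J$) that $\cS_n(X) = \sD_n(X) \rtimes^\beta_{J_{n+1}} J$ is a free left $\sD_n(X)$-module on a set of coset representatives for $J/J_{n+1}$; the same holds for $\cS_n(\bD)$ with the same index set, and the restriction map $\cS_n(\bD) \to \cS_n(X)$ is compatible with these bases. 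Consequently $\cS_n(X) \cong \sD_n(X) \otimes_{\sD_n(\bD)} \cS_n(\bD)$ as $(\sD_n(X), \cS_n(\bD))$-bimodules.

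Next I would run the tensor-associativity argument. Using the above identification together with the associativity of tensor product, we get
\[ \cS_n(X) \underset{\cS_n(\bD)}{\otimes}{} \sL_n(\bD) \cong \left(\sD_n(X) \underset{\sD_n(\bD)}{\otimes}{} \cS_n(\bD)\right) \underset{\cS_n(\bD)}{\otimes}{} \sL_n(\bD) \cong \sD_n(X) \underset{\sD_n(\bD)}{\otimes}{} \sL_n(\bD).\]
Under this chain of isomorphisms, the map in the statement becomes precisely the action map $\sD_n(X) \otimes_{\sD_n(\bD)} \sL_n(\bD) \to \sL_n(X)$ from Lemma \ref{GenByGS}(a) (with $Y = X$, $X$ replaced by $\bD$), which is an isomorphism there. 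One should check that the identifications are $\cS_n(X)$-linear and that the square relating the two action maps commutes; this is a routine diagram chase using that the action of $\cS_n(\bD)$ on $\sL_n(\bD)$ restricts to the $\sD_n(\bD)$-action (Corollary \ref{LnDn}) and that the action of $J$ on $\sL_n(\bD)$ factors through $\beta$ on $J_{n+1}$ by Theorem \ref{DnJnJmodule} (so the crossed-product relations are respected).

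The only mild subtlety — and the step I expect to require the most care — is verifying that the isomorphism $\cS_n(X) \cong \sD_n(X) \otimes_{\sD_n(\bD)} \cS_n(\bD)$ really is an isomorphism of $(\sD_n(X),\cS_n(\bD))$-bimodules compatible with the crossed-product structure, rather than merely of left $\sD_n(X)$-modules; this comes down to the fact that the restriction map $\cD^\dag_{\varpi/|\pi_F|^n}(\bD) \to \cD^\dag_{\varpi/|\pi_F|^n}(X)$ intertwines the $J$-actions (Proposition \ref{NormOfRhor} and its $\dag$-version) and sends $\beta(g)$ to $\beta(g)$ for $g \in J_{n+1}$ (immediate from Definition \ref{Beta}, since $\beta(g)$ is defined by a universal formula in $\partial$). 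Granting this, the composite isomorphism is automatically $\cS_n(X)$-linear and the result follows. I would close by remarking that, just as in Lemma \ref{GenByGS}, this shows the tower $\cS_n \mapsto \sL_n$ is quasi-coherent over the tower $\cS_\bullet$, which is exactly what is needed for the coadmissibility arguments of the next subsection.
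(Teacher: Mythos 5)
Your proof is correct and identifies the same key isomorphism that drives the paper's argument, namely that the crossed-product structure gives $\cS_n(X) \cong \sD_n(X) \otimes_{\sD_n(\bD)} \cS_n(\bD)$. The paper packages this differently: it observes that $\eta_S$ is an isomorphism, and then extends to $\eta_M$ for finitely presented $M$ by right exactness and the Five Lemma, which is why it needs to invoke Corollary \ref{LnSfp} to know $\sL_n(\bD)$ is finitely presented over $\cS_n(\bD)$. Your route is marginally cleaner: by promoting the isomorphism to one of $(\sD_n(X), \cS_n(\bD))$-bimodules, tensor associativity gives $\cS_n(X) \otimes_{\cS_n(\bD)} M \cong \sD_n(X) \otimes_{\sD_n(\bD)} M$ for \emph{all} left $\cS_n(\bD)$-modules $M$, so you can drop the finite presentation hypothesis entirely and plug in Lemma \ref{GenByGS}(a) directly. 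The one point to be precise about — which you flag yourself — is that the restriction map $\sD_n(\bD) \to \sD_n(X)$ is $J$-equivariant and takes $\beta(g)$ to $\beta(g)$, so that it sends the canonical $\sD_n(\bD)$-basis of $\cS_n(\bD)$ (a transversal of $J/J_{n+1}$) to the corresponding $\sD_n(X)$-basis of $\cS_n(X)$; this is implicit in the construction of the presheaf $\cS_n$ in Proposition \ref{SheafDnJ} and is exactly what makes the restriction $\cS_n(\bD) \to \cS_n(X)$ a ring map extending $\sD_n(\bD) \to \sD_n(X)$.
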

\begin{proof} We abbreviate $D := \sD_n(\bD), D' := \sD_n(X), S := \cS_n(\bD)$ and $S' := \cS_n(X)$; these rings form a commutative square
\[ \xymatrix{ D \ar[d]\ar[r] & D' \ar[d] \\ S \ar[r] & S'.}\] 
There is a natural transformation $\eta : D' \otimes_D - \to S' \otimes_S -$ between two functors from $S$-modules to $D'$-modules; since $S$ (respectively, $S'$) is a crossed product of $D$ (respectively, $D'$) with the same finite group $J / J_{n+1}$, we see that the map $\eta_S$ is an isomorphism. Since both functors are right exact, we conclude using the Five Lemma that $\eta_M$ is an isomorphism for every finitely presented $S$-module $M$.

Now let $M := \sL_n(\bD)$ and $M' := \sL_n(X)$ and consider the commutative triangle
\[ \xymatrix{ D' \underset{D}{\otimes}{} M \ar[rr]^{\eta_M}\ar[dr] && S' \underset{S}{\otimes}{} M \ar[dl] \\ & M'. & }\]
Since $M$ is a finitely presented $S$-module by Corollary \ref{LnSfp}, the horizontal arrow $\eta_M$ is an isomorphism by the above. Because the diagonal arrow on the left is an isomorphism by Lemma \ref{GenByGS}(a), we conclude that the diagonal arrow on the right is also an isomorphism. \end{proof}
Recall from Definition \ref{DefOfN} that $N = \begin{pmatrix} 1 & \cO_F \\ 0 & 1 \end{pmatrix}$.
\begin{prop}\label{LisQcoh} Let $X$ be a $J$-stable affinoid subdomain of $\bD$. Suppose that $d \mid (q+1)$ and $N_{m_0+1} \leq J$ for some $m_0 \geq \max\{m_X, v_{\pi_F}(e)\}$. Then the canonical action map
\[\cS_n(X) \underset{\cS_{n+1}(X)}{\otimes}{} \sL_{n+1}(X) \longrightarrow \sL_n(X)\]
is an isomorphism for all $n \geq m_0$.
\end{prop}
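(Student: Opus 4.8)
The plan is to reduce the statement over a general $J$-stable affinoid $X$ to the already-established global statement on $\bD$. First I would invoke Proposition~\ref{LnSnCoh}, applied at both levels $n$ and $n+1$, to replace $\sL_n(X)$ by $\cS_n(X) \otimes_{\cS_n(\bD)} \sL_n(\bD)$ and $\sL_{n+1}(X)$ by $\cS_{n+1}(X) \otimes_{\cS_{n+1}(\bD)} \sL_{n+1}(\bD)$; this is legitimate because $X \in \bD_n/J \subseteq \bD_{m_0}/J$ and $n \geq m_0 \geq \max\{m_X, v_{\pi_F}(e)\}$, so the hypotheses $n \geq v_{\pi_F}(e)$ needed there are met. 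After these substitutions, the source of the action map becomes
\[ \cS_n(X) \underset{\cS_{n+1}(X)}{\otimes}{} \left( \cS_{n+1}(X) \underset{\cS_{n+1}(\bD)}{\otimes}{} \sL_{n+1}(\bD)\right) \cong \cS_n(X) \underset{\cS_{n+1}(\bD)}{\otimes}{} \sL_{n+1}(\bD),\]
and associativity of tensor product together with the commuting square of rings $\cS_{n+1}(\bD) \to \cS_n(\bD) \to \cS_n(X)$ lets me rewrite this as $\cS_n(X) \otimes_{\cS_n(\bD)} \left( \cS_n(\bD) \otimes_{\cS_{n+1}(\bD)} \sL_{n+1}(\bD)\right)$.

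The second step is to identify $\cS_n(\bD) \otimes_{\cS_{n+1}(\bD)} \sL_{n+1}(\bD)$ with $\sL_n(\bD)$. Unwinding Definition~\ref{DefnOfSandT}(b), $\cS_n(\bD) = \sD_n(\bD) \rtimes_{J_{n+1}} J$ and similarly at level $n+1$, so the tensor product $\cS_n(\bD) \otimes_{\cS_{n+1}(\bD)} \sL_{n+1}(\bD)$ is exactly the left-hand side of the action map appearing in Corollary~\ref{JnGenBySn} (taken over $X = \bD$). The hypotheses of Corollary~\ref{JnGenBySn} are $n \geq v_{\pi_F}(e)$, $N_{n+1} \leq J_{n+1}$ and $d \mid (q+1)$. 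The first and third hold by assumption; for the second, I need $N_{n+1} \leq J_{n+1} = J \cap I_{n+1}$, which follows from $N_{n+1} \leq J$ (a consequence of $N_{m_0+1} \leq J$ and $n \geq m_0$, so $N_{n+1} \leq N_{m_0+1} \leq J$) together with the obvious $N_{n+1} \leq I_{n+1}$. Hence Corollary~\ref{JnGenBySn} gives a $\cS_n(\bD)$-linear isomorphism $\cS_n(\bD) \otimes_{\cS_{n+1}(\bD)} \sL_{n+1}(\bD) \stackrel{\cong}{\longrightarrow} \sL_n(\bD)$.

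The final step is to tensor this isomorphism up along $\cS_n(\bD) \to \cS_n(X)$ and check compatibility with the action maps. Applying $\cS_n(X) \otimes_{\cS_n(\bD)} -$ yields an isomorphism $\cS_n(X) \otimes_{\cS_n(\bD)} \left( \cS_n(\bD) \otimes_{\cS_{n+1}(\bD)} \sL_{n+1}(\bD)\right) \stackrel{\cong}{\longrightarrow} \cS_n(X) \otimes_{\cS_n(\bD)} \sL_n(\bD)$, and by Proposition~\ref{LnSnCoh} again the target is isomorphic to $\sL_n(X)$. I then need to verify that the composite of all these identifications is precisely the canonical action map in the statement; this is a diagram chase, keeping track of where a simple tensor $Q \otimes v$ (with $Q \in \cS_n(X)$, $v \in \sL_{n+1}(X)$) goes — it maps to $Q \cdot \tau(v)$ under the action map, and one checks that each of the intermediate isomorphisms is compatible with this. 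The main obstacle I anticipate is not any deep content but exactly this bookkeeping: making sure the module structures used in the various tensor products (restriction of scalars along $\cS_{n+1}(\bD) \to \cS_n(\bD)$, along $\cS_{n+1}(X) \to \cS_{n+1}(\bD)$, etc.) are consistently matched so that the associativity isomorphisms and Proposition~\ref{LnSnCoh} genuinely splice together into the stated map. Everything else is a direct citation of Corollary~\ref{JnGenBySn}, Proposition~\ref{LnSnCoh} and standard associativity of tensor products.
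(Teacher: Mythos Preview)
Your proposal is correct and follows essentially the same route as the paper: both reduce to Corollary~\ref{JnGenBySn} via Proposition~\ref{LnSnCoh} at level $n+1$ and associativity of tensor products, with the same verification that $N_{n+1} \leq J_{n+1}$. The only minor difference is that the paper applies Corollary~\ref{JnGenBySn} directly at $X$ (which already gives $\cS_n(X) \otimes_{\cS_{n+1}(\bD)} \sL_{n+1}(\bD) \cong \sL_n(X)$), whereas you apply it at $\bD$ and then invoke Proposition~\ref{LnSnCoh} at level $n$ to pass to $X$; this is one extra but harmless step.
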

\begin{proof} This map appears as the top horizontal arrow in the commutative diagram
\[\xymatrix{ \cS_n(X) \underset{\cS_{n+1}(X)}{\otimes}{} \sL_{n+1}(X) \ar[r] &\sL_n(X) \\
\cS_n(X) \underset{\cS_{n+1}(X)}{\otimes}{} \left(\cS_{n+1}(X) \underset{\cS_{n+1}(\bD)}{\otimes}{} \sL_{n+1}(\bD)\right) \ar[u]\ar[r]_(0.63){\cong} & \cS_n(X) \underset{\cS_{n+1}(\bD)}{\otimes} \sL_{n+1}(\bD). \ar[u] }\]
Now $X \in \bD_{n+1}/ J$ because $n+1 > n \geq m_0 \geq m_X$, so the vertical arrow on the left is an isomorphism by Proposition \ref{LnSnCoh}. Also, because $n \geq m_0$ and because $N_{m_0 + 1} \leq J$ by assumption, we have
\[N_{n+1} = N_{m_0 + 1} \cap I_{n+1} \leq J \cap I_{n+1} = J_{n+1}.\]
Therefore the vertical arrow on the right is an isomorphism by Corollary \ref{JnGenBySn}. The bottom horizontal arrow is an isomorphism by the associativity of tensor products and the result follows.
\end{proof}
We can now invoke the formalism of $\S \ref{QcohTower}$ and prove our first main result. 
\begin{thm}\label{CoadmSects} Let $X$ be a $J$-stable affinoid subdomain of $\bD$. Suppose that $d \mid (q+1)$ and $N_{m_0+1} \leq J$ for some $m_0 \geq \max\{m_X, v_{\pi_F}(e)\}$. Then
\be \item $(j_\ast \sL)(X)$ is a coadmissible $\w\cD(X,J)$-module, and
\item for each $m \geq m_0$, the canonical map 
\[\cS_m(X) \underset{\w\cD(X,J)}{\otimes}{} (j_\ast \sL)(X) \longrightarrow \sL_m(X)\]
is an isomorphism.
\ee \end{thm}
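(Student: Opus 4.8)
The strategy is to assemble everything from a tower-theoretic point of view, applying the machinery of $\S \ref{QcohTower}$ to the two towers of rings $\cT_\bullet(X)$ and $\cS_\bullet(X)$ from Definition \ref{DefnOfSandT}, shifted to start at $m_0$. First I would observe that by Proposition \ref{TmPres} the tower $(\cT_m(X))_{m \geq m_X}$ is a Fr\'echet--Stein presentation of $\w\cD(X,J)$, and that by Corollary \ref{wDJ} (applied via Theorem \ref{TmSmComp} and Proposition \ref{TStowers}(a)) we also have $\Gamma(\cS_\bullet(X)[m_0]) = \invlim_{m \geq m_0} \cS_m(X) \cong \w\cD(X,J)$. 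The comparison morphisms $f_\bullet$ and $g_\bullet$ from Theorem \ref{TmSmComp} fit into exactly the shifted-tower situation required by Proposition \ref{TStowers}, with the shift $c := \epsilon v_{\pi_F}(p)$. So the plan is: build a quasi-coherent $\cT_\bullet(X)$-module $M_\bullet$ whose sections compute $(j_\ast\sL)(X)$, pull it back along $f_\bullet$ to get a quasi-coherent $\cS_\bullet(X)[m_0]$-module whose sections still compute $(j_\ast\sL)(X)$ by Proposition \ref{TStowers}(b), and then read off coadmissibility.

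Concretely, set $M_m := \sL_m(X)$ for $m \geq m_0$, which is a module over $\cS_m(X)$ by Theorem \ref{DnJnJmodule}, hence (via the $K$-algebra map $f_m(X) : \cT_m(X) \to \cS_m(X)$ of Theorem \ref{TmSmComp}) a $\cT_m(X)$-module; the connecting maps are the restriction maps $\sL_{m+1}(X) \to \sL_m(X)$. Proposition \ref{LisQcoh} says precisely that this $\cS_\bullet(X)[m_0]$-module is quasi-coherent, i.e. $\cS_m(X) \otimes_{\cS_{m+1}(X)} \sL_{m+1}(X) \xrightarrow{\cong} \sL_m(X)$ for all $m \geq m_0$. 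Its global sections are $\invlim_{m \geq m_0} \sL_m(X)$, which equals $(j_\ast\sL)(X)$ by Lemma \ref{jLlimit} (the isomorphism $j_\ast\sL \cong \invlim \sL_m$ of $\cD$-modules on $\bD$, evaluated at $X$, together with the observation that $\sL_m(X)$ is cofinal in the system of all $\sL_m$ as $m_X \le m_0 \le m$). Now I would feed this into Proposition \ref{TStowers}(b): take the quasi-coherent $\cS_\bullet(X)[m_0]$-module to be $M_\bullet := (\sL_m(X))_{m \geq m_0}$, set $L_\bullet := \cT_\bullet(X)[m_0] \otimes_{\cS_\bullet(X)[m_0]} M_\bullet$ via $g_\bullet$, and conclude that $\Gamma(L_\bullet) \cong \Gamma(M_\bullet) = (j_\ast\sL)(X)$ as $\w\cD(X,J) = \Gamma(\cT_\bullet(X)[m_0])$-modules. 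Here one must be slightly careful about the direction of the arrows — $\cS_\bullet(X)$ is the tower over which $M_\bullet$ naturally lives, $f_\bullet$ maps $\cT_\bullet \to \cS_\bullet$, and $g_\bullet$ maps $\cS_\bullet[c] \to \cT_\bullet$ — so the roles of $T_\bullet$ and $S_\bullet$ in Proposition \ref{TStowers} are played by $\cT_\bullet(X)$ and $\cS_\bullet(X)$ respectively; the commutativity requirement is exactly the big commutative diagram of Theorem \ref{TmSmComp}. By Lemma \ref{QCohBC} the pullback $L_\bullet$ is again quasi-coherent over the Fr\'echet--Stein tower $\cT_\bullet(X)[m_0]$, and each $L_m = \cT_{m+m_0}(X) \otimes_{\cS_{m+m_0}(X)} \sL_{m+m_0}(X)$ is finitely generated over $\cT_{m+m_0}(X)$ since $\sL_{m+m_0}(X)$ is finitely presented over $\cS_{m+m_0}(X)$ by Corollary \ref{LnSfp} and $\cS_{m+m_0}(X)$ is finitely generated over $\cT_{m+m_0}(X)$. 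A quasi-coherent module over a Fr\'echet--Stein presentation with finitely generated terms has coadmissible global sections; this gives part (a). For part (b), the canonical map $\cS_m(X) \otimes_{\w\cD(X,J)} (j_\ast\sL)(X) \to \sL_m(X)$ is the map $\cS_m(X) \otimes_{\Gamma(\cS_\bullet(X)[m_0])} \Gamma(M_\bullet) \to M_m$; for a quasi-coherent module over a Fr\'echet--Stein algebra, the natural map $S_m \otimes_{\Gamma(S_\bullet)} \Gamma(M_\bullet) \to M_m$ is an isomorphism (this is the coadmissibility/quasi-coherence dictionary, cf. the proof that $\Gamma$ is an equivalence onto coadmissible modules), so we are done once part (a) is in place.

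The main obstacle is bookkeeping rather than any genuinely hard estimate: one has to match up the index shifts correctly (the gap $c = \epsilon v_{\pi_F}(p)$ between the $J_{m+1}$ appearing in $\cS_m$ and the $J_{m + \epsilon v_{\pi_F}(p)}$ appearing in $\cT_m$), check that the hypothesis $N_{m_0+1} \leq J$ with $m_0 \geq \max\{m_X, v_{\pi_F}(e)\}$ really does license Proposition \ref{LisQcoh} for all $m \geq m_0$ (so quasi-coherence holds on the nose, not just eventually), and verify that the identification $(j_\ast\sL)(X) = \invlim_{m \geq m_0} \sL_m(X)$ is compatible with the module structures on both sides — i.e. that the $\w\cD(X,J)$-action coming from Theorem \ref{TmSmComp}/Corollary \ref{wDJ} agrees with the one coming from Lemma \ref{jLlimit}. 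This last compatibility should follow from functoriality of all the comparison maps in Theorem \ref{TmSmComp} together with the fact that they are compatible with the inclusion $\cD \rtimes J$, but it will need to be spelled out. I expect no surprises from the flatness side, since Theorem \ref{SflatOverT} guarantees $\cS_n(X)$ is flat over $\cT_n(X)$, which is what makes the pullback functor $L_\bullet \mapsto \cT_\bullet \otimes_{\cS_\bullet} (-)$ behave well on the level of the cohomology vanishing needed to invoke the coadmissibility criterion.
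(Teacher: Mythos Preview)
Your approach is essentially identical to the paper's: shift both towers to start at $m_0$, take $M_\bullet = (\sL_{m_0+m}(X))_m$ as a quasi-coherent $\cS_\bullet$-module via Proposition \ref{LisQcoh}, pull back to the $\cT_\bullet$ tower via Proposition \ref{TStowers}(b) and Lemma \ref{QCohBC}, and invoke Corollary \ref{LnSfp} for finite presentation. The index bookkeeping you flag is handled exactly as you anticipate, with the shift $c = \epsilon v_{\pi_F}(p)$; your formula $L_m = \cT_{m+m_0}(X) \otimes_{\cS_{m+m_0}(X)} \sL_{m+m_0}(X)$ is missing this extra $+c$ in the two rightmost subscripts, but you already noted that correction is needed.

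There is one genuine point of care in part (b). You want to conclude that $\cS_m(X) \otimes_{\w\cD(X,J)} (j_\ast\sL)(X) \to \sL_m(X)$ is an isomorphism by appealing directly to the Schneider--Teitelbaum dictionary for the tower $\cS_\bullet(X)$. But $\cS_m(X) = \cD^\dag_{\varpi/|\pi_F|^m}(X) \rtimes_{J_{m+1}} J$ is a filtered colimit of Banach algebras, not itself a Banach algebra, so $(\cS_m(X))_m$ is \emph{not} a Fr\'echet--Stein presentation in the sense of \cite{ST}, and \cite[Corollary 3.1]{ST} does not apply to it directly. The paper instead applies \cite[Corollary 3.1]{ST} to the $\cT_\bullet$-tower (which \emph{is} Fr\'echet--Stein by Proposition \ref{TmPres}) to get $T_m \otimes_{T_\infty} M_\infty \cong L_m$, and then uses the factorisation $T_\infty \to S_{m+c} \to T_m \to S_m$ together with a repeated application of Proposition \ref{LisQcoh} (giving $S_m \otimes_{S_{m+c}} M_{m+c} \cong M_m$) to deduce that $S_m \otimes_{T_\infty} M_\infty \to M_m$ is an isomorphism. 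So your argument for (b) needs this small detour through $\cT_\bullet$. Your mention of Theorem \ref{SflatOverT} is not needed here; the paper does not use it in this proof.
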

\begin{proof} (a) For each $m \geq 0$, let $S_m := \cS_{m_0 + m}(X)$, $T_m := \cT_{m_0 + m}(X)$ and $M_m := \sL_{m_0 + m}(X)$. Then $S_\bullet$ is a tower of rings, and $M_\bullet$ is a quasi-coherent $S_\bullet$-module by Proposition \ref{LisQcoh}. Theorem \ref{TmSmComp} gives us a diagram of towers of rings
\[\xymatrix{T_\bullet[c] \ar[d]_{f_\bullet[c]}\ar[rr]^{\iota^c_{T}} & & T_\bullet \ar[d]^{f_\bullet}\\ S_\bullet[c] \ar[rr]_{\iota^c_{S}}\ar[urr]_{g_\bullet} && S_\bullet} \]
where $c := \epsilon v_{\pi_F}(p)$.  Note that $\invlim T_m \cong \w\cD(X,J)$ by Proposition \ref{TmPres}. Now $L_\bullet := T_\bullet \otimes_{S_\bullet[c]} M_\bullet[c]$ is a quasi-coherent $T_\bullet$-module by Lemma \ref{QCohBC}, and each
\[ L_m = T_m \otimes_{S_{m+c}} M_{m+c} \]
is a finitely presented $T_m$-module because $M_{m+c}$ is a finitely presented $S_{m+c}$-module by Corollary \ref{LnSfp}. In other words, $\invlim L_m$ is a coadmissible $\invlim T_m \cong \w\cD(X,J)$-module.  Now, Proposition \ref{TStowers}(b) implies that there is a natural isomorphism 
\begin{equation}\label{LimOfNs} \invlim M_m \stackrel{\cong}{\longrightarrow} \invlim L_m. \end{equation}
Since the restriction map $(j_\ast \sL)(X) \to \invlim M_m$ is an isomorphism by Proposition \ref{jLlimit}, we conclude that $(j_\ast \sL)(X)$ is a coadmissible $\w\cD(X,J)$-module as claimed.

(b) Write $T_\infty := \w\cD(X,J)$ and $M_\infty := \invlim M_m$. By \cite[Corollary 3.1]{ST}, the canonical map $\alpha_m : T_m \otimes_{T_\infty} M_\infty \to L_m$ is an isomorphism for each $m \geq 0$. This map, together with the map $\psi_m : S_m \otimes_{T_\infty} M_\infty \to M_m$ in question, appears in the following commutative diagram:
\[\xymatrix{ S_m \underset{T_\infty}{\otimes}{} M_\infty\ar[rrr]^{\psi_m}\ar[d]_{\cong} &&& M_m \\ S_m \underset{T_m}{\otimes}{} (T_m \underset{T_\infty}{\otimes}{} M_\infty) \ar[rr]_{1 \otimes\alpha_m}^{\cong} && S_m \underset{T_m}{\otimes}{} (T_m \underset{S_{m+c}}{\otimes}{} M_{m+c}) \ar[r]_(0.58){\cong} & S_m \underset{S_{m+c}}{\otimes}{} M_{m+c}\ar[u] }\]
where the arrow on the right is induced by the action of $S_m$ on $M_m$. We see that this arrow is an isomorphism by a repeated application of Proposition \ref{LisQcoh}, and conclude that $\psi_m$ is also an isomorphism as required. \end{proof}

\begin{lem}\label{LocFrech} $j_\ast \sL$ is a locally Fr\'echet $I$-equivariant $\cD$-module on $\bD$.
\end{lem}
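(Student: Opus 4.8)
The plan is to verify the two conditions in the definition of a locally Fr\'echet equivariant $\cD$-module on $\bD$ from \cite[\S 3.3]{EqDCap}: first, that $j_\ast \sL$ carries a compatible $I$-equivariant structure as a sheaf of modules over the equivariant $\cD$-module structure on $\cD_\bD$, and second, that for each $I$-stable affinoid subdomain $X$ of $\bD$ (or at least for an admissible covering of $\bD$ by such), the sections $(j_\ast \sL)(X)$ form a Fr\'echet space on which $\w\cD(X,I)$ acts continuously, exhibited as an inverse limit of Banach modules over the Fr\'echet--Stein presentation $\cT_\bullet(X)$ of $\w\cD(X,I)$.

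First I would recall that $\sL$ itself is an $I$-equivariant line bundle with flat connection on $\Upsilon = \bD \cap \Omega$ by Definition \ref{UpperHPhyp2}, and that $j : \Upsilon \hookrightarrow \bD$ is the open $I$-equivariant embedding. Since $j$ is $I$-equivariant and $j_\ast$ is the right adjoint to $j^{-1}$, the pushforward $j_\ast \sL$ inherits an $I$-equivariant structure as a sheaf of $\cD_\bD$-modules; concretely the $I$-action on sections is $(g \cdot s)(X) = g \cdot (s(g^{-1}X))$ for $X$ an affinoid subdomain, compatible with the $\mathbb{GL}_2(K)$-equivariant structure on $\cD_{\bP^1}$ restricted to $\cD_\bD$ from Lemma \ref{AutAOD}(d,e). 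This handles the purely sheaf-theoretic equivariance. For the topological/analytic part, I would fix an $I$-stable affinoid subdomain $X$ of $\bD$ — noting that $\bD$ itself is $I$-stable and forms such a covering, or more generally using Corollary \ref{Heisenberg} to see that the $\Upsilon_n$ are $I$-stable — and use Proposition \ref{jLlimit} to identify $(j_\ast \sL)(X)$ with $\invlim_n \sL_n(X)$. Each $\sL_n(X)$ is a finitely generated module over the Banach algebra $\sD_n(X) = \cD^\dag_{\varpi/|\pi_F|^n}(X)$ by Corollary \ref{MnPres}, hence carries a canonical Banach topology; the connecting maps $\sL_{n+1}(X) \to \sL_n(X)$ are continuous, so the inverse limit is a Fr\'echet space. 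Continuity of the $\w\cD(X,I)$-action then follows from Corollary \ref{wDJ}, which identifies $\w\cD(X,I) \cong \invlim_m \sD_m(X) \rtimes_{I_{m+1}} I$, together with Theorem \ref{DnJnJmodule}, which shows the $\sD_n \rtimes I$-action on $\sL_n$ factors through $\sD_n(X) \rtimes_{I_{n+1}} I$ for $n$ large.

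The step I expect to require the most care is checking that the Fr\'echet structure on $(j_\ast \sL)(X)$ obtained from the presentation $\invlim_n \sL_n(X)$ agrees with the one coming from the Fr\'echet--Stein structure of $\w\cD(X,I)$ via its presentation $\cT_\bullet(X)$ from Proposition \ref{TmPres} — that is, that $(j_\ast \sL)(X)$ is genuinely a \emph{coadmissible-at-the-level-of-Banach-pieces} object in a way compatible with the two presentations $\cS_\bullet(X)$ and $\cT_\bullet(X)$. However, this is precisely what is arranged by the comparison of towers in Theorem \ref{TmSmComp} together with the formalism of $\S \ref{QcohTower}$; in fact Theorem \ref{CoadmSects} already establishes that $(j_\ast \sL)(X)$ is a coadmissible $\w\cD(X,I)$-module (taking $J = I$, which satisfies $N_{m_0+1} \leq I$ automatically, and using $d \mid (q+1)$ from Definition \ref{UpperHPhyp2} via the hypotheses in force), and a coadmissible module over a Fr\'echet--Stein algebra is in particular a Fr\'echet space with continuous action. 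So the proof reduces to assembling: the equivariant sheaf structure on $j_\ast\sL$ from equivariance of $j$, the local Fr\'echet structure from Proposition \ref{jLlimit} and Corollary \ref{MnPres}, and the compatibility with $\w\cD(-,I)$ from Corollary \ref{wDJ} and Theorem \ref{CoadmSects}, checking along the way that all the maps involved are $I$-equivariant — the latter being routine given the functoriality built into Proposition \ref{SheafDnJ} and Theorem \ref{DnJnJmodule}.
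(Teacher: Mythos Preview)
Your proposal misreads what ``locally Fr\'echet $I$-equivariant $\cD$-module'' means. The relevant definition is \cite[Definition 3.6.1(a)]{EqDCap}, not \S 3.3: one must equip $(j_\ast \sL)(U)$ with a Fr\'echet topology for \emph{every} affinoid subdomain $U$ of $\bD$ (not just $I$-stable ones), and verify that the $I$-action maps $g^{j_\ast \sL}(U) : (j_\ast\sL)(U) \to (j_\ast\sL)(gU)$ are continuous. There is no requirement that $\w\cD(X,I)$ act continuously or that the module arise from a Fr\'echet--Stein presentation; that is the content of \emph{coadmissibility}, a strictly stronger property established afterwards in Theorem \ref{LolCoad}.

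This misreading leads you to invoke Theorem \ref{CoadmSects}, which carries the additional hypothesis $d \mid (q+1)$. That hypothesis is \emph{not} in force here: Definition \ref{UpperHPhyp2} only records that $d$ is coprime to $p$, and the divisibility $d \mid (q+1)$ appears as a separate assumption in Proposition \ref{LisQcoh}, Theorem \ref{CoadmSects}, and Theorem \ref{LolCoad}. Your claim that it follows ``via the hypotheses in force'' is not correct. Moreover, even granting that hypothesis, your argument only produces a Fr\'echet topology on $(j_\ast\sL)(X)$ for $J$-stable $X$, not for arbitrary affinoid $U \subseteq \bD$.

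The paper's proof avoids all of this machinery. Since $\sL$ is a coherent $\cO_\Upsilon$-module and $U \cap \Upsilon$ admits the quasi-Stein covering $(U \cap V_n)_{n\geq 0}$ by affinoids, each $\sL(U \cap V_n)$ is naturally a $K$-Banach space, and $(j_\ast\sL)(U) = \sL(U\cap\Omega) \cong \invlim_n \sL(U \cap V_n)$ is Fr\'echet. Continuity of the $I$-action follows from \cite[Corollary 1.2.4]{FvdPut}, since the maps $g^{\sL}(U \cap V_n)$ are $\cO(U\cap V_n)$-linear (after pulling back along $g^{\cO}$) between finitely generated modules over affinoid algebras, hence automatically continuous. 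No $\sD_n$-action, no crossed products, and no coadmissibility are needed.
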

\begin{proof} The local Drinfeld space $\Upsilon = \bD \cap \Omega$ is an admissible $I$-stable open subspace of $\bD$. Since $\sL$ is an $I$-equivariant $\cD$-module on $\Upsilon$, its pushforward $j_\ast \sL$ is an $I$-equivariant $\cD$-module on $\bD$. According \cite[Definition 3.6.1(a)]{EqDCap} we must show that $(j_\ast \sL)(U)$ carries a Fr\'echet topology for each affinoid subdomain $U$ of $\bD$, and that the action maps $g^{j_\ast \sL}(U) : (j_\ast \sL)(U) \to (j_\ast \sL)(gU)$ are continuous for each $g \in I$. We observed in $\S \ref{UpperHP}$ that $\Upsilon$ admits a quasi-Stein covering $(\Upsilon_n)_{n=0}^\infty$. Therefore $U \cap \Omega$ admits a quasi-Stein covering $(U \cap V_n)_{n=0}^\infty$ and there is a natural isomorphism
\[ \sL(U \cap \Omega) = \invlim \sL(U \cap V_n).\]
Since $\sL$ is a coherent $\cO$-module on $\Upsilon$ and since $U \cap V_n$ is affinoid, each $\sL(U \cap V_n)$ is naturally a $K$-Banach space. In this way, we see that $(j_\ast \sL)(U) = \sL(U \cap \Omega)$ carries a natural $K$-Fr\'echet space topology. 

The continuity of the maps $g^{j_\ast \sL}(U) : (j_\ast \sL)(U) \to (j_\ast \sL)(gU)$ now follows from the continuity of the maps $g^{\sL}(U \cap V_n) : \sL(U \cap V_n) \to \sL(g(U \cap V_n))$, which holds by viewing $\sL(g(U\cap V_n))$ as an $\cO(U\cap V_n)$-modulle via $g^\cO(U\cap V_n)$, automatically by \cite[Corollary 1.2.4]{FvdPut}.
\end{proof}

We can finally prove our first main local result.
\begin{thm}\label{LolCoad} Let $[\sL] \in \PicCon^I( \Upsilon)_{\tors}$. Suppose that the image $\omega[\sL]$ of $[\sL]$ in $\PicCon( \Upsilon)$ is killed by $q+1$. Let $j \colon \Upsilon \hookrightarrow \bD$ be the open embedding, Then 
\[j_\ast \sL \in \cC_{\bD / J}\]
for any closed subgroup $J$ of $I$ which contains an open subgroup of $N = \begin{pmatrix} 1 & \cO_F \\ 0 & 1 \end{pmatrix}$. \end{thm}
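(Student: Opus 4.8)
The plan is to reduce the statement to the sheaf-theoretic and tower-theoretic machinery already assembled. First I would fix a closed subgroup $J \le I$ containing an open subgroup of $N$. By passing to a smaller open subgroup if necessary, I may assume that $N_{m_0+1} \le J$ for some $m_0$; enlarging $m_0$, I may further assume $m_0 \ge v_{\pi_F}(e)$ where $e$ is the order of $[\sL]$ in $\PicCon^I(\Upsilon)$. Since $d \mid (q+1)$ by the hypothesis that $\omega[\sL]$ is killed by $q+1$ (recall $d$ is the order of $\omega[\sL]$ in $\PicCon(\Upsilon)^I$, so $d$ divides $q+1$), all the running hypotheses of Theorem \ref{CompOnW}, Theorem \ref{XiIso}, Corollary \ref{JnGenBySn} and Proposition \ref{LisQcoh} are in force for $n \ge m_0$. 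I would then verify, using Lemma \ref{LocFrech}, that $j_\ast \sL$ is already a locally Fr\'echet $I$-equivariant $\cD$-module on $\bD$, so restricting to $J$ it is a locally Fr\'echet $J$-equivariant $\cD$-module; the remaining content is the coadmissibility, i.e. membership in $\cC_{\bD/J}$.

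The core of the argument is to check the defining local condition for $\cC_{\bD/J}$ affinoid-subdomain by affinoid-subdomain. Given a $J$-stable affinoid subdomain $X$ of $\bD$, I would choose $m_1 \ge \max\{m_X, m_0\}$ and apply Theorem \ref{CoadmSects}(a): it gives directly that $(j_\ast \sL)(X) = \sL(X \cap \Omega)$ is a coadmissible $\w\cD(X,J)$-module, with the compatibility maps $\cS_m(X) \otimes_{\w\cD(X,J)} (j_\ast \sL)(X) \cong \sL_m(X)$ of Theorem \ref{CoadmSects}(b) for $m \ge m_1$. To promote this to an honest object of $\cC_{\bD/J}$ I need the local sections to glue compatibly: for $Y \subseteq X$ both $J$-stable affinoid subdomains, the transition map should identify $\w\cD(Y,J) \uwset{\w\cD(X,J)} (j_\ast\sL)(X)$ with $(j_\ast\sL)(Y)$. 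This follows by combining Theorem \ref{CoadmSects}(b) applied on both $X$ and $Y$ with the coherence statements: Lemma \ref{GenByGS}(a) and Proposition \ref{LnSnCoh} give $\sD_m(Y) \otimes_{\sD_m(X)} \sL_m(X) \cong \sL_m(Y)$, hence after crossing with $J/J_{m+1}$ also $\cS_m(Y) \otimes_{\cS_m(X)} \sL_m(X) \cong \sL_m(Y)$; passing to the inverse limit over $m \ge \max\{m_X, m_Y, m_0\}$ and using the Fr\'echet-Stein presentations $\w\cD(X,J) = \invlim \cT_m(X)$, $\w\cD(Y,J) = \invlim \cT_m(Y)$ from Proposition \ref{TmPres}, together with the base-change isomorphism $\cT_m(Y) \otimes_{\cT_m(X)} (\text{coadmissible}) $, yields the desired identification. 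Here the flatness input of Theorem \ref{SflatOverT} ($\cS_m$ flat over $\cT_m$) and \cite[Corollary 3.1]{ST} are what make the inverse limits behave.

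I would then cover $\bD$ by the two-element covering $\{\bD\}$ (which is already affinoid) or, more precisely, simply observe that $\bD$ itself is a $J$-stable affinoid subdomain, so that taking $X = \bD$ in Theorem \ref{CoadmSects} already produces the global sections $(j_\ast\sL)(\bD) = \sL(\Upsilon)$ as a coadmissible $\w\cD(\bD,J)$-module. The sheaf-theoretic check is then that the assignment $X \mapsto (j_\ast\sL)(X)$, equipped with the $J$-equivariant structure coming from the $I$-equivariant structure on $j_\ast\sL$ and restricted to $J$, satisfies the axioms of \cite[Definition ...]{EqDCap} for $\cC_{\bD/J}$; the localisation/compatibility axiom is exactly the content of the previous paragraph, and the equivariance axiom is inherited from $j_\ast\sL$. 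The main obstacle is the bookkeeping in the gluing step: one must check that the secret-action coinvariants formalism of Corollary \ref{JnGenBySn} is compatible with restriction $X \to Y$ (so that the identifications $\cS_m(Y) \otimes_{\cS_m(X)} \sL_m(X) \cong \sL_m(Y)$ are natural in the tower $m \mapsto m+1$, not just for fixed $m$), and that the shift by $c = \epsilon v_{\pi_F}(p)$ between the $\cS$-tower and the $\cT$-tower in Theorem \ref{TmSmComp} does not disturb the inverse-limit identifications — this is precisely handled by Proposition \ref{TStowers}, so the obstacle is really just assembling these pieces carefully rather than proving anything genuinely new. Finally, I note that the hypothesis on $J$ is used only through $N_{m_0+1}\le J$, which holds as soon as $J$ contains an open subgroup of $N$; no use is made of $J$ containing an open subgroup of $\mathbb{SL}_2(F)$, which explains the remark in the introduction that $J$ may be any closed subgroup of $I$ containing an open subgroup of the translation subgroup.
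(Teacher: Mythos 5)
The overall strategy is the right one and the ingredient list (Lemma \ref{LocFrech}, Theorem \ref{CoadmSects}, the $\cS$--$\cT$ tower comparison, Proposition \ref{LnSnCoh}) matches the paper's. You correctly note that the hypothesis on $J$ enters only through $N_{m_0+1} \leq J$, and that the hard-won Chapter 5/6 input lives entirely inside Theorem \ref{CoadmSects}.

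There is, however, a genuine gap in your final sheaf-theoretic step. You phrase the ``localisation/compatibility axiom'' as an assertion about transition maps $\w\cD(Y,J) \uwset{\w\cD(X,J)} (j_\ast\sL)(X) \to (j_\ast\sL)(Y)$ for $Y \subseteq X$ both $J$-stable, with the group $J$ held fixed. That is not what membership in $\cC_{\bD/J}$ requires. After one establishes (as you do) that $(j_\ast\sL)(\bD)$ is a coadmissible $\w\cD(\bD,J)$-module, the remaining condition is that the localisation $\Loc^{\w\cD(\bD,J)}_{\bD}\bigl((j_\ast\sL)(\bD)\bigr)$ recovers $j_\ast\sL$ as a $J$-equivariant sheaf. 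By \cite[Definitions 3.5.3 and 3.5.12]{EqDCap}, this localisation is the sheafification of a presheaf on the basis $\bD_w$ whose value on an arbitrary affinoid subdomain $X$ of $\bD$ --- which need not be $J$-stable --- is an inverse limit over $X$-small open subgroups $H$ of $J_X = \Stab_J(X)$ of the modules $\w\cD(X,H) \uwset{\w\cD(\bD,H)} (j_\ast\sL)(\bD)$. So the check one must perform is that
\[
\w\cD(X,H) \underset{\w\cD(\bD,H)}{\w\otimes}{} (j_\ast\sL)(\bD) \;\longrightarrow\; (j_\ast\sL)(X)
\]
is an isomorphism for every affinoid $X \subseteq \bD$ and every $X$-small $H \leq J_X$, and moreover that these isomorphisms are compatible both under shrinking $H$ and under passing from $X$ to a smaller $H$-stable affinoid. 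Your formulation with fixed group $J$ does not see the shrinking of the group at all, and it restricts attention to $J$-stable affinoids, of which there may be too few to form a basis. To run the argument at the level of a small $H \le J_X$, one must first observe that $H$ still contains an open subgroup of $N$ (true, since $H$ is open in $J_X$, hence open in $J \cap$ something, and $J$ meets $N$ openly), then apply Theorem \ref{CoadmSects} and Proposition \ref{LnSnCoh} with $J$ replaced throughout by $H$, and finally verify that the resulting maps $\varphi_H(X)$ form a compatible system; this bookkeeping is the actual content of the paper's proof and cannot be reduced to the $J$-stable transition maps you describe.
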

\begin{proof} It is easy to see that the isomorphism in Lemma \ref{jLlimit} is $J$-equivariant, and so by that result and Theorem \ref{CoadmSects}(a) we know that
\[(j_\ast \sL)(\bD) = \sL(\Upsilon) \cong \lim\limits_{\longleftarrow} \sL_n(\bD)\] 
is a coadmissible $\w\cD(\bD,J)$-module. Because $j_\ast \sL$ is a locally Fr\'echet $I$-equivariant $\cD$-module on $\bD$ by Lemma \ref{LocFrech}, and because $J \leq I$, by \cite[Definition 3.6.7]{EqDCap} it remains to exhibit a continuous $\cD$-linear $J$-equivariant isomorphism 
\[ \Loc^{\w\cD(\bD,J)}_{\bD}\left((j_\ast \sL)(\bD)\right) \stackrel{\cong}{\longrightarrow} j_\ast \sL.\]
Recall that by \cite[Definition 3.5.12]{EqDCap}, $\Loc^{\w\cD(\bD,J)}_{\bD}((j_\ast \sL)(\bD))$ is the unique extension of a sheaf $\cP^{\w\cD(\bD,J)}_{\bD}((j_\ast \sL)(\bD))$ on the basis $\bD_w$ of $\bD$,  defined at \cite[Definition 3.5.3]{EqDCap}. Because $\Loc^{\w\cD(\bD,J)}_{\bD}((j_\ast \sL)(\bD))$ and $j_\ast \sL$ are sheaves, by \cite[Theorem 9.1]{DCapOne} it will suffice to exhibit an isomorphism of $J$-equivariant $\cD$-modules on $\bD_w$
\[ \varphi : \cP^{\w\cD(\bD,J)}_{\bD}((j_\ast \sL)(\bD))  \stackrel{\cong}{\longrightarrow}(j_\ast \sL)_{|\bD_w}.\]
Let $X$ be an affinoid subdomain of $\bD$ and let $H$ be an $X$-small open subgroup\footnote{Since $I$ preserves the free $\cO(\bD)^\circ$-Lie lattice $\cO(\bD)^\circ \partial_x$ in $\cT(\bD)$, we see that in fact, in this situation \emph{every} open subgroup $H$ of $J_X$ is $X$-small.} of $J_X = \Stab_J(X)$, in the sense of \cite[Definition 3.4.4]{EqDCap}. Because $J$ contains an open subgroup of $N$ by assumption, so does its open subgroup $H$. Hence we can find an integer $m_0 \geq \max\{m_X, v_{\pi_F}(e)\}$ such that $N_{m_0 + 1} \leq H$.  For each $n \geq m_0$, let $\cS_n(X) := \sD_n(X) \rtimes_{H_{n+1}}H$ and note that the natural action map
\[\cS_n(X) \underset{\cS_n(\bD)}{\otimes}{} \sL_n(\bD) \longrightarrow \sL_n(X)\]
is an isomorphism by Proposition \ref{LnSnCoh} applied with $J$ replaced by $H$. Note that 
\[\w\cD(X,H) \cong \lim\limits_{\stackrel{\longleftarrow}{n \geq m_0}} \cT_n(X)  \qmb{and} (j_\ast \sL)(\bD) \cong \invlim \cN_n(\bD)\]
by Corollary \ref{wDJ} and equation (\ref{LimOfNs}), where we replace $J$ with $H$ so that
\[\cT_n(X) := \sD_n(X) \underset{H_{n+1}}{\rtimes}{} H \qmb{and} \cN_n(\bD) := \cT_n(\bD) \underset{\cS_{n+c}(\bD)}{\otimes}{} \sL_{n+c}(\bD)\]
and $c := \epsilon v_{\pi_F}(p)$. Now, for each $n \geq m_0$ there is a commutative diagram
\[ \xymatrix{ \cT_n(X) \underset{\cT_n(\bD)}{\otimes}{} \cN_n(\bD) \ar[rr]  & & \cT_n(X) \underset{\cS_{n+c}(X)}{\otimes}{} \sL_{n+c}(X) \\
\cT_n(X) \underset{S_{n+c}(\bD)}{\otimes}{} \sL_{n+c}(\bD) \ar[rr]_(0.4){\cong}\ar[u]^{\cong} && \cT_n(X) \underset{\cS_{n+c}(X)}{\otimes}{}\left(\cS_{n+c}(X) \underset{S_{n+c}(\bD)}{\otimes}{} \sL_{n+c}(\bD)\right) \ar[u]
}\]
where the maps on the left and on the bottom are isomorphisms arising from the associativity of tensor products. The vertical map on the right is an isomorphism by Proposition \ref{LnSnCoh}, so the top horizontal map is an isomorphism. Taking the inverse limit of these maps and using equation (\ref{LimOfNs}) together with the definition of $\w\otimes$ from \cite[\S 7.3]{DCapOne} and \cite[Lemma 3.4.13]{EqDCap}, we obtain a $\w\cD(X,H)$-linear isomorphism
\[ \varphi_H(X) : \w\cD(X,H) \underset{\w\cD(\bD,H)}{\w\otimes}{} (j_\ast \sL)(\bD) = \lim\limits_{\stackrel{\longleftarrow}{n \geq m_0}} \cT_n(X) \underset{\cT_n(\bD)}{\otimes}{} \cN_n(\bD) \quad\stackrel{\cong}{\longrightarrow}\quad (j_\ast \sL)(X).\]
Write $M(X,H) := \w\cD(X,H) \uwset{\w\cD(\bD,H)} (j_\ast \sL)(\bD)$, let $X'$ be an $H$-stable affinoid subdomain of $X$, let $H'$ be another open subgroup of $J$ containing $H$ and let $g \in J$. We leave to the reader the verification that the following diagram commutes:
\[\xymatrix{ M(X',H) \ar[rrrr]^{\varphi_H(X')} &&&& (j_\ast\sL)(X')  \\ && M(X, H') \ar[rrd]^{\varphi_{H'}(X)}  && \\ M(X,H) \ar[rrrr]^{\varphi_H(X)}\ar[rru]\ar[uu]\ar[d]_{g^M_{X,H}}  &&&& (j_\ast \sL)(X) \ar[uu]\\M(gX, {}^gH) \ar[rrrr]_{\varphi_{{}^gH}(gX)} &&&& (j_\ast \sL)(gX)\ar[u]_{g^{j_\ast \sL}(X)}}\]
It follows that the isomorphisms $\varphi_H(X)$ are compatible as $H$ shrinks to $1$, and commute with the restriction maps on both sides. Passing to the limit over all $X$-small $H$ we obtain the required $J$-equivariant $\cD$-linear isomorphism
\[ \varphi : \cP^{\w\cD(\bD,J)}_{\bD}((j_\ast \sL)(\bD)) = \lim\limits_{\stackrel{\longleftarrow}{H}} M(-,H) \quad  \stackrel{\cong}{\longrightarrow} \quad j_\ast \sL. \qedhere\]
\end{proof}

\subsection{Coadmissibility and irreducibility}\label{CoadIrredSect}
Let $j : \Omega \hookrightarrow \bP^1$ and $j_0 : \Upsilon \hookrightarrow \bD$ be the open inclusions and let $G_0 := \mathbb{GL}_2(\cO_F)$.  

\begin{thm}\label{PushForwardIsCoad} Let $[\sL] \in \PicCon^{G_0}(\Omega)_{\tors}$. Then 
\[ j_\ast \sL \in \cC_{\bP^1/H}\]
for every closed subgroup $H$ of $G_0$ which contains an open subgroup of $\mathbb{SL}_2(\cO_F)$.
\end{thm}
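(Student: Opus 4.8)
The plan is to globalise the local coadmissibility result Theorem \ref{LolCoad} from the closed unit disc $\bD$ to the projective line $\bP^1$, using the two-element affinoid covering $\{\bD, w\bD\}$ and the equivariance of $\sL$ under the full group $G_0 = \GL_2(\cO_F)$. First I would reduce to the prime-to-$p$ torsion case together with a character-twist argument: writing $[\sL] = [\sL_p][\sL_{p'}]$ as in the proof of Theorem \ref{DnJnJmodule}, the $p$-part $[\sL_p]$ becomes $[\cO_\chi]$ for a locally constant character $\chi$ of $G_0$ after restricting $\omega[\sL_p]$, so up to twisting by an (obviously coadmissible, since locally analytically induced) line bundle associated to $\chi$, one may assume $\omega[\sL] \in \PicCon(\Omega)^{G_0}[p']$. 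Since $\omega[\sL]$ has some prime-to-$p$ order $d$, and by \cite[Corollary 4.3.4]{ArdWad2023} (or the analogous statement on $\Omega$) this order is constrained, the extra hypothesis $d \mid q+1$ used in Theorem \ref{LolCoad} will hold automatically: this is exactly the classification of $G^0$-equivariant torsion line bundles with connection from \cite{ArdWad2023}, which forces $\omega[\sL]$ to be killed by $q+1$. This is the point where I would invoke the classification results of the companion paper rather than reprove anything.

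Next I would set up the local pieces. Let $I$ be the Iwahori subgroup, which maps onto $\Stab_{\GL_2(F)}(\bD)$ under the M\"obius action; let $w$ be a representative of the non-trivial element of the Weyl group, so $w\bD$ is the other disc and $wIw^{-1}$ is the `opposite' Iwahori stabilising $w\bD$. The restriction $\sL_{|\Upsilon}$ lies in $\PicCon^I(\Upsilon)_{\tors}$ with $\omega[\sL_{|\Upsilon}]$ killed by $q+1$, and likewise for the restriction to $w\Upsilon := w\bD \cap \Omega$. Applying Theorem \ref{LolCoad} to $j_{0,\ast}(\sL_{|\Upsilon})$ on $\bD$ gives $j_{0,\ast}(\sL_{|\Upsilon}) \in \cC_{\bD/J}$ for any closed $J \leq I$ containing an open subgroup of the translation group $N = \begin{pmatrix} 1 & \cO_F \\ 0 & 1 \end{pmatrix}$; by symmetry (conjugating by $w$) the analogous statement holds on $w\bD$ with $N$ replaced by the lower-triangular unipotent $N^- = \begin{pmatrix} 1 & 0 \\ \cO_F & 1\end{pmatrix}$. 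Now if $H$ contains an open subgroup of $\mathbb{SL}_2(\cO_F)$, then $H \cap I$ contains an open subgroup of $N$ and $H \cap wIw^{-1}$ contains an open subgroup of $N^-$, so both local hypotheses are met for the group $H$ (and its relevant intersections); one checks that $j_\ast \sL$ restricted to $\bD$ is naturally identified with $j_{0,\ast}(\sL_{|\Upsilon})$ because $\Omega \cap \bD = \Upsilon$ and pushforward along open immersions is compatible with restriction to admissible opens (this uses that $\bP^1(F) \cap \bD = \cO_F$, so no points of $\bP^1(F)$ sit on the `boundary circle' in a problematic way — more precisely $\bD - \Upsilon$ is a finite union of open discs, each contained in $\bD$).

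The globalisation step itself I would carry out by the standard gluing mechanism for coadmissible equivariant $\cD$-modules: the category $\cC_{\bP^1/H}$ is local on $\bP^1$ for admissible coverings by $H$-stable opens in the sense that a locally Fr\'echet $H$-equivariant $\cD$-module which lies in $\cC_{U/H_U}$ for each member $U$ of such a covering (with matching restrictions on overlaps) lies in $\cC_{\bP^1/H}$ — this is part of the sheaf-theoretic formalism of \cite{EqDCap} underlying Theorem \ref{RigidEqBB}. The covering $\{\bD, w\bD\}$ is $G_0$-stable hence $H$-stable, the overlap $\bD \cap w\bD$ is an affinoid annulus on which $\sL$ is a genuine line bundle with connection (no points of $\bP^1(F)$ there), so $j_\ast\sL$ restricted to the overlap is visibly coadmissible and the cocycle condition is automatic. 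First verifying that $j_\ast \sL$ is a locally Fr\'echet $H$-equivariant $\cD$-module on $\bP^1$ is routine, exactly as in Lemma \ref{LocFrech}, using a quasi-Stein covering of $U \cap \Omega$ for each affinoid $U \subseteq \bP^1$. The main obstacle, and the step I would spend the most care on, is checking that the hypothesis `$H$ contains an open subgroup of $\mathbb{SL}_2(\cO_F)$' really does supply, simultaneously for the two charts, groups satisfying the `contains an open subgroup of $N$' (resp. $N^-$) condition of Theorem \ref{LolCoad}, and that the local-to-global gluing genuinely produces an object of $\cC_{\bP^1/H}$ rather than merely a coherent-looking family — i.e. that the restriction isomorphisms $\Loc$ glue to a global isomorphism $\Loc^{\w\cD(\bP^1,H)}_{\bP^1}((j_\ast\sL)(\bP^1)) \cong j_\ast\sL$. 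This last check is the heart of the matter and is where the formalism of \cite[\S 3.5--3.6]{EqDCap} is indispensable; I expect it to be a careful but ultimately formal diagram-chase once the two local statements are in hand.
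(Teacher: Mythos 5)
Your proposal is correct and matches the paper's proof: restrict to the $G_0$-stable covering $\{\bD, w\bD\}$, apply Theorem \ref{LolCoad} on each chart with $J = H \cap I$ (resp.\ $H \cap {}^{w}I$) containing an open subgroup of $N$ (resp.\ $N^-$), the hypothesis that $\omega[\sL]$ is killed by $q+1$ following from \cite[Corollary 4.3.9]{ArdWad2023}, and conclude via \cite[Definition 3.6.7]{EqDCap}. The only superfluous part is your opening character-twist reduction to strip the $p$-part from $[\sL]$: Theorem \ref{LolCoad} is already stated for arbitrary torsion $[\sL] \in \PicCon^I(\Upsilon)_{\tors}$ with $\omega[\sL]$ killed by $q+1$, and the gluing step you single out as the ``heart of the matter'' is in fact immediate from the definition of $\{\bD, w\bD\}$-coadmissibility once the two local statements are established.
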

\begin{proof} Using the argument given in the proof of Lemma \ref{LocFrech}, we see that $j_\ast \sL$ is a $G_0$-equivariant locally Fr\'echet $\cD$-module on $\bP^1$. Let $w := \begin{pmatrix} 0 & 1 \\ 1 & 0 \end{pmatrix} \in G_0$ so that $\{\bD, w\bD\}$ forms an admissible affinoid covering of $\bP^1$ by two copies of the closed unit disc $\bD$. Note that the Iwahori subgroup $I$ from $(\ref{IwahoriEq})$ stabilises $\bD$, so the opposite Iwahori subgroup ${}^w I := w I w^{-1}$ stabilises $w \bD$; we will show that $j_\ast \sL$ is $\{\bD, w\bD\}$-coadmissible in the sense of \cite[Definition 3.6.7(a)]{EqDCap}.

Next, consider the restriction $\sL_{|\Upsilon}$ so that $[\sL_{|\Upsilon}] \in \PicCon^I(\Upsilon)_{\tors}$.  Because  $\omega[\sL] \in \PicCon(\Omega)^{G_0}$ is killed by $q+1$ by \cite[Corollary 4.3.9]{ArdWad2023}, we see that $\omega[\sL_0] \in \PicCon(\Upsilon)^I$ is also killed by $q+1$. Since $H$ contains an open subgroup of $\mathbb{SL}_2(\cO_F)$, $J := H \cap I$ contains an open subgroup of $N$. Therefore $(j_\ast \sL)_{|\bD} = j_{0,\ast} (\sL_{|\Upsilon}) \in \cC_{\bD / J}$ by Theorem \ref{LolCoad}. Entirely similarly we deduce that $(j_\ast \sL)_{| w\bD} \in \cC_{w \bD / (H \cap {}^w I)}$. Hence $j_\ast \sL \in \cC_{\bP^1 / H}$ by \cite[Definition 3.6.7]{EqDCap}.
\end{proof}

We will now work towards the irreducibility of $j_\ast \sL$ viewed as an object in the abelian category $\cC_{\bP^1/G_0}$. We start in the following abstract setting. Let $X$ be a set equipped with a $G$-topology, let $\cS$ be a sheaf of rings on $X$, let $\cU$ be a covering of $X$ and let $\cM$ be a sheaf of $\cS$-modules on $X$. We say $\cM$ is \emph{$\cU$-quasi-coherent} if the canonical map 
\[\cS(V) \underset{\cS(U)}{\otimes}{} \cM(U) \longrightarrow \cM(V)\]
is an isomorphism whenever $U \in \cU$ and $V$ is an admissible open subset of $U$. For $U, V \in \cU$, write $U \sim_{\cM} V$ if $\cM(U \cap V) \neq 0$ and let $\cong_{\cM}$ be the equivalence relation on $\cU$ generated by $\sim_{\cM}$. We say that $\cU$ is \emph{$\cM$-connected} if there is only one $\cong_{\cM}$ equivalence class in $\cU$.

\begin{prop}\label{GeneralIrred} Let $\cM$ be an $\cS$-module on $X$ and let $\cU$ be an admissible covering of $X$. Suppose that
\be \item $\cM$ is $\cU$-quasi-coherent,
\item $\cU$ is $\cM$-connected, and
\item $\cM(U)$ is a simple $\cS(U)$-module for all $U \in \cU$. 
\ee Then $\cM$ contains no non-zero proper $\cU$-quasi-coherent submodules.
\end{prop}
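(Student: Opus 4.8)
Suppose $\cN$ is a non-zero $\cU$-quasi-coherent $\cS$-submodule of $\cM$; I want to show $\cN = \cM$. The plan is to first record the set $\cU_{\cN} := \{U \in \cU : \cN(U) \neq 0\}$ and argue that it is non-empty: since $\cN \neq 0$ and $\cU$ is an admissible covering, there is some admissible open $V$ with $\cN(V) \neq 0$, and $V$ itself is covered by admissible opens of the form $U' \cap V$ with $U \in \cU$; by the sheaf property of $\cN$ some such restriction is non-zero, and then $\cN(U) \neq 0$ by left-exactness of restriction (a submodule of $\cM(U)$ surjecting onto something non-zero must itself be non-zero). So $\cU_{\cN} \neq \emptyset$.

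Next I would show $\cU_{\cN} = \cU$ using connectedness. For $U \in \cU_{\cN}$, hypothesis (c) says $\cM(U)$ is a simple $\cS(U)$-module, and $\cN(U)$ is a non-zero $\cS(U)$-submodule, hence $\cN(U) = \cM(U)$. Now take any $U, W \in \cU$ with $U \sim_{\cM} W$, i.e. $\cM(U \cap W) \neq 0$, and suppose $U \in \cU_{\cN}$. By $\cU$-quasi-coherence of $\cN$ applied with $U \in \cU$ and the admissible open $U \cap W \subseteq U$, together with $\cN(U) = \cM(U)$ and $\cU$-quasi-coherence of $\cM$, we get
\[ \cN(U \cap W) \cong \cS(U \cap W) \underset{\cS(U)}{\otimes}{} \cN(U) = \cS(U \cap W) \underset{\cS(U)}{\otimes}{} \cM(U) \cong \cM(U \cap W) \neq 0.\]
Hence $\cN(U \cap W) \neq 0$, which forces $\cN(W) \neq 0$ by the same left-exactness argument as above (restriction $\cN(W) \to \cN(U \cap W)$ has non-zero target, so non-zero source), so $W \in \cU_{\cN}$. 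Thus $\cU_{\cN}$ is a union of $\cong_{\cM}$-equivalence classes; being non-empty and $\cU$ being $\cM$-connected, $\cU_{\cN} = \cU$.

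Finally I would globalise: for every $U \in \cU$ we now have $\cN(U) = \cM(U)$. Given any admissible open $V$ of $X$, the sets $U \cap V$ for $U \in \cU$ form an admissible covering of $V$, and for each such the restriction map $\cN(U) \to \cN(U \cap V)$ together with the isomorphism $\cN(U) = \cM(U)$ and $\cU$-quasi-coherence of both $\cM$ and $\cN$ (applied at $U \in \cU$, admissible open $U \cap V$) shows $\cN(U \cap V) \cong \cS(U \cap V) \otimes_{\cS(U)} \cM(U) \cong \cM(U \cap V)$ compatibly with the inclusion $\cN \hookrightarrow \cM$; so $\cN$ and $\cM$ agree on every member of a covering of $V$, and since both are sheaves the inclusion $\cN(V) \hookrightarrow \cM(V)$ is an isomorphism. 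Hence $\cN = \cM$.

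I do not anticipate a serious obstacle here — the argument is a routine connectedness/simplicity propagation. The only points that need a little care are: (i) checking that ``$\cN(\text{something}) \neq 0$'' propagates correctly across restrictions, which uses that $\cN$ is a subsheaf of $\cM$ so restriction maps are injective on $\cM$ but one really only needs that a non-zero module cannot restrict to zero when the restriction is part of a separating covering — this is where the sheaf axiom for $\cN$ (separatedness) is used; and (ii) making sure the quasi-coherence isomorphisms for $\cN$ and $\cM$ are compatible with the inclusion $\cN \hookrightarrow \cM$, which holds because the canonical base-change maps are natural in the module. Everything else is formal.
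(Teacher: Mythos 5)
Your overall strategy matches the paper's and the conclusion is right, but the justification you give for the non-vanishing propagation steps does not hold up, and this is precisely the point you yourself flagged as delicate. Twice you need to pass from $\cN(U\cap W)\neq 0$ to $\cN(W)\neq 0$ (and similarly from $\cN(U\cap V)\neq 0$ to $\cN(U)\neq 0$ in the non-emptiness step). You attribute this to ``left-exactness of restriction'' and, in your caveats, to the separatedness axiom for $\cN$ together with the claim that restriction maps are injective on $\cM$. None of these work: a single restriction map $\cN(W)\to\cN(U\cap W)$ need not be injective (restriction maps of sheaves are generally not injective, even for subsheaves of $\cO$-modules; being a subsheaf of $\cM$ only tells you that the $\cN(Y)\hookrightarrow\cM(Y)$ are injective for each $Y$), and the singleton $\{U\cap W\}$ is not a covering of $W$, so the sheaf separatedness axiom is silent. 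The parenthetical ``a submodule of $\cM(U)$ surjecting onto something non-zero must itself be non-zero'' is not a justification at all, since $\cN(U)\to\cN(U\cap V)$ is not surjective in general.

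What actually makes the implication work is $\cU$-quasi-coherence of $\cN$ applied on the \emph{other side}: with $W\in\cU$ and $U\cap W$ admissible open in $W$, the canonical map $\cS(U\cap W)\otimes_{\cS(W)}\cN(W)\to\cN(U\cap W)$ is an isomorphism, so $\cN(W)=0$ would force $\cN(U\cap W)=0$. That is the missing ingredient. The paper's proof sidesteps the need to state this implication explicitly by arguing by contradiction: assuming $\cN(U)=0$ and $\cN(V)=\cM(V)$, quasi-coherence of $\cN$ at $U$ gives $\cN(U\cap V)=0$, while quasi-coherence of $\cN$ at $V$ together with $\cN(V)=\cM(V)$ and quasi-coherence of $\cM$ gives $\cN(U\cap V)=\cM(U\cap V)\neq 0$ --- a contradiction. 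Your formulation is fine too, provided you replace the left-exactness and separatedness invocations with the quasi-coherence argument; as written, the step does not follow from what you cite. The globalisation at the end is correct.
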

\begin{proof} Let $\cN$ be a $\cU$-quasi-coherent $\cS$-submodule of $\cM$. We first show that for any $U, V \in \cU$ with $U \sim_{\cM} V$, $\cN(U) = \cM(U)$ if and only $\cN(V)=\cM(V)$. If not, then because $\cM(U)$ and $\cM(V)$ are simple, we may assume that without loss of generality that $\cN(U) = 0$ and $\cN(V) = \cM(V)$. But then because both $\cN$ and $\cM$ are $\cU$-quasi-coherent,
\[ \begin{array}{lllll} \cM(U \cap V) &=& \cS(U \cap V) \underset{\cS(V)}{\otimes}{} \cM(V) &=& \cS(U \cap V) \underset{\cS(V)}{\otimes}{} \cN(V)   \\
&=& \cN(U \cap V) &=& \cS(U \cap V) \underset{\cS(U)}{\otimes}{} \cN(U) = 0\end{array}\]
which contradicts the hypothesis that $U \sim_{\cM} V$. 

Suppose now that the $\cS$-submodule $\cN$ is non-zero. Then we can find at least one $U \in \cU$ such that $\cN(U) \neq 0$, and hence $\cN(U) = \cM(U)$ because $\cM(U)$ is simple. Since $\cU$ is $\cM$-connected, we conclude that in fact $\cN(U) = \cM(U)$ for all $U \in \cU$. Since $\cN$ and $\cM$ are both $\cU$-quasi-coherent, we see that
\[ \cN(W) = \cS(W) \underset{\cS(U)}{\otimes}{} \cN(U) = \cS(W) \underset{\cS(U)}{\otimes}{} \cM(U)  = \cM(W)\]
for any $U \in \cU$ and any admissible open $W \subseteq U$; thus $\cN_{|U} = \cM_{|U}$ for all $U \in \cU$. Since $\cU$ is an admissible covering of $X$ and since $\cN$ and $\cM$ are sheaves, we conclude that $\cN = \cM$.\end{proof}

Here is how we apply this general result. Recall the sheaf of rings $\sD_n = \cD^\dag_{\varpi/|{\pi_F}|^n}$ from Definition \ref{CurlyDn} on the $G$-topology $\bD_n$ on $\bD$, and recall that the truncated line bundle $\sL_n := j_{0,\ast}(\sL_{\overline{\Upsilon_n}})$ is a $\sD_n$-module by Corollary \ref{LnDn}. 

\begin{thm}\label{LnDsimple} Let $[\sL] \in \PicCon^I( \Upsilon)_{\tors}$. Suppose that the image $\omega[\sL]$ of $[\sL]$ in $\PicCon(\Upsilon)_{\tors}$ is non-zero and of order $d$ dividing $q+1$. Let $j_0 : \Upsilon \hookrightarrow \bD$ be the inclusion, let $n \geq 0$ and let $\sL_n := j_{0,\ast}(\sL_{\overline{\Upsilon_n}})$. Then $\sL_n(\bD)$ is a simple $\sD_n(\bD)$-module.
\end{thm}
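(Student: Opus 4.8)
The plan is to exhibit $\sL_n(\bD)$ as the module of global sections of a $\sD_n$-module on $\bD$ that satisfies the hypotheses of the abstract irreducibility criterion Proposition \ref{GeneralIrred}, applied with the covering $\cU_n = \{\Upsilon_n, W_{a,n} : a \in \cO_F\}$ from Definition \ref{Win}. Concretely, I would take $X$ to be the $G$-topology $\bD_n$, $\cS := \sD_n$, $\cM := \sL_n$ and $\cU := \cU_n$, and then verify the three bullet conditions: that $\sL_n$ is $\cU_n$-quasi-coherent, that $\cU_n$ is $\sL_n$-connected, and that $\sL_n(U)$ is a simple $\sD_n(U)$-module for each $U \in \cU_n$. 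Granted these, Proposition \ref{GeneralIrred} tells us $\sL_n$ has no non-zero proper $\cU_n$-quasi-coherent submodules; since any $\sD_n(\bD)$-submodule $M$ of $\sL_n(\bD)$ generates a $\cU_n$-quasi-coherent subsheaf $\sN$ with $\sN(\bD) = M$ (using Lemma \ref{GenByGS}(a) to propagate quasi-coherence and the sheaf property to reconstruct $M$ as global sections), this forces $M = 0$ or $M = \sL_n(\bD)$.

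The quasi-coherence of $\sL_n$ with respect to the covering members is exactly Lemma \ref{GenByGS}(a), which says $\sD_n(Y) \otimes_{\sD_n(X)} \sL_n(X) \to \sL_n(Y)$ is an isomorphism whenever $Y \subseteq X$ in $\bD_n$. For simplicity on the members of $\cU_n$: on $\Upsilon_n$, which is a connected $(|{\pi_F}|^n \partial_x/\varpi)^\dag$-admissible affinoid contained in $V_n$, this is Corollary \ref{LnYsimple} (using that $\sL_n(\Upsilon_n)$ is a free rank-one $\cO(\Upsilon_n)$-module and $\cT(\Upsilon_n)$ is freely generated by $\partial_x$, via \cite[Lemma 4.3.2]{ArdWad2023}). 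On $W_{a,n}$, this is Corollary \ref{LnWSimple}, which established simplicity precisely when $k \neq d$ where $k$ is the parameter from Definition \ref{TheK} recording the value of $M_{n,d}(\omega[\sL])$ as a multiple of $\nu_n$; since $\omega[\sL]$ is assumed non-zero, $k \not\equiv 0 \bmod d$, so $k \neq d$ and Corollary \ref{LnWSimple} applies. (Here one should also note $n \geq v_{\pi_F}(e)$ is not actually needed for this local statement since the line bundle structure and the presentation in Theorem \ref{MandMdash} only depend on $\omega[\sL]$; if needed, replace $n$ by $\max\{n, v_{\pi_F}(e)\}$ and use that restriction from a larger $\Upsilon_m$ is compatible — but I expect the argument of Corollary \ref{LnWSimple} goes through verbatim.)

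The connectedness of $\cU_n$ under $\cong_{\sL_n}$ is the step requiring a small genuine argument, though not a hard one: for each $a \in \cO_F$, the intersection $W_{a,n} \cap \Upsilon_n$ is non-empty (it is the cheese $W_{a,n}$ with its innermost disc of radius $|{\pi_F}|^n$ removed), and $\sL_n$ is a line bundle with connection whose underlying sheaf is $(\cO_{\bD})_{\overline{\Upsilon_n}}$-twisted, hence $\sL_n(W_{a,n} \cap \Upsilon_n) \cong \cO(W_{a,n} \cap \Upsilon_n) \neq 0$ by Corollary \ref{iUjOSect} and Lemma \ref{iDagLocal}. Thus every $W_{a,n} \sim_{\sL_n} \Upsilon_n$, so all members lie in one equivalence class. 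Finally, $\cU_n$ is a $\bD_n$-admissible covering of $\bD$ by Lemma \ref{CheckCov2} (its $\bD_n$-admissibility, ignoring the $I_{n+1}$-stability part which we do not need here), so the sheaf-theoretic conclusion of Proposition \ref{GeneralIrred} does yield a statement about global sections on all of $\bD$. I expect the only point demanding care is the bookkeeping in passing from an arbitrary $\sD_n(\bD)$-submodule to a $\cU_n$-quasi-coherent subsheaf and back — one must check the subsheaf generated by $M$ has the right global sections, which follows from flatness of the restriction maps $\sD_n(U) \leftarrow \sD_n(\bD)$ (Lemma \ref{Flat}(b)) together with the sheaf property of $\sL_n$ (Theorem \ref{DagTate} and Theorem \ref{NCDagTate}) — but this is routine.
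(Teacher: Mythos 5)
Your proposal follows essentially the same route as the paper: both use Proposition \ref{GeneralIrred} with the covering $\cU_n = \{\Upsilon_n, W_{a,n}\}$, establish quasi-coherence via Lemma \ref{GenByGS}(a), simplicity on the members via Corollaries \ref{LnYsimple} and \ref{LnWSimple}, and then pass from an arbitrary $\sD_n(\bD)$-submodule to a $\cU_n$-quasi-coherent subsheaf using flatness (Lemma \ref{Flat}(b)) and the exactness of the augmented \v{C}ech complex (Theorem \ref{NCDagTate}). The one place where you are slightly quicker than the paper is the deduction that $k \neq d$: the paper first invokes Lemma \ref{UpsMeasures}(d) (injectivity of the restriction $\PicCon(\Upsilon)^I[d] \to \Con(\Upsilon_n)^I[d]$) to pass from nonvanishing of $\omega[\sL]$ to nonvanishing of $M_{n,d}(\omega[\sL])$, which is needed because $k$ is defined via the restriction to $\Upsilon_n$; you should cite this step. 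Your aside about $n\geq v_{\pi_F}(e)$ not actually being required for Corollary \ref{LnWSimple} is correct (the arguments there depend only on the line-bundle structure, not on $I$-equivariance) and is a point the paper passes over silently.
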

\begin{proof} Let $\cU_n$ be the covering $\{W_{1,n},\ldots, W_{h_n,n}, \Upsilon_n\}$ from Definition \ref{Win}. We can use Lemma \ref{GenByGS}(a) to see that $\sL_n$ is a $\cU_n$-quasi-coherent $\sD_n$-module on $\bD_n$. Then by construction, $\sL_n(W_{i,n} \cap \Upsilon_n)$ is non-zero for any $i = 1,\ldots, h_n$, so the covering $\cU_n$ is $\sL_n$-connected. 

Using Lemma \ref{UpsMeasures}(d), we see that $\omega[\sL|_{\Upsilon_n}] \in \Con(\Upsilon_n)^I[d]$ is non-zero. Hence $k\nu_n = M_{n,d}(\omega[\sL]) = \mu_{\Upsilon_n,d}(\theta_d(\omega[\sL|_{\Upsilon_n}]))$ is non-zero as well by \cite[Corollary 4.3.4]{ArdWad2023}. Since $k \in \{1,\cdots, d\}$ by Definition \ref{TheK}, Lemma \ref{UpsMeasures}(a) now implies that $k \neq d$. Hence $\sL_n(W_{i,n})$ is a simple $\sD_n(W_{i,n})$-module for each $i$ by Corollary \ref{LnWSimple}. 

On the other hand, $\sL_n(\Upsilon_n)$ is a simple $\sD_n(\Upsilon_n)$-module by Corollary \ref{LnYsimple}, because $\Upsilon_n$ is a connected affinoid subdomain of $\bD$. Hence $\sL_n$ contains no non-zero proper $\cU_n$-quasi-coherent $\sD_n$-submodules by Proposition \ref{GeneralIrred}.

Let $L$ be a $\sD_n(\bD)$-submodule of $\sL_n(\bD)$, and consider the presheaf of $\sD_n$-modules $\cL := \sD_n \underset{\sD_n(\bD)}{\otimes}{} L$. For any $\bD_n$-admissible covering $\cV$ of $\bD$, the augmented \v{C}ech complex $C^\bullet_{\aug}(\cV, \sD_n)$ is exact by Theorem \ref{NCDagTate}. Each of its terms is flat as a right $\sD_n(\bD)$-module by Lemma \ref{Flat}(b). Because the covering $\cV$ is finite by Definition \ref{DagSite}(b), the complex is bounded above, and therefore, by induction on its length, each of its syzygies is flat as a right $\sD_n(\bD)$-module. Tensoring this complex on the right by $L$ over $\sD_n(\bD)$ and splicing, we conclude that the augmented \v{C}ech complex $C^\bullet_{\aug}(\cV, \cL)$ is also exact. In particular, this means that $\cL$ is a sheaf on $\bD_n$ and $\cL(\bD) = L$. Applying Lemma \ref{Flat}(b) again, we see that $\cL$ is a subsheaf of $\sD_n \underset{\sD_n(\bD)}{\otimes}{} \sL_n(\bD) \cong \sL_n$. Since $\cL$ is $\cU_n$-quasi-coherent by construction, we conclude from the first paragraph that either $\cL = 0$ or $\cL = \sL_n$. Because $L = \cL(\bD)$, we conclude that either $L = 0$ or $L = \sL_n(\bD)$ as required. \end{proof}

Recall the sheaves $\cT_n$ and $\cS_n$ on $\bD_n/J$ from Definition \ref{DefnOfSandT}.

\begin{cor}\label{LnSsimple} With the notation and assumptions of Theorem \ref{LnDsimple}, $\sL_n(\bD)$ is a simple $\cS_n(\bD)$-module, whenever $n \geq v_{\pi_F}(e)$.
\end{cor}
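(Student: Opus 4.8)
The statement to be proved is that $\sL_n(\bD)$ is a simple $\cS_n(\bD)$-module, for $n \geq v_{\pi_F}(e)$, under the hypotheses of Theorem~\ref{LnDsimple}; recall that $\cS_n(\bD) = \sD_n(\bD) \rtimes_{J_{n+1}}^\beta J$. Since $n \geq v_{\pi_F}(e)$, Theorem~\ref{DnJnJmodule} tells us that $\sL_n(\bD)$ genuinely is an $\cS_n(\bD)$-module, so the statement makes sense. The basic idea is that simplicity as an $\cS_n(\bD)$-module is a \emph{weaker} requirement than simplicity as a $\sD_n(\bD)$-module, because every $\cS_n(\bD)$-submodule is in particular a $\sD_n(\bD)$-submodule; so the corollary should follow immediately from Theorem~\ref{LnDsimple} by restriction of scalars along the inclusion $\sD_n(\bD) \hookrightarrow \cS_n(\bD)$.

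Concretely, the plan is as follows. Let $L$ be a non-zero $\cS_n(\bD)$-submodule of $\sL_n(\bD)$. Then $L$ is in particular a non-zero $\sD_n(\bD)$-submodule of $\sL_n(\bD)$, since $\sD_n(\bD) \subseteq \cS_n(\bD)$ as a subring (the copy of $\sD_n(\bD) \cdot e$ inside the crossed product). By Theorem~\ref{LnDsimple}, $\sL_n(\bD)$ is a simple $\sD_n(\bD)$-module, so $L = \sL_n(\bD)$. Hence $\sL_n(\bD)$ has no non-zero proper $\cS_n(\bD)$-submodules, which is precisely the claim.

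The only genuine points to check are bookkeeping ones: first, that $\sD_n(\bD)$ really does embed as a subring of $\cS_n(\bD) = \sD_n(\bD) \rtimes_{J_{n+1}}^\beta J$ --- this holds because the crossed product is by construction a free left $\sD_n(\bD)$-module on coset representatives of $J/J_{n+1}$ with the identity coset giving a copy of $\sD_n(\bD)$, cf.\ Lemma~\ref{RingSGN}(b) and Proposition~\ref{SheafDnJ}; and second, that the $\cS_n(\bD)$-action on $\sL_n(\bD)$ supplied by Theorem~\ref{DnJnJmodule} restricts to the original $\sD_n(\bD)$-action --- which is immediate from the construction in that theorem, where the $\sD_n \rtimes J$-action (and hence its factorisation through $\cS_n$) extends the $\sD_n$-action from Corollary~\ref{LnDn}. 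There is no real obstacle here: the corollary is a one-line deduction from Theorem~\ref{LnDsimple}, and essentially all the work has already been done in proving that theorem (and in particular in Corollary~\ref{LnWSimple} and Corollary~\ref{LnYsimple}, which feed into it). I would simply write ``This follows immediately from Theorem~\ref{LnDsimple}, since $\sD_n(\bD)$ is a subring of $\cS_n(\bD)$ and the $\cS_n(\bD)$-module structure on $\sL_n(\bD)$ from Theorem~\ref{DnJnJmodule} restricts to the $\sD_n(\bD)$-module structure: any non-zero $\cS_n(\bD)$-submodule is a non-zero $\sD_n(\bD)$-submodule, hence all of $\sL_n(\bD)$.''
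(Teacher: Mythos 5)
Your proposal is correct and is exactly the paper's argument: the $\sD_n(\bD)$-action extends to $\cS_n(\bD)$ by Theorem~\ref{DnJnJmodule}, and since any $\cS_n(\bD)$-submodule is in particular a $\sD_n(\bD)$-submodule, simplicity over $\sD_n(\bD)$ (Theorem~\ref{LnDsimple}) forces simplicity over $\cS_n(\bD)$. The bookkeeping remarks you add about the subring inclusion and the compatibility of actions are accurate but left implicit in the paper.
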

\begin{proof} We know that the $\sD_n(\bD)$-action on $\sL_n(\bD)$ extends to the crossed product $\cS_n(\bD)  = \sD_n(\bD) \rtimes_{J_{n+1}} J$ by Theorem \ref{DnJnJmodule}, and that $\sL_n(\bD)$ is a simple $\sD_n(\bD)$-module by Theorem \ref{LnDsimple}. So it is per force simple as an $\cS_n(\bD)$-module.
\end{proof}

\begin{thm}\label{jLisTopSimpleOnD} Let $[\sL] \in \PicCon^I( \Upsilon)_{\tors}$. Suppose that the image $\omega[\sL]$ of $[\sL]$ in $\PicCon( \Upsilon)$ is non-zero and its order divides $q+1$. Then $\sL(\Upsilon)$ is a topologically irreducible $\w\cD(\bD, J)$-module for any closed subgroup $J$ of $I$ which contains an open subgroup of $N$.
\end{thm}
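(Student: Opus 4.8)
The plan is to combine the simplicity of the truncated line bundles from Corollary \ref{LnSsimple} with the crossed-product presentation $\w\cD(\bD,J)\cong\invlim_m\cS_m(\bD)$ of Corollary \ref{wDJ}, where $\cS_m(\bD)=\sD_m(\bD)\rtimes_{J_{m+1}}J$. First note that $(j_\ast\sL)(\bD)=\sL(\Upsilon)$ is a coadmissible $\w\cD(\bD,J)$-module by Theorem \ref{CoadmSects}(a), and it is nonzero since $\sL$ is a line bundle on the nonempty quasi-Stein space $\Upsilon$. By \cite{ST}, the closed $\w\cD(\bD,J)$-submodules of a coadmissible module coincide with its coadmissible submodules, so it suffices to show that $M:=\sL(\Upsilon)$ has no nonzero proper coadmissible submodule.

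Fix $m_0\geq v_{\pi_F}(e)$ with $N_{m_0+1}\leq J$, which exists because $J$ contains an open subgroup of $N$. Since $\cS_m(\bD)$ is flat over $\cT_m(\bD)$ by Theorem \ref{SflatOverT} and $\cT_m(\bD)$ is flat over $\w\cD(\bD,J)$ by Fr\'echet-Stein theory \cite{ST}, the ring $\cS_m(\bD)$ is flat over $\w\cD(\bD,J)$. Given a coadmissible submodule $N\subseteq M$, I would tensor $0\to N\to M\to M/N\to 0$ with $\cS_m(\bD)$ over $\w\cD(\bD,J)$ to obtain, using the isomorphism $\cS_m(\bD)\otimes_{\w\cD(\bD,J)}M\cong\sL_m(\bD)$ of Theorem \ref{CoadmSects}(b), a short exact sequence of $\cS_m(\bD)$-modules $0\to N_m\to\sL_m(\bD)\to(M/N)_m\to 0$ for every $m\geq m_0$, where $N_m:=\cS_m(\bD)\otimes_{\w\cD(\bD,J)}N$ and $(M/N)_m$ is defined analogously. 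As $\sL_m(\bD)$ is a simple $\cS_m(\bD)$-module for $m\geq v_{\pi_F}(e)$ by Corollary \ref{LnSsimple}, each of $N_m$ and $(M/N)_m$ is either zero or all of $\sL_m(\bD)$, and exactly one of them is nonzero.

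To finish, it is enough to rule out that both $N$ and $M/N$ are nonzero. If $N\neq 0$ then, $N$ being coadmissible, its components $\cT_\ell(\bD)\otimes_{\w\cD(\bD,J)}N$ are nonzero for all large $\ell$; writing $c:=\epsilon v_{\pi_F}(p)$ and using the ring homomorphisms $g_\ell\colon\cS_{\ell+c}(\bD)\to\cT_\ell(\bD)$ together with the compatibilities of Theorem \ref{TmSmComp}, one identifies $\cT_\ell(\bD)\otimes_{\w\cD(\bD,J)}N$ with $\cT_\ell(\bD)\otimes_{\cS_{\ell+c}(\bD)}N_{\ell+c}$, so $N_m\neq 0$ for all large $m$; the same holds for $M/N$. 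Then, for $m$ large, $N_m\neq 0$ and $(M/N)_m\neq 0$ simultaneously, contradicting that they are complementary submodules of the simple module $\sL_m(\bD)$. Hence $N=0$ or $N=M$, so $M=\sL(\Upsilon)$ is topologically irreducible. The step I expect to be most delicate is exactly this flatness-and-nonvanishing bookkeeping linking the overconvergent crossed-product tower $(\cS_m(\bD))_m$ to the genuine Fr\'echet-Stein presentation $(\cT_m(\bD))_m$ of $\w\cD(\bD,J)$: the rings $\sD_m(\bD)=\cD^\dag_{\varpi/|\pi_F|^m}(\bD)$ are \emph{not} faithfully flat over the Banach rings $\cD_{|\pi_F|^{-m}}(\bD)$ underlying $\cT_m(\bD)$, so the comparison must be routed through the two-sided maps $f_m$, $g_m$ of Theorem \ref{TmSmComp} and the interleaving of towers in Proposition \ref{TStowers}, rather than by comparing $\cS_m(\bD)$ and $\cT_m(\bD)$ directly.
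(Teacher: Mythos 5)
Your proof is correct and follows essentially the same route as the paper: coadmissibility from Theorem \ref{CoadmSects}, flatness of $\cS_m(\bD)$ over $\w\cD(\bD,J)$ via Theorem \ref{SflatOverT} and the Fr\'echet--Stein presentation by the $\cT_\bullet$-tower, simplicity of $\sL_m(\bD)$ from Corollary \ref{LnSsimple}, and the factoring of $\w\cD(\bD,J)\to\cT_m(\bD)$ through $\cS_{m+c}(\bD)$ from Theorem \ref{TmSmComp} to push nonvanishing from the $\cT$-components to the $\cS$-components. The paper phrases the contradiction as $S_{n+c}\otimes_{T_\infty}M_\infty$ failing to be simple rather than tensoring the short exact sequence directly, and credits Schneider--Teitelbaum explicitly for the nonvanishing of the components, but these are cosmetic differences.
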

\begin{proof} We know that $M_\infty := \sL(\Upsilon)$ is a coadmissible module over the Fr\'echet-Stein algebra $\w\cD(\bD,J)$ by Lemma \ref{jLlimit} and Theorem \ref{CoadmSects}. Let $S_n := \cS_n(\bD)$ and $T_n := \cT_n(\bD)$ for each $n \geq 0$; it follows from Lemma \ref{PiMstable} that the integer $m_{\bD}$ from Definition \ref{mX} is zero, so the tower of rings $T_\bullet$ gives a Fr\'echet-Stein presentation of $T_\infty := \w\cD(\bD, J)$ by Proposition \ref{TmPres}. Hence each $T_n$ is a flat right $T_\infty$-module by \cite[Remark 3.2]{ST}. On the other hand, $S_n$ is a flat right $T_n$-module by Theorem \ref{SflatOverT} and hence it is also a flat right $T_\infty$-module.

Now suppose that $L$ is a non-zero, closed, proper $T_\infty$-submodule of $M_\infty$. Then by \cite[Corollary 3.3]{ST}, $T_n \otimes_{T_\infty}L$ and $T_n \otimes_{T_\infty} (M_\infty / L)$ are both non-zero for all sufficiently large integers $n$. Let $c := \epsilon v_{\pi_F}(p)$; because the map $T_\infty \to T_n$ factors through $S_{n+c}$ by Theorem \ref{TmSmComp}, it follows that $S_{n+c} \otimes_{T_\infty} L$ and $S_{n+c} \otimes_{T_\infty} (M_\infty / L)$ are also both non-zero. Since $S_{n+c}$ is a flat $T_\infty$-module by the first paragraph, we conclude that $S_{n+c} \otimes_{T_\infty}M_\infty$ is not a simple $S_{n+c}$-module. However this module is isomorphic to $M_{n+c}$ by Theorem \ref{CoadmSects}(b), which contradicts Corollary \ref{LnSsimple}. \end{proof}

\begin{cor}\label{SimpleEqDmoduleLocally} Let $[\sL] \in \PicCon^I( \Upsilon)_{\tors}$. Suppose that the image $\omega[\sL]$ of $[\sL]$ in $\PicCon( \Upsilon)$ is non-zero and its order divides $q+1$. Let $J$ be a closed subgroup of $I$ which contains an open subgroup of $N$. Then $j_{0,\ast} \sL$ is a simple object in $\cC_{\bD/J}$.
\end{cor}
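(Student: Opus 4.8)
The goal is to deduce that $j_{0,\ast}\sL$ is simple in $\cC_{\bD/J}$ from the topological irreducibility of its global sections $\sL(\Upsilon) = (j_{0,\ast}\sL)(\bD)$ as a $\w\cD(\bD,J)$-module, established in Theorem \ref{jLisTopSimpleOnD}. The plan is to use the equivariant localisation formalism from \cite{EqDCap}: since $j_{0,\ast}\sL \in \cC_{\bD/J}$ by Theorem \ref{LolCoad} (using that $\omega[\sL]$ is killed by $q+1$, which holds as its order divides $q+1$, and that $J$ contains an open subgroup of $N$), the global sections functor $\Gamma$ and the localisation functor $\Loc^{\w\cD(\bD,J)}_{\bD}$ should set up an equivalence of categories between $\cC_{\bD/J}$ and the category of coadmissible $\w\cD(\bD,J)$-modules. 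Under such an equivalence, simple objects correspond to simple objects, so the simplicity of $j_{0,\ast}\sL$ in $\cC_{\bD/J}$ would follow immediately from the topological irreducibility of $\sL(\Upsilon)$, noting that a coadmissible module is topologically irreducible precisely when it has no non-zero proper closed submodules, which is the relevant notion of simplicity on the module side.

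First I would invoke the equivariant Beilinson--Bernstein-type localisation theorem --- this is presumably one of the main theorems of \cite{EqDCap}, and it is referenced in the introduction as establishing that $\Loc$ and $\Gamma$ are mutually inverse (or at least an adjoint equivalence) between $\cC_{\bD/J}$ and coadmissible $\w\cD(\bD,J)$-modules, given that $\bD$ is a suitable affinoid (a closed disc). Concretely, for $\cM \in \cC_{\bD/J}$ one has $\Gamma(\bD, \cM)$ a coadmissible $\w\cD(\bD,J)$-module, and $\Loc^{\w\cD(\bD,J)}_{\bD}(\Gamma(\bD,\cM)) \cong \cM$; conversely every coadmissible module arises this way. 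The key point is exactness of both functors (which follows from the Fréchet--Stein structure and the flatness results such as Theorem \ref{SflatOverT} and Lemma \ref{Flat}) so that subobjects correspond to subobjects. I would then argue: if $\cN$ were a non-zero proper subobject of $j_{0,\ast}\sL$ in $\cC_{\bD/J}$, then $\Gamma(\bD,\cN)$ would be a non-zero proper closed $\w\cD(\bD,J)$-submodule of $\Gamma(\bD, j_{0,\ast}\sL) = \sL(\Upsilon)$, contradicting Theorem \ref{jLisTopSimpleOnD}. The non-vanishing and properness of $\Gamma(\bD,\cN)$ use exactness of $\Gamma$ together with faithfulness (no non-zero object of $\cC_{\bD/J}$ has zero global sections), which again comes from the localisation equivalence.

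The main obstacle I anticipate is pinning down the precise statement and hypotheses of the localisation equivalence from \cite{EqDCap} and checking they apply here: one needs $\bD$ to be in the class of spaces for which equivariant localisation holds (it is the closed unit disc, which should be fine, perhaps via a ``$\cD$-affine'' or ``coadmissibly $\cD$-affine'' condition), and one needs to be careful that the notion of ``simple object'' in $\cC_{\bD/J}$ is the one controlled by the equivalence --- i.e. that subobjects in $\cC_{\bD/J}$ are automatically coadmissible equivariant $\cD$-submodules, which should follow since $\cC_{\bD/J}$ is an abelian category and the localisation of a coadmissible submodule is a subobject. A secondary point is the translation between ``topologically irreducible'' (the module-theoretic statement, about closed submodules) and ``simple in the abelian category'' --- but since every subobject's global sections form a closed submodule by exactness and completeness of coadmissible modules, and conversely closed submodules localise to subobjects, these notions match up. Once the localisation equivalence is correctly quoted, the rest is a short formal argument with essentially no computation.

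Alternatively, if one prefers to avoid the full strength of the equivalence, one could argue directly with the local structure: a non-zero subobject $\cN \subseteq j_{0,\ast}\sL$ in $\cC_{\bD/J}$ gives, after passing through the sheaves $\cS_n$ on $\bD_n/J$ as in $\S\ref{CompareSandT}$ and using Corollary \ref{wDJ}, a compatible family of non-zero $\cS_n(\bD)$-submodules of $\sL_n(\bD)$; by Corollary \ref{LnSsimple} each of these equals $\sL_n(\bD)$ for $n \geq v_{\pi_F}(e)$, whence $\cN(\bD) = \sL(\Upsilon)$ by taking inverse limits, and then $\cN = j_{0,\ast}\sL$ by the sheaf property and quasi-coherence (Theorem \ref{CoadmSects}(b) and Lemma \ref{GenByGS}). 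I would present the first (localisation-equivalence) approach as the clean proof and mention this as the concrete underpinning.
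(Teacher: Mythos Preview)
Your proposal is correct and matches the paper's approach: the proof cites Theorem \ref{jLisTopSimpleOnD} together with \cite[Theorem 3.6.11]{EqDCap}, the latter being precisely the equivalence you describe between simple objects in $\cC_{\bD/J}$ and topologically irreducible coadmissible $\w\cD(\bD,J)$-modules. Your alternative direct argument via the $\cS_n$ is unnecessary here, since the cited result from \cite{EqDCap} already packages that correspondence.
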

\begin{proof} This follows from Theorem \ref{jLisTopSimpleOnD} and \cite[Theorem 3.6.11]{EqDCap}.
\end{proof}

\begin{thm}\label{PushForwardIsSimple} Let $[\sL] \in \PicCon^{G_0}(\Omega)_{\tors}$ whose image $\omega [\sL]$ in $\PicCon(\Omega)_{\tors}$ is non-zero. Then $j_\ast \sL$ is a simple object in $\cC_{\bP^1 / H}$ for \emph{every} closed subgroup $H$ of $G_0$ which contains an open subgroup of $\mathbb{SL}_2(\cO_F)$.
\end{thm}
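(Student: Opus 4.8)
The proof will follow the same two-step globalisation strategy used to establish Theorem \ref{PushForwardIsCoad}: we check simplicity locally on the standard two-element affinoid covering $\{\bD, w\bD\}$ of $\bP^1$, where $w := \begin{pmatrix} 0 & 1 \\ 1 & 0 \end{pmatrix} \in G_0$, and then glue. First I would recall that $j_\ast \sL$ is a $G_0$-equivariant locally Fr\'echet $\cD$-module on $\bP^1$ by the argument in the proof of Lemma \ref{LocFrech}, and that $j_\ast \sL \in \cC_{\bP^1/H}$ by Theorem \ref{PushForwardIsCoad}; in particular $(j_\ast \sL)_{|\bD} = j_{0,\ast}(\sL_{|\Upsilon}) \in \cC_{\bD / J}$ where $J := H \cap I$ contains an open subgroup of $N$, and similarly $(j_\ast \sL)_{|w\bD} \in \cC_{w\bD / (H \cap {}^wI)}$. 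The key input is that $\omega[\sL_{|\Upsilon}] \in \PicCon(\Upsilon)$ is non-zero: this follows because the restriction map $\PicCon(\Omega)^{G_0}[q+1] \to \PicCon(\Upsilon)^I[q+1]$ is injective by \cite[Proposition 3.1.9]{ArdWad2023} together with the fact (used already in the proof of Theorem \ref{PushForwardIsCoad}) that $\omega[\sL]$ is killed by $q+1$ by \cite[Corollary 4.3.9]{ArdWad2023}, so if $\omega[\sL_{|\Upsilon}]$ were trivial then $\omega[\sL]$ would be trivial, contrary to hypothesis. Its order divides $q+1$ for the same reason.

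Granting this, Corollary \ref{SimpleEqDmoduleLocally} applies on $\bD$ with the subgroup $J = H \cap I$, showing that $(j_\ast \sL)_{|\bD}$ is a simple object of $\cC_{\bD/J}$; and symmetrically (conjugating everything by $w$, which carries $\bD$ to $w\bD$, $I$ to ${}^wI$, $N$ to ${}^wN = \begin{pmatrix} 1 & 0 \\ \cO_F & 1 \end{pmatrix}$, and $\Upsilon$ to $w\Upsilon$) one sees that $(j_\ast\sL)_{|w\bD}$ is a simple object of $\cC_{w\bD/(H\cap{}^wI)}$. Here I need that $H \cap {}^wI$ contains an open subgroup of ${}^wN$, which holds because $H$ contains an open subgroup of $\mathbb{SL}_2(\cO_F)$, which in turn contains open subgroups of both $N$ and ${}^wN$. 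So on each member of the covering $j_\ast \sL$ restricts to a simple equivariant $\cD$-module.

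The final step is to deduce global simplicity from the local simplicity on the covering $\{\bD, w\bD\}$. This is where I would invoke a connectedness-type argument in the spirit of Proposition \ref{GeneralIrred}: a non-zero subobject $\cN \subseteq j_\ast \sL$ in $\cC_{\bP^1/H}$ restricts to a subobject on each of $\bD$ and $w\bD$, each of which is either zero or everything by local simplicity; and $\cN$ cannot be zero on one piece and everything on the other because the two pieces overlap in $\bD \cap w\bD$, which is non-empty and where $j_\ast\sL$ has non-zero sections, so quasi-coherence of both $\cN$ and $j_\ast\sL$ over the covering would force a contradiction exactly as in the proof of Proposition \ref{GeneralIrred}. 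Hence $\cN$ agrees with $j_\ast\sL$ on both pieces, and since these form an admissible covering of $\bP^1$ and both are sheaves, $\cN = j_\ast \sL$.

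The main obstacle I anticipate is not the globalisation combinatorics but rather making the last paragraph fully rigorous inside the category $\cC_{\bP^1/H}$: one needs the formalism from \cite{EqDCap} relating subobjects in $\cC_{\bP^1/H}$ to compatible families of subobjects in $\cC_{\bD/J}$ and $\cC_{w\bD/(H\cap{}^wI)}$, and the fact that such subobjects are automatically quasi-coherent over the covering so that the tensor-product computation of Proposition \ref{GeneralIrred} can be run. Once the bookkeeping identifying $\cC_{\bP^1/H}$-subobjects with coherent gluing data is in place, the argument is formal; the substantive content has already been absorbed into Corollary \ref{SimpleEqDmoduleLocally} (hence ultimately into Theorem \ref{CompOnW} and the non-triviality of the secret action established in $\S\ref{TheIntegral}$).
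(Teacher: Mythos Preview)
Your proposal is correct and follows essentially the same approach as the paper: local simplicity on $\bD$ and $w\bD$ via Corollary \ref{SimpleEqDmoduleLocally}, then a connectedness argument on the two-element covering. For the bookkeeping you flag in your last paragraph, the paper invokes \cite[Proposition 3.6.10(b)]{EqDCap} (exactness of restriction in $\cC_{\bP^1/H}$) and runs the argument on a short exact sequence $0 \to \cM \to j_\ast\sL \to \cN \to 0$ rather than via Proposition \ref{GeneralIrred}; and for the injectivity of $\PicCon(\Omega)^{G_0}[q+1] \to \PicCon(\Upsilon)^I[q+1]$ it uses a commutative square through $\Con(\Upsilon_0)$ together with Lemma \ref{UpsMeasures}(d) and \cite[Proposition 3.1.9(b), Proposition 4.3.8(a)]{ArdWad2023}, which is a little more than the bare citation you give.
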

\begin{proof} Note first that $j_\ast \sL \in \cC_{\bP^1 / H}$ by Theorem \ref{PushForwardIsCoad}. Let $w := \begin{pmatrix} 0 & 1 \\ 1 & 0 \end{pmatrix} \in G_0$ and recall the covering $\{\bD, w\bD\}$ of $\bP^1$ from the proof of Theorem \ref{PushForwardIsCoad}. 

We will apply Corollary \ref{SimpleEqDmoduleLocally} to $[\sL|_{\Upsilon}] \in \PicCon^I(\Upsilon)_{\tors}$, but first we have to check the required conditions on $\omega[\sL|_{\Upsilon}]$ hold. Since $(q+1) \cdot \omega[\sL] = 0$ by \cite[Corollary 4.3.9]{ArdWad2023}, the order of $\omega[\sL|_{\Upsilon}]$ in $\PicCon(\Upsilon)$ divides $q+1$. Consider the following commutative diagram:
\[\xymatrix{  \PicCon(\Omega)^{G_0}[q+1] \ar[rr]\ar[d]&& \PicCon(\Upsilon)^I[q+1] \ar[d] \\ 
\Con(\Upsilon_0)^{G_0}[q+1] \ar[rr] && \Con(\Upsilon_0)^I[q+1].}\]
Note that $\Upsilon_0 = \Omega_0$ in view of Definition \ref{UnCovering}(b) and \cite[Definition 4.2.12(b)]{ArdWad2023}. Then it follows from \cite[Proposition 3.1.9(b) and Proposition 4.3.8(a)]{ArdWad2023} that the vertical restriction map on the left is an isomorphism. Since the bottom horizontal map is an inclusion, it follows from Lemma \ref{UpsMeasures}(d) that the top horizontal map is injective. Since $\omega[\sL] \neq 0$ in $\PicCon(\Omega)^{G_0}[q+1]$ by assumption, it follows that $\omega[\sL|_{\Upsilon}] \neq 0$ in $\PicCon(\Upsilon)^I[q+1]$ as required.

The group $J := H \cap I$ contains an open subgroup of $N$ because $H$ contains an open subgroup of $\mathbb{SL}_2(\cO_F)$ by assumption. We can now apply Corollary \ref{SimpleEqDmoduleLocally} to see that $(j_\ast \sL)_{|\bD} = j_{0, \ast}(\sL_{|\Upsilon})$ is a simple object in $\cC_{\bD / H \cap I}$.  


Similarly, ${}^wH \cap I$ contains an open subgroup of $N$ because $H$ contains an open subgroup of $\mathbb{SL}_2(\cO_F)$. A similar argument to the above shows that we can apply Corollary \ref{SimpleEqDmoduleLocally} to $w^\ast (\sL_{|w \Upsilon}) \in \PicCon^I(\Upsilon)[e]$ and $J := {}^w H \cap I$ to deduce that $j_{0, \ast} w^\ast \sL_{|w \Upsilon}$ is a simple object in $\cC_{\bD / {}^wH \cap I}$. Hence $(j_\ast \sL)_{|w \bD} = w_\ast (j_{0, \ast} w^\ast \sL_{|w \Upsilon})$ is a simple object in $\cC_{w \bD / H \cap {}^wI}$. 

Now, consider a short exact sequence $0 \to \cM \to j_\ast \sL \to \cN \to 0$ in $\cC_{\bP^1/H}$ with $\cM \neq 0$. By \cite[Proposition 3.6.10(b)]{EqDCap} it induces exact sequences
$0 \to \cM_{|\bD} \to (j_\ast \sL)_{|\bD} \to \cN_{|\bD} \to 0$ in $\cC_{\bD/H \cap I}$ and $0 \to \cM_{|w \bD} \to (j_\ast \sL)_{|w \bD} \to \cN_{|w \bD} \to 0$ in $\cC_{w\bD/H \cap {}^wI}$. 
Since $\cM \neq 0$, we may assume without loss of generality that $\cM_{|\bD} \neq 0$.  Then the map $\cM_{|\bD} \to (j_\ast \sL)_{|\bD}$ is an isomorphism, and $\cN_{|\bD} = 0$. Next, $\cM_{|w \bD}$ cannot be zero, as otherwise $(j_\ast \sL)_{|\bD \cap w \bD} \cong (\cM_{|\bD})_{|\bD \cap w \bD} = (\cM_{|w \bD})_{|\bD \cap w \bD} = 0$ which is not the case. So in fact $\cM_{|w \bD} \neq 0$, which forces $\cN_{|w \bD} = 0$. Hence $\cN = 0$ and $\cM \to j_\ast \sL$ is an isomorphism.
\end{proof}

Let $D(H,K)$ denote the locally $F$-analytic distribution algebra of a locally $F$-analytic group $H$ with coefficients in $K$. Let $\frg = \Lie(H) \otimes_FK$ denote the $K$-Lie algebra of $H$  and let $\fr{m}_0 = D(H,K) \cdot ( \frg U(\frg) \cap Z(U(\frg)) )$ be the closed ideal of $D(H,K)$ generated by the kernel of the trivial infinitesimal character $Z(U(\frg)) \to F$ that sends $\frg$ to zero. We have the following general `rigid-analytic equivariant Beilinson-Bernstein localisation' result.
\begin{thm} \label{RigidEqBB} Let $\bG$ be a connected, split semisimple algebraic group over $F$, let $\bX$ be its flag variety and let $\bfX$ be the $K$-rigid analytification of $\bX \times_F K$. For every open subroup $H$ of $\bG(F)$, the functor of global sections 
\[\Gamma(\bfX,-) : \cC_{\bfX/H} \longrightarrow \{M \in \cC_{D(H,K)} : \fr{m}_0 \cdot M = 0\}.\]
is a well-defined equivalence of categories.
\end{thm}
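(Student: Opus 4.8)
The plan is to recognise this statement as the equivariant form of the rigid analytic Beilinson-Bernstein equivalence and to deduce it from the non-equivariant version together with the crossed-product and equivariant-descent formalism of \cite[\S 2.2, \S 3.6]{EqDCap}. Write $\frg := \Lie(\bG) \otimes_F K$, and let $\w\cD_\bfX$ be the sheaf of infinite-order differential operators on $\bfX$, normalised (following \cite{EqDCap}) so that its global sections carry the \emph{trivial} infinitesimal character. The crucial non-equivariant input is the rigid analytic analogue of the classical identity $\Gamma(\bG/\bB, \cD) = U(\frg)_\chi$: namely that $\Gamma(\bfX, \w\cD_\bfX)$ is the quotient of the appropriate Fr\'echet-Stein completion of $U(\frg)_K$ by the closed two-sided ideal generated by $\frg U(\frg) \cap Z(U(\frg))$. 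This is proved by descending to the Fr\'echet-Stein layers of $\w\cD_\bfX$ and applying the classical Beilinson-Bernstein theorem at a dominant regular weight at each level, together with cohomological base change. Granting this, the quasi-inverse to the functor in the statement will be the equivariant localisation functor $\Loc^{D(H,K)}_\bfX$, assembled by descent over an admissible affinoid covering of $\bfX$ --- working over each member with its stabiliser in $H$ --- exactly as $\Loc^{\w\cD(\bD,J)}_\bD$ was built in the proof of Theorem \ref{LolCoad}.

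First I would check that $\Gamma(\bfX,-)$ is well defined. Since the flag variety is proper, $\bfX$ is quasi-compact, so for $\cM \in \cC_{\bfX/H}$ the global sections $\Gamma(\bfX, \cM)$ form a coadmissible module over the completed $H$-equivariant ring $\w\cD(\bfX, H)$ of global differential operators by the general machinery of \cite{EqDCap}. The identification of $\Gamma(\bfX,\w\cD_\bfX)$ above, combined with the definition of $\w\cD(\bfX, H)$, shows that the canonical algebra map $D(H,K) \to \w\cD(\bfX,H)$ --- induced by the $\frg$-action on $\w\cD_\bfX$ together with the $H$-equivariance --- is surjective with kernel $\fr{m}_0$; here $Z(U(\frg))$ is central in $D(H,K)$ because $H$ acts trivially on it, so $\fr{m}_0$ really is a two-sided ideal. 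Hence a coadmissible $\w\cD(\bfX, H)$-module is precisely a coadmissible $D(H,K)$-module annihilated by $\fr{m}_0$; in particular $\Gamma(\bfX,\cM)$ lies in the target category, and conversely $\Loc^{D(H,K)}_\bfX$ is defined on that category and takes values in $\cC_{\bfX/H}$.

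It then remains to prove that the unit and counit of the adjunction $\big(\Loc^{D(H,K)}_\bfX,\, \Gamma(\bfX,-)\big)$ are isomorphisms. For $\Gamma \circ \Loc \cong \mathrm{id}$ one needs $\cD$-affinity of the rigid analytic flag variety, i.e. $H^i(\bfX, \cN) = 0$ for all $i > 0$ and the counit $\Loc(\Gamma(\bfX,\cN)) \to \cN$ an isomorphism for every $\cN \in \cC_\bfX$; this is the non-equivariant rigid analytic Beilinson-Bernstein theorem, and the equivariant version is deduced from it via the induction equivalence of \cite{EqDCap} (which reduces $H$ to a compact pro-$p$ subgroup, hence ultimately to the trivial group) together with the crossed-product structure. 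For $\Loc \circ \Gamma \cong \mathrm{id}$ one argues $H$-locally: over each member $U$ of an admissible affinoid covering of $\bfX$, and over finite intersections, one verifies that the canonical map
\[ \w\cD(U) \uwset{D(H,K)} \Gamma(\bfX,\cM) \longrightarrow \cM(U) \]
is an isomorphism --- using the sheaf property and the flatness of restriction maps for $\w\cD$ together with the non-equivariant equivalence --- and then glues these, tracking the $H$-equivariant structure through the crossed-product compatibilities (the analogue of Hypothesis \ref{MorphCP}), exactly as in the proof of Theorem \ref{LolCoad}. The main obstacle is the $\cD$-affinity of the rigid analytic flag variety itself: proving the higher-cohomology vanishing and the localisation equivalence uniformly across the Fr\'echet-Stein layers of $\w\cD_\bfX$, and checking that the layerwise equivalences survive passage to the inverse limit of categories of coadmissible modules. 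That is where the deformation and base-change estimates live, and it is the genuinely substantial ingredient of the proof.
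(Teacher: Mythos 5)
The paper's proof is essentially a citation: it invokes \cite[Theorem 6.5.1]{EqDCap} to identify $D(H,K)$ with the completed skew-group algebra $\w{U}(\frg,H)$, then applies \cite[Theorems 6.4.8 and 6.4.9]{EqDCap} to the $K$-algebraic group $\bG \times_F K$ and the continuous inclusion $H \hookrightarrow \bG(K)$ to produce the equivalence, with $\Gamma(\bfX,-)$ as quasi-inverse to $\Loc^{\w{U}(\frg,G)}_{\bfX}$. Your proposal, by contrast, attempts to reconstruct the \emph{content} of those cited theorems from scratch --- the rigid analytic $\cD$-affinity of the flag variety, the identification of $\Gamma(\bfX,\w\cD_{\bfX})$ with a quotient of the Fr\'echet-Stein completion of $U(\frg)$ by the trivial central character ideal, the descent to Fr\'echet-Stein layers, and the assembly of the equivariant localisation functor. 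That reconstruction is conceptually aligned with what happens inside \cite{EqDCap}, but it is a large body of work that the paper deliberately does not re-derive, and your sketch leaves all the genuinely hard estimates as acknowledged ``obstacles'' rather than resolved steps.

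The one piece of actual mathematical content in the paper's proof that you miss entirely is the remark that \cite{EqDCap} assumes $\bG$ to be \emph{simply connected}, whereas $\mathbb{PGL}_2$ (the group to which this theorem is applied later, in Corollary \ref{LiftToG^0} and Corollary \ref{LengthO(Omega)}) is not. The paper explicitly addresses this: the simple-connectedness hypothesis in \cite{EqDCap} is used only in the proof of \cite[Theorem 5.3.5]{EqDCap}, and can be circumvented either by choosing a $K^\circ$-module basis for $\frh$ or by working over a discretely valued ground field, as is being done here. Without that observation the citation does not go through for the group needed, so this is not a cosmetic omission --- it is the only non-trivial thing the paper's proof has to supply, and any proof of this statement that intends to apply to $\mathbb{PGL}_2$ must either make it or re-derive the full machinery without the simply-connected assumption.
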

\begin{proof} First, note that by \cite[Theorem 6.5.1]{EqDCap}, $D(H,K)$ is isomorphic as a Fr\'echet-Stein algebra to a certain completed skew-group algebra denoted $\w{U}(\frg,H)$ that was introduced at \cite[Definition 6.2.7]{EqDCap}. We wish to apply \cite[Theorem 6.4.9]{EqDCap} to the $K$-algebraic group $\bG \times_F K$ and the continuous inclusion $H \hookrightarrow \bG(K)$. In \cite{EqDCap} it was assumed that the algebraic group $\bG$ is simply connected; however this assumption is only used in the proof of \cite[Theorem 5.3.5]{EqDCap}, where it can be easily avoided altogether (by choosing an $K^\circ$-module basis for $\frh$) or instead by working (as we are currently doing) with a discretely valued ground field $K$. With this in mind, Theorem \ref{RigidEqBB} then follows from \cite[Theorem 6.4.9]{EqDCap}, once we observe that the functor of global sections $\Gamma(\bfX,-)$ is quasi-inverse to the localisation functor $\Loc_{\bfX}^{\w{U}(\frg,G)}$, in view of \cite[Theorem 6.4.8]{EqDCap}.  \end{proof}



Recall  that $G^0= \{ g \in \mathbb{GL}_2(F) : v_{\pi_F}(\det g) = 0 \}$.

\begin{cor}\label{LiftToG^0} Suppose $[\sL]\in \PicCon^{G^0}(\Omega)_{\tors}$ whose image $\omega[\sL]$ in $\PicCon(\Omega)$ is non-zero. Then $\sL(\Omega)$ is a coadmissible and topologically irreducible $D(H,K)$-module for \emph{any} closed subgroup $H$ of $G^0$ containing an open subgroup of $\mathbb{SL}_2(F)$. \end{cor}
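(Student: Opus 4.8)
The strategy is to descend the local statements already established for the Iwahori subgroup $I \leq \mathbb{GL}_2(\cO_F)$ and the group $\mathbb{GL}_2(\cO_F)$ up to the larger group $G^0$, using the structural fact that $\mathbb{SL}_2(F) \leq G^0$ and that $G^0$ is generated by $\mathbb{GL}_2(\cO_F)$ together with $\mathbb{SL}_2(F)$. First I would reduce to the case where the ground field $K$ is discretely valued: both coadmissibility and topological irreducibility of $D(H,K)$-modules are insensitive to a change of the coefficient field (one can use \cite[Corollary 3.3]{ST} and faithful flatness of the base change, exactly as in the proof of Corollary \ref{LiftToG^0} where the discretely valued hypothesis is dropped in the final line), so we may pass to a discretely valued $K$ in order to be able to invoke the results of $\S\ref{CoadIrredSect}$ which carry that standing assumption.

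Next, observe that the restriction $[\sL|_\Omega]$ --- or rather, we already have $[\sL] \in \PicCon^{G^0}(\Omega)_{\tors}$ --- has the property that $\omega[\sL] \in \PicCon(\Omega)$ is non-zero and, by \cite[Corollary 4.3.9]{ArdWad2023}, killed by $q+1$; so its order $d$ divides $q+1$ automatically. The plan is then to realise $\sL(\Omega)$ as the global sections of the $\cD$-module $j_\ast \sL$ on $\bP^1$, where $j : \Omega \hookrightarrow \bP^1$ is the open inclusion. By Theorem \ref{PushForwardIsCoad} applied to the subgroup $H_0 := H \cap \mathbb{GL}_2(\cO_F)$ of $\mathbb{GL}_2(\cO_F)$ --- which contains an open subgroup of $\mathbb{SL}_2(\cO_F)$ since $H$ contains an open subgroup of $\mathbb{SL}_2(F)$ --- we get $j_\ast \sL \in \cC_{\bP^1/H_0}$, and by Theorem \ref{PushForwardIsSimple} it is a simple object there. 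To upgrade from $H_0$ to $H$, I would invoke the equivariant Beilinson--Bernstein localisation theorem (Theorem \ref{RigidEqBB}) for the group $\mathbb{SL}_2$ (which is connected, split, semisimple, with flag variety $\bP^1$): the functor $\Gamma(\bP^1,-) : \cC_{\bP^1/H'} \to \{M \in \cC_{D(H',K)} : \fr{m}_0 M = 0\}$ is an equivalence of categories for any open subgroup $H'$ of $\mathbb{SL}_2(F)$, and more generally the formalism of \cite{EqDCap} handles closed subgroups $H$ of $\mathbb{GL}_2(F)$ containing an open subgroup of $\mathbb{SL}_2(F)$ via the reductions in $\S\ref{CoadIrredSect}$ (the group $G^0$ is the subgroup of $\mathbb{GL}_2(F)$ of determinant-valuation zero, and acts on $\bP^1$ with the Iwahori $I$ and $\mathbb{GL}_2(\cO_F)$ as the relevant compact open stabilisers of $\bD$ and of $\bP^1$ respectively). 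Coadmissibility of $\sL(\Omega) = \Gamma(\bP^1, j_\ast \sL)$ as a $D(H,K)$-module then follows because $\Gamma(\bP^1,-)$ sends $\cC_{\bP^1/H}$ into coadmissible $D(H,K)$-modules, and topological irreducibility follows from \cite[Theorem 3.6.11]{EqDCap}: an object of $\cC_{\bP^1/H}$ is simple if and only if its module of global sections is topologically irreducible.

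The one genuinely new point to verify --- and the step I expect to be the main obstacle --- is that $j_\ast \sL$ is coadmissible and simple as an object of $\cC_{\bP^1/H}$ for the \emph{full} group $H \leq G^0$, not merely for $H_0 = H \cap \mathbb{GL}_2(\cO_F)$. The passage from $H_0$ to $H$ requires knowing that the $\mathbb{GL}_2(\cO_F)$-equivariant structure on $j_\ast \sL$ extends to a $G^0$-equivariant one (which it does, since $[\sL] \in \PicCon^{G^0}(\Omega)_{\tors}$ by hypothesis and $j_\ast$ is compatible with equivariant structures), and then that the category-theoretic properties "coadmissible" and "simple" are preserved when enlarging the group. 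For coadmissibility this is the content of the construction of $\cC_{\bfX/H}$ in \cite[\S3.6]{EqDCap}: an $H$-equivariant $\cD$-module on $\bP^1$ lies in $\cC_{\bP^1/H}$ if and only if its restriction to a covering by $\mathbb{GL}_2(\cO_F)$-translates of $\bD$ (and $w\bD$) is locally coadmissible, which we have already verified; and the extension of the action from $H_0$ to $H$ is then automatic because $\w\cD(X,H)$ is a crossed product of $\w\cD(X,H_0)$ with the discrete quotient $H/H_0 \cdot (\text{open})$, so one applies the flatness argument as in the proof of Theorem \ref{jLisTopSimpleOnD} together with \cite[Corollary 3.3]{ST}. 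For simplicity, the argument given at the end of the proof of Theorem \ref{PushForwardIsSimple} --- analysing a short exact sequence $0 \to \cM \to j_\ast \sL \to \cN \to 0$ via its restrictions to $\bD$ and $w\bD$ and using that these restrictions are simple in the respective local categories --- goes through verbatim with $H_0$ replaced by $H$, since the local categories $\cC_{\bD/(H \cap I)}$ and $\cC_{w\bD/(H \cap {}^wI)}$ are exactly the ones treated in Corollary \ref{SimpleEqDmoduleLocally}, and $H \cap I$, $H \cap {}^wI$ still contain open subgroups of $N$ and ${}^wN$ respectively. Finally, applying Theorem \ref{RigidEqBB} (or rather its $\mathbb{GL}_2$-analogue from \cite{EqDCap}, with the understanding that for a discretely valued $K$ the simply-connectedness hypothesis can be removed as noted in the proof of Theorem \ref{RigidEqBB}), we transport simplicity and coadmissibility of $j_\ast \sL$ in $\cC_{\bP^1/H}$ to the statement that $\Gamma(\bP^1, j_\ast \sL) = \sL(\Omega)$ is a coadmissible, topologically irreducible $D(H,K)$-module, completing the proof.
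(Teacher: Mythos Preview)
Your proposal has a genuine gap at the step where you invoke Theorem \ref{RigidEqBB}. That theorem requires $\bG$ to be connected split \emph{semisimple} and $H$ to be an \emph{open} subgroup of $\bG(F)$. Your group $H$ is a closed subgroup of $G^0 \leq \mathbb{GL}_2(F)$; it need not be contained in $\mathbb{SL}_2(F)$, let alone open in it, and there is no ``$\mathbb{GL}_2$-analogue'' of Theorem \ref{RigidEqBB} available in the paper since $\mathbb{GL}_2$ is not semisimple. The crossed-product argument you sketch for upgrading from $H_0 = H \cap \mathbb{GL}_2(\cO_F)$ to $H$ is also problematic: no affinoid subdomain of $\bP^1$ is stabilised by all of $H$ in general (elements of $\mathbb{SL}_2(F)$ not in $\mathbb{SL}_2(\cO_F)$ move $\bD$), so there is no ring $\w\cD(X,H)$ to speak of, and the local category $\cC_{\bP^1/H}$ is not built from a single crossed product in the way you suggest.

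The paper resolves this via a trick you have missed entirely: pass to $\mathbb{PGL}_2$. Since $[\sL]$ is torsion, $\sL^{\otimes e} \cong \cO_\Omega$ as $G^0$-equivariant objects for some $e$, and the centre $Z \leq G^0$ acts trivially on $\cO_\Omega$; hence some finite-index open $Z' \leq Z$ acts trivially on $\sL$. Choose a finite-index open $H' \leq H$ with $H' \cap Z \leq Z'$; then the $H'$-action on $j_\ast \sL$ factors through $\overline{H'} := H'Z/Z$, which is an \emph{open} subgroup of the \emph{semisimple} group $\mathbb{PGL}_2(F)$. Now Theorem \ref{PushForwardIsSimple} gives that $j_\ast \sL$ is simple in $\cC_{\bP^1/\overline{H'}}$, and Theorem \ref{RigidEqBB} applies directly with $\bG = \mathbb{PGL}_{2,F}$ to conclude that $\sL(\Omega)$ is coadmissible and topologically irreducible over $D(\overline{H'},K)$. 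Finally inflate back to $D(H',K)$, and then observe that coadmissibility and topological irreducibility pass a fortiori to the larger algebra $D(H,K)$.
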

\begin{proof} Let $Z$ denote the centre of $\GL_2(F)$; note that $Z$ is contained in $G^0$ and that it acts trivially on the $K$-analytic projective line $\bP^1$ via M\"{o}bius transformations. Since $[\sL]$ is a torsion element of $\PicCon^{G^0}(\Omega)$, $\sL^{\otimes e}$ is $G^0$-equivariantly isomorphic to $\cO_{\Omega}$ for some $e \geq 1$. Since $Z$ acts trivially on $\cO_{\Omega}$,  some finite-index open subgroup $Z'$ of $Z$ acts trivially on $\sL$. We can then choose a finite-index open subgroup $H'$ of $H$ such that $H' \cap Z \leq Z'$. Then the $H'$-action on $j_\ast \sL$ factors through $\overline{H'} := H'Z/Z$ and Theorem \ref{PushForwardIsSimple} tells us that $j_\ast \sL \in \cC_{\bP^1/\overline{H'}}$ is a simple object. 

Our condition on $H$ guarantees that its image $\overline{H} := HZ/Z$ is open in $\mathbb{PGL}_2(F) = \GL_2(F) / Z$. Hence $\overline{H'}$ is open in $\mathbb{PGL}_2(F)$ as well.  We now apply Theorem \ref{RigidEqBB} to $\bG := \mathbb{PGL}_{2,F}$ and $\overline{H'} \hookrightarrow \mathbb{PGL}_2(F)$ to see that $\sL(\Omega)$ is a coadmissible and topologically irreducible $D(\overline{H'},K)$-module, killed by $\frm_0$. Since $H \cap Z'$ acts trivially on $\sL(\Omega)$, it follows by inflation that the same statements hold for $\sL(\Omega)$ viewed as a $D(H',K)$-module. Hence they also hold \emph{a fortiori} for $\sL(\Omega)$ viewed as a $D(H,K)$-module.\end{proof}

\subsection{\ts{j_\ast \cO_{\Omega}} and proof of Corollary B}\label{JstarO}
We begin by recording a very general and very important example of a coadmissible $G$-equivariant $\cD$-module.

\begin{prop}\label{OXCXG} Let $G$ be a $p$-adic Lie group acting continuously on a smooth rigid $K$-analytic space $X$. Then 
\begin{enumerate}
\item $\cO_X$ is a coadmissible $G$-equivariant $\cD$-module on $X$: $\cO_X \in \cC_{X/G}$. 
\item $\cO_X$ is a simple object in $\cC_{X/G}$ whenever $X$ is connected. 
\end{enumerate}
\end{prop}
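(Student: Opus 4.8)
The statement asserts that $\cO_X$ is a coadmissible $G$-equivariant $\cD$-module, and that it is simple when $X$ is connected. The overall strategy is to verify the two conditions required by the definition of $\cC_{X/G}$ --- see \cite[Definition 3.6.7]{EqDCap} --- working locally on a suitable admissible affinoid covering of $X$, and then to globalise via the sheaf property. For part (a), I would first reduce to the affinoid case: choose an admissible covering $\{X_i\}$ of $X$ by $G_{X_i}$-stable affinoids that are small in the sense of \cite[\S 3.4]{EqDCap}, using that the $G$-action on $X$ is continuous. On each such $X_i$, the structure sheaf $\cO_{X_i}$ with its canonical $G_{X_i}$-equivariant $\cD$-structure (coming from Lemma \ref{AutAOD}(a) in the relevant special setting, or its analogue in \cite[Lemma 3.4.3]{EqDCap}) must be shown to be a locally Fr\'echet equivariant $\cD$-module whose global sections $\cO(X_i)$ form a coadmissible $\w\cD(X_i, G_{X_i})$-module, and such that the localisation $\Loc^{\w\cD(X_i,G_{X_i})}_{X_i}(\cO(X_i))$ recovers $\cO_{X_i}$.

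The key computation behind part (a) is the identification of $\cO(X_i)$ as a coadmissible module over the Fr\'echet-Stein algebra $\w\cD(X_i, G_{X_i})$. Here I would use the Fr\'echet-Stein presentation of $\w\cD(X_i, H)$ by the crossed products $\hK{U(\cL_n)} \rtimes_{H_n} H$ as in \cite[\S 3.3--3.4]{EqDCap}, where $\cL_n$ is a decreasing sequence of Lie lattices in $\cT(X_i)$. The point is that $\cO(X_i)$ is already a module over each $\hK{U(\cL_n)}$ via the natural action of differential operators on functions --- this is finitely generated (indeed cyclic, generated by $1$) over each term of the tower, since $\cO(X_i)$ is finitely generated as a module over $U(\cT(X_i))$ when $X_i$ is chosen inside a coordinate chart --- and the inverse limit of these is $\cO(X_i)$ itself, because $\cO(X_i)$ is already complete. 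The equivariant structure then upgrades this to a module over the crossed product, and one checks the quasi-coherence/coadmissibility conditions of \cite[Corollary 3.1]{ST} using the flatness of the connecting maps in the Fr\'echet-Stein tower. Gluing the local identifications $\Loc^{\w\cD(X_i,G_{X_i})}_{X_i}(\cO(X_i)) \cong \cO_{X_i}$ over the covering $\{X_i\}$, using that both sides are sheaves and that the identifications are compatible with restriction and with the $G$-action, yields $\cO_X \in \cC_{X/G}$.

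For part (b), assume $X$ is connected. A non-zero subobject $\cM \hookrightarrow \cO_X$ in $\cC_{X/G}$ restricts, on each affinoid $X_i$ of the covering, to a $\w\cD(X_i, G_{X_i})$-submodule $\cM(X_i)$ of $\cO(X_i)$; by \cite[Proposition 3.6.10(b)]{EqDCap} and the correspondence between subobjects in $\cC_{X_i/G_{X_i}}$ and closed coadmissible submodules of global sections, $\cM(X_i)$ is a closed $\w\cD(X_i,G_{X_i})$-submodule of $\cO(X_i)$. The heart of the matter is that $\cO(X_i)$ is topologically irreducible as a $\w\cD(X_i, G_{X_i})$-module: any non-zero closed submodule stable under all differential operators and containing a non-zero function $f$ must contain a function with no zeroes on some affinoid subdomain (after applying suitable divided powers of vector fields to lower the order of vanishing, as in the proof of \cite[Lemma 4.3.2]{ArdWad2023}), hence a unit, hence $1$, hence all of $\cO(X_i)$; since $\cO(X_i)$ is already the algebra of functions, no proper closed $\cD(X_i)$-submodule of it is dense, so in fact $\cO(X_i)$ has no proper non-zero closed $\cD(X_i)$-submodule at all when $X_i$ is connected. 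Therefore $\cM(X_i)$ is either $0$ or $\cO(X_i)$ for each $i$; a connectedness argument analogous to Proposition \ref{GeneralIrred} --- using that $\cM$ and $\cO_X$ are both quasi-coherent and that $X$ connected forces the covering to be linked --- shows that the same alternative holds uniformly, so $\cM = 0$ or $\cM = \cO_X$.

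\textbf{Main obstacle.} The main technical obstacle is the coadmissibility statement in part (a): one must verify that $\cO(X_i)$, a priori only a topological $\w\cD(X_i,G_{X_i})$-module, is genuinely coadmissible, i.e. that $\hK{U(\cL_n)} \otimes_{\w\cD(X_i,G_{X_i})} \cO(X_i) \to (\text{the } n\text{-th term of the natural tower approximating } \cO(X_i))$ is an isomorphism with finitely generated terms. This requires care about which affinoids $X_i$ one allows --- they must be small enough that $\cO(X_i)$ is finitely generated over $U(\cT(X_i))$, so that the base-changed modules stay finitely generated --- and it requires the flatness of the Fr\'echet-Stein connecting maps, which for the crossed-product presentation of $\w\cD(X_i,H)$ is \cite[Theorem 3.4.8]{EqDCap}. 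Once this local coadmissibility is in hand, the gluing and the simplicity argument are formal.
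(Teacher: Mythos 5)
Your overall strategy is the same as the paper's: reduce to the affinoid small case, show local coadmissibility of $\cO(X)$ over the crossed-product tower $\hK{U(\pi^n\cL)}\rtimes_{G_n} G$, identify the localisation with $\cO_X$, and deduce simplicity by combining local simplicity of $\cO(X)$ as a $\cD(X)$-module with a clopen covering argument. Part (b) is essentially right. But there is a real gap in your treatment of part (a), in the step that makes the whole thing run.

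You observe that $\cO(X_i)$ is cyclic (generated by $1$) over each tower term $\hK{U(\pi^n\cL)}$ and then say the quasi-coherence condition of \cite[Corollary 3.1]{ST} can be checked "using the flatness of the connecting maps in the Fr\'echet-Stein tower." This does not work: flatness of $\sT_n(X)\to \sT_{n+1}(X)$ tells you nothing about whether $\sT_n(X)\otimes_{\sT_{n+1}(X)}\cO(X)\to\cO(X)$ is an isomorphism, because $\cO(X)$ is not defined abstractly as a base change of anything — it is a single module carrying two different tower actions, and identifying the base change with it is precisely what needs proving. Moreover, cyclicity is not enough; one needs $\cO(X)$ to be finitely \emph{presented} over each $\sT_n(X)$, and this requires control of the annihilator of the generator $1$ in the completed ring $\hK{U(\pi^n\cL)}$. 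The annihilator obviously contains the left ideal $\cI$ generated by an $\cA$-basis $\partial_1,\ldots,\partial_d$ of $\cL$, but the reverse inclusion is not formal: after $\pi$-adic completion one could imagine extra elements of the annihilator appearing in the closure. The paper handles this by showing that $\cU_n/\cI$ is $\pi$-adically separated (invoking \cite[Proposition 4.1.6(b)]{EqDCap}, which in turn needs the normalisation $[\cL,\cL]\subseteq\pi\cL$), and only then concludes $\ann_{\cU_n}(1)=\cI$. With this explicit presentation $\cO(X)\cong\sT_n(X)/\sum_i\sT_n(X)\partial_i$ in hand, both the finite presentation and the quasi-coherence isomorphism $\sS_n(X)\otimes_{\sS_{n+1}(X)}\cO(X)\stackrel{\cong}{\to}\cO(X)$ (via Theorem \ref{IndModCoinv}) become direct computations. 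Without it, the coadmissibility assertion is unproven. The same explicit presentation is also what the paper uses to verify $\Loc^{\w\cD(X,G)}_X(\cO(X))\cong\cO_X$ on restriction to a subdomain $Y$, another step you signal but do not carry out. So the missing idea is: compute $\ann_{\hK{U(\pi^n\cL)}}(1)$ exactly, using $\pi$-adic separatedness of the cyclic quotient — this single computation is the technical heart of part (a), and your appeal to flatness is not a substitute for it.
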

\begin{proof}(1) Certainly $\cO_X$ is a $G$-equivariant locally Fr\'echet $\cD$-module on $X$. Then in view of \cite[Definition 3.6.7]{EqDCap}, it is enough to consider the case where $(X, G)$ is small in the sense of \cite[Definition 3.4.4]{EqDCap}. We will show that in this case
\be \item $\cO(X)$ is a coadmissible $\w\cD(X,G)$-module, and
\item there is a natural continuous $G-\cD$-linear isomorphism 
\[\Loc^{\w\cD(X,G)}_X(\cO(X)) \stackrel{\cong}{\longrightarrow} \cO_X.\]
\ee
(a) Because $(X,G)$ is small, we can choose a $G$-stable affine formal model $\cA \subset \cO(X)$ and a $G$-stable free $\cA$-Lie lattice $\cL \subseteq \cT(X)$. By replacing $\cL$ by $\pi \cL$ if necessary, we may assume that $[\cL,\cL] \subseteq \pi \cL$ and $\cL \cdot \cA \subseteq \pi \cA$.  Since $\pi^n \cL \subseteq \cT(X)$ is an $\cA$-Lie lattice, $\cA$ is naturally a $U(\pi^n \cL)$-module. Since $\cA$ is $\pi$-adically complete, the $U(\pi^n \cL)$-action extends to $\cU_n := \h{U(\pi^n \cL)}$ and clearly $1 \in \cA$ generates $\cA$ as a $\cU_n$-module. Since $\cT(X)$ is a free $\cA$-Lie lattice, we can choose an $\cA$-module basis $\{\partial_1,\cdots,\partial_d\}$ for $\cL$. Then the left ideal $\cI$ in $\cU_n$ generated by this finite set annihilates $1 \in \cA$: $\cI \subseteq \ann_{\cU_n}(1)$. 

For the reverse inclusion, let $Q \in \ann_{\cU_n}(1)$ and write $Q = \sum\limits_{\alpha \in \mathbb{N}^d} f_\alpha (\pi^n \partial)^\alpha$ for some family $(f_\alpha)_{\alpha \in \mathbb{N}^d} \subset \cA$ converging to zero as $|\alpha| \to \infty$. Then $Q \cdot 1 = 0$ implies that $f_0 = 0$, and since $f_\alpha \to 0$ as $|\alpha| \to \infty$ we see that for every $m \geq 0$, $Q$ lies in $\cI + \pi^m \cU_n$. Since $[\cL,\cL] \subseteq \pi \cL$, \cite[Proposition 4.1.6(b)]{EqDCap} tells us that the cyclic $\cU_n$-module $\cU_n/\cI$ is $\pi$-adically separated. Hence $Q \in \cI$ and $\ann_{\cU_n}(1) = \cI$. This implies that $\cA$ is a finitely presented $\cU_n$-module, and hence that $\cO(X) = \cA \otimes_{K^\circ} K$ is a finitely presented $\cU_n \otimes_{K^\circ} K$-module for every $n \geq 0$. 

Using \cite[Corollary 3.3.7]{EqDCap}, choose a good chain $G_\bullet$ for $\cL$. In view of \cite[Definitions 3.3.3 and 3.2.13]{EqDCap} this means that $G_0 \geq G_1 \geq \cdots $ is a chain of open normal subgroups of $G$ with trivial intersection, such that $G_n \leq G_{\pi^n \cL}$ for all $n \geq 0$. Define
\[\sT_n(X) := \hK{U(\pi^n \cL)} = \cU_n \otimes_{K^\circ} K \qmb{and} \sS_n(X) := \sT_n(X) \rtimes_{G_n} G\]
for each $n \geq 0$; then by \cite[Lemma 3.3.4]{EqDCap} we have the Fr\'echet-Stein presentation
\[ \w\cD(X,G) \cong \invlim \sS_n(X).\]
Since $G_n \leq G_{\pi^n \cL}$, it follows from \cite[Definition 3.2.11]{EqDCap} that for every $g \in G_n$, the action of $\beta_{\pi^n \cL}(g) \in \cU_n$ on $\cA$ agrees with the action of $g \in G_n$. Therefore the natural $\cU_n \rtimes G$-action on $\cA$ factors through the crossed product $\cU_n \rtimes_{G_n} G$, and hence the natural $\sT_n(X)$-action on $\cO(X)$ extends to $\sS_n(X) = \sT_n(X) \rtimes_{G_n} G$. Since $\cO(X)$ is a finitely presented $\sT_n(X)$-module, Lemma \ref{DfpSfp} implies that it is also a finitely presented $\sS_n(X)$-module. To see that $\cO(X)$ is a coadmissible $\w\cD(X,G)$-module, it remains to see that for every $n \geq 0$, the following natural maps are isomorphisms:
\begin{equation}\label{SnOx} \sS_n(X) \underset{\sS_{n+1}(X)}{\otimes}{} \cO(X) \longrightarrow \cO(X).\end{equation}
Fix $n \geq 0$. We saw above that $\ann_{\sT_n(X)}(1) = \sum\limits_{i=1}^n \sT_n(X) \partial_i$. Therefore 
\begin{equation}\label{TnPres}\sT_n(X)^d \longrightarrow \sT_n(X) \to \cO(X) \to 0\end{equation}
is a presentation of $\cO(X)$ as a $\sT_n(X)$-module, where the first map sends $(Q_i)_{i=1}^d \in \sT_n(X)^d$ to $\sum\limits_{i=1}^d Q_i \partial_i$ and the second map sends $Q \in \sT_n(X)$ to $Q \cdot 1$. This presentation implies that the canonical $\sT_n(X)$-linear action map
\[ \sT_n(X) \underset{\sT_{n+1}(X)}{\otimes}{} \cO(X) \longrightarrow \cO(X)\]
is an isomorphism. Since this map factors through $\sS_n(X) \underset{\sS_{n+1}(X)}{\otimes}{} \cO(X)$ by Theorem \ref{IndModCoinv}, we conclude that the map (\ref{SnOx}) is an isomorphism as well.

(b) After possibly replacing $G$ by an open subgroup and inspecting \cite[3.5.12, 3.5.1 and 3.5.6]{EqDCap}, it suffices to prove that the natural action map
\[ \w\cD(Y, G) \underset{\w\cD(X,G)}{\w\otimes}{} \cO(X) \longrightarrow \cO(Y)\]
is an isomorphism, whenever $Y$ is a $G$-stable affinoid subdomain of $X$. For this, in view of \cite[Lemma 3.3.4]{EqDCap} and \cite[Lemma 7.3]{DCapOne}, it is enough to prove that 
\[ \sS_n(Y) \underset{\sS_n(X)}{\otimes}{} \cO(X) \longrightarrow \cO(Y)\]
is an isomorphism for all sufficiently large $n$. We can use Theorem \ref{IndModCoinv} again to reduce to showing that $\sT_n(Y) \underset{\sT_n(X)}{\otimes}{} \cO(X) \to \cO(Y)$ is an isomorphism for all sufficiently large $n$. Since $\{\partial_1,\cdots,\partial_d\}$ is still a basis for $\cT(Y)$ as an $\cO(Y)$-module, this follows immediately from the explicit presentations $\cO(X) \cong \sT_n(X) / \sum\limits_{i=1}^d \sT_n(X) \partial_i$ and $\cO(Y) \cong \sT_n(Y) / \sum\limits_{i=1}^d \sT_n(Y) \partial_i$ coming from (\ref{TnPres}).

(2) Suppose first that $X$ is a small affinoid, in the sense that $\cT(X)$ is a free $\cO(X)$-module. Then because $X$ is connected, $\cO(X)$ is a simple $\cD(X)$-module and therefore a simple coadmissible $\w\cD(X,H)$-module for any open subgroup $H$ of $G$ such that $(X,H)$ is small. Hence $\cO_X \in \cC_{X/H}$ is a simple object by \cite[Theorem 3.6.11]{EqDCap} in this case, and it is therefore \emph{a fortiori} simple in $\cC_{X/G}$. 

Returning to the general case, we can choose an admissible open covering $\cU$ of $X$ consisting of small, connected, affinoids. Suppose that $\cM$ is a subobject of $\cO_X$ and consider $\cV_1 := \{U \in \cU : \cM|_U=\cO_U\}$ and $\cV_2=\{U\in \cU: \cM|_U=0 \}$. Then $\cU$ is the disjoint union of $ \cV_1$ and $\cV_2$ by the first paragraph. Now if $U \in \cV_1$ and $V \in \cV_2$ with $U \cap V \neq \emptyset$, then \[(\cM|_V)|_{U\cap V}= \cM|_{U\cap V} =(\cM|_U)|_{U\cap V}=\cO_{U\cap V}\] since $U\in \cV_1$. Thus $\cM|_V \neq 0$ a contradiction. Since $X$ is connected if follows that $\cU=\cV_1$ or $\cU=\cV_2$ i.e. $\cM=\cO_X$ or $\cM=0$ as required. 
\end{proof}

Recall that $j : \Omega \hookrightarrow \bP^1$ is the open embedding. \textbf{We write $G:=\mathbb{GL}_2(F)$} and observe that there is a natural injective morphism of $G$-equivariant locally Fr\'echet $\cD$-modules on $\bP^1$
\[ \alpha : \cO_{\bP^1} \to j_\ast \cO_\Omega.\]

\begin{prop}\label{CokerCoad} $\coker \alpha$ lies in $\cC_{\bP^1/G}$.
\end{prop}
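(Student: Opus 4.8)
The plan is to show that $\coker\alpha$ is a coadmissible $G$-equivariant $\cD$-module on $\bP^1$ by working locally on the standard two-element affinoid covering $\{\bD, w\bD\}$ of $\bP^1$, where $w = \begin{pmatrix} 0 & 1 \\ 1 & 0\end{pmatrix}$. By \cite[Proposition 3.6.10(b)]{EqDCap} and the fact that $\alpha$ is a morphism of $G$-equivariant locally Fr\'echet $\cD$-modules whose source $\cO_{\bP^1}$ lies in $\cC_{\bP^1/G}$ by Proposition \ref{OXCXG}(1), it suffices to show that $(\coker\alpha)_{|\bD}$ lies in $\cC_{\bD/(G \cap \Stab_G(\bD))}$ and similarly for $w\bD$. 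By symmetry under conjugation by $w$, we need only treat $\bD$. Restricting to $\bD$, $\alpha_{|\bD}$ becomes the natural map $\cO_\bD \to j_{0,\ast}\cO_\Upsilon$ where $j_0 : \Upsilon = \bD \cap \Omega \hookrightarrow \bD$; its cokernel restricted to $\bD$ is $(\coker\alpha)_{|\bD}$ since cokernels commute with restriction to admissible opens. So the problem reduces to showing $\coker(\cO_\bD \to j_{0,\ast}\cO_\Upsilon) \in \cC_{\bD/J}$ where $J := \Stab_G(\bD) \cap \mathbb{GL}_2(F)$ contains the Iwahori subgroup $I$ (one may use $J = I Z$ or just work with $I$ and note that the central torus acts trivially, invoking the inflation argument as in the proof of Corollary \ref{LiftToG^0}).

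The key point is that $j_{0,\ast}\cO_\Upsilon \in \cC_{\bD/I}$: this is the special case $\sL = \cO_\Upsilon$ of Theorem \ref{LolCoad}, since $[\cO_\Upsilon] = 0 \in \PicCon^I(\Upsilon)_{\tors}$ is trivially torsion and $\omega[\cO_\Upsilon] = 0$ is killed by $q+1$. Since $I$ contains an open subgroup of $N = \begin{pmatrix} 1 & \cO_F \\ 0 & 1\end{pmatrix}$ — in fact all of $N$ — Theorem \ref{LolCoad} applies. We also have $\cO_\bD \in \cC_{\bD/I}$ by Proposition \ref{OXCXG}(1). The category $\cC_{\bD/I}$ is abelian — this is part of the formalism of \cite[\S 3.6]{EqDCap} — so the cokernel of the morphism $\alpha_{|\bD} : \cO_\bD \to j_{0,\ast}\cO_\Upsilon$ of objects in $\cC_{\bD/I}$ is again an object of $\cC_{\bD/I}$, provided $\alpha_{|\bD}$ is a morphism in this category, i.e. continuous and $I-\cD$-linear. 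Continuity and equivariance of $\alpha_{|\bD}$ are clear from its definition as the restriction-of-sections map attached to the open $I$-stable embedding $j_0 : \Upsilon \hookrightarrow \bD$, using that both sides carry their natural locally Fr\'echet structures (compare the proof of Lemma \ref{LocFrech}). Then $(\coker\alpha)_{|\bD} = \coker(\alpha_{|\bD})$ is the cokernel computed in $\cC_{\bD/I}$, hence lies in $\cC_{\bD/I}$, and therefore in $\cC_{\bD/J}$ for any closed subgroup $J \leq I$, and after the inflation step, in $\cC_{\bD/\Stab_G(\bD)}$.

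Having established the two local statements $(\coker\alpha)_{|\bD} \in \cC_{\bD/\Stab_G(\bD)}$ and $(\coker\alpha)_{|w\bD} \in \cC_{w\bD/\Stab_G(w\bD)}$, I would conclude via \cite[Definition 3.6.7]{EqDCap} that $\coker\alpha$ is $\{\bD, w\bD\}$-coadmissible, hence $\coker\alpha \in \cC_{\bP^1/G}$. The main obstacle — really the only non-formal input — is the identification of $j_{0,\ast}\cO_\Upsilon$ as a coadmissible equivariant $\cD$-module, which is exactly Theorem \ref{LolCoad} specialised to the trivial line bundle; everything else is bookkeeping with the abelian category structure on $\cC_{\bullet/\bullet}$, the exactness of restriction to admissible opens, and the gluing criterion \cite[Definition 3.6.7]{EqDCap}. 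One should double check that the locally Fr\'echet structure on $\coker\alpha$ used in the gluing criterion is the one induced as a cokernel, and that this is compatible with the restriction functors — but this is part of the general machinery and poses no real difficulty.
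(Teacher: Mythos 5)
Your proof is essentially the paper's proof with one black box unfolded inline. The paper invokes Theorem \ref{PushForwardIsCoad} (already established, with $H=G_0$, then promoted from $\cC_{\bP^1/G_0}$ to $\cC_{\bP^1/G}$ via \cite[Definition 3.6.7]{EqDCap}) to conclude $j_\ast\cO_\Omega\in\cC_{\bP^1/G}$, and then closes with \cite[Lemma 3.7.5]{EqDCap}, which directly asserts that cokernels of morphisms in $\cC_{\bP^1/G}$ stay in $\cC_{\bP^1/G}$. You instead re-run the $\{\bD,w\bD\}$ covering argument from scratch, applying Theorem \ref{LolCoad} to $\sL=\cO_\Upsilon$ and computing the cokernel locally in the abelian category $\cC_{\bD/I}$; this is correct, since $[\cO_\Upsilon]=0$ trivially satisfies the hypotheses of Theorem \ref{LolCoad}, but it duplicates work already packaged into Theorem \ref{PushForwardIsCoad}. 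The main cost of your route is the awkward passage from $\cC_{\bD/I}$ to $\cC_{\bD/\Stab_G(\bD)}$: the stabiliser of $\bD$ in $G=\GL_2(F)$ is $IZ_F$, which is not compact, and your appeal to ``the inflation argument as in Corollary \ref{LiftToG^0}'' is not quite the right move (Corollary \ref{LiftToG^0} goes in the opposite direction, from a quotient by a central subgroup back up). The cleaner and standard manoeuvre, used by the paper, is to first establish coadmissibility over the compact open $G_0=\GL_2(\cO_F)$ --- for which $\Stab_{G_0}(\bD)=I$ exactly, so no central-torus bookkeeping is needed --- and only then pass from $G_0$ to $G$ using the fact that $j_\ast\cO_\Omega$ and $\coker\alpha$ already carry their $G$-equivariant locally Fr\'echet structures and $G_0$ is open in $G$. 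You should also note that your citation of \cite[Proposition 3.6.10(b)]{EqDCap} at the start is doing double duty: what you actually need there is exactness of restriction (so that $(\coker\alpha)_{|\bD}=\coker(\alpha_{|\bD})$), while the global-from-local conclusion is \cite[Definition 3.6.7]{EqDCap}; keeping these two roles separate would make the argument tighter.
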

\begin{proof} By Proposition \ref{OXCXG}(1), $\cO_{\bP^1}$ lies in $\cC_{\bP^1/G}$. The trivial line bundle with flat connection $\cO_{\Omega}$ is lies in $\Con^G(\Omega)$ by \cite[Lemma 3.2.4]{ArdWad2023}. Applying  Theorem \ref{PushForwardIsCoad} to $[\cO_{\bP^1}]$ with $H =G_0$, we see that $j_\ast \cO_\Omega$ lies in $\cC_{\bP^1/G_0}$ as well. Since it is naturally a $G$-equivariant locally Fr\'echet $\cD$-module on $\bP^1$, \cite[Definition 3.6.7]{EqDCap} implies that $j_\ast \cO_\Omega$ lies in $\cC_{\bP^1/G}$. Finally, the morphism $\alpha$ lies in $\cC_{\bP^1/G}$, and we can conclude by applying \cite[Lemma 3.7.5]{EqDCap}.
\end{proof}

\begin{lem}\label{CokerSupp} $\coker \alpha$ is supported on $\bP^1(F)$ in the sense of \cite[Definition 1.2.1(a)]{ArdEqKash}. 
\end{lem}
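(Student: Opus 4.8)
The statement to prove is that $\coker \alpha$ is supported on $\bP^1(F)$, where $\alpha : \cO_{\bP^1} \to j_\ast \cO_\Omega$ and $j : \Omega \hookrightarrow \bP^1$ is the inclusion of the Drinfeld upper half plane.

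The plan is to check this locally and use the fact that $\Omega = \bP^{1,\an} \setminus \bP^1(F)$. First I would recall from \cite[Definition 1.2.1(a)]{ArdEqKash} what it means for a $\cD$-module on $\bP^1$ to be supported on the closed subset $\bP^1(F)$: it means that its stalk (or its sections over every affinoid subdomain $U$ with $U \cap \bP^1(F) = \emptyset$) vanishes. Since $\coker \alpha$ is a sheaf, it suffices to show that $(\coker \alpha)_{|U} = 0$ for every admissible open subspace $U$ of $\bP^1$ that is contained in $\Omega$, i.e. that does not meet $\bP^1(F)$. For such a $U$, the restriction of $j$ to the preimage situation becomes trivial: because $U \subseteq \Omega$ is an open subspace of $\Omega$, and $j$ is an open immersion, the restriction $(j_\ast \cO_\Omega)_{|U}$ is canonically isomorphic to $\cO_U = \cO_\Omega{}_{|U}$, and under this identification the map $\alpha_{|U}$ is the identity map $\cO_U \to \cO_U$. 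Hence $(\coker \alpha)_{|U} = 0$.

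More precisely, I would argue as follows. Let $U$ be an affinoid subdomain (or more generally an admissible open) of $\bP^1$ with $U \subseteq \Omega$. Then $U$ is also an admissible open subspace of $\Omega$, and $j^{-1}(U) = U$. Since $j_\ast$ is the pushforward along the open immersion $j$, for any abelian sheaf $\cF$ on $\Omega$ and any open $U \subseteq \Omega \subseteq \bP^1$ we have $(j_\ast \cF)(U) = \cF(j^{-1}(U)) = \cF(U)$ functorially. Applying this with $\cF = \cO_\Omega$ gives $(j_\ast \cO_\Omega)(U) = \cO_\Omega(U) = \cO(U) = \cO_{\bP^1}(U)$, and by the construction of $\alpha$ (which is induced by the adjunction unit $\cO_{\bP^1} \to j_\ast j^{-1}\cO_{\bP^1} = j_\ast \cO_\Omega$), the map $\alpha(U)$ is the identity on $\cO(U)$. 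Therefore $\coker(\alpha)(U) = 0$ for all such $U$, which is exactly the assertion that $\coker \alpha$ is supported on $\bP^1(F)$ in the sense of \cite[Definition 1.2.1(a)]{ArdEqKash}.

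I do not expect any genuine obstacle here; the only care needed is to match the formulation of "supported on $\bP^1(F)$" in \cite{ArdEqKash} precisely — whether it is phrased in terms of stalks at points of the Huber space $\widetilde{\bP^1}$ lying outside $\widetilde{\bP^1(F)}$, or in terms of vanishing of sections over affinoids disjoint from $\bP^1(F)$ — and to note that these are equivalent since the affinoid subdomains of $\Omega$ (equivalently, the admissible opens of $\bP^1$ avoiding $\bP^1(F)$) form a basis for the induced topology. If the cited definition is stated via the complement, one phrases it as: $\coker \alpha$ restricted to the open complement $\bP^1 \setminus \bP^1(F) = \Omega$ is zero, which is precisely $(\coker \alpha)_{|\Omega} = \coker(\alpha_{|\Omega}) = \coker(\mathrm{id}_{\cO_\Omega}) = 0$.
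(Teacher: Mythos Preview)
Your proposal is correct and takes essentially the same approach as the paper: the paper's proof simply observes that $(j_\ast \cO_\Omega)_{|\Omega} = \cO_\Omega$, hence $(\coker \alpha)_{|\Omega} = 0$, and since $\bP^1 = \Omega \cup \bP^1(F)$ this gives the result. Your argument is a more detailed unpacking of the same idea.
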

\begin{proof} The restriction of $j_\ast \cO_\Omega$ back to $\Omega$ is equal to $\cO_\Omega$. Therefore $(\coker \alpha)_{|\Omega} = 0$. Since $\bP^1 = \Omega \cup \bP^1(F)$, this means that $\coker \alpha$ is supported on $\bP^1(F)$.
\end{proof}

Recall from $\S \ref{AffTrans}$ that $\bB = \{g \in \mathbb{GL}_2 : g_{21}=0\}$ denotes the subgroup scheme of upper-triangular matrices in $\mathbb{GL}_2$, and let $B := \bB(F)$. Let $\iota : \{0\} \hookrightarrow \bP^1$ be the closed embedding. Recall the algebra of smooth distributions $D^\infty(B,K)$ from \cite{ST3}. 

In what follows, we will silently identify the category $\cC_{D^\infty(B,K)}$ with $\cC_{\{0\}/B}$, using \cite[Theorem B(c)]{EqDCap}. Recall also the functors $\cH^0_{\{0\}}, \ind_{B}^{G}, \iota_+$ and $\iota^\natural$ from \cite{ArdEqKash}. 
\begin{cor}\label{IndEqApp} Let $\cN := \cH^0_{\{0\}}(\coker \alpha)$. Then there is an isomorphism in $\cC_{\bP^1/G}$
\[ \coker \alpha \cong \ind_B^G \cN. \]
\end{cor}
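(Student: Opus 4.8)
The plan is to apply the Induction Equivalence from \cite{ArdEqKash}, which asserts that $\ind_B^G$ is an equivalence between $\cC_{\bP^1(F)/G}$-supported objects of $\cC_{\bP^1/G}$ and objects of $\cC_{\{0\}/B}$, with quasi-inverse $\cH^0_{\{0\}}$. We already know from Proposition \ref{CokerCoad} that $\coker\alpha$ lies in $\cC_{\bP^1/G}$, and from Lemma \ref{CokerSupp} that it is supported on $\bP^1(F)$. First I would check that $\bP^1(F)$ is precisely the $G$-orbit of the point $0 \in \bP^1$ under the M\"obius action, and that the stabiliser of $0$ in $G$ is exactly $B = \bB(F)$: this identifies $\bP^1(F)$ with the homogeneous space $G/B$ and puts us in the situation required to invoke the equivariant Kashiwara equivalence along $\iota : \{0\} \hookrightarrow \bP^1$ together with the Induction Equivalence. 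Concretely, the composite statement we want is that the functor $\ind_B^G \circ \cH^0_{\{0\}}$ is naturally isomorphic to the identity on the full subcategory of $\cC_{\bP^1/G}$ consisting of objects supported on $\bP^1(F)$.

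Second, I would verify the hypotheses under which \cite{ArdEqKash} proves this equivalence apply to our $\coker\alpha$ — namely that it is a coadmissible $G$-equivariant $\cD$-module on $\bP^1$ whose support is contained in the single $G$-orbit $\bP^1(F)$, and that this orbit is a closed ($p$-adic) submanifold with the discrete topology so that the local Kashiwara equivalence at the point $0$ (the functors $\iota_+$ and $\iota^\natural$) is available. Given that $\bP^1(F)$ is compact and discrete inside $\bP^1$, these are exactly the conditions set up in \cite[\S 1]{ArdEqKash}. Then the Induction Equivalence gives a natural isomorphism $\coker\alpha \cong \ind_B^G \iota_+ \iota^\natural(\coker\alpha)$, and unwinding the definition of $\cN = \cH^0_{\{0\}}(\coker\alpha)$ — which is built from $\iota^\natural$ applied at the base point of the orbit — yields $\coker\alpha \cong \ind_B^G \cN$ as objects of $\cC_{\bP^1/G}$.

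The main obstacle I anticipate is bookkeeping rather than mathematics: one must be careful that the object $\cN = \cH^0_{\{0\}}(\coker\alpha)$ as defined here agrees on the nose with the $B$-equivariant module that the Induction Equivalence of \cite{ArdEqKash} produces as the quasi-inverse image, including matching the $B$-equivariant structure (the smooth $D^\infty(B,K)$-action coming from the identification $\cC_{D^\infty(B,K)} \cong \cC_{\{0\}/B}$ via \cite[Theorem B(c)]{EqDCap}). A secondary point to nail down is that all the relevant functors ($\cH^0_{\{0\}}$, $\ind_B^G$, $\iota_+$, $\iota^\natural$) are exact on the subcategories in question, so that the isomorphism $\coker\alpha \cong \ind_B^G \cN$ is genuinely an isomorphism in $\cC_{\bP^1/G}$ and not merely an isomorphism of underlying objects; but since $\coker\alpha$ is already known to be a coadmissible equivariant $\cD$-module, this follows from the fact that the Induction Equivalence is an equivalence of abelian categories. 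Once these identifications are made, the statement is immediate.
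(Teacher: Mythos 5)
Your proposal follows exactly the same route as the paper: identify $\bP^1(F)$ as the $G$-orbit of $0$ with stabiliser $B$, observe via Proposition \ref{CokerCoad} and Lemma \ref{CokerSupp} that $\coker\alpha$ lies in $\cC^{G\cdot 0}_{\bP^1/G}$, and then invoke the Induction Equivalence of \cite[Theorem A]{ArdEqKash}. The extra bookkeeping you flag (matching of $B$-equivariant structures, compatibility of $\cH^0_{\{0\}}$ with the quasi-inverse) is precisely what that theorem packages up, so the paper's proof simply cites it.
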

\begin{proof} The $G$-orbit of the point $0$ is equal to $\bP^1(F)$, and the stabiliser of $0$ under the M\"{o}bius action of $G$ on $\bP^1(F)$ is equal to $B$. By Proposition \ref{CokerCoad} and Lemma \ref{CokerSupp}, we see that $\coker \alpha$ is an object of $\cC^{G\cdot 0}_{\bP^1/G}$. Now use \cite[Theorem A]{ArdEqKash}.
\end{proof}

For the following calculations with local cohomology, recall that by \cite[\S 5]{SchVdPut} for any rigid analytic space $X$, there is a natural equivalence of categories between the abelian sheaves on $X$ and the abelian sheaves on the Huber space $\tilde{X}$ associated with $X$. See also $\S \ref{HuberSpace}$ and \cite[\S 2.1]{ArdEqKash} for further details.

We now begin to study the object $\cN = \cH^0_{\{0\}}(\coker \alpha)$ in $\cC^{\{0\}}_{\bP^1/B}$. This sheaf is supported at $\{0\}$ only, and therefore is completely determined by its restriction $\cN_{|\bD}$ to $\bD$. On the other hand, $\cN_{|\bD}$ lies in $\cC^{\{0\}}_{\bD/B_0}$ where $B_0 := \bB(\cO_F)$, and it is therefore completely determined by its global sections $\cN(\bD)$ as a $\w\cD(\bD,B_0)$-module, in view of \cite[Theorem B(c)]{EqDCap}. We first compute the $\cD(\bD)$-action on $\cN(\bD)$ as follows.

\begin{lem}\label{VDcomp} $\cN(\bD)$ is isomorphic to $H^1_{\{0\}}(\bD, \cO_{\bD})$ as a $\cD(\bD)$-module.\end{lem}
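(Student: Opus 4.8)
The statement to prove is that $\cN(\bD) = \cH^0_{\{0\}}(\coker\alpha)(\bD)$ is isomorphic to $H^1_{\{0\}}(\bD,\cO_\bD)$ as a $\cD(\bD)$-module. The overall strategy is to compute the local cohomology sheaf $\cH^0_{\{0\}}(\coker\alpha)$ directly from the definition of local cohomology on the Huber space $\widetilde{\bD}$, using the short exact sequence $0 \to \cO_{\bP^1} \xrightarrow{\alpha} j_\ast\cO_\Omega \to \coker\alpha \to 0$ together with the long exact sequence in local cohomology supported on $\{0\}$. Restricting everything to $\bD$ (which is harmless since $\coker\alpha$ is supported on $\bP^1(F)$ and $0 \in \bD$), I would first recall that $j_\ast\cO_\Omega|_\bD = j_{0,\ast}\cO_\Upsilon$ where $j_0 : \Upsilon \hookrightarrow \bD$, and that by Lemma \ref{jLlimit} (applied to the trivial line bundle $\cO$) we have $j_{0,\ast}\cO_\Upsilon \cong \invlim_n \cO_{n}$ with $\cO_n := j_{0,\ast}((\cO_\bD)_{\overline{\Upsilon_n}})$.

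The key step is the long exact sequence of local cohomology with support in the closed point $\{0\}$ applied to $0 \to \cO_\bD \to j_{0,\ast}\cO_\Upsilon \to (\coker\alpha)|_\bD \to 0$:
\begin{equation*}
0 \to \cH^0_{\{0\}}(\cO_\bD) \to \cH^0_{\{0\}}(j_{0,\ast}\cO_\Upsilon) \to \cH^0_{\{0\}}((\coker\alpha)|_\bD) \to \cH^1_{\{0\}}(\cO_\bD) \to \cH^1_{\{0\}}(j_{0,\ast}\cO_\Upsilon) \to \cdots
\end{equation*}
Here $\cH^0_{\{0\}}(\cO_\bD)$ is the subsheaf of sections of $\cO_\bD$ supported at $0$, which vanishes because $\cO_\bD$ has no torsion (the point $0$ is not open, and $\cO_\bD$ is a sheaf of integral domains on connected opens). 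The crucial input is that $\cH^i_{\{0\}}(j_{0,\ast}\cO_\Upsilon) = 0$ for $i = 0, 1$: intuitively, $j_{0,\ast}\cO_\Upsilon$ consists of functions that are already "as meromorphic as possible" near the $F$-rational points, so it has no further local cohomology there. To make this precise I would use the presentation $j_{0,\ast}\cO_\Upsilon \cong \invlim_n \cO_n$ together with the fact that each $\cO_n$ is, locally near $0$, a colimit of $\cO((\bD\cap W_n)(s))$ over shrinking wide-open neighbourhoods — the relevant mechanism being Theorem \ref{iDag} identifying $\cF_{\overline{U}}$ with $\cF^\dag_U$ — and the observation that sections overconverging into the hole at $0$ already "see around" the point $0$, so taking local cohomology at $0$ kills nothing and produces nothing; this can be checked by a direct Čech/Laurent-series computation on a small disc $\Sp K\langle x/s\rangle$ minus $\{|x| < t\}$, where the local cohomology exact sequence degenerates. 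Once both $\cH^0_{\{0\}}$ and $\cH^1_{\{0\}}$ of $j_{0,\ast}\cO_\Upsilon$ are shown to vanish, the long exact sequence collapses to an isomorphism $\cH^0_{\{0\}}((\coker\alpha)|_\bD) \xrightarrow{\;\cong\;} \cH^1_{\{0\}}(\cO_\bD)$, which is $\cD$-linear because every map in the local cohomology long exact sequence is $\cD$-linear (local cohomology of a $\cD$-module is again a $\cD$-module, functorially). Taking global sections over $\bD$ gives the claimed isomorphism $\cN(\bD) \cong H^1_{\{0\}}(\bD,\cO_\bD)$ of $\cD(\bD)$-modules.

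The main obstacle I anticipate is the vanishing of $\cH^1_{\{0\}}(j_{0,\ast}\cO_\Upsilon)$ — more precisely, verifying carefully that passing to the inverse limit $\invlim_n \cO_n$ does not introduce a spurious $\varprojlim^1$ contribution or fail to commute with the local cohomology functor. I would handle this by working at a fixed small affinoid neighbourhood $X = \Sp K\langle x/s \rangle$ of $0$ and the cofinal family of holes $\{|x| < t\}$, $t \to 0$, using that local cohomology supported at $\{0\}$ on $X$ can be computed by the two-term Čech complex associated to the covering $\{X, X \setminus \{0\}\}$ — but here the subtlety is that $X \setminus \{0\}$ is not quasi-compact, so one genuinely passes through the overconvergent sheaves $(\cO_X)_{\overline{X \cap U_t}}$ of Definition \ref{iUdagDefn} and their colimit. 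The point is that $(\cO_X)_{\overline{X\cap U_t}}$ is already a flasque-enough model that its higher local cohomology at $0$ vanishes (its sections overconverge into the hole around $0$), and then one takes $\invlim_t$ using the Mittag-Leffler condition guaranteed by the fact that the transition maps $\cO(X \cap U_{t'}) \to \cO(X \cap U_t)$ for $t < t'$ have dense image, so $\varprojlim^1 = 0$. With this in hand the degeneration of the long exact sequence is immediate, and the identification of the connecting map as an isomorphism of $\cD(\bD)$-modules is then purely formal.
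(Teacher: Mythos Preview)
Your overall architecture is exactly the paper's: restrict to $\bD$, use the long exact sequence in local cohomology with support in $\{0\}$ applied to $0 \to \cO_\bD \to j_{0,\ast}\cO_\Upsilon \to \coker\alpha_0 \to 0$, and show that $H^i_{\{0\}}(\bD, j_{0,\ast}\cO_\Upsilon) = 0$ for $i=0,1$ so that the connecting map gives the desired $\cD(\bD)$-linear isomorphism. Where you diverge is in how you justify that vanishing, and here you are working far too hard.

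You propose to attack the vanishing through the inverse-limit presentation $j_{0,\ast}\cO_\Upsilon \cong \invlim_n \cO_n$, overconvergent models, and Mittag--Leffler control of $\varprojlim^1$. This may be salvageable but is both vague as written and unnecessary. The paper's argument is a one-line observation you have overlooked: \emph{the point $0$ does not lie in $\Upsilon$}, since $\Upsilon = \bD \cap \Omega = \bD \setminus \bP^1(F)$. Hence in the standard long exact sequence
\[
0 \to H^0_{\{0\}}(\bD, j_{0,\ast}\cO_\Upsilon) \to H^0(\bD, j_{0,\ast}\cO_\Upsilon) \to H^0(\bD\setminus\{0\}, j_{0,\ast}\cO_\Upsilon) \to H^1_{\{0\}}(\bD, j_{0,\ast}\cO_\Upsilon) \to H^1(\bD, j_{0,\ast}\cO_\Upsilon)
\]
the middle restriction map is $\cO(\Upsilon) \to \cO(\Upsilon\setminus\{0\}) = \cO(\Upsilon)$, i.e.\ the identity. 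This immediately gives $H^0_{\{0\}} = 0$ and forces $H^1_{\{0\}}$ to inject into $H^1(\bD, j_{0,\ast}\cO_\Upsilon)$. The latter vanishes by the Leray spectral sequence for $j_0$ together with Kiehl's theorem ($\Upsilon$ is quasi-Stein, so $H^1(\Upsilon,\cO_\Upsilon)=0$). No inverse limits, no Mittag--Leffler, no analysis of overconvergent sheaves is needed. Replace your proposed argument for the vanishing step with this observation and your proof is complete and matches the paper's.
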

\begin{proof} Recall the notation $\Upsilon = \bD \cap \Omega$. Let $j_0 : \Upsilon \hookrightarrow \bD$ be the open inclusion and let $\alpha_0$ be the restriction of $\alpha$ to $\bD$. Then we have the short exact sequence 
\[ 0 \to \cO_{\bD} \stackrel{\alpha_0}{\longrightarrow} j_{0,\ast} \cO_\Upsilon \to \coker (\alpha_0) \to 0\]
of locally Fr\'echet $\cD$-modules on $\bD$. By the definition of local cohomology that can be found at \cite[p.2]{HartLocCoh}, we have
\[\cN(\bD) = H^0_{\{0\}}(\bD, (\coker \alpha)_{|\bD}) = H^0_{\{0\}}(\bD, \coker \alpha_0).\]
Now, using \cite[Proposition 1.1(b)]{HartLocCoh}, we get the long exact sequence
\begin{equation}\label{LocCohLES} \xymatrix@R-2pc{ 0 \ar[r] & H^0_{\{0\}}(\bD, \cO_{\bD}) \ar[r] & H^0_{\{0\}}(\bD, j_{0,\ast} \cO_\Upsilon) \ar[r] & H^0_{\{0\}}(\bD, \coker \alpha_0) \ar[r] & \\
      \ar[r] & H^1_{\{0\}}(\bD, \cO_{\bD}) \ar[r] & H^1_{\{0\}}(\bD, j_{0,\ast} \cO_\Upsilon) \ar[r] & H^1_{\{0\}}(\bD, \coker \alpha_0).  & }\end{equation}
Note that this is a long exact sequence of $\cD(\bD)$-modules.  We claim that the middle terms in this long exact sequence vanish:
\begin{equation}\label{HiLocCohVanish} H^0_{\{0\}}(\bD, j_{0,\ast} \cO_\Upsilon) = H^1_{\{0\}}(\bD, j_{0,\ast} \cO_\Upsilon) = 0.\end{equation}
By \cite[Corollary 1.9]{HartLocCoh}, these terms appear in the long exact sequence
\[ \xymatrix@R-2pc{ 0 \ar[r] & H^0_{\{0\}}(\bD, j_{0,\ast} \cO_\Upsilon) \ar[r] & H^0(\bD, j_{0,\ast} \cO_\Upsilon) \ar[r] & H^0(\bD \backslash \{0\}, j_{0,\ast} \cO_\Upsilon) \ar[r] &\\
\ar[r] & H^1_{\{0\}}(\bD, j_{0,\ast} \cO_\Upsilon) \ar[r] & H^1(\bD, j_{0,\ast} \cO_\Upsilon) \ar[r] & H^1(\bD \backslash \{0\}, j_{0,\ast} \cO_\Upsilon). & } \]
The third arrow in the first row is $\cO(\Upsilon) \to \cO(\Upsilon \backslash \{0\})$, which is an isomorphism since $0 \notin \Upsilon$. This already implies that $H^0_{\{0\}}(\bD, j_{0,\ast} \cO_\Upsilon) = 0$, and that the second arrow in the second row is injective.  The five-term exact sequence of low degree terms associated the Leray spectral sequence applied to the map $j_0 : \Upsilon \to \bD$ and the sheaf $\cO_\Upsilon$ begins with $0 \to H^1(\bD,  j_{0,\ast} \cO_\Upsilon) \to H^1(\Upsilon, \cO_\Upsilon)$. But $\Upsilon$ is a quasi-Stein rigid analytic space, so $H^1(\Upsilon, \cO_\Upsilon) = 0$ by Kiehl's Theorem. This completes the proof of (\ref{HiLocCohVanish}). The connecting map in (\ref{LocCohLES}) now gives a $\cD(\bD)$-linear isomorphism 
\[ \cN(\bD) = H^0_{\{0\}}(\bD, \coker \alpha_0) \cong H^1_{\{0\}}(\bD, \cO_{\bD}). \qedhere\]
\end{proof}
\begin{lem}\label{inatV} We have $\dim_K \iota^\natural \cN = 1$. \end{lem}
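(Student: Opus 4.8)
The plan is to compute $\iota^\natural \cN$ by reducing to the explicit description of $\cN$ obtained in Lemma \ref{VDcomp}, namely that $\cN(\bD) \cong H^1_{\{0\}}(\bD, \cO_{\bD})$ as a $\cD(\bD)$-module. First I would recall that the functor $\iota^\natural$ appearing here (from \cite{ArdEqKash}) is the equivariant Kashiwara co-restriction functor for the closed embedding $\iota : \{0\} \hookrightarrow \bP^1$; since $\cN$ is supported at $\{0\}$, Kashiwara's equivalence guarantees that $\iota^\natural \cN$ is well-defined, and the point is simply to identify it as a vector space (the $B$-equivariant structure is irrelevant for a dimension count). Because everything is local at $0$ and $0 \in \bD$, I can work entirely on $\bD = \Sp K\langle x\rangle$ with its coordinate $x$, where $\partial_x$ generates $\cT(\bD)$ freely over $\cO(\bD)$, and where $\iota^\natural$ on a $\cD(\bD)$-module $M$ supported at $x=0$ is computed as the kernel of $x$ acting on $M$ (equivalently, $\Hom_{\cO(\bD)}(\cO(\bD)/x\cO(\bD), M)$), since Kashiwara's equivalence in this one-variable setting identifies $\cD_{\{0\}}$-modules with their fibre $\{m : xm = 0\}$.

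The key computation is then the classical one: $H^1_{\{0\}}(\bD, \cO_{\bD})$ is the local cohomology of the structure sheaf of a one-dimensional disc at a point, and it is well known (cf. the exact sequence $0 \to \cO(\bD) \to \cO(\bD \setminus \{0\}) \to H^1_{\{0\}}(\bD,\cO_{\bD}) \to 0$ from \cite[Corollary 1.9]{HartLocCoh}) that $H^1_{\{0\}}(\bD,\cO_{\bD}) \cong \cO(\bD)[1/x]/\cO(\bD)$, which as a $\cD(\bD)$-module is free of rank one over $K[\partial_x]$ acting on the class $e := [1/x]$, with $x \cdot \partial_x^{[n]} e = -\partial_x^{[n-1]} e$ for $n \geq 1$ and $x \cdot e = 0$. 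Thus $\{m : xm = 0\}$ in $\cN(\bD) \cong H^1_{\{0\}}(\bD,\cO_{\bD})$ is precisely the one-dimensional $K$-span of $e = [1/x]$. Feeding this through the identification of $\iota^\natural \cN$ with this fibre gives $\dim_K \iota^\natural \cN = 1$.

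I would organize the write-up as follows: (i) observe $\cN_{|\bP^1 \setminus \{0\}} = 0$ so that $\iota^\natural \cN$ depends only on $\cN_{|\bD}$ and, via Kashiwara, only on the fibre of $\cN(\bD)$ at $x=0$; (ii) invoke Lemma \ref{VDcomp} to replace $\cN(\bD)$ by $H^1_{\{0\}}(\bD,\cO_{\bD})$; (iii) identify $H^1_{\{0\}}(\bD,\cO_{\bD})$ with $\cO(\bD)[1/x]/\cO(\bD)$ and compute that the $x$-torsion submodule is one-dimensional, spanned by $[1/x]$. The main obstacle — really the only subtle point — is making precise the claim that $\iota^\natural$ on a $\cD$-module supported at the point is computed by taking the fibre (the $x$-torsion), i.e. citing the correct form of the equivariant Kashiwara equivalence from \cite{ArdEqKash} and checking that the co-restriction functor $\iota^\natural$ is the one that extracts this fibre rather than, say, the de Rham fibre in positive degree; once the conventions of \cite{ArdEqKash} are pinned down this is immediate, and the rest is the standard local cohomology computation on a one-dimensional disc.
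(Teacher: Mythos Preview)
Your proposal is correct and follows essentially the same approach as the paper: unwind $\iota^\natural$ as taking the $x$-torsion in $\cN(\bD)$, invoke Lemma \ref{VDcomp} to identify $\cN(\bD)$ with $H^1_{\{0\}}(\bD,\cO_{\bD})$, realise the latter via the exact sequence from \cite[Corollary 1.9]{HartLocCoh} as a quotient of rigid functions on the punctured disc, and compute that the $x$-torsion is $K\cdot[1/x]$. The only cosmetic difference is that the paper writes $\cO(\bD\backslash\{0\})/\cO(\bD)$ rather than $\cO(\bD)[1/x]/\cO(\bD)$ and sandwiches the $x$-torsion computation inside $K\langle x,x^{-1}\rangle/K\langle x\rangle$, but this does not affect the argument.
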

\begin{proof} 
The definition of $\iota^\natural$ can be found in the statement of \cite[Theorem 3.4.17]{ArdEqKash}:
\[ \iota^\natural(\cM) = \mathpzc{Hom}_{\cO_{\{0\}}}\left(\Omega_{\{0\}} , \hspace{0.05cm}\iota^\natural\hspace{0.05cm}(\Omega_{\bP^1} \uset{\cO_\bP^1} \cM)\right)\]
for any object $\cM$ of $\cC^{\{0\}}_{\bP^1/B_0}$: first one side-switches $\cM$ to make a $B_0$-equivariant \emph{right} $\cD$-module on $\bP^1$, then one applies the pullback functor $\iota^\natural$ for equivariant right $\cD$-modules, and then one side-switches back again. The pullback functor for right $\cD$-modules $\iota^\natural$ is defined at \cite[Definition 3.4.13]{ArdEqKash}. We see that only the $\cO_{\bP^1}$-module structure on $\Omega_{\bP^1} \uset{\cO_\bP^1} \cM$ is used in the definition of this functor. Since $\cM$ is supported at $\{0\}$, and since $\Omega_{\bP^1}(\bD) = \cO(\bD) dx$ is a free $\cO(\bD)$-module of rank $1$, we see that for any $\cM \in \cC^{\{0\}}_{\bP^1/B_0}$ we have a $K$-vector space isomorphism
\[ \Gamma\left(\bD, \iota^\natural(\Omega_{\bP^1} \uset{\cO_\bP^1} \cM)\right) \cong \cM(\bD)[x] := \ker\left(x : \cM(\bD) \to \cM(\bD)\right).\]
Now, applying \cite[Corollary 1.9]{HartLocCoh} again to $\cO_{\bD}$, we have the long exact sequence
\[ \xymatrix@R-2pc{ 0 \ar[r] & H^0_{\{0\}}(\bD, \cO_{\bD}) \ar[r] & H^0(\bD, \cO_{\bD}) \ar[r] & H^0(\bD \backslash \{0\}, \cO_{\bD}) \ar[r] &\\
\ar[r] & H^1_{\{0\}}(\bD, \cO_{\bD}) \ar[r] & H^1(\bD, \cO_{\bD}) \ar[r] & H^1(\bD \backslash \{0\}, \cO_{\bD}). & } \]
Since $H^1(\bD, \cO_{\bD}) = 0$, Lemma \ref{VDcomp} now implies that as a $\cD(\bD)$-module, we have
\[\cN(\bD) \cong H^1_{\{0\}}(\bD, \cO_{\bD}) \cong \frac{\cO(\bD \backslash \{0\})}{\cO(\bD)}.\]
Now $\cO(\bD) = K \langle x \rangle$, whereas $\cO(\bD \backslash \{0\})$ is a subring of $K\langle x, x^{-1}\rangle$. Then we calculate
\[ K \cdot x^{-1} \subseteq \left(\frac{\cO(\bD \backslash \{0\})}{\cO(\bD)}\right)[x] \subseteq \left(\frac{K \langle x, x^{-1} \rangle}{K \langle x\rangle}\right)[x] = K \cdot x^{-1}.\]
So, $\iota^\natural \cN = \cN(\bD)[x] = \left(\frac{\cO(\bD \backslash \{0\})}{\cO(\bD)}\right)[x] = K \cdot \frac{1}{x}$ has $K$-dimension equal to $1$.\end{proof}

Let $H$ be an open subgroup of $G$; our goal will be now to better understand the restriction of $\coker \alpha$ to an object in $\cC_{\bP^1 / H}$. Using \cite[Lemma 2.2.1]{ArdEqKash}, choose a finite set of representatives $\{s_1,=1,\cdots,s_m\}$ for the $H,B$-double cosets in $G$. For each $i=1,\cdots, m$, recall from \cite[p.18]{ArdEqKash} the object
\[ \cN_i := \Res^{{}^{s_i}B}_{H \cap {}^{s_i}B} \left([s_i] s_{i,\ast} \cN\right) \in \cC_{\bP^1 / (H \cap {}^{s_i}B)}.\]
\begin{lem}\label{ResIndN} $\Res^G_H \left(\ind_B^G \cN\right) \cong \bigoplus\limits_{i=1}^m \ind_{H \cap {}^{s_i}B}^H \cN_i$ in $\cC_{\bP^1/H}$.
\end{lem}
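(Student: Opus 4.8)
\textbf{Proof proposal for Lemma \ref{ResIndN}.} The statement is a Mackey-style decomposition for the equivariant induction functor $\ind_B^G$ from \cite{ArdEqKash}, restricted back down along an open subgroup $H \leq G$. The plan is to reduce everything to the double-coset bookkeeping already set up in \cite[\S 2.2]{ArdEqKash} and then to invoke the formal properties of $\ind$, $\Res$, and the twist functors $[s]$ established there. Concretely, I would proceed as follows.

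First I would recall the construction of $\ind_B^G \cN$ for $\cN \in \cC^{\{0\}}_{\bP^1/B}$. Since $\coker\alpha \cong \ind_B^G \cN$ is supported on the $G$-orbit $G \cdot 0 = \bP^1(F)$, with point stabiliser $B$, the functor $\ind_B^G$ is built (as in \cite[Theorem A]{ArdEqKash}) by "spreading out" $\cN$ from the fibre over $0$ to all fibres over $\bP^1(F)$ using the $G$-action and the coset decomposition $G/B \cong \bP^1(F)$. Restricting to $H$ then decomposes $\bP^1(F) = G/B$ into the finitely many $H$-orbits, which are exactly indexed by the double cosets $H \backslash G / B = \{H s_i B : 1 \leq i \leq m\}$ from \cite[Lemma 2.2.1]{ArdEqKash}. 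For each $i$, the $H$-orbit of the point $s_i \cdot 0$ has stabiliser $H \cap {}^{s_i}B$, and the equivariant sheaf sitting over that orbit is obtained from $\cN$ by translating via $s_i$ (the functor $[s_i] s_{i,\ast}$, which lands in $\cC_{\bP^1 / {}^{s_i}B}$) and then restricting the equivariant structure to $H \cap {}^{s_i}B$ — this is precisely the definition of $\cN_i$. Spreading $\cN_i$ back out over its $H$-orbit inside $\bP^1(F)$ via $\ind_{H \cap {}^{s_i}B}^H$ recovers the corresponding summand. Summing over $i$ then gives the claimed isomorphism, because the $H$-orbits partition $\bP^1(F)$ and the equivariant $\cD$-modules in question are uniquely determined by their values on each orbit together with the equivariant gluing data.

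To make this rigorous I would use two inputs. The first is the analogue of the classical Mackey decomposition in the setting of coadmissible equivariant $\cD$-modules supported on a single orbit; this should be either stated directly or assembled quickly from \cite[Theorem A]{ArdEqKash} together with \cite[Lemma 2.2.1]{ArdEqKash}, using that $\ind$ is exact and that restriction of the equivariant structure along $H \leq G$ commutes with the orbit decomposition. The second is the compatibility of the twist functors: for $g \in G$ one has $\ind_B^G \cong \ind_{{}^gB}^G \circ [g] g_\ast$ up to canonical isomorphism (conjugation does not change the induced module), and $\Res^G_H \circ \ind_{gB}^G$ restricted to the piece supported on the $H$-orbit of $g \cdot 0$ agrees with $\ind_{H \cap {}^gB}^H \circ \Res^{{}^gB}_{H \cap {}^gB}$. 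Combining these for $g = s_1, \dots, s_m$ and using that $\cC_{\bP^1/H}$-objects supported on disjoint unions of $H$-orbits decompose as direct sums over those orbits (via \cite[Definition 3.6.7]{EqDCap} and the locality of the category) yields the formula.

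The main obstacle I anticipate is not conceptual but bookkeeping: one must check that the twisting, restriction, and re-induction functors assemble coherently, i.e.\ that the equivariant structures match up under the isomorphisms $[g] g_\ast$ and that the direct-sum decomposition over $H$-orbits is compatible with the coadmissible-module-theoretic definition of $\cC_{\bP^1/H}$. In other words, the real work is verifying that the abstract Mackey formula, which holds at the level of sheaves on the discrete set $\bP^1(F)$ with its $G$-action, lifts to the level of coadmissible $H$-equivariant $\cD$-modules on $\bP^1$ supported there. Since $\ind_B^G$ and its variants were constructed in \cite{ArdEqKash} precisely to make such manipulations functorial, I expect this to go through cleanly once the double-coset indexing is in place; the proof should therefore be short, citing \cite[Theorem A]{ArdEqKash} and \cite[Lemma 2.2.1]{ArdEqKash} for the structural facts and reducing the remaining content to the elementary partition $\bP^1(F) = \bigsqcup_{i=1}^m H \cdot (s_i \cdot 0)$.
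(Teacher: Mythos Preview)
Your proposal is conceptually correct and identifies exactly the right mechanism: this is a Mackey decomposition for the induction functor of \cite{ArdEqKash}, governed by the partition of $\bP^1(F) = G/B$ into $H$-orbits indexed by $H \backslash G / B$. However, the paper's proof is a single line: ``This follows from \cite[Lemma 2.3.7]{ArdEqKash}.'' That lemma is precisely the abstract Mackey formula for $\Res^G_H \circ \ind_B^G$ in the category $\cC_{\bP^1/-}$, so everything you outline --- the orbit decomposition, the twist by $[s_i] s_{i,\ast}$, the re-induction from $H \cap {}^{s_i}B$, and the compatibility checks --- is already packaged there. You cited \cite[Lemma 2.2.1]{ArdEqKash} and \cite[Theorem A]{ArdEqKash}, which are ingredients, but missed the specific result \cite[Lemma 2.3.7]{ArdEqKash} that does the job directly; your write-up is effectively a sketch of its proof rather than an application of it.
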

\begin{proof} This follows from \cite[Lemma 2.3.7]{ArdEqKash}. \end{proof}

Note that $\cN$ is supported at $0$. For each $i=1,\cdots, m$, we then see that $\cN_i$ is supported at $x_i := s_i \cdot 0 \in \bP^1(F)$. Therefore
\[ \cN_i \in \cC^{x_i}_{\bP^1 / (H \cap {}^{s_i}B)} \qmb{and} \ind_{H \cap {}^{s_i}B}^H \cN_i \in \cC^{H x_i}_{\bP^1 / H}.\]

\begin{cor}\label{LengthResGHjO} Let $H$ be an open subgroup of $G$.
\be \item $\Res^G_H(\coker \alpha)$ has length equal to $m = |H \backslash G / B|$ in $\cC_{\bP^1/H}$. 
\item $\Res^G_H(j_\ast \cO_\Omega)$ has length $m + 1$ in $\cC_{\bP^1/H}$. 
\ee\end{cor}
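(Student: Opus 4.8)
\textbf{Proof strategy for Corollary \ref{LengthResGHjO}.}

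The plan is to combine the structural results just established about $\coker\alpha$ with the short exact sequence $0\to\cO_{\bP^1}\to j_\ast\cO_\Omega\to\coker\alpha\to 0$. First I would prove part (a). By Corollary \ref{IndEqApp} we have $\coker\alpha\cong\ind_B^G\cN$, so Lemma \ref{ResIndN} gives a direct sum decomposition $\Res^G_H(\coker\alpha)\cong\bigoplus_{i=1}^m\ind_{H\cap{}^{s_i}B}^H\cN_i$ in $\cC_{\bP^1/H}$, where $m=|H\backslash G/B|$. Since length is additive over finite direct sums in the abelian category $\cC_{\bP^1/H}$, it suffices to show each summand $\ind_{H\cap{}^{s_i}B}^H\cN_i$ is a simple object. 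Each $\cN_i$ is supported at the single $F$-rational point $x_i=s_i\cdot 0$, so $\ind_{H\cap{}^{s_i}B}^H\cN_i$ lies in $\cC^{Hx_i}_{\bP^1/H}$; by the Induction Equivalence --- \cite[Theorem A]{ArdEqKash} --- the functor $\ind_{H\cap{}^{s_i}B}^H$ restricted to sheaves supported on the orbit $Hx_i$ is an equivalence of categories onto $\cC^{Hx_i}_{\bP^1/H}$, so $\ind_{H\cap{}^{s_i}B}^H\cN_i$ is simple if and only if $\cN_i$ is simple in $\cC^{x_i}_{\bP^1/(H\cap{}^{s_i}B)}$. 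Since $\cN_i$ is obtained from $\cN$ by the exact functors $s_{i,\ast}$, twisting by $[s_i]$, and restriction of the equivariant structure along $H\cap{}^{s_i}B\hookrightarrow{}^{s_i}B$ --- all of which preserve simplicity --- it is enough to check that $\cN$ itself is simple in $\cC^{\{0\}}_{\bP^1/B}$. This I would deduce from Lemma \ref{inatV}: by the equivariant Kashiwara equivalence \cite[Theorem 3.4.17]{ArdEqKash}, objects of $\cC^{\{0\}}_{\bP^1/B}$ correspond to modules over $D^\infty$ (or the appropriate distribution algebra at the point) via $\iota^\natural$ and $\iota_+$, with length preserved; since $\dim_K\iota^\natural\cN=1$, the object $\iota^\natural\cN$ is simple, hence so is $\cN$. (Alternatively, one shows directly that $\cN(\bD)\cong\cO(\bD\backslash\{0\})/\cO(\bD)$ is generated as a $\cD(\bD)$-module by the class of $1/x$ and has no proper nonzero $\cD$-submodule, which is the classical computation of $H^1_{\{0\}}$.)

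For part (b), I would apply $\Res^G_H$ to the short exact sequence $0\to\cO_{\bP^1}\to j_\ast\cO_\Omega\to\coker\alpha\to 0$, which remains exact in $\cC_{\bP^1/H}$ since restriction of the equivariant structure is exact (c.f.\ \cite[Proposition 3.6.10]{EqDCap}). Additivity of length over short exact sequences then gives
\[
\ell_{\cC_{\bP^1/H}}\big(\Res^G_H(j_\ast\cO_\Omega)\big)=\ell_{\cC_{\bP^1/H}}\big(\Res^G_H\cO_{\bP^1}\big)+\ell_{\cC_{\bP^1/H}}\big(\Res^G_H(\coker\alpha)\big).
\]
By Proposition \ref{OXCXG}(2), $\cO_{\bP^1}$ is a simple object of $\cC_{\bP^1/G}$, and since $\bP^1$ is connected the same argument (which is purely about the underlying $\cD$-module and the geometry of the covering) shows $\Res^G_H\cO_{\bP^1}=\cO_{\bP^1}$ is simple in $\cC_{\bP^1/H}$ as well, so its length is $1$. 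Combined with part (a), this yields length $m+1$ for $\Res^G_H(j_\ast\cO_\Omega)$.

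The main obstacle I anticipate is verifying the simplicity of $\cN$ cleanly, i.e.\ making rigorous the claim that the one-dimensionality of $\iota^\natural\cN$ forces $\cN$ to be simple. This requires knowing that $\iota^\natural$ (or the composite Kashiwara-type equivalence onto a category of modules over a distribution algebra supported at a point) is an equivalence of \emph{abelian} categories that reflects and preserves length, which is precisely the content of \cite[Theorem 3.4.17]{ArdEqKash} together with \cite[Theorem B]{EqDCap}; the bookkeeping with side-switching by $\Omega_{\bP^1}$ and the two restrictions of equivariant structure needs to be done carefully but is routine given those inputs. A secondary subtlety is confirming that $\Res^G_H$ genuinely preserves length (not merely that it is exact) --- this follows because $\Res^G_H$ sends simple objects to simple objects on each $H$-orbit by the Induction Equivalence, and here the relevant orbits ($Hx_i$ and the dense orbit for $\cO_{\bP^1}$) are handled as above; I would spell this out explicitly for $\cO_{\bP^1}$ since its support is all of $\bP^1$ rather than a single closed orbit.
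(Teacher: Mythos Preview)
Your proposal is essentially the same as the paper's proof: both use Corollary \ref{IndEqApp} and Lemma \ref{ResIndN} for (a), then reduce simplicity of each summand via the Induction/Kashiwara equivalences to the $1$-dimensionality statement of Lemma \ref{inatV}; and both deduce (b) from the short exact sequence and Proposition \ref{OXCXG}(2).

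One correction: your claim that ``restriction of the equivariant structure along $H\cap{}^{s_i}B\hookrightarrow{}^{s_i}B$ preserves simplicity'' is false in general, so you cannot reduce to simplicity of $\cN$ in $\cC^{\{0\}}_{\bP^1/B}$ and then transport it. The paper instead observes that $s_{i,\ast}(\iota^\natural\cN)\cong\iota_i^\natural\cN_i$, so $\iota_i^\natural\cN_i$ is a $1$-dimensional $K$-vector space by Lemma \ref{inatV}; a $1$-dimensional $K$-vector space is simple over \emph{any} $K$-algebra acting on it, in particular over $D^\infty(H\cap{}^{s_i}B,K)$, and then the Kashiwara equivalence (\cite[Corollary 1.3.2]{ArdEqKash}) gives simplicity of $\cN_i$ directly at the correct group. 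Your argument becomes correct once you route it this way.
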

\begin{proof} (a) Fix $i=1,\cdots, m$. Using Corollary \ref{IndEqApp} and Lemma \ref{ResIndN}, we see that it is enough to show that $\ind_{H \cap {}^{s_i}B}^H \cN_i$ is a simple object in $\cC^{H x_i}_{\bP^1 / H}$. Let $\iota_i : \{x_i\} \hookrightarrow \bP^1$ be the closed embedding; using \cite[Corollary 1.3.2]{ArdEqKash}, it is enough to show that $\iota_i^\natural \cN_i$ is simple in $\cC_{\{x_i\} / H \cap {}^{s_i}B}$. However, the action of $s_i$ induces an isomorphism $s_{i,\ast} (\iota^\natural \cN) \cong \iota_i^\natural \cN_i$, so Lemma \ref{inatV} implies that $\iota_i^\natural \cN_i$ is a $1$-dimensional $K$-vector space and is therefore simple.

(b) This follows from (a) together with Proposition \ref{OXCXG}(b).
\end{proof}

\begin{cor} \label{LengthO(Omega)} Let $H$ be an open subgroup of $G = \GL_2(F)$. Then $\cO(\Omega)$ is a coadmissible $D(H,K)$-module of length $|H \backslash \bP^1(F)| + 1$.
\end{cor}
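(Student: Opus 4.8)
The plan is to deduce Corollary \ref{LengthO(Omega)} from Corollary \ref{LengthResGHjO}(b) together with the rigid-analytic equivariant Beilinson--Bernstein localisation theorem, Theorem \ref{RigidEqBB}. First I would take $\bG := \mathbb{PGL}_{2,F}$, which is connected and split semisimple, with flag variety $\bX = \bP^1$, so that $\bfX = \bP^1$ is the rigid analytification of $\bP^1_K$. The centre $Z$ of $\GL_2(F)$ acts trivially on $\bP^1$ via M\"obius transformations, and it also acts trivially on $\cO_\Omega$, hence on $j_\ast \cO_\Omega$; so for a given open subgroup $H$ of $G = \GL_2(F)$, the $H$-action on $j_\ast \cO_\Omega$ factors through the image $\overline{H} := HZ/Z$, which is an open subgroup of $\mathbb{PGL}_2(F)$. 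By Corollary \ref{LengthResGHjO}(b), $j_\ast \cO_\Omega$ has length $|H \backslash G/B| + 1$ in $\cC_{\bP^1/H}$; since the $H$-action factors through $\overline{H}$ and $Z \leq B$ acts trivially, I would identify $\cC_{\bP^1/\overline{H}}$ with the full subcategory of $\cC_{\bP^1/H}$ on which $Z$ acts trivially, so $j_\ast\cO_\Omega$ also has length $|\overline{H}\backslash \mathbb{PGL}_2(F)/\overline{B}|+1$ in $\cC_{\bP^1/\overline{H}}$, where $\overline{B} = BZ/Z$ is the image of the Borel. The double coset space $\overline{H}\backslash\mathbb{PGL}_2(F)/\overline{B}$ is in natural bijection with $H\backslash \bP^1(F)$, because $\mathbb{PGL}_2(F)/\overline{B} \cong \bP^1(F)$ as $\mathbb{PGL}_2(F)$-sets and the left $\overline{H}$-action corresponds to the left $H$-action; so this length is $|H\backslash \bP^1(F)|+1$.

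Next I would apply Theorem \ref{RigidEqBB} to $\bG = \mathbb{PGL}_{2,F}$ and the open subgroup $\overline{H} \hookrightarrow \mathbb{PGL}_2(F)$. Since $[\cO_{\bP^1}] \in \PicCon^{G}(\Omega)_{\tors}$ (indeed $\cO_\Omega$ is $G$-equivariantly trivial, so it is killed by $q+1$ and more), Theorem \ref{PushForwardIsCoad} gives $j_\ast\cO_\Omega \in \cC_{\bP^1/G_0}$, and by \cite[Definition 3.6.7]{EqDCap} it lies in $\cC_{\bP^1/H}$ and hence in $\cC_{\bP^1/\overline{H}}$. The functor $\Gamma(\bfX, -) : \cC_{\bfX/\overline{H}} \to \{M \in \cC_{D(\overline{H},K)} : \fr{m}_0 \cdot M = 0\}$ is an equivalence of categories, so it sends the length-$(|H\backslash\bP^1(F)|+1)$ object $j_\ast\cO_\Omega$ to a coadmissible $D(\overline{H},K)$-module of the same length. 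Finally I would compute $\Gamma(\bP^1, j_\ast \cO_\Omega) = (j_\ast\cO_\Omega)(\bP^1) = \cO_\Omega(\Omega) = \cO(\Omega)$, so $\cO(\Omega)$ is a coadmissible $D(\overline{H},K)$-module of length $|H\backslash\bP^1(F)|+1$. Since the $\overline{H}$-action was obtained by inflation along $H \twoheadrightarrow \overline{H}$ (the kernel $H \cap Z$ acting trivially), the same length statement holds for $\cO(\Omega)$ viewed as a $D(H,K)$-module, via the natural surjection $D(H,K) \twoheadrightarrow D(\overline{H},K)$ — here I would use that inflation along a quotient map of locally analytic groups preserves coadmissibility and length, exactly as in the proof of Corollary \ref{LiftToG^0}.

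The main obstacle I anticipate is bookkeeping rather than conceptual: one must be careful that the equivalence in Theorem \ref{RigidEqBB} genuinely preserves length, i.e. that it is exact and sends simple objects to simple objects (which follows since it is an equivalence of abelian categories), and that the identification of $\cC_{\bP^1/\overline{H}}$ with the $Z$-trivial part of $\cC_{\bP^1/H}$ is compatible with the localisation/global-sections adjunction. One should also check that $j_\ast\cO_\Omega$ does indeed lie in the image of $\Gamma(\bfX,-)$, i.e. that $\fr{m}_0$ annihilates $\Gamma(\bP^1, j_\ast\cO_\Omega)$; this holds because $j_\ast\cO_\Omega$ is a coadmissible \emph{equivariant $\cD$-module} on $\bP^1$, so its global sections carry a trivial infinitesimal character — precisely the condition encoded by $\fr{m}_0 \cdot M = 0$, as in the proof of Corollary \ref{LiftToG^0}. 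Beyond these checks, everything is a direct assembly of Corollary \ref{LengthResGHjO}(b), Theorem \ref{RigidEqBB}, and the inflation argument from Corollary \ref{LiftToG^0}.
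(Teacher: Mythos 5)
Your proposal is correct and follows essentially the same route as the paper's own proof: pass to the image $\overline{H} = HZ/Z$ in $\mathbb{PGL}_2(F)$, invoke Corollary~\ref{LengthResGHjO}(b) to get the length in $\cC_{\bP^1/\overline{H}}$, apply Theorem~\ref{RigidEqBB} with $\bG = \mathbb{PGL}_2$ to transport this length to $\Gamma(\bP^1, j_\ast\cO_\Omega) = \cO(\Omega)$ as a coadmissible $D(\overline{H},K)$-module, and then inflate back to $D(H,K)$. The only difference is that you spell out the bookkeeping (the bijection $H\backslash G/B \cong H\backslash\bP^1(F)$, the $\fr{m}_0$-annihilation condition) more explicitly than the paper does, which is fine.
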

\begin{proof}  The centre $Z$ of $G$ acts trivially on $j_\ast \cO_\Omega$. Hence we may view $\Res^G_Hj_\ast \cO_{\Omega}$ as a object in $\cC_{\bP^1/\overline{H}}$ where $\overline{H}$ is the open subgroup $HZ/Z$ of $GZ/Z = \mathbb{PGL}_2(F)$, and its length is still $|H \backslash \bP^1(F)| + 1$ by Corollary \ref{LengthResGHjO}(b). Now apply Theorem \ref{RigidEqBB} with $\bG = \mathbb{PGL}_2$ and the open subgroup $\overline{H}$ of $\bG(F)$ to deduce that $\Gamma(\bP^1, j_\ast \cO_{\Omega}) =\cO(\Omega)$ is a coadmissible $D(\overline{H},K)$-module of length $|H \backslash \bP^1(F)| + 1$. The result now follows by inflating back to $D(H,K)$.\end{proof}

Recall  that $G^0= \{ g \in \mathbb{GL}_2(F) : v_{\pi_F}(\det g) = 0 \}$. 

\begin{cor} \label{LengthL(Omega)} Let $H$ be an open subgroup of $G^0$ and let $\sL$ be a torsion $G^0$-equivariant line bundle with connection on $\Omega$ such that $\omega[\sL] = 0$ in $\PicCon(\Omega)$. Then $\sL(\Omega)$ is a coadmissible $D(H,K)$-module of length $|H \backslash \bP^1(F)| + 1$.
\end{cor}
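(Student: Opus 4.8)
The plan is to reduce this statement to Corollary \ref{LengthL(Omega)}'s predecessor Corollary \ref{LengthO(Omega)} about the trivial line bundle $\cO_\Omega$ by using the hypothesis $\omega[\sL] = 0$ in $\PicCon(\Omega)$. First I would recall that by hypothesis, $[\sL]$ lies in $\Con^{G^0}(\Omega)_{\tors}$: the underlying line bundle is trivial, so $\sL$ is (as a line bundle with connection, forgetting the equivariant structure) isomorphic to $\cO_\Omega$, but it may carry a non-trivial $G^0$-equivariant structure, namely it corresponds to a smooth character $\chi : G^0 \to K^\times$ of finite order via \cite[Proposition 3.2.14]{ArdWad2023} (after observing that $\Omega$ is geometrically connected and that the relevant Picard-type group computations go through; compare the argument in the proof of Theorem \ref{DnJnJmodule}). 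Thus $\sL \cong \cO_\chi$ as $G^0$-equivariant $\cD$-modules on $\Omega$, where $\cO_\chi$ is the trivial line bundle with connection equipped with the equivariant structure twisted by $\chi$.

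Next I would argue that twisting by a smooth character does not change the length as a $D(H,K)$-module. Concretely, $\sL(\Omega) \cong \cO(\Omega)$ as topological $K$-vector spaces, and the $D(H,K)$-action on $\sL(\Omega)$ differs from that on $\cO(\Omega)$ by the smooth twist $\chi|_H$. The key point is that twisting a coadmissible $D(H,K)$-module by a smooth (hence locally constant) character of $H$ is an auto-equivalence of the category of coadmissible $D(H,K)$-modules: it is visibly exact and invertible (twist back by $\chi^{-1}$), so it preserves lengths and sends simple objects to simple objects. Therefore $\ell_{D(H,K)}(\sL(\Omega)) = \ell_{D(H,K)}(\cO(\Omega))$, and the latter equals $|H \backslash \bP^1(F)| + 1$ by Corollary \ref{LengthO(Omega)} --- noting that $H$ is an open subgroup of $G^0$ hence also an open subgroup of $G = \GL_2(F)$, so Corollary \ref{LengthO(Omega)} applies directly. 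Alternatively, and perhaps more cleanly, I would run the entire argument of Corollary \ref{LengthResGHjO} and Corollary \ref{LengthO(Omega)} with $j_\ast\cO_\Omega$ replaced by $j_\ast \sL$ throughout: since $\omega[\sL] = 0$, the restriction $(j_\ast\sL)|_\Omega = \sL \cong \cO_\chi$ still has the property that $\coker(\alpha : \cO_{\bP^1} \to j_\ast\sL)$ is supported on $\bP^1(F)$, and the local cohomology computation at each point of $\bP^1(F)$ in Lemma \ref{inatV} is insensitive to the smooth twist (the twisting character acts by a scalar on each $1$-dimensional stalk $\iota_i^\natural\cN_i$, which does not affect simplicity). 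Then one concludes via the Induction Equivalence exactly as before, obtaining length $|H\backslash\bP^1(F)| + 1$, followed by an application of Theorem \ref{RigidEqBB} to $\mathbb{PGL}_2$ after quotienting by the central subgroup acting trivially, exactly as in the proof of Corollary \ref{LengthO(Omega)}.

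The main obstacle I anticipate is making precise and rigorous the claim that $\sL \cong \cO_\chi$ as $G^0$-equivariant objects and that the smooth-twist operation is well-behaved on coadmissible modules over $D(H,K)$ --- in particular checking that the twist of $\cO(\Omega)$ by $\chi|_H$ really does recover $\sL(\Omega)$ with its $D(H,K)$-module structure, and that $H$ need not be replaced by a smaller subgroup. This requires invoking \cite[Proposition 3.2.14]{ArdWad2023} correctly (its hypotheses on $\Omega$ being quasi-Stein and geometrically connected must be verified) and tracking the equivariant structure through the identification $\sL(\Omega) \cong \cO(\Omega)$. A subsidiary subtlety is the central character: as in the proof of Corollary \ref{LengthO(Omega)}, one must note that some open subgroup of the centre $Z \cap G^0$ acts trivially on $\sL$ (since $\sL$ is torsion), pass to an open subgroup $H'$ of $H$ on which this holds, apply the $\mathbb{PGL}_2$-version of Theorem \ref{RigidEqBB}, and then inflate back; the length is unchanged under these reductions because restriction to a finite-index open subgroup and inflation from a quotient by a subgroup acting trivially both preserve length of coadmissible modules over distribution algebras (this last fact being exactly what is already used, implicitly, in the proof of Corollary \ref{LengthO(Omega)}).
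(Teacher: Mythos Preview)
Your proposal is correct and follows essentially the same approach as the paper: invoke \cite[Proposition 3.2.14]{ArdWad2023} to identify $\sL \cong \cO_\chi$ for a finite-order character $\chi$, observe that twisting by $\chi$ is an equivalence of categories (hence preserves coadmissibility and length), and then apply Corollary \ref{LengthO(Omega)}. The paper's proof is just these three sentences, without the alternative direct route you also sketch.
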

\begin{proof}By \cite[Proposition 3.2.14]{ArdWad2023}, $\sL$ is isomorphic as a $G^0$-equivariant line bundle with connection to the twist $(\cO_{\Omega})_{\chi}$ of $\cO_{\Omega}$, for some $\chi \in \Hom(G^0,K^\times)$. Since $\sL$ is torsion, $\chi$ necessarily has finite order. Since twisting by a character is an equivalence of categories, we deduce from Corollary \ref{LengthO(Omega)} that in this case $\sL(\Omega)$ is a coadmissible $D(H,K)$-module of length $|H \backslash \bP^1(F)| + 1$. \end{proof}
We can finally give a proof of our two main results from the Introduction.
\begin{proof}[Proof of Theorem \ref{ThmA}] When $\omega[\sL] \neq 0$ in $\PicCon(\Omega)$, use Corollary \ref{LiftToG^0} together with Theorem \ref{RigidEqBB}. When $\omega[\sL] = 0$, use Corollary \ref{LengthL(Omega)}.
\end{proof}
\begin{proof}[Proof of Corollary \ref{ThmB}]   Applying \cite[Proposition 2.3]{Taylor2} we have a decomposition
\[ f_\ast \cO_X = \bigoplus_{\psi \in \widehat{\Gamma}} \sL_\psi\]
of the $G^0$-equivariant vector bundle with flat connection $f_\ast \cO_X$ into a direct sum of torsion $G^0$-equivariant line bundles with connection. Here $\widehat{\Gamma} = \Hom(\Gamma, K^\times)$ is the character group of $\Gamma$. Note that $[\sL_\psi] \in \PicCon^{G^0}(\Omega)_{\tors}$ for each $\psi \in \widehat{\Gamma}$. Hence
\[ \cO(X) = \bigoplus_{\psi \in \widehat{\Gamma}} \sL_\psi(\Omega).\]
By Corollary \ref{LiftToG^0}, the summand $\sL_\psi(\Omega)$ is a simple coadmissible $D(H,K)$-module whenever $\omega[\sL_\psi] \neq 0$, and by Corollary \ref{LengthL(Omega)}, it is a coadmissible $D(H,K)$-module of length $|H \backslash \bP^1(F)| + 1$ when $\omega[\sL_\psi]=0$. Writing $c_X := \left\vert\{\psi \in \widehat{\Gamma} : \omega[\sL_\psi] = 0\}\right\vert$, we deduce that
\[ \ell_{D(H,K)} \left(\cO(X)\right) = (|\widehat{\Gamma}| - c_X) + c_X \cdot (|H \backslash \bP^1(F)| + 1). \]
It follows from \cite[Corollary 3.1.7]{ArdWad2023} and the proof of \cite[Lemma 2.3.4]{Taylor3} that 
\[c_X = \dim_{\bfC} \cO(X_{\bfC})^{\cT(X_{\bfC}) = 0}.\] 
But because every connected component of $X_{\bfC}$ is geometrically connected and quasi-Stein, this dimension is just the number of connected components $|\pi_0(X_{\bfC})|$ of $X_{\bfC}$, by \cite[Proposition 3.1.6]{ArdWad2023}. The result follows because $|\widehat{\Gamma}| = |\Gamma|$.
\end{proof}

\bibliographystyle{plain}
\bibliography{references}
\end{document}